\documentclass[10pt]{article}
\usepackage[T1]{fontenc}
\usepackage[utf8]{inputenc}
\usepackage[english]{babel}
\usepackage[autostyle, english=american]{csquotes}
\MakeOuterQuote{"}

\usepackage{xcolor}
\definecolor{reference}{rgb}{0.20,0.36,0.74}
\definecolor{citation}{rgb}{0,.40,.80}
\usepackage{url}
\usepackage{breakurl}
\usepackage[breaklinks,colorlinks,linktocpage,urlcolor=blue,linkcolor=reference,citecolor=citation]{hyperref}
%backref

\usepackage{verbatim}
\usepackage{mathrsfs}
\usepackage{mathtools}
\usepackage{latexsym}
\usepackage{graphicx}
\usepackage{amscd,amssymb,amsmath,amsbsy,amsfonts,amsthm}
\usepackage[mathscr]{eucal}
\usepackage[all, 2cell]{xy}
\UseTwocells
\xyoption{2cell}{\UseTwocells}

\usepackage{enumerate}
\usepackage{tikz}
\usepackage[left=2cm, top=2cm, right=2cm, bottom=2cm]{geometry}
\usepackage{enumitem} %[wide, topsep=\parskip, itemsep=\parskip, parsep=\parskip]

\usepackage[backend=biber,style=alphabetic,sorting=nty,firstinits=true,maxbibnames=99,maxalphanames=99]{biblatex}

\usepackage[capitalise]{cleveref}
\crefformat{equation}{(#2#1#3)}

\DeclareFontFamily{OT1}{pzc}{}
\DeclareFontShape{OT1}{pzc}{m}{it}{<-> s * [1.200] pzcmi7t}{}
\DeclareMathAlphabet{\mathpzc}{OT1}{pzc}{m}{it}

\usepackage{relsize}
\usepackage[bbgreekl]{mathbbol}
\DeclareSymbolFontAlphabet{\mathbb}{AMSb} %to ensure that the meaning of \mathbb does not change
\DeclareSymbolFontAlphabet{\mathbbl}{bbold}

\newcommand{\Prism}{{\mathlarger{\mathbbl{\Delta}}}}

% Bibliography spacing
%\setlength{\bibitemsep}{.2\baselineskip plus .05\baselineskip minus .05\baselineskip}
%\newlength{\bibparskip}\setlength{\bibparskip}{0pt}
%\let\oldthebibliography\thebibliography
%\renewcommand\thebibliography[1]{%
%  \oldthebibliography{#1}%
%  \setlength{\parskip}{\bibitemsep}%
%$  \setlength{\itemsep}{\bibparskip}%
%}

% For definitions
\DeclareTextFontCommand{\emdef}{\it}

% Theorems

\newtheorem{thm}{Theorem}
\newtheorem*{thm*}{Theorem}
\newtheorem{lem}[thm]{Lemma}
\newtheorem{cor}[thm]{Corollary}
\newtheorem*{cor*}{Corollary}
\newtheorem{prop}[thm]{Proposition}
\newtheorem*{prop*}{Proposition}

\theoremstyle{definition}
\newtheorem{defn}[thm]{Definition}
\newtheorem{construction}[thm]{Construction}
\newtheorem{notation}[thm]{Notation}

\newtheorem{ex}[thm]{Example}

\newtheorem{rem}[thm]{Remark}

\newtheorem{conj}[thm]{Conjecture}
\newtheorem*{conj*}{Conjecture}

\newtheorem{assumption}[thm]{Assumption}

\definecolor{note_color}{rgb}{0.0,0.7,0.0}

% Commands

\newcommand{\eset}{\varnothing}
\newcommand{\fcat}{\mathscr}
\newcommand{\inj}{\hookrightarrow}
\newcommand{\surj}{\twoheadrightarrow}
\newcommand{\areq}{\mathbin{{\xrightarrow{\,\sim\,}}}}

\renewcommand{\phi}{\varphi}
\renewcommand{\epsilon}{\varepsilon}
\newcommand{\lra}{\longrightarrow}

% Linear algebra

\DeclareMathOperator{\rank}{rank}
\DeclareMathOperator{\Sym}{Sym}
\newcommand{\Mod}{\mathrm{Mod}}

\newcommand{\GL}{{\mathrm{GL}}}

\DeclareMathOperator{\gr}{gr}
\DeclareMathOperator{\rk}{rk}
\DeclareMathOperator{\tr}{tr}

% Algebra

\DeclareMathOperator{\coMod}{coMod}
\DeclareMathOperator{\Frac}{Frac}

% Representation theory
\DeclareMathOperator{\Ad}{Ad}

\DeclareMathOperator{\Rep}{Rep}

% Complex analysis

\renewcommand{\Im}{\mathrm{Im}}

% Topology

\newcommand{\Top}{ {\mathrm{Top}} }

\DeclareMathOperator{\LocSys}{LocSys}

% Differential geometry

%Algebraic geometry
\newcommand{\Sch}{ {\mathcal S\mathrm{ch}} }

\DeclareMathOperator{\Spec}{Spec}
\DeclareMathOperator{\Spf}{Spf}

\DeclareMathOperator{\Pic}{Pic}

\DeclareMathOperator{\Coh}{Coh}

\DeclareMathOperator{\QCoh}{QCoh}

\newcommand{\mstack}{\mathpzc}
\DeclareMathOperator{\PStk}{{\mathscr{PS}tk}}
\DeclareMathOperator{\Stk}{{\mathscr{S}tk}}
\newcommand{\Hdg}{{\mathrm H}}
\newcommand{\dR}{{\mathrm{dR}}}
\newcommand{\crys}{{\mathrm{crys}}}
\newcommand{\perf}{{\mathrm{perf}}}
\newcommand{\fp}{{\mathrm{fp}}}

\DeclareMathOperator{\Aff}{Aff}
\newcommand{\sm}{{\mathrm{sm}}}

%Number theory

\DeclareMathOperator{\rad}{rad}
\DeclareMathOperator{\Spa}{Spa}

\newcommand{\et}{{\mathrm{\acute et}}}
\newcommand{\pd}{{\mathrm{pd}}}

%Category theory
\newcommand{\Type}{ {\fcat{S}} }
\newcommand{\Set}{ {\fcat{S}\mathrm{et}} }

\newcommand{\Cat}{ {\fcat C\mathrm{at}} }

\newcommand{\Prs}{ {\mathrm{Pr}} }

\DeclareMathOperator{\Grp}{Grp}
\DeclareMathOperator{\Hom}{Hom}

\DeclareMathOperator{\Tor}{Tor}

\DeclareMathOperator{\End}{End}

\DeclareMathOperator{\Aut}{Aut}
\DeclareMathOperator{\Map}{Map}
\DeclareMathOperator{\Fun}{Fun}
\DeclareMathOperator{\Exc}{Exc}

\DeclareMathOperator{\Sp}{Sp}

\DeclareMathOperator{\creff}{cr}

\DeclareMathOperator*{\colim}{colim}
\DeclareMathOperator*{\fib}{fib}

\DeclareMathOperator*{\cofib}{cofib}

\DeclareMathOperator{\Alg}{Alg}
\DeclareMathOperator{\CAlg}{CAlg}

\DeclareMathOperator{\Lie}{Lie}
\DeclareMathOperator{\Lan}{Lan}
\DeclareMathOperator{\Ran}{Ran}

\newcommand{\prolim}[1][]{\lim\limits_{\xleftarrow[#1]{}}}

\DeclareMathOperator{\Tot}{Tot}

\DeclareMathOperator{\Fil}{Fil}
\DeclareMathOperator{\Id}{Id}
\newcommand{\PShv}{{\mathcal{PS}\mathrm{hv}}}
\newcommand{\Shv}{{\mathcal S\mathrm{hv}}}
\newcommand{\op}{{\mathrm{op}}}

%Math. phys.

%Other

\mathchardef\mdef="2D

\addbibresource{references.bib}

\usepackage{enumerate}
\usepackage{tocloft}
\usepackage{textcomp}
\setlength{\cftbeforesecskip}{6pt}

\DeclareFontFamily{OT1}{pzc}{}
\DeclareFontShape{OT1}{pzc}{m}{it}{<-> s * [1.200] pzcmi7t}{}
\DeclareMathAlphabet{\mathpzc}{OT1}{pzc}{m}{it}

\DeclareMathOperator*{\holim}{holim}

\newcommand{\cotimes}{\mathbin{\widehat\otimes}}

\newcommand{\ol}{\overline}
\newcommand{\Ker}{\mr{Ker}}

\newcommand{\Ainf}{{A_{\mathrm{inf}}}}

\newcommand{\mf}{\mathfrak}
\newcommand{\mc}{\mathcal}
\newcommand{\mbb}{\mathbb}
\newcommand{\mr}{\mathrm}
\newcommand{\mathcalr}{\mathscr}

\newcommand{\ul}{\underline}
\newcommand{\xra}{\xrightarrow}

\newcommand{\RG}{R\Gamma}

\newcommand{\ra}{\rightarrow}
\newcommand{\ft}{{\mathrm{ft}}}
\newcommand{\lft}{{\mathrm{lft}}}
\newcommand{\lfp}{{\mathrm{lfp}}}
\newcommand{\fl}{{\mathrm{flat}}}
\newcommand{\fg}{{\mathrm{fg}}}
\newcommand{\free}{{\mathrm{free}}}

\newcommand{\id}{{\mathrm{id}}}
\newcommand{\fr}{^{(1)}}

\newcommand{\Fl}{{\mathrm{Fl}}}
\newcommand{\fppf}{{\mathrm{fppf}}}
\newcommand{\constr}{{\mathrm{c}}}
\newcommand{\ism}{{\xymatrix{\ar[r]^\sim &}}}

\DeclarePairedDelimiter\floor{\lfloor}{\rfloor}

\newcommand{\DMod}[1]{D(\Mod_{#1})}
\newcommand{\UMod}[1]{\Mod_{#1}}

\newcommand{\PrL}{\Prs^{\mathrm L}}
\newcommand{\PrR}{\Prs^{\mathrm R}}
\newcommand{\Afd}{{\mathrm{Afd}}}
\newcommand{\BK}{{\mathrm{BK}}}
\newcommand{\iinf}{{\mathrm{inf}}}
\newcommand{\an}{{\mathrm{an}}}
\newcommand{\ET}{{\mathrm{\acute et}}}
\newcommand{\Art}{{\mathrm{Art}}}
\newcommand{\length}{{\mathrm{length}}}
\newcommand{\HT}{{\mathrm{HT}}}

\usepackage{tikz}
\usetikzlibrary{matrix}

\newcommand{\Poly}{{\mathrm{Poly}}}
\newcommand{\sing}{{\mathrm{sing}}}
\newcommand{\fpqc}{{\mathrm{fpqc}}}

\newcommand{\tto}{{\xymatrix{\ar[r]&}}}

\setcounter{section}{-1}

\numberwithin{thm}{subsection}
\numberwithin{equation}{subsection}

\title{$p$-adic Hodge theory for Artin stacks}
\author{Dmitry Kubrak and Artem Prikhodko}
\date{}

\begin{document}
\maketitle

\begin{abstract}
This work is devoted to the study of integral $p$-adic Hodge theory in the context of Artin stacks. For a Hodge-proper stack, using the formalism of prismatic cohomology, we establish a version of $p$-adic Hodge theory with the \'etale cohomology of the Raynaud generic fiber as an input. In particular, we show that the corresponding Galois representation is crystalline and that the associated Breuil-Kisin module is given by the prismatic cohomology. An interesting new feature of the stacky setting is that the natural map between \'etale cohomology of the algebraic and the Raynaud generic fibers is often an equivalence even outside of the proper case. In particular, we show that this holds for global quotients $[X/G]$ where $X$ is a smooth proper scheme and $G$ is a reductive group. As applications we deduce Totaro's conjectural inequality and also set up a theory of $A_{\mathrm{inf}}$-characteristic classes.
\end{abstract}

\tableofcontents

\section{Introduction}
During the last few years new results in integral $p$-adic Hodge theory that were achieved in the series of papers \cite{BMS1}, \cite{BMS2}, \cite{BS_prisms} have quite revolutionized the field. Meanwhile the demand for analogous results in the context of Artin stacks has also been growing. Motivated by his computations in \cite{Totaro_deRhamBG} Totaro conjectured an inequality on dimensions of de Rham and singular cohomology of the classifying stack of a reductive group and a similar conjecture was made independently for the case of a conical resolution in \cite{Kubrak_Travkin} by Travkin and the first author. At the same time the idea of using classifying stacks to construct examples of schemes with a certain pathological behavior of cohomology (which goes back at least to \cite{serre1958topologie}) has bore some fresh fruit with the construction of counterexamples to degeneration of the HKR spectral sequence in char $p$ and Hodge-de Rham degeneration over ramified mixed characteristic local rings in \cite{antieau2019counterexamples} and \cite{Shizhang} respectfully. Besides that in \cite{Mondal} the classical notion of a Dieudonn\'e module (as well as the more recent prismatic Dieudonn\'e module of \cite{AnschutzLeBras}) of a $p$-divisible group was given a natural interpretation in terms of crystalline (resp. prismatic) cohomology of the corresponding classifying stack. 

In this work we perform a systematic study of prismatic cohomology of smooth Artin stacks over $p$-adic bases. As in the schematic case, we show that it carries all the information about the de Rham, crystalline, and \'etale cohomology. If the stack is Hodge-proper it, in fact, "interpolates" between them as in the case of a smooth proper schemes. We then also establish $p$-adic Hodge theory in the Hodge-proper stacks setting: namely we show that the \'etale cohomology groups are crystalline Galois representations and that the associated Breuil-Kisin modules are given by the prismatic cohomology. The \'etale cohomology one sees, however, is \'etale cohomology of \emph{Raynaud's generic fiber}, while for the applications like Totaro's conjecture or \cite[Conjecture 5.2.3]{Kubrak_Travkin} one rather needs to use the \emph{algebraic generic fiber}. This problem doesn't show up in the case of proper schemes, essentially because then the natural map from Raynaud's generic fiber to the analytification of the algebraic generic fiber is an isomorphism. More or less the same argument applies to proper stacks. This, however, is not satisfactory, since main examples of our interest, like classifying stacks $BG$ for $G$ reductive, are almost never proper.

The bulk of the paper is then devoted to showing that the two $\mbb F_p$-\'etale cohomology theories agree in the case of the quotient stack $[X/G]$ of a smooth proper scheme $X$ by an action of a reductive group $G$. The main difficulty here is that the statement only holds globally and it doesn't reduce to any local one in the case of schemes. Nevertheless, using some structural theory of reductive groups, we are able to reduce to some cases which are accessible "by hand". To make these reductions we had to develop some rudiments of \'etale sheaf theory both on algebraic and rigid analytic stacks, which then allowed us to extend some tools available for schemes (base change, local systems, nice behavior of pushforwards, et.c.) to the stacky setting. This can be considered as the technical core of the paper, and might be of independent interest.
For the completeness of the picture we also discuss the case of $\mathbb F_\ell$-coefficients for $\ell\neq p$ in some detail.

%we extend in the $p$-adic Hodge theory for schemes all comparison results relate de Rham or crystalline cohomology with \'etale cohomology of \emph{Raynaud's} generic fiber. Then one uses that in  As in the case of schemes one can more or less formally establish comparison results with \'etale cohomology of Raynaud's generic fiber of a stack. However, unfortunately,  As one of the main technical result of this work we show that for some particularly good global quotient stacks the natural map from Raynaud's generic fiber to the analytification of algebraic generic fiber induces an equivalence on \'etale cohomology like in the proper schemes case, also it is not an equivalence on the non-linearized level of the rigid analytic stack. This result is not formal and uses both the machinery of \'etale local systems on geometric stacks and some tricky arguments

As one of the main practical applications we then get a generalized form of Totaro's conjecture. As another consequence of the above results one gets a natural $G$-equivariant version of the (integral) $p$-adic Hodge theory for smooth proper schemes. We also propose more general classes of stacks for which $p$-adic Hodge theory should behave the same way as it does for smooth proper schemes. More precisely, we expect it to hold rationally for \textit{all} Hodge-proper stacks, and integrally for the so called "formally proper" ones. Besides that, we elaborate on Totaro's computations of de Rham cohomology of $BG$ and  build a variant for the theory of $\Ainf$-characteristic classes. As one of the applications of the latter we show that the mod $p$ de Rham characteristic classes give obstructions to the topological triviality of a vector bundle under some $\mu$-torsion free assumptions on the $\Ainf$-cohomology.

Even though our applications mostly use the cases of Breuil-Kisin prisms or $\Ainf$, when discussing prismatic cohomology and the corresponding comparisons, we make the setting as general as possible, keeping track of which properties of prisms/stacks are actually needed for each statement. This makes the exposition a bit lengthy, but more complete.

We also note that another proof of Totaro's conjectural inequality was recently found by Bhatt and Li in \cite{BhattLi}.

\subsection{$p$-adic Hodge theory for schemes}
$p$-adic Hodge theory is a very rich subject to which many brilliant people contributed their work over the years. We will not attempt to mention every single contribution and instead will focus on the work that we mainly use in our paper, namely \cite{BS_prisms}, mentioning the related references when it seems appropriate. The reader familiar with $p$-adic Hodge theory and \cite{BS_prisms} can freely skip to \Cref{introsec:Totaro's conjecture}.

Let $K$ be a finite extension of $\mbb Q_p$ and let $\mc O_K\subset K$ be the ring of integers. Also let $k$ be the residue field and $\ol K\simeq \ol{\mbb Q}_p$ be the algebraic closure of $K$. Let $X$ be a scheme over $\mathcal O_K$, which we assume to be smooth and proper. $p$-adic Hodge theory aims to establish a precise relation between various $p$-adic cohomology theories that one can associate to $X$. Most notably:
\begin{itemize}[wide]
	\item The \'etale cohomology of generic fiber $H^i_{\et}(X_{\ol K}, \mathbb Z_p)$; this is a $\mbb Z_p$-module with a continuous action of the absolute Galois group $G_K$.
	
	\item The de Rham cohomology $H^i_{\dR}(X/\mathcal O_K)$, an $\mathcal O_K$-module equipped with the Hodge filtration.
	
	\item The crystalline cohomology of the special fiber $H^i_{\crys}(X_k/W(k))$, a $W(k)$-module equipped with a Frobenius-linear endomorphism $\phi$.
\end{itemize}
In \cite{fontaine1982} Fontaine made a conjecture on how these theories are related rationally, i.e. after inverting $p$. First of all, by \cite{berthelot1983f} one knows that $H^i_{\crys}(X_k/W(k))[\tfrac{1}{p}]\otimes_{W(k)[\tfrac{1}{p}]} K\simeq H^i_\dR(X/K)$. Then, in \cite{fontaine1982} Fontaine has defined the period ring $B_\crys$ with an action of $G_K$ and a Frobenius $\phi$, and proposed the existence of a natural $(\phi,G_K)$-equivariant comparison isomorphism:
$$
H^i_\et(X_{\ol K}, \mathbb Q_p)\otimes_{\mbb Q_p} B_\crys \simeq H^i_{\crys}(X_k/W(k))[\tfrac{1}{p}]\otimes_{W(k)[\tfrac{1}{p}]} B_\crys.
$$
%Moreover these period rings by construction carry the corresponding structures (so $B_\dR$ has a filtration and a $G_K$-action which preserves the filtration and $B_\crys$ carries commuting $\phi$ and $G_K$-actions) and the comparison isomorphism then give
Given such an isomorphism one reconstructs $H^i_{\crys}(X_k/W(k))[\tfrac{1}{p}]$ as the Galois invariants $(H^i_\et(X_{\ol K}, \mathbb Q_p)\otimes_{\mbb Q_p} B_\crys)^{G_K}$. In the same work Fontaine also showed that assuming the comparison isomorphism above one can reconstruct $H^i_\et(X_{\ol K}, \mathbb Q_p)$ (as a $G_K$-module) from $H^i_{\crys}(X_k/W(k))[\tfrac{1}{p}]$ and the Hodge filtration on $H^i_\dR(X/K)$.
  Fontaine's conjecture was eventually proved by Tsuji \cite{Tsuji_Cst} and generalized to the formal setting more recently in \cite{ColmezNiziol} and \cite{BMS1}. As a corollary \footnote{We note that chronologically it was proven much earlier by Faltings in \cite{Faltings_p-adic_Hodge}.} one also gets the Hodge-Tate decomposition 
  $$
  H^n_{\et}(X_{\mbb C_p},\mbb Q_p)\otimes_{\mbb Q_p}\mbb C_p \simeq \bigoplus_{i+j=n} H^j(X_{K},\Omega^i_{X/K})\otimes_{K}\mbb C_p(-i).
  $$
Here $\mbb C_p$ is the $p$-adic completion of $\ol{\mbb Q}_p$ and $(-i)$ denotes the $-i$-th Tate twist.

Without inverting $p$ the situation becomes more subtle. In \cite{kisin2006crystalline}, building on the previous work by Breuil \cite{Breuil2000}, Kisin constructed a fully faithful functor $\BK$ (called $\mf M$ in \cite{kisin2006crystalline}) from the category of lattices in crystalline $G_K$-representations to the category of so-called Breuil-Kisin modules. Let's briefly recall the definition of the latter. Let $\mf S=W(k)[[u]]$ and let's pick a uniformizer $\pi\in \mc O_K$. There is a natural $W(k)$-linear map $\theta\colon \mf S \ra \mc O_K$ sending $u$ to $\pi$ and its kernel is generated by an element $E(u)$ which is an Eisenstein polynomial for $\pi$. One considers the Frobenius operator $\phi_{\mf S}$ on $\mf S$ lifting the natural Frobenius on $W(k)$ and sending $u$ to $u^p$. A Breuil-Kisin module is the data of a finitely generated $\mf S$-module $M$ endowed with an isomorphism
$$
\phi^*_{\mf S}M[\tfrac{1}{E(u)}] \xymatrix{\ar[r]^\sim &} M[\tfrac{1}{E(u)}].
$$
Kisin's work suggested that if one plugs $L=H^i_{\et}(X_{\ol K}, \mathbb Z_p)$ in the Kisin's functor $\BK$, the resulting $\mf S$-module $\BK(L)$ should specialize to the de Rham and crystalline cohomology when restricted to certain specific subschemes of $\Spec \mf S$. However, it wasn't clear how to establish such identifications without providing a geometric construction for $\BK(L)$ starting from a scheme $X$.

Such a construction\footnote{It is worth to mention that a different construction for small $n$ was first given in \cite{cais2019breuil} using the crystalline cohomology. It is not, however, as well suited for extension to stacks as the construction of \cite{BS_prisms}.} was given in terms of topological cyclic homology in \cite{BMS2} by Bhatt, Morrow and Scholze, and then was reinterpreted in terms of prismatic site in \cite{BS_prisms} by Bhatt and Scholze. Below let $\widehat{\Sch}_{\mathcal O_K}^\sm$ denote the category of smooth $p$-adic formal schemes over $\mc O_K$ and let $\DMod{\mathfrak S}$ be the derived category of $\mf S$-modules (considered as an $\infty$-category). We also denote by $\mbb C_p^\flat$ the tilt of $\mbb C_p$ (see \Cref{sec: Ainf} for details). There is a natural map $\mf S\ra W(\mbb C_p^\flat)$ sending $u$ to $[\pi^\flat]$ (this depends on a choice of $\pi^\flat=(\pi,\pi^{1/p},\ldots)\in \mc O_{\mbb C_p^\flat}$). In \cite{BS_prisms} Bhatt and Scholze show the following:

\begin{thm}[{\cite[Theorem 1.8]{BS_prisms}}]\label{intro: prismatic for schemes}
In the above notations:
	\begin{enumerate}
		\item 
For any choice of a uniformizer $\pi\in \mc O_K$ there exists a functor
	$$R\Gamma_\Prism(-/\mathfrak S) \colon \widehat{\Sch}_{\mathcal O_K}^{\sm,\op} \xymatrix{\ar[r] &} \DMod{\mathfrak S},$$
	with the following properties:
	\begin{itemize}
		\item $R\Gamma_\Prism(-/\mathfrak S)$ is derived $(p,E(u))$-complete.
		\item There is a natural Frobenius morphism $\phi_\Prism\colon \phi_{\mf S}^*R\Gamma_\Prism(-/\mathfrak S) \ra R\Gamma_\Prism(-/\mf S)$ that induces an equivalence after inverting $E(u)$:
		$$
		\phi_\Prism\colon \phi_{\mf S}^*R\Gamma_\Prism(-/\mathfrak S)[\tfrac{1}{E}] \xymatrix{\ar[r]^\sim &} R\Gamma_\Prism(-/\mf S)[\tfrac{1}{E}].
		$$
		\item(De Rham comparison) There is a canonical equivalence
		$$\phi_{\mathfrak S}^*R\Gamma_\Prism(\mathfrak X/\mathfrak S)\otimes_{\mathfrak S} \mathcal O_K \simeq R\Gamma_\dR(\mathfrak X / \mathcal O_K).$$
		
		\item(Crystalline comparison) There is a canonical $\phi$-equivariant equivalence
		$$\phi_{\mathfrak S}^*R\Gamma_\Prism(\mathfrak X/\mathfrak S)\otimes_{\mathfrak S} W(k) \simeq R\Gamma_{\crys}(\mathfrak X_k / W(k)).$$
		
		\item(\'Etale comparison) There are canonical equivalences
		$$R\Gamma_{\et}({\mathfrak X}_{\mbb C_p}, \mathbb Z/p^n) \simeq \left(R\Gamma_\Prism(\mathfrak X/ \mf S)\otimes_{\mf S} W_n(\mbb C_p^\flat)\right)^{\phi_\Prism = 1} \  \text{and}\quad R\Gamma_{\et}({\mathfrak X}_{\mbb C_p}, \mathbb Z_p) \simeq \left(R\Gamma_\Prism(\mathfrak X/ \mathfrak S)\widehat{\otimes}_{\mathfrak S} W(\mbb C_p^\flat) \right)^{\phi_\Prism = 1},$$
		where ${\mathfrak X}_{\mbb C_p}$ denotes the Raynaud's (geometric) generic fiber of ${\mathfrak X}$. %Consequently, for any $i\ge 0$, one has a $(\phi,G_{K_\infty})$-equivariant isomorphism
		%$$
		%\left(R\Gamma_\Prism(\mathfrak X/ \mathfrak S)\widehat{\otimes}_{\mathfrak S} W(\mbb C_p^\flat)\right)^{\phi=1} \simeq R\Gamma_\et({\mathfrak X}_{\mbb C_p}, \mathbb Z_p).
		%$$	
	\end{itemize}
		\item If the formal scheme $\mf X\in  \widehat{\Sch}_{\mathcal O_K}^\sm$ is proper then 
		
		\begin{itemize}
			\item The complex $R\Gamma_\Prism(\mf X/\mathfrak S)\in \DMod{\mf S}$ is perfect.
			\item The \'etale comparison induces an equivalence
			$$
			R\Gamma_{\et}({\mathfrak X}_{\mbb C_p}, \mathbb Z_p)\otimes_{\mbb Z_p}W(\mbb C_p^\flat)\ism R\Gamma_\Prism(\mathfrak X/ \mathfrak S){\otimes}_{\mathfrak S} W(\mbb C_p^\flat). 
			$$
			\item If $H^i_\crys(\mf X_k/W(k))$ is $p$-torsion free, then so is $H^i_\et(\mf X_{\mbb C_p},\mbb Z_p)$ and 
			$$
			\BK(H^i_\et(\mf X_{\mbb C_p},\mbb Z_p))\simeq H^i_\Prism(\mf X/\mf S).
			$$	
		\end{itemize}
		
	\end{enumerate}
\end{thm}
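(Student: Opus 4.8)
The plan is to follow the strategy of \cite{BS_prisms}, which builds on \cite{BMS1} and \cite{BMS2}. Since $E(u)$ is an Eisenstein polynomial, $(\mf S,(E(u)))$ is a bounded prism with $\mf S/E(u)\simeq\mc O_K$ (via $\theta$), so for a smooth $p$-adic formal $\mf X$ over $\mc O_K$ one forms the relative prismatic site $(\mf X/\mf S)_\Prism$ --- whose objects are bounded prisms $(A,I)$ over $(\mf S,(E))$ equipped with a map $\Spf(A/I)\to\mf X$ over $\Spf\mc O_K$ --- and sets $\RG_\Prism(\mf X/\mf S):=\RG\big((\mf X/\mf S)_\Prism,\mc O_\Prism\big)$ for the structure sheaf $\mc O_\Prism\colon(A,I)\mapsto A$. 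Functoriality in $\mf X$ is then formal; derived $(p,E(u))$-completeness holds because each section ring $A$ is $(p,I)$-complete; and $\phi_\Prism$ is induced by the Frobenius endomorphism of the site coming from the $\delta$-structures. The nonformal local ingredient is that, after choosing \'etale coordinates $\mc O_K\langle t_1^{\pm 1},\dots,t_d^{\pm 1}\rangle\to R$ on an affine $\mf X=\Spf R$, the complex $\RG_\Prism(\mf X/\mf S)$ is computed by an explicit $q$-de Rham (Koszul-type) complex attached to the prismatic envelope of a $\delta$-$\mf S$-lift of $R$; one globalizes by Zariski descent.

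Granting this local model, part (1) is a sequence of checks. (i) The Hodge-Tate comparison: $\RG_\Prism(\mf X/\mf S)\otimes^L_\mf S\mc O_K$ carries the conjugate filtration with graded pieces $\RG\big(\mf X,\wedge^i L_{\mf X/\mc O_K}[-i]\big)$ up to Breuil-Kisin twist --- over a framing a plain Koszul complex --- which also delivers base change in $\mf X$ and the finiteness needed below. (ii) $\phi_\Prism$ becomes an equivalence after inverting $E$, a feature of the prismatic envelope visible on the local model. (iii) The de Rham comparison $\phi_{\mf S}^{*}\RG_\Prism(\mf X/\mf S)\otimes_\mf S\mc O_K\simeq\RG_\dR(\mf X/\mc O_K)$ is a direct computation on the Koszul model, where the prismatic complex modulo $E$ degenerates (after the Frobenius twist) to the de Rham complex. (iv) The crystalline comparison follows by base change along the map of prisms $(\mf S,(E))\to(W(k),(p))$, $u\mapsto 0$ --- a prism map because $E(0)=p\cdot(\text{unit})$ since $E$ is Eisenstein --- giving $\RG_\Prism(\mf X/\mf S)\cotimes_\mf S W(k)\simeq\RG_\Prism(\mf X_k/W(k))$, and then identifying prismatic cohomology over the crystalline prism $(W(k),(p))$ with crystalline cohomology by comparing the two sites. (v) The \'etale comparison (the deepest point): base change along the perfect prism $W(\mbb C_p^\flat)$ over $(\mf S,(E))$, via $u\mapsto[\pi^\flat]$, reduces it to the perfectoid statement that $\big(\mc O_\Prism\otimes W_n(\mbb C_p^\flat)\big)^{\phi_\Prism=1}$ computes $\mbb Z/p^n$-\'etale cohomology of the generic fiber, which one proves by $v$-descent to the affinoid perfectoid case, where it becomes Scholze's (almost) primitive comparison theorem together with the Artin-Schreier-Witt description of $\mbb F_p$- and $\mbb Z_p$-\'etale cohomology of a perfectoid generic fiber as the $\phi=1$ locus of (Witt vectors of) its tilt.

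For part (2), assume $\mf X$ proper over $\mc O_K$. By (i) the complex $\RG_\Prism(\mf X/\mf S)\otimes^L_\mf S\mc O_K$ is a finite iterated extension of the coherent cohomology complexes $\RG(\mf X,\Omega^i_{\mf X/\mc O_K})$, hence perfect over $\mc O_K$; since $\RG_\Prism(\mf X/\mf S)$ is $E$-complete, derived Nakayama along $(E)$ then gives perfectness over $\mf S$. Base change to $W(\mbb C_p^\flat)$ preserves perfectness, so the map supplied by the \'etale comparison, $\RG_\et(\mf X_{\mbb C_p},\mbb Z_p)\cotimes_{\mbb Z_p}W(\mbb C_p^\flat)\to\RG_\Prism(\mf X/\mf S)\cotimes_\mf S W(\mbb C_p^\flat)$, is a morphism of $p$-complete perfect $W(\mbb C_p^\flat)$-complexes; to see it is an equivalence one reduces modulo $p$ (where $W(\mbb C_p^\flat)/p=\mbb C_p^\flat$), and the reduced statement is the primitive comparison theorem in its honest, non-almost form available for proper $\mf X$, identifying both sides with $\RG_\et(\mf X_{\mbb C_p},\mbb F_p)\otimes_{\mbb F_p}\mbb C_p^\flat$. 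Finally, if $H^i_\crys(\mf X_k/W(k))$ is $p$-torsion free, then the crystalline comparison together with perfectness and the torsion-freeness criterion for Frobenius modules over $\mf S$ from \cite{BMS2} forces $H^i_\Prism(\mf X/\mf S)$ to be finite free over $\mf S$, and hence $H^i_\et(\mf X_{\mbb C_p},\mbb Z_p)$ to be $p$-torsion free; the pair $\big(H^i_\Prism(\mf X/\mf S),\phi_\Prism\big)$ is then a Breuil-Kisin module whose \'etale realization is $H^i_\et(\mf X_{\mbb C_p},\mbb Z_p)$ by the \'etale comparison, so $\BK\big(H^i_\et(\mf X_{\mbb C_p},\mbb Z_p)\big)\simeq H^i_\Prism(\mf X/\mf S)$ by full faithfulness of Kisin's functor $\BK$ \cite{kisin2006crystalline} (the rational comparison through $B_\crys$ ensuring the representation is crystalline).

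The main obstacle is the \'etale comparison of part (1) and its sharpening in part (2): these are where genuine input from perfectoid geometry is unavoidable --- almost purity, $v$-descent, and Scholze's primitive comparison theorem --- and the delicate points are upgrading the almost isomorphism to an honest one for proper $\mf X$ and tracking $p$-torsion freeness from crystalline through prismatic to \'etale cohomology. Everything else, once the $\delta$-ring and prismatic-site formalism is in place, reduces to the single local Koszul computation together with base change along maps of prisms.
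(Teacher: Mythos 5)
The statement is \cite[Theorem~1.8]{BS_prisms} quoted as background; the paper does not reprove it but takes it as input. Your outline is a faithful sketch of the strategy of the cited source (and its precursors \cite{BMS1}, \cite{BMS2}): prismatic site, Hodge--Tate comparison via framed $q$-de~Rham/Koszul models, base change along maps of prisms for the de~Rham, crystalline and \'etale comparisons, and the finiteness bootstrap in the proper case. Two small remarks about where your phrasing slightly misplaces the emphasis. First, the Frobenius twist in the crystalline comparison is not produced by the base change $u\mapsto 0$ itself; that base change yields $\RG_\Prism(\mf X/\mf S)\cotimes_{\mf S}W(k)\simeq\RG_\Prism(\mf X_k/W(k))$, and the Frobenius twist enters when one identifies prismatic cohomology over the crystalline prism $(W(k),(p))$ with crystalline cohomology (compare the paper's \Cref{prop: cristalline comparison}, which is phrased in terms of $\RG_{\Prism\fr}=\RG_{\Prism^{(1)}}$ for exactly this reason). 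Second, for the integral \'etale comparison in the proper case you route through the honest form of Scholze's primitive comparison theorem after reduction mod $p$; this works and is essentially the argument of \cite{BMS1}, but the paper, when extending the statement to Hodge-proper stacks (\Cref{cor: adic_etale_comparison2}), takes the purely algebraic shortcut: for a bounded-below coherent complex $M$ over $W(\mbb C_p^\flat)$ with semilinear $\phi$-equivalence, the map $M^{\phi=1}\otimes_{\mbb Z_p}W(\mbb C_p^\flat)\to M$ is an equivalence (citing \cite[Lemma~8.5]{Bhatt_LiteBMS}). The latter argument bypasses any fresh appeal to the primitive comparison at this step and adapts directly to the $\Coh^+$ rather than perfect setting, which is what the stacky version needs. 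Your treatment of the Breuil--Kisin module in the $p$-torsion-free case matches the paper's discussion around \Cref{rem: the BK-module associated to the etale cohomology} and \Cref{thm: Kisin theorem}.
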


In particular, one can put $\mf X=\widehat X$ to be the $p$-adic completion of a smooth proper $\mc O_K$-scheme $X$. Quite importantly, in this case one has $R\Gamma_{\et}({\mathfrak X}_{\mbb C_p}, \mathbb Z_p)\simeq R\Gamma_{\et}({X}_{\mbb C_p}, \mathbb Z_p)$ and (under some further torsion-free assumption) $H^i_\Prism(\mf X/\mf S)$ gives a description for the Breuil-Kisin module associated to $H^i_{\et}({X}_{\mbb C_p}, \mathbb Z_p)\simeq H^i_{\et}({X}_{\ol K}, \mathbb Z_p)$. The complex $\RG_\Prism(X/\mf S)\coloneqq \RG_\Prism(\widehat X/\mf S)$ gives a "universal" $p$-adic cohomology theory which interpolates between the de Rham, crystalline, and \'etale cohomology of $X$. We point out that the relation between these cohomology theories is now more intricate than in the rational case: they are merely fibers of a certain perfect complex of sheaves on $\Spec \mf S$ over some specific subschemes (or schemes mapping to it). 
In particular, in many examples it is not true that $\dim_{k} H^i_\dR( X_k/k)$ is equal to $\dim_{\mbb F_p} H^i_\et(X_{\mbb C_p},\mbb F_p) $; the difference is explicitly controlled by the stalks of $C\coloneqq \RG_{\Prism}(X/\mf S)\otimes_{\mbb Z}\mbb F_p$. 

Let us be a bit more precise here. The complex $C$ is a perfect complex of $k[[u]]$-modules and from \Cref{intro: prismatic for schemes}  it follows that $\RG_\dR(X_k/k)\simeq C\otimes_{k[[u]]} k$ and $\dim_{k} H^i_\dR( X_k/k)= \dim_k  H^i(C\otimes_{k[[u]]} k)$. On the other hand $\RG_\et(X_{\mbb C_p},\mbb F_p)\otimes_{\mbb F_p}\mbb C_p^\flat\simeq C\otimes_{k[[u]]}\mbb C_p^\flat$; the map $k[[u]]\ra \mbb C_p^\flat$ factors through $k((u))$ and $\dim_{\mbb F_p} H^i_\et(X_{\mbb C_p},\mbb F_p)=  \dim_{k((u))}  H^i(C\otimes_{k[[u]]}k((u))$. Thus the dimensions of de Rham and \'etale cohomology are described as the dimensions of (the cohomology of) stalks of $C$ at the special and the generic point of $\Spec k[[u]]$ accordingly (and in the examples these can be different). Nevertheless, by semicontinuity for dimensions of stalks of a perfect complex at least one always has an inequality
$$
 \dim_{\mbb F_p} H^i_\et(X_{\mbb C_p},\mbb F_p)\le \dim_{k} H^i_\dR( X_k/k).
$$
It can also be fruitful to interpret $H^i_\et(X_{\mbb C_p},\mbb F_p)$ in terms of singular cohomology. Namely after fixing an identification $\iota\colon \mbb C \xra{\sim} \mbb C_p$ we have Artin's isomorphism $H^i_\et(X_{\mbb C_p},\mbb F_p)\simeq H^i_\sing(X(\mbb C),\mbb F_p)$ and thus also get an inequality 
\begin{equation}\label{inequality of dimensions for schemes}
\dim_{\mbb F_p} H^i_\sing(X(\mbb C),\mbb F_p)\le \dim_{k} H^i_\dR( X_k/k),
\end{equation}
which gives a quite surprising relation between the mod $p$ topological data of $X(\mbb C)$ and the the algebraic de Rham data of the mod $p$ reduction $X_k$.

\subsection{Totaro's conjecture}\label{introsec:Totaro's conjecture}
Let us now discuss Totaro's conjecture. Let $G$ be a split reductive group scheme over $\mbb Z$. Let $G(\mbb C)$ be the topological group of $\mbb C$-points of $G$. By a theorem of Grothendieck, one can make a canonical identification 
$$
H^*_\sing(G(\mbb C),\mbb C)\simeq H^*_\dR(G/\mbb C).$$
Similarly, using smooth descent one can then identify the singular cohomology of the classifying space $BG(\mathbb C)$ with the de Rham cohomology of the classifying stack $BG$:
$$H^*_\sing(BG(\mbb C),\mbb C)\simeq H^*_\dR(BG/\mbb C).$$  Replacing  the coefficients $\mbb C$ with $\mbb F_p$ one could hope that $H^*_\sing(G(\mbb C),\mbb F_p)$ or $H^*_\sing(BG(\mbb C), \mathbb F_p)$ have analogous algebro-geometric descriptions. Note, however, that it is definitely not true that $H^*_\sing(G(\mbb C),\mbb F_p)$ is isomorphic to $H^*_\dR(G/\mbb F_p)$: indeed $G(\mbb C)$ is a finite CW-complex, and thus $H^*_\sing(G(\mbb C),\mbb F_p)$ is a finite-dimensional $\mbb F_p$-vector space, while $H^*_\dR(G/\mbb F_p)\simeq \bigoplus_i \Omega^i_{G/\mbb F_p}$ by the Cartier isomorphism, and thus is infinite-dimensional if $\dim G>0$. Nevertheless, Totaro proved in \cite[Theorem 0.2]{Totaro_deRhamBG} that if $p$ is a non-torsion prime\footnote{For reminder of the notion of a torsion prime see \Cref{sec: de Rham and prismatic cohomology of BG} and \Cref{defn: torsion primes} in particular.} for $G$, there is an isomorphism
$$H^*_\sing(BG(\mathbb C), \mathbb F_p) \simeq H^*_\dR(BG/\mathbb F_p).$$
Let us point out that the isomorphism Totaro constructed is not canonical, in fact he just showed that under the above assumption on $p$ both algebras are polynomial with generators in same (even) degrees. The naive supposition that both sides could be isomorphic for torsion primes as well turns out to be wrong, namely, as Totaro showed by an explicit computation in the same paper, one has a strict inequality:
$$
 \dim_{\mbb F_2} H^{32}_\sing(B\mr{Spin}_{11}(\mbb C),\mbb F_2)<\dim_{\mbb F_2} H^{32}_\dR(B\mr{Spin}_{11}/\mbb F_2).
$$
Though not explicitly stated in \cite{Totaro_deRhamBG}, the following became known as Totaro's conjecture:

\begin{conj}[Totaro]\label{intro_Totaros_conjecture}
	Let $G$ be a split reductive group scheme over $\mbb Z$. Then for all primes $p$ and all $i\in\mathbb Z_{\ge 0}$ the inequality
	$$\dim_{\mathbb F_p} H_\sing^i(BG(\mathbb C), \mathbb F_p) \le \dim_{\mbb F_p} H^i_\dR(BG/\mbb F_p)$$
	holds true.
\end{conj}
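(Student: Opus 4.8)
\noindent The plan is to reproduce, for the classifying stack $BG$, the exact chain of identifications that produces the schematic inequality \cref{inequality of dimensions for schemes}, with \Cref{intro: prismatic for schemes} replaced by its analogue for Hodge-proper stacks developed below. Take $K=\mbb Q_p$, so that $\mc O_K=\mbb Z_p$, $k=\mbb F_p$, $\pi=p$, $\mf S=\mbb Z_p[[u]]$ and $E(u)=u-p$; base change $G$ to $\mbb Z_p$ and pass to the $p$-adic formal classifying stack $BG$. The first input is that $BG$ is \emph{Hodge-proper} over $\mbb Z_p$: since $L_{BG/\mbb Z_p}\simeq\mf g^\vee[-1]$, the complex $\wedge^i L_{BG/\mbb Z_p}$ is (up to a shift) a divided power of the coadjoint representation $\mf g^\vee$, and for $G$ reductive $\RG\big(BG,\wedge^i L_{BG/\mbb Z_p}\big)$ is a perfect $\mbb Z_p$-complex for every $i$. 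Granting the prismatic formalism for Hodge-proper stacks, this yields a functorial complex $\RG_\Prism(BG/\mf S)$ which is \emph{perfect} over $\mf S$ — via the Hodge–Tate comparison, perfectness over the $2$-dimensional regular base $\mf S$ follows from Hodge-properness over $\mbb Z_p$ by derived Nakayama — together with the de Rham, crystalline and \'etale comparisons exactly as in \Cref{intro: prismatic for schemes}. Put $C\coloneqq\RG_\Prism(BG/\mf S)\otimes_{\mbb Z}\mbb F_p$, a perfect complex of $k[[u]]=\mbb F_p[[u]]$-modules; from the de Rham comparison, $\RG_\dR(BG_{\mbb F_p}/\mbb F_p)\simeq\phi_{\mf S}^*C\otimes_{k[[u]]}k\simeq C\otimes^L_{k[[u]]}k$ (mod $p$ the Frobenius $u\mapsto u^p$ followed by $u\mapsto 0$ is again $u\mapsto 0$), so $\dim_{\mbb F_p}H^i_\dR(BG/\mbb F_p)=\dim_k H^i\big(C\otimes^L_{k[[u]]}k\big)$ is the dimension of the cohomology of the stalk of $C$ at the closed point of $\Spec k[[u]]$.

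On the other side, the $\mbb F_p$-\'etale comparison together with the Artin–Schreier equivalence over the perfect field $\mbb C_p^\flat$ gives $\dim_{\mbb F_p}H^i_\et\big((BG)^{\mr{Ray}}_{\mbb C_p},\mbb F_p\big)=\dim_{\mbb C_p^\flat}H^i\big(C\otimes_{k[[u]]}\mbb C_p^\flat\big)$, where $(BG)^{\mr{Ray}}_{\mbb C_p}$ is Raynaud's generic fiber; since the structure map $k[[u]]\to\mbb C_p^\flat$, $u\mapsto\pi^\flat$, factors through the fraction field $k((u))$, this equals $\dim_{k((u))}H^i\big(C\otimes_{k[[u]]}k((u))\big)$, the dimension of the cohomology of the stalk of $C$ at the generic point. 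Upper semicontinuity of fiberwise cohomology dimensions for the perfect complex $C$ over the DVR $k[[u]]$ — equivalently, universal coefficients, where torsion in $H^i(C)$ and $H^{i-1}(C)$ can only increase the dimension at the closed point — then gives
\[
\dim_{k((u))}H^i\big(C\otimes_{k[[u]]}k((u))\big)\;\le\;\dim_k H^i\big(C\otimes^L_{k[[u]]}k\big),
\]
i.e. $\dim_{\mbb F_p}H^i_\et\big((BG)^{\mr{Ray}}_{\mbb C_p},\mbb F_p\big)\le\dim_{\mbb F_p}H^i_\dR(BG/\mbb F_p)$. Finally, writing $BG=[\,\Spec\mbb Z_p/G\,]$, the quotient of the smooth proper $\mbb Z_p$-scheme $\Spec\mbb Z_p$ by the reductive group $G$, the comparison between the $\mbb F_p$-\'etale cohomology of the Raynaud and of the algebraic generic fiber of a quotient $[X/G]$ applies and gives $H^i_\et\big((BG)^{\mr{Ray}}_{\mbb C_p},\mbb F_p\big)\simeq H^i_\et\big(BG_{\mbb C_p},\mbb F_p\big)$; fixing $\iota\colon\mbb C\xra{\sim}\mbb C_p$ and using the Artin comparison for the algebraic stack $BG_{\mbb C}$ (its $\mbb F_p$-\'etale cohomology coincides with $H^*_\sing(BG(\mbb C),\mbb F_p)$) we obtain $H^i_\et\big((BG)^{\mr{Ray}}_{\mbb C_p},\mbb F_p\big)\simeq H^i_\sing(BG(\mbb C),\mbb F_p)$, and the inequality of the conjecture follows.

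The main obstacle is the Raynaud-versus-algebraic comparison for $[X/G]$ invoked in the last step: unlike the case of schemes it is an essentially global statement admitting no local reduction, and proving it requires the structure theory of reductive groups together with a workable theory of \'etale sheaves (base change, constructibility, local systems, behaviour of proper and smooth pushforwards) on both algebraic and rigid-analytic stacks — the technical core of the paper. The Hodge-properness of $BG$ and the perfectness and comparison theorems for prismatic cohomology of Hodge-proper stacks are the other inputs; once these are in place, the computation above is a verbatim transcription of the argument giving \cref{inequality of dimensions for schemes} for smooth proper schemes.
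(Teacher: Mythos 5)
Your outline follows the paper's strategy almost verbatim: Hodge-properness of $BG$, the inequality between de Rham and Raynaud-\'etale cohomology via prismatic cohomology over $\mf S$ (the stacky analogue of the scheme-case inequality), the identification of Raynaud-\'etale with algebraic-\'etale cohomology for $[X/G]$ (\Cref{thm:main_result}, the technical core of the paper), and Artin comparison to get singular cohomology. There are, however, two gaps.

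First, the perfectness claims are false. For general reductive $G$, the complex $\RG(BG,\wedge^i\mbb L_{BG/\mbb Z_p})$ is \emph{not} perfect: already $\RG(B\mr{SL}_2,\mc O)\simeq\RG(\mr{SL}_2,\mbb Z_p)$ has torsion cohomology in infinitely many degrees. What is true (and what Hodge-properness asks for) is only that each $H^j$ is finitely generated and the complex is bounded below, i.e.\ lies in $\Coh^+(\mbb Z_p)$; consequently $\RG_\Prism(BG/\mf S)$ lies in $\Coh^+(\mf S)$, not in $\Perf(\mf S)$, and $C=\RG_\Prism(BG/\mf S)\otimes\mbb F_p$ is in $\Coh^+(k[[u]])$ but not perfect. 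This breaks the appeal to upper semicontinuity of stalk dimensions for perfect complexes. The parenthetical remark you make — universal coefficients over the DVR $k[[u]]$ — is in fact the correct replacement and needs no perfectness: for $C\in\Coh^+(k[[u]])$, the short exact sequence $0\to H^i(C)/u\to H^i(C\otimes^L_{k[[u]]}k)\to H^{i+1}(C)[u]\to 0$ together with $\dim_k M/u\ge\dim_{k((u))}M[u^{-1}]$ for finitely generated $M$ gives the inequality degree by degree. This is essentially what the paper proves in \Cref{lem: semicontinuity of stalks} (and its mod-$p^n$ refinement, \Cref{inequality}).

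Second, the Hodge-properness of $BG$ over $\mbb Z_p$ is stated as if it were a quick consequence of $\mbb L_{BG}\simeq\mf g^\vee[-1]$, but it is not: the finite generation of $H^j(G,\Gamma^i(\mf g^\vee))$ over $\mbb Z_p$ for reductive $G$ in bad/torsion characteristic is a nontrivial input. The paper proves the stronger cohomological properness of $BP$ by a chain of reductions (split torus, conical $[\mbb A^n/T]$, $BB$ by \v Cech descent, then $BP$), rather than by a direct argument. You correctly flag this and the Raynaud-vs-algebraic comparison as the two genuine inputs, but the way the first is phrased obscures that it requires real work.
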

The conjecture was of course motivated by Inequality \Cref{inequality of dimensions for schemes} which became known for schemes right at the time. Totaro suggested that one should be able to deduce it from an appropriate generalization of $p$-adic Hodge theory in the setting of stacks. 
However, as he points out in \cite{Totaro_deRhamBG}, the main obstruction to directly apply the prismatic cohomology to \Cref{intro_Totaros_conjecture} is that $BG$ is not a proper stack. 

 Before introducing the concept of Hodge-proper stacks, which at least partially redeems the situation, let us comment on why \Cref{intro_Totaros_conjecture} is interesting. First, when $p$ is a torsion prime it can be very difficult to compute both sides explicitly. For example (at least to our knowledge) there is no description of the $\mbb F_p$-cohomology of $\mr{PGL}_n(\mbb C)$ in the case when $p$ divides $n$, $n>p$ even as a graded $\mathbb F_p$-vector space. And even though many explicit computations on the topological side were successfully made over the years, not much is known about the de Rham side, where for now the only few computations that were made are for $\mr{SO}_n$ and $\mr{O}_n$ in \cite{Totaro_deRhamBG}, and for $G_2$ and $\mr{Spin}_{n}$, $n\le 11$ in \cite{Primozic}, all for $p=2$. 
Second, as shown by Totaro in \cite{Totaro_deRhamBG} the Hodge cohomology of $BG$ have a natural representation-theoretic interpretation as the cohomology of $G$ in modules like $\Sym^i \mf g^*$. For small primes these things are often very hard to compute, while \Cref{intro_Totaros_conjecture} would at least give some lower bound in (much better understood) topological terms. Finally, the inequality itself might not be as interesting as understanding the precise relation between the two parts: further we show that it is exactly controlled by the prismatic cohomology (as in the smooth proper scheme case). This then also gives a very efficient way to actually compute the de Rham cohomology of $BG$ for small primes in some cases (see \Cref{rem: bhatt-li-applications} for another comment on that).

\subsection{$p$-adic Hodge theory in the context of Hodge-proper stacks}\label{secintro:p-adic Hodge theory for Hodge-proper stacks}
Even though the classifying stack $BG$ is usually not proper, if $G$ is reductive, $BG$ still satisfies a weaker condition which we call Hodge-properness. We recall its definition:
\begin{defn}[{\cite{KubrakPrikhodko_HdR}}]\label{intro_defn_of_hodge_properness}
	A smooth quasi-compact quasi-separated Artin stack $\mstack X$ over a Noetherian ring $R$ is called \emdef{Hodge-proper} if for any $i,j\in \mathbb Z_{\ge 0}$ the cohomology $H^j(\mstack X, \wedge^i \mathbb L_{\mstack X/R})\in \UMod{R}$ is a finitely generated $R$-module.
\end{defn}
Here $\mbb L_{\mstack X/R}\in \QCoh(\mstack X)$ is the cotangent complex of $\mstack X$ and $\wedge^i \mathbb L_{\mstack X/R}\in \QCoh(\mstack X)$ is its $i$-th wedge power, and $\mstack X$ is allowed to be a higher geometric stack in the sense of \cite{TV_HAGII}. Under the smoothness assumption, the condition on cohomology of $\wedge^i \mathbb L_{\mstack X/R}$'s in \Cref{intro_defn_of_hodge_properness} is in fact equivalent to the total Hodge cohomology
$$H^n_\Hdg(\mstack X/R)\coloneqq \bigoplus_{i=0}^\infty H^n(\mstack X,\wedge^i \mathbb L_{\mstack X/R}[-i])\simeq \bigoplus_{i=0}^\infty H^{n-i}(\mstack X,\wedge^i \mathbb L_{\mstack X/R})$$
being finitely generated over $R$ for any $n\in \mbb Z_{\ge 0}$. Note that in the case $\mstack X=X$ is a smooth scheme, we have $\mbb L_{X/R}\simeq \Omega^1_{X/R}$ and the formula for $H^n_\Hdg(\mstack X/R)$ gives the classical Hodge cohomology. Thus, a Hodge-proper stack $\mstack X$ looks "proper" from the point of view of its Hodge cohomology. See \Cref{section:Hodge_proper_stacks} for more details.

In \cite{KubrakPrikhodko_HdR} we used the notion of Hodge-properness to prove a version of the Hodge-to-de Rham degeneration in the context of Artin stacks in characteristic 0 via reduction to a large enough characteristic $p$ (analogous to the work of Deligne-Illusie \cite{DeligneIllusie}). While that can be seen as an extension of a result related to the classical Hodge theory over $\mbb C$, in this paper we attempt to extend the results of $p$-adic Hodge theory to smooth Hodge-proper stacks over $\mc O_K$. 
 
In fact, up to some point the theory works rather well for any smooth Artin stack $\mstack X$ without any extra restrictions. Namely, let $\mstack X \in \Stk_{\mc O_K}^\sm$ be a smooth $n$-Artin stack over $\mc O_K$. Given a functor $F\colon \Aff_{\mathcal O_K}^{\sm, \op} \ra \mathscr C$ to a complete $\infty$-category $\mathscr C$ one can right Kan extend it along the embedding $\Aff_{\mathcal O_K}^\sm\subset \Stk_{\mc O_K}^\sm$ obtaining a functor $F\colon  \Stk_{\mc O_K}^\sm \ra \mathscr C$. More explicitly, given $\mstack X \in \Stk_{\mc O_K}^\sm$ one has 
$$
F(\mstack X) \simeq \holim_{(T\ra \mstack X)\in (\Aff^{\sm}_{/\mstack X})^{\op}} F(T).$$  This way one can define the following cohomology theories associated to $\mstack X$ (for details see \Cref{section:p_adic_cohomology_for_stacks}):
\begin{itemize}
 	\item The de Rham cohomology $\RG_{\dR}(\mstack X/\mc O_K)\in \DMod{\mc O_K}$. In the $1$-Artin setting this agrees with the definition of de Rham cohomology considered by Totaro in \cite{Totaro_deRhamBG}.

	\item The crystalline cohomology of the special fiber $\RG_{\crys}(\mstack X_k/W(k))\in \DMod{W(k)}$. In the $1$-Artin setting this agrees with the crystalline cohomology of stacks previously considered by Olsson in \cite{Olson_CrysCoh}.
	
	\item The \'etale cohomology of the Raynaud (geometric) generic fiber $\RG_\et(\widehat{\mstack X}_{\mathbb C_p},\mathbb Z_p)$. Here one takes the right Kan extension of $T\mapsto \RG_\et(\widehat{T}_{\mathbb C_p},\mathbb Z_p)$.

	\item The prismatic cohomology $\RG_\Prism(\mstack X/\mf S)\in \DMod{\mf S}$ (corresponding to a choice of uniformizer $\pi\in \mc O_K$). Here one views $\RG_\Prism(-/\mf S)$ as a functor $\Aff^{\sm, \op}_{/\mstack X}\ra \DMod{\mf S}$ by putting $\RG_\Prism(T/\mf S)\coloneqq \RG_\Prism(\widehat T/\mf S)$, where $\widehat T\in \widehat{\Sch}^\sm_{\mc O_K}$ is the $p$-adic completion of $T$.
\end{itemize}

More or less formally one then gets the following extension of part 1 of \Cref{intro: prismatic for schemes} to Artin stacks (for details see \Cref{section:p_adic_cohomology_for_stacks}):

\begin{thm}\label{intro: comparisons for smooth stacks} Let $\mstack X$ be a smooth Artin stack over $\mc O_K$. Them
		\begin{itemize}
			\item The complex $R\Gamma_\Prism(\mstack X/\mathfrak S)$ is derived $(p,E)$-complete and there is a natural Frobenius morphism 
			$$
			\phi_\Prism\colon \phi_{\mf S}^*R\Gamma_\Prism(-/\mathfrak S) \xymatrix{\ar[r]&} R\Gamma_\Prism(-/\mf S)
			$$
		 that induces an equivalence after inverting $E(u)$. See \Cref{cor: Frobenius is a twist if tor-dimension is finite}.
			
			\item (De Rham comparison) $$\phi_{\mf S}^*\RG_\Prism(\mstack X/\mf S)\otimes_{\mf S} \mc O_K \simeq \RG_\dR(\mstack X/\mc O_K)^\wedge_p,$$ where $(-)^\wedge_p$ denotes the derived $p$-completion. See \Cref{prop: de Rham comparison} and \Cref{rem: drop qcqs assumption in the perfect case}.
			\item (Crystalline comparison) $$\phi_{\mf S}^*\RG_\Prism(\mstack X/\mf S)\otimes_{\mf S} W(k)\simeq \RG_\crys(\mstack X_k/W(k)).$$ Here see \Cref{prop: cristalline comparison} and Remarks \ref{rem: drop qcqs assumption in the perfect case}, \ref{rem: prismatic base change in the perfect case}.
			
			\item(\'Etale comparison) Assume $\mstack X$ is also quasi-compact and quasi-separated. Then 	
		$$
		R\Gamma_{\et}({\widehat{\mstack X}}_{\mbb C_p}, \mathbb Z/p^n) \simeq \left(R\Gamma_\Prism({\mstack X}/ \mf S)\otimes_{\mf S} W_n(\mbb C_p^\flat)\right)^{\phi_\Prism = 1} \  \text{and}\quad R\Gamma_{\et}({\widehat{\mstack X}}_{\mbb C_p}, \mathbb Z_p) \simeq \left(R\Gamma_\Prism({\mstack X}/ \mathfrak S)\widehat{\otimes}_{\mathfrak S} W(\mbb C_p^\flat) \right)^{\phi_\Prism = 1},
		$$
		where the tensor product on the right is $p$-completed. See \Cref{cor: adic_etale_comparison2}.
		\end{itemize}

\end{thm}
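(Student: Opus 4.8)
The plan is to bootstrap from the schematic statement, part~1 of \Cref{intro: prismatic for schemes}, using that each of $\RG_\dR(-/\mc O_K)$, $\RG_\crys((-)_k/W(k))$, $\RG_\et(\widehat{(-)}_{\mbb C_p},\mathbb Z_p)$ and $\RG_\Prism(-/\mf S)$ is, by construction, the right Kan extension from $\Aff^{\sm}_{\mc O_K}$, so that $F(\mstack X)\simeq\holim_{(T\to\mstack X)\in(\Aff^{\sm}_{/\mstack X})^{\op}}F(T)$. First I would observe that the Frobenius $\phi_\Prism$, the maps $\phi_{\mf S}^{*}\RG_\Prism(-/\mf S)\otimes_{\mf S}R\to(\cdots)$ for $R\in\{\mc O_K,W(k)\}$, the \'etale comparison maps, and all the equivalences of \Cref{intro: prismatic for schemes} are natural in the smooth affine $T$ (equivalently, in its $p$-adic completion $\widehat T$); hence each extends uniquely to a natural transformation between the corresponding Kan-extended functors on $\Stk^{\sm}_{\mc O_K}$. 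Since these transformations are equivalences on affine test objects, the whole theorem reduces to checking that the operations in play commute with the homotopy limit over $(\Aff^{\sm}_{/\mstack X})^{\op}$.

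For the de Rham and crystalline comparisons this is essentially formal. Because $k$ is finite, $\phi_{\mf S}^{*}$ is extension of scalars along $\phi_{\mf S}\colon\mf S\to\mf S$, under which $\mf S$ is finite free of rank $p$ (basis $1,u,\dots,u^{p-1}$); hence $\phi_{\mf S}^{*}M$ is naturally a product of $p$ copies of $M$, and $\phi_{\mf S}^{*}$ preserves all homotopy limits. Likewise $-\otimes_{\mf S}\mc O_K=-\otimes_{\mf S}\mf S/(E(u))$ and $-\otimes_{\mf S}W(k)=-\otimes_{\mf S}\mf S/(u)$ are Koszul cofibers $M\mapsto\cofib(M\xrightarrow{E(u)}M)$, $M\mapsto\cofib(M\xrightarrow{u}M)$, and cofibers — like fibers — preserve homotopy limits in a stable $\infty$-category; derived $p$-completion, being $M\mapsto\holim_n\cofib(M\xrightarrow{p^n}M)$, does too. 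Combined with the affine comparisons of \Cref{intro: prismatic for schemes} this immediately gives $\phi_{\mf S}^{*}\RG_\Prism(\mstack X/\mf S)\otimes_{\mf S}\mc O_K\simeq\RG_\dR(\mstack X/\mc O_K)^\wedge_p$ and, $\phi$-equivariantly, $\phi_{\mf S}^{*}\RG_\Prism(\mstack X/\mf S)\otimes_{\mf S}W(k)\simeq\RG_\crys(\mstack X_k/W(k))$; see \Cref{prop: de Rham comparison} and \Cref{prop: cristalline comparison}. The Frobenius morphism itself is defined levelwise and its source is again derived $(p,E)$-complete, so $\phi_\Prism$ globalizes with no further input; the one delicate point is that it becomes an equivalence after inverting $E(u)$ — here a filtered colimit must be exchanged with the defining limit, which works under a finite $(p,E)$-complete Tor-dimension hypothesis, as in \Cref{cor: Frobenius is a twist if tor-dimension is finite}.

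The \'etale comparison is where I expect the real difficulty, and it is the reason for the extra quasi-compactness and quasi-separatedness assumption. Here $\RG_\et(\widehat{\mstack X}_{\mbb C_p},\mathbb Z/p^n)$ is by construction $\holim_T\RG_\et(\widehat T_{\mbb C_p},\mathbb Z/p^n)$, and the Frobenius-fixed-points functor $(-)^{\phi_\Prism=1}$, being a fiber, commutes with every homotopy limit; so the statement collapses to the single assertion that $M\mapsto M\otimes_{\mf S}W_n(\mbb C_p^\flat)$ — and, after derived $p$-completion, $M\mapsto M\widehat{\otimes}_{\mf S}W(\mbb C_p^\flat)$ — commutes with the homotopy limit $\holim_T\RG_\Prism(T/\mf S)$. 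Unlike the reductions to $\mc O_K$ and $W(k)$, the map $\mf S\to W_n(\mbb C_p^\flat)$ is far from finitely presented, so this base change is a genuine colimit-preserving functor for which commutation with limits is real content, and it cannot be reduced to perfectness of $\RG_\Prism$, which fails already for $BG$. Assuming $\mstack X$ quasi-compact and quasi-separated I would replace the unwieldy indexing category by the (finitely iterated, for an $n$-Artin stack) \v{C}ech nerve of a smooth affine atlas, reducing the problem to pushing this base change past a totalization of a cosimplicial diagram of affines; this is then handled using the rudiments of \'etale sheaf theory on algebraic and rigid-analytic stacks developed elsewhere in the paper (descent and base change for \'etale cohomology of rigid generic fibers) together with the completed flatness of $\mf S\to W_n(\mbb C_p^\flat)$, and yields the equivalences of \Cref{cor: adic_etale_comparison2}. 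Establishing this commutation/descent for the non-finite base change along $\mf S\to W_n(\mbb C_p^\flat)$ is, in my view, the main obstacle; everything else is a formal consequence of the Kan-extension formalism and the theorem of Bhatt and Scholze.
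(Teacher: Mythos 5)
Your reductions for the Frobenius, de Rham, and crystalline statements are correct and essentially match the paper's route: $\phi_{\mf S}^{*}$ preserves limits because $\mf S$ is a free $\mf S$-module of rank $p$ via $\phi_{\mf S}$ (this is exactly why \Cref{rem: drop qcqs assumption in the perfect case} and \Cref{rem: prismatic base change in the perfect case} allow one to drop qcqs for the Breuil--Kisin prism), and $-\otimes_{\mf S}\mc O_K$ and $-\otimes_{\mf S}W(k)$ are Koszul cofibers, hence preserve limits. The one thing you should also record explicitly is that the affine comparison identifies prismatic cohomology with the $p$-completed de Rham complex of the \emph{formal} completion $\widehat T$; that the resulting right Kan extension agrees with $\RG_\dR(\mstack X/\mc O_K)^\wedge_p$ is \Cref{prop: derived complete is the completion for smooth Artin stacks}, which is not purely formal and should be cited.

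The gap is in the \'etale comparison. You correctly identify the obstacle --- pushing the non-finite base change $\mf S\to W_n(\mbb C_p^\flat)$ past a totalization --- and the \v Cech reduction via Pridham's smooth affine hypercover is the right first move. But the technology you then propose is wrong: the rigid-analytic \'etale sheaf machinery of the paper (base change, descent for $\Shv_\et(\widehat{(-)}_K,\Lambda)$, etc.) is developed to study the comparison map $\Upsilon_{\mstack X}$ between the algebraic and Raynaud generic fibers, and plays no role in the prismatic \'etale comparison. Also, ``completed flatness of $\mf S\to W_n(\mbb C_p^\flat)$'' is not a coherent hypothesis, as $W_n(\mbb C_p^\flat)$ is $p^n$-torsion. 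The paper's actual argument in \Cref{etale_comparison} is more elementary and runs over a perfect prism $(A,(d))$, where the affine formula is $\bigl(\RG_\Prism(T/A)[\frac{1}{d}]/p^n\bigr)^{\phi_\Prism=1}$: the localization $-[\frac{1}{d}]$ is a filtered colimit, hence $t$-exact, hence by \Cref{left_exact_preserve_totalizations_of_uniformly_bounded_below} it commutes with totalizations of uniformly $0$-coconnected cosimplicial objects (and $\RG_\Prism$ of smooth affines is $0$-coconnective), while $-/p^n$ and $(-)^{\phi_\Prism=1}$ are (co)fibers and commute with all limits. The $\mbb Z_p$-version then arises as an inverse limit over $n$ together with \Cref{lem: p-completion of A[1/d]}, and the Breuil--Kisin statement follows by prismatic base change $\mf S\to\Ainf$. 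So the correct replacement for your ``completed flatness'' invocation is the $t$-exactness of $-[\frac{1}{d}]$ (equivalently, flatness of $\Ainf\to\Ainf[\frac{1}{d}]$, or of $\mf S\to W(\mbb C_p^\flat)$), combined with the coconnectivity lemma --- not rigid-analytic sheaf theory.
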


Here the appearance of the $p$-adic completion in the de Rham comparison is explained by the fact that the prismatic cohomology for classical schemes compares with the de Rham cohomology of the formal completion (and not the scheme itself).

 \begin{rem}
 	For a more general stack $\mstack X$ or to work in a relative situation it is probably better to use the prismatic stack $\mstack X_\Prism$ developed by Bhatt-Lurie and Drinfeld (see \cite{Drinfeld_prismatization}), namely $\RG_\Prism(\mstack X/\mf S)$ can be defined as $\RG(\mstack X_{\Prism/\mf S}, \mc O_{\mstack X_{\Prism/\mf S}})$, where $\mstack X_{\Prism/\mf S}\coloneqq \mstack X_{\Prism}\times_{(\Spf \mc O_K)_\Prism} \Spf \mf S$.
 \end{rem}
\begin{rem}
To establish the crystalline and de Rham comparisons over a more general base prism one needs to impose more conditions both on the stack $\mstack X$ and the prism $A$. First of all, one needs to slightly correct the formula, namely the pull-back $\phi_A^*$ needs to be considered in the category of $(p,I)$-complete $A$-modules. Then we also need to assume that the stack $\mstack X$ is quasi-compact and quasi-separated and, finally, that $\phi_{A*}A$ is an $A$-module of finite $(p,I)$-complete Tor-amplitude. See \Cref{section:p_adic_cohomology_for_stacks} for more details.
\end{rem}	 

 The framework of Hodge-proper stacks is exactly where one has a natural analogue of part 2 of \Cref{intro: prismatic for schemes}. Below let $\Coh^+(R)\subset \DMod{R}$ denote the full subcategory of bounded below modules with coherent cohomology (see \Cref{defn: bounded below coherent sheaves}).

\begin{prop}\label{intro: cor for Hodge-proper}
Let $\mstack X$ be a smooth Hodge-proper stack over $\mc O_K$. Then:\begin{itemize}
	\item The complex $\RG_\Prism(\mstack X/\mf S)$ lies in $\Coh^+(\mf S)$ (\Cref{cor: prismatic cohomology are bounded below coherent}).
	\item One has natural $\phi$-equivariant equivalences
\begin{align*}
R\Gamma_{\et}({\widehat{\mstack X}}_{\mbb C_p}, \mathbb Z/p^n)\otimes_{\mbb Z/p^n} W_n(\mbb C_p^\flat) &\simeq R\Gamma_\Prism({\mstack X}/ \mf S)\otimes_{\mf S} W_n(\mbb C_p^\flat), \\ 
 R\Gamma_{\et}({\widehat{\mstack X}}_{\mbb C_p}, \mathbb Z_p)\otimes_{\mbb Z_p} W(\mbb C_p^\flat)  &\simeq R\Gamma_\Prism({\mstack X}/ \mathfrak S){\otimes}_{\mathfrak S} W(\mbb C_p^\flat). 
\end{align*}
Consequently, $R\Gamma_{\et}({\widehat{\mstack X}}_{\mbb C_p}, \mathbb Z_p)\in \Coh^+(\mbb Z_p)$ and $H^i_\et({\widehat{\mstack X}}_{\mbb C_p}, \mathbb F_p)$ is finite-dimensional over $\mbb F_p$ for any $i$ (\Cref{etale_coh_finiteness}).
\end{itemize}
 	
\end{prop}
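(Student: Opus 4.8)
The plan is to play the comparison theorems of \Cref{intro: comparisons for smooth stacks} off against Hodge-properness, and then to run a Frobenius–descent (Artin–Schreier) argument over $\mbb C_p^\flat$. For the first bullet I would argue by derived Nakayama over the complete Noetherian local ring $(\mf S,\mathfrak m)$, $\mathfrak m=(p,u)$: the complex $\RG_\Prism(\mstack X/\mf S)$ is derived $(p,E)$-complete by \Cref{intro: comparisons for smooth stacks}, hence derived $\mathfrak m$-complete since $(p,E)=(p,u^e)$ has radical $\mathfrak m$. By the crystalline comparison, $\phi_{\mf S}^*\RG_\Prism(\mstack X/\mf S)\otimes_{\mf S}k\simeq \RG_\crys(\mstack X_k/W(k))\otimes_{W(k)}k\simeq\RG_\dR(\mstack X_k/k)$, and since $\mstack X_k$ is Hodge-proper over $k$ the Hodge-to-de Rham spectral sequence — which converges degreewise because Hodge-properness forces only finitely many weights $\wedge^i\mathbb L_{\mstack X_k/k}[-i]$ to contribute to a fixed total degree — exhibits this as an object of $\Coh^+(k)$. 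As $\phi_{\mf S}\colon\mf S\to\mf S$ is faithfully flat (finite free of rank $p$, using that $k$ is perfect), the twist $\phi_{\mf S}^*$ preserves and reflects bounded-below coherence, so $\RG_\Prism(\mstack X/\mf S)\otimes^{\mathbb L}_{\mf S}k\in\Coh^+(k)$ and Nakayama gives $\RG_\Prism(\mstack X/\mf S)\in\Coh^+(\mf S)$. (Equivalently one could use the de Rham comparison and reduce modulo $(p,E)$, where $\mf S/(p,E)\cong k[u]/u^e$.)

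For the second bullet the crucial point is that $E(u)$ is a \emph{unit} in $W(\mbb C_p^\flat)$: the variable $u$ maps to the Teichmüller lift $[\pi^\flat]$, which is a unit because $\mbb C_p^\flat$ is a field, and modulo $p$ one has $E\mapsto(\pi^\flat)^e\neq 0$ in $\mbb C_p^\flat$, so $E$ is a unit mod $p$ and lifts to a unit by $p$-completeness of $W(\mbb C_p^\flat)$. Consequently, after base change the Frobenius $\phi_\Prism$ of \Cref{intro: comparisons for smooth stacks} becomes an honest $\Frob$-semilinear automorphism of $M_n\coloneqq\RG_\Prism(\mstack X/\mf S)\otimes_{\mf S}W_n(\mbb C_p^\flat)$, and of its $p$-completed limit $M$ over $W(\mbb C_p^\flat)$; moreover, by the first bullet these complexes are bounded below and, after reduction mod $p$ to the field $\mbb C_p^\flat$, have finite-dimensional cohomology (localize $\RG_\Prism(\mstack X/\mf S)\otimes^{\mathbb L}_{\mf S}k[[u]]$ to $k((u))$ and base change to $\mbb C_p^\flat$).

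Now I would invoke Frobenius descent. Since $\mbb C_p^\flat$ is an algebraically closed field of characteristic $p$, the functor $(V,\phi_V)\mapsto V^{\phi_V=1}=\fib(\phi_V-\id)$ is an exact equivalence from finite-dimensional $\phi$-modules over $\mbb C_p^\flat$ to finite-dimensional $\mbb F_p$-vector spaces, with quasi-inverse $W\mapsto W\otimes_{\mbb F_p}\mbb C_p^\flat$ (exactness being Lang's theorem: $\phi_V-\id$ is surjective); the same holds over $W_n(\mbb C_p^\flat)$ with $\mbb Z/p^n$-coefficients by dévissage along the $p$-adic filtration, and $p$-adically over $W(\mbb C_p^\flat)$ with $\mbb Z_p$-coefficients. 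Because $(-)^{\phi=1}$ commutes with limits and $(-)\otimes\mbb C_p^\flat$ with colimits, these upgrade to equivalences of bounded-below coherent derived categories; applying them to $M_n$ and combining with the étale comparison $M_n^{\phi_\Prism=1}\simeq\RG_\et(\widehat{\mstack X}_{\mbb C_p},\mathbb Z/p^n)$ of \Cref{intro: comparisons for smooth stacks} yields the first displayed equivalence, and — all the relevant maps being natural — it is $\phi$-equivariant and functorial in $\mstack X$; the $\mbb Z_p$-version follows by passing to the limit over $n$. Specializing to $n=1$ gives $\RG_\et(\widehat{\mstack X}_{\mbb C_p},\mathbb F_p)\otimes_{\mbb F_p}\mbb C_p^\flat\simeq M_1$, which is bounded below with finite-dimensional cohomology, so faithfully flat descent along $\mbb F_p\to\mbb C_p^\flat$ shows $H^i_\et(\widehat{\mstack X}_{\mbb C_p},\mathbb F_p)$ is finite-dimensional and $\RG_\et(\widehat{\mstack X}_{\mbb C_p},\mathbb F_p)\in\Coh^+(\mbb F_p)$; finally $\RG_\et(\widehat{\mstack X}_{\mbb C_p},\mathbb Z_p)$ is derived $p$-complete with mod-$p$ reduction $\RG_\et(\widehat{\mstack X}_{\mbb C_p},\mathbb F_p)$ (using the finiteness just obtained), so the standard criterion over the complete Noetherian ring $\mbb Z_p$ gives $\RG_\et(\widehat{\mstack X}_{\mbb C_p},\mathbb Z_p)\in\Coh^+(\mbb Z_p)$.

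I expect the main obstacle to be making the Frobenius-descent step fully rigorous in this generality — unbounded-above, merely almost-perfect complexes, over the non-reduced rings $W_n(\mbb C_p^\flat)$ — i.e. checking that taking $\phi$-fixed points and faithfully flat base change interact correctly with the Postnikov and $p$-adic limits, and that surjectivity of $\phi-\id$ (the content of Lang's theorem that makes the equivalence exact) propagates through the dévissage. Everything else is either formal or imported from \Cref{intro: comparisons for smooth stacks} together with the definition of Hodge-properness.
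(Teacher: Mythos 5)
Your proof is correct in outline and lands on the same central mechanism as the paper (a completeness/Nakayama argument for the first bullet, and Frobenius descent over $\mbb C_p^\flat$ for the second), but it differs in which reductions you run. For the first bullet, the paper does not pass through crystalline cohomology at all: it uses the Hodge--Tate filtration on $\RG_\Prism(\mstack X/\mf S)\otimes_{\mf S}\mc O_K$, whose graded pieces are precisely the $\RG(\mstack X,\wedge^i\mbb L_{\mstack X/\mc O_K})\{-i\}[-i]$ that Hodge-properness controls, and then lifts along the single Cartier divisor $E(u)$ via \Cref{neraly_coherent_module_xi}. Your route via the crystalline comparison and $\RG_\dR(\mstack X_k/k)$ is a legitimate alternative, but note that $(p,u)$ is not a Cartier divisor, so you cannot invoke \Cref{neraly_coherent_module_xi} directly at the maximal ideal; you would need to Nakayama down in two steps (first mod $p$ over $\mf S$, then mod $u$ over $k[[u]]$), which works but should be said. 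The paper's single reduction modulo $E(u)$ is cleaner and stays entirely inside the prismatic package, avoiding the need to justify that $\mstack X_k$ is Hodge-proper over $k$ and that crystalline cohomology mod $p$ computes de Rham cohomology for stacks (both true, but extra steps). For the second bullet the two arguments are essentially the same: you replace the paper's appeal to Bhatt's Lemma 8.5 (stated for perfect complexes over $W(\mbb C_p^\flat)$) with a hand-rolled Lang/Artin--Schreier argument plus d\'evissage along the $p$-adic filtration and a passage to the limit. That is the content of Bhatt's lemma, so nothing is wrong, but the paper's reduction is technically smoother: it first reduces via right $t$-completeness to truncations $M^{\le n}$, notes these are perfect over the regular Noetherian $W(\mbb C_p^\flat)$, and then applies the lemma once, instead of tracking exactness of $\phi - \id$ through a $p$-adic d\'evissage and then through the countable limit (which is exactly where, as you note, one must be careful with ${\prolim}^1$-terms). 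Finally, for the finiteness conclusion the paper goes more directly: it descends coherence along the faithfully flat ring map $\mbb Z_p\to W(\mbb C_p^\flat)$ using \Cref{ncoh_is_fpqc_local}, rather than descending from $\mbb F_p\to\mbb C_p^\flat$ and then bootstrapping $p$-adically over $\mbb Z_p$; your detour works but requires one more commutation (of $\prolim_n$ with $\otimes_{\mbb Z_p}\mbb F_p$, using perfectness of $\mbb F_p$) that you do not spell out. In short: same skeleton, different flesh; the paper's choices avoid the technical friction you correctly flag at the end.
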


 We note that under the Hodge-properness assumption the de Rham complex $\RG_\dR(\mstack X/\mc O_K)$ is derived $p$-complete (\Cref{cor: de Rham coh of Hodge-proper are complete}) and so one also has $\phi_{\mf S}^*\RG_\Prism(\mstack X/\mf S)\otimes_{\mf S} \mc O_K \simeq \RG_\dR(\mstack X/\mc O_K)$. Thus $\RG_\Prism(\mstack X/\mf S)$ plays a very similar role to the one it played in the case of smooth proper schemes: it interpolates between the \'etale, de Rham, and crystalline cohomology and its cohomology are finitely generated $\mf S$-modules. However we stress that the \'etale cohomology  we get are the \'etale cohomology of Raynaud generic fiber $R\Gamma_{\et}({\widehat{\mstack X}}_{\mbb C_p}, \mathbb Z_p)$,  and not the \'etale cohomology of the algebraic generic fiber $R\Gamma_{\et}({\mstack X}_{\mbb C_p}, \mathbb Z_p)$ (which we could compare with the singular cohomology of the underlying homotopy type of $\mbb C$-points via Artin comparison afterwards). Nevertheless, for any smooth Hodge-proper stack $\mstack X$ over $\mc O_K$ we get an analogue of Totaro's inequality (\Cref{inequality}):
\begin{equation}\label{intro: ineq for rigid fiber}
	\dim_{\mathbb F_p} H_\et^i(\widehat{\mstack X}_{\mbb C_p}, \mathbb F_p) \le \dim_{k} H^i_\dR(\mstack X_k/k).
\end{equation}
This applies in particular to the classifying stack $BG$ of a reductive group $G$. To deduce Totaro's conjecture it remains somehow to relate the left hand side with $H^i_\sing(BG(\mbb C),\mbb F_p)$.

\begin{rem}
	It is crucial for \Cref{intro: cor for Hodge-proper} that the base prism $\mf S$ is Noetherian. It is also convenient that $\mf S$ is regular and so any finitely generated $\mf S$-module is perfect: for example this allows to drop the qcqs assumption in the de Rham and crystalline comparisons (\Cref{intro: comparisons for smooth stacks}). We remark as well that the results above should generalize to smooth \emph{formal} stacks over $\mc O_K$ in a  more or less straightforward way, however we do not really touch this topic in our work. It would also be interesting to find an appropriate generalization of both the notion of Hodge-properness and \Cref{intro: cor for Hodge-proper} to a more general non-Noetherian context, for example of smooth formal stacks over $\mc O_{\mbb C_p}$ and $\Ainf$-cohomology, similar to \cite{BMS1}. 
\end{rem}

We defer the discussion of the proof of Totaro's conjecture until \Cref{introsec: local acyclicity and the proof of Totaro's} and instead discuss how to extend some of the results of rational $p$-adic Hodge theory to the general context of Hodge-proper stacks. We refer the reader to \Cref{sssection:C_cris_for_stacks} for more details.

Let $\mstack X$ be a Hodge-proper stack over $\mc O_K$ and consider $\mbb Q_p$-\'etale cohomology $H^i_\et(\widehat{ \mstack X}_{\mbb C_p},\mbb Q_p)\coloneqq H^i_\et(\widehat{ \mstack X}_{\mbb C_p},\mbb Z_p)[\frac{1}{p}]$ of its Raynaud generic fiber. Consider also rational versions $H^i_\dR(\mstack X/K)$ and $H^i_\crys(\mstack X_k/W(k))[\frac{1}{p}]$ of the de Rham and crystalline cohomology correspondingly. For convenience put $K_0\coloneqq W(k)[\frac{1}{p}]$. 

By \Cref{intro: cor for Hodge-proper}, $H^i_{\Prism}(\mstack X/\mf S)$ is finitely generated, and so $\phi_\Prism$ endows it with a structure of Breuil-Kisin module. Consequently (by a structural result for Breuil-Kisin modules, see e.g. \Cref{ex:stuff on BK-modules}), its rationalization $H^i_{\Prism}(\mstack X/\mf S)[\frac{1}{p}]$ is a free $\mf S[\frac{1}{p}]$-module of finite rank. From the comparisons it then follows (\Cref{prop: rationally all cohomology has the same dimension}) that
$$
\dim_{\mbb Q_p} H^i_\et(\widehat{ \mstack X}_{\mbb C_p},\mbb Q_p) = \dim_{K} H^i_\dR(\mstack X/K) =\footnote{With all being equal to $\rk_{\mf S[\frac{1}{p}]}H^i_{\Prism}(\mstack X/\mf S)[\frac{1}{p}]$.}  \dim_{K_0} H^i_\crys(\mstack X_k/W(k))[\tfrac{1}{p}],
$$ exactly as in the case of smooth proper schemes. The similarity doesn't end here. Note that as usual, there is a natural $G_K$-action on  $H^i_\et(\widehat{ \mstack X}_{\mbb C_p},\mbb Q_p)$; moreover one can deduce  from \Cref{intro: cor for Hodge-proper} that this action is continuous. If $\mstack X$ is a smooth proper scheme, $H^i_\et(\widehat{ \mstack X}_{\mbb C_p},\mbb Q_p)$ is a crystalline $G_K$-representation: this is a part of (proven) Fontaine's  $C_{\mr{cris}}$-conjecture. An analogous result holds for Hodge-proper stacks:

\begin{thm*}[\ref{thm: Fontaine's conjecture}]
	Let $\mstack X$ be a smooth Hodge-proper stack over $\mc O_K$. Then for any $i$ there is a natural $(\phi,G_K)$-equivariant isomorphism 
$$
H^i_\et(\widehat{\mstack X}_{\mbb C_p}, \mathbb Q_p)\otimes_{\mbb Q_p} B_\crys \simeq H^i_{\crys}(\mstack X_k/W(k))[\tfrac{1}{p}]\otimes_{K_0} B_\crys.
$$
In particular, $H^i_\et(\widehat{ \mstack X}_{\mbb C_p},\mbb Q_p)$ is a crystalline $G_K$-representation.
\end{thm*}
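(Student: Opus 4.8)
The plan is to reduce Fontaine's $C_\crys$-isomorphism for $\mstack X$ to its schematic counterpart, with the $\Ainf$-cohomology and the period ring $A_\crys$ serving as intermediaries; the only genuinely stacky input needed is the finiteness supplied by \Cref{intro: cor for Hodge-proper}. Write $\mf M := H^i_\Prism(\mstack X/\mf S)$. By \Cref{intro: cor for Hodge-proper} the complex $\RG_\Prism(\mstack X/\mf S)$ lies in $\Coh^+(\mf S)$ over the regular Noetherian ring $\mf S$, so $\mf M$ is finitely generated, and by the structure theory of Breuil--Kisin modules (\Cref{ex:stuff on BK-modules}) $\mf M[\tfrac1p]$ is a finite free $\mf S[\tfrac1p]$-module. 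Consequently, after inverting $p$, every base change of $\RG_\Prism(\mstack X/\mf S)$ along an $\mf S[\tfrac1p]$-algebra is concentrated in the single degree $i$, and $H^i$ commutes with any such base change; this is what will let me pass from complexes to individual cohomology groups at the end.

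Next I would extend the $\Ainf$-cohomology $\RG_{\Ainf}(\widehat{\mstack X}_{\mc O_{\mbb C_p}})$, together with its étale and crystalline comparisons, from smooth $p$-adic formal schemes to $\mstack X$ by right Kan extension along $\Aff^{\sm}_{/\mstack X}\subset\widehat{\Sch}^{\sm}_{\mc O_K}$ --- this is legitimate precisely because the coherence of \Cref{intro: cor for Hodge-proper} forces the schematic base-change identities to survive the defining $\holim$. Concretely, prismatic base change along the map of prisms $\mf S\to\Ainf$, $u\mapsto[\pi^\flat]$ (under which $E(u)$ generates the kernel of $\theta\colon\Ainf\to\mc O_{\mbb C_p}$), identifies $\RG_{\Ainf}(\widehat{\mstack X}_{\mc O_{\mbb C_p}})$ with $\RG_\Prism(\mstack X/\mf S)\widehat{\otimes}_{\mf S}\Ainf$. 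Its base change to $W(\mbb C_p^\flat)\cong\Ainf[\tfrac1\mu]^\wedge_p$ (for Fontaine's element $\mu=[\varepsilon]-1$) recovers $\RG_\et(\widehat{\mstack X}_{\mbb C_p},\mbb Z_p)\otimes_{\mbb Z_p}W(\mbb C_p^\flat)$ by the étale comparison of \Cref{intro: cor for Hodge-proper}, compatibly with $\phi$ and with $G_K$ acting through $W(\mbb C_p^\flat)$; and its $p$-completed base change to $A_\crys$ recovers the crystalline cohomology relative to $A_\crys$ of the mod-$p$ reduction of $\mstack X_{\mc O_{\mbb C_p}}$ (the $A_\crys$-comparison of \cite{BMS1}, \cite{BMS2}, Kan-extended from schemes), which by base change of crystalline cohomology --- Kan-extended from schemes, using that crystalline cohomology depends only on the reduction modulo $p$ --- equals $\RG_\crys(\mstack X_k/W(k))\otimes_{W(k)}A_\crys$, all $(\phi,G_K)$-equivariantly.

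With these two comparisons in hand, the conclusion is the classical period-ring argument, formally identical to the schematic case (Faltings; Tsuji; cf.\ the $\Ainf$-theoretic treatment of \cite{BMS1}, \cite{BMS2}): base-changing both identifications to $B_\crys = A_\crys[\tfrac1p][\tfrac1t]$ --- in which $p$, $t$, and hence $\mu$ are invertible --- and splicing them produces a $(\phi,G_K)$-equivariant equivalence of complexes
$$
\RG_\et(\widehat{\mstack X}_{\mbb C_p},\mbb Q_p)\otimes_{\mbb Q_p}B_\crys \;\simeq\; \RG_\crys(\mstack X_k/W(k))[\tfrac1p]\otimes_{K_0}B_\crys.
$$
Passing to $H^i$ is harmless by the degree-$i$ concentration noted above and yields the stated isomorphism. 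For the final clause, apply $(-)^{G_K}$: since $B_\crys^{G_K}=K_0$ and $H^i_\crys(\mstack X_k/W(k))[\tfrac1p]$ is finite-dimensional over $K_0$, one gets $D_\crys\bigl(H^i_\et(\widehat{\mstack X}_{\mbb C_p},\mbb Q_p)\bigr)\cong H^i_\crys(\mstack X_k/W(k))[\tfrac1p]$, of $K_0$-dimension equal to $\dim_{\mbb Q_p}H^i_\et(\widehat{\mstack X}_{\mbb C_p},\mbb Q_p)$ by \Cref{prop: rationally all cohomology has the same dimension}; hence the $G_K$-representation is crystalline.

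The main obstacle is not this period-ring bookkeeping --- which is verbatim the schematic argument --- but rather (i) setting up the $\Ainf$- and $A_\crys$-comparisons for Hodge-proper stacks with enough finiteness to survive the $\holim$ and to permit passage to $H^i$, which is exactly where Hodge-properness and \Cref{intro: cor for Hodge-proper} are indispensable, and (ii) tracking the semilinear $G_K$-action honestly through the (non-$G_K$-equivariant) prism map $\mf S\to\Ainf$ and the identification $W(\mbb C_p^\flat)\cong\Ainf[\tfrac1\mu]^\wedge_p$, so that the resulting isomorphism is genuinely equivariant for the Galois action on $H^i_\et(\widehat{\mstack X}_{\mbb C_p},\mbb Q_p)$. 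Alternatively, one may observe that $\mf M=H^i_\Prism(\mstack X/\mf S)$ is a Breuil--Kisin module (\Cref{intro: cor for Hodge-proper}) arising as the value at $\mf S$ of a genuine prismatic $F$-crystal on $(\Spf\mc O_K)_\Prism$, and invoke the equivalence between prismatic $F$-crystals and crystalline $G_K$-representations to conclude directly that $H^i_\et(\widehat{\mstack X}_{\mbb C_p},\mbb Q_p)$ is crystalline with $D_\crys$ the crystalline cohomology (and, via the de Rham comparison of \Cref{intro: comparisons for smooth stacks}, the expected Hodge filtration); this bypasses the explicit period rings but rests on the same circle of inputs.
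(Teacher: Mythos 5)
Your outline reproduces the paper's strategy almost step for step: identify $\Ainf$-cohomology with $\RG_\Prism(\mstack X/\mf S)\widehat\otimes_{\mf S}\Ainf$, base change to $W(\mbb C_p^\flat)$ for the \'etale side, base change to $A_\crys$ for the crystalline side, and splice over $B_\crys$ using that $\mf M[\tfrac1p]$ is free to pass to $H^i$. So the approach matches. Two of the points you flag as ``the main obstacle'' are, however, exactly where the real work lives, and your account stops short of resolving them.

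For (i), the obstruction is more specific than ``finiteness to survive the $\holim$.'' The base-change proposition for prismatic cohomology of stacks requires the map of prisms to have finite $(p,I)$-complete Tor-amplitude, precisely so that $\widehat\otimes$ commutes with the totalization over a smooth affine hypercover. The map $\Ainf\to A_\crys$ does \emph{not} have finite $p$-complete Tor-amplitude — this is the paper's explicit opening remark in the proof — so the naive base change $\RG_{\Prism^{(1)}}(\mstack X_{\mc O_{\mbb C_p}}/\Ainf)\widehat\otimes_{\Ainf}A_\crys\simeq\RG_{\Prism^{(1)}}(\mstack X_{\mc O_{\mbb C_p}/p}/A_\crys)$ cannot be invoked directly. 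The fix is not Hodge-properness \emph{per se}; it is the detour through $\mf S$: since $\mf S$ is Noetherian and regular, every $\mf S$-module has finite $I$-complete Tor-amplitude, so base change from $\mf S$ to $A_\crys$ (and to $W(k)$, $B_\crys$, etc.) \emph{does} commute with the hypercover totalization. Hodge-properness then enters separately — via \Cref{lem:the tensor product remains complete} — to identify the completed tensor products with uncompleted ones, so that the various localizations at $p$ and $\mu$ behave. You write ``its $p$-completed base change to $A_\crys$,'' which is the right object ($\RG_\Prism(\mstack X/\mf S)\widehat\otimes_{\mf S}A_\crys$), but you do not say why this equals the Kan-extended $A_\crys$-crystalline cohomology; without the Noetherian-regular input on $\mf S$, that identity is not free.

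For (ii), the key lemma your sketch skips is that for a Hodge-proper stack the isomorphism
\[
H^i_{\et}(\widehat{\mstack X}_{\mbb C_p},\mbb Z_p)\otimes_{\mbb Z_p}W(\mbb C_p^\flat)\simeq H^i_{\Prism^{(1)}}(\mstack X_{\mc O_{\mbb C_p}}/\Ainf)\otimes_{\Ainf}W(\mbb C_p^\flat)
\]
already descends to $\Ainf[\tfrac1\mu]$ (\Cref{lem: can descend to A_inf[1/mu]}). This rests on $H^i_{\Prism^{(1)}}(\mstack X_{\mc O_{\mbb C_p}}/\Ainf)$ being a Breuil--Kisin--Fargues module, which is what permits tensoring further with $B_\crys\subset W(\mbb C_p^\flat)[\tfrac1p]$ and hence splicing with the $B_\crys$-comparison on the crystalline side \emph{before} ever leaving $\Ainf$-land; it is also what makes the resulting map $G_K$-equivariant, because the \'etale comparison is $G_K$-equivariant over $W(\mbb C_p^\flat)$ a priori and only \emph{a posteriori} over the smaller ring. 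You correctly suspect a descent problem along $W(\mbb C_p^\flat)\cong\Ainf[\tfrac1\mu]^\wedge_p$ but do not exhibit the mechanism. Finally, your alternative via prismatic $F$-crystals is a reasonable heuristic, but it does not sidestep the work: one must still prove that $\RG_\Prism(\mstack X/-)$ is a genuine prismatic crystal on $(\Spf\mc O_K)_\Prism$ in the Hodge-proper case, and that verification is base change again, with the same finite-Tor-amplitude caveat.

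Summary: right plan, but the proof as written punts on the two places where the paper actually does something — the Noetherian-regularity-of-$\mf S$ detour around the $\Ainf\to A_\crys$ Tor-amplitude failure, and the descent of the \'etale comparison to $\Ainf[\tfrac1\mu]$ via the Breuil--Kisin--Fargues module structure.
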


From the classical results of Fontaine on admissible representations \cite{Fointane_admissible_reps} it then follows that $H^i_\et(\widehat{\mstack X}_{\mbb C_p}, \mathbb Q_p)$ is also de Rham and Hodge-Tate. In particular one also gets a natural $G_K$-equivariant isomorphism 

$$H^i_\et(\widehat{\mstack X}_{\mbb C_p}, \mathbb Q_p)\otimes_{\mbb Q_p} B_\dR \xymatrix{\ar[r]^\sim &} H^i_\dR(\mstack X/K)\otimes_K B_\dR,$$
where $B_\dR$ is the de Rham period ring. However, from the proof of \Cref{thm: Fontaine's conjecture} it is not a priori clear that this map is an isomorphism of \emph{filtered} spaces, where filtrations in question come from the one on $B_\dR$, trivial filtration on $H^i_\et(\widehat{\mstack X}_{\mbb C_p}, \mathbb Q_p)$, and the Hodge filtration on $H^i_\dR(\mstack X/K)$. This can be resolved using the Nygaard filtration on $\RG_{\Prism^{(1)}}(\mstack X/\mf S)$ and the degeneration of the Hodge-Tate spectral sequence (see \Cref{sssection: Hodge-Tate decomposition}). As a corollary, by passing to the associated graded one also gets the Hodge-Tate decomposition: 
\begin{thm*}[\ref{thm: hodge-tate decomposition}]
	Let $\mstack X$ be a smooth Hodge-proper stack over $\mc O_K$. Then there is a natural $G_K$-equivariant decomposition
		$$
	H^n_{\et}(\widehat{\mstack X}_{\mbb C_p},\mbb Q_p)\otimes_{\mbb Q_p}\mbb C_p \simeq \bigoplus_{i+j=n} H^j(\mstack X_{K},\wedge^i\mbb L_{\mstack X_K/K})\otimes_{K}\mbb C_p(-i),
	$$
	where $\mbb C_p(-i)$ denotes $(-i)$-th Tate twist.
\end{thm*}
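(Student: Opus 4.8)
The plan is to obtain the Hodge--Tate decomposition as the degree-zero associated graded piece of a \emph{filtered} de Rham comparison refining the $B_\dR$-isomorphism supplied by \Cref{thm: Fontaine's conjecture}. By that theorem (and Fontaine's dictionary, which in particular makes $H^i_\et(\widehat{\mstack X}_{\mbb C_p},\mbb Q_p)$ a de Rham representation) one has a $G_K$-equivariant isomorphism
\[
H^i_\et(\widehat{\mstack X}_{\mbb C_p}, \mbb Q_p)\otimes_{\mbb Q_p} B_\dR\;\simeq\;H^i_\dR(\mstack X/K)\otimes_K B_\dR,
\]
and the first task is to promote it to an isomorphism of filtered $B_\dR$-modules, the left-hand side carrying the $\xi$-adic filtration of $B_\dR$ (with the trivial filtration on étale cohomology) and the right-hand side the convolution of the $\xi$-adic filtration with the Hodge filtration on $H^i_\dR(\mstack X/K)$. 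To do this I would work with the Nygaard-filtered prismatic cohomology $\Fil^\bullet_N\RG_{\Prism^{(1)}}(\mstack X/\mf S)$, obtained by right Kan extension from the smooth formal scheme case of \cite{BS_prisms}, and use that its base changes realize the Hodge filtration (via the de Rham comparison) and the $\xi$-adic filtration (via the $B_\dR^+$-specialization), while the Frobenius-fixed points that compute étale cohomology only see the trivial filtration. The one remaining input is that the conjugate filtration on the Hodge--Tate specialization $\overline{\RG}_\Prism(\mstack X)$ splits after rationalization --- i.e. the degeneration of the Hodge--Tate spectral sequence.

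That degeneration is the second step. The conjugate filtration on $\overline{\RG}_\Prism(\mstack X)$, again right Kan extended from \cite{BS_prisms}, produces a spectral sequence
\[
E_2^{i,j}=H^i\bigl(\mstack X_{\mbb C_p},\wedge^j\mbb L_{\mstack X_{\mbb C_p}/\mbb C_p}\bigr)(-j)\;\Longrightarrow\;H^{i+j}_\et(\widehat{\mstack X}_{\mbb C_p},\mbb Q_p)\otimes_{\mbb Q_p}\mbb C_p,
\]
where the Tate twists come from the Breuil--Kisin twists after the Hodge--Tate specialization. Since $\mstack X$ is Hodge-proper all $E_2$-terms are finite-dimensional over $\mbb C_p$, so degeneration at $E_2$ follows from a dimension count: flat base change along $K\to\mbb C_p$ gives $\sum_{i,j}\dim_{\mbb C_p}E_2^{i,j}=\sum_{i,j}\dim_K H^i(\mstack X_K,\wedge^j\mbb L_{\mstack X_K/K})$, which by the characteristic-zero Hodge-to-de Rham degeneration for Hodge-proper stacks from \cite{KubrakPrikhodko_HdR} equals $\sum_n\dim_K H^n_\dR(\mstack X/K)$, which by \Cref{prop: rationally all cohomology has the same dimension} equals $\sum_n\dim_{\mbb Q_p}H^n_\et(\widehat{\mstack X}_{\mbb C_p},\mbb Q_p)=\sum_n\dim_{\mbb C_p}\bigl(H^n_\et(\widehat{\mstack X}_{\mbb C_p},\mbb Q_p)\otimes_{\mbb Q_p}\mbb C_p\bigr)$; as the total dimension of $E_2$ already equals that of the abutment, every differential vanishes.

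With the filtered $B_\dR$-comparison in hand I would take $\gr^0$. Using $\gr^b_\xi B_\dR\simeq\mbb C_p(b)$, the degree-zero piece of the left-hand side is $H^n_\et(\widehat{\mstack X}_{\mbb C_p},\mbb Q_p)\otimes_{\mbb Q_p}\mbb C_p$, while on the right the contribution in degree $0$ is $\bigoplus_{i\ge 0}\gr^i_{\mathrm{Hodge}}H^n_\dR(\mstack X/K)\otimes_K\mbb C_p(-i)\simeq\bigoplus_{i+j=n}H^j(\mstack X_K,\wedge^i\mbb L_{\mstack X_K/K})\otimes_K\mbb C_p(-i)$, where I use the Hodge-to-de Rham degeneration over $K$ once more to identify the graded pieces of the Hodge filtration; this gives the asserted decomposition, and $G_K$-equivariance is automatic since every map involved is natural.

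The hard part will be the filtered $B_\dR$-comparison: carefully threading the Nygaard filtration through the de Rham and étale comparisons for stacks of \Cref{intro: comparisons for smooth stacks} and checking that the degeneration of the Hodge--Tate spectral sequence is exactly the ingredient forcing the conjugate and Nygaard filtrations to agree after inverting $p$. The degeneration itself is cheap once the characteristic-zero Hodge-to-de Rham degeneration of \cite{KubrakPrikhodko_HdR} is invoked; the main bookkeeping hazard is keeping the Breuil--Kisin and Tate twists straight through each specialization.
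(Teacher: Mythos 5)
Your overall strategy coincides with the paper's: obtain a \emph{filtered} $G_K$-equivariant isomorphism $H^n_\et(\widehat{\mstack X}_{\mbb C_p},\mbb Q_p)\otimes_{\mbb Q_p}B_\dR \simeq H^n_\dR(\mstack X_K/K)\otimes_K B_\dR$ by threading the Nygaard filtration through the comparisons (this is exactly \Cref{prop: D_dR for the etale cohomology of a smooth Hodge proper stack}), then take $\gr^0$. That part is sound, and you correctly identify it as the hard step.

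However, your route to the degeneration of the Hodge--Tate spectral sequence has a genuine gap. Your dimension count needs the equality
\[
\sum_{i,j}\dim_K H^i(\mstack X_K,\wedge^j\mbb L_{\mstack X_K/K}) = \sum_n\dim_K H^n_\dR(\mstack X_K/K),
\]
which is precisely the Hodge-to-de Rham degeneration over $K$, and you appeal to \cite{KubrakPrikhodko_HdR} for it. But the degeneration theorem there is proved under the hypothesis that $\mstack X_K$ is \emph{Hodge-properly spreadable} --- i.e.\ admits a model over a finitely generated $\mbb Z$-algebra that is Hodge-proper at almost all primes. Hodge-properness of a model over the single local ring $\mc O_K$ does not obviously produce such a spread-out Hodge-proper model (spreading a stack to $P\subset\mc O_K$ does not a priori preserve the coherence of $H^j(\mstack X,\wedge^i\mbb L)$ over $P$). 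Indeed, the paper makes a point of this: \Cref{thm: HdR degeneration for stacks} (Hodge-to-de Rham degeneration for Hodge-proper stacks over $\mc O_K$) is stated as a \emph{strengthening} of \cite{KubrakPrikhodko_HdR} and is \emph{deduced from} the Hodge--Tate degeneration (\Cref{prop: HT-degeneration}), not the other way around. As written, your argument is therefore circular, or at least uses an input that is not available under the stated hypotheses.

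The fix is the one the paper uses: prove the Hodge--Tate degeneration directly. After base change along $(\mf S,(E))\to(\Ainf,(\xi))$ and inverting $p$, the $E_2$-page terms $H^{q,p}(\mstack X_{\mbb C_p}/\mbb C_p)\{q\}$ become $G_K$-modules that are pure Hodge--Tate of weight $-q$, i.e. direct sums of $\mbb C_p(-q)$. Since the differentials are $G_K$-equivariant and by Tate's theorem there are no nonzero $G_K$-maps between distinct twists $\mbb C_p(a)$ and $\mbb C_p(b)$, all differentials vanish. This replaces your dimension count and breaks the circularity; the Hodge-to-de Rham degeneration over $K$ (which you also invoke at the end when identifying $\bigoplus_i\gr^i_{\mathrm{Hodge}}H^n_\dR$ with the Hodge cohomology) is then a consequence, exactly as in \Cref{thm: HdR degeneration for stacks}.
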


One can also show that if $\mstack X$ is Hodge-proper over $\mc O_K$ the Hodge-to-de Rham spectral sequence for $\mstack X_K$ degenerates (\Cref{thm: HdR degeneration for stacks}).  This gives a partial strengthening of results of \cite{KubrakPrikhodko_HdR} on the degeneration of Hodge-to-de Rham spectral sequence: namely, to show that it degenerates for a given smooth Artin stack over $K$ it is enough to find a Hodge-proper model over $\mc O_K$. Comparing with the results of \cite{KubrakPrikhodko_HdR}, it shows that it's enough to "Hodge-properly spread out" a Hodge-proper stack to a single prime, rather than all but finite set of them.

Finally, it is also possible to describe the Breuil-Kisin module associated to $H^i_\et(\widehat{\mstack X}_{\mbb C_p}, \mbb Z_p)$ in terms of prismatic cohomology, namely, provided $H^i_\crys(\mstack X_k/W(k))$ is $p$-torsion free one has 
$$
\BK(H^i_\et(\widehat{\mstack X}_{\mbb C_p}, \mbb Z_p))\simeq H^i_{\Prism}(\mstack X/\mf S).
$$
There is also a more general version of this isomorphism without any $p$-torsion free assumptions, see \Cref{rem: associated BK-module/refined}.
\begin{rem}
	Note that the statements of Theorems \ref{thm: Fontaine's conjecture} and \ref{thm: hodge-tate decomposition} depend only on the Raynaud generic fiber $\mstack X_K$ (and not $\mstack X$). However, we stress that one needs Hodge-properness of $\mstack X$ over $\mc O_K$ for them to hold: the condition that  $\mstack X_K$ is  Hodge-proper over $K$ is not sufficient. This is not so surprising for Fontaine's conjecture, where in the schematic case one is supposed to have a smooth proper model over $\mc O_K$. Quite differently, the Hodge-Tate decomposition usually doesn't require the existence of any nice model: so here the analogy is not complete.
\end{rem}

\subsection{Local acyclicity and the proof of Totaro's conjecture}\label{introsec: local acyclicity and the proof of Totaro's}
The key to the proof of Totaro's conjecture is in a careful study of the map $\Upsilon_{\mstack X}\colon \RG_\et({\mstack X}_{\mbb C_p}, \mbb F_p) \ra  \RG_\et(\widehat{\mstack X}_{\mbb C_p}, \mbb F_p)$ (construction of which we will remind below) in the case $\mstack X=BG$. For more details see Sections \ref{sect:etale_sheaves}, \ref{sec_etale_cohomology_of_stacks}, and \ref{sec: diff_etale}. We stress that while (most of) the results of \Cref{secintro:p-adic Hodge theory for Hodge-proper stacks} are more or less formal consequences of the analogous results in \cite{BS_prisms}, the comparison of the two versions of the \'etale cohomology via $\Upsilon_{BG}$ is not a formality and can be seen as the essential core of the proof. 

Let $X$ be a scheme over $\mc O_{\mbb C_p}$ of finite type. There are two natural ways to associate a rigid analytic space to $X$: one can either consider the formal completion $\widehat X$ and then the Raynaud generic fiber $\widehat X_{\mbb C_p}$ or, first take the algebraic generic fiber $X_{\mbb C_p}$ and consider its analytification $X_{\mbb C_p}^\an$. There is always a map $\widehat X_{\mbb C_p} \ra X_{\mbb C_p}^\an$; in fact there are maps 
$$
\widehat X_{\mbb C_p} \xymatrix{\ar[r]^{\psi_X}&} X_{\mbb C_p}^\an \xymatrix{\ar[r]^{\phi_X}&} X_{\mbb C_p}
$$
of locally ringed spaces, which induce maps of the corresponding \'etale sites, thus providing pull-back maps:
$$
\RG_\et(X_{\mbb C_p},\mbb F_p) \xymatrix{\ar[r]^{\phi_X^{-1}}&} \RG_\et( X_{\mbb C_p}^\an,\mbb F_p) \xymatrix{\ar[r]^{\psi_X^{-1}}&} \RG_\et(\widehat X_{\mbb C_p},\mbb F_p).
$$ 
It is known (\cite[Theorem 3.8.1]{Huber_adicSpaces}) that $\phi_X^{-1}$ is always an equivalence, however $\psi_X^{-1}$ is usually far from being one (see \Cref{ex:F-p etale cohomology of rigid affine line} for $X=\mbb A^1$). One assumption which makes $\psi_X^{-1}$ an equivalence is properness of $X$: then in fact $X_{\mbb C_p}^\an \simeq \widehat X_{\mbb C_p}$. 

%Here is an attempt to illustrate this on the example $X=\mbb P^1$:
%\begin{ex}
%In the case $X=\mathbb A^1$ we have $\widehat X_{\mbb C_p}=\mathbb D_1=\Sp(\mbb C_p\langle T\rangle)$ being the closed disc of radius 1, while $(\mathbb A^1_{\mbb C_p})^{\mr{an}}$ is an actual analytic affine line $\bigcup_{r\in \mathbb R_{>0}}\mathbb D_r$: the union of balls of all radii $r$ centered at 0. The map $\psi_{\mathbb A^1}\colon \widehat{\mathbb A}^1_{\mbb C_p}\ra (\mathbb A^1_{\mbb C_p})^{\mr{an}}$ is given by the embedding of the disk to the union. In particular $\psi_{\mathbb A^1}$ is far from being an isomorphism. Let's now consider the case of $\mathbb P^1$. Cover $\mathbb P^1$ by two copies of $\mathbb A^1$ with a common intersection $\A^1\setminus \{0\}$. From this perspective, the Raynaud fiber of $\mbb P^1$ is obtained as two discs of radius $1$ (two "hemispheres") glued along the unit circle $\Sp(\mbb C_p\langle T,T^{-1}\rangle)$ ("equator"), while the analytification of the generic fiber produces two analytic affine lines ("spheres" without a point) glued along the analytic punctured affine line (a "sphere" without two points). The result of gluing gives the same rigid analytic space (the "sphere") and the map $\psi_{\mathbb P^1}$ is an isomorphism.
%\end{ex}

Right Kan extending to Artin stacks one obtains maps 
$$
\RG_\et(\mstack X_{\mbb C_p},\mbb F_p) \xymatrix{\ar[r]^{\phi_X^{-1}}&} \RG_\et( \mstack X_{\mbb C_p}^\an,\mbb F_p) \xymatrix{\ar[r]^{\psi_X^{-1}}&} \RG_\et(\widehat{\mstack X}_{\mbb C_p},\mbb F_p),
$$ 
and we define $\Upsilon_{\mstack X}\coloneqq \psi_X^{-1} \circ \phi_X^{-1}$. It is still true that $\phi_X^{-1}$ is an equivalence for all $\mstack X$ of locally finite type (see \Cref{commutation with psi}) and that $\psi_X^{-1}$ is an equivalence for $\mstack X$ proper (\Cref{adic_proper_base_change}). Consequently, $\Upsilon_{\mstack X}$ is an equivalence if $\mstack X$ is proper. However, since $BG$ is not proper (at least in most of the cases), one doesn't see any formal reason for $\Upsilon_{BG}$ to be an equivalence. 

\begin{rem}\label{rem: bhatt-li-applications} One way to proceed is to "approximate" $BG$ by smooth proper schemes, similarly to what Ekedahl does in \cite{Ekedahl}. This strategy was successfully realized by Bhatt and Li in their short note \cite{BhattLi}; using this method they are able to show that the map $H^i_\et(BG_{\mbb C_p},\mbb F_p)\ra H^i_\et(\widehat{BG}_{\mbb C_p},\mbb F_p)$ is always injective. Their result then already suffices to prove the inequality in \Cref{intro_Totaros_conjecture}. In this work we prove a stronger statement that $\Upsilon_{BG}$ is an equivalence (see \Cref{sec: diff_etale}). This then implies not only the inequality, but also that the exact difference between the de Rham and singular cohomology is controlled by the prismatic cohomology of $BG$.  
	
	The latter then allows to fully compute the de Rham cohomology in many cases by using the existing computations on the topological side. As an example, in his UROP project \cite{Chua} Anlong Chua was able to compute the mod 2 de Rham cohomology of $B\mr{Spin}_n$ up to $n\le 13$, using the classical Quillen's topological computation in \cite{quillen1971mod} as an input. This computation assumed the existence of the Hochschild-Serre-type spectral sequence in the prismatic context, which Scavia and the first author are going to set up in \cite{ScaviaKubrak}. Using it, they plan to make explicit computations of both the de Rham and Hodge cohomology in other cases, that seem not yet accessible in any other way.
\end{rem}

In this paper we use a different approach, exploiting the idea of "approximation by proper schemes" only in the simplest non-proper case $\mstack X= B\mbb G_m$.  Starting from there we are able to show the following result below. Let's say that an Artin stack $\mstack X$ over $\mc O_{\mbb C_p}$ is \emdef{$\mbb F_p$-locally acyclic} if $\Upsilon_X$ is an equivalence. 
\begin{thm*}[\ref{thm:main_result}]
	Let $X$ be a smooth proper scheme over $\mathcal O_K$ equipped with an action of a reductive group scheme $G$. Then the quotient stack $[X/G]_{\mc O_{\mbb C_p}}$ is $\mbb F_p$-locally acyclic. 
\end{thm*}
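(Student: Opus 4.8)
The strategy is to reduce the statement about the quotient stack $[X/G]$ to a list of cases that are either directly provable or follow from already-established tools, using the structure theory of reductive groups. The key organizing principle is that $\mathbb{F}_p$-local acyclicity — the property that $\Upsilon_{\mstack X}$ is an equivalence — should be stable under the geometric operations that appear when one dévisses a reductive group. So the first task is to verify permanence properties of the class of $\mathbb{F}_p$-locally acyclic stacks: that it is closed under passing to fiber products over an $\mathbb{F}_p$-locally acyclic base (so in particular it behaves well with respect to the bar construction presenting $[X/G]$ from $X$ and copies of $G$), closed under taking quotients $[Y/H]$ along maps $H \to G$ of reductive groups when the corresponding statement is known for $G$, and — crucially — satisfies a descent-type statement allowing one to check it on a stratification or on an affine/proper cover. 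Since both $\RG_\et(\mstack X_{\mbb C_p},\mbb F_p)$ and $\RG_\et(\widehat{\mstack X}_{\mbb C_p},\mbb F_p)$ are defined by right Kan extension from affines, and $\Upsilon$ is compatible with this, these permanence statements reduce to manipulations with limits of the comparison map; the proper base change result \Cref{adic_proper_base_change} and the commutation with $\phi_X^{-1}$ from \Cref{commutation with psi} are the inputs here.

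Next I would handle the base case $\mstack X = B\mbb G_m$ — and more generally $B T$ for a split torus $T$ — by the "approximation by proper schemes" idea alluded to in \Cref{rem: bhatt-li-applications}: $B\mbb G_m$ is the colimit of the projective spaces $\mbb P^N$ (or rather $B\mbb G_m$ receives compatible maps from $\mbb P^N$ whose colimit computes its cohomology), each of which is smooth and proper, hence $\mathbb{F}_p$-locally acyclic, and one checks that $\Upsilon$ is compatible with the relevant (co)limit so that $\Upsilon_{B\mbb G_m}$ is an equivalence. The torus case follows since $BT \simeq (B\mbb G_m)^{\times r}$ and the class is closed under products. From here one bootstraps: for a split reductive $G$ with maximal torus $T$ and Borel $B$, the flag variety $G/B$ is smooth and proper, the map $[X/G] \to [X/B]$... — more precisely one uses the comparison $[X/B] \simeq [X \times^B G / G]$ together with $B \simeq T \ltimes U$ with $U$ unipotent, so that $BB \simeq BT$ (the unipotent radical contributes nothing) and the quotient by $G$ is related to the quotient by $B$ via the proper fibration with fiber $G/B$. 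This lets one replace $G$ by its maximal torus at the cost of a proper morphism, which the permanence properties absorb.

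The final assembly combines these: using that $X$ is smooth and proper, one writes $[X/G]$ via the simplicial bar resolution $[\,\cdots X \times G \times G \rightrightarrows X\times G \to X\,]$, applies the product permanence property to reduce each term to a statement involving $X$, $G$, and their powers, then handles the $BG$-type contributions by reduction to $BT$ as above and the remaining smooth proper pieces directly. I expect the main obstacle to be precisely the reduction from $G$ to $T$: unlike the situation for schemes, $\mathbb{F}_p$-local acyclicity "only holds globally and does not reduce to any local one" (as the introduction emphasizes), so one cannot simply work étale-locally on $[X/G]$, and the dévissage through $G/B$ and the unipotent radical must be carried out at the level of the global comparison map $\Upsilon$, keeping careful track of the interaction between Raynaud generic fibers and analytifications of the non-proper pieces $U$ and $T$ — the analytification of a torus is genuinely different from its Raynaud fiber, so it is only after taking the classifying stack (where the torus is "delooped" and approximated by proper spaces) that the discrepancy disappears. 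Verifying that all the permanence properties are strong enough to propagate $\mathbb{F}_p$-local acyclicity through this chain, rather than some weaker injectivity statement, is where the real work lies.
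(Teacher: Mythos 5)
Your overall arc — torus via approximation by projective spaces, then dévissage through the Borel and the flag variety — is correctly aligned with the paper's proof. But the final assembly step has a genuine gap that would sink the argument.

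You propose to write $[X/G]$ as the colimit of the bar resolution $\cdots \rightrightarrows X\times G\times G \rightrightarrows X\times G \rightarrow X$ and reduce to the terms $X\times G^n$. These terms are smooth affine (or quasi-affine) schemes, and smooth affine schemes over $\mc O_{\mbb C_p}$ are not $\mbb F_p$-locally acyclic: already $\mbb A^1$ fails, as \Cref{ex:F-p etale cohomology of rigid affine line} shows ($H^1_\et(\widehat{\mbb A}^1_{\mbb C_p},\mbb F_p)$ is huge while $H^1_\et(\mbb A^1_{\mbb C_p},\mbb F_p)=0$). So descent along the atlas $X\to [X/G]$ is exactly the kind of "reduce to a local statement" move you correctly flagged as impossible for $\ell=p$, and yet it is what your plan does in the end. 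The paper avoids this by taking the Čech nerve of the smooth \emph{proper} cover $[X/B]\to [X/G]$ (or $BB\to BG$ in the case $X=\Spec\mc O_K$), whose terms are quotients by $B$ of products of flag varieties — i.e.\ quotients of smooth \emph{proper} schemes by $B$, not affine schemes. That is the decisive structural choice your proposal does not make.

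Relatedly, the $[X/B]$ case is the genuinely hard step, and it cannot be handled by a Čech/bar argument at all (there is no cover of $[X/B]$ with $\mbb F_p$-locally-acyclic affine pieces). The paper instead pushes the constant sheaf forward along the smooth proper schematic map $[X/B]\to BB$, shows via \Cref{functoriality_of_loc_sys} that the resulting complex is a bounded-below local system, uses that $BB_{\mbb C_p}$ is \'etale simply connected (\Cref{ex: BG is etale simply-connected if G is connected}, \Cref{simply_connected}) so all such local systems are constant, and then reduces to the already-known $\mbb F_p$-local acyclicity of $BB$ via proper base change (\Cref{adic_base_change}) — this is \Cref{prop: Lambda-acyclicity translates via proper smooth maps}. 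Your proposal does not mention any local-system or simple-connectedness argument; the "permanence properties" you invoke (closure under fiber products, etc.) are not strong enough to absorb this step, because the obstruction is precisely that the fibers of $[X/B]\to BB$ carry \'etale cohomology and one has to control how it varies.

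One more small inaccuracy: you say $BB\simeq BT$ because "the unipotent radical contributes nothing." This is true at the level of all the cohomology theories in play, but it is not free — it is the content of Steps~2 and~3 of the paper's proof (the computation of $\RG_\Prism$ of $[\mbb A^n/T]$ for a conical $T$-action, \Cref{lem:Prisms A^n/T vs BT}, and then Čech descent along $BT\to BB$ using that each $\check C_n = [U^n/T]$ is such a quotient). One has to actually prove the vanishing, and for the Raynaud-generic-fiber side it goes through the prismatic/Hodge comparison; the analytification of $U$ is certainly not contractible in any naive sense.

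So: right skeleton, but replace the bar resolution of $[X/G]$ by the proper cover $[X/B]\to [X/G]$, and supply the local-system/simply-connectedness argument for $[X/B]\to BB$ — that is where the real proof lives.
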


\begin{rem}
The term "locally acyclic" comes form the interpretation of $\RG_\et(\widehat{\mstack X}_{\mbb C_p}, \mbb F_\ell)$ in terms of the cohomology of the nearby cycles sheaf (see \Cref{sec: Algebraic nearby cycles and local } and \Cref{prop: F-l-acyclicity in terms of the neraby cycles}) in the case when $\ell\neq p$. One can give a similar interpretation in the $\ell=p$ case as well using the recent concept of $\mr{arc_p}$-descent of \cite{BhattMathew_Arc} (see \Cref{rem: interpretation of Upsilon using arc-descent}). To clarify, by "locally" here we mean locally at $p$ (since our base is usually a local $p$-complete ring) but not locally on the stack $\mstack X$.
\end{rem}

Let us outline the proof of \Cref{thm:main_result}. Let $T\subset B \subset G$ be a maximal torus and a Borel subgroup of $G$. The proof consists of the following steps:

\textit{Step 1. $B\mbb G_m$.} In topology we have a nice presentation of $B\mbb C^\times \simeq BS^1$
as the infinite dimensional projective space $\mbb C\mbb P^\infty=\colim_n \mbb C\mbb P^n$. In the algebraic setting it is not true that $B\mbb G_m$ is the colimit of $\mbb P^n$'s; nevertheless, one can show that the map $\colim_n \mbb P^n \ra B\mbb G_m$, classifying the line bundle $\mc O(1)$, induces an equivalence on all cohomology theories of our interest (see \Cref{prismatoc_coh_of_Btori}). From this one deduces $\mbb F_p$-local acyclicity of $B\mbb G_m$ from one of $\mbb P^n$. Similarly one covers the case of $BT$.

\textit{Step 2. $[\mbb A^n/T]$ for a conical linear action (see \Cref{defn: conical action}).} Here by an explicit computation of Hodge cohomology we show that the projection $[\mathbb A^n/T]\ra BT$ induces an equivalence on all cohomology of our interest (see \Cref{subsect:prismatic_coh_of_An_quot_T}). Then $\mbb F_p$-local acyclicity of $[\mathbb A^n/T]$ reduces to the one of $BT$.

\textit{Step 3. $BB$.} Here one reduces to the previous case by considering the \v Cech simplicial object associated to the smooth cover $BT\ra BB$. Namely, the individual terms are exactly of the form $[\mbb A^n/T]$ (with conical linear action) and one gets that $BB$ is $\mbb F_p$-locally acyclic by smooth descent (see \Cref{subsect:case_of_BB}). 

\textit{Step 4. $[X/B]$ with $X$ smooth and proper.} This is the \textit{most} difficult step. In fact, the case of $[X/B]$ is where most of the technical tools developed in this paper (mainly, in \Cref{sect:etale_sheaves}) are being used. Most importantly, derived \'etale sheaves on stacks (\Cref{etale_sheaves_on_stacks}), derived local systems (\Cref{subsect:local systems}) and rigid analytic stacks with \'etale sheaves on them (\Cref{adic_local_systems}). The idea of the proof of $\mbb F_p$-acyclicity is to study (derived) pushforward of the constant sheaf $\mbb F_p$ under the morphism $[X/B]\ra BB$. The machinery from \Cref{sect:etale_sheaves} allows to compare it with the "analytification" of the analogous pushforward on the algebraic side (more precisely, for a proper morphism $f\colon \mstack X\ra \mstack Y$ we prove the "pushforward-commutes-with-analytification" statement, see \Cref{adic_base_change}). We also show that if the map is smooth and schematic, the derived pushforward of the constant sheaf on the algebraic side is a derived local system (\Cref{functoriality_of_loc_sys}), generalizing a classical theorem of Deligne. Then to finish the proof one notices that since $B_{\mbb C_p}$ is connected, $BB_{\mbb C_p}$ is \'etale simply connected and as such does not have any non-trivial abelian local systems. This allows to reduce to the $\mbb F_p$-local acyclicity of $BB$.

\textit{Step 5. $BG$ and $[X/G]$ with $X$ smooth and proper.} The statement for $BG$ is then obtained by considering the \v Cech simplicial object for the smooth cover $BB\ra BG$. Namely, its terms are given by quotients of products of flag varieties (that are smooth and proper) by the diagonal action of $B$ and we reduce to Step 4 by smooth descent. The case $[X/G]$ is covered similarly, by considering $[X/B]\ra [X/G]$ (see \Cref{sec: comparison for [X/P]}).

As a corollary of \Cref{thm:main_result} we establish the following generalized form of Totaro's conjecture:
\begin{thm*}[\ref{thm:main_application}] (Generalized form of Totaro's conjecture).
Let $X$ be a smooth proper scheme over $\mathcal O_K$ equipped with an action of a connected reductive group scheme $G$. Then for any isomorphism\footnote{We note that the resulting topological space $X(\mbb C)$ can depend on the choice of $\iota$ (see \cite{serre1964exemples}), however by Artin comparison $H^n(X(\mathbb C), \mathbb F_p)$, and more generally $H^n_{G(\mathbb C)}(X(\mathbb C), \mathbb F_p)$, does not.} $\iota\colon  \mbb C_p \xra{\sim} \mathbb C$ and all $n\in \mathbb Z_{\ge 0}$ we have an inequality
$$\dim_{\mathbb F_p} H_{G(\mathbb C)}^n(X(\mathbb C), \mathbb F_p) \le \dim_k H_{\dR}^n([X/G]_k/k),$$
where $H^n_{G(\mathbb C)}(X(\mathbb C), \mathbb F_p)$ denotes the $G(\mathbb C)$-equivariant singular cohomology of $X(\mathbb C)$.
\end{thm*}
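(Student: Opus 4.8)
The plan is to deduce the inequality by stringing together three results already available: the $\mbb F_p$-local acyclicity of $[X/G]$ (\Cref{thm:main_result}), the rigid-analytic form of Totaro's inequality (\Cref{inequality}), and Artin's comparison theorem in the form available for Artin stacks. To begin, I would fix the auxiliary data implicit in the statement: together with $\iota\colon \mbb C_p\xra{\sim}\mbb C$, an embedding $\mc O_K\inj \mc O_{\mbb C_p}$, so that $X$, $G$ and $[X/G]$ acquire models over $\mbb C$ and the topological group $G(\mbb C)$ and the space $X(\mbb C)$ are defined; by Artin comparison the cohomology groups in the statement do not depend on these choices (cf.\ the footnote to the statement). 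I would then record that $[X/G]$ is a smooth quasi-compact quasi-separated $1$-Artin stack over $\mc O_K$ (as $X$ is smooth proper and $G$ smooth affine over $\mc O_K$), and that it is moreover Hodge-proper over $\mc O_K$: the structure morphism $[X/G]\to BG$ is proper and $BG$ is Hodge-proper for $G$ reductive (see \Cref{section:Hodge_proper_stacks}). In particular \Cref{inequality} applies to $\mstack X=[X/G]$, whose special fibre is $[X/G]_k=[X_k/G_k]$.

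The argument itself is then the chain of (in)equalities, for each $n\in\mbb Z_{\ge 0}$:
\begin{align*}
\dim_{\mbb F_p} H^n_{G(\mbb C)}(X(\mbb C),\mbb F_p)
&=\dim_{\mbb F_p}H^n_{\et}([X/G]_{\mbb C_p},\mbb F_p)\\
&=\dim_{\mbb F_p}H^n_{\et}(\widehat{[X/G]}_{\mbb C_p},\mbb F_p)\\
&\le\dim_k H^n_{\dR}([X/G]_k/k),
\end{align*}
all three terms being finite in each degree (for the middle term by \Cref{intro: cor for Hodge-proper}; for the left one because $X(\mbb C)$ is compact and $BG(\mbb C)$ has finite-dimensional $\mbb F_p$-cohomology in each degree). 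The first equality I would obtain from Artin comparison applied to the algebraic stack $[X/G]$: the \'etale cohomology of the algebraic generic fibre of an Artin stack agrees with the singular cohomology of the underlying homotopy type of its complex points, and for $[X/G]$ that homotopy type is the Borel construction $X(\mbb C)\times^{h}_{G(\mbb C)}EG(\mbb C)$, whose $\mbb F_p$-cohomology is by definition $H^{\ast}_{G(\mbb C)}(X(\mbb C),\mbb F_p)$; concretely this reduces to the schematic Artin comparison via descent along the bar resolution $X\times G^{\bullet}\to[X/G]$, using the formalism of \'etale cohomology of stacks set up in \Cref{sec_etale_cohomology_of_stacks}. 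The second equality is exactly \Cref{thm:main_result}: the base change $[X/G]_{\mc O_{\mbb C_p}}$ is $\mbb F_p$-locally acyclic, i.e.\ $\Upsilon_{[X/G]}$ is an equivalence. The final inequality is \Cref{inequality} for the smooth Hodge-proper stack $[X/G]$ over $\mc O_K$. Concatenating the three lines yields the theorem.

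The genuine difficulty is concentrated entirely in \Cref{thm:main_result}, whose proof is the technical core of the paper (running through the reduction $B\mbb G_m \rightsquigarrow BT \rightsquigarrow [\A^n/T] \rightsquigarrow BB \rightsquigarrow [X/B] \rightsquigarrow BG,\,[X/G]$ outlined in \Cref{introsec: local acyclicity and the proof of Totaro's} and using the theory of derived \'etale local systems on rigid analytic stacks). Granting that input, the present deduction is formal; the only steps that still demand a little care are (a) the schematic-to-stacky passage for Artin comparison and the identification of the homotopy type of $[X/G](\mbb C)$ with the Borel construction, and (b) the fact that $[X/G]$ is Hodge-proper over $\mc O_K$, which rests on properness of $[X/G]\to BG$ together with Hodge-properness of $BG$. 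One should also remember that the connectedness hypothesis on $G$ enters, via the choice of a Borel subgroup, within the proof of \Cref{thm:main_result}.
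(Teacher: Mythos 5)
Your proposal is correct and follows exactly the paper's own (one-line) deduction: the chain $\dim H^n_{G(\mathbb C)}(X(\mathbb C),\mathbb F_p)=\dim H^n_{\et}([X/G]_{\mathbb C_p},\mathbb F_p)=\dim H^n_{\et}(\widehat{[X/G]}_{\mathbb C_p},\mathbb F_p)\le\dim H^n_{\dR}([X/G]_k/k)$, obtained by stringing together Artin comparison for stacks (\Cref{Artins_comparison_stacks}, \Cref{ex: BG anlytification}), the local acyclicity of $[X/G]$ (\Cref{thm:main_result}), and the Hodge-proper inequality (\Cref{inequality}). The one slight imprecision is that to apply \Cref{inequality} you invoke "$BG$ Hodge-proper, $[X/G]\to BG$ proper"; what the paper actually uses is \emph{cohomological} properness of $BG$ (which is the stronger notion closed under composition with proper maps, see \Cref{BP_is_SCP} and \Cref{coh_proper_stronger}), since Hodge-properness alone is not known to be stable under proper pushforward — but this is exactly what the paper proves in \Cref{sec: examples}, so the reference suffices.
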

\noindent Note that plugging in $X=\Spec \mc O_K$ one precisely gets the original Totaro's conjecture (\ref{intro_Totaros_conjecture}).

Moreover, from \Cref{thm:main_result} it follows that for $\mstack X=[X/G]$ with $X$ smooth and proper and $G$ reductive one has $H^n_\et(\widehat{\mstack X}_{\mbb C_p},\mbb Z_p)=H^n_\et({\mstack X}_{\mbb C_p},\mbb Z_p)$ and $H^n_\et(\widehat{\mstack X}_{\mbb C_p},\mbb Q_p)=H^n_\et({\mstack X}_{\mbb C_p},\mbb Q_p)$. In particular, the results from \Cref{secintro:p-adic Hodge theory for Hodge-proper stacks} apply with $H^n_\et(\widehat{\mstack X}_{\mbb C_p},\mbb Q_p)$ replaced by $H^n_\et({\mstack X}_{\mbb C_p},\mbb Q_p)$: namely, one has a $(\phi,G_K)$-equivariant isomorphism 
$$
H^n_\et({[X/G]}_{\mbb C_p}, \mathbb Q_p)\otimes_{\mbb Q_p} B_\crys \simeq H^n_{\crys}({[X/G]}_k/W(k))[\tfrac{1}{p}]\otimes_{K_0} B_\crys,
$$
a filtered $G_K$-equivariant isomorphism 
$$
H^n_\et({[X/G]}_{\mbb C_p}, \mathbb Q_p)\otimes_{\mbb Q_p} B_\dR \xymatrix{\ar[r]^\sim &} H^n_\dR([X/G]/K)\otimes_K B_\dR
$$
and the Hodge-Tate decomposition 
$$
H^n_{\et}({[X/G]}_{\mbb C_p},\mbb Q_p)\otimes_{\mbb Q_p}\mbb C_p \simeq \bigoplus_{i+j=n} H^j({[X/G]}_{K},\wedge^i\mbb L_{\mstack [X/G]/K})\otimes_{K}\mbb C_p(-i).
$$
This can be seen as a $G$-equivariant version of the classical rational $p$-adic Hodge theory.

It is also true that $BG$ is locally $\mbb F_\ell$-acyclic for $l\neq p$. This holds more generally for the quotients $[X/G]$ where $X$ itself is $\mbb F_\ell$-acyclic (see \Cref{prop: local F_l-acyclicity of quotients}); this now includes not only smooth proper $\mc O_{\mbb C_p}$-schemes, but also complements in those to smooth proper subschemes (\Cref{lem: example of F-l acyclic scheme}). 

\begin{rem}
	Let for simplicity $G$ be a split reductive group over $\mbb Z_p$. Note that the set $\widehat{G}_{\mbb Q_p}(\mbb Q_p)$ is identified with $G(\mbb Z_p)\subset G(\mbb Q_p)$ while $G^{\an}_{\mbb Q_p}(\mbb Q_p)=G(\mbb Q_p)$. Thus, as pointed out in \cite[Remark 2.3]{BhattLi}, one can think of $\widehat{G}_{\mbb Q_p}\subset G^{\an}_{\mbb Q_p}$ as a sort of "maximal compact subgroup". Over $\mbb C$, the topological group $G(\mbb C)$ contracts to its maximal compact subgroup $K\subset G(\mbb C)$, in particular, the natural map of classifying spaces $\lambda\colon BK\ra BG(\mbb C)$ is a homotopy equivalence. If $G$ is connected, this is equivalent to $\lambda$ inducing an equivalence on the singular $\mbb F_\ell$-cohomology for any $\ell$. Thus, one can interpret the simultaneous $\mbb F_p$ and $\mbb F_\ell$-local acyclicity of $BG$ as a $p$-adic analytic analogue of this fact.
\end{rem}

It would be interesting to understand what essential features of the geometry of $[X/G]$ were actually used in the proof of \Cref{thm:main_result}. In particular, it is not clear whether Hodge-properness of a smooth stack $\mstack X$ is enough to guarantee $\mbb F_p$-acyclicity. We don't think this is true in general, but we think that Hodge-properness at least should imply $\mbb Q_p$-acyclicity. Namely,

\begin{conj*}[\ref{conj:Hodge proper stacks}]
	Let $\mstack X$ be a smooth Hodge-proper stack over $\mc O_K$. Then the natural map 
	$$
	\Upsilon_{\mstack X}\colon \RG_\et({\mstack X}_{\mbb C_p},\mbb Q_p)\tto \RG_\et(\widehat{\mstack X}_{\mbb C_p},\mbb Q_p)
	$$
	is an equivalence.
\end{conj*}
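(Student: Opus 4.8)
The plan is to reduce the conjecture to an injectivity statement about $\Upsilon_{\mstack X}$ on cohomology, and then to extract that injectivity from Hodge-properness by a weight/boundary argument, with a d\'evissage to \Cref{thm:main_result} as a fallback. First, since $\phi_X^{-1}$ is an equivalence for any stack locally of finite type (\Cref{commutation with psi}), it suffices to show that $\psi_X^{-1}\colon\RG_\et(\mstack X_{\mbb C_p}^\an,\mbb Q_p)\to\RG_\et(\widehat{\mstack X}_{\mbb C_p},\mbb Q_p)$ is an equivalence, i.e.\ that $\Upsilon_{\mstack X}$ is an isomorphism on every $H^i$. I would then pin down both sides degreewise: on the target, \Cref{prop: rationally all cohomology has the same dimension} gives $\dim_{\mbb Q_p}H^i_\et(\widehat{\mstack X}_{\mbb C_p},\mbb Q_p)=\dim_K H^i_\dR(\mstack X/K)$, which is finite by Hodge-properness (\Cref{cor: prismatic cohomology are bounded below coherent} together with the de Rham comparison); on the source, after fixing $\iota\colon\mbb C_p\xrightarrow{\sim}\mbb C$, Artin comparison for Artin stacks (by smooth descent from the schematic case) identifies $H^i_\et(\mstack X_{\mbb C_p},\mbb Q_p)\simeq H^i_\sing(\mstack X(\mbb C),\mbb Q)\otimes_{\mbb Q}\mbb Q_p$, and the Betti--de Rham comparison for complex Artin stacks (Grothendieck's theorem plus smooth descent, exactly as used for $BG$ in the discussion before \Cref{intro_Totaros_conjecture}) together with flat base change identifies this with $H^i_\dR(\mstack X/K)\otimes_K\mbb C$. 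Hence the source and target of $\Upsilon_{\mstack X}$ have equal, finite dimension in every degree, and the conjecture becomes equivalent to the injectivity (equivalently, surjectivity) of $\Upsilon_{\mstack X}$ on each $H^i$.

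The crux is then this injectivity, and I see two routes. The first is a \emph{boundary/weight argument}: for $\ell\neq p$ the target $\RG_\et(\widehat{\mstack X}_{\mbb C_p},\mbb F_\ell)$ is the cohomology of the nearby-cycles sheaf and $\Upsilon$ is the specialization map (\Cref{prop: F-l-acyclicity in terms of the neraby cycles}), while for $\ell=p$ the analogous picture is furnished by $\mathrm{arc}_p$-descent (\Cref{rem: interpretation of Upsilon using arc-descent}); the hope is that Hodge-properness forces $\fib(\Upsilon_{\mstack X})$ to be rationally trivial. Concretely, $\Upsilon_{\mstack X}$ is $G_K$-equivariant, its target is a crystalline $G_K$-representation of matching dimension whose Hodge--Tate weights are pinned down by Theorems~\ref{thm: Fontaine's conjecture} and~\ref{thm: hodge-tate decomposition} in terms of the $H^j(\mstack X_K,\wedge^i\mbb L_{\mstack X_K/K})$, and one would try to argue that a nonzero, $G_K$-stable kernel is incompatible with this weight bookkeeping — Hodge-properness being exactly what makes the Hodge--Tate data finite and fully accounted for. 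The second route is a \emph{d\'evissage}: reduce a general smooth Hodge-proper $\mstack X$ to stacks of the form $[Y/H]$ with $Y$ smooth proper over $\mc O_K$ and $H$ reductive, for which $\mbb F_p$-local acyclicity — hence an equivalence of $\Upsilon$ with $\mbb Q_p$-coefficients — is already available by \Cref{thm:main_result}; since $\Upsilon$ is compatible with smooth descent (it is the totalization of the degreewise $\Upsilon_{U_n}$ for a smooth hypercover), any presentation of $\mstack X$ assembled from such pieces would finish the proof.

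The main obstacle is that Hodge-properness is a purely cohomological finiteness condition with no manifest geometric incarnation. Properness makes $\mstack X_{\mbb C_p}^\an\simeq\widehat{\mstack X}_{\mbb C_p}$ on the nose (\Cref{adic_proper_base_change}) and hence controls $\psi_X$ directly; Hodge-properness does not, so neither route above is routine. For the weight argument one lacks, at present, a general mechanism turning a finiteness statement about the $\wedge^i\mbb L_{\mstack X}$ into the vanishing of a boundary contribution to $p$-adic \'etale cohomology — the schematic inputs of \cite{BS_prisms} say nothing about the algebraic generic fiber in the non-proper case. For the d\'evissage it is not known which Hodge-proper stacks admit a presentation of the required shape, and the expectation voiced in \Cref{secintro:p-adic Hodge theory for Hodge-proper stacks} is that the class of Hodge-proper (or even merely "formally proper") stacks is genuinely larger than what one can reach this way. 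A sensible intermediate target is therefore to establish the conjecture first for all smooth "formally proper" stacks over $\mc O_K$, where one may hope to leverage the interpolation property of prismatic cohomology more directly, and only afterwards to aim at the full Hodge-proper case.
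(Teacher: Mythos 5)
The statement you are asked to prove is labelled a \emph{conjecture} in the paper, not a theorem: the authors explicitly say they do not know how to prove it and that they ``are planning to attack \Cref{conj:Hodge proper stacks} in the follow-up paper.'' The only evidence the paper offers is precisely what you reproduce in your first paragraph, namely the dimension count of \Cref{cor: dimensions of \'etale cohomology is the same} combined with finiteness from Hodge-properness. So there is no ``paper's own proof'' to compare against.

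Your proposal is, accordingly, not a proof but a strategy sketch, and to your credit you say so: both the weight/boundary route and the d\'evissage route are left with explicit gaps (``the hope is that Hodge-properness forces \ldots'', ``it is not known which Hodge-proper stacks admit a presentation of the required shape''). The reduction to degreewise injectivity via the finite equal dimensions is correct and matches the paper's stated evidence. But neither of your two routes closes the argument: the weight argument needs a mechanism, which you do not supply, that converts $G_K$-equivariance plus a list of Hodge--Tate weights into injectivity of a comparison map between two abstractly isomorphic crystalline representations (nothing prevents a $G_K$-equivariant map between isomorphic representations from having a nonzero kernel); and the d\'evissage cannot proceed because, as you note, there is no known presentation of a general Hodge-proper stack in terms of quotients $[Y/H]$ covered by \Cref{thm:main_result}. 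One small additional caution: your appeal to ``flat base change'' to identify $H^i_\sing(\mstack X(\mbb C),\mbb Q)\otimes\mbb C$ with $H^i_\dR(\mstack X/K)\otimes_K\mbb C$ needs the Betti--de Rham comparison for the \emph{generic fiber} $\mstack X_{\mbb C}$, which is fine, but the paper's own argument in \Cref{cor: dimensions of \'etale cohomology is the same} goes through $H^i_\dR(\mstack X_{\mbb C}/\mbb C)\simeq H^i_\sing(\Pi_\infty\mstack X(\mbb C),\mbb C)$ directly without the intermediate $\mbb Q_p$-step you introduce; the two are equivalent but yours adds an unnecessary comparison. In short, you have accurately reconstructed the paper's heuristic evidence and correctly flagged why it falls short of a proof; no further credit can be given because the statement remains open, both in your write-up and in the paper itself.
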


One reason for believing in this conjecture is that dimensions of individual cohomology of both sides are the same (see \Cref{cor: dimensions of \'etale cohomology is the same}). There are many more examples of Hodge-proper stacks even among the quotient stacks than what is covered by \Cref{thm:main_result} (see \Cref{sec:more examples}); one can also give some sufficient conditions for Hodge-properness in terms a $\Theta$-stratification (a useful notion introduced by Halpern-Leistner in \cite{HL-instability} and \cite{Halpern-Leistner_Theta}) without appealing to the quotient stack presentation (see \Cref{ex: Theta-stratified}). We are planning to attack \Cref{conj:Hodge proper stacks} in the follow-up paper. If \Cref{conj:Hodge proper stacks} is true, the results of \Cref{secintro:p-adic Hodge theory for Hodge-proper stacks} would hold true with $H^n_\et(\widehat{ \mstack X}_{\mbb C_p},\mbb Q_p)$ replaced by $H^n_\et({ \mstack X}_{\mbb C_p},\mbb Q_p)$.

Returning to the question about $\mbb F_p$-acyclicity, we think that some further assumptions that are of more geometric nature are needed. Here is a potential option. Following\footnote{Strictly speaking, the definition of formal properness in \cite{HL_RelaxedProperness} is more involved, but is morally similar, so we kept the name.} \cite{HL_RelaxedProperness}, let's call an $\mc O_K$-stack $\mstack X$ \textit{formally proper} if it satisfies ($p$-adic) formal GAGA: namely, the natural $p$-completion functor $\Coh(\mstack X)\xra{\sim}\Coh(\widehat{ \mstack X})$ (with $\Coh(\mstack X)$ denoting the bounded \textit{derived} category of coherent sheaves) is an equivalence. We conjecture that this property is enough:
\begin{conj*}[\ref{etale_conjecture}]
Let $\mstack X$ be a smooth formally proper stack over $\mc O_K$. Then the natural map  
	$$
\Upsilon_{\mstack X}\colon\RG_\et({\mstack X}_{\mbb C_p},\mbb F_p)\tto \RG_\et(\widehat{\mstack X}_{\mbb C_p},\mbb F_p)
$$
is an equivalence.
\end{conj*}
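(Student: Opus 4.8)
The plan is to bootstrap from the cases already settled in this paper — proper stacks (\Cref{adic_proper_base_change}) and quotients $[X/G]$ of a smooth proper scheme by a reductive group (\Cref{thm:main_result}) — by a dévissage along a stratification, packaged through a few stability properties of $\mbb F_p$-local acyclicity. First I would record these properties. Since both $\RG_\et((-)_{\mbb C_p},\mbb F_p)$ and $\RG_\et(\widehat{(-)}_{\mbb C_p},\mbb F_p)$ are right Kan extended from affines, they send smooth (and, using the analytification-compatibility of pushforward from \Cref{adic_base_change} together with proper descent on the algebraic side, also proper) hypercovers to limits; hence if $\mstack X_\bullet\to\mstack X$ is such a hypercover with every $\mstack X_n$ $\mbb F_p$-locally acyclic and $\Upsilon$ natural along it, then $\mstack X$ is $\mbb F_p$-locally acyclic. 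Likewise, if $\pi\colon\mstack Y\to\mstack X$ induces equivalences on \emph{both} versions of $\RG_\et((-)_{\mbb C_p},\mbb F_p)$ compatibly with $\Upsilon$ — for instance $\pi$ an affine fibration with a conical linear structure, as in Step 2 of the proof of \Cref{thm:main_result}, or a projective-bundle approximation as in Step 1 — then $\mbb F_p$-local acyclicity transfers between $\mstack Y$ and $\mstack X$. The genuinely new sheaf-theoretic input required is purity: for an open--closed decomposition of smooth stacks $\mstack U\hookrightarrow\mstack X\hookleftarrow\mstack Z$, a Gysin/excision triangle on the algebraic and on the Raynaud-analytic side, compatible with $\Upsilon$. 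This I would obtain by extending the formalism of \Cref{sect:etale_sheaves} (and its rigid-analytic counterpart) to a partial six-functor formalism for stacks; in the $\ell=p$ case one feeds in the $\mathrm{arc}_p$-descent description of $\RG_\et(\widehat{(-)}_{\mbb C_p},\mbb F_p)$ from \Cref{rem: interpretation of Upsilon using arc-descent} (following \cite{BhattMathew_Arc}), the $\ell\neq p$ template being \Cref{sec: Algebraic nearby cycles and local }.

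With this toolkit in place, the geometric step is to present a smooth formally proper stack $\mstack X$ as built, up to affine fibrations, conical quotients, and smooth/proper descent, from smooth proper schemes and classifying stacks $BG$ of reductive groups. For this I would invoke the theory of $\Theta$-stratifications of Halpern-Leistner (\cite{HL-instability,Halpern-Leistner_Theta}; cf. \Cref{ex: Theta-stratified}): a smooth formally proper stack with linearly reductive stabilizers ought to admit a finite $\Theta$-stratification $\mstack X=\mstack X^{\mathrm{ss}}\sqcup\bigsqcup_\alpha\mstack S_\alpha$ in which each unstable stratum $\mstack S_\alpha$ retracts, via an attracting-locus affine fibration, onto a "center" $\mstack Z_\alpha$ that is again smooth formally proper of strictly smaller dimension, while the open semistable stratum $\mstack X^{\mathrm{ss}}$ admits a projective good moduli space $Y$, reducing it — via the good moduli space morphism, which is étale-locally a quotient by a reductive group — to proper schemes and to \Cref{thm:main_result}. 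Running Noetherian induction over the strata and induction on $\dim\mstack X$, the purity triangles reduce $\mstack X$ to the $\mstack Z_\alpha$ (handled by the inductive hypothesis) and to $\mstack X^{\mathrm{ss}}$; the base cases are a point and $BG$ with $G$ reductive, the latter being \Cref{thm:main_result} with $X=\Spec\mc O_K$.

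The step I expect to be the main obstacle is exactly the one forcing one to use formal properness rather than just Hodge-properness: one needs $\mstack X^{\mathrm{ss}}$ to have a \emph{projective} good moduli space, the $\Theta$-stratification to be finite, and the centers $\mstack Z_\alpha$ to remain formally proper — and the structure theory of formally proper stacks is not yet developed enough to supply these facts in the required generality (a smooth formally proper stack need not be a global quotient, nor need it have a good moduli space without extra hypotheses, and the good moduli space morphism for $\mstack X^{\mathrm{ss}}$ need not reduce cleanly to the quotients covered by \Cref{thm:main_result}). In effect, a complete proof reduces to a structure theorem of the shape: \emph{every smooth formally proper stack over a field admits a finite locally closed stratification whose strata are, up to affine fibrations and conical quotients, of the form $[X/G]$ with $X$ smooth proper and $G$ reductive}. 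Granting such a statement, the dévissage above upgrades \Cref{thm:main_result} to \Cref{etale_conjecture}, the only remaining technical point being the $\ell=p$ purity/excision triangle for rigid-analytic stacks (its $\ell\neq p$ analogue in \Cref{sec: Algebraic nearby cycles and local } serving as a model). I note finally that the target $\RG_\et(\widehat{\mstack X}_{\mbb C_p},\mbb F_p)$ is, by \Cref{intro: comparisons for smooth stacks}, completely controlled by the prismatic cohomology of $\mstack X$ (which for formally proper $\mstack X$ is coherent), whereas the algebraic-generic-fiber side admits no such description — which is precisely why the geometric argument cannot be bypassed.
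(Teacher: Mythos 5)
This statement is a \emph{conjecture} in the paper (\Cref{etale_conjecture}); the authors explicitly state that the paper proves it only for proper stacks and for $[X/G]$ with $X$ smooth proper and $G$ reductive (\Cref{thm:main_result}), and otherwise leaves it open. Your proposal is therefore not being compared to an existing proof; it is an attempt to supply one, and you are candid that it does not succeed. The two acknowledged gaps are in fact the load-bearing ones, and I would underline that they are not minor technicalities.

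First, the structure theorem you posit — every smooth formally proper stack admits a finite stratification whose strata reduce, via affine fibrations and conical quotients, to $[X/G]$ with $X$ smooth proper and $G$ reductive — is at least as hard as \Cref{etale_conjecture} itself and is almost certainly false at the stated level of generality. Formal properness (formal GAGA for perfect complexes) places no constraint forcing the existence of a good moduli space, linearly reductive stabilizers, affine diagonal, or a $\Theta$-stratification with smooth centers; the $\Theta$-stratification toolkit you invoke (\Cref{ex: Theta-stratified}) requires hypotheses that formal properness alone does not supply. Moreover even when a good moduli space exists, the \'etale-local quotient presentation of Alper--Hall--Rydh produces quotients $[\Spec A / G]$ with $A$ neither proper nor conical over the base, so reduction to \Cref{thm:main_result} is not automatic even for $\mstack X^{\mathrm{ss}}$.

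Second, and more fundamentally for the d\'evissage, the purity/Gysin triangle you need does not exist on the $\ell=p$, Raynaud-generic-fiber side. The $\ell\neq p$ argument of \Cref{sec: Algebraic nearby cycles and local } leans on smooth base change for nearby cycles and on absolute purity, neither of which is available for $\mbb F_p$-coefficients in mixed characteristic $(0,p)$: the sheaf $i^!_{\mstack Z}\underline{\mbb F_p}$ on a closed fiber of relative codimension $d$ is \emph{not} $\underline{\mbb F_p}[-2d]$ (there is no relevant Tate twist to trivialize, and the answer involves syntomic or prismatic data). The $\mathrm{arc}_p$-descent description from \Cref{rem: interpretation of Upsilon using arc-descent} identifies $\fib\Upsilon_{\mstack X}$ with $\fib\bigl(\RG_\et(\mstack X,\mbb F_p)\ra\RG_\et(\mstack X_{\mc O_C/p},\mbb F_p)\bigr)$, but this does not yield an excision triangle along a stratification of the special fiber; indeed \Cref{ex:F-p etale cohomology of rigid affine line} shows that $\Upsilon_{\mbb A^1}$ already fails to be an equivalence, so an open stratum of a smooth formally proper stack can have $\Upsilon$ very far from an equivalence, and the errors would have to cancel in the excision sequence in a way your argument does not control. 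So the d\'evissage strategy, as designed, founders on the same obstruction that led the authors to prove $\mbb F_p$-local acyclicity of $BG$ by \emph{approximating} $BG$ by proper schemes rather than by stratifying it.

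A smaller but real issue: you assert that $\RG_\et((-)_{\mbb C_p},\mbb F_p)$ and $\RG_\et(\widehat{(-)}_{\mbb C_p},\mbb F_p)$ send \emph{proper} hypercovers to limits. The paper only establishes this for \'etale (hence smooth) hypercovers; descent along proper covers needs proper base change plus cohomological descent, which for rigid-analytic stacks in the $\ell=p$ setting is again not available off the shelf (\Cref{adic_proper_base_change} gives a base-change statement, not proper cohomological descent). In short: your plan correctly identifies formal properness and \Cref{thm:main_result} as the intended inputs, and the desired stability properties of $\mbb F_p$-local acyclicity are plausible targets, but the two things you flag as missing are precisely what a proof of \Cref{etale_conjecture} would have to supply, and at present neither is within reach.
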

Our work shows that \Cref{etale_conjecture} is true if $\mstack X=[X/G]$ for $X$ smooth and proper or if $\mstack X$ is itself proper. It's known that proper stacks are formally proper (\cite{lim2019grothendieck}, \cite{HL_RelaxedProperness}), besides that quite a few examples and properties of formally proper stacks (including $BG$ for $G$ split reductive) were given and proven in \cite{HL_RelaxedProperness}, and some work in this direction was also done by Ben Lim in his thesis \cite{lim2020algebraization}. In particular, a conical resolution is formally proper (see \Cref{rem:conical resolutions refined conjecture}) and thus, \Cref{etale_conjecture} would imply \cite[Conjecture 5.2.3]{Kubrak_Travkin}, as well as its refined version \Cref{conj: conical resolutions} (which is a natural analogue of Totaro's inequality in the conical resolution case). In general \Cref{etale_conjecture} would show that for any smooth formally proper stack $\mstack X$ one has the inequality 
$$
\dim_{\mathbb F_p} H_\sing^i(\Pi_{\infty}{\mstack X}(\mbb C), \mathbb F_p) \le \dim_{k} H^i_\dR(\mstack X_k/k),
$$
where $\Pi_{\infty}{\mstack X}(\mbb C)$ is the underlying homotopy type of $\mstack X_{\mbb C}$ (see \Cref{constr:underlying homotopy type}).

The motivation for \Cref{etale_conjecture} comes from the following observation: if $\mstack X$ is formally proper, then $\Upsilon_{\mstack X}$ at least induces an isomorphism on $H^1$. Indeed, both sides can be identified with the set of $\mbb F_p$-\'etale covers of the corresponding geometric stacks, which then can be reinterpreted purely in terms of the category of coherent sheaves on $\mstack X$ and $\widehat{\mstack X}$.

\subsection{Prismatic cohomology of $BG$ and $\Ainf$-Chern classes}
For more details see \Cref{sect_applications}. In \cite{Totaro_deRhamBG} Totaro showed that if $p$ is a non-torsion prime for a split reductive group $G$, then $H^*_\dR(BG/\mathbb Z_{(p)})$  is a polynomial ring on generators of degrees equal to 2 times the fundamental degrees of $G$ and as such is isomorphic to the singular cohomology $H^*_\sing(BG(\mathbb C),\mathbb Z_{(p)})$. However this isomorphism is not canonical and depends on a particular choice of polynomial generators.

From Totaro's result, under the same restriction on $p$ one can compute the prismatic cohomology ring of $BG$ as a graded algebra in the category of Breuil-Kisin modules. Namely, one has (see \Cref{cor:prismatic cohomology of BG as a ring})
$$
H^*_\Prism(BG/\mf S)\simeq \Sym_{\mf S}(\mf S\{-e_1\}[-2e_1]\oplus\cdots \oplus \mf S\{-e_n\}[-2e_n]),
$$
where $e_i$ are the so-called fundamental degrees of $G$.
Moreover, a choice of a uniformizer $\pi\in\mc O_K$ gives a natural way to associate to a given set of polynomial generators of $H^*_\sing(BG(\mathbb C),\mathbb Z_{p})$ a set of polynomial generators of $H^*_\dR(BG/\mc O_K)$ (see \Cref{rem: realizing de Rham as a deformation}). This makes Totaro's isomorphism more canonical, at least after replacing the localization $\mbb Z_{(p)}$ by the completion $\mbb Z_p$ (or, more generally $\mc O_K$). In particular when $\mc O_K=\mbb Z_p$ (or more generally $W(k)$) one preferable choice of $\pi$ could be $p$.

In the case of vector bundles (corresponding to $G=\GL_n$) one can also ask whether one has a nice theory of prismatic characteristic classes. We start to set up such a theory in \Cref{sec:characteristic classes}, defining certain polynomial generators $c_i^\Ainf$ of $H^*_\Prism(B\GL_n/\Ainf)$, however to make the construction independent of any choices, one needs to work with a twisted version (see \Cref{constr: Chern classes} for more details)
$$
H^*_{\Prism,{\mr tw}}(B\GL_n/\Ainf)\coloneqq\bigoplus_{i=0}^\infty \left(H^{2i}_{\Prism^{(1)}}(B\GL_n/\Ainf)\{i\}\right).
$$ Given a variety $X$ with a vector bundle $E$ one can then define the corresponding $i$-th $\Ainf$-Chern class of $E$ as the pull-back of $c_i^\Ainf$ under the corresponding map $X\ra B\GL_n$. Our construction of  $c_i^\Ainf$ uses the topological Chern classes as an input, and thus all the standard nice properties (like product formula) are more or less automatic.  As usual, one can also prove the "projective bundle formula", namely given a vector bundle $E\ra X$ of rank $n$, after a choice of a trivialization of a twist $\Ainf\{1\}$ one gets (see \Cref{splitting_principle} and Remarks \ref{rem: projective bundle formula}, \ref{rem: trivialized projective bundle formula})
$$
H^*_\Prism(\mbb P_X(E)/\Ainf)\simeq H^*_\Prism(X/\Ainf)[\xi]/(\sum_{i=0}^n c_i^\Ainf(E)\cdot \xi^{n-i}=0).
$$
We expect these classes to agree with de Rham and crystalline Chern classes via the de Rham and crystalline comparisons, in this work,  however, we only prove that they differ by an invertible constant in $\mbb Z_p$. We plan to return to this question in a sequel. This should also follow from the work of Bhatt-Lurie, where the local construction for the prismatic Chern classes will be given and which is now in preparation. Nevertheless, the comparison in the above form is still enough to get some non-trivial applications. For example, if $H^{2n}_{\Prism}(X/\Ainf)$ does not have $\mu$-torsion, then non-vanishing of the de Rham Chern classes of the reduction $E_k$ implies that the corresponding topological vector bundle $E(\mbb C)$ is non-trivial (see \Cref{cor: top Chern classes are  0 => de Rham are} and \Cref{rem: etale Chern are 0 is the same as topological}). We also prove a statement on the equality of $p$-adic valuations of Chern numbers in different theories (\Cref{prop: p-adic valuations of Chern numbers}); we plan to show that they are in fact equal in the sequel (and, again, it would follow from the work of Bhatt-Lurie). %We also speculate that the prismatic cohomology over $\Ainf$ provide a natural way of "integrating" a top class $\omega\in H^{2d}_\dR(X_k/k)$ (where $X_k$ is the reduction of a smooth proper scheme $X$ of relative dimension $d$ over $\mc O_K$) over the $C^\infty$-manifold $X(\mbb C)$ of its complex points (see \Cref{rem:integral of a form in char p over C-points}).

\subsection{Plan of the paper}

In this work we freely use the theory of higher geometric stacks developed in \cite[Section 1.3.3]{TV_HAGII} or \cite[Chapter 2.4]{GaitsRozI}. For the reader's convenience we review all necessary definitions and results about geometric stacks and quasi-coherent sheaves on them in \Cref{section_geometric_stacks_and_QCoh}. In a short \Cref{sec: Ainf} we recall the definition of Fontaine's infinitesimal period ring $\Ainf$ and some important elements of it.

We start by recalling the notion of Hodge-proper stacks (introduced by the authors in \cite{KubrakPrikhodko_HdR}) in \Cref{section:Hodge_proper_stacks}. The first two subsections (\ref{sect:Some properties of Coh}, \ref{sect:Hodge-proper stacks}) contain various technical results about bounded below coherent modules and basic properties of Hodge-proper stacks respectfully. In \Cref{sec: examples} we demonstrate Hodge-properness of the main example of interest in this work, namely a global quotient of a smooth proper scheme $X$ by an action of a parabolic subgroup $P$ of a reductive group. We then also give some other examples (like the ones coming from conical resolutions) in \Cref{sec:more examples}.

In \Cref{section:p_adic_cohomology_for_stacks} we extend the de Rham, crystalline and prismatic cohomology to stacks and study relations between them. In \Cref{sect:reminder on prismatic stuff} we recall some essential properties of prismatic cohomology functor for schemes. In \Cref{subsect:prism_for_stacks} we define the prismatic cohomology of stacks and prove some of its basic properties. Then in \Cref{sec: Hodge and de Rham cohomology} we recall the constructions of the Hodge and de Rham cohomology and prove the de Rham comparison for prismatic cohomology of stacks. In a short \Cref{sec:Hodge-Tate filtration} we construct the Hodge-Tate filtration on (the mod $I$-reduction) prismatic cohomology of stacks, which, as in the case of schemes, plays an important technical role in various proofs. In \Cref{subsect:crystalline_for_stacks} we give a definition of the crystalline cohomology of stacks, show that this definition agrees with Olsson's construction in the case of $1$-Artin stacks, and then prove the crystalline comparison.

In \Cref{sect:etale_sheaves} we develop some rudiments of \'etale sheaf theory on geometric stacks both in algebraic and rigid-analytic contexts that are necessary for this work. In \Cref{etale_sheaves_on_stacks} we introduce \'etale sheaves on Artin stacks and prove a few basic results about them, most importantly the smooth base change theorem (\Cref{et_smooth_basechange_algebraic}). In \Cref{subsect:local systems} we study the full subcategory of derived \'etale local systems inside it and show that the pushforward under a smooth proper schematic map sends (bounded below) local systems to local systems (\Cref{functoriality_of_loc_sys}). In \Cref{adic_local_systems} we develop a similar theory for geometric rigid-analytic stacks and extend some results of Huber relating \'etale sheaves on algebraic stacks and their analytifications. The highlight of this section is \Cref{adic_base_change} which states that for constructible sheaves on the algebraic side "analytification" commutes with pushforward under proper maps. The proofs in \Cref{sect:etale_sheaves} consist largely of systematic reductions to the case of schemes by descent, but they are not completely formal.

In \Cref{sec_etale_cohomology_of_stacks} we study \'etale cohomology of two variants of generic fibers: the usual algebraic generic fiber and the Raynaud's generic fiber. In \Cref{sect:etale_cohomology_on_stacks}, using the results from \Cref{adic_local_systems} we construct a natural comparison map $\Upsilon_{\mstack X}$ between \'etale cohomology of the two versions. We formulate a stacky analog of Artin's comparison (\Cref{Artins_comparison_stacks}) for the algebraic side and establish some properties of $\Upsilon_{\mstack X}$ (most importantly, \Cref{prop: Lambda-acyclicity translates via proper smooth maps}). We also formulate a conjecture (\ref{etale_conjecture}) about the generality in which this map is an equivalence. We call this property \emdef{local acyclicity}.  We then study its basic properties in the case of $\mbb F_\ell$-coefficients (for $\ell \neq p)$ in \Cref{sec: Algebraic nearby cycles and local }, where we relate it to the cohomology of the sheaf of the (algebraic) vanishing cycles. The last subsection (\ref{subsect:etale_comparison}) consists of several parts. In \Cref{subsubsect:etale_comparison} we prove the \'etale comparison which expresses the \'etale cohomology of the Raynaud generic fiber in terms of the prismatic cohomology and an enhancement of that in the Hodge-proper case. In \Cref{sssection:C_cris_for_stacks} we then deduce that the rational \'etale cohomology of Raynaud generic fiber of a Hodge-proper stack are crystalline Galois representations and give a formula for the associated Breuil-Kisin module in terms of prismatic cohomology. In \Cref{sssection: Hodge-Tate decomposition} we show the Hodge-to-de Rham degeneration for the generic fiber $\mstack X_K$ and also establish the Hodge-Tate decomposition for $H^n(\mstack X_{\mbb C_p},\mbb Q_p)\otimes_{\mbb Q_p}\mbb C_p$. In \Cref{sssect:inequality of dimension} we show that the length of $\mbb Z/p^n$-\'etale cohomology of the Raynaud generic fiber of a Hodge-proper stack $\mstack X$ is bounded above by the length of $W_n(k)$-crystalline cohomology of the special fiber of $\mstack X$, analogously to a similar result in \cite{BMS1}.

\Cref{sec: diff_etale} is devoted to the proof of local acyclicity of some global quotient stacks. In subsections (\ref{sec:de Rham cohomology of BG_m}, \ref{prismatoc_coh_of_Btori}) we compute the de Rham and prismatic cohomology of a classifying stack of a split torus $T$ by approximating it by products of projective spaces $\mathbb P^n$. In \Cref{subsect:prismatic_coh_of_An_quot_T} we show that the pullback along the structure map $[\mathbb A^n/T] \to BT$ induces an equivalence on the prismatic and \'etale cohomology of both generic fibers. We then use this in \Cref{subsect:case_of_BB} to deduce that the local acyclicity of the classifying stack $BB$ of a Borel subgroup $B$ in a reductive group $G$. In the very short \Cref{ssec:BPvsBL } we deduce that prismatic cohomology of $BP$ is equivalent to the prismatic cohomology of $BL$ for a parabolic subgroup $P\subset L$ and a Levi $L\subset P$ from an analogous result of Totaro. Finally, in subsection \ref{sec: comparison for [X/P]}, using the \'etale sheaf machinery developed in \Cref{sect:etale_sheaves}, we deduce local acyclicity of $[X/P]$, where $X$ is smooth proper scheme and $P\subset G$ is a parabolic subgroup. By combining the results of this and the previous sections we deduce the generalized form of Totaro's conjecture, see \Cref{thm:main_application}.

Finally, in \Cref{sect_applications} we give several other applications. In \Cref{sec: de Rham and prismatic cohomology of BG} we compute the prismatic cohomology of $BG$ for $G$ split reductive under the assumption that $p$ is non-torsion: it is (non-canonically) isomorphic to the free symmetric algebra on $\oplus_i \mathfrak S\{-e_i\}[-2e_i]$ in the category of Breuil-Kisin modules, where $e_i$ are the so-called fundamental degrees of $G$. A choice of a uniformizer $\pi\in\mc O_K$ then gives a preferable choice of polynomial generators (see \Cref{rem: realizing de Rham as a deformation}) which realizes the de Rham cohomology of $BG$ as a concrete deformation of the singular cohomology. In the next two subsections (\ref{sec:characteristic classes}, \ref{sec:Integrals and Chern numbers}) we start to set up the theory of $\Ainf$-valued Chern classes. We establish some comparison results with \'etale, crystalline, and de Rham Chern classes up to an invertible constant from $\mathbb Z_p$. This then gives an analogous compatibility for Chern numbers in all the theories. In the last \Cref{sec: conical resolutions} we discuss some application to conical resolutions studied by Travkin and the first author in \cite{Kubrak_Travkin}.

\subsection{Notations and conventions}\label{sect:notations}
\begin{enumerate}[wide,itemindent=*]
\item We will freely use the language of higher categories, modeled e.g. by quasi-categories of \cite{Lur_HTT}. If not explicitly stated otherwise all categories are assumed to be $(\infty,1)$ and all (co-)limits are homotopy ones. The $(\infty, 1)$-category of Kan complexes will be denoted by $\Type$ and we will call it \emph{the category of spaces}. By $\Lan_i F$ and $\Ran_i F$ we will denote left and right Kan extensions of a functor $F$ along $i$ (see e.g. \cite[Definition 4.3.2.2]{Lur_HTT} for more details).

\item For a commutative ring $R$ by $\DMod{R}$ we will denote the canonical $(\infty, 1)$-enhancement of the triangulated unbounded derived category of the abelian category of $R$-modules $\UMod{R}$. All tensor product, pullback and pushforward functors are implicitly derived.

\item In this work by Artin stacks we always mean (higher) Artin stacks in the sense of \cite[Section 1.3.3]{TV_HAGII} or \cite[Chapter 2.4]{GaitsRozI}: these are sheaves in \'etale topology admitting a smooth $(n-1)$-representable atlas for some $n\ge 0$ (an inductively defined notion, see \Cref{subsect: Artin stacks} for more details). We stress that we (mostly) work with non-derived Artin stacks, i.e. they are defined on the category of \emph{ordinary} commutative rings. When we need to emphasize a precise dependence on $n$ (usually in inductive arguments) we say that $\mstack X$ is an $n$-Artin stack. Sometimes we also need to impose some finiteness conditions on a stack (like quasi-compact, finite type, et.c.). For a reminder see \Cref{sec:stacks_finiteness conditions}.

\item For a stack $\mstack X$ we will denote by $\QCoh(\mstack X)$ the category of \emph{quasi-coherent sheaves on $\mstack X$} defined as the limit $\lim_{\Spec A \to \mstack X} \DMod{A}$ over all affine schemes $\Spec A$ mapping to $\mstack X$ (see \cite[Chapter 3.1]{GaitsRozI} for more details). Note that $\QCoh(\mstack X)$ admits a natural $t$-structure such that $\mathcal F \in \QCoh(\mstack X)^{\le 0}$ if and only if $x^*(\mstack F) \in \DMod{A}^{\le 0}$ for any $A$-point $x\in \mstack X(A)$. Moreover, by \cite[Chapter 3, Corollary 1.5.7]{GaitsRozI}, if $\mstack X$ is an Artin stack then $\QCoh(\mstack X)$ is left- and right-complete (i.e. Postnikov's and Whitehead's towers converge) and the truncation functors commute with filtered colimits. See \Cref{sec:Quasi-coherent sheaves} for more details.

\item For an affine group scheme $G$ over a ring $R$, given a representation $M$ (i.e. a comodule over the corresponding Hopf algebra $R[G]$) we denote by $\RG(G,M)\in \DMod{A}$ the \textit{rational cohomology} complex of $G$, namely the derived functor of $G$-invariants $M\mapsto M^G$. By flat descent, for $G$ flat over $R$, the abelian category $\mr{Rep}(G)^\heartsuit\coloneqq \Rep_G(\DMod{R})^\heartsuit$ is identified with $\QCoh(BG)^\heartsuit$ and $\RG(G,M)\simeq\RG(BG,M)$. More generally, if $G$ acts on an affine variety $X=\Spec A$, the bounded below category $\QCoh^+([X/G])$ can be identified with the (bounded below) derived category of $G$-equivariant $A$-modules (see \Cref{derived_vs_derived}). 

\item We also use a lot the formalism of cotangent complex $\mbb L_{\mstack X/R}$ and its exterior powers. See \Cref{sec: cotangent complex}. 

\item In the paper we also use $I$-adic completions and these are usually assumed to be derived (we refer to \cite[Tag 091N]{StacksProject} for a recollection of basic properties of derived completions). In all considerations the completion ideal $I$ will be (at least locally) finitely generated. We denote the category of derived $I$-complete $A$-modules by $\DMod{A}_{\widehat I}$. We also use notions of $I$-completely (faithfully) flat modules and modules of some $I$-complete Tor-amplitude. Namely, we say that a complex $M\in \DMod{A}$ has \textit{$I$-complete Tor-amplitude $[a,b]$} if for any $I$-torsion module $N$ the (derived) tensor product $M\otimes_A N$ only has cohomology in degrees $[a,b]$. $M$ is called \textit{$I$-completely flat} if it has $I$-complete Tor-amplitude $[0,0]$ and \textit{$I$-completely faithfully flat} if the (automatically classical) $A/I$-module $M\otimes_A A/I$ is faithfully flat.
\end{enumerate}

\paragraph{Acknowledgments.}
We would like to thank Roman Bezrukavnikov, Bhargav Bhatt, Chris Brav, Nikolay Konovalov, Daniel Halpern-Leistner, Shizhang Li, Ben Lim, Davesh Maulik, Matthew Morrow, Sasha Petrov, Peter Scholze, Burt Totaro and Bogdan Zavyalov for fruitful and encouraging discussions during our work on this project. We would also like to thank Georgy Belousov, Chris Brav, Nikolay Konovalov, Sasha Petrov and Alexandra Utiralova for their useful comments on the draft of our text. Special thanks goes to Peter Scholze for looking through the whole draft and making many helpful remarks about various parts of the text. 

The first author would like to express his gratitude to the Massachusetts Institute of Technology for the wonderful time that he spent there as a graduate student and where most of his work in relation to this project was carried out.
The second author was partially supported by Laboratory of Mirror Symmetry NRU HSE, RF Government grant, ag. \textnumero 14.641.31.0001.

\section{Hodge-proper stacks}\label{section:Hodge_proper_stacks}
In this short section we recall the definition and basic results about what we call Hodge-proper stacks introduced in \cite{KubrakPrikhodko_HdR}. The first subsection contains various technical results about bounded below coherent modules. In the second subsection we recall the definition and basic properties of Hodge-proper and cohomologically proper stacks. In the third subsection we prove Hodge-properness of the main example in this work, namely a global quotient of a smooth proper scheme $X$ by an action of a parabolic subgroup $P$ of a reductive group. Finally, in the last subsection we give some other examples of Hodge-proper stacks mainly following \cite[Section 3]{KubrakPrikhodko_HdR}. For a recollection on Artin stacks and quasi-coherent sheaves on them we refer the reader to \Cref{section_geometric_stacks_and_QCoh}.

\subsection{Some properties of \texorpdfstring{$\Coh^+$}{Coh+}}\label{sect:Some properties of Coh}
Fix a Noetherian base ring $R$. Recall that the category $\DMod{R}$ has a natural $t$-structure.
\begin{defn}\label{defn: bounded below coherent sheaves}
	As usual we will denote by $\Coh^+(R)$ the full subcategory spanned by complexes $X$ of $R$-modules such that $H^{\ll 0}(X) \simeq 0$ and such that $H^i(X)$ are finitely generated $R$-modules for all $i\in \mathbb Z$. Following the usual notations we will call such complexes \emdef{bounded below coherent}.
\end{defn}

\noindent We will frequently use the following basic properties of $\Coh^+(R)$:
\begin{prop}[\cite{KubrakPrikhodko_HdR}, Proposition 1.2.3]\label{nperf_basics}
	Let $R$ be a Noetherian ring. Then:
	\begin{enumerate}
		\item The category $\Coh^+(R)$ is closed under finite (co)limits and retracts. In particular $\Coh^+(R)$ is a stable subcategory of $\DMod{R}$.
		
		\item For each $n\in \mathbb Z$ the category $\Coh^{\ge n}(R) \coloneqq \Coh^+(R) \cap \DMod{R}^{\ge n}$ is closed under totalizations.
	\end{enumerate}
	
\end{prop}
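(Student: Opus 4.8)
The one tool that does all the work here is the long exact cohomology sequence attached to a fiber sequence in $\DMod{R}$ (with respect to the standard $t$-structure), together with the elementary fact that over a Noetherian ring the finitely generated modules form a full abelian subcategory of $\UMod{R}$ closed under submodules, quotients, extensions and direct summands. So the plan is: deduce part (1) directly from the long exact sequence, and then feed part (1) into a convergence analysis of the totalization tower to get part (2).

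For part (1): since $\Coh^+(R)$ is a full subcategory of the stable $\infty$-category $\DMod{R}$ containing $0$, it is enough to check closure under shifts and under fibers; closure under cofibers, and hence the assertion that $\Coh^+(R)$ is a stable subcategory (so closed under all finite limits and colimits), then follows formally. Closure under shifts is immediate from $H^i(X[m])=H^{i+m}(X)$. Given a fiber sequence $X\to Y\to Z$ with $Y,Z\in\Coh^+(R)$, I would read off from
\[
\cdots\to H^{i-1}(Z)\to H^i(X)\to H^i(Y)\to H^i(Z)\to\cdots
\]
that $H^i(X)$ is an extension of a submodule of $H^i(Y)$ by a quotient of $H^{i-1}(Z)$; both are finitely generated because $R$ is Noetherian, and $H^i(X)=0$ for $i\ll 0$ since the two neighbouring terms vanish there. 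For retracts, if $X$ is a retract of $Y\in\Coh^+(R)$ then each $H^i(X)$ is a direct summand of the finitely generated module $H^i(Y)$, hence finitely generated, and boundedness below is clear.

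For part (2): let $X^\bullet$ be a cosimplicial object of $\DMod{R}$ with each $X^m\in\Coh^{\ge n}(R)$, and write $\Tot X^\bullet=\lim_m \Tot_m X^\bullet$ for the totalization as the limit of the tower of partial totalizations. The key input is the standard family of fiber sequences
\[
N^m X^\bullet[-m]\longrightarrow \Tot_m X^\bullet\longrightarrow \Tot_{m-1} X^\bullet,
\]
where $N^m X^\bullet$ (the normalized cochains, an iterated fiber of the codegeneracies) is built from $X^0,\dots,X^m$ by finitely many limits, so $N^m X^\bullet\in\Coh^{\ge n}(R)$ by part (1) and therefore $N^m X^\bullet[-m]\in\Coh^{\ge n+m}(R)$. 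Starting from $\Tot_0 X^\bullet=X^0$, part (1) and induction on $m$ give $\Tot_m X^\bullet\in\Coh^{\ge n}(R)$ for every $m$. Then I would observe that $\DMod{R}^{\ge k}$ is closed under extensions (long exact sequence) and under sequential limits (Milnor exact sequence), so the iterated fiber $\fib(\Tot_{m'}X^\bullet\to\Tot_m X^\bullet)$, which is assembled from the $N^j X^\bullet[-j]$ with $m<j\le m'$, lies in $\DMod{R}^{\ge n+m+1}$; passing to the limit over $m'$ gives $\fib(\Tot X^\bullet\to\Tot_m X^\bullet)\in\DMod{R}^{\ge n+m+1}$. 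Hence $H^i(\Tot X^\bullet)\cong H^i(\Tot_m X^\bullet)$ as soon as $m>i-n$, so $H^i(\Tot X^\bullet)$ is finitely generated; and $\Tot X^\bullet\in\DMod{R}^{\ge n}$ because it is a sequential limit of objects of $\DMod{R}^{\ge n}$. Therefore $\Tot X^\bullet\in\Coh^{\ge n}(R)$.

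The only genuine work is the bookkeeping in part (2): getting the direction of the shift $[-m]$ in the $\Tot$-tower fiber sequences right — this is precisely what forces the tower to converge and the cohomology to stabilize in each fixed degree — and tracking connectivity correctly through the sequential limits via the Milnor sequence. Everything else reduces to the long exact sequence plus Noetherianity. In the write-up one would either cite the dual Dold–Kan/$\Tot$-tower formalism for the displayed fiber sequences or spell out $N^m X^\bullet$ explicitly as the total fiber of the cube of codegeneracies of $X^\bullet$.
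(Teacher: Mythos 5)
The paper itself does not prove this proposition; it is cited verbatim as \cite[Proposition 1.2.3]{KubrakPrikhodko_HdR}, so there is no internal proof to compare against. Your argument is, however, correct in substance and is exactly the kind of $\Tot$-tower bookkeeping the paper uses internally for closely related facts (e.g.\ \Cref{totalization_of_coconectives} and \Cref{left_exact_preserve_totalizations_of_uniformly_bounded_below}). Part~(1) is complete and correct: the long exact sequence together with Noetherianity of $R$ handles fibers, and the retract and shift cases are immediate.

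In part~(2) there is one imprecise step. You justify $N^m X^\bullet\in\Coh^{\ge n}(R)$ by saying it is ``built from $X^0,\dots,X^m$ by finitely many limits, so $N^m X^\bullet\in\Coh^{\ge n}(R)$ by part~(1).'' Part~(1) only asserts closure of $\Coh^+(R)$ under finite limits; it says nothing about the coconnectivity bound. A generic finite limit of objects of $\DMod{R}^{\ge n}$ (e.g.\ a fiber) need not stay in $\DMod{R}^{\ge n}$, so this line as written does not prove what you need. The correct observation is that by the stable Dold--Kan decomposition $N^m X^\bullet$ is a direct summand of $X^m$ (the codegeneracies are split), and both $\Coh^+(R)$ (by part~(1)) and $\DMod{R}^{\ge n}$ are closed under retracts, hence so is their intersection $\Coh^{\ge n}(R)$. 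Equivalently, one can bypass $N^m$ entirely and use directly that $\fib(\Tot_m X^\bullet\to\Tot_{m-1}X^\bullet)$ is a retract of $X^m[-m]$, which is the formulation the paper itself uses in the proof of \Cref{totalization_of_coconectives}. With that correction the rest of your convergence argument --- bounding the connectivity of $\fib(\Tot X^\bullet\to\Tot_m X^\bullet)$ via extensions and the Milnor sequence, then stabilizing $H^i$ for $m>i-n$ --- is correct and standard.
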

\begin{ex}
	Let $G$ be a topological group, such that the underlying space of $G$ is a finite CW-complex (e.g. $G$ might be just a finite discrete group). Then the co-chains complex $C^*(BG, R)$ is a bounded below coherent $R$-module. To see this note that $BG \simeq | B_\bullet G|$, where $B_n G :=  G^{\times n}$, and hence $C^*(BG, R) \simeq \Tot C^*(G^{\times \bullet}, R)$. Now, by assumption on $G$, for all $n\in \mathbb Z_{\ge 0}$ the complexes $C^*(G^n, R)$ are perfect (and hence bounded below coherent), so we conclude by the second part of \Cref{nperf_basics}.
\end{ex}

The property of being bounded below coherent may be checked flat locally:
\begin{prop}\label{ncoh_is_fpqc_local}
	Let $R\ra R^\prime$ be a faithfully flat map. Then a module $X\in \DMod{R}$ is bounded below coherent if and only if $H^i(X\otimes_R R^\prime)$ is a finitely presentable $R^\prime$-module for all $i\in \mathbb Z$ and $H^{\ll 0}(X\otimes_R R^\prime) \simeq 0$.
	\begin{rem}
		We stress that $R^\prime$ is not assumed to be Noetherian. If $R^\prime$ is Noetherian, the proposition says that $X$ is bounded below coherent $R$-module if and only if $M\otimes_R R^\prime$ is bounded below coherent $R^\prime$-module.
	\end{rem}
	
	\begin{proof}
		By flatness of $R^\prime$ over $R$, we have $H^i(X\otimes_R R^\prime) \simeq H^i(X)\otimes_R R^\prime$. Hence it is enough to prove that a (classical) $R$-module $M$ is finitely presentable if and only if $M\otimes_R R^\prime$ is finitely presentable over $R^\prime$. This is proved e.g. in \cite[Tag 03C4, Lemma 10.82.2(2)]{StacksProject}.
	\end{proof}
\end{prop}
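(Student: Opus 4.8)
The plan is to reduce the statement to the corresponding fact about a single classical module and then invoke fppf descent of finite presentation. First I would observe that since $R'$ is flat over $R$, the (derived) tensor product $X\otimes_R R'$ commutes with the cohomology functors, so that $H^i(X\otimes_R R')\simeq H^i(X)\otimes_R R'$ for every $i$ (as already noted in the remark). Because $R'$ is moreover faithfully flat, a classical $R$-module $N$ vanishes if and only if $N\otimes_R R'$ does; applying this to $N=H^i(X)$ shows that $X$ is bounded below (i.e.\ $H^{\ll 0}(X)\simeq 0$) if and only if $X\otimes_R R'$ is. Hence the proposition is equivalent to the claim that, for each $i$, the $R$-module $H^i(X)$ is finitely generated --- equivalently, since $R$ is Noetherian, finitely presented --- if and only if $H^i(X)\otimes_R R'$ is a finitely presented $R'$-module.

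This reduces everything to the following purely classical assertion, with no Noetherianity on $R'$: an $R$-module $M$ is finitely presented if and only if $M\otimes_R R'$ is a finitely presented $R'$-module. The "only if" direction is formal: a presentation $R^{\oplus m}\to R^{\oplus n}\to M\to 0$ base-changes, by right-exactness of $-\otimes_R R'$, to a presentation of $M\otimes_R R'$. For the converse --- the only real content --- I would argue in two steps. First, finite generation descends: writing $M$ as the filtered union of its finitely generated submodules $M_\alpha$ and using flatness of $R'$ to identify $M\otimes_R R'\simeq\colim_\alpha (M_\alpha\otimes_R R')$ with injective transition maps, one sees that a finite generating set of $M\otimes_R R'$ already lies in some $M_\alpha\otimes_R R'$, whence $M_\alpha\otimes_R R'\simeq M\otimes_R R'$ and then $M_\alpha\simeq M$ by faithful flatness. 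Second, choosing a surjection $R^{\oplus n}\surj M$ with kernel $K$ and tensoring up (exact, by flatness) exhibits $K\otimes_R R'$ as the kernel of the surjection $(R')^{\oplus n}\surj M\otimes_R R'$; since the target is finitely presented and $(R')^{\oplus n}$ is finitely generated, $K\otimes_R R'$ is finitely generated, hence so is $K$ by the first step, and therefore $M$ is finitely presented.

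The only genuinely non-formal ingredient is this last classical descent statement for finite presentation along a faithfully flat ring map; it is standard, and I would simply cite it (e.g.\ \cite[Tag 03C4]{StacksProject}) rather than reproduce the two-step argument above. Everything else is bookkeeping with the $t$-structure on $\DMod{R}$ together with elementary properties of faithfully flat base change, so I do not anticipate any real obstacle --- the subtlety to keep in mind is only that $R'$ is not assumed Noetherian, which is precisely why the descended condition must be phrased in terms of finite \emph{presentation} rather than finite generation.
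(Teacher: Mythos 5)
Your proposal is correct and follows essentially the same route as the paper: use flatness to commute cohomology with base change, reduce to the classical descent statement that finite presentation of a module can be checked after faithfully flat base change, and cite the Stacks Project for that fact. You additionally spell out the faithfully flat vanishing argument for boundedness and sketch the proof of the cited lemma, but the strategy and the key input are identical.
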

Bounded below coherent complexes are also preserved under base change if we assume that the map is of finite Tor-amplitude:
\begin{lem}\label{lem: base change preserves coh^+ sometimes}
	Let $R\ra R'$ be a map of Noetherian rings of finite Tor-amplitude. Then given $X\in \Coh^+(R)$ we have $X\otimes_R R'\in \Coh^+(R')$.
\end{lem}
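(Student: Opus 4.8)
The plan is to reduce the statement to the classical case of ordinary modules by using the $t$-structure on $\DMod{R}$ and $\DMod{R'}$ together with the finite Tor-amplitude hypothesis. First I would record that since $R \to R'$ has finite Tor-amplitude, say contained in $[-d, 0]$ for some $d \geq 0$, tensoring with $R'$ is a $t$-exact-up-to-a-shift functor: if $X \in \DMod{R}^{\geq n}$ then $X \otimes_R R' \in \DMod{R'}^{\geq n - d}$. Combined with boundedness below of $X$, this immediately gives $H^{\ll 0}(X \otimes_R R') \simeq 0$, so the only real content is that each cohomology group $H^i(X \otimes_R R')$ is a finitely generated $R'$-module.

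The key step is then a devissage on the cohomology of $X$. Since $X \in \Coh^+(R)$ is bounded below, I would use the Postnikov filtration (or equivalently induct on the number of nonzero cohomology groups, using that the truncation functors $\tau^{\leq m}$, $\tau^{\geq m}$ preserve $\Coh^+(R)$ because $R$ is Noetherian): every $X \in \Coh^+(R)$ is built from finitely many — in each truncated range — shifts $M[-j]$ of finitely generated $R$-modules $M$ via fiber sequences. By the stability of $\Coh^+(R')$ under finite (co)limits (\Cref{nperf_basics}(1)) and the long exact sequence in cohomology, it suffices to treat the case $X = M[0]$ for $M$ a single finitely generated $R$-module. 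In that case $H^i(M \otimes_R R') = \Tor^R_{-i}(M, R')$, which vanishes for $i \notin [-d, 0]$ and, for each $i$ in that range, is a subquotient of a finitely generated $R'$-module: indeed, choosing a resolution of $M$ by finite free $R$-modules (which exists as $R$ is Noetherian and $M$ is finitely generated), the complex $M \otimes_R R'$ is computed by a complex of finite free $R'$-modules, whose cohomology is finitely generated since $R'$ is Noetherian.

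Assembling these, for a general $X \in \Coh^+(R)$ bounded in the range $[a, \infty)$, I would note $H^i(X \otimes_R R')$ only depends on the truncation $\tau^{\leq i+d}X$, which has finitely many nonzero cohomology groups, each finitely generated over $R$; running the devissage through that finite filtration and using the Noetherian hypothesis on $R'$ at each stage shows $H^i(X \otimes_R R')$ is finitely generated. The main obstacle — though it is minor — is bookkeeping the Tor-amplitude shift correctly so that the induction on the (a priori infinite) Postnikov tower of $X$ only ever needs finitely many stages to compute any fixed $H^i(X \otimes_R R')$; once that is set up, everything is a routine application of the long exact sequence and the Noetherian hypothesis. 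Alternatively, and perhaps more cleanly, one can invoke \Cref{ncoh_is_fpqc_local} is not available here (the map need not be flat), so the $t$-structure devissage above seems to be the right route.
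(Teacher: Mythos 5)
Your proof is correct and is essentially the same as the paper's: shift so that $X$ is coconnective, use the finite Tor-amplitude to observe that each $H^i(X\otimes_R R')$ only depends on a finite Postnikov truncation of $X$, devissage through the finite filtration to reduce to $X$ a discrete finitely generated $R$-module, and finish by tensoring a finite-free resolution of that module with $R'$ and invoking Noetherianness of $R'$.
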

\begin{proof}
	Without loss of generality we can assume $X\in \Coh^+(R)^{\ge 0}$. The tensor product $X\otimes_R R'$ has a filtration $(\tau^{\le n} X)\otimes_R R'$ induced by the Postnikov filtration on $X$. Let the Tor-amplitude of $R\ra R'$ be equal to $[-k+1,0]$; then $H^n(X\otimes_R R')$ is the same as $H^n((\tau^{\le n+k} X)\otimes_R R')$ for some big enough $n$. In particular, $X\otimes_R R'$ is bounded below and it is enough to show that each cohomology module is finitely generated. Filtering $\tau^{\le n+k} X$ further, by \Cref{nperf_basics}(1), it is then enough to show the statement for $X$ classical: $X\in\Coh^+(R)^{\heartsuit}$. Any such $X$ has a (possibly infinite) free resolution $\ldots \ra R^{\oplus n_2} \ra R^{\oplus n_1}\ra 0 \simeq X$ with all terms being free $R$-modules of finite rank. Tensoring up with $R'$ we get a resolution of $X\otimes_R R'$ with the same property, in particular we see that all cohomology modules of $X\otimes_R R'$ are finitely generated. 
\end{proof}

Let $I \subseteq R$ be an ideal defining a Cartier divisor in $\Spec R$ (i.e. $I$ is a line bundle on $\Spec R$). Recall that a complex $X\in \DMod{R}$ is called (derived) $I$-complete if the natural map $X\ra \prolim X\otimes_R R/I^n$ is an equivalence.
Later we will need the following $\bmod{\ I}$ criterion of coherentness:
\begin{prop}\label{neraly_coherent_module_xi}
	Let $R$ be a Noetherian ring and let $I \subseteq R$ be an ideal defining a Cartier divisor in $\Spec R$ and assume that $R$ is derived $I$-complete. Then an $I$-complete complex $X \in \DMod{R}$ is bounded below coherent if and only if $X/I := X \otimes_R R/I$ is bounded below coherent as an $R/I$-complex of module. 
\end{prop}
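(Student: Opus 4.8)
The plan is to reduce the statement to an induction up the Postnikov tower combined with the classical $\bmod\ I$ criterion for coherence of a single $R$-module. The "only if" direction is essentially \Cref{lem: base change preserves coh^+ sometimes} applied to the map $R\to R/I$, which has Tor-amplitude $[-1,0]$ since $I$ defines a Cartier divisor (locally $I$ is generated by a single nonzerodivisor, so $R/I$ has the two-term free resolution $0\to I\to R\to R/I\to 0$); so only the "if" direction requires work. So assume $X$ is $I$-complete and $X/I\coloneqq X\otimes_R R/I$ is bounded below coherent over $R/I$.

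First I would establish boundedness below of $X$ itself. Since $X$ is derived $I$-complete, the derived Nakayama lemma (or rather its consequence: an $I$-complete complex with $X/I\simeq 0$ is itself zero, and more precisely $H^i(X/I)=0$ for all $i<n$ forces $H^i(X)=0$ for all $i<n$) gives that $X\in\DMod{R}^{\ge n}$ as soon as $X/I\in\DMod{R/I}^{\ge n}$. Concretely, $H^n(X)\otimes_R R/I$ and $\mathrm{Tor}_1^R(H^{n+1}(X),R/I)$ both inject into / surject onto pieces of $H^*(X/I)$, so the lowest nonvanishing cohomology of $X$ cannot be below that of $X/I$. This lets me normalize to $X\in\DMod{R}^{\ge 0}$ with $X/I\in\Coh^+(R/I)^{\ge 0}$.

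Next I would prove that each $H^i(X)$ is a finitely generated $R$-module by induction on $i$, using the Postnikov filtration. The base case $i=0$: from the fiber sequence $\tau^{\ge 1}X\to X\to H^0(X)$ and the fact that $\tau^{\ge 1}X\in\DMod{R}^{\ge 1}$, tensoring with $R/I$ and taking $H^0$ gives a surjection $H^0(X/I)\twoheadrightarrow H^0(X)\otimes_R R/I = H^0(X)/I$; since $H^0(X/I)$ is finitely generated over $R/I$, the module $H^0(X)/I$ is finitely generated over $R/I$, hence finitely generated over $R$. Now $H^0(X)$ is $I$-complete (a retract-in-the-appropriate-sense of the $I$-complete $X$ — more carefully, $\tau^{\le 0}X = H^0(X)$ is $I$-complete because $X$ and $\tau^{\ge 1}X$ are, the latter by the same lowest-cohomology argument), and derived Nakayama for $I$-complete modules says an $I$-complete module that is finitely generated mod $I$ is finitely generated. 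Hence $H^0(X)\in\UMod{R}$ is finitely generated, and being a module over the Noetherian ring $R$, it is coherent. For the inductive step, suppose $H^0(X),\dots,H^{i-1}(X)$ are all finitely generated; then $\tau^{\le i-1}X\in\Coh^+(R)$, so by the "only if" direction $(\tau^{\le i-1}X)\otimes_R R/I\in\Coh^+(R/I)$, and from the fiber sequence $\tau^{\ge i}X\to X\to \tau^{\le i-1}X$ tensored with $R/I$ one reads off (using the long exact sequence and that $R\to R/I$ has Tor-amplitude $[-1,0]$, so only $H^0$ and $H^1$ of the tensor products interact) that $H^i(\tau^{\ge i}X\otimes_R R/I)$ sits in a short exact sequence with a subquotient of $H^i(X/I)$ and a $\mathrm{Tor}_1$-term built from the already-controlled $H^{i-1}$; in particular it is finitely generated over $R/I$. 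But $H^i(\tau^{\ge i}X\otimes_R R/I)$ surjects onto $H^i(\tau^{\ge i}X)\otimes_R R/I = H^i(X)/I$ exactly as in the base case, so $H^i(X)/I$ is finitely generated, and $\tau^{\le i}X$ is $I$-complete (again because $X$ and $\tau^{\ge i+1}X$ are), hence $H^i(X)$ is $I$-complete; derived Nakayama then yields that $H^i(X)$ is finitely generated, hence coherent over the Noetherian ring $R$. This closes the induction and shows $X\in\Coh^+(R)$.

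The main obstacle is the bookkeeping in the inductive step: ensuring at each stage that the relevant truncation $\tau^{\le i}X$ (and its complementary $\tau^{\ge i+1}X$) is genuinely derived $I$-complete, so that Nakayama applies, and correctly tracking the long exact sequence contributions of the $\mathrm{Tor}_1$ terms coming from the nonflatness of $R/I$ over $R$. Both points are standard — $I$-completeness of truncations follows because $\DMod{R}_{\widehat I}$ is closed under the relevant (co)limits and the lowest/highest cohomology comparison pins down where truncations of an $I$-complete complex are $I$-complete, and the $\mathrm{Tor}_1$ terms are automatically finitely generated once the lower cohomologies are — but they are the place where care is needed. Everything else is a formal consequence of derived Nakayama plus Noetherianity.
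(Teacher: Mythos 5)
Your overall strategy is essentially the same as the paper's: establish coconnectivity of $X$ via derived Nakayama applied term-by-term to the universal coefficients sequence, then run an induction up the Postnikov tower, at each stage using the fact that a derived $I$-complete $R$-module $M$ is finitely generated whenever $M/I$ is (which the paper isolates as a lemma, essentially an enhanced derived Nakayama). Your ``only if'' direction via the Tor-finiteness base-change lemma is a valid, slightly different packaging of the same content (the paper directly reads it off the universal coefficients sequence).

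There is, however, a directional slip you should fix. The truncation triangle is $\tau^{\le i-1}X\to X\to\tau^{\ge i}X$, not $\tau^{\ge i}X\to X\to\tau^{\le i-1}X$: the natural map goes $\tau^{\le 0}X\to X$ (counit of the colocalization), and there is no natural map $X\to\tau^{\le 0}X = H^0(X)[0]$ for coconnective $X$. Consequently, tensoring the correct triangle $H^0(X)[0]\to X\to\tau^{\ge 1}X$ with $R/I$ and taking the long exact sequence produces the universal coefficients short exact sequence
\begin{equation*}
0\tto H^0(X)/I\tto H^0(X/I)\tto H^1(X)[I]\otimes_R I\tto 0,
\end{equation*}
so $H^0(X)/I$ is a \emph{submodule} of $H^0(X/I)$, not a quotient of it. Your claimed surjection $H^0(X/I)\twoheadrightarrow H^0(X)/I$ does not hold in general. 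Fortunately this does not damage the argument: $H^0(X)/I$ is a submodule of the finitely generated $R/I$-module $H^0(X/I)$ over a Noetherian ring, hence finitely generated, which is exactly what the paper uses and exactly what you feed into the Nakayama-type step. The same correction applies at each inductive stage.

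One more small point: for the $I$-completeness of the truncations $\tau^{\le i}X$ and $\tau^{\ge i+1}X$, the cleanest justification is the fact (used in the paper, citing \cite[Proposition 3.4.4]{BS_proetale}) that a complex is derived $I$-complete if and only if all its cohomology modules are; since $X$ is $I$-complete, so is every $H^j(X)$, hence so is every truncation. Your ``lowest-cohomology argument'' for $\tau^{\ge 1}X$ being $I$-complete is circular as phrased (you derive completeness of one truncation from completeness of the other, and vice versa); appealing to the stated fact cuts through this.
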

The proof consists of a series of lemmas. Below we assume that $R$ and $I$ are as in the statement of the proposition. We start with the following connectivity estimate: 
\begin{prop}\label{connectiveity_modulo}
Let $X$ be an $I$-complete complex of $R$-module. Then:
\begin{enumerate}[label=(\arabic*)]
	\item $X$ is $n$-connective if and only if $X/I$ is $n$-connective.
	
	\item If $X/I$ is $n$-coconnective, then so is $X$.
\end{enumerate}

\begin{proof}
	We will only prove the first part, the proof of the second one being similar but slightly easier. Since a tensor product of an $n$-connective and an $m$-connective modules is $(n+m)$-connective we have that if $X$ is $n$-connective then so is $X/I \simeq X\otimes_R R/I$. Conversely assume $X/I$ is $n$-connective for some $n\in \mathbb Z$. Shifting if necessary we can assume $n=0$. Now using a co-fiber sequence $X/I \to X/I^k \to X/I^{k-1}$ we find $H^{>0}(X/I^k) \simeq 0$ for all $k\ge 1$. Since $X\simeq \prolim X/I^k$ for any $i$ we have the Milnor exact sequence
	$$\xymatrix{0 \ar[r] & {\prolim}^1 H^{i-1}(X/I^k) \ar[r] & H^i(X) \ar[r] & \prolim H^i(X/I^k) \ar[r] & 0.}$$
	Hence to prove $H^{>0}(X) \simeq 0$ it is enough to prove that the ${\prolim}^1$-term on the left vanishes. Note that $R/I^k$ is of $\Tor$-amplitude $1$ (it admits a $2$-terms resolution $I^k \to R$), so for $i>1$ we have $H^{i-1}(X/I^k) \simeq 0$ for all $k$. For $i=1$ notice that since $H^{-1}(X/I) \simeq 0$ the maps $H^0(X/I^k) \to H^0(X/I^{k-1})$ are surjective for any $k$, hence ${\prolim}^1 H^0(X/I^k) \simeq 0$ by Mittag-Leffler.
\end{proof}
\end{prop}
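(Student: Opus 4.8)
The plan is to translate both assertions into statements about the ordinary cohomology modules of $X$ and of its reductions $X/I^k \coloneqq X\otimes_R R/I^k$. The three tools I would set up first are: (i) since $I$ cuts out a Cartier divisor, each $I^k$ is invertible and $R/I^k$ has the two-term resolution $0\to I^k\to R\to R/I^k\to 0$, so in particular $I^{k-1}/I^k\coloneqq I^{k-1}\otimes_R R/I$ is a line bundle, hence a (faithfully) flat module, over $R/I$; (ii) tensoring the short exact sequences $0\to I^{k-1}/I^k\to R/I^k\to R/I^{k-1}\to 0$ with $X$ gives cofiber sequences $X\otimes_R(I^{k-1}/I^k)\to X/I^k\to X/I^{k-1}$, and by associativity of the derived tensor product $X\otimes_R(I^{k-1}/I^k)\simeq (X/I)\otimes_{R/I}(I^{k-1}/I^k)$, which therefore has exactly the same connectivity and coconnectivity bounds as $X/I$; (iii) derived $I$-completeness of $X$ (for the finitely generated $I$) gives $X\simeq\prolim X/I^k$ and hence, for every $i$, a Milnor exact sequence $0\to {\prolim}^1 H^{i-1}(X/I^k)\to H^i(X)\to \prolim H^i(X/I^k)\to 0$.

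For part (1), the forward implication is immediate: $X/I\simeq X\otimes_R R/I$ is the derived tensor product of an $n$-connective object with the $0$-connective module $R/I$, hence $n$-connective. For the converse, after shifting I may assume $n=0$, so $H^{>0}(X/I)\simeq 0$. First I would show by induction on $k$ that $X/I^k$ is $0$-connective: the base case is the hypothesis, and in the cofiber sequence from (ii) the outer terms $(X/I)\otimes_{R/I}(I^{k-1}/I^k)$ and $X/I^{k-1}$ are $0$-connective (the latter by induction), so the middle term is as well. Feeding this into the Milnor sequence: for $i\ge 2$ both $H^{i-1}(X/I^k)$ and $H^i(X/I^k)$ vanish, so $H^i(X)\simeq 0$; for $i=1$ one is left with $H^1(X)\simeq{\prolim}^1 H^0(X/I^k)$. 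The last point is that this ${\prolim}^1$ vanishes, which I would deduce from Mittag--Leffler after observing that the transition maps $H^0(X/I^k)\to H^0(X/I^{k-1})$ are surjective --- the obstruction sits in $H^1\big(X\otimes_R(I^{k-1}/I^k)\big)\simeq H^1(X/I)\otimes_{R/I}(I^{k-1}/I^k)$, which is zero since $X/I$ is $0$-connective. Hence $H^{>0}(X)\simeq 0$, i.e. $X$ is $0$-connective.

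Part (2) runs along the same lines and is slightly easier, because the boundary cohomological degree causes no trouble. After shifting assume $n=0$, so $H^{<0}(X/I)\simeq 0$; the same inductive argument through the cofiber sequences of (ii) (now using that tensoring a $0$-coconnective $R/I$-complex with a flat module stays $0$-coconnective) shows $X/I^k$ is $0$-coconnective for all $k\ge 1$. Then for every $i<0$ one also has $i-1<0$, so both $H^{i-1}(X/I^k)$ and $H^i(X/I^k)$ vanish, and the Milnor sequence gives $H^i(X)\simeq 0$ with no ${\prolim}^1$-estimate required. Thus $X$ is $0$-coconnective.

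I expect the only genuine obstacle to be the converse of part (1), specifically the vanishing of the ${\prolim}^1$-term in degree $1$: this is precisely where derived $I$-completeness (rather than mere completeness ``modulo $I$'') is used, and it hinges on the surjectivity of the transition maps, which in turn comes from the connectivity of the line-bundle terms $X\otimes_R(I^{k-1}/I^k)$. The remaining care is bookkeeping: checking that $I^{k-1}/I^k$ is flat over $R/I$ (where the Cartier-divisor hypothesis enters) and that $X\simeq\prolim X/I^k$, both of which are standard.
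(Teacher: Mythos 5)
Your proposal is correct and follows essentially the same route as the paper's proof: the cofiber sequences $X\otimes_R(I^{k-1}/I^k)\to X/I^k\to X/I^{k-1}$ to propagate the (co)connectivity bound to all $X/I^k$, the Milnor exact sequence from $X\simeq\prolim X/I^k$, and Mittag--Leffler for the remaining ${\prolim}^1$-term in degree $1$ via surjectivity of the transition maps on $H^0$. Your treatment is in fact slightly more careful than the paper's in identifying the fiber term as $X/I$ twisted by the line bundle $I^{k-1}/I^k$ and in locating the obstruction to surjectivity in $H^1$ of that term.
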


Note that if $X$ and $Y$ are derived $I$-complete, then so is the cofiber $Y/X$ of any map $X\to Y$.
\begin{lem}\label{a_bit_surprising}
Let $X,Y$ be connective $I$-complete $R$-modules and let $\phi\colon X \to Y$ be a map inducing surjection on $H^0$ modulo $I$. Then $H^0(\phi)\colon H^0(X)\to H^0(Y)$ is also surjective.

\begin{proof}
It is enough to prove that $H^0(Y/X) \simeq 0$. But by \cref{connectiveity_modulo}, this is equivalent to $(Y/X)/I \simeq (Y/I)/(X/I)$ being connected, which is equivalent to the assumption that $\pi_0(\phi/I)$ is surjective.
\end{proof}
\end{lem}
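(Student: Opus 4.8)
The plan is to repackage the claim as a connectivity estimate for $\cofib(\phi)$ and then invoke \Cref{connectiveity_modulo}. Set $C \coloneqq Y/X = \cofib(\phi)$. Since $X$ and $Y$ are connective, the long exact cohomology sequence of the triangle $X \to Y \to C \to X[1]$ shows that $C$ is connective and that $H^0(C) \simeq \coker\bigl(H^0(\phi)\bigr)$; hence it is enough to prove $H^0(C) \simeq 0$, i.e. that $C$ is $(-1)$-connective.

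First I would note that $C$ is derived $I$-complete — it is the cofiber of a map between $I$-complete modules, as recorded in the remark preceding the lemma — so \Cref{connectiveity_modulo}(1) applies to $C$ and reduces its $(-1)$-connectivity to the $(-1)$-connectivity of $C/I \simeq C \otimes_R R/I$. Then, since $- \otimes_R R/I$ is exact and commutes with cofibers, $C/I \simeq \cofib(X/I \to Y/I)$; as before this is connective with $H^0(C/I) \simeq \coker\bigl(H^0(\phi/I)\bigr)$, which vanishes precisely because $\phi$ was assumed to induce a surjection on $H^0$ modulo $I$. Thus $C/I$ is $(-1)$-connective, hence so is $C$, and we are done.

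I do not expect a genuine obstacle here: essentially all of the work is hidden in \Cref{connectiveity_modulo}(1), whose point is exactly that connectivity of a derived $I$-complete module may be tested after base change to $R/I$. The only things to keep straight are the cohomological grading conventions — in particular that $- \otimes_R R/I$ is right $t$-exact, so it cannot decrease connectivity — and the fact, used to legitimize the application of \Cref{connectiveity_modulo}, that the cofiber of a map of $I$-complete modules is again $I$-complete.
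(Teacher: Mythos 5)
Your argument is correct and follows the paper's proof step for step: one reduces to showing $H^{\ge 0}(\cofib\phi)=0$, observes that the cofiber inherits derived $I$-completeness, and invokes \Cref{connectiveity_modulo}(1) to test this connectivity after reduction modulo $I$, where it holds by hypothesis. One cosmetic slip: in the paper's cohomological indexing, where a higher index means more connective (so ``connected'' is $1$-connective, i.e.\ $H^{\ge 0}=0$), the condition you want on $\cofib\phi$ should be called $1$-connective, not $(-1)$-connective --- a labeling mismatch that does not affect the substance of the argument.
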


Below, if $M$ is a classical module we also denote by $M/I$ the \emph{ordinary} reduction modulo $\xi$. We hope this doesn't lead to any confusion.
\begin{lem}
In notations above let $X\coloneqq M[0]$ be a discrete derived $I$-complete $R$-module. Then $X$ is coherent if and only if $H^0(X/I)\simeq M/I$ is coherent as an $R/\xi$-module.

\begin{proof}
	If $X$ is coherent, i.e. $M$ is a finitely generated $R$-module, then $M/I$ is also finitely generated over $R/I$. For the other implication we need to prove that $M$ is finitely generated as an $R$-module. Let $\phi\colon R^{\oplus n}\to M$ be any map which induces a surjection $R^{\oplus n}/I \surj M/I$ (such a map exists since $M/I$ is a finitely generated $R/I$-module). By applying the previous lemma to $\phi[0]\colon R^{\oplus n}[0]\to M[0]$, we get that the induced map $H^0(\phi[0]) \colon R^{\oplus n} \to M$ is also surjective.
\end{proof}
\end{lem}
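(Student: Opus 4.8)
The plan is to prove exactly the discrete version: for a classical derived $I$-complete $R$-module $M$, the complex $M[0]$ is coherent (i.e.\ $M$ is finitely generated over $R$) if and only if $H^0(M[0]/I) \simeq M/I$ is finitely generated over $R/I$. The ``only if'' direction is immediate, since a surjection $R^{\oplus n}\surj M$ induces a surjection $R^{\oplus n}/I \surj M/I$, exhibiting $M/I$ as finitely generated over $R/I$. So the content is in the converse.

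For the converse, suppose $M/I$ is a finitely generated $R/I$-module. Choose generators and lift them to elements $m_1,\dots,m_n\in M$; this defines an $R$-module map $\phi\colon R^{\oplus n}\to M$ whose reduction modulo $I$ is surjective. The key point is that both $R^{\oplus n}$ and $M$ are connective (discrete, hence in degree $0$) and derived $I$-complete — for $R^{\oplus n}$ this uses that $R$ is assumed derived $I$-complete and finite direct sums of $I$-complete modules are $I$-complete, and for $M$ it is the hypothesis. Then I would apply \Cref{a_bit_surprising} to $\phi[0]\colon R^{\oplus n}[0]\to M[0]$: the hypothesis there is precisely that $\phi$ induces a surjection on $H^0$ modulo $I$, which holds by construction since $H^0(\phi[0]/I) = \phi\otimes_R R/I$ is surjective. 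The lemma then yields that $H^0(\phi[0])\colon R^{\oplus n}\to M$ is surjective, i.e.\ $m_1,\dots,m_n$ generate $M$, so $M$ is finitely generated and $M[0]$ is coherent.

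There is essentially no main obstacle here beyond bookkeeping: the real work has been packaged into \Cref{a_bit_surprising}, which in turn rests on the connectivity estimate \Cref{connectiveity_modulo} (whose proof invokes the Milnor sequence and Mittag--Leffler). The one subtlety worth flagging explicitly is the distinction between the derived tensor product $M[0]\otimes_R R/I$ and the ordinary reduction $M/I$: a priori $M[0]\otimes_R R/I$ could have an $H^{-1}$-term given by the $I$-torsion $\Tor_1^R(M,R/I)$ (recall $R/I\simeq (I\to R)$ since $I$ is a line bundle), but we only ever use $H^0(M[0]\otimes_R R/I)\simeq M/I$, which is canonically the ordinary cokernel, so the argument is unaffected. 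This matches the convention announced just before the statement. Thus the proof is a direct application of the preceding lemma, and I would present it in three lines as above.
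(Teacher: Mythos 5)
Your proof is correct and follows exactly the paper's own argument: lift generators of $M/I$ to a map $\phi\colon R^{\oplus n}\to M$ and apply \Cref{a_bit_surprising} to $\phi[0]$ to conclude surjectivity of $H^0(\phi[0])$. The extra remarks on completeness of $R^{\oplus n}$ and the derived-versus-classical reduction are accurate bookkeeping but do not change the route.
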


\begin{proof}[Proof of \Cref{neraly_coherent_module_xi}]
 Let $X\in \DMod{R}$. For each $i\in \mathbb Z$ we have the universal coefficients exact sequence
$$\xymatrix{0\ar[r] & H^i(X)/I \ar[r] & H^i(X/I) \ar[r] & H^{i+1}(X)[I]\otimes_R I \ar[r] & 0.}$$
Since coherent modules are closed under extensions, it follows that $X/I$ is bounded below coherent if $X$ is.

Conversely, assume that $X/I$ is bounded below coherent. By \Cref{connectiveity_modulo}, $X$ is bounded below. By shifting if necessary, we can assume that $H^{<0}(X) \simeq 0$. We claim that $H^0(X)$ is coherent. This follows from the previous lemma applied to the module $M\coloneqq H^0(X)$ by noting that $M/I \simeq H^0(X)/I$ is a submodule of a coherent module $H^0(X/I)$, hence coherent.

Finally, if we denote $X^\prime\coloneqq \tau^{>0} X$, we see that $X^\prime/I\simeq \cofib(H^0(X)[0]/I \to X/I)$, being a cofiber of a map of bounded below coherent $R/I$-modules, is bounded below coherent. By the same argument as above $H^0(X^\prime[1]) \simeq H^1(X)$ is finitely generated. Going on, one sees that all $H^i(X)$ are finitely generated.
\end{proof}
We end the subsection with a couple of general facts that will be useful later. Let $I\subset R$ be a finitely-generated ideal and assume $R$ is Noetherian and derived (eq. classically) $I$-adically complete. Recall that by {\cite[Proposition 3.4.4]{BS_proetale}} $X\in \DMod{R}$ is derived $I$-adically complete if and only if all its cohomology modules $H^i(X)$ are.

\begin{lem}\label{lem: bounded below coherent are complete}
	Assume that $R$ is as above. Then any $X\in \Coh^+(R)$ is derived $I$-complete.
	
	\begin{proof}
By \cite[Lemma 3.4.14]{BS_proetale} the classical derived $I$-complete modules form a Serre abelian category. Thus, since $H^i(X)$ is a finitely generated $R$-module, it is derived $I$-complete for any $i\in \mbb Z$. We then are done by \cite[Proposition 3.4.4]{BS_proetale} mentioned above.
	\end{proof}
\end{lem}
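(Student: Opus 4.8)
The plan is to reduce the statement about the complex $X$ to a statement about its individual cohomology modules, and then to invoke the cited structural facts about derived $I$-complete modules over a Noetherian $I$-adically complete ring. First I would recall the two inputs from \cite{BS_proetale}: by \cite[Proposition 3.4.4]{BS_proetale} a complex $X\in\DMod{R}$ is derived $I$-complete if and only if each cohomology module $H^i(X)$ is derived $I$-complete, and by \cite[Lemma 3.4.14]{BS_proetale} the full subcategory of (classically/derived) $I$-complete $R$-modules inside $\UMod{R}$ is a Serre abelian subcategory. So it suffices to check that for every $i\in\mbb Z$ the module $H^i(X)$, which is finitely generated over $R$ by the definition of $\Coh^+(R)$, is derived $I$-complete.

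The key step is therefore: a finitely generated module over a Noetherian ring $R$ that is (derived, equivalently classically) $I$-adically complete is itself derived $I$-complete. I would argue this as follows. Since $R$ is derived $I$-complete, $R$ lies in the Serre subcategory of \cite[Lemma 3.4.14]{BS_proetale}; hence every finitely generated free module $R^{\oplus n}$ lies in it (closure under finite direct sums), and every finitely generated module, being a quotient of such an $R^{\oplus n}$, lies in it as well (closure under cokernels). Thus $H^i(X)$ is derived $I$-complete for each $i$. Applying \cite[Proposition 3.4.4]{BS_proetale} in the reverse direction, we conclude that $X$ is derived $I$-complete, as desired.

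I do not expect any genuine obstacle here: the content is entirely bookkeeping on top of the cited results, and the only point requiring minimal care is that $R$ being Noetherian guarantees $H^i(X)$ is finitely generated (so that the Serre-subcategory argument applies) and, conversely, that over a Noetherian ring "classically $I$-adically complete" and "derived $I$-complete" agree for finitely generated modules — which is exactly what \cite[Proposition 3.4.4]{BS_proetale} together with the Noetherian hypothesis supply. Everything else is formal closure properties of the Serre subcategory.
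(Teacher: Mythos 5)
Your proof is correct and follows the same route as the paper: invoke \cite[Proposition 3.4.4]{BS_proetale} to reduce to the cohomology modules, and then use \cite[Lemma 3.4.14]{BS_proetale} (Serre subcategory) to conclude that each finitely generated $H^i(X)$ is derived $I$-complete. The only difference is that you spell out the closure argument (free modules, then quotients) that the paper leaves implicit.
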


\begin{lem}\label{lem:the tensor product remains complete}
	Assume that $R$ is as above, but also regular. Let $R\ra R'$ be a map and assume that $R'$ is also derived $I$-complete (but not necessarily Noetherian). Then for any complex $X\in\Coh(R)^+$ the tensor product $X\otimes_R R'$ is $I$-complete:
	$$
	X\otimes_R R^\prime \xymatrix{\ar[r]^\sim&} X\widehat\otimes_R R^\prime.
	$$

\begin{proof}
Since $R$ is Noetherian and regular the Tor-amplitude of $R'$ (and thus also the $I$-complete Tor-amplitude) is finite; let both lie in $[-d,0]$ for some $d\ge 0$.  Via the same Noetherian and regular assumptions one has an equivalence $\Coh(R)\simeq \DMod{R}^\perf$ and thus the truncation $\tau^{\le n}X$ belongs to $\DMod{R}^\perf$ for any $n$. Consequently, the tensor product $\tau^{\le n}X\otimes_R - $ commutes with all limits and $\tau^{\le n}X\otimes_{R} R'$ is already $I$-complete. Then, by both usual and $I$-complete Tor-finiteness of $R'$ over $R$, we get equivalences
	$$
	X\otimes_{R} R' \xymatrix{& \ar[l]_\sim} \colim_n \left((\tau^{\le n}X)\otimes_{R} R'\right)  \xymatrix{\ar[r]^\sim &} \colim_n \left((\tau^{\le n}X)\widehat\otimes_{R} R'\right)  \xymatrix{\ar[r]^\sim &} X\widehat\otimes_{R} R'.
	$$ Indeed, $\cofib (\tau^{\le n}X\otimes_{R} R'\ra X\otimes_{R} R')$ is given by $\tau^{>n}X \otimes_{R} R'$ and lies in $\DMod{R}^{> n-d}$ and thus becomes zero as $n\ra \infty$. A similar argument works for the completed tensor product using that $\tau^{>n}X \widehat\otimes_{R} R'\in \DMod{R}^{> n-d}$.
\end{proof}
\end{lem}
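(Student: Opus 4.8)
The plan is to reduce to the case where $X$ is a perfect complex — where the statement is almost automatic — and then bootstrap to all of $\Coh^+(R)$ along the Postnikov tower, using the finite Tor-amplitude of $R'$ over $R$ to control the colimit. First I would extract the two consequences of the hypotheses on $R$ that make this work. Since $R$ is Noetherian and regular it has finite global dimension (for $R=\mf S=W(k)[[u]]$ it equals $2$), so $\Coh(R)\simeq\DMod{R}^\perf$; in particular each truncation $\tau^{\le n}X$ of $X\in\Coh^+(R)$ is perfect, and $X\simeq\colim_n\tau^{\le n}X$ with $\cofib(\tau^{\le n}X\to X)\simeq\tau^{>n}X\in\DMod{R}^{>n}$. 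Moreover, again by finiteness of the global dimension, the classical $R$-algebra $R'$ has Tor-amplitude over $R$ contained in some $[-d,0]$, hence also $I$-complete Tor-amplitude contained in $[-d,0]$.

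Next I would dispose of the perfect case. If $P\in\DMod{R}^\perf$ then $P$ is a retract of a finite colimit of shifts of $R$, so $P\otimes_R R'$ is a retract of a finite colimit of shifts of $R'$; since the full subcategory of derived $I$-complete modules is stable under shifts, finite colimits and retracts, $P\otimes_R R'$ is itself derived $I$-complete, i.e. $P\otimes_R R'\xrightarrow{\ \sim\ }P\widehat\otimes_R R'$. This is exactly where the regularity of $R$ is being used, namely to guarantee that the tower $\tau^{\le n}X$ consists of perfect complexes.

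Finally I would assemble the general statement. From $X\simeq\colim_n\tau^{\le n}X$ and the fact that $-\otimes_R R'$ commutes with colimits we get $\colim_n(\tau^{\le n}X\otimes_R R')\xrightarrow{\ \sim\ }X\otimes_R R'$; the cofiber of the $n$-th term is $\tau^{>n}X\otimes_R R'$, which lies in $\DMod{R}^{>n-d}$ by the Tor-amplitude bound. On the completed side, $\cofib\bigl(\tau^{\le n}X\widehat\otimes_R R'\to X\widehat\otimes_R R'\bigr)\simeq(\tau^{>n}X\otimes_R R')^{\wedge}_I$, and since derived $I$-completion shifts cohomological connectivity by a bounded amount (a connectivity estimate in the spirit of \Cref{connectiveity_modulo}), this is also arbitrarily highly connective for $n$ large; as filtered colimits are $t$-exact, $\colim_n(\tau^{\le n}X\widehat\otimes_R R')\xrightarrow{\ \sim\ }X\widehat\otimes_R R'$. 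Combining with the perfect case applied termwise gives
$$X\otimes_R R'\ \xleftarrow{\ \sim\ }\ \colim_n\bigl(\tau^{\le n}X\otimes_R R'\bigr)\ \xrightarrow{\ \sim\ }\ \colim_n\bigl(\tau^{\le n}X\widehat\otimes_R R'\bigr)\ \xrightarrow{\ \sim\ }\ X\widehat\otimes_R R',$$
as claimed. I expect the only genuinely delicate point to be the bookkeeping in this last step: an infinite colimit of derived $I$-complete modules need not be $I$-complete, so the termwise completeness from the perfect case cannot be transported directly — finite ($I$-complete) Tor-amplitude of $R'$ is precisely what forces the tower $\{\tau^{\le n}X\otimes_R R'\}$ to compute both $X\otimes_R R'$ and $X\widehat\otimes_R R'$ in a range growing without bound, making the sandwich above go through.
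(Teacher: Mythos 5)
Your proof is correct and follows essentially the same strategy as the paper: decompose $X$ as $\colim_n\tau^{\le n}X$, use regularity to see that each $\tau^{\le n}X$ is perfect (so that tensoring with $R'$ preserves $I$-completeness), and then use the finite Tor-amplitude of $R'$ to see that both colimits converge. Your justification of the perfect case via retracts of finite colimits of shifts, and your appeal to a general bounded-shift estimate for derived $I$-completion on the completed side, are minor reformulations of the paper's argument via "tensoring with a dualizable commutes with limits" and the $I$-complete Tor-amplitude bound, respectively.
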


\subsection{Hodge-proper stacks}\label{sect:Hodge-proper stacks}
In this section we still keep $R$ to be Noetherian. We define the class of stacks that we will mostly be interested in in this work. 
\begin{defn}[\cite{KubrakPrikhodko_HdR}, Definition 1.2.5.]\label{def:Hodge_proper}
	A smooth quasi-compact quasi-separated Artin stack $\mstack X$ over $R$ is called \emdef{Hodge-proper} if for any $i\in \mathbb Z_{\ge 0}$ the module of global sections $R\Gamma(\mstack X, \wedge^i \mathbb L_{\mstack X/R})$ lies in $\Coh^+(R)$. In other words, the complex $R\Gamma(\mstack X, \wedge^i \mathbb L_{\mstack X/R})\in \DMod{R}$ should be bounded from below and $H^j(\mstack X, \wedge^i \mathbb L_{\mstack X/R})\in \UMod{R}$ should be a finitely generated $R$-module for all $j\in \mathbb Z_{\ge 0}$.
\end{defn}

\begin{ex}
	Let $X$ be a smooth proper scheme over $R$. Then $\wedge^i \mathbb L_{X/R} \simeq \Omega_{X/R}^i$ is coherent and $H^j(X, \Omega_{X/R}^i)$ is finitely generated for all $i$ and $j$ since $X$ is proper. Hence $X$ is cohomologically proper.
\end{ex}

We use the name of Hodge in the definition above for the simple reason that 
$$
\RG_{\Hdg}(\mstack X/R)\coloneqq \bigoplus_{i=0}^\infty \RG(\mstack X, \wedge^i\mathbb L_{\mstack X/R})[-i]
$$
is a natural extension of the Hodge cohomology to smooth Artin stacks. Indeed, for smooth proper scheme $X$ we get
$$
H^n_\Hdg(X/R)\simeq \bigoplus_{i+j=n} H^{j}(X,\Omega^i_{X/R}).
$$
Thus, Hodge-proper stack is "proper" from the point of view of its Hodge cohomology.

We postpone a detailed discussion of the Hodge (and also de Rham) cohomology till \Cref{sec: Hodge and de Rham cohomology}. We also provide some more general examples of Hodge-proper stacks in Sections \ref{sec: examples} and \ref{sec:more examples}. As for now, we will just prove some general statements about them. First of all, it is easy to see that the property of being Hodge-proper is flat-local on the base:
\begin{prop}\label{lem:cohomologically proper is flat local}
	Let $R \to R^\prime$ be faithfully flat map of Noetherian rings. Then if $\mstack X^\prime \coloneqq \mstack X\times_{\Spec R} \Spec R^\prime$ is Hodge-proper over $R^\prime$, then $\mstack X$ is Hodge-proper over $R$.
	
	\begin{proof}
		Let $q\colon \mstack X^\prime \to \mstack X$ be the canonical projection. Then by the flat base change \Cref{QCoh_base_change}, we have
		$$R\Gamma(\mstack X, \mathcal F)\otimes_R R^\prime \simeq R\Gamma(\mstack X^\prime, q^*\mathcal F)$$
		for any quasi-coherent sheaf $\mathcal F \in \QCoh(\mstack X)$. In particular, the complex $R\Gamma(\mstack X, \wedge^i \mathbb L_{\mstack X/R})\otimes_R R^\prime \simeq R\Gamma(\mstack X^\prime, \wedge^i \mathbb L_{\mstack X^\prime/R^\prime})$ is bounded below coherent by assumption on $\mstack X^\prime$. We conclude by \Cref{ncoh_is_fpqc_local}.
	\end{proof}
\end{prop}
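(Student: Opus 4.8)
The plan is to reduce the statement to two inputs already available: flat base change for the cohomology of quasi-coherent sheaves on Artin stacks, and the fpqc-locality of bounded below coherence recorded in \Cref{ncoh_is_fpqc_local}. First I would observe that $\mstack X' = \mstack X \times_{\Spec R} \Spec R'$ is automatically smooth, quasi-compact and quasi-separated over $R'$ (all three properties being stable under base change), so that Hodge-properness of $\mstack X'$ is a meaningful hypothesis, and that formation of the cotangent complex and of its exterior powers commutes with base change: writing $q \colon \mstack X' \to \mstack X$ for the projection, one has $q^* \mathbb L_{\mstack X/R} \simeq \mathbb L_{\mstack X'/R'}$, hence $q^*(\wedge^i \mathbb L_{\mstack X/R}) \simeq \wedge^i \mathbb L_{\mstack X'/R'}$ for every $i \ge 0$.

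Next I would invoke flat base change for quasi-coherent cohomology on Artin stacks (the version reviewed in the appendix, \Cref{QCoh_base_change}): for any $\mathcal F \in \QCoh(\mstack X)$ there is a natural equivalence $\RG(\mstack X, \mathcal F) \otimes_R R' \simeq \RG(\mstack X', q^*\mathcal F)$. Applying this with $\mathcal F = \wedge^i \mathbb L_{\mstack X/R}$ and combining with the base-change compatibility of the exterior powers of the cotangent complex, I get $\RG(\mstack X, \wedge^i \mathbb L_{\mstack X/R}) \otimes_R R' \simeq \RG(\mstack X', \wedge^i \mathbb L_{\mstack X'/R'})$, and the right-hand side lies in $\Coh^+(R')$ by the assumed Hodge-properness of $\mstack X'$.

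Finally, since $R \to R'$ is faithfully flat and $R'$ is Noetherian, \Cref{ncoh_is_fpqc_local} allows me to descend: an $R$-module complex whose base change along $R \to R'$ is bounded below coherent is itself bounded below coherent. Applying this to $\RG(\mstack X, \wedge^i \mathbb L_{\mstack X/R})$ for each $i \ge 0$ gives that these complexes lie in $\Coh^+(R)$, which is precisely the definition of Hodge-properness of $\mstack X$ over $R$.

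As for difficulty, I do not expect a genuine obstacle: the proposition is essentially bookkeeping that packages \Cref{QCoh_base_change} and \Cref{ncoh_is_fpqc_local} together. The only points deserving a word of care are (a) that flat base change is being used at the level of stacks rather than schemes — which is fine, as it is part of the quasi-coherent sheaf formalism on geometric stacks — and (b) the base-change compatibility $q^*(\wedge^i \mathbb L_{\mstack X/R}) \simeq \wedge^i \mathbb L_{\mstack X'/R'}$, which is standard for the cotangent complex and commutes with the (derived) exterior power functor since pullback is symmetric monoidal.
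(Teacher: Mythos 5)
Your proof is correct and follows essentially the same route as the paper's: flat base change (\Cref{QCoh_base_change}) to identify $R\Gamma(\mstack X, \wedge^i \mathbb L_{\mstack X/R})\otimes_R R'$ with $R\Gamma(\mstack X', \wedge^i \mathbb L_{\mstack X'/R'})$, then descent of bounded-below coherence via \Cref{ncoh_is_fpqc_local}. The only difference is that you spell out the base-change compatibility of $\wedge^i\mathbb L$ and the stability of the smoothness/qcqs hypotheses, which the paper leaves implicit.
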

Under some additional restrictions Hodge-properness is also preserved under flat base change
\begin{prop}\label{prop: Hodge proprness survives some base-changes}
	Let $R \to R^\prime$ be a map of Noetherian rings that has finite Tor-amplitude. Let $\mstack X$ be a Hodge-proper stack over $R$. Then $\mstack X^\prime \coloneqq \mstack X\times_{\Spec R} \Spec R^\prime$ is Hodge-proper over $R^\prime$.
\end{prop}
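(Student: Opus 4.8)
The plan is to reduce the claim to the base change lemma for $\Coh^+$ already established, namely \Cref{lem: base change preserves coh^+ sometimes}, together with the flat base change formula for global sections on stacks (\Cref{QCoh_base_change}). Let $q\colon \mstack X' \to \mstack X$ denote the projection. First I would invoke flat base change to identify, for each $i\ge 0$,
$$
\RG(\mstack X, \wedge^i\mbb L_{\mstack X/R})\otimes_R R' \simeq \RG(\mstack X', q^*\wedge^i\mbb L_{\mstack X/R}).
$$
Next I would note that the cotangent complex is stable under base change, so $q^*\mbb L_{\mstack X/R}\simeq \mbb L_{\mstack X'/R'}$, and since $\mbb L_{\mstack X/R}$ is a perfect complex (as $\mstack X$ is smooth) the pullback commutes with exterior powers, giving $q^*\wedge^i\mbb L_{\mstack X/R}\simeq \wedge^i\mbb L_{\mstack X'/R'}$. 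Hence
$$
\RG(\mstack X, \wedge^i\mbb L_{\mstack X/R})\otimes_R R' \simeq \RG(\mstack X', \wedge^i\mbb L_{\mstack X'/R'}).
$$

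By hypothesis $\mstack X$ is Hodge-proper over $R$, so $\RG(\mstack X, \wedge^i\mbb L_{\mstack X/R})\in\Coh^+(R)$; and the map $R\to R'$ has finite Tor-amplitude. Therefore \Cref{lem: base change preserves coh^+ sometimes} applies and yields $\RG(\mstack X, \wedge^i\mbb L_{\mstack X/R})\otimes_R R'\in\Coh^+(R')$, i.e. $\RG(\mstack X', \wedge^i\mbb L_{\mstack X'/R'})\in\Coh^+(R')$ for all $i$. Since $\mstack X' = \mstack X\times_{\Spec R}\Spec R'$ is again smooth, quasi-compact and quasi-separated over $R'$ (these properties are stable under base change), this is precisely the statement that $\mstack X'$ is Hodge-proper over $R'$.

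The one point that deserves a word of care — and the only place this is not completely formal — is the passage from $q^*\wedge^i\mbb L_{\mstack X/R}$ to $\wedge^i\mbb L_{\mstack X'/R'}$: one needs that $\mbb L_{\mstack X/R}$ is perfect (equivalently, has finite Tor-amplitude and is dualizable in $\QCoh(\mstack X)$) so that $q^*$ is symmetric monoidal on the relevant subcategory and commutes with the (derived) exterior power functor, and one needs the base change compatibility of the cotangent complex for the structure maps of the Cartesian square. Both are standard for smooth morphisms of stacks, but I would spell out the reference (e.g. to the discussion of $\mbb L_{\mstack X/R}$ in \Cref{sec: cotangent complex}) since the exterior powers are derived functors. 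Everything else — stability of smoothness and the qcqs conditions under base change, and the flat base change formula — is quoted directly from earlier in the text, so there is no genuine obstacle here; the proposition is essentially a packaging of \Cref{lem: base change preserves coh^+ sometimes}.
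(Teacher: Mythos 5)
Your proof is correct and takes essentially the same route as the paper: the paper simply cites the Hodge-cohomology base-change result (\Cref{Hodge_and_deRham_basechange}, i.e. Proposition 1.1.8 of \cite{KubrakPrikhodko_HdR}) where you re-derive it from \Cref{QCoh_base_change}, cotangent-complex base change, and the naturality of $\wedge^i$, and both proofs conclude with \Cref{lem: base change preserves coh^+ sometimes}. One small terminological slip: you call the first step ``flat base change,'' but since $R\to R'$ is only of finite Tor-amplitude you are really using the Tor-amplitude form of \Cref{QCoh_base_change}, and you should note that the derived and underived fiber products agree here because $\mstack X\to\Spec R$ is flat (being smooth).
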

\begin{proof}
	Let $f\colon \mstack X' \ra \mstack X$ be the natural map. By \cite[Proposition 1.1.8]{KubrakPrikhodko_HdR}, the induced map $\RG(\mstack X,\wedge^j\mathbb L_{\mstack X/R})\otimes_R R' \ra \RG(\mstack X',\wedge^j\mathbb L_{\mstack X'/R'})$ is an equivalence for any $j$. Then we are done by \Cref{lem: base change preserves coh^+ sometimes}.
\end{proof}

Finally, we have the following smooth descent property:
\begin{prop}\label{lem:Hodge-proper}
	Let $\pi\colon \mstack U \surj \mstack X$ be a smooth quasi-compact quasi-separated surjective map of smooth Artin stacks and assume that all terms of the corresponding \v Cech nerve $\mstack U_n$ are Hodge-proper. Then $\mstack X$ is Hodge-proper.
	\begin{proof}
		By the smooth descent for the cotangent complex (\Cref{flat_descent_for_cotangent_compl}), we have
		$$R\Gamma(\mstack X, \wedge^i \mathbb L_{\mstack X/R}) \simeq \Tot R\Gamma(\mstack U_\bullet, \wedge^i \mathbb L_{\mstack U_\bullet /R}).$$
		By assumption, all complexes $R\Gamma(\mstack U_n, \wedge^i \mathbb L_{\mstack U_n/R})$ are bounded below coherent and cohomologically bounded below by $0$ (by smoothness). So $R\Gamma(\mstack X, \wedge^i \mathbb L_{\mstack X/R})$ is also bounded below coherent by \Cref{nperf_basics}. 
	\end{proof}
\end{prop}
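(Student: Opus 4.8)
The plan is to reduce the computation of $R\Gamma(\mstack X,\wedge^i\mathbb L_{\mstack X/R})$ to the corresponding invariants over the terms of the \v Cech nerve $\mstack U_\bullet$ of $\pi$, where the Hodge-properness hypothesis applies directly. First I would invoke smooth descent for the cotangent complex: since $\pi\colon\mstack U\surj\mstack X$ is a smooth surjection, the pullbacks of $\wedge^i\mathbb L_{\mstack X/R}$ along the structure maps assemble into the cosimplicial sheaf $n\mapsto \wedge^i\mathbb L_{\mstack U_n/R}$, and flat (hence smooth) descent for quasi-coherent cohomology yields an equivalence
$$
R\Gamma(\mstack X,\wedge^i\mathbb L_{\mstack X/R})\;\simeq\;\Tot\bigl(R\Gamma(\mstack U_\bullet,\wedge^i\mathbb L_{\mstack U_\bullet/R})\bigr),
$$
using the smooth descent statement for the cotangent complex recorded as \Cref{flat_descent_for_cotangent_compl}.

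Next I would check that the cosimplicial object on the right lands, uniformly, in a fixed coconnective-below range. By the Hodge-properness assumption on each $\mstack U_n$, the complex $R\Gamma(\mstack U_n,\wedge^i\mathbb L_{\mstack U_n/R})$ lies in $\Coh^+(R)$. Crucially, smoothness of $\mstack U_n$ over $R$ makes $\wedge^i\mathbb L_{\mstack U_n/R}$ a connective object (locally a vector bundle), so cohomology of the quasi-compact quasi-separated stack $\mstack U_n$ with coefficients in it is bounded below by $0$; hence in fact $R\Gamma(\mstack U_n,\wedge^i\mathbb L_{\mstack U_n/R})\in\Coh^{\ge 0}(R)$ for every $n$, with the same lower bound $0$ independent of $n$.

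Finally, I would apply the closure of $\Coh^{\ge 0}(R)$ under totalizations, i.e. \Cref{nperf_basics}(2): the totalization of a cosimplicial object valued in $\Coh^{\ge 0}(R)$ again lies in $\Coh^{\ge 0}(R)\subseteq\Coh^+(R)$. Running this for every $i\in\mathbb Z_{\ge 0}$, together with the hypotheses that $\mstack X$ is smooth and quasi-compact quasi-separated, gives exactly Hodge-properness of $\mstack X$. The only real subtlety — the main obstacle — is the need for the \emph{uniform} connectivity bound before taking $\Tot$: a totalization of bounded-below coherent complexes whose lower bounds drift to $-\infty$ need not be bounded below, so one genuinely must use smoothness to pin every term into the fixed window $[0,\infty)$; after that the argument is formal descent bookkeeping.
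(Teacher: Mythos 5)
Your proof is correct and follows essentially the same route as the paper's: smooth descent for $\wedge^i\mathbb L$, the observation that smoothness forces each $R\Gamma(\mstack U_n,\wedge^i\mathbb L_{\mstack U_n/R})$ into $\Coh^{\ge 0}(R)$, and closure of $\Coh^{\ge 0}(R)$ under totalizations from \Cref{nperf_basics}(2). Your explicit flagging of why the uniform lower bound matters is a helpful remark, but it is the same mechanism the paper relies on.
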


One can also introduce the following stronger variant of properness where instead of wedge powers of the cotangent complex we consider all coherent sheaves. We introduce the relative version right away.
\begin{defn}\label{def:coh_proper}
	Let $\mstack X$ be a finitely presentable Artin stack. A sheaf $\mathcal F\in \QCoh(\mstack X)$ is called \emph{bounded below coherent} if $\mathcal H^{\ll 0}(\mathcal F) \simeq 0$ and $\mathcal H^i(\mathcal F)$ is coherent for all $i\in \mathbb Z$. We will denote the full subcategory of $\QCoh(\mstack X)$ consisting of bounded below coherent sheaves by $\Coh^+(\mstack X)$.
\end{defn}
\begin{defn}\label{def:cohomologically proper morphism}
	A morphism $f\colon \mstack X \to \mstack Y$ of finitely presentable Artin stacks is called \emph{cohomologically proper} if the induced functor $f_*\colon \QCoh(\mstack X) \to \QCoh(\mstack Y)$ sends $\Coh^+(\mstack X)$ to $\Coh^+(\mstack Y)$. A finitely presentable Artin stack $\mstack X$ is called \emph{cohomologically proper} if the structure morphism $\mstack X \to \Spec R$ is cohomologically proper.
\end{defn}
\begin{rem}\label{rem: coh proper enough to check on the heart}
	By the left exactness of $f_*$, it is enough to prove that $f_*(\Coh(\mstack X)^\heartsuit) \subset \Coh^+(\mstack Y)$.
\end{rem}
We have the following basic properties of cohomologically proper morphisms:
\begin{prop}[{\cite[Proposition 2.2.4]{KubrakPrikhodko_HdR}}] \label{properties_of_coh_prop_morps}
	In the notations above we have:
	\begin{enumerate}
		\item The class of cohomologically proper morphisms is closed under compositions.
		
		\item Let $f\colon \mstack X \to \mstack Y$ be a cohomologically proper morphism and assume that $\mstack X$ is Noetherian. Then for any open quasi-compact embedding $\mstack U \inj \mstack Y$ the pullback $\mstack U \times_{\mstack Y} \mstack X$ is cohomologically proper over $\mstack U$.
		
		\item Let $f\colon \mstack X \to \mstack Y$ be a quasi-compact quasi-separated morphism such that for some smooth cover $\pi\colon \mstack U\ra \mstack Y$ the pull-back $f_{\mstack U}\colon \mstack X\times_{\mstack Y} \mstack U \to \mstack U$ is cohomologically proper. Then $f$ is cohomologically proper.
	\end{enumerate}
\end{prop}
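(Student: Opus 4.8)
\emph{Overview.} The three assertions are each a short combination of the base-change and descent results recorded above with the standard coherent-extension theorem; throughout, by \Cref{rem: coh proper enough to check on the heart}, it is enough to test the relevant pushforwards on classical coherent sheaves. \textbf{Part (1)} is immediate: if $f\colon\mstack X\to\mstack Y$ and $g\colon\mstack Y\to\mstack Z$ are cohomologically proper, then $(g\circ f)_*\simeq g_*\circ f_*$ as functors $\QCoh(\mstack X)\to\QCoh(\mstack Z)$, and $f_*$ carries $\Coh^+(\mstack X)$ into $\Coh^+(\mstack Y)$ while $g_*$ carries $\Coh^+(\mstack Y)$ into $\Coh^+(\mstack Z)$, so the composite carries $\Coh^+(\mstack X)$ into $\Coh^+(\mstack Z)$. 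Nothing further is needed.

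\textbf{Part (2).} Write $j\colon\mstack U\inj\mstack Y$ for the open embedding, $j'\colon\mstack X_{\mstack U}\coloneqq\mstack X\times_{\mstack Y}\mstack U\inj\mstack X$ for its (quasi-compact, open) base change, and $f_{\mstack U}\colon\mstack X_{\mstack U}\to\mstack U$. Since $\mstack X_{\mstack U}$ is an open substack of the Noetherian stack $\mstack X$, it is Noetherian as well. As $j$ is flat, \Cref{QCoh_base_change} gives $j^*f_*\simeq f_{\mstack U*}(j')^*$. Now take $\mathcal G\in\Coh(\mstack X_{\mstack U})^\heartsuit$. Using that $\mstack X$ is Noetherian and $j'$ is a quasi-compact open immersion, extend $\mathcal G$ to a coherent sheaf $\widetilde{\mathcal G}\in\Coh(\mstack X)^\heartsuit$ with $(j')^*\widetilde{\mathcal G}\simeq\mathcal G$. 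Then $f_*\widetilde{\mathcal G}\in\Coh^+(\mstack Y)$ by cohomological properness of $f$, and restriction along the flat open immersion $j$ preserves $\Coh^+$ (it is $t$-exact and preserves coherence and boundedness below on the heart). Hence $f_{\mstack U*}\mathcal G\simeq f_{\mstack U*}(j')^*\widetilde{\mathcal G}\simeq j^*f_*\widetilde{\mathcal G}\in\Coh^+(\mstack U)$, which is what we want.

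\textbf{Part (3).} Write $\pi'\colon\mstack X\times_{\mstack Y}\mstack U\to\mstack X$ for the base change of the given smooth cover $\pi\colon\mstack U\to\mstack Y$; it is again smooth, hence flat, so $(\pi')^*$ is $t$-exact and carries $\Coh^+(\mstack X)$ into $\Coh^+(\mstack X\times_{\mstack Y}\mstack U)$ (flat pullback of a coherent sheaf is coherent, and $t$-exactness preserves boundedness below). For $\mathcal F\in\Coh^+(\mstack X)$ we thus get $f_{\mstack U*}(\pi')^*\mathcal F\in\Coh^+(\mstack U)$ by hypothesis, and flat base change (\Cref{QCoh_base_change}) identifies this with $\pi^*f_*\mathcal F$. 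Finally, membership in $\Coh^+$ is detected after pullback along the smooth cover $\pi$: the cohomology sheaves of $f_*\mathcal F$ are coherent and the complex is bounded below exactly when this holds after applying the ($t$-exact, faithfully flat) functor $\pi^*$, which, after passing to smooth atlases, reduces to the fpqc-local statement \Cref{ncoh_is_fpqc_local}. Therefore $f_*\mathcal F\in\Coh^+(\mstack Y)$.

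\textbf{Main obstacle.} The only input that is not entirely formal is the extension, used in (2), of a classical coherent sheaf from the quasi-compact open substack $\mstack X_{\mstack U}$ to the whole Noetherian stack $\mstack X$: this is the familiar statement for Noetherian schemes, and for (higher) Artin stacks one deduces it from the schematic case along a smooth atlas together with flat descent for coherence, which is where the bulk of the (mild) bookkeeping lives. The smooth-locality of bounded-below coherence invoked in (3) is similarly a descent argument resting on \Cref{ncoh_is_fpqc_local}; everything else is routine manipulation of $t$-exactness of flat pullback and of the composition and base-change formulas for $f_*$.
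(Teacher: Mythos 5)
Your proof is correct and, since the paper defers to \cite[Proposition~2.2.4]{KubrakPrikhodko_HdR} without reproducing the argument, I am judging it on its own. Parts (1) and (3) are exactly what one expects: (1) is pure functoriality of $(g\circ f)_*\simeq g_*\circ f_*$, and (3) is the flat base change \Cref{QCoh_base_change} combined with the observation that a smooth surjection is conservative and $t$-exact, so $\Coh^+$ can be detected after pullback along it (reducing, affine-locally and after refining so that the test chart factors through $\mstack U$, to \Cref{ncoh_is_fpqc_local}); your insertion of the hypothesis that $f$ is qcqs is what makes the base-change theorem applicable, and that is indeed where that hypothesis is used.

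The only place where you are substituting a gesture for an argument is in part (2). You correctly identify the key step — extending a classical coherent sheaf from the quasi-compact open substack $\mstack X_{\mstack U}$ to the Noetherian stack $\mstack X$ — and you are right that once such an extension $\widetilde{\mathcal G}$ exists, the base-change identity $j^*f_*\simeq f_{\mstack U*}(j')^*$ finishes the proof. But the extension lemma is not ``mild bookkeeping'': the usual scheme-theoretic proof (take $j'_*\mathcal G$, express it as the filtered union of its coherent subsheaves, and take one whose restriction recovers $\mathcal G$) requires knowing that in $\QCoh(\mstack X)^\heartsuit$ a quasi-coherent sheaf is the filtered colimit of its coherent subsheaves, and for a higher Artin stack this does not come for free from the scheme case by blindly working on an atlas, since one must show the extension can be chosen compatibly with the descent datum. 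For $1$-Artin stacks this is in Laumon--Moret-Bailly; for higher stacks it requires an inductive argument over the geometricity that deserves a few lines rather than a parenthesis. If you spell that step out (or cite it precisely), the proof is complete.
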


The previous notion is stronger then the Hodge-properness:
\begin{prop}\label{coh_proper_stronger}
	Let $\mstack X$ be a smooth cohomologically proper Artin stack. Then $\mstack X$ is Hodge-proper.
	
	\begin{proof}
		By Propositions \ref{cotangent_of_smooth_stack} and \ref{derived_tensor_finiteness} all exterior powers of the cotangent complex of $\mstack X$ are perfect, hence coherent. So, by assumption on $\mstack X$, all complexes $R\Gamma(\mstack X, \wedge^i \mathbb L_{\mstack X/R})$ are bounded below coherent.
	\end{proof}
\end{prop}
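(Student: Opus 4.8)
The plan is to deduce the statement directly from the definition of cohomological properness, the only real content being the observation that the sheaves in question already live in $\Coh^+(\mstack X)$. By \Cref{def:cohomologically proper morphism}, saying that $\mstack X$ is cohomologically proper means precisely that the pushforward $\RG(\mstack X,-)\colon\QCoh(\mstack X)\to\DMod{R}$ along the structure morphism carries $\Coh^+(\mstack X)$ into $\Coh^+(R)$. Comparing this with \Cref{def:Hodge_proper}, it therefore suffices to show that for every $i\ge 0$ the exterior power $\wedge^i\mathbb L_{\mstack X/R}$ belongs to $\Coh^+(\mstack X)$ in the sense of \Cref{def:coh_proper}.

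To verify this, I would first invoke smoothness of $\mstack X$ over $R$: by \Cref{cotangent_of_smooth_stack} the cotangent complex $\mathbb L_{\mstack X/R}$ is a perfect complex on $\mstack X$ (with Tor-amplitude in a bounded range depending only on $n$ for an $n$-Artin stack), and then by \Cref{derived_tensor_finiteness} its exterior powers $\wedge^i\mathbb L_{\mstack X/R}$ are perfect for all $i$ as well. It then remains to note that, since $R$ is Noetherian, any perfect complex $\mathcal F$ on a Noetherian Artin stack is bounded below coherent: $\mathcal F$ is cohomologically bounded, and coherence of its cohomology sheaves can be checked after pulling back along an arbitrary smooth morphism $\Spec A\to\mstack X$ from an affine scheme, where $A$ is Noetherian and perfect complexes have finitely generated cohomology. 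Hence $\wedge^i\mathbb L_{\mstack X/R}\in\Coh^+(\mstack X)$, and applying cohomological properness yields $\RG(\mstack X,\wedge^i\mathbb L_{\mstack X/R})\in\Coh^+(R)$ for all $i$, which is exactly the assertion that $\mstack X$ is Hodge-proper.

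I do not expect a genuine obstacle here: the argument is essentially a one-line chain of implications, and all the weight sits in the two cited inputs, namely perfectness of the cotangent complex of a smooth (higher) Artin stack — which genuinely uses that $\mstack X$ is a reasonable Artin stack, so that $\mathbb L_{\mstack X/R}$ is dualizable and compact in $\QCoh(\mstack X)$ — and the passage from "perfect" to "bounded below coherent", which relies on the Noetherian hypothesis on $R$ together with smooth descent for coherence of cohomology sheaves. Note in particular that one does not even need the reduction to the heart recorded in \Cref{rem: coh proper enough to check on the heart}, since $\wedge^i\mathbb L_{\mstack X/R}$ lands in $\Coh^+(\mstack X)$ on the nose.
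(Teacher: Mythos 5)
Your argument is correct and coincides with the paper's: both cite \Cref{cotangent_of_smooth_stack} and \Cref{derived_tensor_finiteness} to get that $\wedge^i\mathbb L_{\mstack X/R}$ is perfect, note that perfect implies bounded below coherent over the Noetherian base (you simply spell out the smooth-local check that the paper leaves implicit), and then apply the definition of cohomological properness.
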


One also has the following descent property, that is analogous to \Cref{lem:Hodge-proper}, but allows more covers:
\begin{prop}[{\cite[Proposition 2.2.14]{KubrakPrikhodko_HdR}}]\label{lem:Cech cohomologically proper}
	Let $\pi_\bullet\colon \mstack U_{\bullet} \surj \mstack X$ be a flat hypercover by finitely presentable Artin stacks and assume that all $\mstack U_n$ are cohomologically proper. Then $\mstack X$ is cohomologically proper.
\end{prop}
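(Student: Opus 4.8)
The plan is to run the proof of \Cref{lem:Hodge-proper} with the cotangent complex replaced by an arbitrary bounded-below coherent sheaf, and with flat \v Cech descent replaced by flat \emph{hyper}descent. First a couple of reductions: finite presentability of Artin stacks being flat-local, $\mstack X$ is itself finitely presentable, so "cohomologically proper" even makes sense for it; and since $R$ is Noetherian, all the stacks $\mstack X$, $\mstack U_n$ are Noetherian. By \Cref{rem: coh proper enough to check on the heart} it suffices to prove that $R\Gamma(\mstack X,\mathcal F)\in\Coh^+(R)$ for every $\mathcal F\in\Coh(\mstack X)^\heartsuit$. Since $\pi_\bullet\colon\mstack U_\bullet\surj\mstack X$ is a flat hypercover and $\QCoh(-)$ satisfies descent along flat hypercovers (which reduces to flat hyperdescent for $\DMod{-}$ on affine schemes), I would start from the resulting limit presentation
\[
R\Gamma(\mstack X,\mathcal F)\ \simeq\ \Tot\, R\Gamma(\mstack U_\bullet,\pi_\bullet^*\mathcal F).
\]

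The next step is to show that each term of this cosimplicial object lies in $\Coh^{\ge 0}(R)$, \emph{uniformly} in the simplicial degree. Each $\pi_n\colon\mstack U_n\to\mstack X$ is flat, so $\pi_n^*$ is $t$-exact and preserves coherence (flat pullback between Noetherian stacks), whence $\pi_n^*\mathcal F\in\Coh(\mstack U_n)^\heartsuit\subset\Coh^+(\mstack U_n)$; cohomological properness of $\mstack U_n$ then gives $R\Gamma(\mstack U_n,\pi_n^*\mathcal F)\in\Coh^+(R)$. For the lower bound I would use that $R\Gamma(\mstack U_n,-)$, being pushforward along $\mstack U_n\to\Spec R$ followed by the exact global-sections functor on $\Spec R$, is left $t$-exact; applied to the coconnective sheaf $\pi_n^*\mathcal F$ this yields $R\Gamma(\mstack U_n,\pi_n^*\mathcal F)\in\DMod{R}^{\ge 0}$, for all $n$ at once. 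This uniform coconnectivity is exactly the analogue of the role smoothness plays in \Cref{lem:Hodge-proper}: here it is flatness of the hypercover — which keeps each $\pi_n^*\mathcal F$ in the heart, in particular coconnective — rather than any bound on the cohomological amplitude of the individual $\mstack U_n$ that supplies it.

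Combining the two observations, $R\Gamma(\mstack U_n,\pi_n^*\mathcal F)\in\Coh^{\ge 0}(R)$ for all $n$, and since $\Coh^{\ge 0}(R)$ is closed under totalizations by \Cref{nperf_basics}(2), the displayed equivalence shows $R\Gamma(\mstack X,\mathcal F)\in\Coh^{\ge 0}(R)\subset\Coh^+(R)$, which completes the proof. The one point needing genuine care — the "main obstacle", such as it is — is the uniform lower bound that makes the totalization harmless (handled by left $t$-exactness of pushforward together with flatness of the cover), and relatedly the fact that flat hyperdescent for $\QCoh$ applies to honest hypercovers and not merely to \v Cech nerves; the remaining ingredients ($t$-exactness and coherence-preservation of flat pullback, flat-locality of finite presentability) are routine.
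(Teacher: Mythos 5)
Your proof is correct and takes what is surely the same route as the reference: reduce to the heart via \Cref{rem: coh proper enough to check on the heart}, apply flat hyperdescent for $\QCoh$, note that flat pullback keeps $\pi_n^*\mathcal F$ in the heart (hence coconnective, uniformly in $n$), invoke cohomological properness of each $\mstack U_n$, and close with \Cref{nperf_basics}(2). The remark correctly identifying flatness as the analogue of smoothness in \Cref{lem:Hodge-proper} — i.e.\ as the source of the uniform lower bound — is exactly the right way to see it.
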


\subsection{Main example: quotients of proper schemes by reductive groups}\label{sec: examples} 
In this section we give an example of a Hodge-proper stack which is  most important for this work: namely the global quotient $[X/G]$ of a smooth proper scheme $X$ by an action of a reductive group scheme $G$. The fact that $[X/G]$ is Hodge-proper in this case is also a consequence of \cite[Theorem 3.1.4]{KubrakPrikhodko_HdR}, but for completeness of the exposition here we give another argument, which does not rely on hard cohomology vanishing results from \cite{FranjouVDKallen_PowerReductivity}\footnote{However, we have to assume that $X$ is proper instead of a significantly less restrictive condition in \cite[Theorem 3.1.4]{KubrakPrikhodko_HdR}}.

For the rest of this section let $R$ be a Noetherian ring and let be $X$ a smooth proper $R$-scheme. Let $G$ be a reductive group $R$-scheme acting on $X$. By \Cref{coh_proper_stronger}, it is enough to prove that $[X/G]$ is cohomologically proper. Note that since $X$ is proper we have $\RG(X,\mc F)\in \Coh(R)$ for any coherent sheaf $\mc F\in \Coh(X)$.
\begin{rem}
	The statement is more or less obvious if the base ring $R$ is a $\mathbb Q$-algebra: in this case $G$ is linearly reductive and one recovers the underlying complex of $R\Gamma([X/G], \mathcal F) \simeq R\Gamma(X, \mathcal F)^G$ as a direct summand of $R\Gamma(X, \mathcal F)$ and thus is also coherent. However, if $R$ is just an algebra over $\mathbb Z$ one needs to come up with some other argument.
\end{rem}
This other argument will occupy the rest of this section (see also \Cref{BP_is_SCP} for a slight generalization). It will consist of a several step reduction to the case of a torus which is linearly reductive. First note that by \Cref{properties_of_coh_prop_morps} and the following assertion, it is enough to consider the case of split $G$:
\begin{prop}[{\cite[Lemma 5.1.3]{Conrad_Reductive}}]\label{prop:Conrad}
	Let $G$ be a reductive group scheme over $R$. Then there exists an \'etale cover $R\ra R^\prime$ such that $G_{R^\prime}$ is split.
\end{prop}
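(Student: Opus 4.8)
The plan is to produce the claimed \'etale cover $R\ra R'$ in three stages, following the structure theory of reductive group schemes (see \cite{Conrad_Reductive} and the SGA3 references there). Recall that ``$G_{R'}$ is split'' means $G_{R'}\simeq G_0\otimes_{\mbb Z}R'$ for the Chevalley form $G_0$ of a fixed root datum; since $R'$ is again Noetherian (being of finite type over $R$), $\Spec R'$ has finitely many connected components and it is harmless to treat each of them separately, so ``split'' amounts to the existence of a split maximal torus of $G_{R'}$ together with a pinning.

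First I would trivialize a maximal torus: the functor of maximal tori of a reductive group scheme is representable by a smooth affine $\Spec R$-scheme which is surjective onto $\Spec R$, and a smooth surjection acquires a section after an \'etale base change, so there are an \'etale cover $R\ra R_1$ and a maximal torus $T_1\subseteq G_{R_1}$. Next I would split $T_1$: the sheaf $\Isom_{R_1}(\mbb G_m^n,T_1)$ is a torsor under the constant group scheme $\underline{\GL_n(\mbb Z)}$, which is \'etale (hence smooth) over $\Spec R_1$, so it becomes trivial over a further \'etale cover $R_1\ra R_2$ and $T_2\coloneqq T_1\otimes_{R_1}R_2$ is a split maximal torus of $G_{R_2}$. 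Finally, with $T_2$ split the roots form a locally constant subsheaf of the constant sheaf $X^\ast(T_2)=\underline{\mbb Z^n}$, so on each connected component of $\Spec R_2$ the root datum $\mc R$ is constant; choosing a base of $\mc R$ and a trivialization of each invertible simple root subspace $\mf g_\alpha\subseteq\Lie(G_{R_2})$ over a Zariski (a fortiori \'etale) cover $R_2\ra R_3$ endows $(G_{R_3},T_{R_3})$ with a pinning, and by the existence and isomorphism theorems for pinned reductive group schemes a pinned reductive group with root datum $\mc R$ is canonically $G_0\otimes_{\mbb Z}R_3$. Taking $R'\coloneqq R_3$ and composing the (\'etale) covers $R\ra R_1\ra R_2\ra R_3$ finishes the argument.

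The hard part is the input invoked at the start of the second paragraph, namely that the scheme of maximal tori is smooth and surjective over $\Spec R$: this is the genuinely substantial ingredient, resting on the infinitesimal deformation theory and rigidity of tori together with the existence of maximal tori over fields. The remaining reductions are merely trivializations of torsors under smooth group schemes and the classification of pinned reductive group schemes. In the body of the paper we do not reproduce this and simply invoke \cite[Lemma 5.1.3]{Conrad_Reductive}.
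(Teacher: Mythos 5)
The paper does not prove this result; it simply cites \cite[Lemma 5.1.3]{Conrad_Reductive} and your proposal acknowledges exactly this. So there is no ``paper's proof'' to compare against---the interesting thing is whether your sketch of the cited argument is sound.

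Your three-stage outline (acquire a maximal torus, split it, pin the group) is the standard route and the logical skeleton is right, including the correct identification of the smoothness and fiberwise nonemptiness of the scheme of maximal tori as the genuinely substantial input. One point in your second step is worth tightening: to say that $\Isom_{R_1}(\mathbb G_m^n, T_1)$ is a \emph{torsor} under $\underline{\GL_n(\mathbb Z)}$ you already need to know that it is fppf-locally nonempty, i.e.\ that $T_1$ is fppf-locally isomorphic to $\mathbb G_m^n$; that is built into the definition of ``torus'' over a general base, so the content of the step is really the upgrade from fppf-local to \'etale-local triviality, which holds because torsors under a smooth (here \'etale) affine group scheme for the fppf topology are automatically \'etale-locally trivial. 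With that clarification your sketch faithfully reproduces the argument behind the cited lemma.
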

After that the proof consists of four steps: first, we treat the case of a split torus $T$ (which is easy, since $T$ is linearly reductive), then, the case of a quotient $[\mbb A^n/T]$ by linear conical action, from which we deduce the case of Borel subgroup $B\subseteq G$. Finally, from that we deduce the general case of any parabolic subgroup $P\subseteq G$.

\paragraph{Torus.}
Let $T$ be a split algebraic torus $T\simeq \mathbb G_m^d$ over $R$. Let $X^*(T)\simeq \mathbb Z^d$ denote the character lattice. Then by \Cref{derived_vs_derived}, we have $\QCoh(BT)^+\simeq D^+(\Rep_T^\heartsuit)$  and $\Coh(BT)\subset \QCoh(BT)^+$ can be identified with the subcategory of objects in $D^+(\Rep_T^\heartsuit)$ whose underlying $R$-module lies in $\Coh(R)$. Moreover, the category  $\Rep_T^\heartsuit$ is equivalent to the category of $X^*(T)$-graded $R$-modules and the global sections functor $\RG\colon \Coh(BT)\ra \DMod{R}$ is given in terms of $D^+(\Rep_T^\heartsuit)$ by sending a $X^*(T)$-graded coherent complex $M_*$ to its 0-th graded component $M_0$. Since $M_0$ is a direct summand of $M$ it is clear that $BT$ is cohomologically proper.

Let now $V\simeq R^n$ be a representation of $T$. As any other representation of $T$, it is decomposes as a direct sum of characters, say $V=V_{\chi_1}\oplus\cdots \oplus V_{\chi_k}$. We can consider the corresponding affine space $\mathbb A^n = \Spec \Sym_{R} (V^\vee)$, where $V^\vee\coloneqq \Hom_{R}(V,R)$ is the dual representation. There is a natural action of $T$ on $\mathbb A^n$ and we consider the corresponding quotient stack $[\mathbb A^n/T]$. Note that the action of $T$ on $\mathbb A^n$ corresponds to the natural $X^*(T)$-grading on $\Sym_{R}(V^\vee)$. Note also that, via \Cref{derived_vs_derived}, $\Coh([\mathbb A^n/T])$ can be identified with the full subcategory of the derived category of $X^*(T)$-graded $\Sym_{R}(V^\vee)$-modules consisting of objects which are coherent as $\Sym_{R}(V^\vee)$-modules. The global sections functor in this terms is again given by taking the $X^*(T)$-$0$-th graded component of the underlying complex.

We make the following crucial assumption on the action:

\begin{defn}\label{defn: conical action}
	The $T$-representation $V$ is called \textit{conical} if $V^T=0$ and the commutative monoid $R(V)=\mathbb N\cdot\chi_1+\cdots +\mathbb N\cdot \chi_n\subset X^*(T)$, spanned by $\chi_1,\ldots,\chi_n$, does not contain a copy of $\mathbb Z$ inside. %Note that under this condition for any non-zero $\chi\in R(V)$ we have $0\notin \chi+ R(V)$.
\end{defn}
If $V$ is conical then for any $\alpha\in X^*(T)$ the corresponding graded component $\Sym_{R}(V^\vee)_\alpha$ has finite rank over $R$.\footnote{From the definition it follows that $\mbb R_{>0} \cdot\chi_1+\cdots +\mathbb R_{>0}\cdot \chi_n\subset X^*(T)_{\mbb R}$ is a proper cone: namely, it is contained in an open half space $\{v\in X^*(T)_{\mbb R} | h(v)>0\}$ for some linear functional $h\in (X^*(T)_{\mbb R})^\vee$. We have $h(\chi_i)>0$ for any $i$ and $h$ gives a strictly positive $\mbb R$-grading on $\Sym_{R}^{>0}(V^\vee)$ by pairing $X^*(T)$-weight with $h$. Rescaling $h$ we can assume $h(\chi_i)>1$, then for any $d\in \mbb Z$ the $R$-submodule $\Sym_{R}^{>0}(V^\vee)_{h<d}$ is contained in $\oplus_{i=1}^d\Sym_{R}^{i}(V^\vee)$ and, therefore, is finitely generated over $R$. Finally $\Sym_{R}^{>0}(V^\vee)_\alpha \subset \Sym_{R}^{>0}(V^\vee)_{h<\lceil h(\alpha) \rceil}$.}  
\begin{prop}\label{lem:A^n/T is cohomologically proper}
	Assume $V$ is a conical $T$-representation and let $\mathbb A^n = \Spec \Sym_{R} (V^\vee)$. Then the corresponding quotient stack $[\mathbb A^n/T]$ is cohomologically proper.
	
	\begin{proof}
		We use \Cref{rem: coh proper enough to check on the heart} and the  description of the heart $\Coh([\mathbb A^n/T])^{\heartsuit}\subset \Coh([\mathbb A^n/T])$ as finitely generated $X^*(T)$-graded modules over $\Sym_{R}(V^\vee)$. Since global sections are given by taking the 0-th graded component it is enough to show that for any finitely generated (classical) graded $\Sym_{R}(V^\vee)$-module $M_*$ the component $M_0$ is finitely generated over $R$. Choosing homogeneous generators of $M$ we get a surjection 
		$$
		\bigoplus_{i=1}^k \Sym_{R}(V^\vee)(-\alpha_i) \xymatrix{\ar@{->>}[r]&} M,
		$$ where $\alpha_i$ are the degrees of the generators and $(-\alpha_i)$ denotes the grading shift. This provides a surjection $\oplus_{i=1}^k \Sym_{R}(V^\vee)_{-\alpha_i} \surj M_0$ with the left module being finitely generated over $R$.
	\end{proof}
\end{prop}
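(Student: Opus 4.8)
The plan is to reduce the statement to a concrete claim about graded pieces of $\Sym_R(V^\vee)$ and then feed in the convexity that is built into \Cref{defn: conical action}.

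First I would use \Cref{rem: coh proper enough to check on the heart}: it is enough to check that $\RG([\mathbb A^n/T], \mc F)\in\Coh^+(R)$ for every $\mc F\in\Coh([\mathbb A^n/T])^\heartsuit$. Via the identification recalled just above (coming from \Cref{derived_vs_derived}) of $\Coh([\mathbb A^n/T])$ with the coherent objects among $X^*(T)$-graded $\Sym_R(V^\vee)$-modules, such an $\mc F$ is a single finitely generated $X^*(T)$-graded module $M_*$ over $\Sym_R(V^\vee)$, and its global sections complex is the $0$-th graded piece $M_0$ placed in cohomological degree $0$. (The functor $M_*\mapsto M_0$ is exact, since $M_0$ is a direct summand of the underlying $R$-module of $M_*$; this reflects the fact that a split torus is linearly reductive over an arbitrary base.) So the whole statement collapses to the purely algebraic assertion: for every finitely generated graded $\Sym_R(V^\vee)$-module $M_*$, the $R$-module $M_0$ is finitely generated.

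To prove this I would pick finitely many homogeneous module generators of $M_*$, of weights $\alpha_1,\dots,\alpha_k\in X^*(T)$, producing a graded surjection $\bigoplus_{i=1}^k \Sym_R(V^\vee)(-\alpha_i)\surj M_*$; passing to $0$-th graded components gives a surjection of $R$-modules $\bigoplus_{i=1}^k \Sym_R(V^\vee)_{-\alpha_i}\surj M_0$. Hence it suffices to know that each individual graded component $\Sym_R(V^\vee)_\alpha$ is a finitely generated $R$-module.

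This last point is the only genuine use of the conical hypothesis, and it is the step I expect to be the crux (though it is elementary). The conical condition forces the real cone in $X^*(T)_{\mbb R}$ spanned by the weights $\chi_1,\dots,\chi_n$ of $V$ to be strongly convex, i.e. to sit inside an open half-space $\{h>0\}$ for some linear functional $h$ with $h(\chi_i)>0$; after rescaling we may assume $h(\chi_i)\ge 1$ for all $i$, and then any monomial in $\Sym_R(V^\vee)_\alpha$ has total degree at most $\lceil -h(\alpha)\rceil$, so $\Sym_R(V^\vee)_\alpha\subseteq\bigoplus_{j\le \lceil -h(\alpha)\rceil}\Sym^j_R(V^\vee)$, a finite sum of finitely generated free $R$-modules. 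This is precisely the observation recorded in the footnote accompanying the paragraph before the statement. Assembling the three steps then finishes the proof.
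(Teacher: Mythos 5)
Your proposal is correct and takes essentially the same route as the paper: reduce to the heart via \Cref{rem: coh proper enough to check on the heart}, use the identification of $\Coh([\mathbb A^n/T])^\heartsuit$ with finitely generated graded $\Sym_R(V^\vee)$-modules, pass to a surjection from shifted free modules, and then conclude using the fact (the footnote before \Cref{lem:A^n/T is cohomologically proper}) that each graded piece $\Sym_R(V^\vee)_\alpha$ is finitely generated over $R$ thanks to the conical hypothesis. You merely make explicit the final convexity step and the exactness of $M_*\mapsto M_0$, both of which the paper leaves implicit.
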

\begin{rem}\label{U as an example}
	Given a split reductive group $G$ with a maximal torus $T$ an example of a conical representation $V$ is given by the Lie algebra $\mf u\coloneqq \Lie(U)$, where $U$ is the unipotent radical of any Borel subgroup $B\subset G$, such that $B$ contains $T$. We have a splitting of $\mf u$ into a sum of characters, which are given by positive roots $\alpha\in \Delta$ (with respect to the chosen $B$). Moreover, if we consider the $T$-action on $U$ by conjugation, one has a $T$-equivariant identification $\Lie(U)\xra{\sim} U$ as schemes (\cite[Chapter II,1.7]{Jantzen}). Thus, by \Cref{lem:A^n/T is cohomologically proper}, $[U/T]$ is cohomologically proper. This also applies to the diagonal action of $T$ on $U^n$ by conjugation.
\end{rem}

\begin{cor}
	For any $n\in \mathbb Z_{\ge 1}$ the iterated classifying stack $B^n T$ is cohomologically proper.
	
	\begin{proof}
		For $n=1$ this is a special case of \Cref{lem:A^n/T is cohomologically proper}. The general case follows by induction using the cover $\Spec R \ra B^n T$ and \Cref{lem:Cech cohomologically proper}.
	\end{proof}
\end{cor}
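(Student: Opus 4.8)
The plan is to prove, by induction on $n$, an a priori stronger statement: that $B^n S$ is cohomologically proper for \emph{every} split torus $S$ over $R$ (and every $n \ge 1$). Passing to all split tori at once is what will let the induction swallow the finite products that show up in the \v Cech nerve below, so that I never have to separately argue that a finite product of cohomologically proper stacks is cohomologically proper.

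For the base case $n = 1$ I would simply observe that $BS = [\Spec R/S] = [\mathbb A^0/S]$ is the quotient of $\mathbb A^0 = \Spec \Sym_R(0)$ by the zero representation, which is trivially conical ($V^S = 0$ and $R(V) = \{0\}$ contains no copy of $\mathbb Z$), so \Cref{lem:A^n/T is cohomologically proper} applies — this is of course also exactly the computation made in the paragraph on tori above. For the inductive step, fix a split torus $S$ and consider the atlas $\Spec R \to B^n S$. Its \v Cech nerve $\mstack U_\bullet$ is the bar resolution of the group object $\Omega(B^n S) \simeq B^{n-1}S$, so $\mstack U_k \simeq (B^{n-1}S)^{\times k}$; since delooping commutes with finite products this is $B^{n-1}(S^{\times k})$, and $S^{\times k}$ is again a split torus over $R$. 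Hence each $\mstack U_k$ is cohomologically proper by the inductive hypothesis (and each is a finitely presentable Artin stack), so \Cref{lem:Cech cohomologically proper} gives that $B^n S$ is cohomologically proper. Taking $S = T$ finishes the proof.

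The only real point to be careful about — and hence the step I would call the main obstacle — is checking that $\Spec R \to B^n S$ genuinely meets the hypotheses of \Cref{lem:Cech cohomologically proper}, i.e.\ that it is the augmentation of a flat hypercover by finitely presentable Artin stacks: one must see that it is a flat surjection (for $n \ge 2$ this is cleanest by writing it as the composite $\Spec R \to BS \to B^2 S \to \cdots \to B^n S$ of smooth surjections and using closure of flatness under composition), correctly identify the \v Cech terms with the bar resolution $(B^{n-1}S)^{\times k}$, and note that $B^{n-1}S$ together with its finite powers are finitely presentable. None of this is deep, but it is where the argument has any content beyond \Cref{lem:A^n/T is cohomologically proper} and \Cref{lem:Cech cohomologically proper}.
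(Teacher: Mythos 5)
Your proof is correct and follows the same route the paper indicates (induction on $n$, the cover $\Spec R \to B^n T$, and \Cref{lem:Cech cohomologically proper}); what you add is a clean resolution of the one detail the paper leaves implicit, namely why the \v Cech terms $(B^{n-1}T)^{\times k}$ are cohomologically proper. Strengthening the induction hypothesis to all split tori and then identifying $(B^{n-1}T)^{\times k}\simeq B^{n-1}(T^{\times k})$ is exactly the right way to avoid having to establish a separate K\"unneth-type closure of cohomological properness under finite products, and your remarks about flatness of $\Spec R \to B^n T$ and finite presentability of the \v Cech terms are the correct hypotheses to check before invoking \Cref{lem:Cech cohomologically proper}.
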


\paragraph{Borel subgroup.}
Let $B\subset G$ be a Borel subgroup containing $T$ and $U\subset B$ be the unipotent radical. We have $B \simeq T\ltimes U$.

\begin{prop}\label{prop:BB is coh proper}
	The classifying stack $BB$ is cohomologically proper.
	
	\begin{proof}
		We use the smooth cover $\pi\colon BT\to BB$ and the fact that $B \simeq T\ltimes U$. The corresponding \v{C}ech simplicial object $\check{C}(\pi)_\bullet$ of $\pi\colon BT\to BB$ is given as follows: $\check{C}(\pi)_n$ is identified with $[T\backslash \underbrace{B\times_T B\times_T\cdots\times_T B}_n/T]$ with the coboundary maps given by the associated projections. On the other hand, there is an isomorphism 
		$$
		[T\backslash \underbrace{B\times_T B\times_T\cdots\times_T B}_n/T]\simeq [(\underbrace{U\times U\times \cdots \times U}_n)/T],
		$$
		where $T$ acts on $U\times\cdots\times U$ via diagonal conjugation: $t\circ(u_1,\ldots,u_n)=(tu_1t^{-1}, \ldots, tu_nt^{-1})$. The corresponding map is induced by the map of schemes
		$$[(b_1, \ldots, b_n)] \in T\backslash B\times_T B\times_T\cdots\times_T B \mapsto ([b_1b_2\ldots b_{n-1} b_n], \ldots, [b_{n-1}b_n],[b_n])\in (U\times U\times \cdots \times U),
		$$ 
		where $[b]$ denotes a class of $b\in B$ in $[T\backslash B]\simeq U$. It is clear that this map is an isomorphism and the remaining $T$-action on the left translates exactly to the diagonal action of $T$ on the right.
		
		\Cref{U as an example} tells us that the quotient stack $(U\times U\times \cdots \times U)/T$ is cohomologically proper, thus, $\check{C}(\pi)_n$ is cohomologically proper for all $n$. Applying \Cref{lem:Cech cohomologically proper}, it follows that $BB$ is cohomologically proper.
	\end{proof}
\end{prop}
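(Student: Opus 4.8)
The plan is to deduce this from the torus case already handled in \Cref{lem:A^n/T is cohomologically proper}, via smooth descent. First I would observe that the morphism $\pi\colon BT\to BB$ induced by the inclusion $T\hookrightarrow B$ is smooth and surjective: its geometric fibers are the homogeneous space $T\backslash B$, which is isomorphic as a scheme to the unipotent radical $U$ (an affine space) because of the splitting $B\simeq T\ltimes U$; thus $\pi$ is representable by smooth $R$-schemes with nonempty fibers, hence an fppf cover. Consequently the \v Cech nerve $\check C(\pi)_\bullet$ is a flat hypercover of $BB$ by finitely presentable Artin stacks, so by \Cref{lem:Cech cohomologically proper} it is enough to show that each term $\check C(\pi)_n$ is cohomologically proper. (This is exactly the pattern already used for $B^nT$.)

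Next I would identify these terms explicitly. Using the standard computation of fiber products of classifying stacks, $\check C(\pi)_n$ is of the form $[T\backslash(\underbrace{B\times_T\cdots\times_T B}_{j})/T]$ for an appropriate $j=j(n)\ge 0$, with the two outer copies of $T$ acting by left and right translation and the face maps given by the evident projections and multiplications. Applying the scheme isomorphism $T\backslash B\simeq U$ to the outer factors, I would produce a $T$-equivariant isomorphism
\[
[T\backslash(\underbrace{B\times_T\cdots\times_T B}_{j})/T]\;\simeq\;[(\underbrace{U\times\cdots\times U}_{j})/T],
\]
where the remaining copy of $T$ acts on $U^{\times j}$ diagonally by conjugation. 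Concretely one can take the morphism induced on points by $(b_1,\dots,b_j)\mapsto([b_1b_2\cdots b_j],[b_2\cdots b_j],\dots,[b_j])$, and then check that it is an isomorphism of schemes intertwining the two residual $T$-actions.

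Finally, to see that $[U^{\times j}/T]$ is cohomologically proper I would appeal to \Cref{U as an example}: conjugation gives a $T$-equivariant isomorphism of schemes $U\simeq\mathfrak u=\Lie(U)$, and $\mathfrak u$ is the direct sum of the positive root spaces of $(G,T,B)$; hence $\mathfrak u^{\oplus j}$ is a conical $T$-representation in the sense of \Cref{defn: conical action} — all its weights lie in the proper cone spanned by the positive roots — so $[U^{\times j}/T]$ is a quotient of an affine space by a conical linear torus action and \Cref{lem:A^n/T is cohomologically proper} applies. Combined with \Cref{lem:Cech cohomologically proper} this gives the claim. I expect the only genuine obstacle to be the middle step: pinning down the ``straightening'' isomorphism $\check C(\pi)_n\simeq[U^{\times j}/T]$ in a way that is compatible along the simplicial direction and that manifestly turns the residual $T$-action into the diagonal conjugation action; everything else is formal once the torus computations are in hand.
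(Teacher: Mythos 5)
Your proposal is correct and follows essentially the same route as the paper: cover $BB$ by $BT$, identify the \v Cech terms with $[U^{\times n}/T]$ (diagonal conjugation) via the explicit straightening $(b_1,\dots,b_n)\mapsto([b_1\cdots b_n],\dots,[b_n])$, invoke \Cref{U as an example} and \Cref{lem:A^n/T is cohomologically proper} to handle each term, and conclude with \Cref{lem:Cech cohomologically proper}. The only uncertainty you flag — the index $j(n)$ and simplicial compatibility — is resolved exactly as you suspect, with $j=n$ and the face/degeneracy maps intertwined by the same formula.
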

\begin{cor}\label{X_quot_B_is_coh_proper}
	Let $X$ be a smooth proper $R$-scheme equipped with an action of $B$. Then $[X/B]$ is cohomologically proper.
	
	\begin{proof}
		By assumption on $X$, the structure map $[X/B] \to BB$ is proper, and therefore, is cohomologically proper. Since the composition of cohomologically proper maps is cohomologically proper, we can conclude by the previous proposition.
	\end{proof}
\end{cor}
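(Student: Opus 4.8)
The plan is to reduce the statement to two facts that are already in hand: that the class of cohomologically proper morphisms is closed under composition (\Cref{properties_of_coh_prop_morps}(1)) and that $BB$ is cohomologically proper (\Cref{prop:BB is coh proper}). Accordingly I would factor the structure morphism of $[X/B]$ as the composite $[X/B]\xrightarrow{p} BB \to \Spec R$ and show that each factor is cohomologically proper. Note that only properness of $X$ is used here; the smoothness hypothesis is harmless baggage needed so that one may further upgrade to Hodge-properness via \Cref{coh_proper_stronger}.

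First I would observe that the projection $p\colon[X/B]\to BB$ is schematic and proper: pulling back along the atlas $\Spec R\to BB$ recovers the morphism $X\to\Spec R$, which is proper by hypothesis, and properness of a morphism of stacks may be checked after the faithfully flat base change $\Spec R\to BB$. Next I would invoke the fact that a schematic proper morphism of Noetherian stacks is cohomologically proper. By \Cref{properties_of_coh_prop_morps}(3) this is local on the target with respect to a smooth cover, so it reduces to the case of a proper morphism $f\colon Y'\to Y$ of Noetherian schemes, where by \Cref{rem: coh proper enough to check on the heart} it is enough to check that $f_*$ carries $\Coh(Y')^\heartsuit$ into $\Coh^+(Y)$; this is Grothendieck's coherence theorem, with boundedness supplied by the finite cohomological dimension of $f$. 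Granting this, $p$ is cohomologically proper, $BB\to\Spec R$ is cohomologically proper by \Cref{prop:BB is coh proper}, and the composite $[X/B]\to\Spec R$ is cohomologically proper by \Cref{properties_of_coh_prop_morps}(1); hence $[X/B]$ is cohomologically proper over $R$, as claimed.

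I do not expect a genuine obstacle here: the one ingredient that is not purely formal bookkeeping with the cited propositions is the assertion that a schematic proper morphism is cohomologically proper, and that is a classical input (Grothendieck coherence together with flat base change to descend from a smooth atlas of the target). Everything else is an application of the composition-stability and the computation of $BB$ that were established just above.
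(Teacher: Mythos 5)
Your proof is correct and follows essentially the same route as the paper: factor the structure map as $[X/B]\to BB\to\Spec R$, observe that the first leg is proper (hence cohomologically proper) and the second is cohomologically proper by \Cref{prop:BB is coh proper}, and compose using \Cref{properties_of_coh_prop_morps}(1). The paper takes for granted the implication proper $\Rightarrow$ cohomologically proper, whereas you spell out the reduction to Grothendieck coherence via \Cref{properties_of_coh_prop_morps}(3); this is a useful elaboration, not a different argument.
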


\paragraph{Any parabolic subgroup.} Finally, we can pass to the case of $BP$, where $P\subset G$ is any parabolic subgroup. In particular, one can take $P$ to be the whole group $G$.
\begin{prop}\label{BP_is_SCP}
	Let $P\subset G$ be a parabolic subgroup. Then $BP$ is cohomologically proper.
	
	\begin{proof}
		We use the cover $\pi\colon BB\ra BP$. The terms $\check{C}(\pi)_n$ of the corresponding \v{C}ech simplicial object are given by 
		$$
		\check{C}(\pi)_n \simeq [B\backslash\underbrace{P\times_B \cdots \times_B P}_n/B].
		$$
		Similarly to the formula in the proof of \Cref{prop:BB is coh proper}, one can identify $(P\times_B \cdots \times_B P)/B\simeq P/B \times \ldots P/B$. The variety $P/B$ is the flag variety for the Levi subgroup $L\subset P$ and, as such, is smooth and proper. Thus, it follows from \Cref{X_quot_B_is_coh_proper} that $
		\check{C}(\pi)_n$ is cohomologically proper for all $n$. By \Cref{lem:Cech cohomologically proper}, $BP$ is cohomologically proper as well.
	\end{proof}
\end{prop}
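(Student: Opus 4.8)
The plan is to reduce to the two cases already settled, namely that $BB$ is cohomologically proper (\Cref{prop:BB is coh proper}) and that $[X/B]$ is cohomologically proper whenever $X$ is smooth and proper over $R$ with a $B$-action (\Cref{X_quot_B_is_coh_proper}), and then invoke hypercover descent. Concretely, I would use the smooth surjective cover $\pi\colon BB\to BP$ induced by the inclusion $B\subset P$: it is smooth because, after base change along the atlas $\Spec R\to BP$, it becomes $P/B\to\Spec R$, which is smooth and proper over $R$ (the scheme $P/B$ being the flag variety of the Levi quotient of $P$). Thus the \v Cech nerve $\check{C}(\pi)_\bullet$ is a flat hypercover of $BP$ by finitely presentable Artin stacks, and \Cref{lem:Cech cohomologically proper} reduces the statement to showing that each term $\check{C}(\pi)_n$ is cohomologically proper.

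The next step is to compute these terms. As in the proof of \Cref{prop:BB is coh proper}, one has $\check{C}(\pi)_n\simeq[B\backslash(P\times_B\cdots\times_B P)/B]$ with $n$ copies of $P$, and the map sending a class $[p_1,\dots,p_n]$ to $([p_1p_2\cdots p_n],\,[p_2\cdots p_n],\,\dots,\,[p_n])$, where $[p]$ denotes the image of $p\in P$ in $P/B$, should identify $(P\times_B\cdots\times_B P)/B$ with the product $(P/B)^{\times n}$, carrying the residual $B$-action over to the diagonal one. Hence $\check{C}(\pi)_n\simeq[(P/B)^{\times n}/B]$, a quotient of a smooth proper $R$-scheme by $B$.

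Finally, \Cref{X_quot_B_is_coh_proper} applied to $X=(P/B)^{\times n}$ shows that $\check{C}(\pi)_n$ is cohomologically proper for every $n$, and \Cref{lem:Cech cohomologically proper} applied to $\check{C}(\pi)_\bullet\surj BP$ then gives that $BP$ is cohomologically proper; taking $P=G$ recovers in particular that $BG$ is cohomologically proper, hence Hodge-proper by \Cref{coh_proper_stronger}. The only point requiring a little care is checking that the explicit map above is genuinely an isomorphism of $B$-schemes and intertwines the group actions correctly — an elementary but slightly fiddly verification, entirely parallel to the analogous identification $[T\backslash B\times_T\cdots\times_T B]\simeq U^{\times n}$ used for $BB$. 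Once this is granted, the rest is a formal concatenation of results already in hand.
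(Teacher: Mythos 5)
Your proposal is correct and follows essentially the same route as the paper: cover $BP$ by $BB$, identify the \v{C}ech nerve terms with $[(P/B)^{\times n}/B]$ via the telescoping map, invoke \Cref{X_quot_B_is_coh_proper} for each term, and conclude by \Cref{lem:Cech cohomologically proper}. The only difference is that you spell out the explicit isomorphism rather than citing the analogous computation in \Cref{prop:BB is coh proper}; the substance is identical.
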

Similarly to the proof of \Cref{X_quot_B_is_coh_proper}, one deduces:
\begin{cor}
	Let $X$ be a smooth proper $R$-scheme and let $P$ be a parabolic subgroup in a reductive $R$-group scheme. Then $[X/P]$ is cohomologically proper.
\end{cor}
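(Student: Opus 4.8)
The plan is to mimic the proof of \Cref{X_quot_B_is_coh_proper} almost verbatim, with the parabolic $P$ in place of the Borel $B$ and \Cref{BP_is_SCP} in place of \Cref{prop:BB is coh proper}. Here $X$ is of course assumed to carry a $P$-action, so that $[X/P]$ makes sense. First I would, exactly as in the reduction at the beginning of this subsection, invoke \Cref{prop:Conrad} together with \Cref{properties_of_coh_prop_morps}(3) to reduce to the case where the ambient reductive group $G$ is split: an \'etale cover $R\to R'$ trivializing $G$ pulls the structure morphism $[X/P]\to\Spec R$ back to $[X_{R'}/P_{R'}]\to\Spec R'$, and cohomological properness of the structure morphism descends along this smooth (even \'etale) cover of the base.

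Assuming now that $G$ is split, the key observation is that the structure morphism $[X/P]\to BP$ is schematic and proper: its base change along the atlas $\Spec R\to BP$ is simply $X\to\Spec R$, which is proper by hypothesis, and both representability by schemes and properness are fppf-local on the target. A schematic proper morphism of Noetherian stacks has pushforward preserving bounded below coherent complexes (by Grothendieck's coherence theorem for proper morphisms of Noetherian schemes, applied after the base change to the atlas), so $[X/P]\to BP$ is cohomologically proper.

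Finally, by \Cref{BP_is_SCP} the stack $BP$ is cohomologically proper, i.e.\ the structure morphism $BP\to\Spec R$ is cohomologically proper. Since the class of cohomologically proper morphisms is closed under composition (\Cref{properties_of_coh_prop_morps}(1)), the composite
$$
[X/P]\longrightarrow BP\longrightarrow\Spec R
$$
is cohomologically proper, which is precisely the assertion. I do not expect any real obstacle here once \Cref{BP_is_SCP} is available; the only points needing (routine) care are the reduction to split $G$ and the verification that properness of $X$ over $R$ indeed makes the representable morphism $[X/P]\to BP$ cohomologically proper. As an alternative one could bypass the reduction to split $G$ and run the \v{C}ech argument of \Cref{BP_is_SCP} directly with $X$-coefficients — the terms $\check C(\pi)_n$ would become quotients of products $X\times P/B\times\cdots\times P/B$ by $B$, which are smooth proper over $R$ — but piggybacking on \Cref{X_quot_B_is_coh_proper} and \Cref{BP_is_SCP} via composition is the cleanest route.
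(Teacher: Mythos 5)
Your proof is correct and matches the paper's approach: the paper prefaces this corollary with "Similarly to the proof of \Cref{X_quot_B_is_coh_proper}, one deduces," and the route you describe — reducing to split $G$ via \Cref{prop:Conrad} and \Cref{properties_of_coh_prop_morps}(3), noting $[X/P]\to BP$ is proper hence cohomologically proper, and composing with $BP\to\Spec R$ using \Cref{BP_is_SCP} and \Cref{properties_of_coh_prop_morps}(1) — is precisely that argument.
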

\subsection{Other examples}\label{sec:more examples}
Here we give some other examples of Hodge-proper stacks, mainly following \cite[Section 3]{KubrakPrikhodko_HdR}. We assume that the base ring $R$ is Noetherian and regular. Regularity is mainly important only for \Cref{ex: Theta-stratified}, but since we are mostly interested in the case $R=\mc O_K$, which is Noetherian and regular anyways, we make this assumption throughout the section.

\begin{ex}[Classifying stacks]
	By \cite[Proposition 2.3.6]{KubrakPrikhodko_HdR}, if $R=F$ is an algebraically closed field of characteristic 0, the classifying stack $BG$ is cohomologically proper for \textit{any} $G$. However, the situation is quite different in positive (or mixed) characteristic: many of the classifying stacks stop being Hodge-proper. This is very well illustrated by the example of $B{\mbb G_a}$; in this case over $\mbb F_p$ with $p>2$ one has an isomorphism (see \cite[Lemma 4.22 and Proposition 4.27]{Jantzen})
	$$
	H^*(B\mathbb G_a,\mc O_{B\mathbb G_a})\simeq \wedge^*_{\mbb F_p}(\mbb F_p v_1\oplus \mbb F_p v_p \oplus \mbb F_p v_{p^2}\oplus \ldots )\otimes_{\mbb F_p} \Sym^*_{\mbb F_p}(\mbb F_p w_p\oplus \mbb F_p w_{p^2} \oplus \mbb F_p w_{p^3}\oplus \ldots ),
	$$
	where $v_{p^i}$ have degree 1 and $w_{p^i}$ have degree $2$. In particular, $H^j(B\mathbb G_a,\mc O_{B\mathbb G_a})$ is an infinite-dimensional $\mbb F_p$-vector space for all $j>0$. The reader can also find an explicit computation of $H^*_{\mbb F_p}(B\mathbb G_a,\mc O_{B\mathbb G_a})$ over $\mathbb Z$ in \cite[Appendix A]{KubrakPrikhodko_HdR}.
\end{ex}

\begin{ex}[Quotients of conical resolutions by the $\mbb G_m$-action]\label{ex: conical resolutions} This example is motivated by \cite[(generalized form of) Conjecture 5.2.3]{Kubrak_Travkin}. We remind the definition of a conical resolution:
\begin{defn}\label{def: conical resolution}
		A \textit{conical resolution} over $R$ is given by:
		\begin{itemize}
			\item a proper birational morphism $\pi\colon X\ra Y$ of $R$-schemes,
			\item actions of $\mbb G_m$ on $X$ and $Y$ that are intertwined by $\pi$,
		\end{itemize}
		that satisfy the following conditions:
		\begin{itemize}
			\item $X$ is smooth over $R$;
			\item the action on $Y$ is conical, namely $Y\simeq \Spec A$ is affine, the induced $\mbb Z$-grading on $A$ (coming from the $\mbb G_m$-action) is nonpositive\footnote{Alternatively, one can ask for the grading to be nonnegative; these two options are intertwined by the automorphism of $\mbb G_m$ sending $t$ to $t^{-1}$. Note that in \cite{Kubrak_Travkin} the nonnegative convention on the weight is used.} and $A_0$ (which is also isomorphic to $A/A_{<0}$) is isomorphic to $R$.
		\end{itemize}	
	\end{defn}
	
	In this case one has a proper map $[X/\mbb G_m]\ra [Y/\mbb G_m]$. Moreover $\Coh([Y/\mbb G_m])^{\heartsuit}$ is identified (via \Cref{derived_vs_derived}) with the abelian category of finitely-generated $\mbb Z$-graded modules over $A$. Since $A$ is negatively graded (with $A_0\simeq R$ having finite rank over $R$), by an argument similar to \Cref{lem:A^n/T is cohomologically proper}, one sees that $[Y/\mbb G_m]$ is cohomologically proper. Thus, so is $[X/\mbb G_m]$.
\end{ex}

\begin{ex}[Hodge-proper schemes]
	There are examples of Hodge-proper schemes that are not proper (see \cite[Section 2.3.3]{KubrakPrikhodko_HdR}). However, we still do not know an example of such a scheme over $\mbb F_p$ (in characteristic $p$ using \cite[Section 2.3.3]{KubrakPrikhodko_HdR} one can only produce such a scheme over $\mbb F_p(t)$). We also note that a cohomologically proper quasi-compact separated scheme is automatically proper \cite[Corollary 3.10]{Hall_GAGA}.
\end{ex}

The next example shows that, in fact, a much larger class of quotient stacks, than what we considered in \Cref{sec: examples}, is Hodge-proper:
\begin{ex}[More quotients by reductive groups]\label{ex: quotients by reductive groups} Here we assume $R$ is a finitely generated algebra over $\mathbb Z$ (see \Cref{rem: more general base} for how to allow more general base if we are only interested in Hodge-properness). Let $G$ be a split reductive group over $R$ and let $X$ be a finite type $R$-scheme with a $G$-action. Assume that this action is locally linear, namely, that there is a $G$-invariant affine cover $U_i=\Spec A_i$ of $X$. Note that each $A_i$ is finitely generated over $R$. Then the categorical quotient $X/\!\!/G$ is a finite type scheme that is glued out of $U_i/\!\!/G\coloneqq \Spec A_i^G$. By \cite[Theorem 3]{FranjouVDKallen_PowerReductivity}, each $A_i^G$ is finitely generated over $R$ and, by \cite[Proposition 57]{FranjouVDKallen_PowerReductivity}\footnote{This is where the assumption on $R$ to be finite type over $\mbb Z$ is important, see also the footnote to \cite[Theorem 3.0.1]{KubrakPrikhodko_HdR}.}, each cohomology group $H^j(G,M)$ for any $G$-equivariant finitely generated $A_i$-module $M$ is a finitely generated $A_i^G$-module. It follows (see the proof of \cite[Proposition 3.1.2]{KubrakPrikhodko_HdR}) that the natural morphism $[X/G]\surj X/\!\!/G$ (induced by $[\Spec A_i/G]\surj \Spec A_i^G$) is cohomologically proper. In particular, we get the following
	\begin{prop}Let $X$ be a finite type $R$-scheme with a locally linear action of a split reductive group $G$. Then, if the categorical quotient $X/\!\!/G$ is proper, the quotient stack $[X/G]$ is cohomologically (and thus Hodge-) proper.
	\end{prop}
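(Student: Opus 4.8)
The plan is to factor the structure morphism of $[X/G]$ through the categorical quotient as $[X/G] \xrightarrow{\,p\,} X/\!\!/G \xrightarrow{\,f\,} \Spec R$ and to prove cohomological properness of each factor separately; the claim then follows from \Cref{properties_of_coh_prop_morps}(1), and, when $X$ (hence $[X/G]$) is moreover smooth, Hodge-properness follows from \Cref{coh_proper_stronger}. For the second factor: recall that $X/\!\!/G$ is of finite type over $R$, glued from the affine opens $\Spec A_i^G$, where each $A_i^G$ is a finitely generated $R$-algebra by \cite[Theorem 3]{FranjouVDKallen_PowerReductivity}; so $X/\!\!/G$ is Noetherian and $f$ is a proper morphism of Noetherian schemes by hypothesis. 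Hence $Rf_*$ sends $\Coh^+(X/\!\!/G)$ to $\Coh^+(R)$ by the Grothendieck coherence theorem (applied termwise along the Postnikov tower and using \Cref{nperf_basics}), i.e. $f$ is cohomologically proper.

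It remains to treat $p\colon [X/G]\to X/\!\!/G$. This morphism is quasi-compact and quasi-separated since all objects in sight are of finite type over the Noetherian ring $R$, so by \Cref{properties_of_coh_prop_morps}(3) it suffices to produce a smooth cover of $X/\!\!/G$ over which $p$ becomes cohomologically proper. We take $\coprod_i \Spec A_i^G \to X/\!\!/G$, the open (hence smooth and surjective) cover coming from the gluing description of $X/\!\!/G$. By the locally linear hypothesis each $U_i = \Spec A_i$ is $G$-invariant and agrees with the preimage $X\times_{X/\!\!/G}\Spec A_i^G$, so the base change of $p$ along $\Spec A_i^G \hookrightarrow X/\!\!/G$ is precisely $p_i\colon [\Spec A_i/G]\to \Spec A_i^G$. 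Thus we are reduced to showing that each $p_i$ is cohomologically proper.

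For this local statement we invoke \Cref{rem: coh proper enough to check on the heart}: it is enough to verify $p_{i*}\mathcal F \in \Coh^+(A_i^G)$ for $\mathcal F \in \Coh([\Spec A_i/G])^\heartsuit$. Via \Cref{derived_vs_derived} such an $\mathcal F$ is a finitely generated $G$-equivariant $A_i$-module $M$, and $p_{i*}\mathcal F$ is computed by the rational cohomology complex $\RG(G,M)$, regarded as an object of $\DMod{A_i^G}$. This complex lives in non-negative cohomological degrees, being the derived functor of the left-exact invariants functor; and each $H^j(G,M)$ is a finitely generated $A_i^G$-module by \cite[Proposition 57]{FranjouVDKallen_PowerReductivity} (this is exactly where the assumption that $R$ is of finite type over $\mathbb Z$ is used). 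Since $A_i^G$ is Noetherian, this says $\RG(G,M)\in \Coh^+(A_i^G)$, completing the reduction.

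The main obstacle — and the only genuinely non-formal ingredient — is the "power reductivity" package of Franjou--van der Kallen, namely finite generation of the invariant rings $A_i^G$ and of the higher cohomology $H^j(G,M)$; with these in hand the rest is a mechanical assembly via \Cref{properties_of_coh_prop_morps}. The one point deserving care is the compatibility $U_i = X\times_{X/\!\!/G}\Spec A_i^G$ of the GIT quotient with restriction to $G$-invariant affine opens, which is precisely what the definition of a locally linear action is designed to guarantee.
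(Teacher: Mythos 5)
Your factorization $[X/G]\to X/\!\!/G\to\Spec R$, with Franjou--van der Kallen supplying the local finiteness and Grothendieck coherence supplying the second factor, is exactly the route the paper takes; the paper simply outsources the first factor to \cite[Proposition 3.1.2]{KubrakPrikhodko_HdR}, whereas you spell it out. You also correctly locate the hypotheses ($R$ finite type over $\mathbb Z$; $X$ smooth for the Hodge-proper conclusion).

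The one place your write-up is too quick is the claim that the locally linear hypothesis alone forces $U_i = X\times_{X/\!\!/G}\Spec A_i^G$. That identity is asserting that $U_i$ is \emph{saturated} for the quotient map $\pi\colon X\to X/\!\!/G$, and this is genuinely extra information: for instance take $X=\mathbb A^1$ with the scaling $\mathbb G_m$-action, so $X/\!\!/G=\Spec R$, and include $U=\mathbb G_m$ as one of the invariant affine opens in the cover; then $U/\!\!/G=\Spec R$ is all of $X/\!\!/G$ but $\pi^{-1}(\Spec A^G)=\mathbb A^1\neq U$. So ``locally linear'' does not by itself give the base-change identification. What makes the argument go through is that for a reductive $G$ the glued quotient is a \emph{good} quotient, and in particular $\pi$ is an affine morphism; one should then replace the given cover by a cover of $X/\!\!/G$ by affine opens $V_j$ and work with the (automatically affine, $G$-invariant, saturated) preimages $\pi^{-1}(V_j)$, to which Franjou--van der Kallen applies verbatim. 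That is the content hidden in the reference to \cite[Proposition 3.1.2]{KubrakPrikhodko_HdR}, and it is worth stating rather than attributing it to the definition of a locally linear action. With that adjustment the proof is complete and matches the paper's.
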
	
	This applies for example when $X=\Spec A$ is affine and $A^G$ is a finitely-generated $R$-module. See also \cite[Examples 3.1.6 and 3.1.7]{KubrakPrikhodko_HdR}. Using \Cref{lem:cohomologically proper is flat local} and \Cref{prop:Conrad}, one can also get rid of the splitness assumption on $G$.
\end{ex}	

\begin{rem}\label{rem: more general base}
	Let $R$ be a regular $\mathbb Z$-flat Noetherian ring. In this case, by Popescu's theorem, $R$ is a filtered colimit of smooth $\mbb Z$-subalgebras $P\subset R$. Applying the spreading out \cite[Corollary 2.1.14]{KubrakPrikhodko_HdR}, we get that any finitely presentable smooth Artin stack $\mstack X$ over $R$ comes as a base change of a smooth finitely presentable Artin stack $\mstack Y$ over some $P\subset R$; moreover, if $\mstack X$ was given as $[X/G]$, spreading $X$ to some scheme $Y$ (over a possibly bigger $P$) endowed with a $G$-action, we can assume that $\mstack Y$ is of the form $[Y/G]$. If the action on $X$ was locally linear, then spreading the $G$-invariant affine cover (as in \cite[Proposition 3.1.2]{KubrakPrikhodko_HdR}), we can also assume that the action on $Y$ is locally linear. 
	Note that $P$ has finite Krull dimension and is regular, thus, its Tor-dimension is also finite and we can apply \Cref{prop: Hodge proprness survives some base-changes}: namely, $\mstack X$ is Hodge-proper over $R$, if $\mstack Y$ is Hodge-proper over $P$. Thus \Cref{ex: quotients by reductive groups} also applies when $R$ is as in this remark. In particular one can take $R=\mc O_K$.
\end{rem}

\begin{ex}[$\Theta$-stratified stacks]\label{ex: Theta-stratified}
	The notion of $\Theta$-stratification (as defined by Halpern-Leistner in \cite{HL-instability} and \cite{Halpern-Leistner_Theta}) often allows to reduce the question of Hodge-properness of a stack $\mstack X$ to the so-called $\Theta$-strata (provided $\mstack X$ has a full $\Theta$-stratification). Namely (see \cite[Proposition 3.3.2]{KubrakPrikhodko_HdR}), if a smooth Artin stack $\mstack X$ with affine diagonal is endowed with a finite $\Theta$-stratification and all $\Theta$-strata $\mstack \{X^{ss},\mstack S_\alpha\}$ are cohomologically proper, then $\mstack X$ itself is cohomologically proper. We also note that one can replace the unstable strata in this criterion by their centra $\mstack Z_\alpha\subset \mstack S_\alpha$. The particular examples then include even more general quotient stacks $[X/G]$ where the action is Kempf-Ness-complete (see \cite[Proposition 3.3.2]{KubrakPrikhodko_HdR}).
\end{ex}

\section{De Rham, crystalline, and prismatic cohomology of stacks}\label{section:p_adic_cohomology_for_stacks}
In this section we extend various crystalline-like $p$-adic cohomology theories from schemes to stacks and study the relations between them. In  \Cref{subsect:prism_for_stacks} we define the prismatic cohomology of stacks and prove its basic properties. Then, in \Cref{sec: Hodge and de Rham cohomology} we recall the constructions of the Hodge and de Rham cohomology and prove the de Rham comparison for the prismatic cohomology of stacks. In \Cref{subsect:crystalline_for_stacks} we also give a definition of the crystalline cohomology, show that this definition agrees with Olsson's construction in the case of a $1$-Artin stack, and then prove the crystalline comparison.

\subsection{Prismatic cohomology of schemes: reminder} \label{sect:reminder on prismatic stuff}
In this section we briefly remind the notions of prismatic site and prismatic cohomology for formal schemes following the work of Bhatt and Scholze \cite{BS_prisms}.
\begin{defn}
	A \emdef{$\delta$-ring} is a $p$-typical $\lambda$-ring, i.e. it is a $\mathbb Z_{(p)}$-algebra equipped with a map of sets $\delta\colon A \to A$ such that $\delta(0)=\delta(1)=0$ and 
	$$\delta(x+y) = \delta(x) + \delta(y) + \frac{x^p + y^p - (x+y)^p}{p} \quad\text{and}\quad \delta(xy) = x^p\delta(x) + y^p\delta(y) + p\delta(x)\delta(y)$$
	for all $x,y\in A$. Every $\delta$-ring $A$ admits a natural ring endomorphism
	$$\phi(x) := x^p + p\delta(x).$$
	lifting the absolute Frobenius on $A/(p)$. Note that if $A$ is $p$-torsion free, a $\delta$-structure on $A$ is just the same as a choice of such lift $\phi$. 
\end{defn}
\begin{defn}[{\cite[Definition 3.2]{BS_prisms}}] 
	A \emdef{prism} is a pair $(A, I)$, where $A$ is a $\delta$-ring and $I\subset A$ is an ideal defining a Cartier divisor in $\Spec A$, that satisfies the following two conditions
	\begin{itemize}
		\item $A$ is derived $(p, I)$-adically complete\footnote{In \cite{BS_prisms} a slightly less restrictive condition $(p,I)\in \rad(A)$ is used; however all prisms that appear in our work are in fact $(p,I)$-complete.}.
		
		\item  $p \in I + \phi(I)A$; geometrically this means that the divisors $V(I)$ and $V(\phi(I))$ intersect only in characteristic $p$.
	\end{itemize}
\end{defn}
In particular $I$ defines a line bundle on $\Spec A$; it follows that both $I$ and $A/I$ are perfect $A$-modules.
\begin{defn}
	\noindent A prism $(A,I)$ is called \begin{itemize}
		\item[--] \emdef{bounded},  if $A/I$ has bounded $p^\infty$-torsion, i.e. $A[p^\infty]=A[p^n]$ for some $n$. \item[--]\emdef{perfect},  if $\phi$ is an automorphism.
		\item[--]\emdef{orientable}, if $I$ is principal.% the choice of a generator $d$ is called \emdef{an orientation} of the ideal $I$.
	\end{itemize}
\end{defn}
\begin{defn}\label{defn: distinguished}
	An element $d$ in a $\delta$-ring $A$ is called \emdef{distinguished} if $\delta(d)$ is invertible.
\end{defn}

If a $\delta$-ring $A$ is derived $(p,d)$-complete, then $(A,(d))$ is a prism if and only if $d$ is distinguished (\cite[Lemma 2.25]{BS_prisms}). Reversely, for any bounded prism $(A,I)$ there is a (derived) $(p,I)$-completely faithfully flat map $A\ra A'$ such that  $I\cdot A'=(d)$ for a distinguished element $d\in A'$ (\cite[Lemma 3.7(4)]{BS_prisms}). Note that if $(A,(d))$ is a prism, then by definition $d$ is a nonzerodivisor. 
We refer the reader to \cite[Section 3]{BS_prisms} for a more thorough discussion of prisms. We will be mostly interested in the following types of prisms:
\begin{ex}\label{ex: examples of prisms}\begin{enumerate}
		\item (\textit{crystalline}). Let $A$ be a $p$-torsion free, $p$-complete ring and let $I=(p)$; $\delta$-structure on $A$ is then the same as a lift of Frobenius $\phi$. Also, $p$ is always distinguished since $\delta(p)=1-p^{p-1}\in \mbb Z_p^\times$. Particular examples of such prisms include $(W(k),(p))$ and $(A_\crys, (p))$. 
		\item (\textit{Breuil-Kisin}). Let $K/\mbb Q_p$ be a finite extension and let $k$ be the residue field. Choose a uniformizer $\pi\in \mc O_K$, and let $I$ be the kernel of the natural surjection $\mf S\coloneqq W(k)[[u]]\surj \mc O_K$  which sends $u$ to $\pi$. The pair $(\mf S,I)$ is a prism, where $\mf S$ is regarded as a $\delta$-ring via the Frobenius $\phi_{\mathfrak S}\colon \mf S \ra \mf S$ which lifts the natural Frobenius on $W(k)$ and sends $u$ to $u^p$. We denote by $E(u)$ the unique generator of $I$ which is a monic polynomial. Note that since $\pi$ is a uniformizer the polynomial $E(u)$ is Eisenstein. 

		\item (The Fontaine's ring $\Ainf$). For more details on the ring $\Ainf$ see \Cref{sec: Ainf}. The pair $(\Ainf,\ker (\theta))$ defines a prism; the $\delta$-structure on $\Ainf$ is induced by the Frobenius $\phi$. The element $\xi$ (see \ref{rem: xi}) is a generator for $\ker (\theta)$.
		Also, given a choice of $\pi^\flat=(\pi,\pi^{\frac{1}{p}},\pi^{\frac{1}{p^2}},\ldots)\in \mc O_{\mbb C_p}^\flat$ of a compatible system of $p^n$-th roots of $\pi$ for a uniformizer $\pi\in \mc O_K$ we obtain a natural map of prisms $(\mf S,(E(u)))\ra (\Ainf, \ker (\theta))$ sending $u$ to $[\pi^\flat]$.

	\end{enumerate}
	\smallskip We note that all of these prisms are bounded and orientable.
\end{ex}
\begin{defn}[{\cite[Definition  4.1]{BS_prisms}}]
	Fix a bounded prism $(A, I)$. The \emdef{prismatic site $(\mathfrak X/A)_\Prism$ of a smooth $p$-adic formal scheme $\mathfrak X$ over $A/I$} is the category consisting of maps of bounded prisms $g\colon (A, I) \to (B, IB)$ and a map of $p$-adic formal schemes $f\colon \Spf(B/IB) \to \mathfrak X$ making the diagram below commutative
	$$\xymatrix{
		\Spf(B/IB) \ar@/^1pc/[rr] \ar[r]_-f & \mathfrak X \ar[d] & \Spf(B)\ar[d]^g\\
		& \Spf(A/I) \ar[r] & \Spf(A).
	}$$
	Morphisms in $(\mathfrak X/A)_\Prism$ are the natural ones. We endow $(\mathfrak X/A)_\Prism$ with the faithfully flat topology, where a morphism
	$$\big(\mathfrak X \leftarrow \Spf(C/IC) \rightarrow \Spf(C)\big) \xymatrix{\ar[r] &} \big(\mathfrak X \leftarrow \Spf(B/IB) \rightarrow \Spf(B)\big)$$
	in $(\Spf X/A)_\Prism$ is called (faithfully) flat if $C$ is $(p, IB)$-completely (faithfully) flat over $B$.
\end{defn}
\begin{defn}[Prismatic cohomology]\label{defn: prismatic cohomology}
	Fix a prism $(A, I)$ and let $\mathfrak X$ be a smooth $p$-adic formal $A/I$-scheme. \emdef{The prismatic cohomology $R\Gamma_\Prism(\mathfrak X / A)$ of $\mathfrak X$ over $A$} is defined as the derived global sections of the sheaf 
	$$\big(\mathfrak X \leftarrow \Spf(B/IB) \rightarrow \Spf(B)\big) \mapsto B,$$ i.e., as the homotopy limit
	$$R\Gamma_\Prism(\mathfrak X / A) \coloneqq \lim_{\big(\mathfrak X \leftarrow \Spf(B/IB) \rightarrow \Spf(B)\big) \in (\mathfrak X/A)_\Prism} B.$$
	Note that $R\Gamma_\Prism(\mathfrak X / A)$ is naturally an $E_\infty$-algebra over $A$ and since all terms in the diagram above are derived $(p, I)$-complete, the same holds for $R\Gamma_\Prism(\mf X / A)$.
\end{defn}

Note that $R\Gamma_\Prism(\mathfrak X / A)$ has a natural $\phi_A$-linear Frobenius endomorphism $$R\Gamma_\Prism(\mathfrak X / A) \xymatrix{\ar[r]&} \phi_{A*}R\Gamma_\Prism(\mathfrak X / A),$$ coming from the (automatically $\phi_A$-linear) maps $\phi_B\colon B\ra \phi_{A*}B$ for any object of $(\mathfrak X/A)_\Prism$ by taking the limit. We can also consider its linearization 
$$
\phi_{\Prism}\colon \phi_A^* R\Gamma_\Prism(\mathfrak X / A)\xymatrix{\ar[r]&} R\Gamma_\Prism(\mathfrak X / A).
$$

Finally, we also mention that by \cite[Corollary 4.11]{BS_prisms}, for any map $(A/I)\ra (B/J)$ of bounded prisms one has the base change
$$
\RG_{\Prism/A}(\mf X/A)\widehat{\otimes}_A B\simeq \RG_{\Prism/B}(\mf X_{B/J}/B),
$$
where the tensor product is $(p,I)$-completed.

With these notations we have:
\begin{thm}[{\cite[Theorem 11.2]{BMS2}}]\label{main_BMS2}
	Let $\mathfrak X$ be a smooth formal scheme over $\mathcal O_K$. Consider the Breuil-Kisin prism $(\mf S, (E(u)))$, corresponding to some choice of the uniformizer $\pi\in \mc  O_K$. Also recall the $\Ainf$-prism $(\Ainf, \ker (\theta))$. Then:
	\begin{enumerate}[label=(\arabic*)]
		\item If $\mf X$ is proper, $\RG_\Prism(\mf X/\mf S)$ is a perfect complex of $\mf S$-modules.
		
		\item The Frobenius $\phi$ on the prismatic cohomology
		$$\textstyle \phi_{\mathfrak S}^*R\Gamma_\Prism(\mathfrak X/\mathfrak S)[\frac{1}{E}] \xymatrix{\ar[r]^\phi_\sim &} R\Gamma_\Prism(\mathfrak X/\mathfrak S)[\frac{1}{E}]$$
		becomes an equivalence after localizing at $E(u)$.
		
		\item(de Rham comparison) There is a canonical equivalence
		$$\phi_{\mathfrak S}^*R\Gamma_\Prism(\mathfrak X/\mathfrak S)\otimes_{\mathfrak S} \mathcal O_K \simeq R\Gamma_\dR(\mathfrak X / \mathcal O_K).$$
		\item(Hodge-Tate comparison) For affine $\mf X$ there is a canonical identification $H^i(\RG_{\Prism}(\mf X/\mf S)\otimes_{\mf S}\mc O_K)\simeq \Omega^i_{\mf X/\mc O_K}\{-i\}$, where $M\{-i\}\coloneqq M\otimes_{\mf S} ((I/I^2)^\vee)^{\otimes i}$ is the corresponding (Breuil-Kisin) twist.
		\item(Crystalline comparison) There is a canonical $\phi$-equivariant equivalence
		$$\phi_{\mathfrak S}^*R\Gamma_\Prism(\mathfrak X/\mathfrak S)\otimes_{\mathfrak S} W(k) \simeq R\Gamma_{\crys}(\mathfrak X_k / W(k)).$$
		
		\item(\'Etale comparison) There are canonical equivalences
		$$R\Gamma_{\et}({\mathfrak X}_{\mbb C_p}, \mathbb Z/p^n) \simeq \left(R\Gamma_\Prism(\mathfrak X_{\mathcal O_{\mbb C_p}} / A_\iinf)/p^n[\tfrac{1}{\xi}]\right)^{\phi = 1} \quad \text{and}\quad R\Gamma_{\et}({\mathfrak X}_{\mbb C_p}, \mathbb Z_p) \simeq \left(R\Gamma_\Prism(\mathfrak X/ \mathfrak S)\widehat{\otimes}_{\mathfrak S} W(\mbb C_p^\flat) \right)^{\phi = 1},$$
		where ${\mathfrak X}_{\mbb C_p}$ denotes the Raynaud's generic fiber of ${\mathfrak X}$. If $\mf X$ is also proper, for any $i\ge 0$ one has a $(\phi,G_{K_\infty})$-equivariant isomorphism
		$$
		H^i_\Prism(\mathfrak X/ \mathfrak S){\otimes}_{\mathfrak S} W(\mbb C_p^\flat)\simeq H^i_{\et}({\mathfrak X}_{\mbb C_p}, \mathbb Z_p).
		$$
	\end{enumerate}
\end{thm}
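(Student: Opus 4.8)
The statement is \cite[Theorem 11.2]{BMS2}, reformulated through the prismatic formalism of \cite{BS_prisms}, so the plan is to recall the structure of that proof. The first step is to reduce to the affine case. Prismatic cohomology is the cohomology of a sheaf on the prismatic site, hence it satisfies \'etale (indeed flat) descent in $\mf X$, and every assertion — derived $(p,I)$-completeness, the Frobenius, and the de Rham, Hodge--Tate, crystalline and \'etale comparisons — is compatible with the descent spectral sequence for a cover by affines. So it is enough to treat $\mf X=\Spf R$ with $R$ a $p$-complete smooth $\mc O_K$-algebra (and its base changes over $W(k)$ and $\Ainf$), and then glue.

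For affine $\mf X$ one works with the \v Cech--Alexander description of $\RG_\Prism(\Spf R/A)$: choosing a coordinatization (an \'etale map to a torus, or a surjection from a formally smooth $\delta$-$A$-algebra) one identifies the prismatic cohomology with a $(p,I)$-completed cosimplicial ring assembled from prismatic envelopes. The technical heart of the local theory is the Hodge--Tate comparison (4), proved by an explicit computation of the prismatic envelope of $(A[x]\to(A/I)[x],\,(x))$, giving $H^i\!\big(\RG_\Prism(\Spf R/A)\otimes_A A/I\big)\simeq \Omega^i_{R/(A/I)}\{-i\}$. Granting this, the de Rham comparison (3) follows by a filtered/Frobenius-twist argument: $\phi_{\mf S}^*\RG_\Prism(\mf X/\mf S)\otimes_{\mf S}\mc O_K$ and $\RG_\dR(\mf X/\mc O_K)$ both carry conjugate-type filtrations with the same associated graded $\bigoplus_i\Omega^i_{\mf X/\mc O_K}[-i]$; and assertion (2) — that $\phi_\Prism$ is an isomorphism after inverting $E(u)$ — reduces, by flat descent, to the corresponding statement over a perfect prism, where $\phi$ becomes invertible once one inverts a distinguished element. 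The crystalline comparison (5) comes from base change along the map of prisms $(\mf S,(E(u)))\to(W(k),(p))$, cf.\ \cite[Corollary 4.11]{BS_prisms}, together with the identification of $\RG_\Prism(\mf X_k/W(k))$ with crystalline cohomology, obtained by equipping divided-power envelopes with their natural $\delta$-structure after $p$-completion to define a functor between the crystalline and prismatic sites.

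The main obstacle is the \'etale comparison (6), which is genuinely non-formal. One base-changes to the perfect(oid) prism $\Ainf$, so that $\RG_\Prism(\mf X_{\mc O_{\mbb C_p}}/\Ainf)$ becomes the $A\Omega$-theory of \cite{BMS1}; inverting $\xi$ and passing to $(\phi=1)$-fixed points recovers $\RG_\et(\mf X_{\mbb C_p},\mbb Z/p^n)$ of the Raynaud generic fiber. This uses the full perfectoid package — the tilting equivalence, almost purity, and Scholze's primitive comparison theorem on the pro-\'etale site — reduced to a local computation over perfectoid covers of $\Spf R$. Finally, for $\mf X$ proper, coherence of $\RG_\Prism(\mf X/\mf S)$ over the Noetherian ring $\mf S$ together with derived $(p,E)$-completeness upgrades it to a perfect complex (1), and the isomorphism $H^i_\Prism(\mf X/\mf S)\otimes_{\mf S}W(\mbb C_p^\flat)\simeq H^i_\et(\mf X_{\mbb C_p},\mbb Z_p)$ is the \'etale comparison combined with the vanishing of higher $\Tor$-terms in the base change (a consequence of perfectness), which lets one pass from complexes to individual cohomology groups. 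The steps that really use geometry rather than formal manipulation with the prismatic site are the local Hodge--Tate computation and, above all, the \'etale comparison.
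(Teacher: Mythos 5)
This is a cited theorem, not one the paper proves: it is stated verbatim as \cite[Theorem 11.2]{BMS2} (via the prismatic reformulation of \cite{BS_prisms}) in the recollections section \ref{sect:reminder on prismatic stuff}, and the paper gives no proof of its own. There is therefore nothing internal to compare your sketch against; the only fair test is whether your outline matches the actual argument in BMS2/BS.

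On that score your summary is essentially accurate: reduction to the affine case by descent, the \v Cech--Alexander complex, the local Hodge--Tate computation as the technical engine, the de Rham comparison by a conjugate-filtration argument, crystalline comparison by prismatic base change along $(\mf S,(E))\to(W(k),(p))$, and the \'etale comparison as the genuinely non-formal step, proved after base change to $\Ainf$ using perfectoid methods. Two minor refinements are worth noting. First, for (1) you say coherence plus derived $(p,E)$-completeness ``upgrades'' $\RG_\Prism(\mf X/\mf S)$ to a perfect complex; this needs the extra ingredient that $\mf S=W(k)[[u]]$ is regular Noetherian of finite global dimension, so that a \emph{bounded} complex with finitely generated cohomology is automatically perfect --- the boundedness itself comes from the Hodge--Tate/de Rham comparison (an argument exactly of the type the paper develops for stacks in \Cref{cor: prismatic cohomology are bounded below coherent}). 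Second, the last sentence about ``vanishing of higher $\Tor$-terms'' is really flatness of $\mf S\to W(\mbb C_p^\flat)$ and $\mbb Z_p\to W(\mbb C_p^\flat)$ (cf.\ \cite[Lemma 4.30]{BMS1}), combined with perfectness on both sides; once $\RG_\Prism(\mf X/\mf S)$ is perfect, the completed tensor product coincides with the ordinary one and flatness lets you pass to individual cohomology groups. With those clarifications, your reconstruction agrees with the standard argument.
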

\begin{rem}
	Note that if $\mf X=\widehat X$ is a completion of a smooth proper scheme $X$ over $\mc O_K$, then by \cite[Theorems 3.8.1 and 4.4.1]{Huber_adicSpaces} one also has an isomorphism 
	$$
	R\Gamma_{\et}({\mathfrak X}_{\mbb C_p}, \mathbb Z_p)\simeq R\Gamma_{\et}({X}_{\mbb C_p}, \mathbb Z_p)\simeq\footnote{Due to the invariance of the \'etale cohomology under a base change of the ground algebraically closed field of char 0.} R\Gamma_{\et}({X}_{\ol{\mbb Q}_p}, \mathbb Z_p).
	$$
	Thus, \Cref{main_BMS2} can indeed be applied in the classical algebraic setting.
\end{rem}

%\todo{May be remove this?}
%\medskip In particular, under the $p$-torsion freeness assumption on $H^i_{\et}({\mathfrak X}_{\mbb C_p}, \mathbb Z_p)$, each prismatic cohomology $H^i_\Prism(\mathfrak X/ \mathfrak S)$ forms a Breuil-Kisin module, which, by the last comparison, has to be equal $BK(H^i_{\et}({\mathfrak X}_{\mbb C_p}, \mathbb Z_p))$. Also the integral de Rham and crystalline comparisons agree with the ones in \Cref{thm: Kisin theorem}.
%
%
%For the $p$-torsion representations there is no analogue of \Cref{thm: Kisin theorem}: in particular, the interaction between $p$-torsion in $H^i_{\et}(\mathfrak X_{\ol K},\mbb Z_p)$, $H^i_{\crys}(\mathfrak X_{k}/W(k))$ and $H^i_{\dR}(\mathfrak X/\mc O_K)$ can be much more complicated (for example see \cite[Section 2]{BMS1}). However, it follows from \Cref{main_BMS2} (and the semicontinuity of the dimension of the fiber applied to $H^i(\RG_\Prism(\mf X/\mf S)/p)$) that
%$$
%\dim_k H^i_\dR(\mathfrak X_k) \ge \dim_{\mbb F_p} H^i_{\et}(\mf X_{\mbb C_p}, \mbb F_p). 
%$$
%More generally, one can show (see \cite[Theorem 1.1(ii)]{BMS1})) that
%$$
%\mr{length}_{W(k)}(H^i_{\crys}(\mf X_k/W(k))_{\mr{tor}}/p^n)\ge \mr{length}_{\mbb Z_p}(H^i_{\et}(\mf X_{\mbb C_p},\mbb Z_p)_{\mr{tor}}/p^n)
%$$
%and it seems hard to say anything more precise \todo{examples with strict inequality}.

\paragraph{Derived prismatic cohomology}
Here we also review a slightly more general notion of derived prismatic cohomology from \cite[Section 7.2]{BS_prisms}. Let $(A,I)$ be a base bounded prism. Recall that in \Cref{defn: prismatic cohomology} for a smooth $p$-adic formal scheme $\mf X$ over $\Spf A/I$ we defined its prismatic cohomology $\RG_{\Prism}(\mf X/A)$.

\begin{defn}
For a smooth affine scheme $S=\Spec R$ over $A/I$ define its prismatic cohomology $\Prism_{R/A}\in \DMod A$ as $\RG_{\Prism}(\Spf \widehat R/A)$, where $\widehat R$ is the $p$-adic completion of $R$.
\end{defn}
Recall that $\Prism_{R/A}$ is $(p,I)$-adically complete and it has a natural Frobenius $
\phi_A^* R\Gamma_\Prism(\mathfrak X / A) \to R\Gamma_\Prism(\mathfrak X / A)
$, which corresponds by adjunction to a map $R\Gamma_\Prism(\mathfrak X / A) \to \phi_{A*}R\Gamma_\Prism(\mathfrak X / A)$. These maps are functorial, i.e. they give rise to a functor
$$
\Prism_{-/A}\colon \Aff^{\sm,\op}_{A/I} \xymatrix{\ar[r]&} \left({\DMod{A}}_{\widehat{(p,I)}}\right)^{\phi_{A*}}_{\mr{lax}},
$$
where ${\DMod{A}}_{\widehat{(p,I)}}\subset {\DMod{A}}$ denotes the full subcategory spanned by derived $(p,I)$-complete objects and $({\DMod{A}}_{\widehat{(p,I)}})^{\phi_{A*}}_{\mr{lax}}$ is the lax equalizer of $\Id_{{\DMod{A}}_{\widehat{(p,I)}}}$ and $\phi_{A*}$, which is defined as the limit of the following diagram
$$\xymatrix{
 && \Fun(\Delta^1, {\DMod{A}}_{\widehat{(p,I)}}) \ar[d] \\
{\DMod{A}}_{\widehat{(p,I)}} \ar[rr]^-{(\Id, \phi_{A*})} && {\DMod{A}}_{\widehat{(p,I)}} \times {\DMod{A}}_{\widehat{(p,I)}}.
}$$
An object of $({\DMod{A}}_{\widehat{(p,I)}})^{\phi_{A*}}_{\mr{lax}}$ is given by a complex $M$ with a morphism $M\ra \phi_{A*}M$, a 1-morphism is given by a commutative square between those and so on.

One can extend the functor $\Prism_{-/A}$ to all affine schemes via left Kan extension along the embedding $\Aff^{\sm,\op}_{A/I}\hookrightarrow \Aff^{\op}_{A/I}$ (see \cite[Construction 7.6]{BS_prisms}). More concretely, the value $\Prism_{Q/A}$ on an arbitrary $A/I$-algebra $Q$ is given by the colimit 
$$
\Prism_{Q/A}\simeq \underset{P\text{ smooth}}{\colim_{P\ra Q}} \Prism_{P/A}
$$
taken in the category $({\DMod{A}}_{\widehat{(p,I)}})^{\phi_{A*}}_{\mr{lax}}$. Unwinding the definitions, one finds that the underlying object $\Prism_{Q/A}\in {\DMod{A}}$ is given by the derived  $(p,I)$-adic completion of the same colimit taken in $\DMod A$. By construction one has a natural map $\Prism_{Q/A}\ra \phi_{A*}\Prism_{Q/A}$, functorial in $Q$. The resulting functor is called the \emdef{derived prismatic cohomology}.

Given a morphism of bounded prisms $(A,I)\ra (B,J)$, we have a base change equivalence
$$
\Prism_{Q/A}\widehat{\otimes}_A B \xymatrix{\ar[r]^-\sim &} \Prism_{Q_{B/J}/B},
$$
where $Q_{B/J}$ is the derived $p$-completion of the derived tensor product $Q\otimes_{A/I} B/J$ and the tensor product above is derived $(p, I)$-complete.

Also, from the Hodge-Tate comparison it follows that the functor of derived prismatic cohomology still satisfies \'etale hyperdescent:
\begin{lem}\label{descent_prism_for_schemes}
Let $(A,I)$ be a bounded prism. Then the presheaf
$$\Prism_{-/ A} \colon \Aff_{A/I}^{\op} \xymatrix{\ar[r] &} \DMod{A}$$
is an \'etale hypersheaf.

\begin{proof}
Let $Q^{-1} \to Q^\bullet$ be an \'etale hypercover in $\Aff_{A/I}^{\op}$. We want to prove that the natural map
$$\alpha\colon \Prism_{Q^{-1}/A} \xymatrix{\ar[r] &} \Tot \Prism_{Q^\bullet/A}$$
is an equivalence. Since both sides are derived $I$-adically complete and since the $A$-module $A/I$ is perfect (hence the tensor product functor $-\otimes_A A/I$ preserves limits), by derived Nakayama's lemma it is enough to prove that $\alpha\otimes_{A} A/I$ is an equivalence. Recall that by the Hodge-Tate comparison the complex $\Prism_{Q^i/A}\otimes_A A/I$ admits an increasing filtration with associated graded pieces being the $p$-adic completion of $\wedge^j \mathbb L_{Q^i/(A/I)}\otimes_{A/I} ((I/I^2)^\vee)^{\otimes j}[-j]$. But all maps $Q^{-1} \to Q^i$ are \'etale, so the base-change map
$$Q^i\widehat{\otimes}_{Q^{-1}} (\Prism_{Q^{-1}/A}\otimes_A A/I) \xymatrix{\ar[r] &} \Prism_{Q^i/A}\otimes_A A/I$$
is an equivalence. Hence $\alpha\otimes_A A/I$ is an equivalence by flat descent.
\end{proof}
\end{lem}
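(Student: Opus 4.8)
The plan is to run the two reductions that are standard for descent statements about prismatic cohomology: first reduce modulo $I$ to the Hodge--Tate situation by a derived Nakayama argument, and then exploit that Hodge--Tate cohomology is assembled from wedge powers of the cotangent complex, which satisfy flat descent.

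Concretely, let $Q^{-1}\to Q^\bullet$ be an \'etale hypercover in $\Aff_{A/I}^{\op}$ and let $\alpha\colon \Prism_{Q^{-1}/A}\to \Tot\Prism_{Q^\bullet/A}$ be the canonical comparison map. By construction $\Prism_{Q^{-1}/A}$ is derived $(p,I)$-complete, and the right-hand side, being a limit of derived $(p,I)$-complete objects, is derived $(p,I)$-complete as well; in particular both sides are derived $I$-complete. Since $I$ defines a Cartier divisor, $A/I$ is a perfect $A$-module, so $-\otimes_A A/I$ commutes with totalizations; hence by derived Nakayama it suffices to show that $\alpha\otimes_A A/I$ is an equivalence, i.e.\ that the presheaf $Q\mapsto \Prism_{Q/A}\otimes_A A/I$ of Hodge--Tate cohomology is an \'etale hypersheaf. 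At this point I would invoke the (left Kan extended) Hodge--Tate comparison from \Cref{main_BMS2}: for any $A/I$-algebra $Q$ the object $\Prism_{Q/A}\otimes_A A/I$ carries a functorial exhaustive increasing filtration whose $j$-th graded piece is the derived $p$-completion of $\wedge^j\mathbb L_{Q/(A/I)}\otimes_{A/I}\bigl((I/I^2)^\vee\bigr)^{\otimes j}[-j]$. Since every transition map $Q^{-1}\to Q^i$ is \'etale we have $\mathbb L_{Q^i/Q^{-1}}\simeq 0$, so the transitivity sequence gives $\wedge^j\mathbb L_{Q^i/(A/I)}\simeq Q^i\otimes_{Q^{-1}}\wedge^j\mathbb L_{Q^{-1}/(A/I)}$; assembling these over $j$ and $p$-completing yields the base-change equivalence $Q^i\,\widehat{\otimes}_{Q^{-1}}\bigl(\Prism_{Q^{-1}/A}\otimes_A A/I\bigr)\simeq \Prism_{Q^i/A}\otimes_A A/I$. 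Feeding this into the totalization and using faithfully flat (fppf, hence \'etale-hyper-) descent for quasi-coherent modules along $Q^{-1}\to Q^\bullet$ identifies $\Tot\bigl(\Prism_{Q^\bullet/A}\otimes_A A/I\bigr)$ with $\Prism_{Q^{-1}/A}\otimes_A A/I$, so $\alpha\otimes_A A/I$ is an equivalence and the lemma follows.

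The main obstacle is the careful handling of the $p$-adic completions: the graded pieces of the Hodge--Tate filtration are $p$-completed, the base-change tensor products $Q^i\,\widehat{\otimes}_{Q^{-1}}(-)$ are $p$-completed, and one must know that faithfully flat descent commutes with derived $p$-completion --- this is exactly where flatness and finite presentation of the maps in the hypercover are used, to keep the relevant modules under control and to commute the completion past the totalization. One also wants the Hodge--Tate comparison in its sheafified, left-Kan-extended form, equipped with a functorial filtration, rather than just for affine smooth formal schemes; but this is precisely the content of the derived prismatic formalism recalled above. Everything else --- derived Nakayama, vanishing of the relative cotangent complex along \'etale maps, and flat descent for modules --- is formal.
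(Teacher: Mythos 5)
Your proof follows the same line as the paper's: reduce modulo $I$ via derived Nakayama (using perfectness of $A/I$ to commute $-\otimes_A A/I$ with the totalization), use the Hodge--Tate filtration to reduce to descent for $p$-completed wedge powers of the cotangent complex, and conclude from vanishing of $\mathbb L_{Q^i/Q^{-1}}$ plus flat descent. Your closing paragraph about the ``main obstacle'' of commuting completion past the totalization is somewhat overstated --- the Hodge--Tate cohomology is already $p$-complete because $\Prism_{Q/A}$ is $(p,I)$-complete and $A/I$ is perfect, so one is simply doing flat descent in the $p$-complete category --- but this is a matter of exposition rather than a gap.
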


\subsection{Prismatic cohomology of stacks}\label{subsect:prism_for_stacks}
We fix a base bounded prism $(A,I)$. Let $\mstack X$ be a prestack over $A/I$.  
\begin{defn}
We define:
\begin{itemize}[wide]
\setlength\itemindent{\labelwidth+\labelsep}
\item \emdef{Prismatic cohomology}
$R\Gamma_\Prism(\mstack X/A)$ of $\mstack X$ as the value of the right Kan extension of the (derived) prismatic cohomology functor
$$\Prism_{-/A} \colon \Aff_{A/I}^\op \xymatrix{\ar[r] & } \left({\DMod{A}}_{\widehat{(p,I)}}\right)^{\phi_{A*}}_{\mr{lax}}$$
along the Yoneda embedding $\Aff^{\op}_{A/I} \inj \PStk_{A/I}^\op$. By the functoriality of Kan extension, for a morphism $f\colon \mstack X \ra \mstack Y$ of prestacks we get a pullback map $f^{-1}\colon R\Gamma_\Prism(\mstack Y/A)\ra R\Gamma_\Prism(\mstack X/A)$ on the prismatic cohomology.

\item \emdef{Twisted prismatic cohomology} $R\Gamma_{\Prism\fr }(\mstack X/A)$ as the right Kan extension of the twist 
$$
\Prism_{-/A}{\fr}\colon \Aff_{A/I}^\op \xymatrix{\ar[r] & } {\DMod{A}}_{\widehat{(p,I)}},
$$
where $\Prism_{-/A}{\fr}\coloneqq \Prism_{-/A}\widehat{\otimes}_{A,\phi_A}A$.
\end{itemize}
\end{defn}

Unwinding the definitions one finds that the underlying object $R\Gamma_\Prism(\mstack X/A)\in \DMod{A}$ is computed as the value of the right Kan extension of the corresponding underlying functor $\Prism_{-/A} \colon \Aff_{A/I}^\op \ra \DMod{A}$. Note that the $(p,I)$-adic completion is not needed here since the right Kan extension is automatically complete. We have a natural map $R\Gamma_\Prism(\mstack X/A)\ra \phi_{A*}R\Gamma_\Prism(\mstack X/A)$. By adjunction we also obtain a map 
$$
R\Gamma_\Prism(\mstack X/A)\otimes_{A,\phi_A} A \xymatrix{\ar[r] &} R\Gamma_\Prism(\mstack X/A)
$$
which factors through the completed tensor product $R\Gamma_\Prism(\mstack X/A)\fr\coloneqq R\Gamma_\Prism(\mstack X/A)\widehat{\otimes}_{A,\phi_A} A$ thus producing a map
$$
\phi_{\Prism}\colon R\Gamma_\Prism(\mstack X/A)\fr \xymatrix{\ar[r] &} R\Gamma_\Prism(\mstack X/A).
$$

Note that the transformation $\Prism_{-/A}{\fr}\ra \Prism_{-/A}$ induced by Frobenius $\Prism_{-/A}\ra \phi_{A*} \Prism_{-/A}$ also produces a map $R\Gamma_{\Prism\fr}(\mstack X/A) \ra R\Gamma_{\Prism}(\mstack X/A)$ which fits into the commutative diagram
$$
\xymatrix{R\Gamma_{\Prism\fr}(\mstack X/A) \ar[rr]^{\phi_{\Prism\fr}}\ar@{-->}[rd]&& R\Gamma_{\Prism}(\mstack X/A).\\
	&R\Gamma_{\Prism}(\mstack X/A)\fr\ar[ru]^{\phi_{\Prism}}&}
$$

\begin{rem} In \Cref{prop: twisted_cohomology_is_a_twist} we will see that if $\mstack X$ is smooth, under certain finiteness assumption on $A$, the dotted arrow is an equivalence. Given a general prism, the twisted version ${R\Gamma_{\Prism\fr}(\mstack X/A)}$ is better suited for establishing the comparison theorems. The subtlety here is that pullbacks (in particular, Frobenius pullback) do not necessarily commute with limits.
\end{rem}

\begin{rem}
	For completeness of the exposition we mention that in an analogous way we can also define the prismatic cohomology of derived prestacks (as well as its twisted version), by, first, left Kan extending $\Prism_{-/A}$ from $\Aff_{A/I}^\op$ to all derived affine schemes (i.e. the dual category to the category of simplicial $A/I$-algebras) and, then, right Kan extending to all derived prestacks afterwards. 
\end{rem}

\begin{rem}\label{rem:on_Gamma_prism}
	From the descent properties for $\Prism_{-/A}$ (see \Cref{descent_prism_for_schemes}) it follows that:
	\begin{itemize}
		\item For any prestack $\mstack X$ over $A/I$ we have $\RG_\Prism(\mstack X/A)\simeq \RG_\Prism(L_{\et}\mstack X/A)$, where $L_{\et}\colon \PStk_{A/I} \ra \Stk_{A/I}^{\et}$ is the \'etale hypersheafification functor.
		
		\item In particular, the functor 
		$$R\Gamma_\Prism(- /A)\colon \PStk_{A/I}^\op \xymatrix{\ar[r] & } \DMod{A}$$
		of prismatic cohomology satisfies \'etale hyperdescent. In other words if $\mstack U_\bullet \ra \mstack X$ is an \'etale hypercover of stacks, the natural map 
		$$
		R\Gamma_\Prism(\mstack X/A) \xymatrix{\ar[r]^-\sim &} \Tot R\Gamma_\Prism(\mstack U_\bullet/A)
		$$ 
		is an equivalence. As usual this applies to the \v Cech object associated to any smooth cover $\mstack U \surj \mstack X$ as a particular case.
		
		\item Similar assertions hold for the twisted version $R\Gamma_{\Prism\fr}(- /A)$.
	\end{itemize}
\end{rem}

\begin{lem}\label{lem: enough to take smooth guys}
	Let $\mstack X$ be a smooth Artin stack over $A/I$. Then 
	$$
	\RG_{\Prism}(\mstack X/A)\simeq \underset{P \text{ smooth over }A/I}{\lim_{(\Spec P\to \mstack X) }} \Prism_{P/A} \qquad \mbox{and} \qquad \RG_{\Prism\fr}(\mstack X/A)\simeq \underset{P \text{ smooth over }A/I}{\lim_{(\Spec P\to \mstack X) }} \Prism_{P/A}\fr,
	$$
	where the limit is taken only over the smooth maps $\Spec P\ra \mstack X$. 
	\begin{proof}
		The proof is standard and we will only give a sketch. Let $\Aff_{\sm}^\op\subset \Aff^\op$, $(\Aff/\mstack X)_{\sm}^\op\subset (\Aff/\mstack X)^\op$, $\Stk_{\sm}^{k-\Art,\op}\subset \Stk^{k-\Art,\op}$ and  $(\Stk^{k-\Art}/\mstack X)_{\sm}^\op\subset (\Stk^{k-\Art}/\mstack X)^\op$ be the full subcategories of the corresponding categories (over $A/I$) spanned by smooth objects. Then we can reformulate the statement as follows: for a smooth $k$-Artin stack $\mstack X$ the complex $\RG_\Prism(\mstack X/A)$ is the value of the right Kan extension of $\RG_\Prism(-/A)$ along $\Aff_{\sm}^\op \ra \Stk^{k-\Art,\op}_\sm$. This can be shown by induction on $k$. Indeed the case $k=0$ is obvious since $(\Aff/\mstack X)^\op$ has an initial object given by $\mstack X\xra{\id_{\mstack X}} \mstack X\in (\Aff/\mstack X)_{\sm}^\op $. For the induction step, let $\mstack X$ be smooth $k$-Artin. By the transitivity of right Kan extensions and the induction assumption we have
		$$
		\Ran_{\Aff_{\sm}^\op \ra \Stk^{k-\Art,\op}_\sm} \RG_\Prism(-/A) \simeq \Ran_{\Stk^{(k-1)-\Art,\op}_\sm \ra \Stk^{k-\Art,\op}_\sm} \RG_\Prism (-/A).
		$$
		For a smooth atlas $U\ra \mstack X$ with the \v Cech object $\mstack U_\bullet$ by descent we have an equivalence 
		$$
		R\Gamma_\Prism(\mstack X/A) \xymatrix{\ar[r]^-\sim &} \Tot R\Gamma_\Prism(\mstack U_\bullet/A),
		$$
		where each $\mstack U_i$ is smooth $(k-1)$-Artin, so by induction $\RG_\Prism(\mstack U_i/A)$ is given by the value of the right Kan extension $\Ran_{\Stk^{(k-1)-\Art,\op}_\sm \ra \Stk^{k-\Art,\op}_\sm} \RG_\Prism (-/A)$. Thus we get the statement, since $\Ran_{\Stk^{(k-1)-\Art,\op}_\sm \ra \Stk^{k-\Art,\op}_\sm} \RG_\Prism (-/A)$ also satisfies smooth descent.
	\end{proof}
\end{lem}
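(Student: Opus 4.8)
The plan is to show that the right Kan extension of $\Prism_{-/A}$ along the Yoneda embedding $\Aff^{\op}_{A/I}\hookrightarrow \PStk^{\op}_{A/I}$ agrees, when evaluated on a smooth Artin stack, with the ``restricted'' right Kan extension taken only over the smooth affine slice. Since passing from $\Prism_{-/A}$ to its twist $\Prism_{-/A}\fr = \Prism_{-/A}\widehat{\otimes}_{A,\phi_A}A$ does not affect any of the descent-theoretic arguments, I will treat only the untwisted statement; the twisted case is verbatim the same. The key observation, already isolated in \Cref{descent_prism_for_schemes}, is that $\Prism_{-/A}$ satisfies \'etale hyperdescent on affine schemes, and that it satisfies smooth descent on stacks after right Kan extension (\Cref{rem:on_Gamma_prism}). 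These two facts are exactly what is needed to run an induction on the Artin level $k$.

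First I would set up notation: for each $k\ge 0$ let $\Stk^{k\text{-}\Art}_{\sm}$ denote the full subcategory of smooth $k$-Artin stacks over $A/I$, with $\Aff_{\sm}=\Stk^{0\text{-}\Art}_{\sm}$. The claim becomes: for $\mstack X$ a smooth $k$-Artin stack, $\RG_\Prism(\mstack X/A)$ computes the right Kan extension $\Ran_{\Aff^{\op}_{\sm}\to \Stk^{k\text{-}\Art,\op}_{\sm}}\Prism_{-/A}$ evaluated at $\mstack X$, i.e. the limit of $\Prism_{P/A}$ over the slice $(\Aff_{\sm}/\mstack X)^{\op}$. The base case $k=0$ is immediate because $(\Aff_{\sm}/\mstack X)^{\op}$ has the initial object $\id\colon \mstack X\to\mstack X$, so the limit is just $\Prism_{\mstack X/A}$ as defined. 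For the inductive step, one uses transitivity of right Kan extensions to reduce computing $\Ran$ over $\Aff^{\op}_\sm\to\Stk^{k\text{-}\Art,\op}_\sm$ to computing it over $\Stk^{(k-1)\text{-}\Art,\op}_\sm\to\Stk^{k\text{-}\Art,\op}_\sm$ (applying the induction hypothesis on the inner extension). Then, picking a smooth atlas $U\twoheadrightarrow\mstack X$ with \v Cech nerve $\mstack U_\bullet$ consisting of smooth $(k-1)$-Artin stacks, the descent statement of \Cref{rem:on_Gamma_prism} gives $\RG_\Prism(\mstack X/A)\simeq\Tot\RG_\Prism(\mstack U_\bullet/A)$, and by induction each $\RG_\Prism(\mstack U_i/A)$ is the value of the restricted Kan extension; one concludes because this restricted Kan extension functor itself satisfies smooth descent.

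The one point that requires a little care — and which I expect to be the main (if modest) obstacle — is the last sentence: one must check that $\Ran_{\Stk^{(k-1)\text{-}\Art,\op}_\sm\to\Stk^{k\text{-}\Art,\op}_\sm}\Prism_{-/A}$ still satisfies smooth descent on $\Stk^{k\text{-}\Art}_\sm$, so that matching it with $\RG_\Prism(-/A)$ along the atlas forces them to agree globally. This is a general categorical fact: a right Kan extension along a fully faithful functor, when the source functor is a sheaf for a topology that is ``generated'' by covers already present in the smaller category, remains a sheaf — concretely, smooth covers of smooth $k$-Artin stacks can be refined by covers whose terms are smooth $(k-1)$-Artin, and the value of the Kan extension on the cover is computed by the same totalization. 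Since both $\RG_\Prism(-/A)$ and the restricted Kan extension are smooth hypersheaves on $\Stk^{k\text{-}\Art}_\sm$ that agree on $\Stk^{(k-1)\text{-}\Art}_\sm$ (hence on any smooth affine, and on any smooth atlas and its nerve), descent identifies them. Everything else is bookkeeping with slices $(\Aff_\sm/\mstack X)^{\op}$ and the cofinality needed to identify the limit over the smooth slice with the value of the Kan extension; this is routine and I would only sketch it, as indicated in the statement.
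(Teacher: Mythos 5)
Your proposal is correct and follows essentially the same route as the paper: reformulate as a right Kan extension along $\Aff_\sm^\op\to\Stk^{k\text{-}\Art,\op}_\sm$, induct on $k$ with the base case given by the initial object $\id_{\mstack X}$, use transitivity of Kan extensions plus smooth descent along a \v Cech nerve for the inductive step, and conclude by matching two smooth sheaves on the atlas. The point you flag as requiring care (that the restricted Kan extension still satisfies smooth descent) is exactly the point the paper also invokes in its final sentence, so the identification is complete.
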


In particular we have a corollary:
\begin{cor}\label{cor_prism_coconnective_for_smooth}
If $\mstack X$ is a smooth Artin stack then  $\RG_{\Prism}(\mstack X/A)\in \DMod{A}^{\ge 0}$.
\end{cor}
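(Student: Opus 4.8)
The plan is to reduce to the case of a smooth affine scheme, where coconnectivity of prismatic cohomology is immediate from the site-theoretic definition. First I would apply \Cref{lem: enough to take smooth guys} to write
\[
\RG_\Prism(\mstack X/A)\simeq \lim_{(\Spec P\to\mstack X)}\Prism_{P/A},
\]
the limit running only over \emph{smooth} morphisms $\Spec P\to\mstack X$ with $P$ smooth over $A/I$. So it suffices to show that each such $\Prism_{P/A}$ lies in $\DMod{A}^{\ge 0}$ and that $\DMod{A}^{\ge 0}$ is stable under this limit.

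For the first point, I would observe that $\Prism_{P/A}=\RG_\Prism(\Spf\widehat P/A)$ is, by \Cref{defn: prismatic cohomology}, the homotopy limit of the structure presheaf on the prismatic site $(\Spf\widehat P/A)_\Prism$, i.e. the limit of a diagram all of whose values $B$ are honest (discrete) commutative rings. Thus $\Prism_{P/A}$ is a limit in $\DMod{A}$ of objects of the heart $\DMod{A}^\heartsuit$. For the second point I would invoke the elementary fact that the coconnective part $\DMod{A}^{\ge 0}$ of the standard $t$-structure is closed under arbitrary limits (it is the right orthogonal of the colimit-closed subcategory $\DMod{A}^{<0}$). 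Applying this once gives $\Prism_{P/A}\in\DMod{A}^{\ge 0}$ for every smooth affine $P$, and applying it again to the displayed limit yields $\RG_\Prism(\mstack X/A)\in\DMod{A}^{\ge 0}$, as desired.

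I do not expect a genuine obstacle here; the only substantive input is the reduction to smooth affines furnished by \Cref{lem: enough to take smooth guys}. It is worth emphasizing why that reduction is essential: writing $\RG_\Prism(\mstack X/A)$ instead as a limit over \emph{all} affines $\Spec Q\to\mstack X$ would not work, since for non-smooth $Q$ the term $\Prism_{Q/A}$ is a $(p,I)$-completed colimit of the $\Prism_{P/A}$ and is generally not coconnective. Restricting to smooth $P$ is precisely what keeps every term in the limit the prismatic complex of a smooth formal scheme, which is visibly a limit of discrete rings.
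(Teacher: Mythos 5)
Your argument is correct and coincides with the paper's: the corollary is stated as an immediate consequence of Lemma~\ref{lem: enough to take smooth guys}, and the details you supply---that each $\Prism_{P/A}$ for smooth affine $P$ is, via Definition~\ref{defn: prismatic cohomology}, a homotopy limit of discrete rings $B$ lying in the heart, and that $\DMod{A}^{\ge 0}$ is closed under all limits---are exactly the ones the paper leaves implicit.
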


We will now compare the two versions of twisted prismatic cohomology under a certain finiteness condition on the base prism:
\begin{prop}\label{prop: twisted_cohomology_is_a_twist}
	Let $\mstack X$ be a smooth qcqs Artin stack over $A/I$ and assume that the module $\phi_{A*}A$ has a finite $(p,I)$-complete Tor-amplitude. Then the natural map
	$$
	\xymatrix{\RG_{\Prism\fr}(\mstack X/A) \ar[r]& \RG_{\Prism}(\mstack X/A)\fr} 
	$$
	is an equivalence. 
	
	\begin{proof}
		Since $\phi_A\colon A\ra A$ has a finite $(p,I)$-complete Tor-amplitude, the functor $M\mapsto M\fr[-i]= M\widehat \otimes_{A,\phi_A} A[-i]$ is left $t$-exact for some $i\ge 0$. Thus, by  \Cref{left_exact_preserve_totalizations_of_uniformly_bounded_below} we get that $-\fr[-i]$, and thus also $-\fr$, commutes with totalizations of 0-coconnected objects. 
		
		In the case $\mstack X=\Spec P$ is a smooth affine scheme the statement holds trivially, since both sides are equal to $\Prism_{P/A}\fr$. Given an-Artin stack $\mstack X$, by \cite[Theorem 4.7]{Pridham_ArtinHypercovers}, there exists a hypercover $|U_\bullet|\xra{\sim} \mstack X$ with $U_i$ being smooth affine schemes and by descent we get
		$$
		R\Gamma_{\Prism\fr}(\mstack X/A)\xra{\sim} \Tot R\Gamma_{\Prism\fr}(U_\bullet/A)\simeq \Tot R\Gamma_{\Prism}(U_\bullet/A)\fr\xleftarrow{\sim} \RG_{\Prism}(\mstack X/A)\fr.
		$$
	\end{proof}
\end{prop}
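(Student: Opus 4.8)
The plan is to reduce the statement to the case of a smooth affine scheme, where the two sides agree by construction, and to control that reduction with a single connectivity estimate. First I would pick a hypercover $|U_\bullet|\xra{\sim}\mstack X$ with each $U_n=\Spec P_n$ a smooth affine scheme; such a hypercover exists by \cite[Theorem 4.7]{Pridham_ArtinHypercovers} since $\mstack X$ is qcqs (alternatively one can avoid invoking this and instead induct on the Artin degree of $\mstack X$, mirroring the proof of \Cref{lem: enough to take smooth guys}). Since both $\RG_{\Prism}(-/A)$ and $\RG_{\Prism\fr}(-/A)$ satisfy \'etale hyperdescent (\Cref{rem:on_Gamma_prism}), one has $\RG_{\Prism}(\mstack X/A)\simeq\Tot\RG_{\Prism}(U_\bullet/A)$ and $\RG_{\Prism\fr}(\mstack X/A)\simeq\Tot\RG_{\Prism\fr}(U_\bullet/A)$. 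For each smooth affine $U_n$ the twisted theory is by construction $\RG_{\Prism\fr}(U_n/A)\simeq\Prism_{P_n/A}\fr\simeq\RG_{\Prism}(U_n/A)\fr$, so after these identifications the map in question becomes the comparison between $\Tot\bigl(\RG_{\Prism}(U_\bullet/A)\fr\bigr)$ and $\bigl(\Tot\RG_{\Prism}(U_\bullet/A)\bigr)\fr$. Thus the whole claim reduces to the assertion that the Frobenius twist $(-)\fr$ commutes with this totalization.

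This is where the hypothesis on $\phi_{A*}A$ is used. Each term $\RG_{\Prism}(U_n/A)=\Prism_{P_n/A}$ lies in $\DMod{A}^{\ge 0}$ by \Cref{cor_prism_coconnective_for_smooth}, so the cosimplicial object $[n]\mapsto\RG_{\Prism}(U_n/A)$ is uniformly bounded below. Since $\phi_{A*}A$ has finite $(p,I)$-complete Tor-amplitude, the completed base-change functor $M\mapsto M\fr=M\widehat{\otimes}_{A,\phi_A}A$ has finite $(p,I)$-complete Tor-amplitude as well, so for $i\gg 0$ the functor $M\mapsto M\fr[-i]$ is left $t$-exact; as a left $t$-exact functor commutes with totalizations of uniformly bounded below cosimplicial objects (\Cref{left_exact_preserve_totalizations_of_uniformly_bounded_below}), so does $(-)\fr[-i]$, and hence so does $(-)\fr$. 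Applying this to $U_\bullet$ yields $\Tot\bigl(\RG_{\Prism}(U_\bullet/A)\fr\bigr)\simeq\bigl(\Tot\RG_{\Prism}(U_\bullet/A)\bigr)\fr$, which together with the termwise identifications above proves the proposition.

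The one genuine obstacle is exactly this interchange: $(-)\fr$ is a \emph{completed} base change along Frobenius, i.e.\ a colimit followed by a completion, and such functors do not commute with limits in general, so one cannot naively pull it through the limit defining $\RG_{\Prism}(\mstack X/A)$. The way around it is the usual ``left $t$-exact after a shift'' trick, and this is precisely what the finiteness of the $(p,I)$-complete Tor-amplitude of $\phi_{A*}A$ provides, used together with the coconnectivity of prismatic cohomology of smooth affines so that the relevant cosimplicial objects are uniformly bounded below. The remaining ingredients — the existence of a smooth affine hypercover, hyperdescent, and the affine base case — are formal.
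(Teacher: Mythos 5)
Your proof is correct and follows essentially the same route as the paper: reduce via a smooth affine hypercover (Pridham) and hyperdescent to the affine case, then use the finite $(p,I)$-complete Tor-amplitude of $\phi_{A*}A$ to make $(-)\fr$ left $t$-exact after a shift, so that \Cref{left_exact_preserve_totalizations_of_uniformly_bounded_below} lets it pass through the totalization of the uniformly coconnective cosimplicial object. The only cosmetic difference is the order in which the two halves of the argument are presented.
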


\begin{ex}
	Though the condition on the prism in \Cref{prop: twisted_cohomology_is_a_twist} may seem not very restrictive, it is not satisfied in some rather natural examples. In particular, for $(A_\crys,(p))$ the $p$-complete Tor-amplitude of $\phi_{A_\crys *}A_\crys$ is infinite.
\end{ex}
\begin{rem}\label{rem: drop qcqs assumption in the perfect case}
	Given a (possibly infinite) disjoint union of smooth affine schemes $X=\sqcup_{i\in I} X_i$ one has
	$$\RG_{\Prism}(X/A)\simeq \prod_{i\in I} \RG_{\Prism}(X_i/A) \qquad\text{and}\qquad \RG_{\Prism^{(1)}}(X/A)\simeq \prod_{i\in I} \RG_{\Prism^{(1)}}(X_i/A)\simeq  \prod_{i\in I} \RG_{\Prism}(X_i/A)^{(1)}.$$ 
	Assume that $\phi_{A*}A$ is perfect as an $A$-module. Then, since the functor $-\otimes_A\phi_{A*} A$ preserves limits, we have $\RG_{\Prism^{(1)}}(X/A)^{(1)}\simeq \RG_{\Prism}(X/A)^{(1)}$. Considering the hypercover by unions of smooth affine schemes provided by \cite[Theorem 4.7]{Pridham_ArtinHypercovers} for any Artin stack $\mstack X$ and arguing as before we see that one can drop the qcqs assumption on $\mstack X$ in \Cref{prop: twisted_cohomology_is_a_twist} if $\phi_{A*}A$ is a perfect $A$-module.
\end{rem}

In the case of schemes an important property of the Frobenius $\phi_{\Prism}$ is that it becomes an equivalence after we invert $I$. In the case of stacks this holds as well, at least if we consider $\phi_{\Prism\fr}\colon R\Gamma_{\Prism\fr}(\mstack X/A)\ra R\Gamma_\Prism(\mstack X/A)$.
\begin{prop}\label{prop: Frobenius is an iso after inverting d}
	Let $(A,I)$ be a bounded prism. Let $\mstack X$ be a smooth qcqs Artin stack over $A/I$. Then the induced map
	$$\xymatrix{\phi_{\Prism\fr}[I^{-1}] \colon R\Gamma_{\Prism\fr}(\mstack X/A)[I^{-1}] \ar[r] & R\Gamma_\Prism(\mstack X / A)[I^{-1}]}$$
	is an equivalence.
	
	\begin{proof}
		Let $\mstack X$ be a smooth affine scheme. Then $\phi_{\Prism\fr}[I^{-1}]$ is an equivalence by \cite[Theorem 1.8 (6)]{BS_prisms}. For a general smooth qcqs Artin stack we use a hypercover $U_\bullet\ra \mstack X$ provided by \cite[Theorem 4.7]{Pridham_ArtinHypercovers}. Since by descent the natural maps
		$$R\Gamma_\Prism(\mstack X / A) \xymatrix{\ar[r]^\sim & } \Tot R\Gamma_\Prism(U_\bullet / A) \quad \mbox{and} \quad R\Gamma_{\Prism\fr}(\mstack X / A) \xymatrix{\ar[r]^\sim & } \Tot R\Gamma_{\Prism\fr}(U_\bullet / A)$$
		are equivalences, it is enough to prove that the localization $-[I^{-1}]$ commutes with totalizations of coconnected complexes. This follows from \Cref{left_exact_preserve_totalizations_of_uniformly_bounded_below}, since $A \to  A[I^{-1}]$ is flat.
	\end{proof}
\end{prop}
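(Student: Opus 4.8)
The plan is to reduce the assertion, by smooth hyperdescent, to the case of a smooth affine $A/I$-scheme, where it is the Bhatt--Scholze theorem that the linearized prismatic Frobenius is an isomorphism after inverting $I$. The only point requiring care will be that localization at $I$ commutes with the totalization arising from descent, and for that one uses the coconnectivity of prismatic cohomology of smooth objects.

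First I would treat the affine case. For $\Spec P$ with $P$ smooth over $A/I$ one has $\RG_\Prism(\Spec P/A)=\Prism_{P/A}$ and $\RG_{\Prism\fr}(\Spec P/A)=\Prism_{P/A}\fr=\Prism_{P/A}\widehat\otimes_{A,\phi_A}A$, and $\phi_{\Prism\fr}$ is the linearized Frobenius $\Prism_{P/A}\fr\to\Prism_{P/A}$; by \cite[Theorem 1.8(6)]{BS_prisms} (compare \Cref{main_BMS2}(2) in the Breuil--Kisin case) it becomes an equivalence after inverting $I$. Since $\RG_\Prism(-/A)$ and $\RG_{\Prism\fr}(-/A)$ both send disjoint unions of affine schemes to products (as in \Cref{rem: drop qcqs assumption in the perfect case}), the same holds with $\Spec P$ replaced by an arbitrary coproduct of smooth affine schemes. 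Then, for a general smooth qcqs Artin stack $\mstack X$, I would pick a hypercover $U_\bullet\to\mstack X$ whose terms are (disjoint unions of) smooth affine schemes, as supplied by \cite[Theorem 4.7]{Pridham_ArtinHypercovers}. By the \'etale hyperdescent of $\RG_\Prism(-/A)$ and $\RG_{\Prism\fr}(-/A)$ recorded in \Cref{rem:on_Gamma_prism}, the map $\phi_{\Prism\fr}$ for $\mstack X$ is the totalization over $U_\bullet$ of the maps $\phi_{\Prism\fr}$ for the terms, each of which is an equivalence after inverting $I$; so it suffices to see that $(-)[I^{-1}]$ commutes with these two totalizations.

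This last interchange is the (mild) heart of the argument. The cosimplicial objects $\RG_\Prism(U_\bullet/A)$ and $\RG_{\Prism\fr}(U_\bullet/A)$ are uniformly coconnective: $\RG_\Prism(U_n/A)\in\DMod{A}^{\ge 0}$ by \Cref{cor_prism_coconnective_for_smooth}, and $\Prism_{U_n/A}\fr$ is likewise coconnective --- one sees this either by a connectivity estimate of the form \Cref{connectiveity_modulo} applied to its reduction modulo $(p,\phi_A(I))$ (which is Hodge--Tate, hence in $\DMod{A}^{\ge 0}$), or, equivalently, by prismatic base change along the map of bounded prisms $(A,I)\xrightarrow{\phi_A}(A,\phi_A(I)A)$ combined again with \Cref{cor_prism_coconnective_for_smooth}. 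Since $A\to A[I^{-1}]$ is flat, the functor $(-)[I^{-1}]$ is $t$-exact, hence by \Cref{left_exact_preserve_totalizations_of_uniformly_bounded_below} it preserves totalizations of uniformly bounded-below cosimplicial objects. Therefore $\phi_{\Prism\fr}[I^{-1}]$ on $\mstack X$ is a totalization of equivalences and so an equivalence. I would also note that, in contrast to \Cref{prop: twisted_cohomology_is_a_twist}, no finiteness condition on $\phi_{A*}A$ is needed here, since we only use left $t$-exactness of the localization, not of the Frobenius pullback.
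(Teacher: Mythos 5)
Your proof is correct and follows essentially the same argument as the paper's: reduce to the affine case via a hypercover and hyperdescent, invoke the Bhatt--Scholze theorem there, and commute $(-)[I^{-1}]$ past the totalization using \Cref{left_exact_preserve_totalizations_of_uniformly_bounded_below} together with flatness of $A\to A[I^{-1}]$. The only difference is that you supply an explicit justification for the uniform coconnectivity of the cosimplicial object $\RG_{\Prism\fr}(U_\bullet/A)$, which the paper's proof tacitly assumes; this is a genuine and worthwhile piece of bookkeeping, and either of your two suggested arguments for it works.
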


Under the same Tor-finiteness assumptions on $A$ as in \Cref{prop: twisted_cohomology_is_a_twist}, this also applies to $\phi_{\Prism}$:
\begin{cor}\label{cor: Frobenius is a twist if tor-dimension is finite}
	In the context of \Cref{prop: Frobenius is an iso after inverting d} assume that $\phi_{A*}A$ has a finite $(p,I)$-complete Tor-amplitude. Then the map
	$$
	\xymatrix{\phi_{\Prism}[I^{-1}] \colon R\Gamma_{\Prism}(\mstack X/A)\fr[I^{-1}] \ar[r] & R\Gamma_\Prism(\mstack X / A)[I^{-1}]}
	$$
	is an equivalence.
	
	\begin{proof}
		Follows from Propositions \ref{prop: Frobenius is an iso after inverting d} and \ref{prop: twisted_cohomology_is_a_twist}.
	\end{proof}
\end{cor}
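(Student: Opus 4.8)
The plan is to deduce the corollary from the commutative triangle recorded right after the definition of the twisted prismatic cohomology, which relates $\RG_{\Prism\fr}(\mstack X/A)$, $\RG_{\Prism}(\mstack X/A)\fr$, and $\RG_{\Prism}(\mstack X/A)$. In that triangle the Frobenius $\phi_{\Prism\fr}$ factors as the canonical comparison map $c\colon \RG_{\Prism\fr}(\mstack X/A)\to \RG_{\Prism}(\mstack X/A)\fr$ (the dotted arrow) followed by $\phi_{\Prism}$. Since $A\to A[I^{-1}]$ is flat, localization at $I$ is exact and transports this factorization to $\phi_{\Prism\fr}[I^{-1}] = \phi_{\Prism}[I^{-1}]\circ c[I^{-1}]$. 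Hence it is enough to show that both $\phi_{\Prism\fr}[I^{-1}]$ and $c[I^{-1}]$ are equivalences and then conclude by the two-out-of-three property for equivalences.

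First I would invoke \Cref{prop: twisted_cohomology_is_a_twist}: under the standing hypotheses that $\mstack X$ is a smooth qcqs Artin stack over $A/I$ and that $\phi_{A*}A$ has finite $(p,I)$-complete Tor-amplitude, the comparison map $c$ is already an equivalence before inverting $I$, so $c[I^{-1}]$ is an equivalence as well. Then I would invoke \Cref{prop: Frobenius is an iso after inverting d}, which gives that $\phi_{\Prism\fr}[I^{-1}]$ is an equivalence for any smooth qcqs Artin stack. Combining these two facts with the localized factorization above yields that $\phi_{\Prism}[I^{-1}]$ is an equivalence, which is precisely the assertion.

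There is essentially no obstacle at the level of this corollary: it is a formal consequence of the two cited propositions together with the harmless observation that the defining triangle survives localization at $I$. The genuine content lies upstream --- in \Cref{prop: twisted_cohomology_is_a_twist} one needs that the functor $-\fr$ commutes with totalizations of uniformly coconnected complexes (via the Tor-amplitude bound and a smooth affine hypercover as in \cite[Theorem 4.7]{Pridham_ArtinHypercovers}), and in \Cref{prop: Frobenius is an iso after inverting d} one reduces by \'etale descent to the scheme case \cite[Theorem 1.8(6)]{BS_prisms} --- but both of these we may take as given here.
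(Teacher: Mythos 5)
Your proof is correct and takes essentially the same approach as the paper: the paper's proof is just the terse phrase "Follows from Propositions \ref{prop: Frobenius is an iso after inverting d} and \ref{prop: twisted_cohomology_is_a_twist}," and you have correctly unpacked the implicit argument, namely the factorization $\phi_{\Prism\fr} = \phi_{\Prism}\circ c$ from the commutative triangle, its persistence under the flat localization $A\to A[I^{-1}]$, and the two-out-of-three property. Nothing is missing.
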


\begin{ex}\label{ex: Frobeinus is iso Breuil-Kisin}
	Let's consider the Breuil-Kisin prism $(\mf S, E(u))$. Note that in this case $\mf S=W(k)[[u]]$ is a free $\mf S$-module via $\phi_{\mf S}\colon \mf S\ra\mf S$, $u\mapsto u^p$ with a basis given by $1,u,\ldots,u^{p-1}$. Thus, $\phi_{\mf S*}\mf S$ is flat and \Cref{prop: twisted_cohomology_is_a_twist} applies to $(\mf S, E(u))$. Moreover, since $\phi_{\mf S*}\mf S$ is free of finite rank, the completed tensor product $-\widehat{\otimes}_{\mf S, \phi_{\mf S}}\mf S$ coincides with the usual one $-{\otimes}_{\mf S, \phi_{\mf S}}\mf S$. Thus in this case for any smooth Artin stack $\mstack X$ over $\mc O_K$ the equivalence in \Cref{cor: Frobenius is a twist if tor-dimension is finite} restricts to analogous isomorphism on the individual cohomology
	$$
	\xymatrix{\phi_{\Prism}[\frac{1}{E}]\colon H^i_\Prism(\mstack X/\mf S)\fr[\frac{1}{E}] \ar[r]^(0.58){\sim} & H^i_\Prism(\mstack X/\mf S)[\frac{1}{E}]}.
	$$

	The situation is even better if we consider a perfect prism $(A,d)$, since $\phi_{A*}A$ is not just free of finite rank, but isomorphic to $A$. In this case we also get that the Frobenius induces isomorphisms
	$$
	\xymatrix{\phi_{\Prism}[\frac{1}{d}]\colon H^i_\Prism(\mstack X/A)\fr[\frac{1}{d}] \ar[r]^(0.58){\sim} & H^i_\Prism(\mstack X/A)[\frac{1}{d}]}.
	$$
\end{ex}	

Finally we show that (under a similar $(p,I)$-complete Tor-finiteness assumption) one has base change with respect to the morphisms of prisms.
\begin{prop}\label{prismatic_basechange}
	Let $\mstack X$ be a smooth qcqs Artin stack over $A/I$ and let $(A,I)\ra (B,IB)$ be a morphism of bounded prisms of finite $(p,I)$-complete Tor-amplitude. Then the natural map
	$$
	\RG_{\Prism}(\mstack X/A)\widehat{\otimes}_A B\xymatrix{\ar[r]^\sim &}\RG_{\Prism}(\mstack X_{B/IB}/B) 
	$$
	is an equivalence; here the tensor product is $(p,IB)$-completed.

\begin{proof}
	The proof is analogous to \Cref{prop: twisted_cohomology_is_a_twist}: since $A\ra B$ is of finite $(p,I)$-complete Tor-amplitude, the functor $-\widehat{\otimes}_A B[-i]$ is left $t$-exact for some $i$ and thus $-\widehat{\otimes}_A B$ commutes with totalizations. Taking a hypercover by smooth representable affine schemes, we reduce to this case, which is covered by \cite[Corollary 4.11]{BS_prisms}. 
\end{proof}
\end{prop}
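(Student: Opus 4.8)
The plan is to reduce to the case of affine schemes — where the statement is exactly \cite[Corollary 4.11]{BS_prisms} recalled above — and to propagate it to stacks by smooth hyperdescent, following the same pattern as the proof of \Cref{prop: twisted_cohomology_is_a_twist}. First, since $\mstack X$ is qcqs, by \cite[Theorem 4.7]{Pridham_ArtinHypercovers} we may choose a hypercover $|U_\bullet| \xra{\sim} \mstack X$ with each $U_n = \Spec P_n$ a smooth affine $A/I$-scheme. Base changing along $A/I \to B/IB$ and $p$-completing termwise produces a cosimplicial scheme $(U_\bullet)_{B/IB}$ with terms smooth affine over $B/IB$; since base change of sites preserves hypercovers (and the class of smooth affine schemes), $|(U_\bullet)_{B/IB}| \xra{\sim} \mstack X_{B/IB}$ is again such a hypercover. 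By the \'etale (hence smooth) hyperdescent for prismatic cohomology (\Cref{rem:on_Gamma_prism}) we obtain equivalences $\RG_\Prism(\mstack X/A) \simeq \Tot \RG_\Prism(U_\bullet/A)$ and $\RG_\Prism(\mstack X_{B/IB}/B) \simeq \Tot \RG_\Prism((U_\bullet)_{B/IB}/B)$.

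Next, apply $-\widehat\otimes_A B$ to the first of these. Termwise, \cite[Corollary 4.11]{BS_prisms} gives identifications $\RG_\Prism(U_n/A)\widehat\otimes_A B \simeq \Prism_{P_n/A}\widehat\otimes_A B \simeq \RG_\Prism((U_n)_{B/IB}/B)$, compatibly with the cosimplicial structure, so it remains to check that $-\widehat\otimes_A B$ commutes with the totalization $\Tot \RG_\Prism(U_\bullet/A)$. This is where the hypotheses are used: by \Cref{cor_prism_coconnective_for_smooth} each $\RG_\Prism(U_n/A)$ lies in $\DMod{A}^{\ge 0}$, so $\RG_\Prism(U_\bullet/A)$ is a uniformly coconnective cosimplicial object; and because $A \to B$ has finite $(p,I)$-complete Tor-amplitude, the functor $M \mapsto (M\widehat\otimes_A B)[-i]$ is left $t$-exact for some $i \ge 0$, hence by \Cref{left_exact_preserve_totalizations_of_uniformly_bounded_below} commutes with totalizations of uniformly bounded-below cosimplicial objects. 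Therefore $-\widehat\otimes_A B[-i]$, and so $-\widehat\otimes_A B$, commutes with $\Tot \RG_\Prism(U_\bullet/A)$. Chaining the equivalences,
$$
\RG_\Prism(\mstack X/A)\widehat\otimes_A B \simeq \Tot\big(\RG_\Prism(U_\bullet/A)\widehat\otimes_A B\big) \simeq \Tot \RG_\Prism((U_\bullet)_{B/IB}/B) \simeq \RG_\Prism(\mstack X_{B/IB}/B),
$$
and one verifies that the composite is the natural base change map.

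The main obstacle is precisely the commutation of the completed base change $-\widehat\otimes_A B$ with the totalization: this is what forces the qcqs hypothesis (so that the hypercover can be taken with each $U_n$ a single smooth affine scheme, rather than an infinite disjoint union, for whose prismatic cohomology $-\widehat\otimes_A B$ need not commute with the relevant infinite products — compare the perfect-prism workaround in \Cref{rem: drop qcqs assumption in the perfect case}) and the finite $(p,I)$-complete Tor-amplitude hypothesis (needed to obtain left $t$-exactness after a shift and apply \Cref{left_exact_preserve_totalizations_of_uniformly_bounded_below}). A minor but necessary check is that the $p$-completed base change of the chosen smooth affine hypercover of $\mstack X$ is again a hypercover of $\mstack X_{B/IB}$ and that the termwise identifications assemble compatibly with the cosimplicial structure, so that they glue to the asserted natural transformation.
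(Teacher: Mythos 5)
Your proof is correct and follows essentially the same approach as the paper's (terse) proof: take a smooth affine hypercover of $\mstack X$ (using \cite[Theorem 4.7]{Pridham_ArtinHypercovers} and the qcqs hypothesis to ensure each term is a single affine), apply \cite[Corollary 4.11]{BS_prisms} termwise, and use the finite $(p,I)$-complete Tor-amplitude hypothesis together with \Cref{left_exact_preserve_totalizations_of_uniformly_bounded_below} and \Cref{cor_prism_coconnective_for_smooth} to commute $-\widehat\otimes_A B$ past the totalization. One small imprecision: the base-changed cosimplicial object $(U_\bullet)_{B/IB}$ is the ordinary (non-completed) base change of schemes over $B/IB$, not a termwise $p$-completion — the $p$-adic completion already happens inside the definition of $\Prism_{-/B}$ — but this does not affect the argument.
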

\begin{rem}\label{rem: base change for twisted prismatic cohomology}
	Similarly, under the same assumptions on $\mstack X$ and $(A,I)\ra (B,IB)$ one has base change for the twisted prismatic cohomology:
	$$
	\RG_{\Prism^{(1)}}(\mstack X_{B/IB}/B)\simeq \RG_{\Prism^{(1)}}(\mstack X/A)\widehat{\otimes}_A B.
	$$
\end{rem}

\begin{rem}\label{rem: prismatic base change in the perfect case}
	Note that if $B$ is perfect as an $A$-module the completed tensor product in \Cref{prismatic_basechange} coincides with the usual one. Also, using the commutation of $\otimes_A B$ with limits as in \Cref{rem: drop qcqs assumption in the perfect case} we see that if $B$ is perfect over $A$ one can drop the qcqs assumption on $\mstack X$ in \Cref{prismatic_basechange}. 
\end{rem}

In what follows we will also need the K\"unneth formula for prismatic cohomology:
\begin{prop}[K\"unneth formula]\label{Kunneth_for_Prism_stacks}
Let $\mstack X$ and $\mstack Y$ be a pair of smooth quasi-compact quasi-separated smooth Artin stacks over $A/I$. Assume that the $(p, I)$-completed tensor product functor $\widehat\otimes_A$ is of finite cohomological amplitude. Then the natural map
$$R\Gamma_\Prism(\mstack X/A)\cotimes_A R\Gamma_\Prism(\mstack Y/A) \xymatrix{\ar[r] &} R\Gamma_\Prism(\mstack X\times \mstack Y/A),$$
in an equivalence.

\begin{proof}
By the universal property of coproduct pullbacks induce a map of $E_\infty$-algebras
$$R\Gamma_\Prism(\mstack X/A) \otimes_A R\Gamma_\Prism(\mstack Y/A) \xymatrix{\ar[r] &} R\Gamma_\Prism(\mstack X \times \mstack Y/A),$$
which factors through $(p, I)$-completion $R\Gamma_\Prism(\mstack X/A)\cotimes_A R\Gamma_\Prism(\mstack Y/A) \to R\Gamma_\Prism(\mstack X \times \mstack Y/A)$ since the right hand side is $(p, I)$-complete.

Note that by \Cref{cor_prism_coconnective_for_smooth} $R\Gamma_\Prism(\mstack X/A)$ and $R\Gamma_\Prism(\mstack Y/A)$ are $0$-coconnective and hence $R\Gamma_\Prism(\mstack X/A)/(p,I)$ and $R\Gamma_\Prism(\mstack Y/A)/(p, I)$ are at most $(-2)$-coconnective. It follows by our assumption on $\widehat\otimes_A$ that the $(p,I)$-completed tensor product functor
$$R\Gamma_\Prism(\mstack X/A)\cotimes_A - \colon \DMod{A} \tto \DMod{A}$$
is left $t$-exact up to a shift, in particular it preserves totalizations of $0$-coconnective objects. A similar assertion holds for the functor $-\cotimes_{A} R\Gamma_\Prism(\mstack Y/A)$. Let now $U_\bullet$ and $V_\bullet$ be smooth affine hypercovers of $\mstack X$ and $\mstack Y$ respectively. Then by smooth descent for prismatic cohomology and the previous discussion the natural maps
\begin{align*}
R\Gamma_\Prism(\mstack X\times \mstack Y/A) & \tto \Tot R\Gamma_\Prism(U_\bullet \times V_\bullet/A), \\
R\Gamma_\Prism(\mstack X/A) \cotimes_{A} R\Gamma_\Prism(\mstack Y/A) & \tto \Tot\left( (R\Gamma_\Prism(U_\bullet/A)\cotimes_{A} R\Gamma_\Prism(V_\bullet/A) \right)
\end{align*}
are equivalences. It follows that it is enough to prove that for a pair of smooth affine $A/I$-schemes $U, V$ that the natural map
$$\alpha\colon R\Gamma_\Prism(U/A)\cotimes_A R\Gamma_\Prism(V/A) \xymatrix{\ar[r] &} R\Gamma_\Prism(U\times V/A)$$
is an equivalence.

Since both parts are $I$-complete, it is enough to prove that the mod $I$ map
$$(R\Gamma_\Prism(U/A)\otimes_A A/I)\cotimes_{A/I} (R\Gamma_\Prism(V/A)\otimes_A A/I) \xymatrix{\ar[r] &} R\Gamma_\Prism(U\times V/A)\otimes_A A/I$$
is an equivalence. But by the Hodge-Tate comparison for schemes and smoothness of $U$ the complex $R\Gamma_\Prism(U/A)\otimes_A A/I$ admits a finite filtration with associated graded pieces $(\Omega^i_{U})^\wedge_p[-i]$ and similarly for $V$ and $U\times V$. The result then follows from the K\"unneth formula for $p$-completed Hodge cohomology, which in its turn follows from the usual K\"unneth isomorphism for smooth affine $A/I$-schemes
$$\wedge^*_U(\Omega^1_U) \otimes_{A/I} \wedge^*_V(\Omega^1_V) \xymatrix{\ar[r]^-\sim&} \wedge^*_{U\times V}(\Omega^1_{U\times V})$$
and the fact that the $p$-completion functor $-_{p}^\wedge$ is symmetric monoidal.
\end{proof}
\end{prop}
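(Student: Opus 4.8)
The plan is to reduce the statement to the case of smooth affine schemes, where it follows from the Hodge--Tate comparison together with the classical Künneth isomorphism for differential forms, and then bootstrap back up to stacks by smooth descent.

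First I would pin down the comparison map. The two projections $\mstack X\times\mstack Y\to\mstack X$ and $\mstack X\times\mstack Y\to\mstack Y$ give pullback maps on prismatic cohomology, and since $R\Gamma_\Prism(-/A)$ lands in $E_\infty$-$A$-algebras, the universal property of the coproduct furnishes a map $R\Gamma_\Prism(\mstack X/A)\otimes_A R\Gamma_\Prism(\mstack Y/A)\to R\Gamma_\Prism(\mstack X\times\mstack Y/A)$; as the target is a limit of derived $(p,I)$-complete modules it is $(p,I)$-complete, so this factors through $\cotimes_A$ and produces the map $\alpha$ in the statement.

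Next comes the descent reduction, which is the technical heart. By \Cref{cor_prism_coconnective_for_smooth}, $R\Gamma_\Prism(\mstack X/A)$ and $R\Gamma_\Prism(\mstack Y/A)$ are $0$-coconnective, hence their reductions mod $(p,I)$ are at least $(-2)$-coconnective. Combined with the hypothesis that $\cotimes_A$ has finite cohomological amplitude, this forces the functor $R\Gamma_\Prism(\mstack X/A)\cotimes_A(-)$ to be left $t$-exact up to a finite shift, and likewise for $(-)\cotimes_A R\Gamma_\Prism(\mstack Y/A)$; by \Cref{left_exact_preserve_totalizations_of_uniformly_bounded_below} such functors commute with totalizations of uniformly coconnective cosimplicial objects. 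Now pick smooth affine hypercovers $U_\bullet\to\mstack X$ and $V_\bullet\to\mstack Y$ via \cite[Theorem 4.7]{Pridham_ArtinHypercovers}. Étale hyperdescent for prismatic cohomology (\Cref{rem:on_Gamma_prism}) identifies $R\Gamma_\Prism(\mstack X\times\mstack Y/A)$ with $\Tot R\Gamma_\Prism(U_\bullet\times V_\bullet/A)$, and the $t$-exactness above identifies $R\Gamma_\Prism(\mstack X/A)\cotimes_A R\Gamma_\Prism(\mstack Y/A)$ with $\Tot\big(R\Gamma_\Prism(U_\bullet/A)\cotimes_A R\Gamma_\Prism(V_\bullet/A)\big)$. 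So it suffices to prove that $\alpha$ is an equivalence for a pair of smooth affine $A/I$-schemes $U,V$.

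Finally, for the affine case, both sides of $\alpha$ are derived $I$-complete and $A/I$ is a perfect $A$-module, so by derived Nakayama it is enough to check $\alpha\otimes_A A/I$ is an equivalence. Here the Hodge--Tate comparison for schemes (\Cref{main_BMS2}(4)) applies: $R\Gamma_\Prism(U/A)\otimes_A A/I$ has a finite filtration with graded pieces $(\Omega^i_{U/(A/I)})^\wedge_p[-i]$ up to invertible Breuil--Kisin twists, and similarly for $V$ and $U\times V$; the map $\alpha\otimes_A A/I$ is filtered, so one is reduced to the Künneth isomorphism on $p$-completed Hodge cohomology. That in turn follows from the classical isomorphism $\wedge^*_{A/I}\Omega^1_U\otimes_{A/I}\wedge^*_{A/I}\Omega^1_V\xrightarrow{\ \sim\ }\wedge^*_{A/I}\Omega^1_{U\times V}$ for smooth affine $A/I$-schemes together with the fact that $p$-completion is symmetric monoidal. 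The main obstacle is exactly the middle step: without the boundedness assumption on $\cotimes_A$ the derived $(p,I)$-completed tensor product need not commute with the totalizations produced by descent, so the reduction from stacks to affine schemes would fail; everything else is either formal or a citation of the schematic theory.
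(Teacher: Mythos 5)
Your proof is correct and follows essentially the same approach as the paper's: construct the comparison map via the universal property of the coproduct, use $0$-coconnectivity together with the finite cohomological amplitude hypothesis to make $\cotimes_A$ commute with the totalizations coming from smooth affine hypercovers, and then verify the affine case modulo $I$ using the Hodge--Tate filtration and the classical Künneth isomorphism for differential forms. The only cosmetic difference is that you cite \Cref{left_exact_preserve_totalizations_of_uniformly_bounded_below} explicitly and flag the Breuil--Kisin twists in the Hodge--Tate graded pieces, both of which the paper leaves implicit.
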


\subsection{Hodge and de Rham cohomology, $p$-completed and not}
\label{sec: Hodge and de Rham cohomology}
In this section we define the de Rham and Hodge cohomology and also their derived (as well as $p$-completed derived) versions. One can look at \cite[Section 1.1]{KubrakPrikhodko_HdR} for a more detailed treatment of the classical version. For the rest of this section fix a base ring $R$.
\begin{defn}[Hodge cohomology]
	Let $\mstack X$ be an Artin stack over $R$. Define \emdef{Hodge cohomology $R\Gamma_\Hdg(\mstack X/R)$ of $\mstack X$} to be
	$$R\Gamma_\Hdg(\mstack X/R) \coloneqq \bigoplus_{p \ge 0} R\Gamma\left(\mstack X, \wedge^p \mathbb L_{\mstack X/R}[-p]\right),$$
	where $\mathbb L_{\mstack X/R}$ is the cotangent complex of $\mstack X$ over $R$ and $\wedge^p \mathbb L_{\mstack X/R}$ is its $p$-th derived exterior power (see \Cref{stacky_tensor_functos}). Note that by Propositions \ref{cotangent_of_smooth_stack} and \ref{derived_tensor_finiteness}, for a smooth $\mstack X$, the wedge power $\wedge^p \mathbb L_{\mstack X/R}$ is a perfect sheaf. For a fixed $n\in \mathbb Z$ we will also denote
	$$H^n_\Hdg(\mstack X/R) := H^nR\Gamma_\Hdg(\mstack X/R) \simeq \bigoplus_{p+q=n} H^{p,q}(\mstack X/R), \quad\text{where}\quad H^{p,q}(\mstack X/R) := H^q\left(\mstack X, \wedge^p \mathbb L_{\mstack X/R}\right).$$
\end{defn}
\begin{notation}
	Let $U := \Spec P$ be a smooth affine $R$-scheme. The algebraic de Rham complex of $S$ over $R$
	$$\xymatrix{P \ar[r]^-d & \Omega^1_{P/R} \ar[r]^-d & \Omega^2_{P/R} \ar[r]^-d & \ldots}$$
	will be denoted by $\Omega_{U/R, \dR}^\bullet$. We define $R\Gamma_\dR(U/R) \coloneqq \Omega_{U/R, \dR}^\bullet \in \DMod{R}$.
\end{notation}
\begin{defn}[de Rham cohomology]\label{def: de Rham}
	Let $\mstack X$ be a smooth Artin stack over $R$. Define the \emph{de Rham cohomology $R\Gamma_\dR(\mstack X/R)$ of $\mstack X$} to be
	$$R\Gamma_\dR(\mstack X/R) := \lim_{Y\in \Aff^{\sm, \op}_{/\mstack X}} R\Gamma_\dR(Y/R),$$
	where $\Aff^{\sm}_{/\mstack X}$ is the full subcategory of stacks over $\mstack X$ consisting of affine $R$-schemes that are smooth over $\mstack X$. We will also denote $H^nR\Gamma_\dR(\mstack X/R)$ by $H^n_\dR(\mstack X/R)$. If the base ring $R$ is clear from the context, we will denote $R\Gamma_\Hdg(X/R)$ and $R\Gamma_\dR(\mstack X/R)$ just by $R\Gamma_\Hdg(X)$ and $R\Gamma_\dR(\mstack X)$ respectively.
\end{defn}

For smooth $\mstack X$ the Hodge cohomology admits a description similar to our definition of the de Rham cohomology:
\begin{prop}\label{Hodge_as_limit}
	For any $p\in \mathbb Z_{\ge 0}$ the natural map
	\begin{align*}
		R\Gamma(\mstack X, \wedge^p \mathbb L_{\mstack X/R}) \xymatrix{\ar[r]^\sim &} \lim_{S\in \Aff^{\sm, \op}_{/\mstack X}} R\Gamma(S, \wedge^p \mathbb L_{S/R})
	\end{align*}
	is an equivalence.
	
	\begin{proof}
		The cotangent complex and its exterior powers satisfies \'etale descent and then we are done by an argument analogous to \Cref{lem: enough to take smooth guys}.
	\end{proof}
\end{prop}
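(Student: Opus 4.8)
The plan is to reduce Proposition~\ref{Hodge_as_limit} to the smooth descent property of the cotangent complex, running the same induction on the Artin level that is used in the proof of \Cref{lem: enough to take smooth guys}. First I would reformulate the statement. Since $\mstack X$ is smooth over $R$, every affine scheme $S$ that is smooth over $\mstack X$ is smooth over $R$, so $\wedge^p\mathbb L_{S/R}\simeq \Omega^p_{S/R}$ and the right-hand side $\lim_{S\in\Aff^{\sm,\op}_{/\mstack X}}\RG(S,\wedge^p\mathbb L_{S/R})$ is, by construction, the value at $\mstack X$ of the right Kan extension of $S\mapsto \RG(S,\wedge^p\mathbb L_{S/R})$ along $\Aff_\sm^\op\hookrightarrow \Stk_\sm^\op$ (smooth Artin stacks over $R$). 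On the other hand, since $\wedge^p\mathbb L_{-/R}\in\QCoh(-)$ is functorial, $\mstack X\mapsto \RG(\mstack X,\wedge^p\mathbb L_{\mstack X/R})$ defines a functor $\Stk_\sm^\op\to\DMod{R}$ restricting to the previous one on affines, and the map in the statement is exactly the canonical unit map from this functor to its right Kan extension. The goal becomes showing this unit map is an equivalence, which I would prove by induction on $n$ for $\mstack X$ smooth $n$-Artin, using the exhaustion $\Stk_\sm=\bigcup_n \Stk_\sm^{n-\Art}$.

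For the base case ($\mstack X$ a disjoint union of smooth affine schemes) the comparison is an equivalence because $\Aff^\sm_{/\mstack X}$ has $\id_{\mstack X}$ as terminal object when $\mstack X$ is affine, and both the functor $\RG(-,\wedge^p\mathbb L_{-/R})$ and the right Kan extension convert disjoint unions into products. For the inductive step I would use transitivity of right Kan extensions, together with the inductive hypothesis, to write
$$
\Ran_{\Aff_\sm^\op\to \Stk_\sm^{n-\Art,\op}}\big(S\mapsto \RG(S,\wedge^p\mathbb L_{S/R})\big)\;\simeq\;\Ran_{\Stk_\sm^{(n-1)-\Art,\op}\to \Stk_\sm^{n-\Art,\op}}\big(\mstack Y\mapsto \RG(\mstack Y,\wedge^p\mathbb L_{\mstack Y/R})\big).
$$
Then I would pick a smooth atlas $U\surj\mstack X$ with $U$ a disjoint union of affine schemes, and consider its \v Cech nerve $\mstack U_\bullet$, whose terms $\mstack U_m$ are smooth $(n-1)$-Artin stacks over $R$. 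By smooth descent for the cotangent complex and its exterior powers (\Cref{flat_descent_for_cotangent_compl}), $\RG(\mstack X,\wedge^p\mathbb L_{\mstack X/R})\simeq \Tot\RG(\mstack U_\bullet,\wedge^p\mathbb L_{\mstack U_\bullet/R})$. Since the right Kan extension of a functor satisfying smooth descent again satisfies smooth descent (argued exactly as in the proof of \Cref{lem: enough to take smooth guys}), the right-hand side above also equals $\Tot\RG(\mstack U_\bullet,\wedge^p\mathbb L_{\mstack U_\bullet/R})$, where I identify the individual terms using the inductive hypothesis on the $(n-1)$-Artin stacks $\mstack U_m$. Matching these two presentations of the same totalization, compatibly with the unit map, completes the induction.

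The main obstacle here is bookkeeping rather than a new idea: the two substantive inputs---smooth descent for $\wedge^p\mathbb L_{-/R}$ and the fact that the iterated right Kan extension preserves smooth descent---are precisely the ingredients already used in \Cref{lem: enough to take smooth guys}, so the argument runs parallel to one carried out in the text. The only small point needing care is that a smooth Artin stack need not be quasi-compact, so the atlas $U$ should be taken to be a disjoint union of affines rather than a single affine scheme; this is harmless, and it is exactly what also makes the base case go through, since both functors send coproducts to products.
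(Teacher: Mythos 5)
Your proposal is correct and runs along exactly the same lines as the paper's own (very terse) proof: reformulate the claim as saying that $\RG(-,\wedge^p\mathbb L_{-/R})$ on smooth Artin stacks is the right Kan extension of its restriction to smooth affines, then induct on the Artin level using smooth descent for the exterior powers of the cotangent complex (\Cref{flat_descent_for_cotangent_compl}) together with transitivity of right Kan extensions, just as in \Cref{lem: enough to take smooth guys}. Your remark about taking the atlas to be a disjoint union of affines (and both sides converting coproducts to products) is a helpful point of care that the paper's sketch elides.
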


Hodge-proper stacks (\Cref{def:Hodge_proper}) are particularly nice from the point of view of de Rham cohomology:
\begin{prop}\label{Hodge_filtration_for_stacks}
	Let $\mstack X/R$ be a smooth Artin stack. Then
	\begin{enumerate}[label=(\arabic*)]
		\item There exists a complete (decreasing) \emph{Hodge filtration $F^\bullet R\Gamma_\dR(\mstack X/R)$} such that $\gr F^\bullet R\Gamma_\dR(\mstack X/R) \simeq R\Gamma_\Hdg(\mstack X/R)$.
		
		\item Suppose that $R$ is Noetherian. If $\mstack X$ is Hodge-proper over $R$, then the complex $R\Gamma_\dR(\mstack X/R)$ is a bounded below coherent $R$-module.
	\end{enumerate}
	
	\begin{proof}
		This is formal, see \cite[Corollary 1.1.6]{KubrakPrikhodko_HdR} and \cite[Proposition 1.2.6]{KubrakPrikhodko_HdR} for details.
	\end{proof}
\end{prop}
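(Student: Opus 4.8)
The plan is to reduce both statements to the affine case via the defining colimit/limit presentations, and then to invoke the results from \cite{KubrakPrikhodko_HdR} cited at the end. For part (1), the key observation is that for a smooth affine $R$-scheme $U=\Spec P$ the algebraic de Rham complex $\Omega^\bullet_{U/R,\dR}$ carries its naive (stupid) filtration $F^m\Omega^\bullet_{U/R,\dR}=\sigma^{\ge m}\Omega^\bullet_{U/R,\dR}$, which is functorial in $U$ and whose associated graded is $\bigoplus_p\Omega^p_{P/R}[-p]$. First I would observe that this assembles into a filtered object in $\Fun(\Aff^{\sm,\op}_{/\mstack X},\DMod{R})$; taking the limit over $\Aff^{\sm}_{/\mstack X}$ — which by \Cref{def: de Rham} computes $\RG_\dR(\mstack X/R)$ — yields a complete filtration $F^\bullet\RG_\dR(\mstack X/R)$, completeness because the limit of complete filtered objects is complete. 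The identification $\gr F^\bullet\RG_\dR(\mstack X/R)\simeq \RG_\Hdg(\mstack X/R)$ then follows by commuting $\gr$ past the limit (it is a limit of the graded pieces, using that the filtration is, up to a shift, eventually constant in each degree by smoothness so that the relevant towers are Mittag-Leffler) together with \Cref{Hodge_as_limit}, which identifies $\lim_{S}\RG(S,\wedge^p\mbb L_{S/R})$ with $\RG(\mstack X,\wedge^p\mbb L_{\mstack X/R})$.

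For part (2), assuming $R$ Noetherian and $\mstack X$ Hodge-proper, the filtration $F^\bullet\RG_\dR(\mstack X/R)$ from part (1) is a complete filtration with $\gr^p F^\bullet\RG_\dR(\mstack X/R)\simeq \RG(\mstack X,\wedge^p\mbb L_{\mstack X/R})[-p]$, which by \Cref{def:Hodge_proper} lies in $\Coh^+(R)$. I would then note that by smoothness of $\mstack X$ each $\wedge^p\mbb L_{\mstack X/R}$ is perfect and connective, so $\RG(\mstack X,\wedge^p\mbb L_{\mstack X/R})[-p]\in\Coh^{\ge p}(R)$, hence the filtration is uniformly bounded below by $0$ and the graded pieces shift off to the right. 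Therefore $\RG_\dR(\mstack X/R)=\lim_m F^m\RG_\dR(\mstack X/R)$ is built as a (convergent, since the filtration is exhaustive with connectivity tending to $+\infty$) totalization of bounded below coherent modules, and in each fixed cohomological degree only finitely many graded pieces contribute; by \Cref{nperf_basics}(1),(2) — finite (co)limits and retracts preserve $\Coh^+(R)$, and $\Coh^{\ge n}(R)$ is closed under totalizations — one concludes $\RG_\dR(\mstack X/R)\in\Coh^+(R)$.

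The main obstacle, and the only place where genuine care is needed, is the bookkeeping in part (1): verifying that passing to the limit over the (large) indexing category $\Aff^{\sm}_{/\mstack X}$ commutes with taking associated graded, and that the resulting filtration is complete and behaves well degreewise. This is precisely the kind of homotopy-coherent limit manipulation handled in the cited \cite[Corollary 1.1.6]{KubrakPrikhodko_HdR}; the honest move is to package the stupid filtration as a functor to filtered complexes and appeal to the exactness properties of $\lim$ on filtered objects, together with the connectivity estimate that makes both the completion and the $\gr$-vs-$\lim$ interchange unproblematic. Part (2) is then, as the authors say, formal given part (1) and \Cref{nperf_basics}. Accordingly I would keep the write-up short and defer to \cite{KubrakPrikhodko_HdR} for the filtered-limit formalities, as the excerpt does.
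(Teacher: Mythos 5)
Your proposal is correct and takes the same (and really the only sensible) approach as the cited reference: pass the na\"ive (stupid) filtration on $\Omega^\bullet_{-/R,\dR}$ through the limit over $\Aff^{\sm}_{/\mstack X}$, identify its associated graded with $\RG_\Hdg$ via \Cref{Hodge_as_limit}, and then in part (2) use the increasing connectivity of the graded pieces together with \Cref{nperf_basics} to conclude coherence. The only cosmetic remark is that you can dispense with the Mittag-Leffler language entirely: in a stable $\infty$-category $\gr^p = \cofib(F^{p+1}\to F^p)$ is a finite (co)limit, hence commutes with the totalization over $\Aff^{\sm,\op}_{/\mstack X}$ automatically, so the interchange of $\gr$ with $\lim$ requires no Mittag-Leffler hypothesis. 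Also a small terminological note for the cohomological conventions used in the paper: $\wedge^p\mbb L_{\mstack X/R}$ sits in nonnegative \emph{cohomological} degrees (by \Cref{cotangent_of_smooth_stack} and the decalage formula), so $\RG(\mstack X,\wedge^p\mbb L_{\mstack X/R})[-p]\in\Coh^{\ge p}(R)$ as you say; calling it ``connective'' risks confusion, but your actual argument is fine.
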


There is also base change for the de Rham cohomology under certain assumptions on the base:
\begin{prop}[Base-change]\label{Hodge_and_deRham_basechange}
	Let $\mstack X$ be a smooth Artin stack over $R$ and let $R\to R^\prime$ be a ring homomorphism such that $R^\prime$ considered as an $R$-module has finite Tor-amplitude (e.g. if $R^\prime$ is flat or perfect as an $R$-module). Then for $\mstack X^\prime := \mstack X \otimes_R R^\prime$ the natural map $R\Gamma_\dR(\mstack X / R) \otimes_R R^\prime \to R\Gamma_\dR(\mstack X^\prime / R^\prime)$ is a filtered equivalence. In particular, for each $p\in \mathbb Z_{\ge 0}$ the natural map $R\Gamma(\mstack X, \wedge^p \mathbb L_{\mstack X/R}) \otimes_R R^\prime \to R\Gamma(\mstack X^\prime, \mathbb \wedge^p \mbb L_{\mstack X^\prime/R^\prime})$ is an equivalence.
	
	\begin{proof}
		See \cite[Proposition 1.1.8]{KubrakPrikhodko_HdR}.
	\end{proof}
\end{prop}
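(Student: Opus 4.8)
The plan is to reprove this by reduction to the affine case, where it becomes the classical compatibility of the algebraic de Rham complex of a smooth affine scheme with base change, and to handle the de Rham cohomology itself through its Hodge filtration, so that the only descent statement needed is descent for the cotangent complex. First I would record the affine input: for a smooth affine $R$-scheme $\Spec P$ with $P' := P\otimes_R R'$, each $\Omega^p_{P/R}$ is a finite projective $P$-module, so the canonical map $\Omega^p_{P/R}\otimes_R R' \to \Omega^p_{P'/R'}$ is an isomorphism (no derived subtlety) and, compatibly, the whole complex base-changes, $\Omega^\bullet_{P/R}\otimes_R R' \simeq \Omega^\bullet_{P'/R'}$, respecting the stupid filtration. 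Then I would pick a smooth affine hypercover $U_\bullet \to \mstack X$ (cf. \cite[Theorem 4.7]{Pridham_ArtinHypercovers}, as used in \Cref{prop: twisted_cohomology_is_a_twist}); base-changing, $U_\bullet' := U_\bullet\otimes_R R'\to \mstack X'$ is a smooth affine hypercover of $\mstack X'$, and by descent for the cotangent complex and its exterior powers (\Cref{flat_descent_for_cotangent_compl}, cf. \Cref{Hodge_as_limit}), together with affineness of the $U_n$, one gets $R\Gamma(\mstack X,\wedge^p\mathbb L_{\mstack X/R})\simeq \Tot R\Gamma(U_\bullet,\Omega^p_{U_\bullet/R})$ and similarly for $\mstack X'$, the cosimplicial $R$-module $n\mapsto R\Gamma(U_n,\Omega^p_{U_n/R})$ being concentrated in cohomological degree $0$.

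The only real content is the interchange of the base change $-\otimes_R R'$ with this totalization. Since $R'$ has finite Tor-amplitude over $R$, say contained in $[-k,0]$, the functor $(-\otimes_R R')[-k]$ is left $t$-exact, hence by \Cref{left_exact_preserve_totalizations_of_uniformly_bounded_below} commutes with totalizations of $0$-coconnective cosimplicial objects, and therefore so does $-\otimes_R R'$. Applying this to $n\mapsto R\Gamma(U_n,\Omega^p_{U_n/R})$ and using the affine base change of the previous paragraph yields $R\Gamma(\mstack X,\wedge^p\mathbb L_{\mstack X/R})\otimes_R R' \simeq \Tot R\Gamma(U_\bullet',\Omega^p_{U_\bullet'/R'}) \simeq R\Gamma(\mstack X',\wedge^p\mathbb L_{\mstack X'/R'})$; that this is induced by the natural map uses the base-change identification $\wedge^p\mathbb L_{\mstack X'/R'}\simeq f^*\wedge^p\mathbb L_{\mstack X/R}$ for $f\colon \mstack X'\to\mstack X$ (base change of the cotangent complex, commuting with derived exterior powers). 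This is precisely the last assertion of the proposition.

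To obtain the filtered de Rham statement I would bootstrap from this using the complete Hodge filtration. By \Cref{Hodge_filtration_for_stacks}(1), $R\Gamma_\dR(\mstack X/R)$ carries a complete filtration $F^\bullet$ with $\gr^p F^\bullet \simeq R\Gamma(\mstack X,\wedge^p\mathbb L_{\mstack X/R})[-p]$, and likewise for $\mstack X'$. Tensoring with $R'$ (exact on the derived category) gives a filtered object $F^\bullet\otimes_R R'$ with $\gr^p(F^\bullet\otimes_R R')\simeq \gr^p F^\bullet\otimes_R R'$; it is still complete, because $F^p R\Gamma_\dR(\mstack X/R)$ lies in cohomological degrees $\ge p$ (being a totalization of stupidly-truncated affine de Rham complexes), so $F^p\otimes_R R'$ lies in degrees $\ge p-k$, the pro-system $\{F^p\otimes_R R'\}_p$ is pro-zero, and $-\otimes_R R'$ commutes with $\lim_p$ as before. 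The comparison map $R\Gamma_\dR(\mstack X/R)\otimes_R R'\to R\Gamma_\dR(\mstack X'/R')$ coming from functoriality along $f$ (which is $R$-linear with $R'$-linear target, hence factors through $-\otimes_R R'$) is therefore a morphism of complete filtered objects which, on associated graded, is degreewise the equivalence of the previous paragraph; a morphism of complete filtered objects inducing an equivalence on all graded pieces is a filtered equivalence, which finishes the argument.

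The main obstacle, such as it is, is exactly the point isolated in the second paragraph: $-\otimes_R R'$ does not commute with homotopy limits in general, and the limit computing the cohomology of an Artin stack is a genuinely infinite totalization rather than a finite one. The finite Tor-amplitude hypothesis, combined with the uniform coconnectivity of the relevant cosimplicial objects (degree $0$ at the level of the Hodge pieces), is precisely what licenses the interchange via \Cref{left_exact_preserve_totalizations_of_uniformly_bounded_below}; the remaining ingredients — affine base change of the de Rham complex, cotangent-complex descent, and the formalism of complete filtrations — are routine.
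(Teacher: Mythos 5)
Your argument is correct and is, as far as can be determined, essentially the proof the paper defers to in \cite[Proposition 1.1.8]{KubrakPrikhodko_HdR}: affine base change for the de Rham complex, commutation of $-\otimes_R R'$ with totalizations of uniformly coconnective cosimplicial objects via the finite Tor-amplitude hypothesis (\Cref{left_exact_preserve_totalizations_of_uniformly_bounded_below}), and passage from the Hodge graded pieces to the complete Hodge filtration. A slightly more direct variant applies the same totalization-commutation step to the filtered complexes $\Omega^{\ge p}_{U_\bullet/R}$ themselves (also coconnective), but the graded-then-complete bootstrapping you give is equally clean.
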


\paragraph{Derived de Rham cohomology for affine schemes.}
This subsection contains a short technical digression on derived de Rham cohomology which will be used below. For the rest of the subsection fix a commutative base ring $R$. Let $\Alg_{E_\infty, R/}\coloneqq \Alg_{E_\infty}(\DMod{R})$ be the category of $E_\infty$-algebras in $\DMod{R}$. Recall that the functor of derived de Rham cohomology
$$R\Gamma_{L\dR}(-/R) \colon \CAlg_{R/}^\Delta \xymatrix{\ar[r] &} \Alg_{E_\infty, R/}$$
is defined as the left Kan extension of the functor
$$\Omega_{-/R, \dR}^\bullet\colon \Poly^{\fg}_R \xymatrix{\ar[r] &} \Alg_{E_\infty, R/}$$
along the embedding $\Poly^\fg_R \inj \CAlg_{R/}^\Delta$, where $\Poly^\fg_R$ is the category of finitely generated polynomial $R$-algebras. Using the observation that $\CAlg_{R/}^\Delta$ is the sifted completion of $\Poly^\fg_R$ one can see that $R\Gamma_{L\dR}(-/R)$ is the unique functor that agrees with $\Omega_{-/R, \dR}^\bullet$ on $\Poly_R$ and commutes with filtered colimits and geometric realizations. In particular, given a simplicial algebra $A_\bullet\in \CAlg_{R/}^\Delta$ one can explicitly compute $R\Gamma_{L\dR}(A_\bullet/R)$ by finding a polynomial resolution $P_\bullet \sim A_\bullet$ and taking the geometric realization $|\Omega^\bullet_{P_\bullet/R, \dR}|$, where, if $P$ is an infinitely generated, we put $\Omega^\bullet_{P/R, \dR}$ to be equal to the colimit of $\Omega^\bullet_{Q/R, \dR}$ over all $Q\subset P$ such that $Q\in \Poly^\fg_R$. If $A\in \CAlg_{R/}$ is discrete we have a particularly nice functorial \emph{bar resolution} $P_\bullet\ra A$ given by $P_0\coloneqq R[A]$, $P_1\coloneqq R[R[A]]$, etc. with the natural transition maps. We also note that the forgetful functor $\Alg_{E_\infty}({R}) \to \DMod{R}$ commutes with sifted colimits, thus the underlying object of $R\Gamma_{L\dR}(A_\bullet/R)$ is computed by the same procedure but with values in $\DMod{R}$.

\begin{prop}\label{I_myself_was_a_bit_surprised}
	The functor of derived de Rham cohomology
	$$R\Gamma_{L\dR}(-/R) \colon \CAlg_{R/}^\Delta \xymatrix{\ar[r] &} \Alg_{E_\infty, R/}$$
	preserves all small colimits.
	
	\begin{proof}
		By construction $R\Gamma_{L\dR}(-/R)$ preserves filtered colimits, so it is enough to prove that for a span of simplicial $R$-algebras $B \leftarrow A \rightarrow C$ the natural map
		$$R\Gamma_{L\dR}(B/R)\otimes_{R\Gamma_{L\dR}(A/R)} R\Gamma_{L\dR}(C/R) \xymatrix{\ar[r] &} R\Gamma_{L\dR}(B\otimes_A C/R)$$
		is an equivalence. The pushout $B\otimes_A C$ can be rewritten as a geometric realization of simplicial diagram
		$$\xymatrix{\ldots \ar@<+1.35ex>[r] \ar@<+.45ex>[r] \ar@<-.45ex>[r] \ar@<-1.35ex>[r] & B\otimes_R A \otimes_R A \otimes C \ar@<+.9ex>[r] \ar[r] \ar@<-.9ex>[r] & B\otimes_R A \otimes_R C \ar@<-.45ex>[r] \ar@<.45ex>[r] & B\otimes_R C}, $$
		thus, since $R\Gamma_{L\dR}(-/R)$ also preserves geometric realizations, it is left to prove that $R\Gamma_{L\dR}(-/R)$ commutes with binary coproducts, i.e. to establish the special case when $A=R$. Since tensor products commute with colimits this reduces to the case of finitely generated polynomial $R$-algebras by representing $B$ and $C$ as sifted colimits of the latter. Finally, the polynomial case can be checked by a direct computation.
	\end{proof}
\end{prop}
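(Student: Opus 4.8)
The plan is to peel the statement down, one colimit-type at a time, until it becomes the single computation of de Rham complexes of finitely generated polynomial algebras. The first input is essentially built into the construction: since $\CAlg^\Delta_{R/}$ is the sifted cocompletion of $\Poly^\fg_R$ and $R\Gamma_{L\dR}(-/R)$ is the left Kan extension of $\Omega^\bullet_{-/R,\dR}$ along $\Poly^\fg_R \hookrightarrow \CAlg^\Delta_{R/}$, the functor $R\Gamma_{L\dR}(-/R)$ automatically commutes with all sifted colimits, in particular with filtered colimits and with geometric realizations; one should keep track that this holds in $\Alg_{E_\infty,R/}$, where filtered colimits and geometric realizations (indeed all sifted colimits) are computed on underlying objects. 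Now recall that a functor between cocomplete $\infty$-categories preserves all small colimits once it preserves filtered colimits together with all finite colimits, and it preserves all finite colimits once it preserves the initial object and pushouts. Since $R\Gamma_{L\dR}(R/R)\simeq R$ is the initial object of $\Alg_{E_\infty,R/}$, everything reduces to preservation of pushouts.

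To handle pushouts I would use the two-sided bar construction: for a span $B\leftarrow A\rightarrow C$ in $\CAlg^\Delta_{R/}$ one has $B\otimes_A C\simeq \bigl|\,B\otimes_R A^{\otimes_R\bullet}\otimes_R C\,\bigr|$, a geometric realization whose $n$-th term is an iterated binary coproduct in $\CAlg^\Delta_{R/}$. Applying $R\Gamma_{L\dR}(-/R)$ and using that it commutes with geometric realizations, preservation of pushouts follows once I know it preserves binary coproducts: then the natural comparison maps upgrade to a map of simplicial objects in $\Alg_{E_\infty,R/}$ that is a levelwise equivalence, and passing to geometric realizations identifies $R\Gamma_{L\dR}(B\otimes_A C/R)$ with $R\Gamma_{L\dR}(B/R)\otimes_{R\Gamma_{L\dR}(A/R)}R\Gamma_{L\dR}(C/R)$. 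So it remains to show that for all $B,C$ the canonical map
$$
R\Gamma_{L\dR}(B/R)\otimes_R R\Gamma_{L\dR}(C/R)\longrightarrow R\Gamma_{L\dR}(B\otimes_R C/R)
$$
is an equivalence.

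For this last point I would reduce to the polynomial case. Both $B\mapsto R\Gamma_{L\dR}(B/R)\otimes_R R\Gamma_{L\dR}(C/R)$ and $B\mapsto R\Gamma_{L\dR}(B\otimes_R C/R)$ preserve sifted colimits — because the relative tensor product over $R$ is a colimit-preserving bifunctor on $\Alg_{E_\infty,R/}$ and $R\Gamma_{L\dR}(-/R)$ preserves sifted colimits — and symmetrically in the variable $C$; writing $B$ and $C$ as sifted colimits of finitely generated polynomial $R$-algebras (e.g.\ via their bar resolutions) reduces us to $B=R[x_1,\dots,x_m]$, $C=R[y_1,\dots,y_n]$. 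There $B\otimes_R C=R[x_1,\dots,x_m,y_1,\dots,y_n]$ and one checks directly the Künneth isomorphism of de Rham complexes $\Omega^\bullet_{B\otimes_R C/R,\dR}\simeq \Omega^\bullet_{B/R,\dR}\otimes_R\Omega^\bullet_{C/R,\dR}$; this is the only genuinely computational step, and it is routine.

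The main obstacle I expect is not any one of these steps individually but the bookkeeping inside $\Alg_{E_\infty,R/}$: coproducts there are relative tensor products while sifted colimits are computed on underlying objects, so one must make sure that the bar-resolution identification $B\otimes_A C\simeq\bigl|\,B\otimes_R A^{\otimes_R\bullet}\otimes_R C\,\bigr|$ and the compatibility of $R\Gamma_{L\dR}(-/R)$ with it are all taking place at the level of $E_\infty$-$R$-algebras rather than merely of underlying complexes. This is exactly why it is convenient to set up $R\Gamma_{L\dR}(-/R)$ from the start as a functor valued in $\Alg_{E_\infty,R/}$; once that is arranged, no input beyond the definition of $R\Gamma_{L\dR}(-/R)$ and the polynomial computation above is needed.
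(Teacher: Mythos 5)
Your proposal is correct and follows essentially the same route as the paper's proof: filtered colimits come for free from the Kan‐extension construction, pushouts are handled via the two-sided bar resolution and preservation of geometric realizations, binary coproducts reduce to the polynomial case by sifted colimits, and the polynomial case is the K\"unneth formula for de Rham complexes. The only difference is that you make explicit the reduction through finite colimits and the preservation of the initial object, which the paper leaves implicit.
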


We have a natural decreasing (however not necessarily complete) Hodge filtration $\Fil^{\ge n}_\Hdg \RG_{L\dR}(-/R)$ induced by the Hodge filtration on $\Omega^\bullet_{-/R,\dR}$. It's associated graded is given by the wedge powers of the cotangent complex. In particular, for any $A$ we have a natural projection $R\Gamma_{L\dR}(A/R)\rightarrow A\simeq R\Gamma_{L\dR}(A/R)/\Fil^{\ge 1}_\Hdg \RG_{L\dR}(-/R)$. The following assertion compares the absolute and relative derived de Rham cohomology:
\begin{lem}\label{self_consistent}
	Let $A \to B$ be a map of (discrete) commutative $R$-algebras. Then there is a natural equivalence
	\begin{align}\label{eq:relative_dR_vs_absolute}
		R\Gamma_{L\dR}(B/R)\otimes_{R\Gamma_{L\dR}(A/R)} A \simeq R\Gamma_{L\dR}(B/A).
	\end{align}
	
\begin{proof} 
We first construct the equivalence in the case when $A\in \Poly^\fg_R$ and $B\in \Poly^\fg_A \subset \Poly^\fg_R$. The natural map $\Omega^1_{B/R} \to \Omega^1_{B/A}$ induces a morphism of dg-algebras $\Omega^\bullet_{B/R,\dR}
\to \Omega^\bullet_{B/A,\dR}$. By construction the restriction of this morphism along $\Omega^\bullet_{A/R,\dR} \to \Omega^\bullet_{B/R,\dR}$ factors through $\Omega^\bullet_{A/A,\dR} \simeq A$. It follows that there is a natural comparison map
$$\Omega^\bullet_{B/R,\dR} \otimes_{\Omega^\bullet_{A/R,\dR}} A \tto \Omega^\bullet_{B/A,\dR}.$$
By construction the map above is a filtered morphism of $E_\infty$-algebras, where the filtration on the right is the Hodge filtration and on the left it is obtained from Hodge filtrations on $\Omega^\bullet_{A/R,\dR}$ and $\Omega^\bullet_{B/R,\dR}$ and the trivial filtration on $A$. Note that since $A,B$ are polynomial algebras these filtrations are finite and thus complete. Hence it is enough to show that the map above induces an equivalence on the associated graded pieces, which can be proved on the level of cohomology. But note that by our assumptions $H^*(\Omega^\bullet_{B/R})$ is a free graded $H^*(\Omega^\bullet_{A/R})$-module, hence the $\Tor$-spectral sequence computing
$$H^*(\Omega^\bullet_{B/R} \otimes_{\Omega^\bullet_{A/R}} A)$$
degenerates on the first page and there is a natural isomorphism
$$H^*(\Omega^\bullet_{B/R} \otimes_{\Omega^\bullet_{A/R}} A) \simeq H^*(\Omega^\bullet_{B/R}) \otimes_{H^*(\Omega^\bullet_{A/R})} A.$$
Finally, it is easy to see that the natural map
$$H^*(\Omega^\bullet_{B/R}) \otimes_{H^*(\Omega^\bullet_{A/R})} A \tto H^*(\Omega_{B/A}^\bullet)$$
is an isomorphism under our assumptions on $A$ and $B$.

This gives a functorial equivalence in the case of finitely generated polynomial algebras. We extend it to infinitely generated polynomial algebras by passing to the colimit over $A_i\ra B_i \subset A\ra B$ with $A_i$ and $B_i$ being finitely generated.

In the general case observe that for a pair of algebra maps $R\to S \to T$ the bar resolution of $T$ over $R$ maps naturally to the bar resolution of $T$ over $S$. Let $A_\bullet$ be a bar resolution of $A$ over $R$ and let $B_{i,\bullet}$ be a bar resolution of $B$ over $A_i$. By the previous observation $B_{\bullet, \bullet}$ naturally form a bi-simplicial $R$-algebra. Passing to the diagonal simplicial algebra $B_i \coloneqq B_{i,i}$ we obtain a natural map $A_\bullet \to B_\bullet$ such that
\begin{itemize}
\item All $A_i$ are free $R$-algebras.

\item All $B_i$ are free $A_i$-algebras (in particular they are also free as $R$-algebras).

\item The geometric realization of $A_\bullet \to B_\bullet$ is $A \to B$.
\end{itemize}
Now, since $R\Gamma_{L\dR}(-/R)$ preserves geometric realizations and since colimits commute, we have 
$$R\Gamma_{L\dR}(B/R)\otimes_{R\Gamma(A/R)} A \simeq \left| R\Gamma_{L\dR}(B_\bullet/R)\otimes_{R\Gamma(A_\bullet/R)} A_\bullet \right| \simeq \left| \Omega_{B_\bullet/R,\dR}^\bullet \otimes_{\Omega_{A_\bullet/R,\dR}^\bullet} A_\bullet \right| \xra{\sim} \left| \Omega_{B_\bullet/A_\bullet,\dR}^\bullet\right|,$$
where in the last step we use the equivalence constructed in the polynomial case.

Next note that since both sides of \eqref{eq:relative_dR_vs_absolute} preserve sifted colimits in $B$, we can assume that $B$ is a polynomial $A$-algebra. Then
$$\left| \Omega_{B_\bullet/A_\bullet,\dR}^\bullet\right| \simeq \left| \Omega_{B_\bullet/A_\bullet,\dR}^\bullet\right|\otimes_A A \simeq \left| \Omega_{B_\bullet/A_\bullet,\dR}^\bullet\otimes_{A_\bullet} A\right| \simeq \left| \Omega_{B/A,\dR}^\bullet\right| \simeq R\Gamma_{L\dR}(B/A).$$

The naturality of the equivalence follows from the functoriality of bar resolutions as well the isomorphism in the polynomial case.
\end{proof}
\end{lem}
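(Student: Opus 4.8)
The plan is to reduce the claimed equivalence to an explicit computation for polynomial algebras and then propagate it to an arbitrary map $A\to B$ by resolving both sides. The two structural inputs are that $R\Gamma_{L\dR}(-/R)$ preserves all small colimits (\Cref{I_myself_was_a_bit_surprised}), in particular sifted colimits and geometric realizations, and that the relative functor $R\Gamma_{L\dR}(-/A)$ likewise preserves sifted colimits, by the same left-Kan-extension reasoning that defines it.

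First I would construct the comparison map in general. The surjection $\Omega^1_{B/R}\to\Omega^1_{B/A}$ induces a map of commutative differential graded algebras $\Omega^\bullet_{B/R,\dR}\to\Omega^\bullet_{B/A,\dR}$ whose precomposition with $\Omega^\bullet_{A/R,\dR}\to\Omega^\bullet_{B/R,\dR}$ factors through $\Omega^\bullet_{A/A,\dR}\simeq A$; by the universal property of the relative tensor product this yields a natural, Hodge-filtered map
$$\Omega^\bullet_{B/R,\dR}\otimes_{\Omega^\bullet_{A/R,\dR}}A\to\Omega^\bullet_{B/A,\dR}.$$
Next I would verify it is an equivalence when $A\in\Poly^\fg_R$ and $B\in\Poly^\fg_A$. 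Here $\Omega^\bullet_{B/R,\dR}\cong\Omega^\bullet_{A/R,\dR}\otimes_R\Omega^\bullet_{R[\underline x]/R,\dR}$ as differential graded algebras, and since $\Omega^\bullet_{R[\underline x]/R,\dR}$ is a bounded-below complex of free $R$-modules, this exhibits $\Omega^\bullet_{B/R,\dR}$ as a semi-free, hence cofibrant, differential graded module over $\Omega^\bullet_{A/R,\dR}$. Consequently the tensor product above is already the underived one, and a direct computation identifies it with $A\otimes_R\Omega^\bullet_{R[\underline x]/R,\dR}\cong\Omega^\bullet_{B/A,\dR}$, compatibly with the (here finite) Hodge filtrations. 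Passing to the filtered colimit over finitely generated polynomial subalgebras $A_i\to B_i$ inside $A\to B$ extends this to polynomial $A$-algebras over polynomial $R$-algebras on arbitrarily many variables.

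For a general $A\to B$ I would invoke functorial bar resolutions. Let $A_\bullet\to A$ be the bar resolution of $A$ over $R$ and, for each $i$, let $B_{i,\bullet}\to B$ be the bar resolution of $B$ over $A_i$; functoriality of the bar construction makes $B_{\bullet,\bullet}$ a bisimplicial $R$-algebra, and passing to its diagonal $B_i\coloneqq B_{i,i}$ gives a map $A_\bullet\to B_\bullet$ with each $A_i$ free over $R$, each $B_i$ free over $A_i$ (hence free over $R$), and geometric realization $A\to B$. Since $R\Gamma_{L\dR}(-/R)$ preserves geometric realizations and colimits commute with one another,
$$R\Gamma_{L\dR}(B/R)\otimes_{R\Gamma_{L\dR}(A/R)}A\simeq\bigl|\Omega^\bullet_{B_\bullet/R,\dR}\otimes_{\Omega^\bullet_{A_\bullet/R,\dR}}A_\bullet\bigr|\xra{\sim}\bigl|\Omega^\bullet_{B_\bullet/A_\bullet,\dR}\bigr|,$$
the last equivalence coming from the polynomial case applied in each simplicial degree. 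Finally, since both sides of the claimed equivalence preserve sifted colimits in $B$, I may assume $B$ is polynomial over $A$, and then $\bigl|\Omega^\bullet_{B_\bullet/A_\bullet,\dR}\bigr|\simeq\bigl|\Omega^\bullet_{B_\bullet/A_\bullet,\dR}\otimes_{A_\bullet}A\bigr|\simeq\bigl|\Omega^\bullet_{B/A,\dR}\bigr|\simeq R\Gamma_{L\dR}(B/A)$, using that $B_\bullet\otimes_{A_\bullet}A$ is a free $A$-algebra resolution of $B$, base change of the polynomial de Rham complex, and the compatibility of geometric realization with the relevant tensor products. Naturality of the full equivalence is inherited from functoriality of the bar construction together with naturality of the polynomial-case map.

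The step I expect to be the main obstacle is the bookkeeping in this last stage: arranging the compatible (bi)simplicial resolutions so that the comparison remains genuinely functorial in the pair $A\to B$, and checking that geometric realization commutes with tensor products over the varying simplicial ring $A_\bullet$ that appears. On the computational side, the one point that requires care is that the polynomial-case comparison is an isomorphism of complexes over an arbitrary base ring $R$, not merely rationally; the semi-freeness observation is precisely what makes this transparent. Everything else is a formal consequence of colimit-preservation for derived de Rham cohomology.
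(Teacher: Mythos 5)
Your proof follows the same overall architecture as the paper — construct the comparison map, verify it for polynomial algebras, extend by filtered colimits, and propagate to the general case via functorial bar resolutions and sifted-colimit-preservation — but the verification in the polynomial case is genuinely different, and in a way that is arguably cleaner. The paper passes to the associated graded of the Hodge filtration (which kills the differential and turns the question into one about graded modules over graded algebras), then uses that $\bigoplus_i \Omega^i_{B/R}$ is free over $\bigoplus_j\Omega^j_{A/R}$ to degenerate the $\Tor$-spectral sequence. You instead work directly with the dg-algebras: you observe that $\Omega^\bullet_{B/R,\dR}\cong\Omega^\bullet_{A/R,\dR}\otimes_R\Omega^\bullet_{R[\underline x]/R,\dR}$ and argue that this exhibits $\Omega^\bullet_{B/R,\dR}$ as a cofibrant $\Omega^\bullet_{A/R,\dR}$-dg-module, so that the underived tensor product already computes the derived one, and then do the computation by hand. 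Your route avoids the mild awkwardness of the paper's notation (where ``$H^*(\Omega^\bullet_{B/R})$'' silently means the Hodge associated graded, not de Rham cohomology) and sidesteps any spectral-sequence bookkeeping, at the cost of having to justify cofibrancy.

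One small imprecision worth tightening: ``bounded-below complex of free $R$-modules'' is not by itself the right criterion for semi-freeness as an $\Omega^\bullet_{A/R,\dR}$-dg-module, since a filtration by cohomological degree of $\Omega^\bullet_{R[\underline x]/R,\dR}$ has the differential going the \emph{wrong} way. What actually works is either (a) the filtration by \emph{polynomial} degree in the $\underline x$ variables (where the de Rham differential strictly lowers degree, so you do get a genuine semi-free filtration), or (b) the weaker but sufficient observation that $\Omega^\bullet_{R[\underline x]/R,\dR}$ is a \emph{bounded} complex of flat $R$-modules, hence $-\otimes_R\Omega^\bullet_{R[\underline x]/R,\dR}$ preserves quasi-isomorphisms, so $\Omega^\bullet_{B/R,\dR}$ is K-flat over $\Omega^\bullet_{A/R,\dR}$ and the underived tensor product is the derived one. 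Either version completes your argument; as written, the inference is stated too loosely to be self-justifying, but the conclusion is correct.
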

Using the observation above one can extend the definition of the relative de Rham cohomology to an arbitrary simplicial base:
\begin{defn}\label{def: relative de Rham cohomology}
	Let $A\to B$ be a map of simplicial $R$-algebras. Define
	$$R\Gamma_{L\dR}(B/A) \coloneqq R\Gamma_{L\dR}(B/R)\otimes_{R\Gamma_{L\dR}(A/R)} A.$$
	By \Cref{self_consistent} this definition agrees with the usual one when $A$ is discrete.
\end{defn}

This relative notion is rather well-behaved, for example it satisfies base change:
\begin{prop}[Base change]\label{yet_another_base_change}
	Let $A\to B$ be a map of simplicial algebras. Then there is a natural equivalence
	$$R\Gamma_{L\dR}(B/A)\otimes_{A} C \simeq R\Gamma_{L\dR}(B\otimes_A C/C).$$
	
	\begin{proof}
		By definition the left hand side is equivalent to $R\Gamma_{L\dR}(B/R)\otimes_{R\Gamma_{L\dR}(A/R)} C$ and the right hand side is equivalent to $R\Gamma_{L\dR}(B\otimes_A C/R)\otimes_{R\Gamma_{L\dR}(C/R)} C$. Since by \Cref{I_myself_was_a_bit_surprised}
		$$R\Gamma_{L\dR}(B\otimes_A C/R) \simeq R\Gamma_{L\dR}(B/R)\otimes_{R\Gamma_{L\dR}(A/R)} R\Gamma_{L\dR}(C/R)$$
		we obtain the desired result.
	\end{proof}
\end{prop}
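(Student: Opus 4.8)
The statement to prove (Proposition \ref{yet_another_base_change}, base change for relative derived de Rham cohomology) is short, so the proof will be correspondingly brief; the work has essentially already been done by setting up \Cref{I_myself_was_a_bit_surprised} and \Cref{def: relative de Rham cohomology}.

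The plan is to simply unwind both sides using the definition of relative de Rham cohomology and apply the fact that the absolute functor $R\Gamma_{L\dR}(-/R)$ preserves all colimits. First I would rewrite the left-hand side: by \Cref{def: relative de Rham cohomology},
\[
R\Gamma_{L\dR}(B/A)\otimes_A C \;\simeq\; \bigl(R\Gamma_{L\dR}(B/R)\otimes_{R\Gamma_{L\dR}(A/R)} A\bigr)\otimes_A C \;\simeq\; R\Gamma_{L\dR}(B/R)\otimes_{R\Gamma_{L\dR}(A/R)} C,
\]
using associativity of relative tensor products (and that the $A$-algebra structure on $C$ is the given one). Next I would rewrite the right-hand side: again by \Cref{def: relative de Rham cohomology} applied to the map $C\to B\otimes_A C$,
\[
R\Gamma_{L\dR}(B\otimes_A C/C)\;\simeq\; R\Gamma_{L\dR}(B\otimes_A C/R)\otimes_{R\Gamma_{L\dR}(C/R)} C.
\]
Now the key input is \Cref{I_myself_was_a_bit_surprised}: since $B\otimes_A C$ is the pushout of $B\leftarrow A\rightarrow C$ in $\CAlg_{R/}^\Delta$ and $R\Gamma_{L\dR}(-/R)$ preserves colimits (in particular pushouts), we get
\[
R\Gamma_{L\dR}(B\otimes_A C/R)\;\simeq\; R\Gamma_{L\dR}(B/R)\otimes_{R\Gamma_{L\dR}(A/R)} R\Gamma_{L\dR}(C/R).
\]
Substituting this into the right-hand side and cancelling $R\Gamma_{L\dR}(C/R)$ against the outer relative tensor with $C$ (associativity again) yields $R\Gamma_{L\dR}(B/R)\otimes_{R\Gamma_{L\dR}(A/R)} C$, matching the left-hand side.

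All that remains is to check that these identifications are natural and compatible, i.e. that the comparison map one actually writes down (induced by the universal properties of the various pushouts of $E_\infty$-algebras) is the equivalence just exhibited rather than merely some abstract isomorphism of objects. This is a routine diagram chase with the functoriality of \Cref{I_myself_was_a_bit_surprised} and the associativity coherences for relative tensor products in $\Alg_{E_\infty, R/}$, so I would state it tersely. I do not anticipate any genuine obstacle here — the only mild subtlety is keeping track of which algebra acts on $C$ through which map, but since $C$ is a $C$-algebra via the identity and a $B$-algebra (resp. $A$-algebra, $R$-algebra) via the structure maps, all the cancellations are the expected ones. The substance of the proposition is entirely contained in the colimit-preservation statement \Cref{I_myself_was_a_bit_surprised}, which has already been established.
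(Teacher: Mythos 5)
Your proof is correct and takes essentially the same route as the paper's: unwind both sides via the definition of relative de Rham cohomology, apply the colimit-preservation of $R\Gamma_{L\dR}(-/R)$ (\Cref{I_myself_was_a_bit_surprised}) to the pushout $B\otimes_A C$, and cancel by associativity of relative tensor products. The added remark about checking naturality of the comparison map is a reasonable point of care but is not something the paper dwells on.
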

\begin{rem}
	Note that it follows from \Cref{yet_another_base_change} that the definition of $R\Gamma_{L\dR}(B/A)$ in \ref{def: relative de Rham cohomology} does not depend on the choice of the base ring $R$ and only on the homomorphism $A\ra B$.
\end{rem}

In the case the base ring $R$ is discrete and of characteristic $p$, there is the (increasing) conjugate filtration $\Fil^{\mr{cnj}}_\bullet R\Gamma_{L\dR}(B/R)$ on the derived de Rham cohomology, which is induced by the conjugate filtration on $\Omega_{-/R,\dR}^\bullet$. By the derived Cartier isomorphism (\cite[Proposition 3.5]{Bhatt_pAdicdR}) its associated graded is identified with $\oplus_i\wedge^i_R\mbb L_{B^{{(1)}_R}/R}$ where $B^{{(1)}_R}\coloneqq B\otimes_{R,\phi_R} R$ is the Frobenius twist relative to $R$. One can also extend this to the relative situation as follows:
\begin{construction}[Conjugate filtration on $\RG_{L\dR}(B/A)$]\label{constr: conjugate filtration in the relative case}
	Let $A\ra B$ be a homomorphism of simplicial algebras and let's endow $\RG_{L\dR}(B/R)$ and $\RG_{L\dR}(A/R)$ with the conjugate filtration and $A$ with the trivial filtration $\Fil_{<0}A=0$, $A=\Fil_0A=\Fil_1A=\ldots$. The natural homomorphisms $\RG_{L\dR}(A/R) \ra \RG_{L\dR}(B/R)$ and $\RG_{L\dR}(A/R)\ra A$ are filtered and this gives a natural filtration\footnote{The filtration on the tensor product $A\otimes_C B$ for filtered $R$-algebras $A,B,C$ with $C\ra A$ and $C\ra B$ being filtered homomorphisms is best described via the Rees construction $\mc R\colon \DMod{R}^{\mbb Z-\Fil} \xra{\sim} \QCoh([\mbb A^1_R/\mbb G_{m,R}])$. The tensor product $\mc R(A)\otimes_{\mc R(B)} \mc R(C)$ then has the underlying $R$-module (computed as the fiber at $1\in \mbb A^1(R)$) given by $A\otimes_B C$ and the associated graded of the natural filtration (given by the order of vanishing of the corresponding $\mbb G_m$-equivariant sections) is computed as the fiber at $0\in \mbb A^1(R)$ and is equal to the tensor product of the associated gradeds: $\gr A\otimes_{\gr B}\gr C$.}  on the tensor product 
	$
	\RG_{L\dR}(B/A)\simeq \RG_{L\dR}(B/R)\otimes_{\RG_{L\dR}(A/R)} A$, which we also denote by $\Fil^{\mr{cnj}}_\bullet \RG_{L\dR}(B/A)$. Its associated graded is given by the tensor product of the associated gradeds
	$$
	\gr^{\mr{cnj}}_\bullet \RG_{L\dR}(B/A) \simeq  \left(\oplus_i \wedge^i_R\mbb L_{B^{(1)}/R}\right)\otimes_{\left(\oplus_i \wedge^i_R\mbb L_{A^{(1)}/R}\right)}A,
	$$
	where the morphism $\oplus_i \wedge^i_R\mbb L_{A^{(1)}/R}\ra A$ quotients through $\oplus_i \wedge^i_R\mbb L_{A^{(1)}/R} \surj A^{{(1)}_R}\xra{\phi_{A/R}} A$ with the second arrow given by the relative Frobenius. It is not hard to see (e.g. from an explicit computation in the polynomial case) that 
	$$
	\left(\oplus_i \wedge^i_R\mbb L_{B^{(1)}/R}\right)\otimes_{\left(\oplus_i \wedge^i_R\mbb L_{A^{(1)}/R}\right)}A^{(1)}\simeq \oplus_i \wedge^i_R\mbb L_{B^{(1)}/A^{(1)}},
	$$ and thus by base change 
	$$
	\gr^{\mr{cnj}}_\bullet \RG_{L\dR}(B/A) \simeq \left(\oplus_i \wedge^i_R\mbb L_{B^{(1)}/A^{(1)}}\right)\otimes_{A^{(1)},\phi_{A/R}}A\simeq \oplus_i \wedge^i_R\mbb L_{B^{{(1)}_A}/A},
	$$
	where $B^{{(1)}_A}\coloneqq B\otimes_{A,\phi_A}A\simeq B^{{(1)}_R}\otimes_{A^{{(1)}_R},\phi_{A/R}}A$ is the Frobenius twist relative to $A$.
\end{construction}

\paragraph{The $p$-completed version.}
For the rest of this subsection fix a classically $p$-complete base ring $R$ with bounded $p^\infty$-torsion; note that this condition forces $R$ to be derived $p$-complete. For a smooth affine $R$-scheme $\Spec P$ we can consider the derived $p$-completion $(\Omega_{P/R, \dR}^\bullet)^\wedge_p$ of the de Rham complex. Note that by \cite[Lemma 4.7]{BMS2} the derived completion $\widehat P\coloneqq P^\wedge_p$ of $P$ is a classically $p$-complete ring. Since by smoothness of $P$, each $\Omega_{P/R}^i$ is a finitely generated projective $P$-module, it follows from the same lemma that the derived $p$-completion $\widehat \Omega_{P/R}^i\coloneqq (\Omega_{P/R}^i)^\wedge_p$ is also discrete and classically $p$-complete. It follows further that $(\Omega_{P/R, \dR}^\bullet)^\wedge_p$ has a model as a complex 
$$
\widehat \Omega_{P/R, \dR}^\bullet \coloneqq \xymatrix{\widehat{P} \ar[r]^-d & \widehat \Omega^1_{P/R} \ar[r]^-d & \widehat \Omega^2_{P/R} \ar[r]^-d & \ldots}, 
$$
with the differential induced by the de Rham differential on $\Omega_{P/R, \dR}^\bullet$. 

\begin{rem}
	The complex $\widehat \Omega_{P/R, \dR}^\bullet $ can be considered as a natural extension of the relative de Rham complex of the formal scheme $\Spf \widehat P$ to the not necessarily Noetherian case.
\end{rem}

This construction can be extended to a general prestack over $R$ in the same way we did for the prismatic cohomology:
\begin{construction}[$p$-completed derived  de Rham cohomology]\label{constr:p-completed derived de Rham}
	Let $R$ be a $p$-complete ring with bounded $p^\infty$-torsion. Define the functor
	$$R\Gamma_{L\dR^\wedge_p}(-/R) \colon \Aff_R^\op \xymatrix{\ar[r] &} {\DMod{R}}_{\widehat{p}}$$
	of \emdef{$p$-completed derived de Rham cohomology} as a left Kan extension of the functor $P\mapsto \widehat \Omega_{P/R, \dR}$ along the embedding $\Poly^\fg_R \inj \CAlg_R$. The functor
	$$R\Gamma_{L\dR^\wedge_p}(-/R) \colon \PStk_R^\op \xymatrix{\ar[r] &} {\DMod{R}}_{\widehat{p}}$$
	is then defined as a right Kan extension along the Yoneda's embedding $\Aff^\op_R \inj \PStk_R^\op$.
\end{construction}
%\begin{rem}
%Similarly to our definition of the Raynaud generic fiber of $\mstack X$ in \Cref{adic_local_systems} one can consider the formal stack $\widehat{\mstack X}$ given by the formal $p$-adic completion of $\mstack X$. Then one can show that $R\Gamma_{\widehat\dR}(\mstack X)\simeq R\Gamma_\dR(\widehat{\mstack X}/\Spf R)$ where the latter is the right Kan extension of the de Rham cohomology from affine smooth formal $R$-schemes. 
%\end{rem}
\begin{rem}
	Note that the functor $R\Gamma_{L\dR^\wedge_p}(-/R)\otimes_{\mbb Z}\mbb F_p\colon \Aff_R^\op \ra {\DMod{R}}_{\widehat{p}}$ is identified with the relative derived de Rham cohomology $R\Gamma_{L\dR}(-\otimes_{\mbb Z}\mbb F_p /R\otimes_{\mbb Z}\mbb F_p )$, where the simplicial algebra $R\otimes_{\mbb Z}\mbb F_p$ is not necessarily discrete.
\end{rem}

The next proposition shows that this construction satisfies \'etale hyperdescent. The key observation of the proof goes back to the work of Bhatt (\cite{Bhatt_pAdicdR}, see also \cite[Example 5.12]{BMS2}).
\begin{lem}
	The functor of $p$-complete derived de Rham cohomology $R\Gamma_{L \dR^\wedge_p}(-/R)\colon \PStk_R^\op \to {\DMod{R}}_{\widehat{p}}$ factors through the \'etale hypersheafification functor $L_\et\colon \PStk_R \to \Stk_{R,\et}$. In particular it satisfies \'etale hyperdescent.
	
	\begin{proof}
		It is enough to prove that $R\Gamma_{L\dR^\wedge_p}(-/R) \colon \Aff_R^\op \xymatrix{\ar[r] &} {\DMod{R}}_{\widehat{p}}$ satisfies \'etale hyperdescent. Let $ Q^{-1} \to Q^\bullet$ be an \'etale hypercover in $\Aff_R^\op\simeq \Alg_R$ and let
		$$\alpha \colon R\Gamma_{L\dR^\wedge_p}({\Spec Q^{-1}}/R) \xymatrix{\ar[r] &} \Tot R\Gamma_{L\dR^\wedge_p}(\Spec Q^\bullet/R)$$
		be the natural map. By $p$-completeness it is enough to prove that $\alpha \otimes_{\mathbb Z} \mathbb F_p$ is an equivalence, thus it is enough to show the same statement for $R\Gamma_{L\dR}$, and $R$ and $Q$ are simplicial algebras of characteristic $p$. In this case note that by the derived (\cite[Proposition 3.5]{Bhatt_pAdicdR} and \Cref{constr: conjugate filtration in the relative case}) Cartier isomorphism $R\Gamma_{L\dR}(Q^i/R)$ is an algebra over the Frobenius twist $Q^{i(1)}\coloneqq Q^i\otimes_{R,\phi_R}R$. Moreover, the natural base change map
		$$R\Gamma_{L\dR}({\Spec Q^{-1}}/R)\otimes_{Q^{-1(1)}} Q^{i(1)} \xymatrix{\ar[r] &} R\Gamma_{L\dR}(\Spec Q^i/R)$$
		is an equivalence for any $i$: indeed, by the derived Cartier isomorphism  both sides admit an increasing filtration with the associated graded pieces $\wedge^j \mathbb L_{Q^{i(1)}/R}$ and the map $Q^{-1}\ra Q^{i}$ (thus also $Q^{-1(1)}\ra Q^{i(1)}$) is \'etale. The result follows by flat descent.
	\end{proof}
\end{lem}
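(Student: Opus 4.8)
The plan is to reduce the assertion to \'etale hyperdescent for the ordinary (uncompleted) derived de Rham cohomology over an $\mathbb F_p$-base, where the derived Cartier isomorphism is available. The passage from affine schemes to all prestacks is formal: $\RG_{L\dR^\wedge_p}(-/R)$ on $\PStk_R^\op$ is by construction the right Kan extension of its restriction to $\Aff_R^\op$ along the Yoneda embedding, so (by the same reasoning that gives \'etale hyperdescent for $\RG_\Prism$ in \Cref{rem:on_Gamma_prism}) it factors through $L_\et$ as soon as the restriction $\RG_{L\dR^\wedge_p}(-/R)\colon \Aff_R^\op\to {\DMod{R}}_{\widehat p}$ is an \'etale hypersheaf. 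Thus it suffices to fix an \'etale hypercover $Q^{-1}\to Q^\bullet$ in $\CAlg_R\simeq \Aff_R^\op$ and show that $\alpha\colon \RG_{L\dR^\wedge_p}(\Spec Q^{-1}/R)\to\Tot\RG_{L\dR^\wedge_p}(\Spec Q^\bullet/R)$ is an equivalence.

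Next I would pass to characteristic $p$. The source and target of $\alpha$ both lie in ${\DMod{R}}_{\widehat p}$ (the target because derived $p$-complete objects are closed under limits, hence under $\Tot$), so by derived Nakayama it is enough that $\alpha\otimes_{\mathbb Z}\mathbb F_p$ be an equivalence. Now $\RG_{L\dR^\wedge_p}(-/R)\otimes_{\mathbb Z}\mathbb F_p$ is identified with the relative derived de Rham cohomology $\RG_{L\dR}(-\otimes_{\mathbb Z}\mathbb F_p\,/\,R\otimes_{\mathbb Z}\mathbb F_p)$ (which follows from base change for derived de Rham cohomology, \Cref{yet_another_base_change}); writing $\bar R\coloneqq R\otimes_{\mathbb Z}\mathbb F_p$ and $\bar Q^\bullet\coloneqq Q^\bullet\otimes_{\mathbb Z}\mathbb F_p$, a simplicial $\mathbb F_p$-algebra carrying an \'etale hypercover $\bar Q^{-1}\to\bar Q^\bullet$, the task reduces to showing $\RG_{L\dR}(\bar Q^{-1}/\bar R)\areq\Tot\RG_{L\dR}(\bar Q^\bullet/\bar R)$.

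For this I would run the conjugate-filtration argument of \Cref{constr: conjugate filtration in the relative case}. That filtration is an exhaustive, bounded below increasing filtration on $\RG_{L\dR}(\bar Q^i/\bar R)$ whose associated graded is $\bigoplus_{j\ge 0}\wedge^j_{\bar R}\mbb L_{\bar Q^{i,(1)}/\bar R}[-j]$, where $\bar Q^{i,(1)}\coloneqq \bar Q^i\otimes_{\bar R,\phi_{\bar R}}\bar R$ is the Frobenius twist relative to $\bar R$; in particular $\RG_{L\dR}(\bar Q^i/\bar R)$ is naturally a $\bar Q^{i,(1)}$-algebra. Since $\bar Q^{-1}\to\bar Q^i$ is \'etale, so is $\bar Q^{-1,(1)}\to\bar Q^{i,(1)}$, whence $\mbb L_{\bar Q^{i,(1)}/\bar Q^{-1,(1)}}\simeq 0$ and the base-change maps $\wedge^j_{\bar R}\mbb L_{\bar Q^{-1,(1)}/\bar R}\otimes_{\bar Q^{-1,(1)}}\bar Q^{i,(1)}\areq\wedge^j_{\bar R}\mbb L_{\bar Q^{i,(1)}/\bar R}$ are equivalences for all $j$. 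Comparing the conjugate filtrations on $\RG_{L\dR}(\bar Q^{-1}/\bar R)\otimes_{\bar Q^{-1,(1)}}\bar Q^{i,(1)}$ and on $\RG_{L\dR}(\bar Q^i/\bar R)$ graded piece by graded piece, and using that an exhaustive, bounded below filtration is detected by its associated graded, gives a functorial-in-$i$ equivalence $\RG_{L\dR}(\bar Q^{-1}/\bar R)\otimes_{\bar Q^{-1,(1)}}\bar Q^{i,(1)}\areq\RG_{L\dR}(\bar Q^i/\bar R)$. Hence the cosimplicial object $\RG_{L\dR}(\bar Q^\bullet/\bar R)$ is $\RG_{L\dR}(\bar Q^{-1}/\bar R)$ base-changed from $\bar Q^{-1,(1)}$ along the cosimplicial ring $\bar Q^{\bullet,(1)}$; since $\bar Q^{-1,(1)}\to\bar Q^{\bullet,(1)}$ is again an \'etale (hence fpqc) hypercover, flat descent for quasi-coherent modules identifies $\RG_{L\dR}(\bar Q^{-1}/\bar R)$ with the totalization, which is what we want.

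The step I expect to be the main obstacle is the third one, carried out over a non-discrete base: one needs the derived Cartier isomorphism, and with it the whole conjugate filtration together with the identification of its associated graded with $\bigoplus_j\wedge^j_{\bar R}\mbb L_{\bar Q^{(1)_{\bar R}}/\bar R}[-j]$, relative to a simplicial $\mathbb F_p$-algebra $\bar R$; this is precisely what \Cref{constr: conjugate filtration in the relative case} supplies, together with base change for the cotangent complex along an \'etale map of simplicial rings. Once those ingredients are granted, the rest is a formal consequence of flat descent for modules.
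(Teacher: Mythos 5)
Your proposal is correct and follows essentially the same route as the paper: right Kan extension reduces to the affine case, derived Nakayama and derived $p$-completeness reduce modulo $p$, and then the conjugate filtration from \Cref{constr: conjugate filtration in the relative case} plus \'etale base change for the cotangent complex identifies the terms of the cosimplicial object with base changes of $\RG_{L\dR}(\bar Q^{-1}/\bar R)$ along the \'etale hypercover $\bar Q^{-1,(1)}\to\bar Q^{\bullet,(1)}$, after which flat descent concludes. The only addition over the paper's phrasing is that you spell out why the filtration argument is legitimate (the conjugate filtration is exhaustive and bounded below, and tensoring along a flat map preserves it), which is implicit in the paper.
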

\begin{rem}
	There is also an obvious non $p$-completed version of derived de Rham cohomology. A similar descent result holds for its Hodge-completion, but it seems somewhat hard to control the non-completed version outside of the smooth case.
\end{rem}

In the case of a smooth Artin stack the $p$-completed derived de Rham cohomology is exactly the derived $p$-completion of the non-derived version (\Cref{def: de Rham}):
\begin{prop}\label{prop: derived complete is the completion for smooth Artin stacks}
	Let $\mstack X$ be a smooth Artin stack over $R$. Then there is a natural equivalence
	$$R\Gamma_{L\dR^\wedge_p}({\mstack X}/R)\xymatrix{\ar[r]^\sim &}R\Gamma_\dR(\mstack X/R)_{p}^\wedge.$$
	
	\begin{proof}
		By the universal property of the left Kan extension there is a natural transformation
		\begin{align}\label{eq:DeRhamVsDeRham}
			R\Gamma_{L\dR^\wedge_p}(-/R) \xymatrix{\ar[r] &} R\Gamma_\dR(-/R)^\wedge_{p},
		\end{align}
		as functors on smooth affine schemes. Since for a smooth Artin stack both sides are given by right Kan extension from smooth affine schemes (for the left hand side this follows by an argument analogous to \Cref{lem: enough to take smooth guys}, while for the right hand side it follows by definition of de Rham cohomology and the fact that completion commutes with limits), it is enough to show that this transformation is an equivalence when computed on any smooth affine $R$-scheme $\Spec A$.
		
		Let now $\tilde R$ be a $\mathbb Z_p$-flat derived $p$-adically complete (discrete) algebra with a surjective map $\tilde R \surj R$. To construct such $\tilde R$ one can take any surjection $\tilde R^\prime \surj R$ from a $p$-torsion free algebra $\tilde R^\prime$ and define $\tilde R$ to be the derived $p$-adic completion of $\tilde R^\prime$. Note that since $\tilde R^\prime$ is $p$-torsion free its derived $p$-adic completion is equivalent to the usual one. To see that the induced map $\tilde R \to R$ is surjective note that the obstruction to the surjectivity is $H^1(I_p^{\wedge})$, where $I := \ker(\tilde R^\prime \surj R)$. But $I$, being a submodule of $\tilde R^\prime$, is $p$-torsion free itself, thus $H^1(I_p^{\wedge}) \simeq 0$.
		
		Then, by \cite[Tag 07M8]{StacksProject} there exists a smooth $\tilde R$-algebra $\tilde A$ lifting $R \to A$. By the base change for the usual de Rham cohomology of smooth algebras and by \Cref{yet_another_base_change} we have
		$$R\Gamma_\dR(\tilde A/\tilde R)_p^\wedge \widehat \otimes_{\tilde R} R \simeq R\Gamma_\dR(A/R)_p^\wedge \qquad\quad R\Gamma_{L\dR_p^\wedge}(\tilde A/\tilde R)\widehat\otimes_{\tilde R} R \simeq R\Gamma_{L\dR_p^\wedge}(A/R),$$
		thus, by replacing $R$ by $\tilde R$, we can assume that $R$ is flat over $\mathbb Z_p$ and $R\otimes_{\mbb Z_p} \mbb F_p$ is a discrete algebra.
		
		Since both sides of \eqref{eq:DeRhamVsDeRham} are derived $p$-adically complete, it is enough to prove the equivalence modulo $p$. By base change applied to both sides, it is enough to show that if $A$ is a smooth $R$-algebra and $R$ is of characteristic $p$, then the natural map
		$$
		\RG_{L\dR}(\Spec A/R)  \xymatrix{\ar[r] &}  \Omega_{A/R,\dR}^\bullet
		$$ is an equivalence. Both sides admit the conjugate filtration, which is exhaustive since $A$ is smooth; the morphism above induces an equivalence on associated graded pieces, hence is an equivalence itself.
	\end{proof}
\end{prop}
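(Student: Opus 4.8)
The plan is to reduce the assertion, in several base-change steps, to the case of a smooth algebra over a base of characteristic $p$, where it becomes a formal consequence of the derived Cartier isomorphism. First I would produce the comparison map: on smooth affine $R$-schemes the restriction of $R\Gamma_\dR(-/R)^\wedge_p$ is literally the $p$-completed de Rham complex $\widehat\Omega^\bullet_{-/R,\dR}$, so the universal property of the left Kan extension defining $R\Gamma_{L\dR^\wedge_p}(-/R)$ furnishes a natural transformation $R\Gamma_{L\dR^\wedge_p}(-/R)\to R\Gamma_\dR(-/R)^\wedge_p$ of functors on $\Aff^{\sm}_R$. Next I would argue that for a smooth Artin stack $\mstack X$ both sides are computed by right Kan extension from smooth affine schemes: for the left-hand side this is the argument of \Cref{lem: enough to take smooth guys} applied to the smooth (indeed \'etale) hyperdescent for $p$-completed derived de Rham cohomology, and for the right-hand side it is the definition of $R\Gamma_\dR(\mstack X/R)$ as a limit over $\Aff^{\sm,\op}_{/\mstack X}$ together with the fact that derived $p$-completion commutes with limits. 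This reduces the proposition to showing the transformation is an equivalence on every smooth affine $R$-scheme $\Spec A$.

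The second step is to arrange that $R$ is $\mathbb Z_p$-flat (with $R/p$ discrete). I would choose a $p$-torsion-free algebra surjecting onto $R$, take its derived $p$-completion $\tilde R$, and note that surjectivity of $\tilde R\surj R$ survives because the kernel is $p$-torsion-free, hence its derived $p$-completion has vanishing $H^1$; moreover $\tilde R$ is $\mathbb Z_p$-flat and classically $p$-complete. By formal smoothness, \cite[Tag 07M8]{StacksProject}, lift $A$ to a smooth $\tilde R$-algebra $\tilde A$. Then base change for ordinary de Rham cohomology of smooth algebras, together with \Cref{yet_another_base_change}, gives $R\Gamma_\dR(\tilde A/\tilde R)^\wedge_p\,\widehat\otimes_{\tilde R}R\simeq R\Gamma_\dR(A/R)^\wedge_p$ and the analogous statement for $R\Gamma_{L\dR^\wedge_p}$, so it suffices to work over $\tilde R$.

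Finally, since both sides are derived $p$-complete it is enough to check the map modulo $p$; a further application of base change (to both $R\Gamma_{L\dR}$ and $\Omega^\bullet_{-/R,\dR}$) reduces to the statement that for a simplicial ring $R$ of characteristic $p$ and a smooth $R$-algebra $A$ the natural map $R\Gamma_{L\dR}(\Spec A/R)\to\Omega^\bullet_{A/R,\dR}$ is an equivalence. Both sides carry the conjugate filtration, which is exhaustive by smoothness of $A$, and the derived Cartier isomorphism (\cite[Proposition 3.5]{Bhatt_pAdicdR}, cf. \Cref{constr: conjugate filtration in the relative case}) identifies the associated graded of the left side with $\bigoplus_j\wedge^j_R\mathbb L_{A^{(1)}/R}[-j]\simeq\bigoplus_j\Omega^j_{A^{(1)}/R}[-j]$, which is exactly the classical Cartier description of $\gr^{\mr{cnj}}$ of $\Omega^\bullet_{A/R,\dR}$; so the comparison is a filtered equivalence.

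I expect the main obstacle to be organizational rather than conceptual: carefully establishing that both functors on stacks are right Kan extended from smooth affines (which requires first recording the smooth hyperdescent for $p$-completed derived de Rham cohomology and then replaying \Cref{lem: enough to take smooth guys}), and threading the $\mathbb Z_p$-flat lifting together with the two separate applications of base change so that no circularity creeps in. The characteristic-$p$ smooth case is a standard consequence of the conjugate filtration and derived Cartier, so that is not where the real effort lies.
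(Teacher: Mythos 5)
Your proposal follows the paper's proof essentially step for step: the same Kan-extension reduction to smooth affines, the same $\mathbb{Z}_p$-flat replacement $\tilde R$ of $R$ (with the identical surjectivity argument via the $p$-torsion-free kernel), the same lifting by \cite[Tag 07M8]{StacksProject}, the same two base-change applications, and the same final conjugate-filtration/Cartier argument modulo $p$. The only cosmetic difference is that you state the final step for a simplicial characteristic-$p$ base, whereas the $\tilde R$-replacement already guarantees the base is discrete, so the classical Cartier isomorphism suffices there.
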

%\noindent Note that by \Cref{def: de Rham} we have
%$$
%{R\Gamma}_{\widehat\dR}(\mstack X)\simeq {\prolim}_n\left(\lim_{Y\in \Aff^{\sm, \op}_{/\mstack X}} R\Gamma_\dR(Y)\right)/p^n\simeq \lim_{Y\in \Aff^{\sm, \op}_{/\mstack X}} \left({\prolim}_n R\Gamma_\dR(Y)/p^n\right)\simeq \lim_{Y\in \Aff^{\sm, \op}_{/\mstack X}} {R\Gamma}_{\widehat\dR}(Y),
%$$
%so ${R\Gamma}_{\widehat\dR}(\mstack X)$ is the right Kan extension of $p$-completed de Rham cohomology from smooth affine schemes.
If we further assume that $\mstack X$ is Hodge-proper there is no need to complete the right hand side:
\begin{cor}\label{cor: de Rham coh of Hodge-proper are complete}
	Assume $R$ is Noetherian and let $\mstack X$ be a smooth Hodge-proper stack over $R$. Then there is a natural equivalence
	$$
	\RG_{L\dR^\wedge_p}(\mstack X/R)\xymatrix{\ar[r]^\sim &} \RG_{\dR}(\mstack X/R).
	$$
	
\begin{proof}
	By \Cref{prop: derived complete is the completion for smooth Artin stacks} it is enough to show $\RG_{\dR}(\mstack X/R)$ is derived $p$-complete. This follows from \Cref{Hodge_filtration_for_stacks} and \Cref{lem: bounded below coherent are complete}.
\end{proof}
\end{cor}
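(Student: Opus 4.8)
The plan is to reduce the statement to the assertion that, for a smooth Hodge-proper stack, the de Rham cohomology is \emph{already} derived $p$-complete, so that the $p$-completion built into $\RG_{L\dR^\wedge_p}$ does nothing. Concretely, \Cref{prop: derived complete is the completion for smooth Artin stacks} provides, for every smooth Artin stack $\mstack X$ over $R$, a natural equivalence $\RG_{L\dR^\wedge_p}(\mstack X/R)\simeq \RG_{\dR}(\mstack X/R)^\wedge_p$. Hence it suffices to show that the canonical $p$-completion map $\RG_{\dR}(\mstack X/R)\to \RG_{\dR}(\mstack X/R)^\wedge_p$ is an equivalence, i.e. that $\RG_{\dR}(\mstack X/R)$ is derived $p$-complete; the resulting equivalence is then automatically natural in $\mstack X$, since both the equivalence of \Cref{prop: derived complete is the completion for smooth Artin stacks} and the completion map are.

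First I would invoke the Hodge filtration to control $\RG_{\dR}(\mstack X/R)$. Since $R$ is Noetherian and $\mstack X$ is smooth and Hodge-proper over $R$, \Cref{Hodge_filtration_for_stacks}(2) gives that $\RG_{\dR}(\mstack X/R)$ lies in $\Coh^+(R)$: it is bounded below and all its cohomology groups $H^n_{\dR}(\mstack X/R)$ are finitely generated $R$-modules. (Morally this is because $\gr F^\bullet\RG_{\dR}(\mstack X/R)\simeq \RG_\Hdg(\mstack X/R)=\bigoplus_i\RG(\mstack X,\wedge^i\mbb L_{\mstack X/R})[-i]$ is bounded below coherent by the definition of Hodge-properness together with \Cref{nperf_basics}, and completeness of the Hodge filtration transfers this to $\RG_{\dR}(\mstack X/R)$ itself; this is exactly the content of \Cref{Hodge_filtration_for_stacks}.) Second, I would recall the standing hypotheses of this subsection: $R$ is classically $p$-complete with bounded $p^\infty$-torsion, and being additionally Noetherian here, $R$ is in particular derived $p$-complete. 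Then \Cref{lem: bounded below coherent are complete}, applied with the finitely generated ideal $I=(p)$, says that every object of $\Coh^+(R)$ is derived $p$-complete. Applying this to $\RG_{\dR}(\mstack X/R)\in\Coh^+(R)$ from the previous step gives that $\RG_{\dR}(\mstack X/R)$ is derived $p$-complete, which closes the argument.

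Since all the substance has been packaged into the quoted results, there is no genuine obstacle in the argument itself. The only point needing a little care is the bookkeeping of hypotheses on $R$: one must check that "classically $p$-complete, bounded $p^\infty$-torsion, Noetherian" places us in the setting of \Cref{lem: bounded below coherent are complete} for $I=(p)$, so that finite generation of the $H^i$ forces derived $p$-completeness (via the fact that derived $p$-complete modules form a Serre subcategory, so that $\RG_{\dR}(\mstack X/R)$ has derived-$p$-complete cohomology and is therefore derived $p$-complete). The genuinely nontrivial input, \Cref{prop: derived complete is the completion for smooth Artin stacks}, I would simply cite rather than reprove.
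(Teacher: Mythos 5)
Your argument is exactly the paper's: reduce via \Cref{prop: derived complete is the completion for smooth Artin stacks} to showing $\RG_{\dR}(\mstack X/R)$ is derived $p$-complete, then combine \Cref{Hodge_filtration_for_stacks}(2) (which places $\RG_{\dR}(\mstack X/R)$ in $\Coh^+(R)$ using Hodge-properness) with \Cref{lem: bounded below coherent are complete} applied to $I=(p)$. The extra bookkeeping you do on the hypotheses for $R$ is correct and harmless, but the proof is essentially identical in structure and in the lemmas invoked.
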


\paragraph{The de Rham comparison.} Now we can state the de Rham comparison for the prismatic cohomology. Let $A$ be a bounded prism. Recall that by \cite[Theorem 1.8 (3)]{BS_prisms} for a smooth $p$-adic formal scheme $\mf X$ over $\Spf A/I$ we have the de Rham comparison: $\RG_{\dR}(\mf X/(A/I))\simeq\RG_{\Prism}(\mf X/A)\widehat \otimes_{A,\phi_A}A/I$. 

Since for a smooth (non-complete) affine $A/I$-scheme $\Spec P$ we defined the prismatic cohomology $\RG_{\Prism}(\Spec P/A)$ as the prismatic cohomology $\RG_{\Prism}(\Spf \widehat{P}/A)$ of its $p$-adic completion, the comparison we get is with the de Rham cohomology $\RG_{\dR}(\Spf \widehat{P} /(A/I))\simeq \widehat\Omega_{P/(A/I),\dR}^\bullet$ of the formal completion as well. It extends formally to any prestack:
\begin{prop}[de Rham comparison]\label{prop: de Rham comparison}
	Let $(A,I)$ be a bounded prism. Let $\mstack X$ be a prestack over $A/I$. Then there is natural equivalence
	$$
	\RG_{\Prism\fr}(\mstack X/A)\otimes_{A}A/I\simeq \RG_{L\dR^\wedge_p}({\mstack X}/(A/I)).
	$$
	
	\begin{proof}
		Since $A/I$ is a perfect $A$-module, the completed tensor product functor $-\widehat\otimes_A A/I$ coincides with the usual one and commutes both with limits and colimits. Since both sides are defined as the composition of the left and the right Kan extensions to the same categories starting from $\Aff^{\sm,\op}$, it is in enough to construct the equivalence in the smooth affine case, which is covered by \cite[Theorem 1.8 (3)]{BS_prisms}.
	\end{proof}
\end{prop}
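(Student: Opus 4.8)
The plan is to peel off, via their universal properties, the two layers of Kan extension used to define both sides, reducing the claim to the de Rham comparison for $p$-adic formal affine schemes, i.e.\ to \Cref{main_BMS2}(3) (equivalently \cite[Theorem 1.8 (3)]{BS_prisms}).

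First I would record the two structural facts that make the bookkeeping go through. \textbf{(a)} Since $(A,I)$ is a bounded prism, $A/I$ is a perfect $A$-module, so $-\otimes_A A/I\colon \DMod{A}\to\DMod{A/I}$ commutes with all limits and all colimits; in particular $-\widehat\otimes_A A/I$ agrees with the uncompleted $-\otimes_A A/I$, and — using in addition that $M\otimes_A A/I$ is automatically derived $I$-complete (as $A/I$ is derived $I$-torsion) — the composite ``Frobenius twist, then reduce mod $I$'', $M\mapsto (M\widehat\otimes_{A,\phi_A}A)\otimes_A A/I\simeq M\widehat\otimes_{A,\phi_A}(A/I)$, also commutes with all limits, all colimits, and with the $(p,I)$-completions that occur in the definition of derived prismatic cohomology. \textbf{(b)} On a smooth affine $A/I$-scheme $\Spec P$ one has $\RG_{L\dR^\wedge_p}(\Spec P/(A/I))\simeq\widehat\Omega^\bullet_{P/(A/I),\dR}$: the functors $\RG_{L\dR^\wedge_p}(-/(A/I))$ and $\RG_{L\dR}(-/(A/I))^\wedge_p$ agree on affines (they agree mod $p$ on $\Poly^{\fg}_{A/I}$, hence after left Kan extension, and both land in derived $p$-complete objects), and $\RG_{L\dR}(\Spec P/(A/I))\simeq\Omega^\bullet_{P/(A/I),\dR}$ for $P$ smooth (exhaustive conjugate filtration), whose derived $p$-completion is modelled by $\widehat\Omega^\bullet_{P/(A/I),\dR}$ because the $\Omega^i_{P/(A/I)}$ are finite projective.

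Next I would peel off the outer right Kan extension. Both $\RG_{\Prism\fr}(-/A)\otimes_A A/I$ and $\RG_{L\dR^\wedge_p}(-/(A/I))$ are right Kan extensions along the Yoneda embedding $\Aff^{\op}_{A/I}\hookrightarrow\PStk^{\op}_{A/I}$ of their restrictions to $\Aff_{A/I}$ — for the second this is the definition, and for the first it uses that $-\otimes_A A/I$ commutes with limits by (a) — so it suffices to produce a natural equivalence of the restricted functors on $\Aff_{A/I}$. Then I would peel off the inner left Kan extension: on $\Aff_{A/I}$, $\Prism_{-/A}$ is the left Kan extension along $\Aff^{\sm,\op}_{A/I}\hookrightarrow\Aff^{\op}_{A/I}$ of its restriction to smooth affines, so by (a) $\Prism_{-/A}\fr\otimes_A A/I$ is the left Kan extension from smooth affines of $T\mapsto \Prism_{T/A}\widehat\otimes_{A,\phi_A}(A/I)$; and $\RG_{L\dR^\wedge_p}(-/(A/I))$ is likewise a left Kan extension from smooth affines, by transitivity of left Kan extension starting from $\Poly^{\fg}_{A/I}$ together with the identification in (b) of its value on smooth affines. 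Hence it remains to construct a natural equivalence on the category of smooth affine $A/I$-schemes, which, unwinding $\RG_{\Prism}(\Spec P/A)\coloneqq\RG_{\Prism}(\Spf\widehat P/A)$ and using (b), is exactly the assertion $\RG_{\Prism}(\Spf\widehat P/A)\widehat\otimes_{A,\phi_A}(A/I)\simeq\widehat\Omega^\bullet_{P/(A/I),\dR}$ of \Cref{main_BMS2}(3); its naturality in $P$ is part of that statement and propagates through both Kan extensions.

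I expect the main obstacle to be not any single deep input but the careful verification that the functor ``twist then reduce mod $I$'' interacts correctly with both Kan extensions — i.e.\ that it preserves the limits relevant to the right Kan extension along Yoneda and the colimits and derived $(p,I)$-completions relevant to the left Kan extension from smooth affines — all of which is controlled by perfectness and $I$-completeness of $A/I$ over $A$. The only genuinely non-formal ingredient, the identification (b) of $\RG_{L\dR^\wedge_p}$ on smooth affines with the naive $p$-completed de Rham complex, is already available from the earlier discussion (it is essentially the derived Cartier isomorphism argument entering \Cref{prop: derived complete is the completion for smooth Artin stacks}).
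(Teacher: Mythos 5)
Your proof is correct and follows essentially the same route as the paper: observe that $-\otimes_A A/I$ commutes with both limits and colimits because $A/I$ is perfect over $A$, note that both sides are composites of a left Kan extension from smooth affines and a right Kan extension to prestacks, and reduce to the smooth affine case covered by \cite[Theorem 1.8 (3)]{BS_prisms}. You have simply made explicit the bookkeeping that the paper leaves implicit — in particular the interaction with $(p,I)$-completion via $I$-torsionness of $A/I$, and the transitivity step identifying $\RG_{L\dR^\wedge_p}$ on smooth affines with $\widehat\Omega^\bullet_{-/(A/I),\dR}$.
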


\begin{cor}\label{cor: de Rham comparison for untwisted coh}
	In \Cref{prop: de Rham comparison} assume that $\mstack X$ is a smooth Artin stack over $A/I$ and assume that the module $\phi_{A*}A$ has finite $(p,I)$-complete Tor-amplitude. Then there is a natural equivalence 
	$$
	\RG_{\Prism}(\mstack X/A)\widehat\otimes_{A,\phi_A}A/I\simeq \RG_{L\dR^\wedge_p}({\mstack X}/(A/I)).
	$$
\end{cor}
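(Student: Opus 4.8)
The plan is to combine the twisted de Rham comparison of \Cref{prop: de Rham comparison} with the comparison between the two versions of twisted prismatic cohomology established in \Cref{prop: twisted_cohomology_is_a_twist}. Concretely, \Cref{prop: de Rham comparison} gives a natural equivalence $\RG_{\Prism^{(1)}}(\mstack X/A)\otimes_A A/I\simeq \RG_{L\dR^\wedge_p}(\mstack X/(A/I))$, so it suffices to produce a natural equivalence $\RG_{\Prism^{(1)}}(\mstack X/A)\otimes_A A/I\simeq \RG_\Prism(\mstack X/A)\widehat\otimes_{A,\phi_A} A/I$ compatible with the obvious maps. Here is where the two hypotheses enter. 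Since $\mstack X$ is a smooth \emph{Artin stack} (hence in particular qcqs, as it should be — see the remark below) and $\phi_{A*}A$ has finite $(p,I)$-complete Tor-amplitude, \Cref{prop: twisted_cohomology_is_a_twist} applies and gives a natural equivalence $\RG_{\Prism^{(1)}}(\mstack X/A)\xrightarrow{\sim}\RG_{\Prism}(\mstack X/A)^{(1)}$, where by definition $\RG_{\Prism}(\mstack X/A)^{(1)}=\RG_{\Prism}(\mstack X/A)\widehat\otimes_{A,\phi_A} A$.

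First I would rewrite the target of the twisted comparison. Tensoring the equivalence from \Cref{prop: twisted_cohomology_is_a_twist} with $A/I$ over $A$, one gets
$$
\RG_{\Prism^{(1)}}(\mstack X/A)\otimes_A A/I \;\simeq\; \bigl(\RG_{\Prism}(\mstack X/A)\widehat\otimes_{A,\phi_A} A\bigr)\otimes_A A/I.
$$
Next I would simplify the right-hand side. Since $A/I$ is a perfect $A$-module (as $I$ defines a Cartier divisor, so $I$ and $A/I$ are perfect), the completed tensor product $-\widehat\otimes_{A,\phi_A} A$ followed by $-\otimes_A A/I$ can be analyzed using that $-\otimes_A A/I$ commutes with limits; more directly, one identifies $\bigl(M\widehat\otimes_{A,\phi_A}A\bigr)\otimes_A A/I$ with $M\widehat\otimes_{A,\phi_A}A/I$ using that $\phi_A\colon A\to A$ sends $I$ into $\phi(I)A$ and the base-change/completion bookkeeping of \cite[Section 7.2]{BS_prisms}; alternatively, since $A/I$ is perfect over $A$ the $p$-completion is automatic and $(M\widehat\otimes_{A,\phi_A}A)\otimes_A A/I\simeq M\widehat\otimes_{A,\phi_A}(A\otimes_A A/I)= M\widehat\otimes_{A,\phi_A} A/I$. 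Combining with \Cref{prop: de Rham comparison} yields the stated equivalence $\RG_{\Prism}(\mstack X/A)\widehat\otimes_{A,\phi_A} A/I\simeq \RG_{L\dR^\wedge_p}(\mstack X/(A/I))$, and naturality in $\mstack X$ is inherited from the naturality of both \Cref{prop: de Rham comparison} and \Cref{prop: twisted_cohomology_is_a_twist}.

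The main technical point — and the only place requiring care — is the identification $(M\widehat\otimes_{A,\phi_A}A)\otimes_A A/I\simeq M\widehat\otimes_{A,\phi_A}A/I$ together with checking that the resulting equivalence is genuinely compatible with the canonical comparison maps (so that the corollary is an honest refinement of \Cref{prop: de Rham comparison}, not merely an abstract isomorphism). One should track the $(p,I)$-completions carefully: the finite $(p,I)$-complete Tor-amplitude hypothesis on $\phi_{A*}A$ is precisely what makes $-\widehat\otimes_{A,\phi_A}A$ interact well with the totalizations computing $\RG_\Prism(\mstack X/A)$ from an affine smooth hypercover (\cite[Theorem 4.7]{Pridham_ArtinHypercovers}), reducing everything to the smooth affine case where it is \cite[Theorem 1.8(3)]{BS_prisms}. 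I would also remark, for the record, that although the statement of \Cref{prop: de Rham comparison} allows an arbitrary prestack, here $\mstack X$ is assumed to be a smooth Artin stack precisely so that \Cref{prop: twisted_cohomology_is_a_twist} is applicable; if one drops the qcqs hypothesis one can still argue using a hypercover by disjoint unions of smooth affines when $\phi_{A*}A$ is perfect, exactly as in \Cref{rem: drop qcqs assumption in the perfect case}.
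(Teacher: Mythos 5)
Your proof is correct and is exactly the argument the paper intends; the paper abbreviates it to a one-line citation of \Cref{prop: twisted_cohomology_is_a_twist}, whereas you spell out the reduction $(M\widehat\otimes_{A,\phi_A}A)\otimes_A A/I\simeq M\widehat\otimes_{A,\phi_A}A/I$ explicitly. The only slip is the parenthetical ``hence in particular qcqs'' --- being a smooth Artin stack does not imply qcqs; rather, the corollary should tacitly carry the qcqs hypothesis of \Cref{prop: twisted_cohomology_is_a_twist}, a point you correctly flag in your closing remark.
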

\begin{proof}
	Follows from \Cref{prop: twisted_cohomology_is_a_twist}.
\end{proof}

In the Hodge-proper case this also gives a comparison with the usual de Rham cohomology (\Cref{def: de Rham}):
\begin{cor}\label{cor: twisted prismatic cohomology are bounded below coherent}
Assume that $A/I$ is Noetherian and let $\mstack X$ be a smooth Hodge-proper Artin stack. Then 
$$
\RG_{\Prism\fr}(\mstack X/A)\otimes_{A}A/I\simeq \RG_{\dR}(\mstack X/(A/I)).
$$
As a consequence,
$
\RG_{\Prism\fr}(\mstack X/A)\in \Coh^+(A).
$

\begin{proof}
The first assertion follows from the de Rham comparison and \Cref{cor: de Rham coh of Hodge-proper are complete} which asserts that in the Hodge-proper case $\RG_{L\dR^\wedge_p}({\mstack X}/(A/I))\simeq \RG_{\dR}(\mstack X/(A/I))$. For the second one, note that by \cite[Lemma 10.96.5]{StacksProject} $A$ is also Noetherian, thus $\Coh^+(A)$ makes sense. Then the statement follows from  \Cref{neraly_coherent_module_xi} since $\RG_{\Prism\fr}(\mstack X/A)$ is derived $I$-adically complete and $\RG_{\dR}(\mstack X/(A/I))\in \Coh^+(A/I)$ by Hodge-properness of $\mstack X$.
\end{proof}
\end{cor}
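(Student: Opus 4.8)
The plan is to deduce both assertions formally from the de Rham comparison \Cref{prop: de Rham comparison}, the completeness result \Cref{cor: de Rham coh of Hodge-proper are complete}, and the mod-$I$ criterion for bounded-below coherence \Cref{neraly_coherent_module_xi}. First I would establish the equivalence $\RG_{\Prism\fr}(\mstack X/A)\otimes_A A/I\simeq \RG_{\dR}(\mstack X/(A/I))$. Applying the de Rham comparison \Cref{prop: de Rham comparison} to the prestack $\mstack X$ identifies the left-hand side with the $p$-completed derived de Rham cohomology $\RG_{L\dR^\wedge_p}(\mstack X/(A/I))$. Since $A/I$ is Noetherian and $\mstack X$ is smooth and Hodge-proper over $A/I$, \Cref{cor: de Rham coh of Hodge-proper are complete} shows that the natural map $\RG_{L\dR^\wedge_p}(\mstack X/(A/I))\to\RG_{\dR}(\mstack X/(A/I))$ is an equivalence (equivalently, $\RG_{\dR}(\mstack X/(A/I))$ is already derived $p$-complete). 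Composing the two maps gives the first assertion.

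For the coherence statement I would first check that $A$ is Noetherian: $A$ is $(p,I)$-adically complete along the finitely generated ideal $(p,I)$, and $A/(p,I)\simeq (A/I)/p$ is Noetherian as a quotient of the Noetherian ring $A/I$, so $A$ is Noetherian by \cite[Lemma 10.96.5]{StacksProject} and $\Coh^+(A)$ makes sense. Next, $\RG_{\Prism\fr}(\mstack X/A)$ is a homotopy limit of the objects $\Prism_{P/A}\fr$, each derived $(p,I)$-complete by construction; since derived $(p,I)$-completeness is preserved under limits and implies derived $I$-completeness, $\RG_{\Prism\fr}(\mstack X/A)$ is derived $I$-complete. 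As $I$ defines a Cartier divisor in $\Spec A$ and $A$ is derived $I$-complete, \Cref{neraly_coherent_module_xi} reduces the claim $\RG_{\Prism\fr}(\mstack X/A)\in\Coh^+(A)$ to the claim that $\RG_{\Prism\fr}(\mstack X/A)\otimes_A A/I\in\Coh^+(A/I)$. By the first part this reduction is $\RG_{\dR}(\mstack X/(A/I))$, which lies in $\Coh^+(A/I)$ by the Hodge-properness of $\mstack X$ together with \Cref{Hodge_filtration_for_stacks}(2).

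I do not expect a genuine obstacle here: once the de Rham comparison for prestacks and \Cref{neraly_coherent_module_xi} are available, the corollary is essentially bookkeeping. The only points deserving care are verifying that $\RG_{\Prism\fr}(\mstack X/A)$ is genuinely derived $I$-complete --- so that \Cref{neraly_coherent_module_xi} applies --- and that $A$ is Noetherian; both are handled by the standard facts recalled above.
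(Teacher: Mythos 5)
Your proof is correct and follows essentially the same path as the paper's: the de Rham comparison (\Cref{prop: de Rham comparison}) together with \Cref{cor: de Rham coh of Hodge-proper are complete} gives the first assertion, and then Noetherianness of $A$ via \cite[Lemma 10.96.5]{StacksProject} and the mod-$I$ coherence criterion \Cref{neraly_coherent_module_xi} give the second. Your write-up is a bit more explicit about why $A$ is Noetherian and why the prismatic cohomology is derived $I$-complete, but the argument is the same.
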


\begin{ex}
	Let $\mstack X$ be a smooth Hodge-proper stack over $\mc O_K$ and let $(\mf S,E(u))$ be the Breuil-Kisin prism. The ring $\mf S=\mbb Z_p[[u]]$ is obviously Noetherian and $\phi_{\mf S*}$ is free over $\mf S$, thus both \Cref{cor: de Rham comparison for untwisted coh} and \Cref{cor: twisted prismatic cohomology are bounded below coherent} apply. In particular we get that each $H^i_{\Prism\fr}(\mstack X/\mf S)$ is a finitely generated $\mbb Z_p[[u]]$-module and that it has a "$\phi_{\mf S}$-untwist" given by $H^i_{\Prism}(\mstack X/\mf S)$. This gives a certain extra information about the torsion in $H^i_{\dR}(\mstack X/\mc O_K)$. In particular, as in \cite[Remark 1.4]{BMS2}, for $K=\mbb Q_p(\zeta_p)$ we get that $\RG_\dR(\mstack X/\mc O_K)$ comes as a base-change from a $\mbb Z_p$-module and thus the length of $H^i_{\dR}(\mstack X/\mc O_K)_{\mr{tors}}$ in this case is always divisible by $p$.
\end{ex}

\subsection{Hodge-Tate filtration}\label{sec:Hodge-Tate filtration}
\begin{construction}[Hodge-Tate filtration]
	Let $(A,I)$ be a bounded prism and let $S$ be a smooth affine scheme over $A/I$.  Recall that the Hodge-Tate comparison (\Cref{main_BMS2}(4)) asserts that there is a functorial isomorphism
	$$H^i(R\Gamma_\Prism(S/A)\otimes_A A/I) \simeq (\Omega_{S/(A/I)}^i)^\wedge_p\{-i\},$$
	where $M\{-i\}\coloneqq M\otimes_{A/I}((I/I^2)^\vee)^{\otimes n}$. The $A/I$-module $(I/I^2)^\vee $ is perfect, and since $A/I$ is itself a perfect $A$-module, the tensor product $-\otimes_{A}((I/I^2)^\vee)^{\otimes i}$ commutes with all limits and colimits. 
	It follows that the Postnikov filtration on $R\Gamma_\Prism(S/A)\otimes_A A/I$ induces an increasing filtration $\Fil_{\HT}$ on $R\Gamma_\Prism(\mstack X/A)\otimes_R A/I$ for any prestack $\mstack X$, which we will call the \emdef{Hodge-Tate filtration}. Note that since the $p$-adic completion commutes with limits and also by smooth descent for cotangent complex (\Cref{flat_descent_for_cotangent_compl}), in the case $\mstack X$ is a smooth quasi-compact quasi-separated stack over $A/I$, the associated graded pieces of the Hodge-Tate filtration on $R\Gamma_\Prism(\mstack X/A)\otimes_A A/I$ are given by $\RG(\mstack X,\wedge^i \mathbb L_{\mstack X/(A/I)})^\wedge_p\{-i\}[-i]$.  
\end{construction}
\begin{notation}
	Let $R\Gamma_{\Prism/I}(\mstack X/A)\coloneqq R\Gamma_{\Prism}(\mstack X/A)\otimes_A A/I$. We call $R\Gamma_{\Prism/I}(\mstack X/A)\in \DMod{A/I}$ the \textit{Hodge-Tate cohomology} of $\mstack X$.
\end{notation}
\begin{prop}\label{prop: HT_filtration_is_exaustive}
	Let $\mstack X$ be a smooth Artin stack over $A/I$. Then the Hodge-Tate filtration on $R\Gamma_{\Prism/I}(\mstack X/A)$ is exhaustive.
	
	\begin{proof}
		Let $S$ be a smooth affine $A/I$-scheme. Then by definition the cofiber $(R\Gamma_{\Prism/I}(S/A))/\Fil_{\HT}^{\le i}$ is $i$-coconnective. Since $i$-coconnective objects are closed under limits, the cofiber of the map
		$$\Fil^{\le i}_\HT R\Gamma_{\Prism/I}(\mstack X/A) = \lim_{S\in \Aff^\sm_{/\mstack X}} \Fil^{\le i}_\HT R\Gamma_{\Prism/I}(S/A) \xymatrix{\ar[r] &} \lim_{S\in \Aff^\sm_{/\mstack X}} R\Gamma_{\Prism/I}(S/A) \simeq R\Gamma_{\Prism/I}(\mstack X/A)$$
		is $i$-coconnective. It follows that the map 
		$$
		\colim_i \left(\Fil^{\le i}_\HT R\Gamma_{\Prism/I}(\mstack X/A)\right) \xymatrix{\ar[r]&} R\Gamma_{\Prism/I}(\mstack X/A)
		$$
		is $\infty$-coconnective, hence is an equivalence.
	\end{proof}
\end{prop}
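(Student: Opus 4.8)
The plan is to reduce to the case of a smooth affine scheme, where the Hodge-Tate filtration is nothing but (a twist of) the Postnikov filtration, and then to propagate the resulting coconnectivity bounds through the limit defining $R\Gamma_{\Prism/I}(\mstack X/A)$ for a general smooth Artin stack.

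First I would unwind the construction on affines: for a smooth affine $A/I$-scheme $S=\Spec P$ one has $R\Gamma_{\Prism/I}(S/A)\simeq R\Gamma_\Prism(\Spf\widehat P/A)\otimes_A A/I$, and by the Hodge-Tate comparison (\Cref{main_BMS2}(4)) its cohomology modules are the twists $(\Omega^i_{S/(A/I)})^\wedge_p\{-i\}$. Since twisting by the invertible perfect $A/I$-module $((I/I^2)^\vee)^{\otimes i}$ is $t$-exact, the filtration $\Fil^{\le i}_\HT$ on $R\Gamma_{\Prism/I}(S/A)$ is precisely the twist of the truncation $\tau^{\le i}$; consequently $\cofib\bigl(\Fil^{\le i}_\HT R\Gamma_{\Prism/I}(S/A)\to R\Gamma_{\Prism/I}(S/A)\bigr)$ is $i$-coconnective. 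This is the only input specific to the schematic theory.

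Next, for a smooth Artin stack $\mstack X$ the complex $R\Gamma_{\Prism/I}(\mstack X/A)$ is, by construction, obtained by right Kan extension from smooth affines, and since $A/I$ is a perfect $A$-module the functor $-\otimes_A A/I$ commutes with limits; so (using \Cref{lem: enough to take smooth guys}) $R\Gamma_{\Prism/I}(\mstack X/A)\simeq \lim_{S\in \Aff^\sm_{/\mstack X}} R\Gamma_{\Prism/I}(S/A)$, and the Hodge-Tate filtration is computed term by term as $\Fil^{\le i}_\HT R\Gamma_{\Prism/I}(\mstack X/A)\simeq \lim_{S} \Fil^{\le i}_\HT R\Gamma_{\Prism/I}(S/A)$. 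Because the subcategory of $i$-coconnective complexes in $\DMod{A/I}$ is closed under limits, the cofiber of $\Fil^{\le i}_\HT R\Gamma_{\Prism/I}(\mstack X/A)\to R\Gamma_{\Prism/I}(\mstack X/A)$ — a limit of $i$-coconnective affine cofibers — is again $i$-coconnective. Finally, passing to the colimit over $i$, the cofiber of $\colim_i \Fil^{\le i}_\HT R\Gamma_{\Prism/I}(\mstack X/A)\to R\Gamma_{\Prism/I}(\mstack X/A)$ is a filtered colimit of complexes that are $i$-coconnective with $i\to\infty$, hence $\infty$-coconnective and therefore zero; so the map is an equivalence, i.e. the filtration is exhaustive.

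The step I expect to require the most care is the compatibility of the Hodge-Tate filtration with the defining limit, i.e. checking that $\Fil^{\le i}_\HT R\Gamma_{\Prism/I}(\mstack X/A)$ really is $\lim_S \Fil^{\le i}_\HT R\Gamma_{\Prism/I}(S/A)$ (which follows from the way the filtration is set up together with exactness of $-\otimes_A A/I$), combined with the stability of $\DMod{A/I}^{\ge i}$ under limits; once these are in place, the remaining truncation-and-colimit manipulation is formal.
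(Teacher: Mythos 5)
Your proposal is correct and follows essentially the same approach as the paper: both reduce to the $i$-coconnectivity of the affine-level cofibers (where the Hodge-Tate filtration is a twisted Postnikov filtration), propagate this bound through the defining limit over smooth affines, and conclude by taking the colimit over $i$. You spell out more explicitly why the filtration commutes with the limit and why the affine cofibers are $i$-coconnective, but the argument is the same.
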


\begin{cor}\label{cor: prismatic cohomology are bounded below coherent}
	Assume that $A/I$ is Noetherian and let $\mstack X$ be a Hodge-proper stack over $A/I$. Then 
	$$
	\gr^i_{\HT}(R\Gamma_{\Prism/I}(\mstack X/A))\simeq \RG(\mstack X,\wedge^i \mathbb L_{\mstack X/(A/I)})\{-i\}[-i].
	$$
	Consequently,
	$
	\RG_\Prism(\mstack X/A)\in \Coh^+(A).
	$ 
\end{cor}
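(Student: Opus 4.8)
The plan is to establish both assertions together by analysing the Hodge--Tate filtration on $R\Gamma_{\Prism/I}(\mstack X/A)$ and then invoking the $\bmod I$ coherence criterion of \Cref{neraly_coherent_module_xi}. This runs exactly parallel to \Cref{cor: twisted prismatic cohomology are bounded below coherent}, with the Hodge--Tate comparison playing the role that the de Rham comparison plays there; the difference is only in how one checks that the mod $I$ fiber is bounded below coherent.

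For the first (displayed) assertion, I would start from the description of the associated graded of the Hodge--Tate filtration recalled in \Cref{sec:Hodge-Tate filtration}: for a smooth quasi-compact quasi-separated stack over $A/I$ one has $\gr^i_{\HT} R\Gamma_{\Prism/I}(\mstack X/A) \simeq \RG(\mstack X,\wedge^i\mathbb L_{\mstack X/(A/I)})^\wedge_p\{-i\}[-i]$. A Hodge-proper stack is by definition smooth, quasi-compact and quasi-separated, so this applies, and it remains only to see that the derived $p$-completion is redundant. That is where Hodge-properness enters: $\RG(\mstack X,\wedge^i\mathbb L_{\mstack X/(A/I)})\in\Coh^+(A/I)$, and $A/I$ is Noetherian (by hypothesis) and classically $p$-adically complete (it is a quotient of the $(p,I)$-complete bounded prism $A$, hence derived $p$-complete with bounded $p^\infty$-torsion), so \Cref{lem: bounded below coherent are complete} shows that $\RG(\mstack X,\wedge^i\mathbb L_{\mstack X/(A/I)})$ is already derived $p$-complete. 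Hence $(-)^\wedge_p$ acts as the identity on it, which is the claim.

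Next I would deduce that $R\Gamma_{\Prism/I}(\mstack X/A)\in\Coh^+(A/I)$. By the first part, each $\gr^i_{\HT} R\Gamma_{\Prism/I}(\mstack X/A)$ is a shift and Breuil--Kisin twist (i.e.\ a tensor with an invertible $A/I$-module) of an object of $\Coh^+(A/I)$, hence again lies in $\Coh^+(A/I)$, and moreover in $\Coh^{\ge i}(A/I)$ since $\wedge^i\mathbb L_{\mstack X/(A/I)}$ is connective for smooth $\mstack X$ and $\RG(\mstack X,-)$ is left $t$-exact (as in \Cref{cor_prism_coconnective_for_smooth}). By \Cref{prop: HT_filtration_is_exaustive} the filtration is exhaustive with $\Fil^{\le-1}_{\HT}=0$, so every $\Fil^{\le i}_{\HT}R\Gamma_{\Prism/I}(\mstack X/A)$ is a finite iterated extension of $\gr^0_{\HT},\dots,\gr^i_{\HT}$ and therefore lies in $\Coh^{\ge0}(A/I)$ by \Cref{nperf_basics}. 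Since $\cofib(\Fil^{\le i}_{\HT}\to\Fil^{\le i+1}_{\HT})=\gr^{i+1}_{\HT}$ is $(i+1)$-connective, for each fixed $m$ the maps $H^m(\Fil^{\le i}_{\HT})\to H^m(\Fil^{\le i+1}_{\HT})$ are isomorphisms for $i\ge m$; as filtered colimits are exact, $H^m\big(R\Gamma_{\Prism/I}(\mstack X/A)\big)\cong H^m\big(\Fil^{\le m}_{\HT}R\Gamma_{\Prism/I}(\mstack X/A)\big)$ is finitely generated over $A/I$ and vanishes for $m<0$. Thus $R\Gamma_{\Prism/I}(\mstack X/A)\in\Coh^+(A/I)$.

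Finally I would conclude by \Cref{neraly_coherent_module_xi}: as in \Cref{cor: twisted prismatic cohomology are bounded below coherent}, $A$ is Noetherian since $A/I$ is Noetherian and $A$ is $I$-adically complete with $I$ finitely generated; $I$ defines a Cartier divisor and $A$ is derived $I$-complete because $(A,I)$ is a prism; and $R\Gamma_\Prism(\mstack X/A)$ is derived $(p,I)$-complete, in particular derived $I$-complete, with $R\Gamma_\Prism(\mstack X/A)/I = R\Gamma_{\Prism/I}(\mstack X/A)\in\Coh^+(A/I)$. Hence $R\Gamma_\Prism(\mstack X/A)\in\Coh^+(A)$. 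I do not foresee a serious obstacle here: every ingredient --- the Hodge--Tate comparison and its filtration, the automatic completeness of bounded-below coherent modules, and the $\bmod I$ criterion --- is already established, and the only point that needs a little care is the colimit-stabilization estimate showing that the exhaustive Hodge--Tate filtration does not take us out of $\Coh^+$.
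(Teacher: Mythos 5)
Your argument is correct and follows the same route as the paper's proof: identify the Hodge--Tate graded pieces with (shifted, twisted) Hodge cohomology, drop the $p$-completion using \Cref{lem: bounded below coherent are complete} via Hodge-properness, deduce $R\Gamma_{\Prism/I}(\mstack X/A)\in\Coh^+(A/I)$ from the exhaustive filtration and the coconnectivity of the cofiber of $\Fil^{\le i}_{\HT}\to R\Gamma_{\Prism/I}$, and finally invoke \Cref{neraly_coherent_module_xi} (after noting $A$ is Noetherian). Your extra bookkeeping about the coconnectivity of the graded pieces and the applicability of the completeness lemma is sound but not a departure in method.
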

\begin{proof}
	By the definition of Hodge-properness $\RG(\mstack X,\wedge^i \mathbb L_{\mstack X/(A/I)})\in \Coh^+(A/I)$ and so, by \Cref{lem: bounded below coherent are complete}, it is derived $p$-complete. This gives the first assertion. It also follows that $\Fil^{\le i}_{\HT}(\RG_\Prism(\mstack X/A)\otimes_A A/I)\in \Coh^+(A/I)$ for any $i$. Since the cofiber of the map $\Fil^{\le i}_\HT R\Gamma_\Prism(\mstack X/A)\otimes_A A/I \ra R\Gamma_\Prism(\mstack X/A)\otimes_A A/I$ is $i$-coconnected, we get that $R\Gamma_\Prism(\mstack X/A)\otimes_A A/I\in \Coh^+(A/I)$. Finally, $A$ is also Noetherian by \cite[Lemma 10.96.5]{StacksProject}, and then $\RG_{\Prism}(\mstack X/A)\in \Coh^+(A)$ by \Cref{neraly_coherent_module_xi}. 
\end{proof}

\subsection{Crystalline cohomology}\label{subsect:crystalline_for_stacks}
Crystalline cohomology for $1$-stacks was previously defined and studied by Olsson in \cite{Olson_CrysCoh}, where he used them (among other things) to prove Fontaine's $C_{\mathrm{st}}$-conjecture. In this section we compare his construction with the right Kan extension of the functor of crystalline cohomology from the category of schemes (the latter is exactly what we are going to obtain in the stacky version of the crystalline comparison). Similar comparison is also discussed in \cite[Section 2]{Mondal}. For the rest of this section fix a base pd-scheme $(S, I, \gamma)$.

Let $\Aff^{\pd}_{/S}$ denote the category consisting of pairs $(U\inj T, \delta)$, where $U\inj T$ is a closed embedding of an affine $S/I$-scheme $U$ into an affine $S$-scheme $T$, and $\delta$ is a pd-structure on the ideal of $U$ in $T$ compatible with the fixed pd-structure on $I$. Note that there is a natural forgetful functor $\Aff^{\pd}_{/S} \to \Aff_{S/I}$ sending $(U\inj T, \delta)$ to $U$.

\begin{defn}[{\cite[Section 1.3]{Olson_CrysCoh}}]\label{def:crys_Olsson}
	
	For a stack $\mstack X$ over $S/I$ we define \emdef{lisse-crystalline site $\mathrm{Crys}(\mstack X/S)$} of $\mstack X$ over $S$ as a fibered product $\Aff^\pd_{/S} \times_{\Aff_{S/I}} \Aff^\sm_{/\mstack X}$. Unwinding the definition, one sees that an object of $\mathrm{Crys}(\mstack X/S)$ is a triple $(U\to \mstack X, U\inj T, \delta)$, where $U\to \mstack X$ is a smooth map and $T$ is a pd-thickening of $U$ over $S$. There is a sheaf of rings $\mathcal O_{\mstack X_\crys}$ on $\mathrm{Crys}(\mstack X/S)$ sending $(U, T, \delta)$ to $H^0(T, \mathcal O_T)$. Define
	$$R\Gamma(\mstack X_\crys, \mathcal O_{\mstack X_\crys}) := \lim\limits_{(U,T, \delta)\in \mathrm{Crys}(\mstack X/S)^\op} \Gamma(T, \mathcal O_T).$$
\end{defn}
\begin{rem}
	Being more accurate, in his definition Olsson allows $U$ and $T$ to be arbitrary (i.e. not necessary affine) schemes. This definition gives the same answer as \Cref{def:crys_Olsson} by Zariski descent.
\end{rem}

On the other hand we could define crystalline cohomology via right Kan extension:
\begin{defn}\label{def:crys}
	Let $\mstack X$ be a smooth Artin stack over $S/I$. The \emdef{crystalline cohomology $R\Gamma_{\crys}(\mstack X /S)$ of $\mstack X$} is defined as
	$$R\Gamma_{\crys}(\mstack X /S) \coloneqq \lim\limits_{U \in (\Aff_{/\mstack X}^\sm)^\op} R\Gamma_{\crys}(U/S),$$
	where the limit is taken over the diagram of affine schemes smooth over $\mstack X$.
\end{defn}

%\begin{rem}
%	One can also define a derived version of the crystalline cohomology by taking $\RG_\crys(\Spec Q/S)$ for any (simplicial) $S/I$-algebra $Q$ to be the $p$-adic completion of the value of the left Kan extension of $R\Gamma_{\crys}(-/S)$ from smooth schemes. Then one can right Kan extend to obtain a definition for any (derived) prestack.
%\end{rem}

And, in the case $\mstack X$ is smooth, Definitions \ref{def:crys_Olsson} and \ref{def:crys} produce the same answer:
\begin{prop}\label{our_crys_versus_olsons}
	Let $\mstack X$ be a smooth Artin stack over $S/I$. There exists a natural equivalence
	$$R\Gamma(\mstack X_{\crys}, \mathcal O_{\mstack X_\crys}) \simeq R\Gamma_\crys(\mstack X/S).$$
	
	\begin{proof}
		Let $\pi\colon \mathrm{Crys}(\mstack X/S) \to \Aff_{/\mstack X}^\sm$ be the natural projection. By general properties of Kan extensions we can compute Olsson's crystalline cohomology $R\Gamma(\mstack X_{\crys}, \mathcal O_{\mstack X_\crys})$ by first right Kan extending the functor $\mathcal O_{\mstack X_\crys}$ along $\pi$ and then by computing the limit of $\Ran_\pi \mathcal O_{\mstack X_\crys}$ over $\Aff_{/\mstack X}^\sm$. For $U \in \Aff_{/\mstack X}^\sm$ we have
		$$(\Ran_\pi \mathcal O_{\mstack X_\crys})(U) \simeq \lim\limits_{(V, T, \delta) \in (\pi/U)^\op} \Gamma(T, \mathcal O_T).$$
		Note that an object in $\pi/U$ is a pair $(V\to U, V\inj T)$, where $V\to U$ is arbitrary morphism such that the composition $V\to U \to \mstack X$ is smooth and $T$ is a pd-thickening of $U$.
		
		There is a full subcategory $\mathrm{Crys}^\prime(U/S)$ in $\pi/U$ consisting of pairs $(V \to U, T)$ where $V\to U$ is an identity morphism. We claim that $\mathrm{Crys}^\prime(U/S)$ is cofinal in $\pi/U$. By Quillen's Theorem A (see e.g. \cite[Theorem 4.1.3.1]{Lur_HTT}) it is enough to prove that for a fixed $P:=(V\to U, T)$ the comma category $P/\mathrm{Crys}^\prime(U/S)$ is non-empty and weakly contractible. Now, $P/\mathrm{Crys}^\prime(U/S)$ consists of pd-thickenings $T^\prime$ of $U$ such that the diagram
		$$\xymatrix{
			V \ar[r]\ar[d] & U \ar[d]\\
			T \ar[r] & T^\prime
		}$$
		is commutative. But by \cite[Lemma 5.11]{BO_NotesOnCrys} the pushout $U\coprod_V T$ admits a pd-structure, so $P/\mathrm{Crys}^\prime(U/S)$ is non-empty. To see that it is contractible, note that $P/\mathrm{Crys}^\prime(U/S)$ has products given by pd-envelope of $U$ diagonally embedded in $T^\prime\times T^{\prime\prime}$.
		
		It follows that
		$$(\Ran_\pi \mathcal O_{\mstack X, \crys})(U) \simeq \lim\limits_{(T,\delta) \in \mathrm{Crys}^\prime(U/W(k))^\op} \Gamma(T, \mathcal O_T) \simeq R\Gamma_\crys(U/S)$$
		and
		$$R\Gamma(\mstack X_{\crys}, \mathcal O_{\mstack X_\crys}) = \lim_{U\in (\Aff_{/\mstack X}^\sm)^\op} (\Ran_\pi \mathcal O_{\mstack X_\crys})(U) \simeq \lim_{U\in (\Aff_{/\mstack X}^\sm)^\op} R\Gamma_\crys(U/S) = R\Gamma_\crys(\mstack X/S),$$
		finishing the proof.
	\end{proof}
\end{prop}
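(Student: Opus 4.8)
The plan is to express both sides as iterated homotopy limits over the same indexing category and to identify them stage by stage. Olsson's complex $R\Gamma(\mstack X_{\crys}, \mathcal O_{\mstack X_\crys})$ is by definition the limit of the structure sheaf over the lisse-crystalline site $\mathrm{Crys}(\mstack X/S) = \Aff^\pd_{/S} \times_{\Aff_{S/I}} \Aff^\sm_{/\mstack X}$, which carries a forgetful projection $\pi\colon \mathrm{Crys}(\mstack X/S) \to \Aff^\sm_{/\mstack X}$. Since a homotopy limit over a category can be computed by first right Kan extending along a functor out of it and then taking the limit of that extension, I would first understand $\Ran_\pi \mathcal O_{\mstack X_\crys}$ and then its limit over $\Aff^\sm_{/\mstack X}$. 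The value $(\Ran_\pi \mathcal O_{\mstack X_\crys})(U)$ at a smooth affine $U \to \mstack X$ is the limit of $(V\to U,\, V \hookrightarrow T,\, \delta) \mapsto \Gamma(T, \mathcal O_T)$ over the comma category $(\pi/U)^\op$, where now $V \to U$ is allowed to be an arbitrary map with $V \to \mstack X$ smooth. The goal is to compare this with the genuine affine crystalline cohomology $R\Gamma_\crys(U/S)$, which is the limit over the smaller subcategory $\mathrm{Crys}'(U/S) \subset \pi/U$ of objects whose $V \to U$ is the identity.

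The heart of the proof is to show that the inclusion $\mathrm{Crys}'(U/S) \hookrightarrow \pi/U$ is initial (equivalently, that $\mathrm{Crys}'(U/S)^\op \hookrightarrow (\pi/U)^\op$ is cofinal), so that the two limits agree. I would invoke Quillen's Theorem A: it suffices to check that for each object $P = (V\to U,\, V \hookrightarrow T,\, \delta)$ of $\pi/U$, the comma category $P/\mathrm{Crys}'(U/S)$ is nonempty and weakly contractible. Nonemptiness amounts to producing a pd-thickening $T'$ of $U$ over $S$ together with a commuting square relating $V \hookrightarrow T$ to $U \hookrightarrow T'$; the natural candidate is the pushout $U \coprod_V T$, which carries a compatible pd-structure by \cite[Lemma 5.11]{BO_NotesOnCrys}. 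Weak contractibility would follow from the stronger assertion that $P/\mathrm{Crys}'(U/S)$ is cofiltered: given two objects $T'$ and $T''$ under $P$, their "product" should be the pd-envelope of $U$ diagonally embedded into $T' \times_S T''$, and one must verify this again defines an object of $P/\mathrm{Crys}'(U/S)$.

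Granting this cofinality, one gets $(\Ran_\pi \mathcal O_{\mstack X_\crys})(U) \simeq \lim_{(T,\delta)\in \mathrm{Crys}'(U/S)^\op} \Gamma(T, \mathcal O_T) \simeq R\Gamma_\crys(U/S)$ functorially in $U$, and taking the limit over $\Aff^\sm_{/\mstack X}$ yields $R\Gamma(\mstack X_{\crys}, \mathcal O_{\mstack X_\crys}) \simeq \lim_U R\Gamma_\crys(U/S) = R\Gamma_\crys(\mstack X/S)$, which is exactly \Cref{def:crys}. I expect the main obstacle to be the careful verification that the comma categories $P/\mathrm{Crys}'(U/S)$ are genuinely cofiltered — in particular, tracking the compatibility of pd-structures under the diagonal pd-envelope and confirming the resulting thickening is a legitimate object of the lisse-crystalline site (affineness; smoothness of $U\to \mstack X$ being automatic since $U$ is held fixed). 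Everything else — the two-step computation of the limit, the reduction to the affine case, and the application of Theorem A — is formal.
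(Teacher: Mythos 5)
Your proposal is correct and follows essentially the same approach as the paper's proof: the same two-step Kan extension along $\pi$, the same identification of $\mathrm{Crys}'(U/S)\subset \pi/U$, the same appeal to Quillen's Theorem A with nonemptiness from the pushout $U\coprod_V T$ (via \cite[Lemma 5.11]{BO_NotesOnCrys}) and contractibility from products via the diagonal pd-envelope. The only stylistic difference is that you explicitly flag the need to verify that the diagonal pd-envelope construction yields a legitimate object of the comma category, a point the paper states more tersely.
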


We can now formulate the crystalline comparison. Recall that if $(A,p)$ is a prism, then $A$ is $p$-torsion free and $(p)\subset A$ has a natural a pd-structure. 

\begin{prop}[Crystalline comparison]\label{prop: cristalline comparison}
Let $(A,p)$ be a prism. Let $\mstack X$ be a smooth Artin stack over $A/p$. Then
$$
\RG_{\Prism\fr}(\mstack X/A)\simeq \RG_\crys(\mstack X/A).
$$
	
\begin{proof}
	This follows from the limit presentations in \Cref{lem: enough to take smooth guys} and \Cref{def:crys}, and the comparison \cite[Theorem 1.8 (1)]{BS_prisms} in the case of a smooth affine scheme.
\end{proof}
\end{prop}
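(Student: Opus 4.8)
The plan is to reduce the statement to the comparison for smooth affine schemes, which is exactly \cite[Theorem 1.8 (1)]{BS_prisms}, using the limit presentations that have been established for both sides. Recall that for a prism $(A,p)$ with $I = (p)$, the twisted prismatic cohomology $\RG_{\Prism^{(1)}}(\mstack X/A)$ was defined as the right Kan extension from $\Aff^{\sm,\op}_{A/p}$ of the functor $\Prism_{-/A}^{(1)} = \Prism_{-/A}\widehat\otimes_{A,\phi_A}A$, and by \Cref{lem: enough to take smooth guys} one has
$$
\RG_{\Prism^{(1)}}(\mstack X/A)\simeq \lim_{(\Spec P\to \mstack X)} \Prism_{P/A}^{(1)},
$$
where the limit runs over affine schemes smooth \emph{over} $\mstack X$. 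On the other hand, \Cref{def:crys} defines $\RG_\crys(\mstack X/A)$ by the same sort of limit, $\RG_\crys(\mstack X/A)\simeq \lim_{(\Spec P\to\mstack X)}\RG_\crys(P/A)$, and by \Cref{our_crys_versus_olsons} this agrees with Olsson's crystalline cohomology (so the two notions of crystalline cohomology for stacks are interchangeable). Therefore it suffices to produce a natural equivalence $\Prism_{P/A}^{(1)}\simeq \RG_\crys(P/A)$ of functors on $\Aff^{\sm}_{A/p}$, and then pass to the limit over the \emph{same} indexing category $(\Aff^{\sm}_{/\mstack X})^{\op}$.

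First I would recall the scheme-level input. For $(A,p)$ a (bounded) prism, $A$ is automatically $p$-torsion free and $(p)$ carries its canonical divided-power structure, so one is in the crystalline situation. For $\Spec P$ smooth affine over $A/p$, the prismatic cohomology $\Prism_{P/A}$ was defined as $\RG_\Prism(\Spf\widehat P/A)$, and \cite[Theorem 1.8 (1)]{BS_prisms} (equivalently \Cref{main_BMS2}(5) with $\mf S$ replaced by $A$ and $W(k)$ by $A$) gives a canonical $\phi$-equivariant equivalence
$$
\phi_A^*\RG_\Prism(\Spf\widehat P/A)\;\simeq\;\RG_\crys(\widehat P_{/(p)}/A)\;\simeq\;\RG_\crys(P/A),
$$
the last identification because crystalline cohomology only depends on the reduction mod $p$, which is unchanged by $p$-completion. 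The left-hand side is by definition $\Prism_{P/A}\widehat\otimes_{A,\phi_A}A = \Prism_{P/A}^{(1)}$; here the completed tensor product agrees with the ordinary one since, as noted in \Cref{ex: Frobeinus is iso Breuil-Kisin} and the surrounding discussion, $\phi_{A*}A$ is a perfect (indeed free, $\cong A$) $A$-module for a prism of this type, so $-\widehat\otimes_{A,\phi_A}A$ commutes with limits. This is what allows the limit over $\mstack X$ to commute with the Frobenius twist on the nose and makes the right Kan extensions on the two sides match termwise.

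The routine part is checking that this equivalence is natural in $\Spec P$ — i.e. it upgrades to an equivalence of functors $\Aff^{\sm,\op}_{A/p}\to\DMod{A}$, not just a pointwise isomorphism — which follows from the functoriality of the comparison map in \cite{BS_prisms} (both sides are, after all, constructed from site-theoretic global sections). Granting that, the right Kan extension functor $\Ran$ along $\Aff^{\sm,\op}_{A/p}\hookrightarrow \Stk^{\sm,\op}_{A/p}$ applied to the two equivalent functors yields the equivalence $\RG_{\Prism^{(1)}}(\mstack X/A)\simeq\RG_\crys(\mstack X/A)$ for any smooth Artin stack $\mstack X$; concretely this is the limit comparison written above, using that $\RG_\crys(U/S)$ for $U$ smooth affine over $\mstack X$ is computed by exactly the subcategory of pd-thickenings that appears in \Cref{our_crys_versus_olsons}. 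I do not expect a genuine obstacle here: the only mild subtlety is the bookkeeping of the Frobenius twist and the passage between $\widehat\otimes$ and $\otimes$, which is handled by the perfectness of $\phi_{A*}A$; everything else is formal manipulation of Kan extensions plus the cited scheme-level theorem.
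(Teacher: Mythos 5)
Your core argument is correct and matches the paper's proof exactly: both sides are right Kan extensions along $\Aff^{\sm,\op}_{A/p}\hookrightarrow\Stk^{\sm,\op}_{A/p}$, and you identify them termwise via the scheme-level crystalline comparison of Bhatt--Scholze. That is the whole content of the proof.

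However, you insert a factual error that you should be aware of: the claim that ``$\phi_{A*}A$ is a perfect (indeed free, $\cong A$) $A$-module for a prism of this type.'' This is false for a general crystalline prism $(A,p)$. The cited \Cref{ex: Frobeinus is iso Breuil-Kisin} only asserts this for the Breuil--Kisin prism and for \emph{perfect} prisms; a crystalline prism $(A,p)$ need not be perfect, and the paper explicitly warns (in the example immediately after \Cref{prop: twisted_cohomology_is_a_twist}) that $\phi_{A_\crys*}A_\crys$ has \emph{infinite} $p$-complete Tor-amplitude. Fortunately the claim is not load-bearing: you invoke it to worry about whether $\widehat\otimes_{A,\phi_A}A$ commutes with the limit over $(\Aff^\sm_{/\mstack X})^\op$, but no such commutation is required. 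The object $\RG_{\Prism\fr}(\mstack X/A)$ is by definition the right Kan extension of the already-twisted functor $\Prism_{-/A}\fr = \Prism_{-/A}\widehat\otimes_{A,\phi_A}A$ (this is exactly why the paper introduces $\RG_{\Prism\fr}$ as a separate object from $\RG_{\Prism}(-)\fr$), so the Frobenius twist is applied termwise \emph{before} taking the limit, and the limit presentation of \Cref{lem: enough to take smooth guys} for the twisted version hands you precisely $\lim \Prism_{P/A}\fr$. Dropping the paragraph about $\widehat\otimes$ versus $\otimes$ entirely leaves a correct, and cleaner, proof — which is what the paper actually does.
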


%\begin{rem}
%The isomorphism  above also (right Kan) extends to the case of any (derived) prestack due to the comparison of the derived prismatic and the derived crystalline cohomology for all (derived) affine schemes \todo{ref}.
%\end{rem}

\begin{ex}
	In \cite{Mondal} it is shown that for a finite group $G$ over a perfect field $k$ of characteristic $p$ the corresponding Dieudonn\'e module $M(G)$ can be expressed as $H^2_{\crys}(BG/k)$ with the natural Frobenius action on it.	
\end{ex}

\section{\'Etale sheaves on geometric stacks}\label{sect:etale_sheaves}
In this section we develop rudiments of \'etale sheaf theory on geometric stacks both in algebraic and rigid-analytic contexts, and show that the usual relations between them remain valid in our more general stacky setting. We refer the interested reader to \cite{YifengWeizhe_SixOperationsFormalism} for a more exhaustive treatment of the formalism of six operations for sheaves on Artin stacks. We don't need the full power of  six operations and in any case the results of loc. cit. are not quite adapted to our situation, so we decided to develop the theory independently from scratch, taking the known results for schemes as an input. Recall that by our conventions (\Cref{subsect: Artin stacks}) we consider stacks with respect to the \'etale topology.

\subsection{\'Etale sheaves}\label{etale_sheaves_on_stacks}
Let $\Lambda$ be a finite torsion ring such that $|\Lambda|$ is invertible in $R$. In this section we develop rudiments of the theory of \'etale sheaves on Artin stacks and show that many familiar results about sheaves on schemes remain valid in this generality.

\begin{defn}\label{def_sheaves}
Let $(\mathcal C, \tau)$ be an $(\infty, 1)$-site (see \cite[Definition 6.2.2.1]{Lur_HTT}) and let $\fcat D$ be an $(\infty, 1)$-category. A functor $\mathcal F\colon \mathcal C^\op \to \fcat D$ is called a \emdef{sheaf} if for every $U \in \mathcal C$ and a covering sieve $S \inj \mathcal C_{/U}$ the limit $\lim_{V \in S} \mathcal F(V)$ exists and the natural map
$$F(U) \xymatrix{\ar[r] &} \lim_{V \in S} \mathcal F(V)$$
is an equivalence. We will denote the full subcategory of $\Fun(\mathcal C^\op, \fcat D)$ spanned by $\tau$-sheaves by $\Shv_\tau(\mathcal C, \fcat D)$. If $\fcat D$ admits all small limits, the right Kan extension induces an equivalence
$$\Shv_\tau(\mathcal C, \fcat D) \areq \mathrm{RFun}(\Shv_\tau(\mathcal C, \Type)^\op, \fcat D),$$
where $\mathrm{RFun}$ denotes the category of small limit preserving functors. Moreover, if $\fcat D$ is presentable, by \cite[Proposition 4.8.1.17]{Lur_HA}
$$\mathrm{RFun}(\Shv_\tau(\mathcal C, \Type)^\op, \fcat D) \simeq \Shv_\tau(\mathcal C, \Type) \otimes \fcat D,$$
where $-\otimes -$ is the Lurie's tensor product of presentable categories (see \cite[Section 4.8.1]{Lur_HA}). In particular, if $\fcat D$ is presentable, then so is $\Shv_\tau(\mathcal C, \fcat D)$.
\end{defn}

For an affine $R$-scheme $T$ let ${\et}/T$ be the small \'etale site of $T$ and let
$$\Shv_{\et}(T, \Lambda) \coloneqq  \Shv_\et(\et/T, \DMod{\Lambda})$$
be the $(\infty,1)$-category of sheaves of $\Lambda$-modules.
\begin{rem}
The category $\Shv_\et(T, \DMod{\Lambda})$ admits a natural $t$-structure: a sheaf $\mathcal F$ lies in subcategory $\Shv_\et(T, \DMod{\Lambda})^{\le 0}$ if an only if for all $i>0$ the cohomology sheaves $\mathcal H^i(\mathcal F)$ vanish. The heart of this $t$-structure is naturally equivalent to the usual abelian category of \'etale $\Lambda$-modules on $T$. In fact by \cite[Corollary 2.1.2.4]{Lur_SAG} this inclusion induces an equivalence $D^+\Shv_\et(T, \UMod{\Lambda}) \areq \Shv_\et(T, \Lambda)^+$. The full derived category $D\Shv_\et(T, \UMod{\Lambda})$ can be identified with the full subcategory of $\Shv_\et(T, \Lambda)$ spanned by hypercomplete sheaves.
\end{rem}

\begin{construction}
Let $\mstack X$ be a prestack over $R$. We define the category of \emdef{$\Lambda$-\'etale sheaves on $\mstack X$} as
$$\Shv_{\et}(\mstack X, \Lambda) \coloneqq \lim\limits_{(T\ra \mstack X) \in (\Aff_R/\mstack X)^\op} \Shv_{\et}(T, \Lambda),$$
where the limit is taken in $\PrL_\Lambda$. For a sheaf $\mathcal F\in \Shv_{\et}(\mstack X, \Lambda)$ and a map $x\colon T\to \mstack X$ we will sometimes denote the canonical projection $x^{-1}\mathcal F\in\Shv_{\et}(T, \Lambda)$ by $\mathcal F_{|T}$. 

The \'etale cohomology of $\mstack X$ with coefficients in $\mc F$ is defined as the global sections of the usual \'etale cohomology over the category $(\Aff_R/\mstack X)^\op$:
$$
\RG_\et(\mstack X,\mc F)\coloneqq \lim \limits_{(T\ra \mstack X) \in (\Aff_R/\mstack X)^\op}\RG_\et(T,\mc F_{|T}).
$$
\end{construction}

\begin{ex}[The constant sheaf]\label{ex: the constant sheaf} Taking the constant sheaf $\ul \Lambda \in \Shv_{\et}(T, \Lambda)$ for all $T\ra \mstack X$ gives a well-defined object $\ul \Lambda\in \Shv_{\et}(\mstack X, \Lambda)$, which is \textit{the constant sheaf} associated to $\Lambda$. More generally, given any complex $M\in\DMod{\Lambda}$ we can associate to it an object $\ul M \in \Shv_{\et}(\mstack X, \Lambda)$ by taking the sheafification of the constant preasheaf associated to $M$ for each $T\ra \mstack X$. We put $\RG_\et(\mstack X,M)\coloneqq \RG_\et(\mstack X,\ul M)$.
\end{ex}

Similarly to the setting of quasi-coherent sheaves (\Cref{sec:Quasi-coherent sheaves}), we can extend the association $\mstack X \mapsto \Shv_\et(\mstack X,\Lambda)$ to a functor $$\Shv_{\et}(-, \Lambda)^{-1}\colon \PStk_R^\op \to \Prs^{\mathrm L, \otimes}_\Lambda$$ as the right Kan extension of the functor $T \mapsto \Shv_{\et}(T, \Lambda)$ along the inclusion $\Aff_{R}\subset \PStk_R$. The functor $\Shv_{\et}(-, \Lambda)^{-1}$ enjoys the following properties:

\begin{prop}\label{alg_etale_shaeves_basics}
We have:
\begin{enumerate}[label=(\arabic*)]
\item There is a functor
$$\Shv_\et(-, \Lambda)_* \colon \PStk_R \xymatrix{\ar[r] &} \Prs^{\mathrm R}_\Lambda$$
agreeing with $\Shv_{\et}(-, \Lambda)^{-1}$ on objects and such that for each morphism $f\colon \mstack X \to \mstack Y$ the functor
$$f_*\colon \Shv_{\et}(\mstack X, \Lambda) \xymatrix{\ar[r] &} \Shv_{\et}(\mstack Y, \Lambda)$$
is the right adjoint of $f^{-1}\colon \Shv_\et(\mstack Y, \Lambda) \xymatrix{\ar[r] &} \Shv_\et(\mstack X, \Lambda)$.

\item Let $\mstack X_\bullet \colon I \to \PStk_R$ be a small diagram of prestacks. Then the natural map
$$\Shv_{\et}(\colim_I \mstack X_i, \Lambda) \xymatrix{\ar[r] &} \lim_I \Shv_{\et}(\mstack X_i, \Lambda)$$
is an equivalence.

\item Let $\mstack X \to \mstack Y$ be an \'etale equivalence, i.e. a morphism of presheaves inducing equivalence after the \'etale sheafification. Then the induced map $\Shv_{\et}(\mstack X, \Lambda) \to \Shv_{\et}(\mstack Y, \Lambda)$ is an equivalence.

\item Let $\mstack U \to \mstack X$ be a surjective in \'etale topology map of prestacks and let $p_\bullet\colon \mstack U_\bullet \to \mstack X$ be the corresponding \v Cech nerve. Then the natural functor
$$\Shv_{\et}(\mstack X, \Lambda) \xymatrix{\ar[r] &} \Tot \Shv_{\et}(\mstack U_\bullet, \Lambda)$$
is an equivalence. In particular, for $\mathcal F \in \Shv_\et(\mstack X, \Lambda)$ the natural maps
$$\mathcal F \xymatrix{\ar[r] &} \Tot p_{\bullet *} p^{-1}_\bullet \mathcal F \qquad\qquad R\Gamma_\et(\mstack X, \mathcal F) \xymatrix{\ar[r] &} R\Gamma_\et(\mstack U_\bullet, p_\bullet^{-1} \mathcal F)$$
are equivalences and the pullback functor $p^{-1}$ is conservative.
\end{enumerate}

\begin{proof}
The first point follows from the adjoint functor theorem, the second one is the general property of the right Kan extensions, and the last one follows formally from the second one and the third one, since $|\mstack U_\bullet| \to \mstack X$ induces an equivalence after sheafification. To prove the third point by \cite[Proposition A.3.3.1]{Lur_SAG} it is enough to verify the following two assertions:
\begin{itemize}
\item The natural map
$$\Shv_\et\left(X\coprod Y, \Lambda\right) \tto \Shv_\et(X, \Lambda) \prod \Shv_\et(Y, \Lambda)$$
is an equivalence for any pair of affine schemes $X, Y$.

\item For any \'etale surjections $U \surj X$ of affine schemes with the corresponding \v Cech nerve $U_\bullet$ and natural projections $p_n\colon U_n \to X$ the natural functor induced by pullbacks
$$\Shv_\et(X, \Lambda) \xymatrix{\ar[r] &} \Tot \Shv_\et(U_\bullet, \Lambda) \qquad \mathcal F \xymatrix{\ar@{|->}[r] &} p_\bullet^{-1} \mathcal F$$
is an equivalence.
\end{itemize}

The first point is clear. For the second one note that the comparison functor from above admits a right adjoint $\mathcal F^\bullet \mapsto \Tot p_{\bullet *} \mathcal F^\bullet$. We claim that the unit of adjunction $\epsilon \colon \mathcal F \to \Tot p_{\bullet *} p_\bullet^{-1} \mathcal F$ is an equivalence. Since the pullback functor $p_0^{-1}$ is conservative it is enough to prove that $p_0^{-1}(\epsilon)$ is an equivalence. But since limits of sheaves are computed pointwise, $p_0^{-1}$ preserves totalizations. Since $p_0^{-1}(\mathcal F) \to p_0^{-1}(p_{\bullet *} p_\bullet^{-1} \mathcal F)$ is a split augmented cosimplicial object, $p_0^{-1}(\epsilon)$ is an equivalence. It is left to prove that $\Tot p_{\bullet *}$ is conservative. Let $\mathcal F^\bullet \in \Tot \Shv_\et(U_\bullet, \Lambda)$. Arguing as before, we see that $p_0^{-1} p_{\bullet *} \mathcal F^\bullet \simeq \mathcal F^0$. Hence if $\Tot p_{\bullet *} \mathcal F^\bullet \simeq 0$, then $\mathcal F^0 \simeq 0$. But the forgetful functor $\Tot \Shv_\et(U_\bullet, \Lambda) \to \Shv_\et(U_0, \Lambda)$ is conservative, so $\mathcal F^\bullet \simeq 0$.
\end{proof}
\end{prop}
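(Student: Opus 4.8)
The plan is to treat the four points more or less in the order they are stated, relying on general $\infty$-categorical formalities for (1) and (2) and on a descent criterion to reduce (3) to statements about schemes. For point (1), I would invoke the adjoint functor theorem: since $\Shv_\et(-,\Lambda)^{-1}$ takes values in $\PrL_\Lambda$, each pullback $f^{-1}$ preserves small colimits and all limits, hence admits a right adjoint $f_*$; functoriality of the assignment $f\mapsto f_*$ is then the standard passage between $\PrL$ and $\PrR$ via $(-)^\vee$ (see \cite[Corollary 5.5.3.4]{Lur_HTT}). For point (2): $\Shv_\et(-,\Lambda)^{-1}$ is defined as a right Kan extension, hence sends colimits of prestacks to limits of presentable categories essentially by construction, so there is nothing to prove beyond unwinding definitions. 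Point (4) should be deduced formally from (2) and (3): the geometric realization $|\mstack U_\bullet|$ of the \v Cech nerve agrees with $\mstack X$ after \'etale sheafification, so $\Shv_\et(\mstack X,\Lambda)\simeq \Shv_\et(|\mstack U_\bullet|,\Lambda)\simeq \Tot\Shv_\et(\mstack U_\bullet,\Lambda)$ by (3) and (2); the statements about $\mathcal F\to \Tot p_{\bullet *}p_\bullet^{-1}\mathcal F$, about cohomology, and about conservativity of $p^{-1}$ are then the usual consequences of having a limit diagram of categories (as spelled out in the proof of the affine descent statement inside (3)).

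The real content is point (3), and I expect the two bullet-point reductions to be the main obstacle — everything hinges on being able to apply \cite[Proposition A.3.3.1]{Lur_SAG}, which says that a functor out of prestacks that is a right Kan extension from affines, and that satisfies (i) finite-product-to-product on disjoint unions of affines and (ii) \'etale(-hyper)descent on affines, automatically inverts \'etale equivalences. So the work is to verify (i) and (ii) for $T\mapsto \Shv_\et(T,\Lambda)$. Assertion (i), that $\Shv_\et(X\amalg Y,\Lambda)\xrightarrow{\sim}\Shv_\et(X,\Lambda)\times\Shv_\et(Y,\Lambda)$, is immediate from the fact that the small \'etale site of a disjoint union of schemes decomposes as a product. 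For assertion (ii), given an \'etale cover $U\surj X$ of affines with \v Cech nerve $U_\bullet$, I would show that the comparison functor $\Shv_\et(X,\Lambda)\to\Tot\Shv_\et(U_\bullet,\Lambda)$, $\mathcal F\mapsto p_\bullet^{-1}\mathcal F$, is an equivalence by producing its right adjoint $\mathcal F^\bullet\mapsto \Tot p_{\bullet *}\mathcal F^\bullet$ and checking both the unit and the counit are equivalences: for the unit $\epsilon\colon\mathcal F\to\Tot p_{\bullet *}p_\bullet^{-1}\mathcal F$ one uses that $p_0^{-1}$ is conservative and commutes with totalizations (limits of sheaves are computed pointwise), and that $p_0^{-1}$ applied to the cosimplicial object $p_{\bullet *}p_\bullet^{-1}\mathcal F$ is split-augmented via base change along $U\times_X U_\bullet$; for conservativity of $\Tot p_{\bullet *}$ one again pulls back along $p_0$ to reduce to conservativity of the forgetful functor $\Tot\Shv_\et(U_\bullet,\Lambda)\to\Shv_\et(U_0,\Lambda)$. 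This last argument is just the classical proof that \'etale sheaves satisfy descent along \'etale covers of schemes, repackaged at the level of $\infty$-categories; the only subtlety is keeping track of which base-change identities $p_{j}^{-1}p_{i*}\simeq (\text{pullback})_*(\text{projection})^{-1}$ are needed, all of which hold for the small \'etale topos of schemes.

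I do not anticipate any genuinely hard step — the proposition is a packaging result — but if something goes wrong it will be in the split-augmentation argument for the unit $\epsilon$: one must be careful that the relevant cosimplicial object becomes split after applying $p_0^{-1}$, which uses that $U\times_X U_\bullet\to U_\bullet$ has the extra degeneracy coming from the diagonal $U\to U\times_X U$. Once that is in place, the rest is bookkeeping, and points (1), (2), (4) follow formally as indicated above.
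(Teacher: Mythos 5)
Your proposal follows the paper's proof essentially verbatim: (1) by the adjoint functor theorem, (2) as a formal property of right Kan extensions, (4) by combining (2) and (3) via $|\mstack U_\bullet|\to\mstack X$, and (3) by reduction through \cite[Proposition A.3.3.1]{Lur_SAG} to the disjoint-union and affine \'etale-\v Cech-descent checks, the latter settled by showing the unit is an equivalence (split augmentation after $p_0^{-1}$) and that $\Tot p_{\bullet *}$ is conservative. The only minor slip is describing the strategy as ``checking both unit and counit''; what you actually describe — and what the paper does — is unit equivalence plus conservativity of the right adjoint, which is equivalent but worth stating cleanly.
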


\begin{ex}[Smooth descent of \'etale sheaves on Artin stacks]
	Let $\mstack X$ be an Artin stack and let $p\colon \mstack U\to \mstack X$ be an atlas. The morphism $p$ is an \'etale surjection, thus $|\mstack U_\bullet|\xra{\sim} \mstack X$ and by \Cref{alg_etale_shaeves_basics}(3) we have
	$$\Shv_{\et}(\mstack X, \Lambda) \xymatrix{\ar[r]^\sim &} \Tot \Shv_{\et}(\mstack U_\bullet, \Lambda),$$
	where $\mstack U_\bullet$ is the \v{C}ech nerve of $p$. This applies in particular to a smooth map of schemes $p\colon U\ra X$ which is surjective in the usual sense (as a map of schemes): indeed, any such morphism admits a section \'etale locally and thus is a surjection in the \'etale topology.
\end{ex}
	
\smallskip The category $\Shv_{\et}(-, \Lambda)$ admits a natural $t$-structure:
\begin{prop}\label{t_structure_on_et_sheaves}
We have:
\begin{enumerate}[label=(\arabic*)]
\item Let $\mstack X$ be a prestack. Then the category $\Shv_{\et}(\mstack X, \Lambda)$ admits a $t$-structure such that $\mathcal F \in \Shv_{\et}^{\le 0}(\mstack X, \Lambda)$ (resp. $\mathcal F \in \Shv_{\et}^{\ge 0}(\mstack X, \Lambda)$) if and only if for each map $T \in \Aff_{R/\mstack X}$ the restriction $\mathcal F_{|T}$ lands to $\Shv_{\et}^{\le 0}(T, \Lambda)$ (resp. $\Shv_{\et}^{\ge 0}(T, \Lambda)$).

\item This $t$-structure is right complete (see \Cref{defn:t_complete}).

\item Let $f\colon \mstack X \to \mstack Y$ be a morphism of prestacks. Then the pullback functor $f^{-1}$ is $t$-exact and $f_*$ being its right adjoint is left $t$-exact.
\end{enumerate}

\begin{proof}
First point follows from \Cref{lem: t-structure in the limit}. The second assertion follows from \Cref{limits_of_t_structures} and the fact that since filtered colimits are exact in $\Shv_\et(T, \Mod_\Lambda)$, the derived category $\Shv_\et(T, \Lambda)$ is right $t$-complete for all affine scheme $T$. The last statement follows from $t$-exactness of the translation morphisms in the diagram defining $\Shv_\et$.
\end{proof}
\end{prop}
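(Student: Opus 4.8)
The plan is to reduce each of the three assertions to the corresponding statement for affine schemes, which is standard, together with two abstract lemmas about limits of categories carrying $t$-structures (namely the analogues of what is cited in the excerpt as \Cref{lem: t-structure in the limit} and \Cref{limits_of_t_structures}). Recall that by construction $\Shv_{\et}(\mstack X,\Lambda)=\lim_{(T\to\mstack X)\in(\Aff_R/\mstack X)^\op}\Shv_{\et}(T,\Lambda)$, the limit being taken in $\PrL_\Lambda$, with transition functors given by the pullbacks $f^{-1}$ along morphisms of affine schemes over $\mstack X$. The starting observation is that for each affine scheme $T$ the category $\Shv_{\et}(T,\Lambda)$ carries its usual $t$-structure (in which connectivity and coconnectivity are detected stalkwise / sectionwise in the standard way), and that for any morphism $f\colon T'\to T$ of affine schemes the pullback $f^{-1}$ is $t$-exact — this is the étale pullback being exact on the abelian level, and hence $t$-exact on the derived/hypercomplete level.

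For part (1), I would invoke the general principle that a limit of presentable categories along $t$-exact functors inherits a $t$-structure, characterized by the property that an object is connective (resp. coconnective) if and only if each of its components is. Concretely, one sets $\Shv^{\le 0}_{\et}(\mstack X,\Lambda)$ to consist of those $\mc F$ with $\mc F_{|T}\in\Shv^{\le 0}_{\et}(T,\Lambda)$ for every $T\to\mstack X$, and dually for $\Shv^{\ge 0}$; the two subcategories are stable under the appropriate shifts, the Hom-vanishing $\pi_0\Map(\mc F,\mc G[-1])=0$ for $\mc F$ connective and $\mc G$ coconnective follows since mapping spaces in a limit are computed as limits of mapping spaces, and the truncation functors are assembled componentwise from the (compatible, by $t$-exactness of the transition functors) truncation functors on each $\Shv_{\et}(T,\Lambda)$. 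This is exactly what \Cref{lem: t-structure in the limit} packages, so the only thing to check before applying it is the $t$-exactness of the transition functors, noted above.

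For part (2), right completeness of the $t$-structure again descends from the affine case: since filtered colimits are exact in the abelian category of étale $\Lambda$-modules on an affine scheme, each $\Shv_{\et}(T,\Lambda)$ is right $t$-complete, and a limit of right $t$-complete categories along $t$-exact functors is right $t$-complete (this is \Cref{limits_of_t_structures}). One should check that the relevant colimits, namely Postnikov-tower / Whitehead-tower colimits, are computed componentwise in the limit category — but colimits in $\PrL$-limits of this kind are computed componentwise once one knows the transition functors are left adjoints preserving the colimits in question, which holds here. For part (3): $t$-exactness of $f^{-1}$ for a morphism $f\colon\mstack X\to\mstack Y$ of prestacks is immediate from the componentwise description in (1), because $f^{-1}$ is, componentwise, a pullback along a morphism of affine schemes, which is $t$-exact; and left $t$-exactness of $f_*$ then follows formally by adjunction (a right adjoint to a functor that preserves coconnective objects is left $t$-exact, equivalently preserves connective objects — here $f^{-1}$ preserves connective objects, so $f_*$ preserves coconnective objects).

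The main obstacle is not conceptual but bookkeeping: one must be careful that the index category $(\Aff_R/\mstack X)^\op$ is genuinely filtered-friendly enough that the abstract lemmas on limits of $t$-structures apply verbatim (they are stated for arbitrary small diagrams, so this is fine), and one must verify the $t$-exactness of \emph{every} transition functor, including those coming from non-smooth maps of affine schemes over $\mstack X$ — but since all such transitions are ordinary étale-site pullbacks $f^{-1}$, and étale pullback is exact, this causes no trouble. So the substance of the proof is entirely in recalling the affine-level facts and citing the two limit lemmas; there is no real difficulty beyond that.
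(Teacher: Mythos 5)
Your proof follows exactly the paper's route: reduce all three parts to the affine case via \Cref{lem: t-structure in the limit} for (1), \Cref{limits_of_t_structures} together with exactness of filtered colimits for (2), and $t$-exactness of the transition functors for (3). One small inaccuracy in (3): for a general morphism of prestacks $f\colon\mstack X\to\mstack Y$, the functor $f^{-1}$ is \emph{not} componentwise a pullback along a morphism of affine schemes — it simply restricts along the index functor $\Aff_R/\mstack X\to\Aff_R/\mstack Y$ given by post-composition with $f$, so that the $T$-component of $f^{-1}\mathcal G$ is literally $\mathcal G_{|T}$ for the composite $T\to\mstack X\to\mstack Y$ (no pullback on the affine level occurs). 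Your conclusion — that $f^{-1}$ is $t$-exact because the $t$-structure is detected componentwise — is correct, but for this simpler reason.
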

\begin{rem}
The left $t$-completeness of $\Shv_\et(\mstack X, \Lambda)$ is subtle, unless $\mstack X$ has bounded $\Lambda$-cohomological dimension (see \cite[Section 1.3.3]{Lur_SAG}). See \cite{AkhilClausen_Hyperdescent} for the systematic study of this question.
\end{rem}

\begin{rem}[Coskeletal hyperdescent] \label{rem: hyperdescent for etale cohomology}
Recall that a hypercover $X_\bullet$ is called \emdef{$n$-coskeletal} if it is right Kan extended from $\Delta_{\le n}^\op$. Note that in any topos for $n$-skeletal hypercover $X_\bullet$ the natural map $|X_\bullet| \to X_{-1}$ is an equivalence (this follows by induction from the case $n=0$ which is \v Cech decent, see \cite[Corollary 6.2.3.5]{Lur_HTT}). In particular, if $p_\bullet\colon \mstack U_\bullet \ra \mstack X$ is an $n$-coskeletal hypercover of prestacks in \'etale topology, then 
$$
\RG_\et(\mstack X,\mc F) \xymatrix{\ar[r]^\sim &} \Tot \RG_\et(\mstack U_\bullet,p_\bullet^{-1}\mc F).$$	
\end{rem}

\smallskip
As usual we will denote by $\Shv_{\et}^+(\mstack X, \Lambda)$ the union of $\Shv_{\et}^{\ge n}(\mstack X, \Lambda)$ for $n\in\mbb Z$. This is the category of \textit{bounded below \'etale sheaves}.

We now pass to a more detailed discussion of the pushforward functor $f_*$ for a morphism $f\colon \mstack X\ra \mstack Y$. Though we were able to define it formally, we do not yet know how to "compute" it: namely to describe its values at various points $x\colon T\ra \mstack Y$. Using descent it is enough to restrict only to those $x$ that are smooth. In the case of schemes such a description then is given by smooth base change:
\begin{thm}[Smooth base change, {\cite[Expos\'e XVI, Th\'eor\`em 1.1]{SGA4_3}}]\label{smooth_bc_schemes}
Let
$$\xymatrix{
X_2 \ar[r]^q\ar[d]^g & X_1 \ar[d]^f \\
Y_2 \ar[r]^p & Y_1
}$$
be a pullback square of schemes such that $p$ is smooth and $f$ is quasi-compact and quasi-separated. Then the natural map of functors $p^{-1}\circ f_* \to g_* \circ q^{-1}$ renders the diagram below commutative
\begin{align*}
\xymatrix{
\Shv_{\et}^+(X_2, \Lambda) \ar[d]^{g_*} && \ar[ll]_-{q^{-1}} \Shv_{\et}^+(X_1, \Lambda) \ar[d]^{f_*}\\
\Shv_{\et}^+(Y_2, \Lambda) && \ar[ll]_{p^{-1}} \Shv_{\et}^+(Y_1, \Lambda).
}
\end{align*}
\end{thm}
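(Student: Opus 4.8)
This is the classical smooth base change theorem of \cite[Expos\'e XVI, Th\'eor\`em 1.1]{SGA4_3} at the level of abelian \'etale sheaves, so the only thing that requires argument is its promotion to the bounded below derived setting, and the plan is a standard reduction to the heart. First I would make the comparison transformation precise: the functoriality package of \Cref{alg_etale_shaeves_basics} produces, from the commuting square of pushforward functors attached to a pullback square of schemes, a Beck--Chevalley (exchange) transformation $p^{-1}\circ f_*\Rightarrow g_*\circ q^{-1}$ — the mate of the evident equivalence $q^{-1}\circ f^{-1}\simeq g^{-1}\circ p^{-1}$, built from the adjunctions $f_*\dashv f^{-1}$, $g_*\dashv g^{-1}$ — and the assertion to prove is that this transformation is an equivalence when evaluated on any $\mc F\in\Shv_{\et}^+(X_1,\Lambda)$.

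Next I would reduce to objects of the heart using the $t$-structure of \Cref{t_structure_on_et_sheaves}. By that proposition $p^{-1}$ and $q^{-1}$ are $t$-exact and $f_*$, $g_*$ are left $t$-exact; hence both composites $p^{-1}\circ f_*$ and $g_*\circ q^{-1}$ carry $\Shv_{\et}^{\ge 0}$ into $\Shv_{\et}^{\ge 0}$, and after a shift we may assume $\mc F\in\Shv_{\et}^{\ge 0}(X_1,\Lambda)$. For such $\mc F$ the fiber of $\tau^{\le n}\mc F\to\mc F$ lies in $\Shv_{\et}^{\ge n+1}$, and applying either composite (which is exact as a functor of stable categories and left $t$-exact) keeps that fiber in degrees $\ge n+1$; therefore the $i$-th cohomology sheaf of both sides is already computed on $\tau^{\le i}\mc F$, so it suffices to treat each bounded truncation $\tau^{\le n}\mc F$. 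Since $\tau^{\le n}\mc F$ is a finite iterated extension of shifts of objects of the heart and the exchange transformation is compatible with fiber sequences and with shifts, a short d\'evissage reduces the problem to $\mc F=\mc G$ with $\mc G\in\Shv_\et(X_1,\Lambda)^\heartsuit$. For such $\mc F$ one has $\mc H^{j}(f_*\mc G)=R^{j}f_*\mc G$, $\mc H^{j}(g_*q^{-1}\mc G)=R^{j}g_*q^{*}\mc G$, and $\mc H^{j}(p^{-1}f_*\mc G)=p^{*}R^{j}f_*\mc G$ (using that $p^{-1}=p^{*}$ and $q^{-1}=q^{*}$ are exact, so no spectral sequence correction is needed); under these identifications the exchange transformation on $\mc H^{j}$ is precisely the classical base change map $p^{*}R^{j}f_*\mc G\to R^{j}g_*q^{*}\mc G$, which is an isomorphism by \cite[Expos\'e XVI, Th\'eor\`em 1.1]{SGA4_3} since $p$ is smooth and $f$ is quasi-compact and quasi-separated.

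The genuinely hard input is the abelian smooth base change theorem itself, which I am content to cite; on the derived side the work is purely formal. The only points needing a little care are the convergence of the Postnikov argument — guaranteed by left $t$-exactness of $f_*$, which holds for any $f$ — and the compatibility of the Beck--Chevalley transformation with truncation triangles and with shifts, which follows directly once the adjunctions of \Cref{alg_etale_shaeves_basics} are in hand. The quasi-compactness and quasi-separatedness hypothesis on $f$ enters only through the cited classical statement, not through the d\'evissage.
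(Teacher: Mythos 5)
Your argument is correct, and it is the standard way to promote SGA4, Exp.\ XVI, Th.\ 1.1 from the level of individual $R^jf_*$ to the bounded-below $\infty$-categorical setting: build the Beck--Chevalley transformation from $q^{-1}f^{-1}\simeq g^{-1}p^{-1}$, use $t$-exactness of $p^{-1},q^{-1}$ and left $t$-exactness of $f_*,g_*$ together with right completeness (\Cref{t_structure_on_et_sheaves}) so that equivalences can be tested on cohomology sheaves and the Postnikov truncations converge, d\'evissage to the heart, and there recognize the exchange map on $\mc H^j$ as the classical base-change morphism. The paper does not actually write out a proof --- it states the result as a citation of SGA4 --- so your write-up is filling in exactly the (routine but genuinely present) gap between the abelian-sheaf statement in SGA4 and the form the paper uses in \Cref{et_smooth_basechange_algebraic}; nothing you do contradicts or bypasses anything in the text.

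Two tiny indexing/formulation remarks, neither affecting correctness. First, the fiber of $\tau^{\le n}\mc F\to\mc F$ is $(\tau^{\ge n+1}\mc F)[-1]\in\Shv^{\ge n+2}_{\et}$ rather than $\Shv^{\ge n+1}_{\et}$; the convergence conclusion is unchanged. Second, the statement ``the $i$-th cohomology sheaf of both sides is already computed on $\tau^{\le i}\mc F$'' uses, for the side $g_*q^{-1}$, that $q^{-1}$ is $t$-exact and $g_*$ left $t$-exact, and for the side $p^{-1}f_*$, that $f_*$ is left $t$-exact and $p^{-1}$ is $t$-exact --- you have all of these from \Cref{t_structure_on_et_sheaves}, but it is worth making explicit that left $t$-exactness of the pushforwards is the ingredient that lets you pass from bounded truncations to the bounded-below limit, and right completeness of the $t$-structure is what lets you conclude from vanishing of cohomology sheaves. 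With those points noted, the proof is complete.
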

The rest of this section will be devoted to the proof of the analogous statement for Artin stacks. We will more or less formally reduce to the case of schemes, details of the argument are provided below. During the proof we will need the following notion and its basic properties from \cite[Section 4.7.4]{Lur_HA}:
\begin{defn}\label{def:adjointable_sq}
A commutative diagram of categories
$$\xymatrix{
\fcat C_2 & \ar[l]_q \fcat C_1 \\
\fcat D_2 \ar[u]_{F_2} & \fcat D_1\ar[u]_{F_1}\ar[l]_p
}$$
is called \emdef{right adjointable} if both $F_1, F_2$ admit right adjoints $G_1,G_2$ and the natural map
$$p\circ G_1 \xymatrix{\ar[r] &} G_2 \circ F_2 \circ p \circ G_1 \simeq G_2 \circ q \circ F_1 \circ G_1 \xymatrix{\ar[r] &} G_2 \circ q$$
is an equivalence.
\end{defn}

We will also need some auxiliary results about interaction of totalization with a $t$-structure.
\begin{lem}\label{totalization_of_coconectives}
Let $\fcat C$ be a stable category with a $t$-structure. Let $X^\bullet$ be a cosimplicial object of $\fcat C$ such that each term $X^i$ lies in $\fcat C^{\ge 0}$. Then the fiber of the natural map $\Tot(X^\bullet) \to \Tot^{\le n}(X^\bullet)$ lies in $\fcat C^{>n}$.

\begin{proof}
The fiber $\Tot^{n+1}(X^\bullet)$ of $\Tot^{\le n}(X^\bullet) \to \Tot^{\le n+1}(X^\bullet)$ is a retract of $X^{n+1}[-n-1]$. Since $X^{n+1}\in \fcat C^{\ge 0}$, we find that $\Tot^{n+1}(X^\bullet) \in \fcat C^{> n}$. It follows the fiber $F_n$ of the natural map $\Tot(X^\bullet) \to \Tot^{\le n}(X^\bullet)$ admits a decreasing filtration with associated graded pieces $\Tot^i(X^\bullet), i>n$ from $\fcat C^{>n}$. Since $\fcat C^{>n}$ is closed under extensions and limits, $F_n\in \fcat C^{>n}$.
\end{proof}
\end{lem}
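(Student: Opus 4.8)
The plan is to analyze the totalization tower $\cdots \to \Tot^{\le m}(X^\bullet) \to \Tot^{\le m-1}(X^\bullet) \to \cdots$, whose inverse limit is $\Tot(X^\bullet)$, and to bound the connectivity of its successive layers. I would use as the single external input the standard description of this tower in a stable category with a $t$-structure: the fiber $\fib\bigl(\Tot^{\le m}(X^\bullet)\to \Tot^{\le m-1}(X^\bullet)\bigr)$ is a retract of $\Omega^m X^m \simeq X^m[-m]$; this is the cosimplicial form of the Dold--Kan correspondence (see e.g.\ \cite{Lur_HA}).

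First I would establish the connectivity of the layers. Since each $X^m$ lies in $\fcat C^{\ge 0}$ by hypothesis, $X^m[-m]$ lies in $\fcat C^{\ge m}$, and because $\fcat C^{\ge m}$ is closed under retracts the $m$-th layer lies in $\fcat C^{\ge m}$; in particular it lies in $\fcat C^{>n}$ whenever $m>n$. The map $\Tot^{\le m}(X^\bullet)\to \Tot^{\le n}(X^\bullet)$ is a finite composite of layer maps, so its fiber carries a finite filtration whose associated graded pieces are the layers indexed by $n<j\le m$, each in $\fcat C^{>n}$; as $\fcat C^{>n}$ is closed under extensions, $\fib\bigl(\Tot^{\le m}(X^\bullet)\to \Tot^{\le n}(X^\bullet)\bigr)\in \fcat C^{>n}$.

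Then I would pass to the limit. Writing $F_n \coloneqq \fib\bigl(\Tot(X^\bullet)\to \Tot^{\le n}(X^\bullet)\bigr)$ and using $\Tot(X^\bullet)\simeq \lim_m \Tot^{\le m}(X^\bullet)$, one gets $F_n\simeq \lim_{m\ge n}\fib\bigl(\Tot^{\le m}(X^\bullet)\to \Tot^{\le n}(X^\bullet)\bigr)$, a limit of objects of $\fcat C^{>n}$. The subcategory $\fcat C^{>n}=\fcat C^{\ge n+1}$ is closed under small limits: for any $Y\in \fcat C^{\le n}$ the functor $\Map(Y,-)$ preserves limits and annihilates each term, hence annihilates the limit. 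Therefore $F_n\in \fcat C^{>n}$, which is the claim.

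The content here is light once the tower is in hand; the points requiring care are (i) applying the retract description of the layers with the correct shift $\Omega^m=[-m]$, and (ii) the observation that $\fcat C^{\ge k}$ is stable under small limits --- which, in contrast to stability under colimits, is not one of the $t$-structure axioms and must be verified (as above) rather than assumed by analogy.
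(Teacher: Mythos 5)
Your proof is correct and follows the same approach as the paper: identify the layers of the totalization tower as retracts of $X^m[-m]$, deduce they lie in $\fcat C^{>n}$ for $m>n$, and pass to the limit using closure of $\fcat C^{>n}$ under extensions and limits. You spell out a few steps the paper leaves implicit (notably the verification that $\fcat C^{\ge k}$ is closed under small limits), but the argument is the same.
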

\begin{cor}\label{left_exact_preserve_totalizations_of_uniformly_bounded_below}
Let $F\colon \fcat C \to \fcat D$ be a left $t$-exact functor between stable categories equipped with $t$-structures. Then if $\fcat D$ is right $t$-complete, then the natural map $F(\Tot X^\bullet) \to \Tot F(X^\bullet)$ is an equivalence for any cosimplicial object $X^\bullet \in \fcat C^\Delta$ such that $X^n \in \fcat C^{\ge 0}$ for all $n \in \mathbb Z_{\ge 0}$.

\begin{proof}
By the previous proposition the natural map $\Tot(X^\bullet) \to \Tot^{\le n}(X^\bullet)$ is $n$-coconnective. Since $F$ is left $t$-exact, the same holds for $F(\Tot(X^\bullet)) \to F(\Tot^{\le n}(X^\bullet)) \simeq \Tot^{\le n}(F(X^\bullet))$. It follows that the limit map $F(\Tot(X^\bullet)) \to \Tot(f(X^\bullet))$ is $\infty$-coconnective and hence is an equivalence by assumption on $\fcat D$.
\end{proof}
\end{cor}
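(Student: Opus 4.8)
The plan is to deduce the statement from \Cref{totalization_of_coconectives}, which measures precisely the gap between the full totalization and its finite stages. First I would use that $F$, being an exact functor of stable categories, commutes with the truncated totalizations: since $\Tot^{\le n}$ is a finite limit, $F(\Tot^{\le n} X^\bullet) \simeq \Tot^{\le n} F(X^\bullet)$ for every $n$. Hence the canonical comparison map $F(\Tot X^\bullet) \to \Tot F(X^\bullet)$ is compatible with the towers $\{\Tot^{\le n} X^\bullet\}_n$ and $\{\Tot^{\le n} F(X^\bullet)\}_n$, whose limits are $\Tot X^\bullet$ and $\Tot F(X^\bullet)$ respectively.

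Next, since every $X^n$ lies in $\fcat C^{\ge 0}$, \Cref{totalization_of_coconectives} tells us that the fiber of $\Tot X^\bullet \to \Tot^{\le n} X^\bullet$ lies in $\fcat C^{>n}$. Applying the left $t$-exact functor $F$ and invoking the identification from the previous step, the fiber of $F(\Tot X^\bullet) \to \Tot^{\le n} F(X^\bullet)$ lies in $\fcat D^{>n}$; equivalently, this map becomes an equivalence after $\tau^{\le n}$.

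Finally I would pass to the limit over $n$. The fiber of $F(\Tot X^\bullet) \to \Tot F(X^\bullet) = \lim_n \Tot^{\le n} F(X^\bullet)$ then lies in $\bigcap_n \fcat D^{>n}$, i.e. it is $\infty$-coconnective, hence vanishes by right $t$-completeness of $\fcat D$. Therefore the comparison map is an equivalence, as claimed.

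There is no genuine obstacle here: the statement is formal once \Cref{totalization_of_coconectives} is available. The only point deserving a moment's care is the reading of ``left $t$-exact'': one uses both that $F$ is exact, so it preserves the finite limits computing $\Tot^{\le n}$ and preserves fiber sequences, and that it carries $\fcat C^{\ge m}$ into $\fcat D^{\ge m}$, so the connectivity estimate of \Cref{totalization_of_coconectives} is transported to $\fcat D$. Both are part of the hypothesis, so the argument runs without friction.
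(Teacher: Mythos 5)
Your argument is correct and follows the same route as the paper: apply \Cref{totalization_of_coconectives} to bound the coconnectivity of $\Tot \to \Tot^{\le n}$, transport this through the left $t$-exact $F$, and conclude by right $t$-completeness of $\fcat D$. The only difference is that you spell out more carefully that $F(\Tot^{\le n} X^\bullet) \simeq \Tot^{\le n} F(X^\bullet)$ because $\Tot^{\le n}$ is a finite limit, a point the paper leaves implicit.
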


This leads to the following corollary for the pull-back functor $f^{-1}\colon \Shv_{\et}(\mstack Y, \Lambda)\ra \Shv_{\et}(\mstack X, \Lambda)$:
\begin{cor}\label{pullback_preserves_totalizations_of_coconnectives}
Let $f\colon \mstack X \to \mstack Y$ be a map between $R$-prestacks. Then the functor $f^{-1}\colon \Shv_{\et}(\mstack Y, \Lambda)\ra \Shv_{\et}(\mstack X, \Lambda)$ preserves totalizations of uniformly bounded below sheaves.
\end{cor}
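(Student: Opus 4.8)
The statement to prove is \Cref{pullback_preserves_totalizations_of_coconnectives}: for a morphism $f\colon \mstack X \to \mstack Y$ of $R$-prestacks, the pullback functor $f^{-1}\colon \Shv_{\et}(\mstack Y, \Lambda) \to \Shv_{\et}(\mstack X, \Lambda)$ preserves totalizations of uniformly bounded below cosimplicial sheaves. The plan is to deduce this from \Cref{left_exact_preserve_totalizations_of_uniformly_bounded_below}, which says precisely that a \emph{left $t$-exact} functor into a \emph{right $t$-complete} target commutes with totalizations of cosimplicial objects all of whose terms lie in the connective part. So I only need to check two hypotheses: that $f^{-1}$ is left $t$-exact, and that the target $\Shv_{\et}(\mstack X, \Lambda)$ is right $t$-complete.

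Both facts are already available in the excerpt. By \Cref{t_structure_on_et_sheaves}(3), the pullback functor $f^{-1}$ is in fact $t$-exact, hence in particular left $t$-exact. By \Cref{t_structure_on_et_sheaves}(2), the $t$-structure on $\Shv_{\et}(\mstack X, \Lambda)$ is right complete, which is exactly the right $t$-completeness hypothesis needed. Thus the proof is essentially a one-line citation: given a cosimplicial object $\mathcal F^\bullet \in \Shv_{\et}(\mstack Y, \Lambda)^\Delta$ with $\mathcal F^n \in \Shv_{\et}^{\ge 0}(\mstack Y, \Lambda)$ for all $n$ (after a shift this is the meaning of ``uniformly bounded below''), apply \Cref{left_exact_preserve_totalizations_of_uniformly_bounded_below} with $\fcat C = \Shv_{\et}(\mstack Y, \Lambda)$, $\fcat D = \Shv_{\et}(\mstack X, \Lambda)$, and $F = f^{-1}$, to conclude that the natural comparison map $f^{-1}(\Tot \mathcal F^\bullet) \to \Tot f^{-1}(\mathcal F^\bullet)$ is an equivalence.

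There is really no substantive obstacle here; the content has been front-loaded into Propositions~\ref{t_structure_on_et_sheaves} and the formal lemmas \ref{totalization_of_coconectives}--\ref{left_exact_preserve_totalizations_of_uniformly_bounded_below}. The only minor point to be careful about is the precise meaning of ``uniformly bounded below'': it should mean there is a single integer $n_0$ with $\mathcal F^k \in \Shv_{\et}^{\ge n_0}(\mstack Y, \Lambda)$ for all $k$, so that after shifting by $[-n_0]$ we land in the connective part and \Cref{left_exact_preserve_totalizations_of_uniformly_bounded_below} applies verbatim (the shift commutes with $f^{-1}$ and with $\Tot$, so no loss of generality). I would phrase the proof as: ``This follows immediately from \Cref{t_structure_on_et_sheaves} and \Cref{left_exact_preserve_totalizations_of_uniformly_bounded_below}, since $f^{-1}$ is $t$-exact and $\Shv_{\et}(\mstack X, \Lambda)$ is right $t$-complete.''
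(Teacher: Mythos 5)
Your proof is correct and is exactly the argument the paper intends (the paper gives no explicit proof for this corollary, but it follows \Cref{left_exact_preserve_totalizations_of_uniformly_bounded_below} immediately and the hypotheses are supplied by \Cref{t_structure_on_et_sheaves}). You've correctly identified that $t$-exactness of $f^{-1}$ and right completeness of the target $t$-structure are the two inputs, and the remark about normalizing the uniform bound via a shift is a reasonable clarification.
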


\smallskip We are now ready to prove the following:
\begin{prop}[Smooth base change for Artin stacks]\label{et_smooth_basechange_algebraic}
Let
$$\xymatrix{
\mstack X_2 \ar[r]^q\ar[d]^g & \mstack X_1 \ar[d]^f \\
\mstack Y_2 \ar[r]^p & \mstack Y_1
}$$
be a pullback square of Artin stacks such that $f$ is a quasi-compact quasi-separated (see Definitions \ref{quasicompact_Artin} and \ref{quasiseparated_Artin}) and $p$ is smooth. Then the following diagram is right adjointable
\begin{align}\label{etale_bc_eq1}
\xymatrix{
\Shv_{\et}^+(\mstack X_2, \Lambda) && \ar[ll]_-{q^{-1}} \Shv_{\et}^+(\mstack X_1, \Lambda) \\
\Shv_{\et}^+(\mstack Y_2, \Lambda) \ar[u]_{g^{-1}} && \ar[ll]_-{p^{-1}} \Shv_{\et}^+(\mstack Y_1, \Lambda) \ar[u]_{f^{-1}}.
}
\end{align}

\begin{proof}
We first treat a special case when $\mstack Y_1$ is affine, then deduce the general statement.
\begin{enumerate}[wide, topsep=\parskip, parsep=\parskip, itemsep=\parskip, label=\underline{Step \arabic*.}]
\item Assume that $\mstack X_1, \mstack X_2, \mstack Y_2$ are arbitrary Artin stacks, but $\mstack Y_1$ is an affine scheme. Let $\mstack X_1$ be $k$-Artin. We will prove the statement by induction on $k$. Since the question is flat-local on $\mstack Y_2$, we can assume that $\mstack Y_2$ is an affine scheme. Note that in the $k=-1$ case by quasi-compactness of $f$ we know that $\mstack X_1$ is an affine scheme (as opposed to a possibly infinite disjoint union of affines). Hence the base of induction $k=-1$ follows from \Cref{smooth_bc_schemes}.

For $k\ge 0$, by quasi-compactnesson of $f$, there exists a smooth affine atlas $U\surj \mstack X_1$, and, by quasi-separatedness of $f$, all terms in the corresponding \v Cech nerve $U_\bullet$ are quasi-compact and quasi-separated. Set $V:=\mstack X_2\times_{\mstack X_1} U$ and let $V_\bullet$ be the \v{C}ech nerves of $V\surj \mstack X_2$. Also let $f_i\colon U_i \to \mstack X_1 \xrightarrow{f} \mstack Y_1$, $g_i\colon V_i \to \mstack X_2 \xrightarrow{g} \mstack Y_2$ and $q_i\colon V_i\to U_i$ be the canonical maps (in particular $q_{-1} = q$):
$$\xymatrix{
V_i \ar[r]^{q_i} \ar[d]\ar@/_1pc/[dd]_{g_i} & U_i \ar[d] \ar@/^1pc/[dd]^{f_i}\\
\mstack X_2 \ar[r]^q \ar[d]^g & \mstack X_1\ar[d]_f \\
\mstack Y_2 \ar[r]^p & \mstack Y_1.
}$$
Let $\mathcal F$ be a bounded below \'etale sheaf on $\mstack X_1$. Then by descent and the fact that $f_*$ and $g_*$ (being right adjoints) commute with limits, we have
$$p^{-1} f_*\mathcal F \simeq p^{-1}\Tot(f_{\bullet*} \mathcal F^\bullet) \qquad\text{and}\qquad g_*(q^{-1} \mathcal F) \simeq \Tot(g_{\bullet*} (q^{-1} \mathcal F)^\bullet) \simeq \Tot(g_{\bullet*} q_\bullet^{-1} \mathcal F^\bullet).$$
Now since $\mstack X_1$ is $k$-Artin, all elements in the \v{C}ech nerves $U_\bullet, V_\bullet$ are $(k-1)$-Artin. It follows by induction that
$$\Tot(g_{\bullet*} q_\bullet^{-1} \mathcal F^\bullet) \simeq \Tot(p^{-1}f_{\bullet*}\mathcal F^\bullet).$$
Hence it is enough to prove that the natural map
$$p^{-1}\Tot(f_{\bullet*} \mathcal F^\bullet) \xymatrix{\ar[r] &} \Tot(p^{-1}f_{\bullet*}\mathcal F^\bullet)$$
is an equivalence. But since the pushforward functors are right $t$-exact, the simplicial diagram $f_{\bullet*}\mathcal F^\bullet$ is uniformly bounded below, so we conclude by \Cref{pullback_preserves_totalizations_of_coconnectives}.

\item Now assume that $\mstack X_1, \mstack X_2, \mstack Y_1, \mstack Y_2$ are arbitrary Artin stacks. Let $\chi\colon \Delta^1\times \Delta^1 \to \Stk$ denote the square \eqref{etale_bc_eq1}. We want to prove that the diagram of categories $\Shv_\et^{+}(\chi)^{-1}$ is right adjointable. By co-Yoneda lemma $\mstack Y_1 \simeq \colim S_\alpha$, where $S_\alpha$ are affine schemes. Let $\chi_\alpha \coloneqq \chi \times_{\mstack Y_1} S_\alpha$. Since colimits in topoi are universal (see \cite[Theorem 6.1.0.6]{Lur_HTT}) the natural map $\colim \chi_\alpha \to \chi$ is an equivalence. But by the previous step all squares $\Shv_\et^{+}(\chi_\alpha)^{-1}$ are right adjointable. Since by \cite[Corollary 4.7.4.18 (2)]{Lur_HA} the category of right adjointable diagrams is closed under limits, the square $\Shv_\et^+(\chi)^{-1} \simeq \lim_\alpha \Shv_\et^+(\chi_\alpha)^{-1}$ is also right adjointable.\qedhere
\end{enumerate}
\end{proof}
\end{prop}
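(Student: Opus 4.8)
The plan is to reduce the statement to the schematic smooth base change theorem \Cref{smooth_bc_schemes} by \v{C}ech descent, carried out first over an affine base and then globalized. In the first stage I would treat the case where $\mstack Y_1$ is an affine scheme, allowing $\mstack X_1,\mstack X_2$ to be arbitrary Artin stacks; since right adjointability of \eqref{etale_bc_eq1} is flat-local on $\mstack Y_2$, I may also assume $\mstack Y_2$ is affine. I would then induct on the Artin level $k$ of $\mstack X_1$. For $k=-1$, quasi-compactness of $f$ forces $\mstack X_1$ to be an affine scheme, hence so is $\mstack X_2=\mstack X_1\times_{\mstack Y_1}\mstack Y_2$, and the claim is precisely \Cref{smooth_bc_schemes}.

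For the inductive step ($k\ge 0$) I would pick a smooth atlas $U\surj\mstack X_1$ — which exists as a \emph{single} affine scheme by quasi-compactness of $f$ — and observe that, by quasi-separatedness of $f$, every term $U_n$ of its \v{C}ech nerve is a quasi-compact quasi-separated $(k-1)$-Artin stack. Pulling back along $q$ yields a \v{C}ech nerve $V_\bullet\surj\mstack X_2$ with induced maps $f_n\colon U_n\to\mstack Y_1$, $g_n\colon V_n\to\mstack Y_2$ and $q_n\colon V_n\to U_n$. For $\mathcal F\in\Shv_\et^+(\mstack X_1,\Lambda)$, \'etale descent (\Cref{alg_etale_shaeves_basics}) together with the fact that $f_*$ and $g_*$ preserve limits gives $p^{-1}f_*\mathcal F\simeq p^{-1}\Tot(f_{\bullet*}\mathcal F^\bullet)$ and $g_*q^{-1}\mathcal F\simeq\Tot(g_{\bullet*}q_\bullet^{-1}\mathcal F^\bullet)$, where $\mathcal F^\bullet$ denotes the restriction of $\mathcal F$ to $U_\bullet$; the inductive hypothesis applied termwise identifies $\Tot(g_{\bullet*}q_\bullet^{-1}\mathcal F^\bullet)\simeq\Tot(p^{-1}f_{\bullet*}\mathcal F^\bullet)$, so the one remaining point is to move $p^{-1}$ inside the totalization. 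Since $f_*$ is left $t$-exact (\Cref{t_structure_on_et_sheaves}), the cosimplicial sheaf $f_{\bullet*}\mathcal F^\bullet$ on $\mstack Y_1$ is uniformly bounded below, and then \Cref{pullback_preserves_totalizations_of_coconnectives} yields $p^{-1}\Tot(f_{\bullet*}\mathcal F^\bullet)\simeq\Tot(p^{-1}f_{\bullet*}\mathcal F^\bullet)$, finishing this stage.

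For the general case I would regard \eqref{etale_bc_eq1} as a functor $\chi\colon\Delta^1\times\Delta^1\to\Stk_R$, present $\mstack Y_1\simeq\colim_\alpha S_\alpha$ as a colimit of affine schemes, set $\chi_\alpha\coloneqq\chi\times_{\mstack Y_1}S_\alpha$, and use universality of colimits in the \'etale topos to obtain $\colim_\alpha\chi_\alpha\simeq\chi$. By the first stage each square $\Shv_\et^+(\chi_\alpha)^{-1}$ is right adjointable, and since $\Shv_\et^+(-,\Lambda)^{-1}$ carries colimits of prestacks to limits of categories (\Cref{alg_etale_shaeves_basics}) one has $\Shv_\et^+(\chi)^{-1}\simeq\lim_\alpha\Shv_\et^+(\chi_\alpha)^{-1}$; since right adjointable squares are closed under limits (\cite[Corollary 4.7.4.18]{Lur_HA}), the proof is complete. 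The step I expect to require the real work is the interchange of $p^{-1}$ with the descent totalization $\Tot$ in the inductive step: this is the single place where restricting to bounded-below sheaves $\Shv_\et^+$ (rather than working with all of $\Shv_\et$) is essential, since without a uniform lower bound on the cosimplicial object a smooth pullback need not commute with an infinite totalization. One must also set up the induction carefully so that the chosen atlas can be taken affine (quasi-compactness of $f$) and so that the terms of the \v{C}ech nerve stay quasi-compact and quasi-separated, making the inductive hypothesis applicable to them.
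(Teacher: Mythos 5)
Your proposal matches the paper's proof essentially verbatim: the same two-stage reduction (first $\mstack Y_1$ affine with induction on the Artin level of $\mstack X_1$ via a \v{C}ech nerve of a smooth affine atlas, then globalization via the co-Yoneda presentation of $\mstack Y_1$ and closure of right adjointable squares under limits), the same appeal to \Cref{pullback_preserves_totalizations_of_coconnectives} to commute $p^{-1}$ past the totalization, and the same base case from \Cref{smooth_bc_schemes}. Your closing remarks on why bounded-belowness and quasi-compactness/quasi-separatedness are needed correctly identify the load-bearing hypotheses.
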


\begin{ex}
	Let $f\colon \mstack X\ra \mstack Y$ be a qcqs schematic map of Artin stacks, let $\mc F\in \Shv_{\et}(\mstack X,\Lambda)$ be a sheaf and consider a smooth atlas $p\colon U \ra \mstack X$ which we assume is also schematic. Then we get that the pull-back $p^{-1}f_*\mc F$ can be described as $g_*q^{-1}\mc F$ where $$
	\xymatrix{
		\mstack X\times_{\mstack Y} U \ar[r]^q\ar[d]^g & \mstack X \ar[d]^f \\
		U \ar[r]^p & \mstack Y.
	}
	$$ Note that the fiber product $V\coloneqq \mstack X\times_{\mstack Y} U$ is represented by a (non-derived) scheme since $p$ is smooth and that $g_*\colon \Shv_{\et}(V,\Lambda) \ra \Shv_{\et}(U,\Lambda)$ is the usual pushforward for (the derived categories) of \'etale sheaves on schemes. 
\end{ex}

\subsection{Local systems and constructible sheaves}\label{subsect:local systems}
Consider a map of schemes $f\colon X\ra Y$ and an \'etale sheaf $\mc F$ on $X$, and let's say that we are interested in computing the global sections $\RG_\et(X,\mc F)$. Then it be can helpful to look at $\RG_\et(Y, f_*\mc F)$ instead: namely one gets a filtration on $\RG_\et(X,\mc F)$ with the associated graded given by $\bigoplus_i \RG_\et(Y,R^if_*F)[-i]$. The situation is particularly nice if $\mc F$ is a local system and $f$ is smooth and proper: then the individual cohomology sheaves $R^if_*\mc F$ are also local systems. If $Y$ is (\'etale) simply-connected, then all local systems are trivial and the sum above is given by the global sections of constant sheaves (with some cohomological shifts). We would like to generalize this picture to the context of Artin stacks.

Let $R$ and $\Lambda$ be as in the previous section. First recall the following standard notions:
\begin{defn}\label{def: etale local systems for schemes}
Let $X$ be a scheme. An (abelian) \'etale sheaf $\mathcal F \in \Shv_\et^\heartsuit(X, \Lambda)$ of $\Lambda$-modules is called
\begin{itemize}
\item an \emdef{\'etale local system} or an \emdef{\'etale locally constant sheaf} if there exists an \'etale covering $\{T_i\}$ of $X$ such that for all $i$ the restriction $\mathcal F_{|T_i}$ is a constant sheaf of finitely generated $\Lambda$-modules.

\item a \emdef{constructible sheaf} if for each affine open $U$ of $X$ there exists a finite stratification $\eset = U_0 \subseteq U_1 \subseteq \ldots \subseteq U_n = U$ by quasi-compact open subsets $U_i$ such that the restriction of $\mathcal F$ on each $U_i\setminus U_{i-1}$ is an \'etale locally constant sheaf of finitely generated $\Lambda$-modules.
\end{itemize}
An object $\mathcal F$ of the derived category $\Shv_\et(X, \Lambda)$ is called a local system (resp. constructible sheaf) if all of its cohomology sheaves $\mathcal H^i(\mathcal F)\in \Shv_\et^{\heartsuit}(X, \Lambda)$ are local systems (resp. constructible sheaves).
\end{defn}

We extend this definition to Artin stacks in a natural way. We restrict to the bounded below setting from the start.
\begin{defn}
Let $\mstack X$ be an Artin stack. A sheaf $\mathcal F \in \Shv_\et^+(\mstack X, \Lambda)$ is called
\begin{itemize}
\item a \emdef{bounded below \'etale local system} if for every affine scheme $T$ mapping to $\mstack X$ all cohomology sheaves $\mathcal H^i(\mathcal F_{|T}) \in \Shv_\et^\heartsuit(T, {\Lambda})$ are local systems. We will denote the full subcategory of such sheaves by $\mathscr L_{\et}^+(X, \Lambda)$.

\item \emdef{bounded below constructible} if for every affine scheme $T$ mapping to $\mstack X$ all cohomology sheaves $\mathcal H^i(\mathcal F_{|T}) \in \Shv_\et^\heartsuit(T, {\Lambda})$ are constructible. We will denote the full subcategory of \'etale sheaves on $\mstack X$ spanned by constructible sheaves by $\Shv^+_{\et, \constr}(\mstack X, \Lambda)$.
\end{itemize}
Both $\mathcal L_\et^+(\mstack X, \Lambda)$ and $\Shv^+_{\et, \constr}(\mstack X, \Lambda)$ are stable subcategories of $\Shv_\et(\mstack X, \Lambda)$ closed under retracts and extensions and we have a chain of full embeddings 
$$\mathcal L_\et^+(\mstack X, \Lambda) \subseteq \Shv^+_{\et, \constr}(\mstack X, \Lambda) \subseteq \Shv_\et^+(\mstack X, \Lambda).$$
Both $\mathscr L_{\et}^+(\mstack X, \Lambda)$ and $\Shv^+_{\et, \constr}(\mstack X, \Lambda)$ admit natural $t$-structures determined by 
$$\mathscr L_{\et}^{\ge 0}(\mstack X, \Lambda)\coloneqq \Shv_{\et}^{\ge 0}(\mstack X, \Lambda)\cap \mathscr L_{\et}^+(\mstack X, \Lambda) \qquad \Shv_{\et, \constr}^{\ge 0}(\mstack X, \Lambda)\coloneqq \Shv_{\et}^{\ge 0}(\mstack X, \Lambda)\cap \Shv^+_{\et, \constr}(\mstack X, \Lambda).$$
\end{defn}

\smallskip One can characterize $\mathscr L_{\et}^+(\mstack X, \Lambda)$ and $\mathscr L_{\et}^{\ge 0}(\mstack X, \Lambda)$ in terms of a surjective cover:
\begin{lem}\label{local_sys_via_atlas}
Let $p \colon \mstack U\to \mstack X$ be a surjective morphism of Artin stack. Then
\begin{enumerate}
\item An \'etale sheaf $\mathcal F\in \Shv_{\et}(\mstack X, \Lambda)$ is a bounded below local system (resp. constructible sheaf) if and only if $p^{-1}(\mathcal F)$.

\item An \'etale local system $\mathcal L$ lies in $\mathscr L_{\et}^{\ge 0}(\mstack X, \Lambda)$ (resp. $\mathscr L_{\et}^{\le 0}(\mstack X, \Lambda)$) if and only if $p^{-1}(\mathcal L) \in \mathscr L_{\et}^{\ge 0}(\mstack U, \Lambda)$ (resp. $p^{-1}(\mathcal L) \in \mathscr L_{\et}^{\le 0}(\mstack U, \Lambda)$). Similar statement holds for the bounded below constructible categories.
\end{enumerate}

\begin{proof}
We will only prove the first part for the local system case, the proof of other assertions is similar. Let $X \to \mstack X$ be a map, where $X$ is a disjoint union of affines. Then, since $\mstack U\surj \mstack X$ is surjective, for some \'etale cover $V \to X$ there exists a dashed arrow making the diagram below commutative
$$\xymatrix{
V \ar@{-->}[r]\ar@{->>}[d] & \mstack U \ar@{->>}[d]\\
X \ar[r] & \mstack X
}$$
Then $\mathcal F_{|V}$ is a local system because it is a local system already restricted on $U$. On the other hand since the property of being a local system on a scheme can be checked  \'etale locally, we conclude that $\mathcal F_{|X}$ is a local system.
\end{proof}
\end{lem}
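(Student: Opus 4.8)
\textbf{Proof plan for \Cref{local_sys_via_atlas}.}
The statement is purely local on a surjective cover, so the plan is to reduce everything to the corresponding classical fact for schemes, namely that being a local system (resp.\ constructible sheaf) on a scheme can be checked \'etale-locally, and that connectivity of a (bounded below) \'etale sheaf can be checked \'etale-locally (which is built into \Cref{t_structure_on_et_sheaves} and \Cref{alg_etale_shaeves_basics}). The only genuinely stacky ingredient is the observation that surjectivity of $p\colon \mstack U\to\mstack X$ in the \'etale topology means that for any map $X\to\mstack X$ from a disjoint union of affine schemes there is an \'etale cover $V\to X$ through which $X\to\mstack X$ factors through $\mstack U$; this is exactly the definition of an epimorphism of \'etale sheaves and needs no geometric input.

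For part (1), I would argue as follows. One direction is trivial: if $\mc F$ is a bounded below local system on $\mstack X$ then by definition $\mc H^i(\mc F_{|T})$ is a local system for every affine $T\to\mstack X$, in particular for every affine $T\to\mstack U$ composed with $p$, so $p^{-1}\mc F$ is a local system (here one uses that $p^{-1}$ is $t$-exact, \Cref{t_structure_on_et_sheaves}(3), so $\mc H^i(p^{-1}\mc F_{|T}) \simeq (\mc H^i\mc F)_{|T}$). For the converse, suppose $p^{-1}\mc F$ is a bounded below local system and fix $x\colon X\to\mstack X$ with $X$ a disjoint union of affines. Choose an \'etale cover $V\twoheadrightarrow X$ together with a lift $V\to\mstack U$ as in the displayed diagram; by $t$-exactness of the various pullbacks, $\mc H^i(\mc F)_{|V}$ is the restriction of the local system $\mc H^i(p^{-1}\mc F)$ along $V\to\mstack U$ (an affine scheme mapping to $\mstack U$), hence is a local system on $V$. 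Since $V\to X$ is an \'etale cover and the property of being a local system of finitely generated $\Lambda$-modules is \'etale-local on schemes, it follows that $\mc H^i(\mc F)_{|X}$ is a local system; as $X$ was arbitrary, $\mc F$ is a bounded below local system on $\mstack X$. The constructible case is identical, using instead that constructibility of an \'etale sheaf on a scheme is \'etale-local, together with quasi-compactness bookkeeping for the stratifications, which is routine.

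For part (2), given a local system $\mc L$, I would check the connectivity conditions cohomology-degree by cohomology-degree. By \Cref{t_structure_on_et_sheaves}(1), $\mc L\in\Shv_\et^{\ge 0}(\mstack X,\Lambda)$ iff $\mc L_{|T}\in\Shv_\et^{\ge 0}(T,\Lambda)$ for all affine $T\to\mstack X$, and since $p^{-1}$ is $t$-exact the condition $p^{-1}\mc L\in\mathscr L_\et^{\ge 0}(\mstack U,\Lambda)$ is equivalent to $(p^{-1}\mc L)_{|T'}\in\Shv_\et^{\ge 0}(T',\Lambda)$ for all affine $T'\to\mstack U$. One direction is again immediate from $t$-exactness of $p^{-1}$ and the fact that composing $T'\to\mstack U\to\mstack X$ gives an admissible test object for $\mstack X$. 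For the converse, fix $x\colon X\to\mstack X$, choose the \'etale cover $V\to X$ with its lift to $\mstack U$; then $\mc L_{|V}$, being a pullback of $p^{-1}\mc L$ to the affine scheme $V$ over $\mstack U$, lies in $\Shv_\et^{\ge 0}(V,\Lambda)$, and since $V\to X$ is a cover and the truncation functors on $\Shv_\et(-,\Lambda)$ for schemes commute with \'etale pullback (equivalently, connectivity is \'etale-local), we get $\mc L_{|X}\in\Shv_\et^{\ge 0}(X,\Lambda)$. The $\le 0$ statement and the constructible variant follow by the same argument; I expect the main (minor) obstacle to be only the careful handling of the fact that one is allowed to test connectivity and constructibility on \emph{arbitrary} affines $T\to\mstack X$, not just smooth ones, but this is guaranteed by the very definition of the $t$-structure and of local systems / constructible sheaves on $\mstack X$, so no extra work is needed.
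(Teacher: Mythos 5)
Your argument is correct and follows the paper's own proof essentially verbatim: both reduce to a test object $X\to\mstack X$ with $X$ a disjoint union of affines, use surjectivity of $p$ to produce an \'etale cover $V\twoheadrightarrow X$ lifting to $\mstack U$, and then invoke \'etale-locality of the relevant property on schemes. The extra bookkeeping you spell out via $t$-exactness of pullbacks is implicit in the paper's one-paragraph proof.
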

\begin{cor}\label{functoriality_of_loc_sys}
Let $f\colon \mstack X \to \mstack Y$ be a morphism of Artin stacks. Then:
\begin{enumerate}
\item The pullback functor $f^{-1}\colon \Shv_{\et}(\mstack Y, \Lambda) \to \Shv_{\et}(\mstack X, \Lambda)$ preserves the (bounded below) constructible subcategory and local systems.

\item If $f$ is schematic proper (see \Cref{local_on_target_examples}) the pushforward functor $f_*\colon \Shv_\et(\mstack X, \Lambda) \to \Shv_\et(\mstack Y, \Lambda)$ preserves the (bounded below) constructible subcategory. In particular, $f_{*| \Shv_{\et, \constr}^+(\mstack X, \Lambda)}$ is right adjoint to $f^{-1}_{|\Shv_{\et, \constr}^+(\mstack Y, \Lambda)}$.

\item Let $f\colon \mstack X \to \mstack Y$ be an arbitrary morphism between stacks of finite type (see \Cref{def:finite_type}) over a regular base scheme of dimension $\le 1$. Then the pushforward functor $f_* \colon \Shv_{\et}(\mstack X, \Lambda) \to \Shv_\et(\mstack Y, \Lambda)$ preserves the (bounded below) constructible subcategory.

\item If $f$ is smooth and schematic proper then $f_*$ preserves the subcategory of bounded below local systems. In particular, $f_{*| \mathcal L_{\et}^+(\mstack X, \Lambda)}$ is right adjoint to $f^{-1}_{|\mathcal L_{\et}^+(\mstack Y, \Lambda)}$.
\end{enumerate}

\begin{proof}
\begin{enumerate}[wide,itemsep=\parskip]
\item Let $U\surj \mstack Y$ be a smooth atlas and consider the following commutative diagram
$$\xymatrix{
V \ar@{->>}[rd]\ar@/^/[rrd] \ar@/_/[rdd]_p & & \\
& \mstack X\times_{\mstack Y} U \ar[r]\ar@{->>}[d] & U \ar[d] \\
& \mstack X \ar@{->>}[r] & \mstack Y,
}$$
where $V \surj \mstack X\times_{\mstack Y} U$ is a smooth cover by a union of affine schemes, which exists since a fibered product of Artin stacks is again an Artin stack. Now for any $\mathcal F \in \mathscr L_{\et}^+(\mstack Y, \Lambda)$ (resp. in $\Shv_{\et, \constr}^+(\mstack Y, \Lambda)$) we have that $\mathcal F_{|U}$ is a local system (resp. constructible) and hence $(f^{-1}\mathcal F)_{|V} \simeq p^{-1}\mathcal F_{|U}$ is a local system (resp. constructible) as well. We conclude by \Cref{local_sys_via_atlas}.

\item Let $k$ be such that $\mstack X, \mstack Y$ are at most $k$-Artin. We will prove the statement by induction on $k$, the base of induction $k=0$ being \cite[Expos\'e XIV, Th\'eor\`em 1.1]{SGA4_3}. Assume the statement is proved for $k^\prime < k$. Let us take a smooth atlas $U\surj \mstack Y$. By \Cref{local_sys_via_atlas} it is enough to prove that the restriction $(f_*\mathcal \mathcal F)_{|U}$ is constructible. Consider the fiber square
$$\xymatrix{
\mstack X_U\ar[r]\ar[d]^g & \mstack X \ar[d]^f \\
U \ar@{->>}[r] & \mstack Y.
}$$
By the base change we have $(f_*\mathcal \mathcal F)_{|U} \simeq g_*(\mathcal F_{|V})$. By the equivalence above it is enough to prove $g_*\mathcal G$ is constructible for $\mathcal G := \mathcal F_{|V}$. Let $V\surj \mstack X_U$ be an atlas, $V_\bullet$ be the corresponding \v{C}ech nerve, and $g_i\colon V_i\to \mstack X_U \xrightarrow{g} U$ be the canonical morphisms (note that all of $V_i$ are at most $(k-1)$-Artin stacks).  Then by descent we have $g_*(\mathcal G) \simeq \Tot(g_{\bullet*}\mathcal G^\bullet)$. By induction assumption all $g_{i*}\mathcal G^i$ are constructible. By shifting if necessary we can assume $\mathcal F \in \Shv_{et, \constr}^{\ge 0}(\mstack X_U, \Lambda)$ and thus so are all the terms in the cosimplicial diagram $g_{\bullet*}(\mathcal G^\bullet)$. Hence by \Cref{pullback_preserves_totalizations_of_coconnectives} we have 
$$\mathcal H^n(g_*\mathcal G) \simeq \mathcal H^n(\Tot^{\le n}(g_{\bullet*}\mathcal G^\bullet)).$$
But since $\Tot^{\le n}$ is a finite limit, all $\Tot^{\le n}(g_{\bullet*}\mathcal G^\bullet)$ are constructible sheaves. It follows that all cohomology sheaves of $g_*\mathcal G$ are constructible, and so $g_*\mathcal G$ is constructible as desired.

\item Same reduction to the case of schemes, which is covered by \cite[Chapitre 7, Th\'eor\`eme 1.1]{SGA4_12}.

\item Again the same reduction as in the second point, but using \cite[Expos\'e XVI, Corollaire 2.2]{SGA4_3} for the case of schemes.\qedhere
\end{enumerate}
\end{proof}
\end{cor}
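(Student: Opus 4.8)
The plan is to reduce each of the four assertions to its classical counterpart for schemes, detecting constructibility and the local‑system property on a smooth atlas by means of \Cref{local_sys_via_atlas}, and trading a pushforward along $f$ for a pushforward along its base change to such an atlas by means of \Cref{et_smooth_basechange_algebraic}.

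Part (1) is purely formal and needs neither base change nor totalizations. Given $\mathcal F\in\Shv_\et^+(\mstack Y,\Lambda)$ and any $T\to\mstack X$ with $T$ a disjoint union of affines, the composite $T\to\mstack X\xrightarrow{f}\mstack Y$ factors, after some \'etale cover $V\to T$, through a disjoint union of affines $S$ over $\mstack Y$, so $(f^{-1}\mathcal F)_{|V}\simeq(\mathcal F_{|S})_{|V}$ is a local system (resp.\ constructible); since both conditions are \'etale‑local on schemes, $(f^{-1}\mathcal F)_{|T}$ is too, and one concludes by \Cref{local_sys_via_atlas}.

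For parts (2)--(4) I would induct on the Artin truncation level of the source $\mstack X$. The base cases are the classical finiteness theorems for schemes: \cite[Expos\'e XIV]{SGA4_3} for proper pushforward of constructible sheaves (part (2)); \cite[Chapitre 7, Th\'eor\`eme 1.1]{SGA4_12} for pushforward of constructible sheaves along an arbitrary finite‑type morphism over a regular base of dimension $\le 1$ (part (3)); and Deligne's theorem \cite[Expos\'e XVI, Corollaire 2.2]{SGA4_3} that higher direct images along a smooth proper morphism of schemes carry local systems to local systems (part (4)). For the inductive step, pick a smooth atlas $U\twoheadrightarrow\mstack Y$; by \Cref{local_sys_via_atlas} it suffices to show that $(f_*\mathcal F)_{|U}$ is constructible (resp.\ a local system), and by \Cref{et_smooth_basechange_algebraic} (valid since $f$ is quasi-compact quasi-separated) it equals $g_*(\mathcal F_{|\mstack X_U})$ with $g\colon\mstack X_U=\mstack X\times_{\mstack Y}U\to U$. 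Choosing a smooth atlas $V\twoheadrightarrow\mstack X_U$ with \v Cech nerve $V_\bullet$ and structure maps $g_i\colon V_i\to U$, descent gives $g_*\mathcal G\simeq\Tot(g_{\bullet*}\mathcal G^\bullet)$ with each $g_{i*}\mathcal G^i$ constructible (resp.\ a local system) by the inductive hypothesis (and, in parts (2) and (4), already by the scheme‑level input, since $f$ schematic makes $g$ a genuine morphism of schemes up to one more \'etale cover). After shifting so that $\mathcal G\in\Shv_\et^{\ge 0}$, left $t$-exactness of the pushforwards (\Cref{t_structure_on_et_sheaves}) makes $g_{\bullet*}\mathcal G^\bullet$ uniformly bounded below, so by \Cref{totalization_of_coconectives} the fibre of $\Tot\to\Tot^{\le n}$ is concentrated in cohomological degrees $>n$; hence $\mathcal H^n(g_*\mathcal G)\simeq\mathcal H^n(\Tot^{\le n}g_{\bullet*}\mathcal G^\bullet)$ is computed by a \emph{finite} limit of constructible sheaves (resp.\ local systems), which again lies in the relevant subcategory since $\Shv^+_{\et,\constr}$ and $\mathcal L_\et^+$ are stable subcategories closed under finite limits. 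Thus all cohomology sheaves of $g_*\mathcal G$, and so of $f_*\mathcal F$, are constructible (resp.\ local systems); the stated adjunctions follow formally, both $f^{-1}$ and $f_*$ preserving these subcategories.

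The main obstacle is the interplay between the descent totalization and the $t$-structure: one must ensure that the cosimplicial diagrams $g_{\bullet*}\mathcal G^\bullet$ stay uniformly bounded below so that \Cref{totalization_of_coconectives} applies and each $\mathcal H^n$ is governed by only finitely many cosimplicial levels, and one must keep track of the Artin level so that the induction terminates (this is also where the smoothness and properness hypotheses of part (4) enter, guaranteeing that after base change $g$ is smooth and proper on the scheme level). Once the target has been replaced by an atlas and smooth base change has been invoked, the remaining content is just the invocation of the appropriate SGA statements.
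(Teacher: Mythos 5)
Your proof follows essentially the same approach as the paper: for part (1), detect the property on an atlas via \Cref{local_sys_via_atlas}; for parts (2)--(4), induct on the Artin level, pass to an atlas of $\mstack Y$ via \Cref{local_sys_via_atlas}, trade $f_*$ for $g_*$ via the smooth base change of \Cref{et_smooth_basechange_algebraic}, write $g_*\mathcal G$ as a \v Cech totalization, and use the coconnectivity estimate to reduce to a finite limit to which the scheme-level SGA results apply. In fact your write-up is slightly cleaner on two points where the paper is mildly imprecise: you correctly cite the connectivity bound for $\Tot\to\Tot^{\le n}$ from \Cref{totalization_of_coconectives} (the paper instead cites \Cref{pullback_preserves_totalizations_of_coconnectives}, which is really a corollary of it), and you correctly invoke left $t$-exactness of $f_*$ to obtain uniform boundedness.

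One small caveat worth being aware of, which is present in both your write-up and the paper's: when $f$ is schematic (parts (2) and (4)), the maps $g_i\colon V_i\to U$ in the \v Cech totalization are compositions of a smooth cover with a proper map and so are in general not proper, so the inductive hypothesis in the form stated does not literally apply to them. This is harmless because in those cases $\mstack X_U$ is already a scheme and $g$ itself is a proper (resp.\ smooth proper) morphism of schemes, so one can apply the scheme-level input directly and the \v Cech descent step is unnecessary — a simplification you rightly flag in your parenthetical. The genuinely stacky induction and totalization argument is only needed for part (3), where $f$ need not be schematic.
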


From \Cref{alg_etale_shaeves_basics} and \Cref{local_sys_via_atlas} we obtain the following corollary:
\begin{cor}
Let $\mstack U \surj \mstack X$ be a surjection of Artin stacks and let $\mstack U_\bullet$ be the corresponding \v Cech diagram. Then the natural functors
$$\Shv_{\et, \constr}^+(\mstack X, \Lambda)\xymatrix{\ar[r] &} \Tot \Shv_{\et, \constr}^+(\mstack U_\bullet, \Lambda) \qquad\qquad \mathcalr L_{\et}^+(\mstack X, \Lambda) \xymatrix{\ar[r] &} \Tot \mathcalr L_{\et}^+(\mstack U_\bullet, \Lambda)$$
are $t$-exact equivalences.
\end{cor}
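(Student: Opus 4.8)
The plan is to deduce the corollary formally from the descent equivalence for all \'etale sheaves together with the fact that the three properties in play --- bounded below, constructible, local system --- are detected after pulling back along the cover $\mstack U$.

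First I would recall that by \Cref{alg_etale_shaeves_basics}(4) pullback along $p\colon \mstack U \surj \mstack X$ gives an equivalence $\Shv_{\et}(\mstack X,\Lambda)\xrightarrow{\sim}\Tot\Shv_{\et}(\mstack U_\bullet,\Lambda)$, sending $\mc F$ to the system $(p_n^{-1}\mc F)_n$, where $p_n\colon \mstack U_n\to\mstack X$ are the canonical projections; in particular $p_0=p$ and $p^{-1}$ is conservative. The inclusions $\Shv_{\et,\constr}^+(\mstack U_n,\Lambda)\hookrightarrow\Shv_{\et}(\mstack U_n,\Lambda)$ and $\mathcalr L_{\et}^+(\mstack U_n,\Lambda)\hookrightarrow\Shv_{\et}(\mstack U_n,\Lambda)$ are fully faithful and compatible with the cosimplicial structure maps (pullback preserves these subcategories by \Cref{functoriality_of_loc_sys}(1)), hence the induced functors $\Tot\Shv_{\et,\constr}^+(\mstack U_\bullet,\Lambda)\to\Tot\Shv_{\et}(\mstack U_\bullet,\Lambda)$ and $\Tot\mathcalr L_{\et}^+(\mstack U_\bullet,\Lambda)\to\Tot\Shv_{\et}(\mstack U_\bullet,\Lambda)$ are again fully faithful, with essential image the systems $(\mc F_n)$ \emph{all} of whose terms lie in the respective subcategory (for a limit of componentwise fully faithful functors an object of the total limit lifts precisely when every component does, the coherence data being then forced by full faithfulness). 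Transporting along the equivalence, it therefore suffices to show: $\mc F\in\Shv_{\et}(\mstack X,\Lambda)$ satisfies $p_n^{-1}\mc F\in\Shv_{\et,\constr}^+(\mstack U_n,\Lambda)$ (resp.\ $\in\mathcalr L_{\et}^+(\mstack U_n,\Lambda)$) for all $n$ if and only if $\mc F\in\Shv_{\et,\constr}^+(\mstack X,\Lambda)$ (resp.\ $\in\mathcalr L_{\et}^+(\mstack X,\Lambda)$), and that the $t$-structures correspond.

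For one direction, pullback is $t$-exact (\Cref{t_structure_on_et_sheaves}(3)), so it preserves boundedness below, and it preserves constructibility and local systems (\Cref{functoriality_of_loc_sys}(1)); thus each $p_n^{-1}\mc F$ lies in the relevant subcategory. For the converse it is enough to inspect $n=0$: if $p^{-1}\mc F$ is bounded below, then $\mc F$ is bounded below, since $p^{-1}$ is $t$-exact and conservative (a truncation $\tau^{<n}\mc F$ with $p^{-1}\tau^{<n}\mc F\simeq\tau^{<n}p^{-1}\mc F\simeq 0$ vanishes), and if moreover $p^{-1}\mc F$ is constructible (resp.\ a local system) then $\mc F$ is, by \Cref{local_sys_via_atlas}(1). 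For $t$-exactness: the $t$-structures on $\Shv_{\et,\constr}^+$, $\mathcalr L_{\et}^+$ over each $\mstack U_n$ and over $\mstack X$ are the ones induced from $\Shv_{\et}$, so by \Cref{lem: t-structure in the limit} and \Cref{limits_of_t_structures} the $t$-structure on the totalization is computed termwise, and one is left to check that $\mc F$ is connective (resp.\ coconnective) exactly when each $p_n^{-1}\mc F$ is --- the forward implication is $t$-exactness of $p_n^{-1}$, and the converse follows from the $n=0$ case plus conservativity and $t$-exactness of $p^{-1}$, exactly as in \Cref{local_sys_via_atlas}(2).

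There is no genuine obstacle in this argument: it is entirely formal given the descent equivalence of \Cref{alg_etale_shaeves_basics}(4), the stability properties of \Cref{functoriality_of_loc_sys}(1) and \Cref{local_sys_via_atlas}, and the exactness and conservativity of $p^{-1}$. The one place that should be made explicit --- the closest thing to a subtlety --- is the bookkeeping identifying the essential image of a limit of fully faithful functors and verifying that the $t$-structure on $\Tot\Shv_{\et}(\mstack U_\bullet,\Lambda)$ restricts correctly to the constructible and local-system totalizations; both points are settled by reducing, via conservativity of $p^{-1}$, to a single component.
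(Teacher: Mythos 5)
Your proof is correct and takes essentially the same route the paper intends: the paper leaves the corollary unproved, simply citing \Cref{alg_etale_shaeves_basics} and \Cref{local_sys_via_atlas}, and your argument is the natural expansion of that hint. You correctly identify the descent equivalence of \Cref{alg_etale_shaeves_basics}(4) as the backbone, reduce the identification of the essential image of $\Tot$ of a termwise fully faithful diagram to a componentwise check, invoke \Cref{local_sys_via_atlas}(1) for the characterization of constructibility and local systems after pullback, and \Cref{local_sys_via_atlas}(2) together with the termwise description of the limit $t$-structure (\Cref{lem: t-structure in the limit}) for $t$-exactness.
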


The following result describes the heart of the $t$-structure on $\Shv_{\et}(-, \Lambda)$ in the simply-connected case. From now on we assume $R=C$ is an algebraically closed field.
\begin{defn}
A prestack $p\colon \mstack X\to\Spec C$ is called \emdef{$\Lambda$-\'etale connected} if the natural map 
$$\Mod_\Lambda^{\fg} \simeq \xymatrix{\mathscr L_{\et}^\heartsuit(\Spec C, \Lambda) \ar[r]^-{p^{-1}} & \mathscr L_{\et}^\heartsuit(\mstack X, \Lambda)}$$
is fully faithful. A prestack $\mstack X$ is called \emdef{$\Lambda$-\'etale simply connected} if the functor above is an equivalence.
\end{defn}
\begin{ex}
Let $\Lambda \neq 0$. Then a scheme $X$ over $C$ is $\Lambda$-\'etale connected if and only if it is connected in Zariski topology and it is $\Lambda$-\'etale simply connected if and only if $\pi_1^{\et}(X, x) \simeq *$ for any (equivalently for all) geometric point $x$ in $X$.
\end{ex}
\begin{prop}\label{simply_connected}
Let $\mstack U \surj \mstack X$ be a surjective map between Artin stacks and let $p\colon \mstack X \to \Spec C$ be a structure map.
\begin{enumerate}
\item If $\mstack U$ is $\Lambda$-\'etale connected, then the functor of a constant local system
$$\Mod_\Lambda^\fg \simeq \xymatrix{\mathscr L_{\et}^\heartsuit(\Spec C, \Lambda) \ar[r]^-{p^{-1}} & \mathscr L_{\et}^\heartsuit(\mstack X, \Lambda)}$$
is fully-faithful, i.e. $\mstack X$ is $\Lambda$-\'etale connected as well.

\item If $\mstack U_0 := \mstack U$ is $\Lambda$-\'etale simply connected and $\mstack U_1 := \mstack U\times_{\mstack X} \mstack U$ is $\Lambda$-\'etale connected, then $p^{-1}$ is an equivalence, i.e. $\mstack X$ is $\Lambda$-\'etale simply connected.
\end{enumerate}

\begin{proof}
\begin{enumerate}[wide, topsep=\parskip, itemsep=\parskip, parsep=\parskip]\setlength\itemindent{\labelwidth+\labelsep}

\item Since $\mathcal L^\heartsuit(\mstack X, \Lambda)$ is a $1$-category, by descent we know that the pullback functor $\mathcal L^\heartsuit(\mstack X, \Lambda) \to \mathcal L^\heartsuit(\mstack U, \Lambda)$ is faithful. Since the composition
$$\Mod_\Lambda^\fg \simeq \xymatrix{\mathscr L_{\et}^\heartsuit(\Spec C, \Lambda) \ar[r]^-{p^{-1}} & \mathscr L_{\et}^\heartsuit(\mstack X, \Lambda) \ar[r] & \mathscr L_{\et}^\heartsuit(\mstack U, \Lambda)}$$
is fully-faithful, $p^{-1}$ is fully-faithful as well.

\setlength\itemindent{\parindent}\item By the previous part we know that the pullback $p^{-1}\colon \Mod_\Lambda^\fg \to \mathscr L(\mstack X, \Lambda)^\heartsuit$ is fully-faithful, hence it is enough to prove essential surjectivity. For this end let $\mathcal L$ be a local system on $\mstack X$. We will prove that $\mathcal L$ is constant by descent. Since $\mstack U_0$ is simply connected the restriction $\mathcal L_{|\mstack U_0} \simeq \underline M$ for some $M \in \Mod_\Lambda^\fg$. Let $q_1, q_1\colon \mstack U_1 \to \mstack U_0$ be natural projections. Since $\mstack U_1$ is \'etale connected, the descent datum $\alpha\colon q_0^{-1} \underline M \areq q_1^{-1} \underline M$ is pulled back from $\Spec C$, say $\alpha = \underline \phi$ for some $\phi \colon M\areq M$. We want to prove $\phi = \Id_M$.

Let $q_{01}, q_{02}, q_{12}\colon \mstack U_2 := \mstack U \times_{\mstack X} \mstack U \times_{\mstack X} \mstack U \to \mstack U\times_{\mstack X} \mstack U$ be natural projections. The cocylce condition then reads as
$$\underline \phi = q_{02}^{-1}(\underline \phi) = q_{12}^{-1}(\underline \phi) \circ q_{01}^{-1}(\underline \phi) = \underline \phi \circ \underline \phi.$$
Since $\underline \phi$ is invertible it follows that $\underline \phi = \Id_{\underline M}$. Let $L$ be an algebraically closed field extension of $C$ such that $\mstack U_2(L) \not \simeq \eset$. Then since the pullback functor $\mathcal L^\heartsuit_\et(\Spec C, \Lambda) \to \mathcal L^\heartsuit_\et(\Spec L, \Lambda)$ is faithful (in fact an equivalence), the constant local system functor $\mathcal L_\et^\heartsuit(\Spec C, \Lambda) \to \mathcal L_\et^\heartsuit(\mstack U_2, \Lambda)$ is faithful as well. Hence $\phi = \Id_M$.\qedhere
\end{enumerate}
\end{proof}
\end{prop}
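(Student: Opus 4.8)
The final statement is \Cref{simply_connected}, which I will prove by a descent argument reducing the problem to the case of schemes.

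\medskip

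\textbf{Plan.} The key structural input is that $\mathscr L_\et^\heartsuit(-,\Lambda)$ is a $1$-category (it is the heart of a $t$-structure), so by \Cref{functoriality_of_loc_sys}(1) the pullback functor preserves local systems, and by descent along the surjection $\mstack U \surj \mstack X$ (\Cref{alg_etale_shaeves_basics}(4) together with the $t$-exactness statement in the corollary following \Cref{functoriality_of_loc_sys}) the pullback $p^{-1}\colon \mathscr L^\heartsuit(\mstack X,\Lambda)\to\mathscr L^\heartsuit(\mstack U_0,\Lambda)$ along an atlas is \emph{faithful}; this is the analogue of the fact that a sheaf is determined by its value on a cover together with the descent datum. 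For part (1) I would simply observe that the composite $\Mod_\Lambda^\fg\xrightarrow{p^{-1}}\mathscr L^\heartsuit(\mstack X,\Lambda)\to\mathscr L^\heartsuit(\mstack U,\Lambda)$ is fully faithful (the first map into $\mathscr L^\heartsuit(\Spec C,\Lambda)$ is an equivalence $\Mod_\Lambda^\fg\simeq\mathscr L^\heartsuit(\Spec C,\Lambda)$, and then $\Lambda$-\'etale connectedness of $\mstack U$ gives fully faithfulness of the rest); since the second arrow is faithful, the first arrow $p^{-1}$ must itself be fully faithful.

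\medskip

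For part (2), fully faithfulness of $p^{-1}$ already follows from part (1) (note $\mstack U_0$ is \'etale connected since it is \'etale simply connected), so it remains to prove essential surjectivity. Given a local system $\mathcal L\in\mathscr L^\heartsuit(\mstack X,\Lambda)$, by descent it is equivalent to a descent datum: a local system on $\mstack U_0$ together with an isomorphism over $\mstack U_1$ satisfying the cocycle condition over $\mstack U_2:=\mstack U\times_{\mstack X}\mstack U\times_{\mstack X}\mstack U$. Since $\mstack U_0$ is \'etale simply connected, $\mathcal L_{|\mstack U_0}\simeq\underline M$ for some $M\in\Mod_\Lambda^\fg$. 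The descent isomorphism $\alpha\colon q_0^{-1}\underline M\xrightarrow{\sim}q_1^{-1}\underline M$ over $\mstack U_1$ is, by \'etale connectedness of $\mstack U_1$, of the form $\underline\phi$ for some automorphism $\phi$ of $M$; the cocycle condition pulled back to $\mstack U_2$ reads $\underline\phi=\underline\phi\circ\underline\phi$, hence $\underline\phi=\Id$ as an automorphism of $\underline M$ over $\mstack U_2$. To conclude $\phi=\Id_M$ I would pick an algebraically closed extension $L/C$ with $\mstack U_2(L)\neq\varnothing$ (possible since $\mstack U_2$ is a nonempty stack, being the triple fiber product of the surjection $\mstack U\surj\mstack X$) and use that the constant-local-system functor $\mathscr L^\heartsuit(\Spec C,\Lambda)\to\mathscr L^\heartsuit(\mstack U_2,\Lambda)$ factors through the faithful (in fact, by \'etale invariance of cohomology for algebraically closed field extensions, essentially surjective) functor $\mathscr L^\heartsuit(\Spec C,\Lambda)\xrightarrow{\sim}\mathscr L^\heartsuit(\Spec L,\Lambda)$, hence is itself faithful; faithfulness forces $\phi=\Id_M$. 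Therefore the descent datum is trivial and $\mathcal L\simeq p^{-1}M$.

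\medskip

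\textbf{Main obstacle.} The only subtle point is the passage from "$\underline\phi=\Id$ as an endomorphism of the \emph{constant} sheaf $\underline M$ on $\mstack U_2$" to "$\phi=\Id$ as an endomorphism of $M$ in $\Mod_\Lambda^\fg$": one needs the constant-sheaf functor $\Mod_\Lambda^\fg\to\mathscr L^\heartsuit(\mstack U_2,\Lambda)$ to be faithful, which is where $\mstack U_2$ being nonempty (and the device of extending scalars to a large algebraically closed field $L$ to witness a point) enters. Everything else is a formal manipulation of descent data in a $1$-category. I do not expect any difficulty from the stacky nature of the $\mstack U_i$, since all the categorical inputs (descent, preservation of local systems under pullback, the equivalence $\mathscr L^\heartsuit(\Spec C,\Lambda)\simeq\Mod_\Lambda^\fg$) have already been established in the preceding subsections.
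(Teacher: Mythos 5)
Your proposal is correct and follows essentially the same approach as the paper: in part (1) you combine faithfulness of restriction along the atlas (by descent in the $1$-category $\mathscr L^\heartsuit$) with full-faithfulness of the composite into $\mathscr L^\heartsuit(\mstack U,\Lambda)$, and in part (2) you trivialize the local system over $\mstack U_0$, identify the descent datum over $\mstack U_1$ with a constant automorphism $\underline\phi$ via connectedness, deduce $\underline\phi=\Id$ from the cocycle relation $\underline\phi=\underline\phi^2$, and then conclude $\phi=\Id_M$ by restricting along a geometric point of $\mstack U_2$. One small slip in the last step: the constant-sheaf functor to $\mathscr L^\heartsuit(\mstack U_2,\Lambda)$ does not \emph{factor through} $\mathscr L^\heartsuit(\Spec L,\Lambda)$; rather, the (faithful, indeed essentially an equivalence) constant-sheaf functor to $\mathscr L^\heartsuit(\Spec L,\Lambda)$ factors through $\mathscr L^\heartsuit(\mstack U_2,\Lambda)$ via pullback along a point $\Spec L\to\mstack U_2$, which is what forces the first arrow to be faithful. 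The logic you wrote has the factorization in the wrong direction, but the intended argument (and the paper's) is the correct one, so this is a phrasing issue rather than a gap.
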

\begin{ex}\label{ex: BG is etale simply-connected if G is connected}
Let $G$ be a connected $C$-group scheme. Then the \v{C}ech nerve $\Spec C \to BG$ satisfies conditions of the second part of the proposition above, hence $BG$ is \'etale simply connected. If $G$ is abelian, then by induction $B^n G$ are \'etale simply connected for all $n\ge 1$.
\end{ex}

%\todo{Local systems and \'etale homotopy type, local systems on $BG$, where $G$ is a finite constant group scheme.}

\subsection{Rigid analytic \'etale sheaves}\label{adic_local_systems}
For the rest of this subsection we fix the following notation:
\begin{notation}
Let $K$ be a complete non-Archimedean field with the ring of integers $\mathcal O_K \coloneqq \{f\in K \mid |f| \le 1\}$. We fix a pseudo-uniformizer $\pi \in \mathcal O_K$, i.e. an element with the property $0 < |\pi| < 1$.
\end{notation}
In this subsection for a $K$-rigid analytic stack $\mstack X$ (see \Cref{defn:rigid_an_stacks} below) we construct the corresponding category of \'etale sheaves on $\mstack X$ following a path similar to \Cref{sect:etale_sheaves}. The resulting category $\Shv_{\et}(\mstack X,  \Lambda)$ behaves very similarly to what we had in the algebraic setting. If $\mstack X\simeq {(\widehat{\mstack X_{\mc O_K}})}_K $ comes as the Raynaud generic fiber of an Artin $\mathcal O_K$-stack $\mstack X_{\mathcal O_K}$, we also discuss the relationship of $\Shv_\et(\mstack X, \Lambda)$ with the category $\Shv_\et(\mstack X_K, \Lambda)$ of \'etale sheaves on the algebraic generic fiber $\mstack X_K\coloneqq \mstack X_{\mathcal O_K}\times_{\Spec \mathcal O_K} \Spec K$. First recall the following definition:
\begin{defn} A \emdef{Tate algebra} $K\langle x_1,\ldots , x_n \rangle$ is the $K$-subalgebra of the ring $K[[x_1,\ldots,x_n]]$ of formal power series consisting of those $f=\sum_I f_Ix^I$ such that $\lim_{I\ra \infty}|f_I|\ra 0$. The ring $K\langle x_1,\ldots , x_n \rangle$ has the natural \emdef{Gauss norm} given by $\|f\|\coloneqq \max_I |f_I|$, making it a Banach algebra. \emdef{An affinoid $K$-algebra} $A$ by definition is a Banach algebra that is a quotient of some $K\langle x_1,\ldots , x_n \rangle$ by a closed ideal $I$. A \emdef{morphism of affinoid algebras} is a homomorphism that is bounded. 
\end{defn}
The Tate algebra $K\langle x_1,\ldots , x_n \rangle$ can be thought of as the algebra of analytic functions on the closed polydisk $\mathbb D_1^n\subset K^n$ and affinoid algebras are supposed to be rings of analytic functions on closed subvarieties in those.
We denote the opposite category of affinoid $K$-algebras by $\Afd_K$; this is the category of \emdef{affinoid spaces over} $K$. We refer to \cite{Berkovich} or \cite{Bosch_RigidAn} (and partially \cite{Huber_adicSpaces}) for the foundations of rigid analytic geometry which is built up starting from this notion. We denote the category of rigid analytic spaces over $K$ by $\mr{Rig}_K$. We have an embedding $\Afd_K\ra \mr{Rig}_K$, given by $A\mapsto \Sp(A)$. We put $\mbb B^n_K\coloneqq \Sp K\langle x_1,\ldots , x_n \rangle$, a closed rigid analytic polydisk of radius $1$.

One has a notion of an \'etale map and \'etale cover (the latter being a little subtle and using a concept of admissibility to ensure some quasicompactness properties) of affinoid spaces.
\begin{defn}[{\cite[Definition 2.1.1]{Huber_adicSpaces}}]
A morphism of rigid spaces $f\colon X \to Y$ is called \emdef{\'etale} if for any point $x\in X$ the induced map of local rings $\mathcal O_{Y,f(x)} \to \mathcal O_{X, x}$ is flat and unramified. An \'etale morphism $f\colon X \to Y$ is called \emdef{strongly surjective} if for any quasi-compact admissible open subset $V$ of $Y$ there exists a finite collection of admissible open subsets $U_1, \ldots , U_n$ of $X$ such that $V = f(\bigcup_i U_i)$. For a rigid analytic space we define a \emdef{rigid analytic \'etale site $\ET_{/X}^\an$} to be the category of rigid analytic spaces \'etale over $X$ equipped with the Grothendieck topology generated by strongly surjective \'etale morphisms.
\end{defn}

There is also notion of a smooth map:
\begin{defn}[{\cite[Definition 2.1]{BoschLutkebohmertRaynaud_FaRGIII}}]
The map $f\colon X \to Y$ of affinoids is called \emdef{smooth} if for any point $x\in X$ there is a neighborhood $V\subset X$ such that $f_{|V}$ can be factored as
$$\xymatrix{
V\ar@{-->}[r]^(0.4){g}\ar[dr]& Y\times\mbb B^n_K\ar[d]^p\\
& Y
}$$
with $g$ \'etale and $p$ the natural projection.
\end{defn}

Obviously, \'etale maps are smooth and we get a well-defined geometric context ($\Afd_K^\op$, \'etale, smooth). Following \cite{PortaYu_GAGA} we introduce rigid analytic (pre)stacks as follows:
\begin{defn}\label{defn:rigid_an_stacks}
Let $K$ be a complete non-Archimedean field. A category of \emph{locally finite type $K$-analytic prestacks $\PStk_K^{\an,\lft}$} is defined as a category of presheaves on the category of affinoid $K$-analytic spaces $\Afd_K$:
$$\PStk_K^{\an,\lft} := \Fun(\Afd_K^\op, \Type).$$
The category $\Stk_K^{\an,\lft}$ of \emdef{locally finite type $K$-rigid analytic Artin stacks} is defined as the full subcategory of $\PStk_K^{\an,\lft}$ consisting of geometric stacks (see \Cref{def:geometric_stacks}) with respect to the context ($\Afd_K^\op$, \'etale, smooth).
\end{defn}

As promised, now we define the category of \'etale sheaves on a rigid-analytic stack. Given an affinoid $T\in \Afd_K$,  for a commutative ring $\Lambda$ we will denote the corresponding $(\infty, 1)$-sheaf category for $\ET_{/T}^{\mr{an}}$ by $\Shv_\et(T, \Lambda)$. Given a map $f\colon T_1\ra T_2$ we have the pull-back functor $f^{-1}\colon \Shv_{\et}(T_2, \Lambda)\ra \Shv_{\et}(T_1, \Lambda)$. 
\begin{construction}
Let $\Lambda$ be a ring. We define the functor of rigid analytic \'etale sheaves as the right Kan extension of the functor
$$\Shv_\et(-, \Lambda)^{-1}\colon \Afd_K^\op \xymatrix{\ar[r] &} {\Prs}^{\mathrm L}_\Lambda$$
along the Yoneda embedding $\Afd^\op_K \inj {(\PStk^{\an, \lft}_K)}^\op$.
\end{construction}
The proof of the following properties is similar to that for the \'etale sheaves in the algebraic context (see  \Cref{etale_sheaves_on_stacks}):
\begin{prop}
In the notation above we have:
\begin{enumerate}
\item For any rigid analytic prestack $\mathpzc X$ over $K$ the category $\Shv_{\et}(\mathpzc X, \Lambda)$ is presentable and $\Lambda$-linear.

\item The assignment $\mathpzc X\mapsto \Shv_{\et}(\mathpzc X, \Lambda)$ lifts to a functor
$$\Shv_{\et}(-, \Lambda)^{-1}\colon {(\PStk^{\an, \lft}_K)}^\op \xymatrix{\ar[r] &} {\Pr}^{\mathrm L}_{\Lambda}$$
and by adjoint functor theorem there is also a functor
$$\Shv_{\et}(-, \Lambda)_*\colon \PStk^{\an, \lft}_K \xymatrix{\ar[r] &} \PrR_{\Lambda}.$$
For a morphism $f\colon \mathpzc X \to \mathpzc Y$ of prestacks we will denote the induced pullback and pushforward functors by $f^{-1}$ and $f_*$ respectively.

\item The functor $\Shv_{\et}(-, \Lambda)^{-1}$, being a right Kan extension, transforms colimits of prestacks into limits of categories.

\item Since the functor $\Shv_{\et}(-, \Lambda)^{-1}_{|\Afd_K^\op}\colon \Afd_K^\op \to {\Pr}^{\mathrm L}_{\Lambda}$ is (by definition) a sheaf in the \'etale topology, so is the functor $\Shv_{\et}(-, \Lambda)$. In particular if $\epsilon \colon \mstack X \to L_{\et} \mstack X$ is the \'etale sheafification map, the induced functor $\epsilon^{-1}$ is an equivalence.

\item For each prestack $\mathpzc X$ the category $\Shv_{\et}(\mathpzc X, \Lambda)$ admits a natural right complete $t$-structure determined by the property that a sheaf $\mathcal F$ belongs to $\Shv_{\et}^{\ge 0}(\mathpzc X, \Lambda)$ if and only if its restriction $\mathcal F_S$ to any $K$-affinoid $S$ mapping to $X$ lands to $\Shv_{\et}^{\ge 0}(\widehat S, \Lambda)$.

\item For a morphism of stacks $f\colon \mathpzc X \to \mathpzc Y$ the pullback functor $f^{-1}$ is $t$-exact and the pushforward functor $f_*$ is left $t$-exact.
\end{enumerate}
\end{prop}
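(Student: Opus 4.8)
The plan is to deduce each of the six items from the corresponding affinoid‑level statement, transporting it through the defining limit
$\Shv_\et(\mathpzc X,\Lambda)\simeq\lim_{(T\to\mathpzc X)\in(\Afd_K/\mathpzc X)^\op}\Shv_\et(T,\Lambda)$ (computed in $\PrL_\Lambda$), exactly as was done in the algebraic setting of \Cref{etale_sheaves_on_stacks}. So the proof is, to a large extent, a mechanical transcription of the proofs of \Cref{alg_etale_shaeves_basics} and \Cref{t_structure_on_et_sheaves}; I will indicate the one place where a genuinely new input is needed.

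For (1): each $\Shv_\et(T,\Lambda)$ is presentable and $\Lambda$‑linear (it is the $\Lambda$‑linearization of the small rigid‑analytic étale topos of $T$), and a small limit of presentable $\Lambda$‑linear categories along left‑adjoint transition functors is again presentable $\Lambda$‑linear, since $\PrL_\Lambda$ admits small limits and, by \cite[Proposition 4.8.1.17]{Lur_HA}, these are controlled via Lurie's tensor product. Item (2) is the adjoint functor theorem applied to the colimit‑preserving functors $f^{-1}$, combined with functoriality of the right Kan extension — precisely the argument for \Cref{alg_etale_shaeves_basics}(1). Item (3) is formal: a right Kan extension along the Yoneda embedding $\Afd_K^\op\inj(\PStk_K^{\an,\lft})^\op$ automatically sends colimits of prestacks to limits of categories.

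The crux is (4): one must know that the presentable‑category‑valued presheaf $\Shv_\et(-,\Lambda)^{-1}\colon\Afd_K^\op\to\PrL_\Lambda$ is already an étale sheaf. By \cite[Proposition A.3.3.1]{Lur_SAG} it suffices to check (a) that a finite coproduct of affinoids is sent to the product of the corresponding categories — immediate, since a finite disjoint union of affinoids is again affinoid and its small étale site splits — and (b) Čech descent along a strongly surjective étale cover $T'\surj T$ with Čech nerve $T'_\bullet$, i.e.\ that $\Shv_\et(T,\Lambda)\xrightarrow{\ \sim\ }\Tot\Shv_\et(T'_\bullet,\Lambda)$. For (b) one repeats verbatim the argument from the scheme case in the proof of \Cref{alg_etale_shaeves_basics}: the comparison functor has right adjoint $\mathcal F^\bullet\mapsto\Tot p_{\bullet*}\mathcal F^\bullet$, one checks the unit and counit are equivalences after applying the conservative $p_0^{-1}$ (which commutes with totalizations since limits of sheaves are pointwise, and for which $p_0^{-1}\mathcal F\to p_0^{-1}p_{\bullet*}p_\bullet^{-1}\mathcal F$ is split augmented). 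This rests on classical étale descent for sheaves of abelian groups on rigid analytic spaces, cf.\ \cite{Huber_adicSpaces}. Granting this, the right Kan extension of a sheaf is a sheaf, so $\Shv_\et(-,\Lambda)$ satisfies étale descent and $\epsilon^{-1}$ for the sheafification map is an equivalence; the residual descent consequences (conservativity of $p^{-1}$ for a surjection $\mathpzc U\surj\mathpzc X$ and the totalization formula over its Čech nerve) then follow formally as in \Cref{alg_etale_shaeves_basics}(4).

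For (5) and (6): each $\Shv_\et(T,\Lambda)$ carries its standard $t$‑structure in which (co)connectivity is detected stalkwise, the transition functors $g^{-1}$ are $t$‑exact (pullback of sheaves of $\Lambda$‑modules is exact), and each $\Shv_\et(T,\Lambda)$ is right $t$‑complete because filtered colimits of étale $\Lambda$‑modules are exact; hence by \Cref{lem: t-structure in the limit} and \Cref{limits_of_t_structures} the limit category $\Shv_\et(\mathpzc X,\Lambda)$ inherits the pointwise‑described $t$‑structure of the statement and is right complete. Finally, $f^{-1}$ is $t$‑exact because restricting $f^{-1}\mathcal F$ to an affinoid $S\to\mathpzc X$ is the same as restricting $\mathcal F$ along $S\to\mathpzc X\to\mathpzc Y$, and $f_*$, being right adjoint to the $t$‑exact $f^{-1}$, is left $t$‑exact (one may invoke \Cref{pullback_preserves_totalizations_of_coconnectives} and \Cref{left_exact_preserve_totalizations_of_uniformly_bounded_below} when keeping track of interaction with totalizations, exactly as in the algebraic case). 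The main obstacle is step (b) of item (4): establishing honest Čech descent for the $\PrL_\Lambda$‑valued presheaf of étale sheaves on affinoids — once that input from rigid‑analytic étale topos theory is in place, every other assertion is a routine translation of \Cref{etale_sheaves_on_stacks}.
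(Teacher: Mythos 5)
Your proposal is correct and takes essentially the same route as the paper, which gives no separate proof but simply says the argument is ``similar to that for the \'etale sheaves in the algebraic context'' (i.e.\ \Cref{alg_etale_shaeves_basics} and \Cref{t_structure_on_et_sheaves}); your write-up is precisely that transcription. You also helpfully unpack the one place the paper is terse, namely item (4), where the paper asserts that the restriction of $\Shv_\et(-,\Lambda)^{-1}$ to $\Afd_K^\op$ is ``by definition'' an \'etale sheaf: this does in fact require the \v Cech-descent check you supply (finite coproducts plus descent along strongly surjective \'etale covers via the conservative $p_0^{-1}$ and the split augmented cosimplicial object), which rests on \'etale descent for rigid-analytic \'etale sheaves rather than being tautological.
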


\smallskip  We now turn to studying the relation between algebraic and rigid analytic \'etale sheaves. First recall that to a finite type $K$-scheme $X$ one can attach a rigid analytic space $X^\an\in \mr{Rig}_K$, the so-called \emdef{analytification of $X$}, characterized by the property that to give a map of rigid analytic spaces $Y \to X^\an\in \mr{Rig}_K$ is the same as to give a map of locally ringed $G$-spaces $Y \to X$ (see \cite[Section 5.4]{Bosch_RigidAn} for more details). On the other hand, if $\mathfrak X$ is a flat finite type formal $\pi$-adic $\mathcal O_K$-scheme, one can attach to $\mathfrak X$ the so-called \emdef{Raynaud generic fiber $\mathfrak X_K$ of $\mathfrak X$}. Explicitly, if $\mathfrak X \simeq \Spf \widehat A$ for some $\pi$-adically complete finitely generated flat $\mathcal O_K$-algebra $\widehat A$, then $\widehat A\otimes_{\mathcal O_K} K$ is an affinoid $K$-algebra and $\mathfrak X_K$ is just $\mathrm{Sp}(\widehat A\otimes_{\mathcal O_K} K)$. We note that both construction commute with finite limits.

Thus we see that there are two natural ways to associate a rigid analytic space to a finite type flat $\mathcal O_K$-scheme $X$: one can take the algebraic generic fiber $X_K$ of $X$ and then consider its analytification $X_K^\an\coloneqq (X_K)^\an$, or one can take a $\pi$-adic completion $\widehat X$ of $X$ and then consider its Raynaud generic fiber $\widehat X\coloneqq (\widehat X)_K$. By the universal property of $X_K^\an$ these two constructions are related to each other:
\begin{prop}\label{map_from_Raynaud_to_usual_generic_fiber}
For a finite type flat $\mathcal O_K$-scheme $S$ there are natural maps of locally ringed $G$-spaces
$$\xymatrix{\widehat S_K \ar[r]^{\psi_S} & S_K^\an \ar[r]^{\phi_S} & S_K.}$$
Moreover, the pullbacks along these maps induce maps of the corresponding \'etale sites. In particular there is a natural pullback functor on the categories of \'etale sheaves:
$$\lambda_S^{-1}\colon \Shv_\et(S_K, \Lambda) \xymatrix{\ar[r] &} \Shv_\et(\widehat S_K, \Lambda).$$
\end{prop}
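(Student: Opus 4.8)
The plan is to construct the maps $\psi_S$ and $\phi_S$ locally and then glue, using the known universal properties in each category. First, for the map $\phi_S\colon S_K^\an \to S_K$: this is nothing but the structure morphism of locally ringed $G$-spaces coming with the analytification. Indeed, by the universal property recalled just before the statement, to give $\phi_S$ is the same as to exhibit $S_K^\an$ as the terminal rigid analytic space equipped with a map to $S_K$ as locally ringed $G$-spaces, so $\phi_S$ simply \emph{is} the tautological map; this requires no work beyond citing \cite[Section 5.4]{Bosch_RigidAn}. For the map $\psi_S\colon \widehat S_K \to S_K^\an$, I would again invoke the universal property of analytification: $\widehat S_K$ is a rigid analytic space, and there is a canonical map $\widehat S_K \to S_K$ of locally ringed $G$-spaces (in the affine case $S = \Spec A$ with $\pi$-adic completion $\widehat A$, the composite $A \to \widehat A \to \widehat A\otimes_{\mathcal O_K}K$ induces $\Sp(\widehat A \otimes_{\mathcal O_K}K) \to \Spec(A\otimes_{\mathcal O_K} K) = S_K$; this globalizes since completion commutes with finite limits and the construction is Zariski-local on $S$). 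By the universal property this map factors uniquely through $\phi_S$, producing $\psi_S$ with $\phi_S\circ\psi_S$ the canonical map.

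Next I would address the claim that these maps induce maps of \'etale sites. For $\phi_S$ this is the content of the comparison theorem of Huber \cite[Theorem 3.8.1]{Huber_adicSpaces} (and its rigid-analytic antecedents): pullback along $\phi_S$ takes \'etale morphisms over $S_K$ to \'etale morphisms over $S_K^\an$ and preserves covers, since \'etaleness and strong surjectivity can be checked on the level of the induced maps of local rings, and analytification is compatible with base change along \'etale maps. For $\psi_S$, the point is that an \'etale morphism $U \to S_K^\an$ pulls back to an \'etale morphism $U\times_{S_K^\an}\widehat S_K \to \widehat S_K$ of rigid analytic spaces — this is immediate from the definition of \'etaleness in terms of flatness and unramifiedness of local rings, which is preserved under any base change of rigid analytic spaces — and strongly surjective families pull back to strongly surjective families. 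Having a morphism of sites then gives, by functoriality of the sheaf category construction, the pullback functors on \'etale topoi and hence (tensoring with $\DMod{\Lambda}$, or equivalently by the right Kan extension formalism of \Cref{etale_sheaves_on_stacks} applied to schemes/rigid spaces) the pullback functor $\lambda_S^{-1} = \psi_S^{-1}\circ\phi_S^{-1}\colon \Shv_\et(S_K,\Lambda) \to \Shv_\et(\widehat S_K,\Lambda)$.

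The remaining point is functoriality in $S$: for a morphism $S \to S'$ of finite type flat $\mathcal O_K$-schemes, the squares relating $\psi_S, \phi_S$ to $\psi_{S'}, \phi_{S'}$ commute, so that $\lambda^{-1}$ is natural. This follows because all four constructions ($X \mapsto X_K$, $X\mapsto \widehat X$, $Y\mapsto Y_K$ on formal schemes, and $Z \mapsto Z^\an$) are functorial and commute with finite limits, so the universal properties force the comparison maps to be compatible. I do not expect any of this to be genuinely difficult: the statement is essentially a bookkeeping exercise assembling standard facts about analytification (\cite{Bosch_RigidAn}), Raynaud generic fibers, and Huber's comparison of \'etale sites. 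The one step deserving a little care is the verification that $\psi_S^{-1}$ is well-defined as a morphism of sites in the non-Noetherian generality implicit here (since $\widehat A \otimes_{\mathcal O_K}K$ need only be affinoid, not the analytification of a finite type $K$-scheme), but this is handled directly by the local-ring criterion for \'etaleness rather than by any comparison theorem, so it presents no real obstacle.
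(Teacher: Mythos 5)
The paper gives no proof of this proposition — it just states it after the one-line hint ``By the universal property of $X_K^\an$'' and moves on — so what you have written is a correct and appropriate filling-in of the standard argument the authors take for granted. Your approach (take $\phi_S$ to be the tautological map coming with the analytification, build $\psi_S$ by first producing $\widehat S_K\to S_K$ as locally ringed $G$-spaces and then invoking the universal property of $S_K^\an$, check the site morphisms by the local-ring criterion and base-change stability of \'etaleness, and get functoriality for free from the universal property) is exactly the intended one.

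One small correction on attribution: the claim that $\phi_S$ induces a morphism of \'etale sites is \emph{not} the content of \cite[Theorem 3.8.1]{Huber\_adicSpaces}. That theorem is the cohomological comparison theorem (that $\phi_S^{-1}$ induces an equivalence on \'etale cohomology with constructible coefficients), which \emph{presupposes} the site morphism; the more basic fact that analytification takes \'etale schemes to \'etale rigid analytic spaces and preserves covers is established earlier in Huber's book (around \S 3.2--3.3). Your actual reasoning in that paragraph — local-ring criterion plus compatibility of analytification with \'etale base change — is correct and does not need the comparison theorem at all, so the math is sound; only the citation should be moved. Also worth saying explicitly, since it simplifies the $\psi_S$ half: for separated finite type flat $\mathcal O_K$-schemes $S$, the map $\psi_S$ is an admissible open immersion (the tube over the special fiber), so the pullback of \'etale covers along $\psi_S$ is just a restriction to an admissible open, which makes the ``map of sites'' claim for $\psi_S$ essentially immediate.
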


If $S$ is proper then $\psi_S$ is an isomorphism. Also, note that by a non-archimedean analogue of Artin's comparison (\cite[Theorem 3.8.1]{Huber_adicSpaces}) $\phi_S^{-1}$ induces an isomorphism on cohomology (at least when applied to constructible sheaves). On the opposite, $\psi_S^{-1}$ changes cohomology quite drastically, unless $S$ is proper.

Now we are going to generalize this picture to stacks:
\begin{construction}
Since the Raynaud generic fiber construction $\Spec A\mapsto \Sp(\widehat A\otimes_{\mc O_K} K)$ sends \'etale maps/covers to \'etale maps/covers, it extends to a map between the big sites
$$
\widehat{-}_K\colon \ET(\Aff^{\fg}_{\mc O_K}) \xymatrix{\ar[r] &} \ET(\Afd_{K}).
$$
We denote by
$$
\widehat{-}_K\colon \PStk^{\lft}_{\mc O_K} \xymatrix{\ar[r] &} \PStk^{\lft,\an}_{K}
$$
its left Kan extension. It then descends to the corresponding categories of stacks
$$
\widehat{-}_K\colon \Stk^{\lft}_{\mc O_K} \xymatrix{\ar[r] &} \Stk^{\lft,\an}_{K}.
$$
Moreover, since smooth maps are also preserved under the Raynaud generic fiber functor, geometric stacks go to geometric stacks.
\end{construction}

Situation with the functor $(-)^\an_K$ is slightly more complicated. The problem is that $(\Spec A)^\an_K\in \mr{Rig}_K$ is usually not an affinoid space, e.g. in the case $A=\mc O_K[x]$ we get the rigid analytic affine line $(\mbb A^1_K)^\an$ which is not quasi-compact. However, the construction $\Spec A\ra (\Spec A)^\an_K$ still sends \'etale covers/maps on the algebraic side to \'etale covers/maps on the rigid analytic side.
\begin{construction}
Let $\mr{Rig}_K\ra \Stk^{\lft,\an}_{K}$ be the left Kan extending the Yoneda embedding $\Afd_K\hookrightarrow \Stk^{\lft,\an}_{K}$ via the embedding $\Afd_K\hookrightarrow \mr{Rig}_K$. It once again sends \'etale maps/covers to \'etale maps/surjections. Left Kan extending the resulting functor $\Aff_K^\fp \to \Stk^{\lfp,\an}_{K}$ we obtain a functor
$$
(-)^\an_K\colon \Stk^{\lft}_{K} \xymatrix{\ar[r] &} \Stk^{\lft,\an}_{K}.
$$
\end{construction}

We will need the following technical lemma:
\begin{lem}\label{pass from algebraic to analytic}
Let $\mstack X$ be a finitely presentable flat Artin stack over $\mathcal O_K$. Then the natural maps
\begin{align*}
\Shv_\et(\mstack X_K, \Lambda) \xymatrix{\ar[r] &} \lim_{S \in \Aff_{\mathcal O_K/\mstack X}^{\ft,\fl,\op}} \Shv_\et(S_K, \Lambda), \\ 
\Shv_\et(\widehat{\mstack X}_K, \Lambda) \xymatrix{\ar[r] &} \lim_{S \in \Aff_{\mathcal O_K/\mstack X}^{\ft,\fl,\op}} \Shv_\et(\widehat S_K, \Lambda), \\
\Shv_\et(\mstack X_K^\an, \Lambda) \xymatrix{\ar[r] &} \lim_{S \in \Aff_{\mathcal O_K/\mstack X}^{\ft,\fl,\op}} \Shv_\et(S^\an_K, \Lambda)
\end{align*}
are equivalences.

\begin{proof}
Let $\mstack X$ be $n$-Artin. We will prove the statement by induction on $n$, the base of induction $n=-1$ being obvious since the category $\Aff_{\mathcal O_K/\mstack X}$ has a final object in this case. To make an inductive step note that in each case both parts satisfy \'etale descent. Hence, if $U\surj \mstack X$ is a smooth atlas of $\mstack X$, the statement follows by induction from the corresponding statement for the elements of the \v Cech nerve $U_\bullet$ and descent.
\end{proof}
\end{lem}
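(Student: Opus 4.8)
The plan is to establish all three equivalences at once, by induction on the Artin level $n$ of $\mstack X$, reducing along a smooth atlas to the case of affine schemes, where each statement is tautological. The two facts that make the induction run are: (i) both the source and the target of each comparison map, regarded as functors of $\mstack X$ on finitely presentable flat Artin $\mc O_K$-stacks, satisfy \'etale descent; and (ii) each of the generic-fiber constructions $(-)_K$, $\widehat{(-)}_K$ and $(-)^\an_K$ preserves colimits of prestacks --- being, by construction, a left Kan extension from the relevant small site --- and in particular carries the geometric realization of a \v Cech nerve to the corresponding realization on the analytic (resp.\ algebraic-generic-fiber) side. Descent for the sources is then immediate from \Cref{alg_etale_shaeves_basics} (and its rigid-analytic counterpart), since $\Shv_\et(-,\Lambda)$ satisfies \'etale descent and $(-)_K$, $\widehat{(-)}_K$, $(-)^\an_K$ commute with \'etale sheafification. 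Descent for the targets follows from the observation that $S\mapsto\Shv_\et(S_K,\Lambda)$ (and its two variants) is an \'etale sheaf on the site $\Aff_{\mc O_K}^{\ft,\fl}$, so that its right Kan extension along $\Aff_{\mc O_K}^{\ft,\fl}\hookrightarrow\PStk_{\mc O_K}$ --- which, by the pointwise formula, is exactly $\mstack X\mapsto\lim_{S\in\Aff_{\mc O_K/\mstack X}^{\ft,\fl,\op}}\Shv_\et(S_K,\Lambda)$ --- depends only on the \'etale sheafification of $\mstack X$, hence is again an \'etale sheaf.

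For the base case $n=-1$: a finitely presentable $(-1)$-Artin stack is a finitely presented affine $\mc O_K$-scheme, and since $\mstack X$ is flat over $\mc O_K$, the identity $\mstack X\to\mstack X$ is a terminal object of $\Aff_{\mc O_K/\mstack X}^{\ft,\fl}$; all three limits on the right then collapse to their value there, so the comparison maps are the identity. For the inductive step, take $\mstack X$ to be $n$-Artin with $n\ge 0$. Using that $\mstack X$ is finitely presentable and flat over $\mc O_K$, choose a smooth atlas $p\colon U\surj\mstack X$ by a finite disjoint union of finite-type flat affine $\mc O_K$-schemes; the terms $U_i$ of the \v Cech nerve $U_\bullet$ of $p$ are then finitely presentable, flat over $\mc O_K$, and $(n-1)$-Artin. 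Because $|U_\bullet|\xra{\sim}\mstack X$ after \'etale sheafification and the relevant generic-fiber functor commutes with this realization, \'etale descent for the two sides identifies $\Shv_\et(\mstack X_K,\Lambda)$ with the totalization of $[i]\mapsto\Shv_\et((U_i)_K,\Lambda)$, and identifies $\lim_{S\in\Aff_{\mc O_K/\mstack X}^{\ft,\fl,\op}}\Shv_\et(S_K,\Lambda)$ with the totalization of $[i]\mapsto\lim_{S\in\Aff_{\mc O_K/U_i}^{\ft,\fl,\op}}\Shv_\et(S_K,\Lambda)$, compatibly with the comparison map; likewise for $\widehat{(-)}_K$ and $(-)^\an_K$. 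Applying the induction hypothesis to each $U_i$ then finishes the argument.

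The step requiring the most care is the descent property for the target side --- concretely, checking that the totalization over $U_\bullet$ of $[i]\mapsto\lim_{S/U_i}\Shv_\et(S_K,\Lambda)$ genuinely recomputes $\lim_{S/\mstack X}\Shv_\et(S_K,\Lambda)$. This is a Grothendieck-construction/cofinality matter: one needs that the right Kan extension of an \'etale sheaf along $\Aff_{\mc O_K}^{\ft,\fl}\hookrightarrow\PStk_{\mc O_K}$ sees only the \'etale sheafification of $\mstack X$, together with the (easy) facts that $(-)_K$, $\widehat{(-)}_K$ and $(-)^\an_K$ send \'etale covers of finite-type flat affine $\mc O_K$-schemes to \'etale covers and preserve the attendant \v Cech nerves, so that passing from $U_\bullet$ to $\mstack X$ is harmless on both sides. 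Once this bookkeeping is dispatched the induction is purely formal, and the $n=-1$ case carries no content.
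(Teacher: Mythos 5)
Your proof is correct and follows essentially the same strategy as the paper's: induction on the Artin level, with the base case handled by noting that $\Aff_{\mc O_K/\mstack X}^{\ft,\fl}$ has a final object when $\mstack X$ is affine finitely presented and flat, and the inductive step carried out via \'etale descent on both sides of each comparison map using a smooth atlas. The paper's version is terser — it simply asserts that both sides satisfy \'etale descent — while you spell out why (the generic-fiber constructions, being left Kan extensions, preserve colimits and carry \'etale covers to \'etale covers; the target side is the right Kan extension from $\Aff^{\ft,\fl}_{\mc O_K}$ of an \'etale sheaf, hence again an \'etale sheaf); these are the details the paper is implicitly taking for granted, so the two arguments match.
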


We are now ready to define functors $\lambda^{-1}$, $\phi^{-1}$ and $\psi^{-1}$ for arbitrary geometric stacks:
\begin{construction}\label{construction:et_algebraic_vs_Raynaud}
The map of \'etale sites from \Cref{map_from_Raynaud_to_usual_generic_fiber} induces a natural transformation 
$$\lambda_-^{-1} \colon \Shv_\et(-_K, \Lambda) \xymatrix{\ar[r] &} \Shv_\et(\widehat -_K, \Lambda)$$
of functors from $\Aff_{\mathcal O_K}^{\ft, \mathrm{fl}, \op}$ to ${\Pr}^{\mathrm L}_\Lambda$.
Passing to the right Kan extensions and using the equivalences from \Cref{pass from algebraic to analytic}, for a finitely presentable flat Artin $\mathcal O_K$-stack $\mstack X$ we obtain a functorial natural transformation
$$\lambda_{\mstack X}^{-1}\colon \Shv_\et(\mstack X_K, \Lambda) \xymatrix{\ar[r] &} \Shv_\et(\widehat{\mstack X}_K, \Lambda).$$
We will also denote the right adjoint of $\lambda_{\mstack X}^{-1}$ by $\lambda_{\mstack X*}$. Similarly, we can also define pairs of adjoint functors
$$
\xymatrix{\Shv_\et(\mstack X_K, \Lambda)\ar@<.5ex>[r]^{\phi_X^{-1}}& \Shv_\et(\mstack X^\an_K, \Lambda)\ar@<.5ex>[l]^{\phi_{X*}}}\qquad\text{and}\qquad\xymatrix{\Shv_\et(\mstack X^\an_K, \Lambda)\ar@<.5ex>[r]^{\psi_X^{-1}}& \Shv_\et(\widehat{\mstack X}_K, \Lambda) \ar@<.5ex>[l]^{\psi_{X*}}}.
$$
\end{construction}

If $f\colon \mstack X\to \mstack Y$ is a map of finitely presentable flat Artin $\mathcal O_K$-stacks we have a natural equivalence $\lambda_{\mstack X}^{-1} \circ f_K^{-1} \simeq \widehat f_K^{-1} \circ \lambda_{\mstack Y}$. Our goal now will be to show that the corresponding square of categories is not only commutative, but right adjoinable when restricted to the category of constructible sheaves. We will do this in two steps (basically decomposing $\lambda$ as $\phi\circ \psi$).

\begin{prop}\label{commutation with psi}
Let $f_K\colon \mathpzc X_K \to \mathpzc Y_K$ be a quasi-compact quasi-separated morphism between locally finite type Artin $K$-stacks. Then the diagram
$$\xymatrix{
\Shv_{\et}^+({\mstack X}^\an_K, \Lambda) & \ar[l]_-{\phi_{\mstack X}^{-1}} \Shv_{\et, \constr}^+(\mstack X_K, \Lambda) \\
\Shv_{\et}^+({\mstack Y}^\an_K, \Lambda) \ar[u]_{ (f_{K}^{\an})^{-1}} & \ar[l]_-{\phi_{\mstack Y}^{-1}}\ar[u]_{f_K^{-1}} \Shv_{\et, \constr}^+(\mstack Y_K, \Lambda).
}$$
is right adjointable (see \Cref{def:adjointable_sq}).

\begin{proof}
First assume that $\mstack Y$ is a finitely presentable (equivalently finite type in our case) scheme and that $\mstack X$ is $n$-Artin. We will prove the statement by induction on $n$. The base of induction $n=0$ is covered by the theorem of Huber \cite[Theorem 3.8.1]{Huber_adicSpaces}. For $n>0$ let $U\surj \mstack X$ be a smooth atlas and let $\mstack U_\bullet$ be the corresponding \v Cech nerve. By induction all squares
$$\xymatrix{
\Shv_{\et}^+(\mstack U^\an_{i}, \Lambda) & \ar[l]_-{\phi_{U_i}^{-1}} \Shv_{\et, \constr}^+(\mstack U_i, \Lambda) \\
\Shv_{\et}^+({\mstack Y}^\an_K, \Lambda) \ar[u] & \ar[l]_-{\phi_{\mstack Y}^{-1}}\ar[u] \Shv_{\et, \constr}^+(\mstack Y_K, \Lambda).
}$$
are right adjointable. Moreover, by \'etale descent
$$\Shv_{\et, \constr}^+(\mstack X_K, \Lambda) \xymatrix{\ar[r]^\sim &} \Tot \Shv_{\et, \constr}^+(\mstack U_{\bullet}, \Lambda) \qquad \text{and}\qquad \Shv_{\et}^+(\mstack X_K^{\an}, \Lambda) \xymatrix{\ar[r]^\sim &} \Tot \Shv_{\et}^+(\mstack U^\an_{\bullet}, \Lambda).$$
We conclude, since by \cite[Corollary 4.7.4.18.]{Lur_HA} the category of left adjoint functors and right adjointable squares as morphisms is closed under limits.

By co-Yoneda's lemma a general $\mstack Y_K$ can be represented as a colimit of affine finitely presentable $K$-schemes. Since by construction both functors 
$$\Shv_{\et, \constr}^+(-, \Lambda),\  \Shv_{\et}^+(-^\an, \Lambda) \colon \Stk^{\lft,\op}_K \xymatrix{\ar[r] &} \Cat_\infty$$
preserve limits, the result follows by \cite[Corollary 4.7.4.18.]{Lur_HA} again.
\end{proof}
\end{prop}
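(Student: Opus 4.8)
The plan is to establish the right adjointability of the square by first reducing to the case where the target $\mstack Y_K$ is an affine finite-type $K$-scheme, and then carrying out an induction on the geometric level $n$ of $\mstack X$. In the base case $n=0$, i.e. both $\mstack X$ and $\mstack Y$ schemes, the statement is exactly Huber's comparison theorem \cite[Theorem 3.8.1]{Huber_adicSpaces}, which asserts that for a constructible sheaf on a finite-type $K$-scheme, analytification commutes with pushforward (and since $\phi^{-1}$ is fully faithful and preserves cohomology for constructible sheaves, one gets the desired base change). For the inductive step, I would pick a smooth atlas $U \surj \mstack X$ with \v{C}ech nerve $\mstack U_\bullet$ whose terms are $(n-1)$-Artin, and invoke \'etale descent for both $\Shv^+_{\et,\constr}(-, \Lambda)$ on the algebraic side and $\Shv^+_\et((-)^\an, \Lambda)$ on the analytic side: the equivalences
$$\Shv^+_{\et,\constr}(\mstack X_K, \Lambda) \xrightarrow{\sim} \Tot \Shv^+_{\et,\constr}(\mstack U_{\bullet}, \Lambda), \qquad \Shv^+_\et(\mstack X^\an_K, \Lambda) \xrightarrow{\sim} \Tot \Shv^+_\et(\mstack U^\an_\bullet, \Lambda)$$
hold (the first using that $f$ is qcqs so the atlas and its \v{C}ech nerve are qcqs, the second being analytic \'etale descent). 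Then I would apply \cite[Corollary 4.7.4.18]{Lur_HA}, which says that a limit of right adjointable squares (with morphisms the left adjoints) is again right adjointable: each square for $\mstack U_i \to \mstack Y$ is right adjointable by the induction hypothesis, so the totalized square for $\mstack X \to \mstack Y$ is too.

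To pass from an affine scheme $\mstack Y$ to a general locally finite type stack $\mstack Y_K$, I would use co-Yoneda: write $\mstack Y_K \simeq \colim_\alpha S_\alpha$ as a colimit of affine finite-type $K$-schemes. Since both functors $\Shv^+_{\et,\constr}(-, \Lambda)$ and $\Shv^+_\et((-)^\an, \Lambda)$ on $(\Stk^{\lft}_K)^\op$ preserve limits (by construction, being right Kan extensions from affinoids, together with \Cref{pass from algebraic to analytic}), the square for $\mstack X \to \mstack Y$ is the limit over $\alpha$ of the squares for $\mstack X \times_{\mstack Y} S_\alpha \to S_\alpha$, each of which falls into the case already handled. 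Applying \cite[Corollary 4.7.4.18]{Lur_HA} once more closes the argument. One subtle point here is that pulling back along $S_\alpha \to \mstack Y$ keeps $\mstack X \times_{\mstack Y} S_\alpha$ an Artin stack of bounded geometric level and keeps the map qcqs, so the reduction is legitimate.

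The main obstacle I anticipate is the bookkeeping around the two competing descent statements and making sure the boundedness (in the sense of the $t$-structure) is preserved throughout: the totalization of a cosimplicial diagram of bounded below constructible sheaves is controlled by \Cref{pullback_preserves_totalizations_of_coconnectives} (pullback commutes with totalizations of uniformly bounded below sheaves) and the right $t$-exactness of pushforward, exactly as in the proof of \Cref{functoriality_of_loc_sys}(2). One must shift so that everything sits in $\Shv^{\ge 0}_{\et,\constr}$, use that $\Tot^{\le m}$ is a finite limit hence preserves constructibility, and that the cohomology sheaves of $\Tot$ agree with those of $\Tot^{\le m}$ in the relevant range. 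A second delicate point is verifying that $\phi_{\mstack X}^{-1}$ is well-behaved (in particular that the analytification functors are set up so the square even commutes strictly after passing to right Kan extensions) — but this is handled by \Cref{construction:et_algebraic_vs_Raynaud} and the compatibility $\lambda_{\mstack X}^{-1} \circ f_K^{-1} \simeq \widehat f_K^{-1} \circ \lambda_{\mstack Y}^{-1}$ together with its evident analogue for $\phi$. Everything else is a formal consequence of the scheme-level input (Huber's theorem) propagated through descent and the closure properties of right adjointable diagrams.
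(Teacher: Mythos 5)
Your proposal matches the paper's proof essentially verbatim: reduce to $\mstack Y$ an affine finite-type scheme, induct on the geometric level of $\mstack X$ using a smooth atlas and its \v{C}ech nerve, invoke Huber's theorem in the base case, use \'etale descent for both $\Shv^+_{\et,\constr}$ and $\Shv^+_\et((-)^\an)$, and close each step with the stability of right adjointable squares under limits \cite[Corollary 4.7.4.18]{Lur_HA}, finishing with co-Yoneda for general $\mstack Y_K$. The extra remarks you add about $\Tot^{\le m}$ and boundedness are not needed here (the required descent is already recorded in the corollary following \Cref{functoriality_of_loc_sys}), but they do no harm.
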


To deal with $\psi$ we will use proper base change for adic morphisms (in particular, properness assumption on $f$ will be essential). To use it we will need the following lemma. 
\begin{lem}
Let $f\colon X\to Y$ be a proper morphism of finite type flat $\mathcal O_K$-schemes. Then the commutative square
$$\xymatrix{
\widehat X_K \ar[r]^{\psi_X}\ar[d]^{\widehat f_K} & X_K^\an \ar[d]^{f^\an_K}\\
\widehat Y_K \ar[r]^{\psi_Y} & Y_K^\an
}$$
is a pullback square of rigid analytic spaces.

\begin{proof}
By Nagata's compactification theorem there exist compactifications $\overline X, \overline Y$ of $X$ and $Y$ respectively. By replacing $\ol X$ with the closure of the graph $\Gamma_f\subset \ol X\times \ol Y$ we can choose $\ol X$ such that there is a commutative square
$$\xymatrix{
X \ar[r]\ar[d]^f & \overline X \ar[d]^{\overline f} \\
Y \ar[r] & \overline Y
}$$
with $\ol f$ being proper. It is also easy to see that this commutative square is fibered (since $f$ was already proper and thus taking closure of $\Gamma_f$ does not affect fibers over $Y\subset \ol Y$).

Consider now a commutative diagram of rigid spaces
$$\xymatrix{
& X_K^\an \ar[rr] \ar[dd] && \overline X_K^\an \ar[dd]^{\overline f_K^\an}\\
\widehat X_K \ar[ru]^{\psi_X}\ar[rr]\ar[dd]^{\widehat f_K} && \widehat{\overline X}_K \ar[dd] \ar[ru]\\
& Y_K^\an \ar[rr] && \overline Y_K^\an \\
\widehat Y_K \ar[rr]\ar[ru] && \ar[ru] \widehat{\overline Y}_K.
}$$
The back and front faces of this diagram are pullback: this follows from the fact that both the Raynaud generic fiber and the analytification commute with fibered products. Moreover, since $\overline X$ and $\overline Y$ are proper, we have $\widehat{\overline X}_K \simeq \overline X_K^\an$ and $\widehat{\overline Y}_K \simeq \overline Y_K^\an$. Hence in the right face horizontal maps are isomorphisms and in particular it is fibered. It follows that in the commutative diagram
$$\xymatrix{
\widehat X_K \ar[r]\ar[d]^{\widehat f_K} & X_K^\an \ar[r]\ar[d]^{f_K} & \overline X_K^\an \ar[d]^{\overline f_K} \\
\widehat Y_K \ar[r] & Y_K^\an \ar[r] & \overline Y_K^\an
}$$
big and right hand squares are fibered. It follows that the left hand square is also fibered.
\end{proof}
\end{lem}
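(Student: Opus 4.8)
The plan is to reduce the statement to the known case of \emph{proper} finite type flat $\mathcal{O}_K$-schemes, where the comparison map $\psi_Z\colon \widehat Z_K \to Z_K^{\an}$ is an isomorphism (recalled just after \Cref{map_from_Raynaud_to_usual_generic_fiber}), and then to transport this isomorphism along a well-chosen relative compactification of $f$, exploiting that both $\widehat{(-)}_K$ and $(-)^{\an}_K$ commute with fibered products. The first task is to construct the compactification. By Nagata's compactification theorem choose open immersions $Y \hookrightarrow \overline Y$ and $X \hookrightarrow \overline X_0$ into proper $\mathcal O_K$-schemes; since $\overline Y$ is separated, $(j_X,f)\colon X \to \overline X_0 \times_{\mathcal O_K}\overline Y$ is an immersion, and I take $\overline X$ to be its scheme-theoretic closure equipped with the projection $\overline f\colon \overline X \to \overline Y$. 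Then $\overline X$ is closed in $\overline X_0\times_{\mathcal O_K}\overline Y$, hence proper over $\mathcal O_K$, the morphism $\overline f$ is proper, and $X$ is a dense open subscheme of $\overline X$. Crucially, the resulting commutative square with corners $X,\overline X,Y,\overline Y$ is Cartesian: because $f$ is proper and the projection $\overline X_0\times_{\mathcal O_K}Y \to Y$ is separated, the morphism $X \to \overline X_0\times_{\mathcal O_K}Y$ is proper, hence a closed immersion, so $X$ is closed in $\overline X \cap(\overline X_0\times_{\mathcal O_K}Y) = \overline X\times_{\overline Y}Y$; being also dense there, it equals $\overline X\times_{\overline Y}Y$. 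This step, where properness of $f$ (rather than mere finiteness of type) is genuinely used, is the main obstacle.

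With such an $\overline X$ in hand, base changing $X=\overline X\times_{\overline Y}Y$ along $\mathcal O_K\to K$ and applying the analytification functor — which preserves fibered products and open immersions — gives $X_K^{\an}\simeq \overline X_K^{\an}\times_{\overline Y_K^{\an}}Y_K^{\an}$. Since $\pi$-adic completion and the Raynaud generic fiber functor likewise preserve fibered products, $\widehat X_K \simeq \widehat{\overline X}_K\times_{\widehat{\overline Y}_K}\widehat Y_K$. Now apply the proper case of $\psi$ to $\overline X$ and $\overline Y$, obtaining canonical identifications $\widehat{\overline X}_K\simeq \overline X_K^{\an}$ and $\widehat{\overline Y}_K\simeq \overline Y_K^{\an}$ compatible with $\overline f$. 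Combining these,
$$
\widehat X_K \simeq \widehat{\overline X}_K\times_{\widehat{\overline Y}_K}\widehat Y_K \simeq \overline X_K^{\an}\times_{\overline Y_K^{\an}}\widehat Y_K \simeq \left(\overline X_K^{\an}\times_{\overline Y_K^{\an}}Y_K^{\an}\right)\times_{Y_K^{\an}}\widehat Y_K \simeq X_K^{\an}\times_{Y_K^{\an}}\widehat Y_K ,
$$
the last step using the identification of $X_K^{\an}$ found above.

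Finally I would verify that under these identifications the comparison map $\widehat X_K \to X_K^{\an}\times_{Y_K^{\an}}\widehat Y_K$ induced by $\psi_X,\psi_Y,\widehat f_K$ and $f_K^{\an}$ is precisely the isomorphism produced above; this is a formal consequence of the naturality of $\psi$ and of the fact that $\widehat{(-)}_K$ and $(-)^{\an}_K$ applied to the morphisms $X\hookrightarrow \overline X$, $Y\hookrightarrow \overline Y$, $f$, $\overline f$ fit into commutative squares. Concretely, one assembles everything into the commutative cube whose back and front faces are the Cartesian squares for $\widehat{(-)}_K$ and $(-)^{\an}_K$ attached to $X=\overline X\times_{\overline Y}Y$ and whose four side faces are the $\psi$-squares; since $\overline X$ and $\overline Y$ are proper the right face has its two horizontal arrows isomorphisms and is therefore Cartesian, and a routine pasting of Cartesian squares then forces the left face — which is exactly the square in the statement — to be Cartesian as well.
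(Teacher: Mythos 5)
Your proof is correct and takes essentially the same route as the paper's: Nagata compactification of $X$ and $Y$, replacement of $\overline X$ by the closure of the graph so that $\overline f$ is proper and the algebraic square is Cartesian, then passing to the rigid cube where the proper case of $\psi$ collapses the right face. The only substantive difference is that you spell out carefully why $X = \overline X \times_{\overline Y} Y$ (properness of $f$ makes the graph immersion $X \to \overline X_0 \times_{\mathcal O_K} Y$ closed, and a scheme-theoretic closure intersected back with the open where it was taken recovers the original subscheme), whereas the paper dismisses this with a one-line ``easy to see''; that extra care is welcome but does not change the argument.
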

\begin{prop}\label{adic_proper_base_change}
Let $f\colon \mathpzc X \to \mathpzc Y$ be a proper morphism (see \Cref{def:proper morphism}) of finitely presentable flat Artin $\mathcal O_K$-stacks. Then the diagram
\begin{align*}
\xymatrix{
\Shv_{\et}^+(\widehat{\mstack X}_K, \Lambda) & \ar[l]_-{\psi_{\mstack X}^{-1}} \Shv_{\et}^+(\mstack X^\an_K, \Lambda) \\
\Shv_{\et}^+(\widehat{\mstack Y}_K, \Lambda) \ar[u]_{\widehat f_K^{-1}} & \ar[l]_-{\psi_{\mstack Y}^{-1}}\ar[u]_{f_K^{-1}} \Shv_{\et}^+(\mstack Y^\an_K, \Lambda).
}
\end{align*}
is right adjointable (see \Cref{def:adjointable_sq}).

\begin{proof}
First assume that both $X$ and $Y$ are schemes. By the lemma above the commutative square
$$\xymatrix{
\widehat X_K \ar[r]^{\psi_X}\ar[d]^{\widehat f_K} & X_K^\an \ar[d]^{f^\an_K}\\
\widehat Y_K \ar[r]^{\psi_Y} & Y_K^\an
}$$
is a fibered square of adic spaces. Hence the result in this case follows from the proper base change for adic spaces \cite[Theorem 4.4.1]{Huber_adicSpaces}. Note that $\psi_Y$ is an open embedding, thus $\dim\!.\!\tr(\psi_Y)=0$ and the mentioned base change result can indeed be applied.

Next assume that $\mstack X$ is $n$-Artin but $\mstack Y$ is a scheme. Since $f$ is proper we can choose a surjection $U \to \mstack X$ from a scheme $U$ such that all terms of the corresponding \v Cech nerve $\mstack U_k$ are proper over $\mstack Y$. Note that by \'etale descent we have
$$\Shv_{\et}^+(\widehat{\mstack X}_K, \Lambda) \xymatrix{\ar[r]^\sim &} \Tot \Shv_{\et}^+(\widehat{\mstack U}_{\bullet, K}, \Lambda)$$
and similar equivalence holds for $\mstack X_K^\an$ (see the proof of \Cref{commutation with psi}). Since all $\mstack U_i$ are at most $(n-1)$-Artin by induction the proposition holds for the map $\mstack U_i \to \mstack Y$, i.e. all squares
$$\xymatrix{
\Shv_{\et}^+(\widehat{\mstack U}_{i, K}, \Lambda) & \ar[l]_-{\psi_{\mstack U_k}^{-1}} \Shv_{\et}^+(\mstack U^\an_{i,K}, \Lambda) \\
\Shv_{\et}^+(\widehat{\mstack Y}_K, \Lambda) \ar[u]_{\widehat f_K^{-1}} & \ar[l]_-{\psi_{\mstack Y}^{-1}}\ar[u]_{f_K^{-1}} \Shv_{\et}^+(\mstack Y^\an_K, \Lambda).
}$$
We deduce that the proposition holds for the map $\mstack X \to \mstack Y$ since right adjointable squares are closed under limits.

For the general case let $\mstack X$ be arbitrary and let $\mstack Y$ be $n$-Artin. Let $V\surj \mstack Y$ be an atlas and let $\mstack V_i$ be the corresponding \v Cech cover. Since all $\mstack V_i$ are at most $(n-1)$-Artin the proposition holds for all maps $\mstack U_i \coloneqq \mstack X\times_{\mstack Y} \mstack V_i \to \mstack V_i$. Again we conclude by \'etale descent and since right adjointable squares are closed under limits.
\end{proof}
\end{prop}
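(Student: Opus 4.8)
The plan is to reduce the statement to the case where $\mstack X$ and $\mstack Y$ are schemes, where it becomes an instance of Huber's proper base change theorem for adic spaces, and then to propagate it to Artin stacks by \'etale descent, in exactly the style of the proof of the smooth base change \Cref{et_smooth_basechange_algebraic}. The key geometric input, which I would establish first as a lemma, is: for a proper morphism $f\colon X\to Y$ of finite type flat $\mc O_K$-schemes, the square with vertices $\widehat X_K$, $X^\an_K$, $\widehat Y_K$, $Y^\an_K$, horizontal arrows $\psi_X,\psi_Y$ and vertical arrows $\widehat f_K$, $f^\an_K$, is cartesian in rigid analytic spaces. Granting this, the scheme case of the proposition follows from \cite[Theorem 4.4.1]{Huber_adicSpaces}: here one only has to note that $\psi_{\mstack Y}$ is an open immersion (so $f^\an_K$ is proper and the relevant finiteness hypothesis on the base-change map is satisfied), and that proper base change then gives precisely the right adjointability of the square of $\Shv^+_\et$-categories.

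To prove the cartesian-square lemma I would use Nagata compactification. Choose compactifications $\ol Y\supset Y$ and $\ol X\supset X$; replacing $\ol X$ by the closure of the graph $\Gamma_f\subset \ol X\times\ol Y$, one arranges a commutative square $X\hookrightarrow\ol X$, $Y\hookrightarrow\ol Y$ over $f$ and the proper map $\ol f$ which is moreover cartesian (since $f$ is already proper, passing to the graph closure does not alter the fibres over $Y$). Now form the cube of rigid spaces whose back and front faces are the images of this square under $\widehat{(-)}_K$ and $(-)^\an_K$; both functors commute with fibre products, so both faces are cartesian, and since $\ol X,\ol Y$ are proper we have $\widehat{\ol X}_K\simeq\ol X^\an_K$ and $\widehat{\ol Y}_K\simeq\ol Y^\an_K$. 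Pasting (the square over $X\hookrightarrow \ol X$ becomes cartesian once its right factor has isomorphisms for horizontal maps, and the composite is cartesian from the back face) then forces the left face --- the square we want --- to be cartesian.

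For the passage to stacks I would argue in two stages, using that both $\Shv^+_\et(\widehat{(-)}_K,\Lambda)$ and $\Shv^+_\et((-)^\an_K,\Lambda)$ satisfy \'etale descent; this is in hand via \Cref{pass from algebraic to analytic} together with the sheaf property of $\Shv_\et(-,\Lambda)$ (cf.\ the proof of \Cref{commutation with psi}). \emph{Stage one:} $\mstack Y$ a scheme, $\mstack X$ an $n$-Artin stack; induct on $n$, with base $n=-1$ the scheme case. For the step, use properness of $f$ to pick a surjection $U\surj\mstack X$ whose \v Cech nerve $\mstack U_\bullet$ has all terms proper over $\mstack Y$; then the categories attached to $\mstack X$ are the totalizations of those of $\mstack U_\bullet$, each $\mstack U_i$ is $(n{-}1)$-Artin so the square for $\mstack U_i\to\mstack Y$ is right adjointable by induction, and right adjointable squares are closed under limits \cite[Corollary 4.7.4.18]{Lur_HA}, giving the square for $\mstack X\to\mstack Y$. \emph{Stage two:} general $\mstack Y$; induct on the Artin level of $\mstack Y$, base case being stage one. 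Given an atlas $V\surj\mstack Y$ with \v Cech nerve $\mstack V_\bullet$, set $\mstack U_i\coloneqq\mstack X\times_{\mstack Y}\mstack V_i$ (proper over $\mstack V_i$); each $\mstack V_i$ is of smaller Artin level, so by induction the squares for $\mstack U_i\to\mstack V_i$ are right adjointable, and one concludes by \'etale descent and closure of right adjointable squares under limits.

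The main obstacle, I expect, is the cartesian-square lemma in the scheme case: arranging the Nagata compactification to sit in a genuinely cartesian square over $f$, checking that the cube argument really produces a pullback of rigid spaces, and confirming that Huber's adic proper base change applies with the open immersion $\psi_{\mstack Y}$ in the role of the horizontal base-change map. Once the scheme case is in place the rest is formal --- the only ingredients are the \'etale descent for the two sheaf categories (already available) and the closure of right adjointable squares under limits --- so both inductive stages go through with no new ideas.
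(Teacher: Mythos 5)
Your proposal is correct and follows essentially the same route as the paper: the same cartesian-square lemma via Nagata compactification and graph closure, the same reduction to Huber's proper base change in the scheme case, and the same two-stage induction on Artin level using proper \v Cech covers, \'etale descent, and closure of right adjointable squares under limits.
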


\begin{cor}\label{adic_base_change}
Let $f\colon \mathpzc X \to \mathpzc Y$ be a proper morphism of finitely presentable flat Artin $\mathcal O_K$-stacks. Then the diagram
\begin{align*}
\xymatrix{
\Shv_{\et}^+(\widehat{\mstack X}_K, \Lambda) & \ar[l]_-{\lambda_{\mstack X}^{-1}} \Shv_{\et, \constr}^+(\mstack X_K, \Lambda) \\
\Shv_{\et}^+(\widehat{\mstack Y}_K, \Lambda) \ar[u]_{\widehat f_K^{-1}} & \ar[l]_-{\lambda_{\mstack Y}^{-1}}\ar[u]_{f_K^{-1}} \Shv_{\et, \constr}^+(\mstack Y_K, \Lambda).
}
\end{align*}
is right adjointable (see \Cref{def:adjointable_sq}).

\begin{proof}
Follows by composing squares in Propositions \ref{commutation with psi} and \ref{adic_proper_base_change}.
\end{proof}
\end{cor}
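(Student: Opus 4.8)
The plan is to obtain \Cref{adic_base_change} as a purely formal consequence of Propositions \ref{commutation with psi} and \ref{adic_proper_base_change}, by exhibiting the square in the statement as the horizontal pasting of the two squares produced there and invoking the pasting lemma for right adjointable diagrams.

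First I would record the factorization $\lambda_{\mstack X}^{-1}\simeq\psi_{\mstack X}^{-1}\circ\phi_{\mstack X}^{-1}$. For a finite type flat $\mathcal O_K$-scheme $S$ this is built into \Cref{map_from_Raynaud_to_usual_generic_fiber}: the morphism of locally ringed $G$-spaces $\widehat S_K\to S_K$ is by construction the composite $\widehat S_K\xrightarrow{\psi_S}S_K^{\an}\xrightarrow{\phi_S}S_K$, so the induced map of \'etale sites, and hence the pullback functor, factors accordingly. Passing to right Kan extensions along $\Aff_{\mathcal O_K}^{\ft,\fl,\op}\hookrightarrow(\PStk_{\mathcal O_K}^{\lft})^{\op}$ as in \Cref{construction:et_algebraic_vs_Raynaud}, and using the limit presentations of \Cref{pass from algebraic to analytic}, the same factorization holds for any finitely presentable flat Artin $\mathcal O_K$-stack $\mstack X$; in particular $\phi_{\mstack X}^{-1}$ carries $\Shv_{\et, \constr}^+(\mstack X_K,\Lambda)$ into $\Shv_{\et}^+(\mstack X^\an_K,\Lambda)$, which is precisely the source of the $\psi$-square.

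Next I would note that the square in \Cref{adic_base_change} is the horizontal composite of the square of \Cref{commutation with psi} (the ``$\phi$-square'': top arrow $\phi_{\mstack X}^{-1}$, vertical arrows $f_K^{-1}$ and $(f_K^{\an})^{-1}$; right adjointable because $f$, hence $f_K$, is quasi-compact and quasi-separated) followed by the square of \Cref{adic_proper_base_change} (the ``$\psi$-square'': top arrow $\psi_{\mstack X}^{-1}$, vertical arrows $(f_K^{\an})^{-1}$ and $\widehat f_K^{-1}$; right adjointable because $f$ is proper). The two squares share the middle column $\Shv_{\et}^+(\mstack X^\an_K,\Lambda)\to\Shv_{\et}^+(\mstack Y^\an_K,\Lambda)$ with vertical functor $(f_K^{\an})^{-1}$, so they may be glued, and the resulting outer square has top row $\psi_{\mstack X}^{-1}\circ\phi_{\mstack X}^{-1}\simeq\lambda_{\mstack X}^{-1}$, bottom row $\lambda_{\mstack Y}^{-1}$, and outer columns $f_K^{-1}$ and $\widehat f_K^{-1}$, which is exactly the diagram in the statement. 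Since right adjointable squares are closed under horizontal composition, by the pasting lemma for adjointable diagrams (e.g. \cite[Section 4.7.4]{Lur_HA}, compare the closure-under-limits statement used in the proofs of Propositions \ref{commutation with psi} and \ref{adic_proper_base_change}), this proves the corollary.

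I do not expect a substantive obstacle beyond the two cited propositions; the only point that genuinely needs checking is that the factorization $\lambda=\psi\circ\phi$ is compatible with the stacky constructions of \Cref{construction:et_algebraic_vs_Raynaud}, so that the $\phi$- and $\psi$-squares really do agree on their common column and the pasting lemma applies verbatim. Once this compatibility is in hand, the rest is bookkeeping.
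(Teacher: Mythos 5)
Your proposal is correct and follows the paper's proof exactly: the paper's one-line argument ("Follows by composing squares in Propositions \ref{commutation with psi} and \ref{adic_proper_base_change}") is precisely the horizontal pasting you spell out, using $\lambda_{\mstack X}^{-1}\simeq\psi_{\mstack X}^{-1}\circ\phi_{\mstack X}^{-1}$ and closure of right adjointable squares under horizontal composition. The extra care you take to verify the factorization $\lambda=\psi\circ\phi$ persists through the right Kan extension in \Cref{construction:et_algebraic_vs_Raynaud} is a reasonable thing to flag, though the paper treats it as immediate from functoriality of that construction.
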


\section{\'Etale cohomology of stacks}\label{sec_etale_cohomology_of_stacks}
In this section we study various \'etale cohomology theories for Artin stacks and prove the \'etale comparison theorem for prismatic cohomology. In \Cref{sect:etale_cohomology_on_stacks} we defined the two variants of the \'etale cohomology: one of the usual algebraic and that of the Raynaud generic fiber. Using results from \Cref{adic_local_systems} we construct a natural comparison map between these two versions and formulate a conjecture (\ref{etale_conjecture}) on when this map is an equivalence. \Cref{subsect:etale_comparison} is devoted to the proof of the \'etale comparison for the prismatic cohomology which relates it to the \'etale cohomology of the Raynaud's generic fiber. As an application, we show that in the Hodge-proper case the analytic $\mbb Z_p$-\'etale cohomology groups are finitely generated over $\mbb Z_p$ and that over $\mc O_K$, after inverting $p$, the corresponding Galois representations are crystalline. We also deduce an inequality on lengths of the crystalline cohomology of the special fiber and the \'etale cohomology of the Raynaud's generic fiber of a Hodge-proper stack similar to the one in \cite[Theorem 1.1(ii)]{BMS1}. As another applications we deduce Hodge-to-de Rham degeneration for Hodge-proper stacks admitting a smooth proper model over $\mathcal O_K$ and Hodge-Tate decomposition of the \'etale cohomology of Raynaud generic fiber.

\subsection{\'Etale cohomology of algebraic and Raynaud generic fibers}\label{sect:etale_cohomology_on_stacks}
In this subsection we define two versions of \'etale $\mathbb Z_p$-cohomology of the generic fiber of a stack $\mstack X$ over $\mathcal O_K$. They come naturally from the two candidates for the (geometric) generic fiber, one being the algebraic stack $\mstack X_{\mbb C_p}$ given by the \textit{algebraic} generic fiber of $\mstack X$, and the other being the Raynaud generic fiber $\widehat{\mstack X}_{\mbb C_p}$: this is the rigid analytic stack that we have defined in \Cref{adic_local_systems}. The \'etale cohomology of the first are easier to describe in practice and has a clear "topological" interpretation via Artin's comparison (see \Cref{Artins_comparison_stacks}). The \'etale cohomology of the Raynaud generic fiber a priory behaves rather badly, in particular the individual cohomology groups are not finitely generated, unless we make some (either literal or relaxed) properness assumptions on the stack. As we will see shortly in \Cref{etale_comparison}, in general the prismatic cohomology only compares with the cohomology of the Raynaud general fiber (and that's why we are forced to consider it). 

%%For the rest of this section let $\mbb C_p$ be \todo{or the completion of the algebraic closure??} a complete non-archimedean algebraically closed field extension of $\mathbb Q_p$ with the ring of integers $\mathcal O_{\mbb C_p}$. Also let $\Lambda$ be a torsion ring.

\paragraph{The algebraic version.}
Fix a base ring $R$ and let $\Lambda$ be a torsion ring. Recall the category $\Shv_\et(\mstack X, \Lambda)$ that we defined in \Cref{etale_sheaves_on_stacks} for any prestack $\mstack X\in \PStk_{R}$. Let $\ul{\Lambda}\in \Shv_\et(\mstack X, \Lambda)$ be the constant sheaf (see \Cref{ex: the constant sheaf}).

\begin{defn}\label{defn: algebraic etale cohomology}
	\begin{enumerate}[wide]
		\item The \'etale cohomology $R\Gamma_\et(\mstack X, \Lambda)$ is defined as
		$$R\Gamma_\et(\mstack X, \Lambda) \coloneqq \End_{\Shv_\et(\mstack X, \Lambda)}(\underline \Lambda)\in \DMod{\Lambda}.$$
		Unraveling the definitions we have a functorial isomorphism
		$$
		\End_{\Shv_\et(\mstack X, \Lambda)}(\underline \Lambda)\simeq \lim_{(T\ra \mstack X)\in(\Aff_{R}/\mstack X)^\op} \End_{\Shv_\et(T, \Lambda)}(\underline \Lambda) \simeq \lim_{(T\ra \mstack X)\in(\Aff_{R}/\mstack X)^\op} \RG_{\et}(T,\Lambda).
		$$
		Thus, $R\Gamma_\et(\mstack X, \Lambda)$ is the value of the right Kan extension of the functor $\RG_{\et}(-,\Lambda)$ along $\Aff_{R}\subset \PStk_{R}$ and as such naturally extends to a functor 
		$$
		\RG_{\et}(-,\Lambda)\colon \PStk_R^{\op}\xymatrix{\ar[r]&}
		\DMod{\Lambda}.$$
		In particular, for a morphism $f\colon\mstack X\ra \mstack Y$ we have a functorial pull-back map $f^{-1}\colon \RG_{\et}(\mstack Y,\Lambda)\ra \RG_{\et}(\mstack X,\Lambda)$.
		\item More generally, given an \'etale sheaf $\mc F\in \Shv_\et(\mstack X, \Lambda)$ we define 
		$$
		\RG_{\et}(\mstack X,\mc F)\coloneqq \Hom_{\Shv_\et(\mstack X, \Lambda)}(\ul{\Lambda},\mc F).
		$$
		Once again, unraveling the definitions, we get a formula expressing it as a limit
		$$
		\RG_{\et}(\mstack X,\mc F)\simeq  \lim_{(f:T\ra \mstack X)\in(\Aff_{R}/\mstack X)^\op} \Hom_{\Shv_\et(T, \Lambda)}(\ul{\Lambda},f^*\mc F) \simeq  \lim_{(f:T\ra \mstack X)\in(\Aff_{R}/\mstack X)^\op} \RG_{\et}(T,f^*\mc F).
		$$
		
		\item We also extend the definition to $\mbb Z_p$-coefficients by a somewhat ad hoc formula
		$$
		R\Gamma_\et(\mstack X, \mathbb Z_p) \coloneqq \prolim R\Gamma_\et(\mstack X, \mathbb Z/p^n).
		$$
		Similarly, for any $k\in \mbb Z$ we put 
		$$
		R\Gamma_\et(\mstack X, \mathbb Z_p(k)) \coloneqq \prolim R\Gamma_\et(\mstack X, \mu_{p^n}^{\otimes k}).
		$$
	\end{enumerate}
\end{defn}

\begin{rem}
	For a scheme $X$ our $R\Gamma_\et(X, \mathbb Z_p(k))$ is equivalent to the continuous \'etale cohomology of Jennsen \cite{Jannsen_ContEtale} (essentially by definition) or to the cohomology of the sheaf $\mathbb Z_p(k)$ in the pro-\'etale site of $X$ (by \cite[Proposition 5.6.2]{BS_proetale}). Moreover, if all cohomology groups $H^i_\et(X, \mathbb Z/p^n(k))$ are finite, e.g. if $X$ is a variety over an algebraically closed field, the Mittag-Leffler condition is automatically satisfied, so by Milnor's exact sequence $H^iR\Gamma_\et(X, \mathbb Z_p(k)) \simeq \prolim H^i_\et(X, \mathbb Z/p^n(k))$.
\end{rem}

As in the case of schemes, for $R$ being the field of complex numbers $\mathbb C$, the \'etale cohomology of a stack can be interpreted as the singular cohomology of the underlying homotopy type of its complex points (which we now define).
\begin{lem}\label{etale_descent_for_underlying_homotopy_type}
	Let $\Pi_\infty -(\mathbb C)\colon \Aff_{\mathbb C}^\ft \to \Type$ be a functor defined as a composition
	$$\xymatrix{\Aff_{\mathbb C}^\ft \ar[r]^-{-(\mathbb C)} & \Top \ar[r]^{\Pi_\infty} & \Type,}$$
	where the first functor sends an affine scheme $X$ to its set of complex points $X(\mathbb C)$ equipped with the natural analytic topology and the second functor sends a topological space to its underlying homotopy type. Then $\Pi_\infty-(\mathbb C)$ satisfies \'etale descent.
	
	\begin{proof}
		Let $U\surj X$ be an \'etale cover. We need to prove that the induced map
		$$|\Pi_\infty U_\bullet(\mathbb C)| \xymatrix{\ar[r] &} \Pi_\infty X(\mathbb C)$$
		is a homotopy equivalence, where $U_\bullet$ is the \v Cech nerve of $\pi$.
		
		Note that the analytification functor $-(\mathbb C)$ preserves fibered products and sends \'etale morphisms to local homeomorphisms. Hence, it is enough to prove that for a local homeomorphism $V \surj Y$ of topological spaces the induced map $|V_\bullet| \to Y$ (where $V_\bullet$ is a \v Cech nerve of $V \surj Y$ formed in the category $\Top$ of topological spaces) is an equivalence. This is \cite[Corollary 1.5]{DuggerIsaksen_HypercoversAndA1HomotopyRealiation}.
	\end{proof}
\end{lem}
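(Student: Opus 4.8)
The statement to be proven is \Cref{etale_descent_for_underlying_homotopy_type}: the functor $\Pi_\infty-(\mathbb C)$ satisfies \'etale descent. The plan is to reduce the assertion about \'etale covers of $\mathbb C$-schemes to a purely topological statement about local homeomorphisms, and then quote a known result. First I would fix an \'etale cover $\pi\colon U\surj X$ and form its \v Cech nerve $U_\bullet$; the goal is to show that the canonical map $|\Pi_\infty U_\bullet(\mathbb C)| \to \Pi_\infty X(\mathbb C)$ is an equivalence of spaces. The key observation is that the analytification functor $(-)(\mathbb C)\colon \Aff_{\mathbb C}^\ft \to \Top$ commutes with the relevant finite limits (in particular with the fibered products appearing in the \v Cech nerve) and sends \'etale morphisms of schemes to local homeomorphisms of topological spaces. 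Granting this, the \v Cech nerve $U_\bullet(\mathbb C)$ formed in $\Top$ coincides with the image of the scheme-theoretic \v Cech nerve.

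The second step is to isolate the purely topological claim: for a surjective local homeomorphism $V \surj Y$ of topological spaces, with \v Cech nerve $V_\bullet$ formed in $\Top$, the natural map $|\Pi_\infty V_\bullet| \to \Pi_\infty Y$ (equivalently, after applying $\Pi_\infty$ termwise and realizing) is an equivalence. This is exactly the content of \cite[Corollary 1.5]{DuggerIsaksen_HypercoversAndA1HomotopyRealiation}, so once the reduction is in place the proof is finished by citation. One should be mildly careful that the realization $|V_\bullet|$ of the topological \v Cech nerve computes the homotopy colimit of $\Pi_\infty V_\bullet$ — this is standard since $\Pi_\infty$ preserves colimits (it is a left adjoint) and geometric realization of a simplicial topological space models the homotopy colimit of the corresponding simplicial space once we pass to underlying homotopy types.

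I expect the main (really the only) obstacle to be the verification that $(-)(\mathbb C)$ sends \'etale maps to local homeomorphisms and preserves fibered products; but both of these are classical facts from the theory of complex-analytic spaces (an \'etale morphism of finite-type $\mathbb C$-schemes induces a map that is, complex-analytically locally, an isomorphism, hence a local homeomorphism on $\mathbb C$-points; and analytification preserves finite limits among finite-type schemes). So there is no genuine difficulty, only bookkeeping: making sure the \v Cech nerve formed scheme-theoretically and then analytified agrees with the topological \v Cech nerve, which follows from the limit-preservation. The proof as written in the excerpt already follows exactly this outline, so the role of this proposal is simply to confirm that the reduction-to-topology strategy is the right one and that the cited corollary supplies the final input.
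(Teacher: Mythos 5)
Your proposal is correct and follows the same route as the paper's proof: reduce to the topological statement about local homeomorphisms via the fact that analytification preserves fibered products and sends \'etale maps to local homeomorphisms, then cite Dugger--Isaksen \cite[Corollary 1.5]{DuggerIsaksen_HypercoversAndA1HomotopyRealiation}. Nothing to add.
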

\begin{construction}\label{constr:underlying homotopy type}
	Define the functor of the \emdef{underlying homotopy type} of a prestack of locally finite type 
	$$\Pi_\infty-(\mathbb C)\colon \PStk_{\mathbb C}^\lft\coloneqq \Fun(\CAlg_{\mathbb C/}^\ft, \Type) \xymatrix{\ar[r] &} \Type$$
	as the left Kan extension of the functor $\Pi_\infty - (\mathbb C)\colon \Aff_{\mathbb C}^\ft \to \Type$ (considered in the previous lemma) along the Yoneda embedding $\Aff_\mathbb C^\ft \inj \PStk_{\mathbb C}^\lft$.
	
	Since $\Pi_\infty - (\mathbb C)\colon \Aff_{\mathbb C}^\ft \to \Type$ evidently preserves finite coproducts and satisfies \'etale descent by the previous lemma, we see that $\Pi_\infty -(\mathbb C)$ factors through a functor $\Stk_{\mathbb C, \et}^\lft \to \Type$, which (by a slight abuse of notation) we will also denote by $\Pi_\infty-(\mathbb C)$.
\end{construction}
\begin{ex}\label{ex: BG anlytification}
	By construction the functor $\Pi_\infty-(\mathbb C)\colon \Stk^\lft_{\mathbb C, \et} \to \Type$ commutes with colimits. In particular, for an algebraic group $G$ over $\mathbb C$ we have that $\Pi_\infty (BG)(\mathbb C)$ is naturally equivalent to the topological classifying space of the topological group $G(\mathbb C)$. More generally, for a $\mathbb C$-scheme $X$ equipped with an action of $G$ there is a natural equivalence $\Pi_\infty [X/G](\mathbb C) \simeq X(\mathbb C)_{hG(\mathbb C)}$, where $-_{hG(\mathbb C)}\colon \Type^{BG(\mathbb C)} \to \Type$ is the Borel's homotopy quotient by $G(\mathbb C)$ functor.
\end{ex}
\begin{prop}[Artin's comparison]\label{Artins_comparison_stacks}
	Let $\mstack X$ be an Artin $\mathbb C$-stack locally of finite type and let $\Lambda$ be a finite abelian group. Then there is a natural equivalence
	$$R\Gamma_\et(\mstack X, \Lambda) \simeq C^*(\Pi_\infty \mstack X(\mathbb C), \Lambda),$$
	where $C^*(-, \Lambda) \colon \Type^\op \to \DMod{\Lambda}$ is the singular $\Lambda$-cochains functor. Also, 
	$$
	R\Gamma_\et(\mstack X, \mbb Z_p) \simeq C^*(\Pi_\infty \mstack X(\mathbb C), \mbb Z_p).
	$$
	
\begin{proof}
Recall that since $\mathbb Z_p \simeq \prolim \mathbb Z/p^n$ in the derived category of abelian groups, for any homotopy type $K$ we have
$$C^*(K, \mathbb Z_p) = \Hom_{\DMod{Z}}(C_*(K, \mathbb Z), \mathbb Z_p) \simeq \prolim \Hom_{\DMod{\mathbb Z}}(C_*(K, \mathbb Z), \mathbb Z/p^n) = \prolim C^*(K, \mathbb Z/p^n).$$
Since we have defined $R\Gamma_\et(\mstack X, \mathbb Z_p)$ as $\prolim R\Gamma_\et(\mstack X, \mathbb Z/p^n)$, the last part of the proposition follows from the case of torsion coefficients.

In the latter case both sides satisfy \'etale descent: the left hand side essentially by definition and the right hand side by \Cref{etale_descent_for_underlying_homotopy_type}. Hence by \cite[Theorem 4.7]{Pridham_ArtinHypercovers} it is enough to prove the equivalence for $\mstack X$ being an affine finite type $\mathbb C$-scheme. This is a classical theorem of Artin, see \cite[Expos\'e XVI, Theorem 4.1]{SGA4_3}.
\end{proof}
\end{prop}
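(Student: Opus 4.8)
The plan is to reduce everything to the classical Artin comparison for affine finite-type $\mathbb C$-schemes by a descent argument, after first disposing of the $\mathbb Z_p$-coefficient case formally. For the $\mathbb Z_p$-statement, since $\mathbb Z_p \simeq \prolim_n \mathbb Z/p^n$ in $\DMod{\mathbb Z}$ and $C_*(K,\mathbb Z)$ is a complex of abelian groups for any homotopy type $K$, one has $C^*(K,\mathbb Z_p) \simeq \prolim_n C^*(K,\mathbb Z/p^n)$; combined with the definition $\RG_\et(\mstack X,\mathbb Z_p) := \prolim_n \RG_\et(\mstack X,\mathbb Z/p^n)$ from \Cref{defn: algebraic etale cohomology}, the $\mathbb Z_p$-case follows termwise from the finite-coefficient case, provided the latter equivalence is natural in $\mstack X$.

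Thus it suffices to produce, naturally in $\mstack X \in \Stk_{\mathbb C,\et}^\lft$, an equivalence $\RG_\et(\mstack X,\Lambda) \simeq C^*(\Pi_\infty \mstack X(\mathbb C),\Lambda)$ for $\Lambda$ a finite abelian group. \textbf{Step 1: the comparison map.} For an affine finite-type $\mathbb C$-scheme $X$, the morphism of sites $X(\mathbb C)_{\mathrm{an}} \to X_\et$ sending an \'etale $X$-scheme to its analytification (a local homeomorphism over $X(\mathbb C)$) induces a natural map $\RG_\et(X,\Lambda) \to C^*(\Pi_\infty X(\mathbb C),\Lambda)$ of functors $\Aff_{\mathbb C}^{\ft,\op} \to \DMod{\Lambda}$. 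Right Kan extending along $\Aff_{\mathbb C}^\ft \hookrightarrow \PStk_{\mathbb C}^\lft$ recovers $\RG_\et(-,\Lambda)$ on the left by \Cref{defn: algebraic etale cohomology}, and on the right recovers $C^*(\Pi_\infty(-)(\mathbb C),\Lambda)$ since $\Pi_\infty(-)(\mathbb C)$ is a left Kan extension (\Cref{constr:underlying homotopy type}) and $C^*(-,\Lambda)\colon \Type^\op \to \DMod{\Lambda}$ sends colimits to limits. This yields a natural transformation $\eta_{\mstack X}$ on all prestacks whose restriction to affine finite-type schemes is the classical comparison map.

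\textbf{Step 2: descent and reduction to affines.} Both sides are \'etale hypersheaves on $\PStk_{\mathbb C}^\lft$: the left-hand side because $\Shv_\et(-,\Lambda)$ satisfies \'etale hyperdescent (\Cref{alg_etale_shaeves_basics}) and $\RG_\et(-,\Lambda)$ is its global-sections complex, and the right-hand side because $\Pi_\infty(-)(\mathbb C)$ satisfies \'etale descent by \Cref{etale_descent_for_underlying_homotopy_type} and $C^*(-,\Lambda)$ turns the corresponding colimit of homotopy types into a totalization. By \cite[Theorem 4.7]{Pridham_ArtinHypercovers} every Artin stack $\mstack X$ of finite type admits a hypercover $U_\bullet \to \mstack X$ with each $U_n$ an affine finite-type $\mathbb C$-scheme; since $\eta$ is a natural transformation, $\eta_{\mstack X}$ is the totalization of the $\eta_{U_n}$, so it is an equivalence once each $\eta_{U_n}$ is. \textbf{Step 3:} for $X$ affine of finite type over $\mathbb C$, $\eta_X$ is exactly the comparison isomorphism $\RG_\et(X,\Lambda) \xrightarrow{\sim} C^*(X(\mathbb C),\Lambda)$ of \cite[Expos\'e XVI, Theorem 4.1]{SGA4_3}, and $C^*(X(\mathbb C),\Lambda) \simeq C^*(\Pi_\infty X(\mathbb C),\Lambda)$ by definition of singular cohomology. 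The passage from finite-type to locally-finite-type stacks is immediate, since both sides commute with the colimit presentation $\mstack X \simeq \colim_\alpha S_\alpha$ over affine finite-type schemes.

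The main obstacle is not any deep input — the scheme case is quoted and the descent is formal — but the care needed in Step 1 to exhibit $\eta$ as a genuine natural transformation of $\infty$-functors rather than a pointwise family of equivalences, since only then does the totalization argument of Step 2 apply. The crux there is the identification of the right Kan extension of $C^*(-,\Lambda)$ from $\Aff_{\mathbb C}^\ft$ with $C^*(\Pi_\infty(-)(\mathbb C),\Lambda)$, which relies precisely on $\Pi_\infty(-)(\mathbb C)$ being left Kan extended from affines (\Cref{constr:underlying homotopy type}) and on $C^*(-,\Lambda)$ being continuous as a contravariant functor. Note that left $t$-completeness issues for $\Shv_\et(\mstack X,\Lambda)$ play no role here, as $\Lambda$ is finite and the argument manipulates the global-sections complexes directly.
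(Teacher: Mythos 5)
Your argument follows the paper's proof essentially step for step: the $\mathbb{Z}_p$-case is reduced to torsion coefficients via $\mathbb{Z}_p \simeq \prolim \mathbb{Z}/p^n$, both sides are shown to satisfy \'etale hyperdescent, and Pridham's hypercover theorem reduces the claim to the affine finite-type case, where classical Artin comparison applies. The only genuine addition is Step 1, where you spell out how to produce the comparison $\eta$ as a natural transformation of $\infty$-functors via right Kan extension (using that $\Pi_\infty(-)(\mathbb C)$ is a left Kan extension and $C^*(-,\Lambda)$ turns colimits into limits) — the paper takes this for granted, so your version is a useful clarification but not a different proof.
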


\begin{rem}
	This applies in particular to the  $p$-adic setting. Namely, let $\mstack X$ be an Artin stack over $\mc O_{\mbb C_p}$. Recall that there is an abstract isomorphism $\iota\colon \mathbb C_p\simeq \mathbb C$ because both fields are of characteristic 0, algebraically closed and of the same cardinality. Then by \Cref{Artins_comparison_stacks} we get 
	$$
	\RG_\et(\mstack X_{\mbb C_p},\Lambda)\simeq \RG_\et(\mstack X_{\mbb C},\Lambda)\simeq C^*(\Pi_\infty \mstack X(\mathbb C), \Lambda).
	$$
	Note that this isomorphism is not canonical and depends on the choice of $\iota$.
\end{rem}

\begin{rem}\label{rem:algebraic etale cohomology is usually finite}
	We also record that for any finite type stack $\mstack X$ over $\mbb C_p$ the cohomology of $\RG_{\et}(\mstack X,\Lambda)\in \Coh^+(\Lambda)$. Indeed, by part (3) of \Cref{functoriality_of_loc_sys}, $p_*\ul \Lambda\in \DMod{\Lambda}$ for $p\colon \mstack X\ra \Spec \mbb C_p$ is constructible and so all of its cohomology groups are finitely generated.
\end{rem}	
\paragraph{The rigid analytic version.}
Unfortunately, the \'etale comparison from \Cref{main_BMS2} relates the prismatic cohomology with the \'etale cohomology of the Raynaud generic fiber instead of the algebraic one. A discussion on the relation between the two is postponed till the next subsection.

Meanwhile, the \'etale comparison will be formulated in a slightly more general setup than the one we had in \Cref{adic_local_systems}. We fix a base $p$-adically complete ring $R$ with bounded $p$-torsion.
\begin{notation}
	Let $U = \Spec A \in \Aff_{R}$ be a flat finite type affine scheme over $R$. Consider the corresponding formal scheme $\widehat{U}\coloneqq \Spf \widehat A$ and the adic space $\Spa(\widehat A,\widehat A)$, where $\widehat A = \prolim A/p^n$ is the $p$-adic completion. To $\widehat U$ we can associate its \emdef{Raynaud generic fiber} $\widehat U_{R[1/p]}$ given by $\widehat U_{R[1/p]} \coloneqq \Spa(\widehat A[1/p],\widehat A).$
	This is an adic space.
\end{notation}

In \cite{Huber_adicSpaces} under Noetherian assumptions on $\widehat A [1/p]$ the theory of \'etale cohomology of adic spaces was developed and studied in great detail. By \cite[Corollary 3.2.2]{Huber_adicSpaces} for finite constant coefficients one has an equivalence
$$
\xymatrix{\RG_{\et}(\widehat U_{R[\frac{1}{p}]},\Lambda)\simeq \RG_{\et}(\Spec \widehat A[\frac{1}{p}],\Lambda),}
$$
where on the right hand side we have the ordinary affine scheme $\Spec \widehat A[{1}/{p}]$. In the case we drop the Noetherian assumptions we can use this as a definition, as suggested in \cite{BhattMathew_Arc}:

\begin{notation}\label{not: adic cohomology in non-Noetherian case}	
	We define	
	$$
	R\Gamma_\et(\widehat - _{R[\frac{1}{p}]}, \Lambda)\colon \left(\Aff^{\fl,\ft}_{R}\right)^\op \xymatrix{\ar[r]&}\DMod{\Lambda},
	$$
	to be a natural functor that sends $\Spec A \in \Aff^{\fl,\ft}_{R}$ to $\RG_{\et}(\Spec \widehat A[{1}/{p}],\Lambda)$.
\end{notation}

\begin{defn}\begin{enumerate}[wide]
		\item
		We define the functor of \emdef{\'etale cohomology of the Raynaud generic fiber} (which we sometimes also call the \emdef{adic \'etale cohomology} for brevity)
		
		$$R\Gamma_\et(\widehat - _{R[\frac{1}{p}]}, \Lambda) \colon (\PStk_{R})^\op \xymatrix{\ar[r] &} \DMod{\Lambda}$$
		as the right Kan extension along the Yoneda embedding $\Aff^{\fl,\ft}_{R}\inj \PStk_{R}$ of the functor which sends an affine $R$-scheme $U$ to $R\Gamma_\et(\widehat U_{R[1/p]}, \Lambda)$. Explicitly, for a prestack $\mstack X$ we have:
		$$R\Gamma_{\et}(\widehat{\mstack X}_{R[\frac{1}{p}]}, \Lambda) \simeq \lim_{(\Spec A\ra \mstack X)\in (\Aff^{\fl,\ft}_R\!\!/\mstack X)^\op}\! \! \xymatrix{ R\Gamma_\et(\Spec \widehat A[\frac{1}{p}], \Lambda).}$$
		\item Given an \'etale sheaf $\mc F\in \Shv_\et({\mstack X}, \Lambda)$ we also define its adic \'etale cohomology $\RG(\widehat{\mstack X}_{R[1/p]},\mc F)$. Namely, for any $\Spec A\in \Aff^{\fl,\ft}_R$ one has a map $\mu_A\colon \Spec \widehat A[1/p]\ra \Spec A$. Then we can put
		$$
		\RG(\widehat{\mstack X}_{R[\frac{1}{p}]},\mc F)=\lim_{(\Spec A\ra \mstack X)\in (\Aff^{\fl,\ft}_R\!\!/\mstack X)^\op} \!\!\xymatrix{R\Gamma_\et(\Spec \widehat A[\frac{1}{p}], \mu_A^*\mc F|_{U}).}
		$$
		\item 
		For $p$-adic coefficients we also define $R\Gamma_\et(\widehat{\mstack X}_C, \mathbb Z_p) \coloneqq \prolim R\Gamma_\et(\widehat{\mstack X}_C, \mathbb Z/p^n)$.
	\end{enumerate}
\end{defn}

\begin{ex}
	Recall the setting of \Cref{adic_local_systems}. There, among other things, to a flat finitely presentable Artin stack $\mstack X$  over $\mc O_K$ we associated the rigid analytic stack $\widehat {\mstack X}_K$ over $K$. Note that by \cite[Proposition 2.1.4]{Huber_adicSpaces} for any flat finitely-presented $\mc O_K$-algebra $A$ the rigid \'etale cohomology $\RG_\et(\Sp \widehat A_K,\Lambda)$ compares with the adic \'etale cohomology. Thus, by \Cref{pass from algebraic to analytic} we get a formula for $R\Gamma_{\et}(\widehat{\mstack X}_K, \Lambda)$ analogous to the one in \Cref{defn: algebraic etale cohomology}, namely:
	$$
	R\Gamma_\et(\widehat{\mstack X}_K, \Lambda)\simeq \lim_{(U\ra \mstack X)\in(\Aff_{R}^{\fl, \ft}/\mstack X)^\op} \End_{\Shv_\et(\widehat U_{K}, \Lambda)}(\underline \Lambda) \simeq \End_{\Shv_\et(\widehat{\mstack X}_K, \Lambda)}(\underline \Lambda)\in \DMod{\Lambda}.
	$$
	Similarly, for a sheaf $\mc F\in \Shv_\et(\widehat{\mstack X}_K, \Lambda)$ we have
	$$
	R\Gamma_\et(\widehat{\mstack X}_K, \mc F)\simeq \Hom_{\Shv_\et(\widehat{\mstack X}_K, \Lambda)}(\underline \Lambda,\mc F)\in \DMod{\Lambda}.
	$$
	This also allows us to define
	$$
	R\Gamma_\et(\widehat{\mstack X}_K, \mbb Z_p(k))\coloneqq \prolim \RG_{\et}(\widehat{\mstack X}_K, \mu_{p^{n}}^{\otimes k}).
	$$
\end{ex}

\begin{rem}
	Let $\mstack X$ be a flat finitely presentable Artin stack over $\mc O_K$, where $K/\mbb Q_p$ is finite. Then the complex $\RG(\widehat{\mstack X}_{\mbb C_p},\mc F)$ for the algebraic closure $K\subset \mbb C_p$ can be considered as the cohomology of a version of nearby cycles of $\mc F$ relative to the special fiber of the morphism $\mstack X\ra \Spec \mc O_K$.
\end{rem}

\paragraph{Comparison of the two versions of \'etale cohomology.} For this section let's assume that our base ring $R$ is the ring of integers $\mc O_C$ in an  algebraically closed complete non-Archimedean field $C$ of mixed characteristic.

\begin{construction}\label{constr:alg_to_adic_et_comp}
	Let $\mstack X$ be a flat finitely presentable Artin $\mathcal O_C$-stack. Recall that by \Cref{construction:et_algebraic_vs_Raynaud} there is a natural pair of adjoint functors
	$$\lambda_{\mstack X}^{-1} \colon \Shv_\et(\mstack X_C, \Lambda) \rightleftarrows \Shv_\et(\widehat {\mstack X}_C, \Lambda)\!\ :\!\lambda_{\mstack X*}.$$
	Applying the global sections functor $\RG_{\et}(\mstack X,-)$ to the unit of adjunction $\underline \Lambda \to \lambda_{\mstack X*} \underline \Lambda$ and using the identification $\lambda_{\mstack X}^{-1}\ul\Lambda\simeq \ul\Lambda $ we obtain a natural comparison map
	$$\Upsilon_{\mstack X}\colon R\Gamma_\et(\mstack X_C, \Lambda) \xymatrix{\ar[r] &} R\Gamma_\et({\mstack X}_C,  \lambda_{\mstack X*}\ul{\Lambda})\simeq R\Gamma_\et(\widehat{\mstack X}_C, \Lambda).$$
\end{construction}
\begin{rem}\label{rem:Upsilon as base change}
	The map $\Upsilon_{\mstack X}$ can be also viewed as the base change morphism (see \Cref{adic_base_change}) 
	$$
	(\lambda_{\Spec C}^{-1}\circ p_{C*})(\ul{\Lambda})\to (\widehat p_{C*}\circ\lambda^{-1}_{\mstack X})(\ul{\Lambda})
	$$ for the constant sheaf $\ul \Lambda \in \Shv_{\et}(\mstack X_C)$ and the structure morphism $p\colon \mstack X\ra \Spec \mc O_C$. Indeed, the functor $\lambda\colon \et/\Spec C \xra{\sim} \et/\Sp C$ is an equivalence of sites since both of them are trivial. Thus the pull-back functor
	$$\lambda_{\Spec C}^{-1}\colon \Shv_\et(\Spec C, \Lambda) \xymatrix{\ar[r]^-\sim &} \Shv_\et(\Sp C, \Lambda)$$ is trivially an equivalence, with both categories being equivalent to $\DMod{\Lambda}$ via the global sections functor.
	Under this identification, the functors $p_{C*}$ and $\widehat{p}_{C*}$ are given by $\RG_\et(\mstack X_C,-)$ and  $\RG_\et(\widehat{\mstack X}_C,-)$ respectfully; this way the base change morphism gives us exactly the map $\Upsilon_{\mstack X}$.
	In particular, by \Cref{adic_base_change}
	$\Upsilon_{\mstack X}$ is an equivalence if $\mstack X$ is a proper stack.
\end{rem}
\begin{rem}
	We can also replace $\Upsilon_{\mstack X}$ by a pull-back map for a morphism of rigid-analytic stacks using the analytification $\mstack X_C^\an$. Namely recall (\Cref{construction:et_algebraic_vs_Raynaud}) that we have the following diagram of functors $$
	\xymatrix{\Shv_\et(\mstack X_C, \Lambda)\ar@<.5ex>[r]^{\phi_{\mstack X}^{-1}} \ar@/_2.0pc/[rr]_{\lambda_{\mstack X}^{-1}}& \Shv_\et(\mstack X^\an_C, \Lambda)\ar@<.5ex>[l]^{\phi_{X*}}\ar@<.5ex>[r]^{\psi_X^{-1}}& \Shv_\et(\widehat{\mstack X}_C, \Lambda) \ar@<.5ex>[l]^{\psi_{\mstack X*}}.}
	$$
	This gives a decomposition:  
	$$
	\xymatrix{
		\RG_{\et}(\mstack X_C,\Lambda) \ar@/_2.0pc/[rr]_{\Upsilon_{\mstack X}}\ar[r]_\sim^{\phi_{\mstack X}^{-1}}& \RG_\et(\mstack X^\an_C, \Lambda) \ar[r]^{\psi_{\mstack X}^{-1}}&\RG_\et(\widehat{\mstack X}_C, \Lambda).}
	$$
	Moreover, interpreting $\phi_{\mstack X}^{-1}$ as a base change morphism for $f_C\colon \mstack X_C\ra \Spec C$, by \Cref{commutation with psi} we see that $\phi_{\mstack X}^{-1}$ is an equivalence. Thus, we get that $\Upsilon_{\mstack X}$ is an equivalence if and only if $\psi_{\mstack X}^{-1}\colon \RG_\et(\mstack X^\an_C, \Lambda)\ra \RG_\et(\widehat{\mstack X}_C, \Lambda)$ is.
\end{rem}

\begin{rem}\label{rem: interpretation of Upsilon using arc-descent}
	Quite interestingly,  using $\mr{arc}_p$-descent, one can also give a purely algebraic interpretation of the fiber of $\Upsilon_{\mstack X}$ as the obstruction for certain local acyclicity of $\mstack X$ relative to $\Spec \mc O_C$. Namely, by \cite[Corollary 6.17]{BhattMathew_Arc}, for any $\mc O_C$-algebra $R$ we have the following fibered square	
	$$
	\xymatrix{\RG_{\et}(\Spec R,\Lambda)\ar[d]_{\iota_p^*}\ar[r]& \RG_{\et}(\Spec R[1/p],\Lambda)\ar[d]^{\Upsilon_{\Spec R}}\\
		\RG_{\et}(\Spec R/p,\Lambda)\ar[r] & \RG_{\et}(\Spec \widehat R[1/p],\Lambda)
	}
	$$
	with $\iota_p\colon \Spec R/p \ra \Spec R$ being the embedding of the closed fiber. Passing to the limit over $\Aff_{\mc O_C}/\mstack X$ we get an analogous fiber square:
	$$
	\xymatrix{\RG_{\et}(\mstack X,\Lambda)\ar[d]_{\iota_p^*}\ar[r]& \RG_{\et}(\mstack X_{C},\Lambda)\ar[d]^{\Upsilon_{\mstack X}}\\
		\RG_{\et}(\mstack X_{\mc O_C/p},\Lambda)\ar[r] & \RG_{\et}(\widehat {\mstack X}_C,\Lambda).}
	$$
	This induces an equivalence of fibers
	$$
	\fib{\iota_p^*}\simeq \fib \Upsilon_{\mstack X},
	$$
	and, in particular, $\Upsilon_{\mstack X}$ is an equivalence if and only if $\iota_p^*\colon \RG_{\et}(\mstack X,\Lambda)\ra\RG_{\et}(\mstack X_{\mc O_C/p},\Lambda)$ is.
\end{rem}

Previous remark suggests the following definition:
\begin{defn}
	A flat locally finitely presentable stack $\mstack X$ over $\mc O_C$ is called \emdef{$\Lambda$-locally acyclic} if the map
	$$\Upsilon_{\mstack X}\colon R\Gamma_\et(\mstack X_C, \Lambda) \xymatrix{\ar[r]^\sim &} R\Gamma_\et(\widehat{\mstack X}_C, \Lambda)$$
	is an equivalence. 
\end{defn}
\begin{ex}
	$\Spec \mc O_C$ is $\Lambda$-locally acyclic. By \Cref{rem:Upsilon as base change} any proper $\mc O_C$-scheme is also $\Lambda$-locally acyclic.
\end{ex}
\begin{rem}It is not hard to see that $\mstack X$ is $\Lambda$-locally acyclic if and only if it is $\mbb F_\ell$-locally acyclic for all $\ell$ that divide $|\Lambda|$.
\end{rem}	
The behavior for $\mbb F_\ell$-local acyclicity is very different depending on whether $\ell$ equals to $p$ or not. Situation is easier if $\ell\neq p$: then $\mbb F_\ell$-local acyclicity can be reformulated in terms of (a version of) the algebraic nearby cycles of $\mstack X$, see \Cref{prop: F-l-acyclicity in terms of the neraby cycles}. In particular some nice affine $\mc O_C$-schemes like $\mbb A^n$ or $\mbb G_m$ are $\mbb F_\ell$-locally acyclic. In this case $\mbb F_\ell$-local acyclicity also automatically enjoys some basic nice properties, like being closed under products:
\begin{lem}\label{lem: product of acyclic is acyclic}
	Let $\Lambda$ be a field and assume $\mr{char}\ \!  \Lambda \neq p$. Let $\mstack X,\mstack Y$ be two flat locally finitely presentable $\Lambda$-locally acyclic Artin $\mc O_C$-stacks. Then the product $\mstack X\times\mstack Y$ is $\Lambda$-locally acyclic.
\end{lem}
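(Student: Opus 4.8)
The plan is to reduce the statement to K\"unneth formulas for the two \'etale cohomology theories, together with the naturality and multiplicativity of the comparison map $\Upsilon$. First I would record that the Raynaud generic fiber commutes with products: $\widehat{(\mstack X\times\mstack Y)}_C\simeq \widehat{\mstack X}_C\times\widehat{\mstack Y}_C$. Indeed $\widehat{(-)}_C$ preserves colimits and, on affine schemes, is given by $\Spec A\mapsto \Sp(\widehat A[\tfrac1p])$, which commutes with finite limits; and $\mstack X\times\mstack Y$ is the geometric realization of the diagonal of the product of smooth affine hypercovers $U_\bullet\to\mstack X$, $V_\bullet\to\mstack Y$ (which exist by \cite[Theorem 4.7]{Pridham_ArtinHypercovers}), whose terms $U_i\times V_i$ are again affine.

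Next I would establish the two K\"unneth equivalences
\begin{align*}
\RG_\et(\mstack X_C,\Lambda)\otimes_\Lambda\RG_\et(\mstack Y_C,\Lambda)&\xrightarrow{\sim}\RG_\et((\mstack X\times\mstack Y)_C,\Lambda),\\
\RG_\et(\widehat{\mstack X}_C,\Lambda)\otimes_\Lambda\RG_\et(\widehat{\mstack Y}_C,\Lambda)&\xrightarrow{\sim}\RG_\et(\widehat{(\mstack X\times\mstack Y)}_C,\Lambda).
\end{align*}
Both are proved by smooth hyperdescent, reducing (via the product hypercover above) to the case of affine schemes, resp. of the affinoid rigid spaces $\widehat U_C$ — which are Noetherian, since Tate algebras over $C$ are. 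There K\"unneth is classical: for finite-type affine $C$-schemes it is the usual K\"unneth formula for torsion \'etale cohomology, and for quasi-compact rigid spaces it holds because $\mathrm{char}\,\Lambda$ is invertible in $\mc O_C$ — most conveniently seen by passing to the special fiber and invoking the K\"unneth formula for (algebraic) nearby cycles, as in \Cref{sec: Algebraic nearby cycles and local } (cf. \Cref{prop: F-l-acyclicity in terms of the neraby cycles}). The descent step is where $\Lambda$ being a \emph{field} enters: then $-\otimes_\Lambda M$ is $t$-exact up to a shift for any bounded-below $M$, so by \Cref{left_exact_preserve_totalizations_of_uniformly_bounded_below} it commutes with the totalizations of the (uniformly $0$-connective) cosimplicial objects computing the cohomology of the hypercovers; combined with the Bousfield--Kan identification of $\Tot$ of a bicosimplicial object with the iterated $\Tot$, this yields the displayed isomorphisms. (A merely locally finitely presentable $\mstack X$ reduces to the quasi-compact case by writing it as a filtered colimit, which both \'etale cohomology theories and $\Upsilon$ turn into limits.)

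Finally I would assemble the square
$$\xymatrix{
\RG_\et(\mstack X_C,\Lambda)\otimes_\Lambda\RG_\et(\mstack Y_C,\Lambda) \ar[r]^-{\sim}\ar[d]_{\Upsilon_{\mstack X}\otimes\Upsilon_{\mstack Y}} & \RG_\et((\mstack X\times\mstack Y)_C,\Lambda)\ar[d]^{\Upsilon_{\mstack X\times\mstack Y}}\\
\RG_\et(\widehat{\mstack X}_C,\Lambda)\otimes_\Lambda\RG_\et(\widehat{\mstack Y}_C,\Lambda)\ar[r]^-{\sim} & \RG_\et(\widehat{(\mstack X\times\mstack Y)}_C,\Lambda),
}$$
whose horizontal arrows are the K\"unneth equivalences just established. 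Its commutativity follows from the naturality of $\Upsilon$ in the stack (it is the base-change transformation of \Cref{rem:Upsilon as base change}) together with the fact that the K\"unneth maps are the cup products of the two pullbacks along the projections: $\Upsilon$ intertwines these pullbacks, and it is a morphism of $E_\infty$-algebras, being induced by the unit $\underline\Lambda\to\lambda_{\mstack X*}\underline\Lambda$ of the symmetric monoidal adjunction $(\lambda^{-1}_{\mstack X},\lambda_{\mstack X*})$, with $\lambda^{-1}$ commuting with external products. Since $\Upsilon_{\mstack X}$ and $\Upsilon_{\mstack Y}$ are equivalences by hypothesis and $-\otimes_\Lambda-$ preserves equivalences, the left vertical arrow is an equivalence, hence so is $\Upsilon_{\mstack X\times\mstack Y}$, i.e. $\mstack X\times\mstack Y$ is $\Lambda$-locally acyclic. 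The main obstacle I expect is having the K\"unneth formula available on the rigid-analytic side in the required generality; the nearby-cycles route (already set up in \Cref{sec: Algebraic nearby cycles and local }) reduces it to a classical statement, and the remaining work — products of hypercovers, commuting $-\otimes_\Lambda-$ past $\Tot$, and the multiplicativity of $\Upsilon$ — is essentially formal.
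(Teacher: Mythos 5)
Your argument is correct and follows the same architecture as the paper's proof: reduce to K\"unneth equivalences for both \'etale cohomology theories via smooth hyperdescent to the affine case, use that $\Lambda$ is a field so that $-\otimes_\Lambda M$ (for $M$ bounded below) commutes with the totalizations arising from the hypercovers (this is where \Cref{left_exact_preserve_totalizations_of_uniformly_bounded_below} and the diagonal cofinality $\Delta\hookrightarrow\Delta\times\Delta$ enter in both treatments), and conclude from naturality of $\Upsilon$. The one genuine point of divergence is how the affine K\"unneth formula on the Raynaud side is sourced: the paper cites \cite[Proposition 6.22]{BhattMathew_Arc} directly, whereas you route through \Cref{prop: F-l-acyclicity in terms of the neraby cycles} and the classical K\"unneth formula for nearby cycles (valid precisely because $\ell\neq p$). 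Both are legitimate; yours stays closer to the nearby-cycles formalism already developed in \Cref{sec: Algebraic nearby cycles and local }, while the paper's single citation is somewhat more direct. Your additional remarks — that $\widehat{(-)}_C$ commutes with products, and the explicit commutativity of the final square via multiplicativity of the unit $\underline\Lambda\to\lambda_{\mstack X*}\underline\Lambda$ — are correct and make explicit details the paper's proof leaves implicit; the ``filtered colimit'' reduction from locally finitely presentable to quasi-compact is a little quick as stated for higher Artin stacks, but this step is elided equally in the paper's proof, so it is not a real discrepancy.
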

\begin{proof}We need to show that the map 
	$$
	\Upsilon_{\mstack X\times \mstack Y}\colon R\Gamma_\et((\mstack X\times \mstack Y)_C, \Lambda) \xymatrix{\ar[r]&} R\Gamma_\et(\widehat{(\mstack X\times \mstack Y)}_C, \Lambda)
	$$
	is an equivalence.	This follows from the K\"unneth formula applied both sides, which, as we now show, still holds true in the world of stacks. First of all, the K\"unneth formula holds if both $\mstack X$ and $\mstack Y$ are affine schemes: this is classical for the algebraic side and is given by \cite[Proposition 6.22]{BhattMathew_Arc} in the adic case, here we use the assumption on the characteristic of $\Lambda$. We now show that the natural map $\RG_\et(\mstack X_C, \Lambda)\otimes_{\Lambda}\RG_\et(\mstack Y_C, \Lambda)\longrightarrow R\Gamma_\et((\mstack X\times \mstack Y)_C, \Lambda)$ is an equivalence, the proof in the adic case is analogous. Since $\Lambda$ is a field the tensor product functor $-\otimes_{\Lambda}M$ with any complex $M \in \DMod{\Lambda}^{\ge 0}$ is left $t$-exact, and thus commutes with totalizations of $0$-coconnected objects. Taking smooth hypercovers $U_{\bullet_1}\ra \mstack X$ and $V_{\bullet_2}\ra \mstack Y$ by affine schemes, we get a hypercover $(U\times V)_\bullet\ra \mstack X\times \mstack Y$ where $(U\times V)_n \coloneqq U_n\times V_n$. By smooth descent and the remark above we get a chain of natural equivalences	
	\begin{align*}
		\RG_\et(\mstack X_C, \Lambda)\otimes_{\Lambda}\RG_\et(\mstack Y_C, \Lambda) \xymatrix{\ar[r]^\sim &}  \Tot_{\bullet_1} &\left( \RG_\et((U_{\bullet_1})_C, \Lambda)\right)\otimes_{\Lambda}\Tot_{\bullet_2}\left( \RG_\et((V_{\bullet_2})_C, \Lambda)\right)\xymatrix{\ar[r]_\sim^{\text{(remark)}} &}\\ 
		\!\!\!  \Tot_{\bullet_1} \left(\Tot_{\bullet_2}\left(\RG_\et((U_{\bullet_1})_C, \Lambda) \otimes_{\Lambda}\RG_\et((V_{\bullet_2})_C, \Lambda)\right)\right) &\xymatrix{\ar[r]^\sim &}\Tot_{\bullet=\bullet_1= \bullet_2}\left(\RG_\et((U_{\bullet})_C, \Lambda) \otimes_{\Lambda}\RG_\et((V_{\bullet})_C, \Lambda)\right)\xymatrix{\ar[r]^\sim &}\\
		\xymatrix{\ar[r]_\sim^{\text{K\"unneth}} &} \Tot_{\bullet}	(\RG_\et((U\times V&)_{\bullet,C},\Lambda)) \xymatrix{\ar[r]^\sim &} \RG_\et((\mstack X\times\mstack Y)_C, \Lambda),	
	\end{align*}
	which show that the map above is an equivalence. Here in the second line we also used that the limit over $\Delta\times \Delta$ can be computed by restricting to the diagonal $\Delta \hookrightarrow \Delta\times \Delta$.
\end{proof}

If $\ell=p$, then typically the map $\Upsilon_{\mstack X}$ is very far from being an equivalence (see \Cref{ex:F-p etale cohomology of rigid affine line} below). Nevertheless there are still two ways to show that a stack $\mstack X$ is $\mbb F_\ell$-locally acyclic no matter what $\ell$ is.  

First, by \'etale hyperdescent for algebraic and adic \'etale cohomology of Artin stacks (see \Cref{rem: hyperdescent for etale cohomology} for the algebraic case, the same reasoning then works for the adic situation as well) one can check $\Lambda$-local acyclicity on a hypercover: 
\begin{lem}\label{lem: descent for F-l acyclic stacks}
	Let $\mstack X$ be an $n$-Artin $\mc O_C$-stack locally of finite type and let $\mstack U_{\bullet}\ra \mstack X$ be a hypercover in \'etale topology such that each $\mstack U_i$ is also $n$-Artin and locally of finite type. Then if all $\mstack U_i$ are $\Lambda$-locally acyclic, so is $\mstack X$.
\end{lem}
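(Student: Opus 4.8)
The statement asserts that $\Lambda$-local acyclicity descends along \'etale hypercovers of locally finite type $n$-Artin stacks. The plan is to exploit the commutative square defining $\Upsilon_{\mstack X}$ (from \Cref{constr:alg_to_adic_et_comp}) together with the two hyperdescent statements recalled just before: algebraic \'etale cohomology $R\Gamma_\et(-_C, \Lambda)$ satisfies \'etale hyperdescent by \Cref{rem: hyperdescent for etale cohomology} (applied on the category $\Aff_{\mc O_C}$ and then pushed through the right Kan extension), and the adic \'etale cohomology $R\Gamma_\et(\widehat{(-)}_C, \Lambda)$ satisfies \'etale hyperdescent by the same argument carried out for adic spaces. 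The key point is that the comparison map $\Upsilon_{(-)}$ is natural in the stack, so it assembles into a natural transformation of functors on the (opposite) category of $\mc O_C$-stacks.

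First I would set up the diagram: given the hypercover $\mstack U_\bullet \to \mstack X$, naturality of $\Upsilon$ gives a commutative square
$$
\xymatrix{
R\Gamma_\et(\mstack X_C, \Lambda) \ar[r]^{\Upsilon_{\mstack X}} \ar[d] & R\Gamma_\et(\widehat{\mstack X}_C, \Lambda) \ar[d]\\
\Tot R\Gamma_\et((\mstack U_\bullet)_C, \Lambda) \ar[r]^{\Tot \Upsilon_{\mstack U_\bullet}} & \Tot R\Gamma_\et(\widehat{\mstack U}_{\bullet,C}, \Lambda),
}
$$
where the left and right vertical maps are the natural comparison maps to the totalization of the cosimplicial object obtained by restricting along $\mstack U_\bullet \to \mstack X$. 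Next I would argue that both vertical maps are equivalences: for the left one this is hyperdescent for algebraic \'etale cohomology of Artin stacks, and for the right one this is the analogous hyperdescent for adic \'etale cohomology (the Raynaud generic fiber functor sends \'etale covers to \'etale covers, so $\widehat{\mstack U}_{\bullet, C} \to \widehat{\mstack X}_C$ is an \'etale hypercover of rigid analytic stacks, and we apply the hypersheaf property of $\Shv_\et(-, \Lambda)$). Since each $\mstack U_i$ is $\Lambda$-locally acyclic by hypothesis, each $\Upsilon_{\mstack U_i}$ is an equivalence, hence $\Tot \Upsilon_{\mstack U_\bullet}$ is an equivalence (totalization preserves levelwise equivalences). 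Then $\Upsilon_{\mstack X}$ is an equivalence by two-out-of-three in the square.

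The only genuine subtlety — and the step I expect to be the main obstacle to state cleanly rather than hard in substance — is verifying that both cohomology theories satisfy \emph{hyperdescent} (not merely \v{C}ech descent) for $n$-coskeletal or general hypercovers by $n$-Artin stacks. For the algebraic side this follows from \Cref{rem: hyperdescent for etale cohomology}: the hypercover, being levelwise $n$-Artin of finite type, can itself be refined to a hypercover by affine schemes where \'etale hyperdescent of $R\Gamma_\et(-, \Lambda)$ is part of the input (or one invokes \Cref{alg_etale_shaeves_basics}(4) plus the fact that $L_\et$ of $|\mstack U_\bullet|$ is $\mstack X$, noting that the hypercover being $n$-coskeletal makes $|\mstack U_\bullet| \to \mstack X$ an equivalence after sheafification). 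For the adic side one has the parallel statement: $R\Gamma_\et(\widehat{(-)}_C, \Lambda)$ is defined as a right Kan extension from $\Aff^{\fl,\ft}_{\mc O_C}$ of a functor which, by \cite{Huber_adicSpaces} (or \cite{BhattMathew_Arc} in the non-Noetherian case), is an \'etale (hyper)sheaf, so the same descent-to-affines argument applies verbatim. Once hyperdescent is in hand on both sides, the rest of the argument is the formal two-out-of-three manipulation above; no calculation is involved. I would also remark that the finite type and $n$-Artin hypotheses on the $\mstack U_i$ are exactly what is needed to guarantee the hypercover stays inside the category on which both functors are computed by right Kan extension from affines, so that the totalization identifications hold.
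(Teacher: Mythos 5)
Your proof is correct and is exactly the argument the paper has in mind: the paper does not spell out a proof but simply invokes \'etale hyperdescent for $R\Gamma_\et(-_C,\Lambda)$ and $R\Gamma_\et(\widehat{(-)}_C,\Lambda)$ and leaves the two-out-of-three step implicit, which is precisely the commutative square you wrote down. One small clarification on the point you flagged as a subtlety: the relevant input is not full hyperdescent of $R\Gamma_\et(-,\Lambda)$ on affines (which can fail, as the \'etale topos need not be hypercomplete), but rather the fact recalled in \Cref{rem: hyperdescent for etale cohomology} that $n$-coskeletal hypercovers give $|\mstack U_\bullet|\xrightarrow{\sim}\mstack X$ in \emph{any} topos by induction from \v{C}ech descent; your second parenthetical alternative is the one that actually applies, whereas the first ("hyperdescent\ldots is part of the input") overstates what is known for general hypercovers.
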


Unfortunately, if $\ell=p$, this is not as useful as one could think since there are very few $\mbb F_p$-locally acyclic affine schemes, as the following example illustrates:
\begin{ex}\label{ex:F-p etale cohomology of rigid affine line}
	Let $X=\mathbb A^1$ be the affine line and let $\Lambda=\mbb F_p$. Then all higher \'etale cohomology of $\mathbb A^1_C$ vanish, in particular $H^1_{\et}(\mathbb A^1_C,\mbb F_p)\simeq 0$. On the other hand the first $\mathbb F_p$-cohomology of $\mbb D_C\simeq\widehat{\mathbb A}^1_C$ is infinite-dimensional over $\mathbb F_p$. Indeed, identifying $\mbb F_p$ with $\mu_p$ we can see this from the Kummer exact sequence $1\ra \mbb F_p\ra \mc O_{\mbb D_C}^\times \xra{-^p}\mc O_{{\mbb D_C}}^\times \ra 1 $ of sheaves on $\et/\mbb D_C$. Namely, we have $H^1_{\et}(\mbb D_C, \mc O_{\mbb D_C}^\times)\simeq \Pic(\mbb D_C)\simeq 0$ and so 
	$$
	H^1_\et(\mbb D_C,\mbb F_p)\simeq C\langle T\rangle^\times/(C\langle T\rangle^\times)^p.
	$$ This has $(1+\mf m_C\langle T\rangle)/(1+\mf m_C\langle T \rangle)^p$ as a subgroup and there is a well-defined surjective homomorphism 
	$$
	[-]_1\colon (1+\mf m_C\langle T\rangle)/(1+\mf m_C\langle T \rangle)^p \xymatrix{\ar@{->>}[r]&}\mf m_C/p\cdot \mf m_C,
	$$ 
	that sends a power series $1+a_1T+a_2T^2+\ldots$ to the first coeffient $a_1\! \pmod {p\cdot \mf m_C}$.
	The vector space $\mf m_C/p\cdot \mf m_C$ has infinite dimension over $\mbb F_p$ and thus we see that $H^1_\et(\mbb D_C,\mbb F_p)$ is very far from being 0.
\end{ex}

Another way to show that $\mstack X$ is $\mbb F_\ell$-locally acyclic is to map it smoothly and properly onto a stack $\mstack Y$ which we know already is $\mbb F_\ell$-locally acyclic. 
\begin{prop}\label{prop: Lambda-acyclicity translates via proper smooth maps}
	Let $\Lambda$ be a field. Let $f\colon \mstack X\ra \mstack Y$ be a smooth proper schematic map of Artin stacks over $\mc O_C$ and assume that $\mstack Y$ is $\Lambda$-locally acyclic and $\mstack Y_C$ is \'etale $1$-connected. Then ${\mstack X}$ is also $\Lambda$-locally acyclic.
	
	\begin{proof}
		Since $f$ is smooth and proper, by \Cref{adic_base_change} we have $\widehat f_{C*}\ul\Lambda_{\widehat{ \mstack X}_C}\simeq \lambda^{-1}_{\mstack Y}f_{C*}\ul\Lambda_{\mstack X_C}$ and it is enough to check that 
		$$
		\lambda^{-1}_{\mstack Y}\colon \RG(\mstack Y_C, f_{C*}\ul\Lambda_{\mstack X_C}) \xymatrix{\ar[r]&} \RG(\widehat{\mstack Y}_C, \widehat f_{C*}\ul\Lambda_{\widehat{ \mstack X}_C})
		$$ is an equivalence. By \Cref{functoriality_of_loc_sys} $f_{C*}\ul\Lambda\in \Shv_{\et}(\mstack X_C,\Lambda)^{\ge 0}$ is a derived local system. Since $\mstack Y$ is 1-connected, by \Cref{simply_connected} the Postnikov filtration of $f_{C*}\ul\Lambda$ has the associated graded pieces given by finite sums of shifts of the constant sheaf $\ul\Lambda_{\mstack Y_C}$. We conclude, since $\lambda^{-1}_{\mstack Y}$ is $t$-exact and $\mstack Y$ is $\Lambda$-locally acyclic.
	\end{proof}
\end{prop}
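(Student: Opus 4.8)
The plan is to reduce everything to the smooth-proper base-change and local-system statements already assembled in the excerpt. Since $f$ is smooth and proper (and schematic), \Cref{adic_base_change} applies to $f$: the square relating $\lambda^{-1}_{\mstack X}$, $\widehat f_{C*}$, $f_{C*}$, $\lambda^{-1}_{\mstack Y}$ on constructible sheaves is right adjointable, and since $\lambda_{\mstack X}^{-1}$ is symmetric monoidal it sends $\ul\Lambda$ to $\ul\Lambda$, so we get a canonical equivalence $\widehat f_{C*}\ul\Lambda_{\widehat{\mstack X}_C}\simeq \lambda^{-1}_{\mstack Y}\,f_{C*}\ul\Lambda_{\mstack X_C}$ in $\Shv^+_\et(\widehat{\mstack Y}_C,\Lambda)$. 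Taking global sections over $\mstack Y_C$, respectively $\widehat{\mstack Y}_C$, and unwinding the construction of $\Upsilon$ as a base-change map (\Cref{rem:Upsilon as base change}), the statement $\Upsilon_{\mstack X}$ is an equivalence becomes exactly the statement that
$$
\lambda^{-1}_{\mstack Y}\colon \RG(\mstack Y_C, f_{C*}\ul\Lambda_{\mstack X_C}) \xymatrix{\ar[r]&} \RG(\widehat{\mstack Y}_C, \widehat f_{C*}\ul\Lambda_{\widehat{\mstack X}_C}) \simeq \RG(\widehat{\mstack Y}_C, \lambda^{-1}_{\mstack Y} f_{C*}\ul\Lambda_{\mstack X_C})
$$
is an equivalence. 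So from here on $\mstack Y$ is fixed and we only need to understand which sheaves $\mc F$ on $\mstack Y_C$ have the property that $\lambda^{-1}_{\mstack Y}\colon \RG(\mstack Y_C,\mc F)\to\RG(\widehat{\mstack Y}_C,\lambda^{-1}_{\mstack Y}\mc F)$ is an equivalence.

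The second step is to identify $f_{C*}\ul\Lambda_{\mstack X_C}$ as a bounded below derived local system on $\mstack Y_C$. This is \Cref{functoriality_of_loc_sys}(4): $f$ smooth and schematic proper implies $f_{C*}$ preserves bounded below local systems, and $\ul\Lambda$ is one, so $\mc G\coloneqq f_{C*}\ul\Lambda_{\mstack X_C}\in\mathcal L_\et^+(\mstack Y_C,\Lambda)$; since $\Lambda$ is a field and pushforward along a proper schematic map is right $t$-exact up to a shift, $\mc G$ is in fact connective (or at least uniformly bounded below). Next, because $\mstack Y_C$ is $\Lambda$-\'etale $1$-connected, \Cref{simply_connected} tells us every object of $\mathcal L_\et^{\heartsuit}(\mstack Y_C,\Lambda)$ is constant, i.e. pulled back along $\mstack Y_C\to\Spec C$; hence the cohomology sheaves $\mathcal H^i(\mc G)$ are each finite sums of copies of $\ul\Lambda$, and the Postnikov tower of $\mc G$ has associated graded pieces finite direct sums of shifts $\ul\Lambda_{\mstack Y_C}[-i]$. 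The property of $\lambda^{-1}_{\mstack Y}$ inducing an equivalence on global sections holds for $\ul\Lambda_{\mstack Y_C}$ by hypothesis ($\mstack Y$ is $\Lambda$-locally acyclic), is stable under shifts and finite direct sums, and — crucially — is preserved under the limit computing $\mc G$ from its Postnikov tower, because $\lambda^{-1}_{\mstack Y}$ is $t$-exact (so it commutes with the relevant totalizations of uniformly bounded below objects, cf. \Cref{left_exact_preserve_totalizations_of_uniformly_bounded_below} applied to both $\RG$ functors), and the Postnikov filtration converges in both $\Shv_\et^+(\mstack Y_C,\Lambda)$ and $\Shv_\et^+(\widehat{\mstack Y}_C,\Lambda)$ (right $t$-completeness, \Cref{t_structure_on_et_sheaves}). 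Inducting up the tower, we conclude $\lambda^{-1}_{\mstack Y}$ is an equivalence on $\RG(-,\mc G)$, hence $\Upsilon_{\mstack X}$ is an equivalence, i.e. $\mstack X$ is $\Lambda$-locally acyclic.

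The step I expect to be the main obstacle is the convergence/exactness bookkeeping in the last paragraph: one has to be careful that $f_{C*}\ul\Lambda$ is \emph{uniformly} bounded below (not merely bounded below termwise), so that $t$-exactness of $\lambda^{-1}_{\mstack Y}$ together with \Cref{left_exact_preserve_totalizations_of_uniformly_bounded_below} really lets $\RG(\widehat{\mstack Y}_C,-)$ and $\lambda^{-1}_{\mstack Y}$ commute past the Postnikov limit, and that the left $t$-completeness subtlety flagged after \Cref{t_structure_on_et_sheaves} does not bite — but here $\mc G$ is \emph{connective}, so only right $t$-completeness is needed, which holds unconditionally. Everything else (right adjointability of the square, the local-systems input, the simply-connected input) is quoted verbatim from earlier in the text. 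One should also note that, as remarked right after the definition of $\Lambda$-local acyclicity, it suffices to treat $\Lambda$ a field and the proof above is written in that generality, matching the hypothesis of the proposition.
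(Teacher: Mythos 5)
Your proof follows the same route as the paper's: reduce via \Cref{adic_base_change} to the question of whether $\lambda^{-1}_{\mstack Y}$ is an equivalence on $\RG(\mstack Y_C, f_{C*}\ul\Lambda)$, identify $f_{C*}\ul\Lambda$ as a bounded below derived local system via \Cref{functoriality_of_loc_sys}, use $1$-connectedness of $\mstack Y_C$ and \Cref{simply_connected} to conclude the graded pieces are finite sums of shifts of $\ul\Lambda$, then invoke $t$-exactness of $\lambda^{-1}_{\mstack Y}$ and local acyclicity of $\mstack Y$. So in substance this is correct and essentially the paper's argument.

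Two small inaccuracies in the convergence discussion, neither fatal. First, in the paper's (cohomological) conventions $f_{C*}$ is left $t$-exact, so $\mc G = f_{C*}\ul\Lambda$ lies in $\Shv_\et^{\ge 0}$ --- it is \emph{co}connective, not connective; what you actually need is that it is uniformly bounded below, which you do also state. Second, \Cref{left_exact_preserve_totalizations_of_uniformly_bounded_below} is a statement about totalizations of cosimplicial objects, not about towers of Postnikov truncations, so the citation is slightly off. The cleanest way to dispose of the convergence issue --- which neither you nor the paper spells out --- is that a smooth proper schematic map has finite cohomological dimension, so $\mc G$ is in fact \emph{bounded}; the Postnikov filtration is then finite, and the conclusion follows by a finite induction on truncations with no completeness input at all. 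Alternatively, even for unbounded $\mc G \in \Shv^{\ge 0}$, one only needs that $\RG(\mstack Y_C,-)$ and $\RG(\widehat{\mstack Y}_C,-)$ are left $t$-exact, so that the cofiber $\tau^{\ge n+1}\mc G$ contributes nothing to $H^{\le n}\RG$; passing to the limit in $n$ then requires no $t$-completeness of the sheaf categories.
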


\begin{rem}
	Obviously, one can drop the \'etale 1-connectedness assumption in \Cref{prop: Lambda-acyclicity translates via proper smooth maps} if we assume that $\lambda_{\mstack Y}^{-1}$ is an equivalence not only for the constant sheaf but also for all local systems in the heart $\mathscr{L}_{\et}(\mstack Y,\Lambda)^{\heartsuit}$.
\end{rem}

In general, if we concentrate on the $\ell=p$ case, it seems that for $\mstack X$ to be $\mbb F_p$-locally acyclic one should impose some sort of (relaxed) properness condition on $\mstack X$.
For example, in \Cref{etale_coh_finiteness} we will deduce from the \'etale comparison that for a smooth Hodge-proper stack $\mstack X$ defined over $\mc O_K$ (with $K/\mathbb Q_p$ being finite) we have $\RG_\et(\widehat{\mstack X}_C, \mathbb F_p)\in \Coh^+(\mathbb F_p)$. In particular individual \'etale cohomology groups $H^i_\et(\widehat{\mstack X}_C,\mbb F_p)$ are finite, as well as are $H^i_\et(\mstack X,\mbb F_p)$ (by \Cref{rem:algebraic etale cohomology is usually finite}). This may give a slender hope that at least for Hodge-proper stacks the map $\Upsilon_{\mstack X}$ is an equivalence. On the other hand, \Cref{prop: Lambda-acyclicity translates via proper smooth maps} might be seen as a red flag, since it is not clear why Hodge-properness should persists under smooth proper maps.% Even though we do not have a concrete counterexample in mind we do not think this is true in general, since even for schemes this would involve some way of expressing $f_*\Omega^i_X$ in terms of $\Omega^j_Y$'s.

Thus it seems reasonable to impose some further assumptions on $\mstack X$. One option is to assume that $\mstack X$ is cohomologically proper: at least this eliminates the objection above, since cohomological properness persists under any proper map. However, it is still not clear how to relate the algebraic \'etale cohomology to anything of coherent nature in this setting. Thus one might want to impose an even stronger condition which would give such a connection:

\begin{defn}(Similar to \cite[Definition 1.1.3]{HL_RelaxedProperness})
	A smooth Artin stack $\mstack X$ over $\mc O_C$ is called \emdef{formally proper} if it satisfies formal GAGA, namely if the natural $p$-completion functor
	$$
	\QCoh(\mstack X)^\perf \xymatrix{\ar[r]^\sim &} \prolim\QCoh(\mstack X_{\mc O_C/p^n})^\perf
	$$
	is an equivalence.
\end{defn}
\begin{rem}
	We call this formal GAGA because $\prolim\QCoh(\mstack X_{\mc O_C/p^n})^\perf$ can be seen as the category of perfect sheaves on the corresponding formal stack $\widehat{\mstack X}$.
\end{rem}
We make the following conjecture in this setup:
\begin{conj}\label{etale_conjecture}
	Let $\mstack X$ be a smooth formally proper Artin stack over $\mathcal O_K$. Then the comparison map
	$$
	\Upsilon_{\mstack X} \colon R\Gamma_\et(\mstack X_C, \mbb F_\ell)\xymatrix{\ar[r]^\sim &} R\Gamma_\et(\widehat{\mstack X}_C, \mbb F_\ell)
	$$
	is an equivalence for any $\ell$.
\end{conj}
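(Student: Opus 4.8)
The plan is to reduce the conjecture to a statement about the single morphism $\psi_{\mstack X}^{-1}$ and then to propagate the known proper case along formal GAGA. Recall from \Cref{construction:et_algebraic_vs_Raynaud} that $\Upsilon_{\mstack X}$ factors as $\psi_{\mstack X}^{-1}\circ\phi_{\mstack X}^{-1}$ and that $\phi_{\mstack X}^{-1}\colon\RG_\et(\mstack X_C,\mbb F_\ell)\to\RG_\et(\mstack X_C^\an,\mbb F_\ell)$ is an equivalence for every locally finite type stack by \Cref{commutation with psi}. Hence it suffices to prove that $\psi_{\mstack X}^{-1}\colon\RG_\et(\mstack X_C^\an,\mbb F_\ell)\to\RG_\et(\widehat{\mstack X}_C,\mbb F_\ell)$ is an equivalence, i.e. that the open immersion $\widehat{\mstack X}_C\hookrightarrow\mstack X_C^\an$ "sees all of the $\mbb F_\ell$-cohomology". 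Equivalently, by the $\mr{arc}_p$-descent square of \Cref{rem: interpretation of Upsilon using arc-descent}, it is enough to show that the restriction $\iota_p^\ast\colon\RG_\et(\mstack X,\mbb F_\ell)\to\RG_\et(\mstack X_{\mc O_C/p},\mbb F_\ell)$ to the special fibre is an equivalence; this reformulation has the advantage of being purely algebraic, and it is the form in which I would attempt the general case.

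I would treat the case $\ell\neq p$ first, as it is of a different (and easier) flavour. Using \'etale hyperdescent (\Cref{lem: descent for F-l acyclic stacks}) and the smooth proper base-change reduction of \Cref{prop: Lambda-acyclicity translates via proper smooth maps}, the problem localizes to the (algebraic) nearby-cycles complex of the structure map $\mstack X\to\Spec\mc O_C$ over the special fibre (cf.\ \Cref{prop: F-l-acyclicity in terms of the neraby cycles}). The content is then to show that formal properness forces these nearby cycles to behave as for a proper morphism — morally, away from coherent subtleties a formally proper stack "is proper to first order", and for $\ell\neq p$ it is precisely the vanishing cycles that detect non-properness. For $\mstack X=[X/G]$ this is the content of \Cref{prop: local F_l-acyclicity of quotients}, and the task would be to isolate which consequence of formal GAGA is actually used and to phrase it intrinsically.

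The genuinely hard case is $\ell=p$, where $\mbb F_p$-cohomology no longer sees only "coherent" data beyond degree one. What is already available is the degree-one isomorphism: as observed after the statement, $H^1$ of both sides classifies $\mbb F_p$-\'etale covers of the respective geometric stacks, which by the formal GAGA equivalence $\QCoh(\mstack X)^\perf\xra{\sim}\prolim\QCoh(\mstack X_{\mc O_C/p^n})^\perf$ and its rigid-analytic counterpart are both computed from the category of perfect complexes on $\mstack X$, hence agree. To go to higher degrees I would try to run the dévissage of \Cref{thm:main_result} without a group-quotient presentation: establish the conjecture for the "building blocks" by approximating $\widehat{\mstack X}$ first by formal schemes and then by honest proper schemes, using formal GAGA as the substitute for the step "approximate $B\mbb G_m$ by $\mbb P^n$"; then propagate along smooth proper schematic maps via \Cref{prop: Lambda-acyclicity translates via proper smooth maps}, exploiting that the relevant base stacks are \'etale $1$-connected so that the derived pushforwards of $\mbb F_p$ are iterated extensions of constant sheaves (\Cref{simply_connected}).

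The main obstacle I expect is exactly the absence of a structure theory for formally proper stacks comparable to the Borel/torus dévissage available for $BG$: there is no general presentation of a formally proper stack reducing it to cases "accessible by hand", so one needs a mechanism that transports the equivalence $\Upsilon$ from proper schemes to an arbitrary formally proper $\mstack X$. Formal GAGA does this on $H^1$ through perfect complexes, but it is unclear how to package the full \'etale homotopy type of $\mstack X_C$ — as opposed to $\pi_1$ — in coherent terms. A plausible intermediate target, and a useful sanity check, would be to first prove the rational statement \Cref{conj:Hodge proper stacks}, where the dimensions of the individual cohomology groups on the two sides already match (\Cref{cor: dimensions of \'etale cohomology is the same}), and then attempt to integralize it under the stronger formal-properness hypothesis.
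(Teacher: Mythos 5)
The statement you are proposing to prove is an \emph{open conjecture}: the paper does not prove it and verifies it only for proper stacks and for $[X/G]$ with $X$ smooth proper and $G$ reductive. Your preliminary reductions are correct and coincide with the paper's own framing: $\phi_{\mstack X}^{-1}$ is always an equivalence, so $\Upsilon_{\mstack X}$ is an equivalence iff $\psi_{\mstack X}^{-1}$ is, iff $\iota_p^*\colon R\Gamma_\et(\mstack X,\mathbb F_\ell)\to R\Gamma_\et(\mstack X_{\mathcal O_C/p},\mathbb F_\ell)$ is (by the $\mathrm{arc}_p$-descent square); the degree-one observation that both $H^1$'s classify $\mathbb F_\ell$-\'etale covers, hence are matched by the formal-GAGA equivalence $\QCoh(\mstack X)^\perf\simeq\prolim\QCoh(\mstack X_{\mathcal O_C/p^n})^\perf$, is exactly the paper's stated motivation for the conjecture. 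You also correctly diagnose the decisive obstruction — the absence of a structure theory for formally proper stacks that would replace the torus--Borel--parabolic dévissage — and your suggested intermediate target, the rational statement for Hodge-proper stacks, is also the paper's own announced next step.

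Two points in the proposal need correction. First, the heuristic that "for $\ell\neq p$ it is precisely the vanishing cycles that detect non-properness" is false: the paper shows that $\mathbb A^1$, $\mathbb G_m$, smooth proper schemes minus smooth proper subschemes, and more generally quotients $[X/G]$ with $X$ merely $\mathbb F_\ell$-locally acyclic all have trivial nearby cycles while being far from proper or formally proper. For $\ell\neq p$, formal properness is thus sufficient but nowhere near the right intrinsic condition, and the only structurally meaningful case of the conjecture is $\ell=p$. Second, "approximate $\widehat{\mstack X}$ by proper schemes using formal GAGA as a substitute for $\mathbb P^n\to B\mathbb G_m$" is not yet a mechanism: formal GAGA is an equivalence of categories of perfect complexes, not a geometric approximation, and it furnishes no tower of proper schemes cofinal in $\mathbb F_p$-cohomology the way the $\mathbb P^n$'s are for $B\mathbb G_m$. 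You are honest that this is the missing input. As a research plan for an open problem the proposal is reasonable, but it is not a proof and the paper does not supply one either.
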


\begin{rem}
	For now, if $\ell=p$ the class of stacks for which we can check that $\Upsilon_{\mstack X}$ is an equivalence are the quotient stacks $[X/G]$ under the assumption that $X$ is proper and $G$ is reductive (see \Cref{thm:main_result}). From \cite{HL_RelaxedProperness} one can deduce that all these stacks are formally proper. However the class of formally proper stacks is much bigger, in particular it includes the $\mbb G_m$-quotients of conical resolutions (at least under some projectivity assumptions this is covered by \cite[Proposition 4.2.3]{HL_RelaxedProperness}). More generally this situation fits well into the context of $\Theta$-stratifications where one could often use the associated semiorthogonal decompositions of $\QCoh(\mstack X)^\perf$ (constructed in \cite{Halpern-Leistner_Theta}, see also \cite[Section 3.3]{KubrakPrikhodko_HdR}) to reduce to quotients of smooth proper schemes by trivial actions. Some other examples also were considered by Ben Lim in his thesis \cite{lim2020algebraization}, where he shows, in particular, that the quotient stack $[\mbb A^n/\mr{SL}_n]$ is formally proper over a complete local ring in $\mr{char} \ \! p$.
\end{rem}

\subsection{Algebraic nearby cycles and local $\mathbb F_l$-acyclicity}\label{sec: Algebraic nearby cycles and local }
For this section our base is $\mc O_{C}$, and we fix a prime $\ell\ne p$. In particular $\ell$ is invertible in $\mc O_{C}$. We are going to study the map $\Upsilon_{\mstack X}$ for $\mbb F_\ell$-coefficients.

For an Artin stack $\mstack X$ consider $i_{\mstack X}\colon \mstack X_k \hookrightarrow \mstack X \leftarrow\mstack X_C \!\! \ :\! \!j_{\mstack X}$, the embeddings of the closed and generic fibers of $\mstack X \ra \Spec \mc O_C$. Note that both maps are qcqs since they are obtained by base change from $\Spec k \hookrightarrow \Spec \mc O_C \leftarrow \Spec C$. 

\begin{defn} Let $\mc F\in \Shv_{\et}(\mstack X,\mbb F_\ell)$ be an \'etale sheaf.
	\begin{enumerate}
		\item The \textit{nearby cycles sheaf} $\Psi_{\mstack X}\mc F\in \Shv_{\et}(\mstack X_k,\mbb F_\ell)$ is defined as 
		$$
		\Psi_{\mstack X}\mc F\coloneqq i_{\mstack X}^*j_{\mstack X*} j_{\mstack X}^*\mc F.
		$$
		
		\item The adjunction $\id \ra i_{\mstack X*} i_{\mstack X}^*$ gives a natural map $j_{\mstack X*} j_{\mstack X}^*\mc F \ra i_{\mstack X*} \Psi \mc F$ which induces a map 
		$$
		\eta_{\mstack X,\mc F}\colon \RG(\mstack X_C, j_{\mstack X}^*\mc F) \xymatrix{\ar[r]& \RG(\mstack X_k, \Psi_{\mstack X} \mc F)}.
		$$
	\end{enumerate}
\end{defn}

We start with the following simple base change lemma:

\begin{lem}\label{lem: base change for nearby cycles}
	Let $f\colon \mstack X \ra \mstack Y$ be a smooth morphism of Artin $\mc O_C$-stacks and let $\mc F\in \Shv_{\et}(\mstack Y,\mbb F_\ell)$. Then there is a natural equivalence $$
	f^*(\Psi_{\mstack Y}\mc F)\xymatrix{\ar[r]^\sim &} \Psi_{\mstack X}(f^*\mc F).
	$$
\end{lem}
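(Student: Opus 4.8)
\emph{Proof plan.} The statement is a packaging of smooth base change, so the plan is to reduce it to \Cref{et_smooth_basechange_algebraic}. First I would assemble the commutative diagram of special and generic fibre inclusions lying over $f$,
$$\xymatrix{
\mstack X_k \ar[r]^{i_{\mstack X}} \ar[d]_{f_k} & \mstack X \ar[d]^{f} & \mstack X_C \ar[l]_{j_{\mstack X}} \ar[d]^{f_C} \\
\mstack Y_k \ar[r]^{i_{\mstack Y}} & \mstack Y & \mstack Y_C \ar[l]_{j_{\mstack Y}}
}$$
where $f_k$ and $f_C$ are the base changes of $f$ along $\Spec k\hookrightarrow\Spec\mc O_C$ and $\Spec C\hookrightarrow\Spec\mc O_C$. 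Both squares are Cartesian: $\mstack X\times_{\mstack Y}\mstack Y_k=\mstack X\times_{\mstack Y}(\mstack Y\times_{\Spec\mc O_C}\Spec k)=\mstack X\times_{\Spec\mc O_C}\Spec k=\mstack X_k$, and likewise for the generic fibres. (Here on the left of the asserted equivalence "$f^*$" is to be read as $f_k^*$, while inside $\Psi_{\mstack X}(f^*\mc F)$ it is the pullback $\Shv_\et(\mstack Y,\mbb F_\ell)\to\Shv_\et(\mstack X,\mbb F_\ell)$.)

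Next I would unwind the two sides using contravariant functoriality of $(-)^{*}$. From $i_{\mstack Y}\circ f_k=f\circ i_{\mstack X}$ one gets $f_k^*\circ i_{\mstack Y}^*\simeq i_{\mstack X}^*\circ f^*$, and from $j_{\mstack Y}\circ f_C=f\circ j_{\mstack X}$ one gets $f_C^*\circ j_{\mstack Y}^*\simeq j_{\mstack X}^*\circ f^*$. Hence
$$f_k^*\,\Psi_{\mstack Y}\mc F=f_k^* i_{\mstack Y}^* j_{\mstack Y*} j_{\mstack Y}^*\mc F\simeq i_{\mstack X}^*\bigl(f^* j_{\mstack Y*}\bigr)j_{\mstack Y}^*\mc F, \qquad \Psi_{\mstack X}(f^*\mc F)=i_{\mstack X}^* j_{\mstack X*} j_{\mstack X}^* f^*\mc F\simeq i_{\mstack X}^*\bigl(j_{\mstack X*}f_C^*\bigr)j_{\mstack Y}^*\mc F .$$
So it suffices to produce a natural equivalence $f^*\circ j_{\mstack Y*}\simeq j_{\mstack X*}\circ f_C^*$, i.e. to check that the generic-fibre square, read with $j_{\mstack Y},j_{\mstack X}$ as the vertical arrows and $f,f_C$ as the horizontal ones, is right adjointable in the sense of \Cref{def:adjointable_sq}.

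Finally I would invoke \Cref{et_smooth_basechange_algebraic} for the Cartesian square
$$\xymatrix{
\mstack X_C \ar[r]^{f_C}\ar[d]_{j_{\mstack X}} & \mstack Y_C \ar[d]^{j_{\mstack Y}} \\
\mstack X \ar[r]^{f} & \mstack Y .
}$$
In its notation $f$ plays the role of the smooth bottom-horizontal map $p$ (smooth by hypothesis), while $j_{\mstack Y}$ plays the role of the qcqs right-vertical map: $j_{\mstack Y}$ is the base change of $\Spec C\to\Spec\mc O_C$ along $\mstack Y\to\Spec\mc O_C$, and $\Spec C\to\Spec\mc O_C$, being a morphism of affine schemes, is affine hence quasi-compact and quasi-separated. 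The proposition then gives precisely that the base change transformation $f^* j_{\mstack Y*}\to j_{\mstack X*}f_C^*$ is an equivalence on $\Shv_\et^+$, with naturality in $\mc F$ built into the construction of that transformation; since $j_{\mstack Y}^*$ is $t$-exact this applies whenever $\mc F$ is bounded below, which covers the uses in the sequel (e.g. $\mc F=\ul{\mbb F_\ell}$). There is no genuine obstacle here; the only point requiring a word of care is this boundedness hypothesis, as \Cref{et_smooth_basechange_algebraic} is only formulated for bounded-below sheaves.
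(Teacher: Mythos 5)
Your proof is correct and follows exactly the same route as the paper, which simply cites \Cref{et_smooth_basechange_algebraic} together with the definition of $\Psi$; you have spelled out the diagram chase that is left implicit there. Your caveat about the bounded-below hypothesis is a fair observation (it is present but unflagged in the paper's one-line proof, since \Cref{et_smooth_basechange_algebraic} is formulated on $\Shv_\et^+$), and is harmless for the applications, where $\mc F=\ul{\mbb F_\ell}$.
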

\begin{proof}
	This follows from smooth base change for \'etale sheaves (\Cref{et_smooth_basechange_algebraic}) and the definition of $\Psi$.
\end{proof}

\begin{rem}\label{rem: nearby cycles in the smooth case}
	In the case when $\mstack X$ is a smooth Artin $\mc O_C$-stack and $\mc F= \ul{\mbb F_\ell}$ is constant we can apply \Cref{lem: base change for nearby cycles} to the structure morphism $p\colon \mstack X \ra \Spec \mc O_C$. The \'etale site of $\Spec \mc O_C$ is trivial and so $\Psi_{\Spec \mc O_C}\ul{\mbb F_\ell}\simeq \ul{\mbb F_\ell}\in \Shv_{\et}(\Spec k,\ul{\mbb F_\ell})$. Thus we get that 
	$$
	\Psi_{\mstack X}(\ul{\mbb F_\ell})\simeq p^*(\Psi_{\Spec \mc O_C}\ul{\mbb F_\ell})\simeq p^*\ul{\mbb F_\ell}\simeq \ul{\mbb F_\ell}\in \Shv_{\et}(\mstack X_k,\mbb F_\ell).
	$$
	In particular, $\RG(\mstack X_k, \Psi_{\mstack X} \ul{\mbb F_\ell})\simeq \RG(\mstack X_k, \mbb F_\ell)$ and the map $\eta_{\mstack X,\ul{\mbb F_\ell}}$ in this case gives a "specialization" map 
	$$
	\eta_{\mstack X}\coloneqq \eta_{\mstack X,\ul{\mbb F_\ell}}\colon \RG(\mstack X_C, \ul{\mbb F_\ell}) \xymatrix{\ar[r]&} \RG(\mstack X_k, {\mbb F_\ell}).
	$$
	
\end{rem}
\begin{rem}
	Note that from smooth base change for \'etale sheaves it also follows that for any $\mc F\in \Shv_{\et}(U,\mbb F_\ell)$ the assignment 
	$U \mapsto \RG(U_k,\Psi \mc F)$ defines a $\DMod{\mbb F_\ell}$-valued \'etale sheaf on $\Aff_{/\mc O_C}^\op$.
\end{rem}
%We also record the following:
%\begin{lem}\label{lem: base change for nearby cycles}
%	Let $g\colon \mstack X \ra \mstack Y$ be a proper schematic \todo{generalize?} morphism of Artin $\mc O_C$-stacks and let $\mc F\in \Shv_{\et}(\mstack X,\mbb F_l)$. Then there is a natural equivalence $$
%	g_*(\Psi_{\mstack X}\mc F)\xymatrix{\ar[r]^\sim &} \Psi_{\mstack Y}(g_*\mc F).
%	$$
%\end{lem}
%\begin{proof}
%	This follows from smooth proper base change \todo{add..} and the definition of $\Psi$.
%\end{proof}

The main result of this section is the following:
\begin{prop}\label{prop: F-l-acyclicity in terms of the neraby cycles}
	An Artin $\mc O_C$-stack $\mstack X$ locally of finite type is $\mbb F_\ell$-locally acyclic if and only if 
	$$
	\eta_{\mstack X}\coloneqq \eta_{\mstack X,\ul{\mbb F_\ell}}\colon \RG(\mstack X_C, \ul{\mbb F_\ell}) \xymatrix{\ar[r]&} \RG(\mstack X_k, \Psi_{\mstack X} \ul{\mbb F_\ell})$$ is an equivalence.
\end{prop}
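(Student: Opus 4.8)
The plan is to reduce the statement to the analogous fact for affine schemes (which, for $\ell\neq p$, is a standard consequence of the theory of nearby cycles and the comparison of \'etale cohomology of adic generic fibers with the cohomology of the scheme $\Spec\widehat A[1/p]$) and then bootstrap to arbitrary Artin stacks by descent along a smooth hypercover. Concretely, I would first recall from \Cref{rem: interpretation of Upsilon using arc-descent} (or directly from \Cref{rem:Upsilon as base change}) that $\Upsilon_{\mstack X}$ is the base change morphism associated to the constant sheaf and the structure map $p\colon \mstack X\to\Spec\mc O_C$; its fiber is computed pointwise along $(\Aff^{\fl,\ft}_{\mc O_C}/\mstack X)^\op$. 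Thus it suffices to identify $\fib(\Upsilon_{\Spec A})$ with $\fib(\eta_{\Spec A})$ functorially in $\Spec A\in\Aff^{\fl,\ft}_{\mc O_C}$, and then pass to the limit.

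For the affine case: for $\Spec A$ flat of finite type over $\mc O_C$, the generic fiber $\Spec A[1/p]=\Spec A_C$ maps to $\Spec A$ via the open immersion $j_{\Spec A}$, the special fiber $\Spec A/p$ via the closed immersion $i_{\Spec A}$, and $\widehat{\Spec A}_C=\Spec\widehat A[1/p]$. The point is the classical comparison (for $\ell\ne p$) that $\RG_\et(\Spec\widehat A[1/p],\mbb F_\ell)$ computes $\RG(\Spec A/p,\Psi_{\Spec A}\ul{\mbb F_\ell})$, i.e. the nearby cycles of the constant sheaf over the special fiber; this follows from Huber's comparison $\RG_\et(\widehat{\Spec A}_C,\mbb F_\ell)\simeq\RG_\et(\Spec\widehat A[1/p],\mbb F_\ell)$ together with the standard description of nearby cycles over a Henselian/complete trait via the completed (or Henselized) generic fiber, valid because $\ell$ is invertible. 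Under this identification the morphism $\Upsilon_{\Spec A}$ is carried to $\eta_{\Spec A}\colon\RG(\Spec A_C,\mbb F_\ell)\to\RG(\Spec A/p,\Psi_{\Spec A}\ul{\mbb F_\ell})$, and by \Cref{rem: nearby cycles in the smooth case} — or rather its analogue with $A$ not necessarily smooth, using \Cref{lem: base change for nearby cycles} applied to $\Spec A\to\Spec\mc O_C$ — the target is just $\RG(\Spec A/p,\mbb F_\ell)$ when $A$ is $\mc O_C$-smooth, but in general one keeps the nearby-cycles sheaf. So one gets a natural equivalence $\fib(\Upsilon_{\Spec A})\simeq\fib(\eta_{\Spec A})$ on $\Aff^{\fl,\ft}_{\mc O_C}$.

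Next I would globalize. By \Cref{constr:alg_to_adic_et_comp} and the limit presentations in \Cref{pass from algebraic to analytic} / \Cref{defn: algebraic etale cohomology}, both $\RG_\et(\mstack X_C,\mbb F_\ell)$ and $\RG_\et(\widehat{\mstack X}_C,\mbb F_\ell)$ are right Kan extensions from $\Aff^{\fl,\ft}_{\mc O_C}$, and $\Upsilon_{\mstack X}$ is the induced map. On the other side, $\RG(\mstack X_C,\mbb F_\ell)$ and $\RG(\mstack X_k,\Psi_{\mstack X}\ul{\mbb F_\ell})$ also have such presentations: for the latter this uses smooth base change for \'etale sheaves (\Cref{et_smooth_basechange_algebraic}), which shows $U\mapsto\RG(U_k,\Psi_{U}\ul{\mbb F_\ell})$ is an \'etale sheaf and that $\Psi$ commutes with smooth pullback (\Cref{lem: base change for nearby cycles}), so $\RG(\mstack X_k,\Psi_{\mstack X}\ul{\mbb F_\ell})\simeq\lim_{(\Spec A\to\mstack X)}\RG(\Spec A/p,\Psi_{\Spec A}\ul{\mbb F_\ell})$. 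Taking the limit over $(\Aff^{\fl,\ft}_{\mc O_C}/\mstack X)^\op$ of the natural equivalence of fibers from the affine case then yields $\fib(\Upsilon_{\mstack X})\simeq\fib(\eta_{\mstack X})$, and in particular $\Upsilon_{\mstack X}$ is an equivalence iff $\eta_{\mstack X}$ is.

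The main obstacle I expect is the affine-level comparison identifying $\RG_\et(\Spec\widehat A[1/p],\mbb F_\ell)$ with the cohomology of the nearby cycles sheaf over the special fiber, together with checking that this identification is \emph{functorial} and carries $\Upsilon_{\Spec A}$ to $\eta_{\Spec A}$; one must be careful about the non-Noetherian nature of $\widehat A$ (so \Cref{not: adic cohomology in non-Noetherian case} is used as the definition), about the difference between completion and Henselization of the generic fiber — both compute the same $\mbb F_\ell$-cohomology for $\ell\ne p$ — and about the fact that $A$ need not be smooth, so the simplification $\Psi_{\Spec A}\ul{\mbb F_\ell}\simeq\ul{\mbb F_\ell}$ of \Cref{rem: nearby cycles in the smooth case} is unavailable and one genuinely works with $\Psi$. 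Everything after that is formal manipulation of right Kan extensions and smooth descent, exactly parallel to arguments like \Cref{lem: enough to take smooth guys} and \Cref{pass from algebraic to analytic}.
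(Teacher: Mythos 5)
Your proposal is essentially the same argument as the paper's, with one organizational difference. Where you propose to show directly that $\fib(\Upsilon_{\mstack X})\simeq\fib(\eta_{\mstack X})$ by working functorially with the right Kan extension presentations, the paper proceeds slightly more directly: it cites Huber's comparison (\cite[Theorem~3.5.13, Corollary~3.5.16]{Huber_adicSpaces}) which, for a locally finite type $\mc O_C$-scheme $X$, gives a natural equivalence $\iota_X\colon\RG(\widehat X_C,\mbb F_\ell)\xra{\sim}\RG(X_k,\Psi\ul{\mbb F_\ell})$ with the property (built into the construction of the map in Huber) that $\eta_X=\iota_X\circ\Upsilon_X$. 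Then, using smooth base change for nearby cycles (\Cref{lem: base change for nearby cycles}) together with hyperdescent for \'etale cohomology (\Cref{rem: hyperdescent for etale cohomology}), the paper passes over a smooth affine hypercover to produce the same natural equivalence $\iota_{\mstack X}$ at the level of stacks, still satisfying $\eta_{\mstack X}\simeq\iota_{\mstack X}\circ\Upsilon_{\mstack X}$; the equivalence of the two acyclicity conditions is then immediate. Your version — matching fibers rather than constructing the equivalence of targets — is logically equivalent and rests on the same inputs; the one real obstacle you flag, namely functoriality of the affine comparison and its compatibility with $\Upsilon$ versus $\eta$ (including the Henselization-versus-completion subtlety), is exactly the content the paper outsources to Huber's theorem and to the observation that Huber's map is by construction compatible with the specialization triangle. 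So there is no gap, only a difference in bookkeeping; the paper's formulation is marginally cleaner since it exhibits the explicit equivalence $\iota_{\mstack X}$ rather than arguing through fibers.
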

\begin{proof}
	The key input of the proof is the result of Huber \cite[Theorem 3.5.13, Corollary 3.5.16]{Huber_adicSpaces} which, applied to our setting, states that for an $\mc O_C$-scheme $X$ locally of finite type one has a natural equivalence
	$$
	\iota_X\colon \RG(\widehat X_C,\mbb F_\ell)\xymatrix{\ar[r]^(.5)\sim &} \RG(X_k,\Psi\ul{\mbb F_\ell}).
	$$
	Moreover it is not hard to see that from the definition of the corresponding map in \cite{Huber_adicSpaces} one has a commutative triangle
	$$
	\xymatrix{\RG(X_C,\mbb F_\ell) \ar[r]^(0.45){\eta_{\mstack X}}\ar[d]_{\Upsilon_{\mstack X}}& \RG(X_k,\Psi_{X}\ul{\mbb F_\ell})\\
		\RG(\widehat X_C,\mbb F_\ell)\ar[ru]^\sim_{\iota_{\mstack X}}}.
	$$
	
	By smooth base change for nearby cycles (\Cref{lem: base change for nearby cycles}) and hyperdescent for \'etale cohomology (\Cref{rem: hyperdescent for etale cohomology}), for any  hypercover $\mstack U_\bullet \ra \mstack X$ with $U_i\ra \mstack X$ being smooth we have 
	$$
	\RG(\mstack X_k,\Psi_{\mstack X}\ul{\mbb F_\ell})\xymatrix{\ar[r]^(.5)\sim &} \Tot\RG({(\mstack U_\bullet)}_k,\Psi_{\mstack U_\bullet}\ul{\mbb F_\ell}).
	$$
	Picking a hypercover (using \cite[Theorem 4.7]{Pridham_ArtinHypercovers}), where all $\mstack U_i$ are disjoint unions of finite type affine schemes  we deduce that there is an equivalence 
	$$
	\iota_{\mstack X}\colon \RG(\widehat{\mstack X}_C,\ul{\mbb F_\ell})\xymatrix{\ar[r]^(.5)\sim &}\RG(\mstack X_k,\Psi_{\mstack X}\ul{\mbb F_\ell}),
	$$
	and, moreover, $\eta_{\mstack X}\simeq \iota_{\mstack X}\circ\Upsilon_{\mstack X}$. Hence the statement.
\end{proof}

Quite differently from the $l=p$ case there are many more examples of  $\mbb F_\ell$-locally acyclic schemes.
\begin{lem}\label{lem: example of F-l acyclic scheme}
	Let $U\subset X$ be an open subscheme of a smooth proper $\mc O_C$-scheme $X$, such that $Z\coloneqq X\setminus U$ is also smooth. Then $U$ is $\mbb F_\ell$-locally acyclic.
\end{lem}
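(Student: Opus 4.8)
By \Cref{prop: F-l-acyclicity in terms of the neraby cycles} it suffices to show that the specialization map $\eta_U\colon \RG(U_C,\ul{\mbb F_\ell})\to \RG(U_k,\Psi_U\ul{\mbb F_\ell})$ is an equivalence. Since $X$ is proper it is $\mbb F_\ell$-locally acyclic (\Cref{rem:Upsilon as base change}), hence $\eta_X$ is an equivalence; similarly $Z$, being smooth and proper over $\mc O_C$ (as a closed subscheme of a proper scheme), is $\mbb F_\ell$-locally acyclic and $\eta_Z$ is an equivalence. The idea is to compare the open--closed decompositions of the cohomology of $U$ on the generic and on the special fiber, using that nearby cycles commute with the relevant pullbacks and pushforwards.

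First I would set up the localization triangle on the generic fiber. With $j_C\colon U_C\hookrightarrow X_C$ the open embedding and $\iota_C\colon Z_C\hookrightarrow X_C$ the complementary closed embedding, one has the exact triangle $\iota_{C*}\iota_C^!\ul{\mbb F_\ell}\to \ul{\mbb F_\ell}\to Rj_{C*}\ul{\mbb F_\ell}$ on $X_C$, and by smoothness of $Z_C$ in $X_C$ (both being smooth over $C$) purity identifies $\iota_C^!\ul{\mbb F_\ell}\simeq \ul{\mbb F_\ell}(-c)[-2c]$ where $c=\operatorname{codim}(Z,X)$. Taking global sections gives a triangle relating $\RG(X_C,\mbb F_\ell)$, $\RG(U_C,\mbb F_\ell)$ and $\RG(Z_C,\mbb F_\ell)(-c)[-2c]$ (the Gysin triangle). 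Next I would produce the analogous triangle on the special fiber after applying $\Psi_X$. Here the key points are: (i) nearby cycles on a proper scheme commute with the pushforward $Rj_{X*}$ along the open immersion $U\hookrightarrow X$, i.e. there is a natural equivalence $\Psi_X(Rj_{X*}\ul{\mbb F_\ell})\simeq Ri_{U_k*}\Psi_U\ul{\mbb F_\ell}$ where $i_{U_k}\colon U_k\hookrightarrow X_k$ — this is the compatibility of nearby cycles with proper (indeed, here finite-type, and we can even arrange properness by replacing $X$) pushforward, cf. \cite[Theorem 3.5.13]{Huber_adicSpaces} and the results of \Cref{adic_base_change} applied to $j$; and (ii) $\Psi$ of the purity triangle for the closed embedding $Z_C\hookrightarrow X_C$ gives the purity triangle for $Z_k\hookrightarrow X_k$, using that $\Psi_Z\ul{\mbb F_\ell}\simeq \ul{\mbb F_\ell}$ and $\Psi_X\ul{\mbb F_\ell}\simeq \ul{\mbb F_\ell}$ by \Cref{rem: nearby cycles in the smooth case} (both $X$ and $Z$ are smooth over $\mc O_C$) and that the twist and shift are insensitive to $\Psi$. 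Combining, $\RG(X_k,\Psi_X\ul{\mbb F_\ell})$, $\RG(U_k,\Psi_U\ul{\mbb F_\ell})$ and $\RG(Z_k,\Psi_Z\ul{\mbb F_\ell})(-c)[-2c]$ fit into a triangle mapping compatibly, via $\eta_X,\eta_U,\eta_Z$, to the generic-fiber Gysin triangle.

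Since $\eta_X$ and $\eta_Z$ are equivalences, the five lemma (applied to the long exact cohomology sequences of the two triangles, or directly to the map of triangles in the stable category) forces $\eta_U$ to be an equivalence as well, which is what we wanted.

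\textbf{Main obstacle.} The delicate step is (i): establishing the base change $\Psi_X\circ Rj_{X*}\simeq Ri_{U_k*}\circ\Psi_U$ on the special fiber, i.e. that forming nearby cycles commutes with pushforward along the open immersion $j_X\colon U\hookrightarrow X$. Unlike proper pushforward, open pushforward does not commute with arbitrary base change, so one cannot invoke a naive base change theorem; what makes it work here is properness of $X$ (so that $X_k$ is proper and $Ri_{U_k*}$ is computed correctly), together with the comparison of nearby cycles with the Raynaud generic fiber from \cite{Huber_adicSpaces}. Concretely, I would instead argue on the level of the equivalences $\iota_X,\iota_U,\iota_Z$ of \Cref{prop: F-l-acyclicity in terms of the neraby cycles}: since $X$ is proper, $\widehat X_C\simeq X_C^{\an}$ and likewise for $Z$, while $\widehat U_C$ is an open subspace of $X_C^{\an}$ with closed complement $\widehat Z_C$; the localization triangle for this open--closed decomposition of adic spaces, together with purity on the adic side, directly yields the needed triangle for $\RG(\widehat U_C,\mbb F_\ell)$, and transporting back through the $\iota$'s gives the special-fiber triangle compatibly with $\eta$. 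This reduces the whole argument to adic-space localization and purity plus the two known cases $\eta_X$, $\eta_Z$, and sidesteps having to prove an open-pushforward base change for $\Psi$ from scratch.
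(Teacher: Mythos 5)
Your main argument — the Gysin/localization triangle for $Z \subset X \supset U$, purity for the regular closed immersion $Z\hookrightarrow X$, and two-out-of-three using that $\eta_X$ and $\eta_Z$ are equivalences — is exactly the paper's proof. The difference in presentation is that the paper runs the localization triangle $f_*f^!\ul{\mbb F_\ell}\to \ul{\mbb F_\ell}\to g_*\ul{\mbb F_\ell}$ \emph{once, on the integral model $X$ over $\mc O_C$} (with absolute purity giving $f^!\ul{\mbb F_\ell}\simeq\ul{\mbb F_\ell}[-2d]$ there), rather than separately on the generic and special fibers as you do. This is what dissolves the compatibility you flag as the "main obstacle": since $\Psi$ is exact and commutes with $f_*$ by proper base change for the closed immersion $f$, and since $\Psi_X\ul{\mbb F_\ell}\simeq\ul{\mbb F_\ell}$, $\Psi_Z\ul{\mbb F_\ell}\simeq\ul{\mbb F_\ell}$ on the smooth $X,Z$ (\Cref{rem: nearby cycles in the smooth case}), applying $\Psi_X$ to that single triangle identifies its first two terms and first arrow with the Gysin triangle on $X_k$; this \emph{forces} $\Psi_X(g_*\ul{\mbb F_\ell})\simeq g_{k*}\Psi_U\ul{\mbb F_\ell}$ as the common cofiber, instead of requiring an open-pushforward base change as a separate input.

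Your alternative "sidestep," however, contains a genuine error: it is not true that $\widehat Z_C$ is the closed complement of $\widehat U_C$ in $\widehat X_C\simeq X_C^{\an}$. Take $X=\mbb P^1_{\mc O_C}$, $U=\mbb G_m$, $Z=\{0,\infty\}$. Then $\widehat U_C=\Sp\,C\langle x,x^{-1}\rangle$ is the closed unit annulus $\{|x|=1\}$, while $\widehat Z_C=\Spa(C,\mc O_C)\sqcup\Spa(C,\mc O_C)$ is merely the two classical points $\{0,\infty\}$. Their union is nowhere near all of $(\mbb P^1_C)^{\an}$: everything with $0<|x|<1$ or $1<|x|<\infty$ is missing. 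The actual complement of $\widehat U_C$ in $\widehat X_C$ is the tube over $Z_k$, a much larger adic subspace whose cohomology is exactly $\RG(Z_k,\Psi_Z\ul{\mbb F_\ell})$ — so relating it to $\RG(\widehat Z_C,\ul{\mbb F_\ell})$ is once more the nearby-cycles comparison you were trying to avoid invoking. The open--closed localization triangle you propose on the adic side therefore does not exist in the form stated, and the sidestep does not work; the argument over the integral model is the correct way to proceed.
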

\begin{proof}
	Let's denote by $f\colon Z \hookrightarrow X$ and $g\colon U \ra X$ the corresponding embeddings. Then one has a fiber sequence
	$$
	f_*f^!\ul{\mbb F_\ell}\xymatrix{\ar[r]&} \ul{\mbb F_\ell} \xymatrix{\ar[r]&} g_*\ul{\mbb F_\ell} 
	$$
	in $\Shv_{\et}(X,\mbb F_\ell)$. Moreover, by the smoothness assumption, we have $f^!\ul{\mbb F_\ell}\simeq \mbb F_\ell[-2d]$, where $d$ is the relative codimension of $Z$ in $X$ (here we also used that $C$ is algebraically closed to trivialize the Tate twist). It follows that if $\eta_Z$ and $\eta_X$ are equivalences, then $\eta_U$ is also an equivalence. Thus it is enough to restrict to the smooth proper case. But there everything follows from smooth proper base change.
\end{proof}

\begin{ex}\label{ex: A^1 and G_m are F-l acyclic}
	In particular, $\mbb A^1$ and $\mbb G_m$ are $\mbb F_\ell$-locally acyclic. Indeed one can compactify both by schemes by $\mbb P^1$ with complement given by either one or two copies of $\Spec \mc O_C$ respectively. Consequently, e.g. by the K\"unneth formula (proof of \Cref{lem: product of acyclic is acyclic}), any product $\mbb A^k\times \mbb G_m^l$ is also $\mbb F_\ell$-locally acyclic. Recall that we had a very different behavior in the case $l=p$ (\Cref{ex:F-p etale cohomology of rigid affine line}).
\end{ex}

In particular, from this it is rather easy to show that $BG$ for $G$ reductive is $\mbb F_\ell$-locally acyclic. 
\begin{prop}\label{prop: local F_l-acyclicity of quotients}
	Let $G$ be a reductive group over $\mc O_C$. Then $BG$ is $\mbb F_\ell$-locally acyclic. Consequently, any quotient $[X/G]$ of an $\mbb F_\ell$-locally acyclic $\mc O_C$-scheme $X$ (e.g. smooth and proper) is $\mbb F_\ell$-locally acyclic.
\end{prop}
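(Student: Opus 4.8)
The plan is to reduce the statement to what we have already established for tori and Borel subgroups, using the descent and proper–base–change machinery. First I would establish $\mbb F_\ell$-local acyclicity of $BT$ for a split torus $T\simeq\mbb G_m^d$: by \Cref{ex: A^1 and G_m are F-l acyclic} and \Cref{lem: product of acyclic is acyclic} each product $\mbb G_m^d$ is $\mbb F_\ell$-locally acyclic, and the colimit presentation $\colim_n(\mbb P^n)^{\times d}\to BT$ (which, as discussed in Step 1 of the outline of \Cref{thm:main_result}, induces an equivalence on all the cohomology theories of interest, including both $\RG_\et((-)_C,\mbb F_\ell)$ and $\RG_\et(\widehat{(-)}_C,\mbb F_\ell)$) together with $\mbb F_\ell$-local acyclicity of $\mbb P^n$'s (by \Cref{lem: example of F-l acyclic scheme} or directly by smooth proper base change) gives that $BT$ is $\mbb F_\ell$-locally acyclic. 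Next, for $[\mbb A^n/T]$ with a conical linear action, one uses that the projection $[\mbb A^n/T]\to BT$ induces an equivalence on all cohomology theories of interest (as in Step 2 of the outline), so $[\mbb A^n/T]$ is $\mbb F_\ell$-locally acyclic.

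Then I would handle $BB$ for a Borel $B=T\ltimes U$: by \Cref{prop:BB is coh proper}'s \v Cech presentation, the cover $BT\to BB$ has terms $\check C(\pi)_n\simeq[(U^{\times n})/T]$, which are quotients $[\mbb A^{nm}/T]$ by conical linear actions (using the $T$-equivariant identification $\Lie(U)^{\times n}\xra{\sim}U^{\times n}$ from \Cref{U as an example}). By the previous paragraph each $\check C(\pi)_n$ is $\mbb F_\ell$-locally acyclic, so by descent for $\mbb F_\ell$-local acyclicity (\Cref{lem: descent for F-l acyclic stacks}) $BB$ is $\mbb F_\ell$-locally acyclic. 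Now for $BG$ itself I want to apply \Cref{prop: Lambda-acyclicity translates via proper smooth maps} to the map $f\colon BB\to BG$ induced by $B\subset G$: this map is schematic (its fibers are $G/B$), smooth, and proper (since $G/B$ is proper), and $BG_C$ is \'etale $1$-connected by \Cref{ex: BG is etale simply-connected if G is connected} as $G$ is connected. Since $BB$ is $\mbb F_\ell$-locally acyclic, \Cref{prop: Lambda-acyclicity translates via proper smooth maps} gives that $BG$ is $\mbb F_\ell$-locally acyclic.

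For the second assertion, let $X$ be an $\mbb F_\ell$-locally acyclic $\mc O_C$-scheme with a $G$-action and consider $[X/G]$. The map $[X/G]\to BG$ is schematic with fibers $X$, hence is not proper in general, so I cannot apply \Cref{prop: Lambda-acyclicity translates via proper smooth maps} directly. Instead I would proceed via $[X/B]$: the map $g\colon[X/B]\to[X/G]$ fits (after base change along $X\to BG$, noting $[X/B]\simeq[X/G]\times_{BG}BB$) into a Cartesian square over $f\colon BB\to BG$, so $g$ is also schematic, smooth and proper with fibers $G/B$, and $[X/G]_C$ is \'etale $1$-connected provided $[X/B]_C$ is $\mbb F_\ell$-locally acyclic and has no nontrivial local systems — but cleanest is to instead prove $[X/B]$ is $\mbb F_\ell$-locally acyclic first and then invoke smooth proper descent. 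To get $[X/B]$ I would use the \v Cech cover $[X/T]\to[X/B]$ coming from $BT\to BB$: its terms are $[(X\times U^{\times n})/T]$, and one must show these are $\mbb F_\ell$-locally acyclic. The hard part will be verifying this last point: it requires knowing $[X/T]$ (equivalently, after unwinding, $X\times(\text{conical }\mbb A^N)$ mod $T$) is $\mbb F_\ell$-locally acyclic, which should follow by combining $\mbb F_\ell$-local acyclicity of $X$ with the conical-reduction argument of Step 2 and the K\"unneth formula of \Cref{lem: product of acyclic is acyclic} — but making the interaction of the $T$-action on $X$ with the conical structure precise, and checking that the "projection to $BT$ induces an equivalence on cohomology" step goes through in this twisted setting, is where the genuine work lies. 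With $[X/T]$ and hence $[X/B]$ in hand, smooth proper descent along $[X/B]\to[X/G]$ (using $\mbb F_\ell$-local acyclicity of $[X/B]$ together with the \v Cech cover whose terms are quotients of $X\times(G/B)^{\times n}$ by $B$) finishes the argument, exactly as in Step 5 of the outline.
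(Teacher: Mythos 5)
Your step from $BB$ to $BG$ applies \Cref{prop: Lambda-acyclicity translates via proper smooth maps} in the wrong direction. That proposition says: for a smooth proper schematic $f\colon \mstack X\to\mstack Y$ with $\mstack Y_C$ \'etale $1$-connected, if the \emph{target} $\mstack Y$ is $\Lambda$-locally acyclic then so is the \emph{source} $\mstack X$ (the proof pushes the constant sheaf forward to $\mstack Y$, observes it is a trivial local system there, and uses local acyclicity of $\mstack Y$). Applying it to $f\colon BB\to BG$ would let you deduce that $BB$ is $\mbb F_\ell$-locally acyclic from $BG$ being so --- but you want the reverse implication, and the proposition simply does not give it. To pass from $BB$ to $BG$ one must instead use descent, as the paper does: take the \v Cech cover $BB\to BG$, whose $n$-th term is $B\backslash(G\times_B\cdots\times_B G)/B\simeq[Y_n/B]$ with $Y_n=(G\times_B\cdots\times_B G)/B$ a \emph{smooth proper} scheme, show each $[Y_n/B]$ is $\mbb F_\ell$-locally acyclic by applying \Cref{prop: Lambda-acyclicity translates via proper smooth maps} to the smooth proper map $[Y_n/B]\to BB$ (now in the correct direction, since $BB$ is the known-locally-acyclic target), and then conclude for $BG$ by \Cref{lem: descent for F-l acyclic stacks}.

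A second, acknowledged gap is the $[X/G]$ case, which you leave unfinished. Here the paper's route is much more direct and avoids the $[X/T]$ and $[X/B]$ detours entirely: take the \v Cech cover $X\to[X/G]$, whose terms are $X\times G^n$, and observe these are $\mbb F_\ell$-locally acyclic by the K\"unneth formula (\Cref{lem: product of acyclic is acyclic}) once one knows $G$ itself is $\mbb F_\ell$-locally acyclic; the latter is dealt with separately (e.g.\ via a wonderful compactification and \Cref{lem: example of F-l acyclic scheme}). Finally, your approach to $BB$ via $BT$ and conical $[\mbb A^n/T]$ is not wrong but is substantially heavier than needed for $\ell\ne p$: since $B\simeq T\times U$ as a scheme, the products $B^n$ are already products of $\mbb G_m$'s and $\mbb A^1$'s, hence $\mbb F_\ell$-locally acyclic by \Cref{ex: A^1 and G_m are F-l acyclic} and K\"unneth, and the \v Cech cover $\Spec\mc O_C\to BB$ immediately gives local acyclicity of $BB$ without any prismatic input.
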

\begin{proof}	
	Let $B\subset G$ be a Borel subgroup (note that $G$ is automatically split, since $\mc O_C$ has no nontrivial \'etale covers). By \Cref{ex: A^1 and G_m are F-l acyclic}, since $B\simeq T\times U$ (as schemes), $B$ and its self-products $B^n$ are $\mbb F_\ell$-locally acyclic; thus so is $BB$ (by considering the \v Cech cover $\Spec \mc O_C \ra BB$ and applying \Cref{lem: descent for F-l acyclic stacks}).
	
	Given any smooth proper $\mc O_C$-scheme $X$ with a $B$-action, the quotient stack $[X/B]$ is also $\mbb F_\ell$-locally acyclic. Indeed, the morphism $[X/B]\ra BB$ is smooth and proper, and $BB_C$ is \'etale 1-connected since $B_C$ is connected; thus we are done by \Cref{prop: Lambda-acyclicity translates via proper smooth maps}.

	Finally, considering a smooth proper cover $BB\ra BG$ we can consider the corresponding \v Cech object, all of whose terms are given by double-quotients $B\backslash(G\times_B \cdots \times_B G)/B$, where each $(G\times_B \cdots \times_B G)/B$ is a smooth proper scheme. Applying \Cref{lem: descent for F-l acyclic stacks} to this simplicial cover we get that $BG$ is $\mbb F_\ell$-locally acyclic. The case of $[X/G]$ then follows by considering the \v Cech object of the smooth cover $X\ra [X/G]$, all of whose terms are now given by products $X\times G^n$ and are $\mbb F_\ell$-locally acyclic by the K\"unneth formula.
\end{proof}
\begin{rem}
	In fact the reductive group $G$ itself is also $\mbb F_\ell$-locally acyclic. This can be seen by either considering a so-called wonderful compactification of $G$ and using \Cref{lem: example of F-l acyclic scheme} directly or by considering the map $q\colon G\ra G/B$. The map $q$ is an \'etale (and in fact Zariski) locally-trivial $B$-torsor; from this it is automatic that $q_*\mbb F_\ell$ is a local system and then the  $\mbb F_\ell$-local acyclicity of $G$ follows from $q_*\mbb F_\ell$-local acyclicity of $G/B$ (which in turn reduces to "local acyclicity" and proper base change for the smooth and proper morphism $G/B\ra \Spec \mc O_C$).
\end{rem}
\begin{ex}
	To give the reader a better feeling of $\mbb F_\ell$-local acyclicity we also provide some examples of non-$\mbb F_\ell$-locally acyclic stacks. First of all, if the closed fiber $\mstack X_k$ is empty then $\RG(\mstack X_k, \Psi_{\mstack X}\mc F)\simeq 0$ for any $\mc F$ and thus such $\mstack X$ can't be $\mbb F_\ell$-locally acyclic. As a slightly more elaborate example one can take $\mbb A^1$ over $\mc O_C$ and remove a point $0_k\in \mbb A^1_k \hookrightarrow \mbb A^1$ from the closed fiber: $X\coloneqq \mbb A^1\setminus\{0_k\}$. The scheme $X$ is still smooth, but the $\mbb F_\ell$-cohomology of the closed and generic fibers are different, thus by \Cref{rem: nearby cycles in the smooth case} $X$ is not $\mbb F_\ell$-locally acyclic. 
\end{ex}

\subsection{\'Etale comparison and applications}\label{subsect:etale_comparison}
\subsubsection{\'Etale comparison}\label{subsubsect:etale_comparison}
Let $(A,(d))$ be a perfect prism. In this section we prove the \'etale comparison: namely, we express the \'etale cohomology of the Raynaud generic fiber as Frobenius-fixed points of a suitable localization of the prismatic cohomology. We start with the $\mbb Z/p^n$-coefficients. 

Note that since $\phi_A\colon A\ra A$ is perfect the underlying $\mbb Z_p$-module of $\RG_{\Prism^{(1)}}(\mstack X/A)\simeq \RG_{\Prism}(\mstack X/A)^{(1)}$ (see \Cref{prop: twisted_cohomology_is_a_twist}) is isomorphic to $\RG_{\Prism}(\mstack X/A)$. Also note that since $\phi(d)-d^p \in pA$, we have $A[\tfrac{1}{d}]/p^n\simeq A[\tfrac{1}{\phi(d)}]/p^n$ and then consequently $\RG_{\Prism}(\mstack X/A)^{(1)}[\frac{1}{d}]/p^n \simeq \RG_{\Prism}(\mstack X/A)[\frac{1}{d}]/p^n$. Putting this together we see that for each $n$ the prismatic Frobenius gives a well-defined ($\mbb Z_p$-linear) endomorphism
$$
\phi_{\Prism}\colon \RG_{\Prism}(\mstack X/A)[\tfrac{1}{d}]/p^n \xymatrix{\ar[r]&} \RG_{\Prism}(\mstack X/A)[\tfrac{1}{d}]/p^n.
$$
For a complex $M\in\DMod{\mbb Z_p}$ with an endomorphism $\phi$ we denote by $M^{\phi=1}$ the homotopy $\phi$-fixed points $M^{\phi=1}\coloneqq \fib(\phi-\id_M\colon M \ra M)$.
\begin{prop}\label{etale_comparison}
	Let $(A,(d))$ be a perfect prism. Let $\mstack X$ be a smooth quasi-compact quasi-separated Artin stack over $R\coloneqq A/I$. Then for any $n \ge 0$ there is a canonical equivalence
	$$R\Gamma_{\et}(\widehat{\mstack X}_{{R[\frac{1}{p}]}}, \mathbb Z/p^n) \simeq \left(R\Gamma_\Prism(\mstack X/ A)[\tfrac{1}{d}]/p^n\right)^{\phi_{\Prism} = 1}. $$
	
	\begin{proof}
	By \Cref{left_exact_preserve_totalizations_of_uniformly_bounded_below} the localization $-[\frac{1}{d}]$ commutes with totalizations of $0$-coconnected objects. Being a shift of a limit and a limit, $-/p^n$ and $(-)^{\phi_{\Prism}=1}$ commute with any totalizations. Thus both sides satisfy \'etale (equivalently smooth) descent and, since they take values in the $\DMod{\mbb Z/p^n \mbb Z}^{\ge 0}\subset \DMod{\mbb Z/p^n \mbb Z}$, satisfy \'etale hyperdecent as well. By considering a smooth hypercover of $\mstack X$ by affine schemes provided by \cite[Theorem 4.7]{Pridham_ArtinHypercovers}, the result follows from the affine case \cite[Theorem 9.1]{BS_prisms}.
	\end{proof}
\end{prop}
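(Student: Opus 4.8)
The plan is to reduce the statement to the already-known case of smooth affine schemes, which is \cite[Theorem 9.1]{BS_prisms}, by a descent argument along a smooth hypercover. The two sides of the claimed equivalence are $R\Gamma_{\et}(\widehat{\mstack X}_{R[\frac{1}{p}]}, \mathbb Z/p^n)$ and $\left(R\Gamma_\Prism(\mstack X/ A)[\tfrac{1}{d}]/p^n\right)^{\phi_{\Prism} = 1}$, and both are defined by right Kan extension from affine schemes (the left side by \Cref{not: adic cohomology in non-Noetherian case} and the subsequent definitions; the right side via the prismatic cohomology functor of \Cref{subsect:prism_for_stacks}), so there is a canonical comparison map once we have the affine case. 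The main work is to show both functors satisfy \'etale hyperdescent on qcqs Artin stacks; given that, we choose a hypercover $U_\bullet \to \mstack X$ with each $U_i$ a disjoint union of smooth affine $R$-schemes, using \cite[Theorem 4.7]{Pridham_ArtinHypercovers}, and conclude by comparing the two totalizations term by term.

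For the hyperdescent of the right-hand side, first I would record that the operation $M \mapsto M[\frac{1}{d}]$ commutes with totalizations of uniformly coconnective cosimplicial objects, by \Cref{left_exact_preserve_totalizations_of_uniformly_bounded_below} applied to the flat (hence left $t$-exact) map $A \to A[\frac{1}{d}]$; by \Cref{cor_prism_coconnective_for_smooth} the prismatic cohomology of a smooth scheme is $0$-coconnective, so the relevant cosimplicial diagrams are indeed uniformly bounded below. The functors $M \mapsto M/p^n$ and $M \mapsto M^{\phi_\Prism = 1} = \fib(\phi_\Prism - \id)$ are each (shifted) finite limits, hence commute with all limits, in particular with totalizations. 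Combining these observations, $\left(R\Gamma_\Prism(-/A)[\frac{1}{d}]/p^n\right)^{\phi_\Prism = 1}$ sends smooth covers (and their \v Cech nerves) to limit diagrams, so it satisfies \'etale descent; and since it takes values in $\DMod{\mathbb Z/p^n}^{\ge 0}$, \'etale descent upgrades to \'etale hyperdescent (as in \Cref{rem: hyperdescent for etale cohomology}, since bounded-below objects in a site with the relevant structure satisfy Postnikov convergence). The hyperdescent of the left-hand side, the adic \'etale cohomology, is by definition, since $R\Gamma_{\et}(\widehat{-}_{R[\frac{1}{p}]}, \mathbb Z/p^n)$ is built as the right Kan extension of the ordinary \'etale cohomology of the schemes $\Spec \widehat{A}[\frac{1}{p}]$, and this identification with $R\Gamma_\et$ of an honest scheme means it inherits \'etale (hence, being bounded below, hyper-) descent from the classical theory.

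With both sides shown to be \'etale hypersheaves valued in connective $\mathbb Z/p^n$-modules, I would pick a smooth hypercover $U_\bullet \to \mstack X$ by disjoint unions of smooth affine $R$-schemes (\cite[Theorem 4.7]{Pridham_ArtinHypercovers}), noting that the functors commute with the relevant (possibly infinite) products so that the disjoint-union terms cause no trouble, exactly as in \Cref{rem: drop qcqs assumption in the perfect case} where the perfectness of $A$ over itself is used to commute $-\otimes_A \phi_{A*}A$ with products. Then the comparison map between the two totalizations is, at each simplicial level, the affine \'etale comparison of \cite[Theorem 9.1]{BS_prisms}, which is an equivalence; passing to totalizations gives the desired equivalence for $\mstack X$. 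I expect the main obstacle to be the bookkeeping around the localization $-[\frac{1}{d}]$ and its interaction with $-/p^n$: one needs the identification $A[\frac{1}{d}]/p^n \simeq A[\frac{1}{\phi(d)}]/p^n$ (which holds because $\phi(d) \equiv d^p \bmod p$) to make sense of $\phi_\Prism$ as an honest $\mathbb Z/p^n$-linear endomorphism of $R\Gamma_\Prism(\mstack X/A)[\frac{1}{d}]/p^n$, and to know that the Frobenius twist $R\Gamma_{\Prism^{(1)}}(\mstack X/A)$ has underlying $A$-module isomorphic to $R\Gamma_\Prism(\mstack X/A)$ since $\phi_A$ is an automorphism; all of this is genuinely used to even write down the statement, but it is local-in-$\mstack X$ and so follows once the affine case is granted. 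Everything else is the descent formalism already in place.
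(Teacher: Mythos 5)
Your proposal is correct and takes essentially the same route as the paper: reduce by \'etale hyperdescent along a smooth hypercover by affine schemes (via Pridham), after observing that $-[\tfrac{1}{d}]$ commutes with totalizations of uniformly coconnective cosimplicial objects, that $-/p^n$ and $(-)^{\phi_\Prism=1}$ are (shifted) limits, and that values land in $\DMod{\mbb Z/p^n}^{\ge 0}$ so descent upgrades to hyperdescent, with the affine case coming from \cite[Theorem 9.1]{BS_prisms}. One small remark: since $\mstack X$ is assumed qcqs, the Pridham hypercover can be taken with each term an honest affine scheme rather than an infinite disjoint union, so your worry (borrowed from \Cref{rem: drop qcqs assumption in the perfect case}) about commuting with infinite products is unnecessary here.
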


Recall that by \cite[Corollary 2.31]{BS_prisms} for any perfect prism $(A, d)$ one in fact has $A=W(R^\flat)$ where $R^\flat\coloneqq A/p$ is perfect ring of characteristic $p$. In particular $A$ is necessarily $p$-torsion free. Also note that the reduction $\ol d\in R^\flat$ is non-zero (\cite[Lemma 2.33]{BS_prisms}). To pass to the limit by $n$ in \Cref{etale_comparison} we will need the following easy lemma:
\begin{lem}\label{lem: p-completion of A[1/d]}
	Let $(A,(d))$ be a perfect prism. Then the derived $p$-completion of $A[\frac{1}{d}]$ is classical and equivalent to $W(R^\flat[\frac{1}{\ol d}])$.
	
	\begin{proof}
		Note that since $A$ is $p$-torsion free, so is $A[\frac{1}{d}]$; thus the derived completion ${A[\frac{1}{d}]}^\wedge_p$ is classical. The element $d\in W(R^\flat[\frac{1}{\ol d}])$ is invertible modulo $p$ and since $W(R^\flat[\frac{1}{\ol d}])$ is (derived) $p$-complete, it is invertible. This gives a map ${A[\frac{1}{d}]}^\wedge_p\ra W(R^\flat[\frac{1}{\ol d}])$, which is an isomorphism mod $p$; thus so was the original map, since both sides are derived $p$-complete.
	\end{proof}
\end{lem}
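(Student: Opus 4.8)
The statement to prove is \Cref{lem: p-completion of A[1/d]}: for a perfect prism $(A,(d))$, the derived $p$-completion of $A[\frac{1}{d}]$ is classical and equivalent to $W(R^\flat[\frac{1}{\ol d}])$.

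The plan is to proceed in three short steps. First I would observe that since $A = W(R^\flat)$ is $p$-torsion free (as recalled just before the lemma, using \cite[Corollary 2.31]{BS_prisms}), the localization $A[\frac{1}{d}]$ is a filtered colimit of copies of $A$ along multiplication by $d$, hence also $p$-torsion free; therefore its derived $p$-completion agrees with its classical $p$-completion and is concentrated in degree $0$. This disposes of the "classical" half of the statement and reduces everything to identifying the classical $p$-completion $\widehat{A[\frac{1}{d}]}$ with $W(R^\flat[\frac{1}{\ol d}])$.

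Second, I would construct the comparison map. The ring $W(R^\flat[\frac{1}{\ol d}])$ is $p$-adically complete (Witt vectors of a perfect $\mathbb F_p$-algebra are $p$-complete), and in it the Teichmüller-type image of $d$ — more precisely the image of $d$ under the natural map $A = W(R^\flat) \to W(R^\flat[\frac{1}{\ol d}])$ — becomes invertible. This is because $d$ is invertible modulo $p$ in $W(R^\flat[\frac{1}{\ol d}])$ (its reduction is $\ol d$, which is a unit in $R^\flat[\frac{1}{\ol d}]$), and an element of a $p$-complete ring that is a unit mod $p$ is a unit. By the universal property of localization this gives a ring map $A[\frac{1}{d}] \to W(R^\flat[\frac{1}{\ol d}])$, which then extends to the $p$-completion $\alpha\colon \widehat{A[\frac{1}{d}]} \to W(R^\flat[\frac{1}{\ol d}])$ since the target is already $p$-complete.

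Third, I would check that $\alpha$ is an isomorphism mod $p$, whence it is an isomorphism: both source and target are (derived) $p$-complete, so by derived Nakayama it suffices to check $\alpha \otimes^{\mathbb L}_{\mathbb Z} \mathbb F_p$ is an equivalence, and by $p$-torsion-freeness of both sides this is just the statement that $A[\frac{1}{d}]/p \to W(R^\flat[\frac{1}{\ol d}])/p$ is an isomorphism. But $A/p = R^\flat$ and $d \mapsto \ol d$, so the left side is $R^\flat[\frac{1}{\ol d}]$, while the right side is $W(R^\flat[\frac{1}{\ol d}])/p = R^\flat[\frac{1}{\ol d}]$, and the map is the identity. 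This is essentially what the proof in the excerpt already sketches; I do not anticipate a real obstacle here — the only mildly delicate point is making sure the "unit mod $p$ implies unit" argument is applied to the correct ring (the $p$-complete $W(R^\flat[\frac{1}{\ol d}])$, not to $\widehat{A[\frac{1}{d}]}$ itself, which would be circular), and that the reduction $\ol d \in R^\flat$ is genuinely nonzero so that the localization is not the zero ring, which is guaranteed by \cite[Lemma 2.33]{BS_prisms}.
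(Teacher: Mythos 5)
Your proposal is correct and follows essentially the same route as the paper's proof: establish $p$-torsion-freeness to reduce to classical completion, produce the comparison map from the unit-mod-$p$-implies-unit observation applied to $W(R^\flat[\frac{1}{\ol d}])$, and conclude by derived Nakayama after checking the map is an isomorphism modulo $p$. The extra details you supply (the filtered-colimit reason for $p$-torsion-freeness, the explicit identifications $A/p=R^\flat$ and $W(R^\flat[\frac{1}{\ol d}])/p=R^\flat[\frac{1}{\ol d}]$, and the caveat about where the Henselian-type argument is applied) are all accurate expansions of what the paper leaves implicit.
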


\begin{cor}\label{cor: Z_p etale cohomology of rigid fiber}
	In notations of \Cref{etale_comparison} we also have an equivalence $$\xymatrix{R\Gamma_{\et}(\widehat{\mstack X}_{{R[\frac{1}{d}]}}, \mathbb Z_p) \simeq \left(R\Gamma_\Prism(\mstack X/ A)\widehat{\otimes}_A W(R^\flat[\frac{1}{\ol d}])\right)^{\phi_{\Prism} = 1}},$$ where the tensor product is $p$-completed.
	
	\begin{proof}
		Passing to the limit over $n$ in \Cref{etale_comparison} we get an equivalence $$\xymatrix{R\Gamma_{\et}(\widehat{\mstack X}_{{R[\frac{1}{d}]}}, \mathbb Z_p) \simeq \left(R\Gamma_\Prism(\mstack X/ A)\widehat{\otimes}_A {A[\frac{1}{d}]}^\wedge_p\right)^{\phi_{\Prism} = 1}}.$$
		The rest then follows from ${A[\frac{1}{d}]}^\wedge_p\simeq W(R^\flat[\frac{1}{\ol d}])$. 
	\end{proof}
\end{cor}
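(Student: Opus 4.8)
The proof of \Cref{cor: Z_p etale cohomology of rigid fiber} is essentially a formal passage to the limit in \Cref{etale_comparison}, using the identifications of the previous lemmas. The plan is as follows.

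First I would pass to the inverse limit over $n$ in the equivalence of \Cref{etale_comparison}. By \Cref{defn: algebraic etale cohomology} (the definition of $\mathbb Z_p$-coefficient cohomology) the left-hand side becomes $R\Gamma_{\et}(\widehat{\mstack X}_{R[\frac{1}{p}]}, \mathbb Z_p) \coloneqq \prolim R\Gamma_{\et}(\widehat{\mstack X}_{R[\frac{1}{p}]}, \mathbb Z/p^n)$; note that since $R[\frac{1}{p}] = R[\frac{1}{d}]$ here (both $p$ and $d$ generate the unit ideal after inverting the appropriate element, or rather the Raynaud generic fiber only depends on the $p$-adic topology) the notation matches. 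On the right-hand side, $R\Gamma_\Prism(\mstack X/A)[\tfrac{1}{d}]/p^n$ is by definition $R\Gamma_\Prism(\mstack X/A)[\tfrac{1}{d}]\otimes_{\mathbb Z}\mathbb Z/p^n$, so $\prolim_n R\Gamma_\Prism(\mstack X/A)[\tfrac{1}{d}]/p^n$ computes the derived $p$-completion $R\Gamma_\Prism(\mstack X/A)[\tfrac{1}{d}]{}^\wedge_p$, which equals $R\Gamma_\Prism(\mstack X/A)\widehat{\otimes}_A {A[\frac{1}{d}]}{}^\wedge_p$ since $R\Gamma_\Prism(\mstack X/A)$ is already $(p,d)$-complete (hence $p$-complete) and localization at $d$ commutes with the tensor product. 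Since taking $\phi$-fixed points $(-)^{\phi=1} = \fib(\phi - \id)$ is a finite limit, it commutes with the inverse limit $\prolim_n$; likewise the limit over $n$ of the compatible family of endomorphisms $\phi_\Prism$ gives the endomorphism $\phi_\Prism$ on the completion. This yields the intermediate equivalence
$$
R\Gamma_{\et}(\widehat{\mstack X}_{R[\frac{1}{d}]}, \mathbb Z_p) \simeq \left(R\Gamma_\Prism(\mstack X/A)\widehat{\otimes}_A {A[\tfrac{1}{d}]}{}^\wedge_p\right)^{\phi_\Prism = 1}.
$$

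The only remaining point is to replace ${A[\frac{1}{d}]}{}^\wedge_p$ by $W(R^\flat[\frac{1}{\ol d}])$, which is exactly the content of \Cref{lem: p-completion of A[1/d]}: the derived $p$-completion of $A[\frac{1}{d}]$ is classical (as $A$, and hence $A[\frac{1}{d}]$, is $p$-torsion free, since $A = W(R^\flat)$ for a perfect ring $R^\flat$ of characteristic $p$ by \cite[Corollary 2.31]{BS_prisms}) and is canonically isomorphic to $W(R^\flat[\frac{1}{\ol d}])$. Substituting this identification (which is compatible with Frobenius) into the displayed equivalence above gives the statement. I would also remark that $p$-completing the tensor product is necessary because $R\Gamma_\Prism(\mstack X/A)$ need not be perfect or even bounded, so $\widehat{\otimes}_A W(R^\flat[\frac{1}{\ol d}])$ cannot be replaced by the uncompleted tensor product in general.

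There is no real obstacle here: the argument is a bookkeeping exercise in commuting limits, and every nontrivial input (\Cref{etale_comparison}, \Cref{lem: p-completion of A[1/d]}, and the description $A = W(R^\flat)$) is already available. The mildly delicate part — which I would state carefully rather than belabor — is confirming that the Frobenius endomorphism on $R\Gamma_\Prism(\mstack X/A)[\frac{1}{d}]/p^n$ described in the paragraph preceding \Cref{etale_comparison} (built from the twist identification $R\Gamma_{\Prism^{(1)}} \simeq R\Gamma_\Prism^{(1)}$ of \Cref{prop: twisted_cohomology_is_a_twist} together with $A[\frac{1}{d}]/p^n \simeq A[\frac{1}{\phi(d)}]/p^n$) is compatible across the tower in $n$ and induces, in the limit, precisely the Frobenius appearing in \Cref{cor: Z_p etale cohomology of rigid fiber}; this is immediate from the functoriality of all constructions in $n$.
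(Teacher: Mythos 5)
Your proof is correct and follows essentially the same route as the paper's: pass to the inverse limit over $n$ in \Cref{etale_comparison} to obtain the formula with ${A[\frac{1}{d}]}^\wedge_p$, then invoke \Cref{lem: p-completion of A[1/d]} to identify this $p$-completion with $W(R^\flat[\frac{1}{\ol d}])$. The extra care you take about commuting $(-)^{\phi=1}$ past the inverse limit, about why $R\Gamma_\Prism(\mstack X/A)[\frac{1}{d}]^\wedge_p$ equals the $p$-completed tensor product, and about Frobenius compatibility across the tower is all sound, just spelled out in more detail than the paper's two-line proof. (One small caveat: the phrase ``$R[\frac{1}{p}]=R[\frac{1}{d}]$'' is not literally meaningful since $d=0$ in $R=A/I$; the subscript $R[\frac{1}{d}]$ in the corollary's statement is a typo for $R[\frac{1}{p}]$, which your parenthetical remark effectively recognizes.)
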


\begin{rem}
	Even though the \'etale comparison \cite[Theorem 9.1]{BS_prisms} applies to any formal scheme without any smoothness assumptions, the proof of \Cref{etale_comparison} does not immediately generalize to arbitrary (say flat) Artin stacks over $A/I$, unless $\mstack X$ has a hypercover by affine schemes with uniformly bounded below prismatic cohomology. This does not hold for a general affine scheme, but it does for spectra of (quasi)syntomic rings. In particular the same proof works for syntomic stacks.
\end{rem}

We now restrict to the case $A=\Ainf$, where we have $R=\mc O_{\mbb C_p}$ and ${A[\frac{1}{d}]}^\wedge_p\simeq W(\mbb C_p^\flat)$. 
We are going to deduce some finer formulas for the \'etale cohomology in the case when a stack comes from a Hodge-proper stack over $\mc O_K$. For this we fix a Breuil-Kisin prism $(\mf S,E(u))$ corresponding to a uniformizer $\pi\in \mc O_K$ and also fix an element $\pi^\flat\coloneqq (\pi, \pi^{1/p}, \pi^{1/p^2}, \ldots)\in \mathcal O_{\mbb C_p}^\flat$ defined by a compatible choice of $p^n$-th roots of $\pi$. Such $\pi^\flat$ defines a map $(\mf S,(E(u)))\ra (\Ainf, (\xi))$ by sending $u\in \mf S$ to $[\pi^\flat]$. This map lifts the natural embedding $\mc O_K\ra \mc O_{\mbb C_p}$. 

Let $\mstack X$ be a smooth qcqs Artin stack over $\mc O_K$. The $\mf S$-linear Frobenius $R\Gamma_{\Prism^{(1)}}(\mstack X / \mathfrak S) \ra R\Gamma_{\Prism}(\mstack X / \mathfrak S)$ defines a $W(\mbb C_p^\flat)$-linear map 
$$R\Gamma_{\Prism^{(1)}}(\mstack X / \mathfrak S)\otimes_{\mf S} W(\mbb C_p^\flat)\tto R\Gamma_\Prism(\mstack X / \mathfrak S)\otimes_{\mf S} W(\mbb C_p^\flat)$$ 
where the underlying $\mbb Z_p$-module of $R\Gamma_{\Prism^{(1)}}(\mstack X / \mathfrak S)\otimes_{\mf S} W(\mbb C_p^\flat)$ is the same as for $R\Gamma_\Prism(\mstack X / \mathfrak S)\otimes_{\mf S} W(\mbb C_p^\flat)$ since $\phi_{W(\mbb C_p^\flat)}\colon W(\mbb C_p^\flat)\xra{\sim} W(\mbb C_p^\flat)$ is an isomorphism. This defines a $\mbb Z_p$-linear endomorphism $\phi_{\Prism}$ of $R\Gamma_\Prism(\mstack X / \mathfrak S)\otimes_{\mf S} W(\mbb C_p^\flat)$.

\begin{prop}\label{cor: adic_etale_comparison2}
	Let $(\mf S,E(u))$ be a Breuil-Kisin prism. Let $\mstack X$ be a smooth Hodge-proper Artin stack over $\mathcal O_K\simeq \mf S/E(u)$. Then there is a canonical equivalence
	$$
	R\Gamma_{\et}(\widehat{\mstack X}_{{\mathbb C_p}}, \mathbb Z_p)\xymatrix{\ar[r]^\sim&} \left(R\Gamma_\Prism(\mstack X / \mathfrak S)\otimes_{\mathfrak S} W(\mathbb C_p^\flat)\right)^{\phi_{\Prism}=1}.
	$$
	Moreover, it induces a natural equivalence 	
	$$R\Gamma_{\et}(\widehat{\mstack X}_{{\mathbb C_p}}, \mathbb Z_p)\otimes_{\mathbb Z_p} W(\mathbb C_p^\flat) \xymatrix{\ar[r]^\sim&}  R\Gamma_\Prism(\mstack X / \mathfrak S)\otimes_{\mathfrak S} W(\mathbb C_p^\flat).$$
		\begin{proof}
		First note that by base change (\Cref{prismatic_basechange}) for a morphism of prisms $(\mf S,E(u))\ra (\Ainf, \widetilde{\xi})$ we have an equivalence $\RG_{\Prism}(\mstack X_{\mc O_{\mbb C_p}}/\Ainf)\simeq \RG_{\Prism}(\mstack X/\mf S)\widehat\otimes_{\mf S} \Ainf$. Then, applying \Cref{cor: Z_p etale cohomology of rigid fiber} to $\mstack X_{\mc O_{\mbb C_p}}$, we get 
		$$
		R\Gamma_{\et}(\widehat{\mstack X}_{{\mathbb C_p}}, \mathbb Z_p)\simeq \left(R\Gamma_\Prism(\mstack X / \mathfrak S)\widehat{\otimes}_{\mathfrak S} W(\mathbb C_p^\flat)\right)^{\phi_{\Prism}=1}.
		$$ 
		Since $\mstack X$ is Hodge-proper, by \Cref{cor: prismatic cohomology are bounded below coherent}, $R\Gamma_\Prism(\mstack X / \mathfrak S)\in \Coh^+(\mf S)$. By \Cref{lem: base change preserves coh^+ sometimes}, the tensor product $R\Gamma_\Prism(\mstack X / \mathfrak S){\otimes}_{\mathfrak S} W(\mathbb C_p^\flat)$ also lies in $ \Coh^+(W(\mathbb C_p^\flat))$, thus by \Cref{lem: bounded below coherent are complete} it is derived $p$-complete (and so coincides with the completed tensor product). This proves the first assertion.
		
		For the second statement it is enough to prove the following more general assertion: for an object $$(M,\phi_M)\in\left({\DMod{W(\mathbb C_p^\flat)}}_p^\wedge\right)^{\phi_{W(\mathbb C_p^\flat)*}}_{\mr{lax}}$$
		with the semi-linear morphism $\phi_M\colon M\ra \phi_{W(\mathbb C_p^\flat)*}M$ such that the underlying object $M$ lies in $\Coh^+(W(\mathbb C_p^\flat))$, the natural map
		$$M^{\phi_M=1}\otimes_{\mathbb Z_p} W(\mathbb C_p^\flat) \xymatrix{\ar[r] & } M$$
		is an equivalence. Note that both parts commute with colimits, hence it is enough to prove the statement for $M^{\le n}$. But since $W(\mathbb C_p^\flat)$ is regular and Noetherian, coherent $W(\mathbb C_p^\flat)$-modules are perfect, so the result follows from \cite[Lemma 8.5]{Bhatt_LiteBMS}.
	\end{proof}
\end{prop}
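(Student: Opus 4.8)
The strategy is to reduce everything to the étale comparison for perfect prisms (\Cref{cor: Z_p etale cohomology of rigid fiber}) via base change, and then to an algebraic descent statement about Frobenius-fixed points of bounded below coherent modules over $W(\mbb C_p^\flat)$. First I would invoke prismatic base change along the map of prisms $(\mf S, E(u)) \ra (\Ainf, \widetilde\xi)$ determined by the chosen $\pi^\flat$: by \Cref{prismatic_basechange} (noting that $\Ainf$ is perfect over $\mf S$, so \Cref{rem: prismatic base change in the perfect case} lets us drop quasi-compactness issues and pass to the ordinary tensor product) we get $\RG_\Prism(\mstack X_{\mc O_{\mbb C_p}}/\Ainf) \simeq \RG_\Prism(\mstack X/\mf S)\widehat\otimes_{\mf S}\Ainf$. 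Applying \Cref{cor: Z_p etale cohomology of rigid fiber} with $A = \Ainf$, $R = \mc O_{\mbb C_p}$, $d = \widetilde\xi$, for which ${A[\tfrac{1}{d}]}^\wedge_p \simeq W(\mbb C_p^\flat)$, yields the first claimed equivalence but with the completed tensor product $\widehat\otimes_{\mf S} W(\mbb C_p^\flat)$.

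The next step is to upgrade that completed tensor product to the ordinary one, which is where the Hodge-properness hypothesis enters. By \Cref{cor: prismatic cohomology are bounded below coherent}, $\RG_\Prism(\mstack X/\mf S) \in \Coh^+(\mf S)$; since $\mf S \ra W(\mbb C_p^\flat)$ is a map to a derived $p$-complete ring and $\mf S$ is Noetherian and regular, by \Cref{lem: base change preserves coh^+ sometimes} (or by \Cref{lem:the tensor product remains complete}) the tensor product $\RG_\Prism(\mstack X/\mf S)\otimes_{\mf S} W(\mbb C_p^\flat)$ lies in $\Coh^+(W(\mbb C_p^\flat))$; hence by \Cref{lem: bounded below coherent are complete} it is already derived $p$-complete, so $\widehat\otimes_{\mf S} W(\mbb C_p^\flat) \simeq \otimes_{\mf S} W(\mbb C_p^\flat)$. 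This gives the first displayed equivalence.

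For the second displayed equivalence it suffices to prove the general algebraic statement: for $(M,\phi_M)$ a module $M \in \Coh^+(W(\mbb C_p^\flat))$ equipped with a semi-linear Frobenius $\phi_M\colon M \ra \phi_{W(\mbb C_p^\flat)*}M$, the natural map $M^{\phi_M=1}\otimes_{\mbb Z_p} W(\mbb C_p^\flat) \ra M$ is an equivalence. Both sides commute with filtered colimits, so writing $M$ as the colimit of its truncations $M^{\le n}$ one reduces to the case where $M$ is bounded; since $W(\mbb C_p^\flat)$ is regular and Noetherian, such $M$ is perfect, and the statement for perfect modules is exactly \cite[Lemma 8.5]{Bhatt_LiteBMS} (equivalently a form of the $\phi$-module descent for $A_{\inf}$). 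Applying this to $M = \RG_\Prism(\mstack X/\mf S)\otimes_{\mf S} W(\mbb C_p^\flat)$ with the Frobenius $\phi_\Prism$ described just before the statement, and combining with the first equivalence, gives the result.

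\textbf{Main obstacle.} The one genuinely non-formal ingredient is the passage from the completed to the uncompleted tensor product, i.e. the derived $p$-completeness of $\RG_\Prism(\mstack X/\mf S)\otimes_{\mf S} W(\mbb C_p^\flat)$ — this is precisely what forces the Hodge-properness hypothesis and makes use of the coherence result \Cref{cor: prismatic cohomology are bounded below coherent}, which in turn rests on the Hodge--Tate filtration being exhaustive and the base prism being Noetherian. Everything else is bookkeeping: base change along prism morphisms, the already-proven étale comparison for perfect prisms, and the perfect-module descent lemma. One should also take brief care that the Frobenius endomorphism of $\RG_\Prism(\mstack X/\mf S)\otimes_{\mf S} W(\mbb C_p^\flat)$ used here agrees under base change with the one appearing in \Cref{cor: Z_p etale cohomology of rigid fiber} (which is immediate from functoriality of the prismatic Frobenius and the identification of underlying $\mbb Z_p$-modules via the automorphism $\phi_{W(\mbb C_p^\flat)}$).
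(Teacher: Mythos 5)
Your proposal is correct and follows essentially the same approach as the paper's proof: base change along $(\mf S, E(u)) \to (\Ainf, \widetilde\xi)$, the étale comparison for perfect prisms, Hodge-properness to deduce coherence and hence $p$-completeness of the uncompleted tensor product, and finally the reduction to the perfect-module case via truncations and \cite[Lemma 8.5]{Bhatt_LiteBMS}. The extra remarks you add (dropping qcqs via perfectness of $\Ainf$ over $\mf S$, compatibility of Frobenii under base change) are accurate but do not constitute a different route.
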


One can also give a formula for the \'etale cohomology of $\widehat{\mstack X}_{\mbb C_p}$ that would not depend on the auxiliary choice of the Breuil-Kisin prism $(\mf S, E(u))$.
\begin{cor}\label{cor: prismatic_vs_etale_torsion} For a Hodge-proper smooth Artin stack over $\mc O_K$ one also has an equivalence 
$$
	R\Gamma_{\et}(\widehat{\mstack X}_{{\mathbb C_p}}, \mathbb Z_p)\xymatrix{\ar[r]^\sim &} \left(R\Gamma_\Prism(\mstack X _{\mc O_{\mbb C_p}}/ \Ainf)\otimes_{\Ainf} W(\mathbb C_p^\flat)\right)^{\phi_{\Prism}=1}
$$
that in turn leads to an equivalence 
$$
R\Gamma_{\et}(\widehat{\mstack X}_{{\mathbb C_p}}, \mathbb Z_p)\otimes_{\mathbb Z_p} W(\mathbb C_p^\flat)  \xymatrix{\ar[r]^\sim &} R\Gamma_\Prism(\mstack X _{\mc O_{\mbb C_p}}/ \Ainf)\otimes_{\Ainf} W(\mathbb C_p^\flat).
$$
	
\begin{proof} For the first point, by \Cref{cor: Z_p etale cohomology of rigid fiber} we have $R\Gamma_{\et}(\widehat{\mstack X}_{{\mathbb C_p}}, \mathbb Z_p)\xra{\sim} \left(R\Gamma_\Prism(\mstack X _{\mc O_{\mbb C_p}}/ \Ainf)\widehat \otimes_{\Ainf} W(\mathbb C_p^\flat)\right)^{\phi_{\Prism}=1}$ and thus it is enough to see that the tensor product $R\Gamma_\Prism(\mstack X _{\mc O_{\mbb C_p}}/ \Ainf)\otimes_{\Ainf} W(\mathbb C_p^\flat)$ is already $p$-complete. For this we use $\RG_{\Prism}(\mstack X/\mf S)$ for any choice of $\mf S$. Then, by \Cref{lem:the tensor product remains complete} and base change $R\Gamma_\Prism(\mstack X _{\mc O_{\mbb C_p}}/ \Ainf)\simeq \RG_{\Prism}(\mstack X/\mf S)\otimes_{\mf S}\Ainf$ and so $R\Gamma_\Prism(\mstack X _{\mc O_{\mbb C_p}}/ \Ainf)\otimes_{\Ainf} W(\mathbb C_p^\flat)\simeq \RG_{\Prism}(\mstack X/\mf S)\otimes_{\mf S} W(\mathbb C_p^\flat)$ which is $p$-complete again by \Cref{lem:the tensor product remains complete}.
The second point is also clear from \Cref{cor: adic_etale_comparison2} since we have just identified the right hand side with $\RG_{\Prism}(\mstack X/\mf S)\otimes_{\mf S} W(\mathbb C_p^\flat)$.
\end{proof}
\end{cor}

\begin{rem}\label{rem: comparison for the twisted prismatic cohomology}
	One can prove a variant of \Cref{cor: prismatic_vs_etale_torsion} with the twisted version $\RG_{\Prism^{(1)}}(\mstack X _{\mc O_{\mbb C_p}}/ \Ainf)\otimes_{\Ainf} W(\mathbb C_p^\flat)$ in place of $\RG_{\Prism}(\mstack X / \Ainf)\otimes_{\Ainf} W(\mathbb C_p^\flat)$. Indeed, this does not change the statement in the first part of the proposition (since the underlying $\mbb Z_p$-modules are the same), but gives another equivalence in the second:
	$$
	R\Gamma_{\et}(\widehat{\mstack X}_{{\mathbb C_p}}, \mathbb Z_p)\otimes_{\mathbb Z_p} W(\mathbb C_p^\flat)\xymatrix{\ar[r]^\sim&} R\Gamma_{\Prism^{(1)}}(\mstack X _{\mc O_{\mbb C_p}}/ \Ainf)\otimes_{\Ainf} W(\mathbb C_p^\flat).
	$$
	One can then return to a similarly looking statement via the identification $$R\Gamma_{\Prism^{(1)}}(\mstack X _{\mc O_{\mbb C_p}}/ \Ainf){\otimes}_{\Ainf} W(\mathbb C_p^\flat)\xymatrix{\ar[r]^\sim&}  R\Gamma_{\Prism}(\mstack X _{\mc O_{\mbb C_p}}/ \Ainf){\otimes}_{\Ainf, \phi_\Ainf} W(\mathbb C_p^\flat)\xymatrix{\ar[r]_\sim^{\id\otimes \phi}&} R\Gamma_{\Prism}(\mstack X _{\mc O_{\mbb C_p}}/ \Ainf){\otimes}_{\Ainf} W(\mathbb C_p^\flat).$$
	The resulting equivalence $
	R\Gamma_{\et}(\widehat{\mstack X}_{{\mathbb C_p}}, \mathbb Z_p)\otimes_{\mathbb Z_p} W(\mathbb C_p^\flat)\xra{\sim} \RG_{\Prism}(\mstack X / \Ainf)\otimes_{\Ainf} W(\mathbb C_p^\flat)$ thus will differ from the one in \Cref{cor: prismatic_vs_etale_torsion} by composing with Frobenius on $W(\mathbb C_p^\flat)$.
\end{rem}

From these finer formulas we deduce that $p$-adic \'etale cohomology of a smooth Hodge-proper stack are finitely generated over $\mbb Z_p$:
\begin{cor}\label{etale_coh_finiteness}
	Let $\mstack X$ be a smooth Hodge-proper Artin stack over $\mathcal O_K$. Then $R\Gamma_{\et}(\widehat{\mstack X}_{{\mathbb C_p}}, \mathbb Z_p)$ (resp. $R\Gamma_{\et}(\widehat{\mstack X}_{{\mathbb C_p}}, \mathbb Z/p^n)$) is a bounded below coherent complex of $\mathbb Z_p$-(resp. $\mbb Z/p^n$-)modules.
	
	\begin{proof}
		By the \'etale comparison and \Cref{cor: prismatic cohomology are bounded below coherent} the complex $R\Gamma_\et(\widehat{\mstack X}_{{\mathbb C_p}}, \mathbb Z_p)\otimes W(\mathbb C_p^\flat)$ is bounded below coherent. Since the natural map $\mathbb Z_p \to W(\mathbb C_p^\flat)$ is faithfully flat, we conclude by \Cref{ncoh_is_fpqc_local}. Reducing modulo $p^n$ we get the statement for $R\Gamma_{\et}(\widehat{\mstack X}_{{\mathbb C_p}}, \mathbb Z/p^n)$ (e.g. using \Cref{lem: base change preserves coh^+ sometimes}).
	\end{proof}
\end{cor}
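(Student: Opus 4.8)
The plan is to combine the \'etale comparison established in \Cref{cor: adic_etale_comparison2} with the fact that bounded below coherence descends along faithfully flat maps (\Cref{ncoh_is_fpqc_local}). More precisely, I would first invoke \Cref{cor: prismatic cohomology are bounded below coherent}, which applies since $\mstack X$ is Hodge-proper over $\mc O_K\simeq \mf S/E(u)$ and $\mf S$ is Noetherian, to conclude that $R\Gamma_\Prism(\mstack X/\mf S)\in \Coh^+(\mf S)$. Since the map $\mf S\ra W(\mbb C_p^\flat)$ has finite Tor-amplitude (in fact $W(\mbb C_p^\flat)$ is even $\mf S$-flat, or one uses that $\mf S$ is regular so coherent modules are perfect), \Cref{lem: base change preserves coh^+ sometimes} gives that $R\Gamma_\Prism(\mstack X/\mf S)\otimes_{\mf S} W(\mbb C_p^\flat)\in \Coh^+(W(\mbb C_p^\flat))$.

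Next I would use the second equivalence of \Cref{cor: adic_etale_comparison2}, namely
$$R\Gamma_{\et}(\widehat{\mstack X}_{{\mathbb C_p}}, \mathbb Z_p)\otimes_{\mathbb Z_p} W(\mathbb C_p^\flat) \xrightarrow{\ \sim\ }  R\Gamma_\Prism(\mstack X / \mathfrak S)\otimes_{\mathfrak S} W(\mathbb C_p^\flat),$$
to transport the coherence statement to the left-hand side: thus $R\Gamma_{\et}(\widehat{\mstack X}_{{\mathbb C_p}}, \mathbb Z_p)\otimes_{\mathbb Z_p} W(\mathbb C_p^\flat)\in \Coh^+(W(\mbb C_p^\flat))$. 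Now $\mathbb Z_p\ra W(\mathbb C_p^\flat)$ is faithfully flat (it is a flat local map of local rings, or one simply notes $W(\mathbb C_p^\flat)/p = \mathbb C_p^\flat\ne 0$ is flat over $\mathbb F_p$), so \Cref{ncoh_is_fpqc_local} applies and lets us descend: $R\Gamma_{\et}(\widehat{\mstack X}_{{\mathbb C_p}}, \mathbb Z_p)\in \Coh^+(\mathbb Z_p)$, i.e. it is bounded below with finitely generated cohomology groups.

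Finally, for the $\mathbb Z/p^n$-statement I would reduce modulo $p^n$. Concretely, $R\Gamma_{\et}(\widehat{\mstack X}_{{\mathbb C_p}}, \mathbb Z/p^n)\simeq R\Gamma_{\et}(\widehat{\mstack X}_{{\mathbb C_p}}, \mathbb Z_p)\otimes_{\mathbb Z_p}\mathbb Z/p^n$ (either by the defining formula $R\Gamma_\et(\widehat{\mstack X}_{\mathbb C_p},\mathbb Z_p)=\prolim R\Gamma_\et(\widehat{\mstack X}_{\mathbb C_p},\mathbb Z/p^n)$ together with the coherence-and-hence-completeness just established, which forces the Mittag-Leffler/derived-limit behaviour to be well-behaved, or directly from the mod-$p^n$ \'etale comparison \Cref{etale_comparison}); then $\mathbb Z_p\ra \mathbb Z/p^n$ has finite Tor-amplitude, so \Cref{lem: base change preserves coh^+ sometimes} gives $R\Gamma_{\et}(\widehat{\mstack X}_{{\mathbb C_p}}, \mathbb Z/p^n)\in \Coh^+(\mathbb Z/p^n)$.

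The only genuinely delicate point is the very first reduction, the identification of $R\Gamma_{\et}(\widehat{\mstack X}_{{\mathbb C_p}}, \mathbb Z/p^n)$ with the mod-$p^n$ reduction of $R\Gamma_{\et}(\widehat{\mstack X}_{{\mathbb C_p}}, \mathbb Z_p)$, which a priori involves commuting a derived limit with $-\otimes_{\mathbb Z_p}\mathbb Z/p^n$; but since we already know $R\Gamma_{\et}(\widehat{\mstack X}_{{\mathbb C_p}}, \mathbb Z_p)\in\Coh^+(\mathbb Z_p)$ is derived $p$-complete (\Cref{lem: bounded below coherent are complete}) with finitely generated cohomology, this commutation is routine — so in practice the whole argument is a short chain of appeals to already-proven statements, and there is no real obstacle. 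One could alternatively bypass the subtlety altogether by applying \Cref{etale_comparison} directly to $\mstack X_{\mc O_{\mbb C_p}}$ over the $\Ainf$-prism and arguing mod $p^n$ from the start.
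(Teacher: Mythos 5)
Your proof is correct and follows exactly the same route as the paper's: base change the coherence of $R\Gamma_\Prism(\mstack X/\mf S)$ (from \Cref{cor: prismatic cohomology are bounded below coherent}) along $\mf S\to W(\mathbb C_p^\flat)$, transport through the \'etale comparison, descend along the faithfully flat $\mathbb Z_p\to W(\mathbb C_p^\flat)$ via \Cref{ncoh_is_fpqc_local}, and reduce mod $p^n$. You also correctly flag and resolve the only mildly delicate point (identifying the mod-$p^n$ cohomology with the reduction of the derived limit), which the paper glosses over with "reducing modulo $p^n$".
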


\begin{rem}\label{rem: adic cohomology of Hodge-proper stack are finitely generated}
	It follows that if $\mstack X$ is smooth Hodge-proper, each individual cohomology $H^i_\et(\widehat{\mstack X}_{{\mathbb C_p}}, \mathbb Z_p)$ (resp. $H^i_\et(\widehat{\mstack X}_{{\mathbb C_p}}, \mathbb Z/p^n)$) is a finitely generated $\mbb Z_p$-(resp. $\mbb Z/p^n$-)module. From this we also get that $$H^i_\et(\widehat{\mstack X}_{{\mathbb C_p}}, \mathbb Z_p)\xra{\sim} \lim_n H^i_\et(\widehat{\mstack X}_{{\mathbb C_p}}, \mathbb Z/p^n);$$ indeed  $R\Gamma_\et(\widehat{\mstack X}_{{\mathbb C_p}}, \mathbb Z_p)\xra{\sim}\lim\limits_{\leftarrow n} R\Gamma_\et(\widehat{\mstack X}_{{\mathbb C_p}}, \mathbb Z/p^n)$ by definition, but since all groups $\{H^i_\et(\widehat{\mstack X}_{{\mathbb C_p}}, \mathbb Z/p^n)\}_n$ are finite, they form a Mittag-Leffler system and thus the derived limit on all the cohomology agrees with the classical one.
\end{rem} 
\begin{rem}
	If $(\ell,p)=1$, one can also show that $R\Gamma_\et(\widehat{\mstack X}_{{\mathbb C_p}}, \mathbb F_\ell)\in \Coh^+(\mathbb F_\ell)$ for any $\mc O_C$-stack $\mstack X$ of finite type. Indeed, using smooth hyperdescent it is enough to show this for the case of a finite type affine scheme over $\mc O_C$. By \Cref{prop: F-l-acyclicity in terms of the neraby cycles} we have $R\Gamma_\et(\widehat{\mstack X}_{{\mathbb C_p}}, \mathbb F_\ell)\simeq R\Gamma_\et(\widehat{\mstack X}_{\ol{\mbb F}_p}, \Psi_{\mstack X}\mathbb F_\ell)$, the complex $\Psi_{\mstack X}\mathbb F_\ell\in \Shv_{\et}(\mstack X_{\ol{\mbb F}_p},\mbb F_\ell)$ is constructible by \cite[Th. finitude]{SGA4_12} and thus its cohomology is finite-dimensional (e.g. by \cite[Chapitre 7, Th\'eor\`eme 1.1]{SGA4_12}).
\end{rem}

\subsubsection{Fontaine's crystalline conjecture for stacks and Breuil-Kisin modules}\label{sssection:C_cris_for_stacks}
In this section we deduce an analogue of Fontaine's crystalline conjecture for stacks: namely we prove that the rational \'etale cohomology of the Raynaud generic fiber of a smooth Hodge-proper stack $\mstack X$ over $\mathcal O_K$ is a crystalline representation. We also show that the prismatic cohomology in this case provides a geometric description of the Breuil-Kisin functor applied to $H^n_\et(\widehat{\mstack X}_{\mbb C_p}, \mbb Z_p)$.

Let $K$ be a finite extension of $\mbb Q_p$ and $k$ be the residue field of $\mc O_K$. Let $G_K$ be the absolute Galois group of $K$. Let, as before, $\mf S\coloneqq W(k)[[u]]$ be the Kisin ring with a $\mbb Z_p$-linear endomorphism $\phi_{\mf S}\curvearrowright\mf S$, sending $u$ to $u^p$. Let $\pi\in \mc O_K$ be a uniformizer and $E(u)\in \mf S$ be the minimal polynomial of $\pi$ over $K_0\coloneqq W(k)[\frac{1}{p}]\subset K$. Recall the following definition:
\begin{defn}\label{defn: Breuil-Kisin module}
	\emdef{A Breuil-Kisin module $(M,\phi_M)$} is a data of a finitely generated $\mathfrak S$-module $M$ together with an isomorphism
	$$\phi_M\colon M^{(1)}[\tfrac{1}{E}] \xymatrix{\ar[r]^\sim&} M[\tfrac{1}{E}],$$
	where $M^{(1)} \coloneqq \phi^*_{\mf S} M$ denotes the Frobenius twist of $M$. The category of Breuil-Kisin modules will be denoted by $\Mod_{\mathfrak S}^\phi$.
\end{defn}

\begin{ex}\label{ex: prismatic cohomology of Hodge-proper stack is a BK-module}
	Let $\mstack X$ be a smooth Hodge-proper $\mc O_K$-stack. Then $H^i_\Prism(\mstack X/\mf S)$ has a natural structure of a Breuil-Kisin module. Indeed, by \Cref{cor: prismatic cohomology are bounded below coherent} each $H^i_\Prism(\mstack X/\mf S)$ is a finitely-generated $\mf S$-module and the Frobenius map induces an isomorphism	(see \Cref{ex: Frobeinus is iso Breuil-Kisin})    
	$$
	\phi_{\Prism}[\tfrac{1}{E}]\colon H^i_\Prism(\mstack X/\mf S)\fr[\tfrac{1}{E}] \xymatrix{\ar[r]^(0.5){\sim}&} H^i_\Prism(\mstack X/\mf S)[\tfrac{1}{E}].
	$$
	In particular this applies when $\mstack X$ is a smooth proper scheme.
\end{ex}

\begin{ex}\label{ex:stuff on BK-modules}
	The structure of a Breuil-Kisin module imposes some restrictions on the underlying structure of an $\mf S$-module. Namely, for any $(M,\phi_M)$ there is a canonical exact sequence of Breuil-Kisin modules 
	$$
	\xymatrix{0\ar[r]& (M_{\mr{tor}}, \phi_{M_{\mr{tor}}})\ar[r]& (M,\phi_M)\ar[r]& (M_{\mr{free}}, \phi_{M_{\mr{free}}}) \ar[r]& (\ol M, \phi_{\ol M}) \ar[r]& 0,}
	$$
	where $M_{\mr{tor}}$ is killed by a power of $p$, $M_{{\mr{free}} }$ is free over $\mf S$, $\ol M$ is killed by a power of $(p,E)$ (see e.g. \cite[Proposition 4.3]{BMS1}).
	From this it follows in particular that
	\begin{itemize}
		\item $M[\frac{1}{p}]$ is a free $\mf S [\frac{1}{p}]$-module.

		\item If $M$ is a torsion $\mf S$-module, it is killed by some power of $p$.
	\end{itemize}
\end{ex}

\medskip

From the discussion in \Cref{ex:stuff on BK-modules} we immediately get the following result comparing the dimensions of different (rationalized) $p$-adic cohomology theories:
\begin{prop}\label{prop: rationally all cohomology has the same dimension}
	Let $\mstack X$ be a smooth Hodge-proper $\mc O_K$-stack. Let $K_0\coloneqq W(k)[\frac{1}{p}]$ and let $H^i_\et(\widehat {\mstack X}_{\mbb C_p},\mbb Q_p)\coloneqq H^i_\et(\widehat {\mstack X}_{\mbb C_p},\mbb Z_p)[\tfrac{1}{p}]$. Then 
	$$
	\dim_{K} H^i_\dR(\mstack X_K)=\dim_{K_0}\left(H^i_\crys(\mstack X_k/W(k))[\tfrac{1}{p}]\right)= \dim_{\mbb Q_p}H^i_\et(\widehat {\mstack X}_{\mbb C_p},\mbb Q_p).
	$$

\begin{proof}
	First by \Cref{cor: de Rham coh of Hodge-proper are complete} one has $H^i_\dR(\mstack X_K)\simeq H^i_{L\dR^\wedge_p}(\mstack X/\mc O_K)[\frac{1}{p}]$. Also, both $\mf S/u$ and $\mf S/E(u)$ are perfect and thus the completed tensor product of $\RG_{\Prism^{(1)}}(\mstack X/\mf S)$ with either coincides with the non-completed one. By \Cref{ex:stuff on BK-modules} the cohomology of $\RG_{\Prism}(\mstack X/\mf S)[\frac{1}{p}]$ and, consequently, $\RG_{\Prism^{(1)}}(\mstack X/\mf S)[\frac{1}{p}]\simeq \RG_{\Prism}(\mstack X/\mf S)^{(1)}[\frac{1}{p}]$ are free $\mf S[\frac{1}{p}]$-modules. By de Rham, crystalline and \'etale comparisons (plus the universal coefficients formula) $H^i_{L\dR^\wedge_p}(\mstack X/\mc O_K)[\frac{1}{p}]$, $H^i_\crys(\mstack X_k/W(k))[\tfrac{1}{p}]$ and $H^i_\et(\widehat {\mstack X}_{\mbb C_p},\mbb Q_p)$ all have the same dimension given by the rank of $H^i_{\Prism^{(1)}}(\mstack X/\mf S)[\frac{1}{p}]$.
\end{proof}
\end{prop}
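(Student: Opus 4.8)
The plan is to deduce everything from the comparison theorems established earlier in the paper, together with the structural result for Breuil--Kisin modules recalled in \Cref{ex:stuff on BK-modules}. The strategy is: express all three dimensions as ranks of suitable base changes of the (twisted) prismatic cohomology $\RG_{\Prism^{(1)}}(\mstack X/\mf S)$, and then use that these base changes are taken along maps of fields $K_0$, $K$, $\mbb Q_p$ after rationalizing, so that each individual cohomology group is free over the corresponding field and its dimension equals the rank of the relevant cohomology group of $\RG_{\Prism^{(1)}}(\mstack X/\mf S)[\tfrac 1p]$.

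First I would record the key freeness input. By \Cref{cor: prismatic cohomology are bounded below coherent}, since $\mstack X$ is Hodge-proper and $\mf S$ is Noetherian, $\RG_\Prism(\mstack X/\mf S)\in\Coh^+(\mf S)$, so each $H^i_\Prism(\mstack X/\mf S)$ is a finitely generated $\mf S$-module carrying a Breuil--Kisin structure (\Cref{ex: prismatic cohomology of Hodge-proper stack is a BK-module}). By \Cref{ex:stuff on BK-modules}, $H^i_\Prism(\mstack X/\mf S)[\tfrac 1p]$ is a free $\mf S[\tfrac 1p]$-module of finite rank, say $r_i$. Since $\phi_{\mf S*}\mf S$ is free of finite rank over $\mf S$ (\Cref{ex: Frobeinus is iso Breuil-Kisin}), $\phi_{\mf S}^*$ is exact and commutes with rationalization, so $H^i_{\Prism^{(1)}}(\mstack X/\mf S)[\tfrac 1p]\simeq H^i_\Prism(\mstack X/\mf S)^{(1)}[\tfrac 1p]$ is also free of rank $r_i$ over $\mf S[\tfrac 1p]$. (Here I use \Cref{prop: twisted_cohomology_is_a_twist}, valid since $\phi_{\mf S*}\mf S$ has finite Tor-amplitude, to identify the two versions of the twisted cohomology.)

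Next I would run the three base changes. The quotient maps $\mf S\surj\mf S/u=W(k)$ and $\mf S\surj\mf S/E(u)=\mc O_K$ are each killed by a single element cutting out a Cartier divisor, hence of finite Tor-amplitude, and $\mf S/u$, $\mf S/E(u)$ are perfect $\mf S$-modules, so completed and ordinary tensor products agree. By the crystalline comparison (\Cref{prop: cristalline comparison} together with \Cref{cor: Frobenius is a twist if tor-dimension is finite}) and the de Rham comparison (\Cref{cor: de Rham comparison for untwisted coh}, with the Hodge-proper refinement \Cref{cor: de Rham coh of Hodge-proper are complete} identifying $\RG_{L\dR^\wedge_p}(\mstack X/\mc O_K)$ with $\RG_\dR(\mstack X/\mc O_K)$), one has
$$
\RG_{\Prism^{(1)}}(\mstack X/\mf S)\otimes_{\mf S}W(k)\simeq\RG_\crys(\mstack X_k/W(k)),\qquad
\RG_{\Prism^{(1)}}(\mstack X/\mf S)\otimes_{\mf S}\mc O_K\simeq\RG_\dR(\mstack X/\mc O_K).
$$
Rationalizing and applying the universal coefficients spectral sequence (which degenerates because each $H^i_{\Prism^{(1)}}(\mstack X/\mf S)[\tfrac 1p]$ is free, so all $\Tor$ terms vanish), I get $\dim_{K_0}H^i_\crys(\mstack X_k/W(k))[\tfrac 1p]=r_i=\dim_K H^i_\dR(\mstack X_K)$, using $\mf S[\tfrac 1p]\otimes_{\mf S}W(k)=W(k)[\tfrac 1p]=K_0$ and $\mf S[\tfrac 1p]\otimes_{\mf S}\mc O_K=K$. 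For the étale side, \Cref{cor: adic_etale_comparison2} (in the form of \Cref{rem: comparison for the twisted prismatic cohomology}) gives $\RG_\et(\widehat{\mstack X}_{\mbb C_p},\mbb Z_p)\otimes_{\mbb Z_p}W(\mbb C_p^\flat)\simeq\RG_{\Prism^{(1)}}(\mstack X/\mf S)\otimes_{\mf S}W(\mbb C_p^\flat)$; tensoring with $\mbb Q_p$ and invoking that $W(\mbb C_p^\flat)[\tfrac 1p]$ is faithfully flat over $\mbb Q_p$, the left side has $W(\mbb C_p^\flat)[\tfrac 1p]$-rank $\dim_{\mbb Q_p}H^i_\et(\widehat{\mstack X}_{\mbb C_p},\mbb Q_p)$ while the right side has rank $r_i$ (again by freeness of $H^i_{\Prism^{(1)}}(\mstack X/\mf S)[\tfrac 1p]$ and base change along $\mf S[\tfrac 1p]\to W(\mbb C_p^\flat)[\tfrac 1p]$). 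Comparing gives the last equality.

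There is no serious obstacle here; the proof is essentially bookkeeping, and the only points requiring a little care are (i) checking that the hypotheses of the comparison theorems apply — Noetherianity of $\mf S$, finite Tor-amplitude and perfectness of the relevant base changes, and Hodge-properness to upgrade $\RG_{L\dR^\wedge_p}$ to $\RG_\dR$ — and (ii) the degeneration of the universal coefficients spectral sequence, which is immediate from the freeness supplied by \Cref{ex:stuff on BK-modules}. The one subtlety worth flagging explicitly is that the Breuil--Kisin structural result only gives freeness after inverting $p$, which is exactly why the statement is about rational cohomology; integrally the three groups genuinely differ.
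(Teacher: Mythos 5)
Your proof is correct and follows the paper's own argument: establish freeness of $H^i_{\Prism^{(1)}}(\mstack X/\mf S)[\tfrac1p]$ via the Breuil--Kisin structural result, then read off all three dimensions as the common rank using the de Rham, crystalline, and \'etale comparisons together with universal coefficients. The extra care you take in checking the hypotheses (perfectness of $\mf S/u$ and $\mf S/E(u)$, finite Tor-amplitude, Hodge-properness to pass from $\RG_{L\dR^\wedge_p}$ to $\RG_\dR$) simply spells out what the paper's proof invokes implicitly.
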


\begin{cor}\label{cor: dimensions of \'etale cohomology is the same}
	Let $\mstack X$ be a smooth Hodge-proper $\mc O_K$-stack. Then 
	$$
	\dim H^i_\et(\widehat {\mstack X}_{\mbb C_p},\mbb Q_p) = \dim H^i_\et({\mstack X}_{\mbb C_p},\mbb Q_p).
	$$

\begin{proof}
	Choose some isomorphism $\mbb C_p\simeq \mbb C$; it induces embeddings $K\inj \mbb C$ and $\mbb Q_p\inj \mbb C$. By base change $\dim_{K} H^i_\dR(\mstack X_K/K)=\dim_{\mbb C} H^i_\dR(\mstack X_{\mbb C}/\mathbb C)$. Recall that by Artin's comparison (\Cref{Artins_comparison_stacks}) $H^i_\et({\mstack X}_{\mbb C_p},\mbb Q_p)\simeq H^i_\sing(\Pi_{\infty}{\mstack X}(\mbb C),\mbb Q_p)$. By base change of coefficients $\dim_{\mbb Q_p} H^i_\sing(\Pi_{\infty}{\mstack X}(\mbb C),\mbb Q_p)= \dim_{\mbb C} H^i_\sing(\Pi_{\infty}{\mstack X}(\mbb C),\mbb C)$. 
	
	Both singular and de Rham cohomology satisfy smooth descent. For a smooth scheme $X$ over $\mbb C$ one has a natural equivalence $\RG_\sing({X}(\mbb C),\mbb C)\xra{\sim} \RG_\dR(X/\mathbb C)$, which induces an equivalence  $\RG_\sing(\Pi_\infty{\mstack X}(\mbb C),\mbb C)\xra{\sim}\RG_\dR(\mstack X_{\mbb C})$ in our setting. Altogether this shows that $\dim_{K} H^i_\dR(\mstack X_K)= \dim_{\mbb Q_p} H^i_\et({\mstack X}_{\mbb C_p},\mbb Q_p)$ and then the statement follows from \Cref{prop: rationally all cohomology has the same dimension}.
\end{proof}
\end{cor}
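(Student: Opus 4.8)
The statement to prove is Corollary~\ref{cor: dimensions of \'etale cohomology is the same}: for a smooth Hodge-proper $\mc O_K$-stack $\mstack X$ one has $\dim_{\mbb Q_p} H^i_\et(\widehat{\mstack X}_{\mbb C_p},\mbb Q_p) = \dim_{\mbb Q_p} H^i_\et({\mstack X}_{\mbb C_p},\mbb Q_p)$. The plan is to reduce both quantities to the dimension of de Rham cohomology over $\mbb C$, exploiting the fact that the already-established Proposition~\ref{prop: rationally all cohomology has the same dimension} handles the Raynaud side by comparison with $H^i_\dR(\mstack X_K/K)$, so all that remains is to connect the \emph{algebraic} generic fiber's $p$-adic \'etale cohomology to the same de Rham dimension. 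The bridge is the classical chain: algebraic $\mbb Q_p$-\'etale cohomology $\leftrightarrow$ singular $\mbb Q_p$-cohomology of the complex-analytic homotopy type (Artin comparison) $\leftrightarrow$ singular $\mbb C$-cohomology $\leftrightarrow$ algebraic de Rham cohomology over $\mbb C$ $\leftrightarrow$ (by flat base change) de Rham cohomology over $K$.

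Concretely, first I would fix an abstract isomorphism $\iota\colon \mbb C_p \xrightarrow{\sim} \mbb C$, which exists since both are algebraically closed of characteristic zero and of the same cardinality; this induces compatible field embeddings $K \hookrightarrow \mbb C$ and $\mbb Q_p \hookrightarrow \mbb C$. By the de Rham base change Proposition~\ref{Hodge_and_deRham_basechange} (applied to the flat extension $K \to \mbb C$), $\dim_K H^i_\dR(\mstack X_K/K) = \dim_{\mbb C} H^i_\dR(\mstack X_{\mbb C}/\mbb C)$. Next, by Artin's comparison for stacks, Proposition~\ref{Artins_comparison_stacks}, together with the base-change invariance of \'etale cohomology with torsion (hence $\mbb Z_p$ and after inverting $p$, $\mbb Q_p$) coefficients under extension of algebraically closed fields, one gets $H^i_\et(\mstack X_{\mbb C_p},\mbb Q_p) \simeq H^i_\et(\mstack X_{\mbb C},\mbb Q_p) \simeq H^i_\sing(\Pi_\infty \mstack X(\mbb C),\mbb Q_p)$; then $\dim_{\mbb Q_p} H^i_\sing(\Pi_\infty \mstack X(\mbb C),\mbb Q_p) = \dim_{\mbb C} H^i_\sing(\Pi_\infty \mstack X(\mbb C),\mbb C)$ by faithful flatness of $\mbb Q_p \to \mbb C$ (i.e. the universal coefficient theorem over a field).

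It remains to identify $H^i_\sing(\Pi_\infty \mstack X(\mbb C),\mbb C)$ with $H^i_\dR(\mstack X_{\mbb C}/\mbb C)$. The key point here is that both functors $\RG_\sing(\Pi_\infty(-)(\mbb C),\mbb C)$ and $\RG_\dR(-/\mbb C)$ on smooth Artin $\mbb C$-stacks satisfy smooth (hyper)descent: for singular cohomology this is Lemma~\ref{etale_descent_for_underlying_homotopy_type} together with smooth descent for sheaves, and for de Rham cohomology it follows from its definition as a right Kan extension, Definition~\ref{def: de Rham}. For a smooth affine $\mbb C$-scheme the classical Grothendieck comparison theorem gives a natural equivalence $\RG_\sing(X(\mbb C),\mbb C) \xrightarrow{\sim} \RG_\dR(X/\mbb C)$; taking a smooth affine hypercover of $\mstack X_{\mbb C}$ (via \cite[Theorem 4.7]{Pridham_ArtinHypercovers}) and applying descent termwise, one obtains $\RG_\sing(\Pi_\infty \mstack X(\mbb C),\mbb C) \simeq \RG_\dR(\mstack X_{\mbb C}/\mbb C)$, hence equality of dimensions in each degree. (For this to work I should check boundedness/convergence of the hypercohomology spectral sequence, but Hodge-properness and smoothness ensure the relevant complexes lie in $\Coh^+$ with the Hodge filtration, so this is routine.) Stringing together the four equalities yields $\dim_{\mbb Q_p} H^i_\et(\mstack X_{\mbb C_p},\mbb Q_p) = \dim_K H^i_\dR(\mstack X_K/K)$, and then Proposition~\ref{prop: rationally all cohomology has the same dimension} gives $\dim_K H^i_\dR(\mstack X_K/K) = \dim_{\mbb Q_p} H^i_\et(\widehat{\mstack X}_{\mbb C_p},\mbb Q_p)$, completing the proof.

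The only genuinely nontrivial ingredient is the smooth-descent identification of singular and de Rham cohomology for stacks; the main obstacle is bookkeeping the compatibility of the classical affine comparison isomorphism with the hypercover structure maps (so that the termwise equivalences assemble into an equivalence of totalizations), and ensuring the left $t$-exactness facts from \Cref{left_exact_preserve_totalizations_of_uniformly_bounded_below} apply to justify commuting the relevant functors past the totalization. Everything else is a concatenation of base-change statements for coefficient fields, which are standard once the invariance of \'etale cohomology under algebraically closed base field extension is invoked.
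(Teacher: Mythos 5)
Your proposal follows exactly the same route as the paper's proof: fix an abstract isomorphism $\mbb C_p \simeq \mbb C$, reduce both sides to $\dim_{\mbb C} H^i_\dR(\mstack X_{\mbb C}/\mbb C)$ via (i) de Rham base change $K\to\mbb C$, (ii) Artin comparison plus invariance of $\mbb Q_p$-\'etale cohomology under extension of algebraically closed fields, (iii) coefficient base change $\mbb Q_p\to\mbb C$, and (iv) the singular--de Rham comparison on smooth stacks by descent from the affine case, then conclude by \Cref{prop: rationally all cohomology has the same dimension}. One minor remark: the worry you flag about convergence of the descent spectral sequence is unnecessary --- both $\RG_\dR(-/\mbb C)$ (\Cref{def: de Rham}) and $\RG_\sing(\Pi_\infty(-)(\mbb C),\mbb C)$ are defined as right Kan extensions from smooth affines, so the pointwise Grothendieck comparison isomorphism on smooth affine $\mbb C$-schemes Kan-extends formally to an equivalence on all smooth stacks, with no boundedness input required.
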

\begin{conj}\label{conj:Hodge proper stacks}
	The corollary above poses a natural question: is it true that for any Hodge-proper stack $\mstack X$ the rationalized map $$\lambda_{\mstack X}^{-1}[\tfrac{1}{p}]\colon H^i_\et({\mstack X}_{\mbb C_p},\mbb Q_p) \xymatrix{\ar[r]&} H^i_\et(\widehat{\mstack X}_{\mbb C_p},\mbb Q_p)$$ is an equivalence? We do not present a strong evidence for the validity of this here, but we believe that it should indeed be the case.	
\end{conj}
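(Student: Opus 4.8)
The plan is to combine the dimension equality already at hand with an $\mathrm{arc}_p$-theoretic reduction to a base-change statement, and then to propagate the (already established) quotient-stack case as far as the available structure theory of Hodge-proper stacks allows. First, by \Cref{cor: dimensions of \'etale cohomology is the same} the two sides of $\lambda_{\mstack X}^{-1}[\tfrac1p]$ have the same finite dimension in each degree, so it suffices to prove that $\lambda_{\mstack X}^{-1}[\tfrac1p]$ is injective (equivalently, surjective) in every degree; and since $\phi_{\mstack X}^{-1}$ is always an equivalence (\Cref{commutation with psi}), this is the same as showing that the specialisation map $\psi_{\mstack X}^{-1}\colon\RG_\et(\mstack X^\an_{\mbb C_p},\mbb Q_p)\to\RG_\et(\widehat{\mstack X}_{\mbb C_p},\mbb Q_p)$ is a rational equivalence (geometrically, $\widehat{\mstack X}_{\mbb C_p}$ is the tube of the special fibre inside $\mstack X^\an_{\mbb C_p}$, so the content is that the ``boundary at $p$'' of a Hodge-proper stack carries no rational $p$-adic cohomology). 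Applying the fibre square of \Cref{rem: interpretation of Upsilon using arc-descent} to $\mstack X_{\mc O_{\mbb C_p}}$ with $\mbb Z/p^n$-coefficients, passing to the limit over $n$ (exact on cohomology here, by the finiteness in \Cref{etale_coh_finiteness} and \Cref{rem: adic cohomology of Hodge-proper stack are finitely generated}) and inverting $p$, one obtains
\[
\fib\bigl(\lambda_{\mstack X}^{-1}[\tfrac1p]\bigr)\;\simeq\;\fib\bigl(\iota_p^*\bigr),\qquad \iota_p^*\colon\RG_\et(\mstack X_{\mc O_{\mbb C_p}},\mbb Q_p)\longrightarrow\RG_\et(\mstack X_{\mc O_{\mbb C_p}/p},\mbb Q_p),
\]
so that the conjecture becomes a $p$-adic specialisation statement relative to $\Spec\mc O_{\mbb C_p}$: the closed-fibre restriction should be a rational equivalence. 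For $\mstack X$ proper over $\mc O_{\mbb C_p}$ this is proper base change together with topological invariance of the \'etale site of $\mstack X_{\mc O_{\mbb C_p}/p}$; the problem is to replace properness by Hodge-properness.

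At this point I would proceed in two stages. The first is to settle those Hodge-proper stacks for which the integral statement is already available or accessible: the quotients $[X/G]$ with $X$ smooth proper and $G$ reductive are handled by \Cref{thm:main_result} (which gives $\mbb F_p$-local acyclicity, hence the rational statement), and by the same d\'evissage the conical-resolution quotients of \Cref{ex: conical resolutions}. For a Hodge-proper stack carrying a finite $\Theta$-stratification (\Cref{ex: Theta-stratified}) I would then try to propagate the rational comparison along the associated semiorthogonal decompositions, reducing to the centres of the strata --- which, up to conical affine bundles, are quotients of smooth proper schemes by Levi subgroups --- and so to the previous case; the point to verify is that the resulting weight-type decomposition is respected both algebraically and after passing to the Raynaud fibre, which should follow from the degeneration results \Cref{thm: hodge-tate decomposition} and \Cref{thm: HdR degeneration for stacks}. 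For a general Hodge-proper stack these reductions will not suffice, and the plan is instead to prove that $\psi_{\mstack X}^{-1}$ is already an isomorphism on $p$-adic de Rham (equivalently $B_{\dR}^+$-)cohomology, where Hodge-properness controls the input directly through the Hodge cohomology of $\mstack X_K$; this requires establishing a $p$-adic de Rham comparison for the non-quasicompact analytification $\mstack X^\an_{\mbb C_p}$ (of the type known for Stein and partially proper rigid spaces) and for $\widehat{\mstack X}_{\mbb C_p}$, compatibly with $\psi_{\mstack X}$ and with the prismatic comparisons of this paper, after which the rational \'etale statement would follow through $B_{\crys}$ and $B_{\dR}$.

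The main obstacle is the non-locality of Hodge-properness --- the same phenomenon flagged for \Cref{etale_conjecture}. A smooth hypercover of $\mstack X$ by affine schemes is useless here: even $\mbb A^1_{\mc O_{\mbb C_p}}$ fails the rational comparison, its Raynaud fibre being the closed disc, whose $p$-adic \'etale cohomology is not that of the algebraic line. So no d\'evissage along arbitrary smooth covers is possible, and the argument must feed in a genuinely non-local ingredient --- a $\Theta$-stratification, or the $p$-adic de Rham comparison for rigid-analytic stacks sketched above. Developing that comparison for analytifications of Hodge-proper stacks, and checking its compatibility with the specialisation map $\psi_{\mstack X}$ and with $\RG_{\Prism}$, is where I expect the real difficulty to lie.
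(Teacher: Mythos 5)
The statement you are addressing is \Cref{conj:Hodge proper stacks}, and the paper does not prove it --- it is explicitly left as an open conjecture, with the only offered evidence being the dimension equality \Cref{cor: dimensions of \'etale cohomology is the same} and a promise to attack it in a sequel. So there is no paper proof to compare against; the question is whether your proposal actually establishes the conjecture, and it does not --- you say so yourself at the end.

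That said, your preliminary reductions are sound and worth singling out. Combining the dimension equality with the finiteness results of \Cref{etale_coh_finiteness} and \Cref{rem: adic cohomology of Hodge-proper stack are finitely generated} to reduce to a one-sided (injectivity or surjectivity) claim is correct, and is already more than the paper records. The arc-descent reformulation of the fibre of $\Upsilon_{\mstack X}$ as the fibre of the restriction to the special fibre $\iota_p^*$ is exactly \Cref{rem: interpretation of Upsilon using arc-descent}, and you correctly note what is needed to pass from torsion coefficients to $\mbb Q_p$. The identification of the real obstruction --- non-locality of Hodge-properness, which makes d\'evissage along smooth affine hypercovers useless --- is the same issue the paper flags for the stronger \Cref{etale_conjecture}, and your example of $\mbb A^1$ is the right cautionary one. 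So the framing is faithful to the difficulty.

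Where the proposal stops being a proof: for stacks not covered by \Cref{thm:main_result}, no argument is actually given. The $\Theta$-stratification route would require showing the semiorthogonal decompositions of $\QCoh^{\perf}$ induce compatible decompositions of both algebraic and adic $p$-adic \'etale cohomology with matching specialisation maps; nothing in the paper supplies this, and it is a genuinely new statement (the degeneration results \Cref{thm: hodge-tate decomposition}, \Cref{thm: HdR degeneration for stacks} live on the adic side only and do not obviously transfer the needed functoriality). The alternative route via a $B_{\dR}^+$-comparison for the analytification $\mstack X^\an_{\mbb C_p}$ is a large open problem --- $\mstack X^\an_{\mbb C_p}$ is neither quasi-compact nor partially proper in general, and the existing rigid-analytic $p$-adic Hodge theory does not cover it. So the proposal should be read as a research plan consistent with the paper's own expectations, not a proof; it neither resolves the conjecture nor contradicts anything in the paper, and the paper itself offers no resolution.
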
	
The notion of Breuil-Kisin modules was originally motivated by the result of Kisin relating them to crystalline $G_K$-representations. To recall it we introduce some extra piece of notation.

\begin{defn}\label{defn:crystlline_and_deRham_reps}
Recall that a continuous $\mbb Q_p$-linear $G_K$-representation $V$ is called \emdef{crystalline} if the $K_0$-vector space $D_\crys(V)\coloneqq (V\otimes_{\mbb Q_p} B_\crys)^{G_K}$ has the same dimension as $V$.
\end{defn}

Here $B_\crys\coloneqq A_\crys[\frac{1}{\mu}]$ and $A_\crys$ is explicitly given by the $p$-adic completion of the (sub)ring $\Ainf[\frac{\xi^n}{n!}]_{n\ge 1}\subset \Ainf[\frac{1}{p}]$. $B_\crys$ is a $\mbb Q_p$-algebra since $\mu^{p-1}\in pA_\crys$. One also shows that the natural Frobenius on $\Ainf$ induces a Frobenius $\phi\colon B_{\crys} \ra B_{\crys}$.

Recall that the choice of a compatible family of $p^n$-th roots of $\pi$ gives an element $\pi^\flat\coloneqq (\pi, \pi^{1/p}, \pi^{1/p^2}, \ldots)\in \mathcal O_{\mbb C_p}^\flat$ which then induces a natural morphism $\mathfrak S \to \Ainf$ by sending $u$ to $[\pi^\flat]$. This induces a map of prisms $(\mf S,(E(u)))\ra (\Ainf, (\xi))$. Let $K_{\infty} \coloneqq K(\pi^{1/p^\infty})\subset \mbb C_p$ be the infinite extension of $K$ obtained by adjoining $\pi^{1/p^n}$ for all $n\ge 1$ and let $G_{K_\infty}$ be the the absolute Galois group of $K_\infty$. By composing $\mathfrak S \to \Ainf$ with the  natural map $A_{\inf} \to W(\mbb C_p^\flat)$ we obtain a homomorphism $\mathfrak S\to W(\mbb C_p^\flat)$ which is  $(\phi, G_{K_\infty})$-equivariant. In the formulation of \Cref{thm: Kisin theorem} below we will also use its Frobenius-twisted version $\mf S\xra{\phi_{\mf S}}\mf S\ra W(\mbb C_p^\flat)$; it is also $(\phi, G_{K_\infty})$-equivariant. Finally, let's denote by $\Rep_{G_K}^{\#,\crys}$ the category of $\mbb Z_p$-lattices $T$ of finite rank with a continuous $G_K$-action with the property that $T\otimes_{\mbb Z_p}\mbb Q_p$ is crystalline.
\begin{thm}[{\cite[Theorem 1.2.1]{Kisin_BKModules}}]
	\label{thm: Kisin theorem}
	There is a fully-faithful tensor (but \emph{not} exact!) functor $\BK\colon \Rep_{G_K}^{\#,\crys} \to \Mod_{\mathfrak S}^\phi$ such that 
	$$\textstyle (\BK(T)/u)[\frac{1}{p}] \simeq D_{\crys}(T\otimes_{\mathbb Z_p}\mathbb Q_p).$$
	Moreover $\BK(T)$ is characterized as the unique free Breuil-Kisin module such that there exists a $(\phi, G_{K_\infty})$-equivariant isomorphism
	$$\BK(T)\otimes_{\mathfrak S,\phi_{\mf S}} W(\mbb C_p^\flat) \simeq T\otimes_{\mathbb Z_p} W(\mbb C_p^\flat).$$
\end{thm}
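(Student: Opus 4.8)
The plan is to follow Kisin's original strategy \cite{Kisin_BKModules}, which proceeds by reducing everything to the theory of $\phi$-modules over the rigid-analytic open unit disk. First I would pass to rational coefficients: given $T\in\Rep_{G_K}^{\#,\crys}$, set $V\coloneqq T\otimes_{\mbb Z_p}\mbb Q_p$ and $D\coloneqq D_\crys(V)$, a weakly admissible filtered $\phi$-module over $K_0$; after a Tate twist we may assume $D$ is effective, i.e. $\mr{Fil}^0 D_K=D_K$. Let $\mc O$ denote the ring of rigid-analytic functions on $\{|u|<1\}$ over $K_0$ with Frobenius $\phi(u)=u^p$, and let $\lambda\coloneqq \prod_{n\ge 0}\phi^n(E(u)/E(0))\in\mc O$. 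One attaches to $D$ a $\phi$-module $\mc M(D)$, finite free over $\mc O$, with $\phi^*\mc M(D)[\tfrac1\lambda]\xra{\sim}\mc M(D)[\tfrac1\lambda]$: its underlying module is obtained from $\mc O\otimes_{K_0}D$ by modifying the lattice along the divisor $E(u)=0$ according to $\mr{Fil}^\bullet D_K$, and weak admissibility of $D$ is precisely what makes this modification extend over the whole disk rather than just a punctured one.

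The heart of the argument is to descend $\mc M(D)$ to a finite free $\phi$-stable $\mf S$-submodule $\BK(T)\coloneqq\mf M\subset\mc M(D)$ with $\mf M[\tfrac1{E}]$ generating $\mc M(D)[\tfrac1{E}]$ and $\coker(\phi^*\mf M\to\mf M)$ killed by a power of $E(u)$. Existence is proved by an approximation procedure exploiting that $\mf S=\mc O\cap W(k)[[u]]$ inside the $p$-adic completion $\mc O_{\mc E}$ of $\mf S[\tfrac1u]$, together with the fact that a $\phi$-stable lattice is determined by its image in $\mc M(D)/u^N\mc M(D)$ for $N$ large; uniqueness follows from a determinant/rigidity argument, since the quotient of two such lattices would be a $\phi$-module over $\mf S$ that is $u$-torsion-free and isogenous to zero away from $E(u)$, hence zero. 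This defines $\BK$ on objects, and functoriality together with the compatibility $\mc M(D_1\otimes D_2)\simeq \mc M(D_1)\otimes\mc M(D_2)$ upgrades it to a tensor functor; its failure to be exact is visible already from the $E(u)$-torsion in the cokernels. Reducing mod $u$ gives $\mf M/u\mf M\otimes_{W(k)}K_0\simeq D$, which is the displayed formula $(\BK(T)/u)[\tfrac1p]\simeq D_\crys(V)$.

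For full faithfulness and the characterization I would base change $\mf M$ along $\mf S\to\mc O_{\mc E}$ to obtain an étale $\phi$-module, which by Fontaine's equivalence corresponds to a $\mbb Z_p$-lattice representation of $G_{K_\infty}$; comparing with $B_\crys$ one identifies this representation with $T|_{G_{K_\infty}}$, and base changing further to $W(\mbb C_p^\flat)$ — over which an étale $\phi$-module, the residue field being separably closed, is trivialized by the Galois representation it classifies — yields the $(\phi,G_{K_\infty})$-equivariant isomorphism $\BK(T)\otimes_{\mf S,\phi_{\mf S}}W(\mbb C_p^\flat)\simeq T\otimes_{\mbb Z_p}W(\mbb C_p^\flat)$. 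Uniqueness of a free Breuil–Kisin module admitting such an isomorphism follows because the isomorphism pins down its étale realization over $\mc O_{\mc E}$, and the descent uniqueness from the previous step then pins down the $\mf S$-lattice. Fully faithfulness of $\BK$ is deduced by combining fully faithfulness of the localization functors $\mf M\mapsto \mf M[\tfrac1{E},\tfrac1p]$ with the key rigidity theorem that the restriction functor $\Rep_{G_K}^{\crys}\to\Rep_{G_{K_\infty}}$ is fully faithful. I expect this rigidity statement, together with the existence half of the $\mf S$-lattice descent, to be the two genuine obstacles — both are the technical core of Kisin's paper, where they are established via the $\mc O$-module incarnation and a careful analysis of the ramification of $K_\infty/K$; the remaining steps are formal manipulations with $\phi$-modules along the tower $\mf S\subset\mc O\subset\mc O_{\mc E}\subset W(\mbb C_p^\flat)$.
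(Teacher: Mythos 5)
You have correctly reconstructed the strategy of Kisin's original proof, which is exactly what the paper invokes here (it does not reprove \cite[Theorem 1.2.1]{Kisin_BKModules}, only cites it in the form popularized by \cite{BMS1}). The route through $\phi$-modules over the rigid open unit disk, descent of $\mathcal M(D)$ to a finite-height $\mathfrak S$-lattice, Fontaine's \'etale $\phi$-module equivalence over $\mathcal O_{\mathcal E}$, and full faithfulness of restriction from crystalline $G_K$-representations to $G_{K_\infty}$-representations is indeed Kisin's argument, and you correctly identify the $\mathfrak S$-lattice existence/uniqueness and the $G_K$/$G_{K_\infty}$-rigidity as its two technical pillars.
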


\begin{rem}
	Compare with the convention used in \cite[Theorem 4.4]{BMS1}, namely there one considers a map $\mf S\ra W(\mbb C_p^\flat)$ sending $u$ to $[\pi^\flat]^p$. This is also related to the discussion in \Cref{rem: comparison for the twisted prismatic cohomology} and the fact that $\RG_\Ainf(X)\simeq \RG_{\Prism^{(1)}}(\mstack X/\Ainf)$ (see \cite[Example 1.9(2)]{BMS2}). In particular for the \'etale comparison in \cite{BMS1} one is forced to consider the twisted prismatic cohomology.
\end{rem}

\begin{ex}\label{ex: twists}
	The Tate twist $\mbb Z_p(1)\subset \mbb Q_p(1)$ is a nice example of a lattice in a crystalline representation. The corresponding Breuil-Kisin module $\mf S\{1\}\coloneqq \BK(\mbb Z_p(1))$ is described in a canonical but somewhat complicated form in \cite[Example 4.2]{BMS1}. Its underlying $\mf S$-module is (non-canonically!) isomorphic to $\mf S$ and $\phi_{\mf S\{1\}}$ sends $x\in \mf S\{1\}$ to $\frac{\alpha}{E(u)}\phi_{\mf S}(x)$ where $\alpha\in \mf S^\times$ is a certain unit depending on the choice of $E(u)$. On the other hand, there is an easy explicit description if we make base change via the composite map $\mf S\xra{\phi_{\mf S}}\mf S\ra \Ainf$ above, then the corresponding Breuil-Kisin-Fargues module (see \cite[Definition 4.22]{BMS1}) $\Ainf\{1\}\coloneqq \mf S\{1\}\otimes_{\mf S, \phi_{\mf S}} \Ainf$ is described as 
	$$
	\Ainf\{1\}\simeq \frac{1}{\mu}(\Ainf\otimes_{\mbb Z_p}\mbb Z_p(1)).
	$$
	This identification is $G_{K_\infty}$-equivariant, but in fact it can also be made $G_K$-equivariant if we use a more canonical description of $\Ainf\{1\}$ without appealing to $\mf S\{1\}$ whatsoever (see \cite[Example 4.24]{BMS1}). In particular this gives a natural $G_K$-equivariant map $\mbb Z_p(1)\ra \Ainf\{1\}$. By multiplicativity we also get an identification  of $\Ainf\{i\}\coloneqq \Ainf\{1\}^{\otimes_\Ainf i}$ with  $\frac{1}{\mu^i}(\Ainf\otimes_{\mbb Z_p}\mbb Z_p(i))$. The $G_K$-equivariant map $\mbb Z_p(i)\ra \Ainf\{i\}$ then exists if $i\ge 0$; also note that its image is $\phi$-invariant.
\end{ex}

By the famous Fontaine's crystalline conjecture (\cite[Conjecture $C_{\mr{cris}}$]{fontaine1982}), proved in {\cite[Theorem 0.2]{Tsuji_Cst} (also \cite[Theorem 1.1]{BMS1}}) for a smooth proper $\mc O_K$-scheme $X$ the \'etale cohomology $H^n_\et(\widehat X_{\mbb C_p},\mbb Q_p)$ is a crystalline $G_K$-representation. Assuming $H^n_\crys(\widehat X_k / W(k))$ is $p$-torsion free, the Breuil-Kisin module corresponding to $H^n_\et(\widehat X_{\mbb C_p},\mbb Z_p)$ can be described explicitly, namely, as shown in \cite{BMS2} by Bhatt, Morrow and Scholze, one has
$$
\BK(H^i_\et(\widehat X_{\mbb C_p},\mbb Z_p))\simeq H^i_\Prism(X/\mf S).
$$ 
Below we extend these results to smooth Hodge-proper stacks. For this we need a few preliminary lemmas.

Note that the maps $\mf S\ra W(\mbb C_p^\flat)$ and $\Ainf \ra W(\mbb C_p^\flat)$ are flat (the proof in \cite[Lemma 4.30]{BMS1} applies to both). Thus, from \Cref{cor: prismatic_vs_etale_torsion} (and \Cref{rem: comparison for the twisted prismatic cohomology}) for any $i$ we get an isomorphism
$$
H^i_{\et}(\widehat{\mstack X}_{{\mathbb C_p}}, \mathbb Z_p)\otimes_{\mathbb Z_p} W(\mathbb C_p^\flat) \xymatrix{\ar[r]^{\sim}&} H^i_{\Prism^{(1)}}(\mstack X_{\mc O_{\mbb C_p}} / \Ainf)\otimes_{\Ainf} W(\mathbb C_p^\flat).$$ Note that we have an embedding $\Ainf[\tfrac{1}{\mu}] \subset  W({\mbb C_p^\flat})$ (since $\mu\equiv \epsilon -1 \mod pW({\mbb C_p^\flat})$ is non-zero and thus invertible in $W({\mbb C_p^\flat})$).

\begin{lem}\label{lem: can descend to A_inf[1/mu]}
	Let $\mstack X$ be a smooth Hodge-proper $\mc O_K$-stack. Then the isomorphism $$H^i_{\et}(\widehat{\mstack X}_{{\mathbb C_p}}, \mathbb Z_p)\otimes_{\mathbb Z_p} W(\mathbb C_p^\flat) \xymatrix{\ar[r]^{\sim}&} H^i_{\Prism^{(1)}}(\mstack X_{\mc O_{\mbb C_p}} / \Ainf)\otimes_{\Ainf} W(\mathbb C_p^\flat)$$ restricts to an isomorphism
	$$H^i_{\et}(\widehat{\mstack X}_{{\mathbb C_p}}, \mathbb Z_p)\otimes_{\mathbb Z_p} \Ainf[\tfrac{1}{\mu}] \xymatrix{\ar[r]^-\sim&} H^i_{\Prism^{(1)}}(\mstack X_{\mc O_{\mbb C_p}} / \Ainf)\otimes_{\Ainf}\Ainf[\tfrac{1}{\mu}].$$
\end{lem}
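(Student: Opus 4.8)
The strategy is to descend the established isomorphism over $W(\mbb C_p^\flat)$ down to the subring $\Ainf[\tfrac{1}{\mu}]\subset W(\mbb C_p^\flat)$. The key algebraic fact to exploit is that both sides of the claimed isomorphism are already defined over $\Ainf[\tfrac{1}{\mu}]$: the left-hand side because $H^i_{\et}(\widehat{\mstack X}_{\mbb C_p},\mbb Z_p)$ is a finitely generated $\mbb Z_p$-module (by \Cref{etale_coh_finiteness} and \Cref{rem: adic cohomology of Hodge-proper stack are finitely generated}), and the right-hand side because $H^i_{\Prism^{(1)}}(\mstack X_{\mc O_{\mbb C_p}}/\Ainf)$ is a finitely generated $\Ainf$-module. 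For the latter, recall that by base change (\Cref{prismatic_basechange}, \Cref{rem: base change for twisted prismatic cohomology}) and \Cref{lem:the tensor product remains complete} we have $\RG_{\Prism^{(1)}}(\mstack X_{\mc O_{\mbb C_p}}/\Ainf)\simeq \RG_{\Prism^{(1)}}(\mstack X/\mf S)\otimes_{\mf S}\Ainf$, and the latter lies in $\Coh^+(\Ainf)$ by \Cref{cor: twisted prismatic cohomology are bounded below coherent} together with \Cref{lem: base change preserves coh^+ sometimes} (the map $\mf S\to\Ainf$ being flat). Hence $H^i_{\Prism^{(1)}}(\mstack X_{\mc O_{\mbb C_p}}/\Ainf)$ is a finitely generated $\Ainf$-module.

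First I would record that the comparison isomorphism of \Cref{cor: prismatic_vs_etale_torsion} (in its twisted form, \Cref{rem: comparison for the twisted prismatic cohomology}) is obtained by base change along $\Ainf\to W(\mbb C_p^\flat)$ from a natural map already defined at the level of $\Ainf$: indeed the $\phi_\Prism$-fixed-points formula produces $H^i_{\et}(\widehat{\mstack X}_{\mbb C_p},\mbb Z_p)$ together with a $\mbb Z_p$-linear map to $H^i_{\Prism^{(1)}}(\mstack X_{\mc O_{\mbb C_p}}/\Ainf)\otimes_{\Ainf}W(\mbb C_p^\flat)$, which by adjunction and $\mbb Z_p$-finiteness of the source factors through an $\Ainf$-linear map
$$
\alpha_i\colon H^i_{\et}(\widehat{\mstack X}_{\mbb C_p},\mbb Z_p)\otimes_{\mbb Z_p}\Ainf \xymatrix{\ar[r]&} H^i_{\Prism^{(1)}}(\mstack X_{\mc O_{\mbb C_p}}/\Ainf).
$$
Inverting $\mu$ gives a map $\alpha_i[\tfrac{1}{\mu}]$ of finitely generated $\Ainf[\tfrac{1}{\mu}]$-modules whose base change to $W(\mbb C_p^\flat)$ is the isomorphism we already know. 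Since $\Ainf[\tfrac{1}{\mu}]\to W(\mbb C_p^\flat)$ is faithfully flat (it is flat, and it hits $\mbb C_p^\flat$ after reduction mod $p$, which is faithfully flat over $\Ainf[\tfrac{1}{\mu}]/p$; alternatively use that $\mu$ becomes invertible mod $p$ and $W(\mbb C_p^\flat)$ is $p$-complete), a map of finitely generated modules that becomes an isomorphism after such a base change is already an isomorphism. This is the essential point and would conclude the argument.

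The main obstacle I anticipate is the bookkeeping needed to check that the comparison isomorphism really is base-changed from an $\Ainf[\tfrac{1}{\mu}]$-linear map in a way compatible with everything --- in particular that the map $\alpha_i$ constructed by adjunction agrees with the restriction of the $W(\mbb C_p^\flat)$-isomorphism, rather than merely becoming equal to it after a further base change. This is where one must be careful about the Frobenius twist (note that $\RG_{\Prism^{(1)}}(\mstack X_{\mc O_{\mbb C_p}}/\Ainf)\otimes_\Ainf W(\mbb C_p^\flat)$ and $\RG_{\Prism}(\mstack X_{\mc O_{\mbb C_p}}/\Ainf)\otimes_\Ainf W(\mbb C_p^\flat)$ differ by a Frobenius on the coefficients) and about the precise map $\mf S\to\Ainf$, exactly as flagged in \Cref{rem: comparison for the twisted prismatic cohomology} and \Cref{ex: twists}. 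The cleanest route is probably to work with $\RG_{\Prism^{(1)}}$ throughout and invoke the twisted \'etale comparison \cite[Theorem 9.1]{BS_prisms}-style statement directly, so that no Frobenius untwisting of the coefficients is needed and the $\Ainf[\tfrac{1}{\mu}]$-structure is manifest. The finiteness inputs (both modules finitely generated over their respective rings) are already in hand from the cited results, so no new vanishing or coherence statement is required.
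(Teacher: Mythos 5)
There is a genuine gap: the map $\Ainf[\tfrac{1}{\mu}]\to W(\mbb C_p^\flat)$ is \emph{not} faithfully flat, so the descent step at the end of your argument fails. Indeed, $W(\mbb C_p^\flat)$ is a discrete valuation ring whose spectrum has only two points, so its image in $\Spec\Ainf[\tfrac{1}{\mu}]$ consists of $(0)$ and $(p)$ only; but $\Ainf[\tfrac{1}{\mu}]$ has many more primes. Concretely, $\widetilde\xi=\phi(\xi)$ does not divide $\mu$ (one has $\theta\circ\phi^{-1}(\mu)=\zeta_p-1\neq 0$), so $(\widetilde\xi)$ remains a nonzero prime of $\Ainf[\tfrac{1}{\mu}]$ and $\widetilde\xi$ is not a unit there, whereas $\widetilde\xi$ is nonzero modulo $p$ in $\mbb C_p^\flat$ and hence invertible in $W(\mbb C_p^\flat)$. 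Multiplication by $\widetilde\xi$ on $\Ainf[\tfrac{1}{\mu}]$ therefore becomes an isomorphism after base change to $W(\mbb C_p^\flat)$ but is not an isomorphism. This directly refutes the key claim that a map of finitely generated $\Ainf[\tfrac{1}{\mu}]$-modules becoming an isomorphism after base change to $W(\mbb C_p^\flat)$ is already an isomorphism; finite generation and the fact that the map $\alpha_i$ is defined over $\Ainf[\tfrac{1}{\mu}]$ are not enough.

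What is missing is precisely the Frobenius structure, which in your write-up appears only as a bookkeeping concern at the end. The paper's proof first identifies $H^i_{\Prism^{(1)}}(\mstack X_{\mc O_{\mbb C_p}}/\Ainf)\simeq H^i_\Prism(\mstack X/\mf S)\otimes_{\mf S,\phi_{\mf S}}\Ainf$ as a Breuil-Kisin-Fargues module, using that $H^i_\Prism(\mstack X/\mf S)$ is a Breuil-Kisin module so that $H^i_\Prism(\mstack X/\mf S)[\tfrac{1}{p}]$ is free over $\mf S[\tfrac{1}{p}]$ (\Cref{ex:stuff on BK-modules}), and then invokes \cite[Lemma 4.26]{BMS1}. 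That lemma is exactly the descent statement you want, but its proof is a Frobenius-descent (``Dwork's trick'') argument that uses the semi-linear isomorphism $\phi_M\colon M^{(1)}[\tfrac{1}{\widetilde\xi}]\xrightarrow{\sim}M[\tfrac{1}{\widetilde\xi}]$ to pin down the $\Ainf[\tfrac{1}{\mu}]$-lattice inside $M\otimes_\Ainf W(\mbb C_p^\flat)$. So the overall shape of your argument (descend the known isomorphism from $W(\mbb C_p^\flat)$ to $\Ainf[\tfrac{1}{\mu}]$) is the right one, but the justification must come from the Breuil-Kisin-Fargues formalism, which your proposal never actually deploys.
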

\begin{proof}
	Take a Breuil-Kisin prism $(\mf S,E(u))$ with $\mf S/E(u)\simeq \mc O_K$. Since $H^i_\Prism(\mstack X / \mathfrak S)$ is a Breuil-Kisin module, the module $H^i_\Prism(\mstack X / \mathfrak S)[\tfrac{1}{p}]$ is free over $\mf S[\tfrac{1}{p}]$ (\Cref{ex:stuff on BK-modules}). It follows that $H^i_{\Prism^{(1)}}(\mstack X_{\mc O_{\mbb C_p}} / \Ainf)\simeq H^i_\Prism(\mstack X / \mathfrak S)\otimes_{\mathfrak S,\phi_{\mf S}}\Ainf$ is a Breuil-Kisin-Fargues module (see \cite[Definition 4.22]{BMS1}). The statement then follows from \cite[Lemma 4.26]{BMS1}.
\end{proof}
\begin{rem}\label{rem: can descend to A_inf[1/mu]}
	For us an important consequence of \Cref{lem: can descend to A_inf[1/mu]} is that by tensoring further with $B_{\crys}$ over $\Ainf[\frac{1}{\mu}]$ we also get an isomorphism $H^i_{\et}(\widehat{\mstack X}_{{\mathbb C_p}}, \mathbb Z_p)\otimes_{\mathbb Z_p} B_\crys \xra{\sim}H^i_{\Prism^{(1)}}(\mstack X_{\mc O_{\mbb C_p}} / \Ainf)\otimes_{\Ainf} B_\crys$. By construction this map is $\phi$-equivariant.
\end{rem}

\begin{rem}\label{rem: G_K-equivariance of etale comparison} Given a scheme $X$ (of locally finite type) over $\mc O_K$ and an element $\sigma\in G_K$ one has a natural equivalence 
	$$
	\sigma^*\colon\RG_{\et}(\widehat{X}_{\mbb C_p},\mbb Z_p) \xymatrix{\ar[r]^{\sim}&} \RG_{\et}(\widehat{X}_{\mbb C_p},\mbb Z_p)
	$$
induced by the action of $\sigma$ as an automorphism on $\widehat{\mstack X}_{\mbb C_p}$ (considered as an adic space). Similarly, if we consider $\RG_{\Prism^{(1)}}(X_{\mc O_{\mbb C_p}} / \Ainf)$, by base change with respect to $\sigma\colon \Ainf \xra{\sim} \Ainf$, we get a $\sigma$-linear equivalence  
$$
\sigma^*\colon\RG_{\Prism^{(1)}}(X_{\mc O_{\mbb C_p}} / \Ainf) \xymatrix{\ar[r]^{\sim}&} \RG_{\Prism^{(1)}}(X_{\mc O_{\mbb C_p}} / \Ainf)
$$
then also giving an equivalence 
$$
\sigma^*\colon\RG_{\Prism^{(1)}}(X_{\mc O_{\mbb C_p}} / \Ainf)\widehat{\otimes}_{\Ainf} W(\mbb C_p^\flat) \xymatrix{\ar[r]^{\sim}&} \RG_{\Prism^{(1)}}(X_{\mc O_{\mbb C_p}} / \Ainf)\widehat{\otimes}_{\Ainf} W(\mbb C_p^\flat)
$$
This defines an action of $G_K$ by derived autoequivalences and moreover by construction the \'etale comparison map 
$$
\RG_{\et}(\widehat{X}_{\mbb C_p},\mbb Z_p) \tto \RG_{\Prism^{(1)}}(X_{\mc O_{\mbb C_p}} / \Ainf)\widehat{\otimes}_{\Ainf} W(\mbb C_p^\flat)
$$
is $G_K$-equivariant. By applying the right Kan extension for any Artin stack $\mstack X$ over $\mc O_K$ we get $G_K$-actions on $\RG_{\et}(\widehat{\mstack X}_{\mbb C_p},\mbb Z_p)$ and $\RG_{\Prism^{(1)}}(\mstack X_{\mc O_{\mbb C_p}} / \Ainf)\widehat{\otimes}_{\Ainf} W(\mbb C_p^\flat)$, such that the \'etale comparison morphism 
$$
\RG_{\et}(\widehat{\mstack X}_{\mbb C_p},\mbb Z_p) \tto \RG_{\Prism^{(1)}}(\mstack X_{\mc O_{\mbb C_p}} / \Ainf)\widehat{\otimes}_{\Ainf} W(\mbb C_p^\flat)
$$
is $G_K$-equivariant. In particular the isomorphism $$
H^i_{\et}(\widehat{\mstack X}_{{\mathbb C_p}}, \mathbb Z_p)\otimes_{\mathbb Z_p} B_\crys \xymatrix{\ar[r]^{\sim}&} H^i_{\Prism^{(1)}}(\mstack X_{\mc O_{\mbb C_p}} / \Ainf)\otimes_{\Ainf} B_\crys
$$ that we established in \Cref{rem: can descend to A_inf[1/mu]} for Hodge-proper $\mstack X$ is also $G_K$-equivariant.
\end{rem}	

\begin{rem}
	Recall that if $\mstack X$ is smooth and Hodge-proper over $\mc O_K$, by \Cref{etale_coh_finiteness} each individual group $H^i_{\et}(\widehat{\mstack X}_{{\mathbb C_p}}, \mathbb Z/p^n)$ is finite. Thus, say by \cite{nikolov2003finite}, the corresponding $G_K$-action is automatically continuous. While using \cite{nikolov2003finite} might seem as a slight overkill, the continuity can also be seen as follows: namely one can consider $Rp_*\ul{\mbb Z/p^n}\in \Shv_{\et}(\Sp(K),\mbb Z/p^n)$ for the natural morphism $p\colon \widehat{\mstack X}_K\ra \Sp(K)$, where $\Shv_{\et}(\Sp(K),\mbb Z/p^n)^\heartsuit$ is identified exactly with the $\mbb Z/p^n$-modules with continuous $G_K$-action. The (generalizing) base change \cite{Huber_adicSpaces}, applied to $\Sp \mbb C_p\ra \Sp_K$ and extended to stacks (as in \Cref{adic_local_systems}) exactly identifies $H^i_{\et}(\widehat{\mstack X}_{{\mathbb C_p}}, \mathbb Z/p^n)$ with the above action and $R^ip_*\ul{\mbb Z/p^n}$. We then also have $H^i_\et(\widehat{\mstack X}_{{\mathbb C_p}}, \mathbb Z_p)\simeq  \lim_n H^i_\et(\widehat{\mstack X}_{{\mathbb C_p}}, \mathbb Z/p^n)$ (see \Cref{rem: adic cohomology of Hodge-proper stack are finitely generated}) and thus the $G_K$-action on $H^i_\et(\widehat{\mstack X}_{{\mathbb C_p}}, \mathbb Z_p)$ is also continuous. In particular this gives a natural structure of a continuous $G_K$-representation on $H^i_\et(\widehat{\mstack X}_{{\mathbb C_p}}, \mathbb Q_p)\coloneqq H^i_\et(\widehat{\mstack X}_{{\mathbb C_p}}, \mathbb Z_p)[\frac{1}{p}]$. 
\end{rem}

We now are going to show that $H^i_\et(\widehat{\mstack X}_{{\mathbb C_p}}, \mathbb Q_p)$ is a crystalline representation. We stress that in the proof we are going to use the twisted prismatic cohomology $\RG_{\Prism^{(1)}}(\mstack X/\mf S)$ and the corresponding \'etale comparison (see \Cref{rem: comparison for the twisted prismatic cohomology}).
\begin{thm}\label{thm: Fontaine's conjecture}
	Let $\mstack X$ be a smooth Hodge-proper stack over $\mathcal O_K$ and let $C=\mbb C_p$. Then the Galois representation $H^i_\et(\widehat{\mstack X}_C, \mathbb Q_p)$ is crystalline (see \Cref{defn:crystlline_and_deRham_reps}) for all $i\in \mathbb Z_{\ge0}$ and $D_\crys(H^i_\et(\widehat{\mstack X}_C, \mathbb Q_p)) \simeq H^i_\crys(\mstack X_k/ W(k))[\tfrac{1}{p}]$.
	
	\begin{proof} A slight problem that we face is that the morphism $\Ainf \ra A_{\crys}$ is not of finite $(p,I)$-complete Tor-amplitude and thus base change (\Cref{prismatic_basechange}) can't be applied directly. To remedy this we first fix an auxiliary Breuil-Kisin prism $(\mf S, E(u))$. Note that since $\mf S$ is regular and Noetherian any $\mf S$-module has finite $I$-complete Tor-amplitude for any $I$ (including $I=0$ and $I=(p,E(u))$).
		By base change for the twisted version of prismatic cohomology (\Cref{rem: base change for twisted prismatic cohomology}), applied to the composition $(\mf S,(E(u)))\ra (\Ainf,({\xi}))\ra (A_\crys,(p))$, we get an equivalence
		$$
		\RG_{\Prism^{(1)}}(\mstack X/\mf S)\widehat\otimes_{\mf S} A_\crys\xymatrix{\ar[r]^\sim &}R\Gamma_{\Prism^{(1)}}(\mstack X_{\mathcal O_C/p} / A_\crys)\xymatrix{\ar[r]^\sim &} \RG_\crys(\mstack X_{\mc O_{\mbb C_p}/p}/A_\crys),
		$$
		where the second arrow is given by the crystalline comparison (\Cref{prop: cristalline comparison}).
		By \Cref{lem:the tensor product remains complete} if $\mstack X$ is Hodge-proper the tensor product on the left does not need to be completed. 
		We also have 
		$$
		\RG_{\Prism^{(1)}}(\mstack X/\mf S)\otimes_{\mf S} A_\crys \simeq \RG_{\Prism^{(1)}}(\mstack X/\mf S)\otimes_{\mf S}\Ainf \otimes_{\Ainf} A_\crys \simeq \RG_{\Prism^{(1)}}(\mstack X_{\mc O_{\mbb C_p}}/\Ainf)\otimes_{\Ainf} A_\crys, 
		$$
		where we use again that $\RG_{\Prism^{(1)}}(\mstack X/\mf S)\otimes_{\mf S}\Ainf$ is already $(p,I)$-complete. From this we see that the base change morphism is an equivalence:
	$$
	\RG_{\Prism^{(1)}}(X_{\mc O_{\mbb C_p}}/\Ainf)\otimes_{\Ainf}A_\crys\xymatrix{\ar[r]^\sim &}R\Gamma_{\Prism^{(1)}}(\mstack X_{\mathcal O_C/p} / A_\crys).
	$$
	 Note that this morphism is $G_K$-equivariant.

		Localizing at $\mu\in A_\crys$ we get a $G_K$-equivariant equivalence 
		$$
		\RG_{\Prism^{(1)}}(X_{\mc O_{\mbb C_p}}/\Ainf)\otimes_{\Ainf} B_\crys \xymatrix{\ar[r]^\sim &}R\Gamma_\crys(\mstack X_{\mathcal O_C/p} / A_\crys)[\tfrac{1}{\mu}].
		$$

		Using the regularity and Noetherianness of $W(k)$ to deal with completions, by base-change (now applied to $(W(k),(p))\ra (A_\crys,(p))$) we also get a $G_K$-equivariant equivalence
		$$
		R\Gamma_{\Prism^{(1)}}(\mstack X_k / W(k))\otimes_{W(k)} B_\crys \xymatrix{\ar[r]^\sim &} R\Gamma_{\Prism^{(1)}}(\mstack X_{\mathcal O_C/p} / A_\crys)[\tfrac{1}{\mu}]
		$$
		in turn providing an equivalence
		$$R\Gamma_{\crys}(\mstack X_k / W(k))\otimes_{W(k)} B_\crys \xymatrix{\ar[r]^\sim &} R\Gamma(\mstack X_{\mathcal O_C/p} / A_\crys)[\tfrac{1}{\mu}]$$
		via the crystalline comparison.
		
		Recall that $B_\crys$ is a $\mbb Q_p$-algebra and that  $H^i_\Prism(\mstack X/\mf S)[\tfrac{1}{p}]$ (and thus $H^i_{\Prism^{(1)}}(\mstack X/\mf S)[\tfrac{1}{p}]$) is a free $\mathfrak S[\tfrac{1}{p}]$-module for any $i$; consequently the cohomology of the tensor product $\RG_{\Prism^{(1)}}(\mstack X/\mf S)\otimes_{\mf S} B_\crys$ are computed simply as $H^i_{\Prism^{(1)}}(\mstack X/\mf S)\otimes_{\mf S} B_\crys$\footnote{In fact it is also true that $B_\crys$ is a flat $\mf S$-module.}. It then follows that $H^i(\RG_{\Prism^{(1)}}(\mstack X_{\mc O_{\mbb C_p}}/\Ainf)\otimes_{\Ainf} B_\crys)\simeq H^i_{\Prism^{(1)}}(\mstack X_{\mc O_{\mbb C_p}}/\Ainf)\otimes_{\Ainf} B_\crys$. Even better, $H^i_\crys(\mstack X,W(k))[\frac{1}{p}]$ is a vector space over $K_0$ and so the cohomology of $R\Gamma_{\crys}(\mstack X_k / W(k))\otimes_{W(k)} B_\crys$ are given by $H^i_\crys(\mstack X,W(k))\otimes_{W(k)}B_\crys$. Using Remarks \ref{rem: can descend to A_inf[1/mu]} and \ref{rem: G_K-equivariance of etale comparison}, we thus obtain a $G_K$-equivariant isomorphism
		$$H^i_\et(\widehat{\mstack X}_C, \mathbb Q_p)\otimes_{\mathbb Q_p} B_\crys \xymatrix{\ar[r]^\sim&} H^i_\crys(\mstack X_k / W(k))[\tfrac{1}{p}]\otimes_{K_0} B_\crys$$
		 for any $i\ge 0$. It also respects $\phi$-actions (since all isomorphisms we used were essentially obtained by base change). Since the $G_K$-action on $H^i_\crys(\mstack X_k / W(k))[\tfrac{1}{p}]$ is trivial and $(B_\crys)^{G_K}\simeq K_0$ we get that
		$$D_\crys(H^i_\et(\widehat{\mstack X}_C, \mathbb Q_p))\coloneqq  \left((H^i_\et(\widehat{\mstack X}_C, \mathbb Q_p)\otimes_{\mathbb Q_p} B_\crys\right)^{G_K} \simeq H^i_\crys(\mstack X_k/W(k))[\tfrac{1}{p}].\qedhere$$
	\end{proof}
\end{thm}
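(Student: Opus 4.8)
The plan is to reduce everything to the twisted prismatic cohomology over a Breuil--Kisin prism and then transport the comparison across the maps $(\mf S,(E(u)))\to(\Ainf,(\xi))\to(A_\crys,(p))$ to relate $H^i_\et(\widehat{\mstack X}_{\mbb C_p},\mbb Q_p)$ and $H^i_\crys(\mstack X_k/W(k))[\tfrac1p]$ after $\otimes B_\crys$. Concretely, first I would fix an auxiliary Breuil--Kisin prism $(\mf S, E(u))$ with $\mf S/E(u)\simeq\mc O_K$; the point of this choice is that $\mf S$ is Noetherian and regular, so every $\mf S$-module has finite $I$-complete Tor-amplitude for any ideal $I$, which makes the base change result \Cref{prismatic_basechange} (and its twisted version \Cref{rem: base change for twisted prismatic cohomology}) applicable at each stage --- this is the technical workaround for the fact that $\Ainf\to A_\crys$ is \emph{not} of finite Tor-amplitude, so one cannot directly base-change from $\Ainf$.

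Next I would run the base change for twisted prismatic cohomology along $(\mf S,(E(u)))\to(\Ainf,(\widetilde\xi))\to(A_\crys,(p))$ to get
$$\RG_{\Prism^{(1)}}(\mstack X/\mf S)\widehat\otimes_{\mf S}A_\crys\ \simeq\ \RG_{\Prism^{(1)}}(\mstack X_{\mc O_C/p}/A_\crys)\ \simeq\ \RG_\crys(\mstack X_{\mc O_C/p}/A_\crys),$$
the last step being the crystalline comparison \Cref{prop: cristalline comparison}. Using Hodge-properness of $\mstack X$ and \Cref{lem:the tensor product remains complete} (which applies since $\mf S$ is regular Noetherian and $\RG_{\Prism^{(1)}}(\mstack X/\mf S)\in\Coh^+(\mf S)$ by \Cref{cor: twisted prismatic cohomology are bounded below coherent}), the completed tensor products can be replaced by ordinary ones, and I can rewrite $\RG_{\Prism^{(1)}}(\mstack X/\mf S)\otimes_{\mf S}A_\crys$ as $\RG_{\Prism^{(1)}}(\mstack X_{\mc O_{\mbb C_p}}/\Ainf)\otimes_{\Ainf}A_\crys$. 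Inverting $\mu$ and running base change also along $(W(k),(p))\to(A_\crys,(p))$ gives a $G_K$-equivariant, $\phi$-equivariant equivalence $\RG_\crys(\mstack X_k/W(k))\otimes_{W(k)}B_\crys\simeq\RG(\mstack X_{\mc O_C/p}/A_\crys)[\tfrac1\mu]$. Chaining these with the descent-to-$\Ainf[\tfrac1\mu]$ isomorphism (\Cref{lem: can descend to A_inf[1/mu]}, \Cref{rem: can descend to A_inf[1/mu]}) and the $G_K$-equivariance of the \'etale comparison (\Cref{rem: G_K-equivariance of etale comparison}) yields a $(\phi,G_K)$-equivariant isomorphism on cohomology
$$H^i_\et(\widehat{\mstack X}_{\mbb C_p},\mbb Q_p)\otimes_{\mbb Q_p}B_\crys\ \xrightarrow{\ \sim\ }\ H^i_\crys(\mstack X_k/W(k))[\tfrac1p]\otimes_{K_0}B_\crys.$$
Here I would use that $H^i_\Prism(\mstack X/\mf S)[\tfrac1p]$ is free over $\mf S[\tfrac1p]$ (from \Cref{ex:stuff on BK-modules} applied to the Breuil--Kisin module structure of \Cref{ex: prismatic cohomology of Hodge-proper stack is a BK-module}), so taking cohomology commutes with $\otimes B_\crys$ on both sides. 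Finally, taking $G_K$-invariants and using $(B_\crys)^{G_K}\simeq K_0$ together with triviality of the $G_K$-action on $H^i_\crys$, I conclude $D_\crys(H^i_\et(\widehat{\mstack X}_{\mbb C_p},\mbb Q_p))\simeq H^i_\crys(\mstack X_k/W(k))[\tfrac1p]$, which has the expected dimension by \Cref{prop: rationally all cohomology has the same dimension}; hence the representation is crystalline.

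The main obstacle will be bookkeeping the compatibilities: ensuring that \emph{all} the base-change, crystalline-comparison, $\mu$-localization and $\Ainf[\tfrac1\mu]$-descent isomorphisms are simultaneously $\phi$-equivariant \emph{and} $G_K$-equivariant, and that the $G_K$-action arising from the various prismatic models agrees with the geometric Galois action on $\widehat{\mstack X}_{\mbb C_p}$. The hard part is not any single calculation but arranging the diagram so the $G_K$-equivariance of \Cref{rem: G_K-equivariance of etale comparison} and of the crystalline base change glue correctly; once that is in place the passage to cohomology and to invariants is routine given the freeness statements from \Cref{ex:stuff on BK-modules}.
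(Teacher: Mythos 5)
Your proposal follows the paper's argument essentially verbatim: fix an auxiliary Breuil--Kisin prism to sidestep the Tor-amplitude failure of $\Ainf\to A_\crys$, base change the twisted prismatic cohomology through $\mf S\to\Ainf\to A_\crys$ and $W(k)\to A_\crys$, invoke the crystalline comparison and the $\mu$-localization/$\Ainf[\tfrac1\mu]$-descent, use freeness of $H^i_\Prism(\mstack X/\mf S)[\tfrac1p]$ over $\mf S[\tfrac1p]$ to pass to cohomology, and take $G_K$-invariants. The only discrepancy is a cosmetic one (you write $(\Ainf,(\widetilde\xi))$ where the paper's proof writes $(\Ainf,(\xi))$ --- both appear elsewhere in the paper depending on which $\mf S\to\Ainf$ normalization is in play), so this is the same proof.
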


\begin{rem}
	One can see that \Cref{thm: Fontaine's conjecture} (after appropriate replacement of $B_\crys$ and $\mbb C_p$) stays true in a greater generality where $\mc O_K$ is a ring of integers in a finite extension of $K_0=W(k)[\frac{1}{p}]$ where $k$ is a perfect field. Using \cite[Section 2.3.3]{KubrakPrikhodko_HdR}, one can construct examples of Hodge-proper non-proper schemes over such more general $\mc O_K$ (at least for $p\gg 0$). However for the construction in \textit{loc.cit.} it is necessary that $k$ has transcendence degree at least 1 over $\mbb F_p$. It would be interesting to find an example of a Hodge-proper non-proper scheme over $\mc O_K$ where $K$ is a finite extension of $\mbb Q_p$, for now we do not know one. 
\end{rem}

We end the subsection with some remarks concerning the Breuil-Kisin module associated to the \'etale cohomology $H^i_\et(\widehat{\mstack X}_{\mbb C_p},\mbb Z_p)$.

\begin{rem}\label{rem: the BK-module associated to the etale cohomology}
		By \Cref{thm: Fontaine's conjecture} the free quotient
		$$H^i_\et(\widehat{\mstack X}_{\mbb C_p},\mbb Z_p)_{\free}\coloneqq H^i(\widehat{\mstack X}_{\mbb C_p},\mbb Z_p)/H^i_\et(\widehat{ \mstack X}_{\mbb C_p},\mbb Z_p)_{\mr{tors}}\hookrightarrow H^i(\widehat{\mstack X}_{\mbb C_p},\mbb Q_p)$$ is a lattice in a crystalline representation. We claim that the prismatic cohomology gives a formula for the corresponding Breuil-Kisin module, namely:	
$$
\BK(H^i_\et(\widehat{\mstack X}_{\mbb C_p},\mbb Z_p)_{\free}) \simeq H^i_\Prism(\mstack X/\mf S)_{\free},
$$
where $H^i_\Prism(\mstack X/\mf S)_{\free}$ is the free Breuil-Kisin module associated to $H^i_\Prism(\mstack X/\mf S)$ (see \Cref{ex:stuff on BK-modules}).

Indeed, using \Cref{ex:stuff on BK-modules} and flatness of $\mf S \ra W(\mbb C_p^\flat)$ we get a short exact sequence 
$$
\xymatrix{
0\ar[r]& H^i_\Prism(\mstack X/\mf S)_{\mr{tors}}\otimes_{\mf S}W(\mbb C_p^\flat) \ar[r]&H^i_\Prism(\mstack X/\mf S)\otimes_{\mf S}W(\mbb C_p^\flat)\ar[r]& H^i_\Prism(\mstack X/\mf S)_{\free}\otimes_{\mf S}W(\mbb C_p^\flat) \ar[r]& 0}
$$
which is isomorphic to
$$
\xymatrix{
	0\ar[r]& H^i_\et(\widehat{\mstack X}_{\mbb C_p},\mbb Z_p)_{\mr{tors}}\otimes_{\mbb Z_p} W(\mbb C_p^\flat) \ar[r]&H^i_\et(\widehat{\mstack X}_{\mbb C_p},\mbb Z_p)\otimes_{\mbb Z_p}W(\mbb C_p^\flat)\ar[r]& H^i_\et(\widehat{\mstack X}_{\mbb C_p},\mbb Z_p)_{\free} \otimes_{\mbb Z_p}W(\mbb C_p^\flat) \ar[r]& 0}
$$
via the \'etale comparison (\Cref{cor: adic_etale_comparison2}). This shows in particular that $\BK(H^i_\et(\mstack X_{\mbb C_p},\mbb Z_p)_{\free})\simeq H^i_\Prism(\mstack X/\mf S)_{\free}$.
\end{rem}

The following lemma is an analogue of \cite[Corollary 4.20]{BMS1} that guarantees freeness of $H^i_\Prism(\mstack X/\mf S)$ under some assumptions.
\begin{lem}
Let $\mstack X$ be a Hodge-proper stacks over $\mathcal O_K$.
\begin{enumerate}[label=(\arabic*)]
\item If $H^i_\crys(\mstack X_k/W(k))$ is $p$-torsion free, then $H^i_\Prism(\mstack X/\mathfrak S)$ is free as an $\mathfrak S$-module.

\item If additionally $H^{i+1}_\crys(\mstack X_k/W(k))$ is $p$-torsion free, then the natural injection
$$H^i_{\Prism^{(1)}}(\mstack X/\mathfrak S) \otimes_{\mathfrak S} W(k) \xymatrix{\ar@{^(->}[r] &} H^i(R\Gamma_{\Prism^{(1)}}(\mstack X/\mathfrak S)\otimes^{\mbb L}_{\mathfrak S} W(k)) \simeq H^i_\crys(\mstack X_k / W(k))$$
is an isomorphism.
\end{enumerate}

\begin{proof} All tensor products are assumed to be non-derived (unless noted otherwise).
First note that $H^i_\Prism(\mstack X/\mathfrak S)$ is free over $\mf S$ if and only if $H^i_{\Prism^{(1)}}(\mstack X/\mathfrak S)$ is. Let more generally $C$ be a complex of $\mathfrak S$-modules such that $H^i(C)$ is finitely generated over $\mathfrak S$ and $H^i(C)[1/p]$ is free over $\mathfrak S[1/p]$ (e.g. $C=\RG_{\Prism^{(1)}}(\mstack X/\mathfrak S)$). We claim that if $H^i(C\otimes_{\mathfrak S}^{\mbb L} W(k))$ is $p$-torsion free then $H^i(C)$ is free. Indeed, since $H^i(C)\otimes_S W(k)$ is a submodule of $H^i(C\otimes^{\mbb L}_{\mathfrak S} W(k))$ it follows that $H^i(C)\otimes_{\mf S} W(k)$ is $p$-torsion free (equivalently free as a $W(k)$-module) as well. Hence it is enough to prove the following assertion: let $M$ be a finitely generated $\mathfrak S$-module such that
\begin{itemize}
\item $M[1/p]$ is free over $\mathfrak S[1/p]$.

\item $M\otimes_{\mathfrak S} W(k)$ is free over $W(k)$.
\end{itemize}
Then $M$ is free over $\mathfrak S$.

But under these assumptions
\begin{align*}
\dim_k M\otimes_{\mathfrak S} k & = \rank_{W(k)} M\otimes_{\mathfrak S} W(k) = \rank_{W(k)[1/p]} M\otimes_{\mathfrak S} W(k)[\tfrac{1}{p}] = \\
& = \rank_{\mathfrak S[1/p]} M\otimes_{\mathfrak S} \mathfrak S[\tfrac 1 p] = \dim_{\Frac \mathfrak S} M\otimes_{\mathfrak S} \Frac \mathfrak S.
\end{align*}
%Hence we reduced the first part of the lemma to the following: let $R$ be a local ring with the residue field $k$, then a finitely generated $R$-module $M$ is free if and only if $\dim_k M\otimes_R k = \dim_{\Frac R} M\otimes_R \Frac R$. To see this let $R^n \surj M$ be a surjection inducing isomorphism modulo the maximal ideal (which exists by Nakayama's lemma). If we denote by $N$ the kernel of this map, we find that
%$$\dim_{\Frac R} N = n - \dim_{\Frac R} M \otimes_R \Frac R = \dim_k M\otimes_R k  - \dim_{\Frac R} M\otimes_R \Frac R = 0.$$
%So $N$ is a torsion submodule of a free module, hence vanishes, and the map $R^n \surj M$ is an isomorphism.
and thus $M$ is free by the semicontinuity of stalks.

To see the second part note that by the universal coefficients formula the cokernel of the natural injection $H^i_\Prism(\mstack X_k/\mathfrak S) \otimes_{\mathfrak S} W(k) \inj H^i(R\Gamma_\Prism(\mstack X/\mathfrak S)\otimes^{\mbb L}_{\mathfrak S} W(k))$
is isomorphic to $u$-torsion in $H^{i+1}_\Prism(\mstack X/\mathfrak S)$. But by our assumption and the previous discussion this is a free $\mathfrak S$-module, thus has no $u$-torsion.
\end{proof}
\end{lem}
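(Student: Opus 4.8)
The plan is to reduce part (1) to a standard fact about finitely generated modules over the regular local ring $\mathfrak S = W(k)[[u]]$, and then deduce part (2) from it by a short $\Tor$ computation. Since the Frobenius $\phi_{\mathfrak S}\colon \mathfrak S\to\mathfrak S$ makes $\mathfrak S$ a free module of rank $p$ over itself, the module $H^i_\Prism(\mstack X/\mathfrak S)$ is free over $\mathfrak S$ if and only if its twist $M:=H^i_{\Prism^{(1)}}(\mstack X/\mathfrak S)=\phi_{\mathfrak S}^{*}H^i_\Prism(\mstack X/\mathfrak S)$ is, so I would work with $M$ throughout, and likewise translate all finiteness and rank statements across $\phi_{\mathfrak S}$. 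Two inputs on $M$ are available at the outset: it is a finitely generated $\mathfrak S$-module by Hodge-properness (via the Corollary that $R\Gamma_\Prism(\mstack X/\mathfrak S)\in\Coh^+(\mathfrak S)$ together with the Noetherianness of $\mathfrak S$), and $M[\tfrac1p]$ is free over $\mathfrak S[\tfrac1p]$, because $H^i_\Prism(\mstack X/\mathfrak S)$ carries a Breuil--Kisin module structure via $\phi_\Prism[\tfrac1E]$ and rationalized Breuil--Kisin modules are free (structure theory recalled above).

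The linear-algebra input I would isolate is: \emph{a finitely generated $\mathfrak S$-module $M$ with $M[\tfrac1p]$ free over $\mathfrak S[\tfrac1p]$ and $M\otimes_{\mathfrak S}W(k)$ free over $W(k)$ is free over $\mathfrak S$.} One checks this by comparing the fiber dimension at the maximal ideal with the generic rank: $\dim_k M\otimes_{\mathfrak S}k=\rank_{W(k)}M\otimes_{\mathfrak S}W(k)=\rank_{W(k)[1/p]}M\otimes_{\mathfrak S}W(k)[\tfrac1p]=\rank_{\mathfrak S[1/p]}M\otimes_{\mathfrak S}\mathfrak S[\tfrac1p]=\dim_{\Frac\mathfrak S}M\otimes_{\mathfrak S}\Frac\mathfrak S$, where the middle equality uses that $M[\tfrac1p]$ is free and $W(k)[\tfrac1p]=\mathfrak S[\tfrac1p]/(u)$. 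Thus the fiber dimension of $M$ at the closed point of the integral scheme $\Spec\mathfrak S$ agrees with its rank at the generic point; by Nakayama there is a surjection $\mathfrak S^{n}\twoheadrightarrow M$ whose kernel is then torsion, hence $0$ since $\mathfrak S$ is a domain, so $M$ is free. To apply this with $M=H^i_{\Prism^{(1)}}(\mstack X/\mathfrak S)$ I would invoke the crystalline comparison, which gives $R\Gamma_{\Prism^{(1)}}(\mstack X/\mathfrak S)\otimes^{\mbb L}_{\mathfrak S}W(k)\simeq R\Gamma_\crys(\mstack X_k/W(k))$; the universal coefficients sequence for reduction along $W(k)=\mathfrak S/(u)$ then realizes $M\otimes_{\mathfrak S}W(k)$ as a submodule of $H^i_\crys(\mstack X_k/W(k))$, so the $p$-torsion-freeness hypothesis forces $M\otimes_{\mathfrak S}W(k)$ to be $p$-torsion free, i.e.\ free over the discrete valuation ring $W(k)$. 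This yields part (1).

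For part (2), the natural injection $H^i_{\Prism^{(1)}}(\mstack X/\mathfrak S)\otimes_{\mathfrak S}W(k)\hookrightarrow H^i\bigl(R\Gamma_{\Prism^{(1)}}(\mstack X/\mathfrak S)\otimes^{\mbb L}_{\mathfrak S}W(k)\bigr)$ is exactly the edge map of the universal coefficients sequence for reduction along $W(k)=\mathfrak S/(u)$, and its cokernel is $\Tor_1^{\mathfrak S}\bigl(H^{i+1}_{\Prism^{(1)}}(\mstack X/\mathfrak S),W(k)\bigr)$. Using the resolution $0\to\mathfrak S\xrightarrow{\cdot u}\mathfrak S\to W(k)\to0$, this $\Tor_1$ is precisely the $u$-torsion submodule of $H^{i+1}_{\Prism^{(1)}}(\mstack X/\mathfrak S)$. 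Applying part (1) with $i$ replaced by $i+1$, the extra hypothesis that $H^{i+1}_\crys(\mstack X_k/W(k))$ is $p$-torsion free makes $H^{i+1}_{\Prism^{(1)}}(\mstack X/\mathfrak S)$ free over $\mathfrak S$, hence $u$-torsion free, so the cokernel vanishes and the map is an isomorphism; composing with the crystalline comparison identifies the target with $H^i_\crys(\mstack X_k/W(k))$. I expect the only real work to be bookkeeping — assembling the derived-versus-underived reductions correctly, pinning down which comparison supplies $R\Gamma_{\Prism^{(1)}}\otimes^{\mbb L}_{\mathfrak S}W(k)\simeq R\Gamma_\crys$, and tracking the Frobenius twist — while the module-theoretic core is routine.
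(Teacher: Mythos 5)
Your proposal is correct and follows essentially the same route as the paper: isolate the module-theoretic fact that a finitely generated $\mathfrak S$-module with free rationalization and free reduction mod $u$ is itself free, supply the inputs from Hodge-properness (finite generation), the Breuil--Kisin structure (rational freeness), and the universal coefficients sequence together with the crystalline comparison ($p$-torsion-freeness of the reduction), then deduce part (2) from part (1) applied in degree $i+1$ via the $\Tor_1$ term. The only cosmetic difference is that where the paper concludes the module-theoretic claim by invoking ``semicontinuity of stalks,'' you spell out the Nakayama-plus-torsion-free-kernel argument, which is precisely what that phrase abbreviates.
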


\begin{rem}\label{rem: associated BK-module/refined}
	In particular from \Cref{rem: the BK-module associated to the etale cohomology} we get that if $H^i_\crys(\mstack X_k/W(k))$ is $p$-torsion free, then so is $H^i_\et(\widehat{\mstack X}_{\mbb C_p},\mbb Z_p)$ and we have $$
	\BK(H^i_\et(\widehat{\mstack X}_{\mbb C_p},\mbb Z_p))\simeq H^i_\Prism(\mstack X,\mf S).
	$$
\end{rem}

\subsubsection{Hodge-Tate decomposition}\label{sssection: Hodge-Tate decomposition}
In this section we also deduce the Hodge-Tate decomposition from the comparison established in \Cref{thm: Fontaine's conjecture}. In the case of smooth proper schemes this is a celebrated result of Faltings \cite[Theorem 4.1]{Faltings_p-adic_Hodge}. 

Recall that for a smooth stack $\mstack X$ over $A/I$ we have the Hodge-Tate filtration $\Fil_{\HT}$ on the reduction
$$\RG_{\Prism/I}(\mstack X/A)\coloneqq \RG_{\Prism}(\mstack X/A)\otimes_A A/I.$$
By \Cref{prop: HT_filtration_is_exaustive} it is exhaustive. Moreover, if $\mstack X$ is Hodge-proper, by \Cref{cor: prismatic cohomology are bounded below coherent} the associated graded pieces of this filtration are equivalent to $\RG(\mstack X,\wedge^i \mathbb L_{\mstack X/(A/I)})\{-i\}[-i]$. Note that this induces a (strongly convergent) \textit{Hodge-Tate} spectral sequence $E_{2}^{p,q}= H^{q,p}(\mstack X/(A/I))\{q\} \Rightarrow H^{p+q}_{\Prism/I}(\mstack X/A)$ where $H^{p,q}(\mstack X/(A/I))\coloneqq H^{q}(\mstack X,\wedge^p\mbb L_{\mstack X/(A/I)})$.

Let $\mbb C_p(i)\coloneqq \mbb C_p\otimes_{\mbb Z_p}\mbb Z_p(i)$ denote the $i$-th Hodge-Tate twist. One can consider the category $\Mod_{\mbb C_p}(\Rep_{G_K})$ of finite-dimensional $\mbb C_p$-vector spaces with a semi-linear continuous $G_K$-action. Such a representation is called \emdef{Hodge-Tate} if it is isomorphic to a direct sum of $\mbb C_p(i)$'s for various $i$. Recall that by a classical result of Tate there are neither nontrivial homomorphisms nor extensions between $\mbb C_p(i)$ and $\mbb C_p(j)$ if $i\neq j$.

\begin{prop}\label{prop: HT-degeneration}
	Let $\mstack X$ be a smooth Hodge-proper stack over $\mc O_K$ and let $(A,I)=(\mf S, (E(u)))$ be a Breuil-Kisin prism with $\mf S/E(u) \simeq \mc O_K$. Then the Hodge-Tate spectral sequence for $\mstack X$ degenerates rationally (after inverting $p$).

\begin{proof}
	Note that $\mf S/E(u)[\frac{1}{p}]\simeq K$ is a field and that by base change $H^{p,q}(\mstack X/\mc O_K)[\frac{1}{p}]\simeq H^{p,q}(\mstack X_K/K)$. The twist $\{i\}$ does nothing on the level of $\mc O_K$-modules since $I$ is principal. Thus it is enough check that for a given $n$ one has $\dim_K H^{n}_{\Prism/I}(\mstack X/\mf S)[\frac{1}{p}]=\sum_{p+q=n} \dim_K H^{p,q}(\mstack X_K/K)$. This is enough to do after base change to $(\Ainf, \ker \theta)$; here we again use \Cref{lem:the tensor product remains complete} and Hodge-properness of $\mstack X$ to deduce that  $\RG_{\Prism/I}(\mstack X/\mf S)\otimes_{\mf S}\Ainf$ is already $p$-adically complete to be able to apply \Cref{prismatic_basechange}. The Hodge-Tate filtration is functorial and thus all maps in the spectral sequence $E_{2}^{p,q}= H^{q,p}(\mstack X_{\mc O_{\mbb C_p}}/{\mc O_{\mbb C_p}})\{-q\} \Rightarrow H^{p+q}_{\Prism/I}(\mstack X_{\mc O_{\mbb C_p}}/\Ainf)$ are $G_K$-equivariant. Note that the $i$-th twist $(\Ainf/(\xi))\{i\}$ is given by the reduction of the Breuil-Kisin twist and thus is isomorphic to $\mc O_{\mbb C_p}(i)$ as a $G_K$-module. By base change we get $H^{q,p}(\mstack X_{\mc O_{\mbb C_p}}/{\mc O_{\mbb C_p}})\{q\}[\frac{1}{p}]\simeq H^{q,p}(\mstack X_{K}/K)\otimes_K \mbb C_p(-q)$. Thus after inverting $p$ the terms in the Hodge-Tate spectral sequence are given by Hodge-Tate twists with the value of the twist equal to minus the column number. Since there are no homomorphisms between different twists, all differentials vanish.
\end{proof}
\end{prop}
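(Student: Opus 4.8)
The plan is to reduce the rational degeneration of the Hodge-Tate spectral sequence for $\mstack X$ over $\mc O_K$ to a statement over $\Ainf$, where the Hodge-Tate twists acquire their Galois-equivariant meaning, and then invoke the classical Tate vanishing $\Hom_{G_K}(\mbb C_p(i),\mbb C_p(j)) = \Ext^1_{G_K}(\mbb C_p(i),\mbb C_p(j)) = 0$ for $i \neq j$. First I would observe that $\mf S/E(u)[\tfrac{1}{p}] \simeq K$ is a field, so degeneration over $\mf S$ after inverting $p$ is equivalent to the numerical identity $\dim_K H^n_{\Prism/I}(\mstack X/\mf S)[\tfrac{1}{p}] = \sum_{p+q=n} \dim_K H^{p,q}(\mstack X_K/K)$; here one uses \Cref{cor: prismatic cohomology are bounded below coherent} so that the associated graded of the exhaustive Hodge-Tate filtration (\Cref{prop: HT_filtration_is_exaustive}) is $\RG(\mstack X,\wedge^i\mbb L_{\mstack X/\mc O_K})\{-i\}[-i]$, together with base change $H^{p,q}(\mstack X/\mc O_K)[\tfrac{1}{p}] \simeq H^{p,q}(\mstack X_K/K)$ (\Cref{Hodge_and_deRham_basechange}) and the fact that the Breuil-Kisin twist is trivial on underlying $\mc O_K$-modules since $I = (E(u))$ is principal.

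Next I would pass to the $\Ainf$-prism. The key technical point is that \Cref{prismatic_basechange} requires finite $(p,I)$-complete Tor-amplitude of the base-change map, which holds for $(\mf S, (E(u))) \to (\Ainf, \ker\theta)$; moreover, by \Cref{lem:the tensor product remains complete} together with Hodge-properness of $\mstack X$, the tensor product $\RG_{\Prism/I}(\mstack X/\mf S)\otimes_{\mf S}\Ainf$ is already $p$-adically complete, so $\RG_{\Prism/I}(\mstack X_{\mc O_{\mbb C_p}}/\Ainf) \simeq \RG_{\Prism/I}(\mstack X/\mf S)\otimes_{\mf S}\Ainf$. Since the Hodge-Tate filtration is functorial in the affine scheme and hence descends through the right Kan extension, the whole Hodge-Tate spectral sequence over $\Ainf$ is $G_K$-equivariant; here $G_K$ acts via its action on $\Ainf$ as in \Cref{rem: G_K-equivariance of etale comparison}. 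The associated graded pieces are $H^{q,p}(\mstack X_{\mc O_{\mbb C_p}}/\mc O_{\mbb C_p})\{-q\}$, and the crucial identification is that $(\Ainf/\xi)\{i\}$ is the reduction of the Breuil-Kisin twist, hence isomorphic to $\mc O_{\mbb C_p}(i)$ as a $G_K$-module (cf. \Cref{ex: twists}); combined with base change $H^{q,p}(\mstack X_{\mc O_{\mbb C_p}}/\mc O_{\mbb C_p})[\tfrac{1}{p}] \simeq H^{q,p}(\mstack X_K/K)\otimes_K\mbb C_p$, the $(p,q)$-entry of the spectral sequence rationally becomes $H^{q,p}(\mstack X_K/K)\otimes_K\mbb C_p(-q)$, a direct sum of copies of the Hodge-Tate twist whose weight is determined by the column.

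Finally, since every nonzero differential $d_r$ in the spectral sequence shifts the column by $r \geq 2$ (after appropriate reindexing, by one unit), it would have to be a $G_K$-equivariant map between $\mbb C_p$-vector spaces that are Hodge-Tate of distinct weights; by Tate's theorem all such maps vanish, so the spectral sequence degenerates at $E_2$ rationally. I expect the main obstacle to be purely bookkeeping rather than conceptual: one must be careful that the base-change identifications over $\Ainf$ are genuinely $G_K$-equivariant (which is why the completeness statement from \Cref{lem:the tensor product remains complete} is invoked, to legitimately apply \Cref{prismatic_basechange}), and one must track that the Breuil-Kisin twist reduces to the correct Tate twist with the correct sign so that entries in different columns really do carry different Hodge-Tate weights. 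Everything else — exhaustiveness, strong convergence, the numerical reformulation — is either already in the excerpt or formal.
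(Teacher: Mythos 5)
Your proof follows the paper's argument essentially step for step: reduce to a dimension count over $K$ using that the twist is trivial on underlying $\mc O_K$-modules, base change to $\Ainf$ (justified by \Cref{lem:the tensor product remains complete} and Hodge-properness so that \Cref{prismatic_basechange} applies), observe $G_K$-equivariance of the resulting spectral sequence and that $(\Ainf/\xi)\{i\}\simeq\mc O_{\mbb C_p}(i)$, and conclude by Tate's vanishing theorem for morphisms between distinct $\mbb C_p(i)$'s. This is the same proof, correctly executed.
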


\begin{rem}\label{rem: the maps from HT-truncations are embeddings}
	Also note that since the Hodge-Tate spectral sequence degenerates rationally the natural maps $H^n(\Fil_\HT^{\le i}\RG_{\Prism/I}(\mstack X/\mf S)[\frac{1}{p}])\ra H^n_{\Prism/I}(\mstack X/
	\mf S)[\frac{1}{p}]$ are embeddings for all $i$ and $n$.
\end{rem}

\begin{rem}\label{rem: HT-decomposition for HT-truncations} From the proof of \Cref{prop: HT-degeneration} it also follows that for any $n$ one has a $G_K$-equivariant decomposition $$H^n(\Fil_\HT^{\le j}\RG_{\Prism/I}(\mstack X_{\mc O_{\mbb C_p}}/\Ainf)[\tfrac{1}{p}]) \simeq \oplus_{i=0}^j  H^{n-i}(\mstack X_K,\Lambda^i\mbb L_{\mstack X_K/K})\otimes_K\mbb C_p(-i).$$
\end{rem}

As a corollary we can establish the degeneration of the Hodge-de Rham spectral sequence for $\mstack X_K$:

\begin{thm}[Hodge-de Rham degeneration]\label{thm: HdR degeneration for stacks}
	Let $\mstack X$ be a smooth Hodge-proper stack over $\mc O_K$. Then for the generic fiber $\mstack X_K$ the Hodge-to-de Rham spectral sequence $E_2^{p,q}=H^{p}(\mstack X_K,\wedge^q \mbb L_{\mstack X_K/K})\Rightarrow H^{p+q}_\dR(\mstack X_K/K)$ degenerates.
\end{thm}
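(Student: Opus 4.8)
The plan is to deduce Hodge--de Rham degeneration for $\mstack X_K$ from the rational degeneration of the Hodge--Tate spectral sequence established in \Cref{prop: HT-degeneration}, via a dimension count. First I would fix a Breuil--Kisin prism $(\mf S, (E(u)))$ with $\mf S/E(u)\simeq \mc O_K$. By \Cref{cor: twisted prismatic cohomology are bounded below coherent} (together with \Cref{Hodge_filtration_for_stacks}) the de Rham cohomology $\RG_\dR(\mstack X/\mc O_K)$ is identified with $\RG_{\Prism^{(1)}}(\mstack X/\mf S)\otimes_{\mf S} \mc O_K$ and carries the Hodge filtration whose associated graded is $\RG_\Hdg(\mstack X/\mc O_K)$, hence after inverting $p$ there is a (strongly convergent) Hodge--de Rham spectral sequence $E_1^{p,q} = H^q(\mstack X_K, \wedge^p\mbb L_{\mstack X_K/K}) \Rightarrow H^{p+q}_\dR(\mstack X_K/K)$; degeneration is equivalent to the equality $\dim_K H^n_\dR(\mstack X_K/K) = \sum_{p+q=n}\dim_K H^q(\mstack X_K,\wedge^p\mbb L_{\mstack X_K/K})$ for each $n$, since the spectral sequence degenerates if and only if no nonzero differential ever occurs, which for a first-quadrant (here: suitably bounded-below) spectral sequence of finite-dimensional vector spaces is detected by total dimensions.

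The right-hand side $\sum_{p+q=n}\dim_K H^q(\mstack X_K,\wedge^p\mbb L_{\mstack X_K/K})$ is exactly $\dim_K H^n_\Hdg(\mstack X_K/K)$, and I would compare both sides with the prismatic cohomology. On one hand, $\dim_K H^n_\dR(\mstack X_K/K) = \dim_{\mf S[1/p]}\cdots$: more precisely, by \Cref{ex:stuff on BK-modules} the module $H^n_{\Prism^{(1)}}(\mstack X/\mf S)[1/p]$ is free over $\mf S[1/p]$, and base change along $\mf S[1/p]\to K$ gives $\dim_K H^n_\dR(\mstack X_K/K) = \rk_{\mf S[1/p]} H^n_{\Prism^{(1)}}(\mstack X/\mf S)[1/p]$ (using the universal coefficients formula and that the relevant Tor-terms vanish rationally because the cohomology is free). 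On the other hand, \Cref{prop: HT-degeneration} gives $\dim_K H^n_{\Prism/I}(\mstack X/\mf S)[1/p] = \sum_{p+q=n}\dim_K H^{p,q}(\mstack X_K/K) = \dim_K H^n_\Hdg(\mstack X_K/K)$; and the same freeness-plus-base-change argument (now along $\mf S[1/p]\to \mf S/E(u)[1/p]\simeq K$, reducing mod $I$) shows $\dim_K H^n_{\Prism/I}(\mstack X/\mf S)[1/p] = \rk_{\mf S[1/p]} H^n_{\Prism^{(1)}}(\mstack X/\mf S)[1/p]$, the very same rank. Chaining these equalities yields $\dim_K H^n_\dR(\mstack X_K/K) = \dim_K H^n_\Hdg(\mstack X_K/K)$, which is the desired degeneration.

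The main obstacle is bookkeeping the two base-change identifications correctly: one must know that $H^n_{\Prism^{(1)}}(\mstack X/\mf S)[1/p]$ is finitely generated (hence, being a rational Breuil--Kisin module, free) over $\mf S[1/p]$ so that both reductions — mod $p$-adic-completion to $\mc O_K$ giving de Rham, and mod $I$ giving Hodge--Tate — compute cohomology by naive tensor product with no surviving higher Tor after inverting $p$. This is exactly where Hodge-properness enters (via \Cref{cor: prismatic cohomology are bounded below coherent} and \Cref{cor: twisted prismatic cohomology are bounded below coherent}, which guarantee $\RG_{\Prism^{(1)}}(\mstack X/\mf S)\in\Coh^+(\mf S)$ and the de Rham comparison without $p$-completion via \Cref{cor: de Rham coh of Hodge-proper are complete}); everything else is formal. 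One should also double-check strong convergence of the Hodge--de Rham spectral sequence in the stacky (possibly unbounded-above, but bounded-below in each total degree by smoothness) setting, which follows from the completeness of the Hodge filtration in \Cref{Hodge_filtration_for_stacks} and finiteness of the Hodge cohomology groups in each bidegree for Hodge-proper $\mstack X$.
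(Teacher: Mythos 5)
Your proof is correct and follows essentially the same route as the paper: reduce degeneration to an equality of total dimensions, compute $\dim_K H^n_\dR(\mstack X_K/K)$ as the rank of the free $\mf S[\frac{1}{p}]$-module $H^n_{\Prism^{(1)}}(\mstack X/\mf S)[\frac{1}{p}]$, identify that same rank with $\dim_K H^n_{\Prism/I}(\mstack X/\mf S)[\frac{1}{p}]$ by the freeness argument, and then invoke the rational degeneration of the Hodge--Tate spectral sequence from \Cref{prop: HT-degeneration} to convert the Hodge--Tate dimension into $\dim_K H^n_\Hdg(\mstack X_K/K)$.
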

\begin{rem}
	Note that this gives a significant (though partial) strengthening of \cite[Theorem 1.4.3]{KubrakPrikhodko_HdR}, where the authors have shown Hodge-to-de Rham degeneration for the so-called Hodge-properly spreadable stacks. Indeed, \Cref{thm: HdR degeneration for stacks} says that instead of having a smooth Hodge-proper model over a finitely generated $\mbb Z$-algebra, for the Hodge-de Rham degeneration it is enough to have a model over $\mc O_K$ for some finite extension $K/\mbb Q_p$ over a single prime $p$. %Consequently, this gives an interesting "reciprocity" for the behavior of Hodge cohomology at different primes: more precisely, if we have a smooth stack $\mstack X$ over some finitely generated $\mbb Z$-algebra $R\subset \mbb C$ and the Hodge-de Rham spectral sequence does not degenerate  turned out to not degenerate be finitely generated at some prime $p$, it \textit{can't be} finitely generated at \textit{any} other prime $l$ as well. We note that this happens automatically if the Hodge-de Rham spectral sequence does not degenerate for  $\mstack X_{\mbb C}$.
\end{rem}
\begin{proof}
	For a reminder on the Hodge-de Rham spectral sequence for stacks see \cite[Section 1.1]{KubrakPrikhodko_HdR}. It is enough to show that $\dim_K H^{n}_\dR(\mstack X_K/K)=\sum_{p+q=n} \dim_K H^{p}(\mstack X_K,\wedge^q \mbb L_{\mstack X_K/K})$. As in the proof of \Cref{prop: rationally all cohomology has the same dimension}, one has $\dim_K H^{n}_\dR(\mstack X_K/K)= \rk_{\mf S[\frac{1}{p}]} H^n_{\Prism^{(1)}}(\mstack X/\mf S)[\frac{1}{p}]$. Since $H^n_{\Prism}(\mstack X/\mf S)[\frac{1}{p}]$ is free over $\mf S[\frac{1}{p}]$ and $H^n_{\Prism^{(1)}}(\mstack X/\mf S)[\frac{1}{p}]\simeq H^n_{\Prism}(\mstack X/\mf S)^{(1)}[\frac{1}{p}]$ one has $\rk_{\mf S[\frac{1}{p}]} H^n_{\Prism^{(1)}}(\mstack X/\mf S)[\frac{1}{p}]=\rk_{\mf S[\frac{1}{p}]} H^n_{\Prism}(\mstack X/\mf S)[\frac{1}{p}]$. On the other hand, again by freeness, $ H^n_{\Prism/I}({\mstack X}/\mf S)[\frac{1}{p}]\simeq (H^n_{\Prism}({\mstack X}/\mf S)[\frac{1}{p}])/I$ and $\rk_{\mf S[\frac{1}{p}]} H^n_{\Prism}(\mstack X/\mf S)[\frac{1}{p}]=\dim_{K} H^n_{\Prism/I}({\mstack X}/\mf S)$. One then has $\dim_{K} H^n_{\Prism/I}({\mstack X}/\mf S)=\sum_{p+q=n} \dim_K H^{p}(\mstack X_K,\wedge^q \mbb L_{\mstack X_K/K})$ by the degeneration of the Hodge-Tate spectral sequence.
\end{proof}

In the previous section in \Cref{thm: Fontaine's conjecture} we proved that $D_{\crys}(H^n_\et(\mstack X_{\mbb C_p}, \mbb Q_p))\simeq H^n_{\crys}(\mstack X_k, W(k))[\frac{1}{p}]$.  Consider de Rham period rings $B_\dR^+\coloneqq (\Ainf[\frac{1}{p}])^\wedge_\xi$ and $B_\dR\coloneqq B_\dR^+[\frac{1}{\xi}]$; one can show that there is a natural $G_K$-equivariant embedding $B_{\crys}\hookrightarrow B_{\dR}$. Recall that $(B_\dR)^{G_K}\simeq K$ and that $B_\dR$ has a natural ascending filtration $\Fil^{\ge i}B_\dR\coloneqq \xi^{i} B_\dR^+\subset B_\dR$ which is preserved by the $G_K$-action. Along with $D_\crys$ one can also consider the functor $D_\dR(V)\coloneqq (V\otimes_{\mbb Q_p}B_\dR)^{G_K}$; $D_\dR(V)$ is a naturally a $K$-vector space. Moreover, if $V$ is crystalline then the natural map $D_\crys(V)\otimes_{K_0}K\ra D_\dR(V)$ is an isomorphism.

\begin{prop}\label{prop: D_dR for the etale cohomology of a smooth Hodge proper stack}
	Let $\mstack X$ be a smooth Hodge-proper stack over $\mc O_K$. Then $D_\dR(H^n(\widehat{\mstack X}_{\mbb C_p},\mbb Q_p))$ is isomorphic to $H^n_\dR(\mstack X_K/K)$ endowed with the Hodge filtration.

\begin{rem} It is not hard to show from the de Rham and crystalline comparisons that for a smooth Hodge-proper stack
$$H^n_\dR(\mstack X_K/K) \simeq H^n_\crys(\mstack X_k/W(k))\otimes_{W(k)} K.$$
In particular this shows that the underlying $K$-vector space of $D_\dR(H^n(\widehat{\mstack X}_{\mbb C_p},\mbb Q_p))$ is given by $H^n_\dR(\mstack X_K/K)$. The main content of the proposition is that this is also an isomorphism of filtered vector spaces, namely that the filtration on $D_\dR(H^n(\widehat{\mstack X}_{\mbb C_p},\mbb Q_p))$ coming from $B_\dR$ coincides with the Hodge filtration on $H^n_\dR(\mstack X_K/K)$.
\end{rem}

\begin{proof} Recall the Nygaard filtration $\Fil_N^{\ge j}\RG_{\Prism}(X/\mf S)^{(1)}$ on the twisted prismatic cohomology (\cite[Section 15]{BS_prisms}). By definition $\phi_{\Prism}$ induces a map $\phi_{\Prism}\colon \Fil_N^{\ge j}\RG_{\Prism}(X/\mf S)^{(1)} \ra \RG_{\Prism}(X/\mf S)$ which factors through the multiplication by $E(u)^j$ map 
	$$
	\xymatrix{\Fil_N^{\ge j}\RG_{\Prism}(X/\mf S)^{(1)} \ar[rr]^{\phi_{\Prism}}\ar@{-->}[rd]_{\ol\phi_{\Prism}}& &\RG_{\Prism}(X/\mf S)\\
		& \RG_{\Prism}(X/\mf S) \ar[ru]_{\cdot E(u)^j}.& 
	}
	$$
	Moreover $\ol\phi_{\Prism}$ induces an isomorphism $\gr^j_N\RG_{\Prism}(X/\mf S)^{(1)}\simeq \Fil^{\le j}_\HT \RG_{\Prism/I}(X/\mf S)\{j\}$. 
	
	Defining $\Fil_N^{\ge j}\RG_{\Prism^{(1)}}(\mstack X/\mf S)$ as the right Kan extension of $\Fil_N^{\ge j}\RG_{\Prism}(-/\mf S)^{(1)}$ from $\Aff_{\mathcal O_K}^\sm$ we also get maps 
	
	$$
	\xymatrix{\Fil_N^{\ge j}\RG_{\Prism^{(1)}}(\mstack X/\mf S) \ar[rr]^{\phi_{\Prism}}\ar@{-->}[rd]_{\ol\phi_{\Prism}}& &\RG_{\Prism}(\mstack X/\mf S)\\
		& \RG_{\Prism}(\mstack X/\mf S) \ar[ru]_{\cdot E(u)^j}.& 
	}
	$$
	with identifications $\gr^j_N\RG_{\Prism^{(1)}}(\mstack X/\mf S)\simeq \Fil^{\le j}_\HT \RG_{\Prism/I}(\mstack X/\mf S)\{j\}$. Let the filtration $F^{\ge j}_1H^n_{\Prism^{(1)}}(\mstack X/\mf S)$ be defined as the image of $H^n(\Fil_N^{\ge j}\RG_{\Prism^{(1)}}(\mstack X/\mf S))$ in $H^n_{\Prism^{(1)}}(\mstack X/\mf S)$.

	On the other hand there is also its own Nygaard filtration on $H^n_{\Prism}(\mstack X/\mf S)^{(1)}$; namely for $M\coloneqq H^n_{\Prism}(\mstack X/\mf S)$ we have the equivalence $\phi_\Prism \colon M^{(1)}[\frac{1}{E}]\xra{\sim} M[\frac{1}{E}]$ and one can put $F^{\ge j}_2 M^{(1)}\coloneqq \phi^{-1}_\Prism(E^j\cdot M)\cap M^{(1)}$. We claim that after inverting $p$ the two filtrations coincide: $F^{\ge j}_1=F^{\ge j}_2$ as filtrations on $M[\frac{1}{p}]$.

	Let's prove the claim. One has $F_1^{\ge j}\subset F_2^{\ge j}$ because $\phi_{\Prism}|_{F_{1}^{\ge j}}=E^j\cdot \ol \phi_{\Prism}$ and thus $\phi_{\Prism}(F_1^{\ge j})\subset E^j \cdot M[\frac{1}{p}]$. Moreover, $F_1^{\ge 0}=F_2^{\ge 0}=M^{(1)}$ and so it remains to show that if $x\in F_1^{\ge j}$ and $\phi_{\Prism}(x)\in E^{j+1}\cdot M[\frac{1}{p}]$ (which is equivalent to $x\in F_1^{\ge j+1}$) then in fact $x\in F_1^{\ge j+1}$; in other words it is enough to show that the natural map $s\colon \gr^j_1\ra E^j\cdot M[\frac{1}{p}]/E^{j+1}\cdot M[\frac{1}{p}]\simeq (M[\frac{1}{p}]/E)\{j\}$ induced by $\phi_{\Prism}$ is an embedding. We have a commutative diagram 
	$$
	\xymatrix{H^n(\gr^j_N\RG_{\Prism^{(1)}}[\tfrac{1}{p}])\ar[rr]^{\phi_\Prism}&& M/I[\frac{1}{p}]\{j\}\\
	H^n(\Fil^{\ge j}_N\RG_{\Prism^{(1)}}[\tfrac{1}{p}])/H^n(\Fil^{\ge j +1}_N\RG_{\Prism^{(1)}}[\tfrac{1}{p}])\ar[rr]^(.7)i\ar[u]^\delta&&\gr_1^jM^{(1)}\ar[u]^s	
	}	
	$$
	where $\delta$ is an embedding coming as the boundary map in the long exact sequence of cohomology and $i$ is the surjection coming from the definition of $F_1^j$. The map $\phi_\Prism$ is identified with the one induced on $H^n$ by $$\Fil^{\le j}_\HT\RG_{\Prism/I}(\mstack X/\mf S)[\frac{1}{p}]\{j\} \tto \RG_{\Prism/I}(\mstack X/\mf S)[\frac{1}{p}]\{j\}$$
	and is an embedding by \Cref{rem: the maps from HT-truncations are embeddings}. Since so is $\delta$ from the commutative diagram we get that $i$ is also an embedding and, being also surjective, an isomorphism. It follows then that $s$ is also an embedding. 
	 Thus we get $F_1^j=F_2^j$. 

	%Recall that by the de Rham comparison (\Cref{prop: de Rham comparison}) $\RG_{\Prism^{(1)}}(\mstack X/\mf S)\otimes_{\mf S}\mc O_K\simeq \RG_{\dR}(\mstack X/\mc O_K)$. Right Kan extending, it follows from \todo{lem} below that the reduction of $\Fil_N^j\RG_{\Prism^{(1)}}(\mstack X/\mf S)$ modulo $E(u)$ is exactly the Hodge filtration on $\RG_{\dR}(\mstack X/\mc O_K)$.

	Recall that by \Cref{rem: the BK-module associated to the etale cohomology} $M_\free\simeq \BK(H^n_\et(\widehat{\mstack X}_{\mbb C_p},\mbb Z_p)_\free)$. Note also that by \Cref{ex:stuff on BK-modules} $M_\free[\frac{1}{p}]=M[\frac{1}{p}]$. By \cite[Theorem 1.2.1(1)]{Kisin_BKModules} (and also \Cref{rem: the BK-module associated to the etale cohomology}) the filtration on  $D_\dR(H^n_\et(\widehat{\mstack X}_{\mbb C_p},\mbb Q_p))\simeq\footnote{This is also given by \cite[Theorem 1.2.1(1)]{Kisin_BKModules}.} M^{(1)}\otimes_{\mf S} K\simeq H^n_\dR(\mstack X_K/K)$ is given by the image $\Fil_{N \text{mod} E}^*$ of $F^{\ge *}_1=F^{\ge *}_2$ modulo $E$.  Let $\Fil_\Hdg^* H^n_{\dR}(\mstack X_K/K)$ denote the Hodge filtration on $H^n_{\dR}(\mstack X_K/K)$. By \Cref{lem: reduction of Nygaard filtration maps to Hodge} one has $\Fil_{N \text{mod} E}^j\subset \Fil^j_\Hdg$. Thus it is enough to show the inclusion in the other way. % show that the corresponding associated gradeds are the same, namely that $\gr_{N \mod E}^j = H^{n-j}(\mstack X_K,\wedge^j\mbb L_{\mstack X_K/K})$. 

	Note that we have a natural map of long exact sequences 
	$$
	\xymatrix{\ldots \ar[r]& H^n(\Fil_N^{\ge j+1}\RG_{\Prism^{(1)}}[\tfrac{1}{p}])
	\ar[r]\ar[d]&  H^n(\Fil_N^{\ge j}\RG_{\Prism^{(1)}}[\tfrac{1}{p}])\ar[r]\ar[d]&  H^n(\gr_N^{j}\RG_{\Prism^{(1)}}[\tfrac{1}{p}])\ar[d] \ar[r]&\ldots\\
	\ldots \ar[r] & H^n(\Fil_\Hdg^{\ge j+1}\RG_\dR[\tfrac{1}{p}])
	\ar[r]&  H^n(\Fil_\Hdg^{\ge j}\RG_\dR[\tfrac{1}{p}])\ar[r]&  H^n(\gr_\Hdg^{j}\RG_\dR[\tfrac{1}{p}])\ar[r]&\ldots .}
	$$
	where the arrows 
	$
	H^n(\Fil_N^{\ge *}\RG_{\Prism^{(1)}}[\tfrac{1}{p}]) \ra H^n(\Fil_\Hdg^{\ge *}[\tfrac{1}{p}])
	$ factoring as 
	$$
	H^n(\Fil_N^{\ge *}\RG_{\Prism^{(1)}}[\tfrac{1}{p}]) \surj H^n(\Fil_N^{\ge *}\RG_{\Prism^{(1)}}[\tfrac{1}{p}])/I \ra H^n(\Fil_\Hdg^{\ge *}\RG_\dR[\tfrac{1}{p}])
	$$
	with the second arrow provided by \Cref{lem: reduction of Nygaard filtration maps to Hodge}.
	It is enough to show that the induced map $$
	 H^n(\Fil_\Hdg^{j}\RG_\dR)[\tfrac{1}{p}]\xymatrix{\ar[r]&} H^n(\gr_\Hdg^{j}\RG_\dR[\tfrac{1}{p}])$$ is surjective (this will show that  $\Fil_{N \text{mod} E}^j = \Im \left(F_1^{\ge j}/I\right) \subset M/I[\frac{1}{p}]$ surjects on $\Fil_\Hdg^j H^n_{\dR}$). This is enough to show after base change $(\mf S,(E(u)))\ra (\Ainf, (\xi))$.
	
	Note that the map $H^n(\gr_N^{j}\RG_{\Prism^{(1)}}(\mstack X_{\mc O_{\mbb C_p}}/\Ainf)[\tfrac{1}{p}])\ra H^n(\gr_\Hdg^{j}\RG_\dR(\mstack X_{\mbb C_p}/\mbb C_p))$ is given by the natural projection $$H^n(\Fil^{\le j}_\HT \RG_{\Prism/I}(\mstack X_{\mc O_{\mbb C_p}}/ \Ainf)[\tfrac{1}{p}]\{j\}) \xymatrix{\ar[r]&} H^{n}(\mstack X_{\mbb C_p}, \wedge^j \mbb L_{\mstack X_{\mbb C_p}/\mbb C_p}[-j])$$ to $j$-th Hodge-Tate graded component and that the latter splits off as the Hodge-Tate weight $0$ component of $H^n(\Fil^{\le j}_\HT \RG_{\Prism/I}(\mstack X_{\mc O_{\mbb C_p}}/ \Ainf)[\tfrac{1}{p}]\{j\})$ by \Cref{rem: HT-decomposition for HT-truncations}. Thus it is enough to show that the corresponding direct summand lies in the image of $$\gr_1^j\otimes_K\mbb C_p\simeq H^n(\Fil_N^{\ge j}\RG_{\Prism^{(1)}}[\tfrac{1}{p}])/H^n(\Fil_N^{\ge j+1}\RG_{\Prism^{(1)}}[\tfrac{1}{p}])\subset H^n(\gr_N^{j}\RG_{\Prism^{(1)}}[\tfrac{1}{p}]).$$
	However, if the image of $\gr_1^j\otimes_K \mbb C_p$ does not contain $H^{n}(\mstack X_{\mbb C_p}, \wedge^j \mbb L_{\mstack X_{\mbb C_p}/\mbb C_p}[-j])$ then the map $H^n_{\Prism^{(1)}}(\mstack X_{\mc O_{\mbb C_p}}/\Ainf)[\tfrac{1}{p}]\ra H^n_\dR(\mstack X_{\mbb C_p}/\mbb C_p)$ can not be surjective (and it should be, since $H^n_{\Prism^{(1)}}(\mstack X_{\mc O_{\mbb C_p}}/\Ainf)[\tfrac{1}{p}]$ is free over $\Ainf[\frac{1}{p}]$ and so $H^n_\dR(\mstack X_{\mbb C_p}/\mbb C_p)\simeq H^n_{\Prism^{(1)}}(\mstack X_{\mc O_{\mbb C_p}}/\Ainf)[\tfrac{1}{p}])/\xi$). Indeed, note that this map is $G_K$-equivariant and $H^n_\dR(\mstack X_{\mbb C_p}/\mbb C_p)\simeq H^n_\dR(\mstack X_{K}/K)\otimes_K \mbb C_p$ is pure of Hodge-Tate weight $0$. By \Cref{rem: HT-decomposition for HT-truncations} we have a decomposition 
	$$\oplus_j H^n(\gr_N^{j}\RG_{\Prism^{(1)}}(\mstack X_{\mc O_{\mbb C_p}})[\tfrac{1}{p}])\simeq \oplus_j H^n(\Fil_\HT^{\le j}\RG_{\Prism/I}(\mstack X_{\mc O_{\mbb C_p}})[\tfrac{1}{p}])\{j\}\simeq \oplus_j \oplus_{i=0}^{\min(j,n)} H^{n-i}(\mstack X_{K}, \wedge^{i} \mbb L_{\mstack X_{K}/K})\otimes_K \mbb C_p(j-i)$$ as $G_K$-modules. The weight 0 component in this sum is exactly given by the associated graded of the Hodge filtration $\oplus_{j} H^{n-j}(\mstack X_{K}, \wedge^j \mbb L_{\mstack X_{K}/K})\otimes_K \mbb C_p$ and note that due to the Hodge-to-de Rham degeneration its dimension is equal to $\dim H^n_\dR(\mstack X_K/\mc O_K)$. Note that the filtration $F_1^{\ge *}$ on $H^n_{\Prism^{(1)}}[\frac{1}{p}]$ is complete; indeed $F_1^{\ge *}=F_2^{\ge *}$ and from the definition of $F_2^{\ge *}$ and the fact that $H^n_{\Prism^{(1)}}[\frac{1}{p}]$ is finitely generated (and $\phi_{\Prism}[\frac{1}{E}]$ is an isomorphism) it is not hard to see that $F_2^{\ge j+1}=E\cdot F_2^{\ge j}$ for $j\gg 0$.  As we saw above the associated graded $\oplus_j \gr^j_1(H^n_{\Prism^{(1)}}(\mstack X_{\mc O_{\mbb C_p}}/\Ainf)[\tfrac{1}{p}])$ embeds into $\oplus_j H^n(\gr_N^{j}\RG_{\Prism^{(1)}}(\mstack X_{\mc O_{\mbb C_p}}/\Ainf)[\tfrac{1}{p}])$ and by completeness should still surject on $\oplus_j H^n(\gr_\Hdg^{j}\RG_\dR(\mstack X_{\mbb C_p}/\mbb C_p))$ which is pure of Hodge-Tate weight 0. The map is $G_K$-equivariant and looking at the dimensions we see that the only way it is surjective is if $\gr^j_1(H^n_{\Prism^{(1)}}(\mstack X_{\mc O_{\mbb C_p}}/\Ainf)[\tfrac{1}{p}])\subset H^n(\gr_N^{j}\RG_{\Prism^{(1)}}(\mstack X_{\mc O_{\mbb C_p}}/\Ainf)[\tfrac{1}{p}])$ contains the Hodge-Tate weight 0 component. Thus we are done and $\Fil^*_{N\text{mod}E}=\Fil_\Hdg^*$.
\end{proof}
\end{prop}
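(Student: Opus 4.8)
The plan is to reduce the statement to a comparison of filtrations on twisted prismatic cohomology, and to identify these with the Hodge filtration on one side and, via Kisin's theory, with the Hodge filtration on $D_\dR$ on the other. Concretely, fix a Breuil-Kisin prism $(\mf S,(E(u)))$ with $\mf S/E(u)\simeq\mc O_K$. By Remark \ref{rem: the BK-module associated to the etale cohomology} the free Breuil-Kisin module $\BK(H^n_\et(\widehat{\mstack X}_{\mbb C_p},\mbb Z_p)_{\free})$ is $H^n_\Prism(\mstack X/\mf S)_{\free}$, and by \cite[Theorem 1.2.1]{Kisin_BKModules} the filtered $K$-vector space $D_\dR(H^n_\et(\widehat{\mstack X}_{\mbb C_p},\mbb Q_p))$ is the reduction modulo $E(u)$ of the Frobenius twist of this module, equipped with the image of a Nygaard-type filtration. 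Since rationally $H^n_\Prism(\mstack X/\mf S)_{\free}[\tfrac{1}{p}]=H^n_\Prism(\mstack X/\mf S)[\tfrac{1}{p}]$, everything reduces to showing that the Nygaard filtration on $H^n_{\Prism^{(1)}}(\mstack X/\mf S)[\tfrac{1}{p}]$, reduced modulo $E(u)$, agrees with the Hodge filtration on $H^n_\dR(\mstack X_K/K)$ under the de Rham comparison.

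To make sense of this I would introduce two a priori different filtrations. First, the geometric one $F^{\ge j}_1$: right Kan extend the Nygaard filtration $\Fil^{\ge j}_N\RG_{\Prism}(-/\mf S)^{(1)}$ of \cite{BS_prisms} from smooth affine $\mc O_K$-schemes, and let $F^{\ge j}_1H^n_{\Prism^{(1)}}(\mstack X/\mf S)$ be the image of $H^n(\Fil^{\ge j}_N\RG_{\Prism^{(1)}}(\mstack X/\mf S))$; the factorization $\phi_\Prism=E(u)^j\cdot\overline\phi_\Prism$ survives Kan extension, with $\overline\phi_\Prism$ inducing $\gr^j_N\RG_{\Prism^{(1)}}(\mstack X/\mf S)\simeq\Fil^{\le j}_\HT\RG_{\Prism/I}(\mstack X/\mf S)\{j\}$. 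Second, the module-theoretic one $F^{\ge j}_2M^{(1)}\coloneqq\phi_\Prism^{-1}(E^jM)\cap M^{(1)}$ for $M\coloneqq H^n_\Prism(\mstack X/\mf S)$. The inclusion $F^{\ge j}_1\subseteq F^{\ge j}_2$ is immediate from the factorization. For the reverse inclusion, rationally, I would induct on $j$, the key point being that the graded map $\gr^j_1\to(M/I)\{j\}[\tfrac{1}{p}]$ induced by $\phi_\Prism$ is injective; this follows from injectivity of the maps $H^n(\Fil^{\le j}_\HT\RG_{\Prism/I}(\mstack X/\mf S)[\tfrac{1}{p}])\to H^n_{\Prism/I}(\mstack X/\mf S)[\tfrac{1}{p}]$, which is Remark \ref{rem: the maps from HT-truncations are embeddings}, a consequence of the rational degeneration of the Hodge-Tate spectral sequence (Proposition \ref{prop: HT-degeneration}). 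Thus $F^{\ge j}_1=F^{\ge j}_2$ rationally, and by Kisin the Hodge filtration on $D_\dR$ is $\Fil^j_{N\bmod E}\coloneqq\mathrm{Im}(F^{\ge j}_1/I)$.

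It then remains to show $\Fil^j_{N\bmod E}=\Fil^j_\Hdg H^n_\dR(\mstack X_K/K)$. The inclusion $\Fil^j_{N\bmod E}\subseteq\Fil^j_\Hdg$ is formal from the compatibility of the Nygaard and Hodge filtrations (Lemma \ref{lem: reduction of Nygaard filtration maps to Hodge}). For the reverse inclusion I would compare the long exact sequences of the filtered complexes $\Fil^{\ge\bullet}_N\RG_{\Prism^{(1)}}$ and $\Fil^{\ge\bullet}_\Hdg\RG_\dR$ after inverting $p$ and base-changing along $(\mf S,(E(u)))\to(\Ainf,(\xi))$; it suffices to show $H^n(\Fil^j_\Hdg\RG_\dR)[\tfrac{1}{p}]\to H^n(\gr^j_\Hdg\RG_\dR[\tfrac{1}{p}])$ is surjective. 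By Remark \ref{rem: HT-decomposition for HT-truncations} the target is the Hodge-Tate weight $0$ part of $H^n(\gr^j_N\RG_{\Prism^{(1)}})[\tfrac{1}{p}]\simeq H^n(\Fil^{\le j}_\HT\RG_{\Prism/I})\{j\}[\tfrac{1}{p}]$, and one wants the weight $0$ part of $\gr^j_1\otimes_K\mbb C_p$ to surject onto it. Here I invoke $G_K$-equivariance: $H^n_\dR(\mstack X_{\mbb C_p}/\mbb C_p)\simeq H^n_\dR(\mstack X_K/K)\otimes_K\mbb C_p$ is pure of Hodge-Tate weight $0$, the full associated graded of the Nygaard filtration decomposes (Remark \ref{rem: HT-decomposition for HT-truncations}) into Tate twists whose weight $0$ part is exactly $\bigoplus_j H^{n-j}(\mstack X_K,\wedge^j\mbb L_{\mstack X_K/K})\otimes_K\mbb C_p$, and by Hodge-to-de Rham degeneration (Theorem \ref{thm: HdR degeneration for stacks}) this has the same total dimension as $H^n_\dR(\mstack X_K/K)$. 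Completeness of $F^{\ge\bullet}_1$ (from $F^{\ge j+1}_2=E\cdot F^{\ge j}_2$ for $j\gg 0$) together with the dimension count forces the surjectivity, hence $\Fil^j_{N\bmod E}=\Fil^j_\Hdg$.

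The main obstacle I expect is precisely this last step: pinning down which direct summand of the Hodge-Tate decomposition of the Nygaard graded pieces the Hodge filtration hits. The difficulty is that the Nygaard-graded pieces are not pure of a single Hodge-Tate weight, whereas $H^n_\dR$ is, so the argument must exploit $G_K$-equivariance and a global dimension count (ultimately resting on Hodge-to-de Rham degeneration and the freeness of $H^n_{\Prism^{(1)}}(\mstack X_{\mc O_{\mbb C_p}}/\Ainf)[\tfrac{1}{p}]$ over $\Ainf[\tfrac{1}{p}]$) to exclude the possibility that the comparison map misses the weight $0$ part. Everything else amounts to transporting, via right Kan extension and smooth descent, the filtered structures and comparison isomorphisms already established for smooth affine schemes in \cite{BS_prisms} and \cite{Kisin_BKModules}.
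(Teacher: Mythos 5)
Your proposal reproduces the paper's proof essentially step for step: you introduce the same two filtrations $F^{\ge j}_1$ (geometric, via Kan-extended Nygaard filtration) and $F^{\ge j}_2$ (module-theoretic, via $\phi_\Prism^{-1}(E^jM)\cap M^{(1)}$), identify them rationally using the injectivity from the degeneration of the Hodge-Tate spectral sequence, invoke Kisin's theorem to express the $B_\dR$-filtration as $\Fil^*_{N\bmod E}$, and then establish the reverse inclusion $\Fil^j_\Hdg\subseteq\Fil^j_{N\bmod E}$ via the same $G_K$-equivariance plus Hodge--Tate weight plus dimension-count argument resting on Hodge-to-de Rham degeneration and completeness of the Nygaard filtration. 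This matches the paper's argument in both strategy and all key lemmas.
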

In the course of the proof we used the following lemma:
\begin{lem}\label{lem: reduction of Nygaard filtration maps to Hodge}
	Let $(A,I)$ be a {bounded} prism. The composite map $\Fil_{N}^j \RG_{\Prism}(-/A)^{(1)} \ra \RG_{\Prism}(-/A)^{(1)} \ra \RG_{\Prism}(-/A)^{(1)}\otimes_A A/I \simeq (\Omega_{-/A/I,\dR}^\bullet)^\wedge_{p}$ (as a transformation of functors from $\Aff^\sm_{A/I}$ to $\DMod{A/I}$) factors naturally through $j$-th term of the Hodge filtration $(\Omega_{-/A/I,\dR}^{\ge j})^\wedge_{p}$.
\end{lem}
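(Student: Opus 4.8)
The plan is to reduce the statement to the case of a smooth affine $A/I$-scheme (which is in any case the only situation where these functors are defined) and there to identify it with the filtered refinement of the de Rham comparison. Concretely, I would produce, naturally in $\Spec R\in\Aff_{A/I}^{\sm}$, a morphism of filtered objects
\[
\Fil_N^{\ge\bullet}\bigl(\RG_{\Prism}(R/A)^{(1)}/I\bigr)\;\tto\;\bigl(\Omega_{R/(A/I),\dR}^{\ge\bullet}\bigr)^\wedge_p
\]
whose underlying ($\bullet=0$) map is the de Rham comparison isomorphism of \Cref{prop: de Rham comparison} (equivalently \Cref{main_BMS2}(3)); the lemma is then the $\bullet=j$ component of this morphism, and the factorization is unambiguous because for smooth $R$ the Hodge piece $(\Omega_{R/(A/I),\dR}^{\ge j})^\wedge_p$ is a degreewise split subcomplex of the full ($p$-completed) de Rham complex. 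The existence and naturality of such a filtered morphism is, up to matching normalizations, the content of \cite[Section 15]{BS_prisms}: there the Nygaard filtration on $\RG_{\Prism}(R/A)^{(1)}$ is constructed, its reduction mod $I$ is compared with the Hodge filtration on $\RG_{L\dR^\wedge_p}(R/(A/I))$, and on associated graded pieces the comparison map is the projection $\gr_N^j\simeq\Fil_{\HT}^{\le j}(\RG_{\Prism}(R/A)/I)\{j\}\twoheadrightarrow\gr_{\HT}^j\{j\}\simeq\Omega_{R/(A/I)}^j[-j]=\gr_H^j$ furnished by the Hodge--Tate comparison \Cref{main_BMS2}(4).

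If a self-contained argument is preferred, one reduces to polynomial algebras: derived prismatic cohomology together with its Nygaard filtration, the de Rham comparison map, and $\RG_{L\dR^\wedge_p}$ with its Hodge filtration are all left Kan extended from finitely generated polynomial $A/I$-algebras (for the prismatic side, \cite[Sections 7.2, 15]{BS_prisms}; for the de Rham side, \Cref{constr:p-completed derived de Rham}, and \Cref{prop: derived complete is the completion for smooth Artin stacks} for the agreement with the honest version on smooth algebras). Hence a natural factorization constructed on $\Poly_{A/I}^{\fg}$ extends uniquely, by sifted-colimit-preservation, to $\Aff_{A/I}^{\sm}$. On $R=(A/I)[x_1,\dots,x_n]$, after the faithfully flat cover of \cite[Lemma 3.7(4)]{BS_prisms} making $I=(d)$ principal (harmless by descent), one computes $\RG_{\Prism}(R/A)$ via the explicit prismatic (e.g.\ $q$-de Rham) complex attached to the $\delta$-lift $\widetilde R=A[x_1,\dots,x_n]$; the divided Frobenius $\phi_{\Prism}\colon\Fil_N^{\ge j}\RG_{\Prism}(R/A)^{(1)}\to(d)^j\cdot\RG_{\Prism}(R/A)$, reduced mod $I$ and transported along the de Rham comparison, then visibly annihilates the image of $\Fil_N^{\ge j}$ in the quotient complex $\Omega_{R/(A/I),\dR}^{\bullet}/\Omega_{R/(A/I),\dR}^{\ge j}$, i.e.\ lands in $\Omega^{\ge j}$, naturally in the chosen coordinates.

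The main obstacle is the bookkeeping in the reduction-to-polynomials step: one must check that on a smooth $R$ the piece $\Fil_N^{\ge j}$ of the honest prismatic cohomology agrees with the left Kan extension from polynomials of the corresponding Nygaard piece, which rests on Nygaard-completeness for bounded prisms and on $p$-completion commuting with the relevant sifted colimits — this works because in each Hodge-- and Hodge--Tate--graded degree the objects involved are perfect complexes of $A/I$-modules. The subtler conceptual point is that the reduction mod $I$ of the Nygaard filtration does \emph{not} coincide with the Hodge filtration (their associated gradeds differ by the lower Hodge--Tate contributions), so the required factorization cannot be read off from associated gradeds alone and genuinely uses the mod-$I$ compatibility of the Nygaard and Hodge filtrations from \cite{BS_prisms}; once that input is available, the remaining work is the routine naturality and completion checks.
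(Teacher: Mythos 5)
Your proposal identifies the right mechanism — transporting along the divided Frobenius $\widetilde\phi$ that realizes the de Rham comparison — but neither of your two routes is what the paper does, and both as written have gaps.

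On your Route 1: it is not accurate to say that the filtered factorization ``is the content of'' \cite[Section 15]{BS_prisms}. What is proved there (Theorem 15.3) is that $\widetilde\phi_\Prism$ is a filtered equivalence between $\Fil_N^{\ge\bullet}\RG_\Prism(-/A)^{(1)}$ and the Beilinson-filtered $L\eta_I\RG_\Prism(-/A)$, together with Corollary 15.4 for the underlying de Rham comparison. The reduction of $\Fil_N^{\ge j}$ modulo $I$ and its identification with $(\Omega^{\ge j}_{-/(A/I),\dR})^\wedge_p$ are \emph{not} stated there, and it is not the case that the associated-graded projection $\gr_N^j\twoheadrightarrow\gr_\Hdg^j$ settles it (you correctly flag this yourself in your last paragraph, but Route 1 implicitly leans on it). The lemma has to be extracted from Theorem 15.3, and that extraction is precisely what the paper's proof supplies.

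The paper does this without reducing to polynomials and without ever choosing coordinates. It uses the explicit model $L\eta_I K\simeq\tau_B^{\le 0}(I^\bullet\otimes_A K)$ (Beilinson connective truncation, \cite[Prop.\ 5.8]{BMS2}), so $\Fil_N^{\ge j}$ is identified as $\tau_B^{\le 0}(I^{\max(\bullet,j)}\otimes_A K)$. The crucial abstract fact is $\tau_B^{\le 0}(I^\bullet\otimes_A K)\otimes_A A/I\simeq H^0_B(I^\bullet\otimes_A K)$, and $H^0_B(I^\bullet\otimes_A K)\simeq(\Omega^\bullet_{-/(A/I),\dR})^\wedge_p$ via the Hodge--Tate comparison and \cite[Prop.\ 6.12]{BMS1}, \cite[Thm.\ 5.4(3)]{BMS2}. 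Since $\Fil_N^{\ge j}\to\RG_\Prism^{(1)}$ is $A$-linear, its mod-$I$ reduction factors through $\tau_B^{\le 0}(I^{\max(\bullet,j)}\otimes_A K)/I\simeq H^0_B(I^{\max(\bullet,j)}\otimes_A K)$, which is exactly $(\Omega^{\ge j})^\wedge_p$ inside $(\Omega^\bullet)^\wedge_p$. The factorization therefore drops out formally from the Beilinson truncation calculus, with no descent or Kan extension gymnastics.

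Your Route 2 is a genuinely different and more concrete approach (reduce to polynomials, make $I$ principal by a faithfully flat cover, compute via a $q$-de~Rham model), and I believe it can be made to work. But as written it is only a gesture: the phrase ``visibly annihilates the image of $\Fil_N^{\ge j}$'' is doing all the work, and you would need to (a) write down the explicit formula for the Nygaard filtration on the twisted prismatic complex of a polynomial $\delta$-lift (the content of \cite[Def.\ 15.2]{BS_prisms} unwound), (b) justify that the de Rham comparison isomorphism is implemented at the cochain level via the divided Frobenius on this model, and (c) verify that the left Kan extension along $\Poly^{\fg}_{A/I}\hookrightarrow\Aff^{\sm}_{A/I}$ recovers $\Fil_N^{\ge j}$ on all smooth affines (this needs quasisyntomic descent and Nygaard-completeness of bounded prisms). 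None of these are trivial, and the abstract $L\eta$ argument sidesteps all of them. If you want a self-contained proof, I would recommend adopting the paper's $\tau_B^{\le 0}$/$H^0_B$ argument rather than pushing Route 2 through.
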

\begin{proof}
	Recall the proof of de Rham comparison \cite[Corollary 15.4]{BS_prisms}: namely by the definition of Nygaard filtration $\phi_\Prism$ gives a filtered map $\Fil_N^* \RG_{\Prism}(-/A)^{(1)} \ra I^* \RG_\Prism(-/A)$ which induces an equivalence $\widetilde\phi_\Prism\colon \RG_{\Prism}(-/A)^{(1)} \ra L\eta_I \RG_\Prism(-/A)$ \cite[Theorem 15.3]{BS_prisms}; here one uses the identification $L\eta_I K\simeq \tau_B^{\le 0}(I^* \otimes_A K)$ \cite[Proposition 5.8]{BMS2} for any $K\in\DMod{A}$. The de Rham comparison then is obtained by taking reduction mod $I$ and identifying $(L\eta_I \RG_\Prism(-/A))\otimes_A A/I$ with the de Rham complex (via the Hodge-Tate comparison and \cite[Proposition 6.12]{BMS1}). Let $K\coloneqq \RG_{\Prism}(-/A)^{(1)}$. The complex $\Fil_N^j \RG_{\Prism}(-/A)^{(1)}$ with filtration $F^i$ given by $F^i\coloneqq\Fil_N^j \RG_{\Prism}(-/A)^{(1)}$ if $i\le j$ and $F^i\coloneqq\Fil_N^i \RG_{\Prism}(-/A)^{(1)}$ otherwise, is identified with $\tau_B^{\le 0}(I^{\max{(*,j)}} \otimes_A K)$; the natural map $\Fil_N^j \RG_{\Prism}(-/A)^{(1)}\ra \RG_{\Prism}(-/A)^{(1)}$ is just induced by $I^{\max{(*,j)}} \otimes_A K\ra I^{*} \otimes_A K$. Note that in fact $H^0_B(I^* \otimes_A K) \xra{\sim} \tau_B^{\le 0}(I^* \otimes_A K)\otimes_A A/I$. Thus the natural map $\tau_B^{\le 0}(I^{\max{(*,j)}} \otimes_A K)\ra \tau_B^{\le 0}(I^* \otimes_A K)\otimes_A A/I$ quotients through $H^0_B(I^{\max{(*,j)}} \otimes_A K)$ which (see the description of $H^0_B(-)$ in \cite[Theorem 5.4(3)]{BMS2}) is exactly identified with $(\Omega_{-/A/I,\dR}^{\ge j})^\wedge_{p}$ (under the identification $H^0_B(I^* \otimes_A K)\simeq (\Omega_{-/A/I,\dR}^\bullet)^\wedge_{p}$).
\end{proof}

As a corollary we get the Hodge-Tate decomposition for $H^n_{\et}(\widehat{\mstack X}_{\mbb C_p},\mbb Q_p)\otimes_{\mbb Q_p}\mbb C_p$:
\begin{thm}[{Hodge-Tate decomposition}]\label{thm: hodge-tate decomposition}
	Let $\mstack X$ be a smooth Hodge-proper stack over $\mc O_K$. Then one has the Hodge-Tate decomposition:	
	$$
	H^n_{\et}(\widehat{\mstack X}_{\mbb C_p},\mbb Q_p)\otimes_{\mbb Q_p}\mbb C_p \simeq \bigoplus_{i+j=n} H^j(\mstack X_{K},\wedge^i\mbb L_{\mstack X_K/K})\otimes_{K}\mbb C_p(-i).
	$$
\end{thm}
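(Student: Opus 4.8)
The plan is to deduce \Cref{thm: hodge-tate decomposition} from \Cref{prop: D_dR for the etale cohomology of a smooth Hodge proper stack} by passing to the associated graded of the de Rham filtration, exactly as one does in the schematic case (Faltings \cite[Theorem 4.1]{Faltings_p-adic_Hodge}). Concretely, \Cref{thm: Fontaine's conjecture} tells us that $V\coloneqq H^n_{\et}(\widehat{\mstack X}_{\mbb C_p},\mbb Q_p)$ is a crystalline $G_K$-representation, hence in particular de Rham and Hodge--Tate. For a de Rham representation $V$ the $\mbb C_p$-semilinear representation $V\otimes_{\mbb Q_p}\mbb C_p$ decomposes as $\bigoplus_{i\in\mbb Z} \left(\gr^{-i}D_\dR(V)\otimes_K \mbb C_p(i)\right)$, where $\gr^\bullet D_\dR(V)$ is the associated graded of the Hodge filtration on the $K$-vector space $D_\dR(V)$; this is a standard consequence of Tate's computation ($H^0(G_K,\mbb C_p(j))=0$ and the relevant vanishing of extensions for $j\neq 0$) applied to the exhaustive, separated filtration $\Fil^\bullet B_\dR$ together with $(B_\dR)^{G_K}\simeq K$ and $\gr^j B_\dR\simeq \mbb C_p(j)$.

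First I would spell out this reduction: apply $-\otimes_{\mbb Q_p}\mbb C_p$ to $V$, use the comparison isomorphism $V\otimes_{\mbb Q_p}B_\dR\simeq D_\dR(V)\otimes_K B_\dR$ coming from the fact that $V$ is de Rham (which follows from crystallinity via Fontaine's results, as already noted in the excerpt right after \Cref{thm: Fontaine's conjecture}), and pass to $\gr^0$ of the filtered tensor product, where the filtration on the right is the convolution of the Hodge filtration $\Fil^\bullet D_\dR(V)$ and $\Fil^\bullet B_\dR$. Taking $\gr^0$ yields a $G_K$-equivariant isomorphism $V\otimes_{\mbb Q_p}\mbb C_p\simeq\bigoplus_i \gr^i D_\dR(V)\otimes_K\mbb C_p(-i)$. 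Then I would invoke \Cref{prop: D_dR for the etale cohomology of a smooth Hodge proper stack}, which identifies $D_\dR(V)$ with $H^n_\dR(\mstack X_K/K)$ \emph{as a filtered vector space}, so that $\gr^i D_\dR(V)\simeq \gr^i_\Hdg H^n_\dR(\mstack X_K/K)$. Finally, by the Hodge-to-de Rham degeneration \Cref{thm: HdR degeneration for stacks}, $\gr^i_\Hdg H^n_\dR(\mstack X_K/K)\simeq H^{n-i}(\mstack X_K,\wedge^i\mbb L_{\mstack X_K/K})$, giving the stated formula after re-indexing with $j=n-i$.

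The one genuine subtlety — and the step I expect to require the most care — is the identification of $D_\dR(V)$ with the \emph{Hodge-filtered} de Rham cohomology rather than merely with the underlying vector space; but this is precisely the content of \Cref{prop: D_dR for the etale cohomology of a smooth Hodge proper stack}, which I am entitled to assume, so the work here is really just the formal linear-algebra manipulation with $B_\dR$ and a careful bookkeeping of filtration indices (and signs of Tate twists). Everything else is an invocation of results already established: \Cref{thm: Fontaine's conjecture} for crystallinity (hence de Rham-ness), \Cref{prop: D_dR for the etale cohomology of a smooth Hodge proper stack} for the filtered comparison, and \Cref{thm: HdR degeneration for stacks} for the degeneration that turns $\gr_\Hdg$ into the Hodge cohomology groups. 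I would also remark, as the authors do, that the statement depends only on $\mstack X_K$ through the decomposition but that Hodge-properness of the integral model $\mstack X$ over $\mc O_K$ is what makes all the inputs available.
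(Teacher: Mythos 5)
Your proposal is correct and takes essentially the same route as the paper: form the filtered $G_K$-equivariant isomorphism $H^n_{\et}(\widehat{\mstack X}_{\mbb C_p},\mbb Q_p)\otimes_{\mbb Q_p}B_{\dR}\simeq H^n_\dR(\mstack X_K/K)\otimes_K B_\dR$ supplied by \Cref{prop: D_dR for the etale cohomology of a smooth Hodge proper stack}, take $\gr^0$ using $\gr^i B_\dR\simeq \mbb C_p(i)$, and identify the graded pieces of the Hodge filtration via \Cref{thm: HdR degeneration for stacks}. Your write-up simply makes explicit the degeneration step and the $B_\dR$ linear algebra that the paper compresses into one line.
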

\begin{proof} From \Cref{prop: D_dR for the etale cohomology of a smooth Hodge proper stack} it follows that one has a filtered $G_K$-equivariant isomorphism 
	$$
	H^n_{\et}(\widehat{\mstack X}_{\mbb C_p},\mbb Q_p)\otimes_{\mbb Q_p}B_{\dR}\xymatrix{\ar[r]^\sim&}H^n_\dR(\mstack X_K/K)\otimes_K B_\dR.
	$$
	Using that $\gr^iB_{\dR}=\mbb C_p(i)$ the Hodge-Tate decomposition is obtained by taking $\gr^0$ of both sides.\qedhere
\end{proof}

\subsubsection{Inequality on dimensions}\label{sssect:inequality of dimension}
In this section as another application of the \'etale comparison we establish an inequality between the lengths of crystalline and \'etale cohomology modulo $p^n$. As before $\mc O_K$ denotes the ring of integers of a finite extension $K/\mathbb Q_p$. We fix a Breuil-Kisin prism $(\mf S, (E(u)))$ with $\mf S/E(u)\simeq \mc O_K$.

Note that $\mf S/p^n\simeq W_n(k)[[u]]$. We have  $\mf S/p^n\otimes_{\mf S} \mf S/u\simeq \mf S/(p^n,u)\simeq W_n(k)$ and $\mf S/p^n[\tfrac{1}{E}]\simeq\mf S/p^n[\tfrac{1}{u}]\simeq  W_n(k)((u))$ (since $E(u)\in W(k)[u]$ is Eisenstein and thus is congruent to a power of $u$ mod $p$). Note that both $W_n(k)$ and $W_n(k)((u))$ are Noetherian and have finite lengths as modules over themselves (the only non-trivial submodules are given by the principal ideals $(p^i)$ with $i\le n$). Consequently any finitely generated module also has finite length.
\begin{lem}[Semicontinuity]\label{lem: semicontinuity of stalks}
	Let $M\in \Coh^+(\mf S/p^n)$ be a bounded below complex. Then for any $i$ one has
	$$
	\length_{W_n(k)} H^i(M\otimes_{\mf S/p^n} W_n(k)) \ge \length_{W_n(k)((u))} H^i(M[\tfrac{1}{E}]).
	$$

\begin{proof}
	By the universal coefficient formula we have a short exact sequence:
	$$
	\xymatrix{0 \ar[r] & H^i(M)/u \ar[r] &  H^i(M\otimes_{\mf S/p^n} W_n(k))\ar[r] & \Tor^1_{\mf S/p^n}(W_n(k), H^{i+1}(M))\ar[r] & 0.}
	$$ 
	Since the length is additive in short exact sequences we get an inequality
	$$
	\length_{W_n(k)}\left(H^i(M\otimes_{\mf S/p^n} W_n(k))\right)\ge \length_{W_n(k)}(H^i(M)/u).
	$$
	We also have $H^i(M[\tfrac{1}{E}])\simeq H^i(M)[\tfrac{1}{E}]\simeq H^i(M)[\tfrac{1}{u}]$ and thus (replacing $M$ by $H^i(M)$) it is enough to show that for a classical $M$ we have $\length_{W_n(k)}(H^i(M)/u)\ge \length_{W_n(k)((u))}(H^i(M)[\tfrac{1}{u}])$. Filtering $M$ by $p^kM$ and taking associated graded we reduce to the case $M$ is $p$-torsion. There the statement reduces further to the inequality on dimensions of stalks of a finitely generated $k[[u]]$-module which is an easy consequence of (classical) Nakayama lemma.
\end{proof}
\end{lem}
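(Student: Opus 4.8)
The plan is to reduce, via a universal‑coefficients argument, from the complex $M$ to its individual cohomology modules, and then to establish the corresponding inequality for a single finitely generated $\mf S/p^n$‑module by a length‑Euler‑characteristic computation. First recall that $\mf S/p^n\simeq W_n(k)[[u]]$, that $u$ is a nonzerodivisor in it, and that $W_n(k)\simeq(\mf S/p^n)/(u)$ therefore has $\Tor$‑dimension $1$ over $\mf S/p^n$ with the two‑term free resolution given by the Koszul complex on $u$. Consequently, for each $i$ the universal coefficients spectral sequence collapses to a short exact sequence
$$0\ra H^i(M)/u\ra H^i\!\bigl(M\otimes_{\mf S/p^n}W_n(k)\bigr)\ra \Tor^1_{\mf S/p^n}\!\bigl(W_n(k),H^{i+1}(M)\bigr)\ra 0,$$
and since $H^i(M),H^{i+1}(M)$ are finitely generated over $\mf S/p^n$ (because $M\in\Coh^+(\mf S/p^n)$), all three terms are killed by $u$, hence of finite length over $W_n(k)$. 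Additivity of length then yields $\length_{W_n(k)}H^i(M\otimes_{\mf S/p^n}W_n(k))\ge\length_{W_n(k)}(H^i(M)/u)$. On the other side, $H^i(M[\tfrac1E])\simeq H^i(M)[\tfrac1E]$ by exactness of localization, and (as recalled before the lemma) $\mf S/p^n[\tfrac1E]\simeq\mf S/p^n[\tfrac1u]\simeq W_n(k)((u))$. So it remains to prove: for an arbitrary finitely generated $\mf S/p^n$‑module $N$ (to be applied with $N=H^i(M)$),
$$\length_{W_n(k)}(N/uN)\ \ge\ \length_{W_n(k)((u))}(N[\tfrac1u]).$$

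To prove this I would introduce $\chi(N):=\length_{W_n(k)}(N/uN)-\length_{W_n(k)}(N[u])$, where $N[u]$ is the $u$‑torsion submodule (finite length over $W_n(k)$, being a finitely generated $(\mf S/p^n)/(u)$‑module). Tensoring a short exact sequence of finitely generated $\mf S/p^n$‑modules with the Koszul complex on $u$ and reading off the long exact cohomology sequence shows $\chi$ is additive on short exact sequences; moreover $\chi(\mf S/p^n)=n$, $\chi(k[[u]])=1$, and $\chi(k[[u]]/u^m)=0$. Filtering $N$ by powers of $p$, the graded pieces $Q_j:=p^jN/p^{j+1}N$ are finitely generated $k[[u]]$‑modules, so by the structure theorem over the discrete valuation ring $k[[u]]$ together with additivity one gets $\chi(N)=\sum_j\chi(Q_j)=\sum_j\rk_{k[[u]]}Q_j$. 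Since localization at $u$ is exact, $N[\tfrac1u]$ carries the induced $p$‑adic filtration with graded pieces $Q_j[\tfrac1u]$, and each $Q_j[\tfrac1u]$ is a $k((u))$‑vector space of dimension $\rk_{k[[u]]}Q_j$; hence $\length_{W_n(k)((u))}(N[\tfrac1u])=\sum_j\rk_{k[[u]]}Q_j=\chi(N)$. Therefore
$$\length_{W_n(k)}(N/uN)=\length_{W_n(k)((u))}(N[\tfrac1u])+\length_{W_n(k)}(N[u])\ \ge\ \length_{W_n(k)((u))}(N[\tfrac1u]),$$
which is the required inequality; combining it with the first paragraph applied to $N=H^i(M)$ completes the proof.

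The only real content is the single‑module inequality, and within it the one point that might look like the main obstacle is comparing $N/uN$ with $N[\tfrac1u]$ across $\mf S/p^n$, which is not regular for $n\ge 2$, so one cannot directly invoke flatness or semicontinuity over a regular base. The device of the Euler characteristic $\chi$ circumvents this: the naive approach of reducing to $p$‑torsion modules and summing the Nakayama inequality $\rk_{k[[u]]}Q_j\le\dim_k(Q_j/uQ_j)$ over the graded pieces fails because $(-)/u$ is only right exact (so $\length_{W_n(k)}(N/uN)$ is only bounded \emph{above} by $\sum_j\dim_k(Q_j/uQ_j)$), and it is precisely the torsion correction term $\length_{W_n(k)}(N[u])$ appearing in $\chi$ that repairs the bookkeeping.
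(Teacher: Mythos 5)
Your argument is correct and follows the paper's overall plan (universal coefficients to reduce to a single finitely generated $\mf S/p^n$‑module $N$, then dévissage to $k[[u]]$‑modules via the $p$‑adic filtration, then Nakayama), but it diverges at exactly one step and for good reason. The paper's proof asserts that ``filtering $M$ by $p^k M$ and taking associated graded we reduce to the case $M$ is $p$‑torsion,'' and as you observe this reduction is not automatic: $\length_{W_n(k)}(N/uN)$ is only \emph{sub}additive in short exact sequences (the boundary map $Q_j[u]\to (p^{j+1}N)/u$ in the Koszul long exact sequence eats some length), so the Nakayama inequality for each graded piece $Q_j$ bounds $\sum_j\length(Q_j/u)$ from below, not $\length(N/u)$. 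Your Euler characteristic $\chi(N)=\length_{W_n(k)}(N/uN)-\length_{W_n(k)}(N[u])$ is precisely $\chi$ of the derived fiber $N\otimes^{\mathbb L}_{\mf S/p^n}W_n(k)$, hence genuinely additive in short exact sequences; combined with the computations $\chi(Q)=\rk_{k[[u]]}Q$ on $k[[u]]$‑modules and exactness of localization, it gives the exact identity $\length_{W_n(k)}(N/uN)=\length_{W_n(k)((u))}(N[\tfrac1u])+\length_{W_n(k)}(N[u])$, from which the inequality is immediate. This is not a different strategy so much as a careful implementation that locates and repairs a lacuna in the paper's terse reduction; the extra payoff is that you get the correction term $\length_{W_n(k)}(N[u])$ explicitly, so the inequality is an equality precisely when $H^i(M)$ has no $u$‑torsion.

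One minor stylistic remark: it would be worth stating explicitly, before applying the Koszul long exact sequence, that $u$ is a nonzerodivisor in $\mf S/p^n\simeq W_n(k)[[u]]$ so that $W_n(k)$ has projective dimension one over $\mf S/p^n$ with the two‑term Koszul resolution, since this is what makes both the universal‑coefficients short exact sequence collapse and $\chi$ additive; you invoke it correctly but implicitly.
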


We use this lemma to prove the following:
\begin{prop}
	Let $\mstack X$ be a smooth Hodge-proper Artin stack over $\mc O_K$. Then
	$$\length_{\mathbb Z/p^n} H^i_\et(\widehat{\mstack X}_{\mbb C_p}, \mathbb Z/p^n) \le \length_{W_n(k)} H^i_\crys(\mstack X_k/W_n(k)).$$

	\begin{proof}
		Consider the mod $p^n$ twisted prismatic cohomology $\RG_{\Prism^{(1)}/p^n}(\mstack X/\mf S)\coloneqq \RG_{\Prism^{(1)}}(\mstack X/\mf S)/p^n$; by Hodge-properness of $\mstack X$ we have $\RG_{\Prism^{(1)}/p^n}(\mstack X/\mf S)\in \Coh^+(\mf S/p^n)$. We will use \Cref{lem: semicontinuity of stalks}. By base change for the mod $u$ reduction $(\mf S, (E))\ra (W(k),(p))$ we have
		$$ H^i_\crys(\mstack X_k/W_n(k))\simeq H^i(\RG_{\Prism^{(1)}/p^n}(\mstack X/\mf S)\otimes_{\mf S/p^n} W_n(k)).$$
		It remains to relate $\RG_{\Prism^{(1)}/p^n}(\mstack X/\mf S)[E^{-1}]$ to \'etale cohomology.  
		Note that for any $\mbb Z_p$-module $M$ one has $\length_{\mathbb Z_p}M = \length_{W(\mbb C_p^\flat)}\left( M\otimes _{\mbb Z_p}W(\mbb C_p^\flat)\right)$. By \Cref{cor: prismatic_vs_etale_torsion} (and \Cref{rem: comparison for the twisted prismatic cohomology}) reducing modulo $p^n$ we get an isomorphism $H^i_\et(\mstack X_{\mbb C_p},\mbb Z/p^n)\otimes_{\mbb Z_p} \Ainf[\frac{1}{\xi}]\simeq H^i_{\Prism^{(1)}/p^n}(\mstack X_{\mc O_{\mbb C_p}}/\Ainf)[\frac{1}{\xi}]$. 
		Note that $\Ainf[\frac{1}{\xi}]/p^n\simeq W_n(\mbb C_p^\flat)$ and we have
		$$\length_{\mathbb Z_p} H^i_\et(\mstack X_{\mbb C_p},\mbb Z/p^n) = \length_{W(\mbb C_p^\flat)} H^i_{\Prism^{(1)}/p^n}(\mstack X_{\mc O_{\mbb C_p}}/\Ainf)[\tfrac{1}{\xi}].$$
		Next, by base change for $(\mf S,(E))\ra (\Ainf,(\xi))$ one has
		$$H^i_{\Prism^{(1)}/p^n}(\mstack X_{\mc O_{\mbb C_p}}/\Ainf)[\tfrac{1}{\xi}]\simeq H^i_{\Prism^{(1)}/p^n}(\mstack X/\mf S)[\tfrac{1}{E}]\otimes_{\mf S/p^n[\tfrac{1}{E}]}W_n(\mbb C_p^\flat).$$
		Note that every finite length $\mf S/p^n[\tfrac{1}{E}]$-module is a direct sum of $\mf S/p^{s_i}[\tfrac{1}{E}]\simeq \mf S/p^n[\tfrac{1}{E}]\otimes_{\mbb Z/p^n} \mbb Z/p^{s_i}$ for $s_i\le n$. We have $\mf S/p^{s_i}[\tfrac{1}{E}]\otimes_{\mf S} W(\mbb C_p^\flat)\simeq W_{{s_i}}(\mbb C_p^\flat)$; we see that the tensor product $-\otimes_{\mf S} W(\mbb C_p^\flat) $ preserves the length. In particular,
		$$\length_{W_n(\mbb C_p^\flat)}\left( H^i_{\Prism^{(1)}/p^n}(\mstack X_{\mc O_{\mbb C_p}}/\Ainf)[\tfrac{1}{\xi}]\right) = \length_{\mathfrak S[E^{-1}]} \left(H^i_{\Prism^{(1)}/p^n}(\mstack X/\mf S)[\tfrac{1}{E}]\right).\qedhere$$
	\end{proof}
\end{prop}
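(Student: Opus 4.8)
The statement to prove is the length inequality
\[
\length_{\mathbb Z/p^n} H^i_\et(\widehat{\mstack X}_{\mbb C_p}, \mathbb Z/p^n) \le \length_{W_n(k)} H^i_\crys(\mstack X_k/W_n(k))
\]
for a smooth Hodge-proper Artin stack $\mstack X$ over $\mc O_K$. The strategy is dictated by the two lemmas stated just above: the semicontinuity \Cref{lem: semicontinuity of stalks} for a bounded below coherent complex $M\in \Coh^+(\mf S/p^n)$, together with the base-change identifications relating the two special fibers of (twisted, mod $p^n$) prismatic cohomology. So the plan is to take $M \coloneqq \RG_{\Prism^{(1)}/p^n}(\mstack X/\mf S) = \RG_{\Prism^{(1)}}(\mstack X/\mf S)/p^n$ and to identify $H^i(M\otimes_{\mf S/p^n} W_n(k))$ with the crystalline side and $H^i(M[\tfrac 1E])$ with the étale side.

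First I would note that $M\in \Coh^+(\mf S/p^n)$: by Hodge-properness and \Cref{cor: twisted prismatic cohomology are bounded below coherent} (or \Cref{cor: prismatic cohomology are bounded below coherent}) one has $\RG_{\Prism^{(1)}}(\mstack X/\mf S)\in\Coh^+(\mf S)$, and reducing mod $p^n$ preserves bounded-below coherence (e.g.\ by \Cref{lem: base change preserves coh^+ sometimes}, since $\mbb Z/p^n$ has finite Tor-dimension over $\mbb Z$, or directly). Next, for the crystalline side: applying prismatic base change (\Cref{prismatic_basechange}, legitimate since $\mf S$ is Noetherian and regular so all modules have finite $(p,E)$-complete Tor-amplitude) to the map of prisms $(\mf S,(E(u)))\to (W(k),(p))$, combined with the crystalline comparison (\Cref{prop: cristalline comparison}) and reduction mod $p^n$, gives $H^i(M\otimes_{\mf S/p^n}W_n(k))\simeq H^i_\crys(\mstack X_k/W_n(k))$; here one uses $\mf S/p^n\otimes_{\mf S}\mf S/u\simeq W_n(k)$. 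For the étale side I would use \Cref{cor: prismatic_vs_etale_torsion} together with \Cref{rem: comparison for the twisted prismatic cohomology} (passing to the twisted version) and base change $(\mf S,(E))\to(\Ainf,(\xi))$ to write
\[
H^i_\et(\widehat{\mstack X}_{\mbb C_p},\mbb Z/p^n)\otimes_{\mbb Z_p}W(\mbb C_p^\flat)\;\simeq\; H^i_{\Prism^{(1)}/p^n}(\mstack X/\mf S)[\tfrac 1E]\otimes_{\mf S/p^n[1/E]}W_n(\mbb C_p^\flat),
\]
using $\mf S/p^n[\tfrac 1E]\simeq \mf S/p^n[\tfrac 1u]\simeq W_n(k)((u))$ and $\Ainf[\tfrac 1\xi]/p^n\simeq W_n(\mbb C_p^\flat)$.

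The remaining point — which I expect to be the only mildly delicate one — is that tensoring up to $W_n(\mbb C_p^\flat)$ (resp.\ $W(\mbb C_p^\flat)$) does not change lengths, so that
\[
\length_{\mathbb Z/p^n}H^i_\et(\widehat{\mstack X}_{\mbb C_p},\mbb Z/p^n)=\length_{W(\mbb C_p^\flat)}\bigl(H^i_\et(\widehat{\mstack X}_{\mbb C_p},\mbb Z/p^n)\otimes_{\mbb Z_p}W(\mbb C_p^\flat)\bigr)=\length_{\mf S/p^n[1/E]}H^i_{\Prism^{(1)}/p^n}(\mstack X/\mf S)[\tfrac 1E].
\]
This follows since every finite-length module over $\mbb Z/p^n$ (resp.\ over $\mf S/p^n[\tfrac 1E]\simeq W_n(k)((u))$) is a finite direct sum of $\mbb Z/p^{s}$ (resp.\ $\mf S/p^{s}[\tfrac 1E]\simeq W_{s}(k)((u))$) with $s\le n$, and $\mf S/p^{s}[\tfrac 1E]\otimes_{\mf S}W(\mbb C_p^\flat)\simeq W_{s}(\mbb C_p^\flat)$, so each simple constituent contributes length exactly $1$ on both sides; in particular each of these rings (being local Artinian with residue field a field) has additive length and flatness of $\mf S\to W(\mbb C_p^\flat)$ keeps the exact sequences of a composition series exact. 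Finally, feeding the two identifications into \Cref{lem: semicontinuity of stalks} applied to $M=\RG_{\Prism^{(1)}/p^n}(\mstack X/\mf S)$ yields
\[
\length_{W_n(k)}H^i_\crys(\mstack X_k/W_n(k))=\length_{W_n(k)}H^i(M\otimes_{\mf S/p^n}W_n(k))\;\ge\;\length_{W_n(k)((u))}H^i(M[\tfrac 1E])=\length_{\mathbb Z/p^n}H^i_\et(\widehat{\mstack X}_{\mbb C_p},\mbb Z/p^n),
\]
which is the desired inequality. The main obstacle is really just bookkeeping: carefully justifying the three base-change steps in the correct (twisted vs.\ untwisted) variant and checking the length-preservation under $-\otimes W(\mbb C_p^\flat)$; there is no conceptual difficulty beyond what the cited lemmas already supply.
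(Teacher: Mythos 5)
Your argument is correct and follows the paper's proof essentially step for step: same choice of $M=\RG_{\Prism^{(1)}/p^n}(\mstack X/\mf S)$, same use of \Cref{lem: semicontinuity of stalks}, the same pair of base-change identifications (mod $u$ to $W_n(k)$, resp.\ to $(\Ainf,(\xi))$ then inverting $\xi$), and the same length-preservation argument via decomposition of finite-length modules over $\mbb Z/p^n$ and over $W_n(k)((u))$ into cyclic pieces. You supply a bit more justification at a few points (e.g.\ why $M\in\Coh^+(\mf S/p^n)$, and the flatness remark making composition series stay exact), but the route is identical.
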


Specializing to the case $n=1$ we get:
\begin{cor}\label{inequality}
	Let $\mstack X$ be a smooth Hodge-proper stack over $\mc O_K$. Then for any $i\ge 0$		
	$$\dim_{\mbb F_p}H^i_\et(\widehat{\mstack X}_{\mbb C_p},\mbb F_p) \le \dim_{k}H^i_\dR(\mstack X_k/k).$$
\end{cor}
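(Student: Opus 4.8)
The plan is to specialize the length inequality proved above (the proposition immediately preceding \Cref{inequality}) to the case $n=1$. For $n=1$ the ring $\mf S/p \simeq k[[u]]$ is a discrete valuation ring, so for any finitely generated $k[[u]]$-module $M$ one has $\length_{k[[u]]} M = \dim_k M$ when $M$ is killed by $u$, and more to the point, for a finite-dimensional $\mbb F_p$-vector space $V$ we have $\length_{\mbb F_p} V = \dim_{\mbb F_p} V$, while for a finite-dimensional $k((u))$-vector space $W$ we have $\length_{k((u))} W = \dim_{k((u))} W$. Similarly, for the ring $W_1(k) = k$, a finitely generated module is just a finite-dimensional $k$-vector space and length agrees with dimension. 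Thus the inequality
$$
\length_{\mbb Z/p}H^i_\et(\widehat{\mstack X}_{\mbb C_p},\mbb Z/p) \le \length_{W_1(k)} H^i_\crys(\mstack X_k/W_1(k))
$$
becomes
$$
\dim_{\mbb F_p}H^i_\et(\widehat{\mstack X}_{\mbb C_p},\mbb F_p) \le \dim_k H^i_\crys(\mstack X_k/k).
$$

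\textbf{Identifying the right-hand side with de Rham cohomology.} It remains to observe that $H^i_\crys(\mstack X_k/k) \simeq H^i_\dR(\mstack X_k/k)$. This is the crystalline comparison together with the de Rham comparison applied over the base prism $(W(k),(p))$, whose mod-$p$ reduction is $k$: indeed, for the prism $(W(k),(p))$ the Frobenius twist $\phi^*_{W(k)}(-)\otimes_{W(k)} k$ of $\RG_{\Prism}(\mstack X_k/W(k))$ is, by \Cref{prop: cristalline comparison}, identified with $\RG_\crys(\mstack X_k/W(k))\otimes_{W(k)}k$, and by \Cref{prop: de Rham comparison} (or \Cref{cor: de Rham comparison for untwisted coh}, noting $\phi_{W(k)*}W(k)\simeq W(k)$ is perfect) it is also identified with $\RG_{L\dR^\wedge_p}(\mstack X_k/k) \simeq \RG_{\dR}(\mstack X_k/k)$, the last equivalence because $\mstack X_k$ is a stack over $k$ of characteristic $p$ so its $p$-completed derived de Rham cohomology is the derived de Rham cohomology, which for a smooth Artin stack agrees with the usual (non-derived) de Rham cohomology of \Cref{def: de Rham} by \Cref{prop: derived complete is the completion for smooth Artin stacks}. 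Taking cohomology in degree $i$ gives $H^i_\crys(\mstack X_k/k)\simeq H^i_\dR(\mstack X_k/k)$, and both are finite-dimensional over $k$ by Hodge-properness (via \Cref{Hodge_filtration_for_stacks}).

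\textbf{Main obstacle.} There is essentially no obstacle here: the content is entirely in the preceding proposition, whose proof rests on the semicontinuity \Cref{lem: semicontinuity of stalks} and the (twisted) \'etale comparison \Cref{cor: prismatic_vs_etale_torsion}. The only things to check for the corollary itself are the bookkeeping identifications $\length = \dim$ over $\mbb F_p$, $k$, and $k((u))$, and the identification of crystalline with de Rham cohomology mod $p$ described above; all of these are routine once one has the length inequality in hand. If one wanted a self-contained argument avoiding the crystalline side altogether, one could alternatively run the semicontinuity comparison directly between the special and generic stalks of the perfect complex $C = \RG_{\Prism^{(1)}}(\mstack X/\mf S)\otimes_{\mbb Z}\mbb F_p \in \Coh^+(k[[u]])$, using $C\otimes_{k[[u]]}k \simeq \RG_\dR(\mstack X_k/k)$ on one side and $C\otimes_{k[[u]]}k((u)) \simeq \RG_\et(\widehat{\mstack X}_{\mbb C_p},\mbb F_p)\otimes_{\mbb F_p}\mbb C_p^\flat$ (pulled back along $k[[u]]\to \mbb C_p^\flat$, which factors through $k((u))$) on the other, exactly paralleling the scheme case discussed in the introduction; but the shortest route is simply to quote the proposition.
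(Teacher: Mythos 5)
Your proof is correct and takes the same route as the paper, which simply says ``Specializing to the case $n=1$'' and leaves the reader to supply the details. You have correctly filled in the implicit steps: that length agrees with dimension over $\mbb F_p$ and over $W_1(k)=k$, and that $H^i_\crys(\mstack X_k/k)\simeq H^i_\dR(\mstack X_k/k)$ follows from the crystalline and de Rham comparisons over the prism $(W(k),(p))$ (the side remarks about lengths over $k[[u]]$ and $k((u))$ are harmless but not needed for the specialization).
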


\section{Local acyclicity of some quotient stacks} \label{sec: diff_etale}
Recall that in \Cref{sect:etale_cohomology_on_stacks} we introduced two versions of "\'etale cohomology of the generic fiber" for Artin stacks over $\mathcal O_{\mathbb C_p}$: namely \'etale cohomology of the algebraic generic fiber $\mstack X_{\mathbb C_p}$ and the Raynaud's generic fiber $\widehat{\mstack X}_{\mathbb C_p}$ correspondingly. Both versions have their own advantages: by Artin's comparison (see \Cref{Artins_comparison_stacks}) the algebraic version $R\Gamma_\et(\mstack X_{\mathbb C_p}, \mathbb Z_p)$ is related to the usual singular cohomology of $\mstack X(\mathbb C)$, while the rigid-analytic version $R\Gamma_\et(\widehat{\mstack X}_{\mathbb C_p}, \mathbb Z_p)$ is related to prismatic cohomology of $\mstack X$ via the \'etale comparison (\Cref{cor: Z_p etale cohomology of rigid fiber}).

We constructed a natural map $\Upsilon_{\mstack X}\colon R\Gamma_\et(\mstack X_{\mathbb C_p}, \mathbb Z_p) \to R\Gamma_\et(\widehat{\mstack X}_{\mathbb C_p}, \mathbb Z_p)$ and stated \Cref{etale_conjecture} that formulates when we expect this map to be an equivalence. In this section we show that $\Upsilon_{\mstack X}$ is an equivalence in the case of  the global quotient $[X/P]$ of a smooth proper scheme $X$ by a parabolic subgroup $P$ in a reductive group $G$. Our proof is geometric in nature and relies on the basic structure theory of reductive groups. 

Here is the plan of this section. In the first two subsections we compute de Rham and prismatic cohomology of a classifying stack of a torus $T$ by approximating it by (products of) projective spaces $\mathbb P^n$. In subsection \ref{subsect:prismatic_coh_of_An_quot_T} we show that prismatic and rigid-analytic \'etale cohomology of the quotient stacks $[\mbb A^n/T]$ and $BT$ are the same, provided the $T$-action on $\mbb A^n$ is contracting. We then use this in subsection \ref{subsect:case_of_BB} to deduce that $\Upsilon_{BB}$ is an equivalence for the classifying stack $BB$ of a Borel subgroup $B\subset G$. Finally, in subsection \ref{sec: comparison for [X/P]}, using the \'etale sheaf machinery developed in \Cref{sect:etale_sheaves}, we deduce from this the general case $[X/P]$.

\subsection{De Rham cohomology of $B\mathbb G_m$}\label{sec:de Rham cohomology of BG_m}
Recall that in \Cref{sec: Hodge and de Rham cohomology} for any Artin stack $\mstack X$ over any base ring $R$ we have defined its Hodge cohomology $R\Gamma_\Hdg(\mstack X/R)$ and de Rham cohomology $R\Gamma_\dR(\mstack X/R)$. In this section we compute them for $X = B\mathbb G_m$ over $R=\mathbb Z$. We also show that pullback under the natural map $\mathbb P^\infty \to B\mathbb G_m$ induces an equivalence on de Rham cohomology.

We denote by $\mbb Z(n)$ the character of $\mbb G_m$ given by $\mbb G_m\xra{t\ra t^n} \mbb G_m$ together with the corresponding quasi-coherent sheaf on $B\mbb G_m$. By \Cref{ex:contangent_for_BG} the cotangent complex of $B\mathbb G_m$ is given by $\mbb Z(0)[-1]$ and more generally $\wedge^i \mathbb L_{B\mathbb G_m/\mathbb Z} \simeq \mbb Z(0)[-i]$. This way the algebra $\oplus_{i=0}^\infty \wedge^i(\mathbb L_{B\mathbb G_m/\mathbb Z})[-i]$ is equivalent to $\oplus_{i=0}^\infty \mathbb Z(0)[-2i]$ with the algebra structure given by the natural isomorphisms $\mathbb Z(0)[-2i]\otimes\mathbb Z(0)[-2j] \simeq \mathbb Z(0)[-2(i+j)]$. We have $R\Gamma(B\mathbb G_m,\mathbb Z(0))\simeq \mathbb Z(0)^{\mathbb G_m} \simeq \mathbb Z$, since the functor of $\mathbb G_m$-invariants is $t$-exact over an arbitrary base. Consequently, $R\Gamma(B\mathbb G_m,\oplus_{i=0}^\infty \wedge^i(\mathbb L_{B\mathbb G_m/\mathbb Z})[-i])\simeq \oplus_{i=0}^\infty \mbb Z[-2i]$. Taking cohomology and picking a generator $x\in H^2_{\Hdg}(B\mathbb G_m/\mathbb Z)\simeq \mbb Z$ we get 
\begin{lem}
The algebra of Hodge cohomology $H^\bullet_{\Hdg}(B\mathbb G_m/\mathbb Z)$ of $B\mathbb G_m$ over $\mathbb Z$ is isomorphic to $\mathbb Z[x]$, $\deg(x)=2$.
\end{lem}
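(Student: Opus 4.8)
The plan is to compute $\wedge^i \mathbb L_{B\mathbb G_m/\mathbb Z}$ and its global sections directly from the description of the cotangent complex of a classifying stack, and then to read off the ring structure. The preamble to the statement already does most of the work: by $\Cref{ex:contangent_for_BG}$ we have $\mathbb L_{B\mathbb G_m/\mathbb Z} \simeq \mathbb Z(0)[-1]$, the trivial character placed in cohomological degree $1$, so $\wedge^i \mathbb L_{B\mathbb G_m/\mathbb Z} \simeq \Sym^i(\mathbb Z(0)[-1])[0]$; since the underlying line bundle is trivial (and in particular a flat module), the derived exterior power of a shift of a line bundle in odd degree is the corresponding shifted symmetric power, giving $\wedge^i \mathbb L_{B\mathbb G_m/\mathbb Z} \simeq \mathbb Z(0)[-i]$. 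First I would spell this out, being slightly careful that "$\wedge^i$ of a degree-shifted invertible sheaf" means the décalage identification $\wedge^i(M[-1]) \simeq (\Sym^i M)[-i]$, which for $M$ a line bundle gives $M^{\otimes i}[-i]$; here $M = \mathbb Z(0)$ so this is $\mathbb Z(0)[-i]$.

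Next I would compute global sections. The key input is that the functor of $\mathbb G_m$-invariants on $\QCoh(B\mathbb G_m)^\heartsuit \simeq \Rep_{\mathbb G_m}(\DMod{\mathbb Z})^\heartsuit$ (graded $\mathbb Z$-modules) is exact — it simply extracts the weight-zero graded piece — so $R\Gamma(B\mathbb G_m, \mathbb Z(0)) \simeq \mathbb Z$ concentrated in degree $0$, with all higher cohomology vanishing. Hence $R\Gamma(B\mathbb G_m, \wedge^i \mathbb L_{B\mathbb G_m/\mathbb Z}[-i]) \simeq \mathbb Z[-2i]$, and summing over $i$ gives $R\Gamma_{\Hdg}(B\mathbb G_m/\mathbb Z) \simeq \bigoplus_{i=0}^\infty \mathbb Z[-2i]$ as a complex, so $H^n_{\Hdg}(B\mathbb G_m/\mathbb Z) \simeq \mathbb Z$ for $n$ even and $0$ for $n$ odd. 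Picking $x \in H^2_{\Hdg}$ a generator, the additive statement is immediate; the ring statement requires identifying the multiplication $H^2 \otimes H^{2j} \to H^{2(j+1)}$.

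The one genuinely non-formal point — and the step I expect to be the main obstacle — is the multiplicativity: I must check that cup product sends a generator $x$ of $H^2$ and a generator of $H^{2j}$ to a generator of $H^{2(j+1)}$, i.e. that the Hodge cohomology ring is polynomial on $x$ rather than, say, having divided-power or torsion behaviour in higher degrees. This follows because the multiplication on $\bigoplus_i \wedge^i \mathbb L_{B\mathbb G_m/\mathbb Z}[-i]$ as a sheaf of graded algebras is induced by the canonical isomorphisms $\wedge^i \mathbb L \otimes \wedge^j \mathbb L \to \wedge^{i+j}\mathbb L$ of the exterior algebra — which under the identification $\wedge^i \mathbb L_{B\mathbb G_m/\mathbb Z}[-i] \simeq \mathbb Z(0)[-2i]$ become the evident isomorphisms $\mathbb Z(0)[-2i] \otimes \mathbb Z(0)[-2j] \simeq \mathbb Z(0)[-2(i+j)]$, as already noted in the preamble — and taking $\mathbb G_m$-invariants (a lax monoidal, here strictly monoidal on the relevant weight-zero pieces) preserves these isomorphisms. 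Thus the product $H^2_{\Hdg} \otimes H^{2j}_{\Hdg} \to H^{2(j+1)}_{\Hdg}$ is an isomorphism $\mathbb Z \otimes \mathbb Z \to \mathbb Z$, so $x^{j+1}$ generates $H^{2(j+1)}_{\Hdg}$, and the map $\mathbb Z[x] \to H^\bullet_{\Hdg}(B\mathbb G_m/\mathbb Z)$ sending $x \mapsto x$ is an isomorphism of graded rings. (I would also remark that since $B\mathbb G_m$ is smooth with $\QCoh$ left- and right-complete, no convergence subtleties arise in the direct sum over $i$.)
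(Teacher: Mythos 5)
Your argument is essentially identical to the paper's: both use $\mathbb L_{B\mathbb G_m/\mathbb Z}\simeq \mathbb Z(0)[-1]$ together with the décalage identification to get $\wedge^i\mathbb L_{B\mathbb G_m/\mathbb Z}\simeq \mathbb Z(0)[-i]$, then exactness of $\mathbb G_m$-invariants to compute $R\Gamma(B\mathbb G_m,\mathbb Z(0))\simeq\mathbb Z$, and the natural isomorphisms $\mathbb Z(0)[-2i]\otimes\mathbb Z(0)[-2j]\simeq\mathbb Z(0)[-2(i+j)]$ for the ring structure. You spell out the multiplicativity step a bit more explicitly than the paper, but it is the same proof.
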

\begin{cor}
The Hodge-de Rham spectral sequence for $B\mathbb G_m$ over $\mathbb Z$ degenerates and $H^\bullet_\dR(B\mathbb G_m/\mathbb Z)\simeq \mathbb Z[x]$, $\deg(x)=2$.

\begin{proof}
The first statement holds for every smooth linearly reductive group $G$ over a base ring $R$, since $E_1^{p,q}\simeq H^p(BG,\wedge^q\mbb L_{G/R})\simeq H^p(G, \Sym^q(\mf g^*)[-q]) \simeq 0$ for $p\ne q$ by semi-simplicity of the category of $G$-representations. So $\gr^\bullet H^\bullet_\dR(B\mathbb G_m/\mathbb Z) \simeq 
H^\bullet_\Hdg(B\mathbb G_m/\mathbb Z) \simeq \mathbb Z[x]$. By choosing a homogeneous lift of $x$ in $H^2_\dR(B\mathbb G_m/\mathbb Z)$ we obtain a filtered map $\mathbb Z[x] \to H^\bullet_\dR(B\mathbb G_m/\mathbb Z)$, which induces an isomorphism on the associated graded pieces, thus is an isomorphism itself.
\end{proof}
\end{cor}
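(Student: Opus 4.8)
The statement to prove is that the natural map $\mathbb{P}^\infty \to B\mathbb{G}_m$ (classifying $\mathcal{O}(1)$) induces an equivalence on de Rham cohomology, or — as it is phrased in the excerpt just before — that $H^\bullet_\dR(B\mathbb{G}_m/\mathbb{Z}) \simeq \mathbb{Z}[x]$ with $\deg x = 2$, with the $\mathbb{P}^\infty$-comparison following. The plan is to first compute the Hodge cohomology, then bootstrap to de Rham via the Hodge-to-de Rham spectral sequence, and finally match the answer with $H^\bullet_\dR(\mathbb{P}^\infty/\mathbb{Z})$ to get the comparison with the projective-space approximation.

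First I would establish the Hodge cohomology ring. Using \Cref{ex:contangent_for_BG} (or the analogue cited above) the cotangent complex is $\mathbb{L}_{B\mathbb{G}_m/\mathbb{Z}} \simeq \mathbb{Z}(0)[-1]$, so $\wedge^i \mathbb{L}_{B\mathbb{G}_m/\mathbb{Z}} \simeq \mathbb{Z}(0)[-i]$ as quasi-coherent sheaves on $B\mathbb{G}_m$, and the graded algebra $\bigoplus_i \wedge^i \mathbb{L}[-i]$ identifies with $\bigoplus_i \mathbb{Z}(0)[-2i]$ with its evident multiplication. Since the functor of $\mathbb{G}_m$-invariants is $t$-exact over an arbitrary base (the category of $\mathbb{G}_m$-representations being the category of $\mathbb{Z}$-graded modules, and $M \mapsto M_0$ being exact and a direct summand projection), $R\Gamma(B\mathbb{G}_m, \mathbb{Z}(0)) \simeq \mathbb{Z}$ concentrated in degree $0$, hence $R\Gamma_\Hdg(B\mathbb{G}_m/\mathbb{Z}) \simeq \bigoplus_i \mathbb{Z}[-2i]$, giving $H^\bullet_\Hdg(B\mathbb{G}_m/\mathbb{Z}) \simeq \mathbb{Z}[x]$ with $\deg x = 2$ after choosing a generator $x \in H^2$.

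Next I would pass to de Rham cohomology by degeneration of the Hodge-to-de Rham spectral sequence $E_1^{p,q} = H^p(B\mathbb{G}_m, \wedge^q \mathbb{L}_{B\mathbb{G}_m/\mathbb{Z}})$. As noted, for any smooth linearly reductive $G$ over a base $R$ one has $E_1^{p,q} \simeq H^p(G, \Sym^q(\mathfrak{g}^*)[-q]) = 0$ for $p \neq q$ by semisimplicity of $\Rep(G)$, so all differentials vanish and $\gr^\bullet H^\bullet_\dR(B\mathbb{G}_m/\mathbb{Z}) \simeq H^\bullet_\Hdg(B\mathbb{G}_m/\mathbb{Z}) \simeq \mathbb{Z}[x]$. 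To promote this to a ring isomorphism I would choose a lift $\tilde x \in H^2_\dR(B\mathbb{G}_m/\mathbb{Z})$ of $x$ under the edge map $H^2_\dR \twoheadrightarrow F^1/F^2 \simeq E_\infty^{1,1} \simeq H^1(B\mathbb{G}_m, \mathbb{L}) = \mathbb{Z}\cdot x$ (the other graded piece $E_\infty^{2,0} = H^2(B\mathbb{G}_m,\mathcal{O}) = 0$ vanishes, so the lift exists and is essentially canonical). Then $\mathbb{Z}[\tilde x] \to H^\bullet_\dR(B\mathbb{G}_m/\mathbb{Z})$ is a filtered algebra map inducing an isomorphism on associated graded pieces, hence is an isomorphism. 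I don't anticipate a genuine obstacle here; the one point requiring mild care is that $H^\bullet_\dR$ of a stack is only defined via the right Kan extension from smooth affines, so one must check the spectral sequence and the multiplicative structure are compatible with this definition — but this is exactly what \Cref{Hodge_filtration_for_stacks} provides. Finally, for the $\mathbb{P}^\infty$ comparison: since $\mathbb{P}^\infty = \colim_n \mathbb{P}^n$ and de Rham cohomology sends colimits of prestacks to limits, $H^\bullet_\dR(\mathbb{P}^\infty/\mathbb{Z}) = \lim_n \mathbb{Z}[\xi]/(\xi^{n+1}) = \mathbb{Z}[\xi]$ with $\deg \xi = 2$ (the transition maps being surjective, so $\lim^1$ vanishes and the limit is computed on cohomology); the map $\mathbb{P}^\infty \to B\mathbb{G}_m$ sends the generator $x$ to $\xi$ (up to a unit, pinned down by the first Chern class of $\mathcal{O}(1)$), and therefore induces a ring isomorphism $\mathbb{Z}[x] \xrightarrow{\sim} \mathbb{Z}[\xi]$ on de Rham cohomology.
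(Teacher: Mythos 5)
Your proof of the stated corollary is correct and follows essentially the same route as the paper: degeneration of the Hodge--de Rham spectral sequence via semisimplicity of $\Rep(\mathbb{G}_m)$, followed by lifting the generator $x$ to a filtered algebra map $\mathbb{Z}[x] \to H^\bullet_\dR(B\mathbb{G}_m/\mathbb{Z})$ inducing an isomorphism on associated gradeds. The opening paragraph on Hodge cohomology and the closing paragraph on the $\mathbb{P}^\infty$-comparison are not part of this corollary's proof (they are, respectively, the preceding lemma and the subsequent Corollary~\ref{BG_m_limit_dR} in the paper), but the core argument matches.
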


\begin{rem}
Since $B\mathbb G_m$ is smooth over $\mathbb Z$ and $\mbb Z$ has finite $\Tor$-dimension, by base change the same result holds for any $R$:
$$H^\bullet_\dR(B\mathbb G_m/R)\simeq H^\bullet_\Hdg(B\mathbb G_m/R)\simeq R[x],\ \deg(x) = 2.$$
\end{rem}

\smallskip Let $\mathbb P^n$ be the projective space over $\Spec \mathbb Z$. Then by the standard computation of the Hodge cohomology of ${\mathbb P^n}$ (e.g. see \cite[Lemma 50.11.3]{StacksProject}) we have
$$H^\bullet_\Hdg(\mathbb P^n/\mathbb Z) \simeq \mathbb Z[y]/y^{n+1}, \ \deg(y)=2.$$
The Hodge-de Rham spectral sequence degenerates since the cohomology of the graded pieces $R\Gamma(\mathbb P^n, \Omega^i_{\mathbb P^n/\mbb Z}[-i])$ are concentrated in even total degrees and so we have the same answer for the de Rham cohomology
$$H^\bullet_\dR(\mathbb P^n/\mathbb Z) \simeq \mathbb Z[y]/y^{n+1}.$$
Note that any subspace inclusion $i\colon \mathbb P^k \inj \mathbb P^n$ induces a map $i^*\colon R\Gamma_\dR(\mathbb P^n/\mathbb Z)\ra R\Gamma_\dR(\mathbb P^n/\mathbb Z)$, which on the cohomology can be identified with the quotient by $y^{k+1}$: $\mathbb Z[y]/(y^{n+1})\surj \mathbb Z[y]/(y^{k+1})$.

The line bundle $\mathcal O_{\mathbb P^n}(1)$ on $\mathbb P^n$ is classified by a map $\mathcal O_{\mathbb P^n}(1)\colon \mathbb P^n \ra B\mathbb G_m$ and we can consider the corresponding pull-back map $\mathcal O_{\mathbb P^n}(1)^*\colon R\Gamma_\dR(B\mathbb G_m/\mathbb Z)\ra R\Gamma_\dR(\mathbb P^n/\mathbb Z)$ on the de Rham cohomology.
\begin{prop}\label{de_Rham_fur_P_n}
Under the identifications $H^\bullet_\dR(B\mathbb G_m/\mathbb Z)\simeq \mathbb Z[x]$  and $H^\bullet_\dR(\mathbb P^n/\mathbb Z)\simeq \mathbb Z[y]/y^{n+1}$ we have
$$\mathcal O_{\mathbb P^n}(1)^*(x)=\pm y \qquad\text{and}\qquad H^2_\dR(B\mathbb G_m/\mathbb Z)\simeq H^2_\dR(\mathbb P^n /\mathbb Z).$$
As a consequence, the fiber of the pullback map $\mathcal O_{\mathbb P^n}(1)^*\colon R\Gamma_\dR(B\mathbb G_m/\mathbb Z) \to R\Gamma_\dR(\mathbb P^n/\mathbb Z)$ is $(2n+1)$-coconnected.

\begin{proof}
Consider the map of presheaves $d\log\colon \mathcal O^\times_- \to \Omega^1_-$ on $\Aff^{\op,\sm}_{/B\mathbb G_m}$ given on each $X\to B\mathbb G_m$ by the map $d\log\colon \mathcal O_X^\times \to \Omega_X^1$ that sends an invertible function $f\in \mathcal O^\times_X(X)$ to the differential form $d\log(f) \coloneqq df/f$. Passing to the limit over $\Aff^{\op,\sm}_{/B\mathbb G_m}$ and applying $H^1$, we obtain a map
$$d\log \colon \Pic(B\mathbb G_m) \xymatrix{\ar[r] &} H^1(B\mathbb G_m, \Omega^1_-) \stackrel{(*)}{\simeq} H^1(B\mathbb G_m, \mathbb L_{B\mathbb G_m/\mathbb Z}) \simeq H^2_\dR(B\mathbb G_m)\simeq \mathbb Z,$$
where $\Pic(\mstack X) \coloneqq \pi_0\Hom_{\Stk}(\mstack X, B\mathbb G_m)$ and the equivalence $(*)$ follows from the smooth descent for cotangent complex (\Cref{Hodge_as_limit}). On the other hand
$$\Pic(B\mathbb G_m) = \pi_0 \Hom_{\Stk}(B\mathbb G_m, B\mathbb G_m) \simeq \Hom_{\Grp}(\mathbb G_m, \mathbb G_m) \simeq \mathbb Z.$$
So the map $d\log\colon \mr{Pic}(B\mathbb G_m)\to H^2_\dR(B\mathbb G_m)$ is some map from $\mathbb Z$ to $\mathbb Z$ and we need to show it is an isomorphism.

To this end consider the pull-back to $\mathbb P^n$ of the transformation $d\log\colon R\Gamma_{\et}(-, \mathcal O^\times)\to \Omega^1$ under $\mathcal O_{\mathbb P^n}(1)\colon \mathbb P^n \ra B\mathbb G_m$. This gives a commutative square:
$$
\xymatrix{
\mr{Pic}(B\mathbb G_m/\mathbb Z) \ar[d]_{d\log}\ar[r]^{\mathcal O(1)^*}& \mr{Pic}(\mathbb P^n/\mathbb Z) \ar[d]_{d\log}\\
H^2_\dR(B\mathbb G_m/\mathbb Z)\ar[r]^{\mathcal O(1)^*}& H^2_\dR(\mathbb P^n/\mathbb Z).
}
$$
Since the line bundle $\mathcal O_{\mathbb P^n}(1)$ generates $\mr{Pic}(\mathbb P^n/\mathbb Z)$ the top horizontal arrow is an isomorphism. Since all groups in the diagram are isomorphic to $\mathbb Z$, if we show that the right vertical arrow is an isomorphism it will follow that all remaining arrows are isomorphisms as well. Consider an embedding of a line $i\colon \mathbb P^1\hookrightarrow \mathbb P^n$, then $i^*$ induces isomorphisms on both the Picard groups and $H^2_\dR$ and so it is enough to check that $d\log$ is an isomorphism for $\mathbb P^1$, which is is easy. We get that a generator $x$ in $H^2_\dR(B\mathbb G_m/\mathbb Z)$ is getting mapped to a generator of $H^2_\dR(\mathbb P^n/\mathbb Z)$, two options given by $\pm y$.

The pullback $\mathcal O_{\mathbb P^n}(1)^*\colon H^\bullet_\dR(B\mathbb G_m)\ra H^\bullet_\dR(\mathbb P^n)$ is a homomorphism of algebras and since $\mathcal O_{\mathbb P^n}(1)^*(x)=\pm y$ it can be identified with the projection $\mathbb Z[y]\surj \mathbb Z[y]/(y^{n+1})$ and we have $H^\bullet(\fib(\mathcal O_{\mathbb P^n}(1)^*))\simeq y^{n+1}\cdot \mathbb Z[y]$ which lives in cohomological degrees $\ge 2n+2$, so the fiber is $(2n+1)$-coconnected.
\end{proof}
\end{prop}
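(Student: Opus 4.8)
The plan is to prove Proposition \ref{de_Rham_fur_P_n} by identifying the map $\mathcal O_{\mathbb P^n}(1)^*$ on $H^2_\dR$ explicitly via the $d\log$ construction, and then bootstrapping this to the statement about the fiber using the algebra structure of de Rham cohomology. First I would set up the natural transformation $d\log\colon \mathcal O^\times_- \to \Omega^1_-$ of presheaves on the smooth affine site over $B\mathbb G_m$, sending an invertible function $f$ to $df/f$. Passing to the limit over $\Aff^{\sm,\op}_{/B\mathbb G_m}$ and applying $H^1$ produces a map $d\log\colon \Pic(B\mathbb G_m) \to H^1(B\mathbb G_m,\Omega^1_-)$, and here I would invoke the smooth descent for the cotangent complex (\Cref{Hodge_as_limit}, \Cref{flat_descent_for_cotangent_compl}) to identify $H^1(B\mathbb G_m,\Omega^1_-)$ with $H^1(B\mathbb G_m,\mathbb L_{B\mathbb G_m/\mathbb Z})$, which by the already-computed degeneration is $H^2_\dR(B\mathbb G_m/\mathbb Z)\simeq \mathbb Z$. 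I would also note $\Pic(B\mathbb G_m) = \pi_0\Hom_{\Stk}(B\mathbb G_m, B\mathbb G_m) \simeq \Hom_{\Grp}(\mathbb G_m,\mathbb G_m)\simeq \mathbb Z$, so $d\log$ is a priori just some endomorphism of $\mathbb Z$.

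The key step is to pin down that this endomorphism is an isomorphism. I would do this by naturality: pulling back the $d\log$ transformation along $\mathcal O_{\mathbb P^n}(1)\colon \mathbb P^n\to B\mathbb G_m$ gives a commutative square relating $d\log$ on $B\mathbb G_m$ to $d\log$ on $\mathbb P^n$, with horizontal maps the pullbacks on $\Pic$ and on $H^2_\dR$. Since $\mathcal O_{\mathbb P^n}(1)$ generates $\Pic(\mathbb P^n/\mathbb Z)\simeq\mathbb Z$, the top horizontal arrow is an isomorphism; all four groups in the square are $\simeq\mathbb Z$, so it suffices to show $d\log$ is an isomorphism on $\mathbb P^n$, and by further restricting along a line $i\colon \mathbb P^1\hookrightarrow \mathbb P^n$ (which induces isomorphisms on both $\Pic$ and $H^2_\dR$, using the description of $i^*$ as the quotient by $y^{k+1}$) I would reduce to $\mathbb P^1$, where $d\log$ being an isomorphism is a direct classical check. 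This forces $\mathcal O_{\mathbb P^n}(1)^*(x)$ to be a generator of $H^2_\dR(\mathbb P^n/\mathbb Z)\simeq\mathbb Z[y]/y^{n+1}$, i.e. $\pm y$, and in particular $H^2_\dR(B\mathbb G_m/\mathbb Z)\xrightarrow{\sim} H^2_\dR(\mathbb P^n/\mathbb Z)$.

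Finally, for the statement about the fiber, I would use that $\mathcal O_{\mathbb P^n}(1)^*$ is a homomorphism of graded rings $\mathbb Z[x]\to \mathbb Z[y]/y^{n+1}$ sending $x\mapsto \pm y$; hence it is identified with the canonical surjection $\mathbb Z[y]\twoheadrightarrow \mathbb Z[y]/(y^{n+1})$, whose kernel is $y^{n+1}\cdot\mathbb Z[y]$, concentrated in cohomological degrees $\ge 2n+2$. Since both sides of the map are cohomologically even and the map is surjective on cohomology, the long exact sequence of the fiber gives $H^\bullet(\fib(\mathcal O_{\mathbb P^n}(1)^*))\simeq y^{n+1}\cdot\mathbb Z[y]$, so the fiber is $(2n+1)$-coconnected. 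The only mildly delicate point — and the main thing to be careful about — is the identification $H^1(B\mathbb G_m,\Omega^1_-)\simeq H^2_\dR(B\mathbb G_m/\mathbb Z)$ together with checking that the $d\log$ map really lands in the right graded piece of the Hodge filtration; this is a bookkeeping matter handled by the already-established Hodge-de Rham degeneration for $B\mathbb G_m$, so I do not expect a genuine obstruction, just care with the identifications.
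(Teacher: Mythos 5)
Your proposal is correct and follows essentially the same route as the paper's proof: setting up the $d\log$ natural transformation, identifying $\mathrm{Pic}(B\mathbb G_m)$ and $H^2_\dR(B\mathbb G_m/\mathbb Z)$ with $\mathbb Z$, using naturality along $\mathcal O_{\mathbb P^n}(1)$ and the restriction to $\mathbb P^1$ to show $d\log$ is an isomorphism, and then concluding via the ring homomorphism structure. The only cosmetic difference is that you flag the Hodge-filtration bookkeeping explicitly, which the paper leaves implicit.
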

%\begin{lem}
%Let $A$ be an abelian group scheme and let $\mstack X$ be an Artin stack. Then there is a natural equivalence of spaces
%$$\Hom_{\Stk}(\mstack X, B^n_\tau A) \simeq \Omega^\infty \lim_{S \in \Aff^{\op, \sm}_{/\mstack X}} R\Gamma_\tau(S, G)[n].$$
%\end{lem}

Now consider an infinite system of subspaces $\mathbb P^0\overset{i_0}{\hookrightarrow} \mathbb P^1\overset{i_1}{\hookrightarrow} \cdots \overset{i_{n-1}} {\hookrightarrow} \mathbb P^n \overset{i_n}{\hookrightarrow} \cdots$. The collection of line bundles $\mathcal O_{\mathbb P^n}(1)$ on $\mathbb P^n$ produces a compatible system of maps $\mathcal O_{\mathbb P^n}(1)\colon \mathbb P^n\ra B\mathbb G_m$. Let $\mathcal{O}_{\mathbb P^\infty}(1)^*\colon R\Gamma_\dR(B\mathbb G_m/\mathbb Z)\ra \prolim R\Gamma_\dR(\mathbb P^n/\mathbb Z)$ be the natural map to the limit.
\begin{cor}\label{BG_m_limit_dR}
The pullback map
$$\mathcal{O}_{\mathbb P^\infty}(1)^* \colon R\Gamma_\dR(B\mathbb G_m / \mathbb Z) \xymatrix{\ar[r] &} \prolim R\Gamma_\dR(\mathbb P^n / \mathbb Z)$$
is an equivalence.

\begin{proof}\label{cor:de Rham cohomology of BG_m as a limit}
By \Cref{de_Rham_fur_P_n}, the coconnectivity of the fiber of the map $\mathcal{O}_{\mathbb P^n}(1)^*$ tends to $\infty$ when $n\to \infty$.
\end{proof}
\end{cor}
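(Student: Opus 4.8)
This is a formal consequence of Proposition~\ref{de_Rham_fur_P_n}, so the plan is short. First I would observe that the pullbacks $\mathcal{O}_{\mathbb P^n}(1)^*\colon \RG_\dR(B\mathbb G_m/\mathbb Z)\to\RG_\dR(\mathbb P^n/\mathbb Z)$ are compatible with the restriction maps $\RG_\dR(\mathbb P^{n+1}/\mathbb Z)\to\RG_\dR(\mathbb P^n/\mathbb Z)$ induced by the inclusions $i_n\colon\mathbb P^n\hookrightarrow\mathbb P^{n+1}$: this holds because $i_n^*\mathcal{O}_{\mathbb P^{n+1}}(1)\simeq\mathcal{O}_{\mathbb P^n}(1)$, so the two classifying maps $\mathbb P^n\to B\mathbb G_m$ obtained along the two routes agree. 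Hence these maps assemble into $\mathcal{O}_{\mathbb P^\infty}(1)^*\colon\RG_\dR(B\mathbb G_m/\mathbb Z)\to\prolim[n]\RG_\dR(\mathbb P^n/\mathbb Z)$, and since the formation of fibers commutes with limits, $\fib\bigl(\mathcal{O}_{\mathbb P^\infty}(1)^*\bigr)\simeq\prolim[n]\fib\bigl(\mathcal{O}_{\mathbb P^n}(1)^*\bigr)$.

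Next I would feed in Proposition~\ref{de_Rham_fur_P_n}: writing $F_n\coloneqq\fib\bigl(\mathcal{O}_{\mathbb P^n}(1)^*\bigr)$, it says that $F_n$ is $(2n+1)$-coconnected, i.e. lies in $\DMod{\mathbb Z}^{\ge 2n+2}$. Thus the coconnectivity of the terms of the tower $\{F_n\}_n$ tends to infinity, and it remains to note that a sequential homotopy limit of such a tower is contractible. Concretely, for any fixed $k\in\mathbb Z$ one has $H^{k-1}(F_n)=H^k(F_n)=0$ whenever $2n+2>k$, so both inverse systems $\{H^{k-1}(F_n)\}_n$ and $\{H^k(F_n)\}_n$ are eventually zero; hence every term of the Milnor exact sequence computing $H^k\bigl(\prolim[n] F_n\bigr)$ from these systems vanishes, and so $H^k\bigl(\prolim[n] F_n\bigr)=0$. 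As $k$ was arbitrary, $\prolim[n] F_n\simeq 0$ (equivalently, this is left-completeness of the standard $t$-structure on $\DMod{\mathbb Z}$), and therefore $\mathcal{O}_{\mathbb P^\infty}(1)^*$ is an equivalence.

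There is no real obstacle in the present step: essentially all the content lives in Proposition~\ref{de_Rham_fur_P_n} (the identification $\mathcal{O}_{\mathbb P^n}(1)^*(x)=\pm y$ via the $d\log$ map and the reduction to the case of $\mathbb P^1$). The only points needing a little attention here are the compatibility of the maps in the tower with the transition maps and the bookkeeping of the degree bounds, both of which are routine.
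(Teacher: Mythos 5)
Your argument is correct and is exactly the intended one: the paper's proof is a one-line appeal to Proposition~\ref{de_Rham_fur_P_n}, and your write-up simply unpacks the same reasoning (fibers commute with sequential limits, the fibers $F_n$ are $(2n+1)$-coconnected, and left-completeness of the standard $t$-structure on $\DMod{\mathbb Z}$ gives $\prolim F_n\simeq 0$). No new ideas or alternate route here, just a more explicit spelling out of the paper's argument.
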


\subsection{Prismatic and \'etale cohomology of $B\mathbb G_m$}\label{prismatoc_coh_of_Btori}
Fix a Breuil-Kisin prism $(\mf S,(E))$ with $\mf S/E\simeq \mc O_K$. Again we consider a system $\mathbb P^0\overset{i_0}{\hookrightarrow} \mathbb P^1\overset{i_1}{\hookrightarrow} \cdots \overset{i_{n-1}} {\hookrightarrow} \mathbb P^n \overset{i_n}{\hookrightarrow} \cdots$, but now over $\mathcal O_K$. We have compatible maps $\mathcal O_{\mathbb P^n}(1)\colon \mathbb P^n\ra B\mathbb G_m$ which produce a map $\mathcal{O}_{\mathbb P^\infty}(1)^*\colon R\Gamma_\Prism(B\mathbb G_m/\mf S)\to \prolim R\Gamma_\Prism(\mathbb P^n/ \mf S)$. To identify the right hand side recall that the \'etale cohomology of $\mathbb P^n_C$ are given by
$$H^*_{\et}(\mathbb P^n_{C}, \mathbb Z_p)\simeq \bigoplus_{i=0}^n \mathbb Z_p(-i)[-2i].$$ 
Recall that by \cite[Theorem 1.2.1]{Kisin_BKModules} the functor $\BK\colon \Rep_{G_K}^{\#, \crys} \to \Mod_{\mathfrak S}^\phi$ is a symmetric monoidal equivalence and $\BK(\mathbb Z_p(n)) \simeq \mathfrak S\{n\}$. Moreover, since the crystalline cohomology of $\mathbb P^n$ are torsion free, by \cite[Theorem 14.6(iii)]{BMS1} there is a natural isomorphism of Breuil-Kisin modules $H^i_\Prism(\mathbb P^n/\mathfrak S) \simeq \BK(H^i_\et(\mathbb P^n_K, \mathbb Z_p))$. So
$$
H^*_\Prism(\mathbb P^n/\mf S)\simeq \mr{BK}(H^*_\et(\mathbb P^n_{{\mathbb C_p}}, \mathbb Z_p))\simeq \bigoplus_{i=0}^n \mr{BK}(\mathbb Z_p(-i))[-2i]\simeq \bigoplus_{i=0}^n \mf S\{-i\}[-2i]
$$
as a ring object in the category of graded Breuil-Kisin modules. Here the ring structure on the right hand side is induced by isomorphisms $\mf S\{-i\}\otimes_{\mf S} \mf S\{-j\}\simeq \mf S\{-i-j\}$ if $i+j\le n$ and zero otherwise. In particular the underlying $\mf S$-module is just $\mf S[x]/(x^{n+1})$.
\begin{lem}\label{lem:prismatic_coh_of_BGm_vs_Pn}
The map $\mathcal{O}_{\mathbb P^\infty}(1)^*: R\Gamma_\Prism(B\mathbb G_m/\mf S)\to \prolim R\Gamma_\Prism(\mathbb P^n/\mf S)$ is an equivalence.

\begin{proof}
We will use the de Rham comparison and the derived Nakayama lemma. Namely, since $\varphi_{\mf S}$ is faithfully flat, it is enough to prove that 
$$
\varphi_{\mf S}(\mathcal{O}_{\mathbb P^\infty}(1)^*)\colon \varphi_{\mf S}^* R\Gamma_\Prism(B\mathbb G_m/\mf S)\xymatrix{\ar[r] &} \varphi_{\mf S}^*(\prolim R\Gamma_\Prism(\mathbb P^n/\mf S))
$$
is an equivalence. Moreover, since both sides are $E$-adically complete, it is enough to check this modulo $E\in \mf S$. The $\mf S$-algebra $\phi_{\mf S*}\mf S$ is perfect as an $\mf S$-module and so
$$
\varphi_{\mf S}^*(\prolim R\Gamma_\Prism(\mathbb P^n/\mf S))\xymatrix{\ar[r] &} \prolim \varphi_{\mf S}^* R\Gamma_\Prism(\mathbb P^n/\mf S).
$$
is an equivalence. Since $\mathcal O_K$ is a perfect $\mathfrak S$-module as well, the natural map 
$$
(\prolim \varphi_{\mf S}^* R\Gamma_\Prism(\mathbb P^n/\mf S))\otimes_{\mf S} \mathcal O_K \xymatrix{\ar[r] &} \prolim \left(\varphi_{\mf S}^* R\Gamma_\Prism(\mathbb P^n/\mf S)\otimes_{\mf S} \mathcal O_K\right)
$$
is also an equivalence. So it is enough to check that 
$$
\varphi_{\mf S}(\mathcal{O}_{\mathbb P^\infty}(1)^*)/E \colon \varphi_{\mf S}^*(R\Gamma_\Prism(B\mathbb G_m/\mf S)) \otimes_{\mf S} \mathcal O_K \xymatrix{\ar[r] &} \prolim \left(\varphi_{\mf S}^* R\Gamma_\Prism(\mathbb P^n/\mf S)\otimes_{\mf S}\mathcal O_K\right)
$$
is an equivalence. But by the de Rham comparison (\Cref{prop: de Rham comparison}) this map identifies with
$$\mathcal{O}_{\mathbb P^\infty}(1)^*\colon R\Gamma_\dR(B\mathbb G_m/\mc O_K)\xymatrix{\ar[r] &} \prolim R\Gamma_\dR(\mathbb P^n/\mc O_K),$$
which is an equivalence 
 by \Cref{cor:de Rham cohomology of BG_m as a limit}. Indeed we have $\prolim R\Gamma_\dR(\mathbb P^n/\mc O_K)\simeq (\prolim R\Gamma_\dR(\mathbb P^n/\mbb Z))\otimes_{\mbb Z}\mc O_K$ by the increasing coconnectivity of $\mc O_{P^n}(1)^*$ and finite Tor-dimension of $\mbb Z$, and thus we are done by base change for de Rham cohomology.
\end{proof}
\end{lem}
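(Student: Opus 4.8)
The plan is to deduce the prismatic statement from the already-established de Rham analogue (\Cref{BG_m_limit_dR}) via the de Rham comparison and derived Nakayama along the Cartier divisor $(E)\subset\mf S$. Write $\alpha\colon \RG_\Prism(B\mathbb G_m/\mf S)\to\prolim \RG_\Prism(\mathbb P^n/\mf S)$ for the map in question, induced by the compatible family of classifying maps $\mathcal O_{\mathbb P^n}(1)\colon\mathbb P^n\to B\mathbb G_m$.

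First I would apply the Frobenius pullback $\varphi_{\mf S}^*$. Since $\varphi_{\mf S}\colon\mf S\to\mf S$ is faithfully flat, $\varphi_{\mf S}^*$ is conservative, so it is enough to show that $\varphi_{\mf S}^*\alpha$ is an equivalence. The key bookkeeping point is that $\phi_{\mf S*}\mf S$ is a finite free (hence perfect) $\mf S$-module, so $\varphi_{\mf S}^*$ commutes with the inverse limit over $n$; consequently both the source and target of $\varphi_{\mf S}^*\alpha$ are $E$-adically complete, being limits of $(p,E)$-complete complexes. By derived Nakayama for the ideal $(E)$ it then suffices to check that $\varphi_{\mf S}^*\alpha\otimes_{\mf S}\mf S/E$ is an equivalence, and since $\mc O_K=\mf S/E$ is again a perfect $\mf S$-module the functor $-\otimes_{\mf S}\mc O_K$ commutes with the limit over $n$ as well.

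Next I would invoke the de Rham comparison (\Cref{prop: de Rham comparison}): for any prestack $\mathfrak X$ over $\mc O_K$ there is a natural equivalence $\varphi_{\mf S}^*\RG_\Prism(\mathfrak X/\mf S)\otimes_{\mf S}\mc O_K\simeq\RG_{L\dR^\wedge_p}(\mathfrak X/\mc O_K)$. Applying it to $B\mathbb G_m$ and to each $\mathbb P^n$, and using that for these smooth stacks the $p$-completed derived de Rham cohomology agrees with the ordinary de Rham cohomology (\Cref{prop: derived complete is the completion for smooth Artin stacks}, together with $p$-completeness of $\RG_\dR(\mathbb P^n/\mc O_K)$ and of $\RG_\dR(B\mathbb G_m/\mc O_K)\simeq\mc O_K[x]$), the mod-$E$ map is naturally identified with
\[
\mathcal O_{\mathbb P^\infty}(1)^*\colon\RG_\dR(B\mathbb G_m/\mc O_K)\longrightarrow\prolim \RG_\dR(\mathbb P^n/\mc O_K).
\]
This is an equivalence: by \Cref{BG_m_limit_dR} the corresponding map over $\mathbb Z$ is one, and base change along $\mathbb Z\to\mc O_K$ --- using finite Tor-dimension of $\mathbb Z$, \Cref{Hodge_and_deRham_basechange}, and the fact that $\fib(\mathcal O_{\mathbb P^n}(1)^*)$ over $\mathbb Z$ becomes arbitrarily coconnected as $n\to\infty$, so that $\prolim$ commutes with $-\otimes_{\mathbb Z}\mc O_K$ --- deduces the $\mc O_K$-statement.

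The part requiring the most care is the chain of commutation statements: $\varphi_{\mf S}^*$, $-\otimes_{\mf S}\mc O_K$, and $-\otimes_{\mathbb Z}\mc O_K$ each have to pass through the inverse limit over $n$, and each such passage rests on a separate finiteness input (perfectness of $\phi_{\mf S*}\mf S$ and of $\mc O_K$ over $\mf S$; finite Tor-dimension of $\mathbb Z$; the increasing coconnectivity of the fibers of $\mathcal O_{\mathbb P^n}(1)^*$). Getting the order of reductions right --- pull back by Frobenius, reduce mod $E$, translate to de Rham, then base change from $\mathbb Z$ --- is what keeps the argument non-circular, since the de Rham input is only available after these reductions.
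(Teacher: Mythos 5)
Your argument is essentially identical to the paper's: Frobenius pullback plus derived Nakayama along $(E)$, commuting $\varphi_{\mf S}^*$ and $-\otimes_{\mf S}\mc O_K$ past the limit via perfectness of $\phi_{\mf S*}\mf S$ and $\mc O_K$ over $\mf S$, identification with de Rham via \Cref{prop: de Rham comparison}, and base change from $\mathbb Z$ to $\mc O_K$ using the increasing coconnectivity from \Cref{de_Rham_fur_P_n}. The proof is correct and matches the paper's route.
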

\begin{rem}\label{rem: the pull-back to P^n is 2n+1 connected }
In particular we get that 
$$H^*_\Prism(B\mathbb G_m/\mf S)\simeq \Sym_{\mf S}(\mathfrak S\{-1\}[-2]) \simeq \bigoplus_{i\ge 0} \mf S\{-i\}[-2i]$$
as a ring object in the category of graded Breuil-Kisin modules (with the underlying $\mf S$-algebra isomorphic to $\mf S[x]$). The natural map $\RG_\Prism(B\mbb G_m/\mf S) \ra \RG_\Prism(\mbb P^n/\mf S)$ is given on cohomology by the  surjection $\mf S[x]\surj \mf S[x]/x^{n+1}$ and thus is $(2n+1)$-connected. 
\end{rem}

\begin{rem}
	Note that by base change (for prismatic cohomology) we also have 
	$$
	H^*_\Prism(B\mbb G_m/\Ainf)\simeq \Sym_{\Ainf}(\Ainf\{-1\}[-2]) \simeq \bigoplus_{i\ge 0} \Ainf\{-i\}[-2i].
	$$
\end{rem}
%\begin{rem}
%%Note that all the maps $\mathcal{O}_{\mathbb P^n}(1)^*: R\Gamma_{\Ainf}(B\mathbb G_m)\ra R\Gamma_{\Ainf}(\mathbb P^n)$ are natural and in particular they commute with the Frobenius \todo{Frob action on Ainf}, so the structure of the Breuil-Kisin-Fargues modules is also being  preserved by the quasi-isomorphism $\mathcal{O}_{\mathbb P^\infty}(1)^*$.
%%\end{rem}
A similar argument can be given to compute prismatic cohomology of a split torus. We will deduce it from the case of $\mathbb G_m$ instead:
\begin{prop}\label{prop: prismatic cohomology of BT}
Let $T$ be a split torus over $\mathcal O_K$. Then there is a canonical isomorphism
$$\Sym_{\Mod_{\mathfrak S}^\phi}(X^*(T)\otimes_{\mathbb Z} \mathfrak S\{-1\}) \xymatrix{\ar[r]^-\sim &} H^*_\Prism(BT/\mathfrak S),$$
where $X^*(T)$ denotes the character lattice of $T$ and $X^*(T)\otimes_{\mathbb Z} \mathfrak S\{-1\}$ is placed in cohomological degree $2$. Non-canonically $H^*_\Prism(BT/\mf S)\simeq \mf S[x_1,\ldots,x_k]$ as graded $\mf S$-algebras, where $k=\dim T$ and $\deg x_i=2$.

\begin{proof}
First note that there is a natural map of sets $X^*(T)\times\mathfrak S\{-1\} \to H^2_\Prism(BT/\mathfrak S)$ defined as follows: for a fixed character $\chi \colon T \to \mathbb G_m$ we take pullback of $H^2(B\mathbb G_m/\mathfrak S) \simeq \mathfrak S\{-1\}$ under the induced map $B(\chi)\colon BT\to B\mathbb G_m$. This map is in fact linear in $\chi$; indeed since $B(\chi_1+\chi_2)\colon BT\ra B\mbb G_m$ is given by the composition 
$$
\xymatrix{BT\ar[r]^{B(\Delta)}& BT\times BT \ar[rr]^{B(\chi_1)\times B(\chi_2)} && B\mbb G_m\times B\mbb G_m \ar[r]^{B(m)}& B\mbb G_m}
$$ 
where $m\colon \mbb G_m\times \mbb G_m \ra \mbb G_m$ is the multiplication map, thus it is enough to see that for any $x\in H^2_\Prism(B\mbb G_m/\mf S)$ we have $B(m)^*(x)= x\otimes 1 + 1\otimes x \in H^2_\Prism(B\mbb G_m \times B\mbb G_m/\mf S)$ (where we identify $H^2_\Prism(B\mbb G_m \times B\mbb G_m/\mf S)$ with $H^2(B\mbb G_m/\mf S)\otimes_{\mf S} H^0(B\mbb G_m/\mf S)\oplus H^0(B\mbb G_m/\mf S)\otimes_{\mf S}H^2(B\mbb G_m/\mf S)$ via the K\"unneth formula). This is seen immediately by restricting further to $B\mbb G_m\times Be$ and $Be\times B\mbb G_m$ (where $e\hookrightarrow \mbb G_m$ is the identity).

 Extending by multiplicativity we obtain a comparison homomorphism
$$\Sym_{\Mod_{\mathfrak S}^\phi}(X^*(T)\otimes_{\mathbb Z} \mathfrak S\{-1\}) \xymatrix{\ar[r] &} H^*_\Prism(BT/\mathfrak S),$$
which can be seen to be an isomorphism by the K\"unneth formula (and the case of a $1$-dimensional torus). The formula $H^*_\Prism(BT/\mf S)\simeq \mf S[x_1,\ldots,x_k]$ follows by fixing a splitting $T\simeq (\mbb G_m)^k$ and using the K\"unneth formula.
\end{proof}
\end{prop}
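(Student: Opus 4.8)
The plan is to bootstrap from the computation of $H^*_\Prism(B\mathbb G_m/\mf S)$ established in \Cref{lem:prismatic_coh_of_BGm_vs_Pn} by means of the K\"unneth formula. Recall that there we found $H^*_\Prism(B\mathbb G_m/\mf S)\simeq \Sym_{\mf S}(\mf S\{-1\}[-2])$ as a graded ring object in $\Mod_{\mathfrak S}^\phi$, with underlying $\mf S$-algebra $\mf S[x]$, $\deg x = 2$; this is exactly the one-dimensional case of the asserted statement, since $X^*(\mathbb G_m)\otimes_{\mathbb Z}\mf S\{-1\}=\mf S\{-1\}$. As $\mf S$ is regular and Noetherian, the $(p,E)$-completed tensor product $\cotimes_{\mf S}$ has finite cohomological amplitude, so \Cref{Kunneth_for_Prism_stacks} applies to finite products of copies of $B\mathbb G_m$. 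Fixing a splitting $T\simeq\mathbb G_m^k$ gives $BT\simeq (B\mathbb G_m)^{\times k}$ and hence, by iterated K\"unneth, $H^*_\Prism(BT/\mf S)$ is the $k$-fold $(p,E)$-completed tensor power of $H^*_\Prism(B\mathbb G_m/\mf S)$; taking underlying $\mf S$-algebras yields the non-canonical presentation $\mf S[x_1,\dots,x_k]$ with $\deg x_i=2$.

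To obtain the canonical version I would construct the comparison map $c$ directly, without reference to a splitting. For $\chi\in X^*(T)$, let $B\chi\colon BT\to B\mathbb G_m$ be the induced morphism and set $c(\chi)\in H^2_\Prism(BT/\mf S)$ to be $(B\chi)^*$ of the canonical generator of $H^2_\Prism(B\mathbb G_m/\mf S)\simeq\mf S\{-1\}$. The first thing to check is that $\chi\mapsto c(\chi)$ is $\mathbb Z$-linear. Since $B(\chi_1+\chi_2)$ factors as $BT\xrightarrow{B\Delta}BT\times BT\xrightarrow{B\chi_1\times B\chi_2}B\mathbb G_m\times B\mathbb G_m\xrightarrow{Bm}B\mathbb G_m$ with $m$ the multiplication of $\mathbb G_m$, it suffices to know that the generator $x\in H^2_\Prism(B\mathbb G_m/\mf S)$ is primitive, i.e. $(Bm)^*x = x\otimes 1 + 1\otimes x$ in $H^2_\Prism(B\mathbb G_m\times B\mathbb G_m/\mf S)$. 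This last group has no $H^1\otimes H^1$ contribution because $H^1_\Prism(B\mathbb G_m/\mf S)=0$, so $(Bm)^*x$ is determined by its two K\"unneth components; restricting along the inclusions $B\mathbb G_m\times Be\hookrightarrow B\mathbb G_m\times B\mathbb G_m$ and $Be\times B\mathbb G_m\hookrightarrow B\mathbb G_m\times B\mathbb G_m$ (with $e\hookrightarrow\mathbb G_m$ the unit section), along which $Bm$ restricts to the identity, forces each component to equal $x$. Extending $c$ multiplicatively then produces the desired homomorphism $\Sym_{\Mod_{\mathfrak S}^\phi}(X^*(T)\otimes_{\mathbb Z}\mf S\{-1\})\to H^*_\Prism(BT/\mf S)$ of graded ring objects in $\Mod_{\mathfrak S}^\phi$.

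To see $c$ is an isomorphism I would once more reduce to $\mathbb G_m$: after a splitting $T\simeq\mathbb G_m^k$ the source becomes the tensor product over $i$ of $\Sym_{\Mod_{\mathfrak S}^\phi}(\mf S\{-1\})$ and the target becomes the $k$-fold completed tensor power of $H^*_\Prism(B\mathbb G_m/\mf S)$, and $c$ respects these decompositions (by the compatibility of generators and functoriality of pullback), so the claim follows from the $\mathbb G_m$-case already recorded in \Cref{lem:prismatic_coh_of_BGm_vs_Pn}. Since $c$ itself was defined functorially in $T$, the resulting isomorphism is independent of the auxiliary splitting, hence canonical.

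I expect no essential obstacle here; the main care points are bookkeeping. Since $H^*_\Prism(B\mathbb G_m/\mf S)$ is a free $\mf S$-module concentrated in even degrees, the K\"unneth spectral sequence computing $H^*_\Prism(BT/\mf S)$ degenerates, and the resulting ring isomorphism is automatically compatible with the Frobenius structure because \Cref{Kunneth_for_Prism_stacks} produces a morphism of Frobenius-equivariant $E_\infty$-$\mf S$-algebras; this is what lets one read off the statement in $\Mod_{\mathfrak S}^\phi$ rather than merely at the level of $\mf S$-algebras. The only other point needing attention is the primitivity of $x$ used above, which rests on $H^1_\Prism(B\mathbb G_m/\mf S)=0$ and the multiplicativity of pullback on prismatic cohomology --- both already in hand.
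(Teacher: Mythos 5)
Your proof takes essentially the same approach as the paper: both construct the comparison map by pulling back the generator of $H^2_\Prism(B\mathbb G_m/\mathfrak S)$ along $B\chi$, verify additivity by factoring $B(\chi_1+\chi_2)$ through the diagonal and multiplication and checking primitivity of $x$ via restriction to $B\mathbb G_m\times Be$ and $Be\times B\mathbb G_m$, and conclude via the K\"unneth formula and the rank-one case. The extra remarks you make (that $H^1_\Prism(B\mathbb G_m/\mathfrak S)=0$ is what kills the cross-term, and that the K\"unneth spectral sequence degenerates) are useful clarifications but do not change the argument.
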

\begin{rem}\label{rem: prismatic cohomology of BT over O_C}
Note that (e.g. by \Cref{prop:Conrad}) any torus $T$ over $\mc O_{\mbb C_p}$ is split. By an analogous argument one gets a functorial isomorphism
$$
\Sym_{\Mod_{\Ainf}^\phi}(X^*(T)\otimes_{\mathbb Z} \Ainf \{-1\}) \xymatrix{\ar[r]^-\sim &} H^*_\Prism(BT/\Ainf).
$$
If $T$ was defined (but not necessarily split) over $\mc O_K$, then by functoriality this is an equivalence of $G_K$-modules (with the natural $G_K$-action on $X^*(T_{\mc O_{\mbb C_p}})$).
\end{rem}

\begin{cor} \label{BG_m_lim_et}
The pullback map on the \'etale cohomology of the Raynaud generic fibers
$$\mathcal{O}_{\mathbb P^\infty}(1)^*\colon R\Gamma_\et((\widehat{B\mathbb G_m})_{\mbb C_p},\mathbb Z_p)\xymatrix{\ar[r] &} \prolim R\Gamma_\et((\widehat{\mathbb P^n})_{\mbb C_p},\mathbb Z_p)$$
is an equivalence.

\begin{proof}
By the \'etale comparison \Cref{etale_comparison} we have
$$R\Gamma_\et((\widehat{B\mathbb G_m})_{\mbb C_p}, \mathbb Z_p) \simeq \left(R\Gamma_\Prism(B\mathbb G_m/ \mathfrak S)\otimes_{\mathfrak S} W(\mbb C_p^\flat) \right)^{\phi = 1} \quad\text{and}\quad R\Gamma_\et((\widehat{\mathbb P^n})_{\mbb C_p}, \mathbb Z_p) \simeq \left(R\Gamma_\Prism(\mathbb P^n/ \mathfrak S)\otimes_{\mathfrak S} W(\mbb C_p^\flat) \right)^{\phi = 1}.$$
Moreover, by flatness of $W(\mbb C_p^\flat)$ over $\mathfrak S$ the natural map
$$\left((\prolim R\Gamma_\Prism(\mathbb P^n /\mathfrak S))\otimes_{\mathfrak S} W(\mbb C_p^\flat)\right)^{\phi=1} \xymatrix{\ar[r] & } \prolim \left(R\Gamma_\Prism(\mathbb P^n /\mathfrak S)\otimes_{\mathfrak S} W(\mbb C_p^\flat)\right)^{\phi=1}$$
is an equivalence. So we conclude by the previous lemma.
\end{proof}
\end{cor}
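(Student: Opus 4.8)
The plan is to transport the purely prismatic statement \Cref{lem:prismatic_coh_of_BGm_vs_Pn} --- which already gives that $\mathcal{O}_{\mathbb P^\infty}(1)^*\colon \RG_\Prism(B\mbb G_m/\mf S)\to \prolim \RG_\Prism(\mbb P^n/\mf S)$ is an equivalence --- to the \'etale side by applying the \'etale comparison. First I would invoke the \'etale comparison in its Hodge-proper form (\Cref{cor: adic_etale_comparison2}; equivalently \Cref{etale_comparison} over the perfect prism $\Ainf$ followed by prismatic base change): $B\mbb G_m$ is cohomologically proper by the discussion of \Cref{sec: examples}, hence Hodge-proper by \Cref{coh_proper_stronger}, and each $\mbb P^n$ is proper and hence Hodge-proper, so there are equivalences
\begin{align*}
\RG_\et((\widehat{B\mbb G_m})_{\mbb C_p},\mbb Z_p) &\simeq \left(\RG_\Prism(B\mbb G_m/\mf S)\otimes_{\mf S} W(\mbb C_p^\flat)\right)^{\phi_\Prism = 1},\\
\RG_\et((\widehat{\mbb P^n})_{\mbb C_p},\mbb Z_p) &\simeq \left(\RG_\Prism(\mbb P^n/\mf S)\otimes_{\mf S} W(\mbb C_p^\flat)\right)^{\phi_\Prism = 1},
\end{align*}
functorial in $n$ and compatible with $\mathcal{O}_{\mathbb P^\infty}(1)^*$. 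Since the prismatic complexes appearing are bounded below coherent over $\mf S$ by \Cref{cor: prismatic cohomology are bounded below coherent}, by \Cref{lem:the tensor product remains complete} the a priori $p$-completed tensor products here agree with the underived ones, so no completion subtleties intervene.

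It then remains to see that the functor $M\mapsto \left(M\otimes_{\mf S} W(\mbb C_p^\flat)\right)^{\phi_\Prism = 1}$, evaluated on the tower $\{\RG_\Prism(\mbb P^n/\mf S)\}_n$, commutes with $\prolim$. Since $(-)^{\phi_\Prism = 1} = \fib(\phi_\Prism - \id)$ is a finite limit, it commutes with all limits automatically, so the issue is only whether $-\otimes_{\mf S} W(\mbb C_p^\flat)$ commutes with $\prolim \RG_\Prism(\mbb P^n/\mf S)$. Here I would use that $W(\mbb C_p^\flat)$ is flat over $\mf S$, so that $-\otimes_{\mf S} W(\mbb C_p^\flat)$ is $t$-exact, together with the connectivity estimate recorded in \Cref{rem: the pull-back to P^n is 2n+1 connected }: the fiber of $\RG_\Prism(B\mbb G_m/\mf S)\to \RG_\Prism(\mbb P^n/\mf S)$ has cohomology concentrated in degrees $\geq 2n+2$, hence so does its image after base change, and therefore --- by the same increasing-connectivity argument as in \Cref{lem:prismatic_coh_of_BGm_vs_Pn} --- the canonical map $\RG_\Prism(B\mbb G_m/\mf S)\otimes_{\mf S} W(\mbb C_p^\flat)\to \prolim\left(\RG_\Prism(\mbb P^n/\mf S)\otimes_{\mf S} W(\mbb C_p^\flat)\right)$ is an equivalence. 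Taking $\phi_\Prism$-fixed points and feeding in \Cref{lem:prismatic_coh_of_BGm_vs_Pn} then yields the corollary.

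I expect no genuine obstacle here: the mathematical content is carried entirely by the prismatic computation \Cref{lem:prismatic_coh_of_BGm_vs_Pn} (which in turn rests on the de Rham computation \Cref{cor:de Rham cohomology of BG_m as a limit}) and the connectivity bound of \Cref{rem: the pull-back to P^n is 2n+1 connected }. The only point requiring a modicum of care is the bookkeeping just described --- checking that the \'etale comparison applies verbatim and that $-\otimes_{\mf S} W(\mbb C_p^\flat)$ may simultaneously be taken underived and be passed through the inverse limit.
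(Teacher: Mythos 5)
Your argument is correct and is essentially the same as the paper's: both transport \Cref{lem:prismatic_coh_of_BGm_vs_Pn} through the $\mathfrak S$-version of the \'etale comparison (\Cref{cor: adic_etale_comparison2}), observe that $(-)^{\phi=1}$ is a finite limit and thus commutes with $\prolim$, and use flatness of $W(\mathbb C_p^\flat)$ over $\mathfrak S$ to commute the base change past the inverse limit. You merely spell out the last step more carefully than the paper does — invoking the $(2n+1)$-connectivity bound of \Cref{rem: the pull-back to P^n is 2n+1 connected } to justify that $-\otimes_{\mathfrak S} W(\mathbb C_p^\flat)$ commutes with the limit of the tower (flatness alone, with $W(\mathbb C_p^\flat)$ not finitely generated, would not suffice for an arbitrary $\prolim$), which is a worthwhile precision but not a different proof.
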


The map $\mathcal{O}_{\mathbb P^\infty}(1)^*\colon R\Gamma_\et(({B\mathbb G_m})_{\mbb C_p},\mathbb Z_p)\ra\prolim R\Gamma_\et(({\mathbb P^n})_{\mbb C_p},\mathbb Z_p)$ is an equivalence by the $\mbb A^1$-homotopy invariance of the (algebraic) \'etale cohomology over a field of $\mr{char} \ \! 0$:
\begin{lem}\label{BG_m_limalg}
The map $\mathcal{O}_{\mathbb P^\infty}(1)^*\colon R\Gamma_\et(({B\mathbb G_m})_{\mbb C_p},\mathbb Z_p)\ra\prolim R\Gamma_\et(({\mathbb P^n})_{\mbb C_p},\mathbb Z_p)$ is an equivalence.

\begin{proof}
First note that for each $n$ we have an open embedding $j_n\colon \mathbb P^{n}\simeq [(\mathbb A^{n+1}\setminus 0)/\mathbb G_m]\ra [\mathbb A^{n+1}/\mathbb G_m]$ and a projection $p_n\colon[\mathbb A^{n+1}/\mathbb G_m]\ra [\mr{pt}/\mathbb G_m]=B\mathbb G_m$. The map $\mathcal O_{\mathbb P^n}(1)\colon\mathbb P^n\ra \mathbb B\mathbb G_m$ factors as $p_n\circ j_n$. We also have a section $s_n\colon B\mathbb G_m \simeq [\{0\}/\mathbb G_m] \hookrightarrow [\mathbb A^{n+1}/\mathbb G_m]$ of $p_n$. Since (algebraic) \'etale $\mathbb Z_p$-cohomology over $C$ are $\mathbb A^1$-homotopy invariant, the maps 
$$
p_n^*\colon R\Gamma_\et(({B\mathbb G_m})_{\mbb C_p},\mathbb Z_p)\leftrightarrows R\Gamma_\et([\mathbb A^n/\mathbb G_m]_{\mbb C_p},\mathbb Z_p)\ :s_n^*
$$
are inverse equivalences. In particular, if we choose a sequence of linear embeddings (which are then automatically $\mathbb G_m$-equivariant) $\mathbb A^1\overset{i_0}{\hookrightarrow}\mathbb A^2\overset{i_1}{\hookrightarrow}\mathbb A^3\overset{i_2}{\hookrightarrow}\ldots $ the natural map $p_\infty^*\colon R\Gamma_\et(({B\mathbb G_m})_{\mbb C_p},\mathbb Z_p)\ra \prolim R\Gamma_\et([\mathbb A^n/\mathbb G_m]_{\mbb C_p},\mathbb Z_p)$ is an equivalence. So it is enough to prove,for each $n$, that the fiber of the map $j_n^*\colon R\Gamma_\et([\mathbb A^{n+1}/\mathbb G_m]_{\mbb C_p},\mathbb Z_p)\to R\Gamma_\et(\mathbb P^n_{\mbb C_p},\mathbb Z_p)$ induced by the open embedding $j_n\colon \mathbb P^n \simeq (\mathbb A^n\setminus\{0\})/\mathbb G_m \to [\mathbb A^n/\mathbb G_m]$ is $2n+1$-coconnected.

To see this we will use \v Cech objects for covers $u_1\colon \mathbb A^{n+1} \to [\mathbb A^{n+1}/\mathbb G_m]$ and $u_2\colon \mathbb A^{n+1}\setminus \{0\} \to \mathbb P^n$. The open embedding $j_n\colon \mathbb A^{n+1}\setminus \{0\} \to \mathbb A^{n+1}$ is $\mathbb G_m$-equivariant, inducing exactly $j_n\colon\mathbb P^n\to [\mathbb A^{n+1}/\mathbb G_m]$ on the quotients, and the corresponding \v Cech objects naturally map to each other:
$$
\xymatrix{
\ldots \ar[d] \ar[r] & \ldots \ar[d]\\
\mathbb G_m^k\times\mathbb A^{n+1}\setminus \{0\} \ar[d]\ar[r]^-{j_{n,k}}& \mathbb G_m^k\times\mathbb A^{n+1}\ar[d]\\
\ldots \ar[d] \ar[r] & \ldots \ar[d]\\
\mathbb G_m\times\mathbb A^{n+1}\setminus \{0\} \ar[d]\ar[r]^-{j_{n,1}}& \mathbb G_m\times\mathbb A^{n+1}\ar[d]\\
\mathbb A^{n+1}\setminus \{0\} \ar[r]^-{j_{n}=j_{n,0}}& \mathbb A^{n+1}.
}
$$

For each $j_{n,k}$ we have the map $\underline{\mathbb Z}_p \to (j_{n,k})_*j_{n,k}^*\underline{\mathbb Z}_p \simeq (j_{n,k})_*\mathbb Z_p$ of (pro)-\'etale sheaves on $(\mathbb G_m^k\times\mathbb A^{n+1})_{\mbb C_p}$ and the fiber is identified with $(i_{j,n})_*i_{j,n}^!\underline{\mathbb Z}_p$ (where $i_{j,n}$ is the embedding $\mathbb G_m^k\times \{0\}\hookrightarrow \mathbb G_m^k\times \mathbb A^{n+1}$) which is equivalent to $(i_{j,n})_*\underline{\mathbb Z}_p[2n+1]$ by the local Poincare duality. It follows then that the corresponding map on global sections $j_{n,k}^*\colon R\Gamma_\et((\mathbb G_m^k\times \mathbb A^{n+1})_{\mbb C_p},\mathbb Z_p)\ra R\Gamma_\et((\mathbb G_m^k\times \mathbb A^{n+1}\setminus \{0\})_{\mbb C_p},\mathbb Z_p)$ has $2n+1$-coconnected fiber. Finally, the fiber of the map $j_n^*\colon R\Gamma_\et([\mathbb A^{n+1}/\mathbb G_m]_{\mbb C_p},\mathbb Z_p)\ra R\Gamma_\et(\mathbb P^n_{\mbb C_p},\mathbb Z_p)$ is given by the totalization of fibers of $j_{n,k}^*\colon R\Gamma_\et((\mathbb G_m^k\times \mathbb A^{n+1})_{\mbb C_p},\mathbb Z_p)\ra R\Gamma_\et((\mathbb G_m^k\times \mathbb A^{n+1}\setminus \{0\})_{\mbb C_p},\mathbb Z_p)$, each of them is $2n+1$-coconnected, so the totalization is as well.
\end{proof}
\end{lem}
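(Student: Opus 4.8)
The plan is to reduce the claim to $\mbb A^1$-homotopy invariance of algebraic étale $\mbb Z_p$-cohomology over the characteristic-zero field $\mbb C_p$, running exactly parallel to the rigid-analytic argument of \Cref{BG_m_lim_et} but with homotopy invariance in place of the prismatic comparison. First I would factor each map $\mc O_{\mbb P^n}(1)\colon \mbb P^n\to B\mbb G_m$ as the composite $\mbb P^n\simeq[(\mbb A^{n+1}\setminus 0)/\mbb G_m]\xrightarrow{j_n}[\mbb A^{n+1}/\mbb G_m]\xrightarrow{p_n}B\mbb G_m$, where $p_n$ is the projection and the zero section $s_n$ is a section of $p_n$. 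Since $\mbb Z_p$-étale cohomology of $\mbb C_p$-schemes, and hence (by descent along a smooth affine atlas, computing $\RG_\et$ as a totalization) of $\mbb C_p$-stacks such as $[\mbb A^{n+1}/\mbb G_m]$, is $\mbb A^1$-invariant, the maps $p_n^*$ and $s_n^*$ are inverse equivalences. Choosing a compatible system of linear — hence automatically $\mbb G_m$-equivariant — embeddings $\mbb A^1\hookrightarrow\mbb A^2\hookrightarrow\cdots$, one gets that $p_\infty^*$ identifies $\RG_\et((B\mbb G_m)_{\mbb C_p},\mbb Z_p)$ with $\prolim \RG_\et([\mbb A^{n}/\mbb G_m]_{\mbb C_p},\mbb Z_p)$. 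It therefore suffices to prove that the fiber of $j_n^*\colon\RG_\et([\mbb A^{n+1}/\mbb G_m]_{\mbb C_p},\mbb Z_p)\to\RG_\et(\mbb P^n_{\mbb C_p},\mbb Z_p)$ is $(2n+1)$-coconnected, for then $\mc O_{\mbb P^\infty}(1)^*$ is an equivalence after passing to the inverse limit over $n$.

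To bound this fiber I would pass to the \v{C}ech nerves of the smooth covers $\mbb A^{n+1}\to[\mbb A^{n+1}/\mbb G_m]$ and $\mbb A^{n+1}\setminus 0\to\mbb P^n$; these map to one another compatibly with $j_n$, and in cosimplicial degree $k$ the terms are $\mbb G_m^k\times\mbb A^{n+1}$ and $\mbb G_m^k\times(\mbb A^{n+1}\setminus 0)$, related by the open immersion $j_{n,k}$ whose complement is the smooth codimension-$(n+1)$ subscheme $\mbb G_m^k\times\{0\}$. Using the localization triangle for $j_{n,k}$ together with purity (the Gysin isomorphism $i^!\underline{\mbb Z_p}\simeq\underline{\mbb Z_p}(-n-1)[-2n-2]$, the Tate twist being harmless over $\mbb C_p$), the fiber of $j_{n,k}^*$ on global sections is identified with $\RG_\et((\mbb G_m^k)_{\mbb C_p},\mbb Z_p)[-2n-2]$, which lives in cohomological degrees $\geq 2n+2$ and so is $(2n+1)$-coconnected. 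Since the fiber of $j_n^*$ is the totalization over $k$ of these fibers, and totalization preserves lower bounds on coconnectivity, the fiber of $j_n^*$ is $(2n+1)$-coconnected, which is exactly what is needed.

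The main technical obstacle will be making the $\mbb A^1$-invariance step and the interplay of limits precise. Concretely I would deduce $\mbb A^1$-invariance for $[\mbb A^{n}/\mbb G_m]$ by descent along the atlas from the schematic statement applied $n$ times to the \v{C}ech terms $\mbb G_m^k\times\mbb A^{n}$; one must check that $\RG_\et(-,\mbb Z_p)$, being $\prolim_m\RG_\et(-,\mbb Z/p^m)$, commutes with the finite homotopy limits in sight, which is fine because all étale cohomology groups occurring are finite, so the pro-systems are Mittag--Leffler and $\prolim_m$ and the limit over $n$ both commute with the relevant $\mathrm{Tot}$'s; the purity isomorphism should likewise be read with $\mbb Z/p^m$-coefficients and then passed to the limit. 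A secondary point, straightforward but needing care, is that the factorization $\mc O_{\mbb P^n}(1)=p_n\circ j_n$ and the compatibility of the two \v{C}ech nerves are strictly functorial in $n$, so that everything assembles into a genuine morphism of pro-objects along which the coconnectivity estimate can be applied.
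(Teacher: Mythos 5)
Your argument coincides with the paper's proof in all essential respects: the same factorization $\mathcal O_{\mathbb P^n}(1)=p_n\circ j_n$ through $[\mathbb A^{n+1}/\mathbb G_m]$, the same use of $\mathbb A^1$-invariance to invert $p_n^*$, the same pair of \v{C}ech nerves, and the same purity/Gysin estimate giving $(2n{+}1)$-coconnectivity of the fiber of $j_n^*$. The extra care you flag about commuting $\prolim_m$ past totalizations is a sensible point to make explicit but does not change the structure of the argument.
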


We finish this subsection by showing that the natural map from algebraic to adic \'etale $\mathbb Z_p$-cohomology of $B\mathbb G_m$ is an equivalence:
\begin{prop}\label{two_etale_coh_comp_BG_m}
The map $\Upsilon_{B\mathbb G_m}\colon R\Gamma_\et({(B\mathbb G_m)}_{\mbb C_p}, \mathbb Z_p)\ra R\Gamma_\et({(\widehat{B\mathbb G_m})}_{\mbb C_p}, \mathbb Z_p)$ from \Cref{constr:alg_to_adic_et_comp} is an equivalence.

\begin{proof}
By \Cref{BG_m_lim_et} and \Cref{BG_m_limalg} we have natural equivalences
$$R\Gamma_\et((\widehat{B\mathbb G_m})_{\mbb C_p},\mathbb Z_p)\simeq\prolim R\Gamma_{\et}((\widehat{\mathbb P^n})_{\mbb C_p},\mathbb Z_p) \qquad \text{and} \qquad R\Gamma_\et(({B\mathbb G_m})_{\mbb C_p},\mathbb Z_p)\simeq\prolim R\Gamma_\et(({\mathbb P^n})_{\mbb C_p},\mathbb Z_p)$$
under which the map $\Upsilon_{B\mathbb G_m}$ is identified with the limit $\prolim \Upsilon_{\mathbb P^n}$. But each $\Upsilon_{\mathbb P^n}$ is an equivalence (since $\mathbb P^n$ is proper), so we are done.
\end{proof}
\end{prop}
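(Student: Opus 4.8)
The plan is to deduce the statement for $B\mathbb G_m$ from the (essentially trivial) case of projective spaces, which are proper and hence already known to be $\mathbb Z_p$-locally acyclic. First I would invoke the two limit presentations established just above. By \Cref{BG_m_lim_et}, pullback along the system of maps $\mathcal O_{\mathbb P^n}(1)\colon \mathbb P^n\to B\mathbb G_m$ exhibits $R\Gamma_\et((\widehat{B\mathbb G_m})_{\mbb C_p},\mathbb Z_p)$ as the homotopy limit $\prolim R\Gamma_\et((\widehat{\mathbb P^n})_{\mbb C_p},\mathbb Z_p)$ over the tower of linear subspace inclusions $\mathbb P^0\hookrightarrow\mathbb P^1\hookrightarrow\cdots$; by \Cref{BG_m_limalg} the \emph{same} maps exhibit $R\Gamma_\et((B\mathbb G_m)_{\mbb C_p},\mathbb Z_p)$ as $\prolim R\Gamma_\et((\mathbb P^n)_{\mbb C_p},\mathbb Z_p)$.

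Next I would unwind the naturality of the comparison map. The transformation $\Upsilon_{(-)}$ of \Cref{constr:alg_to_adic_et_comp} is functorial in the stack: it is the global-sections incarnation of the base-change transformation $\lambda^{-1}$ (\Cref{rem:Upsilon as base change}), so each $\mathcal O_{\mathbb P^n}(1)$ gives a commutative square relating $\Upsilon_{B\mathbb G_m}$ to $\Upsilon_{\mathbb P^n}$, compatibly with the inclusions $\mathbb P^n\hookrightarrow\mathbb P^{n+1}$. Since both limit presentations of the preceding paragraph are induced by this one family of maps, passing to the limit identifies $\Upsilon_{B\mathbb G_m}$ with $\prolim\Upsilon_{\mathbb P^n}$.

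It then remains to observe that each $\mathbb P^n$ over $\mc O_{\mbb C_p}$ is proper, so by \Cref{rem:Upsilon as base change} (equivalently, applying the proper base change \Cref{adic_base_change} to the structure map, which makes $(\mathbb P^n)^{\an}_{\mbb C_p}\simeq\widehat{(\mathbb P^n)}_{\mbb C_p}$) the map $\Upsilon_{\mathbb P^n}$ is an equivalence; a homotopy limit of equivalences is an equivalence, so $\Upsilon_{B\mathbb G_m}$ is one too. The only point requiring genuine care is the bookkeeping in the middle step: one must verify that the algebraic and Raynaud-generic-fiber limit presentations are induced by a single compatible system of maps $\mathcal O_{\mathbb P^n}(1)$ and that $\Upsilon$ intertwines the two towers, rather than matching them only termwise; once this is in place the argument is purely formal.
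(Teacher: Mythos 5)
Your proposal is correct and follows essentially the same route as the paper: identify both sides with the limit over the $\mathbb P^n$-tower via \Cref{BG_m_lim_et} and \Cref{BG_m_limalg}, use functoriality of $\Upsilon$ to match the towers, and conclude from properness of $\mathbb P^n$. The extra care you flag about compatibility of the two tower presentations is a reasonable point to make explicit, but it is exactly what the paper's appeal to naturality of $\Upsilon$ (via \Cref{rem:Upsilon as base change}) is meant to handle.
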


\begin{rem}\label{rem:torus_same_arg}
A similar argument can be applied to any split torus $T\simeq \mathbb G_m^k$ of higher dimension. More precisely we can approximate it by $(\mbb P^n)^k$ (mapping to $BT$ via $(\mc O_{\mbb P^n}(1)^*)^k$); indeed from K\"unneth formula and \Cref{rem: the pull-back to P^n is 2n+1 connected } it follows that the corresponding map $\RG_\Prism(B\mbb G_m^k/\mf S)\ra \RG_\Prism((\mbb P^n)^k/\mf S)$ is also at least $(2n+1)$-coconnected and so the map to the limit
$$
\RG_\Prism(B\mbb G_m^k/\mf S)\xymatrix{\ar[r]&} \lim_n \RG_\Prism((\mbb P^n)^k/\mf S)
$$ is an equivalence. Then arguing as in \Cref{BG_m_lim_et} we get that 
$$R\Gamma_\et((\widehat{B\mathbb G_m})_{\mbb C_p}^k,\mathbb Z_p)\xymatrix{\ar[r] &} \lim_n R\Gamma_\et((\widehat{\mathbb P^n})_{\mbb C_p}^k,\mathbb Z_p)$$
is an equivalence, while $$R\Gamma_\et(({B\mathbb G_m})_{\mbb C_p}^k,\mathbb Z_p)\xymatrix{\ar[r] &} \lim_n R\Gamma_\et(({\mathbb P^n})_{\mbb C_p}^k,\mathbb Z_p)$$ is an equivalence by the K\"unneth formula for the algebraic \'etale cohomology. It then follows as in \Cref{two_etale_coh_comp_BG_m} that $\Upsilon_T\colon R\Gamma_\et({(BT)}_{\mbb C_p}, \mathbb Z_p)\ra R\Gamma_\et({(\widehat{BT})}_{\mbb C_p}, \mathbb Z_p)$ is an equivalence.
\end{rem}   
\begin{rem}
Let $T$ be a (not necessarily split) torus over $\mc O_K$. From \Cref{rem: prismatic cohomology of BT over O_C} and the \'etale comparison, by computing the $\phi$ invariants on $\RG_{\Prism}(BT/\mf S)\otimes_{\mf S} W(\mbb C_p^\flat)\simeq \RG_{\Prism}(BT/\Ainf)\otimes_\Ainf W(\mbb C_p^\flat)$ one gets a natural isomorphism of graded rings with $G_K$-action:
$$
H^*_\et(BT_{\mbb C_p},\mathbb Z_p) \areq H^*_\et(\widehat{BT}_{\mbb C_p},\mathbb Z_p) \simeq \Sym^*_{\mbb Z_p}(X^*(T_{\mc O_{\mbb C_p}})\otimes_{\mbb Z} \mbb Z_p(-1)[-2]).
$$
Also note that $X^*(T)\otimes_{\mbb Z} \mbb Z_p(-1) \simeq T_p(T)^\vee$ as $G_K$-modules, where $T_p(T)^\vee$ is the $\mbb Z_p$-dual of the Tate module $T_p(T)$ of $T$. Thus we get 
$$
H^*_\et(BT_{\mbb C_p},\mathbb Z_p) \areq H^*_\et(\widehat{BT}_{\mbb C_p},\mathbb Z_p) \simeq \Sym^*_{\mbb Z_p}(T_p(T)^\vee[-2]).
$$
\end{rem}

\subsection{The case of $[\mathbb A^n/T]$}\label{subsect:prismatic_coh_of_An_quot_T}
Let $T\simeq \mathbb G_m^r$ be a split algebraic torus over $\mathcal O_K$ and let $\chi\colon T \ra \mathbb G_m$ be a character. Consider a $T$-representation $V$ which is free as an $\mathcal O_K$-module. $V$ is a sum of characters $V\simeq \chi_1\oplus\cdots\oplus\chi_n$ and we denote by $R(V)\coloneqq \mbb N\cdot \chi_1 + \ldots + \mbb N\cdot \chi_n\subset X^*(T)$ the cone spanned by $\chi_i$. For the rest of this subsection we make the following assumption:

\begin{assumption}\label{ass_on_T_action}
$V$ is conical (see \ref{defn: conical action}). Let $R(V)\subset X^*(T)$ be the set of $T$-weights appearing in $A$. Note that then for any non-zero $\chi\in R(V)$ we have $0\notin \chi+ R(V)$. 
\end{assumption}

Let $A=\Sym_{\mathcal O_K}(V^\vee)$ and $\mathbb A^n \coloneqq \Spec A$ be the affine space corresponding to $V$. Surely, this $\mathbb A^n$ has a natural $T$-action. Let's pick coordinates $x_1,\ldots, x_n\in V^*\subset A$ which agree with the direct sum decomposition $V\simeq\chi_1\oplus\cdots\oplus\chi_n$. The action of $T$ makes $A$ a $X^*(T)$-graded algebra which we denote by $A_{\bullet}$. 
 Note that the non-zero graded parts in $A$ are given by $-R(V)\subset X^*(T)$ . Recall that the category $\QCoh^+([\mathbb A^n/T])$ is identified with the bounded below derived category of graded $A_\bullet$-modules. Recall also that $\QCoh(BT)$ is identified with the derived category of $X^*(T)$-graded $\mathcal O_K$-modules (with respect to the trivial grading on $\mathcal O_K$). In this terms the functor of global sections $R\Gamma$ on both categories is given by taking the degree $0$ part of the corresponding complex with respect to $X^*(T)$-grading. 

Given a character $\chi\in X^*(T)$, for a $T$-module $M$ we will denote by $M(\chi)$ the corresponding shift of the grading, $M(\chi)_\mu \coloneqq M_{\mu-\chi}$. We consider the natural projection $i\colon [\mathbb A^n/T]\surj  BT$.
\begin{lem}\label{lem:Prisms A^n/T vs BT}
The pullback map
$$p^*\colon  R\Gamma_\Prism(BT / \mathfrak S)\tto  R\Gamma_\Prism([\mathbb A^n/T] / \mathfrak S) $$
is an equivalence.

\begin{proof}
By derived $E(u)$-completeness and exhaustiveness of Hodge-Tate filtration (\Cref{prop: HT_filtration_is_exaustive}) it is enough to prove that $i_0^*$ induces an equivalence on the  Hodge cohomology. Let $\mathfrak t\coloneqq\mathrm{Lie}(T)$ be the Lie algebra of $T$. As a complex of graded $A$-modules the cotangent complex $\mathbb L_{[\mathbb A^n/T]}$ (relative to $\mathcal O_K$) is equivalent to the $2$-term complex $\Omega^1_{\mathbb A^n} \xra{a^*} A_\bullet\otimes_{\mathcal O_K}\mathfrak{t}^\vee$, where $a^*$ is the coderivative of the $T$-action (see \Cref{ex: cotangent complex of X/G}). We have a $T$-equivariant isomorphism
$$\Omega^1_{\mathbb A^n} \simeq \bigoplus_{i=1}^n A_\bullet\cdot dx_i \simeq \bigoplus_{i=1}^n A_{\bullet}(\chi_i)$$
and so the cotangent complex is equivalent to a $2$-term complex $\left(\bigoplus_{i=1}^n A_{\bullet}(-\chi_i)\right)\to A_{\bullet}\otimes_{\mathcal O_K}\mf t^\vee$. Similarly $\wedge^i\ \! \mathbb L_{[\mathbb A^n/T]}$ is equivalent to the corresponding wedge power $\wedge^i_{A}\ \!\left((\bigoplus_{i=1}^n A_{\bullet}(-\chi_i))\xra{p} A_{\bullet}\right)$.

On the other hand, the cotangent complex $\mathbb L_{BT}$ is equivalent to $\mathfrak t^\vee[-1]$. The pullback functor $p^*$ is given by tensoring up with $A$, so $p^*\mathbb L_{BT}$ is equivalent to $(A_{\bullet}\otimes_{\mathcal O_K}\mf t^\vee)[-1]$. The map $R\Gamma(BT, \mathbb L_{BT})\to R\Gamma([\mathbb A^n/T], p^*\mathbb L_{BT})$ is then given by the restriction of the embedding
$$
\mf t^\vee[-1]\to A_\bullet\otimes_{\mathcal O_K}\mf t^\vee[-1]
$$
to the degree $0$ part. Since $A_0 \simeq \mathcal O_K$ we see that it is an equivalence.

Finally, the map $p^*\mathbb L_{BT}\ra \mathbb L_{[\mathbb A^n/T]}$ is given by sending $p^*\mathbb L_{BT}\simeq A_\bullet\otimes_{\mathcal O_K}\mf t^\vee[-1]$ to the similar term in $\mathbb L_{[\mathbb A^n/T]}\simeq (\bigoplus_{i=1}^n A_{\bullet}(-\chi_i))\to A_{\bullet}\otimes_{\mathcal O_K}\mf t^\vee$:
$$\xymatrix{
\cdots \ar[r] & 0\ar[d]^0\ar[r] &A_\bullet\otimes_{\mathcal O_K}\mf t^\vee\ar[d]^{\sim} \ar[r] & \cdots\\ 
\cdots \ar[r] & \left(\bigoplus_{i=1}^n A_{\bullet}(-\chi_i)\right)\ar[r] & A_\bullet\otimes_{\mathcal O_K}\mf t^\vee \ar[r] & \cdots .
}$$
From the assumptions on $V$ we know that the degree $0$ component of $\bigoplus_{i=1}^n A_{\bullet}(-\chi_i)$ is $0$. So
this map is an equivalence when restricted to the part of degree $0$. Thus $
p^*\colon R\Gamma(BT, \mathbb L_{BT}) \ra R\Gamma([\mathbb A^n/T], \mathbb L_{[\mathbb A^n/T]})$ is an equivalence. A similar argument works for $\wedge^i\ \mathbb L_{[\mathbb A^n/T]}$.
\end{proof}
\end{lem}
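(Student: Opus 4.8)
The plan is to reduce the statement, via derived $E(u)$-completeness and the exhaustiveness of the Hodge-Tate filtration (\Cref{prop: HT_filtration_is_exaustive}), to the assertion that the pullback map induces an equivalence on Hodge cohomology, i.e. that
$$
p^*\colon R\Gamma\bigl(BT, \wedge^i\mathbb L_{BT/\mc O_K}\bigr) \xymatrix{\ar[r]&} R\Gamma\bigl([\mathbb A^n/T], \wedge^i\mathbb L_{[\mathbb A^n/T]/\mc O_K}\bigr)
$$
is an equivalence for every $i\ge 0$. Indeed, both prismatic complexes are $E(u)$-complete, and modulo $E(u)$ the Hodge-Tate filtration has associated graded pieces given (up to twist and shift) by the $p$-completed wedge powers of the cotangent complexes, so an equivalence on each $\wedge^i$ of the cotangent complexes forces an equivalence on the Hodge-Tate cohomology and hence, by completeness, on prismatic cohomology itself. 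Alternatively one can invoke the de Rham comparison (\Cref{prop: de Rham comparison}) after applying $\phi_{\mf S}^*$ and derived Nakayama, exactly as in the proof of \Cref{lem:prismatic_coh_of_BGm_vs_Pn}; either route is fine, and I would spell out the Hodge-Tate one since it is the most direct.

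For the cotangent complex computation I would use \Cref{ex: cotangent complex of X/G}: writing $\mf t = \Lie(T)$, the cotangent complex $\mathbb L_{[\mathbb A^n/T]/\mc O_K}$ is the two-term complex of $X^*(T)$-graded $A_\bullet$-modules
$$
\Bigl(\bigoplus_{i=1}^n A_\bullet(-\chi_i)\Bigr) \xymatrix{\ar[r]& A_\bullet\otimes_{\mc O_K}\mf t^\vee},
$$
where the identification $\Omega^1_{\mathbb A^n}\simeq\bigoplus_i A_\bullet(-\chi_i)$ is $T$-equivariant with respect to the chosen eigencoordinates $x_i$, while $\mathbb L_{BT/\mc O_K}\simeq \mf t^\vee[-1]$ and $p^*\mathbb L_{BT/\mc O_K}\simeq (A_\bullet\otimes_{\mc O_K}\mf t^\vee)[-1]$. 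The key point is that the canonical map $p^*\mathbb L_{BT/\mc O_K}\to\mathbb L_{[\mathbb A^n/T]/\mc O_K}$ is the inclusion of the right-hand term, so after passing to wedge powers $\wedge^i$ and then to degree-$0$ graded components (which is how $R\Gamma$ is computed on both $\QCoh(BT)$ and $\QCoh([\mathbb A^n/T])$), everything comes down to the elementary observation that, because $A_0\simeq\mc O_K$ and (by \Cref{ass_on_T_action}, conicality of $V$) every nonzero weight $\chi\in R(V)$ satisfies $0\notin\chi + R(V)$, the degree-$0$ part of $\bigoplus_i A_\bullet(-\chi_i)$ — and more generally the degree-$0$ part of any wedge power involving at least one factor $A_\bullet(-\chi_i)$ — vanishes. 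Hence in degree $0$ the map $\wedge^i(p^*\mathbb L_{BT})\to\wedge^i\mathbb L_{[\mathbb A^n/T]}$ is an isomorphism onto the summand $\wedge^i(A_\bullet\otimes\mf t^\vee)$ whose degree-$0$ part is just $\wedge^i\mf t^\vee$, matching $R\Gamma(BT,\wedge^i\mathbb L_{BT})$.

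The only real obstacle is bookkeeping: one must argue cleanly that $\wedge^i$ of a two-term complex $[M_1\to M_0]$ of graded modules decomposes, as an object, into a finite filtered piece whose graded parts are of the form $\wedge^a M_1\otimes_{A}\Sym^b M_0$ or similar, and then check that for $a\ge 1$ the degree-$0$ component of $\wedge^a M_1\otimes_A(\text{stuff})$ is zero using conicality — this is where \Cref{ass_on_T_action} is genuinely used, and it must be invoked with care because it is a statement about the monoid $R(V)\subset X^*(T)$ rather than about individual weights. Once that vanishing is in place, the comparison of degree-$0$ parts is immediate, and the equivalence on $R\Gamma$ of each $\wedge^i$ follows, completing the reduction. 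I would also remark, as in \Cref{lem:prismatic_coh_of_BGm_vs_Pn}, that all tensor products and completions behave well here because $\mf S$ is Noetherian and regular and $A$ is smooth over $\mc O_K$, so no subtlety about derived versus classical completion arises.
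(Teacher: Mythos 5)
Your proposal follows essentially the same route as the paper's own proof: reduce to Hodge cohomology via $E(u)$-completeness and the exhaustive Hodge-Tate filtration, compute the cotangent complexes as two-term graded $A_\bullet$-complexes using \Cref{ex: cotangent complex of X/G}, and observe that conicality forces the degree-$0$ graded part of $\bigoplus_i A_\bullet(-\chi_i)$ to vanish, so that $p^*$ becomes an equivalence on each $\wedge^i$ after taking degree-$0$ parts. One small correction to the "bookkeeping" you flag: for the cofiber sequence $p^*\mathbb L_{BT}\to\mathbb L_{[\mathbb A^n/T]}\to\Omega^1_{\mathbb A^n}$, the standard filtration on $\wedge^i$ has graded pieces $\wedge^a(p^*\mathbb L_{BT})\otimes_A\wedge^b(\Omega^1_{\mathbb A^n})$ with $a+b=i$ (no $\Sym$ appears); but your reasoning — any piece with $b\ge 1$ has vanishing degree-$0$ part by conicality, and the $b=0$ piece matches $\wedge^i p^*\mathbb L_{BT}$ — goes through verbatim, which is exactly what the paper means by "a similar argument works for $\wedge^i$."
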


\begin{rem}\label{rem: non-split case A^n/T vs BT} In fact in \Cref{lem:Prisms A^n/T vs BT} we do not need to assume that $T$ is split over $\mc O_K$. Indeed by {\Cref{prop:Conrad}} any $T$ over $\mc O_K$ splits after an unramified extension $\mc O_K\ra \mc O_{K'}$. A uniformizer $\pi\in \mc O_K$ then gives a uniformizer of $\mc O_{K'}$ and we have a map of the corresponding Breuil-Kisin prisms $(W(k)[[u]],E(u))\ra (W(k')[[u]],E(u))$ which is faithfully flat. Since the map $p^*$ in \Cref{lem:Prisms A^n/T vs BT} is an equivalence for $BT_{\mc O_{K'}}$ and $[\mbb A^n/T]_{\mc O_{K'}}$, it is also true for $BT$ and $[\mbb A^n/T]$ by base change for prismatic cohomology and faithfully flat descent (applied to $W(k)[[u]]\ra W(k')[[u]]$).
\end{rem}

From the \'etale comparison (\Cref{etale_comparison}) we deduce:
\begin{cor}\label{cor:Rigid_A^1_vs_BG_m}
Under the assumptions from \ref{ass_on_T_action} on $V$ the pullback map
$$p^*\colon R\Gamma_\et(\widehat{BT}_{\mbb C_p}, \mathbb Z_p) \xymatrix{\ar[r] &} R\Gamma_\et(\widehat{[\mathbb A^n/T]}_{\mbb C_p}, \mathbb Z_p)$$
is an equivalence.
\end{cor}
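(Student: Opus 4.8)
The statement to prove is Corollary~\ref{cor:Rigid_A^1_vs_BG_m}: under Assumption~\ref{ass_on_T_action} on the conical $T$-representation $V$, the pullback map $p^*\colon R\Gamma_\et(\widehat{BT}_{\mbb C_p}, \mathbb Z_p) \to R\Gamma_\et(\widehat{[\mathbb A^n/T]}_{\mbb C_p}, \mathbb Z_p)$ is an equivalence. The plan is to reduce this purely formally to \Cref{lem:Prisms A^n/T vs BT}, which was the geometric heart of the matter, via the \'etale comparison theorem established earlier in this section.

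First I would invoke \Cref{cor: Z_p etale cohomology of rigid fiber} (the $\mathbb Z_p$-coefficient \'etale comparison over the perfect prism $\Ainf$), which for any smooth qcqs Artin stack $\mstack Y$ over $\mc O_{\mbb C_p}$ identifies $R\Gamma_\et(\widehat{\mstack Y}_{\mbb C_p}, \mathbb Z_p)$ with $\big(R\Gamma_\Prism(\mstack Y / \Ainf)\widehat\otimes_{\Ainf} W(\mbb C_p^\flat)\big)^{\phi_\Prism=1}$; alternatively one can work over a Breuil-Kisin prism $(\mf S, E(u))$ as in \Cref{etale_comparison} and \Cref{cor: adic_etale_comparison2}, using \Cref{rem: non-split case A^n/T vs BT} to descend from a splitting field if $T$ is not split over $\mc O_K$. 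Both $BT$ and $[\mathbb A^n/T]$ are smooth qcqs Artin stacks over $\mc O_K$ (the quotient by a split, hence linearly reductive, torus of an affine space), so the comparison applies. Under this identification the map $p^*$ on adic \'etale cohomology is induced, functorially in the prismatic cohomology, by the prismatic pullback map $p^*\colon R\Gamma_\Prism(BT/\mf S) \to R\Gamma_\Prism([\mathbb A^n/T]/\mf S)$.

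Next I would observe that \Cref{lem:Prisms A^n/T vs BT} already tells us this prismatic pullback map is an equivalence (this is exactly where Assumption~\ref{ass_on_T_action} on $V$ being conical is used: it guarantees the degree-zero parts of the $X^*(T)$-graded cotangent complexes and their wedge powers agree after pullback). It then remains to check that the functor $M \mapsto \big(M\widehat\otimes_{\mf S} W(\mbb C_p^\flat)\big)^{\phi=1}$ preserves equivalences --- which is automatic, as it is a composite of the completed base change along the flat map $\mf S \to W(\mbb C_p^\flat)$ (flatness as in \cite[Lemma 4.30]{BMS1}), which commutes with the totalizations/limits at play, and the homotopy fixed points functor $(-)^{\phi=1} = \fib(\phi-\id)$, which preserves all limits and in particular equivalences. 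Composing: $p^*$ on $R\Gamma_\et(\widehat{(-)}_{\mbb C_p}, \mathbb Z_p)$ is obtained from the equivalence of \Cref{lem:Prisms A^n/T vs BT} by applying an equivalence-preserving functor, hence is an equivalence.

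I do not expect a genuine obstacle here --- this corollary is a formal consequence of the two main inputs, the \'etale comparison and \Cref{lem:Prisms A^n/T vs BT}. The only points requiring minor care are: (i) confirming that $[\mathbb A^n/T]$ is qcqs and smooth so that the hypotheses of \Cref{etale_comparison}/\Cref{cor: Z_p etale cohomology of rigid fiber} are met; (ii) checking that the identification of $p^*$ on adic cohomology with the prismatic $p^*$ is compatible, which follows from the naturality of the \'etale comparison in the stack; and (iii) handling the $\mathbb Z_p$ (rather than $\mathbb Z/p^n$) passage, where one needs that base change to $W(\mbb C_p^\flat)$ need not be completed --- but this is exactly \Cref{lem:the tensor product remains complete} applied to the bounded-below coherent prismatic cohomology of these Hodge-proper stacks (both $BT$ and $[\mathbb A^n/T]$ are cohomologically, hence Hodge-, proper by \Cref{lem:A^n/T is cohomologically proper} and the torus case in \Cref{sec: examples}), together with \Cref{cor: prismatic cohomology are bounded below coherent}. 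So the argument is: apply the comparison, quote \Cref{lem:Prisms A^n/T vs BT}, done.
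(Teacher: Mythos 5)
Your proof is correct and follows exactly the paper's one-line argument ("From the \'etale comparison we deduce:"), just spelled out in full; the key inputs --- \Cref{lem:Prisms A^n/T vs BT} plus naturality of the \'etale comparison --- are the same. One small remark: your step (iii) about completions and Hodge-properness is superfluous, since any functor preserves equivalences regardless of whether $\widehat\otimes$ can be replaced by $\otimes$; that finer analysis only matters when one wants to descend to uncompleted statements, not for transporting the equivalence of \Cref{lem:Prisms A^n/T vs BT}.
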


By the $\mathbb A^1$-homotopy invariance of the algebraic \'etale cohomology over characteristic zero fields we also have:
\begin{lem}\label{Algebraic_A^1_vs_BG_m}
For any $T$-representation $V$ the map
$$p^*\colon R\Gamma_\et(BT_{\mbb C_p}, \mathbb Z_p) \xymatrix{\ar[r] &} R\Gamma_\et({[\mathbb A^n/T]}_{\mbb C_p}, \mathbb Z_p)$$
is an equivalence.
\end{lem}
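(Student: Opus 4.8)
The statement to prove is \Cref{Algebraic_A^1_vs_BG_m}: for any $T$-representation $V$ with associated affine space $\mathbb A^n = \Spec \Sym_{\mc O_K}(V^\vee)$, the pullback map $p^*\colon R\Gamma_\et(BT_{\mbb C_p}, \mathbb Z_p) \to R\Gamma_\et([\mathbb A^n/T]_{\mbb C_p}, \mathbb Z_p)$ along the projection $p\colon [\mathbb A^n/T] \to BT$ is an equivalence. Note that, unlike \Cref{lem:Prisms A^n/T vs BT} and \Cref{cor:Rigid_A^1_vs_BG_m}, no conicality assumption on $V$ is needed here; this is the whole point of passing to the algebraic generic fiber, where $\mathbb A^1$-homotopy invariance is available.

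The plan is to reduce to the $\mathbb A^1$-homotopy invariance of $p$-adic \'etale cohomology over an algebraically closed field of characteristic zero (as was done for the absolute case in the proof of \Cref{BG_m_limalg}). First I would reduce to $\mathbb Z/p^n$-coefficients, since $R\Gamma_\et(-,\mathbb Z_p)$ is defined as the (derived) limit $\prolim R\Gamma_\et(-,\mathbb Z/p^n)$ and limits commute. Next, the key point is that both sides satisfy smooth (in fact \'etale hyper-)descent as functors on prestacks over $\mbb C_p$ (this follows from \Cref{alg_etale_shaeves_basics} together with \Cref{rem: hyperdescent for etale cohomology} and \cite[Theorem 4.7]{Pridham_ArtinHypercovers}), so using the standard smooth cover $\Spec\mc O_K \to BT$ with \v Cech nerve having terms $T^{\times k}$, the map $p^*$ is the totalization of the maps $R\Gamma_\et(T^k_{\mbb C_p}, \mathbb Z/p^n) \to R\Gamma_\et((\mathbb A^n \times T^k)_{\mbb C_p}, \mathbb Z/p^n)$ induced by the projections $\mathbb A^n \times T^k \to T^k$. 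So it suffices to show that for each schematic affine $Y = T^k_{\mc O_K}$ over $\mc O_K$, the projection $\mathbb A^n_Y \to Y$ induces an equivalence on $\mathbb Z/p^n$-\'etale cohomology of the $\mbb C_p$-fibers. Equivalently, writing everything after base change to $\mbb C_p$, one needs: for a finite-type $\mbb C_p$-scheme $Z$, the projection $\mathbb A^n_Z \to Z$ induces an isomorphism $R\Gamma_\et(Z, \mathbb Z/p^n) \xrightarrow{\ \sim\ } R\Gamma_\et(\mathbb A^n_Z, \mathbb Z/p^n)$.

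This last statement is precisely $\mathbb A^1$-homotopy invariance of \'etale cohomology with torsion coefficients prime to the residue characteristic of $\mbb C_p$ — but $\mbb C_p$ has residue characteristic $p$, so one must be careful: over a field of characteristic zero, $\mathbb A^1$-invariance does hold for $\mathbb Z/p^n$-coefficients, since $p$ is invertible in any characteristic-zero ring. Concretely, by the projection formula and proper base change one reduces $\mathbb A^n_Z \to Z$ to the single step $\mathbb A^1_Z \to Z$, and then to the assertion $H^*_\et(\mathbb A^1_{\mbb C_p}, \mathbb Z/p^n) \simeq \mathbb Z/p^n$ concentrated in degree $0$ (with the relative version following by a base-change/limit argument over affines mapping to $Z$, exactly as in \Cref{pass from algebraic to analytic} and \Cref{BG_m_limalg}); this is classical (e.g. \cite[Expos\'e XV]{SGA4_3} or the references used in the proof of \Cref{BG_m_limalg}). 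Since $p$ is invertible in $\mbb C_p$, the usual $\mathbb A^1$-homotopy invariance applies. I would cite the same sources invoked in \Cref{BG_m_limalg}.

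The main obstacle — really the only non-formal point — is making the descent reduction precise: one must know that $R\Gamma_\et(-,\mathbb Z/p^n)$ on $\mbb C_p$-prestacks commutes with the relevant totalizations and that the cover $\Spec\mc O_K \to BT$ base-changes correctly to $\mbb C_p$, so that $[\mathbb A^n/T]_{\mbb C_p}$ is presented by the \v Cech nerve with terms $(\mathbb A^n\times T^k)_{\mbb C_p}$. Both are already available in the paper: the descent property of $\Shv_\et(-,\Lambda)$ and of \'etale cohomology is \Cref{alg_etale_shaeves_basics}(4), hyperdescent for the bounded-below part is \Cref{rem: hyperdescent for etale cohomology}, and base change of the cover is immediate since quotient stacks and \v Cech nerves commute with base change along $\mc O_K \to \mbb C_p$. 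Once this is in place, the statement follows verbatim from the torsion-coefficient $\mathbb A^1$-invariance used in the proof of \Cref{BG_m_limalg}, and passing to the limit over $n$ gives the $\mathbb Z_p$-statement. I expect the write-up to be short, essentially: reduce mod $p^n$; use smooth descent along $\Spec\mc O_K\to BT$ to reduce to the projection $\mathbb A^n\times T^k \to T^k$; invoke $\mathbb A^1$-homotopy invariance of $\mathbb Z/p^n$-\'etale cohomology over $\mbb C_p$; take the limit over $n$.
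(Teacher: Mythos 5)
Your proposal is correct and fills in exactly the argument the paper leaves implicit (the paper's own "proof" is just the one-line preamble "By the $\mathbb A^1$-homotopy invariance of the algebraic \'etale cohomology over characteristic zero fields"). Your reductions — pass to $\mathbb Z/p^n$-coefficients, descend along $\Spec\mc O_K\to BT$ to reduce to the projections $\mathbb A^n\times T^k\to T^k$, and invoke $\mathbb A^1$-invariance of torsion \'etale cohomology over a characteristic-zero field — are the intended ones, and you correctly note that this works for $p$-torsion coefficients precisely because $p$ is invertible in $\mathbb C_p$, so that (unlike the rigid-analytic lemma) no conicality hypothesis on $V$ is required.

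One small imprecision: when reducing the relative statement $R\Gamma_\et(Z,\mathbb Z/p^n)\xrightarrow{\sim}R\Gamma_\et(\mathbb A^n_Z,\mathbb Z/p^n)$ to the absolute computation $H^{>0}_\et(\mathbb A^1_{\mathbb C_p},\mathbb Z/p^n)=0$, the tool is smooth base change (to compute $R^q\pi_*\underline{\mathbb Z/p^n}$ fiberwise for the non-proper map $\pi\colon\mathbb A^1_Z\to Z$), not proper base change; and the further reduction from $\mathbb A^n$ to $\mathbb A^1$ is just iterating the projection, so "projection formula" is also not doing any work there. These are cosmetic and do not affect the validity of the argument.
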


\begin{prop}\label{prop:alg vs rigid etale coh for A^n/T}
Under the assumptions on $V$ from \ref{ass_on_T_action} the comparison map 
$$
\Upsilon_{[\mathbb A^n/T]}\colon R\Gamma_\et({[\mathbb A^n/T]}_{\mbb C_p},\mathbb Z_p)\xymatrix{\ar[r] &} R\Gamma_\et(\widehat{[\mathbb A^n/T]}_{\mbb C_p}, \mathbb Z_p)
$$
is an equivalence.

\begin{proof}
This follows from the existence of the commutative square
$$
\xymatrix{
R\Gamma_\et(BT_{\mbb C_p},\mathbb Z_p) \ar[d]^{p^*}_\sim \ar[rr]^-{\Upsilon_{BT}}_-\sim && R\Gamma_\et(\widehat{BT}_{\mbb C_p}, \mathbb Z_p)\ar[d]^{p^*}_\sim\\
R\Gamma_\et({[\mathbb A^n/T]}_{\mbb C_p},\mathbb Z_p) \ar[rr]^-{\Upsilon_{[\mathbb A^n/T]}} && R\Gamma_\et(\widehat{[\mathbb A^n/T]}_{\mbb C_p}, \mathbb Z_p),
}
$$
where we already know that all arrows except $\Upsilon_{[\mathbb A^n/T]}$ are equivalences by \Cref{two_etale_coh_comp_BG_m}, \Cref{cor:Rigid_A^1_vs_BG_m} and \Cref{Algebraic_A^1_vs_BG_m}.
\end{proof}
\end{prop}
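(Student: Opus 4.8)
The plan is to deduce the statement from the already-established comparison for the classifying stack $BT$ by exploiting that the transformation $\mstack X\mapsto \Upsilon_{\mstack X}$ is functorial in $\mstack X$, and then transporting it along the projection $p\colon [\mathbb A^n/T]\to BT$. Concretely, I would first record that $\Upsilon_{-}$ is a natural transformation between the functors $\RG_\et((-)_{\mbb C_p},\mathbb Z_p)$ and $\RG_\et(\widehat{(-)}_{\mbb C_p},\mathbb Z_p)$ on flat finitely presentable Artin $\mc O_{\mbb C_p}$-stacks; this is immediate from \Cref{constr:alg_to_adic_et_comp} and \Cref{rem:Upsilon as base change}, since both $\lambda_{-}^{-1}$ and the unit $\underline{\mathbb Z_p}\to \lambda_{-*}\underline{\mathbb Z_p}$ are natural in the stack. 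Applying this to $p$ produces the commutative square
$$
\xymatrix{
\RG_\et(BT_{\mbb C_p},\mathbb Z_p) \ar[d]_{p^*} \ar[rr]^-{\Upsilon_{BT}} && \RG_\et(\widehat{BT}_{\mbb C_p},\mathbb Z_p)\ar[d]^{p^*}\\
\RG_\et([\mathbb A^n/T]_{\mbb C_p},\mathbb Z_p) \ar[rr]^-{\Upsilon_{[\mathbb A^n/T]}} && \RG_\et(\widehat{[\mathbb A^n/T]}_{\mbb C_p},\mathbb Z_p)\ .
}
$$

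The second step is to verify that the three maps other than $\Upsilon_{[\mathbb A^n/T]}$ are equivalences. The top arrow $\Upsilon_{BT}$ is an equivalence: since $T$ is split, one approximates $BT$ by products of projective spaces and invokes \Cref{two_etale_coh_comp_BG_m} together with \Cref{rem:torus_same_arg}. The left vertical arrow is an equivalence by $\mathbb A^1$-homotopy invariance of algebraic $\mathbb Z_p$-étale cohomology over a characteristic zero field, namely \Cref{Algebraic_A^1_vs_BG_m}; note this uses nothing about the shape of $V$. The right vertical arrow is an equivalence by \Cref{cor:Rigid_A^1_vs_BG_m}, which in turn comes from the prismatic statement \Cref{lem:Prisms A^n/T vs BT} via the étale comparison \Cref{etale_comparison}. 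Given all of this, the conclusion is formal: in a commutative square of spectra in which three of the four arrows are equivalences, the fourth is an equivalence, so $\Upsilon_{[\mathbb A^n/T]}$ is an equivalence.

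The genuinely load-bearing input — and the place where Assumption \ref{ass_on_T_action} is used essentially — is the right-hand vertical arrow, i.e. that $p^*$ is an equivalence on the étale cohomology of the Raynaud generic fibers. By the étale comparison this reduces to \Cref{lem:Prisms A^n/T vs BT}, whose proof in turn reduces (via derived $E(u)$-completeness and the exhaustive Hodge--Tate filtration) to showing $p^*$ induces an equivalence on Hodge cohomology; and there the conicality of $V$ is exactly what forces the degree-$0$ graded component of $\bigoplus_i A_\bullet(-\chi_i)$ to vanish, so that the map $p^*\mathbb L_{BT}\to \mathbb L_{[\mathbb A^n/T]}$ becomes an equivalence after taking $X^*(T)$-degree $0$, and likewise for all exterior powers. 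Without the conical hypothesis one only has the left vertical arrow (homotopy invariance is insensitive to the action), and $p^*$ can genuinely fail to be an equivalence on the rigid side; so this is the step I expect to be the main obstacle, with everything else being assembly of results already in place.
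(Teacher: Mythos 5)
Your proof is correct and takes essentially the same route as the paper: construct the commutative square from naturality of $\Upsilon$ along $p\colon [\mathbb A^n/T]\to BT$, identify the other three arrows as equivalences via \Cref{two_etale_coh_comp_BG_m} (and \Cref{rem:torus_same_arg} for general split $T$), \Cref{Algebraic_A^1_vs_BG_m}, and \Cref{cor:Rigid_A^1_vs_BG_m}, and conclude by two-out-of-three. Your additional discussion of where the conical hypothesis enters (the right vertical arrow, tracing back through \Cref{lem:Prisms A^n/T vs BT} to the Hodge cohomology computation) is a useful expansion but matches the paper's intent.
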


\begin{rem}\label{rem: comparison for quotients by conical action}
	By a similar argument it is not hard to show that if one has a smooth $\mc O_K$-scheme $X$ with a locally linear $\mbb G_m$-action with $X=X^+$ (where $X^+$ is the attractor scheme, see e.g. \cite[Section 3.2.2]{KubrakPrikhodko_HdR} for the definition in relative setting) and $X^0\coloneqq X^{\mbb G_m}$ being proper, then $\Upsilon_{[X/\mbb G_m]}$ is an equivalence. Indeed, one has an affine map $p\colon [X^+/\mbb G_m] \surj [X^0/\mbb G_m]$ and $p_*$ induces an equivalence of categories $\QCoh([X^+/\mbb G_m])\xra{\sim}\Mod_{\mc A}(\QCoh([X^0/\mbb G_m]))$ with $\mc A\coloneqq p_*\mc O_{[X^+/\mbb G_m]}$; moreover, since the action of $\mbb G_m$ on $X^0$ is trivial $\QCoh([X^0/\mbb G_m])$ is identified with $\mbb Z$-graded objects in $\QCoh(X^0)$ and by the definition of $X^+$ as the attractor, the graded algebra $\mc A_\bullet$ is non-positively graded with $\mc A_0\simeq \mc O_{X^0}$. Considering the map $p^*\mbb L_{[X^0/\mbb G_m]}\ra \mbb L_{[X^+/\mbb G_m]}$ and taking pushforward one also sees that the cofiber of $p_*p^*\mbb L_{[X^0/\mbb G_m]}\ra p_*\mbb L_{[X^+/\mbb G_m]}$ has strictly negative degrees and thus (considering also $\lambda^i\mbb L$) one gets that $p^*$ induces an equivalence on Hodge cohomology. This then shows that $p^*$ is also an equivalence for the prismatic cohomology, as well as the \'etale cohomology of Raynaud generic fiber. Moreover, by \cite{bialynicki1973} the map of the generic fibers $p\colon {[X^+/\mbb G_m]}_{\mbb C_p} \ra {[X^0/\mbb G_m]}_{\mbb C_p}$ is in fact a fibration in affine spaces and thus $p^*$ is an equivalence for the \'etale cohomology of the algebraic generic fiber as well. This way we can reduce to $[X^0/\mbb G_m]$, where the statement follows from \Cref{thm:main_result} since $X^0$ is automatically smooth, and proper by the assumption.
\end{rem}

\subsection{The case of $BB$}\label{subsect:case_of_BB}
Let $G$ be a quasi-split reductive group over $\mathcal O_K$ and let $B\subset G$ be a Borel subgroup. In this subsection using the results of the previous two subsections we prove that the comparison map from \Cref{constr:alg_to_adic_et_comp} is an equivalence for the classifying stack $BB$.

Let $T\subset B$ be the maximal torus and consider the natural cover $\pi\colon BT\to BB$. Then the $n$-th term $\check{C}(\pi)_n$ of the corresponding \v{C}ech diagram is equivalent to 
$$
[T\backslash \underbrace{B\times_T B\times_T\cdots\times_T B}_n/T]\simeq [(\underbrace{U\times U\times \cdots \times U}_n)/T]
$$
with the $T$-action on $U\times\cdots\times U$ given by the simultaneous adjoint action on all factors $t\circ(u_1,\ldots,u_n)=(tu_1t^{-1}, \ldots, tu_nt^{-1})$ (see the proof of \Cref{prop:BB is coh proper}).

\begin{prop}\label{prisms_BT_BB}
The pullback map $\pi^*\colon R\Gamma_\Prism(BB /\mathfrak S) \tto R\Gamma_\Prism(BT /\mathfrak S)$ is an equivalence.

\begin{proof}
By the smooth descent (\Cref{rem:on_Gamma_prism}) we know that
$$R\Gamma_\Prism(BB /\mathfrak S)\simeq \Tot R\Gamma_\Prism([U^\bullet/T] / \mathfrak S).$$
On the other hand, as a scheme $U$ is $T$-equivariantly isomorphic to an affine space $\mf u\coloneqq \Spec(\Sym(\mathfrak u^\vee))$ (see \Cref{U as an example}). The representation $\mf u$ is conical and more generally this applies to each $U^n$ with the diagonal $T$-action. Hence by \Cref{lem:Prisms A^n/T vs BT} (and \Cref{rem: non-split case A^n/T vs BT}) all pullback maps $R\Gamma_\Prism(BT / \mathfrak S) \to R\Gamma_\Prism([U^\bullet/T] / \mathfrak S)$ are equivalences. We get that $p^*\colon R\Gamma_\Prism(BT / \mathfrak S) \ra R\Gamma_\Prism(BB /\mathfrak S)$ for the projection $p\colon BB\ra BT$ is an equivalence. But since the composition $p\circ \pi\colon BT\ra BT$ is the identity it also follows that $\pi^* \colon R\Gamma_\Prism(BB /\mathfrak S) \ra R\Gamma_\Prism(BT /\mathfrak S)$ is an equivalence.
\end{proof}
\end{prop}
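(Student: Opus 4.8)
The plan is to reduce the statement to the already-established comparison between $[\mathbb A^n/T]$ and $BT$ via \v Cech descent along the atlas $\pi\colon BT\to BB$. First I would invoke the smooth (\'etale hyper)descent for prismatic cohomology (\Cref{rem:on_Gamma_prism}) to write $R\Gamma_\Prism(BB/\mathfrak S)\simeq \Tot R\Gamma_\Prism(\check C(\pi)_\bullet/\mathfrak S)$, where $\check C(\pi)_n\simeq [(U^{\times n})/T]$ with $T$ acting diagonally by conjugation, exactly as recalled in the proof of \Cref{prop:BB is coh proper}. The key geometric input is that, as a scheme with $T$-action, $U$ is $T$-equivariantly identified with the affine space $\mathfrak u=\Spec(\Sym(\mathfrak u^\vee))$ (see \Cref{U as an example}), and $\mathfrak u$ is a conical $T$-representation in the sense of \Cref{defn: conical action}; the same applies to $U^{\times n}$ with the diagonal action, since a direct sum of conical representations is conical. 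Hence \Cref{ass_on_T_action} is satisfied for each $U^{\times n}$.

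Next I would apply \Cref{lem:Prisms A^n/T vs BT} (together with \Cref{rem: non-split case A^n/T vs BT} to handle the case where $T$ is only quasi-split, not split, over $\mathcal O_K$) to conclude that for every $n$ the pullback map $R\Gamma_\Prism(BT/\mathfrak S)\to R\Gamma_\Prism([U^{\times n}/T]/\mathfrak S)$ along the structure projection is an equivalence. These equivalences are compatible with the cosimplicial structure maps of the \v Cech object, because all of them arise by pullback along the commuting square of projections to $BT$; the simplicial object $[U^{\times\bullet}/T]$ sits over the constant simplicial object $BT$ via the projections $p_n\colon[U^{\times n}/T]\to BT$, and these are a map of simplicial stacks whose induced map on prismatic cohomology is levelwise an equivalence. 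Taking totalizations, the projection $p\colon BB\to BT$ induces an equivalence $p^*\colon R\Gamma_\Prism(BT/\mathfrak S)\to R\Gamma_\Prism(BB/\mathfrak S)$. Finally, since the composition $p\circ\pi\colon BT\to BT$ is the identity (as $\pi$ is the canonical atlas and $p$ the canonical projection for $B=T\ltimes U$), the relation $\pi^*\circ p^*=\id$ on prismatic cohomology, combined with $p^*$ being an equivalence, forces $\pi^*$ to be an equivalence as well.

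The main obstacle, and the place where care is needed, is the compatibility of the levelwise equivalences with the totalization — more precisely, making sure the equivalences of \Cref{lem:Prisms A^n/T vs BT} assemble into an equivalence of cosimplicial objects rather than merely a levelwise zigzag. This is handled cleanly by observing that $p^*$ is induced by an honest natural transformation of functors (the pullback along the map of simplicial stacks $[U^{\times\bullet}/T]\to BT$), so the passage to $\Tot$ is automatic; one does not need to track coherences by hand. A secondary point is the descent to the quasi-split case in \Cref{rem: non-split case A^n/T vs BT}: one uses that $T$ splits over an unramified extension $\mathcal O_K\to\mathcal O_{K'}$, that a uniformizer of $\mathcal O_K$ remains a uniformizer of $\mathcal O_{K'}$ giving a faithfully flat map of Breuil--Kisin prisms $W(k)[[u]]\to W(k')[[u]]$, and then base change for prismatic cohomology (\Cref{prismatic_basechange}, noting $W(k')[[u]]$ is finite free and hence finite $(p,E)$-complete Tor-amplitude over $W(k)[[u]]$) plus faithfully flat descent. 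Everything else is formal bookkeeping with the \v Cech complex and the identity $p\circ\pi=\id$.
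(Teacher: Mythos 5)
Your proposal is correct and follows essentially the same route as the paper's proof: smooth descent along the atlas $\pi\colon BT\to BB$, identification of the \v Cech terms with conical quotients $[U^{\times n}/T]$, application of \Cref{lem:Prisms A^n/T vs BT} (with \Cref{rem: non-split case A^n/T vs BT}) levelwise, and the retraction $p\circ\pi=\id$ to pass from $p^*$ to $\pi^*$. Your extra remarks about the compatibility of the levelwise equivalences with the totalization and the unramified base change are sound elaborations of points the paper leaves implicit.
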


\begin{rem}
By base change the analogues of \Cref{prisms_BT_BB} and \Cref{lem:Prisms A^n/T vs BT} also hold for $\RG_\Prism(-/\Ainf)$. 	
\end{rem}

From this we deduce that $\Upsilon_{BB}$ is an equivalence:
\begin{prop}\label{etale_alg_rig_BB}
Let $G$ be a split reductive group, $T\subset B \subset G$ be a maximal torus and a Borel subgroup correspondingly. Then we have a diagram of quasi-isomorphisms
$$\xymatrix{
R\Gamma_\et({BB}_{\mbb C_p}, \mathbb Z_p)\ar[d]_{\pi^*}^\sim\ar[rr]^-{\Upsilon_{BB}}_-\sim && R\Gamma_\et({\widehat{BB}}_{\mbb C_p},\mathbb Z_p)\ar[d]_{\pi^*}^\sim \\
R\Gamma_\et({BT}_{\mbb C_p}, \mathbb Z_p)\ar[rr]^-{\Upsilon_{BT}}_-\sim && R\Gamma_\et({\widehat{BT}}_{\mbb C_p},\mathbb Z_p).
}$$ 
In particular the comparison map $\Upsilon_{BB}\colon R\Gamma_\et({BB}_{\mbb C_p}, \mathbb Z_p)\to R\Gamma_\et({\widehat{BB}}_{\mbb C_p},\mathbb Z_p)$ is an equivalence.

\begin{proof}
The bottom horizontal arrow is an equivalence by \Cref{rem:torus_same_arg} and the right vertical arrow is an equivalence by \Cref{prisms_BT_BB} and the \'etale comparison (\Cref{etale_comparison}). So it is enough to prove that the left vertical map is an equivalence.

To see this, note that since $B \simeq T\rtimes U$, we have an isomorphism of schemes $B \simeq T\times U$. Moreover, as a scheme $U$ is isomorphic to an affine space. So from homotopy $\mathbb A^1$-invariance we get that the pullback map
$$(\pi \times\ldots\times \pi)^*\colon R\Gamma_\et(B_{\mbb C_p}\times\ldots\times B_{\mbb C_p}, \mathbb Z_p)\xymatrix{\ar[r] &} R\Gamma_\et(T_{\mbb C_p}\times \ldots\times T_{\mbb C_p}, \mathbb Z_p)$$
is an equivalence. The statement for $BT_{\mbb C_p}$ and $BB_{\mbb C_p}$ now follows from considering the simplicial schemes $(BT)_\bullet$ and $(BB)_\bullet$ and identifying $R\Gamma_\et({BT}_{\mbb C_p}, \mathbb Z_p)$ and $R\Gamma_\et({BB}_{\mbb C_p}, \mathbb Z_p)$ with the corresponding totalizations.  
\end{proof}
\end{prop}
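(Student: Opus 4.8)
The plan is to deduce the statement from the commutative square displayed in \Cref{etale_alg_rig_BB} by verifying that three of its four arrows are equivalences; the fourth arrow $\Upsilon_{BB}$ will then also be an equivalence. The square exists because $\lambda^{-1}_{-}$ (hence $\Upsilon_{-}$) is natural in the stack, and because $\pi\colon BT\to BB$ is a morphism of stacks; applying the functorial construction of $\Upsilon$ (\Cref{constr:alg_to_adic_et_comp}) to $\pi$ produces exactly this commutative diagram. So the real content is the three equivalence claims for the other arrows.

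First I would dispose of the bottom arrow $\Upsilon_{BT}\colon R\Gamma_\et(BT_{\mbb C_p},\mbb Z_p)\to R\Gamma_\et(\widehat{BT}_{\mbb C_p},\mbb Z_p)$: this was already established in \Cref{rem:torus_same_arg} (building on \Cref{two_etale_coh_comp_BG_m} for $B\mbb G_m$, then using the K\"unneth formula to pass to a split torus, and finally \Cref{prop:Conrad} to drop splitness, since $T$ is split over $\mc O_K$ when $G$ is split). Next the right vertical arrow $\pi^*\colon R\Gamma_\et(\widehat{BB}_{\mbb C_p},\mbb Z_p)\to R\Gamma_\et(\widehat{BT}_{\mbb C_p},\mbb Z_p)$: by the \'etale comparison \Cref{etale_comparison} (or \Cref{cor: Z_p etale cohomology of rigid fiber}), adic \'etale $\mbb Z_p$-cohomology of the Raynaud generic fiber is the Frobenius-fixed points of prismatic cohomology tensored with $W(\mbb C_p^\flat)$; since $W(\mbb C_p^\flat)$ is flat over $\mf S$ and taking $\phi$-fixed points and tensoring commute appropriately, it suffices to know that $\pi^*\colon R\Gamma_\Prism(BB/\mf S)\to R\Gamma_\Prism(BT/\mf S)$ is an equivalence — which is precisely \Cref{prisms_BT_BB}. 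Note that \Cref{prisms_BT_BB} itself is the technical heart: it uses smooth descent for prismatic cohomology (\Cref{rem:on_Gamma_prism}) to write $R\Gamma_\Prism(BB/\mf S)$ as the totalization of $R\Gamma_\Prism([U^\bullet/T]/\mf S)$, then the $T$-equivariant identification $U\cong \mf u$ as a scheme (\Cref{U as an example}), the conicality of $\mf u$ and its self-products under the diagonal adjoint $T$-action, and finally \Cref{lem:Prisms A^n/T vs BT} (with \Cref{rem: non-split case A^n/T vs BT}) to conclude each $R\Gamma_\Prism(BT/\mf S)\to R\Gamma_\Prism([U^n/T]/\mf S)$ is an equivalence; a retraction argument via the projection $p\colon BB\to BT$ with $p\circ\pi=\id$ then gives the claim for $\pi^*$.

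The remaining and, for the present statement, the genuinely new point is the left vertical arrow $\pi^*\colon R\Gamma_\et(BB_{\mbb C_p},\mbb Z_p)\to R\Gamma_\et(BT_{\mbb C_p},\mbb Z_p)$ on the \emph{algebraic} generic fibers. Here I would argue as follows. Write $BT\simeq |(BT)_\bullet|$ and $BB\simeq |(BB)_\bullet|$ as geometric realizations of the standard simplicial schemes with $(BT)_n = T^{\times n}$ and $(BB)_n = B^{\times n}$; since algebraic \'etale cohomology sends colimits of stacks to limits (as in \Cref{alg_etale_shaeves_basics}), $R\Gamma_\et(BT_{\mbb C_p},\mbb Z_p)\simeq \Tot R\Gamma_\et((T^{\times\bullet})_{\mbb C_p},\mbb Z_p)$ and similarly for $BB$. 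It therefore suffices to show that the map induced by $\pi\colon T\to B$ on each simplicial level, i.e. $(\pi^{\times n})^*\colon R\Gamma_\et((B^{\times n})_{\mbb C_p},\mbb Z_p)\to R\Gamma_\et((T^{\times n})_{\mbb C_p},\mbb Z_p)$, is an equivalence — and this in turn reduces by the K\"unneth formula to the single statement that $\pi^*\colon R\Gamma_\et(B_{\mbb C_p},\mbb Z_p)\to R\Gamma_\et(T_{\mbb C_p},\mbb Z_p)$ is an equivalence. But as a scheme $B\simeq T\times U$ with $U$ isomorphic to an affine space, and the inclusion $T\hookrightarrow B$ corresponds (up to this product decomposition) to the zero section $T\times\{0\}\hookrightarrow T\times\mbb A^{\dim U}$; thus $\pi^*$ is an equivalence by the $\mbb A^1$-homotopy invariance of $\mbb Z_p$-\'etale cohomology over the characteristic-zero field $\mbb C_p$. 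The main obstacle in writing this cleanly is bookkeeping the coboundary maps so that $\pi^*$ really is a map of cosimplicial objects (it is, since $\pi$ is a homomorphism of group schemes, so all faces and degeneracies are compatible), after which the totalization of a levelwise equivalence is an equivalence and we are done.
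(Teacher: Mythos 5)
Your proposal is correct and follows essentially the same route as the paper: dispose of $\Upsilon_{BT}$ by \Cref{rem:torus_same_arg}, handle the right vertical arrow by combining \Cref{prisms_BT_BB} with the \'etale comparison, and then prove the left vertical arrow by identifying both sides with totalizations over the standard simplicial schemes and using the scheme isomorphism $B\simeq T\times U$ together with $\mbb A^1$-homotopy invariance of algebraic \'etale cohomology over $\mbb C_p$. The extra remarks you make (naturality of $\Upsilon$ giving the square, the K\"unneth reduction to a single factor, the group-homomorphism bookkeeping for the cosimplicial map) are all consistent with and fill in the same argument.
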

\subsection{$BP$ versus $BL$}\label{ssec:BPvsBL }
In this small subsection we briefly discuss how to generalize \Cref{prisms_BT_BB} to the case of any parabolic using a result of Totaro from \cite{Totaro_deRhamBG}. 

\begin{prop}\label{prisms_BP_BL}
	Let $P\subset G$ be a parabolic in a reductive group over $\mc O_K$ and let $L\subset P$ be a Levi. Then the natural map $\pi\colon BL\ra BP$ induces an equivalence 
	$$
	\pi^*\colon R\Gamma_\Prism(BP /\mathfrak S) \tto R\Gamma_\Prism(BL /\mathfrak S).
	$$
\end{prop}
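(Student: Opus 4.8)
The plan is to reduce the statement to the corresponding comparison for $P$ and its unipotent radical, exactly as we did for $B$ and $T$ in \Cref{prisms_BT_BB}, and then invoke Totaro's computation to fill in the one missing ingredient. Write $P \simeq L \ltimes U_P$, where $U_P$ is the unipotent radical of $P$. As in the proof of \Cref{prop:BB is coh proper}, the \v{C}ech nerve of the smooth cover $\pi\colon BL \ra BP$ has $n$-th term $\check C(\pi)_n \simeq [L\backslash \underbrace{P\times_L \cdots \times_L P}_n /L] \simeq [(U_P\times\cdots\times U_P)/L]$, where $L$ acts diagonally by conjugation. By smooth descent for prismatic cohomology (\Cref{rem:on_Gamma_prism}) we have $R\Gamma_\Prism(BP/\mf S) \simeq \Tot R\Gamma_\Prism([U_P^\bullet/L]/\mf S)$, so it suffices to show that the pullback $R\Gamma_\Prism(BL/\mf S) \ra R\Gamma_\Prism([U_P^n/L]/\mf S)$ along each structural projection is an equivalence; as in \Cref{prisms_BT_BB}, once the projection $p\colon BP \ra BL$ induces an equivalence on prismatic cohomology, so does $\pi^*$, since $p\circ\pi = \id_{BL}$.

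The step that does \emph{not} go through verbatim is the affine-space comparison \Cref{lem:Prisms A^n/T vs BT}: there the key input was that $U$ (for a Borel) is $T$-equivariantly an affine space with a \emph{conical} linear action, which made the degree-$0$ part of the relevant graded modules collapse. For a general parabolic, $U_P$ is again $L$-equivariantly isomorphic to an affine space $\mathfrak u_P = \Spec \Sym_{\mc O_K}(\mathfrak u_P^\vee)$ (via an $L$-equivariant exponential, cf. \cite[Chapter II,1.7]{Jantzen}), but the $L$-action is no longer a torus action, so "conical" must be replaced by a cohomology-vanishing statement for the reductive group $L$. Concretely, what is needed is that the pullback $R\Gamma(BL, \wedge^i\mathbb L_{BL/\mc O_K}) \ra R\Gamma([\mathfrak u_P^n/L], \wedge^i\mathbb L_{[\mathfrak u_P^n/L]/\mc O_K})$ is an equivalence, i.e. that $H^j(L, \Sym^\bullet(\mathfrak u_P^{\oplus n})^\vee \otimes \wedge^i(\text{cotangent terms}))$ sees no extra contribution beyond the constant part. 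This is exactly the type of statement Totaro establishes in \cite{Totaro_deRhamBG}: the Hodge cohomology of $BP$ agrees with that of $BL$ because the relative cotangent complex of $BP\ra BL$ contributes only through $H^*(L, \Sym^\bullet \mathfrak u_P^\vee)$, which reduces to the constants. I would therefore cite the relevant result of Totaro (the comparison $H^*_\Hdg(BP/R)\simeq H^*_\Hdg(BL/R)$, or equivalently his computation that the natural map $\gr^\bullet R\Gamma_\dR(BP)\ra \gr^\bullet R\Gamma_\dR(BL)$ is an equivalence) to conclude that $p^*$ is an equivalence on Hodge cohomology.

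Given the Hodge-cohomology statement, the passage to prismatic cohomology is formal and parallels \Cref{lem:Prisms A^n/T vs BT}: both $R\Gamma_\Prism(BP/\mf S)$ and $R\Gamma_\Prism(BL/\mf S)$ are derived $(p,E)$-complete, and by derived Nakayama it is enough to check that $\phi_{\mf S}^*p^*$ becomes an equivalence after $\otimes_{\mf S}\mc O_K$; by the de Rham comparison (\Cref{prop: de Rham comparison}, together with \Cref{prop: twisted_cohomology_is_a_twist} since $\phi_{\mf S *}\mf S$ is finite free) this map is identified with $p^*\colon R\Gamma_\dR(BL/\mc O_K) \ra R\Gamma_\dR(BP/\mc O_K)$ — wait, in the correct direction $p^*$ goes from $BL$-cohomology to $BP$-cohomology, so one uses the exhaustive Hodge(-Tate) filtration and the Hodge-cohomology equivalence above. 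As in \Cref{lem:Prisms A^n/T vs BT} one also needs the unramified-descent trick of \Cref{rem: non-split case A^n/T vs BT} to drop any splitness hypothesis on $G$ if desired. The main obstacle is thus isolating and correctly invoking Totaro's vanishing result for the reductive group $L$ acting on $\mathfrak u_P$ (and its powers); everything else is a mechanical repetition of the Borel case. If a clean reference to Totaro for the $n$-fold product $\mathfrak u_P^{\oplus n}$ is not available in the literature, one can instead note that $H^*(L, \Sym^\bullet(\mathfrak u_P^{\oplus n})^\vee)$ is governed by the same positivity/filtration argument as the $n=1$ case via the Koszul/bar resolution, reducing to $H^*(L,\Sym^\bullet\mathfrak u_P^\vee)$ by a Künneth-type decomposition.
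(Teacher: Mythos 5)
Your proposal lands on the same key ingredient the paper uses — Totaro's Hodge-cohomology comparison of $BP$ and $BL$ — and the mechanism you describe for passing from prismatic to Hodge cohomology (derived $(p,E)$-completeness plus exhaustiveness of the Hodge--Tate filtration, rather than the de Rham comparison) is the right one. But the paper's actual proof is much shorter and bypasses everything involving the \v Cech nerve of $BL\to BP$: by $E$-completeness and the exhaustive Hodge--Tate filtration one reduces to showing $\pi^*$ is an equivalence on $\RG_\Hdg(-/\mc O_K)$; by Hodge-properness of $BP$ and $BL$ those Hodge cohomology complexes lie in $\Coh^+(\mc O_K)$, hence are $\pi$-adically complete (\Cref{lem: bounded below coherent are complete}), so by derived Nakayama one reduces further to $\RG_\Hdg(BP_k/k)\to\RG_\Hdg(BL_k/k)$; and this last statement is precisely \cite[Theorem 6.1]{Totaro_deRhamBG}, cited directly for $BP$ versus $BL$.

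Two remarks on your write-up. First, once you decide to cite Totaro for the Hodge-cohomology comparison, the \v Cech decomposition into $[\mathfrak u_P^n/L]$ becomes superfluous — his Theorem 6.1 already compares $BP$ and $BL$ directly, so you never need a vanishing statement for $H^*(L,\Sym^\bullet(\mathfrak u_P^{\oplus n})^\vee)$ and the worry you raise about the $n$-fold product evaporates. The analogy with the $BB$ versus $BT$ argument is a bit misleading here: in that case the terms $[U^n/T]$ could be handled by a bare-hands conical computation, whereas for a general $L$ you are forced to invoke exactly the kind of global result Totaro proves, at which point the descent decomposition adds nothing. Second, your reduction from prismatic to Hodge cohomology is described only down to $\mc O_K$; since Totaro's theorem lives over the characteristic-$p$ residue field $k$, you still need the extra step — Hodge-properness gives $\pi$-completeness of the Hodge cohomology over $\mc O_K$, allowing a second application of derived Nakayama to drop to $k$ — which you gesture at but do not spell out. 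With that step made explicit and the \v Cech detour removed, your argument collapses to the one in the paper.
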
 
\begin{proof}
	By derived $E$-completeness, considering reductions modulo $E$ and using the exhaustiveness of Hodge-Tate filtration (\Cref{prop: HT_filtration_is_exaustive}) we reduce to proving that $\pi^*$ induces an equivalence on Hodge cohomology over $\mc O_K$. By Hodge-properness both $R\Gamma_\Hdg(BP/\mc O_K)$ and $R\Gamma_\Hdg(BL /\mc O_K)$ are $p$- and so also $\pi$-complete and thus we can reduce further to showing that $\pi^*\colon R\Gamma_\Hdg(BP_k/k) \ra R\Gamma_\Hdg(BL_k/k)$ is an equivalence. This is now covered by \cite[Theorem 6.1]{Totaro_deRhamBG}.
\end{proof}
\begin{rem}\label{rem: prisms_BP_BL over O_C}
	By base change we also get an equivalence
	$$
	\pi^*\colon R\Gamma_\Prism(BP_{\mc O_{\mbb C_p}} /\Ainf) \xymatrix{\ar[r]^\sim&} R\Gamma_\Prism(BL_{\mc O_{\mbb C_p}}/\Ainf).
	$$
\end{rem}

\subsection{The general case}\label{sec: comparison for [X/P]}
In this subsection using the results of previous subsections and the machinery of \'etale sheaves on stacks developed in \Cref{sect:etale_sheaves} we will deduce that for a smooth proper scheme $X$ over $\mathcal O_K$ and a parabolic subgroup $P$ of a reductive group $G$ acting on $X$ the comparison map (\Cref{constr:alg_to_adic_et_comp}) between two versions of \'etale cohomology of $[X/P]$ is an equivalence. We first establish the following particular case:
\begin{prop}\label{X/B} Let $B\subset G$ be a Borel in a split reductive group over $\mc O_K$.
Let $\mstack X\ra BB$ be a smooth proper schematic map. Then the comparison map for $\mstack X$
$$\Upsilon_{\mstack X}\colon R\Gamma_\et(\mstack X_{\mbb C_p}, \mathbb Z_p) \xymatrix{\ar[r] &} R\Gamma_\et(\widehat{\mstack X}_{\mbb C_p}, \mathbb Z_p)$$
is an equivalence.
\begin{proof}

Since by construction both parts are $p$-adically derived complete it is enough to prove that the natural map
$$\Upsilon_{\mstack X, \mathbb F_p} \colon R\Gamma_\et(\mstack X_{\mbb C_p}, \mathbb F_p) \xymatrix{\ar[r] &} R\Gamma_\et(\widehat{\mstack X}_{\mbb C_p}, \mathbb F_p)$$
is an equivalence. Note that the algebraic stack $BB_{\mbb C_p}$ is \'etale 1-connected by \Cref{ex: BG is etale simply-connected if G is connected} and $\mbb F_p$-locally acyclic by \Cref{etale_alg_rig_BB}. Applying \Cref{prop: Lambda-acyclicity translates via proper smooth maps} to $\mstack X_{\mc O_{C}}\ra BB_{\mc O_{C}}$ we are done. 
\end{proof}
\end{prop}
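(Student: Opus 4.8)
The plan is to reduce immediately to $\mathbb F_p$-coefficients and then invoke the propagation of local acyclicity along smooth proper maps that was set up earlier. First I would note that both $R\Gamma_\et(\mstack X_{\mbb C_p}, \mathbb Z_p)$ and $R\Gamma_\et(\widehat{\mstack X}_{\mbb C_p}, \mathbb Z_p)$ are, by their very construction, derived $p$-complete (they are defined as $\prolim$ over $n$ of the analogous complexes with $\mathbb Z/p^n$-coefficients) and that $\Upsilon_{\mstack X}$ is compatible with reduction modulo $p^n$. Hence, by derived Nakayama, it is enough to show that
$$\Upsilon_{\mstack X, \mathbb F_p}\colon R\Gamma_\et(\mstack X_{\mbb C_p}, \mathbb F_p) \xymatrix{\ar[r] &} R\Gamma_\et(\widehat{\mstack X}_{\mbb C_p}, \mathbb F_p)$$
is an equivalence; equivalently, that the base change $\mstack X_{\mc O_{\mbb C_p}}$ is $\mbb F_p$-locally acyclic in the sense introduced after \Cref{rem: interpretation of Upsilon using arc-descent}.

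Next I would set $\mstack Y \coloneqq BB$ and consider the base change $f\colon \mstack X_{\mc O_{\mbb C_p}} \ra \mstack Y_{\mc O_{\mbb C_p}}$, which remains smooth, proper and schematic. The generic fiber $\mstack Y_{\mbb C_p} = BB_{\mbb C_p}$ is \'etale $1$-connected by \Cref{ex: BG is etale simply-connected if G is connected}, because $B_{\mbb C_p}$ is a connected group scheme; and $\mstack Y$ is $\mbb F_p$-locally acyclic, since this is precisely the assertion of \Cref{etale_alg_rig_BB} that $\Upsilon_{BB}$ is an equivalence. These are exactly the hypotheses of \Cref{prop: Lambda-acyclicity translates via proper smooth maps} with $\Lambda = \mbb F_p$, so that proposition yields that $\mstack X_{\mc O_{\mbb C_p}}$ is $\mbb F_p$-locally acyclic, which is what we wanted.

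The only genuine care needed is bookkeeping: \Cref{prop: Lambda-acyclicity translates via proper smooth maps} is formulated over $\mc O_C$ for $C$ algebraically closed complete non-Archimedean of mixed characteristic, so one must first pass to $\mc O_{\mbb C_p}$ before applying it, and one should check (via \Cref{rem:Upsilon as base change}) that the map $\Upsilon$ for $\mstack X$ and $\mstack Y$ really agrees with the base change morphism used in that proposition. The substantive, non-formal part of the argument --- that $\Upsilon_{BB}$ is an equivalence even at $\ell = p$ --- has already been carried out in \Cref{etale_alg_rig_BB} via the approximation of $BT$ by products of projective spaces and the reduction of $BB$ to the quotients $[\mbb A^n/T]$; so no new obstacle arises here. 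The main obstacle, such as it is, is thus entirely one of verifying that the hypotheses of \Cref{prop: Lambda-acyclicity translates via proper smooth maps} hold literally in the present situation, rather than any new mathematical input.
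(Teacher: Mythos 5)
Your proposal is correct and follows essentially the same route as the paper's proof: reduce to $\mathbb F_p$-coefficients by derived $p$-completeness, note that $BB_{\mbb C_p}$ is \'etale $1$-connected and that $BB$ is $\mbb F_p$-locally acyclic, and conclude by \Cref{prop: Lambda-acyclicity translates via proper smooth maps} applied to the base change over $\mc O_{\mbb C_p}$. The extra bookkeeping you flag (base changing to $\mc O_{\mbb C_p}$ and identifying $\Upsilon$ with the base change morphism via \Cref{rem:Upsilon as base change}) is indeed implicit in the paper's argument but causes no difficulty.
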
 

Now we can deduce the main result of this section:
\begin{thm}\label{thm:main_result}
Let $G$ be a reductive $\mathcal O_K$-group scheme and let $P$ be a parabolic subgroup of $G$. Let $X$ be a smooth proper $\mathcal O_K$-scheme equipped with an action of $P$. Then the comparison map is an equivalence
$$\Upsilon_{[X/P]}\colon R\Gamma_{\et}([X/P]_{\mbb C_p}, \mathbb Z_p) \xymatrix{\ar[r] &} R\Gamma_{\et}({\widehat{[X/P]}}_{\mbb C_p}, \mathbb Z_p)$$
is an equivalence.

\begin{proof} Note that the statement depends only on the base change of $X$, $P$ and $G$ to $\mc O_{\mbb C_p}$. In particular, replacing $\mc O_K$ by a suitable finite (and even unramified) extension $\mc O_K\ra \mc O_{K'}$ we can assume that $G$ is split.
Then let $B\subseteq P$ be a Borel subgroup. The natural cover $[X/B] \to [X/P]$ is smooth, proper and schematic. Consequently all elements of the corresponding \v Cech nerve are smooth, proper and schematic over $BB$. Since both parts of the comparison in question satisfy smooth descent, the result follows by the previous proposition.
\end{proof}
\end{thm}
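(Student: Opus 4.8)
The plan is to deduce the statement from \Cref{X/B} by a Čech descent argument, once $G$ has been made split.

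First I would observe that the objects $R\Gamma_{\et}([X/P]_{\mbb C_p}, \mathbb Z_p)$ and $R\Gamma_{\et}(\widehat{[X/P]}_{\mbb C_p}, \mathbb Z_p)$, together with the map $\Upsilon_{[X/P]}$ between them, depend only on the base changes of $X$, $P$ and $G$ along $\mathcal O_K \to \mathcal O_{\mbb C_p}$. Since $\mathcal O_{\mbb C_p}$ has no nontrivial finite étale covers, by \Cref{prop:Conrad} there is a finite unramified extension $\mathcal O_K \to \mathcal O_{K'}$, sitting inside $\mathcal O_{\mbb C_p}$, over which $G$ becomes split; replacing $\mathcal O_K$ by $\mathcal O_{K'}$ we may therefore assume $G$ is split.

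Next, fix a Borel subgroup $B \subseteq P$ containing a maximal torus. The induced map $\pi\colon [X/B] \to [X/P]$ is a smooth, proper, schematic surjection: it is the base change of $BB \to BP$ along $[X/P] \to BP$, its fibres are the flag variety $P/B$ of the Levi of $P$ (smooth and proper), and it is schematic since $X \to [X/P]$ is. Arguing exactly as in the proof of \Cref{BP_is_SCP} — using $(P \times_B \cdots \times_B P)/B \simeq (P/B)^{\times n}$ — one identifies the $n$-th term of the Čech nerve of $\pi$ with $[(X \times (P/B)^{\times n})/B]$ for the diagonal $B$-action; hence the structure map $\check C(\pi)_n \to BB$ is smooth, proper and schematic, with fibre the smooth proper scheme $X \times (P/B)^{\times n}$. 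Thus \Cref{X/B} applies to each $\check C(\pi)_n$, so $\Upsilon_{\check C(\pi)_n}$ is an equivalence for every $n$.

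Finally, both functors $\mstack Y \mapsto R\Gamma_{\et}(\mstack Y_{\mbb C_p}, \mathbb Z_p)$ and $\mstack Y \mapsto R\Gamma_{\et}(\widehat{\mstack Y}_{\mbb C_p}, \mathbb Z_p)$ take the smooth cover $\pi$ to limit diagrams (étale, equivalently smooth, hyperdescent for algebraic and for rigid-analytic étale cohomology; cf. \Cref{rem: hyperdescent for etale cohomology} and its rigid-analytic analogue), and $\Upsilon$ is a natural transformation, so $\Upsilon_{[X/P]}$ is identified with $\Tot_\bullet \Upsilon_{\check C(\pi)_\bullet}$, a totalization of equivalences, hence an equivalence. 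Given the results of the previous subsections this last step is entirely formal; the only substantive verification, folded into \Cref{X/B}, is that the base $BB_{\mbb C_p}$ is simultaneously $\mathbb F_p$-locally acyclic (\Cref{etale_alg_rig_BB}) and $\mathbb F_p$-étale $1$-connected (\Cref{ex: BG is etale simply-connected if G is connected}), which is what licenses the application of \Cref{prop: Lambda-acyclicity translates via proper smooth maps} to the smooth proper schematic maps $\check C(\pi)_n \to BB$. That — together with the étale-sheaf-theoretic machinery of \Cref{sect:etale_sheaves} underlying \Cref{prop: Lambda-acyclicity translates via proper smooth maps} — is the real heart of the matter, and the reduction sketched above is routine by comparison.
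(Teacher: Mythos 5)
Your proof is correct and follows essentially the same strategy as the paper's: reduce to split $G$, take the \v Cech nerve of the smooth proper schematic cover $[X/B]\to [X/P]$, observe that its terms are smooth proper schematic over $BB$, and conclude from \Cref{X/B} by smooth descent. The explicit identification $\check C(\pi)_n \simeq [(X\times (P/B)^n)/B]$ is a nice piece of extra bookkeeping the paper elides, though it is not strictly needed (it suffices to note $\check C(\pi)_n\to [X/B]$ and $[X/B]\to BB$ are both smooth proper schematic and compose); the justification "schematic since $X\to [X/P]$ is" is slightly misstated — schematicity of $\pi$ comes from it being a base change of the schematic map $BB\to BP$, or equivalently from pulling back along the atlas $X\to[X/P]$ to get the scheme $X\times P/B$ — but this does not affect the argument.
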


By combining the results of this and the previous section we deduce the main application of this work:
\begin{thm}\label{thm:main_application}
Let $X$ be a smooth proper scheme over $\mathcal O_K$ equipped with an action of a parabolic subgroup $P$ in a connected reductive group scheme $G$ and choose an isomorphism $\iota\colon  \mbb C_p \xra{\sim} \mathbb C$. Then for any choice of an isomorphism $\iota\colon  \mbb C_p \xra{\sim} \mathbb C$ and all $n\in \mathbb Z_{\ge 0}$ we have an inequality
$$\dim_{\mathbb F_p} H_{P(\mathbb C)}^n(X(\mathbb C), \mathbb F_p) \le H_{\dR}^n([X/P]/k),$$
where $H^n_{P(\mathbb C)}(X(\mathbb C), \mathbb F_p)$ denotes the $P(\mathbb C)$-equivariant singular cohomology of $X(\mathbb C)$.

\begin{proof}
This follows by combining the isomorphism from the previous theorem with \Cref{inequality} and Artin's comparison (\Cref{Artins_comparison_stacks}, see also \Cref{ex: BG anlytification}).
\end{proof}
\end{thm}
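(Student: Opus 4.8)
The plan is to combine the three main ingredients that have already been assembled in the excerpt: (i) the equivalence $\Upsilon_{[X/P]}$ of \Cref{thm:main_result}, which identifies the algebraic and Raynaud \'etale cohomology of $[X/P]$; (ii) the dimension inequality of \Cref{inequality}, relating the $\mathbb{F}_p$-\'etale cohomology of the Raynaud generic fiber to the mod $p$ de Rham cohomology of the special fiber; and (iii) Artin's comparison (\Cref{Artins_comparison_stacks}, together with the analytification computation of \Cref{ex: BG anlytification}), which reinterprets the algebraic \'etale cohomology of $[X/P]_{\mathbb{C}_p}$ in terms of the equivariant singular cohomology of $X(\mathbb{C})$ after fixing $\iota\colon \mathbb{C}_p \xrightarrow{\sim} \mathbb{C}$.

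First I would verify that the hypotheses of the cited results are met. Since $G$ is connected reductive and $P\subset G$ is a parabolic subgroup, $[X/P]$ is a smooth Artin stack over $\mathcal{O}_K$; by \Cref{BP_is_SCP} and its corollary (quotients of smooth proper schemes by parabolic subgroups are cohomologically proper, hence Hodge-proper by \Cref{coh_proper_stronger}) the stack $[X/P]$ is smooth and Hodge-proper over $\mathcal{O}_K$, so \Cref{inequality} applies and gives
$$
\dim_{\mathbb{F}_p} H^n_{\et}(\widehat{[X/P]}_{\mathbb{C}_p}, \mathbb{F}_p) \le \dim_k H^n_{\dR}([X/P]_k/k).
$$
Next, \Cref{thm:main_result} gives an equivalence $R\Gamma_{\et}([X/P]_{\mathbb{C}_p}, \mathbb{Z}_p) \simeq R\Gamma_{\et}(\widehat{[X/P]}_{\mathbb{C}_p}, \mathbb{Z}_p)$; reducing mod $p$ (or, equivalently, running the same argument with $\mathbb{F}_p$-coefficients as in the proof of \Cref{thm:main_result}) yields $R\Gamma_{\et}([X/P]_{\mathbb{C}_p}, \mathbb{F}_p) \simeq R\Gamma_{\et}(\widehat{[X/P]}_{\mathbb{C}_p}, \mathbb{F}_p)$, hence equality of the $\mathbb{F}_p$-dimensions of $H^n$ on both sides. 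Finally, by \Cref{Artins_comparison_stacks} and \Cref{ex: BG anlytification}, for the fixed isomorphism $\iota$ there is an identification $R\Gamma_{\et}([X/P]_{\mathbb{C}_p}, \mathbb{F}_p) \simeq C^*(\Pi_\infty [X/P](\mathbb{C}), \mathbb{F}_p)$ together with $\Pi_\infty[X/P](\mathbb{C}) \simeq X(\mathbb{C})_{hP(\mathbb{C})}$, so $H^n_{\et}([X/P]_{\mathbb{C}_p},\mathbb{F}_p) \simeq H^n_{P(\mathbb{C})}(X(\mathbb{C}),\mathbb{F}_p)$. Chaining these three identifications/inequalities gives the desired inequality.

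There is essentially no remaining obstacle, as all the substantive work has been done in the quoted theorems; the proof is a short concatenation. The only points requiring a word of care are: that the choice of $\iota$ genuinely does not affect the left-hand side (this is the content of the footnote accompanying the theorem statement and follows from Artin's comparison, since the $\mathbb{F}_p$-\'etale cohomology is insensitive to the base change of algebraically closed fields of characteristic $0$), and that passing from the $\mathbb{Z}_p$-coefficient equivalence of \Cref{thm:main_result} to the $\mathbb{F}_p$-statement is harmless — which it is, since both sides are derived $p$-complete, or alternatively because \Cref{prop: Lambda-acyclicity translates via proper smooth maps} and the structural inputs (\Cref{etale_alg_rig_BB}, \Cref{ex: BG is etale simply-connected if G is connected}) already operate with $\mathbb{F}_p$-coefficients. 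I would then simply record that plugging $X = \Spec \mathcal{O}_K$ and $P = G$ recovers the original Totaro inequality of \Cref{intro_Totaros_conjecture}.
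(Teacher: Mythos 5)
Your proposal is correct and follows essentially the same route as the paper's proof, which simply concatenates \Cref{thm:main_result}, \Cref{inequality}, and Artin's comparison. The only addition you make is to spell out the verification that $[X/P]$ is Hodge-proper (via \Cref{BP_is_SCP} and \Cref{coh_proper_stronger}) and to flag the harmless passage from $\mathbb{Z}_p$- to $\mathbb{F}_p$-coefficients, both of which are implicit in the paper's terse argument.
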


\section{Applications}\label{sect_applications}
\subsection{De Rham and prismatic cohomology of $BG$}\label{sec: de Rham and prismatic cohomology of BG}
In this section we recall some of the results of \cite{Totaro_deRhamBG} about the de Rham cohomology of $BG$ over non-torsion primes and deduce the analogous statements for the prismatic cohomology. Let $G$ be a split reductive group over $\mbb Z$.
\begin{defn}\label{defn: torsion primes}
A prime $p$ is called torsion for $G$ if the $p$-torsion in $H^*_{\mr{sing}}(G(\mbb C),\mbb Z)$ is nontrivial. 	 Since $H^*_{\mr{sing}}(G(\mbb C),\mbb Z)$ is a finitely-generated abelian group there is only a finite set of torsion primes for a given $G$.
\end{defn}

\begin{rem}
People often use other definitions of torsion primes (see e.g. \cite[Section 10]{Totaro_deRhamBG} or \cite[Definition 2.43]{juteau2014parity}) that are more combinatorial in nature. It is true that all these definitions are equivalent (this can be deduced from \cite[Th\`eor\'emes A,B,C]{ArmandBorel}). We would like to note that unfortunately this equivalence is hugely based on a case-by-case consideration and relatively explicit computations.
\end{rem}

Borel also showed (see \cite[Th\`eor\'eme B]{ArmandBorel}) that $p$ is a torsion prime for $G$ if and only if the singular cohomology $H^*_{\text{sing}}(BG(\mathbb C),\mathbb Z)$ of the classifying space $BG(\mbb C)$ (in place of $G(\mbb C)$) has non-trivial $p$-torsion.
Quite remarkably, in \cite{Totaro_deRhamBG} Totaro was able to give another cohomological description of torsion primes, but now in terms of the classifying stack $BG$, more precisely the Hodge theory of the corresponding reduction $BG_{\mbb F_p}$. Let $\mf g$ be the affine space associated to the Lie algebra of $G$ and consider the quotient stack $[\mf g/G]$. We have an affine map $\pi\colon [\mf g/G] \ra BG$.
\begin{thm}[{\cite[Therorem 10.1]{Totaro_deRhamBG}}]\label{thm:description of torsion primes}
A prime $p$ is non-torsion for $G$ if and only if $H^{>0}(BG_{\mbb F_p},\pi_*\mathcal O_{{[\mf g/G]}_{\mbb F_p}}) \simeq 0$. This happens if and only if $H^{2n}_{\dR}(BG_{\mbb F_p}/\mbb F_p)\simeq H^{n,n}(BG_{\mbb F_p}/\mbb F_p)$ and $H^{2n+1}_{\dR}(BG_{\mbb F_p}/\mbb F_p)\simeq 0$ for any $n\ge 0$.
\end{thm}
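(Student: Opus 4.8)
The plan is to translate every condition in the statement into one about the rational cohomology $H^{*}(G_{\mathbb F_p},\Sym^{\bullet}\mathfrak g^{\vee})$ of $G$ with coefficients in the symmetric powers of the coadjoint representation, and then to feed in the classical invariant theory of reductive groups. Two structural identifications are the starting point. First, $\pi\colon[\mathfrak g/G]\to BG$ is an affine morphism (a vector bundle stack), so under the equivalence of \Cref{derived_vs_derived} the sheaf $\pi_{*}\mathcal O_{[\mathfrak g/G]}$ corresponds to the $G$-representation $\mathcal O(\mathfrak g)=\Sym^{\bullet}(\mathfrak g^{\vee})$, whence
$$H^{q}(BG_{\mathbb F_p},\pi_{*}\mathcal O_{[\mathfrak g/G]_{\mathbb F_p}})\;\simeq\;\bigoplus_{i\ge 0}H^{q}(G_{\mathbb F_p},\Sym^{i}\mathfrak g^{\vee}).$$
Second, by \Cref{ex: cotangent complex of X/G} the cotangent complex of $BG$ is $\mathfrak g^{\vee}[-1]$, so the d\'ecalage isomorphism gives $\wedge^{i}\mathbb L_{BG/\mathbb F_p}\simeq\Sym^{i}(\mathfrak g^{\vee})[-i]$ and therefore
$$H^{p,q}(BG_{\mathbb F_p}/\mathbb F_p)\;=\;H^{q}(BG,\wedge^{p}\mathbb L_{BG})\;=\;H^{q-p}(G_{\mathbb F_p},\Sym^{p}\mathfrak g^{\vee}),$$
which vanishes for $q<p$ and equals $(\Sym^{p}\mathfrak g^{\vee})^{G}$ for $q=p$. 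Comparing the two formulas, the condition $H^{>0}(BG_{\mathbb F_p},\pi_{*}\mathcal O_{[\mathfrak g/G]_{\mathbb F_p}})=0$ is literally the same as saying that the Hodge cohomology of $BG_{\mathbb F_p}$ is concentrated on the diagonal, i.e.\ $H^{p,q}(BG_{\mathbb F_p}/\mathbb F_p)=0$ for all $q\neq p$.

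With this reduction in hand, the equivalence with the shape of de Rham cohomology is mostly formal in one direction. Recall from \Cref{coh_proper_stronger} and \Cref{BP_is_SCP} that $BG$ is cohomologically, hence Hodge, proper over $\mathbb F_p$, so every $H^{p,q}(BG_{\mathbb F_p}/\mathbb F_p)$ and every $H^{n}_{\dR}(BG_{\mathbb F_p}/\mathbb F_p)$ is finite-dimensional, and by \Cref{Hodge_filtration_for_stacks} the Hodge filtration on $R\Gamma_{\dR}(BG_{\mathbb F_p}/\mathbb F_p)$ is complete, so the Hodge--de Rham spectral sequence $E_1^{p,q}=H^{q}(BG,\wedge^{p}\mathbb L_{BG})\Rightarrow H^{p+q}_{\dR}(BG_{\mathbb F_p}/\mathbb F_p)$ converges. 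If the Hodge cohomology is diagonal, the $E_1$-page is supported in even total degree, every differential has odd target degree and so vanishes, the spectral sequence degenerates, and $H^{2n}_{\dR}\simeq E_1^{n,n}=H^{n,n}$, $H^{2n+1}_{\dR}=0$. The two remaining implications — that de Rham purity forces diagonality of the Hodge cohomology, and that either condition is equivalent to $p$ being a non-torsion prime — I would handle through the classical theory.

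The heart of the matter is the equivalence "diagonal Hodge cohomology $\Leftrightarrow$ $p$ non-torsion". Using that $\mathfrak g/\!\!/G=\Spec(\Sym^{\bullet}\mathfrak g^{\vee})^{G}$ is affine, diagonality of the Hodge cohomology is equivalent to the structure morphism $[\mathfrak g/G]_{\mathbb F_p}\to(\mathfrak g/\!\!/G)_{\mathbb F_p}$ being cohomologically trivial, equivalently to $\Sym^{\bullet}\mathfrak g^{\vee}_{\mathbb F_p}$ being an acyclic $G$-module, equivalently — by a Kostant-type statement — to $\Sym^{\bullet}\mathfrak g^{\vee}_{\mathbb F_p}$ being free over its ring of invariants with the latter a polynomial ring; this is precisely the assertion that the Chevalley restriction isomorphism for $\mathfrak g$ survives to characteristic $p$. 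By the integral invariant theory of Weyl groups (Demazure), together with Borel's theorem that $p$ is a torsion prime for $G$ exactly when $H^{*}_{\mathrm{sing}}(BG(\mathbb C),\mathbb Z)$ (equivalently $H^{*}_{\mathrm{sing}}(G(\mathbb C),\mathbb Z)$) has $p$-torsion, this freeness holds if and only if $p$ is not a torsion prime. Finally, to close the loop I would invoke Totaro's computations in \cite{Totaro_deRhamBG}: for $p$ non-torsion, $H^{*}_{\dR}(BG_{\mathbb F_p}/\mathbb F_p)$ is a polynomial ring on generators in even degrees, hence has exactly the shape in the statement, while the analysis of the torsion case (as in the $\mathrm{Spin}_{11}$, $p=2$ example) shows that de Rham purity fails whenever $p$ is a torsion prime. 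I expect this last circle — the equivalence with non-torsionness, which rests on the case-by-case classification of torsion primes and the integral structure of Weyl-group invariants rather than on any soft property of prismatic or de Rham cohomology — to be the main obstacle.
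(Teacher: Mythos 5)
The paper does not prove this statement: it is quoted verbatim as Totaro's Theorem 10.1 from \cite{Totaro_deRhamBG}, so there is no proof in the paper to compare against. What follows is therefore a review of your proposal on its own merits.

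Your reductions at the start are correct and in the spirit of Totaro's argument. The identification $\pi_*\mathcal O_{[\mathfrak g/G]}\simeq\Sym^\bullet\mathfrak g^\vee$ as a $G$-representation, the d\'ecalage $\wedge^p\mathbb L_{BG}\simeq\Sym^p(\mathfrak g^\vee)[-p]$ (from $\mathbb L_{BG}\simeq\mathfrak g^\vee[-1]$ and $\wedge^p(\mathcal F)\simeq\Sym^p(\mathcal F[1])[-p]$), and the resulting formula $H^{p,q}(BG_{\mathbb F_p})\simeq H^{q-p}(G_{\mathbb F_p},\Sym^p\mathfrak g^\vee)$ — forcing $H^{p,q}=0$ for $q<p$ — are all right. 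So is the observation that $H^{>0}(BG,\pi_*\mathcal O_{[\mathfrak g/G]})=0$ is literally the statement that $H^{p,q}=0$ for $q\neq p$. The forward implication "diagonal Hodge cohomology $\Rightarrow$ de Rham purity" is also sound: Hodge-properness of $BG$ (via \Cref{BP_is_SCP}) gives finiteness and, with $H^{p,q}=0$ off the diagonal, the $E_1$-page of the Hodge-to-de Rham spectral sequence sits in even total degree, so all $d_r$ (which shift total degree by $1$) vanish, and the filtration is complete by \Cref{Hodge_filtration_for_stacks}, so the spectral sequence degenerates.

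The real content of the theorem, however, lies exactly in the pieces you defer: (a) de Rham purity forces Hodge diagonality, and (b) Hodge diagonality is equivalent to $p$ being non-torsion. For (a), a purely spectral-sequence argument is not as formal as you suggest. If $H^{2n}_{\dR}\simeq H^{n,n}$ is interpreted only as an abstract isomorphism, then a dimension count gives $\dim H^{n,n}=\sum_{p+q=2n}\dim E_\infty^{p,q}$, which does not immediately force the off-diagonal $E_\infty^{p,q}$, let alone the off-diagonal $E_1^{p,q}$, to vanish — differentials could in principle kill off-diagonal $E_1$ terms while leaving the total de Rham dimension unchanged. One needs either a naturality statement about the comparison map $H^{n,n}\to H^{2n}_{\dR}$, or to argue (b) directly and deduce (a) as a byproduct. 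For (b), the chain "diagonality $\Leftrightarrow$ cohomological triviality of $[\mathfrak g/G]\to\mathfrak g/\!\!/G$ $\Leftrightarrow$ $\Sym^\bullet\mathfrak g^\vee$ free over invariants with polynomial invariants" is the correct shape, but the middle equivalence is not automatic: in characteristic $p$ the implication "acyclicity $\Rightarrow$ freeness over polynomial invariants" is a genuine theorem (a mod $p$ Kostant statement, which is precisely where the assumption on $p$ enters), not a formal consequence of the vanishing, and conversely freeness plus polynomiality give vanishing only after a careful Koszul/regular-sequence argument over $\mathbb F_p$. Likewise the link to Borel's definition of torsion primes requires Demazure's integral Weyl-group invariant theory together with the Chevalley restriction isomorphism mod $p$, and some of this is established case by case. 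You have correctly identified this as the bottleneck, but as written it remains a gap rather than a proof, and it is precisely the part Totaro's own argument is devoted to.
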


\begin{ex} 
	$H^{*}(BG_{\mbb F_p},\pi_*\mathcal O_{{({\mf g}/G)}_{\mbb F_p}})$ can be equivalently described as the group cohomology $H^{*}(G_{\mbb F_p},\mc O(\mf g)_{\mbb F_p})$. By \cite[Proposition and proof of Theorem in 2.2]{donkin1988conjugating} and \cite[II.4.22]{Jantzen} if $p$ is \textit{good} the algebra $\mc O(\mf g)_{\mbb F_p}$ has a good filtration and so $H^{>0}(G_{\mbb F_p},\mc O(\mf g)_{\mbb F_p}) \simeq 0$. However, not every bad (= not good) prime is torsion (e.g. $p=2$ is bad for $\mr{Sp}_n$ but is non-torsion).
\end{ex}	
\begin{ex} Following \cite[Section 2.6]{juteau2014parity} given $G$ it is possible to completely describe the corresponding set of torsion primes. First, a reductive group has the same torsion primes as its derived subgroup. Then, if $G$ is semi-simple, torsion primes are those of its simply connected cover, together with the primes dividing
	the order of $\pi_1(G)$. The set of torsion primes of a semi-simple and
	simply connected group coincides with the union of torsion primes of its simple factors.
		Finally, if $G$ is simple and simply connected the torsion primes are completely described as follows:
		\begin{itemize}
			\item there are no torsion primes in type $A_n$ and $C_n$;
			\item $p=2$ if $G$ is of type $B_n(n\ge 3), D_n,G_2$;
			\item  $p=2,3$ if $G$ is of type $E_6$, $E_7$, or $F_4$;
			\item $p=2,3,5$ if $G$ is of type $E_8$.
		\end{itemize}  
\end{ex}	

\begin{ex}
	Any prime $p$ is torsion for some $G$; indeed, for example, $H^2(\mr{PGL}_p(\mbb C),\mbb Z)\simeq \mbb F_p$ and thus $p$ is a torsion prime for $\mr{PGL}_p$.
\end{ex}

Let $\mbb Z_{(p)}$ be the localization of $\mbb Z$ at $p$. \Cref{thm:description of torsion primes} was used by Totaro to compute the de Rham cohomology of $BG$ locally at any non-torsion prime:
\begin{thm}[{\cite[Theorem 10.2]{Totaro_deRhamBG}}]\label{thm:de rham cohomology for non-singular primes}
Let $G$ be a split connected reductive group over $\mathbb Z$ and let $p$ be a non-torsion prime for $G$. Then the de Rham cohomology $H^*_\dR(BG/\mathbb Z)_{(p)}\simeq H^*_\dR(BG_{\mbb Z_{(p)}}/\mathbb Z_{(p)})$, is a polynomial ring on generators of degrees equal to 2 times the fundamental degrees of $G$. This graded ring is (non-canonically) isomorphic to the singular cohomology $H^*_\sing(BG(\mathbb C),\mathbb Z_{(p)})$.
\end{thm}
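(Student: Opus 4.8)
The plan is to deduce \Cref{thm:de rham cohomology for non-singular primes} from the corresponding topological statement together with \Cref{thm:description of torsion primes}. First I would recall the classical input on the topological side: by a theorem of Borel, if $p$ is non-torsion for $G$, the singular cohomology $H^*_\sing(BG(\mathbb C),\mathbb Z_{(p)})$ is a polynomial ring on generators of (even) degrees $2e_1,\ldots,2e_n$, where the $e_i$ are the fundamental degrees of $G$ (equivalently, $H^*_\sing(BT(\mathbb C),\mathbb Z_{(p)})^W \simeq H^*_\sing(BG(\mathbb C),\mathbb Z_{(p)})$ and the invariant ring of the Weyl group action is polynomial after inverting the torsion primes). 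This gives the model we want to match on the de Rham side.

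Next I would set up the comparison between the two. The key point is that $BG_{\mathbb Z_{(p)}}$ is Hodge-proper (indeed cohomologically proper) over $\mathbb Z_{(p)}$ by \Cref{BP_is_SCP} applied with $P=G$, after base change (\Cref{prop: Hodge proprness survives some base-changes}), so the Hodge-to-de Rham spectral sequence is controlled and $H^*_\Hdg(BG/\mathbb Z_{(p)})$ is a finitely generated $\mathbb Z_{(p)}$-module in each degree. The crucial structural input is \Cref{thm:description of torsion primes}: for $p$ non-torsion one has $H^{2n}_\dR(BG_{\mathbb F_p}/\mathbb F_p)\simeq H^{n,n}(BG_{\mathbb F_p}/\mathbb F_p)$ and $H^{2n+1}_\dR(BG_{\mathbb F_p}/\mathbb F_p)\simeq 0$; this forces the Hodge-de Rham spectral sequence for $BG_{\mathbb F_p}$ to degenerate and, by a dimension count together with semicontinuity (applied to $\RG_\dR(BG_{\mathbb Z_{(p)}}/\mathbb Z_{(p)})$ viewed as a perfect-enough complex, or to its individual cohomology which are finitely generated over the PID $\mathbb Z_{(p)}$), to show there is no $p$-torsion in $H^*_\dR(BG_{\mathbb Z_{(p)}}/\mathbb Z_{(p)})$ and that $H^{2n+1}_\dR$ vanishes. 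Then base change along $\mathbb Z_{(p)}\to\mathbb C$ identifies $\dim_\mathbb{C} H^*_\dR(BG_\mathbb{C}/\mathbb{C})$, and via the $\mathbb{C}$-de Rham comparison $\RG_\sing(\Pi_\infty BG(\mathbb C),\mathbb C)\simeq \RG_\dR(BG_\mathbb C/\mathbb C)$ (which holds by smooth descent from the scheme case, cf.\ the discussion in \Cref{introsec:Totaro's conjecture}) this matches the ranks of $H^*_\sing(BG(\mathbb C),\mathbb Z_{(p)})$ degreewise.

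Having matched ranks, torsion-freeness, and concentration in even degrees, I would promote this to a ring isomorphism: $H^*_\dR(BG_{\mathbb Z_{(p)}}/\mathbb Z_{(p)})$ is a graded-commutative $\mathbb Z_{(p)}$-algebra, free as a module with Poincaré series equal to that of $\mathbb Z_{(p)}[x_1,\ldots,x_n]$ with $\deg x_i = 2e_i$. The standard argument is to lift a system of polynomial generators: pick classes in degrees $2e_1,\ldots,2e_n$ whose images generate $H^*_\sing(BG(\mathbb C),\mathbb Q)$ (using the known $\mathbb Q$-computation), check they lie in the integral-local cohomology, and then show the induced map $\mathbb Z_{(p)}[x_1,\ldots,x_n]\to H^*_\dR(BG_{\mathbb Z_{(p)}}/\mathbb Z_{(p)})$ is an isomorphism by comparing Hilbert series (both sides free of the same finite rank in each degree, and surjectivity in each degree follows by a Nakayama / downward induction argument once one knows it modulo $p$, which is \Cref{thm:description of torsion primes} plus the degeneration). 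The non-canonical isomorphism with $H^*_\sing(BG(\mathbb C),\mathbb Z_{(p)})$ then follows since both are polynomial rings on generators in the same degrees.

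The main obstacle I anticipate is the passage from "same Betti numbers / torsion-free" to "polynomial ring" \emph{integrally at $p$}, rather than rationally: one must be careful that the lifted generators can be chosen to actually live in $H^*_\dR(BG_{\mathbb Z_{(p)}}/\mathbb Z_{(p)})$ (not just after inverting $p$) and that no unexpected relations or non-generators appear mod $p$ — this is exactly where \Cref{thm:description of torsion primes} is indispensable, since it pins down the mod $p$ de Rham cohomology completely. A secondary subtlety is verifying the multiplicativity of the $\mathbb C$-de Rham comparison for the stack $BG$ (so that the ring structure, and not merely the additive structure, transports), but this is formal from its compatibility with the smooth-descent presentation and has effectively been used already in \Cref{introsec:Totaro's conjecture}.
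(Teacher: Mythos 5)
Your overall plan (even concentration and torsion-freeness from \Cref{thm:description of torsion primes} via base change, rank matching via $\mathbb C$-de Rham comparison, then lift generators) is the right shape and parallels Totaro's proof, but there is a genuine gap at the last step. Matching Hilbert series plus torsion-freeness does \emph{not} pin down the ring structure over $\mathbb Z_{(p)}$. Concretely, $R=\mathbb Z_{(p)}[a,b]/(a^2-pb)$ with $\deg a=2$, $\deg b=4$ is a graded domain, free of rank $1$ over $\mathbb Z_{(p)}$ in every even degree — the same Hilbert series as $\mathbb Z_{(p)}[x]$, $\deg x=2$ — yet $R$ is not polynomial, and the degree-$2$ generator $a$, which does generate $R\otimes\mathbb Q$, fails to generate $R/p=\mathbb F_p[a,b]/(a^2)$. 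This is exactly the failure mode your argument does not rule out: choosing the $x_i$ to generate rationally gives no control mod $p$, so your appeal to Nakayama ("once one knows it modulo $p$") is circular. \Cref{thm:description of torsion primes} tells you $H^{*}_\dR(BG_{\mathbb F_p}/\mathbb F_p)$ is concentrated in even degrees and equals the diagonal Hodge cohomology $\bigoplus_n H^{n,n}$, but it does \emph{not} tell you that this graded ring is polynomial on generators in degrees $2e_1,\dots,2e_n$.

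That missing input is the real content of Totaro's argument and is independent of the Hilbert-series count: one identifies $H^{n,n}(BG/\mathbb F_p)$ with the degree-$n$ piece of the invariant ring $\mathcal O(\mathfrak g_{\mathbb F_p})^G$ (using $H^{>0}(G,\mathcal O(\mathfrak g))=0$ from \Cref{thm:description of torsion primes}), then proves that this ring of invariants is polynomial on generators of degree $e_1,\dots,e_n$. This last step needs a char-$p$ Chevalley restriction isomorphism $\mathcal O(\mathfrak g)^G\xrightarrow{\sim}\mathcal O(\mathfrak t)^W$ valid for $p$ non-torsion, together with Demazure's theorem that $\mathcal O(\mathfrak t)^W$ is polynomial over $\mathbb F_p$ in that range (both are cited elsewhere in this paper, e.g.\ in the proof of \Cref{prop:char classes as W-invariants}). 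Once you know the mod-$p$ ring is polynomial on generators in those degrees, you can choose your lifts $x_i$ so that $\bar x_i$ generate mod $p$, and then Nakayama plus the rank count finishes exactly as you intended. Without that input, the argument stops short: Borel's theorem on the topological side pins down the Hilbert series but cannot be transported to a ring-level statement in mod-$p$ de Rham cohomology by a purely numerical comparison.
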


\begin{ex}
We recall that the \emdef{fundamental degrees} of a split connected reductive group $G$ are the degrees of the generators of the ring of $G_{\mbb Q}$-invariants $(\Sym \mf g_{\mbb Q}^*)^{G_{\mbb Q}}$ which is itself a polynomial ring. Note that $(\Sym \mf g_{\mbb Q}^*)^{G_{\mbb Q}}$ only depends on the Lie algebra $\mf g$ and not the group $G$ itself. For the completeness of the exposition we give the list of fundamental degrees for simple groups:
\begin{center}
	\begin{tabular}{c l}
		$\mbb G_m$ & $1,$\\
		$A_n$ & $2,3,\ldots , n+1,$\\
		$B_n$ & $2,4,6,\ldots, 2n,$\\
		$C_n$ & $2,4,6,\ldots, 2n,$\\
		$D_n$ & $2,4,6,\ldots, 2n-2,n,$\\
		$G_2$ & $2,6,$\\
		$F_4$ & $2,6,8,12$,\\
		$E_6$ & $2,5,6,8,9,12$,\\
		$E_7$ & $2,6,8,10,12,14,18$,\\
		$E_8$ & $2,8,12,14,18,20,24,30$.\\
	\end{tabular}
\end{center}
For a general $G$ the set of fundamental degrees (with multiplicities) is given by the union of fundamental degrees of the simple components of $\mf g$.	
\end{ex}	

\begin{ex}
	If we discard the non-torsion condition on $p$ the structure of the singular cohomology ring $H^*_\sing(BG(\mathbb C),\mathbb Z_{(p)})$ is usually much more complicated. As an example of a particularly tricky computation one can see \cite{Vistoli} where the additive structure of $H^*_\sing(B\mr{PGL}_p(\mathbb C),\mathbb Z)$ is understood.
\end{ex}

\begin{rem}\label{rem:de Rham as deformation of \'etale}
Below (see \Cref{cor:prismatic cohomology of BG as a ring}) we deduce from \cite[Theorem 10.2]{Totaro_deRhamBG} an analogous result for the prismatic cohomology of $BG$ over $\mathbb Z_p$. This gives a more precise relation between  $H^*_\dR(BG/\mathbb Z_p)$ and $H^*_{\text{sing}}(BG(\mathbb C),\mathbb Z_p)$: namely after a choice of a uniformizer $\pi\in \mbb Z_p$ one can be seen as a deformation of the other.
\end{rem}

Let's now fix $\mc O_K$ and a Breuil-Kisin prism prism $(\mf S,E(u))$ with $\mf S/E(u)\simeq \mc O_K$.
We will use \Cref{thm:de rham cohomology for non-singular primes} to compute the prismatic cohomology $H^*_\Prism(BG/\mf S)$ under the assumption that $p$ is non-torsion. 
\begin{thm}
Let $G$ be a split reductive group over $\mathbb Z$ and let $p$ be a non-torsion prime for $G$. Then the prismatic cohomology $H^*_\Prism(BG/\mathfrak S)$ is a free graded polynomial algebra over $\mf S$ on generators of degrees equal to $2$ times the fundamental degrees of $G$.

\begin{proof}
Recall that the proof of \Cref{thm:de rham cohomology for non-singular primes} in \cite{Totaro_deRhamBG} is based on the analogous statement for the Hodge cohomology. Namely under the non-torsion assumption on $p$ one has $H^{i,j}(BG/\mbb Z_p)=0$ if $i\neq j$ and $H^*_\Hdg(BG/\mbb Z_p)\simeq \oplus_n H^{n,n}(BG/\mbb Z_p)$ is a polynomial algebra on generators of needed degrees. By base change \Cref{Hodge_and_deRham_basechange} we have $H^{i,j}(BG/\mc O_K)=0$ for $i\neq j$ and $H^*_\Hdg(BG/\mc O_K)$ is also a polynomial algebra; now looking at the Hodge-Tate spectral sequence for $H^*_{\Prism/I}(BG/\mf S)$ we see that it degenerates. Using that $\gr^*_\HT H^*_{\Prism/I}(BG/\mf S)\simeq H^*_\Hdg(BG/\mc O_K)$ and lifting the polynomial generators we get that the Hodge-Tate cohomology $H^*_{\Prism/I}(BG/\mf S)$ is a polynomial algebra over $\mc O_K$ on generators of degrees equal to $2$ times the fundamental degrees of $G$.

 By the universal coefficients formula we have short exact sequences:
$$\xymatrix @R=.5pc{
0 \ar[r] & H^{2i}_\Prism(BG/\mf S)\otimes_{\mf S} \!\mc O_K\ar[r] & H^{2i}_{\Prism/I}(BG/\mf S) \ar[r] &\Tor^1_{\mf S}(H^{2i+1}_\Prism(BG/\mf S), \mc O_K)  \ar[r] & 0, \\
0 \ar[r] & H^{2i+1}_\Prism(BG/\mf S)\otimes_{\mf S} \!\mc O_K  \ar[r] & H^{2i+1}_{\Prism/I}(BG/\mf S) \ar[r] &  \Tor^1_{\mf S}(H^{2i+2}_\Prism(BG/\mf S), \mc O_K) \ar[r] & 0.
}$$
From the above we know that $H^{2i+1}_{\Prism/I}(BG/\mf S) \simeq 0$ and so $H^{2i+1}_\Prism(BG/\mf S)\otimes_{\mf S}\! \mc O_K \simeq 0$. Since $H^{2i+1}_\Prism(BG/\mf S)$ is finitely generated over $\mf S$ (by \Cref{cor: prismatic cohomology are bounded below coherent}) it follows that $H^{2i+1}_\Prism(BG/\mf S)\simeq 0$ for $i$. Then tautologically $\Tor^1_{\mf S}(H^{2i+1}_\Prism(BG/\mf S), \mc O_K) \simeq 0$ and thus $H^{2i}_{\Prism/I}(BG/\mc O_K)\xra{\sim} H^{2i}_\Prism(BG/\mf S)\otimes_{\mf S} \mc O_K.$
Summarizing, we get an isomorphism of graded $\mc O_K$-algebras
$$
H^*_\Prism(BG/\mf S)\otimes_{\mf S} \! \mc O_K\xra{\sim} H^*_{\Prism/I}(BG/\mf S).
$$
Note that identifying $\Tor^1_{\mf S}(H^{2i}_\Prism(BG/\mf S), \mc O_K)$ with $E(u)$-torsion in $H^{2i}_\Prism(BG/\mf S)$ from the short exact sequences above we also get that 
$H^{2i}_\Prism(BG/\mf S)$ is $E(u)$-torsion free.

Let $x_{e_1},\ldots, x_{e_n}$ be the polynomial generators of $H^*_{\Prism/I}(BG/\mc O_K)$ of degree $2e_i$, where $e_i$ are the fundamental degrees and let $\tilde{x}_{e_1},\ldots, \tilde{x}_{e_n}\in H^*_\Prism(BG/\mf S)$ be any lifts of $x_{e_1},\ldots, x_{e_n}$ under the projection
$$
H^*_\Prism(BG/\mf S)\xymatrix{\ar@{->>}[r] &} H^*_\Prism(BG/\mf S)\otimes_{\mf S} \!\mc O_K \simeq H^*_{\Prism/I}(BG/\mc O_K).
$$
The choice of lifts gives a map from the polynomial ring
$$\tilde f\colon \mf{S}[y_{e_1},\ldots, y_{e_n}] \xymatrix{\ar[r] &} H^*_\Prism(BG/\mf S),$$
which by the above is an isomorphism when reduced modulo $E(u)$. The graded components on both sides are finitely generated $\mf S$-modules that are $E(u)$-torsion free, so (derived) Nakayama lemma can be applied and $\tilde f$ is an isomorphism.
\end{proof}
\end{thm}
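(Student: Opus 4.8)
The plan is to deduce the statement from Totaro's computation of the Hodge cohomology of $BG$ at non-torsion primes, transported to the Hodge--Tate cohomology via the Hodge--Tate filtration, and then promoted to the integral prismatic cohomology by a descent argument using the universal coefficients formula and Nakayama's lemma. First I would record that $BG = BP$ with $P = G$ is cohomologically, hence Hodge-, proper over any Noetherian base (\Cref{BP_is_SCP} and \Cref{coh_proper_stronger}), so that $\RG_\Prism(BG/\mf S) \in \Coh^+(\mf S)$ by \Cref{cor: prismatic cohomology are bounded below coherent}; in particular each $H^i_\Prism(BG/\mf S)$ is a finitely generated module over the Noetherian local ring $\mf S$. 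The substantive input is the proof of \Cref{thm:de rham cohomology for non-singular primes}: when $p$ is non-torsion for $G$ one has $H^{i,j}(BG/\mbb Z_p) = 0$ for $i \neq j$ and $H^*_\Hdg(BG/\mbb Z_p) \simeq \bigoplus_n H^{n,n}(BG/\mbb Z_p)$ is a polynomial $\mbb Z_p$-algebra on generators of degrees $2e_1, \dots, 2e_n$, with $e_i$ the fundamental degrees. Applying the base-change \Cref{Hodge_and_deRham_basechange} (valid since $\mc O_K$ is flat over $\mbb Z_p$) transports this to $\mc O_K$: $H^{i,j}(BG/\mc O_K)$ vanishes off the diagonal and $H^*_\Hdg(BG/\mc O_K)$ is polynomial on generators of the same degrees.

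Next I would pass to the Hodge--Tate cohomology $H^*_{\Prism/I}(BG/\mf S)$. The Hodge--Tate filtration is exhaustive (\Cref{prop: HT_filtration_is_exaustive}) with associated graded $\RG(BG, \wedge^i \mbb L_{BG/\mc O_K})\{-i\}[-i]$, and since $(I/I^2)^\vee$ is a line bundle on the local ring $\mc O_K$ the twist $\{-i\}$ is trivial on underlying $\mc O_K$-modules. Hence the Hodge--Tate spectral sequence $E_2^{p,q} = H^{q,p}(BG/\mc O_K)\{q\} \Rightarrow H^{p+q}_{\Prism/I}(BG/\mf S)$ has $E_2$-page concentrated in even total degree, so all differentials vanish and $\gr^*_\HT H^*_{\Prism/I}(BG/\mf S) \simeq H^*_\Hdg(BG/\mc O_K)$. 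Lifting the polynomial generators of the associated graded (possible because a graded polynomial algebra is freely generated and the filtration is complete) then shows $H^{2i+1}_{\Prism/I}(BG/\mf S) = 0$ for all $i$ and that $H^*_{\Prism/I}(BG/\mf S)$ is a polynomial $\mc O_K$-algebra on generators in degrees $2e_i$.

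Finally I would bootstrap back to $H^*_\Prism(BG/\mf S)$. Using $\RG_{\Prism/I}(BG/\mf S) \simeq \RG_\Prism(BG/\mf S) \otimes_{\mf S} \mf S/E(u)$ and that $\mf S/E(u) = \mc O_K$ has Tor-amplitude $[-1,0]$ over $\mf S$ (quotient by the nonzerodivisor $E(u)$, so $\Tor^1_{\mf S}(N, \mc O_K) \simeq N[E(u)]$), the universal coefficients sequences combined with vanishing of odd-degree Hodge--Tate cohomology give: $H^{2i}_\Prism(BG/\mf S)$ is $E(u)$-torsion free, and $H^{2i+1}_\Prism(BG/\mf S) \otimes_{\mf S} \mc O_K = 0$. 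Since $H^{2i+1}_\Prism(BG/\mf S)$ is finitely generated over $\mf S$ and $E(u)$ lies in the maximal ideal $(p,u)$, Nakayama forces $H^{2i+1}_\Prism(BG/\mf S) = 0$, whence $H^{2i}_\Prism(BG/\mf S) \otimes_{\mf S} \mc O_K \xra{\sim} H^{2i}_{\Prism/I}(BG/\mf S)$ as graded $\mc O_K$-algebras. Choosing $\mf S$-lifts $\tilde x_{e_i} \in H^{2e_i}_\Prism(BG/\mf S)$ of the polynomial generators defines a graded $\mf S$-algebra map $\mf S[y_{e_1}, \dots, y_{e_n}] \to H^*_\Prism(BG/\mf S)$ that reduces to an isomorphism modulo $E(u)$; as both sides are degreewise finitely generated and $E(u)$-torsion free, derived Nakayama promotes it to an isomorphism, which is exactly the claim.

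As for the main obstacle: the real content is all imported from Totaro's work --- the off-diagonal vanishing and polynomiality of $H^*_\Hdg(BG)$ at non-torsion primes --- and everything downstream is formal. The only places demanding care are the degeneration of the Hodge--Tate spectral sequence (which is immediate once one notes the diagonal concentration and triviality of the twist over a local ring) and marshalling the finiteness and completeness hypotheses needed for the two Nakayama arguments, which is precisely why it is worth establishing Hodge-properness of $BG$ and hence coherence of $\RG_\Prism(BG/\mf S)$ at the very start.
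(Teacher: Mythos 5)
Your proposal is correct and follows essentially the same route as the paper's proof: Totaro's diagonal concentration of $H^*_\Hdg(BG/\mathbb Z_p)$, base change, degeneration of the Hodge--Tate spectral sequence, universal coefficients plus two Nakayama arguments, and lifting polynomial generators. The only addition is that you make explicit at the outset the Hodge-properness and resulting coherence of $\RG_\Prism(BG/\mathfrak S)$, which the paper relies on implicitly when invoking finiteness of the prismatic cohomology groups.
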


In most cases one can also describe $H^*_\Prism(BG/\mf S)$ by looking at its image in $H^*_\Prism(BT/\mf S)$ under the natural map $q\colon BT\to BG$:
\begin{prop}\label{prop:char classes as W-invariants}
Let $G$ be a split reductive group and let $p$ be a non-torsion prime or $p\neq 2$ if $G$ has a simple factor of type $C_n$. Then $q^*\colon H^*_\Prism(BG/\mf S)\to H^*_\Prism(BT/\mf S)$ maps isomorphically onto the subring of (non-derived) $W$-invariants:
$$
H^*_\Prism(BG/\mf S) \xymatrix{\ar[r]^-\sim &} H^*_\Prism(BT/\mf S)^W.
$$

\begin{proof}
Since the cohomology groups $H^*_\Prism(BG/\mf S)$ are free as $\mf S$-modules we have an embedding 
$$
H^*_\Prism(BG/\mf S)\hookrightarrow H^*_\Prism(BG/\mf S)\otimes_{\mf S} W(\mathbb C_p^\flat) \simeq H^*_\et(BG_{{\mathbb C_p}},\mathbb Z_p)\otimes_{\mathbb Z_p} W(\mathbb C_p^\flat),
$$
where we use \Cref{thm:main_result} and \Cref{cor: adic_etale_comparison2} for the isomorphism on the right. Considering the same diagram for $T$ we obtain a commutative square 
$$
\xymatrix{H^*_\Prism(BG/\mf S)\ar[r]\ar[d]& H^*_\et(BG_{{\mathbb C_p}},\mathbb Z_p)\otimes_{\mathbb Z_p} W(\mathbb C_p^\flat) \ar[d]\\
H^*_\Prism(BT/\mf S)\ar[r]& H^*_\et(BT_{{\mathbb C_p}},\mathbb Z_p)\otimes_{\mathbb Z_p}W(\mathbb C_p^\flat),
}
$$
where the bottom horizontal map is $W$-equivariant. The $W$-action on $H^*_\et(BT_{{\mathbb C_p}},\mathbb Z_p)$ is induced by the adjoint action of elements of $G_{{\mathbb C_p}}$ lying in the connected component of the identity and so this action is trivial when restricted to the image of $q^*\colon H^*_\et(BG_{{\mathbb C_p}}, \mathbb Z_p)\to H^*_\et(BT_{{\mathbb C_p}}, \mathbb Z_p)$. It follows that the image of $q^*\colon H^*_\Prism(BG/\mf S)\ra H^*_\Prism(BT/\mf S)$ lands in $W$-invariants $H^*_\Prism(BT/\mf S)^W$. 

It remains to check that 
$$q^*\colon H^*_\Prism(BG/\mf S)\xymatrix{\ar[r]&} H^*_\Prism(BT/\mf S)^W$$
is an isomorphism. Using \Cref{prop: prismatic cohomology of BT} together with an isomorphism $\Sym_{\mf S} (X^*(T)\otimes_{\mbb Z}\mf S)\simeq \mc O(\mf t_{\mf S})$ we can identify the right hand side with the ring of polynomial functions $\mathcal O(\mf t_{\mf S})$ on $\mf t_{\mf S}$, moreover this identification is $W$-equivariant. So $H^*_\Prism(BT/\mf S)^W \simeq \mathcal O(\mf t_{\mf S})^W$. By \cite[Th\`eor\'eme]{Demazure_WeylInvariants} under a milder condition on $p$ the ring $\mathcal O(\mf t_{\mf S})^W$ is a polynomial ring in variables of degrees equal to the fundamental degrees of $G$, in particular each graded component is free as an $\mf S$-module. So by Nakayama it is enough to check that the map $q^*$ is an isomorphism modulo the ideal $(p,u)\subset \mf S$. By the crystalline comparison\footnote{Here $M^{(-1)}$ denotes the Frobenius untwist $M\otimes_{k,\phi^{-1}} k$.} $H^*_\Prism(BG/\mf S)\otimes_{\mf S}\mf S/(p,u)\simeq H^*_{\dR}(BG_k/k)^{(-1)}$ and similarly for $BT$. Moreover, by \cite[Th\`eor\'eme]{Demazure_WeylInvariants} we have $\mathcal O(\mf t_{\mf S})^W/(p,u)\simeq \mathcal O(\mf t_{k})^W$. And, finally, the fact that $q^*$ induces an isomorphism $H^*_{\dR}(BG_k/k)^{(-1)}\xra{\sim } \mathcal O(\mf t_{k})^W$ is checked during the proof of Theorem 10.2 in \cite{Totaro_deRhamBG}.
\end{proof}
\end{prop}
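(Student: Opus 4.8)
The plan is to compare both sides with their base change to $W(\mbb C_p^\flat)$, push the problem down to the special fibre $\mf S/(p,u)=k$, and there invoke Totaro's description of $H^*_\dR(BG_k/k)$ together with Demazure's description of the ring of Weyl invariants on the Lie algebra.

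First I would record that, under the hypothesis on $p$, each graded piece of $H^*_\Prism(BG/\mf S)$ is a free $\mf S$-module: for $p$ non-torsion this is the preceding theorem, and in the remaining case freeness can be read off from Demazure's theorem below combined with the comparisons. Freeness gives an injection $H^*_\Prism(BG/\mf S)\hookrightarrow H^*_\Prism(BG/\mf S)\otimes_{\mf S}W(\mbb C_p^\flat)$, and since $BG$ is Hodge-proper, by \Cref{thm:main_result} and \Cref{cor: adic_etale_comparison2} the target is identified with $H^*_\et(BG_{\mbb C_p},\mbb Z_p)\otimes_{\mbb Z_p}W(\mbb C_p^\flat)$. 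Carrying out the same for $T$ produces a commutative square whose lower horizontal arrow $q^*\colon H^*_\et(BG_{\mbb C_p})\to H^*_\et(BT_{\mbb C_p})$ is $W$-equivariant for the $W$-action on $H^*_\et(BT_{\mbb C_p})$ induced by conjugation by $G_{\mbb C_p}$-points in the identity component; this action is trivial on the image of $q^*$, so the image of $q^*\colon H^*_\Prism(BG/\mf S)\to H^*_\Prism(BT/\mf S)$ lands in the (non-derived) $W$-invariants.

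Next I would make the target explicit. By \Cref{prop: prismatic cohomology of BT} one has $H^*_\Prism(BT/\mf S)\simeq \Sym_{\mf S}(X^*(T)\otimes_{\mbb Z}\mf S\{-1\})$; forgetting the Breuil--Kisin twist on underlying modules and rescaling the grading by $2$, this is the coordinate ring $\mc O(\mf t_{\mf S})$ with its tautological $W$-action, so $H^*_\Prism(BT/\mf S)^W\simeq \mc O(\mf t_{\mf S})^W$. Under the hypothesis on $p$, Demazure's theorem \cite[Th\'eor\`eme]{Demazure_WeylInvariants} shows $\mc O(\mf t_{\mf S})^W$ is a polynomial $\mf S$-algebra on generators in degrees twice the fundamental degrees of $G$; in particular each graded piece is free over $\mf S$. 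Since each graded piece of $H^*_\Prism(BG/\mf S)$ is also a finitely generated free $\mf S$-module, a map between the two graded pieces that is an isomorphism after $-\otimes_{\mf S}\mf S/(p,u)$ is automatically an isomorphism (Nakayama plus the fact that a surjective endomorphism of a finitely generated module is injective), so it suffices to check that $q^*$ becomes an isomorphism modulo $(p,u)$.

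Finally, by the crystalline comparison \Cref{prop: cristalline comparison} one has $H^*_\Prism(BG/\mf S)\otimes_{\mf S}\mf S/(p,u)\simeq H^*_\dR(BG_k/k)^{(-1)}$ (Frobenius untwist), and likewise for $BT$, while Demazure's theorem again gives $\mc O(\mf t_{\mf S})^W\otimes_{\mf S}\mf S/(p,u)\simeq \mc O(\mf t_k)^W$. Thus everything reduces to the statement that $q^*$ induces an isomorphism $H^*_\dR(BG_k/k)\xra{\sim}\mc O(\mf t_k)^W$, which is exactly what is proved inside Totaro's argument for \cite[Theorem 10.2]{Totaro_deRhamBG}, the slightly more general hypothesis on $p$ here being arranged precisely so that Demazure's theorem still applies. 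The main obstacle I expect is bookkeeping rather than conceptual: one must simultaneously track the Breuil--Kisin twists $\{-i\}$ on $H^*_\Prism(BT/\mf S)$, the Frobenius untwist $(-1)$ from the crystalline comparison, the identification $\Sym_{\mf S}(X^*(T)\otimes\mf S)\simeq\mc O(\mf t_{\mf S})$, and $W$-equivariance, and verify that the stated hypothesis on $p$ (with the excluded case $p=2$ for $C_n$-factors) is exactly the one under which Demazure's theorem and Totaro's special-fibre computation are available.
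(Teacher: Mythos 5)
Your proof is correct and follows essentially the same route as the paper's: embed into $W(\mathbb C_p^\flat)$-coefficients via the \'etale comparison to see the image lands in $W$-invariants, identify the invariant ring with $\mathcal O(\mathfrak t_{\mathfrak S})^W$ via Demazure, and reduce by Nakayama to Totaro's computation on the special fiber. One small remark: the hypothesis on $p$ is best read as a conjunction ($p$ non-torsion \emph{and} $p\neq 2$ in the presence of $C_n$-factors), so the ``remaining case'' you mention for freeness does not actually arise -- freeness of $H^*_\Prism(BG/\mathfrak S)$ is always available from the preceding theorem.
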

\begin{cor}\label{cor:prismatic cohomology of BG as a ring}
Under the same assumption on $G$ and $p$ as in \Cref{prop:char classes as W-invariants} we have 
$$
H^*_\Prism(BG/\mf S)\simeq \Sym_{\mf S}(\mf S\{-e_1\}[-2e_1]\oplus\cdots \oplus \mf S\{-e_n\}[-2e_n]) 
$$
as a graded ring in the category of Breuil-Kisin modules. Consequently,
$$
H^*_\Prism(BG/\Ainf)\simeq \Sym_{\Ainf}(\Ainf\{-e_1\}[-2e_1]\oplus\cdots \oplus \Ainf\{-e_n\}[-2e_n]) 
$$
as a graded ring in the category of Breuil-Kisin-Fargues modules.
\end{cor}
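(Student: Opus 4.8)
The plan is to compute the $W$-invariants appearing in \Cref{prop:char classes as W-invariants} directly as a \emph{ring object in the category $\Mod_{\mf S}^\phi$ of Breuil-Kisin modules}, and then transport the answer to $\Ainf$ by base change. Recall that \Cref{prop:char classes as W-invariants} identifies $H^*_\Prism(BG/\mf S)$ with $H^*_\Prism(BT/\mf S)^W$ via pullback along $q\colon BT\to BG$, and that \Cref{prop: prismatic cohomology of BT} provides a natural isomorphism of graded rings in $\Mod_{\mf S}^\phi$
$$
\Sym_{\Mod_{\mf S}^\phi}\!\big(X^*(T)\otimes_{\mbb Z}\mf S\{-1\}\big)\ \xra{\ \sim\ }\ H^*_\Prism(BT/\mf S),
$$
with $X^*(T)\otimes_{\mbb Z}\mf S\{-1\}$ placed in cohomological degree $2$. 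So the whole task reduces to understanding the $W$-action on the left-hand side.

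First I would record that the $W$-action is $\mf S$-linear and factors through the Weyl-group action on the lattice $X^*(T)$; in particular it is trivial on the Breuil-Kisin twist $\mf S\{-1\}$. Using multiplicativity of twists, $\mf S\{-1\}^{\otimes d}\cong \mf S\{-d\}$, the cohomological degree $2d$ component of $H^*_\Prism(BT/\mf S)$ is $\Sym^d_{\mf S}\big(X^*(T)\otimes_{\mbb Z}\mf S\big)\otimes_{\mf S}\mf S\{-d\}$, with $W$ acting on the first tensor factor only. Since $\mf S\{-d\}$ is an invertible $\mf S$-module, taking $W$-invariants commutes with tensoring by it, so
$$
H^{2d}_\Prism(BG/\mf S)\ \cong\ \big(\Sym^d_{\mf S}(X^*(T)\otimes_{\mbb Z}\mf S)\big)^W\otimes_{\mf S}\mf S\{-d\}.
$$
Identifying $\Sym_{\mf S}(X^*(T)\otimes_{\mbb Z}\mf S)\cong \mc O(\mf t_{\mf S})$ $W$-equivariantly, Demazure's theorem (\cite{Demazure_WeylInvariants}), valid under the hypothesis on $p$ from \Cref{prop:char classes as W-invariants}, says $\mc O(\mf t_{\mf S})^W=\mf S[f_1,\dots,f_n]$ is a genuine polynomial ring on homogeneous generators $f_i$ of degree equal to the fundamental degree $e_i$, with each graded piece finite free over $\mf S$.

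Next I would build the comparison map. Each $f_i\in\big(\Sym^{e_i}_{\mf S}(X^*(T)\otimes_{\mbb Z}\mf S)\big)^W$ is an $\mf S$-linear map $\mf S\to\big(\Sym^{e_i}_{\mf S}(X^*(T)\otimes_{\mbb Z}\mf S)\big)^W$; tensoring with $\mf S\{-e_i\}$, shifting, and composing with the isomorphism above yields a morphism of Breuil-Kisin modules $\mf S\{-e_i\}[-2e_i]\to H^*_\Prism(BG/\mf S)$. By the universal property of the free symmetric algebra in the symmetric monoidal category $\Mod_{\mf S}^\phi$ these assemble into a morphism of graded rings in Breuil-Kisin modules
$$
\Sym_{\mf S}\big(\mf S\{-e_1\}[-2e_1]\oplus\cdots\oplus\mf S\{-e_n\}[-2e_n]\big)\ \longrightarrow\ H^*_\Prism(BG/\mf S).
$$
On underlying graded $\mf S$-algebras this is the map $\mf S[y_1,\dots,y_n]\to\mc O(\mf t_{\mf S})^W$, $y_i\mapsto f_i$ (after trivializing each invertible twist module), an isomorphism in every degree by Demazure's theorem; since it is already a morphism in $\Mod_{\mf S}^\phi$, its inverse also commutes with $\phi$, so it is an isomorphism of Breuil-Kisin modules. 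This gives the first assertion. For the $\Ainf$-statement, base change along the prism map $(\mf S,E(u))\to(\Ainf,\xi)$: $BG$ is smooth and Hodge-proper (\Cref{BP_is_SCP}, \Cref{coh_proper_stronger}), so $\RG_\Prism(BG/\mf S)\in\Coh^+(\mf S)$ by \Cref{cor: prismatic cohomology are bounded below coherent}; hence by \Cref{lem:the tensor product remains complete} the product $\RG_\Prism(BG/\mf S)\otimes_{\mf S}\Ainf$ is already $(p,\xi)$-complete, and by \Cref{prismatic_basechange} it is $\phi$-equivariantly and multiplicatively $\RG_\Prism(BG_{\mc O_{\mbb C_p}}/\Ainf)$ (as in \Cref{rem: prismatic cohomology of BT over O_C}). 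Applying $-\otimes_{\mf S}\Ainf$ to the formula just proved, using that $\Sym$ commutes with base change and that the Breuil-Kisin twists base change to the Breuil-Kisin-Fargues twists $\Ainf\{-e_i\}$ (\Cref{ex: twists}), yields the displayed isomorphism.

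There is essentially no new hard content here: the two substantial inputs — Totaro's Hodge-cohomology computation at non-torsion primes and Demazure's polynomiality of $\mc O(\mf t)^W$ — are already consumed in \Cref{prop:char classes as W-invariants}. The only point requiring care, and thus the "main obstacle", is the bookkeeping with twists: checking that the $W$-action is trivial on $\mf S\{-1\}$ so that invariants commute with the twist, and ensuring the comparison map is set up genuinely in $\Mod_{\mf S}^\phi$ so that an isomorphism on underlying modules upgrades to an isomorphism of Breuil-Kisin (resp. Breuil-Kisin-Fargues) modules; everything else is formal.
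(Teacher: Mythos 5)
Your proof is correct and follows essentially the same route as the paper's: reduce to $W$-invariants of $H^*_\Prism(BT/\mf S)$ via \Cref{prop:char classes as W-invariants}, use the description of $H^*_\Prism(BT/\mf S)$ from \Cref{prop: prismatic cohomology of BT} together with Demazure's theorem, and obtain the $\Ainf$ statement by base change. The extra bookkeeping you add — checking that the $W$-action is trivial on the twist $\mf S\{-1\}$ so invariants commute with twisting, and packaging the comparison via the universal property of $\Sym$ in $\Mod^\phi_{\mf S}$ — is a reasonable unpacking of the paper's terse citation of those same two results.
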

\begin{proof}
	The first part follows from \Cref{prop:char classes as W-invariants} and the description of $H^*_\Prism(BT/\mf S)$ in \Cref{prop: prismatic cohomology of BT} (see also \Cref{rem: prismatic cohomology of BT over O_C}). The second part follows from the first by base change.
\end{proof}
%\begin{rem}
%	In a future work, for the case $G=\mr{GL}_n$, the authors are going to study certain nice generators of these polynomial algebras coming from the "universal" $p$-adic Chern classes that live in the cohomology of $p$-adic Tate twists (in the case of perfect $A$ defined in \cite[Section 7.4]{BMS2}). Given a smooth proper scheme $X$ with a vector bundle $\mc E$ over $\mc O_K$ they allow to compare the Chern numbers of the underlying topological bundle on $X(\mbb C)$ with the de Rham (and crystalline) ones of $\mc E$.
%\end{rem}

%\begin{rem}
%	In the same future work the prismatic Stiefel-Whitney classes (in the mod 2 prismatic cohomology of $B\mr{SO}_n$) will be defined, and the somewhat weird product formulas in \cite[Proposition 12.2]{Totaro_deRhamBG} will be explained naturally. Also, the computation of the Steenrod algebra action will be made and used in another upcoming work of the first author with Anlong Chua to compute the de Rham cohomology of $B\mr{Spin}_n$ over $\mbb F_2$ for any $n$ (along the lines of \todo{ref or not ref?}).
%\end{rem}

\subsection{$\Ainf$-valued characteristic classes}\label{sec:characteristic classes}
As another application in this section we begin to set up the theory of $\Ainf$-valued characteristic classes, which should (roughly speaking) interpolate between the \'etale, de Rham, and crystalline Chern classes. In fact the characteristic classes that we are going to define will live rather in the $\Ainf$-cohomology (that in the case of schemes was constructed in \cite{BMS1}) $\RG_{\Prism^{(1)}}(\mstack X/\Ainf)$ than the prismatic cohomology itself (with extra BK/Tate-twists needed if we want to take the Frobenius/Galois-action into account). This is more natural if we want to lift the characteristic classes to the corresponding terms in the Nygaard filtration (which we are planning to do in a sequel) since this is where the Nygaard filtration is actually defined.

\begin{construction}
Recall that by \Cref{thm:main_result} the natural map 
$$
\Upsilon_{B\GL_r}\colon H^{*}_\et(B\GL_{r, \mathbb C_p}, \mathbb Z_p)\xymatrix{\ar[r]^\sim&} H^{*}_\et(\widehat{B\GL}_{{r, \mathbb C_p}}, \mathbb Z_p)
$$
is an equivalence. The algebra $H^{*}_\et(B\GL_{r, \mathbb C_p}, \mathbb Z_p)\simeq  H^{*}_\sing(B\GL_{r}(\mathbb C), \mathbb Z_p)$ is a polynomial algebra in \textit{topological} Chern classes $c_i$ and is concentrated in even degrees. Any prime $p$ is non-torsion for $\GL_r$ and thus by \Cref{cor:prismatic cohomology of BG as a ring} the prismatic cohomology ring $H^{*}_{\Prism^{(1)}}(B\GL_r/\Ainf)$ is also concentrated in even degrees. In particular, since $H^{2i-1}_{\Prism^{(1)}}(B\GL_r/\Ainf)\simeq 0$ via \'etale comparison (\Cref{cor: Z_p etale cohomology of rigid fiber}) $H^{2i}_\et(\widehat{B\GL}_{{r, \mathbb C_p}}, \mathbb Z_p)$ maps isomorphically to $\phi_{{\Prism^{(1)}}}$-invariants in $H^{2i}_{\Prism^{(1)}}(B\GL_r/\Ainf)\otimes_\Ainf W(\mbb C_p^\flat)$. Recall that by \Cref{ex: twists} we have a $G_K$-equivariant map $\mbb Z_p(i)\ra \Ainf \{i\}$. Twisting the \'etale comparison by this map then gives a well defined $G_K$-equivariant map 
\begin{equation}\label{kappa}
\kappa_{B\GL_r, i}\colon H^{2i}_\et(B\GL_{r, \mathbb C_p}, \mathbb Z_p)(i) \tto H^{2i}_{\Prism^{(1)}}(B\GL_r/\Ainf)\{i\}\otimes_{\Ainf} W(\mathbb C_p^\flat)
\end{equation}
and $H^{2i}_\et(B\GL_{r, \mathbb C_p}, \mathbb Z_p)(i)$ is still identified with the $\phi_{\Prism^{(1)}}$-invariants on the right. Indeed, due to the formula in \Cref{ex: twists} $\Ainf\{i\}\otimes_\Ainf W(\mbb C_p^\flat)$ is isomorphic to $W(\mbb C_p^\flat)(i)$ as a $(\phi,G_K)$-module; this isomorphism is induced by multiplication by $\mu^i$ which becomes a unit in $W(\mbb C_p^\flat)$. It is easy to see that the composition of $\kappa_{X,i}$ with this isomorphism gives us back the \'etale comparison (composed with $\Upsilon_{B\GL_r}$), where we return to the discussion above.

 Finally note that by \Cref{cor:prismatic cohomology of BG as a ring} each individual cohomology $H^{2i}_{\Prism^{(1)}}(B\GL_r/\Ainf)$ is isomorphic to a direct sum of the $(-i)$-th Breuil-Kisin twists $\Ainf\{-i\}$ and that the twists on the right cancel each other. Since $\mbb Z_p\simeq \Ainf^{\phi=1}\simeq W(\mbb C_p^\flat)^{\phi=1}$ and trivially $\mbb Z_p$ generates $\Ainf$ over $\Ainf$, the $\phi_{\Prism^{(1)}}$-invariants of $H^{2i}_{\Prism^{(1)}}(B\GL_r/\Ainf)\{i\}\otimes_{\Ainf} W(\mathbb C_p^\flat)$ in fact lie in $H^{2i}_{\Prism^{(1)}}(B\GL_r/\Ainf)\{i\} \subset H^{2i}_{\Prism^{(1)}}(B\GL_r/\Ainf)\{i\}\otimes_{\Ainf} W(\mathbb C_p^\flat)$ and generate it as $\Ainf$-module. This provides a well-defined $G_K$-equivariant map 
$$\kappa_{B\GL_r,i}\colon H^{2i}_\et(B\GL_{r, \mathbb C_p}, \mathbb Z_p)(i) \tto H^{2i}_{\Prism^{(1)}}(B\GL_r/\Ainf)\{i\}$$ which extends to an isomorphism $H^{2i}_\et(B\GL_{r, \mathbb C_p}, \mathbb Z_p)(i)\otimes_{\mbb Z_p}\Ainf \xra{\sim}  H^{2i}_{\Prism^{(1)}}(B\GL_r/\Ainf)\{i\}$.

\end{construction}

\begin{rem}\label{rem: chern classes etale vs sing}
	Fix an isomorphism $\iota\colon \mbb C_p\xra{
	\sim} \mbb C$ and consider the induced identification  $H^{*}_\et(B\GL_{r, \mathbb C_p}, \mathbb Z_p)\simeq H^{*}_\et(B\GL_{r, \mathbb C}, \mathbb Z_p)\simeq  H^{*}_\sing(B\GL_{r}(\mathbb C), \mathbb Z_p)$. Any two isomorphisms $\iota_1,\iota_2$ differ by an automorphism of $\mbb C$; however we have a canonical identification $H^{*}_\et(B\GL_{r,{\mathbb C}}, \mathbb Z_p)\simeq  H^{*}_\et(B\GL_{r, \ol{\mathbb Q}}, \mathbb Z_p) $ and thus the action of $\Aut(\mbb C)$ on $H^{*}_\et(B\GL_{r,{\mathbb C}}, \mathbb Z_p)$ factors through the absolute Galois group $G_{\mbb Q}$ of $\mbb Q$. The cohomology group $H^{2i}_\et(B\GL_{r, \ol{\mathbb Q}}, \mathbb Z_p)$ is given by a direct sum of $(-i)$-th Tate twists; thus if we tensor it up with $\mbb Z_p(i)$ the $G_{\mbb Q}$-action becomes trivial and we get a canonical identification 
$$
H^{2i}_\et(B\GL_{r, \mathbb C_p}, \mathbb Z_p)(i)\simeq H^{2i}_\et(B\GL_{r, \ol{\mathbb Q}}, \mathbb Z_p)(i) \simeq H^{2i}_\sing(B\GL_{r}(\mathbb C), \mathbb Z_p),
$$
which in particular does not depend on $\iota$.
\end{rem}

Using this we construct some canonical (polynomial) generators for the ring
$$
\mr{CC}_\Ainf\coloneqq \bigoplus_{i=0}^\infty H^{2i}_{\Prism^{(1)}}(B\GL_r/\Ainf)\{i\}\xymatrix{\ar[r]^\sim &} \Sym_{\Ainf}(\Ainf[-2]\oplus \Ainf[-4]\cdots \oplus \Ainf[-2n]). 
$$of $\Ainf$-characteristic classes.
\begin{construction}\label{constr: Chern classes}\begin{enumerate}\item We define 
		$$
		c_i^\et \in H^{2i}_\et(B\GL_{r, \mathbb C_p}, \mathbb Z_p)(i)
		$$ as the image of the $i$-th Chern class $c_i\in H^{2i}_\sing(B\GL_{r}(\mathbb C), \mathbb Z_p)$ under the identification in the remark above. For a vector bundle $E$ on a ${\mathbb C_p}$-scheme (or, more generally, a stack) $X$ classified by a map $f\colon X \to B\GL_r$ we define the \emdef{\'etale Chern classes} $c_i^{\et}(E) \in H^{2i}_\et(X, \mbb Z_p)(i)$ as the pullback of $c_i^{\Ainf}$ along $f$.
\item We then define
$$c_i^\Ainf \coloneqq \kappa_{B\GL_r,i}(c_i^\et) \in H^{2i}_{\Prism^{(1)}}(B\GL_r/\Ainf)\{i\}.$$ Similarly, for a vector bundle $E$ on an $\mathcal O_{\mathbb C_p}$-stack $\mstack X$ classified by a map $f\colon \mstack X \to B\GL_r$ we define the \emdef{$\Ainf$-valued Chern classes} $c_i^{\Ainf}(E) \in H^{2i}_{\Prism^{(1)}}(\mstack X/\Ainf)\{i\}$ as the pullback of $c_i^{\Ainf}$ along $f$. 

\end{enumerate}
\end{construction}
%\begin{lem}
%In notations above $c_i^{\Ainf}$ lands in $H^{2i}_\Prism(B\GL_r/\Ainf) \subset H^{2i}_\Prism(B\GL_r/\Ainf)[1/\mu]$.
%
%\begin{proof}
%Let $U_r$ be the universal vector bundle over $B\GL_r$. By \Cref{cor:prismatic cohomology of BG as a ring} the restriction map $H^*_\Prism(B\GL_r /\Ainf) \to H^*_\Prism(B\mathbb G_m^r /\Ainf)$ is injective, hence it is enough to prove that $c_i^\Ainf$ of the restriction of $U_r$ to $B\mathbb G_m^r$ lies in $H^{2i}_\Prism(B\mathbb G_m^r/\Ainf) \subset H^{2i}_\Prism(B\mathbb G_m^r/\Ainf)[1/\mu]$. Since \'etale Chern classes are natural and multiplicative so are the $\Ainf$-classes. It follows, since $U_r$ splits over $B\mathbb G_m^r$ into a sum of line bundles, that it is enough to prove the assertion in the special case $r=1$. In this case as we seen before (see \Cref{lem:prismatic_coh_of_BGm_vs_Pn}) the restriction along the natural $\mathbb P^1 \to B\mathbb G_m$ induces an isomorphism
%$$\xymatrix{H^2_\Prism(B\mathbb G_m/\Ainf) \ar[r]^-\sim & H^2_\Prism(\mathbb P^1/\Ainf) \simeq \Ainf\{-1\}.}$$
%\end{proof}
%\end{lem}

Since $c_i$ are the generators for $H^{2i}_\sing(B\GL_{r}(\mathbb C), \mathbb Z_p)$ we have an isomorphism 
$$
\mr{CC}_\Ainf\simeq \Ainf[c_1^\Ainf,\ldots,c_n^\Ainf].
$$

\begin{rem}\label{rem: etale Chern compare with topological} Our definition of \'etale Chern classes $c_i^\et(E)$ agrees with a standard one (see \cite[Expose VII]{SGA5}). Indeed by \cite[Proposition 3.8.5]{SGA5} the $i$-th etale Chern class of a bundle $E$ over $X_{\mbb C}$ in the sense of \textit{loc.cit.} lies in $H^{2i}_\et(X_{\mbb C},\mu_{p^k}^{\otimes i})$ and is given by the image of $c_i\in H^{2i}_\sing(X(\mbb C),\mbb Z/p^k \mbb Z)$ under the \'etale comparison. One then returns to our setting of $\mbb Z_p$-coefficients by passing to the limit over $k$.
\end{rem}	
%\begin{rem}
%	Rationally, the classes $c_i^\et$ can also be defined as the images of the motivic Chern classes in the Chow ring of $B\GL_{r,\mbb C_p}$ as defined by Totaro (\todo{cite}) via the \'etale realization map.
%\end{rem}

\begin{notation}\label{not: map kappa}
	Let $\mstack X$ be a stack over $\mc O_{\mbb C_p}$. We denote by $\kappa_{\mstack X,i}$ the natural composition
	$$
	\xymatrix{\kappa_{\mstack X,i}\colon H^{2i}_\et(\mstack X_{\mbb C_p},\mbb Z_p)(i)\ar[r]^-{\Upsilon_{\mstack X}}& H^{2i}_\et(\widehat{\mstack X}_{\mbb C_p},\mbb Z_p)(i)\ar[r]& H^{2i}_{\Prism^{(1)}}(\widehat{\mstack X}_{\mc O_{\mbb C_p}},\Ainf))\{i\} \otimes_\Ainf W(\mbb C_p^\flat), 
	}
	$$
	where the second map is obtained from the \'etale comparison by tensoring with $\mbb Z_p(i)\ra \Ainf\{i\}$ (discarding the $(\phi,G_K)$-structure this is given by division of the usual \'etale comparison map by $\mu^i$).
	If $\mstack X$ is Hodge-proper then by \Cref{lem: can descend to A_inf[1/mu]} this descends to a map 
	$$
	\kappa_{\mstack X,i}\colon H^{2i}_\et(\mstack X_{\mbb C_p},\mbb Z_p)(i) \tto H^{2i}_{\Prism^{(1)}}(\widehat{\mstack X}_{\mc O_{\mbb C_p}}/\Ainf))\{i\}[\tfrac{1}{\mu}]
	$$
	which we continue to denote the same way.
\end{notation}

\begin{rem}\label{warn:mu_torsion_free} Let $X$ be a smooth proper $\mathcal O_{\mathbb C_p}$-scheme (or more generally, a Hodge-proper stack) and let $E$ be a vector bundle on $X$. One can be tempted to define $c_i^\Ainf(E)\in H_{\Prism^{(1)}}^{2i}(X/\Ainf)\{i\}$ using the \'etale comparison for $X$ as we did for $B\GL_r$. However, the problem is that the \'etale comparison only gives a map to the $\mu$-localization, but not $H^*_\Prism(X/\Ainf)$ itself. Passing to the localization makes us lose some information about $c_i^\Ainf(E)$. Indeed, let $f\colon X \to B\GL_r$ be the map defining $E$, then the \'etale comparison (together with \Cref{lem: can descend to A_inf[1/mu]}) induces a commutative diagram 
$$\xymatrix{
H_{\Prism^{(1)}}^{2i}(B\GL_r/\Ainf)\{i\} \ar[r]\ar[d]^{f^*} & H_{\Prism^{(1)}}^{2i}(B\GL_r/\Ainf)\{i\}[\tfrac{1}{\mu}] \ar[d]^{f^*[\tfrac{1}{\mu}]} & \ar[l]_-{\kappa_{B\GL_r,i}}^-\sim H^{2i}_\et(B\GL_{r,\mathbb C_p}, \mathbb Z_p)(i)\otimes_{\mathbb Z_p} \Ainf[\tfrac{1}{\mu}] \ar[d]^{f_{\et}^*}\\
H_{\Prism^{(1)}}^{2i}(X/\Ainf)\{i\} \ar[r] & H_{\Prism^{(1)}}^{2i}(X/\Ainf)\{i\}[\tfrac{1}{\mu}] & \ar[l]_(.55){\kappa_{X,i}}^(.55)\sim H^{2i}_\et(X_{\mathbb C_p}, \mathbb Z_p)(i)\otimes_{\mathbb Z_p} \Ainf[\tfrac{1}{\mu}].
}$$
It follows that the image of $c_i^\et(E)$ in $H_{\Prism^{(1)}}^{2i}(X/\Ainf)\{i\}[\frac{1}{\mu}]$ under $\kappa_{X,i}$ agrees with the image of $c_i^\Ainf(E)$. But if $H_{\Prism^{(1)}}^{2i}(X/\Ainf)$ has $\mu$-torsion, $H_{\Prism^{(1)}}^{2i}(X/\Ainf)\{i\}$ does not embed into the localization $H_{\Prism^{(1)}}^{2i}(X/\Ainf)\{i\}[\frac{1}{\mu}]]$, thus $\kappa_{X,i}(c_i^\et(E))$ does not define any element of $H_{\Prism^{(1)}}^{2i}(X/\Ainf)\{i\}$. In particular, if $c_i^\Ainf(E)\in H_{\Prism^{(1)}}^{2i}(X/\Ainf)\{i\}_{\mu\mr{-tors}}$ then  $\kappa_{X,i}(c_i^\et(E))$ is necessarily 0 even though $c_i^\Ainf(E)$ might not be.
\end{rem}

We would now like to show that the projective (or rather relative flag) bundle formula for the Chern classes still holds in our context. However to formulate it properly we need to take the Breuil-Kisin twists into account. The following is a more or less standard way to do that:
\begin{notation}\label{not: twisted Ainf cohomology}
	For an Artin stack $\mstack X$ over $\mc O_{\mbb C_p}$ we denote
	$$
	H^*_{\Prism,\mr{tw}}(\mstack X/\Ainf)\coloneqq \bigoplus_{i=0}^\infty H^i_{\Prism^{(1)}}(\mstack X/\Ainf)\{\floor{i/2}\}.
	$$
	Note that passage from $H^*_{\Prism}(\mstack X/\Ainf)$ to $H^*_{\Prism,\mr{tw}}(\mstack X/\Ainf)$ only changes the $(\phi,G_K)$-module structures. Note in particular that $H^*_{\Prism,\mr{tw}}(\mstack X/\Ainf)$ \textit{no more} has a natural algebra structure in Breuil-Kisin-Fargues modules, but its even part $H^{\mr{even}}_{\Prism,\mr{tw}}(\mstack X/\Ainf)\subset H^*_{\Prism,\mr{tw}}(\mstack X/\Ainf)$ still does. The whole $H^*_{\Prism,\mr{tw}}(\mstack X/\Ainf)$ has a natural module structure over $H^{\mr{even}}_{\Prism,\mr{tw}}(\mstack X/\Ainf)$. In the case $\mstack X=B\GL_r$ we have $H^*_{\Prism,\mr{tw}}(B\GL_r/\Ainf)\simeq H^{\mr{even}}_{\Prism,\mr{tw}}(B\GL_r/\Ainf)\simeq \mr{CC}_\Ainf$;
	in particular $H^*_{\Prism,\mr{tw}}(B\GL_r/\Ainf)$ is an algebra. 
\end{notation}	

\begin{defn}\label{rem:twisted Ainf-cohomology}
	For a vector bundle $E$ on an Artin stack $\mstack X$ the total $\Ainf$-Chern class $c^\Ainf(E)\in 
	H^{\mr{even}}_{\Prism,\mr{tw}}(\mstack X/\Ainf)
	$ is defined as $$c^\Ainf(E)\coloneqq \sum_{i=0}^{\rk(E)} c_i^\Ainf(E),$$
	where $c_i^\Ainf(E)\in H^{2i}_{\Prism,\mr{tw}}(\mstack X/\Ainf)$ is the pull-back of $c_i^\Ainf\in H^{2i}_{\Prism,\mr{tw}}(B\GL_{\rk(E)}/\Ainf)$ under the map classifying $E$.
\end{defn}

%\begin{rem}
%	Quite remarkably there is still a way to endow $H^*_{\Prism,\mr{tw}}(\mstack X/\Ainf)$ with a natural algebra structure in Breuil-Kisin-Fargues modules if we change slightly the underlying $\Ainf$-algebra and the $G_K$-action. Namely we can twist the multiplication further by the $G_K$-equivariant map $\Ainf(1)\ra \Ainf\{1\}$ (see \cite[Example 4.24]{BMS1}) to define the multiplication on the odd component; indeed, consider 
%	$$
%	H^*_{\Prism,\mr{tw}'}(\mstack X/\Ainf)\coloneqq H^{\mr{even}}_{\Prism,\mr{tw}}(\mstack X/\Ainf)\oplus H^{\mr{odd}}_{\Prism,\mr{tw}}(\mstack X/\Ainf)(1)
%	$$
%	where $H^{2i+1}_{\Prism,\mr{tw}}(\mstack X/\Ainf)(1)$ and $H^{2j+1}_{\Prism,\mr{tw}}(\mstack X/\Ainf)(1)$ multiply to $H^{2(i+j+1)}_{\Prism,\mr{tw}}(\mstack X/\Ainf)(1)$ using the ususal product on the prismatic cohomology $H^*_{\Prism,\mr{tw}}(\mstack X/\Ainf)$ and the natural map $\Ainf\{i+j\}(1)\ra \Ainf\{i+j+1\}$ induced by the map above. However (after trivializing the twists) on the level of the underlying $\Ainf$-algebras this new multiplication on the odd degree component would differ from the usual one by an additional multiplication of the result by $\mu$.
%\end{rem}

\begin{rem}\label{rem: Chern classes as symmetric polynomials}
	Let $B_r\subset \GL_r$ be a Borel and let $T_r\subset B_r$ be a maximal torus. Let's fix a trivialization $T_r\simeq \mbb G_m^r$ and let $\mbb G_{m,i}$ be the $i$-th component. From \Cref{prisms_BT_BB} it follows that $\RG_\Prism(BB_r/\Ainf)\simeq \RG_{\Prism}(BT_r/\Ainf)$ and by the K\"unneth formula \Cref{Kunneth_for_Prism_stacks} we get $H^*_{\Prism,\mr{tw}}(BT_r/\Ainf)\simeq \otimes_{i=0}^r H^*_{\Prism,\mr{tw}}(B\mbb G_{m,i}/\Ainf)$. Putting $t_i\coloneqq (c_1^\Ainf)_i\in H^2_{\Prism,\mr{tw}}(B\mbb G_{m,i}/\Ainf)$ we get an isomorphism $H^*_{\Prism,\mr{tw}}(BT_r/\Ainf)\simeq \Ainf[t_1,\ldots, t_r]$. Also note that by \Cref{prop:char classes as W-invariants} we get an identification $H^*_{\Prism,\mr{tw}}(B\GL_r/\Ainf) \xra{\sim} H^*_{\Prism,\mr{tw}}(BT_r/\Ainf)^{\Sigma_r}$ where the symmetric group $\Sigma_r$ acts by permuting the components $\{\mbb G_{m,i}\}_i$. In fact we have a commutative diagram
$$
\xymatrix{H^*_{\Prism,\mr{tw}}(B\GL_r/\Ainf)\ar[r]^\sim& H^*_{\Prism,\mr{tw}}(BT_r/\Ainf)^{\Sigma_r} & \ar[l]_(.53)\sim \big(\Ainf[(c_1^\Ainf)_1,\ldots, (c_1^\Ainf)_r]\big)^{\Sigma_r}\\
H^*_{\sing}(B\GL_r(\mbb C),\mbb Z_p)\ar[r]^\sim \ar[u]& H^*_{\sing}(BT_r(\mbb C),\mbb Z_p)^{\Sigma_r} \ar[u] & \ar[l]_(.48)\sim\big(\mbb Z_p[(c_1)_1,\ldots, (c_1)_r]\big)^{\Sigma_r}\ar[u]} 
$$
and since by the standard computation in topology $c_i\in H^{2i}_{\sing}(B\GL_r(\mbb C),\mbb Z_p)$ is equal to $\sigma_i((c_1)_1,\ldots, (c_1)_r)\in \mbb Z_p[(c_1)_1,\ldots, (c_1)_r]^{\Sigma_r}$ where $\sigma_i(x_1,\ldots,x_r) \in \mathbb Z[x_1, \ldots, x_r]^{\Sigma_r}$ is the $i$-th elementary symmetric polynomial, the same formula holds true in the $\Ainf$-setting, namely
$$
c_i^\Ainf=\sigma_i(t_1,\ldots, t_n)\in \Ainf[t_1,\ldots, t_r]^{\Sigma_r}\simeq H^*_{\Prism,\mr{tw}}(B\GL_r/\Ainf).
$$
\end{rem}
\begin{rem}\label{rem: partial flags}
	Let $\ul r=(r_1,r_2, \ldots, r_k), \  r_1\ge r_2 \ge \ldots r_k>0, \  r_1+\ldots +r_k=r$ be a partition of $r$ and let $P_{\ul r}\subset \GL_r$ be the corresponding parabolic (given up to isomorphism by the block-upper-triangular matrices with blocks of the size $r_1,r_2, \ldots, r_k$). In particular $P_r=\GL_r$ and $P_{(1,\ldots,1)}$ is a Borel. By \Cref{rem: prisms_BP_BL over O_C} the map $B\GL_{\ul r} \ra BP_{\ul r}$ (given by the natural embedding $\GL_{\ul r}\coloneqq \GL_{r_1}\times\ldots \times \GL_{r_k} \ra P_{\ul r}$) induces on equivalence on $\RG_\Prism(-/\Ainf)$. Then taking pull-back via the composition $ BT_r\ra B\GL_{\ul r} \ra BP_{\ul r}$ we can make an identification 
	$$
	H^*_{\Prism,\mr{tw}}(BP_{\ul r}/\Ainf) \xymatrix{\ar[r]^\sim&} \Ainf[t_1,\cdots,t_r]^{\Sigma_{r_1}\times\ldots\times \Sigma_{r_k}}; 
	$$
    moreover the pull-back map $q^*\colon H^*_{\Prism,\mr{tw}}(B\GL_r/\Ainf) \ra H^*_{\Prism,\mr{tw}}(BP_{\ul r}/\Ainf)$ for $q\colon  BP_{\ul r}\ra B\GL_r$ in this terms is identified with the natural embedding
    $$
    \Ainf[t_1,\cdots,t_r]^{\Sigma_{r}}\xymatrix{\ar[r]&} \Ainf[t_1,\cdots,t_r]^{\Sigma_{r_1}\times\ldots\times \Sigma_{r_k}}.
    $$
\end{rem}
\begin{prop}[Relative flag bundle formula]\label{splitting_principle}
Let $E$ be a rank $r$-vector bundle on a smooth $\mathcal O_{\mbb C_p}$-stack $\mstack X$ and let $p\colon \mathrm{Fl}_E\ra \mstack X$ be the full flag variety of $E$ relative to $\mstack X$. Note that the pullback of $E$ to $\Fl_E$ admits the tautological filtration (=flag) with associated graded $\bigoplus_{i=1}^r \mathcal L_i$, where $\mathcal L_1, \ldots, \mathcal L_r$ are some line bundles. Then
we have an isomorphism
$$H^*_{\Prism,\mr{tw}}(\Fl_E/\Ainf) \simeq H^*_{\Prism,\mr{tw}}(\mstack X/\Ainf)[\xi_1, \ldots, \xi_k]/\big(\sigma_i(\xi_1, \ldots, \xi_k) = c_i^\Ainf(E)\big)$$
as modules over $H^{\mr{even}}_{\Prism,\mr{tw}}(\mstack X/\Ainf)$ in $G_K$-equivariant Breuil-Kisin-Fargues modules, which sends $\xi_i$ to $c_1^\Ainf(\mathcal L_i)$, and $\sigma_i \in \mathbb Z[x_1, x_2, \ldots, x_k]$ is the $i$-th elementary symmetric polynomial.

\begin{proof}
There is a natural equivalence $\Fl_E \simeq \mstack X\times_{B\GL_r} BB_r$, where $B_r$ is the standard Borel subgroup of $\GL_r$. It follows that there is a natural map of $E_\infty$-algebras
\begin{align}\label{eq:cohomology_of_flags}
R\Gamma_\Prism(\mstack X/\Ainf) \otimes_{R\Gamma_\Prism(B\GL_r/\Ainf)} R\Gamma_\Prism(BB_r/\Ainf) \xymatrix{\ar[r] &} R\Gamma_\Prism(\Fl_E/\Ainf).
\end{align}
We claim that \eqref{eq:cohomology_of_flags} is an equivalence. Since the homomorphism $$\Ainf[t_1, \ldots, t_k]^{\Sigma_k} \simeq H^*_\Prism(B\GL_k/\Ainf) \xymatrix{\ar@{^(->}[r]&} H^*_\Prism(BB_k/\Ainf) \simeq \Ainf[t_1, \ldots , t_k]$$
is finite free, $R\Gamma_\Prism(BB_r/\Ainf)$ is a finite free $R\Gamma_\Prism(B\GL_r/\Ainf)$-module, so the functor
$$M \xymatrix{\ar@{|->}[r] &} M \otimes_{R\Gamma_\Prism(B\GL_r/\Ainf)} R\Gamma_\Prism(BB_r/\Ainf)$$
preserves limits, and hence the left hand side of \eqref{eq:cohomology_of_flags} satisfies smooth descent with respect to $\mstack X$. So does the right hand side.
Thus to show that the map \eqref{eq:cohomology_of_flags} is an equivalence we can assume that $E$ is a trivial vector bundle, in which case $\Fl_E \simeq \mstack X\times \big(\GL_r/B_r\big)$ and the result follows by the K\"unneth formula (see \Cref{Kunneth_for_Prism_stacks}).

Next, since $H^*_\Prism(BB_r/\Ainf)$ is flat over $H^*_\Prism(B\GL_r/\Ainf)$, there is a natural isomorphism
$$H^*\left(R\Gamma_\Prism(\mstack X/\Ainf) \otimes_{R\Gamma_\Prism(B\GL_r/\Ainf)} R\Gamma_\Prism(BB_r/\Ainf)\right) \simeq H^*_\Prism(\mstack X/\Ainf) \otimes_{H^*_\Prism(B\GL_r/\Ainf)} H^*_\Prism(BB_r/\Ainf)$$ 
which then also gives an isomorphism between the twisted versions:
$$
H^*_{\Prism,\mr{tw}}(\Fl_E/\Ainf)\simeq H^*_{\Prism,\mr{tw}}(\mstack X/\Ainf) \otimes_{H^*_{\Prism,\mr{tw}}(B\GL_r/\Ainf)} H^*_{\Prism,\mr{tw}}(BB_r/\Ainf)
$$
(here we used the algebra structure on $H^*_{\Prism,\mr{tw}}(B\GL_r/\Ainf)$ (see \Cref{not: twisted Ainf cohomology}) and the fact that  $H^*_{\Prism,\mr{tw}}(BB_r/\Ainf)$ is concentrated in even degrees).
Finally, since the pull-back of $c_i^\Ainf\in H^{2i}_{\Prism,\mr{tw}}(B\GL_r/\Ainf)$ in $H^{2i}_{\Prism,\mr{tw}}(BB_r/\Ainf)\simeq H^{2i}_{\Prism,\mr{tw}}(BT_r/\Ainf)$ is expressed as elementary symmetric polynomial in first $\Ainf$-Chern classes, the ring on the right hand side is exactly given by $H^*_{\Prism,\mr{tw}}(X/\Ainf)[\xi_1, \ldots, \xi_r]/\big(\sigma_i(\xi_1, \ldots, \xi_k) = c_i^\Ainf(E)\big)$.
\end{proof}
\end{prop}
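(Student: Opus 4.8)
The plan is to realise the relative full flag variety as the fiber product $\Fl_E\simeq \mstack X\times_{B\GL_r}BB_r$, where $B_r\subset \GL_r$ is the standard Borel, and then bootstrap the computation from the already-understood prismatic cohomology of $B\GL_r$, $BB_r$ and $BT_r$ via smooth descent and the K\"unneth formula. Indeed, giving a full flag in $E$ is the same datum as a reduction of the structure group of $E$ from $\GL_r$ to $B_r$, which yields the above identification; under it the tautological line bundles $\mc L_i$ (the associated graded of the tautological flag) are the pullbacks of the standard diagonal characters along $\Fl_E\to BB_r$, so that $c_1^\Ainf(\mc L_i)$ is the pullback of the class $t_i\in H^2_{\Prism,\mr{tw}}(BB_r/\Ainf)$ appearing in \Cref{rem: Chern classes as symmetric polynomials}.

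First I would construct the natural base-change morphism of $E_\infty$-$\Ainf$-algebras
$$\RG_\Prism(\mstack X/\Ainf)\otimes_{\RG_\Prism(B\GL_r/\Ainf)}\RG_\Prism(BB_r/\Ainf)\lra \RG_\Prism(\Fl_E/\Ainf)$$
and argue it is an equivalence. Using \Cref{prisms_BT_BB} (base-changed to $\Ainf$) together with \Cref{prop: prismatic cohomology of BT} and \Cref{prop:char classes as W-invariants}, the ring $H^*_\Prism(BB_r/\Ainf)\cong\Ainf[t_1,\ldots,t_r]$ is a finite free module over $H^*_\Prism(B\GL_r/\Ainf)\cong\Ainf[t_1,\ldots,t_r]^{\Sigma_r}$ (freeness of a polynomial ring over its ring of symmetric polynomials is insensitive to the base), so $\RG_\Prism(BB_r/\Ainf)$ is a perfect $\RG_\Prism(B\GL_r/\Ainf)$-module and tensoring over it commutes with limits. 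Hence the left-hand side satisfies smooth descent in $\mstack X$, and so does the right-hand side by \Cref{rem:on_Gamma_prism}; thus it suffices to treat $E$ trivial, where $\Fl_E\simeq \mstack X\times(\GL_r/B_r)$ and the claim reduces to the K\"unneth formula \Cref{Kunneth_for_Prism_stacks} together with the case $\mstack X=\Spec\mc O_{\mbb C_p}$, i.e.\ $\RG_\Prism((\GL_r/B_r)/\Ainf)\simeq \RG_\Prism(BB_r/\Ainf)\otimes_{\RG_\Prism(B\GL_r/\Ainf)}\Ainf$.

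Next I would pass to cohomology. Since $H^*_\Prism(BB_r/\Ainf)$ is flat over $H^*_\Prism(B\GL_r/\Ainf)$ and both are concentrated in even degrees, the relevant Tor spectral sequence collapses, giving
$$H^*_\Prism(\Fl_E/\Ainf)\cong H^*_\Prism(\mstack X/\Ainf)\otimes_{H^*_\Prism(B\GL_r/\Ainf)}H^*_\Prism(BB_r/\Ainf).$$
Passing to the twisted groups, using the ring structure on $H^*_{\Prism,\mr{tw}}(B\GL_r/\Ainf)$ and that $H^*_{\Prism,\mr{tw}}(BB_r/\Ainf)$ sits in even degrees (see \Cref{not: twisted Ainf cohomology}), this becomes $H^*_{\Prism,\mr{tw}}(\Fl_E/\Ainf)\cong H^*_{\Prism,\mr{tw}}(\mstack X/\Ainf)\otimes_{H^*_{\Prism,\mr{tw}}(B\GL_r/\Ainf)}H^*_{\Prism,\mr{tw}}(BB_r/\Ainf)$. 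Finally, by \Cref{rem: Chern classes as symmetric polynomials} the pullback of $c_i^\Ainf$ to $H^*_{\Prism,\mr{tw}}(BB_r/\Ainf)\cong\Ainf[t_1,\ldots,t_r]$ is $\sigma_i(t_1,\ldots,t_r)$, and $t_i$ pulls back to $c_1^\Ainf(\mc L_i)$ on $\Fl_E$; substituting, the right-hand side is exactly $H^*_{\Prism,\mr{tw}}(\mstack X/\Ainf)[\xi_1,\ldots,\xi_r]/(\sigma_i(\xi_1,\ldots,\xi_r)=c_i^\Ainf(E))$ with $\xi_i\mapsto c_1^\Ainf(\mc L_i)$. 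The $G_K$-equivariance and the compatibility with Frobenius are automatic, since every morphism above is a pullback of the canonical structures.

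The step I expect to be the main obstacle is the equivalence of the base-change morphism: one must verify the finite freeness of $H^*_\Prism(BB_r/\Ainf)$ over $H^*_\Prism(B\GL_r/\Ainf)$ carefully enough to conclude that $-\otimes_{\RG_\Prism(B\GL_r/\Ainf)}\RG_\Prism(BB_r/\Ainf)$ preserves limits, and then run the smooth-descent reduction together with the K\"unneth formula over the non-Noetherian ring $\Ainf$. Once this is in place, the remaining steps are essentially bookkeeping with symmetric functions and the Breuil--Kisin twist conventions.
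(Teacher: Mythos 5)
Your proposal is correct and follows essentially the same argument as the paper: the identification $\Fl_E\simeq\mstack X\times_{B\GL_r}BB_r$, the base-change map on prismatic cohomology, finite freeness of $R\Gamma_\Prism(BB_r/\Ainf)$ over $R\Gamma_\Prism(B\GL_r/\Ainf)$ to reduce by smooth descent to the trivial-bundle case handled by K\"unneth, and then passing to cohomology via flatness and rewriting in terms of elementary symmetric polynomials. You make one step slightly more explicit than the paper does — namely that the K\"unneth reduction also uses the absolute case $\mstack X=\Spec\mc O_{\mbb C_p}$, i.e.\ $\RG_\Prism((\GL_r/B_r)/\Ainf)\simeq\Ainf\otimes_{\RG_\Prism(B\GL_r/\Ainf)}\RG_\Prism(BB_r/\Ainf)$ — but this is a presentation choice rather than a different route.
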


\begin{rem}\label{rem: projective bundle formula}
	Replacing $B_r$ in the proof by a parabolic $P_{\ul r}$ one also gets similar formulas for the cohomology of the corresponding relative partial flag varieties. In particular taking $\ul r=(1,r-1)$ one gets the projective bundle formula 
	$$H^*_{\Prism,\mr{tw}}(\mbb P_{\mstack X}(E)/\Ainf) \simeq H^*_{\Prism,\mr{tw}}(\mstack X/\Ainf)[\xi]/\big(c(\xi)\big),$$
	where $c(\xi)\coloneqq \sum_{i=0}^{\rk(E)} c_{i}^\Ainf(E) \cdot \xi^{\rk(E)-i}$ and $\xi$ maps to $c_1^\Ainf(\mc O_{\mbb P_{\mstack X}(E)}(1))$.
\end{rem}
\begin{rem}\label{rem: trivialized projective bundle formula}
	If one chooses a compatible system $\epsilon\coloneqq (1,\zeta_p, \zeta_{p^2},\ldots)$ of $p^n$-th roots of unity, thus trivializing the Tate twist $\mbb Z_p(i)$ and the Breuil-Kisin twist $\Ainf\{1\}$ as an $\Ainf$-module (by giving a particular choice of an element $\mu=[\epsilon]-1 \in \Ainf$), one can define non-twisted Chern classes $c_i^\Ainf\in H^{2i}_\Prism(B\GL_r/\Ainf)$. Then one gets a similar isomorphism for the untwisted prismatic cohomology: $$H^*_{\Prism}(\Fl_E/\Ainf) \simeq H^*_{\Prism}(X/\Ainf)[\xi_1, \ldots, \xi_k]/\big(\sigma_i(\xi_1, \ldots, \xi_k) = c_i^\Ainf(E)\big)$$
\end{rem}

 We deduce the following Grothendieck-style axiomatic characterization of our $\Ainf$-valued Chern classes:
\begin{cor}\label{cor: prism_Cherns_Grothendieck}
The $\Ainf$-Chern classes $c_i^{\Ainf}$ are the unique $H^{2*}_{\Prism,\mr{tw}}(- /\Ainf)$-valued characteristic classes satisfying the following set of axioms:
\begin{enumerate}
\item (Naturality) $\Ainf$-Chern classes commute with pullbacks.

\item (Multiplicativity) For any short exact sequence of vector bundles $0 \to E_1 \to E_2 \to  E_3 \to 0$ we have
$$c^\Ainf(E_2) = c^\Ainf(E_1) \cdot c^\Ainf(E_3).$$

\item (Normalization) For the line bundle $\mathcal O_{\mathbb P^1}(1)$ we have $c_1^\Ainf(\mathcal O_{\mathbb P^1}(1))=\kappa(c_1^\et(\mathcal O_{\mathbb P^1}(1))$, where $$\kappa=\kappa_{\mbb P^1,1}\colon H^2_\et(\mathbb P^1_{\mbb C_p},\mbb Z_p)(1)\tto H^2_{\Prism,\mr{tw}}(\mathbb P^1/\Ainf)$$ is induced by the \'etale comparison map.
\end{enumerate}

\begin{proof}
The first property is automatic because of our definition of Chern classes. The last follows by considering the commutative square 
$$
\xymatrix{H^2_\et(\mathbb P^1_{\mbb C_p},\mbb Z_p)(1)\ar[rr]^-{\kappa_{\mbb P^1,1}} && H^2_{\Prism,\mr{tw}}(\mathbb P^1/\Ainf)\{1\}\\
H^2_\et(B\mbb G_{m,\mbb C_p},\mbb Z_p)(1)\ar[u]_\sim\ar[rr]^-{\kappa_{B \mbb G_m,1}} && H^2_{\Prism,\mr{tw}}(B\mathbb G_m/\Ainf)\{1\}\ar[u]^\sim
}
$$ induced by $\mc O_{\mbb P^1}(1)\colon \mbb P^1\ra B\mbb G_m$ and noting that $c_1^\Ainf=\kappa_{B \mbb G_m,1}(c_1^\et)\in H^2_\Prism(B\mathbb G_m/\Ainf)\{1\}$. The multiplicativity follows from the multiplicativity in the universal case, which we now show. Namely, a short exact sequence $0 \to E_1 \to E \to  E_2 \to 0$  with $\dim E_i=r_i$, gives a map $X\ra BP_{(r_1,r_2)}$. It is then enough to show that for the natural maps $q\colon BP_{(r_1,r_2)} \ra B\GL_{r_1+r_2}$, $q_i\colon BP_{(r_1,r_2)} \ra B\GL_{r_i}$ sending (the universal) short exact sequence $0 \to E_1 \to E \to  E_2 \to 0$ to $E$ and $E_i$ correspondingly, one has $q^*(c^\Ainf)=q_1^*(c^\Ainf)q_2^*(c^\Ainf)$. This can be seen either using the analogous equality for Chern classes in the singular cohomology or directly, using Remarks \ref{rem: Chern classes as symmetric polynomials}, \ref{rem: partial flags} and the fact that for the "total" elementary symmetric polynomial
$$S_r(x) \coloneqq \sum_{i=0}^{r} \sigma_i(t_1, t_2, \ldots, t_r)x^i = \prod_{i=0}^r (1 + t_ix)$$
we have $S_{r_1+r_2}(x) = S_{r_1}(x) \cdot S_{r_2}(x)$. 

The uniqueness then follows from the "normalization" (this defines first Chern classes of line bundles, since they are pulled back from $B\mbb G_m$ and $H^{2}_{\Prism,\mr{tw}}(B\mbb G_m/\Ainf)\xra{\sim} H^{2}_{\Prism,\mr{tw}}(\mbb P^1/\Ainf)$ via the map $\mbb P^1\ra B\mbb G_m$ classifying $\mathcal O_{\mathbb P^1}(1)$) and the flag bundle formula (which then uniquely determines Chern classes for bundles of higher rank). 
\end{proof}
\end{cor}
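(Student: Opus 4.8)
The plan is to check that $c_\bullet^\Ainf$ satisfies the three axioms and then to pin down any competing theory by a dévissage from line bundles through flag bundles. Naturality is immediate from \Cref{constr: Chern classes}, where $c_i^\Ainf(E)$ is \emph{defined} to be the pullback of a universal class along the classifying map $\mstack X\to B\GL_{\rk E}$, and pullback on cohomology is functorial. For the normalization axiom I would use the map $\mc O_{\mbb P^1}(1)\colon \mbb P^1\to B\mbb G_m$ together with the naturality of the comparison maps $\kappa_{-,1}$ of \Cref{not: map kappa}: this produces a commutative square relating $\kappa_{\mbb P^1,1}$ to $\kappa_{B\mbb G_m,1}$ whose left vertical arrow $H^2_\et(B\mbb G_{m,\mbb C_p},\mbb Z_p)(1)\to H^2_\et(\mbb P^1_{\mbb C_p},\mbb Z_p)(1)$ is an isomorphism; since $c_1^\Ainf=\kappa_{B\mbb G_m,1}(c_1^\et)$ by construction, chasing the square yields $c_1^\Ainf(\mc O_{\mbb P^1}(1))=\kappa(c_1^\et(\mc O_{\mbb P^1}(1)))$ with $\kappa=\kappa_{\mbb P^1,1}$.

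For multiplicativity the key is to reduce to the universal situation. A short exact sequence $0\to E_1\to E\to E_2\to 0$ on $\mstack X$ with $\dim E_i=r_i$ is classified by a map $\mstack X\to BP_{(r_1,r_2)}$, where $P_{(r_1,r_2)}\subset\GL_{r_1+r_2}$ is the block-upper-triangular parabolic, sitting in maps $q\colon BP_{(r_1,r_2)}\to B\GL_{r_1+r_2}$ recording the universal extension $E$ and $q_i\colon BP_{(r_1,r_2)}\to B\GL_{r_i}$ recording $E_i$. By naturality it then suffices to prove $q^*(c^\Ainf)=q_1^*(c^\Ainf)\,q_2^*(c^\Ainf)$ in $H^{\mr{even}}_{\Prism,\mr{tw}}(BP_{(r_1,r_2)}/\Ainf)$. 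I would deduce this by passing further down to $BT_{r_1+r_2}$: by \Cref{prisms_BP_BL} and the Künneth formula \Cref{Kunneth_for_Prism_stacks} (compare \Cref{rem: Chern classes as symmetric polynomials} and \Cref{rem: partial flags}) this identifies $H^*_{\Prism,\mr{tw}}(BP_{(r_1,r_2)}/\Ainf)$ with $\Ainf[t_1,\dots,t_{r_1+r_2}]^{\Sigma_{r_1}\times\Sigma_{r_2}}$, under which $q^*(c_i^\Ainf)$ is the $i$-th elementary symmetric polynomial in all the $t$'s and $q_j^*(c_i^\Ainf)$ is the $i$-th elementary symmetric polynomial in the $j$-th block of variables. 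The required identity then falls out of the factorization $S_{r_1+r_2}(x)=S_{r_1}(x)\,S_{r_2}(x)$ of the total elementary symmetric polynomial $S_r(x)=\prod_i(1+t_ix)$; alternatively one can transport the classical multiplicativity of topological Chern classes through the identification of $H^*_{\Prism,\mr{tw}}(BP_{(r_1,r_2)}/\Ainf)$ with singular cohomology of the corresponding classifying space, just as was done for $B\GL_r$.

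For uniqueness, let $c'$ be another theory satisfying the axioms. Naturality together with the normalization axiom forces $c'_1(\mc L)=c_1^\Ainf(\mc L)$ for every line bundle $\mc L$: $\mc L$ is classified by some $f\colon\mstack X\to B\mbb G_m$, and the map $\mbb P^1\to B\mbb G_m$ classifying $\mc O_{\mbb P^1}(1)$ induces an isomorphism on $H^2_{\Prism,\mr{tw}}(-/\Ainf)$ (by the computation of $H^*_\Prism(B\mbb G_m/\Ainf)$ in \Cref{prismatoc_coh_of_Btori}), so axiom (3) pins down the universal class $c_1^\Ainf\in H^2_\Prism(B\mbb G_m/\Ainf)\{1\}$ and hence $c'_1(\mc L)=f^*(c_1^\Ainf)=c_1^\Ainf(\mc L)$. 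For a rank-$r$ bundle $E$ on a smooth $\mstack X$ I would pull back to the relative full flag bundle $p\colon\Fl_E\to\mstack X$, where the pulled-back bundle acquires a filtration with line bundle subquotients $\mc L_1,\dots,\mc L_r$; multiplicativity then gives $p^*(c'(E))=\prod_i c'(\mc L_i)=\prod_i c^\Ainf(\mc L_i)=p^*(c^\Ainf(E))$, and the relative flag bundle formula \Cref{splitting_principle} shows $p^*$ is (split) injective, whence $c'(E)=c^\Ainf(E)$.

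The delicate point will be the bookkeeping of the Breuil-Kisin (and Tate) twists throughout the multiplicativity argument: one is forced to work in $H^*_{\Prism,\mr{tw}}(-/\Ainf)$, whose \emph{even} part is a ring in Breuil-Kisin-Fargues modules while the rest is only a module over it (\Cref{not: twisted Ainf cohomology}), and one must check that the identification with the symmetric-polynomial ring $\Ainf[t_1,\dots,t_r]$ is compatible both with all the pullbacks $q,q_i$ and with the twist conventions of \Cref{constr: Chern classes}. Once that compatibility is secured, the uniqueness step is essentially formal, relying only on the split injectivity of $p^*$ furnished by \Cref{splitting_principle}.
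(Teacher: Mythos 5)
Your proposal follows essentially the same approach as the paper: naturality by construction, normalization via the commutative square relating $\kappa_{\mbb P^1,1}$ to $\kappa_{B\mbb G_m,1}$, multiplicativity by reduction to the universal case over $BP_{(r_1,r_2)}$ using the symmetric-polynomial description, and uniqueness via the splitting principle from \Cref{splitting_principle}. The only differences are cosmetic elaborations (making the split injectivity of $p^*$ explicit in the uniqueness step, and flagging the twist bookkeeping as a point requiring care), which the paper leaves implicit.
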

We now would like to compare $c_i^\Ainf(E)$ with the Chern classes $c_i^\dR(E_{\mc O_{\mbb C_p}})$ and $c_i^\crys(E_{\ol k})$ in de Rham and crystalline cohomology via the corresponding comparisons. 
For this it is good to understand better the reductions of the twist $\Ainf\{n\}$ modulo $\ker \theta $ and $W(\mf m_{\mbb C_p^\flat})$ correspondingly. We denote by $W(\ol k)\{n\}$ the crystalline twist, namely it is a ${W(\ol k)}$-module of rank 1 on which $\phi_{W(\ol k)\{n\}}\colon W(\ol k)\{n\}[\frac{1}{p}]\xra{\sim} W(\ol k)\{n\}[\frac{1}{p}]$ acts by multiplication by $p^n$ times Frobenius $\phi$.

\begin{lem}\label{lem: reductions of Ainf-twist}
In the above notations:
\begin{enumerate}[label=(\arabic*)]
	\item The reduction of $\Ainf\{n\}$ modulo $\ker \theta$ (where $\theta\colon \Ainf \surj \mc O_{\mbb C_p}$ is the Fontaine's map) is canonically identified with $\mc O_{\mbb C_p}$. In particular this identification is $G_K$-equivariant for any finite extension $K/\mbb Q_p$.
	\item The reduction of $\Ainf\{n\}$ modulo $W(\mf m_{\mbb C_p^\flat})$ (via the map $\Ainf\surj W(\ol k)$) is canonically identified with $W(\ol k)\{n\}$. This identification is $(\phi, G_K)$-equivariant.
	\end{enumerate}
\end{lem}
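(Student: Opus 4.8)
The plan is to reduce everything to the well-known behavior of the Breuil-Kisin twist $\mf S\{1\}$ and the Breuil-Kisin-Fargues twist $\Ainf\{1\}$ as recalled in \Cref{ex: twists}, together with the explicit descriptions of the relevant period rings. Since $\Ainf\{n\}\simeq \Ainf\{1\}^{\otimes n}$ and both $\mc O_{\mbb C_p}$ and $W(\ol k)$ are rings, it suffices to treat the case $n=1$ and then take tensor powers; the case $n=0$ is trivial and the case $n<0$ follows by taking duals. So I would phrase the two statements for $\Ainf\{1\}$ and deduce the general $n$ at the end by multiplicativity.

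For part (1): recall from \Cref{ex: twists} that there is a canonical $G_K$-equivariant isomorphism $\Ainf\{1\}\simeq \tfrac{1}{\mu}\bigl(\Ainf\otimes_{\mbb Z_p}\mbb Z_p(1)\bigr)$ (the truly canonical form not appealing to $\mf S\{1\}$, as in \cite[Example 4.24]{BMS1}). Under $\theta\colon \Ainf\surj \mc O_{\mbb C_p}$ one has $\theta(\mu)=0$ but $\mu/\xi$ maps to a generator of the image — more precisely, $\theta$ does not kill $\mu$ on the nose but the module $\tfrac{1}{\mu}\Ainf\otimes_{\mbb Z_p}\mbb Z_p(1)$ is free of rank one over $\Ainf$ and its reduction modulo $\ker\theta$ is a free rank-one $\mc O_{\mbb C_p}$-module. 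To get a \emph{canonical} trivialization I would use that $\mu \cdot \Ainf\{1\} = \Ainf\otimes_{\mbb Z_p}\mbb Z_p(1)$ inside $\Ainf\{1\}[\tfrac{1}{\mu}]$, and that the composite $\tfrac{\xi}{\mu}\colon \Ainf\{1\}\to \Ainf\{1\}$ followed by reduction mod $\ker\theta$ identifies $\Ainf\{1\}/\ker\theta$ with $\mc O_{\mbb C_p}$ — here $\xi/\mu$ is a specific element of $\Ainf$ (up to a unit, $\xi/\mu \in \Ainf$ since $\mu$ divides $\xi$ after applying $\phi^{-1}$; one should be careful and use $\xi$ vs $\widetilde\xi=\phi(\xi)$ consistently, but since $\mu \mid \widetilde\xi$ in $\Ainf$ the relevant map is $\widetilde\xi/\mu$). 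The $G_K$-equivariance is then automatic because all the elements $\mu$, $\xi$ (or $\widetilde\xi$) transform by units under $G_K$ up to the cyclotomic character, and the twist by $\mbb Z_p(1)$ precisely cancels this. I should double-check that the resulting identification is independent of the choice of compatible system of roots of unity $\epsilon$, which it is because changing $\epsilon$ rescales $\mu$ and $\mbb Z_p(1)$ compatibly.

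For part (2): reduce $\Ainf$ modulo $W(\mf m_{\mbb C_p^\flat})$ via $\Ainf = W(\mc O_{\mbb C_p^\flat})\surj W(\ol k)$ induced by $\mc O_{\mbb C_p^\flat}\surj \ol k$. Again starting from $\Ainf\{1\}\simeq \tfrac{1}{\mu}(\Ainf\otimes_{\mbb Z_p}\mbb Z_p(1))$, I would observe that $\mu\in W(\mf m_{\mbb C_p^\flat})$ (since $\epsilon - 1\in \mf m_{\mbb C_p^\flat}$), so $\mu$ maps to $0$ in $W(\ol k)$; hence the naive reduction would be zero, which signals that one must keep track of the denominator. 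The correct statement is that the reduction of $\Ainf\{1\}$ is canonically $W(\ol k)\otimes_{\mbb Z_p}\mbb Z_p(1)$ but with the Frobenius rescaled: on $\Ainf\{1\}$ the Frobenius is $\phi_{\Ainf\{1\}}(x)=\tfrac{[\epsilon]}{\phi(\mu)/\mu}\cdot$(stuff) — more precisely $\phi(\mu)=\mu\cdot([\epsilon]^{p-1}+\cdots+1)$ and the extra factor $\tfrac{\phi(\mu)}{\mu}$ reduces mod $W(\mf m_{\mbb C_p^\flat})$ to $p$ (each $[\epsilon^i]\equiv 1$). This is exactly the defining property of the crystalline twist $W(\ol k)\{1\}$: Frobenius acts as $p$ times the Witt-vector Frobenius. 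The $G_K$-equivariance and $\phi$-equivariance then follow by tracking these units through. The general $n$ follows by tensoring, with $\phi$ acting by $p^n$ times $\phi$.

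The main obstacle I anticipate is bookkeeping the distinction between $\xi$, $\widetilde\xi=\phi(\xi)$, $\mu$ and $\phi(\mu)$, and making sure the Frobenius factor that appears in (2) is exactly $p$ and not some unit times $p$; this requires the precise computation $\phi(\mu)/\mu \bmod W(\mf m_{\mbb C_p^\flat}) = p$, which is a standard but slightly delicate identity (it is essentially the statement that $\mu/\phi^{-1}(\mu)$ generates $\ker\theta$, combined with $\theta$-reduction). The other mild subtlety is confirming that the isomorphisms produced are canonical — independent of the choice of $\epsilon$ — which I would handle by checking that both sides carry a $\mbb Z_p(1)$-twist whose trivialization-dependence cancels. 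Everything else is a routine unwinding of the descriptions in \cite[Examples 4.2, 4.24]{BMS1} and the definition of the crystalline twist.
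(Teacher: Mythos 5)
Your overall strategy matches the paper's: reduce to $n=1$ by multiplicativity, start from the description $\Ainf\{1\}\simeq \tfrac{1}{\mu}(\Ainf\otimes_{\mbb Z_p}\mbb Z_p(1))$ of \cite[Example 4.24]{BMS1}, and for part (2) use $\phi(\mu)/\mu=\widetilde\xi$ reducing to $p$ modulo $W(\mf m_{\mbb C_p^\flat})$. That last step is exactly what the paper does and is correct.

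However, in part (1) your proposed mechanism for producing the trivialization has a genuine gap. You claim $\xi/\mu\in\Ainf$ ``since $\mu$ divides $\xi$ after applying $\phi^{-1}$,'' and then fall back to $\widetilde\xi/\mu$. Both of these are false: the relevant factorizations are $\mu = \phi^{-1}(\mu)\cdot\xi$ and $\phi(\mu)=\mu\cdot\widetilde\xi$, so $\xi$ divides $\mu$ (not conversely), $\mu/\xi = \phi^{-1}(\mu)\in\Ainf$, and neither $\xi/\mu$ nor $\widetilde\xi/\mu$ lies in $\Ainf$. More to the point, no multiplication by an element of $\Ainf$ can trivialize $\Ainf\{1\}/\ker\theta$: $\mu\mapsto 0$ under $\theta$, so ``multiplying by $\mu$'' annihilates the reduction. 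The clean argument, which the paper uses, bypasses this entirely: for $n$ coprime to $p$ one computes $n^*\mu=\mu\cdot\sigma_n$ with $\sigma_n=1+[\epsilon]+\cdots+[\epsilon]^{n-1}$, and shows $\sigma_n$ reduces to $n$ modulo both $\ker\theta$ and $W(\mf m_{\mbb C_p^\flat})$ because $[\epsilon]$ reduces to $1$. Therefore $G_K$ acts on the reduction of the line $\tfrac{1}{\mu}\Ainf$ by the inverse cyclotomic character, and the $\mbb Z_p(1)$ twist cancels this; no auxiliary element is needed. Your closing sentence for part (1) does gesture at this cancellation, but the spurious ``multiply by $\xi/\mu$'' step would derail the proof as written and should be replaced by the direct computation of the $G_K$-action.
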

\begin{proof} Note that $\Ainf$, $\mathcal O_{\mathbb C_p}$, and $W(\overline k)$ are algebra objects in the (abelian) symmetric monoidal category of continuous $G_K$-modules and the reduction functors are symmetric monoidal; since $\Ainf\{n\} = \Ainf\{1\}^{\otimes n}$, it is enough to prove the assertion for $n=1$.
We use the description $\Ainf\{1\}\simeq \frac{1}{\mu}(\Ainf\otimes_{\mbb Z_p}\mbb Z_p(1))$ (for some choice of $\mu$) which is $(\phi, G_K)$-equivariant (see \cite[Example 4.24]{BMS1}). First we identify the $G_K$-action on the ideal $(\mu)\subset \Ainf$ (and the corresponding inverse $\frac{1}{\mu}(\Ainf)$). We have $\mu=[\varepsilon]-1$, so $\ker(\chi_{p^\infty})\subset G_K$ stabilizes $\mu$ and the action on it factors through $\mbb Z_p^\times$. Let $n\in \mbb N$, $(n,p)=1$ be an integer, then $n\in \mbb Z_p^\times$ and $$n^*\mu=[\varepsilon]^n-1=\mu\cdot(1+[\varepsilon]+\cdots+[\varepsilon]^{n-1}).$$  
The element $\sigma_n\coloneqq (1+[\varepsilon]+\cdots+[\varepsilon]^{n-1})$ is equal to $n$ both modulo $\ker \theta$ and modulo $W(\mf m_{\mbb C_p^\flat})$: indeed this reduces to showing that $[\varepsilon]$ reduces to 1. This is obvious for $\ker \theta$ from the definition of $\theta$ since $\varepsilon=(1,\zeta_p,\ldots)$. For $W(\mf m_{\mbb C_p^\flat})$ we have $[\varepsilon] \pmod{W(\mf m_{\mbb C_p^\flat})} =[\ol\varepsilon]\in W(\ol k)$ where $\ol\varepsilon \in \ol k$ is the reduction of $\varepsilon$ modulo $\mf m_{\mbb C_p^\flat}$; then since $\zeta_{p^i}-1 \mf m_{\mbb C_p}$ we have $\ol\varepsilon=1 \mod \mf m_{\mbb C_p^\flat}$ and so $[\varepsilon] \pmod{W(\mf m_{\mbb C_p^\flat})} =[1]=1\in W(\ol k)$. Since this holds for any $n\in\mbb Z\cup \mbb Z_p^\times$ which is dense in $\mbb Z^\times$ we get that modulo these ideals $G_K$ acts on $\mu$ as on a Tate twist. In particular the corresponding  reductions of $\frac{1}{\mu}\Ainf$ are $G_K$-equivariantly identified with $\mc O_{\mbb C_p}(-1)$ and $W(\ol k)(-1)$. Then for $\Ainf\{1\}=\frac{1}{\mu}(\Ainf(1))$ the twists cancel and we get that $\Ainf\{n\}\otimes_\Ainf \mc O_{\mbb C_p}\simeq \mc O_{\mbb C_p}$ and $\Ainf\{n\}\otimes_\Ainf W(\ol k)\simeq W(\ol k)$ as $G_K$-modules. 

Finally, $\phi(\mu)=\widetilde\xi\cdot \mu$ where $\widetilde\xi=1+[\varepsilon]+\ldots+[\varepsilon]^{p-1}$ which by the computation above reduces to $p$ in $W(\ol k)$. This identifies $\Ainf\{n\}\otimes_\Ainf W(\ol k)\simeq W(\ol k)\{n\}$.
\end{proof}

\begin{rem}
Recall that one has well-defined theories of de Rham \cite[Lemma 50.21.1]{StacksProject} and crystalline \cite{Berthelot-Illusie} Chern classes for vector bundles on smooth schemes that satisfy Grothendieck-style axioms. Namely they are natural and multiplicative (similar to \Cref{cor: prism_Cherns_Grothendieck}) with the following normalization: for a line bundle $\mc L$ on $X$ over $\mc O_{\mbb C_p}$ we have $c_1^\dR(\mc L)=d\log^\dR(\mc L)\in H^2_\dR(X/\mc O_{\mbb C_p})$ where the map $$
d\log^\dR \colon \Pic^0(X) \tto H^2(X,\Omega^{\ge 1}_{X,\dR/\mc O_{\mbb C_p}})\tto H^2_\dR(X/\mc O_{\mbb C_p})
$$
is obtained by applying $H^2$ to the map of sheaves $d\log\colon \mc O^\times_X[-1] \ra \Omega^{\ge 1}_{X,\dR/\mc O_{\mbb C_p}}$ sending $f$ to the differential $1$-form $d\log f \coloneqq df/f$. Similarly, for a line bundle $\mc L$ on $X$ over $\ol k$ one has $c_1^\crys(\mc L)=d\log^\crys(\mc L)\in H^2_\crys(X/W(\ol k))$ where
$$d\log^\crys \colon \Pic^0(X) \tto H^2_\crys(X/W(\ol k))$$
is composition of the map $\Pic^0(X) \ra H^2_\crys(X,J)$, obtained by applying $H^2_\crys$ to the composition of the boundary map\footnote{Here $J$ is the kernel of the surjection $\mc O_{X_\crys}\surj \ol{\mc O}_{X_\crys}$ of sheaves on $X_\crys$. One then has a short exact sequence $0\ra (1+J) \ra \mc O_{X_\crys}^\times\ra \ol{\mc O}_{X_\crys}^\times\ra 0$ where one can identify the kernel $(1+J)$ with $J$ via (pd)-logarithm. This provides a map $\ol{\mc O}_{X_\crys}^\times \ra J[1]$ in the derived category.} $\ol{\mc O}_{X_\crys}^\times[-1] \ra J$ and the map $J\ra \mc O_{X_\crys}$ (we have $H^2(X_\crys, \ol{\mc O}_{X_\crys}^\times[-1])\simeq H^1(X_\crys, \ol{\mc O}_{X_\crys}^\times) \simeq \Pic^0(X)$ and  $H^2_\crys(X/W(\ol k))\coloneqq H^2(X_\crys,\mc O_{X_\crys})$). Taking the right Kan extension of the corresponding maps of sheaves (or rather the induced maps on derived global sections) we can extend these maps functorially to obtain
$$
d\log^\dR \colon \Pic^0(\mstack X)\ra H^2_\dR(\mstack X/\mc O_{\mbb C_p}) \ \ \ \text{ and } \ \ \ d\log^\crys \colon \Pic^0(\mstack X_{\ol k}) \tto H^2_\crys(\mstack X_{\ol k}/W(\ol k))
$$
for any smooth $\mc O_{\mbb C_p}$-stack $\mstack X$.
In particular, taking the tautological line bundle $\mc L_{\mr{taut}}$ on $B\mbb G_m$ we get classes $c_1^\dR\coloneqq d\log^\dR(\mc L_{\mr{taut}}) \in H^2_\dR(B\mbb G_m/\mc O_{\mbb C_p})$ and $c_1^\crys\coloneqq d\log^\crys(\mc L_{\mr{taut}}) \in H^2_\crys(B\mbb G_m/W(\ol k))$; by functoriality for any line bundle $\mc L$ on a smooth scheme $X$ over $\mc O_{\mbb C_p}$ we have $c_1^\dR(\mc L)=f^*c_1^\dR$ (resp. $c_1^\crys(\mc L_{\ol k})=f^*c_1^\crys$) where $f\colon X\ra B\mbb G_m$ is the map classifying $\mc L$. 
\end{rem}	

\begin{rem}
	Recall that in course of the proof of \Cref{de_Rham_fur_P_n} we showed in particular that the map $d\log^\dR_{\mbb Z}\colon \Pic^0(B\mbb G_m)\ra H^2_\dR(B\mbb G_m/\mbb Z)$ is an isomorphism. By base change it follows that $c_1^\dR\in H^2_\dR(B\mbb G_m/\mc O_{\mbb C_p})\simeq \mc O_{\mbb C_p}$ is a generator. 
	Moreover, we can consider $d\log^\dR_{W(\ol k)}\colon \Pic^0(B\mbb G_m)\ra H^2_\dR(B\mbb G_m/W(\ol k))$ which agrees with $d\log^\crys$ via the chains of isomorphisms
	$$\xymatrix{
	H^2_\dR(B\mbb G_m/W(\ol k)) \ar[r]^{\mc O(1)^*}_\sim & H^2_\dR(\mbb P^1/W(\ol k))\ar[r]^{\sim} & H^2_\crys(\mbb P^1/W(\ol k)) & \ar[l]^{\sim}_{\mc O(1)^*} H^2_\crys(B\mbb G_m/W(\ol k))\\ 
	\Pic^0(B\mbb G_m) \ar[u]^{d\log^\dR_{W(k)}}\ar[r]^{\mc O(1)^*}_\sim & \Pic^0(\mbb P^1) \ar[u]^{d\log^\dR_{W(k)}} \ar@{=}[r] & \Pic^0(\mbb P^1)\ar[u]^{d\log^\crys} &  \Pic^0(B\mbb G_m)\ar[l]^{\sim}_{\mc O(1)^*}\ar[u]^{d\log^\crys}.}
	$$
	Here commutativity in the middle is classical (see e.g. \cite[Proposition 2.3]{Berthelot-Illusie}) and the top left horizontal isomorphism follows from \Cref{lem:prismatic_coh_of_BGm_vs_Pn} by base change with respect to $\mf S\ra W(\ol k)$. This shows that $c_1^\crys \in H^2_\crys(B\mbb G_m/W(\ol k))\simeq W(\ol k)$ is a generator as well.  
\end{rem}

Since the reduction of $A\{n\}$ to $W(\ol k)$ still carries a $\phi$-action it makes sense to define the twisted version of the crystalline cohomology as well.
\begin{defn}
	For a smooth Artin stack $\mstack X$ over a field $k$ of characteristic $p$ we define 
	$$
	H^*_{\crys,\mr{tw}}(\mstack X/W(\ol k))\coloneqq \bigoplus_{i=0}^\infty H^i_{\crys}(\mstack X/W(\ol k))\{\floor{i/2}\}.
	$$

\end{defn}

Since $H^*_{\Prism}(B\GL_r/\Ainf)$ is free as $\Ainf$-module, from the crystalline comparison and \Cref{lem: reductions of Ainf-twist} we get an isomorphism 
$$
H^*_{\Prism,\mr{tw}}(B\GL_r/\Ainf)\otimes_\Ainf W(\ol k)\simeq H^*_{\crys,\mr{tw}}(B\GL_r/W(\ol k))
$$
of algebras in $(\phi,G_K)$-modules. We note that classical crystalline Chern classes $c_i^\crys(E)$ for a bundle $E$ on a smooth $\ol k$-scheme $X$ naturally lie in $H^{2i}_{\crys,\mr{tw}}(X/W(\ol k))$ and moreover (taking into account the twist) are $\phi$-invariant.

\begin{rem}
Note that by the de Rham and crystalline comparisons (and base change for prismatic cohomology) from \Cref{prop:char classes as W-invariants} one deduces isomorphisms
$$
H^*_{\dR}(B\GL_r/\mc O_{\mbb C_p})\simeq H^*_{\dR}(BT_r/\mc O_{\mbb C_p})^{\Sigma_r} \ \ \ \text{ and } \ \ \ H^*_{\crys,\mr{tw}}(B\GL_r/W(\ol k))\simeq H^*_{\crys,\mr{tw}}(BT_r/W(\ol k))^{\Sigma_r}
$$	
analogous to \Cref{rem: Chern classes as symmetric polynomials}. Trivializing $T_r\simeq \mbb G_m^r$ we can identify $H^*_{\dR}(BT_r/\mc O_{\mbb C_p})\simeq \mc O_{\mbb C_p}[(c_1^\dR)_1,\ldots, (c_1^\dR)_r]$ and consider $c_i^\dR\coloneqq \sigma_i((c_1^\dR)_1,\ldots, (c_1^\dR)_r)$ as an element of $H^*_{\dR}(B\GL_r/\mc O_{\mbb C_p})$. From multiplicativity and the "relative flag bundle formula" for the classically defined $c_i^\dR(E)$ it is easy to see that in fact $c_i^\dR(E)$ is equal to $f^*c_i^\dR$ where $f\colon X\ra B\GL_{\rk(E)}$ classifies $E$. Similarly for $c_i^\crys\coloneqq \sigma_i((c_1^\crys)_1,\ldots, (c_1^\crys)_r)$ we have $c_i^\crys(E)=f^*c_i^\crys$.
\end{rem}

Recall that due to freeness of $H^{*}_{\Prism}(B\GL_r/\Ainf)$ over $\Ainf$ we have isomorphisms $H^{2i}_{\Prism,\mr{tw}}(B\GL_r/\Ainf)\otimes_\Ainf W(\ol k)\simeq H^{2i}_{\dR}(B\GL_r/\mc O_{\mbb C_p})$ and  $H^{2i}_{\Prism,\mr{tw}}(B\GL_r/\Ainf)\otimes_\Ainf W(\ol k)\simeq H^{2i}_{\crys,\mr{tw}}(B\GL_r/W(\ol k))$. By \Cref{lem: reductions of Ainf-twist} these isomorphisms are $G_{\mbb Q_p}$ and $\phi$-equivariant\footnote{Here we consider $B\GL_r$ as a stack over $\mbb Z_p$ and this way obtain a $G_{\mbb Q_p}$-action on $H^{2i}_{\Prism}(B\GL_r/\Ainf)$ and $H^{2i}_{\Prism,\mr{tw}}(B\GL_r/\Ainf)$.}. We would like now to compare the reductions of $c_i^\Ainf$ modulo $\ker \theta$ and $W(\mf m_{\mbb C_p^\flat})$ with $c_i^\dR$ and $c_i^\crys$ correspondingly. While in a sequel we aim to show that they actually agree, here we only prove a weaker statement:
\begin{prop}\label{prop: Chern classes differ by units}
	For any $i\ge 0$ the images of $c_i^\Ainf$ in $H^{2i}_{\dR}(B\GL_r/\mc O_{\mbb C_p})$ and $H^{2i}_{\crys,\mr{tw}}(B\GL_r/W(\ol k))$ via the corresponding reductions differ from $c_i^\dR$ and $c_i^\crys$ by some constants in $\mbb Z_p^\times$.
\end{prop}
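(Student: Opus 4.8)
The plan is to reduce the statement to the comparison of \emph{first} Chern classes on $B\mbb G_m$ and then to pin the resulting comparison unit down to $\mbb Z_p^\times$ using Galois (and, in the crystalline case, Frobenius) equivariance. For the first reduction I would fix a maximal torus $T_r\subset B_r\subset \GL_r$ with an identification $T_r\simeq \mbb G_m^r$, and use that by \Cref{prop:char classes as W-invariants} (every prime is non-torsion for $\GL_r$) the pullback along $BT_r\to B\GL_r$ identifies $H^*_{\Prism,\mr{tw}}(B\GL_r/\Ainf)$ with $\Ainf[t_1,\dots,t_r]^{\Sigma_r}$, where $t_j=(c^\Ainf_1)_j$ is the pullback of $c_1^\Ainf$ along the $j$-th projection and $c_i^\Ainf=\sigma_i(t_1,\dots,t_r)$ (\Cref{rem: Chern classes as symmetric polynomials}), while after reduction mod $\ker\theta$ one gets $H^*_{\dR}(B\GL_r/\mc O_{\mbb C_p})\simeq \mc O_{\mbb C_p}[(c_1^\dR)_1,\dots,(c_1^\dR)_r]^{\Sigma_r}$ with $c_i^\dR=\sigma_i((c_1^\dR)_1,\dots)$, and similarly after reduction mod $W(\mf m_{\mbb C_p^\flat})$ for $c_i^\crys$. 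Since the reduction maps are ring homomorphisms compatible with pullback and $H^*(B\GL_r)\hookrightarrow H^*(BT_r)$ is injective onto the $\Sigma_r$-invariants, it suffices to prove that the reduction of $c_1^\Ainf\in H^2_{\Prism,\mr{tw}}(B\mbb G_m/\Ainf)$ mod $\ker\theta$ (resp. mod $W(\mf m_{\mbb C_p^\flat})$) equals $u\cdot c_1^\dR$ (resp. $u'\cdot c_1^\crys$) for some $u,u'\in\mbb Z_p^\times$; the constant for $c_i$ is then $u^i$ (resp. $(u')^i$), again in $\mbb Z_p^\times$.

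On $B\mbb G_m$ both classes will be generators of free rank-one modules. By the construction of $c_1^\Ainf$ the map $\kappa_{B\mbb G_m,1}$ extends to an isomorphism $H^2_\et(B\mbb G_{m,\mbb C_p},\mbb Z_p)(1)\otimes_{\mbb Z_p}\Ainf\xra{\sim}H^2_{\Prism,\mr{tw}}(B\mbb G_m/\Ainf)$, and $H^2_\et(B\mbb G_{m,\mbb C_p},\mbb Z_p)(1)\simeq\mbb Z_p$ is free of rank one on $c_1^\et$, so $H^2_{\Prism,\mr{tw}}(B\mbb G_m/\Ainf)$ is free of rank one over $\Ainf$ on $c_1^\Ainf$; since $H^*_{\Prism^{(1)}}(B\mbb G_m/\Ainf)$ is free over $\Ainf$, reducing mod $\ker\theta$ (via the de Rham comparison \Cref{prop: de Rham comparison} and \Cref{lem: reductions of Ainf-twist}(1)) carries this isomorphically onto the free rank-one $\mc O_{\mbb C_p}$-module $H^2_{\dR}(B\mbb G_m/\mc O_{\mbb C_p})$, so the reduction of $c_1^\Ainf$ is a generator there. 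On the other hand $c_1^\dR=d\log^\dR(\mc L_{\mr{taut}})$ is the image of a generator of $H^2_{\dR}(B\mbb G_m/\mbb Z)\simeq\mbb Z$ --- precisely the fact that $d\log^\dR_{\mbb Z}$ is an isomorphism, proved inside the proof of \Cref{de_Rham_fur_P_n} --- hence also generates $H^2_{\dR}(B\mbb G_m/\mc O_{\mbb C_p})$. So the reduction of $c_1^\Ainf$ equals $u\cdot c_1^\dR$ with $u\in\mc O_{\mbb C_p}^\times$. The crystalline statement is parallel, using the crystalline comparison \Cref{prop: cristalline comparison} and \Cref{lem: reductions of Ainf-twist}(2) to identify the reduction mod $W(\mf m_{\mbb C_p^\flat})$ with the free rank-one $W(\ol k)$-module $H^2_{\crys,\mr{tw}}(B\mbb G_m/W(\ol k))$, and the identification $d\log^\crys=d\log^\dR_{W(\ol k)}$ recalled just before the proposition to see that $c_1^\crys$ generates its underlying module; thus the reduction of $c_1^\Ainf$ equals $u'\cdot c_1^\crys$ with $u'\in W(\ol k)^\times$.

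Finally I would descend the units to $\mbb Z_p^\times$ by equivariance. Viewing $B\mbb G_m$ over $\mbb Z_p=\mc O_{\mbb Q_p}$, the group $G_{\mbb Q_p}$ acts on all cohomology of its base change to $\mc O_{\mbb C_p}$. The class $c_1^\et$ is $G_{\mbb Q_p}$-invariant, since $G_{\mbb Q_p}$ acts on $H^2_\et(B\mbb G_{m,\ol{\mbb Q}_p},\mbb Z_p)$ through a Tate twist which the $(1)$ cancels (as in \Cref{rem: chern classes etale vs sing}); the map $\kappa_{B\mbb G_m,1}$ is $G_{\mbb Q_p}$-equivariant, being built from the $G_{\mbb Q_p}$-equivariant \'etale comparison of \Cref{rem: G_K-equivariance of etale comparison} and the $G_K$-equivariant map $\mbb Z_p(1)\to\Ainf\{1\}$ of \Cref{ex: twists}, so $c_1^\Ainf$ is $G_{\mbb Q_p}$-invariant, and so is its reduction since reduction together with \Cref{lem: reductions of Ainf-twist} is $G_{\mbb Q_p}$-equivariant. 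Likewise $c_1^\dR$ and $c_1^\crys$ are $G_{\mbb Q_p}$-invariant, being pulled back from $H^2_\dR(B\mbb G_m/\mbb Z_p)$ and $H^2_\crys(B\mbb G_m/\mbb Z_p)$. Hence $u\,c_1^\dR$ and $c_1^\dR$ both lie in $H^2_\dR(B\mbb G_m/\mc O_{\mbb C_p})^{G_{\mbb Q_p}}=H^2_\dR(B\mbb G_m/\mbb Z_p)$, forcing $u\in\mc O_{\mbb C_p}^{G_{\mbb Q_p}}=\mbb Z_p$; being a unit, $u\in\mbb Z_p^\times$. The same argument, with $W(\ol k)^{G_{\mbb Q_p}}=W(\ol{\mbb F}_p)^{\Gal(\ol{\mbb F}_p/\mbb F_p)}=\mbb Z_p$ in place of $\mc O_{\mbb C_p}^{G_{\mbb Q_p}}=\mbb Z_p$, gives $u'\in\mbb Z_p^\times$.

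The formal parts (free rank-one modules, symmetric polynomials, compatibility of the comparison maps with pullback) are routine; the step I expect to require genuine care, and which I regard as the main obstacle, is the equivariance bookkeeping of the last paragraph --- tracking the $G_{\mbb Q_p}$-action through $\kappa_{B\mbb G_m,1}$, the de Rham and crystalline comparison isomorphisms, and \Cref{lem: reductions of Ainf-twist}, so that "$G_{\mbb Q_p}$-invariant" is genuinely preserved at every stage --- together with correctly matching up the Breuil-Kisin twists on the two sides so that the reduction maps land where claimed.
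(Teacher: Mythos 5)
Your proof is correct and follows essentially the same route as the paper's: reduce to $c_1$ on $B\mbb G_m$ via the symmetric-polynomial description, observe that both $c_1^\Ainf$ (after reduction) and $c_1^\dR$ (resp.\ $c_1^\crys$) are generators of free rank-one modules, and then pin the comparison unit down to $\mbb Z_p^\times$ by $G_{\mbb Q_p}$-equivariance using $\mc O_{\mbb C_p}^{G_{\mbb Q_p}}=\mbb Z_p$ (resp.\ $W(\ol k)^{G_{\mbb Q_p}}=\mbb Z_p$). You spell out the equivariance bookkeeping more explicitly than the paper does, but the underlying argument is the same.
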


\begin{proof}
	Recall that $c_1^{\Ainf}$ is a generator of $H^{2}_{\Prism,\mr{tw}}(B\mbb G_m/\Ainf)$ as an $\Ainf$-module and that it is $G_{\mbb Q_p}$ and $\phi$-invariant. Its image in $H^2_\dR(B\mbb G_m/\mathcal O_{\mathbb C_p})$ is non-zero and $G_{\mbb Q_p}$-invariant and so is $c_1^\dR$ (by naturality). Since $H^2_\dR(B\mbb G_m/\mathcal O_{\mathbb C_p})\simeq H^2_\dR(\mbb P^1/\mathcal O_{\mathbb C_p})$ and $H^2_\dR(\mbb P^1/\mathcal O_{\mathbb C_p})^{G_{\mbb Q_p}}\simeq H^2_\dR(\mbb P^1/\mbb Z_p)\simeq \mbb Z_p$ we get that the image of $c_1^{\Ainf}$ in $H^2_\dR(B\mbb G_m/\mathcal O_{\mathbb C_p})$ differs from $c_1^\dR$ by a unit in $\mbb Z_p$. Similarly the image in $H^2_{\crys,\mr{tw}}(B\mbb G_m/W(\overline k))$ is a generator of $H^2_{\crys,\mr{tw}}(B\mbb G_m/W(\overline k))^{G_{\mbb Q_p}}$, the latter is identified with $H^2_{\crys}(B\mbb G_m/W(\mbb F_p))\simeq \mbb Z_p$ and we get that the image of $c_1^{\Ainf}$ in $H^2_{\crys,\mr{tw}}(B\mbb G_m/W(\overline k))$ differs from $c_1^\crys$ by a unit in $\mbb Z_p$ as well. The statement for $c_i^\Ainf$ then follows automatically since it is given as elementary symmetric polynomials in first Chern classes (after pull-back under $BT_r\ra B\GL_r$) and the same is true for $c_i^\dR$ and $c_i^\crys$.
\end{proof}

\begin{rem}\label{rem: polynomials in Chern classes also differ by units!}
More generally for any homogeneous polynomial expression $f(x_1,\ldots ,x_r)\in \mbb Z_p[x_1,\ldots,x_r]$ of degree $2n$ where we put $\deg x_i=2i$ we have that the images of $f(c_1^\Ainf,\ldots ,c_r^\Ainf)\in H^{2n}_{\Prism,\mr{tw}}(B\GL_r/\Ainf)$ in $H^{2n}_{\dR}(B\GL_r/\mc O_{\mbb C_p})$ and $H^{2n}_{\crys,\mr{tw}}(B\GL_r/W(\ol k))$ differ from $f(c_1^\dR,\ldots ,c_r^\dR)$ and $f(c_1^\crys,\ldots ,c_r^\crys)$ by units in $\mbb Z_p$ as well. Indeed following the proof of \Cref{prop: Chern classes differ by units} it is easy to see that if $c_1^\dR =\alpha\cdot \ol{c^\Ainf_1}$ then $c_i^\dR =\alpha^i\cdot \ol{c^\Ainf_i}$; thus we also have $f(c_1^\dR,\ldots ,c_r^\dR)=\alpha^n \cdot \ol{f(c_1^\Ainf,\ldots ,c_r^\Ainf)}$ (here we denoted by $\ol{x}$ the image of a class $x\in H^*_{\Prism,\mr{tw}}(B\GL_r/\Ainf)$ in $H^{*}_{\dR}(B\GL_r/\mc O_{\mbb C_p})$). Same argument works in the crystalline case.
\end{rem}	
This leads to the following implication, showing in particular that there are some positive characteristic algebraically-geometric obstructions for the topological triviality of vector bundles (see \Cref{rem: etale Chern are 0 is the same as topological} below): 
\begin{cor}\label{cor: top Chern classes are  0 => de Rham are}
	Let $X$ be a smooth proper scheme over $\mathcal O_{\mathbb C_p}$ and let $E$ be a vector bundle over $X$. Assume that $H^{2i}_\Prism(X/\Ainf)$ is $\mu$-torsion free. Then if $c_i^\et(E_{|X_{{\mathbb C_p}}})$ vanishes, then $c_i^\dR(E) = 0$ and $c_i^\crys(E_{|X_{\overline k}}) = 0$.  %% More generally if $c_i^\dR(E) = 0 \mod p^n$ the same is true for $c_i^\dR(E)$ and $c_i^\crys(E_{|X_{\overline k}})$.
	
	\begin{proof}
		Let $f\colon X\ra B\GL_r$ be the map classifying $E$. By \Cref{prop: Chern classes differ by units} and functoriality of de Rham comparison $c_\dR^i(E)=f^*c_\dR^i$ is equal up to a unit to the image of $c_i^\Ainf(E)=f^*c_i^\Ainf$ under the natural map $H^{2i}_{\Prism,\mr{tw}}(X/\Ainf)\otimes_\Ainf \mc O_{\mbb C_p} \ra H^{2i}_\dR(X/\mc O_{\mbb C_p})$. Note that by definition the image of $c_i^\Ainf(E)$ in the $\mu$-localization $H^{2i}_{\Prism,\mr{tw}}(X/\Ainf)[\frac{1}{\mu}]$ agrees with the image of $c_i^\et(E_{|X_{{\mathbb C_p}}})$ under the \'etale comparison map $\kappa_{X,i}$ (see \ref{warn:mu_torsion_free} for details). Since $H^{2i}_\Prism(X/\Ainf)$ is $\mu$-torsion free $H^{2i}_{\Prism,\mr{tw}}(X/\Ainf)$ embeds in $H^{2i}_{\Prism,\mr{tw}}(X/\Ainf)[\frac{1}{\mu}]$ and thus $c_i^\et(E_{|X_{{\mathbb C_p}}})=0 \Rightarrow c_i^\Ainf(E)=0$. It follows that $c_\dR^i(E)=0$. The proof for $c_i^\crys(E_{|X_{\overline k}})$ is analogous.\qedhere
		
		%%For the mod $p^n$ part we trivialize the twists picking some compatible system of $p^n$-th roots of unity and will consider $c_i^\Ainf$ as an element of $H^{2i}_{\Prism}(X/\Ainf)/p^n$. Note that by the proof of \Cref{lem: reductions of Ainf-twist} the change of trivialization doesn't do anything after reductions mod $\ker \theta$ and $W(\mf m_{\mbb C_p^\flat})$ and multiplies $c_i^\Ainf$ by a unit in $\Ainf$.  We have that $H^{2i}_{\Prism}(X/\Ainf)/p^n$ still embeds in $H^{2i}_{\Prism}(X/\Ainf)[\frac{1}{\mu}]/p^n$ \todo{probably not true/finish} and that by the \'etale comparison (and \Cref{lem: can descend to A_inf[1/mu]}) $$H^{2i}_{\Prism}(X/\Ainf)[\tfrac{1}{\mu}]/p^n\simeq H^{2i}_{\et}(X_{\mbb C_p},\mbb Z_p)/p^n\otimes_{\mbb Z_p} \Ainf[\tfrac{1}{\mu}]$$ (here all quotients are non-derived). In particular $\kappa_{X,i}(x)=0 \mod p^n$ if and only if $x=0 \mod p^n$. Picking $x=c_i^\et(E_{|X_{{\mathbb C_p}}})$ we get that $\kappa_{X,i}(c_i^\et(E_{|X_{{\mathbb C_p}}}))=0 \mod p^n$ and thus so is $c_i^\Ainf(E)$. The rest follows since the reductions of $c_i^\dR(E)$ and $c_i^\crys(E)$ mod $p^n$ differ from the corresponding reductions of $c_i^\Ainf(E)$ by units.
	\end{proof}
\end{cor}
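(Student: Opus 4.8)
The plan is to run essentially the same argument as for $c_i^\dR(E)$, but now simultaneously tracking both the de Rham and crystalline reductions. First I would let $f\colon X\to B\GL_r$ be the classifying map of $E$ (here $r=\rk E$), so that by naturality $c_i^\Ainf(E)=f^*c_i^\Ainf$, $c_i^\dR(E)=f^*c_i^\dR$, $c_i^\crys(E_{|X_{\ol k}})=f^*c_i^\crys$, and $c_i^\et(E_{|X_{\mbb C_p}})=f^*c_i^\et$. By \Cref{prop: Chern classes differ by units} we have, inside $H^{2i}_\dR(B\GL_r/\mc O_{\mbb C_p})$ and $H^{2i}_{\crys,\mr{tw}}(B\GL_r/W(\ol k))$ respectively, equalities $c_i^\dR=\alpha\cdot \ol{c_i^\Ainf}$ and $c_i^\crys=\beta\cdot \ol{c_i^\Ainf}$ for some $\alpha,\beta\in\mbb Z_p^\times$, where $\ol{(-)}$ denotes the images of $c_i^\Ainf\in H^{2i}_{\Prism,\mr{tw}}(B\GL_r/\Ainf)$ under the de Rham (reduction mod $\ker\theta$, using the de Rham comparison) and crystalline (reduction mod $W(\mf m_{\mbb C_p^\flat})$, using the crystalline comparison) maps. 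Applying the functorial maps $f^*$ and using compatibility of the de Rham and crystalline comparisons with pullback, it follows that $c_i^\dR(E)$ (resp. $c_i^\crys(E_{|X_{\ol k}})$) equals, up to the unit $\alpha$ (resp. $\beta$), the image of $c_i^\Ainf(E)\in H^{2i}_{\Prism,\mr{tw}}(X/\Ainf)$ under the natural maps $H^{2i}_{\Prism,\mr{tw}}(X/\Ainf)\otimes_\Ainf\mc O_{\mbb C_p}\to H^{2i}_\dR(X/\mc O_{\mbb C_p})$ and $H^{2i}_{\Prism,\mr{tw}}(X/\Ainf)\otimes_\Ainf W(\ol k)\to H^{2i}_\crys(X_{\ol k}/W(\ol k))$.

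The crux is then to show $c_i^\Ainf(E)=0$ in $H^{2i}_{\Prism,\mr{tw}}(X/\Ainf)$ under the hypothesis $c_i^\et(E_{|X_{\mbb C_p}})=0$. For this I would invoke the commutative square in \Cref{warn:mu_torsion_free} (with $\rk(E)=r$): the image of $c_i^\Ainf(E)$ under the localization map $H^{2i}_{\Prism,\mr{tw}}(X/\Ainf)\to H^{2i}_{\Prism,\mr{tw}}(X/\Ainf)[\tfrac1\mu]$ agrees, via $f_\et^*$ of the $\kappa_{B\GL_r,i}$-isomorphism, with $\kappa_{X,i}(c_i^\et(E_{|X_{\mbb C_p}}))$. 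Since $X$ is smooth proper over $\mc O_{\mbb C_p}$, \Cref{etale_coh_finiteness} (applied over $K=\mbb C_p$, i.e. to the Hodge-proper, indeed proper, stack $X$) and \Cref{cor: prismatic cohomology are bounded below coherent} apply, but the essential point is the $\mu$-torsion freeness assumption: it guarantees that $H^{2i}_{\Prism^{(1)}}(X/\Ainf)\{i\}=H^{2i}_{\Prism,\mr{tw}}(X/\Ainf)$ injects into its $\mu$-localization. Hence $c_i^\et(E_{|X_{\mbb C_p}})=0$ forces $\kappa_{X,i}(c_i^\et(E_{|X_{\mbb C_p}}))=0$, and by the injectivity we conclude $c_i^\Ainf(E)=0$. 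Combining with the previous paragraph yields $c_i^\dR(E)=\alpha\cdot 0=0$ and $c_i^\crys(E_{|X_{\ol k}})=\beta\cdot 0=0$.

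The main obstacle I anticipate is bookkeeping with the Breuil-Kisin-Fargues twists and making sure all the comparison maps used — $\kappa_{X,i}$, the de Rham comparison reduction mod $\ker\theta$, and the crystalline comparison reduction mod $W(\mf m_{\mbb C_p^\flat})$ — are genuinely functorial in $X$ and compatible with the corresponding universal maps for $B\GL_r$, so that $f^*$ can be applied uniformly. This is where \Cref{lem: reductions of Ainf-twist} (identifying the reductions of $\Ainf\{n\}$) and the $G_K$- and $\phi$-equivariance of the \'etale comparison (Remarks \ref{rem: can descend to A_inf[1/mu]} and \ref{rem: G_K-equivariance of etale comparison}, applied in the proper case) do the real work; the $\mu$-torsion freeness is exactly the hypothesis that removes the pathology flagged in \Cref{warn:mu_torsion_free}, namely that without it $\kappa_{X,i}(c_i^\et(E))$ need not even lift to $H^{2i}_{\Prism,\mr{tw}}(X/\Ainf)$ and could vanish while $c_i^\Ainf(E)$ does not. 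No new computation is needed beyond these compatibilities.
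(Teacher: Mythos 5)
Your proposal is correct and follows essentially the same route as the paper's proof: use \Cref{prop: Chern classes differ by units} to relate $c_i^\dR$ and $c_i^\crys$ to the reductions of $c_i^\Ainf$ up to $\mbb Z_p^\times$-units, then use the $\mu$-torsion freeness hypothesis to promote vanishing of $c_i^\et(E_{|X_{\mbb C_p}})$ in the $\mu$-localization (via the commutative square of \Cref{warn:mu_torsion_free}) to vanishing of $c_i^\Ainf(E)$ itself. The paper phrases the de Rham case and says the crystalline case is analogous, whereas you carry both reductions along simultaneously; your side reference to \Cref{etale_coh_finiteness} over ``$K=\mbb C_p$'' is out of scope (that proposition assumes $K/\mbb Q_p$ finite) but, as you yourself note, is not used in the actual argument.
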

\begin{rem}\label{rem: etale Chern are 0 is the same as topological}
Note that by the definition of \'etale Chern classes and Artin's comparison, for any isomorphism $\iota\colon \mbb C_p\xra{\sim}\mbb C$ the condition $c_i^\et(E_{|X_{{\mathbb C_p}}})=0$ is equivalent to $c_i(E(\mbb C))=0$. Here, since $c_i\in H^{2i}_\sing(B\GL_r(\mbb C),\mbb Z_p)$ lies in the image of the natural embedding $H^{2i}_\sing(B\GL_r(\mbb C),\mbb Z)\subset H^{2i}_\sing(B\GL_r(\mbb C),\mbb Z_p)$ it does not matter whether we consider $c_i(E(\mbb C))$ as an element of $\mbb Z$ or $\mbb Z_p$-valued singular cohomology. Note also that the reduction $\ol {c_i^\dR(E)}\in H^{2i}_\dR(X_k/k)$ of $c_i^\dR(E)$ modulo $\mf m_{\mbb C_p}$ is equal to $c_i^\dR(E_{|X_k})$ (since Chern classes in de Rham cohomology of smooth schemes commute with base change). In particular from \Cref{cor: top Chern classes are  0 => de Rham are} we get that if $c_i^\dR(E_{|X_k})\neq 0$ for some $i>0$ then the corresponding topological vector bundle $E(\mbb C)$ is non-trivial.
\end{rem}	

\begin{rem}
	Using \Cref{rem: polynomials in Chern classes also differ by units!} it is easy to show a similar vanishing statement for any homogeneous polynomial expression $f(c_1,\ldots,c_r)$ of degree $2i$ in Chern classes in place of a single Chern class $c_i$. One needs the same "$\mu$-torsion free" assumption on $H^{2i}_\Prism(X/\Ainf)$.
\end{rem}

\begin{rem}
	The topological application is mostly interesting in the case when $c_i^\dR(E)$ (or equivalently $c_i^\crys(E)$) are killed by some power of $p$. Otherwise we also have $c_i^\dR(E_{|X_{\mbb C}})\neq 0$ and the non-vanishing of $c_i(E(\mbb C))$ follows by complex-analytic methods. This also applies to any polynomial expression $p(c_1,\ldots,c_r)$ as above.
\end{rem}

We end the subsection with a final remark on how to descend the $\Ainf$-characteristic classes to $H^*_{\Prism,\mr{tw}}(-/\mf S)$ in case we are only interested in schemes over $\mc O_K$:
\begin{rem}\label{rem: BK-Chern classes} Choose a finite extension $K/\mbb Q_p$ and a Breuil-Kisin prism $(\mf S,E(u))$ with $\mf S/E(u)\simeq \mc O_K$. Note that for any $i\ge 0$ one has an isomorphism $(\mf S\{i\})^{(1)}\otimes_{\mf S}\footnote{Note the presence of the Frobenius twist on the left hand side, as well as the order in which the Frobenius and Breuil-Kisin twist are applied. This accounts for the fact that the Breuil-Kisin-Fargues twist $\Ainf\{n\}$ is obtained as a base change of the Breuil-Kisin twist $\mf S\{n\}$ via the Frobenius-twisted map $\mf S\ra \Ainf$ sending $u$ to $[\pi^\flat]^p$ and not to $[\pi^\flat]$.} \Ainf\xra{\sim} \Ainf\{i\}$. Moreover, for the trivial twist, the $\phi$-invariants in $\Ainf$ (that are just identified with $\mbb Z_p\subset \Ainf$) lie in the image of $\mf S \ra \Ainf$ (and, moreover, generate it over $\mf S$). Recall that by \Cref{cor:prismatic cohomology of BG as a ring}, the module $H^{2i}_{\Prism}(B\GL_r/\mf S)\{i\}$ is a sum of trivial Breuil-Kisin twists. Using this, one sees that the class $c_i^\Ainf$ lies in the image of the natural embedding
	$$
	H^{2i}_{\Prism,\mr{tw}}(B\GL_r/\mf S)\coloneqq \left(H^{2i}_{\Prism}(B\GL_r/\mf S)\{i\}\right)^{(1)}\tto H^{2i}_{\Prism,\mr{tw}}(B\GL_r/\Ainf),
	$$
induced by the map $\mf S\ra \Ainf$ (sending $u$ to $[\pi^\flat]$). 	
Since the classes $c_i^\Ainf$ are by definition $\phi$-invariant, they come as images of some classes $c_i^{\BK}\in H^{2i}_{\Prism,\mr{tw}}(B\GL_r/\mf S)$. In particular if we want to consider smooth schemes (or stacks) with vector bundles over $\mc O_K$ by considering pull-backs of $c_i^{\BK}$ we get a well defined $H^{*}_{\Prism,\mr{tw}}(-/\mf S)$-valued theory of characteristic classes, which we denote by $c_i^{\BK}(-)$.	
\end{rem}	

\begin{rem}\label{rem: realizing de Rham as a deformation}
A similar construction can be applied to any reductive group $G$, provided $p$ is non-torsion. Similarly to the case of $\GL_n$, by \Cref{cor:prismatic cohomology of BG as a ring}, the module $H^{2i}_{\Prism}(B\GL_r/\mf S)\{i\}$ is a sum of trivial Breuil-Kisin twists. Then, given a set of polynomial generators $x_1,\ldots,x_n$	of $H^*_\sing(BG(\mbb C),\mathbb Z_p)$, using the \'etale comparison one gets polynomial generators  $x_1^\Ainf,\ldots,x_n^\Ainf$ of $H^*_{\Prism,\mr{tw}}(BG/\Ainf)$ that are $\phi$-invariant, and this way (arguing as in the above remark) also provide a set of polynomial generators $x_1^\BK,\ldots,x_n^\BK$ for $H^*_{\Prism,\mr{tw}}(BG/\mf S)$. One can see that the (composite) twist doesn't affect the de Rham comparison even over $\mf S$: namely, using the definition of $\mf S\{1\}$ as a limit of $E_r\mf S/E_{r}^2\mf S$ from \cite[Example 4.2]{BMS1} one can see that $(\mf S\{1\})^{(1)}\simeq \lim_r \phi(E_r)\mf S/\phi(E_{r}^2)\mf S$. Now $(E,\phi(E_r))$ is a regular sequence for any $r$, with $\phi(E_r) \equiv p^r \bmod E$ so $(\mf S\{1\})^{(1)}\otimes_{\mf S} \mc O_K \simeq \lim_r p^r\mc O_K/p^{2r}\mc O_K$ with the maps $p^{r+1}\mc O_K/p^{2r+2}\mc O_K \ra p^r\mc O_K/p^{2r}\mc O_K$ induced by first dividing by $p$ and then reducing modulo $p^{2r}$. This naturally identifies with $\lim_r \mc O_K/p^r\simeq \mc O_K$. Note, however, that natural action of $\mf S$ on $\mc O_K\simeq (\mf S\{1\})^{(1)}\otimes_{\mf S} \mc O_K$ is through $\phi_{\mf S}$. In particular, we do have an an isomorphism 
$$
H^*_{\Prism,\mr{tw}}(BG/\mf S)\otimes_{\mf S} \mc O_K \simeq H^*_{\Prism}(BG/\mf S)\otimes_{\mf S,\phi_{\mf S}} \mc O_K \simeq H^*_\dR(BG/\mc O_K)
$$
and can define a set of polynomial generators $x_1^\dR,\ldots,x_n^\dR$ of $H^*_\dR(BG/\mc O_K)$ by taking reductions of $x_1^\BK,\ldots,x_n^\BK$ modulo $E$. Returning to \Cref{rem:de Rham as deformation of \'etale} we get that for any given choice of the uniformizer $\pi\in \mc O_K$ there is a deformation from $H^*_\sing(BG(\mbb C),\mathbb Z_p)$ to $H^*_\dR(BG/\mc O_K)$ over $\mf S$ given by $H^*_{\Prism,\mr{tw}}(BG/\mf S)\otimes_{\mf S} \mc O_K$ which in particular gives a way of associating to a set of polynomial generators $x_1,\ldots,x_n$	of $H^*_\sing(BG(\mbb C),\mathbb Z_p)$ a set of polynomial generators $x_1^\dR,\ldots,x_n^\dR$ of $H^*_\dR(BG/\mc O_K)$.
\end{rem}

\subsection{Integrals and Chern numbers}\label{sec:Integrals and Chern numbers}
As another application in this subsection we prove a comparison result on characteristic numbers in different theories that is valid without any assumptions on $\Ainf$-cohomology of $X$.

Let $X$ be a connected smooth proper $\mathcal O_K$-scheme of relative dimension $d$. 
\begin{notation}\label{not:de Rham/crys canonical classes}
\begin{enumerate}
	\item 
The $\mc O_K$-module $H^{2d}_\dR(X/\mathcal O_K)$ is free of rank 1 and one has the canonical generator $\omega^\dR_X\in H^{2d}_\dR(X/\mathcal O_K)$ which we call the \textit{(de Rham) canonical volume form}. One easy way to define it is to use the Hodge filtration and the natural identification $H^{2d}_\dR(X/\mathcal O_K)\simeq H^d(X,\Omega^d_{X/\mc O_K})$, where the right hand side is canonically identified with $\mc O_K$ via the "trace map":	
$$
H^d(X,\Omega^d_{X/\mc O_K})\simeq \Hom_X(\mc O_X,\Omega^d_{X/\mc O_K}[-d])\xymatrix{\ar[r]^{\tr_X}_\sim &} \mc O_K.
$$ 
We then put $\omega^\dR_X\coloneqq \tr^{-1}_X(1)$ (where the identification $H^{2d}_\dR(X/\mathcal O_K)\simeq H^d(X,\Omega^d_{X/\mc O_K})$ is implicit).

	\item In \cite[Chapitre VII, 1.4]{berthelot2006cohomologie} Berthelot constructed the "trace map" for the crystalline cohomology. In particular, in our situation, one gets a canonical isomorphism 
	$$
	H^{2d}_\crys(X_k/W(k))\xymatrix{\ar[r]^{\tr_X^\crys}_\sim &} W(k).
	$$ 
	Moreover, it is not hard to see that if one puts the twist $H^{2d}_\crys(X_k/W(k))\{d\}$ on the left hand side instead, this map becomes $\phi$-equivariant. Similarly, we define the \emdef{(crystalline) canonical volume form} $\omega_X^\crys\in H^{2d}_\crys(X_k/W(k))\{d\}$ as $(\tr_X^\crys)^{-1}(1)$.  Note that $\omega_X^\crys$ is $\phi$-invariant and that $\big(H^{2d}_\crys(X_k/W(k))\{d\}\big)^{\phi=1}\simeq \mbb Z_p$.
	
	\item We will also call any $\mc O_K$-(resp. $\mbb Z_p$-)generator of $H^{2d}_\dR(X/\mathcal O_K)$ (resp. $\big(H^{2d}_\crys(X_k/W(k))\{d\}\big)^{\phi=1}$) a \emdef{de Rham (resp. crystalline) volume form}.
\end{enumerate}
\end{notation}

We are now going to provide another natural choice of de Rham and crystalline volume forms using the bridge from the topological data of $X(\mbb C)$ to the integral de Rham data of $X$ that is provided by the $\Ainf$-cohomology and the \'etale comparison. 	
\begin{construction}
	Note that by the Poincar\'e duality for \'etale cohomology $H^{2d}_\et(X_{\mathbb C_p}, \mathbb Z_p) \simeq \mathbb Z_p(-d)$, thus  $$H^{2d}_\Prism(X/\mf S)\simeq \BK(H^{2d}_\et(X_{\mathbb C_p}, \mathbb Z_p))\simeq \mf S\{-d\}$$ and so $H^{2d}_{\Prism^{(1)}}(X_{\mc O_{\mbb C_p}}/\Ainf)\simeq \Ainf\{-d\}$. We get that $H^{2d}_{\Prism,\mr{tw}}(X_{\mc O_{\mbb C_p}}/\Ainf) \coloneqq H^{2d}_{\Prism^{(1)}}(X_{\mc O_{\mbb C_p}}/\Ainf)\{d\}\simeq \Ainf(d)$ as a $(G_K,\phi)$-module. Note that $H^{2d}_{\Prism}(X_{\mc O_{\mbb C_p}}/\Ainf)$ is the top prismatic cohomology group and since it is free over $\Ainf$, the de Rham and crystalline comparisons hold in the non-derived form, namely
	$$
	H^{2d}_{\Prism,\mr{tw}}(X_{\mc O_{\mbb C_p}}/\Ainf)\otimes_{\Ainf} \mc O_{\mbb C_p}\simeq H^{2d}_\dR(X_{\mc O_{\mbb C_p}}/\mc O_{\mbb C_p}) \ \ \ \text{ and } \ \ \  H^{2d}_{\Prism,\mr{tw}}(X_{\mc O_{\mbb C_p}}/\Ainf)\otimes_{\Ainf} W(\ol k)\simeq H^{2d}_{\crys,\mr{tw}}(X_{\ol k}/W(\ol k)).
	$$
	
	Recall that we defined a map (see \Cref{not: map kappa})
	$$\kappa_{X,d}\colon H^{2d}_\et(X_{\mathbb C_p},\mbb Z_p)(d)\xra{\sim}\footnote{Here we use that $X$ is proper.} H^{2d}_\et(\widehat X_{\mathbb C_p},\mbb Z_p)(d)\tto H^{2d}_{\Prism,\mr{tw}}(X_{\mc O_{\mbb C_p}}/\Ainf)\otimes_\Ainf W(\mbb C_p^\flat)$$,
	which in our case factors through $H^{2d}_{\Prism,\mr{tw}}(X_{\mc O_{\mbb C_p}}/\Ainf)$
	since the $\phi$-invariants in $H^{2d}_{\Prism,\mr{tw}}(X_{\mc O_{\mbb C_p}}/\Ainf)\otimes_\Ainf W(\mbb C_p^\flat)\simeq W(\mbb C_p^\flat)$ are given by $\mbb Z_p$ and lie in $\Ainf\subset W(\mbb C_p^\flat)$ (which is identified with the image of $H^{2d}_{\Prism,\mr{tw}}(X_{\mc O_{\mbb C_p}}/\Ainf)$). Moreover we see that $H^{2d}_\et(X_{\mathbb C_p},\mbb Z_p)(d)$ generates $H^{2d}_{\Prism,\mr{tw}}(X_{\mc O_{\mbb C_p}}/\Ainf)$ over $\Ainf$.
	
 Analogously to \Cref{rem: chern classes etale vs sing}, from Artin's comparison we get a canonical isomorphism 
	$$
	H^{2d}_\sing(X(\mbb C),\mbb Z_p)\simeq H^{2d}_\et(X_{\mathbb C_p},\mbb Z_p)(d).
	$$	
Note that $X(\mbb C)$ is a real $2d$-dimensional manifold and the complex structure on $X(\mbb C)$ endows it with a natural orientation. This gives a $\mathbb Z$-generator $[X(\mbb C)]$ of $H^{2d}_\sing(X(\mbb C),\mbb Z)$ and, then, a $\mathbb Z_p$-generator $[X(\mbb C)]^\et\in H^{2d}_\et(X_{\mathbb C_p},\mbb Z_p)(d)$. By the discussion above the class $[X(\mbb C)]^\Ainf\coloneqq \kappa_{X,d}([X(\mbb C)])\in H^{2d}_{\Prism,\mr{tw}}(X_{\mathcal O_{\mathbb C_p}}/\Ainf)$ then is a generator as an $\Ainf$-module. Its reduction $[X(\mbb C)]^\dR\in H^{2d}_\dR(X_{\mc O_{\mbb C_p}}/{\mc O_{\mbb C_p}})\simeq \mc O_{\mbb C_p}$ modulo $\ker \theta$ is also a generator and is $G_K$-invariant (since the $G_K$-action on $H^{2d}_\et(X_{\mathbb C_p},\mbb Z_p)(d)$ was trivial), it thus descends to an $\mc O_K$-generator $[X(\mbb C)]^\dR\in H^{2d}_\dR(X/{\mc O_{K}})$. Analogously, reducing $[X(\mbb C)]^\Ainf$ modulo $W(\mf m_{\mbb C_p^\flat})$ we get a $(G_K,\phi)$-invariant $W(\ol k)$-generator $[X(\mbb C)]^\crys\in H^{2d}_\crys(X_{\ol k}/W(\ol k))\{d\}$ which then descends to a $\phi$-invariant $W(k)$-generator of $[X(\mbb C)]^\crys\in H^{2d}_\crys(X_{k}/W(k))\{d\}$.
\end{construction}
\begin{rem}\label{rem:compatibility of volume forms}
	We expect that $[X(\mbb C)]^\dR=\omega_X^\dR$ and $[X(\mbb C)]^\crys=\omega_X^\crys$, however, we do not show this here and plan to return to that question in the future. 
\end{rem}	

\begin{notation} Let $f(x_1, \ldots, x_r) \in \mbb Z_p[x_1,\ldots, x_r]$ be a polynomial. Recall that for an oriented topological closed manifold $Y$ of dimension $2d$ with complex bundle $E$ of rank $r$ the corresponding \textit{Chern number} $c(f,E)$ is defined as the image of $f(c_1,\ldots, c_n)\in H^*_\sing(Y,\mbb Z_p)$ under the natural composition 
$$\xymatrix{H^*_\sing(Y,\mbb Z_p)\ar[r] & H^{2d}_\sing(Y,\mbb Z_p) \ar[rr]^-{\lambda\cdot [Y]\mapsto \lambda}_-\sim && \mbb Z_p}.$$
\end{notation}

\begin{construction}\begin{enumerate}
		\item For a class $\alpha \in H^{*}_\dR(X/\mathcal O_K)$ we define the \textit{"integral"} $\int_{X} \alpha \in \mathcal O_K$ as the image of the composition
		$$\int_{X} \colon\xymatrix{ H^*_\dR(X/\mathcal O_K) \ar[r] & H^{2d}_\dR(X/\mathcal O_K) \ar[rr]^-{\lambda\cdot \omega^\dR_X \mapsto \lambda}_-\sim && \mathcal O_K,}$$
		where the first arrow is the projection on the top-degree component and the second trivialization is given by the choice of the canonical volume form $\omega^\dR_X$ as a generator. Similarly, for a class $\alpha \in H^{*}_\crys(X_k/W(k))$ we can define $\int_X \alpha \in W(k)$ as image of the composition
		$$\int_{X} \colon\xymatrix{ H^*_\crys(X_k/W(k))\{d\} \ar[r] & H^{2d}_{\crys,\mr{tw}}(X_k/W(k)) \ar[rr]^-{\lambda\cdot \omega_X^\crys \!\mapsto \lambda}_-\sim && W(k)}.$$
		Note that since $[X(\mbb C)]^\dR$ and $[X(\mbb C)]^\crys$ are also volume forms we have 
		$$
		\int_{X} [X(\mbb C)]^\dR \in \mc O_K^\times \ \ \ \ \text{ and } \ \ \ \ \! \int_{X}  [X(\mbb C)]^\crys\in \mbb Z_p^\times.
		$$
\item Let $X$ be a connected smooth proper $\mathcal O_K$-scheme and let $E$ be vector bundle on $X$ of rank $r$. For a polynomial $f(x_1, \ldots, x_r) \in \mathcal O_K[x_1,\ldots, x_r]$ we define the corresponding \emdef{de Rham Chern number} $c^\dR(f,E)$ of $E$ as
$$c^\dR(f,E)\coloneqq \int_{X} f(c_1^\dR(E), c_2^\dR(E), \ldots, c_r^\dR(E)).$$
Similarly, for $f(x_1, \ldots, x_r) \in W(k)[x_1,\ldots, x_r]$ the \emdef{crystalline Chern number} $c^\dR(f,E|_{X_k})$ is defined as 
$$
c^\crys(f,E|_{X_k})\coloneqq \int_{X} f(c_1^\crys(E|_{X_k}), c_2^\dR(E|_{X_k}), \ldots, c_r^\crys(E|_{X_k})).
$$

\end{enumerate}
\end{construction}

\begin{prop}\label{prop: p-adic valuations of Chern numbers}
Let $X$ be a smooth proper $\mathcal O_K$-scheme of relative dimension $d$ and $E$ a rank $r$ vector bundle on $X$. Then for any polynomial $f(x_1, \ldots, x_r) \in \mathbb Z_p[x_1, \ldots, x_r]$ the corresponding de Rham Chern number $c^\dR(f,E)$ (resp. crystalline Chern number  $\int_{X} f(c_1^\crys(E|_{X_k}), \ldots, c_r^\crys(E|_{X_k}))$) coincides with the topological Chern number $c(f,E(\mbb C))$ up to an invertible constant from $\mathcal O_K$ (resp. $\mbb Z_p$). In particular, $p$-adic valuations of these characteristic numbers (for any choice of $\omega$) are the same.

\begin{proof}
	 Without loss of generality we can assume $f$ is homogeneous of degree $2d$ (where we count $\deg x_i=2i$). By Artin's comparison and compatibility of topological and \'etale Chern classes (see \Cref{rem: etale Chern compare with topological}) the Chern number $c(f,E(\mbb C))$ can be computed as the image of $f(c_1^\et(E|_{X_{\mbb C_p}}), \ldots, c_r^\et(E|_{X_{\mbb C_p}}))\in H^{2d}_{\et}(X_{\mbb C_p},\mbb Z_p)(d)$ under the composite map 
	$$\xymatrix{ H^{2d}_{\et}(X_{\mbb C_p},\mbb Z_p)(d) \ar[rr]^-{\lambda\cdot [X(\mbb C)]^\et\mapsto \lambda}_-\sim && \mbb Z_p},$$	where $[X(\mbb C)]^\et$ is the image of the fundamental class $[X(\mbb C)]$ under the $H^{2d}_\sing(X(\mbb C),\mbb Z_p)\xra{\sim} H^{2d}_{\et}(X_{\mbb C_p},\mbb Z_p)(d)$. Note that $[X(\mbb C)]^\dR$ and $[X(\mbb C)]^\crys$ are defined as reductions of $[X(\mbb C)]^\Ainf=\kappa_{X,d}([X(\mbb C)]^\et)$. By construction the image of $f(c_1^\et(E|_{X_{\mbb C_p}}), \ldots, c_r^\et(E|_{X_{\mbb C_p}}))$ in $H^{2d}_{\Prism, \mr{tw}}(X_{\mathcal O_{\mathbb C_p}}/\Ainf)$  under $\kappa_{X,n}$ is equal to $f(c_1^\Ainf(E|_{X_{\mc O_{\mbb C_p}}}), \ldots c_r^\Ainf(E|_{X_{\mc O_{\mbb C_p}}}))$ and one then sees that 
	$$
	\int_X \big(f(c_1^\Ainf(E|_{X_{\mc O_{\mbb C_p}}}), \ldots, c_r^\Ainf(E|_{X_{\mc O_{\mbb C_p}}}))\bmod \ker \theta\big)= c(f,E(\mbb C))\cdot \int_X [X(\mbb C)]^\dR
	$$
and similarly for the crystalline case.	Recall that $\int_X [X(\mbb C)]^\dR\in \mc O_K^\times$ and $\int_X [X(\mbb C)]^\crys\in \mbb Z_p^\times$.

 Since $H^{2d}_{\Prism}(X_{\mathcal O_{\mathbb C_p}}/\Ainf)$ is $\mu$-torsion free we can apply \Cref{rem: polynomials in Chern classes also differ by units!}: namely the image of $$f(c_1^\Ainf(E|_{X_{\mc O_{\mbb C_p}}}), \ldots c_r^\Ainf(E|_{X_{\mc O_{\mbb C_p}}})) \bmod \ker \theta $$ is equal up to a $\mbb Z_p^\times$-multiple to $f(c_1^\dR(E), \ldots, c_r^\dR(E))$ (and similarly to $f(c_1^\crys(E), \ldots, c_r^\crys(E))$ modulo $W(\mf m_{\mbb C_p^\flat})$ in the crystalline case). Thus so are the corresponding integrals and the statement about the Chern numbers follows. 
\end{proof}
\end{prop}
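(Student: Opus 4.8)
The plan is to route all three characteristic numbers through the single $\Ainf$-cohomology class $f(c_1^\Ainf(E_{\mc O_{\mbb C_p}}),\dots,c_r^\Ainf(E_{\mc O_{\mbb C_p}})) \in H^{2d}_{\Prism,\mr{tw}}(X_{\mc O_{\mbb C_p}}/\Ainf)$. As a first reduction I would assume $f$ homogeneous of degree $2d$ (counting $\deg x_i = 2i$), since only the top-degree component of $f(c_1,\dots,c_r)$ contributes to any of the "integrals". The key structural fact, already recorded in the construction preceding the statement, is that by Poincar\'e duality and the \'etale comparison $H^{2d}_{\Prism}(X_{\mc O_{\mbb C_p}}/\Ainf)\simeq\Ainf\{-d\}$ is free of rank one, hence $\mu$-torsion free, so $H^{2d}_{\Prism,\mr{tw}}(X_{\mc O_{\mbb C_p}}/\Ainf)$ embeds into its $\mu$-localization and the map $\kappa_{X,d}$ lands in the un-localized group itself, with $[X(\mbb C)]^\et$ mapping to an $\Ainf$-generator $[X(\mbb C)]^\Ainf$.

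Next I would unwind the topological side. By Artin's comparison (\Cref{Artins_comparison_stacks}) together with compatibility of \'etale and topological Chern classes (\Cref{rem: etale Chern compare with topological}), the topological Chern number $c(f,E(\mbb C))$ is the unique scalar with $f(c_1^\et,\dots,c_r^\et) = c(f,E(\mbb C))\cdot[X(\mbb C)]^\et$ inside the free rank-one module $H^{2d}_\et(X_{\mbb C_p},\mbb Z_p)(d)$. Applying the $\mbb Z_p$-linear map $\kappa_{X,d}$, which by construction of the $\Ainf$-Chern classes sends $f(c_i^\et)$ to $f(c_i^\Ainf)$ and sends $[X(\mbb C)]^\et$ to $[X(\mbb C)]^\Ainf$, gives $f(c_i^\Ainf) = c(f,E(\mbb C))\cdot[X(\mbb C)]^\Ainf$ in $H^{2d}_{\Prism,\mr{tw}}(X_{\mc O_{\mbb C_p}}/\Ainf)$. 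Reducing this identity modulo $\ker\theta$ and applying $\int_X$ (which is $\mc O_{\mbb C_p}$-linear on $H^{2d}_\dR$) yields $\int_X\bigl(f(c_i^\Ainf)\bmod\ker\theta\bigr) = c(f,E(\mbb C))\cdot\int_X[X(\mbb C)]^\dR$; and since $[X(\mbb C)]^\dR$ is an $\mc O_K$-generator of $H^{2d}_\dR(X/\mc O_K)$, the factor $\int_X[X(\mbb C)]^\dR$ is a unit in $\mc O_K$. The crystalline version is identical after reducing modulo $W(\mf m_{\mbb C_p^\flat})$ and using that $[X(\mbb C)]^\crys$ is a $\phi$-invariant $W(k)$-generator, so that $\int_X[X(\mbb C)]^\crys\in\mbb Z_p^\times$.

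Finally I would compare $f(c_i^\Ainf)\bmod\ker\theta$ with $f(c_1^\dR(E),\dots,c_r^\dR(E))$. Pulling back along the classifying map $X\to B\GL_r$ and invoking \Cref{rem: polynomials in Chern classes also differ by units!} — legitimate precisely because $H^{2d}_{\Prism}(X_{\mc O_{\mbb C_p}}/\Ainf)$ is $\mu$-torsion free, so that \'etale Chern data detects the honest $\Ainf$-Chern data, exactly as in the argument for \Cref{cor: top Chern classes are  0 => de Rham are} — this reduction differs from $f(c_i^\dR(E))$ by a constant in $\mbb Z_p^\times$, and similarly the reduction modulo $W(\mf m_{\mbb C_p^\flat})$ differs from $f(c_i^\crys(E|_{X_k}))$ by a $\mbb Z_p^\times$-multiple. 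Applying $\int_X$ to both sides and combining with the identity of the previous paragraph gives $c^\dR(f,E) = (\text{unit in }\mc O_K)\cdot c(f,E(\mbb C))$ and $c^\crys(f,E|_{X_k}) = (\text{unit in }\mbb Z_p)\cdot c(f,E(\mbb C))$, whence equality of $p$-adic valuations.

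I do not anticipate a genuine obstacle here: the proof is an assembly of comparisons already established in the paper. The one point demanding care is the bookkeeping of twists — the Tate twist $(d)$ on the \'etale side, the Breuil–Kisin/Fargues twist $\{d\}$ defining $H^{2d}_{\Prism,\mr{tw}}$, and the Frobenius twist implicit in the crystalline comparison — and, relatedly, checking that $\kappa_{X,d}$ really factors through the un-localized group, which is where properness of $X$ and the identification of the top prismatic cohomology group enter. It should also be emphasized that one can only conclude equality "up to a unit" rather than on the nose, since the identifications $[X(\mbb C)]^\dR = \omega_X^\dR$ and $[X(\mbb C)]^\crys = \omega_X^\crys$ are not available here (they are deferred to a sequel, cf.\ \Cref{rem:compatibility of volume forms}); accordingly the statement about $p$-adic valuations, being insensitive to units, is the sharpest conclusion one obtains.
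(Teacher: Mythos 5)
Your proposal follows the same route as the paper's own proof: route everything through $f(c_1^\Ainf,\dots,c_r^\Ainf)$, observe that $[X(\mbb C)]^\dR$ and $[X(\mbb C)]^\crys$ are the reductions of $\kappa_{X,d}([X(\mbb C)]^\et)$, use that the freeness of $H^{2d}_\Prism(X_{\mc O_{\mbb C_p}}/\Ainf)\simeq\Ainf\{-d\}$ (hence $\mu$-torsion-freeness) makes $\kappa_{X,d}$ land in the un-localized module, and then invoke the unit-discrepancy from \Cref{rem: polynomials in Chern classes also differ by units!}. The only difference is cosmetic: you pin the $\mu$-torsion-free hypothesis to the step where $\kappa_{X,d}(f(c_i^\et))$ is identified with $f(c_i^\Ainf)$ — which is indeed where it is genuinely needed — whereas the paper mentions it in connection with the unit comparison (which in fact already follows by functoriality from the universal case over $B\GL_r$); both presentations are correct.
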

\begin{rem}
	In a sequel we aim to show that in fact the equality holds on the nose:
	$$
	c(f,E)=c^\dR(f,E(\mbb C))= c^\crys(f,E|_{X_k}).
	$$
	This reduces to showing that $c_1^\dR$ and $c_1^\crys$ are equal to the corresponding reductions of $c_1^\Ainf$ and the compatibility of the volume forms as in \Cref{rem:compatibility of volume forms}.
\end{rem}
\begin{rem}
Using \Cref{prop: p-adic valuations of Chern numbers} one can deduce the analogue of \cite[Theorem 8.1]{FGV_symplectic} for the de Rham characteristic classes: namely, if $p$ divides the numerator of the $(d+1)$-st Bernoulli number $B_d$, then, given a projective smooth $\mc O_K$-scheme $X$ of relative dimension $d< p$ with a principally polarized abelian scheme $\mc A\ra X$ over it, the de Rham Chern number
$
c^\dR(\mr{ch}_d,H_{\mc A})
$ is divisible by $p$. Here $\mr{ch}_d$ is (the $d$-th graded component of) the Chern character and $H_{\mc A}$ is the Hodge bundle associated to $\mc A$; note that $\mr{ch}_d\in \mbb Z_p[c_1,\ldots, c_d]$ because $d<p$ and thus \Cref{prop: p-adic valuations of Chern numbers} indeed applies.
\end{rem}
\subsection{De Rham cohomology of conical resolutions}\label{sec: conical resolutions}
For the reminder and the definition of conical resolution see \Cref{ex: conical resolutions}.
In \cite[(generalized version of) Conjecture 5.2.3]{Kubrak_Travkin} Roman Travkin and the first author conjectured that for a conical resolution $\pi\colon X\ra Y$ over $\mc O_K$ one should have an inequality 
$$
\dim_{k} H^1_{\dR}([X/\mbb G_m]_{k}/k)\ge \dim_{\mbb F_p} H^1_{\mr{sing}}(X(\mbb C),\mbb F_p).
$$
In fact it is more natural to expect the following more general inequality:
\begin{conj} \label{conj: conical resolutions} Let $\pi\colon X\ra Y$ be a conical resolution, then for any $i\ge 0$
$$
\dim_{k} H^i_{\dR}([X/\mbb G_m]_{k}/k) \ge \dim_{\mbb F_p} H^i_{\mr{sing}}(X(\mbb C)/\mbb C^\times,\mbb F_p),
$$
where $X(\mbb C)/\mbb C^\times$ denotes the homotopy quotient.
\end{conj}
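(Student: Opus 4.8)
\textbf{Proof strategy for \Cref{conj: conical resolutions}.}
The plan is to reduce this conjecture to \Cref{etale_conjecture} (equivalently, to $\mbb F_p$-local acyclicity of $[X/\mbb G_m]_{\mc O_{\mbb C_p}}$) exactly as \Cref{intro_Totaros_conjecture} was reduced to \Cref{thm:main_result}. First I would observe that $\mstack X\coloneqq [X/\mbb G_m]$ is a smooth Hodge-proper stack over $\mc O_K$: by \Cref{ex: conical resolutions} it is even cohomologically proper, since $[Y/\mbb G_m]$ is cohomologically proper (the graded algebra $A$ defining $Y$ is non-positively graded with $A_0\simeq \mc O_K$, so the argument of \Cref{lem:A^n/T is cohomologically proper} applies) and $\pi\colon [X/\mbb G_m]\ra [Y/\mbb G_m]$ is proper. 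Hence by \Cref{coh_proper_stronger} and \Cref{inequality} we already get the inequality
\[
\dim_{\mbb F_p} H^i_\et(\widehat{\mstack X}_{\mbb C_p}, \mbb F_p) \le \dim_{k} H^i_\dR(\mstack X_k/k)
\]
for all $i\ge 0$. It therefore remains to identify the left-hand side with $\dim_{\mbb F_p} H^i_{\mr{sing}}(X(\mbb C)/\mbb C^\times,\mbb F_p)$.

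Next I would handle the topological side. By Artin's comparison (\Cref{Artins_comparison_stacks}) together with \Cref{ex: BG anlytification}, for any isomorphism $\iota\colon \mbb C_p\xra{\sim}\mbb C$ one has a natural identification
\[
\RG_\et(\mstack X_{\mbb C_p},\mbb F_p)\simeq C^*(\Pi_\infty[X/\mbb G_m](\mbb C),\mbb F_p)\simeq C^*(X(\mbb C)_{h\mbb C^\times},\mbb F_p),
\]
and since $\mbb C^\times$ is homotopy equivalent to $S^1$, the Borel homotopy quotient $X(\mbb C)_{h\mbb C^\times}$ is (weakly) equivalent to $X(\mbb C)/\mbb C^\times$ in the sense used in the conjecture. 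Thus $\dim_{\mbb F_p}H^i_{\mr{sing}}(X(\mbb C)/\mbb C^\times,\mbb F_p)=\dim_{\mbb F_p}H^i_\et(\mstack X_{\mbb C_p},\mbb F_p)$. So the conjecture is now \emph{equivalent} to the statement that $\Upsilon_{\mstack X}\colon \RG_\et(\mstack X_{\mbb C_p},\mbb F_p)\ra\RG_\et(\widehat{\mstack X}_{\mbb C_p},\mbb F_p)$ induces an injection on each $H^i$ (an equivalence would suffice and is what \Cref{etale_conjecture} predicts, since a conical resolution is formally proper, see \Cref{rem:conical resolutions refined conjecture}).

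The main obstacle is therefore proving $\mbb F_p$-local acyclicity of $[X/\mbb G_m]$ — this does not follow from the methods of \Cref{sec: diff_etale}, which exploit the specific reductive-quotient structure. Here, however, there is a natural line of attack using the $\Theta$-stratification geometry: I would try to exploit the attractor presentation. By \Cref{rem: comparison for quotients by conical action}, if one has a smooth $\mc O_K$-scheme $Z$ with a locally linear $\mbb G_m$-action such that $Z=Z^+$ (the full attractor) and $Z^0\coloneqq Z^{\mbb G_m}$ is proper, then $\Upsilon_{[Z/\mbb G_m]}$ is an equivalence. The plan would be to show $X$, after a suitable equivariant modification, fits this pattern, or alternatively to stratify $[X/\mbb G_m]$ by a $\Theta$-stratification whose strata and centers are of this attractor type and conclude by \Cref{lem: descent for F-l acyclic stacks} together with a long-exact-sequence argument for the closed complements — paralleling \Cref{lem: example of F-l acyclic scheme}. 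The delicate point is that $X$ itself need not have $X=X^+$; one only knows $\pi\colon X\ra Y$ is proper birational with $Y$ conical, so the attractor/repeller decomposition of $X$ relative to the one of $Y$ must be analyzed carefully, and it is conceivable that extra hypotheses (e.g. projectivity of $\pi$, or semiprojectivity of $X$ in the sense of Hausel–Villegas) are needed. Absent a complete such argument, the fallback is simply to record \Cref{conj: conical resolutions} as a consequence of \Cref{etale_conjecture} applied to the formally proper stack $[X/\mbb G_m]$, via \Cref{inequality}, \Cref{Artins_comparison_stacks}, and the identification of the Borel quotient above.
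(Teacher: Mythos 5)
This statement is a \emph{conjecture} in the paper, not a theorem, and the paper offers no proof of it. What the paper does provide (mainly in \Cref{rem:conical resolutions refined conjecture} and the surrounding remarks) is exactly the reduction you carry out: \Cref{ex: conical resolutions} shows $[X/\mbb G_m]$ is cohomologically proper, so \Cref{inequality} gives $\dim_{\mbb F_p} H^i_\et(\widehat{\mstack X}_{\mbb C_p},\mbb F_p)\le\dim_k H^i_\dR(\mstack X_k/k)$; Artin's comparison (\Cref{Artins_comparison_stacks} plus \Cref{ex: BG anlytification}) identifies $H^i_{\mr{sing}}(X(\mbb C)/\mbb C^\times,\mbb F_p)$ with $H^i_\et(\mstack X_{\mbb C_p},\mbb F_p)$; and the missing link is precisely that $\Upsilon_{[X/\mbb G_m]}$ be injective (or an equivalence, as \Cref{etale_conjecture} would give, using formal properness of $[X/\mbb G_m]$ for projective $\pi$ via \cite[Proposition 4.2.3]{HL_RelaxedProperness}). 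Your reduction is therefore in line with what the paper records.

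Two remarks. First, you write that the conjecture is \emph{equivalent} to injectivity of $\Upsilon_{\mstack X}$ in each degree; it only \emph{follows from} injectivity. The inequality from \Cref{inequality} need not be tight (it is an equality precisely when $H^i_\Prism(\mstack X/\mf S)/p$ is free over $k[[u]]$), so the conjecture could in principle hold without $\Upsilon_{\mstack X}$ being injective.

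Second, you are right to doubt the attractor route: the paper itself records (in the final remark of \Cref{sec: conical resolutions}) that $X=X^+$ can fail even when the central fiber is nontrivial, and gives an explicit modification of the Springer resolution of the $\mf{sl}_2$-nilcone as a counterexample. The sufficient criterion $X=X^+$ is equivalent to the $\mbb G_m$-action being trivial on the central fiber $\pi^{-1}(y_0)$, which is a genuine restriction — it covers, e.g., $T^*(G/B)\ra\mc N$ with the fiberwise scaling action, but not every conical resolution. So that line of attack is not expected to close the gap in general, and the paper leaves the statement as a conjecture contingent on \Cref{etale_conjecture}, exactly as you conclude in your fallback.
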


 However, we claim that for $i=1$ there is in fact no difference between the two. This follows from the following lemma:

\begin{lem}\label{lem: about S1-action} Let $Z$ be a connected CW-complex with an action of $S^1$ and assume that the set of the (non-homotopy) fixed points $Z^{S^1}$ is non-empty. Then
	$$
	H^1_{\mr{sing}}(Z/S^1,\mbb F_p)\simeq H^1_{\mr{sing}}(Z,\mbb F_p),
	$$
	where $Z/S^1$ is the homotopy quotient.
\end{lem}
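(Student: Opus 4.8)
The plan is to use the fibration sequence $Z \to Z/S^1 \to BS^1$ (the Borel construction) together with the assumption that there is a genuine fixed point $z_0 \in Z^{S^1}$. Since $z_0$ is fixed, the inclusion $\{z_0\} \hookrightarrow Z$ is $S^1$-equivariant, and passing to homotopy quotients gives a section $BS^1 \simeq \{z_0\}/S^1 \to Z/S^1$ of the projection $p\colon Z/S^1 \to BS^1$. So $p$ admits a section $s$, and therefore $p^*\colon H^*_{\mr{sing}}(BS^1,\mbb F_p) \to H^*_{\mr{sing}}(Z/S^1,\mbb F_p)$ is a split injection, while the fiber inclusion $\iota\colon Z \to Z/S^1$ gives a restriction map $\iota^*\colon H^*_{\mr{sing}}(Z/S^1,\mbb F_p) \to H^*_{\mr{sing}}(Z,\mbb F_p)$.

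First I would run the Serre spectral sequence $E_2^{a,b} = H^a_{\mr{sing}}(BS^1, H^b_{\mr{sing}}(Z,\mbb F_p)) \Rightarrow H^{a+b}_{\mr{sing}}(Z/S^1,\mbb F_p)$; here $BS^1 \simeq \mbb{CP}^\infty$ is simply connected so the coefficient system is trivial. In total degree $1$ the only contributing terms are $E_2^{0,1} = H^1_{\mr{sing}}(Z,\mbb F_p)$ and $E_2^{2,0} = H^2_{\mr{sing}}(BS^1,\mbb F_p) \cdot$ (wait, that is degree $2$); in degree $1$ only $E_2^{0,1}$ and $E_2^{1,0}$ can contribute, and $E_2^{1,0} = H^1_{\mr{sing}}(BS^1,\mbb F_p) = 0$ since $BS^1$ is simply connected. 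Thus $H^1_{\mr{sing}}(Z/S^1,\mbb F_p)$ receives only the term $E_\infty^{0,1}$, which is a subquotient — in fact a subgroup, as $E^{0,\bullet}$ is always a submodule — of $E_2^{0,1} = H^1_{\mr{sing}}(Z,\mbb F_p)$. Concretely, the edge map is exactly $\iota^*$, and the five-term exact sequence reads
\[
0 \to H^1_{\mr{sing}}(BS^1,\mbb F_p) \to H^1_{\mr{sing}}(Z/S^1,\mbb F_p) \xrightarrow{\ \iota^*\ } H^1_{\mr{sing}}(Z,\mbb F_p) \xrightarrow{\ d_2\ } H^2_{\mr{sing}}(BS^1,\mbb F_p),
\]
and since the first term vanishes, $\iota^*$ is injective in degree $1$.

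It remains to show $\iota^*$ is surjective in degree $1$, equivalently that the transgression $d_2\colon H^1_{\mr{sing}}(Z,\mbb F_p) \to H^2_{\mr{sing}}(BS^1,\mbb F_p)$ vanishes. This is where the fixed point is used a second time: restricting the whole fibration to the fixed point gives a map of fibrations from $(S^1 \to \mr{pt} \to BS^1)$-data — more precisely, the inclusion $z_0 \hookrightarrow Z$ induces a map of Serre spectral sequences from that of $Z/S^1 \to BS^1$ to that of $\{z_0\}/S^1 = BS^1 \to BS^1$ (the trivial fibration). In the target spectral sequence all differentials vanish, and the section $s$ shows the map $p^*$ on the $E_2^{*,0}$-row is a split injection onto a direct summand of $H^*_{\mr{sing}}(BS^1,\mbb F_p)$, hence an isomorphism there; in particular the base edge $H^2_{\mr{sing}}(BS^1,\mbb F_p) \to H^2_{\mr{sing}}(Z/S^1,\mbb F_p)$ is injective, which forces the transgression $d_2$ out of $E_2^{0,1}$ to be zero (its image lies in the kernel of that edge map). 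Therefore $\iota^*\colon H^1_{\mr{sing}}(Z/S^1,\mbb F_p) \to H^1_{\mr{sing}}(Z,\mbb F_p)$ is an isomorphism. The main obstacle is organizing the spectral-sequence bookkeeping so that the existence of the section $s$ cleanly kills the transgression; an alternative, possibly cleaner, route is to observe directly that $d_2$ on $E_2^{2,0}$ being zero (forced by the section) means $H^2_{\mr{sing}}(BS^1,\mbb F_p)$ survives to $E_\infty$, and then a short diagram chase with the five-term sequence gives the claim without invoking the full naturality argument.
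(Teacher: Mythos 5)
Your proof is correct and shares the same framework as the paper's (Serre spectral sequence of the Borel fibration $Z \to Z/S^1 \to BS^1$, reduction to showing the transgression $d_2^{0,1}$ vanishes), but you kill $d_2$ by a genuinely different and cleaner mechanism. The paper goes to considerable lengths to give a geometric interpretation of $d_2$: it uses the Hurewicz isomorphism to identify a generator of $H_2(\mathbb{CP}^\infty)$ with $[\mathbb{CP}^1] \in \pi_2$, then pairs against the boundary map $\delta\colon \pi_2(BS^1) \to \pi_1(Z)$ in the homotopy long exact sequence of the fibration, and finally argues that the orbit map $S^1 \to Z$ at a fixed point is constant, hence null-homotopic, so $\delta([\mathbb{CP}^1]) = 0$. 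You instead observe that the fixed point gives a section $s\colon BS^1 \to Z/S^1$ of the projection $p$, so $p^*$ is split injective; since the edge homomorphism $E_2^{2,0} = H^2(BS^1) \to H^2(Z/S^1)$ is exactly $p^*$ and factors through $E_\infty^{2,0} = E_2^{2,0}/\mathrm{im}(d_2)$, its injectivity forces $\mathrm{im}(d_2) = 0$. This is a purely cohomological deduction from the five-term exact sequence, avoiding Hurewicz and the homotopy-theoretic interpretation of the transgression, and I would consider it the preferable exposition. Both arguments ultimately exploit the same geometric input — the fixed point splits the Borel construction over $BS^1$ — so the depth is the same; your route just packages it more efficiently.
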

\begin{proof}
	Consider the fiber sequence $Z\ra Z/S^1\xra{p} BS^1$. Recall that $BS^1$ has an explicit model as $S^\infty/S^1\simeq \mbb CP^\infty$ (where $S^\infty=\colim_n S^{2n-1}$ and $S^{2n-1} \subset \mbb C^n$ is the unit sphere endowed with the natural action of $S^1=\{z\in \mbb C^\times| |z|=1\}$). Since $\mbb CP^\infty$ is simply connected one has the Serre-Lerray spectral sequence 
	$$
	E_2^{p,q}=H^{p}_{\mr{sing}}(\mbb CP^\infty, H^q(Z,\mbb F_p)) \Rightarrow H^{p+q}_{\mr{sing}}(Z/S^1).
	$$
	Once again, by simply-connectedness, $E^{1,0}\simeq 0$, and so $H^1_{\mr{sing}}(Z/S^1,\mbb F_p)\simeq \Ker\ \! d_{2}^{0,1} \subset  E^{0,1}$. Thus it is enough to prove that $d_{2}^{0,1}\colon E^{0,1}\ra E^{2,0}$ is equal to 0. Note that since $Z$ is connected $H^0(Z,\mbb F_p)=\mbb F_p$ and  $E^{2,0}\simeq H^2(\mbb CP^\infty,\mbb F_p)\simeq \mbb F_p$. 
	
	Essentially by definition (of the differentials in the Lerray spectral sequence) and the Hurewicz theorem, $d_{2}^{0,1}$ has the following geometric description. Namely, consider the projective line $\mbb CP^1\subset \mbb CP^\infty$; its class $[\mbb CP^1]\in H_2(\mbb CP^\infty,\mbb Z)\simeq \pi_2(\mbb CP^\infty)\simeq \mbb Z$ is the generator. It also gives a generator of $H_2(\mbb CP^\infty,\mbb F_p)$ by reduction mod $p$. Pick a point $z\in Z/S^1$ and identify the homotopy fiber over $p(z)$ with $Z$. Let's consider a boundary map in the long exact sequence of homotopy groups of our fibration 
	$$
	\xymatrix{\ldots \ar[r] & \pi_2(\mbb CP^\infty, p(z))\ar[r]^(.57){\delta} &\pi_1(Z,z)\ar[r] & \ldots }.
	$$
	We have a pairing $\langle\cdot ,\cdot \rangle\colon  H^1_{\mr{sing}}(Z,\mbb F_p)\times \pi_1(Z,z)\ra \mbb F_p$ and the differential can be computed by the formula $d_{2}^{0,1}(x)=\langle x, \delta([\mbb CP^1])\rangle \in \mbb F_p$.
	
	Thus it is enough to show that if $Z^{S^1}\neq \varnothing$, then we have $\delta([\mbb CP^1])=0$. Pick $z\in Z/{S^1}$ to be the image of an $S^1$-fixed point $\widetilde{z}\in Z^{S^1}$ under $Z\ra Z/S^1$. Under the identification $\pi_2(BS^1,p(z))\simeq \pi_2(\Omega BS^1,p(z))\simeq \pi_1(S^1,e)$ (with $e\in S^1$ being the unit element) the boundary map comes from applying $\pi_1(\cdot)$ to the fiber sequence $\Omega BS^1 \ra Z \ra Z/S^1$, where $\Omega BS^1\simeq S^1$ is identified with the homotopy fiber of $Z \ra Z/S^1$ over $z\ra Z/S^1$. However, the map $S^1\ra Z$, obtained this way, quotients over the $S^1$-orbit corresponding to $z$, which by our assumption is a point. Thus this map is null-homotopic; consequently the boundary map $\delta$ is 0 and we are done.
\end{proof}

\begin{cor}
	Let $\pi\colon X\ra Y$ be a conical resolution (\Cref{def: conical resolution}). Then
	$$
	H^1_{\mr{sing}}(X(\mbb C)/\mbb C^\times,\mbb F_p)\simeq H^1_{\mr{sing}}(X(\mbb C),\mbb F_p).
	$$
\end{cor}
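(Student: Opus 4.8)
The corollary is an immediate consequence of \Cref{lem: about S1-action} once we verify the hypotheses for $Z = X(\mathbb C)$ with its $S^1$-action coming from restricting the $\mathbb G_m$-action on $X$ to the unit circle $S^1 \subset \mathbb C^\times = \mathbb G_m(\mathbb C)$. So the real content is checking two things: that $X(\mathbb C)$ is connected and that the (honest, not homotopy) fixed locus $X(\mathbb C)^{S^1}$ is non-empty.

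\textbf{Step 1: connectedness of $X(\mathbb C)$.} Here I would use the structure of a conical resolution (\Cref{def: conical resolution}): $\pi\colon X \to Y$ is proper and birational, and $Y \simeq \Spec A$ is affine with $A$ non-positively graded and $A_0 \simeq R$ (which in our setting is $\mathcal O_K$, but for the topological statement we base change to $\mathbb C$ via some embedding, or rather work over $\mathbb C$ directly after choosing $\iota$). Since $\pi$ is birational and $X$ is smooth hence normal, and $Y = \Spec A_{\mathbb C}$ is connected because $A_{\mathbb C}$ has $(A_{\mathbb C})_0 \simeq \mathbb C$ sitting in degree $0$ and no idempotents (the grading forces any idempotent into degree $0$, and $\mathbb C$ has only the trivial ones), the generic fiber of $\pi$ is a single point and $X$ is irreducible, hence $X(\mathbb C)$ is connected in the analytic topology by the standard fact that a connected complex variety has connected analytic points.

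\textbf{Step 2: non-emptiness of the fixed locus.} This is where the conical structure is essential. The point $0 \in Y$ corresponding to the maximal graded ideal $A_{<0} \subset A$ (so that $A/A_{<0} \simeq R$) is a $\mathbb G_m$-fixed point of $Y$, and it is the unique "cone point". Since $\pi$ is proper and $\mathbb G_m$-equivariant, the fiber $\pi^{-1}(0) \subset X$ is a non-empty proper $\mathbb G_m$-invariant closed subscheme. A non-empty proper scheme with a $\mathbb G_m$-action over $\mathbb C$ always has a $\mathbb G_m$-fixed point (by the Borel fixed point theorem, or concretely by taking a limit point of a $1$-parameter subgroup orbit which exists by properness); such a point is then also fixed by $S^1 \subset \mathbb G_m(\mathbb C)$, so $X(\mathbb C)^{S^1} \neq \varnothing$. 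I expect Step 2 to be the part requiring the most care, mainly in making sure the fixed-point argument is applied to the right (proper, non-empty) subvariety rather than to $X$ itself, which is typically non-proper.

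\textbf{Step 3: conclude.} With $X(\mathbb C)$ connected and $X(\mathbb C)^{S^1} \neq \varnothing$, \Cref{lem: about S1-action} gives $H^1_{\mr{sing}}((X(\mathbb C))/S^1, \mathbb F_p) \simeq H^1_{\mr{sing}}(X(\mathbb C), \mathbb F_p)$. Finally, since $\mathbb C^\times$ is homotopy equivalent to $S^1$, the homotopy quotient $X(\mathbb C)/\mathbb C^\times$ is homotopy equivalent to $X(\mathbb C)/S^1$ (the inclusion $S^1 \hookrightarrow \mathbb C^\times$ of topological groups is an equivalence, inducing an equivalence on $B(-)$ and hence on Borel quotients), which gives the stated isomorphism. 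The only mild subtlety is recording that $\mathbb G_m(\mathbb C) = \mathbb C^\times$ deformation retracts onto $S^1$ $\mathbb G_m(\mathbb C)$-equivariantly enough to identify the homotopy quotients, which is standard.
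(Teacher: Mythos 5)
Your proof is correct and matches the paper's approach: both reduce to Lemma \ref{lem: about S1-action} by verifying that $X(\mathbb C)$ is connected (using that any idempotent in $A$ must lie in degree $0$, where $A_0\simeq\mathbb C$ has only trivial idempotents, then transporting connectedness across the birational map) and that $X(\mathbb C)^{S^1}$ is non-empty. The only difference is in the second step: the paper picks $x\in X(\mathbb C)$ with $\pi(x)\neq y_0$ and argues directly that $x_0:=\lim_{t\to 0}t\circ x$ exists (by the conical structure on $Y$ plus the valuative criterion for the proper map $\pi$) and is $S^1$-fixed, whereas you apply the Borel fixed point theorem to the proper non-empty fiber $\pi^{-1}(y_0)$; these are really the same idea — you even mention the limit argument as an alternative — and both are valid.
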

\begin{proof}
	Since $\mbb C^\times \sim S^1$ (homotopy equivalence), it is enough to show that $X(\mbb C)$ satisfies the conditions of \Cref{lem: about S1-action}. Note that the statement depends only only on the base change $\pi_{\mbb C}\coloneqq \pi\times_{\mc O_K}\mbb C$ and for brevity we will continue to denote $X_{\mbb C}$, $Y_{\mbb C}$ and $A_{\mbb C}$ by $X$, $Y$ and $A$.
	
	Note that $Y$ is connected. Indeed, any idempotent $e\in A$ satisfies $e^2=e$ and thus should lie in $A_0$; since $A_0\simeq \mbb C$ we get $e=1$. Since $X$ is birationally equivalent to $Y$, $X$ is also connected (as an algebraic variety) and consequently $X(\mbb C)$ is connected. It remains to see that $X(\mbb C)^{S^1}\neq 0$. Note that the point $y_0\in Y(\mbb C)$ corresponding to the ideal $A_{<0}\subset A$ is fixed under the action of $\mbb C^\times$ and that for any other point $y\in Y(\mbb C)$ the limit $\lim_{t\ra 0} t\circ y$ is equal to $y_0$. Picking $x\in X(\mbb C)$ such that $\pi(x)\neq y_0$ we can consider $x_0\coloneqq \lim_{t\ra 0}t\circ x$; note that this limit exists since $\pi$ is proper. Since for any $t_0\in S^1\subset \mbb C^\times$ we have $t_0\circ x_0 = \lim_{t\ra 0}(t_0 t)\circ x =  \lim_{t'\ra 0}t'\circ x =x_0$ we get that $x_0\in X(\mbb C)^{S^1}$ and we are done.
\end{proof}
\begin{rem}\label{rem:conical resolutions refined conjecture}
Note that by Artin's comparison we have
$$
H^i_{\sing}(X(\mbb C),\mbb F_p)\simeq H^i_{\et}(X_{\mbb C_p},\mbb F_p)
$$
and that from the \'etale comparison \Cref{etale_comparison} we get an  inequality similar to the one in \Cref{conj: conical resolutions} but with the adic \'etale cohomology in place of the algebraic one:
$$
\dim_{k} H^i_{\dR}([X/\mbb G_m]_{k}/k) \ge \dim_{\mbb F_p} H^i_{\et}(\widehat{[X/\mbb G_m]}_{\mbb C_p},\mbb F_p).
$$
We would also like to add that \Cref{conj: conical resolutions} would follow from the comparison of the two versions of \'etale cohomology for formally proper stacks (see \Cref{etale_conjecture}). Indeed \cite[Proposition 4.2.3]{HL_RelaxedProperness} show that $[X/\mbb G_m]$ is formally proper if $\pi\colon X\ra Y$ is projective and more generally one can use the semi-orthogonal decompositions of \cite{HL_GIT} as in \cite[Section 3.2]{KubrakPrikhodko_HdR} to reduce to the case of a quotient of a proper scheme by the trivial action of $\mbb G_m$ (which is covered for example by \cite[Corollary 4.3.2 and Proposition 4.3.4]{HL_RelaxedProperness}). 
\end{rem}

\begin{rem}
	Some conical resolutions (e.g. the Springer resolution $T^*(G/B) \ra \mc N\subset \mf g$ with the rescaling action along fibers of $T^*(G/B)\ra G/B$) satisfy the assumptions in \Cref{rem: comparison for quotients by conical action} and thus \Cref{conj: conical resolutions} definitely holds for them. However, even though it is always true that $X^0$ is proper, it is usually not true that $X$ is equal to $X^+$. For example one can take\footnote{In fact this example is a Springer resolution for $\mf g=\mf{sl}_2$, but we have modified the standard $\mbb G_m$-action so that it is now non-trivial on the central fiber.} $X$ to be the blow up at vertex of the quadratic affine cone $Y=\{(x,y,z) \ \! | \ \! xy=z^2\}\subset \mbb A^3$ and consider the $\mbb G_m$-action on both defined by $t^*(x,y,z)=(tx,t^3y,t^2 z)$. Then one can see that $X^0$ is given by 2 points and is disconnected; however the map $X^+\ra X^0$ always induces a bijection on the sets of connected components and thus (since $X$ itself is connected) we can't have $X=X^+$. The property $X=X^+$ is in fact equivalent to $\mbb G_m$-action being trivial on the central fiber $\pi^{-1}(y_0)$ (where $y_0=Y^{\mbb G_m}$).
\end{rem}

\appendix
\section{Geometric stacks and quasi-coherent sheaves}\label{section_geometric_stacks_and_QCoh}
In this section we review some theory of geometric stacks and quasi-coherent sheaves on them. Here we largely follow \cite{TV_HAGII} and \cite{GaitsRozI}.

\subsection{Artin stacks}\label{subsect: Artin stacks}
In this subsection we recall the notion of a geometric stack and their basic properties, see also \cite[Chapter 1.3]{TV_HAGII}, \cite[Chapter 2, Section 4]{GaitsRozI}, or \cite[Section 2]{PortaYu_GAGA}. First recall
\begin{defn}
Let $(\fcat C, \tau)$ be a pair consisting of an $(\infty, 1)$-category $\fcat C$ and a Grothendieck topology $\tau$. We will denote the category of $\tau$-sheaves (see \Cref{def_sheaves}) with values in the $\infty$-category of spaces $\Type$ by $\Shv_\tau(\fcat C)$. By \cite[Proposition 6.2.2.7]{Lur_HTT} the inclusion $\Shv_\tau(\fcat C) \inj \PShv(\fcat C)$ admits a left exact left adjoint functor, which we will usually denote by $L_\tau$. A sheaf $\mathcal F$ is called a  \emdef{$\tau$-hypercomplete sheaf} or \emdef{$\tau$-hypersheaf} if for any $\tau$-hypercover $U_\bullet \to U$ the induced map $\mathcal F(U) \to \Tot \mathcal F(U_\bullet)$ is an equivalence. 

Let $\pi_0 \colon \Shv_\tau(\fcat C) \to \Shv_\tau(\fcat C, \Set)$ be the natural functor. A map of sheaves $\phi\colon \mathcal F \to \mathcal G$ is called \emdef{surjective} if the induced map $\pi_0(\phi)$ is an epimorphism (in the $1$-categorical sense).
\end{defn}
\begin{defn}
A \emdef{geometric context} is a triple $(\fcat C, \tau, \mathrm P)$ consisting of a $(\infty, 1)$-category $\fcat C$ equipped with a Grothendieck topology $\tau$ and a class of morphisms $\mathrm P$ in $\fcat C$ satisfying the following assumptions:
\begin{enumerate}
\item Every representable presheaf is a hypercomplete $\tau$-sheaf.

\item The class of morphisms $\mathrm P$ is closed under equivalences, compositions and pullbacks.

\item $\tau$-coverings consist of morphisms in $\mathrm P$.

\item $\mathrm P$ is $\tau$-local on the source: if $f\colon X\to Y$ is a morphism in $\fcat C$ such that there exists a covering $\{U_i \to X\}$ such that all compositions $U_i \to X \xrightarrow{f} Y$ are in $\mathrm P$, then $f$ is also in $\mathrm P$.
\end{enumerate}
\end{defn}
\begin{defn}\label{def:geometric_stacks}
Let $\fcat C$ be a category category equipped with a Grothendieck topology $\tau$. A \emdef{prestack $\mstack X$} is just a functor $\mstack X\colon \fcat C^\op \to \Type$. A \emdef{stack} is a prestack which is a sheaf with respect to $\tau$. We will denote the category of prestacks and stacks by $\PStk_{\fcat C}$ and $\Stk_{\fcat C, \tau}$ respectively or just $\PStk$ and $\Stk$ if $\fcat C$ and $\tau$ are clear from the context.
\end{defn}
Note that the previous definition did not use the class of morphisms $\mathrm P$. The choice of $\mathrm P$ allows to define stacks which are geometric in the following sense: they can be presented as iterated quotients of representable stacks by equivalence relations lying in $\mathrm P$. More precisely:
\begin{defn}[Geometric stacks]\label{defn:non_der_Artin}
Let $(\fcat C, \tau, \mathrm P)$ be a geometric context. The category of \emdef{$n$-geometric stacks} is the full subcategory of the category of $(\fcat C, \tau)$-stacks defined inductively as follows:
\begin{itemize}
\item[$\circ$] A stack $\mstack X$ is \emph{$(-1)$-geometric stack} if it is equivalent to a representable sheaf.

\item[$\circ$] A morphism $f\colon \mstack X\to \mstack Y$ of stacks is \emph{$(-1)$-representable} if for any representable sheaf $S$ and a morphism $S\to \mstack Y$ the fibered product $S\times_{\mstack Y}\mstack X$ is representable. It is called a $\mathrm P$-morphism if the corresponding map $S\times_{\mstack Y}\mstack X\to S$ is in $\mathrm P$ for any $S\to \mstack Y$.

\item[$\star$] Assuming the notions of an $(n-1)$-geometric stack, an $(n-1)$-representable morphism and a $\mathrm P$-morphism are already given one can proceed further and define:
\begin{itemize}
\item[$\bullet$] A stack $\mstack X$ is called \emdef{$n$-geometric} if there exists a disjoint union of representable stacks $S = \coprod_i S_i$ and a surjective morphism of sheaves $S\to \mstack X$ that is $\mathrm P$-$(n-1)$-representable. Such a morphism is called a \emph{$\mathrm P$-$n$-atlas} of $\mstack X$.

\item[$\bullet$] A morphism $f\colon\mstack X\to \mstack Y$ of prestacks is \emdef{$n$-representable} if for any representable sheaf $S$ and a map $S\to \mstack Y$ the fibered product $S\times_{\mstack Y}\mstack X$ is $n$-geometric over $S$.

\item[$\bullet$] An $n$-representable morphism $f\colon \mstack X\to \mstack Y$ of stacks is called a \emdef{$\mathrm P$-morphism} if for any representable sheaf $S$ and $S\to \mstack Y$ there exists a $\mathrm P$-$n$-atlas $U\to S\times_{\mstack Y}\mstack X$ such that the composite projection $U\to S$ is a $\mathrm P$-morphism.
\end{itemize}

\item[$\bigstar$] A stack $\mstack X$ is called \emph{geometric} if it is $n$-geometric for some $n$. We say that it is a $\mathrm P$-stack if the base morphism $\mstack Y\to *$ is a $\mathrm P$-morphism.
\end{itemize}
\end{defn}
\begin{defn}\label{relatively_geometric}
A morphism of prestacks $\mstack X \to \mstack Y$ is called \emdef{relatively geometric} if the pullback $S\times_{\mstack Y} \mstack X$ is a geometric stack for every representable $S$ mapping to $\mstack Y$.
\end{defn}

Let $\CAlg_{R/}$ be the (ordinary) category of commutative $R$-algebras and let $\Aff_{/S}\coloneqq\CAlg^\op_{R/}$ be the category of affine schemes over $S \coloneqq \Spec R$. Let also  $\PStk_R\coloneqq  \PStk_{\Aff_{/S}}$ and $\Stk_R\coloneqq  \Stk_{\Aff_{/S}}$. Geometric stacks on $\Aff_R\coloneqq \Aff_{/S}$ with respect to some geometric context are more commonly called \emdef{Artin stacks}. In this work we will mainly be interested in (\'etale, smooth)-Artin stacks in the previous notation. Another natural choice is to consider stacks in flat topology. However, the following result of To\"en shows that by choosing (fppf, fppf)-context we in fact obtain the same category of geometric stacks:
\begin{thm}[{\cite[Th\'eor\`em 0.1]{Toen_AutoFlat}}]\label{thm:Toen_AutoFlat}
We have:
\begin{itemize}
\item Let $\mstack X$ be an (\'etale, smooth)-Artin stack. Then it satisfies fppf descent.

\item Let $\mstack X$ be a (fppf, fppf)-Artin stack. Then $\mstack X$ admits a smooth atlas.
\end{itemize}
In other words, the full subcategories $\Stk^\Art_{\et, \mathrm{smooth},R}$ and $\Stk^\Art_{\fppf, \fppf, R}$ of $\PStk_{R}$ coincide.
\end{thm}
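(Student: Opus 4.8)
The statement to prove is Toën's theorem \cite{Toen_AutoFlat}, asserting the coincidence of the categories $\Stk^{\Art}_{\et,\mathrm{smooth},R}$ and $\Stk^{\Art}_{\fppf,\fppf,R}$ as full subcategories of $\PStk_R$. This is a known hard theorem and the intended ``proof'' here is really a citation; nonetheless, let me sketch how one would organize the argument if one were to reprove it. The two assertions are genuinely of different natures: the first is a descent statement (every (\'etale, smooth)-Artin stack is automatically an fppf sheaf), while the second is a structural statement (every (fppf, fppf)-Artin stack admits a smooth atlas). The plan is to prove each by induction on the geometric level $n$ of the stack, using the inductive structure of \Cref{defn:non_der_Artin}.

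For the first assertion, I would proceed by induction on $n$. The base case $n=-1$ is the statement that representable presheaves are fppf sheaves, which is the classical fppf descent for affine schemes (part of the definition of a geometric context in the fppf setting, but here needs to be verified since our context is the \'etale one). For the inductive step, let $\mstack X$ be an $n$-geometric stack for the (\'etale, smooth) context with a smooth atlas $p\colon U \surj \mstack X$, and let $U_\bullet$ be its \v Cech nerve; each $U_i$ is $(n-1)$-geometric, hence an fppf sheaf by induction. One then needs to show that an fppf cover $\{T_\alpha \to T\}$ of an affine $T$ together with a map $T \to \mstack X$ can be refined so that descent data glue; the key point is that a smooth surjection, fppf-locally on the target, admits a section, so one can lift the structure map along $p$ after passing to an fppf (in fact smooth) refinement. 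Combined with fppf descent on the $U_i$'s (induction) and the fact that $\mstack X \simeq |U_\bullet|$ as a sheaf, this gives fppf descent for $\mstack X$. The main subtlety is handling the $2$-categorical (or $\infty$-categorical) gluing of higher descent data, which is why the argument is organized via the \v Cech totalization rather than naively.

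For the second assertion, again induct on $n$. The base case is trivial. For the inductive step, let $\mstack X$ be $n$-geometric for the (fppf, fppf) context, with an fppf atlas $q\colon V \surj \mstack X$ where $V$ is a disjoint union of affines and $q$ is $(n-1)$-representable and flat (fppf). The \v Cech nerve terms $V_i$ are $(n-1)$-geometric for the fppf context, hence admit smooth atlases by induction. The hard part -- and the genuine technical heart of Toën's paper -- is to show that the fppf atlas $q$ can be replaced by a \emph{smooth} one. The strategy is local on $V$: an fppf (= flat, finitely presented) morphism of affine schemes, after a suitable faithfully flat finitely presented base change and by the local structure theory of flat morphisms (Artin's approximation / the ``platification'' results of Raynaud--Gruson), can be ``straightened'' into a smooth morphism at the expense of replacing the source by a smooth cover of it. One then uses that smooth maps are smooth-local on the source (part of the (\'etale, smooth) geometric context axioms) to assemble these local smooth charts into a genuine smooth atlas of $\mstack X$. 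Verifying that the resulting stack, now equipped with a smooth atlas, is still $n$-geometric in the (\'etale, smooth) sense requires checking that the \v Cech nerve of the new atlas has $(n-1)$-geometric terms for the \'etale--smooth context, which follows by a second application of the inductive hypothesis to the fiber products.

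\textbf{Expected main obstacle.} The real difficulty is entirely in the second assertion, specifically in the ``straightening'' of a flat finitely presented morphism into a smooth one after a smooth refinement of the source; this is where one needs serious input from the theory of flat descent, Artin approximation, and the structure theory of flat morphisms, and it is precisely the content that makes Toën's theorem nontrivial rather than formal. The first assertion, by contrast, is a relatively soft descent argument once the inductive bookkeeping is set up. Since a complete proof would essentially reproduce \cite{Toen_AutoFlat}, in the body of the paper we simply invoke that reference.
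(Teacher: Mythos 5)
The paper does not prove this theorem; it simply cites To\"en's \cite{Toen_AutoFlat} and moves on, which is exactly what your proposal does. Your accompanying sketch is a reasonable informal outline of To\"en's strategy, though I would flag one small inaccuracy: the key input for "straightening" an fppf atlas into a smooth one is Artin's approximation theorem together with the existence of smooth (or quasi-finite) quasi-sections for flat finitely presented morphisms, not the Raynaud--Gruson platification-by-blowups theorem, which is a different result and is not the tool used there.
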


 \noindent We will denote the category of Artin stacks in any of the two equivalent definitions by $\Stk^\Art_R$. In \Cref{adic_local_systems} we will also consider another example of geometric stacks in the rigid analytic context.

\smallskip 

Here are some examples of Artin stacks:
\begin{ex}[Schemes]
Let $X$ be an $R$-scheme. If the diagonal map $\Delta\colon X\to X\times_S X$ is affine (e.g. if $X$ is separated), $X$ is a $0$-Artin stack. In general, $X$ is still a 1-Artin stack.
\end{ex}
\begin{ex}[Quotient stacks]\label{ex:quotient_stacks}
Let $X$ be an $S$-scheme and let $G$ be a group $S$-scheme acting on $X$. One defines \emph{the stacky quotient $[X/G]$} to be the sheafification of the presheaf $A \mapsto X(A)/G(A)$, where the latter means the homotopy quotient (taken in $\Type$). In particular for $X = *$ being the base scheme we will denote $[*/G]$ by $BG$. Unwinding the definitions one finds that a map $Y \to BG$ classifies locally trivial $G$-torsors on $Y$. The quotient stack $[X/G]$ is $1$-Artin and if $G$ is smooth over $S$ the natural map $X \to [X/G]$ is a smooth atlas.
\end{ex}
\begin{ex}
Analogously, if $G$ is an abelian group scheme one can iterate the $B$-construction (by sheafifying $A\mapsto B^nG(A)\coloneqq K(G(A),n)$) to obtain the $n$-classifying stack $B^n G$ for any $n\ge 0$. Note that for a scheme $Y$ the set of (homotopy classes of) maps $[Y, B^n G]$ is naturally isomorphic to $H^n_{\et}(Y, G)$. The iterated classifying stack $B^n G$ is $n$-Artin and if $G$ is smooth the point $*=S$ is an atlas.
\end{ex}
\begin{ex}[Constant stacks]
Let $K$ be a homotopy type. We can define the associated \emdef{constant stack $\underline K$} as the sheafification of the presheaf $A \mapsto K$. E.g. $\underline{\mathbb S^1} \simeq B\underline{\mathbb Z}$, where $\underline{\mathbb Z}$ is a constant group scheme associated to $\mathbb Z$. If $\pi_{>n}(K, x) \simeq 0$ for any point $x\in K$, the stack $\underline K$ is $n$-Artin with an atlas $\underline{\pi_0(K)}$. On the other hand, since the third point of \Cref{Artin_basics} below for an $n$-Artin stack $\mstack X$ and any point $x\colon \Spec A \to \mstack X$ the $n$-fold self product $\Omega^n(\mstack X, s)$ of $s$ with itself over $\mstack X$ must be equivalent to coproduct of affines (in particular it is a set valued functor), we see that the condition $\pi_{>n}(K, x) \simeq 0$ is also necessary for $\underline K$ to be $n$-Artin.
\end{ex}
\begin{ex}[Mapping stacks]
The category of prestacks admits an inner hom, the so called \emdef{mapping prestack $\Map(-, -)$}, given by
$$\Map(\mstack X, \mstack Y)(R) \coloneqq \Hom_{\PStk_{/S}}(\Spec R \times \mstack X, \mstack Y).$$
Moreover, it is easy to see that if $\mstack Y$ is a stack, then so is $\Map(\mstack X, \mstack Y)$ for any prestack $\mstack X$. In a special case when $X$ be a scheme and $\mstack Y = BG$ the corresponding stacks is called \emdef{stack of $G$-bundles on $X$}. See \cite[Section 5]{HL_RelaxedProperness} for sufficient conditions for mapping stacks to be Artin.
\end{ex}

Having any property of a morphism in $\Aff_{R}$ that is local with respect to the smooth topology one can extend it to a property of a morphism of Artin stacks:
\begin{construction}[{See \cite[Section 1.3.6]{TV_HAGII}}]\label{properties_of_morphisms_Artin}
Let $\fcat P$ be a class of morphisms between affine schemes closed under equivalencies, compositions and pullbacks and satisfying the following properties:
\begin{itemize}
\item ($\fcat P$ is local on source) Let $f\colon X\to Y$ be a morphism of affine schemes with the property that for every surjection (in topology $\tau$ from \Cref{defn:non_der_Artin}) $\coprod_i U_i \surj X$ all composite maps $U_i \to X \stackrel{f}{\lra} Y$ are in $\fcat P$. Then $f$ is in $\fcat P$.

\item ($\fcat P$ is local on target) Let $f\colon X\to Y$ be a morphism of affine schemes such that for some cover $\coprod_i U_i \surj Y$ each pullback morphism $U_i \times_Y X \to U_i$ is in $\fcat P$. Then $f$ is in $\fcat P$.
\end{itemize}

Then we say that \emph{an $n$-representable morphism of stacks $f\colon \mstack X \to \mstack Y$ is in $\fcat P$} if for any affine $A$ mapping to $\mstack Y$ there exists an atlas $\coprod_i U_i$ of $A\times_{\mstack Y} \mstack X$ such that all composite maps $U_i \to A$ are in $\fcat P$.
\end{construction}
\begin{ex}\label{ex:lfp_lft_et_sm_fl}
This way one obtains notions of \emdef{locally finitely presentable}, \emdef{locally of finite type}, \emdef{\'etale}, \emdef{smooth}, and \emdef{flat} morphisms of Artin stacks.
\end{ex}
\begin{rem}
For general prestacks one can give the following definition in terms of the categories of quasi-coherent sheaves (see \Cref{def:QCoh} below): a morphsim of prestacks $f\colon \mstack X \to \mstack Y$ is called \emdef{flat} if the pullback functor $f^*\colon \QCoh(\mstack Y) \to \QCoh(\mstack X)$ is $t$-exact (equivalently right $t$-exact). The resulting notion of flat morphism coincides with the one given in the previous example when $f$ is relatively Artin.
\end{rem}
If the class of morphisms is local only on target one can still introduce the following notion:
\begin{construction}
Let $\fcat P$ be a class of morphisms of schemes local on target. Then the map of prestacks $f\colon \mstack X \to \mstack Y$ is called \emdef{schematic $\fcat P$} or just \emdef{$\fcat P$} if for any affine scheme mapping to $\mstack Y$ the pullback $S\times_{\mstack Y} \mstack X$ is a scheme and the natural projection $S\times_{\mstack Y} \mstack X \to S$ lies in $\fcat P$.
\end{construction}
\begin{ex}\label{local_on_target_examples}
This way we obtain a notions of \emdef{(schematic) closed embedding}, \emdef{affine}, \emdef{schematic}, \emdef{schematic projective}, and \emdef{schematic proper} morphisms of prestacks.
\end{ex}

\smallskip The following facts follow formally from the definitions and will be freely used in the rest of the paper without further references:
\begin{prop}[{\cite[Chapter 1.3]{TV_HAGII}}]\label{Artin_basics}
We have:
\begin{enumerate}[label=(\arabic*)]
\item The class of $n$-representable morphisms is closed under pullbacks and compositions.

\item If a geometric stack $\mstack X$ admits an $n$-representable atlas $U \surj \mstack X$, then the diagonal morphism $\mstack X\to \mstack X\times\mstack X$ is $(n-1)$-representable.

\item Any morphism from an affine scheme to an $n$-Artin stack is $(n-1)$-representable.

\item The full subcategory of $n$-Artin stacks is closed under finite limits.

\item Let $\fcat P$ be a class of morphism of affine schemes as in \Cref{properties_of_morphisms_Artin}. Then the class of $\fcat P$-morphism between Artin stacks is closed under equivalences, compositions and pullbacks and is local on target.
\end{enumerate}
\end{prop}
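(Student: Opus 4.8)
The final statement is \Cref{Artin_basics}, a compendium of basic structural facts about geometric stacks that the paper recalls from \cite{TV_HAGII}. So the "proof" here is really a coordinated induction along the defining hierarchy in \Cref{defn:non_der_Artin}, and the plan is to carry out all five items simultaneously by induction on $n$, since their inductive clauses feed into one another.

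\medskip

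\textbf{Overall strategy.} The plan is to prove the statements by induction on $n$, where at stage $n$ I establish: (1) that $n$-representable morphisms are closed under base change and composition; (5) the analogous closure properties plus locality on target for $\mathrm P$-morphisms (hence for the derived classes of \Cref{properties_of_morphisms_Artin}) between $n$-geometric stacks; and then use these to deduce (2), (3), (4) at level $n$. The base case $n=-1$ is essentially the axioms of a geometric context: $(-1)$-representable morphisms are those whose base change to any affine is affine, and closure under composition and pullback of such follows from the fact that affine schemes are closed under fibered products in prestacks together with the pasting lemma for pullback squares; locality on target of $\mathrm P$ at level $-1$ is built into the geometric context axioms (class $\mathrm P$ local on source, plus representable presheaves being hypersheaves so that $\mathrm P$ can be tested after a cover). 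Item (4) at level $-1$ is just that a fibered product of affines over an affine is affine.

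\medskip

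\textbf{The inductive step.} Assume (1)--(5) known below level $n$. For (1): given $\mstack X \xrightarrow{f} \mstack Y \xrightarrow{g} \mstack Z$ with $f,g$ both $n$-representable, and an affine $S \to \mstack Z$, I factor $S\times_{\mstack Z}\mstack X \simeq (S\times_{\mstack Z}\mstack Y)\times_{\mstack Y}\mstack X$; the outer factor $S\times_{\mstack Z}\mstack Y$ is $n$-geometric over $S$ by hypothesis on $g$, one chooses an atlas $U \to S\times_{\mstack Z}\mstack Y$ with $U$ a disjoint union of affines, and then $U\times_{\mstack Y}\mstack X$ is $n$-geometric over $U$ by hypothesis on $f$ (base-changing $f$ along $U\to \mstack Y$); one then needs that "$n$-geometric over $U$, with $U$ a sum of affines, which is itself $n$-geometric over $S$" implies "$n$-geometric over $S$", which is where one uses that atlases compose and that a composite of a $\mathrm P$-$(n-1)$-representable surjection with an $n$-representable one can be rearranged into an $n$-atlas — this uses the level-$(n-1)$ closure statements and the fact that $\mathrm P$ is local on source to glue the atlases of the fibers. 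Closure under pullback of $n$-representable morphisms is formal from the associativity/pasting of fibered products. For (5), the same bookkeeping applies with the extra tracking of the property $\fcat P$: one uses locality on source of $\fcat P$ to descend the property along the atlas, locality on target to check it after base change, and the inductive hypothesis at level $n-1$ for the atlas maps; locality on target of the class of $\fcat P$-morphisms between Artin stacks then follows because being $n$-geometric and the property $\fcat P$ can both be checked after pulling back to a cover by affines, using that geometric stacks are sheaves (invoking \Cref{thm:Toen_AutoFlat} to know that \'etale-smooth Artin stacks satisfy fppf descent). Items (2) and (3): if $U\surj\mstack X$ is an $n$-representable atlas, the diagonal $\mstack X \to \mstack X\times\mstack X$ is identified, after base change along $U\times U \to \mstack X\times\mstack X$, with $U\times_{\mstack X} U \to U\times U$, and $U\times_{\mstack X}U$ is $n$-geometric over $U\times U$ by (1) applied to the two projections $U\times_{\mstack X}U \to U \to \mstack X$ — whence $\Delta$ is $(n-1)$-representable after one checks that $(n-1)$-representability, like all these notions, is local on the target (this last point is (5) at level $n-1$ together with the descent property). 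Then (3) is the case $\mstack Y$ affine, $\mstack X$ $n$-Artin of (2), combined with the observation that a morphism from an affine $\Spec A$ to $\mstack X$ factors as $\Spec A \to \Spec A \times \mstack X \to \mstack X$ where the first map is the graph (a base change of $\Delta_{\mstack X}$, hence $(n-1)$-representable) and the second is a projection (a base change of $\mstack X\to *$... rather one argues directly via the diagonal). Finally (4): closure of $n$-Artin stacks under finite limits reduces to closure under fibered products $\mstack X\times_{\mstack Z}\mstack Y$ and a terminal object; for the fibered product, if $U\surj\mstack X$, $V\surj\mstack Y$, $W\surj\mstack Z$ are atlases, one builds an atlas of $\mstack X\times_{\mstack Z}\mstack Y$ out of $U$, $V$, $W$ using (1) and (2) (the map $U\times_W V \to \mstack X\times_{\mstack Z}\mstack Y$, suitably interpreted, is $\mathrm P$-$(n-1)$-representable and surjective).

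\medskip

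\textbf{Main obstacle.} The genuinely delicate point — and the reason all five items must be proved together rather than in sequence — is the "rearrangement of atlases" in the composition step: showing that if $\mstack Y$ is $n$-geometric over an affine $S$ with atlas $U\to\mstack Y$, and $\mstack X\to\mstack Y$ is $n$-representable with a $\mathrm P$-$(n-1)$-atlas $V\to U\times_{\mstack Y}\mstack X$ over $U$, then the composite $V \to \mstack X$ exhibits $\mstack X$ as $n$-geometric over $S$ with a single $\mathrm P$-$(n-1)$-atlas. This requires knowing that the composite of the $(n-1)$-representable (indeed $\mathrm P$-) morphism $V\to U\times_{\mstack Y}\mstack X$ with the $(n-1)$-representable morphism $U\times_{\mstack Y}\mstack X \to U$ (a base change of $U\to\mstack Y$) is again $\mathrm P$-$(n-1)$-representable, which is precisely item (1) and item (5) one level down, and that $\mathrm P$ being local on source lets one verify the $\mathrm P$-condition for $V\to S$ after the given covers. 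I would organize the write-up so that this lemma is isolated and proved first within the inductive step, after which (1)--(5) at level $n$ follow by the formal fibered-product manipulations sketched above. Since all of this is standard and recorded in \cite[Chapter 1.3]{TV_HAGII} and \cite[Chapter 2.4]{GaitsRozI}, in the paper itself it is appropriate simply to cite those references, as is done.
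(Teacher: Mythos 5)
The paper provides no proof of this proposition; it states it as a black-box citation to \cite[Chapter 1.3]{TV_HAGII}, so there is no argument in the text to compare yours against. Your simultaneous-induction sketch is the standard route (it is essentially how the source handles it), and the core observation — that items (1)--(5) must be proved together because the composition step for $n$-representability relies on the level-$(n-1)$ closure properties to rearrange atlases — is exactly right and is the genuinely nontrivial point. A few local remarks: the appeal to \Cref{thm:Toen_AutoFlat} for item (5) is misplaced, since \Cref{Artin_basics} holds in an arbitrary geometric context and locality on target for $\fcat P$-morphisms should follow from the geometric-context axioms (in particular $\fcat P$ being local on source and representables being hypersheaves), not from To\"en's flat-descent theorem which is specific to the $(\et,\sm)$ vs.\ $(\fppf,\fppf)$ comparison. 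Also the item (4) sketch is a bit loose: $U\times_{\mstack Z} V$ (or whatever "$U\times_W V$, suitably interpreted" means) is a fibered product over the stack $\mstack Z$, hence not obviously a disjoint union of affines; one must either take a further atlas of that fibered product using the inductive hypothesis, or first pull everything back along an atlas $W\surj \mstack Z$ and assemble from there. These are cosmetic gaps in a correct outline, and since the paper deliberately defers the proof to the reference, deferring these details is likewise appropriate.
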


We will also need the following result:
\begin{prop}[{\cite[Proposition 1.2]{Toen_AutoFlat}}]
Let $f\colon \mstack X \to \mstack Y$ be an fppf cover. Then if $\mstack X$ is smooth then so is $\mstack Y$.
\end{prop}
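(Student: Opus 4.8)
The statement to prove is: if $f\colon \mstack X \to \mstack Y$ is an fppf cover and $\mstack X$ is smooth (over the base $S$), then $\mstack Y$ is smooth. The plan is to reduce everything to the analogous statement for schemes, which is a standard descent result (smoothness descends along fppf covers), and then bootstrap through the geometric-stack induction using the already-recorded formal properties of Artin stacks. First I would observe that ``smooth over $S$'' for an Artin stack means that the structure morphism $\mstack X \to S$ is smooth in the sense of \Cref{properties_of_morphisms_Artin} (see \Cref{ex:lfp_lft_et_sm_fl}), which is a property local on source and target and checked on a smooth atlas by affine schemes; and an fppf cover is in particular a flat, finitely presented, surjective morphism.

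The key steps, in order: (1) Since $\mstack X$ is smooth, pick a smooth atlas $U = \coprod_i U_i \surj \mstack X$ with each $U_i$ an affine scheme; then each composite $U_i \to \mstack X \to S$ is smooth (composition of smooth morphisms is smooth, using \Cref{Artin_basics}(5)). The composite $U \to \mstack X \xrightarrow{f} \mstack Y$ is an fppf cover of $\mstack Y$: it is a composite of an fppf (even smooth surjective) atlas map and the fppf cover $f$, and fppf covers are closed under composition. So without loss of generality we may assume the source of the fppf cover is a disjoint union of affine schemes, each smooth over $S$. (2) Now reduce to a scheme target: for any affine scheme $T$ with a map $T \to \mstack Y$, form the pullback $\mstack X_T := T \times_{\mstack Y} \mstack X$; it suffices to show $T \to S$ is smooth, and since ``smooth'' for $\mstack Y \to S$ is detected after pulling back along all affine $T \to \mstack Y$ (by the definition via atlases and the locality properties), it is enough to work $T$-locally. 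Here $\mstack X_T \to T$ is an fppf cover with $\mstack X_T$ still relatively geometric and smooth over $S$. (3) Choose a smooth affine atlas $V \surj \mstack X_T$; then $V \to T$ is again an fppf cover (composite of a smooth surjection and an fppf cover), $V$ is an affine scheme smooth over $S$, and $T$ is an affine scheme. At this point we are in the purely schematic situation: $V \to T$ is a faithfully flat, finitely presented morphism of affine schemes and $V$ is smooth over $S$. (4) Apply the classical fppf-descent statement for smoothness of schemes (e.g. \cite[Tag 02VL]{StacksProject} combined with descent of flatness and local finite presentation along faithfully flat morphisms) to conclude $T$ is smooth over $S$.

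I expect step (3)--(4), namely the passage ``$V$ smooth over $S$ and $V \to T$ fppf $\Rightarrow$ $T$ smooth over $S$'', to be essentially immediate by invoking the schematic descent result, so the only genuine work is the bookkeeping in steps (1)--(2): verifying that smoothness of a stack is correctly characterized by the atlas condition, that fppf covers compose, and that the property ``$\mstack Y$ is smooth'' can be tested after base change to affine schemes over $\mstack Y$ (which is exactly how \Cref{properties_of_morphisms_Artin} defines it, together with \Cref{Artin_basics}(5) giving that the class is local on target). The main obstacle, if any, is a subtlety about whether one needs $\mstack X$ and $\mstack Y$ to be geometric or merely prestacks: here the cited source (\cite[Proposition 1.2]{Toen_AutoFlat}) is stated for the setting at hand, and we may freely invoke it, so in fact no induction on geometric level is required and the whole argument collapses to citing the scheme-level descent plus the formal locality properties already recorded in the excerpt. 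A cleaner route, which I would actually follow, is simply: this is exactly \cite[Proposition 1.2]{Toen_AutoFlat}, recalled here for the reader's convenience, and the above is a sketch of why it holds in the form we use it.
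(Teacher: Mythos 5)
The paper gives no proof of this statement; it is imported directly as \cite[Proposition 1.2]{Toen_AutoFlat}, and you correctly note this at the end. However, your sketch of why it holds has a genuine gap in the passage from step (2) to step (3). You propose to show that $T\to S$ is smooth for \emph{every} affine $T\to \mstack Y$, and you then assert in step (3) that a smooth affine atlas $V\surj \mstack X_T = T\times_{\mstack Y}\mstack X$ is smooth over $S$. That assertion only follows if $\mstack X_T$ is smooth over $S$, which in turn requires $T\to \mstack Y$ to be smooth: only then is $\mstack X_T\to\mstack X$ smooth (base change of smooth) and hence $\mstack X_T\to\mstack X\to S$ smooth. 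For an arbitrary affine $T\to\mstack Y$ there is no reason for $\mstack X_T$ to be smooth over $S$; indeed the goal "$T\to S$ smooth for all affine $T\to\mstack Y$" is simply false (take $T$ a nonreduced closed point of a smooth $\mstack Y$), and it is also not what the definition from \Cref{properties_of_morphisms_Artin} asks for---one needs only a single smooth atlas $W\surj\mstack Y$ with $W\to S$ smooth, not that every affine over $\mstack Y$ be smooth over $S$.

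The repair is just to quantify correctly. Since $\mstack Y$ is an Artin stack in the (\'etale, smooth) sense, it admits a smooth affine atlas $W\surj\mstack Y$; run your argument with $T=W$. Then $\mstack X_W:=W\times_{\mstack Y}\mstack X\to\mstack X$ is smooth (base change of the smooth $W\to\mstack Y$), so $\mstack X_W$ is smooth over $S$; choose a smooth affine atlas $V\surj\mstack X_W$, so $V\to S$ is smooth and $V\to W$ is fppf (composite of a smooth surjection and the base change of the fppf map $f$); schematic fppf descent for smoothness gives $W\to S$ smooth, and since $W$ is a smooth atlas of $\mstack Y$, this is exactly what \Cref{properties_of_morphisms_Artin} requires. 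Your steps (1) and (4) are correct as written; only the universal quantifier over $T$ in step (2) and the consequent unjustified smoothness of $V$ in step (3) need to be fixed.
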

\begin{cor}
Let $G$ be a flat group scheme acting on a smooth scheme $X$. Then the quotient stack $[X/G]$ (see \Cref{ex:quotient_stacks}) is smooth. In particular $BG$ is smooth for any flat group scheme $G$.
\end{cor}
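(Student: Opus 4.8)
The final statement is the Corollary: if $G$ is a flat group scheme acting on a smooth scheme $X$, then $[X/G]$ is smooth, and in particular $BG$ is smooth for any flat $G$. The plan is to deduce this directly from the immediately preceding Proposition (\cite[Proposition 1.2]{Toen_AutoFlat}), which asserts that if $f\colon \mstack X \to \mstack Y$ is an fppf cover and $\mstack X$ is smooth, then $\mstack Y$ is smooth. So the entire task reduces to exhibiting a smooth scheme that fppf-covers $[X/G]$.

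First I would recall the construction of the quotient stack from \Cref{ex:quotient_stacks}: $[X/G]$ is the (\'etale, equivalently fppf by \Cref{thm:Toen_AutoFlat}) sheafification of $A \mapsto X(A)/G(A)$, and the canonical map $X \to [X/G]$ is the atlas. The key point is that this map $X \to [X/G]$ is an fppf cover. Indeed, for any affine scheme $S$ with a map $S \to [X/G]$, the fiber product $S \times_{[X/G]} X$ is the $G$-torsor over $S$ classified by that map; since $G$ is flat (hence $G \to \Spec R$ is fppf locally, being a flat affine — or at least flat finitely presented — group scheme), the torsor $S \times_{[X/G]} X \to S$ is fppf. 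This is precisely the content of $X \to [X/G]$ being a schematic fppf cover, and it is exactly the reason the quotient stack is $1$-Artin in the fppf-fppf context. (Here I am implicitly using that a flat group scheme acting on a scheme yields a groupoid object all of whose structure maps are flat, so the quotient is geometric in the sense of \Cref{defn:non_der_Artin}; this is standard and is used earlier in the excerpt when quotient stacks are introduced.)

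With $f\colon X \to [X/G]$ identified as an fppf cover and $X$ smooth by hypothesis, the preceding Proposition applies verbatim to give that $[X/G]$ is smooth. For the special case $BG = [\Spec R / G]$, the base scheme $\Spec R$ is smooth over itself, so the same argument gives that $BG$ is smooth. I would write this out in two or three sentences, citing \Cref{ex:quotient_stacks} for the fppf-cover property of the atlas and the preceding Proposition for the descent of smoothness.

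The main (and really the only) obstacle is the first step: verifying carefully that $X \to [X/G]$ is genuinely an fppf cover, i.e.\ that every pullback along an affine test scheme is a flat, finitely presented, surjective morphism of schemes. This is where the flatness hypothesis on $G$ is consumed: one needs that a $G$-torsor over an affine base, being fppf-locally isomorphic to $G$ itself, inherits flatness and finite presentation from $G$, and surjectivity is automatic for torsors. None of this requires $G$ to be smooth — flatness suffices — which is exactly why the corollary is stated for flat $G$. Everything after that is a one-line invocation of the quoted Proposition of To\"en, so no further calculation is needed.
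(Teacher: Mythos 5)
Your argument is exactly the one the paper intends (and leaves implicit): $X\to[X/G]$ is an fppf cover because $G$ is flat, so the cited Proposition of To\"en immediately transfers smoothness from $X$ down to $[X/G]$, with $BG=[\Spec R/G]$ as the special case $X=\Spec R$. Your elaboration of why the atlas is fppf — via pullbacks being $G$-torsors — is the correct justification and matches the standard reasoning behind \Cref{ex:quotient_stacks}.
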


\subsubsection{Finiteness conditions}\label{sec:stacks_finiteness conditions}
In this subsection we recall various types of finiteness conditions on morphisms of Artin stacks, which will be useful for this work.

\paragraph{Quasi-compact and quasi-separated morphisms.}
\begin{defn}\label{quasicompact_Artin}
An Artin stack $\mstack X$ is called \emdef{quasi-compact} if there exists a smooth atlas $U \surj \mstack X$ such that $U$ is an affine scheme (as opposed to an infinite disjoint union of affine schemes, which always exists by the definition of Artin stacks). A morphism $f\colon \mstack X \to \mstack Y$ is called \emdef{quasi-compact} if the pullback $\Spec R \times_{\mstack Y} \mstack X$ is a quasi-compact Artin $R$-stack for any $\Spec R$ mapping to $\mstack Y$.
\end{defn}

The notion of quasi-separated morphism is defined inductively.
\begin{defn}\label{quasiseparated_Artin}
We say that an $n$-Artin stack $\mstack X$ is \emdef{$0$-quasi-separated} if the diagonal morphism $\mstack X \to \mstack X \times \mstack X$ is quasi-compact. We say that an $n$-representable map is \emdef{$0$-quasi-separated} if its base change to an affine scheme is a $0$-quasi-separated $n$-Artin stack.

For $0<k\le n$ we say that an $n$-Artin stack is \emdef{$k$-quasi-separated} if the diagonal morphism $\mstack X \to \mstack X \times \mstack X$ is $(k-1)$-quasi-separated as $(n-1)$-representable map. We say that $n$-representable map is $k$-quasi-separated if its base change to an affine scheme is a $k$-quasi-separated $n$-Artin stack.

An $n$-Artin stack $\mstack X$ is called \emdef{quasi-separated} if it is $k$-quasi-separated for all $0\le k\le n$. An $n$-representable map of stack is called \emdef{quasi-separated} if its base change to an affine scheme is a quasi-separated $n$-Artin stack.
\end{defn}

\paragraph{Finite type and finitely presentable morphisms.}
\begin{defn}\label{def:finite_type}
A morphism $f\colon \mstack X \to \mstack Y$ of Artin stacks is called \emdef{finitely presentable} if it is locally finitely presentable (see \Cref{ex:lfp_lft_et_sm_fl}), quasi-compact and quasi-separated. A morphism $f$ is called \emdef{of finite type} if it is locally of finite type (see \Cref{ex:lfp_lft_et_sm_fl}) and quasi-compact.
\end{defn}

\paragraph{Separated and proper morphisms.}
Following \cite[Section 4]{PortaYu_GAGA} we define:
\begin{defn}\label{def:proper morphism}
A $0$-representable morphism $f\colon \mstack X \to \mstack Y$ is called \emdef{proper} if for any affine scheme $S$ mapping to $\mstack Y$, the pullback $\mstack X\times_{\mstack Y} S$ is a proper $S$-scheme (i.e. if $f$ is schematic proper). Next, assuming that the notion of a proper $(n-1)$-representable morphism is already defined, an $n$-representable morphism $f\colon \mstack X \to \mstack Y$ is called \emdef{proper} if
\begin{itemize}
\item $f$ is \emdef{separated}, i.e. the diagonal map $\mstack X \to \mstack X \times_{\mstack Y} \mstack X$ (which is $(n-1)$-representable) is proper.

\item For any affine scheme $S$ mapping to $\mstack Y$ the pullback $\mstack X_S \coloneqq \mstack X \times_{\mstack Y} S$ admits a surjective $S$-morphism $P \surj \mstack X_S$ such that $P$ is a proper $S$-scheme.
\end{itemize}
\end{defn}
\begin{rem}
Since the property of a morphism of schemes to be proper is flat local on the target, it is enough in the definition above to check the second condition only for some atlas of $\mstack Y$.
\end{rem}
\begin{rem}
A potentially more familiar definition of a (classical) proper algebraic stack $p:\mstack X\ra S$ is that $p$ should be separated, finite type and universally closed. We note that such stacks over $S$ are proper 1-Artin stacks in the definition above. Indeed, by \cite[Theorem 1.1]{Olson_ProperArtin} in this case there exists a proper surjective map $U\ra \mstack X$ from a proper scheme $U$.
\end{rem}

The following properties are proved by the usual diagram chase:
\begin{prop}
The classes of quasi-compact, quasi-separated, finitely presentable, finite type, separated, and proper morphisms are closed under base change, compositions and fibered products.
\end{prop}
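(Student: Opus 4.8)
The plan is to reduce every assertion to the corresponding classical fact for schemes, where all six properties are known to be stable under base change, composition and fibered products, and then to propagate these statements up the tower of $n$-geometric stacks by induction on the geometricity level. First, stability under fibered products is a formal consequence of the other two: given $f_i\colon\mstack X_i\to\mstack Y_i$ over a base $\mstack S$, the morphism $f_1\times_{\mstack S}f_2$ factors as $\mstack X_1\times_{\mstack S}\mstack X_2\to\mstack Y_1\times_{\mstack S}\mstack X_2\to\mstack Y_1\times_{\mstack S}\mstack Y_2$ with each arrow a base change of $f_1$ or of $f_2$, so I only need to treat base change and composition. Next, for ``locally finitely presentable'' and ``locally of finite type'' --- the ingredients of \Cref{def:finite_type} that are not quasi-compactness or quasi-separatedness --- stability under all three operations is already recorded in \Cref{Artin_basics}(5), since these are $\fcat P$-morphisms in the sense of \Cref{properties_of_morphisms_Artin}. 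Hence it suffices to prove the proposition for the four remaining properties ``quasi-compact'', ``quasi-separated'', ``separated'' and ``proper''; stability of each of these under base change is immediate from the definitions (Definitions \ref{quasicompact_Artin}, \ref{quasiseparated_Artin}, \ref{def:proper morphism}), all of which are phrased via the behaviour of the pullback to an arbitrary affine scheme, together with transitivity of fibered products from \Cref{Artin_basics}(1).

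The real content is stability under composition of a pair $\mstack X\xrightarrow{f}\mstack Y\xrightarrow{g}\mstack Z$, which I would prove by induction on $n$-geometricity with the schematic case as the base, supplied by the classical statements for schemes. For quasi-compactness: pulling back to an affine $S\to\mstack Z$, the stack $S\times_{\mstack Z}\mstack Y$ is quasi-compact, so has an affine atlas $U$, and $U\times_{\mstack Y}\mstack X$ is quasi-compact over the affine $U$, whence $S\times_{\mstack Z}\mstack X$ is quasi-compact. For quasi-separatedness and separatedness the crucial point is the standard factorization of the relative diagonal of a composition: $\Delta_{g\circ f}$ equals $\mstack X\xrightarrow{\Delta_f}\mstack X\times_{\mstack Y}\mstack X\to\mstack X\times_{\mstack Z}\mstack X$, where the second arrow is a base change of $\Delta_g\colon\mstack Y\to\mstack Y\times_{\mstack Z}\mstack Y$; since by \Cref{Artin_basics}(2) these diagonals drop one level in geometricity, the inductive hypothesis (stability of base change and composition at level $n-1$) together with the relevant finiteness input --- quasi-compactness for the quasi-separated case, properness for the separated case --- finishes the step. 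Properness is then handled simultaneously: the separatedness half is the case just discussed, and for the ``admits a surjection from a proper scheme'' half one picks, after base change along an affine $S\to\mstack Z$, a proper scheme $Q\surj S\times_{\mstack Z}\mstack Y$, then, using properness of $f$, a proper $Q$-scheme $P\surj Q\times_{\mstack Y}\mstack X$; composing, $P$ is proper over $S$ and surjects onto $S\times_{\mstack Z}\mstack X$.

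The only real obstacle is organizational: one must package all of this as a single simultaneous induction in which, at level $n$, ``quasi-separated'' invokes ``quasi-compact'' at level $n-1$ and ``separated'' invokes ``proper'' at level $n-1$, while ``proper'' at level $n$ invokes ``separated'' at level $n$, and one must check at each turn that the auxiliary objects produced --- relative diagonals, pullbacks of atlases, the proper schemes $P$ and $Q$ --- genuinely drop in geometric level or remain schematic, so that the recursion is well-founded. This is precisely what \Cref{Artin_basics} and \Cref{def:proper morphism} are arranged to make routine, and I do not expect any input beyond the scheme-level statements and the stability results already collected there.
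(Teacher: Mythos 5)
Your proposal is correct and spells out exactly the ``usual diagram chase'' that the paper declines to write down. The organizational skeleton — reduce fibered products to base change and composition via the factorization $\mstack X_1\times_{\mstack S}\mstack X_2\to\mstack Y_1\times_{\mstack S}\mstack X_2\to\mstack Y_1\times_{\mstack S}\mstack Y_2$; dispatch the ``local'' parts of ``finite type'' and ``finitely presentable'' via \Cref{Artin_basics}(5); observe that base change is built into Definitions \ref{quasicompact_Artin}, \ref{quasiseparated_Artin}, \ref{def:proper morphism}; and handle composition by a nested induction on geometricity level using the diagonal factorization $\Delta_{g\circ f}=\bigl(\mstack X\xrightarrow{\Delta_f}\mstack X\times_{\mstack Y}\mstack X\to\mstack X\times_{\mstack Z}\mstack X\bigr)$ and the drop in level guaranteed by \Cref{Artin_basics}(2) — is precisely what one would expect to be meant by a ``diagram chase'' here, and each step holds up.

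A few small remarks on places where you might sharpen the exposition if you write this out formally. First, for quasi-compactness under composition you do not actually need the full inductive apparatus: your direct construction of an affine $V$ mapping smoothly, surjectively and representably onto $S\times_{\mstack Z}\mstack X$ already does the job, because any such map from an affine is an atlas in the sense of \Cref{defn:non_der_Artin}. Second, ``base change is immediate from the definitions'' is literally true for quasi-compactness, but for quasi-separatedness, separatedness, and properness it already secretly uses the statement being proved one level down (e.g.\ ``base change of a quasi-compact diagonal is quasi-compact''), so this step really is part of the simultaneous induction rather than preceding it — you acknowledge the well-foundedness issue at the end, but it would be cleaner to fold base change into the same induction from the start. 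Third, the ``quasi-separated'' clause in \Cref{quasiseparated_Artin} is itself a conjunction over all $0\le k\le n$ of $k$-quasi-separatedness, so the diagonal factorization must be applied $k$ times; this is routine but worth noting to make the recursion manifestly terminate. None of these affects correctness.
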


\subsection{Quasi-coherent sheaves}\label{sec:Quasi-coherent sheaves}
To each commutative (classical) $R$-algebra $A$ we can associate the unbounded derived category $\DMod{A}$ of $A$-modules. Given a homomorphism $f\colon A\to B$ one has a natural continuous $R$-linear pullback functor $f^*\colon \DMod{A}\to \DMod{B}$ defined by the derived tensor product: $f^*(M)\coloneqq M\otimes_A B$. This way we obtain a functor 
$$\DMod{-}\colon \Aff_{R}^\op \to \Prs^{\mathrm L, \otimes}_R , \qquad \Spec A\mapsto \DMod{A}$$
from $\Aff_{R}^\op$ to the category of $R$-linear presentably symmetric monoidal $\infty$-categories and continuous functors between them. Thinking of $\DMod{A}$ as the category of quasi-coherent sheaves on $\Spec A$, given a prestack $\mstack Y$, one can define the category of quasi-coherent sheaves on $\mstack Y$ via the right Kan extension:
\begin{defn}\label{def:QCoh}
The category $\QCoh(\mstack Y)$ of quasi-coherent sheaves on $\mstack Y$ is defined as the limit 
$$\QCoh(\mstack Y) \coloneqq \lim_{(\Spec A\to \mstack Y) \in (\Aff_{R}/\mstack Y)^\op} \DMod{A}.$$
More precisely, one can extend the assignment $\mstack X \mapsto \QCoh(\mstack X)$ to a functor $\QCoh^*\colon \PStk_R^\op \to \PrL_R$ ($-^*$ staying for the pull-back functor) as the right Kan extension of the $A \mapsto \DMod{A}$ functor along the inclusion $\Aff_{R}\inj \PStk_R$.
\end{defn}

In particular, for a morphism of prestacks $f\colon \mstack X\ra \mstack Y$ we have a well-defined functor $f^*\colon \QCoh(\mstack Y)\ra \QCoh(\mstack X)$.
\begin{rem}
One can think of an object $\mathcal F\in \QCoh(\mstack Y)$ as a collection of complexes $\mathcal F_{|A}\coloneqq x^* \mc F \in \DMod{A}$ associated to each $A$-point $x\colon \Spec A \ra \mstack Y$ compatible under pullbacks but only up to coherent set of higher homotopies.
\end{rem}

The following properties follow more or less formally from the definition (see e.g. \cite[Chapter 3]{GaitsRozI} for more details):
\begin{prop}
We have:
\begin{enumerate}
\item $\QCoh^*$ is a functor from the category of prestacks to the category $\Prs_R^{\otimes, \mathrm L}$ of presentably symmetric monoidal $R$-linear categories and continuous symmetric monoidal $R$-linear functors.

\item By the adjoint functor theorem there is a functor $\QCoh_*\colon \PStk_R \to \PrR_R$ (agreeing with $\QCoh^*$ on objects) such that for each morphism $f\colon \mstack X \to \mstack Y$ the functor $f_*\colon \QCoh(\mstack X) \to \QCoh(\mstack Y)$ is the right adjoint of $f^*\colon \QCoh(\mstack Y) \to \QCoh(\mstack X)$.

\item Let $\mstack X_\bullet \colon I \to \PStk_R$ be a diagram of prestacks. Then the natural map
$$\QCoh(\colim_I \mstack X_i) \xymatrix{\ar[r] &} \lim_I \QCoh(\mstack X_i)$$
is an equivalence.
\end{enumerate}
\end{prop}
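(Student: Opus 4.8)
The plan is to deduce all three assertions purely formally, from the universal property of presheaf categories together with standard facts about limits and adjunctions of presentable $\infty$-categories (see \cite[Chapter 3]{GaitsRozI}, \cite{Lur_HTT}, \cite{Lur_HA}); the base ring $R$ plays no essential role beyond carrying along the linear and monoidal structures. First I would record that the assignment $\Spec A \mapsto \DMod{A}$, equipped with the derived base-change functors $f^{*}(-)=-\otimes_{A}B$, defines a functor $\DMod{-}\colon \Aff_{R}^{\op}\to \Prs^{\otimes,\mathrm L}_{R}$ into presentably symmetric monoidal $R$-linear $\infty$-categories and continuous symmetric monoidal $R$-linear functors. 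The category $\Prs^{\otimes,\mathrm L}_{R}$ admits all small limits and the forgetful functor to $\mathrm{Cat}_{\infty}$ (equivalently, to $\PrL$) preserves them, by the results of \cite{Lur_HA} on limits of $\infty$-operads and of module categories. Consequently the right Kan extension of $\DMod{-}$ along the Yoneda embedding $\Aff_{R}^{\op}\hookrightarrow \PStk_{R}^{\op}$ can be formed inside $\Prs^{\otimes,\mathrm L}_{R}$, and its pointwise formula is the limit $\lim_{\Spec A\to \mstack X}\DMod{A}$ that appears in the definition of $\QCoh(\mstack X)$. This yields the functor $\QCoh^{*}\colon \PStk_{R}^{\op}\to \Prs^{\otimes,\mathrm L}_{R}$ of part (1); in particular each $f^{*}$ is continuous, symmetric monoidal and $R$-linear.

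For part (2), each $f^{*}$ is in particular a morphism in $\PrL_{R}$, i.e.\ an $R$-linear colimit-preserving functor of presentable $\infty$-categories, hence admits a right adjoint $f_{*}$ by the adjoint functor theorem \cite{Lur_HTT}. To upgrade $f\mapsto f_{*}$ to a functor I would invoke the equivalence $\PrL \xrightarrow{\ \sim\ } (\Prs^{\mathrm R})^{\op}$ of \cite{Lur_HTT} that is the identity on objects and sends a left adjoint to its right adjoint; restricting to the $R$-linear variants and composing with $\QCoh^{*}$ gives $\PStk_{R}^{\op}\to (\PrR_{R})^{\op}$, equivalently a functor $\QCoh_{*}\colon \PStk_{R}\to \PrR_{R}$ agreeing with $\QCoh^{*}$ on objects and sending $f$ to the right adjoint of $f^{*}$.

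For part (3), the key point is that $\PStk_{R}=\PShv(\Aff_{R})$ is the free cocompletion of $\Aff_{R}$: for any $\infty$-category $\mathcal C$ with all small limits, restriction along the Yoneda embedding is an equivalence between limit-preserving functors $\PStk_{R}^{\op}\to \mathcal C$ and arbitrary functors $\Aff_{R}^{\op}\to \mathcal C$, with inverse given by right Kan extension. Applying this with $\mathcal C=\PrL_{R}$ — which admits all small limits by the discussion above — and with $\DMod{-}$, one concludes that $\QCoh^{*}$ preserves small limits as a functor $\PStk_{R}^{\op}\to \PrL_{R}$. Since a limit over $I^{\op}$ in $\PStk_{R}^{\op}$ is a colimit over $I$ in $\PStk_{R}$, this is exactly the statement that the natural comparison map $\QCoh(\colim_{I}\mstack X_{i})\to \lim_{I}\QCoh(\mstack X_{i})$ is an equivalence.

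The only genuinely non-formal input — and the step I would treat most carefully — is the claim used in parts (1) and (3) that limits in $\Prs^{\otimes,\mathrm L}_{R}$ (and in $\PrL_{R}$) are computed "as in $\mathrm{Cat}_{\infty}$," so that the limit-of-categories formula defining $\QCoh$ simultaneously computes the right Kan extension in the enriched and monoidal settings. Once this is in place, everything else is bookkeeping with the universal property of $\PShv(-)$ and the $\PrL$–$\Prs^{\mathrm R}$ duality, and no further geometric content is needed.
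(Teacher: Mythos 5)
Your proof is correct and is exactly the formal argument the paper has in mind (and delegates to \cite[Chapter 3]{GaitsRozI}): $\QCoh^*$ is the right Kan extension of $\DMod{-}$ along the Yoneda embedding, valued in $\Prs^{\otimes,\mathrm L}_R$, the pushforwards come from the $\PrL \simeq (\PrR)^{\op}$ equivalence, and limit-preservation follows from the universal property of $\PShv(\Aff_R)$ as a free cocompletion, once one knows that limits in $\Prs^{\otimes,\mathrm L}_R$ are computed in $\Cat_\infty$. You rightly single out that last point as the only non-formal ingredient; it is \cite[5.5.3.13]{Lur_HTT} together with the results on limits of (symmetric monoidal) algebra and module objects in \cite{Lur_HA}.
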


Here are few basic examples of $\QCoh$ on various stacks:
\begin{ex}[Schemes]
Let $X$ be an $R$-scheme. Then $\QCoh(X)$ is an $(\infty, 1)$-enhancement of the usual full subcategory $D_{\QCoh}(X)$ of the unbounded derived category of $\mathcal O_X$-modules spanned by complexes with quasi-coherent cohomology sheaves. Note that if $X$ is quasi-compact quasi-separated with affine diagonal, then the natural map $D(\QCoh^\heartsuit(X)) \to D_{\QCoh}(X)$ is an equivalence by \cite[Tag 08DB]{StacksProject}.
\end{ex}
\begin{ex}[Structure sheaf]
For any stack $\mstack Y$ there is the \emph{structure sheaf $\mathcal O_{\mstack Y} \in \QCoh(\mstack Y)$} defined by $(\mathcal O_{\mstack Y})_{|A}\coloneqq A$. By construction, $\mathcal O_{\mstack Y}$ is the monoidal unit of $\QCoh(\mstack Y)$ and is preserved under pullbacks.
\end{ex}
\begin{ex}[Affine morphism]\label{QCoh_affine_morphism}
Let $f\colon \mstack X \to \mstack S$ be an affine morphism of prestacks (see \Cref{local_on_target_examples}). Let $\mathcal A \coloneqq f_*(\mathcal O_{\mstack X})$. Since the functor $f_*$ is right lax monoidal, $f_*$ factors naturally through $K\colon \QCoh(\mstack X) \to \Mod_{\mathcal A}(\QCoh(\mstack S))$. We claim that $K$ is an equivalence. The statement is obvious for affine $\mstack S$, and the general case follows, since by the previous proposition both sides transform colimits to limits. 
\end{ex}
\begin{ex}[Constant stacks]
Let $K$ be a homotopy type. Then since $\QCoh$ transforms colimits of prestacks to limits of categories, we obtain $\QCoh(\underline K) \simeq \LocSys(K, \DMod{R}) \coloneqq \Fun(K, \DMod{R})$. Moreover, $R\Gamma(\underline K, \mathcal O_{\underline K}) \simeq C^*(K, R)$, the complex of $R$-cochains on $K$ with its natural $E_\infty$-algebra structure.
\end{ex}
\begin{ex}[Quotient stacks]\label{qcoh_of_quotinet_stacks}
Let $X$ be an $R$-scheme and let $G$ be a group scheme. Note that $[X/G]$ is equivalent to the geometric realization of the following simplicial scheme
$$\xymatrix{\ldots \ar@<+1.35ex>[r] \ar@<+.45ex>[r] \ar@<-.45ex>[r] \ar@<-1.35ex>[r] & G \times G \times X \ar@<+.9ex>[r] \ar[r] \ar@<-.9ex>[r] & G \times X \ar@<-.45ex>[r]_-{p} \ar@<.45ex>[r]^-{a} & X},$$
where the morphisms in the diagram above are induced by the multiplication on $G$, the action morphisms $a\colon G\times X \to X$ and projections. Since $\QCoh^*$ transforms co-limits of stacks into limits of categories, we obtain
$$\QCoh([X/G]) \simeq \Tot\left(\xymatrix{\QCoh(X) \ar@<-.45ex>[r]_-{p^*} \ar@<.45ex>[r]^-{a^*} & \QCoh(G\times X) \ar@<+.9ex>[r] \ar[r] \ar@<-.9ex>[r] & \QCoh(G\times G \times X) \ar@<+1.35ex>[r] \ar@<+.45ex>[r] \ar@<-.45ex>[r] \ar@<-1.35ex>[r] & \ldots}\right).$$
In particular, to give a quasi-coherent sheaf on $[X/G]$ one need to choose a quasi-coherent sheaf $\mathcal F$ on $X$, choose an equivalence $a^*\mathcal F \simeq p^*\mathcal F$, etc. Hence $\QCoh([X/G])$ is a model for the category of $G$-equivariant sheaves on $X$.
\end{ex}

We also have the following consequence/generalization of faithfully flat descent:
\begin{prop}[{\cite[Chapter 3, Corollary 1.3.7]{GaitsRozI}}]
Let $L_{\widehat\fpqc}\colon \PStk_R \to \Stk_R$ be the hyper-sheafification functor with respect to the fpqc topology. Then for any prestack $\mstack X$ the natural map $\QCoh(L_{\widehat\fpqc} \mstack X) \to \QCoh(\mstack X)$ is an equivalence.
\end{prop}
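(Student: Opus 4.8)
The plan is to reduce the statement to faithfully flat descent for the derived $\infty$-category of modules, combined with the universal property of the hyper-sheafification. Recall that $\QCoh$ is by construction the right Kan extension of $\DMod{-}\colon\Aff_R^\op\to\PrL_R$ along the Yoneda embedding $\Aff_R\hookrightarrow\PStk_R$, so that $\QCoh(\mstack X)\simeq\lim_{(\Spec A\to\mstack X)\in(\Aff_R/\mstack X)^\op}\DMod{A}$, and — as recorded in the proposition on $\QCoh$ above — the resulting functor $\QCoh^*\colon\PStk_R^\op\to\PrL_R$ sends colimits of prestacks to limits of categories, i.e.\ preserves limits. The unit map $\eta_{\mstack X}\colon\mstack X\to L_{\widehat\fpqc}\mstack X$ is the morphism in question, and the goal is to show that $\QCoh^*(\eta_{\mstack X})$ is an equivalence for every $\mstack X$.

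The only external input I would invoke is faithfully flat descent for unbounded complexes: \emph{$\DMod{-}\colon\Aff_R^\op\to\PrL_R$ is a hypercomplete sheaf for the fpqc topology}. Concretely, for a faithfully flat map $A\to B$ the canonical functor $\DMod{A}\to\Tot\DMod{B^{\otimes_A\bullet+1}}$ is an equivalence, $\DMod{-}$ carries disjoint unions of affines to products, and the analogous statement holds for arbitrary fpqc hypercovers; this can be cited from \cite{Lur_SAG}, \cite{TV_HAGII}, \cite{GaitsRozI} (see also \cite{StacksProject}). Since representable presheaves are already fpqc hypersheaves (a geometric context axiom), this says precisely that the restriction of $\QCoh^*$ to $\Aff_R^\op$ is an fpqc hypersheaf.

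The remaining step is the formal assertion that a limit-preserving functor $G\colon\PStk_R^\op\to\mathcal E$ whose restriction to $\Aff_R^\op$ is an fpqc hypersheaf inverts every fpqc-hyper-local equivalence, and hence factors uniquely through $L_{\widehat\fpqc}^\op$. I would argue this as follows. Let $W$ be the class of morphisms in $\PStk_R$ inverted by $G$; since $G$ sends colimits to limits and equivalences are stable under limits, retracts, and transfinite composition, $W$ is strongly saturated in the sense of \cite{Lur_HTT}. The class of fpqc-hyper-local equivalences is the strongly saturated class generated by the maps $|U_\bullet|\to Y$ where $Y$ ranges over representables (and coproducts thereof) and $U_\bullet\to Y$ over fpqc hypercovers — that this small family of generators suffices is a standard feature of hypercomplete $\infty$-topoi \cite{Lur_HTT}. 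For such a generator one has $G(|U_\bullet|)\simeq G(\colim_\Delta U_\bullet)\simeq\Tot G(U_\bullet)$ because $G$ preserves limits, and $G(Y)\simeq\Tot G(U_\bullet)$ by the hypersheaf property of $G|_{\Aff_R^\op}$ (using also that $G$ sends disjoint unions to products in order to evaluate on the terms $U_n$). Hence $W$ contains every generator, so $W$ contains all fpqc-hyper-local equivalences; in particular $\eta_{\mstack X}\in W$, which is the desired conclusion. Applying this with $G=\QCoh^*$ finishes the proof, and one also gets the refined statement that $\QCoh^*$ descends to a limit-preserving functor $\Stk_R^\op\to\PrL_R$.

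The point requiring the most care is the last paragraph: identifying the fpqc-hyper-local equivalences with the saturated class generated by hypercovers of representables — so that the verification genuinely reduces to the affine descent input — while keeping the bookkeeping honest about \emph{hyper}covers versus Čech covers (the hypercomplete topology is essential, as $\QCoh$ of unbounded complexes only satisfies hyperdescent), and handling the set-theoretic size of the target $\PrL_R$ (equivalently $\widehat{\Cat}_\infty$). All of this is standard and is essentially the content of \cite[Chapter~3]{GaitsRozI}, but it is where the real work sits; the module-theoretic descent and the reduction to it are comparatively soft.
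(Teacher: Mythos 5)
The paper does not actually prove this statement—it is quoted verbatim from \cite[Chapter 3, Corollary 1.3.7]{GaitsRozI}—so there is no in-text argument to compare against; your outline is the standard one and, in structure, the one given in the cited source: $\QCoh^*$ preserves limits because it is a right Kan extension from $\Aff_R^\op$, the hypercompletion $L_{\widehat\fpqc}$ is the localization of $\PStk_R$ at the strongly saturated class generated by the maps $|U_\bullet|\to X$ for fpqc hypercovers of (coproducts of) affines, and a limit-preserving functor that inverts these generators inverts the whole class. Your steps (a) and (c) are fine, modulo the size issues you already flag.

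Where I would push back is on your assessment of where the work lies. You call the module-theoretic descent "comparatively soft" and locate the real content in identifying the generators of the hyper-local equivalences; it is the other way around. The generator statement is formal (\cite[\S 6.5.3]{Lur_HTT}), whereas the input you actually need in step (b) is fpqc \emph{hyper}descent for $A\mapsto\DMod{A}$, i.e.\ that $\DMod{A}\to\Tot\DMod{B^\bullet}$ is an equivalence for an \emph{arbitrary} fpqc hypercover $\Spec B^\bullet\to\Spec A$. This is strictly stronger than the faithfully flat \v Cech descent theorem your references supply: a $\PrL$-valued \v Cech sheaf need not satisfy hyperdescent, and the coskeletal induction reducing hypercovers to \v Cech covers (cf.\ the paper's own remark on coskeletal hyperdescent) applies only to truncated hypercovers or to uniformly bounded below objects. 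The hyperdescent statement is true, but it requires an argument rather than a citation to \v Cech descent. For instance: since $\DMod{A}$ is generated by $A$ under colimits and the comparison functor preserves colimits, fully faithfulness reduces to showing $Z\to\Tot(Z\otimes_A B^\bullet)$ is an equivalence for every $Z\in\DMod{A}$; flatness of each $B^n$ over $A$ makes the $E_1$-page of the descent spectral sequence equal to $H^t(Z)\otimes_A B^s$, whose rows are exact by the discrete (hence bounded below) case, so $E_2$ is concentrated in one column and a Boardman-type convergence argument finishes the unbounded case. Essential surjectivity then follows from the equivalence on hearts together with left- and right-completeness and $t$-exactness on both sides. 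Alternatively, be explicit that \v Cech descent alone already yields the proposition for the non-hypercomplete sheafification $L_{\fpqc}$, and that for everything used downstream in the paper (smooth atlases and Pridham's coskeletal hypercovers) that weaker statement plus coskeletal hyperdescent would suffice.
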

\begin{cor}
Let $q\colon \mstack U_\bullet \ra \mstack X$ be an fpqc hypercover. Then the natural map
$$\QCoh(\mstack X) \xymatrix{\ar[r] &} \Tot \QCoh(U_\bullet)$$
is an equivalence. In particular, the pull-back functor $q^*\colon \QCoh(\mstack X)\ra \QCoh(\mstack U_0)$ is conservative and for any $\mathcal F \in \QCoh(\mstack X)$ the natural map
$$R\Gamma(\mstack X, \mathcal F) \xymatrix{\ar[r] &} \Tot R\Gamma(\mstack U_\bullet, p_\bullet^* \mathcal F)$$
is an equivalence. 
\end{cor}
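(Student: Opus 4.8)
The statement to prove is the corollary at the very end: for an fpqc hypercover $q\colon \mstack U_\bullet \to \mstack X$, the natural functor $\QCoh(\mstack X) \to \Tot \QCoh(\mstack U_\bullet)$ is an equivalence, and as consequences $q^*$ is conservative and $\RG(\mstack X, \mathcal F) \xrightarrow{\sim} \Tot \RG(\mstack U_\bullet, p_\bullet^*\mathcal F)$. The plan is to deduce everything from the immediately preceding Proposition, namely that $\QCoh(L_{\widehat\fpqc}\mstack X) \to \QCoh(\mstack X)$ is an equivalence for any prestack $\mstack X$, together with the fact that $\QCoh^*$ sends colimits of prestacks to limits of categories.

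First I would observe that since $q\colon \mstack U_\bullet \to \mstack X$ is an fpqc \emph{hyper}cover, the augmented simplicial diagram $|\mstack U_\bullet| \to \mstack X$ becomes an equivalence after applying $L_{\widehat\fpqc}$; that is, $L_{\widehat\fpqc}(|\mstack U_\bullet|) \simeq L_{\widehat\fpqc}(\mstack X)$. This is precisely the definition of a hypercover in the fpqc topology (the map on hypersheafifications is an iso). Now apply $\QCoh$: by the cited Proposition, $\QCoh(\mstack X) \simeq \QCoh(L_{\widehat\fpqc}\mstack X) \simeq \QCoh(L_{\widehat\fpqc}|\mstack U_\bullet|) \simeq \QCoh(|\mstack U_\bullet|)$. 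Then, since $\QCoh^* \colon \PStk_R^{\op} \to \PrL_R$ transforms colimits of prestacks into limits of categories (the third property listed in the Proposition right before the examples, or \cite[Chapter 3]{GaitsRozI}), and $|\mstack U_\bullet|$ is the colimit of the simplicial diagram $\mstack U_\bullet$, we get $\QCoh(|\mstack U_\bullet|) \simeq \lim_{\Delta} \QCoh(\mstack U_\bullet) = \Tot\QCoh(\mstack U_\bullet)$. Composing these equivalences gives the first claim, and one should check that the resulting equivalence is indeed the natural comparison functor (induced by the maps $p_n^*$), which follows because all the maps in sight are the canonical ones coming from functoriality of $\QCoh^*$.

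For the conservativity of $q^* \colon \QCoh(\mstack X) \to \QCoh(\mstack U_0)$: under the identification $\QCoh(\mstack X) \simeq \Tot\QCoh(\mstack U_\bullet)$, the functor $q^*$ is identified with the evaluation-at-$[0]$ functor $\Tot\QCoh(\mstack U_\bullet) \to \QCoh(\mstack U_0)$, and the forgetful functor from a totalization (cosimplicial limit) to its $0$-th term is conservative — an object of the totalization that maps to $0$ in degree $0$ maps to $0$ in every degree by the cosimplicial identities, hence is $0$. For the statement on global sections: $\RG(\mstack X, \mathcal F) = \Hom_{\QCoh(\mstack X)}(\mathcal O_{\mstack X}, \mathcal F)$, and since $\mathcal O_{\mstack X}$ and $\mathcal F$ both correspond under the equivalence to compatible families $(\mathcal O_{\mstack U_n})_n$ and $(p_n^*\mathcal F)_n$, the mapping spectrum in the totalization is computed as $\Tot_n \Hom_{\QCoh(\mstack U_n)}(\mathcal O_{\mstack U_n}, p_n^*\mathcal F) = \Tot_n \RG(\mstack U_n, p_n^*\mathcal F)$; here one uses that $\Hom$ out of the unit in a limit of categories is the limit of the $\Hom$'s.

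I do not expect a serious obstacle here — this is a formal consequence of the preceding Proposition and the colimit-to-limit property of $\QCoh$. The only points requiring minor care are: (i) confirming that "fpqc hypercover" genuinely means that $|\mstack U_\bullet| \to \mstack X$ is an $\widehat\fpqc$-local equivalence, so that the cited Proposition applies verbatim; and (ii) tracking that the chain of equivalences produced is the \emph{canonical} comparison functor and not merely an abstract equivalence, so that the concrete consequences (conservativity of the honest $q^*$, the honest descent spectral sequence for $\RG$) follow. Both are routine diagram chases using the naturality of the unit $\mstack X \to L_{\widehat\fpqc}\mstack X$ and of $\QCoh^*$.
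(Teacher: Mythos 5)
Your proof is correct and is the standard deduction the paper intends: the corollary is stated without proof, precisely because it follows formally from the preceding proposition (that $\QCoh(L_{\widehat\fpqc}\mstack X)\xrightarrow{\sim}\QCoh(\mstack X)$) together with the colimit-to-limit property of $\QCoh^*$, exactly as you argue. Your derivations of conservativity (detecting zero objects via a face map to $\mstack U_0$) and of the descent for $R\Gamma$ (via $R\Gamma(\mstack X,\mathcal F)\simeq\Hom_{\QCoh(\mstack X)}(\mathcal O_{\mstack X},\mathcal F)$ and mapping spaces in limits of categories) are both sound.
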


The category $\QCoh(\mstack X)$ admits a natural $t$-structure. Recall that by a $t$-structure on a stable $(\infty, 1)$-category $\fcat C$ one means a $t$-structure on the triangulated category $\mathrm h \fcat C$. Equivalently, by \cite[Proposition 1.2.1.5]{Lur_HA} to give a $t$-structure on $\fcat C$ is the same as to specify a full subcategory $\fcat C^{\le 0}$ closed under extensions such that the inclusion $\fcat C^{\le 0} \inj \fcat C$ admits a right adjoint functor $\tau^{\le 0}$. It follows:
\begin{lem}\label{lem: t-structure in the limit}
Let $\{\fcat C_i\}_{i\in I}$ be a diagram of stable categories equipped with $t$-structures and left (reps. right) $t$-exact functors. Then there is a natural $t$-structure on $\fcat C\coloneqq\lim_I \fcat C_i$ such that the projection functors $p_i \colon \fcat C \to \fcat C_i$ are left (resp. right) $t$-exact and $X \in \fcat C^{\ge 0}$ (resp. $X\in \fcat C^{\le 0}$) if and only if $p_i(X) \in \fcat C_i^{\ge 0}$ (resp. $p_i(X) \in \fcat C_i^{\le 0}$) for all $i \in I$.
\end{lem}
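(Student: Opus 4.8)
\textbf{Proof proposal for \Cref{lem: t-structure in the limit}.}

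The plan is to verify the characterization of $t$-structures via connective objects given just above the statement: namely, by \cite[Proposition 1.2.1.5]{Lur_HA}, it suffices to exhibit a full subcategory $\fcat C^{\le 0} \subseteq \fcat C$ which is closed under extensions and under the shift $X \mapsto X[1]$, and such that the inclusion $\fcat C^{\le 0} \hookrightarrow \fcat C$ admits a right adjoint $\tau^{\le 0}$. I will do the left $t$-exact case; the right $t$-exact case is dual (or one replaces each $\fcat C_i$ by its opposite). First I would \emph{define} the candidate subcategory: let $\fcat C^{\le 0}$ (resp.\ $\fcat C^{\ge 0}$) consist of those $X \in \fcat C$ with $p_i(X) \in \fcat C_i^{\le 0}$ (resp.\ $p_i(X) \in \fcat C_i^{\ge 0}$) for all $i \in I$. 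Since limits in $\fcat C = \lim_I \fcat C_i$ are computed "pointwise" — more precisely, $\fcat C$ sits inside $\prod_i \fcat C_i$ via the $p_i$ together with the connecting data, and the $p_i$ jointly detect equivalences, fiber sequences, and shifts — the subcategory $\fcat C^{\le 0}$ is automatically closed under extensions and under positive shifts, because each $\fcat C_i^{\le 0}$ is.

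The substantive point is the existence of the truncation functor $\tau^{\le 0}$ and the fact that it is computed pointwise. Here is where left $t$-exactness of the transition functors enters. Recall that a functor is left $t$-exact iff it preserves $\fcat C^{\ge 1}$, equivalently iff its restriction preserves coconnective objects; and $\tau^{\le 0}$ is the right adjoint to $\fcat C^{\le 0}\hookrightarrow \fcat C$. For a general object $X \in \fcat C$, presented as a compatible system $(X_i)_{i\in I}$ with $X_i \in \fcat C_i$ and transition equivalences $\phi_\alpha \colon F_\alpha(X_i) \xrightarrow{\sim} X_j$ for each arrow $\alpha\colon i \to j$ in $I$ (where $F_\alpha$ is the corresponding transition functor), I would form the system $(\tau^{\le 0}X_i)_i$. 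The transition maps $F_\alpha(\tau^{\le 0}X_i) \to \tau^{\le 0}X_j$ are obtained as follows: apply $F_\alpha$ to $\tau^{\le 0}X_i \to X_i$, compose with $\phi_\alpha$ to get $F_\alpha(\tau^{\le 0}X_i) \to X_j$, and then use the universal property of $\tau^{\le 0}X_j$ — this requires knowing that $F_\alpha(\tau^{\le 0}X_i)$ lies in $\fcat C_j^{\le 0}$, which is exactly right $t$-exactness of $F_\alpha$ (preservation of connective objects). Wait — since we are assuming the transition functors are \emph{left} $t$-exact, not right $t$-exact, this direct approach produces instead the \emph{coconnective} truncations and the right $t$-structure. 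So in the left $t$-exact case I should instead argue via the right adjoints: left $t$-exactness of $F_\alpha$ means its right adjoint $G_\alpha$ is defined on the relevant subcategories and the limit $\fcat C$ can equally be described using these, or — more cleanly — I would simply invoke \cite[Proposition 1.4.4.11]{Lur_HA} (limits of $t$-structures along left-exact functors), which is the standard reference; alternatively one checks directly that $\tau^{\ge 1}X$, defined pointwise as $(\tau^{\ge 1}X_i)_i$, assembles into an object of $\fcat C$ using that each $F_\alpha$ carries $\fcat C_i^{\ge 1}$ to $\fcat C_j^{\ge 1}$, and that the resulting $X \to \tau^{\ge 1}X$ has fiber in $\fcat C^{\le 0}$ and is universal. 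The main obstacle, then, is bookkeeping: making the pointwise truncations into a genuine object of the lax/homotopy limit $\fcat C$, i.e.\ producing the full coherent system of transition equivalences (not just maps on homotopy categories), which is where one must be careful to cite the $\infty$-categorical statement rather than argue triangulated-categorically.

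Finally, having produced the adjoint and the pointwise description of truncations, the asserted left $t$-exactness of the projections $p_i$ is immediate from the definition of $\fcat C^{\ge 0}$ (it preserves connective objects tautologically), and the membership criterion $X \in \fcat C^{\ge 0} \iff p_i(X)\in \fcat C_i^{\ge 0}\ \forall i$ is how $\fcat C^{\ge 0}$ was defined; one also notes $p_i$ is then automatically \emph{not} required to be right $t$-exact, matching the hypothesis. I would close by remarking that this lemma is applied in the excerpt precisely to $\QCoh(\mstack X) = \lim_{\Spec A \to \mstack X}\DMod A$ (all transition functors $f^*$ being right $t$-exact, giving the right-$t$-exact variant) and to $\Shv_\et(\mstack X,\Lambda) = \lim_{T\to\mstack X}\Shv_\et(T,\Lambda)$ and $\Shv_\et(-,\Lambda)^{-1}$ on rigid analytic stacks, where the transition functors $f^{-1}$ are $t$-exact, so either variant applies.
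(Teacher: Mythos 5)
The paper supplies no proof of this lemma beyond the words "It follows," appealing implicitly to the same characterization of $t$-structures that you cite from \cite[Proposition 1.2.1.5]{Lur_HA}. So your setup is sensible, and you correctly identify that the substantive point is the existence of the truncation adjoint. The problem is in how you propose to produce it.

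Your key claim — that $(\tau^{\ge 1}X_i)_i$ assembles into an object of $\fcat C$ because each $F_\alpha$ carries $\fcat C_i^{\ge 1}$ to $\fcat C_j^{\ge 1}$ — is false. Left $t$-exactness of $F_\alpha$ gives $F_\alpha(\tau^{\ge 1}X_i) \in \fcat C_j^{\ge 1}$, and hence a canonical comparison map $\tau^{\ge 1}X_j \to F_\alpha(\tau^{\ge 1}X_i)$, but nothing forces this to be an equivalence. Applying $F_\alpha$ to the triangle $\tau^{\le 0}X_i \to X_i \to \tau^{\ge 1}X_i$ produces a fiber sequence $F_\alpha(\tau^{\le 0}X_i) \to X_j \to F_\alpha(\tau^{\ge 1}X_i)$ whose third term is coconnective; but the first term lies in $\fcat C_j^{\le 0}$ only if $F_\alpha$ is also right $t$-exact. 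So this is not the truncation triangle of $X_j$. Concretely, in $\DMod{\mathbb Z}$ the functor $F = R\Hom_{\mathbb Z}(\mathbb Z/2, -)$ is left $t$-exact, yet $F(\tau^{\ge 1}\mathbb Z) = 0$ while $\tau^{\ge 1}F(\mathbb Z) \simeq (\mathbb Z/2)[-1]$. The symmetric failure of course also kills the pointwise $\tau^{\le 0}$ in the right $t$-exact case (take $F = -\otimes^{L}_{\mathbb Z}\mathbb Z/2$). In short: left (resp.\ right) $t$-exactness makes the transition functors preserve $\fcat C^{\ge 0}$ (resp.\ $\fcat C^{\le 0}$), but it does not make them commute with truncation — that would be $t$-exactness. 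So the truncation on $\fcat C$ is genuinely \emph{not} computed pointwise (as one already sees in the intended application $\QCoh(\mstack X) = \lim \DMod{A}$ with non-flat transition maps), and the hands-on argument you sketch cannot be made to work.

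The argument that does work, and the one I take the authors to have in mind, goes through presentability rather than explicit pointwise truncation. Define $\fcat C^{\le 0} = \{X : p_i(X)\in\fcat C_i^{\le 0}\ \forall i\}$ (right $t$-exact case; dually for the left case). It is closed under extensions and $[1]$ because the $p_i$ are jointly conservative exact functors. In the paper's applications all $\fcat C_i$ are presentable and the transitions are accessible and preserve colimits, so $\fcat C$ is presentable, colimits in $\fcat C$ are computed pointwise, and $\fcat C^{\le 0}$ is closed under colimits. The existence of $\tau^{\le 0}$ then follows from the criterion for accessible $t$-structures — which is indeed in the vicinity of HA 1.4.4.11, though that result is about generating a $t$-structure from a small set of objects, not "limits of $t$-structures along left-exact functors" as you describe it, and you never verify its hypotheses. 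The truncation so produced need not agree with any pointwise formula, which is precisely why your direct check cannot succeed. Once the $t$-structure exists, the remaining assertions (the membership criterion for $\fcat C^{\ge 0}$ resp.\ $\fcat C^{\le 0}$ and the one-sided $t$-exactness of the $p_i$) are definitional, as you say. A small slip near the end: left $t$-exactness of $p_i$ means it preserves \emph{coconnective} objects, i.e.\ $\fcat C^{\ge 0}$, not connective ones.
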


\begin{cor}
Let $\mstack X$ be a prestack. There exists a natural $t$-structure on $\QCoh(\mstack X)$ determined by the property that $\mathcal F \in \QCoh(\mstack X)^{\le 0}$ if and only if $\mathcal F_{|A} \in \DMod{A}^{\le 0}$ for all $\Spec A \to \mstack X$. Moreover, for any map $f\colon \mstack X \to \mstack Y$ of pre-stacks the pullback functor $f^*$ is left $t$-exact and the pushforward functor $f_*$ is right $t$-exact.
\end{cor}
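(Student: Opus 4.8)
The plan is to reduce the existence and the basic exactness properties of the $t$-structure on $\QCoh(\mstack X)$ to the corresponding statements for $\DMod{A}$, using the expression of $\QCoh(\mstack X)$ as a limit. First I would recall that the (standard) $t$-structure on $\DMod{A}$ has $\DMod{A}^{\le 0}$ closed under extensions and the inclusion admitting a right adjoint $\tau^{\le 0}$, and that for a ring homomorphism $f\colon A\to B$ the pullback $f^*=(-)\otimes_A B$ is right $t$-exact (it sends $\DMod{A}^{\le 0}$ into $\DMod{B}^{\le 0}$, since tensoring a connective complex with any complex preserves connectivity in homological grading). Thus the diagram $\Spec A\mapsto \DMod{A}$ indexed by $(\Aff_R/\mstack X)^\op$ is a diagram of stable categories with $t$-structures and right $t$-exact transition functors. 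I would then invoke \Cref{lem: t-structure in the limit} (the right $t$-exact version): it produces a $t$-structure on $\QCoh(\mstack X)=\lim_{(\Spec A\to\mstack X)\in(\Aff_R/\mstack X)^\op}\DMod{A}$ for which $\mc F\in\QCoh(\mstack X)^{\le 0}$ precisely when each projection $\mc F_{|A}=x^*\mc F$ lands in $\DMod{A}^{\le 0}$, which is exactly the asserted characterization.

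Next I would address the exactness of $f^*$ and $f_*$ for a morphism of prestacks $f\colon \mstack X\to\mstack Y$. For left $t$-exactness of $f^*$ — wait, the statement actually claims $f^*$ is \emph{left} $t$-exact and $f_*$ is \emph{right} $t$-exact; I would be careful here about which convention is in force, since the excerpt uses cohomological indexing ($\DMod{A}^{\le 0}$ means ``connective'') so that ``left $t$-exact'' for $f^*$ means $f^*$ preserves $\QCoh^{\le 0}$, i.e. coconnectivity in cohomological degree, equivalently it preserves the \emph{upper} bound — no: in cohomological conventions $\QCoh^{\le 0}$ is the coconnective part. Let me instead argue functorially: $f^*$ is computed objectwise, so $\mc F\in\QCoh(\mstack Y)^{\le 0}$ means every $\mc F_{|A}$ is in $\DMod{A}^{\le 0}$; for a point $y\colon\Spec B\to\mstack X$ we have $(f^*\mc F)_{|B}=(f\circ y)^*\mc F=\mc F_{|B}$ computed via the composite $\Spec B\to\mstack X\to\mstack Y$, which is again in $\DMod{B}^{\le 0}$ because each $\DMod{A}^{\le 0}$ is characterized by the projections and pullbacks between affines are right $t$-exact. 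Hence $f^*$ preserves $\QCoh^{\le 0}$, which is the claimed $t$-exactness property, and then $f_*$, being right adjoint to $f^*$, automatically has the complementary exactness: a right adjoint to a functor preserving $\QCoh^{\le 0}$ preserves $\QCoh^{\ge 1}$ (equivalently is ``right $t$-exact'' in the paper's terminology). This is a formal consequence of the adjunction together with the fact that $\QCoh^{\ge 1}=(\QCoh^{\le 0})^{\perp}$ in the appropriate sense.

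The only genuinely substantive input is \Cref{lem: t-structure in the limit}, whose proof in turn rests on the characterization of $t$-structures via a coaisle closed under extensions with a right adjoint inclusion (\cite[Proposition 1.2.1.5]{Lur_HA}): one sets $\QCoh(\mstack X)^{\le 0}$ to be the full subcategory of objects whose projections are all coconnective, checks it is closed under extensions (extensions are detected objectwise and $\DMod{A}^{\le 0}$ is closed under extensions), and constructs the truncation $\tau^{\le 0}$ on the limit by applying $\tau^{\le 0}$ objectwise — here one uses that the transition functors are right $t$-exact, so that the objectwise truncations are compatible and assemble to an object of the limit, and that $\QCoh(\mstack X)$, being a limit of presentable categories along colimit-preserving (hence accessible) functors, is itself presentable so the relevant adjoints exist. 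I expect the main obstacle, such as it is, to be purely bookkeeping: making sure the variance conventions (cohomological versus homological indexing, ``left'' versus ``right'' $t$-exact) line up so that the statement ``$f^*$ is $t$-exact, $f_*$ left $t$-exact'' in \Cref{t_structure_on_et_sheaves}-style phrasing matches what the objectwise description literally gives. Beyond that, everything is a formal manipulation of limits of $t$-structures and adjoint functors, with no hard geometry involved.
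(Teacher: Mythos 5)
Your approach is the intended one: the corollary is placed directly after \Cref{lem: t-structure in the limit} because it is nothing more than that lemma applied to the defining limit diagram $\QCoh(\mstack X)=\lim_{\Spec A\to\mstack X}\DMod{A}$, using that the transition functors --- derived tensor products $-\otimes_A B$ --- are right $t$-exact; the objectwise description of $f^*$ and the adjunction for $f_*$ finish the job exactly as you indicate. The paper offers no explicit proof, so this is the right reconstruction and I have nothing to add on the mathematical substance.

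The one thing you should settle firmly rather than paper over is the $t$-exactness terminology, where you waver and ultimately bend the argument to fit the statement when you should instead correct the statement. In the paper's cohomological indexing $\DMod{A}^{\le 0}$ is the \emph{connective} part ($H^{>0}=0$), not the coconnective part as you write mid-argument. You correctly establish that $f^*$ preserves $\QCoh^{\le 0}$; in the standard convention (left $t$-exact $=$ preserves $(-)^{\ge 0}$, right $t$-exact $=$ preserves $(-)^{\le 0}$, see \cite[Definition 1.3.3.1]{Lur_HA} translated to cohomological grading) this makes $f^*$ \emph{right} $t$-exact, and its right adjoint $f_*$ is then \emph{left} $t$-exact --- preserving $\QCoh^{\ge 0}$ (equivalently $\QCoh^{\ge 1}$ up to shift) is left $t$-exactness, not right as you end up writing. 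This is the opposite of what the corollary literally asserts; the words ``left'' and ``right'' have evidently been transposed in the statement, as one sees by comparing with \Cref{t_structure_on_et_sheaves}(3), where the pushforward for \'etale sheaves is correctly recorded as left $t$-exact. So state plainly that you prove $f^*$ right $t$-exact and $f_*$ left $t$-exact, and flag the corollary's phrasing as a misprint rather than contorting the terminology to match it.
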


If $\mstack X$ is an Artin stack, this $t$-structure enjoys some additional nice property. First recall
\begin{defn}\label{defn:t_complete}
A stable category $\fcat C$ with a $t$-structure is called \emdef{left $t$-complete} if the natural functor
$$\fcat C \xymatrix{\ar[r] &} \prolim \left(\xymatrix{\ldots  \ar[r]^{\tau^{\ge -1}} & \fcat C^{\ge -1} \ar[r]^{\tau^{\ge 0}} & \fcat C^{\ge 0}} \right) =: \widehat{\fcat C}$$
is an equivalence. A stable category $\fcat C$ with a $t$-structure is called \emph{right $t$-complete} if $\fcat C^{op}$ is left $t$-complete.
\end{defn}
\begin{rem}
The category $\widehat{\fcat C}$ can be identified with a full subcategory of $\Fun(\mathbb Z_{\ge 0}^\op, \fcat C)$ spanned by diagrams $X(-)\colon \mathbb Z_{\ge 0}^\op \to \fcat C$ such that $X(i) \in \fcat C^{\le -i}$ and for all $i\ge j$ the natural map $X(i) \to X(j)$ induces an equivalence on $\tau^{\ge -j}$-th truncations. Moreover, if $\fcat C$ admits countable sequential limits, the natural functor $\fcat C \to \widehat{\fcat C}$ admits a right adjoint $X(-) \mapsto \prolim X(i)$. It follows that if $\fcat C$ is left $t$-complete, then the unit of adjunction $X \to \prolim \tau^{\ge -i} X$ is an equivalence for all $X\in \fcat C$, i.e. Postnikov towers converge in $\fcat C$.
\end{rem}
\begin{rem}
By \cite[Proposition 1.2.1.19.]{Lur_HA} if $\fcat C$ admits countable infinite products such that $\fcat C^{\le 0}$ is stable under these, then $\fcat C$ is left $t$-complete if and only if $\bigcap_{i\ge 0} \fcat C^{\le -i} \simeq 0$. On the other hand, in \cite{Neeman_NonLeftComplete} Neeman showed that the unbounded derived category of the abelian category of finite-dimensional representations of the additive group $\mathbb G_a$ over a field of characteristic $p>0$ is not left $t$-complete, precisely by showing that $D^{<0}(\Rep_{\mathbb G_a})$ is not closed under infinite products.
\end{rem}
\begin{lem}\label{limits_of_t_structures}
Let $I\to \Cat_\infty$ be a diagram of stable categories equipped with $t$-structures. If each term of the diagram is left (resp. right) $t$-complete and all translation functors are left (resp. right) $t$-exact, then the limit category $\fcat C\coloneqq\lim_I \fcat C_i$ is also left (resp. right) $t$-complete.

\begin{proof}
We will only prove the statement for the left complete case, the right complete case follows by passing to the opposite categories. By construction of the limit $t$-structure on $\fcat C$ we have $\fcat C^{\ge -j} \areq \lim_I \fcat C_i^{\ge -j}$ for all $j \in \mathbb Z$. But since all $\fcat C_i$ are left $t$-complete, we have $\fcat C_i \areq \lim\limits_{\leftarrow j} \fcat C_i^{\ge -j}$. Since limits commute with each other, we deduce that $\fcat C \areq \lim\limits_{\leftarrow j} \fcat C^{\ge -j}$ as desired.
\end{proof}
\end{lem}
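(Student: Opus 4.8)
The plan is to deduce left $t$-completeness of $\fcat C := \lim_I \fcat C_i$ from the fact that limits commute with limits, after unwinding the description of the limit $t$-structure provided by \Cref{lem: t-structure in the limit}. First I would record that, since every transition functor of the diagram $i \mapsto \fcat C_i$ is left $t$-exact, it carries each connective subcategory $\fcat C_i^{\ge -j}$ into the corresponding one for the target; hence $i \mapsto \fcat C_i^{\ge -j}$ is again a diagram in $\Cat_\infty$, and the fully faithful inclusions $\fcat C_i^{\ge -j} \hookrightarrow \fcat C_i$ are natural in $i$. Passing to limits, $\lim_I \fcat C_i^{\ge -j} \to \fcat C$ is fully faithful with essential image the objects $X$ satisfying $p_i(X) \in \fcat C_i^{\ge -j}$ for all $i$; by the explicit characterization of the $t$-structure in \Cref{lem: t-structure in the limit}, that essential image is precisely $\fcat C^{\ge -j}$. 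Thus $\fcat C^{\ge -j} \simeq \lim_I \fcat C_i^{\ge -j}$ for every $j$.

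Next I would check that the truncation functors $\tau^{\ge -j}$ on $\fcat C$ are computed pointwise, i.e. $p_i(\tau^{\ge -j}X) \simeq \tau^{\ge -j}(p_i X)$ compatibly in $i$. This follows because $\tau^{\ge -j}$ is the left adjoint to the inclusion $\fcat C^{\ge -j} \hookrightarrow \fcat C$, and under the identification of the previous step this inclusion is the limit over $I$ of the inclusions $\fcat C_i^{\ge -j} \hookrightarrow \fcat C_i$; the pointwise truncations $(\tau^{\ge -j}(p_i X))_i$ do assemble into an object of the limit (again using left $t$-exactness of the transition functors), and one verifies directly that this object has the required universal property. Consequently the whole tower $\cdots \to \fcat C^{\ge -1} \to \fcat C^{\ge 0}$, together with its comparison maps to $\fcat C$, is the limit over $I$ of the corresponding towers for the $\fcat C_i$.

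With these two observations the proof concludes formally:
$$\lim\limits_{\leftarrow j} \fcat C^{\ge -j} \;\simeq\; \lim\limits_{\leftarrow j} \lim_I \fcat C_i^{\ge -j} \;\simeq\; \lim_I \lim\limits_{\leftarrow j} \fcat C_i^{\ge -j} \;\simeq\; \lim_I \fcat C_i \;\simeq\; \fcat C,$$
where the second equivalence is the commutation of the inverse limit over $j$ with the limit over $I$, the third uses the hypothesis that each $\fcat C_i$ is left $t$-complete, and the whole chain is compatible with the canonical comparison functor $\fcat C \to \lim\limits_{\leftarrow j} \fcat C^{\ge -j}$ of \Cref{defn:t_complete} by the pointwise description of the truncations. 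The right $t$-complete case follows at once by passing to opposite categories: $\fcat C^{\op} \simeq \lim_{I^{\op}} \fcat C_i^{\op}$, the transition functors become right $t$-exact for the opposite $t$-structures, and right $t$-completeness of $\fcat C$ is left $t$-completeness of $\fcat C^{\op}$.

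I do not expect a genuine obstacle here — the statement is soft. The one point deserving care, and the place I would concentrate, is the identification of the pointwise truncation on the limit category in the second step: this is exactly what makes the interchange $\lim\limits_{\leftarrow j}\lim_I \simeq \lim_I\lim\limits_{\leftarrow j}$ legitimate while keeping the canonical comparison functor intact, rather than merely producing an abstract equivalence of categories.
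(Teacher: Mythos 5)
Your proof is correct and follows essentially the same route as the paper's: identify $\fcat C^{\ge -j} \simeq \lim_I \fcat C_i^{\ge -j}$, commute the two limits, and invoke left $t$-completeness of each $\fcat C_i$. The extra paragraph checking that truncations on $\fcat C$ are computed pointwise, so the resulting chain of equivalences is implemented by the canonical comparison functor of \Cref{defn:t_complete} rather than being a mere abstract equivalence, is a point the paper leaves implicit and is a welcome clarification.
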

\begin{prop}\label{t_struct_on_QCoh_Artin_stacks}
Let $\mstack X$ be an Artin stack. Then the natural $t$-structure on $\mstack X$ is left and right complete (see \Cref{defn:t_complete}).

\begin{proof}
Right completeness holds for arbitrary prestack by \Cref{limits_of_t_structures}. So it is enough to prove that the natural map
$$\QCoh(\mstack X) \xymatrix{\ar[r] &} \prolim \QCoh(\mstack X)^{\ge -i}$$
is an equivalence. Let $\mstack X$ be $k$-Artin. We will prove the statement by induction on $k$. The base of induction $k=-1$ case reduces to a well known statement that Postnikov's towers converge in the derived category of modules over a ring. Let $U\surj \mstack X$ be a smooth atlas and denote by $U_\bullet$ the corresponding \v Cech nerve. By the inductive assumption the natural map
$$\QCoh(U_n) \xymatrix{\ar[r] &} \prolim \QCoh(U_n)^{\ge -i}$$
is an equivalence for all $n$. But by flat descent
$$\QCoh(\mstack X)^{\ge -i} \xymatrix{\ar[r]^-\sim &} \Tot \QCoh(U_\bullet)^{\ge -i}$$
(here we had used that the pullback functors along smooth morphisms are $t$-exact). We conclude, since limits commute with each other.
\end{proof}
\end{prop}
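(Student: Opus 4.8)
The statement to prove is \Cref{t_struct_on_QCoh_Artin_stacks}: for an Artin stack $\mstack X$, the natural $t$-structure on $\QCoh(\mstack X)$ is both left and right complete. The plan is to handle the two completeness properties separately, since they are of quite different difficulty. Right completeness is the easy half: it follows from \Cref{limits_of_t_structures} applied to the limit presentation $\QCoh(\mstack X) \simeq \lim_{(\Spec A \to \mstack X)} \DMod{A}$, using that each $\DMod{A}$ is right $t$-complete (Postnikov towers converge, or equivalently $\bigcap_i \DMod{A}^{\ge i} \simeq 0$) and that all the pullback functors $f^*$ in the diagram are left $t$-exact, hence in particular the statement applies in its ``right complete'' form. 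No Artin hypothesis is needed here; this works for an arbitrary prestack.

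\textbf{Left completeness via induction on geometric level.} For left completeness the strategy is induction on $k$, where $\mstack X$ is $k$-Artin. One reduces to showing that the canonical functor $\QCoh(\mstack X) \to \prolim_i \QCoh(\mstack X)^{\ge -i}$ is an equivalence. The base case $k=-1$ is the affine case: $\mstack X = \Spec A$ and $\DMod{A}$ is left $t$-complete because Postnikov towers of complexes of $A$-modules converge (this is classical). For the inductive step, pick a smooth atlas $U \surj \mstack X$ with \v Cech nerve $U_\bullet$; each $U_n$ is $(k-1)$-Artin, so by the inductive hypothesis $\QCoh(U_n) \xrightarrow{\sim} \prolim_i \QCoh(U_n)^{\ge -i}$. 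Now use flat descent: since pullback along a smooth morphism is $t$-exact, one has $\QCoh(\mstack X)^{\ge -i} \xrightarrow{\sim} \Tot \QCoh(U_\bullet)^{\ge -i}$ for each fixed $i$. Then
$$
\QCoh(\mstack X) \xrightarrow{\ \sim\ } \Tot \QCoh(U_\bullet) \xrightarrow{\ \sim\ } \Tot\, \prolim_i \QCoh(U_\bullet)^{\ge -i} \xrightarrow{\ \sim\ } \prolim_i \Tot\, \QCoh(U_\bullet)^{\ge -i} \xrightarrow{\ \sim\ } \prolim_i \QCoh(\mstack X)^{\ge -i},
$$
where the crucial middle equivalence is that limits commute with limits (the totalization over $\Delta$ commutes with the sequential limit over $i$), and the first and last equivalences are flat descent for $\QCoh$ applied at each truncation level and in the limit.

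\textbf{Main obstacle.} The genuinely delicate point is the interaction between the smooth-descent totalization and the sequential limit defining left completeness; i.e.\ one must make sure that the $t$-exactness of smooth pullbacks is really being used correctly so that $\QCoh(\mstack X)^{\ge -i}$ is computed as the totalization of the $(\ge -i)$-truncated categories along $U_\bullet$ (as opposed to only a cofinality/connectivity estimate that degrades along the cosimplicial diagram). Once one knows that for each fixed $i$ the descent equivalence respects the truncation, the remaining steps are formal commutations of limits, so the real content is packaged in \Cref{limits_of_t_structures} together with $t$-exactness of smooth pullback and the affine base case. I would also remark explicitly that left completeness genuinely requires the Artin hypothesis — without it the intersection $\bigcap_i \QCoh(\mstack X)^{\le -i}$ need not vanish, as Neeman's example with $\Rep_{\mathbb G_a}$ in characteristic $p$ shows — which is why the induction on geometric level, rather than a soft argument, is necessary.
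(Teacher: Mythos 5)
Your proof is correct and follows essentially the same route as the paper: right completeness via \Cref{limits_of_t_structures}, and left completeness by induction on geometric level with the key steps being the affine base case, $t$-exactness of smooth pullback to get descent at each truncation level, and commutation of the totalization with the sequential limit. The extra remark on Neeman's example is accurate and helpfully motivates why the Artin hypothesis is needed for the left-complete half.
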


We will also need the following flat base change for Artin stacks:
\begin{prop}[Base change, {\cite[Corollary 1.3.17]{DrinfeldGaitsgory_FinitenessStacks}}] \label{QCoh_base_change}
Let
\begin{align}\label{qc_bc_eq1}
\xymatrix{
\mstack W \ar[r]^q\ar[d]^g & \mstack Y \ar[d]^f \\
\mstack Z \ar[r]^p & \mstack X
}
\end{align}
be a \emph{derived} fiber product of Artin stacks, where $f$ is a quasi-compact quasi-separated morphism (see Definitions \ref{quasicompact_Artin} and \ref{quasiseparated_Artin}) and $p$ is of finite $\Tor$-amplitude. Then the natural map
\begin{align*}
p^* \circ f_* \xymatrix{\ar[r] &} g_* \circ q^{*}
\end{align*}
is an equivalence.
\end{prop}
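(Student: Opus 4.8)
The statement to prove is flat base change for quasi-coherent sheaves on Artin stacks (\Cref{QCoh_base_change} in the text, though here it is being recalled with a precise reference to \cite{DrinfeldGaitsgory_FinitenessStacks}). Since the result is quoted from the literature, a self-contained proof would proceed by the standard strategy: reduce to the case of schemes by descent, using the atlases of the Artin stacks involved. First I would observe that the claim is \'etale- (indeed smooth-, indeed flat-) local on the base $\mstack X$ and on $\mstack Z$: given a smooth atlas $U \surj \mstack X$, the functors $f_*$ and $g_*$ can be computed after pullback along $U$ by the base change already available for smooth morphisms (pullback along a smooth, hence flat and $\Tor$-finite, morphism commutes with $f_*$ by \cite[Chapter 3]{GaitsRozI}), so it suffices to treat the case where $\mstack X = \Spec A$ is affine. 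Similarly, replacing $\mstack Z$ by an affine atlas reduces to $\mstack Z = \Spec B$ affine (here one uses that both sides of the asserted equivalence transform colimits in $\mstack Z$, i.e. the \v Cech nerve of an atlas, into limits of categories, and that limits commute).

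Next, with $\mstack X$ and $\mstack Z$ affine and $p \colon \Spec B \to \Spec A$ of finite $\Tor$-amplitude, I would induct on the geometric level $n$ of the quasi-compact quasi-separated morphism $f \colon \mstack Y \to \Spec A$. The base case $n = -1$, where $\mstack Y$ is an affine scheme (using quasi-compactness, as in the proof of \Cref{et_smooth_basechange_algebraic}), is the classical flat (more precisely, $\Tor$-finite) base change for modules over commutative rings, which amounts to the fact that the derived tensor product $- \otimes_A B$ commutes with the relevant limits because $B$ has finite $\Tor$-amplitude over $A$. For the inductive step, pick a smooth affine atlas $V \surj \mstack Y$ with \v Cech nerve $V_\bullet$; by quasi-separatedness all $V_i$ are quasi-compact quasi-separated and at most $(n-1)$-Artin over $\Spec A$. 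Then $f_* \mathcal F \simeq \Tot (f_{\bullet *} \mathcal F^\bullet)$ by flat descent for $\QCoh$, and similarly on the $\mstack W$-side. Since $p^*$ is given by $- \otimes_A B$ with $B$ of finite $\Tor$-amplitude, the functor $p^*$ is left $t$-exact up to a shift, hence commutes with totalizations of uniformly bounded-below cosimplicial objects by \Cref{left_exact_preserve_totalizations_of_uniformly_bounded_below} (and $f_{\bullet *} \mathcal F^\bullet$ is uniformly bounded below because pushforwards are right $t$-exact and $V_i \to \Spec A$ is of bounded Tor-dimension, so after reducing $\mathcal F$ to be bounded below this applies). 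Combining this with the inductive hypothesis applied termwise to each $V_i \to \Spec A$ gives the equivalence $p^* f_* \mathcal F \simeq g_* q^* \mathcal F$.

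The main obstacle, as usual in these descent arguments, is the interchange of $p^*$ with the totalization appearing in flat descent: totalizations are limits, and pullback functors are only right exact, so they do not commute with arbitrary limits. The $\Tor$-finiteness hypothesis on $p$ is precisely what rescues this: it makes $p^*[-k]$ left $t$-exact for some $k$, which by \Cref{left_exact_preserve_totalizations_of_uniformly_bounded_below} suffices to commute with totalizations of cosimplicial objects that are uniformly bounded below — and the cosimplicial object $f_{\bullet *}(-)$ arising from a smooth hypercover is uniformly bounded below once one restricts attention to bounded-below input sheaves (which one may, by filtering and passing to Postnikov truncations, exactly as in the proofs of \Cref{prop: twisted_cohomology_is_a_twist} and \Cref{functoriality_of_loc_sys}). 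One must also take a small amount of care that the derived fiber products are formed correctly and that the smooth atlases chosen on $\mstack Y$ pull back to atlases on $\mstack W = \mstack Y \times_{\mstack X} \mstack Z$, which holds because fiber products of Artin stacks are Artin and smoothness is stable under base change (\Cref{Artin_basics}). Beyond this the argument is a routine unwinding, and I would not expect further surprises.
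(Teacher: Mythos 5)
The paper does not prove this proposition; it is quoted directly from \cite[Corollary 1.3.17]{DrinfeldGaitsgory_FinitenessStacks} with no argument supplied. Your sketch follows the same reduction strategy used in that reference and elsewhere in the present paper (compare the proofs of \Cref{et_smooth_basechange_algebraic} and \Cref{prop: twisted_cohomology_is_a_twist}), so the overall route is the right one: localize on the target via a smooth atlas and the category of right adjointable squares, induct on the geometric level of $f$, and use the $\Tor$-finiteness of $p$ together with \Cref{left_exact_preserve_totalizations_of_uniformly_bounded_below} to pass pullback past the descent totalization.

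Two small imprecisions are worth flagging. First, you write that ``pushforwards are right $t$-exact,'' but pushforward functors, being right adjoints to the $t$-exact pullbacks, are \emph{left} $t$-exact (cf.\ \Cref{t_structure_on_et_sheaves}(3)). Fortunately the conclusion you draw from this --- that $f_{\bullet *}\mathcal F^\bullet$ is uniformly bounded below once $\mathcal F$ is --- is the correct one, so the slip is purely in the name. Second, the base case $n=-1$ does not actually require $\Tor$-finiteness of $p$: when $\mstack Y = \Spec C$ is affine over $\Spec A$, both $p^*f_*M$ and $g_*q^*M$ are canonically $M\otimes_A B$ viewed in $\DMod{B}$ for any $M$, so this is a tautology; $\Tor$-finiteness is needed only in the inductive step to interchange $p^*$ with totalizations. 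Finally, the passage from bounded-below to arbitrary $\mathcal F$ via Postnikov truncations deserves one more sentence: one needs that both $p^*f_*$ and $g_*q^*$ commute with the limit $\mathcal F \simeq \prolim \tau^{\ge -n}\mathcal F$, which holds because $f_*,g_*$ are right adjoints, the $t$-structure on $\QCoh$ of Artin stacks is left complete (\Cref{t_struct_on_QCoh_Artin_stacks}), and $p^*,q^*$ are left $t$-exact up to a uniform shift by the $\Tor$-finiteness of $p$ (hence of $q$), so the error terms vanish in the limit.
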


\begin{cor}\label{cor:Tor_indep_basechange}
In \Cref{QCoh_base_change} suppose that $\mstack Y$ and $\mstack Z$ are $\Tor$-independent over $\mstack X$, i.e. the derived fibered product $\mstack W$ coincides with the non-derived one $\mstack W'$ (e.g. if either $f$ or $p$ is flat). Then the base change transformation for $\mstack W'$ is also an equivalence.
\end{cor}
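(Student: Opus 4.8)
The plan is to obtain this as a direct formal consequence of Proposition~\ref{QCoh_base_change}, so that essentially no new work is needed. First I would recall that, for any commutative square of prestacks
$$
\xymatrix{
\mstack W \ar[r]^{q}\ar[d]^{g} & \mstack Y \ar[d]^{f} \\
\mstack Z \ar[r]^{p} & \mstack X,
}
$$
the base change transformation $p^{*}\circ f_{*}\to g_{*}\circ q^{*}$ is manufactured purely from the two projections $q,g$ and the $2$-cell filling the square, using the units and counits of the adjunctions $(q^{*},q_{*})$ and $(g^{*},g_{*})$; in particular it is insensitive to whether $\mstack W$ was formed as a derived or as a classical fiber product. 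So the key step is simply to note that under the Tor-independence hypothesis there is, by definition, a canonical identification $\mstack W'\simeq\mstack W$ of the classical with the derived fiber product, compatible with the projections to $\mstack Y$ and $\mstack Z$. Consequently the base change transformation attached to the classical square $\mstack W'$ is identified with the one attached to the derived square $\mstack W$, and the latter is an equivalence by Proposition~\ref{QCoh_base_change} (whose hypotheses, namely $f$ quasi-compact quasi-separated and $p$ of finite Tor-amplitude, are carried over verbatim).

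Second, I would check that the two sufficient conditions mentioned in the statement really do force Tor-independence. If $p$ is flat, then it has Tor-amplitude $[0,0]$, so Proposition~\ref{QCoh_base_change} is applicable, and flatness of $p$ makes the higher relative Tor's $\Tor^{\mathcal O_{\mstack X}}_{>0}(\mathcal O_{\mstack Z},\mathcal O_{\mstack Y})$ vanish locally, i.e. $\mstack W'\simeq\mstack W$. If instead $f$ is flat, one argues symmetrically, since Tor-independence of $\mstack Y$ and $\mstack Z$ over $\mstack X$ is symmetric in the two legs, invoking flat base change for $p$ in the other direction (equivalently, the evident variant of Proposition~\ref{QCoh_base_change} with the roles of $f$ and $p$ exchanged). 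I do not expect any genuine obstacle here: the whole content sits in Proposition~\ref{QCoh_base_change}, and the corollary merely records that its conclusion survives replacing the derived fiber product by the ordinary one whenever the two coincide. If one wanted to be maximally careful, the single point deserving a line of justification is the compatibility of the identification $\mstack W'\simeq\mstack W$ with the structural $2$-cells, so that the two base change maps are literally equal and not just both equivalences; this is immediate from the universal property presentation of the classical fiber product as the $0$-truncation of the derived one.
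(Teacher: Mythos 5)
Your argument is correct, and it is the natural one; the paper in fact states this corollary without a proof, so there is no written argument to compare against, but the reasoning you supply is exactly the implicit one. The whole corollary reduces to two observations: first, the base change transformation $p^{*}f_{*}\to g_{*}q^{*}$ is built from the adjunction data and the $2$-cell of the commutative square, so an equivalence $\mstack W'\simeq\mstack W$ respecting the projections to $\mstack Y$ and $\mstack Z$ identifies the two base change maps; second, Tor-independence is precisely such an identification. Your first paragraph says exactly this and correctly carries over the hypotheses of \Cref{QCoh_base_change} (qcqs $f$, finite Tor-amplitude $p$) verbatim.

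One small caution about the second paragraph: the remark that when $f$ is flat one should invoke ``flat base change for $p$ in the other direction, equivalently the evident variant of \Cref{QCoh_base_change} with $f$ and $p$ exchanged'' is a tangent that is not needed and is slightly misleading. The role of the flatness of $f$ (or $p$) in the corollary is solely to guarantee Tor-independence, i.e.\ the identification $\mstack W'\simeq\mstack W$; it does not change which base change morphism is under discussion, nor does it replace the running hypothesis that $p$ has finite Tor-amplitude. Tor-independence is indeed symmetric, so flatness of either leg suffices for it, and that is the only symmetry being used. Once that hypothesis is in hand, you apply \Cref{QCoh_base_change} exactly as stated (with $p$ of finite Tor-amplitude, which is automatic if $p$ is flat but must be assumed separately if instead $f$ is the flat leg) and transport the conclusion across the identification $\mstack W'\simeq\mstack W$. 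Tightening that aside would make the write-up cleaner, but it does not affect the validity of the proof.
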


Using this we can refine \Cref{qcoh_of_quotinet_stacks} in the affine case:
\begin{prop}
Let $G = \Spec H$ be an affine flat group scheme over $R$ and let $X \coloneqq \Spec A$ be an affine $R$-scheme equipped with an action of $G$. Then there is a natural symmetric monoidal $t$-exact equivalence
$$\QCoh([X/G]) \simeq \Mod_A(\coMod_{H, \DMod{R}}),$$
where by $\coMod_{H, D(R)}$ we denoted the category of $H$-comodules in the derived category $\DMod{R}$.

\begin{proof}
Let us treat the case $[*/G] = BG$ first. Consider the fibered diagram
$$\xymatrix{
G \ar[r]^q\ar[d]^q & {*} \ar[d]^p \\
{*} \ar[r]^p & BG.
}$$
By flat descent and \cite[Proposition 4.7.5.1]{Lur_HA} the pullback functor $p^*\colon \QCoh(BG) \to \DMod{R}$ is comonadic. Moreover, by the \Cref{continuous_comonads_on_mod} below $\coMod_{p^*p_*}(\DMod{R}) \simeq \coMod_H(\DMod{R})$, where $H$ is an $R$-coalgebra $p^* p_*(R)$. But by the base change $H = p^*p_*(R) \simeq q_* q^* (R) \simeq \RG(G, \mathcal O_G)$. The general case now follows from \Cref{QCoh_affine_morphism}. 
\end{proof}
\end{prop}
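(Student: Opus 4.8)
The statement to prove is that for an affine flat group scheme $G = \Spec H$ over $R$ acting on an affine $R$-scheme $X = \Spec A$, there is a natural symmetric monoidal $t$-exact equivalence $\QCoh([X/G]) \simeq \Mod_A(\coMod_{H, \DMod R})$. The plan is to first reduce to the case $X = *$, i.e. to identifying $\QCoh(BG)$ with $\coMod_{H, \DMod R}$, and then bootstrap to the general case via the affine-morphism description of quasi-coherent sheaves. Indeed, the quotient map $\pi \colon [X/G] \to BG$ is affine (its base change along $* \to BG$ is $X \to *$, which is affine), so by \Cref{QCoh_affine_morphism} we have $\QCoh([X/G]) \simeq \Mod_{\mathcal A}(\QCoh(BG))$ where $\mathcal A = \pi_* \mathcal O_{[X/G]}$. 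Under the equivalence $\QCoh(BG) \simeq \coMod_{H, \DMod R}$, the algebra $\mathcal A$ corresponds to the $H$-comodule algebra $A$ (this should be traced through from the construction, using that $A$ carries its natural $G$-coaction), which gives the desired identification $\Mod_A(\coMod_{H, \DMod R})$. The symmetric monoidal and $t$-exactness claims then follow because all the functors involved ($\pi^*$, the comonadic comparison, base change) are symmetric monoidal and $t$-exact, the latter since $G$ is flat over $R$ so that pullback along the flat cover $* \to BG$ is $t$-exact and detects connectivity.

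For the case of $BG$ itself, the strategy is to apply comonadic descent to the flat atlas $p \colon * \to BG$. First I would note that $p$ is a faithfully flat cover (since $G$ is flat and the map $* \to BG$ is an fppf atlas), so by flat descent (the corollary following \Cref{t_struct_on_QCoh_Artin_stacks}, or rather the fpqc-hyperdescent statement) the pullback functor $p^* \colon \QCoh(BG) \to \DMod R$ is conservative and preserves all limits and colimits. By the Barr–Beck–Lurie comonadicity theorem (\cite[Proposition 4.7.5.1]{Lur_HA} — invoked in the proof as stated), $p^*$ is comonadic, so $\QCoh(BG) \simeq \coMod_{p^* p_*}(\DMod R)$, the category of comodules over the comonad $p^* p_*$. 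The next step is to identify this comonad: one shows $p^* p_*$ is given by tensoring with the $R$-coalgebra $H' \coloneqq p^* p_*(R)$, which requires knowing that $p^* p_*$ is a continuous (colimit-preserving) comonad on $\DMod R$ whose underlying functor is $- \otimes_R H'$ — this is the content of the lemma ``\texttt{continuous\_comonads\_on\_mod}'' cited in the proof, which presumably says that continuous comonads on $\DMod R$ are precisely coalgebra objects. Finally, flat base change (\Cref{QCoh_base_change} or \Cref{cor:Tor_indep_basechange}, applicable since $p$ is flat) applied to the Cartesian square with $G = * \times_{BG} *$ at the top left identifies $H' = p^* p_*(R) \simeq q_* q^*(R) \simeq R\Gamma(G, \mathcal O_G) = H$ as coalgebras, where the coalgebra structure on the right comes from the group structure on $G$. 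Thus $\QCoh(BG) \simeq \coMod_{H, \DMod R}$.

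The main obstacle I expect is the careful identification of structures rather than the existence of abstract equivalences: specifically, checking that (i) the comonad $p^* p_*$ really is the one associated to the coalgebra $H$ \emph{with its group-theoretic comultiplication} — the base change isomorphism $p^* p_* R \simeq q_* q^* R$ is a map of coalgebras, and one must verify the coalgebra structures match, which amounts to a diagram chase comparing the cosimplicial bar resolution of $p$ with the Čech nerve of $G \rightrightarrows *$; and (ii) in the general $[X/G]$ case, that the algebra object $\mathcal A = \pi_* \mathcal O_{[X/G]}$ in $\coMod_{H, \DMod R}$ is genuinely $A$ with its $G$-comodule-algebra structure, which again uses flat base change along $* \to BG$ to compute $\pi_* \mathcal O_{[X/G]}$ as the global sections of $\mathcal O_X$ equipped with the $H$-coaction induced by the action map $a^* \colon A \to A \otimes_R H$. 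None of this is deep, but it is the part where one cannot be cavalier. The symmetric-monoidality and $t$-exactness are comparatively routine: $p^*$ is symmetric monoidal and $t$-exact by flatness, comodule categories inherit both structures from the base, and one checks the comparison functor respects them.
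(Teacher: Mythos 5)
Your proposal follows the same route as the paper: comonadic descent along $p\colon {*}\to BG$ via Barr–Beck–Lurie, identification of the comonad $p^*p_*$ with a coalgebra using the lemma on continuous comonads, flat base change to compute $p^*p_*(R)\simeq q_*q^*(R)\simeq H$, and reduction of the general $[X/G]$ case to $BG$ via \Cref{QCoh_affine_morphism}. The only difference is cosmetic — you state the reduction to $BG$ up front while the paper does it at the end — and your added caveats about matching coalgebra/comodule-algebra structures flag exactly the points the paper treats tersely.
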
\begin{lem}\label{continuous_comonads_on_mod}
Let $R$ be a commutative ring and let $Q$ be a colimit preserving $R$-linear comonad on $\DMod{R}$. Then $C \coloneqq Q(R)$ has a natural $R$-coalgebra structure and there is a natural equivalence $\coMod_Q(\DMod{R}) \simeq \coMod_C(\DMod{R})$.

\begin{proof}
For any cocomplete $R$-linear category $\fcat D$ the evaluation at $R$ induces an equivalence $\Fun_R^{\mathrm L}(\DMod{R}, \fcat D) \areq \fcat D$. By applying this observation to $\fcat D = \DMod{R}$ we obtain a monoidal equivalence $\End_R^{\mathrm L}(\DMod{R}) \areq \DMod{R}$. It is left to note that a continuous $R$-linear comonad on $\DMod{R}$ is by definition a coalgebra object in $\End_R^{\mathrm L}(\DMod{R})$.
\end{proof}
\end{lem}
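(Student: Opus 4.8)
<br>

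The final statement to prove is \Cref{continuous_comonads_on_mod}: for a continuous $R$-linear comonad $Q$ on $\DMod{R}$, the object $C \coloneqq Q(R)$ carries a natural $R$-coalgebra structure and $\coMod_Q(\DMod{R}) \simeq \coMod_C(\DMod{R})$.

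The plan is to reduce everything to the observation that evaluation at the unit object $R$ identifies the monoidal category of continuous $R$-linear endofunctors of $\DMod{R}$ with $\DMod{R}$ itself (equipped with its tensor product). First I would recall the key input: for any presentable $R$-linear category $\fcat D$, the functor $\Fun_R^{\mathrm L}(\DMod{R}, \fcat D) \to \fcat D$ given by $F \mapsto F(R)$ is an equivalence (this is the universal property of $\DMod{R}$ as the unit of $\Prs^{\mathrm L}_R$, cf. \cite[Section 4.8.1]{Lur_HA}). Specializing to $\fcat D = \DMod{R}$ and keeping track of the composition monoidal structure on the left and the tensor monoidal structure on the right, one gets a \emph{monoidal} equivalence $\End_R^{\mathrm L}(\DMod{R}) \simeq \DMod{R}$; this sends an endofunctor $F$ to $F(R)$, and the composite $F \circ G$ goes to $F(G(R)) \simeq F(R) \otimes_R G(R)$ since $F$ is continuous and $R$-linear (so $F(M) \simeq F(R) \otimes_R M$ functorially). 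Under this equivalence the comonad $Q$, which is by definition a coalgebra object of $\End_R^{\mathrm L}(\DMod{R})$ (i.e. an $E_\infty$- or rather $E_1$-coalgebra with respect to composition), corresponds precisely to an $R$-coalgebra structure on $C = Q(R)$. This produces the coalgebra structure claimed in the statement.

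Next I would identify the two comodule categories. By definition, $\coMod_Q(\DMod{R})$ is the category of comodules over the comonad $Q$ acting on $\DMod{R}$, while $\coMod_C(\DMod{R})$ is the category of comodules over the coalgebra $C$ with its coaction via $- \otimes_R C$. But the action of the comonad $Q$ on an object $M$ is $Q(M) \simeq M \otimes_R Q(R) = M \otimes_R C$ (again using continuity and $R$-linearity of $Q$), and this identification is compatible with the comultiplication and counit of $Q$ on the one hand and those of $C$ on the other, exactly because the monoidal equivalence above carries the coalgebra structure of $Q$ to that of $C$. Hence the two notions of comodule literally agree: a $Q$-comodule structure on $M$ is the same data as a $C$-comodule structure. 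Passing from "the same data objectwise" to "an equivalence of categories" is formal once one checks the coherences, which is what the monoidal equivalence buys us.

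I do not anticipate a serious obstacle here; the statement is essentially a repackaging of the universal property of $\DMod{R}$ together with the tautology that a comonad on a module category given by tensoring is the same as a coalgebra. The one point requiring mild care is to make the monoidal equivalence $\End_R^{\mathrm L}(\DMod{R}) \simeq \DMod{R}$ genuinely monoidal at the $\infty$-categorical level (not just on homotopy categories), so that coalgebra objects correspond; this is standard and can be cited from \cite[Section 4.8.1]{Lur_HA}, or deduced from the fact that $\DMod{R}$ is an idempotent algebra in $\Prs^{\mathrm L}$. A second small check is that the equivalence of comodule categories is compatible with the forgetful functors to $\DMod{R}$ (which is clear from the construction), so that in the application — proving comonadicity of $p^*\colon \QCoh(BG) \to \DMod{R}$ and identifying $\coMod_{p^*p_*}(\DMod{R})$ with $\coMod_{H}(\DMod{R})$ for $H = \RG(G,\mathcal O_G)$ — one really recovers the category of $H$-comodules with its usual underlying objects.
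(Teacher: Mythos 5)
Your proof is correct and follows the same path as the paper's: both invoke the universal property of $\DMod{R}$ in $\Prs^{\mathrm L}_R$ to obtain the monoidal equivalence $\End_R^{\mathrm L}(\DMod{R}) \simeq \DMod{R}$ via evaluation at $R$, and then read off the result from the fact that a continuous $R$-linear comonad is a coalgebra object in $\End_R^{\mathrm L}(\DMod{R})$. You spell out a few more intermediate steps (e.g. $F(M) \simeq F(R) \otimes_R M$ and the matching of comodule data), but the argument is the same.
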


\begin{rem}
	One can think of $\coMod_{H, \DMod{R}}$ as a good candidate for the unbounded derived category $\Rep_G$ of $R$-linear representations of $G$. Putting $\Rep_G\coloneqq \coMod_{H, \DMod{R}}$ this claim is supported by the equivalence $\Rep_G\simeq \QCoh(BG)$ (which follows from the proposition in the case $X=\Spec R$).
\end{rem}
By combining this with the fact that for a quasi-compact quasi-separated $1$-Artin stack $\mstack X$ with affine diagonal the natural map $D^+(\QCoh^\heartsuit(\mstack X)) \to \QCoh(\mstack X)^+$ is an equivalence (see \cite[Chapter 3, Proposition 2.4.3]{GaitsRozI}), we deduce:
\begin{cor}\label{derived_vs_derived}
Let $G = \Spec H$ and $X = \Spec A$ be as in the previous proposition. Then there is a natural equivalence
$$\QCoh([X/G])^+ \simeq D^+\big(\Mod_A(\Rep^\heartsuit_G)\big),$$
where $\Rep_G^\heartsuit\simeq \coMod_{H, R}$ is the abelian category of $H$-comodules in $\Mod_R$.

\end{cor}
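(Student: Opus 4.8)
The plan is to combine the $t$-exact equivalence of the preceding proposition with the Gaitsgory--Rozenblyum comparison $D^+(\QCoh^\heartsuit(-)) \xrightarrow{\sim} \QCoh(-)^+$. First I would check that $[X/G]$ lies within the scope of \cite[Chapter 3, Proposition 2.4.3]{GaitsRozI}, i.e. that it is a quasi-compact quasi-separated $1$-Artin stack with affine diagonal. It is $1$-Artin by \Cref{ex:quotient_stacks} and quasi-compact because $X = \Spec A$ furnishes an affine atlas. For the diagonal, pulling $\Delta\colon [X/G]\to [X/G]\times[X/G]$ back along the fppf cover $X\times X \to [X/G]\times[X/G]$ produces the scheme $\{(g,x_1,x_2)\in G\times X\times X \mid g x_1 = x_2\}$ (equivalently $G\times X$ via $(g,x)\mapsto (x,gx)$), which is a closed subscheme of the affine-over-$(X\times X)$ scheme $G\times X\times X$ and hence affine over $X\times X$; since "affine morphism" is local on the target for the fppf topology, $\Delta$ is affine, and in particular $[X/G]$ is quasi-separated. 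Therefore the natural functor $D^+(\QCoh^\heartsuit([X/G]))\to \QCoh([X/G])^+$ is an equivalence.

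It then remains to identify $\QCoh^\heartsuit([X/G])$ with $\Mod_A(\Rep_G^\heartsuit)$. By the preceding proposition there is a $t$-exact symmetric monoidal equivalence $\QCoh([X/G])\simeq \Mod_A(\coMod_{H,\DMod{R}})$, so it suffices to compute the heart on the right. Since $G$ is flat over $R$, the coalgebra $H \simeq \RG(G,\mathcal O_G)$ is a flat (in particular discrete) $R$-module, so the forgetful functor $\coMod_{H,\DMod{R}}\to\DMod{R}$ is $t$-exact and induces $(\coMod_{H,\DMod{R}})^\heartsuit \simeq \coMod_{H,R} = \Rep_G^\heartsuit$. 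The object $A$, being a classical commutative ring carrying a $G$-action, is a \emph{discrete} commutative algebra object of $\coMod_{H,\DMod{R}}$; by the standard analysis of the $t$-structure on a module category over a connective algebra (the forgetful functor is $t$-exact) together with the heart identification for discrete algebras, the $t$-structure on $\Mod_A(\coMod_{H,\DMod{R}})$ has heart $\Mod_A((\coMod_{H,\DMod{R}})^\heartsuit) = \Mod_A(\Rep_G^\heartsuit)$. Transporting along the $t$-exact equivalence gives $\QCoh^\heartsuit([X/G])\simeq \Mod_A(\Rep_G^\heartsuit)$.

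Chaining the two steps yields
$$\QCoh([X/G])^+ \simeq D^+(\QCoh^\heartsuit([X/G])) \simeq D^+(\Mod_A(\Rep_G^\heartsuit)),$$
which is the claim. The step I expect to require the most care is the identification of the heart in the second paragraph: one must make genuine use of the flatness of $G$ (so that the forgetful functor $\coMod_{H,\DMod{R}}\to\DMod{R}$ is $t$-exact and the heart is the "naive" one) and of the discreteness of the algebra $A$ (so that forming $A$-modules commutes with passing to the heart). This is also precisely where the standing hypotheses — $G$ flat and both $X$ and $G$ affine — are used; the appeal to \cite{GaitsRozI} for the comparison $D^+(\QCoh^\heartsuit)\simeq \QCoh^+$ can then be treated as a black box, and everything else is formal bookkeeping with $t$-structures.
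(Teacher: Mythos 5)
Your proposal is correct and follows essentially the same route the paper takes: combine the $t$-exact equivalence $\QCoh([X/G])\simeq\Mod_A(\coMod_{H,\DMod{R}})$ from the preceding proposition with the Gaitsgory--Rozenblyum comparison $D^+(\QCoh^\heartsuit(\mstack X))\simeq\QCoh(\mstack X)^+$ for qcqs $1$-Artin stacks with affine diagonal. You have merely spelled out the verification of the hypotheses (affineness of the diagonal, identification of the heart using flatness of $H$ and discreteness of $A$) that the paper leaves implicit.
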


\subsubsection{Finiteness conditions}
Sometimes it is useful to put some size restrictions on quasi-coherent sheaves. Classically, there are two notions of this type: perfect complexes and coherent sheaves. In this subsection we will discuss their natural extensions to stacks.

Recall that the subcategory $\DMod{A}^\perf\subset \DMod{A}$ of perfect $A$-modules is defined as the minimal full subcategory containing $A$ and closed under finite (co)limits and retracts. It can be also described as the full subcategory of dualizable objects \cite[Proposition 7.2.4.4]{Lur_HA}.
\begin{defn}
A sheaf $\mathcal F$ on a prestack $\mstack Y$ is called \emdef{perfect} if for any map $x\colon \Spec A \to \mstack Y$ the restriction $x^*\mathcal F$ is a perfect $A$-module.  We will denote the full subcategory consisting of perfect sheaves by $\QCoh(\mstack Y)^\perf$. This is an $R$-linear stable subcategory of $\QCoh(\mstack Y)$ closed under finite (co)limits and retracts.
\end{defn}
Similarly to $\DMod{A}^\perf$, there is also a description of $\QCoh(\mstack Y)^\perf$ in terms of dualizability. We recall the following statement:
\begin{lem}[{\cite[Proposition 4.6.1.11]{Lur_HA}}]\label{limit_of_dualizables_is_dualizable}
Let $\{\fcat C_i\}_{i\in I}$ be a diagram of symmetric monoidal categories and symmetric monoidal functors. Then an object $X\in \lim_I \fcat C_i$ is dualizable if and only if the projections of $X$ to all $\fcat C_i$ are dualizable.
\end{lem}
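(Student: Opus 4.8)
The statement is \cite[Proposition 4.6.1.11]{Lur_HA}, so the plan is to reconstruct Lurie's argument in outline. The starting point I would use is the characterization of dualizability together with its essential uniqueness: an object $X$ of a symmetric monoidal $\infty$-category $\mathcal D$ is dualizable iff there exist an object $X^\vee$, a coevaluation $\coev\colon \mathbf 1\to X\otimes X^\vee$ and an evaluation $\ev\colon X^\vee\otimes X\to \mathbf 1$ together with coherences witnessing the two triangle identities, and the space $D_{\mathcal D}(X)$ of all such triples $(X^\vee,\ev,\coev)$-with-coherences is either empty or contractible (Lurie HA \S4.6.1); moreover, on the full subcategory of dualizable objects the assignment $X\mapsto X^\vee$ is functorial. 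The first, easy, observation is that any symmetric monoidal functor $F\colon\mathcal D\to\mathcal E$ carries a dualizability datum for $X$ to one for $F(X)$, since it preserves $\otimes$, the unit, and all the structure morphisms; hence $F$ preserves dualizable objects.

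For the "only if" direction I would then just recall that the forgetful functor $\CAlg(\Cat_\infty)\to\Cat_\infty$ preserves limits, so that the underlying $\infty$-category of $\lim_I\mathcal C_i$ (formed as a symmetric monoidal $\infty$-category) is $\lim_I\mathcal C_i$ with pointwise tensor product and unit, and each projection $p_i\colon\lim_I\mathcal C_i\to\mathcal C_i$ is symmetric monoidal; dualizability of $X$ then forces dualizability of each $p_i(X)$ by the previous paragraph.

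The substance is the "if" direction. Here I would argue that, since mapping spaces, $\otimes$, and the unit of $\lim_I\mathcal C_i$ are all reassembled from the $\mathcal C_i$ through the $p_i$, so is the whole dualizability datum; concretely I would establish a natural equivalence
\[
D_{\lim_I\mathcal C_i}(X)\ \simeq\ \lim_{i\in I} D_{\mathcal C_i}(p_iX),
\]
by unwinding the definition of the limit of symmetric monoidal $\infty$-categories: an object of the left side is an object $X^\vee$ of the limit equipped with $\coev$, $\ev$ and coherences, which by definition of the limit is exactly a compatible $I$-indexed family of the same data for $p_i(X)$. Granting this identification, the hypothesis makes each $D_{\mathcal C_i}(p_iX)$ nonempty, hence contractible by essential uniqueness, and a limit over $I$ of a diagram of contractible spaces is again contractible, in particular nonempty; so $D_{\lim_I\mathcal C_i}(X)\neq\eset$, i.e. $X$ is dualizable.

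The step I expect to be the main obstacle is precisely the displayed equivalence: packaging the (a priori infinitely many) triangle-identity coherences into a single diagram that is visibly preserved by the symmetric monoidal projections and reassembled in the limit is the technical heart of the matter, and a careful write-up really has to invoke Lurie's formalism of dualizability data from HA \S4.6.1. If one wants to minimize that input, I would instead run the uniqueness argument by hand: choose duals $p_i(X)^\vee$ componentwise; for each arrow $\alpha\colon i\to j$ of $I$ the (symmetric monoidal) transition functor $F_\alpha$ makes $F_\alpha(p_i(X)^\vee)$ a dual of $p_j(X)$, hence \emph{canonically} identified with $p_j(X)^\vee$ compatibly with $\ev$ and $\coev$; essential uniqueness forces the cocycle condition, so these identifications glue to an object $X^\vee$ of $\lim_I\mathcal C_i$, the evaluation and coevaluation maps glue to morphisms of the limit, and the triangle identities hold there because they may be checked after the jointly conservative projections $p_i$.
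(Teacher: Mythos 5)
The paper gives no proof of this lemma -- it is quoted verbatim from Lurie's \emph{Higher Algebra} -- so the only available comparison is with Lurie's own argument, and your primary argument reproduces it faithfully: symmetric monoidal functors carry duality data to duality data (giving the ``only if'' direction), the space of duality data for $X$ in $\lim_I\fcat C_i$ is the limit of the spaces of duality data for the $p_i(X)$, and a limit of contractible spaces is contractible, hence nonempty. That argument is correct, and you are right that the only real work is encoding the duality datum as a diagram of fixed shape so that it is visibly computed termwise in the limit.

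Your fallback ``by hand'' argument, however, ends with a step that does not work: ``the triangle identities hold there because they may be checked after the jointly conservative projections $p_i$.'' The projections are indeed jointly conservative (a morphism of the limit is an equivalence iff all its images are), but the triangle identities are not of that form: they assert that two points of $\Map_{\lim_I\fcat C_i}(X,X)\simeq\lim_I\Map_{\fcat C_i}(p_iX,p_iX)$ lie in the same path component, and $\pi_0$ does not commute with limits -- there is a $\lim^1$-obstruction (for a tower of circles with degree-$2$ transition maps every term is connected but the limit is not). The same issue affects the gluing of the componentwise duals $p_i(X)^\vee$ over a general diagram $I$: identifications for each arrow plus the cocycle condition for composable pairs do not determine an object of an $\infty$-categorical limit; one needs the entire hierarchy of higher coherences, and supplying these canonically is precisely what the contractibility of the spaces of duality data is for. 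So the ``by hand'' version is not a genuine alternative that avoids Lurie's formalism; it is the first argument with its essential step left implicit. Since you present it only as an optional shortcut, this does not invalidate the proposal, but as written that paragraph should not be relied on.
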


Thus, we get that $\mathcal F\in \QCoh(\mstack Y)$ is perfect if and only if it is dualizable. The following corollary also follows:

\begin{cor}\label{cor: Perf for colim}
Let $\mstack X_\bullet \colon I \to \PStk_S$ be a diagram of pre-stacks over $S$. Then the natural map
$$\QCoh(\colim_{i\in I} \mstack X_i)^\perf \xymatrix{\ar[r] &} \lim_{i\in I} \QCoh(\mstack X_i)^\perf$$
is an equivalence.
\end{cor}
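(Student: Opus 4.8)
The statement to prove is \Cref{cor: Perf for colim}: for a diagram $\mstack X_\bullet\colon I\to \PStk_S$ the natural functor $\QCoh(\colim_{i\in I}\mstack X_i)^\perf \to \lim_{i\in I}\QCoh(\mstack X_i)^\perf$ is an equivalence. The plan is to deduce this formally from the two facts just established: that $\QCoh^*$ sends colimits of prestacks to limits of (symmetric monoidal, $R$-linear, continuous-functor) categories, and that perfectness is detected pointwise, i.e. $\mathcal F\in\QCoh(\mstack Y)$ is perfect iff it is dualizable (which in turn follows from \Cref{limit_of_dualizables_is_dualizable} together with the definition of perfect as ``pullback to every affine is perfect''). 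So the argument should be almost a one-liner: restrict the equivalence of symmetric monoidal categories to the respective full subcategories of dualizable objects, and invoke \Cref{limit_of_dualizables_is_dualizable} to see that these subcategories match up under the equivalence.

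First I would recall that $\QCoh(\colim_i\mstack X_i)\xrightarrow{\ \sim\ }\lim_i\QCoh(\mstack X_i)$ is an equivalence of presentably symmetric monoidal categories (this is the third bullet of the basic properties of $\QCoh^*$ proved in the excerpt, obtained because $\QCoh^*$ is a right Kan extension and hence sends colimits to limits; the symmetric monoidal structure is part of the statement that $\QCoh^*$ lands in $\Prs_R^{\otimes,\mathrm L}$). An equivalence of symmetric monoidal categories carries dualizable objects to dualizable objects in both directions, so it restricts to an equivalence $\QCoh(\colim_i\mstack X_i)^{\mathrm{dual}}\xrightarrow{\ \sim\ }\big(\lim_i\QCoh(\mstack X_i)\big)^{\mathrm{dual}}$. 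Next, by \Cref{limit_of_dualizables_is_dualizable}, an object of $\lim_i\QCoh(\mstack X_i)$ is dualizable iff each of its projections to $\QCoh(\mstack X_i)$ is dualizable; since $\QCoh(\mstack X_i)^\perf$ is exactly the full subcategory of dualizable objects (again by the pointwise characterization of perfectness via \Cref{limit_of_dualizables_is_dualizable} applied to the defining limit $\QCoh(\mstack X_i)=\lim_{\Spec A\to\mstack X_i}\DMod{A}$, using that perfect $=$ dualizable over an ordinary ring), we get $\big(\lim_i\QCoh(\mstack X_i)\big)^{\mathrm{dual}}=\lim_i\QCoh(\mstack X_i)^\perf$, where the limit on the right is formed in $\Cat_\infty$ (a full subcategory of a limit is computed termwise on full subcategories). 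Finally, on the left-hand side, $\QCoh(\colim_i\mstack X_i)^{\mathrm{dual}}=\QCoh(\colim_i\mstack X_i)^\perf$ by the same pointwise characterization. Stringing these identifications together yields the desired equivalence.

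There is essentially no hard step here; the only things to be slightly careful about are bookkeeping points. One is to make sure the functor in the statement — ``the natural map'' — is literally the one obtained by restricting the canonical comparison equivalence, which it is, since both sides are built by pullback/restriction along the structure maps $\mstack X_i\to\colim_j\mstack X_j$. Another is to confirm that a limit of stable $R$-linear categories restricted to full subcategories of dualizables agrees with the limit of those subcategories — this is immediate because full subcategories are closed under the relevant limit diagrams and the inclusion functors are fully faithful. The only genuine input beyond formal nonsense is the identification of $\QCoh(\mstack Y)^\perf$ with the dualizable objects for an arbitrary prestack $\mstack Y$; this is already spelled out in the excerpt as an application of \Cref{limit_of_dualizables_is_dualizable} to the defining limit presentation of $\QCoh(\mstack Y)$, together with the classical fact (\cite[Proposition 7.2.4.4]{Lur_HA}) that over an ordinary ring $A$ the perfect complexes are precisely the dualizable objects of $\DMod{A}$. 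So I expect the write-up to be three or four lines.
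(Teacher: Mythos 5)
Your argument is correct and is exactly the paper's (implicit) proof: the corollary is stated in the paper immediately after \Cref{limit_of_dualizables_is_dualizable} and the observation that perfect equals dualizable, with no further argument given, precisely because it follows in the three or four lines you sketch — restrict the symmetric monoidal equivalence $\QCoh(\colim_i\mstack X_i)\simeq\lim_i\QCoh(\mstack X_i)$ to the full subcategories of dualizable objects and apply \Cref{limit_of_dualizables_is_dualizable} on both sides. Nothing to add.
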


In particular, given an fpqc-surjection $\mstack U\ra \mstack X$ we have an equivalence $q^*\colon \QCoh(\mstack X)^\perf\xra{\sim} \Tot\QCoh(\mstack U_\bullet)^\perf$. Also $\mc F\in \QCoh(\mstack X)$ is perfect if and only if $q^*\mc F$ is.

\begin{ex}
Let $\mstack X \coloneqq [X/G]$. Then by the previous corollary a sheaf $\mathcal F \in \QCoh([X/G])$ is perfect if and only if the pullback along the quotient map $\pi\colon X\to [X/G]$ is perfect. Recall now that by \Cref{qcoh_of_quotinet_stacks} $\QCoh([X/G])$ is equivalent to the category of $G$-equivariant sheaves on $X$ and the pullback along $\pi$ correspond to forgetting the $G$-action. In particular, $\mathcal F\in \QCoh(BG)$ is perfect if and only if the underlying complex of the corresponding representation is perfect.
\end{ex}

\smallskip Another finiteness condition one can put on a sheaf is that it is coherent. For the rest of this section we assume that $R$ is a Noetherian ring.
\begin{defn}
Let $A$ be a Noetherian $R$-algebra. We will call a complex of modules $M\in \DMod{A}$ \emph{coherent} if all but finitely many of its cohomology $H^i(M)$ vanish and $H^i(M)$ is a finitely generated $A$-module for any $i\in \mathbb Z$. We will denote the full subcategory of coherent modules by $\Coh(A)\subset \DMod{A}$.
\end{defn}
It is easy to see that $\Coh(A)$ is closed under finite (co)limits, and retracts. Note that $\DMod{A}^\perf\subset \DMod A$ and if $A$ is regular then in fact $\Coh(A)\simeq \DMod{A}^\perf$ by Serre's homological criterion of regularity.
\begin{defn}\label{def:coh}
A sheaf $\mathcal F$ on a finitely presentable Artin $R$-stack $\mstack Y$ is called \emdef{coherent} if for all smooth maps $y\colon \Spec A \to \mstack Y$ the pullback $y^*\mathcal F$ is a coherent module over $A$. We will denote the full subcategory of $\QCoh(\mstack Y)$ consisting of coherent sheaves by $\Coh(\mstack Y)$.
\end{defn}
\begin{rem}\label{coherent is smooth local}
In fact a sheaf $\mathcal F\in \QCoh(X)$ is coherent if and only if for any smooth atlas of finite type $U\to \mstack X$ the restriction $\mathcal F_{|U}$ lies in $\Coh(U)$. This follows from the fact that a module $M\in \Mod_A$ is finitely generated if and only $M\otimes_A B$ is finitely generated for some faithfully flat finitely generated $A$-algebra $B$. In particular a sheaf $\mathcal F$ is coherent if and only if all of its cohomology sheaves $\mathcal H^i(\mathcal F)$ (with respect to the $t$-structure on $\QCoh(\mstack X)$) are coherent.
\end{rem}

If $R$ is regular, for smooth Artin stacks of finite type coherent means the same as perfect: 
\begin{prop}
Assume $R$ is regular and $\mstack X$ is a smooth Artin stack of finite type over $R$. Then
$$\Coh(\mstack X)\simeq\QCoh(\mstack X)^\perf.$$

\begin{proof}
Since $\mstack X$ is of finite type there is a smooth atlas $\Spec A\ra \mstack X$. Since $\mstack X$ is smooth, $A$ is smooth over $R$, thus is regular. The statement then follows from $\Coh(A)\simeq \DMod{A}^\perf$ and the fact that both categories are local with respect to smooth topology (\Cref{coherent is smooth local} and \Cref{cor: Perf for colim}).
\end{proof}
\end{prop}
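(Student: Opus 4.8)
The statement to prove is that for a smooth Artin stack $\mstack X$ of finite type over a regular Noetherian ring $R$, we have $\Coh(\mstack X)\simeq\QCoh(\mstack X)^\perf$.

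The plan is to reduce the statement to the already-established affine case, namely that for a regular ring $A$ one has $\Coh(A)\simeq \DMod{A}^\perf$ (Serre's homological criterion for regularity). First I would observe that since $\mstack X$ is of finite type over $R$, by definition of Artin stacks there exists a smooth atlas of finite type $p\colon \Spec A\ra \mstack X$. Because $\mstack X$ is smooth over $R$ and $R$ is regular, $A$ is smooth over $R$ and hence regular, so the affine equivalence $\Coh(A)\simeq \DMod{A}^\perf$ applies. The main content is then to transfer this equivalence across the atlas. For this I would invoke the two ``smooth-local'' statements already in the excerpt: \Cref{coherent is smooth local}, which says that $\mc F\in\QCoh(\mstack X)$ is coherent if and only if its restriction $\mc F_{|U}$ to any finite-type smooth atlas $U$ lies in $\Coh(U)$; and \Cref{cor: Perf for colim} (together with the discussion of fpqc-surjections following it), which says that $\mc F$ is perfect if and only if $p^*\mc F$ is perfect. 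Since $p$ is a smooth surjection, both properties of $\mc F$ are detected on $\Spec A$, where they coincide by the affine case. Hence $\mc F\in\Coh(\mstack X)\iff p^*\mc F\in\Coh(A)\iff p^*\mc F\in\DMod{A}^\perf\iff \mc F\in\QCoh(\mstack X)^\perf$, and since both $\Coh(\mstack X)$ and $\QCoh(\mstack X)^\perf$ are full subcategories of $\QCoh(\mstack X)$, the equality of objects upgrades to an equivalence of categories.

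I do not anticipate a serious obstacle here; the proposition is essentially a formal assembly of inputs already developed in the appendix. The one point that requires a little care is making sure that ``smooth atlas of finite type'' really exists — this is exactly where the finite type hypothesis on $\mstack X$ is used — and that the atlas $p$ is simultaneously a finite-type morphism (so that \Cref{coherent is smooth local} applies verbatim) and an fpqc surjection (so that \Cref{cor: Perf for colim}, applied to the \v Cech nerve $\mstack U_\bullet$ of $p$, gives the conservativity of $p^*$ on perfect sheaves). Both hold because smooth surjections of finite type are in particular faithfully flat and quasi-compact. The regularity of $R$ enters only to guarantee regularity of $A$, which is what makes $\Coh(A)=\DMod{A}^\perf$; without it, coherent and perfect genuinely differ already for affines, so the hypothesis is indispensable. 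Thus the proof is: pick the atlas, reduce to $\Spec A$, apply Serre's criterion, and descend both finiteness conditions along $p$.
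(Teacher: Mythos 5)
Your proof is correct and takes essentially the same route as the paper's: choose a finite-type smooth atlas $\Spec A \to \mstack X$, use regularity of $R$ and smoothness of $\mstack X$ to deduce regularity of $A$ and hence $\Coh(A)\simeq\DMod{A}^\perf$, then descend both finiteness conditions along the atlas via \Cref{coherent is smooth local} and \Cref{cor: Perf for colim}. The additional details you supply (existence and finite-type of the atlas, that both conditions are genuinely smooth-local) are exactly the ingredients the paper invokes implicitly.
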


However if these assumptions are not satisfied, the two categories are usually different. Coherent sheaves are better behaved with respect to pushforwards along proper morphism:
\begin{prop}\label{proper_push_of_coh_is_coh}
Let $f\colon \mstack X \to \mstack Y$ be a proper schematic map of Artin stacks of finite type. Then for any $\mathcal F \in \Coh(\mstack X)$ its direct image $f_*\mathcal F$ is coherent.

\begin{proof}
Let $U \to \mstack Y$ be a smooth atlas and let $g\colon V\coloneqq U\times_{\mstack Y} \mstack X \to \mstack X$ be the base change of $f$. Then by the flat base change $f_*(\mathcal F)_{|U} \simeq g_*(\mathcal F_{|V})$, which is coherent by the Grothendieck's theorem for proper morphisms between schemes.
\end{proof}
\end{prop}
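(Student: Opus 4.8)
The statement to prove is \Cref{proper_push_of_coh_is_coh}: for a proper schematic map $f\colon \mstack X \to \mstack Y$ of finite type Artin stacks and $\mathcal F \in \Coh(\mstack X)$, the pushforward $f_*\mathcal F$ is coherent. The plan is to reduce to the already-known statement for proper morphisms of schemes (Grothendieck's coherence theorem) via smooth descent, exactly as in the proofs of \Cref{QCoh_base_change} and \Cref{lem:cohomologically proper is flat local}. First I would choose a smooth atlas $\pi\colon U \surj \mstack Y$ with $U$ a (disjoint union of) affine schemes of finite type over $R$; such an atlas exists because $\mstack Y$ is of finite type. Form the fiber product $V \coloneqq U \times_{\mstack Y} \mstack X$ and let $g\colon V \to U$ be the base change of $f$; since $f$ is schematic and proper, $V$ is a scheme and $g$ is a proper morphism of finite type $R$-schemes. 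Let $q\colon V \to \mstack X$ be the other projection.

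The key step is the flat base change identity: since $\pi$ is flat (smooth), \Cref{QCoh_base_change} (or its flatness-only variant \Cref{cor:Tor_indep_basechange}) applies to the square
\[
\xymatrix{
V \ar[r]^q \ar[d]^g & \mstack X \ar[d]^f \\
U \ar[r]^\pi & \mstack Y,
}
\]
giving a natural equivalence $\pi^*(f_*\mathcal F) \simeq g_*(q^*\mathcal F)$ in $\QCoh(U)$. Now $q^*\mathcal F \in \Coh(V)$: indeed, coherence is smooth-local (\Cref{coherent is smooth local}), and $q$ is a smooth (in fact the atlas map restricted) morphism, so the pullback of a coherent sheaf along $q$ is coherent — concretely, on each affine chart of $V$ the pullback of $\mathcal F$ along a smooth map to $\mstack X$ is coherent by \Cref{def:coh}. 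Then Grothendieck's theorem (the classical coherence of higher direct images under proper morphisms of Noetherian schemes, e.g. \cite[Tag 02O5]{StacksProject}) gives $g_*(q^*\mathcal F) \in \Coh(U)$. Hence $\pi^*(f_*\mathcal F)$ is coherent, and by the smooth-local characterization of coherence (\Cref{coherent is smooth local}, using that $\pi$ is a finite-type smooth atlas) we conclude $f_*\mathcal F \in \Coh(\mstack Y)$.

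I do not expect any serious obstacle here; the proof is a routine descent argument and all the ingredients are in place. The one point to be a little careful about is the choice of atlas $U$: to invoke \Cref{coherent is smooth local} in the concluding step one wants $U \to \mstack Y$ to be of finite type, which is automatic since $\mstack Y$ is of finite type over the Noetherian base $R$; and one must note that $V$ is genuinely a scheme of finite type over $U$ (hence over $R$, hence Noetherian) so that the schematic Grothendieck theorem applies. If one wanted the statement for all cohomology sheaves at once rather than bounded complexes, one would additionally use that $\mathcal F$ has bounded coherent cohomology and that $g$ has bounded cohomological dimension (proper of finite type), so $g_*$ preserves boundedness; but as $\Coh$ here already means bounded-with-finitely-generated-cohomology this is subsumed in the classical statement. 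The main conceptual content, such as it is, is simply that flat base change lets one test coherence of $f_*\mathcal F$ after the faithfully flat cover $\pi$, reducing everything to schemes.
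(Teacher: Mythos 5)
Your proof is correct and follows exactly the same strategy as the paper's own argument: pull back along a smooth atlas $U \to \mstack Y$, use flat base change to identify $\pi^*(f_*\mathcal F)$ with $g_*$ of a coherent sheaf on the scheme $V$, apply Grothendieck's coherence theorem for proper morphisms of Noetherian schemes, and conclude by the smooth-local nature of coherence. Your version is more explicit about the smooth-locality step and the finiteness hypotheses, but the substance is identical.
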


\subsubsection{Derived tensor functors}
Recall that for a scheme $X$ one defines $\Omega^n_X \coloneqq \wedge^n \Omega_X^1$, where $\wedge^n\colon \QCoh(X)^\heartsuit \to \QCoh(X)^\heartsuit$ is the usual exterior power functor. In order to define the Hodge cohomology of stacks we need to extend this construction to more general geometric objects. Unfortunately, outside of characteristic $0$ case for a stack $\mstack X$ one can not recover $\wedge^n$ purely in terms of $\QCoh(\mstack X)$ considered as a symmetric monoidal category equipped with a $t$-structure.\footnote{It is a folklore result that the additional data necessary to recover a simplicial commutative ring $A_\bullet$ from a symmetric monoidal category of $A_\bullet$-modules is precisely the data of a compatible action of derived symmetric power functors on $\Mod_{A_\bullet}$. We plan to make this assertion precise in future work.} Instead, following \cite[Section 3]{MathewBrantner_LMonad} we will first define $\wedge^n_R$ as an endofunctor of $\DMod{R}$ for arbitrary commutative ring $R$, then show that these functors satisfy base-change, hence extend naturally to stacks. 

First recall that there is a natural extension of a functor defined on the $1$-category $\UMod{R}$ to the connective derived category $\DMod{R}^{\le 0}$:
\begin{defn}[Non-abelian derived functors]
Let $A$ be a commutative ring. Let us denote by $\Mod_A^{\free, \fg}$ a full subcategory of $\UMod{A}$ spanned by finite rank free $A$-modules. By \cite[Section 7.2.2]{Lur_HA} for any category $\fcat D$ admitting sifted colimits the restriction functor
$$\Fun_\Sigma(\DMod{A}^{\le 0}, \fcat D) \xymatrix{\ar[r] &} \Fun(\Mod_A^{\free,\fg}, \fcat D)$$
is an equivalence with the inverse given by the left Kan extension. Here $\Fun_\Sigma(\DMod{A}^{\le 0}, \fcat D)$ denotes the full subcategory of $\Fun(\DMod{A}^{\le 0}, \fcat D)$ spanned by sifted colimits preserving functors. For a functor $F\colon \UMod{A}^{\free,\fg} \to \fcat D$ we will denote its image in $\Fun_\Sigma(\DMod{A}^{\le 0}, \fcat D)$ by $\mathrm LF$ and call it the \emdef{non-abelian derived functor of $F$}.
\end{defn}

An extension to the full derived category $\DMod{R}$ is more involved (e.g. as explained in \cite[Section 3.3]{Kaledin_NonAbelianDerived}, one should probably impose some assumptions on a functor to be able to extend it to the full derived category) and was first written down probably in \cite[Chapitre I.4]{Illusie_Cotangent}. For technical reasons we prefer to use the language developed in \cite[Section 3]{MathewBrantner_LMonad}\footnote{See also \cite{BGMN_K_theory_and_poly}, where the authors with Clark Barwick and Saul Glasman use similar ideas to show that polynomial (in the sense of Goodwillie) functors act on the spectrum of algebraic $K$-theory of a small, idempotent-complete stable $\infty$-category.}, which we briefly review here. First there is the following universal property of $\DMod{A}^\perf$:
\begin{prop}[{\cite[Proposition 3.8]{MathewBrantner_LMonad}}]\label{sifted_preserving}
Let $A$ be a commutative ring. For any category $\fcat D$ admitting sifted colimits, the restriction functor
$$\Fun_\Sigma(\DMod{A}, \fcat D) \xymatrix{\ar[r] &} \Fun_\sigma(\DMod{A}^\perf, \fcat D)$$
is an equivalence, where $\Fun_\sigma(\DMod{A}^\perf, \fcat D)$ denotes the full subcategory of $\Fun(\DMod{A}^\perf, \fcat D)$ spanned by functors preserving geometric realization of $m$-skeletal simplicial objects for some $m\in \mathbb Z_{\ge 0}$. The inverse functor is given by the left Kan extension.
\end{prop}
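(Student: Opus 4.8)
The plan is to recognize the statement as a universal property: the inclusion $j\colon \DMod{A}^\perf \hookrightarrow \DMod{A}$ should exhibit $\DMod{A}$ as a kind of \emph{restricted} sifted cocompletion of $\DMod{A}^\perf$ — the cocompletion under exactly those sifted colimits that are built out of geometric realizations in boundedly many skeletal stages together with filtered colimits. The inverse equivalence will be the left Kan extension $\Lan_j$ along $j$; the content of the proposition is then (a) that $\Lan_j F$ preserves \emph{all} sifted colimits whenever $F$ preserves $m$-skeletal geometric realizations for some $m$, and (b) that the unit and counit of the $(\Lan_j, j^*)$-adjunction restrict to equivalences on the relevant subcategories of functors. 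Point (a) is precisely where the clause "for some $m$" is essential: were we to require preservation of $m$-skeletal realizations only for one fixed $m$, the Kan extension would fail to commute with geometric realizations of higher skeletal complexity, and the restriction functor would not be essentially surjective.

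I would run the argument in two stages. First, the connective case: using Lurie's identification $\DMod{A}^{\le 0} \simeq \mathcal{P}_\Sigma(\Mod_A^{\free,\fg})$ (recalled in the excerpt's discussion of non-abelian derived functors) together with Dold--Kan and the structure theory of connective perfect complexes — every connective perfect complex is a retract of a finite geometric realization (an $m$-skeletal one, corresponding under Dold--Kan to a bounded complex of finite free modules) of objects of $\Mod_A^{\free,\fg}$ — one shows that the smallest full subcategory of $\DMod{A}^{\perf,\le 0} \coloneqq \DMod{A}^\perf \cap \DMod{A}^{\le 0}$ containing $\Mod_A^{\free,\fg}$ and closed under retracts and under $m$-skeletal realizations is all of $\DMod{A}^{\perf,\le 0}$. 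From this and the universal property of $\mathcal{P}_\Sigma$ one deduces $\Fun_\sigma(\DMod{A}^{\perf,\le 0}, \fcat{D}) \simeq \Fun(\Mod_A^{\free,\fg}, \fcat{D}) \simeq \Fun_\Sigma(\DMod{A}^{\le 0}, \fcat{D})$, with the comparison given by restriction and its inverse by left Kan extension. Second, the stabilization step: both categories are filtered colimits along the (fully faithful) suspension functor, namely $\DMod{A} \simeq \colim_n \DMod{A}^{\le 0}$ and $\DMod{A}^\perf \simeq \colim_n \DMod{A}^{\perf,\le 0}$ with transition maps $\Sigma$ (using that any perfect complex is $\Sigma^n Q$ for a connective perfect $Q$, and that $\Sigma$ is fully faithful on $\DMod{A}^{\le 0}$). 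Since $\Fun_\Sigma(-,\fcat{D})$ turns such colimits of presentable categories into the corresponding limits, and — this is the delicate part — $\Fun_\sigma(-,\fcat{D})$ does the same for the colimit presentation of $\DMod{A}^\perf$, combining with the connective case yields the desired equivalence.

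The main obstacle I anticipate is precisely that last assertion: that a compatible family of functors $\DMod{A}^{\perf,\le 0} \to \fcat{D}$, each preserving $m_n$-skeletal realizations, glues to a functor on $\DMod{A}^\perf = \colim_n \DMod{A}^{\perf,\le 0}$ which preserves $m$-skeletal realizations for a \emph{single} $m$ on each bounded stage. Handling this requires (i) the observation that every $m$-skeletal simplicial object of $\DMod{A}^\perf$ is supported, through its finitely many generating cells, in some $\DMod{A}^{\perf,\le n}$, and (ii) the fact that a functor preserving $m_0$-skeletal realizations automatically preserves $m$-skeletal ones for all $m \ge m_0$, so that the bounds can be harmonized. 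A secondary technical point is checking that $\DMod{A}^\perf$ generates $\DMod{A}$ under boundedly-skeletal sifted colimits — i.e. that every object of $\DMod{A}$ is such a colimit of perfect complexes — so that $\Lan_j$ is genuinely determined on $\DMod{A}$ and lands in $\Fun_\Sigma(\DMod{A},\fcat D)$; this should follow by combining the connective statement (finite free modules, hence connective perfect complexes, generate $\DMod{A}^{\le 0}$ under sifted colimits) with a cell-attachment argument along the coconnective truncations.
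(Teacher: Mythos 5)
Your first (connective) stage is sound, but the stabilization step has two genuine problems. First, the decomposition $\DMod{A} \simeq \colim_n \DMod{A}^{\le 0}$ along $\Sigma$ is not justified by the parenthetical you give (which only covers the perfect side): taken in $\Cat_\infty$, the colimit of $\DMod{A}^{\le 0}$ along suspension produces only the \emph{bounded-above} complexes, not all of $\DMod{A}$. Interpreting the colimit in $\PrL$ does recover $\DMod{A}$ (a right-complete stable category is the stabilization of its connective part), but then the identity $\Fun_\Sigma(\colim_{\PrL}\fcat C_i, \fcat D) \simeq \lim \Fun_\Sigma(\fcat C_i, \fcat D)$ is not a formal consequence of the $\PrL$-colimit universal property — that property governs colimit-preserving functors, whereas $\Fun_\Sigma$ imposes a strictly weaker condition, and the transition right adjoints involve truncations that do not commute with geometric realizations of non-connective simplicial objects. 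Second, your proposed fix (ii) is backwards: an $m_0$-skeletal simplicial object is also $m$-skeletal for every $m \ge m_0$, so preservation of $m$-skeletal realizations is a \emph{stronger} condition as $m$ grows; preserving $m_0$-skeletal realizations therefore does not imply preserving $m$-skeletal ones for $m > m_0$, which is exactly the direction you would need in order to harmonize bounds. (Relatedly, the correct reading of "for some $m$" in the definition of $\Fun_\sigma$ is that the functor preserves realizations of every simplicial object that is $m$-skeletal for some $m$, i.e.\ all finitely-skeletal ones, not that a single uniform bound $m$ exists.)

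The argument in Brantner--Mathew avoids the suspension tower entirely by working directly with $\DMod{A} \simeq \Ind(\DMod{A}^\perf)$. A sifted-colimit-preserving functor on $\Ind(\DMod{A}^\perf)$ preserves filtered colimits and is therefore determined by its restriction to $\DMod{A}^\perf$; that restriction automatically preserves finite geometric realizations, since those are finite colimits and already lie in $\DMod{A}^\perf$. Conversely, if $F_0$ on $\DMod{A}^\perf$ preserves finite realizations, its $\Ind$-extension preserves all geometric realizations because any realization in $\Ind(\DMod{A}^\perf)$ is the filtered colimit of the realizations of its finite skeleta, and each of those is in turn a filtered colimit of realizations of $m$-skeletal simplicial objects valued in $\DMod{A}^\perf$; commuting filtered colimits past everything finishes the argument. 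You touch on the relevant fact at the very end (every module is a filtered colimit of perfect complexes), but you treat it as a "secondary technical point" rather than as the structural backbone of the whole proof, which is what it actually is.
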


To use this proposition it is enough to extend our non-Abelian derived functors (defined a priory only on $\DMod{A}^{\le 0}$) to $\DMod{A}^\perf$. To this end we will use a bit the machinery of Goodwillie's calculus of functors (see \cite{Goodwillie_Calculu3} and \cite[Section 6.1]{Lur_HA}). Recall the following definitions:
\begin{defn}
Let $T$ be a set. A \emdef{$T$-cube in a category $\fcat C$} is a functor $\mathrm P(T) \to \fcat C$, where $\mathrm P(T)$ denotes the poset of subsets of $T$. We will denote by $[n]$ the $(n+1)$-element set $\{0,1,\ldots, n\}$ and will usually call $[n]$-cubes just $n$-cubes. A cube $\mathcal X\colon \mathrm P([n]) \to \fcat C$ is called
\begin{itemize}
\item \emdef{strongly cocartesian} if it is left Kan extended from subsets of cardinality at most $1$.

\item \emdef{cartesian} if it is a limit diagram.
\end{itemize}
\end{defn}
\begin{defn}
Let $\fcat C$ be a category admitting finite colimits and let $\fcat D$ be a category admitting finite limits. A functor $F\colon \fcat C \to \fcat D$ is called \emdef{$n$-excisive} if for any strongly cocartesian $n$-cube $\mathcal X$ in $\fcat C$ the cube $F(\mathcal X)$ is cartesian, i.e. if the natural map
$$F(\mathcal X_{\eset}) \xymatrix{\ar[r] &} \lim_{S\in \mathrm P([n])\setminus \eset} F(\mathcal X_S)$$
is an equivalence.
\end{defn}
\begin{ex}
If $\fcat C$ admits the final object $*$, then the functor $F\colon \fcat C \to \fcat D$ is $0$-excisive if and only if it is constant with the value $F(*)$. If the category $\fcat D$ is stable (so that a square in $\fcat D$ is Cartesian if and only if it is coCartesian), then the functor $F\colon \fcat C \to \fcat D$ is $1$-excisive if and only if it preserves finite (co-)limits. 
\end{ex}
\begin{thm}[{\cite[Theorem 6.1.1.10]{Lur_HA}}]\label{n-excisive adjoints}
Let $\fcat C$ be a category admitting finite colimits and a final object and let $\fcat D$ be a category admitting finite limits and countable sequential colimits and such that those commute with each other. Let us denote by $\Exc^n(\fcat C, \fcat D)$ the full subcategory of $\Fun(\fcat C, \fcat D)$ spanned by $n$-excisive functors. Then the inclusion functor $i\colon \Exc^n(\fcat C, \fcat D) \inj \Fun(\fcat C, \fcat D)$ admits a left adjoint $P_n$, which moreover preserves finite limits.
\end{thm}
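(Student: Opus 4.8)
The plan is to follow Lurie's construction of the $n$-excisive approximation: realize $P_n$ as a sequential colimit of iterates of an auxiliary endofunctor $T_n$ of $\Fun(\fcat C,\fcat D)$, and then extract both the adjunction and the finite-limit preservation from the explicit shape of this construction. First I would attach to each object $X\in\fcat C$ the strongly cocartesian $(n+1)$-cube $\underline X\colon\mathcal P([n])\to\fcat C$ characterized by $\underline X(\eset)=X$ and $\underline X(\{i\})\simeq\ast$ for every $i\in[n]$; such a cube exists (and is essentially unique, and functorial in $X$) precisely because $\fcat C$ has finite colimits and a final object $\ast$. For $F\colon\fcat C\to\fcat D$ I would then define $(T_nF)(X)\coloneqq\lim_{\eset\neq S\subseteq[n]}F(\underline X(S))$, a finite limit in $\fcat D$, together with the natural transformation $\alpha_F\colon F\to T_nF$ induced by the maps $X=\underline X(\eset)\to\underline X(S)$; finally I would set $P_nF\coloneqq\colim\bigl(F\xrightarrow{\alpha_F}T_nF\xrightarrow{\alpha_{T_nF}}T_n^2F\to\cdots\bigr)$, the sequential colimit formed pointwise in $\fcat D$.

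The argument then rests on three points. (1) If $G$ is $n$-excisive, then $\alpha_G$ is an equivalence: for every $X$ the cube $\underline X$ is strongly cocartesian, so $G(\underline X)$ is cartesian, which is exactly the assertion that $G(X)\to\lim_{\eset\neq S}G(\underline X(S))$ is an equivalence; consequently $T_n$ preserves $n$-excisive functors and $P_nG\simeq G$ for such $G$. (2) For an arbitrary $F$ the functor $P_nF$ is $n$-excisive. (3) Granting (1) and (2), for any $n$-excisive $G$ precomposition with $\alpha_F\colon F\to P_nF$ induces an equivalence $\mathrm{Map}(P_nF,G)\xrightarrow{\ \sim\ }\mathrm{Map}(F,G)$: using the naturality square of the transformation $\alpha_{(-)}\colon\mathrm{id}\to T_n$ together with the fact that $\alpha_G$ is an equivalence, one checks that every map $F\to G$ factors essentially uniquely through $\alpha_F$. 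This exhibits $\Exc^n(\fcat C,\fcat D)$ as a reflective subcategory with reflection $P_n$, i.e. $P_n$ is left adjoint to the inclusion $i$.

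For the finite-limit statement, note that $T_n$ already preserves finite limits. Indeed limits in $\Fun(\fcat C,\fcat D)$ are computed pointwise, and pointwise $(T_nF)(X)$ is a finite limit of the evaluation functors $F\mapsto F(\underline X(S))$, each of which preserves all limits; a finite limit of functors preserving finite limits again preserves finite limits. Hence each iterate $T_n^k$ preserves finite limits. Since $P_nF=\colim_kT_n^kF$ is computed as a pointwise sequential colimit and, by hypothesis, sequential colimits commute with finite limits in $\fcat D$, for a finite diagram $\{F_j\}$ with limit $F$ we get $(P_nF)(X)=\colim_k\lim_j(T_n^kF_j)(X)=\lim_j\colim_k(T_n^kF_j)(X)=(\lim_jP_nF_j)(X)$, naturally in $X$, so $P_n$ preserves finite limits.

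The main obstacle is step (2): showing that the sequential colimit of the approximation tower is genuinely $n$-excisive rather than merely ``closer'' to being so. The heart of this is a cube-combinatorial estimate — given a strongly cocartesian $(n+1)$-cube $\mathcal X$ in $\fcat C$, one must produce, for each $k$, a comparison identifying (the total fiber of) $(T_n^{k}F)(\mathcal X)$ with $(T_n^{k-1}F)$ applied to suitable ``degenerate'' strongly cocartesian cubes built from $\mathcal X$, so that the failure of $(T_n^kF)(\mathcal X)$ to be cartesian is uniformly pushed one step up the tower; passing to the colimit and again invoking the commutation of sequential colimits with the finite limits computing total fibers, that obstruction vanishes. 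This cube-combinatorial analysis (the precise interplay of $T_n$ with strongly cocartesian cubes) is the technically delicate part of Lurie's treatment and would occupy the bulk of a full write-up; everything else — the construction of $T_n$, the adjunction, and the finite-limit preservation — is formal manipulation of the colimit together with the two commutation hypotheses on $\fcat D$.
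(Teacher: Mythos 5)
The paper does not prove this statement; it simply cites it as \cite[Theorem 6.1.1.10]{Lur_HA}, so there is no in-paper proof to compare against. Your sketch is a faithful reconstruction of Lurie's argument: the cube $\underline X$ is his Construction 6.1.1.22, $T_n$ and $P_n=\colim_k T_n^k$ are Constructions 6.1.1.27--6.1.1.28, your point (1) is his Remark 6.1.1.29, your point (3) is the reflective-localization formalism he invokes via HTT 5.2.7.4, and the finite-limit preservation of $P_n$ is deduced exactly as you say — $T_n$ is a pointwise finite limit of evaluation functors, and the hypothesis that sequential colimits commute with finite limits in $\fcat D$ carries this through the colimit. You have also correctly isolated the genuinely non-formal content of the theorem, namely that $P_nF$ is $n$-excisive (Lurie's Theorem 6.1.1.13, resting on Lemma 6.1.1.26 for the cube-combinatorial pushing-up of the obstruction); your sketch does not actually carry this out, and you are right to flag it as the bulk of any complete write-up, but that is not a gap so much as an honest delineation of where the work lies.
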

The relevance of this notion to our problem is explained by the following proposition:
\begin{prop}[{\cite[Theorem 3.35]{MathewBrantner_LMonad}}]\label{excisive_shifts}
The restriction functor
$$\Exc^n(\DMod{A}^{\perf, \le 0}, \fcat D) \xymatrix{\ar[r] &} \Exc^n(\DMod{A}^{\perf}, \fcat D)$$
is an equivalence.
\end{prop}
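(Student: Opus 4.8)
<br>

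The statement to prove is Proposition \ref{excisive_shifts} (restated as the final \texttt{\textbackslash prop} in the excerpt), which asserts that restriction along the inclusion $\DMod{A}^{\perf,\le 0} \hookrightarrow \DMod{A}^\perf$ induces an equivalence $\Exc^n(\DMod{A}^\perf, \fcat D) \to \Exc^n(\DMod{A}^{\perf,\le 0}, \fcat D)$ of $n$-excisive functor categories.

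The plan is to construct an explicit inverse to the restriction functor, using the shift (suspension) autoequivalence of $\DMod{A}^\perf$ to "push" any perfect complex into the connective part, and then descend along Goodwillie derivatives. First I would recall that for a perfect complex $M \in \DMod{A}^\perf$ there is some $k\ge 0$ with $M[k] \in \DMod{A}^{\perf,\le 0}$ (this is just boundedness of perfect complexes), and that the shift functor $[1]\colon \DMod{A}^\perf \to \DMod{A}^\perf$ is an equivalence which restricts to a fully faithful embedding $\DMod{A}^{\perf,\le 0} \hookrightarrow \DMod{A}^{\perf,\le 1}$ of connective-type subcategories. Thus $\DMod{A}^\perf = \bigcup_k \DMod{A}^{\perf,\le k}$ is a filtered union under shifts of a single subcategory, and the restriction functor $\Exc^n(\DMod{A}^\perf,\fcat D) \to \lim_k \Exc^n(\DMod{A}^{\perf,\le k},\fcat D)$ is an equivalence by a standard cofinality/colimit argument (an $n$-excisive functor on a filtered union of subcategories closed under the relevant strongly-cocartesian cubes is determined by its restrictions, since every strongly cocartesian $n$-cube lands in some $\DMod{A}^{\perf,\le k}$). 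So it suffices to show that restriction $\Exc^n(\DMod{A}^{\perf,\le k+1},\fcat D) \to \Exc^n(\DMod{A}^{\perf,\le k},\fcat D)$ is an equivalence for each $k$; and by applying the shift autoequivalence this reduces to the single case $\Exc^n(\DMod{A}^{\perf,\le 1},\fcat D) \to \Exc^n(\DMod{A}^{\perf,\le 0},\fcat D)$, or even better, one directly reduces the whole statement to showing: \emph{every $n$-excisive functor $F\colon \DMod{A}^{\perf,\le 0}\to\fcat D$ extends essentially uniquely to an $n$-excisive functor on $\DMod{A}^{\perf,\le 1}$.}

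The key technical input for this extension step is the interaction between $n$-excisive functors and suspension. Here I would invoke the formalism of Goodwillie derivatives: an $n$-excisive functor $F$ is built from its Taylor tower, whose layers are $j$-homogeneous functors ($j\le n$) classified by symmetric multilinear functors, i.e. by "coefficient" objects together with $\Sigma_j$-actions. The crucial point is that a $j$-homogeneous functor $\fcat C \to \fcat D$ commutes with filtered colimits and is, up to the symmetric-power bookkeeping, controlled by its behavior on a generating subcategory; and for functors on $\DMod{A}^{\perf,\le 0}$ one can transport the classifying data across the shift equivalence. Concretely, I would use that the strongly cocartesian cubes in $\DMod{A}^{\perf,\le 1}$ can be resolved (via the bar construction / skeletal filtration) in terms of objects of $\DMod{A}^{\perf,\le 0}$: any object of $\DMod{A}^{\perf,\le 1}$ is a finite colimit built from shifts of connective perfect modules in a controlled way, and $n$-excisive functors take strongly cocartesian cubes to cartesian cubes, which pins down the value of the extension uniquely. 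This is exactly the content one expects from Lurie's theory (\cite[Section 6.1]{Lur_HA}) combined with \Cref{n-excisive adjoints}, and I would lean on \Cref{n-excisive adjoints} to produce the extension: take $F$ on $\DMod{A}^{\perf,\le 0}$, left Kan extend it (non-$n$-excisively) to $\DMod{A}^{\perf,\le 1}$ along the inclusion, then apply $P_n$ to $n$-excisivize; checking that the result restricts back to $F$ uses that $P_n$ preserves finite limits (hence is compatible with the cartesian-cube condition) and that $F$ was already $n$-excisive, so the unit $F \to P_n(\mathrm{Lan}\,F)|_{\DMod{A}^{\perf,\le 0}}$ is an equivalence.

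The main obstacle I anticipate is precisely the verification that the $P_n$-localization of the left Kan extension does restrict back to the original functor, i.e. that the counit/unit comparison is an equivalence — this is where one must genuinely use that the inclusion $\DMod{A}^{\perf,\le 0}\hookrightarrow \DMod{A}^{\perf,\le 1}$ "generates under strongly cocartesian $n$-cubes," so that $n$-excisivity forces no new constraints to be lost. One clean way to organize this: show that for the inclusion $\iota\colon \DMod{A}^{\perf,\le 0}\hookrightarrow\DMod{A}^{\perf,\le 1}$, every object $X$ of the target sits in a strongly cocartesian cube whose other vertices lie in the image of $\iota$ (for instance using that $X[-1]$ is already connective, together with a cofiber sequence relating $X$, $X[-1]$, and a zero object, thickened to an $n$-cube), and conclude that an $n$-excisive functor is determined on $X$ by its values on $\iota$. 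Combined with the existence of the extension from \Cref{n-excisive adjoints}, this gives both fully faithfulness and essential surjectivity of the restriction functor, completing the proof. The remaining bookkeeping — that the filtered-union-over-shifts argument is legitimate and that shifting is an autoequivalence compatible with strongly cocartesian cubes — is routine and I would not belabor it.
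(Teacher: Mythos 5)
The paper itself gives no proof of this proposition; it simply cites \cite[Theorem~3.35]{MathewBrantner_LMonad}, so the only thing to evaluate is whether your argument is correct in its own right. Your overall strategy — reduce to the single step $\DMod{A}^{\perf,\le 0}\hookrightarrow \DMod{A}^{\perf,\le 1}$ by the shift autoequivalence, express values on the less-connective object as limits of punctured strongly cocartesian cubes whose other vertices are connective, and use $P_n$ of a left Kan extension to produce the inverse — is sound and is the standard route to this statement. Two points, however, need fixing.

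First, a direction error: in the paper's cohomological convention $\DMod{A}^{\le 0}$ is the connective part and $[1]$ makes things \emph{more} connective ($\DMod{A}^{\le k}[1]=\DMod{A}^{\le k-1}$). So for $X\in \DMod{A}^{\perf,\le 1}$ it is $X[1]$, not $X[-1]$, that lands in $\DMod{A}^{\perf,\le 0}$, and the relevant strongly cocartesian $n$-cube $\mathcal X$ has $\mathcal X(\emptyset)=X$ and $\mathcal X(\{i\})=0$, so that for nonempty $S$ the vertices $\mathcal X(S)=X\star S$ are positive suspensions of $X$ (hence in $\DMod{A}^{\perf,\le 0}$). Your parenthetical builds the cube around $X[-1]$, which lies in $\DMod{A}^{\perf,\le 2}$ and goes the wrong way.

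Second, the justification that $P_n(\mathrm{Lan}\,F)$ restricts back to $F$ is not the one you cite. Neither "$P_n$ preserves finite limits" nor "$F$ is already $n$-excisive" by themselves give this; the essential input is the closure property of the source category. Concretely, $\DMod{A}^{\perf,\le 0}$ is closed under the fiberwise join $X\mapsto X\star S$ for every nonempty $S$ (these are positive suspensions and finite colimits thereof, and $\DMod{A}^{\perf,\le 0}$ is closed under finite colimits and $[1]$). Therefore for any $G\colon \DMod{A}^{\perf,\le 1}\to\fcat D$ and $X\in\DMod{A}^{\perf,\le 0}$, the construction $T_n G(X)=\lim_{S\neq\emptyset} G(X\star S)$ depends only on $G|_{\DMod{A}^{\perf,\le 0}}$, hence so does $P_n G(X)=\colim_k T_n^k G(X)$. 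Since $\mathrm{Lan}\,F$ restricts to $F$ along the fully faithful inclusion and $F$ is already $n$-excisive, this gives $P_n(\mathrm{Lan}\,F)|_{\DMod{A}^{\perf,\le 0}}\simeq P_n(F)\simeq F$. Your proposed alternative fix (proving that every $X\in\DMod{A}^{\perf,\le 1}$ sits in a strongly cocartesian cube with connective nonempty vertices) does establish full faithfulness of restriction, but does not by itself give essential surjectivity: knowing that \emph{some} extension exists plus full faithfulness does not tell you that \emph{your} candidate extension restricts to $F$. You still need the closure argument above (or an explicit verification that the limit-formula extension is $n$-excisive) to close the loop.
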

I.e. any $n$-excisive functor defined on $\DMod{A}^{\perf,\le 0}$ admits an essentially unique $n$-excisive extension to $\DMod{A}^{\perf}$. To be able to apply this proposition we need to know that $\wedge^n\colon \DMod{A}^{\perf, \le 0} \to \DMod{A}$ is $n$-excisive. To this end recall the following definition:
\begin{defn}
We say that a functor $F\colon \fcat C \to \fcat D$ \emdef{is of degree $0$} if it is constant. $F$ is called \emdef{degree $n$} if the difference functor $D_X F(Y) \coloneqq \fib\big(F(X\oplus Y) \to F(Y)\big)$ is a degree $n-1$ functor.
\end{defn}
\begin{ex}
Classically, $\Sym^n, \wedge^n$ and $\Gamma^n$ are degree $n$-functors.
\end{ex}
Then one has:
\begin{prop}[{\cite[Proposition 5.10]{JonsonMcCarthy_TaylorTowers}, \cite[Proposition 3.34]{MathewBrantner_LMonad}}]\label{deg_n_vs_exc}
Let $F\colon \Mod_A \to \fcat D$ be a degree $n$ functor. Then the derived functor $\mathrm LF\colon D^{\le 0}(\mathcal A) \to \fcat D$ is $n$-excisive.
\end{prop}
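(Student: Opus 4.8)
The statement to be proved is \Cref{deg_n_vs_exc}: if $F\colon \UMod A \to \fcat D$ is a degree $n$ functor, then the non-abelian derived functor $\mathrm LF\colon \DMod A^{\le 0} \to \fcat D$ is $n$-excisive. The plan is to reduce everything to the universal property of the non-abelian derived functor, namely that $\mathrm LF$ is the unique sifted-colimit-preserving extension of $F|_{\UMod A^{\free,\fg}}$. First I would recall that both the property ``degree $\le n$'' and the property ``$n$-excisive'' behave well under the difference construction $D_X$: on the nose for degree (by definition), and for excisiveness because a strongly cocartesian cube remains strongly cocartesian after taking a levelwise direct sum with a fixed object and because $\mathrm L(D_XF) \simeq D_{?}(\mathrm LF)$ — more precisely, $\mathrm L$ commutes with the difference construction in the appropriate sense since $D_XF$ is again built out of $F$ by finite (co)limits over finite diagrams in the free modules, and $\mathrm L$ preserves sifted colimits and finite colimits of representables. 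So I would set up an induction on $n$.

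The base case $n=0$: a degree $0$ functor $F$ on $\UMod A$ is constant, say with value $F(0)$, hence $\mathrm LF$ (being the sifted-colimit extension of a constant functor on $\UMod A^{\free,\fg}$) is constant on $\DMod A^{\le 0}$, and a constant functor is $0$-excisive. For the inductive step, suppose the claim holds for $n-1$. Given $F$ of degree $n$, by definition $D_XF$ is of degree $n-1$ for every $X$, so by the induction hypothesis $\mathrm L(D_XF)$ is $(n-1)$-excisive for every $X\in \UMod A^{\free,\fg}$, and then — by taking sifted colimits in $X$, using that $(n-1)$-excisive functors are closed under filtered colimits and geometric realizations in the target (this is where one needs $\fcat D$ to admit sifted colimits commuting suitably, which is part of the hypotheses of the ambient setup) — one gets that $D_MF \coloneqq \fib(\mathrm LF(M\oplus -)\to \mathrm LF(-))$ is $(n-1)$-excisive for all $M\in \DMod A^{\le 0}$. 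Now I would invoke the standard characterization (as in \cite[Section 6.1]{Lur_HA} or \cite{JonsonMcCarthy_TaylorTowers}): a functor $G$ between suitable categories is $n$-excisive if and only if $D_XG$ is $(n-1)$-excisive for all $X$, provided one also knows $G$ is ``reduced enough'' or one handles the constant part separately. Concretely, $G$ is $n$-excisive iff the cross-effect $\creff_{n+1}G$ (the total fiber of an $(n+1)$-cube of direct sums) vanishes, and $\creff_{n+1}G$ can be computed by iterating $D$, so $n+1$ iterated difference functors of $\mathrm LF$ vanish because $n$ iterated differences already give a $1$-excisive (in fact we can push to $0$-excisive constant) functor.

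The main obstacle, and the step I would be most careful about, is the interchange $\mathrm L(D_XF) \simeq D_X(\mathrm LF)$ and more generally the claim that forming the derived functor commutes with the difference/cross-effect constructions; this is really the content that makes the induction go through, and it rests on $\mathrm LF$ preserving sifted colimits together with $D_X$ being expressible via finite limits (a fiber) which do \emph{not} in general commute with sifted colimits — so one must argue that on the subcategory of free modules the relevant fiber is actually computed as a finite colimit, or use that $F(X\oplus Y)$ decomposes functorially (Eilenberg–Zilber / the direct-sum decomposition of degree $n$ functors on an additive category) so that $D_XF$ is again a ``polynomial'' functor of the same combinatorial shape whose derived functor is manifestly $D_X\mathrm LF$. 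I would therefore first prove the purely algebraic statement that on an additive category a degree $n$ functor $F$ satisfies $F(X\oplus Y)\simeq \bigoplus_{k} \creff_k F(X^{\times k}_{\le}, \dots)$ type formula making $D_XF$ a finite (co)limit of values of cross-effects of $F$, then pass to derived functors termwise. After that, the excisiveness of $\mathrm LF$ follows formally from vanishing of $\creff_{n+1}(\mathrm LF)$, and the proof is complete; alternatively one cites \Cref{deg_n_vs_exc}'s sources directly, but the argument above is the one I would write out.
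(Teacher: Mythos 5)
The paper itself gives no proof of this proposition; it simply cites Johnson--McCarthy and Brantner--Mathew. Your proposal recovers the standard strategy from those references: induct on $n$, compare $\mathrm L$ with the difference construction $D_X$, and reduce $n$-excisiveness to vanishing of an iterated difference (equivalently a cross-effect). The step you single out as the main obstacle --- the interchange $\mathrm L(D_XF)\simeq D_X(\mathrm LF)$ --- is indeed one of the crucial points, and your resolution is correct: because the inclusion $Y\to X\oplus Y$ has a retraction, $D_XF(Y)$ is simultaneously a fiber and a cofiber of $F(Y)\to F(X\oplus Y)$, so it is a finite \emph{colimit} of values of $F$ and therefore commutes with the sifted-colimit extension defining $\mathrm L$. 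You also correctly note that $D_M\mathrm LF$ is $(n-1)$-excisive for all $M\in D^{\le 0}(A)$, not just free $M$, by writing $M$ as a sifted colimit of frees and using that $M\mapsto D_M\mathrm LF$ preserves sifted colimits together with closure of $(n-1)$-excisive functors under sifted colimits into a stable target.

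However, there is a second obstacle that your write-up treats as ``standard'' but which is genuinely the hard part and is left as a gap: the claim that a functor $G$ out of $D^{\le 0}(A)$ with all $(n+1)$-fold iterated differences vanishing (equivalently $\creff_{n+1}G\simeq 0$) must be $n$-excisive. The cross-effect $\creff_{n+1}G$ only tests $G$ on \emph{split} $(n+1)$-cubes $S\mapsto\bigoplus_{i\in S}X_i$, whereas $n$-excisiveness is the assertion that $G$ carries \emph{all} strongly cocartesian $(n+1)$-cubes to cartesian cubes. In a stable source the two notions coincide by a suspension/shift argument, but $D^{\le 0}(A)$ is only prestable: a strongly cocartesian cube is determined by the initial maps $\chi_\emptyset\to\chi_{\{i\}}$, which need not be split, and the induced cube is not a split cube (nor obviously a sifted colimit of split cubes of frees --- one must resolve the maps, not just the objects). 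Bridging this gap --- showing that excisiveness can be tested on split cubes of free modules once $\mathrm LF$ is known to preserve sifted colimits --- is precisely where the real work in Johnson--McCarthy (Prop.~5.10) and Brantner--Mathew (Prop.~3.34) sits; for instance one factors $\chi_\emptyset\to\chi_{\{i\}}$ as a split monomorphism $(1,f)\colon\chi_\emptyset\to\chi_\emptyset\oplus\chi_{\{i\}}$ followed by a split epimorphism, and then performs a (nontrivial) d\'evissage. Without that step, the implication from ``$D_XG$ is $(n-1)$-excisive for all $X$'' to ``$G$ is $n$-excisive'' is unjustified; note that the converse direction is the easy one, whereas your induction needs exactly the hard direction. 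So the outline is right and you have the interchange lemma correct, but you should either supply the reduction from arbitrary strongly cocartesian cubes to split cubes or cite it explicitly as the content of the referenced propositions rather than as a ``standard characterization.''
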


Finally, we have the following useful observation:
\begin{prop}\label{Goodwillie_very_useful}
Let $F\colon \fcat C \to \fcat D$ be a filtered colimit preserving functor and assume that $\fcat D$ is stable and cocomplete. Then for each $n \in \mathbb Z_{\ge 0}$ the functor $P_n F$ (see \ref{n-excisive adjoints}) preserves sifted colimits.

\begin{proof}
Consider the fiber sequence $D_n F \to P_n F \to P_{n-1} F$. By induction it is enough to prove that $D_n F$ preserves sifted colimits. By \cite[Theorem 6.1.4.7]{Lur_HA} there is a functor $\creff_n F\colon \fcat C^{\times n} \to \fcat D$ which is $1$-excisive separately in each argument and such that $D_n F(X) \simeq \big(\creff_n F(X,X,\ldots X)\big)_{h\Sigma_n}$. Since by assumption $F$ preserves filtered colimits, by construction the same holds for $\creff_n F$. It follows that $\creff_n F$ preserves all colimits separately in each argument. Since by definition of a sifted diagram $I$ the diagonal functor $I \to I^n$ is cofinal, the functor $X \mapsto \creff_n F(X,X, \ldots, X)$ preserves sifted colimits. Since colimits commute, the same holds for $D_n F$.
\end{proof}
\end{prop}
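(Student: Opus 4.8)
The plan is to reduce the statement to the already-established fact that $D_nF$ splits off via cross-effects, and to exploit \Cref{Goodwillie_very_useful}-style stability of $1$-excisive functors under colimits. First I would recall the defining fiber sequence of Goodwillie towers: there is a natural cofiber sequence $D_nF \to P_nF \to P_{n-1}F$ in $\Fun(\fcat C, \fcat D)$, where $D_nF$ is the $n$-th homogeneous layer. Since $\fcat D$ is stable, this is also a fiber sequence, and since a sifted colimit of fiber sequences is a fiber sequence (in a stable category sifted colimits are exact), it suffices to prove by induction on $n$ that each $D_nF$ preserves sifted colimits; the base case $P_0F \simeq F(\ast)$ (the constant functor) trivially preserves all colimits, and in fact one can instead start the induction from $P_1F$, which preserves filtered colimits because $F$ does and $P_1$ is built from finite limits and a sequential colimit commuting with filtered colimits by hypothesis on $\fcat D$.

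Next I would invoke the classification of homogeneous functors: by \cite[Theorem 6.1.4.7]{Lur_HA} there is a symmetric multilinear functor $\creff_n F\colon \fcat C^{\times n} \to \fcat D$ — the $n$-th cross-effect — which is $1$-excisive separately in each variable, together with a natural equivalence $D_nF(X) \simeq \bigl(\creff_nF(X, X, \ldots, X)\bigr)_{h\Sigma_n}$. The key point is to trace through the construction of $\creff_nF$ and observe that it inherits filtered-colimit preservation from $F$: the cross-effect is defined as an iterated total fiber of the cube $S \mapsto F\bigl(\bigoplus_{i \in S} X_i\bigr)$, and total fibers are finite limits, which commute with filtered colimits in $\fcat D$ by assumption. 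Hence $\creff_nF$ preserves filtered colimits in each argument; being moreover $1$-excisive (= finite-colimit-preserving, since $\fcat D$ is stable) in each argument, it preserves \emph{all} colimits separately in each variable.

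Then I would use the standard fact that the diagonal $\delta\colon I \to I^{\times n}$ is cofinal (indeed a localization, hence cofinal) whenever $I$ is sifted; this is precisely the definition of siftedness. Consequently the composite $X \mapsto \creff_nF(X, \ldots, X)$ is a colimit of the $n$-variable functor along the diagonal, and since colimits in $I^{\times n}$ are computed "variable by variable" and $\creff_nF$ preserves colimits in each slot, the functor $X \mapsto \creff_nF(X,\ldots,X)$ preserves sifted colimits. Finally, homotopy orbits $(-)_{h\Sigma_n}$ is itself a colimit (over $B\Sigma_n$), so it commutes with the sifted colimit, and we conclude that $D_nF$ preserves sifted colimits. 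Feeding this back into the fiber sequence and the induction finishes the argument.

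\textbf{Main obstacle.} The only genuinely delicate point is the claim that the cross-effect functor $\creff_nF$ preserves filtered (equivalently, all) colimits separately in each argument: this requires unwinding the inductive construction of cross-effects as total fibers of strongly cocartesian cubes and checking the interaction of those finite limits with colimits in $\fcat D$ — which is exactly where the hypothesis "countable sequential colimits commute with finite limits in $\fcat D$" (and, more importantly, that $\fcat D$ is stable and cocomplete) gets used. Everything else — the Goodwillie fiber sequence, cofinality of the diagonal of a sifted category, and commuting homotopy orbits past a colimit — is formal.
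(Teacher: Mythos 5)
Your proof is correct and follows essentially the same line as the paper: the inductive reduction via the fiber sequence $D_n F \to P_n F \to P_{n-1}F$, the appeal to \cite[Theorem 6.1.4.7]{Lur_HA} to write $D_nF$ as homotopy $\Sigma_n$-orbits of the diagonal of the multilinear cross-effect $\creff_nF$, the observation that $\creff_nF$ inherits filtered-colimit preservation from $F$ (hence preserves all colimits in each slot since it is $1$-excisive into a stable category), cofinality of the diagonal $I \to I^{\times n}$ for sifted $I$, and commuting $(-)_{h\Sigma_n}$ past the colimit. One small imprecision: the constant functor $P_0F \simeq F(\ast)$ does not "trivially preserve all colimits" (it fails for coproducts); the relevant fact is that it preserves colimits over weakly contractible diagrams, which includes sifted ones — but this does not affect the argument.
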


Combing all this one can perform the following construction:
\begin{construction}
Let $F\colon \UMod{A}^{\free, \fg} \to \fcat D$ be a degree $n$ functor. By \Cref{deg_n_vs_exc} the derived functor $\mathrm LF_{|\DMod{A}^{\perf, \le 0}} \colon \DMod{A}^{\perf, \le 0} \to \fcat D$ is $n$-excisive. By \Cref{excisive_shifts} it extends in a canonical way to an $n$-excisive functor $\mathrm LF^{\prime}\colon \DMod{A}^\perf \to \fcat D$. Moreover, by \Cref{Goodwillie_very_useful} $\mathrm LF^{\prime}$ preserves geometric realizations. Taking the left Kan extension, we obtain a unique (by \Cref{sifted_preserving}) sifted colimit preserving functor $\mathrm{RL}F\colon \DMod{A} \to \fcat D$. Note that since $\mathrm{RL}F$ preserves sifted colimits, its restriction to $\DMod{A}^{\le 0}$ is equivalent to $\mathrm LF$.
\end{construction}
\begin{notation}\label{derived_tensor_fns_aff}
For a commutative ring $A$ we will denote by $\Sym^n_A, \wedge^n_A, \Gamma^n_A \colon \DMod{A} \to \DMod{A}$ the functors obtained by the previous construction from the classical symmetric, exterior and divided power functors respectively. If the ring $A$ is clear from the context we will usually omit it from notations and write just $\Sym^n$, $\wedge^n$ and $\Gamma^n$.
\end{notation}
\begin{rem}
If $A$ is a $\mathbb Q$-algebra, then for a free discrete $A$-module $M$ the homotopy symmetric power $\Sym^n_{E_\infty}(M)\coloneqq (M^{\otimes n})_{h\Sigma_n}$ is equivalent to the discrete one. Since $\Sym^n_{E_\infty}$ is $n$-excisive and sifted colimit preserving, it follows that it coincides with the derived symmetric powers functor constructed above. A similar reasoning applies to derived exterior powers.

If $R$ is not a $\mathbb Q$-algebra, then $\Sym_{E_\infty}(M)$ is very different from the usual polynomial algebra on $M$ since its homotopy groups compute homology of symmetric groups with coefficients in $R$. Hence homotopy and derived symmetric powers do not coincide in this case.
\end{rem}

Next we explain how to extend these functors to the category of quasi-coherent sheaves on stacks.
\begin{prop}
Let $A\to B$ be a homomorphism of commutative rings and let $X\in\DMod{A}$ be an $A$-module. Then the natural map
$$(\wedge^n_A X)\otimes_A B \xymatrix{\ar[r] &} \wedge^n_B(X\otimes_A B)$$
is an equivalence.

\begin{proof}
First assume that $X$ is connective. Then $X \simeq |X_\bullet|$ for some simplicial diagram $X_\bullet$ of discrete free $A$-modules. Since both parts commute with sifted colimits, we reduced the statement to the case of free modules, which is well known. The general case follows from equivalences of \Cref{excisive_shifts} and \Cref{sifted_preserving}.
\end{proof}
\end{prop}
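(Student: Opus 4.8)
The statement to be proved is the base-change compatibility
$$
(\wedge^n_A X)\otimes_A B \xrightarrow{\ \sim\ } \wedge^n_B(X\otimes_A B)
$$
for a ring map $A\to B$ and $X\in\DMod{A}$. The plan is to reduce the assertion step by step to the classical identity for finite free modules, using the universal properties recorded in \Cref{sifted_preserving} and \Cref{excisive_shifts}, and the fact that $\wedge^n$ was constructed precisely so as to preserve sifted colimits and to be $n$-excisive.

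First I would set up the two functors to be compared. Both $F_1(X)\coloneqq (\wedge^n_A X)\otimes_A B$ and $F_2(X)\coloneqq \wedge^n_B(X\otimes_A B)$ are endofunctors $\DMod{A}\to\DMod{B}$, and there is a natural transformation $F_1\Rightarrow F_2$ (coming from the lax symmetric monoidality of $-\otimes_A B$, or more concretely from applying $-\otimes_A B$ to the counit data defining $\wedge^n$). The base change functor $-\otimes_A B$ preserves all colimits and is exact, and $\wedge^n_A$, $\wedge^n_B$ preserve sifted colimits by construction (\Cref{derived_tensor_fns_aff} and the discussion preceding it, which invokes \Cref{Goodwillie_very_useful}). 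Hence both $F_1$ and $F_2$ preserve sifted colimits. By \Cref{sifted_preserving} a sifted-colimit-preserving functor out of $\DMod{A}$ is determined by its restriction to $\DMod{A}^\perf$, so it suffices to show $F_1\Rightarrow F_2$ is an equivalence on $\DMod{A}^\perf$. Next, both restrictions are $n$-excisive: $\wedge^n$ on $\DMod{A}^\perf$ is the canonical $n$-excisive extension of the derived exterior power on connective perfect complexes (this is exactly how it was built, via \Cref{excisive_shifts} applied to the degree-$n$ functor $\wedge^n$ and \Cref{deg_n_vs_exc}), and post-composition and pre-composition with the exact functor $-\otimes_A B$ preserve $n$-excisiveness. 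Then by \Cref{excisive_shifts}, an $n$-excisive functor on $\DMod{A}^\perf$ is determined by its restriction to $\DMod{A}^{\perf,\le 0}$, so it is enough to check that $F_1\Rightarrow F_2$ is an equivalence on connective perfect complexes.

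On $\DMod{A}^{\le 0}$ (restricting further to $\DMod{A}^{\perf,\le 0}$) the functors $\wedge^n_A$ and $\wedge^n_B$ agree with the non-abelian derived functors $\mathrm L(\wedge^n)$, which by definition are the left Kan extensions of the classical exterior power from $\Mod_A^{\free,\fg}$ and commute with sifted (in particular simplicial) colimits. Given $X\in\DMod{A}^{\le 0}$, write $X\simeq |X_\bullet|$ with $X_\bullet$ a simplicial object of finite free $A$-modules. Since $-\otimes_A B$, $\wedge^n_A$ and $\wedge^n_B$ all commute with geometric realizations, the map $F_1(X)\to F_2(X)$ is the realization of the map $F_1(X_\bullet)\to F_2(X_\bullet)$, reducing us to the case $X=A^{\oplus m}$. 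There the assertion is the standard algebraic fact that $\wedge^n_A(A^{\oplus m})\otimes_A B\cong \wedge^n_B(B^{\oplus m})$, both being free of rank $\binom{m}{n}$, compatibly with the natural map; this is the content cited at the end of the excerpt ("which is well known"). I would also remark, as in the proof of the previous base-change proposition in the excerpt, that once the connective case is settled the general perfect case is automatic from the two universal-property equivalences invoked above, so there is no extra work.

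The only genuinely nontrivial point — the "main obstacle" — is bookkeeping: one must be careful that the natural transformation $F_1\Rightarrow F_2$ is literally the one induced functorially (so that checking it on generators suffices), that $\wedge^n_B(X\otimes_A B)$ as a functor of $X$ is indeed obtained by pre-composing the $B$-linear $\wedge^n$ with the exact functor $-\otimes_A B\colon \DMod{A}^\perf\to\DMod{B}^\perf$ (which lands in perfect complexes, so $\wedge^n_B$ is applicable and is the $n$-excisive sifted-preserving functor we think it is), and that the restriction-to-$\DMod{A}^{\le0}$ step is compatible on both sides with the corresponding reduction on $\DMod{B}$. None of these involve computation; they are checks that the universal-property machinery of \cite{MathewBrantner_LMonad} applies uniformly to both sides, after which the finite-free case closes the argument. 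The same proof works verbatim for $\Sym^n$ and $\Gamma^n$ in place of $\wedge^n$.
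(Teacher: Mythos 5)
Your proof is correct and follows essentially the same route as the paper's: reduce to the connective (perfect) case via \Cref{sifted_preserving} and \Cref{excisive_shifts}, resolve by finite free modules, and invoke the classical base-change identity for exterior powers of free modules. The paper states the reduction in the opposite order (connective case first, then the general case "follows from" the two propositions), but the content is identical; your write-up simply makes explicit the bookkeeping that the paper leaves to the reader.
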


Passing to the right Kan extensions we obtain:
\begin{cor}\label{stacky_tensor_functos}
For each $n \in \mathbb Z_{\ge 0}$ there exist natural endotransformations $\Sym^n$, $\wedge^n$ and $\Gamma^n$ of the functor
$$\QCoh^*\colon \PStk^\op \xymatrix{\ar[r] &} {\Pr}^\Sigma$$
(where ${\Pr}^\Sigma$ denotes the $(\infty, 1)$-category of presentable categories and sifted colimit preserving functors) determined by the property that their restriction to $\Aff^\op$ is equivalent to the derived tensor functors from \Cref{derived_tensor_fns_aff}.
\end{cor}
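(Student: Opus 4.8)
The statement to prove is \Cref{stacky_tensor_functos}: the derived tensor functors $\Sym^n_A$, $\wedge^n_A$, $\Gamma^n_A$ of \Cref{derived_tensor_fns_aff} assemble into endotransformations of $\QCoh^*\colon \PStk^\op \to \Pr^\Sigma$. The plan is to first record the key base-change compatibility that was just established (the proposition immediately preceding the corollary), namely that for any ring map $A\to B$ and any $A$-module $X$ the natural map $(\wedge^n_A X)\otimes_A B \to \wedge^n_B(X\otimes_A B)$ is an equivalence, with the identical statement for $\Sym^n$ and $\Gamma^n$ (the proof being the same: reduce to connective $X$ by \Cref{excisive_shifts} and \Cref{sifted_preserving}, then to free modules by writing $X$ as a sifted colimit, where the statement is classical). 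This compatibility is precisely the assertion that the square of $\infty$-categories
$$
\xymatrix{
\DMod{A} \ar[r]^{\wedge^n_A} \ar[d]_{(-)\otimes_A B} & \DMod{A} \ar[d]^{(-)\otimes_A B} \\
\DMod{B} \ar[r]^{\wedge^n_B} & \DMod{B}
}
$$
commutes up to coherent homotopy, functorially in the ring map. In other words, $\wedge^n$ is a natural transformation of the functor $\DMod{-}\colon \Aff^\op \to \Pr^\Sigma$ (the target being presentable categories and sifted-colimit-preserving functors, which is where each $\wedge^n_A$ lives by construction). The same holds for $\Sym^n$ and $\Gamma^n$.

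Next I would upgrade this from a compatibility of a pair of functors to an honest natural transformation of functors valued in $\Pr^\Sigma$; concretely, $\wedge^n$ should be a morphism in the functor category $\Fun(\Aff^\op, \Pr^\Sigma)$ from $\DMod{-}$ to itself. The data of such a morphism is a functor $\Aff^\op \times \Delta^1 \to \Pr^\Sigma$ restricting to $\DMod{-}$ on both ends; assembling the pointwise functors $\wedge^n_A$ together with the coherence of the base-change squares into this single functor is a standard (if slightly technical) coherence argument, which one can run either by straightening/unstraightening or by exhibiting $\wedge^n$ via a universal property on each $\DMod A$ that is manifestly compatible with the pullback functors — the latter being available since, by \Cref{excisive_shifts} and \Cref{sifted_preserving}, $\wedge^n_A$ is \emph{characterized} as the essentially unique sifted-colimit-preserving, suitably $n$-excisive extension of the classical exterior power on $\Mod_A^{\free,\fg}$, and this characterization transports along any ring map by the base-change proposition. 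Then I would take the right Kan extension along $\Aff^\op \hookrightarrow \PStk^\op$: since $\QCoh^* = \Ran_{\Aff^\op \hookrightarrow \PStk^\op} \DMod{-}$ by \Cref{def:QCoh}, and right Kan extension is functorial, the self-transformation $\wedge^n$ of $\DMod{-}$ induces a self-transformation of $\QCoh^*$, which by construction restricts back to the original $\wedge^n_A$ on affines. This gives the claimed endotransformations $\Sym^n$, $\wedge^n$, $\Gamma^n$ of $\QCoh^*$.

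Finally one should check that the transformation obtained is the ``correct'' one — i.e. that evaluating at a stack $\mstack X$ gives a functor $\QCoh(\mstack X) = \lim_{(\Spec A \to \mstack X)} \DMod{A} \to \lim_{(\Spec A \to \mstack X)} \DMod{A}$ that is (pointwise in each $\Spec A \to \mstack X$) the derived exterior power $\wedge^n_A$, and that this really does land in $\Pr^\Sigma$, i.e.\ preserves sifted colimits. The first is automatic from how right Kan extension computes the value at $\mstack X$ as the limit and from the base-change compatibility (which ensures the pointwise $\wedge^n_A$'s glue to a well-defined functor on the limit category). For the second point one uses that sifted colimits in a limit of categories are computed pointwise together with the fact that each $\wedge^n_A$ preserves sifted colimits; alternatively one notes that $\Pr^\Sigma$ was already the chosen target, so this is built in. The main obstacle I expect is the bookkeeping in the second paragraph: cleanly producing the coherent datum of a natural transformation in $\Fun(\Aff^\op,\Pr^\Sigma)$ out of the pointwise functors plus pairwise base-change equivalences, i.e.\ promoting ``commutes up to homotopy for each map'' to ``commutes coherently over the whole category''. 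This is where the universal-property characterization of $\wedge^n_A$ is essential, since it lets one avoid constructing the higher coherences by hand; everything else — the base-change proposition, the right Kan extension, the compatibility with sifted colimits — is either already in the excerpt or a formal consequence.
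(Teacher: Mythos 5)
Your proposal is correct and follows essentially the same route as the paper: establish the base-change compatibility for $\wedge^n_A$ (the proposition immediately preceding the corollary) and then right Kan extend along $\Aff^\op\hookrightarrow\PStk^\op$. The paper disposes of the coherence bookkeeping you discuss in your second paragraph with the terse phrase ``Passing to the right Kan extensions we obtain:'', whereas you correctly identify that promoting pointwise base-change equivalences to a genuine morphism in $\Fun(\Aff^\op,\Pr^\Sigma)$ is the nontrivial step, and your suggestion — exploiting the universal-property characterization of $\wedge^n_A$ furnished by \Cref{excisive_shifts} and \Cref{sifted_preserving} so the coherences come for free — is exactly the right way to fill that gap.
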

Finally, we will need some standard results about derived tensor functor, which we recall here.
\begin{prop}[Decalage equivalence]
Let $\mstack X$ be a prestack and let $\mathcal F$ be a quasi-coherent sheaf on $\mstack X$. For each $n\in \mathbb Z_{\ge 0}$ there are canonical equivalences:
$$\Sym^n(\mathcal F[1]) \simeq \wedge^n(\mathcal F)[n] \qquad\text{and}\qquad \wedge^n(\mathcal F[1]) \simeq \Gamma^n(\mathcal F)[n].$$

\begin{proof}
The statement is local, so we can assume that $\mstack X \simeq \Spec A$ for some commutative ring $A$. Then the result for connective $\mathcal F$ is proved in \cite[Proposition 25.2.4.2]{Lur_SAG}. The general case follows from equivalences of \Cref{excisive_shifts} and \Cref{sifted_preserving}.
\end{proof}
\end{prop}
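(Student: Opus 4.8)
The plan is to reduce the statement to the affine case and there to the classical décalage formula for non-abelian derived functors, then to propagate the resulting identification back up through the two universal properties out of which the derived tensor functors are built.

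First I would note that both sides of each claimed equivalence are endotransformations of the functor $\QCoh^{*}\colon \PStk^{\op}\to {\Pr}^{\Sigma}$ of \Cref{stacky_tensor_functos}: namely $\mathcal F\mapsto \Sym^{n}(\mathcal F[1])$ versus $\mathcal F\mapsto (\wedge^{n}\mathcal F)[n]$, and similarly $\mathcal F\mapsto \wedge^{n}(\mathcal F[1])$ versus $\mathcal F\mapsto (\Gamma^{n}\mathcal F)[n]$; here one uses only that the shift functors $[\pm 1]$ are exact, $\mathbb Z$-linear, and commute with every pullback $x^{*}\colon \QCoh(\mstack X)\to \DMod{A}$. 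Since, by \Cref{stacky_tensor_functos}, the stacky functors $\Sym^{n}$, $\wedge^{n}$, $\Gamma^{n}$ are the right Kan extensions from $\Aff^{\op}$ of the affine ones, and right Kan extension is functorial, it suffices to construct the two natural equivalences of endofunctors of $\DMod{A}$, naturally in the commutative ring $A$; the case of a general prestack then follows formally.

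Next I would fix a ring $A$ and unwind the construction of $\Sym^{n}_{A}$, $\wedge^{n}_{A}$, $\Gamma^{n}_{A}$: each is first the non-abelian derived functor on $\DMod{A}^{\perf,\le 0}$, which is $n$-excisive by \Cref{deg_n_vs_exc}; it is then extended uniquely to an $n$-excisive functor on $\DMod{A}^{\perf}$ by \Cref{excisive_shifts}; and finally left Kan extended, uniquely among sifted-colimit-preserving functors, to $\DMod{A}$ by \Cref{sifted_preserving}. Both $\mathcal F\mapsto \Sym^{n}(\mathcal F[1])$ and $\mathcal F\mapsto (\wedge^{n}\mathcal F)[n]$ preserve sifted colimits, being composites of sifted-colimit-preserving functors with the equivalences $[\pm 1]$; and both are $n$-excisive on $\DMod{A}^{\perf}$, since precomposing with the equivalence $[1]$ preserves strongly cocartesian cubes and postcomposing with the equivalence $[n]$ preserves cartesian cubes, so the $n$-excisiveness of $\Sym^{n}$ and of $\wedge^{n}$ is inherited. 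Therefore, by the two equivalences of functor categories just recalled, giving a natural equivalence between either pair of functors on all of $\DMod{A}$ amounts to giving one between their restrictions to connective perfect $A$-modules $\DMod{A}^{\perf,\le 0}$, compatibly in $A$ — which is legitimate, all the functor categories and restriction functors in sight being themselves functorial in $A$.

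Finally, on $\DMod{A}^{\perf,\le 0}$ the two required equivalences $\Sym^{n}(M[1])\simeq (\wedge^{n}M)[n]$ and $\wedge^{n}(M[1])\simeq (\Gamma^{n}M)[n]$ are precisely the classical décalage isomorphisms for non-abelian derived functors, due to Illusie \cite{Illusie_Cotangent} (see also \cite[Proposition 25.2.4.2]{Lur_SAG}); one can also recover them from the rank-one case by multiplicativity of $\Sym$ and $\wedge$, after writing an arbitrary connective perfect module as a geometric realization of finite free modules. I do not expect a genuine obstacle here: the mathematical content lies entirely in the classical affine connective case, and the only thing requiring real attention is the bookkeeping — verifying that $\Sym^{n}(-[1])$ and $(\wedge^{n}-)[n]$ are $n$-excisive and preserve sifted colimits so that the uniqueness assertions of \Cref{excisive_shifts} and \Cref{sifted_preserving} genuinely apply, and tracking naturality in both $A$ and $\mathcal F$ so that the equivalence descends to $\QCoh$ of an arbitrary prestack.
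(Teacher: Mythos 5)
Your proposal is correct and follows the same route as the paper's own proof: reduce to the affine case, invoke the classical d\'ecalage equivalence for connective modules (Illusie, or \cite[Proposition 25.2.4.2]{Lur_SAG}), and then propagate to all of $\DMod{A}$ via the uniqueness assertions of \Cref{excisive_shifts} and \Cref{sifted_preserving}. You simply spell out in more detail the verifications (that shifting preserves $n$-excisiveness and sifted colimits, and that the equivalence is natural in $A$) that the paper leaves implicit.
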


We also have the following finiteness result:
\begin{prop}\label{derived_tensor_finiteness}
Let $\mstack X$ be a prestack and let $\mathcal E$ be a perfect sheaf on $\mstack X$. Then for all $n\in \mathbb Z_{\ge 0}$ the quasi-coherent sheaves $\Sym^n(\mathcal E)$, $\wedge^n(\mathcal E)$ and $\Gamma^n(\mathcal E)$ are also perfect.

\begin{proof}
%Let $X \to Y \to Z$ be a cofiber sequence of connective $A$-modules. Up to equivalence we can assume that $X \to Y \to Z$ is represented by a short exact sequence of complexes of projective $A$-modules concentrated in negative cohomological degrees. by the Dold-Kan equivalence this corresponds to a short exact sequence of simplicial $A$-modules $0\to X_\bullet \to Y_\bullet \to Z_\bullet \to 0$. Classically, for each short exact sequence $0 \to X_i\to Y_i \to Z_i \to 0$ the $n$-th symmetric power $\Sym^n(Y_i)$ admits a functorial filtration with associated graded $\bigoplus_{k+l = n} \Sym^k(X_i)\otimes \Sym^l(Z_i)$. Passing to the colimit over $\Delta^\op$ we obtain a corresponding filtration on $\Sym^n(Y)$.
By the decalage equivalence it is enough to prove the proposition only for $\Sym^n$. Moreover, since the statement is local, we can assume that $\mstack X \simeq \Spec A$ for some commutative ring $A$. Let $E$ be perfect $k$-connective module for some $k\in \mathbb Z$. We will prove the statement by induction on $n$ and $k$. The base of induction $k=0$ was treated in \cite[Proposition 25.2.5.3]{Lur_SAG}. Since by construction $\Sym^n$ is $n$-excisive we have an equivalence
$$\Sym^n(E) \simeq \lim_{S\in \mathrm P([n])\setminus \eset} \Sym^n(S \star E),$$
where $S \star E \simeq (E[1])^{\oplus |S|-1}$. Since $E[1]$ is $(k+1)$-connective and since $\Sym^n_A$ of a direct sum can be expressed as a product of $\Sym^i$ for $i\le n$, we conclude by inductive assumption.
\end{proof}
\end{prop}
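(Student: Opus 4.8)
Looking at the final statement, it is \Cref{derived_tensor_finiteness}: for a prestack $\mstack X$ and a perfect sheaf $\mathcal E$ on it, the sheaves $\Sym^n(\mathcal E)$, $\wedge^n(\mathcal E)$, $\Gamma^n(\mathcal E)$ are perfect for all $n$.

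\textbf{Overall approach.} The plan is to reduce immediately to the affine case and then argue by a double induction. First, perfectness is a local condition: a sheaf $\mathcal F$ on $\mstack X$ is perfect iff $x^*\mathcal F$ is perfect for every $x\colon \Spec A\to\mstack X$, and the derived tensor functors commute with pullback along $x$ by \Cref{stacky_tensor_functos}. So it suffices to treat $\mstack X=\Spec A$ and a perfect $A$-module $E$. Next, by the d\'ecalage equivalences $\Sym^n(E[1])\simeq \wedge^n(E)[n]$ and $\wedge^n(E[1])\simeq \Gamma^n(E)[n]$, it is enough to handle $\Sym^n$ alone: given the statement for $\Sym^n$ applied to shifts of $E$, the statements for $\wedge^n$ and $\Gamma^n$ follow by applying it to $E[1]$ and $E[2]$ respectively (any perfect module is a shift of a connective one, so this is legitimate).

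\textbf{Key steps for $\Sym^n$.} I would induct on the pair $(n,k)$, where $E$ is perfect and $k$-connective. The base case is $k=0$: a connective perfect $A$-module has derived symmetric powers that are again connective perfect modules — this is \cite[Proposition 25.2.5.3]{Lur_SAG}. For the inductive step, one uses that $\Sym^n$ is $n$-excisive (by construction, since it is the canonical $n$-excisive extension of a degree $n$ functor, via \Cref{deg_n_vs_exc} and \Cref{excisive_shifts}). Applying $n$-excisiveness to the strongly cocartesian $n$-cube built from $E$ whose non-initial vertices are $S\star E\simeq (E[1])^{\oplus(|S|-1)}$ for $\eset\neq S\subseteq[n]$, one gets
$$\Sym^n(E)\simeq \lim_{\eset\neq S\subseteq[n]}\Sym^n(S\star E).$$
Each $S\star E$ is a finite direct sum of copies of $E[1]$, which is $(k+1)$-connective and still perfect, and $\Sym^n$ of a finite direct sum decomposes as a finite product/tensor combination of $\Sym^i$'s for $i\le n$ of the summands. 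By the inductive hypothesis on $n$ (for the $\Sym^i$ with $i<n$) and on $k$ (since $E[1]$ is more connective), all the terms $\Sym^n(S\star E)$ are perfect; a finite limit of perfect modules is perfect, so $\Sym^n(E)$ is perfect. Running the induction down shows this for all $k\in\mathbb Z$, hence for all perfect $E$.

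\textbf{Main obstacle.} The subtle point is making the inductive bookkeeping airtight: one must ensure the limit formula expresses $\Sym^n(E)$ in terms of $\Sym^j$ applied only to modules that are \emph{strictly} more connective (so the $k$-induction makes progress) or involve \emph{strictly smaller} $n$ (so the $n$-induction makes progress) — this is exactly what the "$\Sym^n$ of a direct sum is a combination of lower symmetric powers of the summands" identity delivers for the vertices $S\star E$ with $|S|\ge 2$, while the single vertex $\{i\}$ with $S\star E\simeq E[1]$ is handled purely by the $k$-induction. One should also note that "perfect" for $A$-modules is equivalent to "dualizable," hence closed under finite (co)limits and retracts, which is what legitimizes passing the property through the finite limit; and that the reduction to the affine case used the description of $\QCoh(\mstack X)^{\perf}$ via dualizability together with \Cref{limit_of_dualizables_is_dualizable}, so nothing beyond what is already in the excerpt is needed. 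I do not expect any genuine difficulty here beyond carefully citing \cite[Proposition 25.2.5.3]{Lur_SAG} for the connective base case; the rest is the excisiveness argument above.
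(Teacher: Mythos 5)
Your proof is correct and takes essentially the same route as the paper: reduce by d\'ecalage to $\Sym^n$, pass to the affine case, and then induct on $(n,k)$ using the $n$-excisiveness limit formula together with the base case \cite[Proposition 25.2.5.3]{Lur_SAG}. The extra commentary you add on the inductive bookkeeping (distinguishing the $|S|=2$ vertex, handled by the $k$-induction, from the $|S|\ge 3$ vertices, handled by the $n$-induction on the factors $\Sym^i$ with $i<n$) is sound and simply makes explicit what the paper's proof leaves implicit.
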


\subsection{Cotangent complex}\label{sec: cotangent complex}
In this section we will recall the definition and some basic properties of the cotangent complex $\mathbb L_{\mstack Y}$ of a prestack $\mstack Y$. To give a definition we first note that there exists a natural extension of $\mstack Y$ to a functor on simplicial $R$ algebras $\CAlg_{R/}^{\Delta^\op}$ (with its natural $(\infty, 1)$-structure) given by
\begin{align}\label{undervied_stacks_in_derived}
\mstack Y(A_\bullet) \coloneqq (\Lan_{i_\Delta} \mstack Y)(A_\bullet),
\end{align}
where $\Lan_{i_\Delta} \mstack Y$ is the left Kan extension of $\mstack Y$ along the embedding of commutative $R$-algebras into simplicial commutative $R$-algebras. 
\begin{rem}
The definition above is natural from the following perspective: if we denote by $i_{\mathrm{red}} \colon \CAlg^{\mathrm{red}}_{R/} \inj \CAlg_{R/}$ the natural embedding of the category of reduced $R$-algebras, then it is easy to see that for any reduced $R$-scheme $Y\in \Sch_{/S}$ and any (not necessary reduced) $R$-algebra $A$ we have
$$Y(A) \simeq (\Lan_{i_{\mathrm{red}}} Y_{|\CAlg_{/R}^{\mathrm{red}}})(A).$$
Now, one can think about simplicial algebra $A_\bullet$ as a derived nilpotent thickening of the usual algebra $\pi_0(A_\bullet)$ (some elements of $A_\bullet$ are so nilpotent, that they live in higher homotopy degrees). Hence \eqref{undervied_stacks_in_derived} generalizes the usual embedding of the full subcategory of reduced schemes into the category of all schemes.
\end{rem}
\begin{rem}
Outside of characteristic $0$ there are two different notions of cotangent complex: one controls deformations as a simplicial commutative algebra, the other deformations as $E_\infty$-algebras. The usual polynomial algebra $R[x]$ is not free over $R$  when considered as an $E_\infty$-algebra, thus its $E_\infty$-cotangent complex will in general be quite different from the usual module of K\"ahler differentials. For us this behavior is undesirable since we would at least like the cotangent complex of a smooth classical algebra $A$ to be given by the $A$-module of differential 1-forms $\Omega^1_{A/R}$. Essentially by construction one doesn't have this problem in the simplicial context, hence we stick to this case.
\end{rem}
Let $A$ be an $R$-algebra and let $M\in \DMod{A}^{\le 0}$ be a connective complex. Then one can consider the \emph{trivial square-zero extension $A\oplus M$ of $A$ by $M$} which is given as follows: by Dold-Kan correspondence we have a simplicial module $M_\bullet\in \UMod{A}^{\Delta^{op}}$ corresponding to $M$ and then can apply the usual square-zero extension degreewise. With this notations we can formulate the defining property of $\mathbb L_{-}$:
\begin{defn}
Let $f\colon \mstack X \to \mstack Y$ be a morphism of prestacks. The defining property of the \emph{relative cotangent complex $\mathbb L_f=\mathbb L_{\mstack X/\mstack Y} \in \QCoh(\mstack X)$} is that for any $R$ algebra $A$\footnote{One could also ask the analogues representability property for all simplicial $R$-algebras $A_\bullet$. For non-derived stacks there is no difference.}, a point $x \in \mstack X(A)$ and a module $M \in \DMod{A}^{\le 0}$ the space of morphisms $\Hom_{\DMod{A}}(x^* \mathbb L_{\mstack X/\mstack Y}, M)$ is equivalent to the space of lifts
$$\xymatrix{
\Spec A \ar[r]^-{x} \ar[d] & \mstack X \ar[d]^-{f}
\\
\Spec A\oplus M \ar@{-->}[ur] \ar[r]_-{f \circ x \circ q} & \mstack Y
}$$
where $q\colon \Spec A\oplus M \to \Spec A$ is the projection morphism. In the special case $\mstack Y = S$ we will call $\mathbb L_{\mstack X/S}$ just \emdef{cotangent complex of $\mstack X$} and will just write $\mathbb L_{\mstack Y}$ if the base $S$ is clear from the context.
\end{defn}
The cotangent complex $\mathbb L_{\mstack X}$ does not exist in general; however, it does if $\mstack X$ is Artin (see e.g. \cite[Theorem 1.4.3.2]{TV_HAGII} or \cite[Chapter 1, Proposition 7.4.2]{GaitsRozII}). We will now recall some basic properties of the cotangent complexes that are going to be relevant later. The following proposition follows formally from the defining property of $\mathbb L_{-}$:
\begin{prop}\label{cotangent:descent_and_kunneth}
In the above notation we have:
\begin{enumerate}
\item Let $f\colon \mstack X \to \mstack Y$ be a morphism of prestacks such that both $\mstack X$ and $\mstack Y$ admit cotangent complexes over $S$. Then the relative cotangent complex $\mathbb L_f$ exists and is canonically equivalent to the cofiber of the natural map $f^* \mathbb L_{\mstack Y} \to \mathbb L_{\mstack X}$.

\item (Base change for $\mathbb L_{-}$) Let
$$\xymatrix{
\mstack X \ar[r]^q\ar[d] & \mstack Z \ar[d] \\
\mstack Y \ar[r] & \mstack W
}$$
be a derived pullback square of prestacks such that $\mathbb L_{\mstack Z/\mstack W}$ and $\mathbb L_{\mstack X/\mstack Y}$ exist and. Then there exists a canonical equivalence $q^*\mathbb L_{\mstack Z/\mstack W} \simeq \mathbb L_{\mstack X/\mstack Y}$.

\item Let $\mstack X_\bullet \colon I \to \Stk_{/S}$ be a diagram of pre-stacks admitting cotangent complex and flat maps between them and let $\mstack X \coloneqq \lim_I \mstack X_i$. Then $\mathbb L_{\mstack X}$ exists and, if $p_i\colon \mstack X \to \mstack X_i$ is the natural projection, the induced map $\colim_I p_i^* \mathbb L_{\mstack X_i} \to \mathbb L_{\mstack X}$ is an equivalence.
\end{enumerate}
\end{prop}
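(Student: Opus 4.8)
The plan is to derive all three parts formally from the representability property defining $\mathbb{L}_{-}$: the relative cotangent complex, when it exists, is the object of $\QCoh(\mstack X)$ corepresenting the ``relative derivation functor'' $\Der_{\mstack X/\mstack Y}$ that sends a triple $(A,\,x\in\mstack X(A),\,M\in\DMod A^{\le 0})$ to the space of dashed lifts in the displayed square. So in each case I would exhibit a candidate quasi-coherent sheaf, compute the functor it corepresents, and match it with $\Der_{\mstack X/\mstack Y}$. Two elementary observations do all the work. First, for any prestack $\mstack Z$ the evaluations $\Map_{\PStk}(\Spec(A\oplus M),\mstack Z)$ and $\Map_{\PStk}(\Spec A,\mstack Z)$ preserve limits in $\mstack Z$, since they are just evaluations of a presheaf of spaces; hence pullbacks and arbitrary limits of prestacks turn into the corresponding homotopy pullbacks and limits of lift-spaces, and this is insensitive to whether the limits are derived, the square-zero extensions $\Spec(A\oplus M)$ themselves being (derived) affine test objects. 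Second, in $\DMod A$ a colimit corepresents the corresponding limit of mapping spaces, and any pullback $x^{*}\colon\QCoh(\mstack X)\to\DMod A$ is exact and colimit-preserving, so it commutes with cofibers and colimits of sheaves.

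For (1) I would argue as follows. A lift of $x$ to $\mstack X$ over $\mstack Y$ is precisely a lift of $x$ over $S$ whose image under $f$ is the \emph{trivial} lift $f\circ x\circ q$ of $f\circ x$; so $\Der_{\mstack X/\mstack Y}(x,M)$ is the fiber of $\Der_{\mstack X/S}(x,M)\to\Der_{\mstack Y/S}(f\circ x,M)$ over the basepoint, which by the defining property of the absolute cotangent complexes is the fiber of $\Hom_{\DMod A}(x^{*}\mathbb{L}_{\mstack X},M)\to\Hom_{\DMod A}(x^{*}f^{*}\mathbb{L}_{\mstack Y},M)$, the map being induced by the canonical $f^{*}\mathbb{L}_{\mstack Y}\to\mathbb{L}_{\mstack X}$. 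A fiber of mapping spaces over the zero map is corepresented by the cofiber and $x^{*}$ is exact, so $\Der_{\mstack X/\mstack Y}(x,M)\simeq\Hom_{\DMod A}\bigl(x^{*}\cofib(f^{*}\mathbb{L}_{\mstack Y}\to\mathbb{L}_{\mstack X}),\,M\bigr)$ naturally in $(A,x)$; hence $\mathbb{L}_{\mstack X/\mstack Y}$ exists and is the claimed cofiber. For (2), put $z:=q(x)$; applying the first observation to the pullback square $\mstack X=\mstack Y\times_{\mstack W}\mstack Z$ identifies a lift of $x$ over $\mstack Y$ with a lift of $z$ over $\mstack W$ (the remaining data being fixed by the square), so $\Der_{\mstack X/\mstack Y}(x,M)\simeq\Hom_{\DMod A}(z^{*}\mathbb{L}_{\mstack Z/\mstack W},M)\simeq\Hom_{\DMod A}(x^{*}q^{*}\mathbb{L}_{\mstack Z/\mstack W},M)$, so that $q^{*}\mathbb{L}_{\mstack Z/\mstack W}$ corepresents $\Der_{\mstack X/\mstack Y}$ and therefore is $\mathbb{L}_{\mstack X/\mstack Y}$.

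For (3), writing $x_{i}:=p_{i}(x)$, the two observations give a chain of natural equivalences
$$\Der_{\mstack X/S}(x,M)\;\simeq\;\lim_{i\in I}\Der_{\mstack X_i/S}(x_{i},M)\;\simeq\;\lim_{i\in I}\Hom_{\DMod A}(x^{*}p_{i}^{*}\mathbb{L}_{\mstack X_i},M)\;\simeq\;\Hom_{\DMod A}\Bigl(x^{*}\colim_{i\in I}p_{i}^{*}\mathbb{L}_{\mstack X_i},\,M\Bigr),$$
natural in $(A,x)$; hence $\colim_{i}p_{i}^{*}\mathbb{L}_{\mstack X_i}$ corepresents $\Der_{\mstack X/S}$, which gives at once the existence of $\mathbb{L}_{\mstack X}$ and the assertion that the comparison map $\colim_{i}p_{i}^{*}\mathbb{L}_{\mstack X_i}\to\mathbb{L}_{\mstack X}$ is an equivalence.

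The argument is purely formal, and the closest thing to an obstacle is the bookkeeping in (3): the representability property as stated tests only against $M\in\DMod A^{\le 0}$, so it characterises the corepresenting object only once one knows that object sits in a controlled range (bounded Tor-amplitude), and this is exactly what the hypotheses that the $\mstack X_i$ admit cotangent complexes and that the transition maps are flat are there to guarantee — flatness, via part (1), controls the relative cotangent complexes in the colimit diagram and keeps $\colim_{i}p_{i}^{*}\mathbb{L}_{\mstack X_i}$ within that range. So the effort I expect to spend is in checking those finiteness/connectivity points and in being careful that the ``lift-space $=$ mapping space'' identifications are natural in $(A,x)$; everything else unwinds directly from the universal property and the exactness of pullback on $\QCoh$.
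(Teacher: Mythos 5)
Your proof is essentially the formal unwinding that the paper invokes when it writes ``The following proposition follows formally from the defining property of $\mathbb{L}_{-}$'' (the paper itself gives no argument beyond this remark), and all three parts are handled correctly: (1) reduces the defining lifting problem over $\mathscr{Y}$ to the fiber of the map between absolute lifting problems and then uses exactness of $x^{*}$ to pass from the fiber of mapping spaces to the cofiber; (2) uses that the lifting problem decomposes across a derived pullback square; (3) decomposes the lifting problem across the limit and identifies the resulting limit of corepresentable functors.

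One remark, mostly to make precise something you half-flagged. Your ``first observation'' — that evaluating a prestack at $\Spec(A\oplus M)$ preserves limits in the prestack variable — is not literally true for arbitrary limits, because $A\oplus M$ is a simplicial algebra and the evaluation is $(\Lan_{i_{\Delta}}\mstack X)(A\oplus M)$, where $\Lan_{i_{\Delta}}$ is the left Kan extension along $\CAlg_{R/}\hookrightarrow\CAlg_{R/}^{\Delta^{op}}$; left Kan extension preserves colimits but not limits of prestacks. This is exactly why the hypotheses in the statement read the way they do: in (2), ``derived pullback square'' is the assumption that the square remains a pullback after $\Lan_{i_{\Delta}}$, which is what your argument needs; in (3), the flatness of the transition maps is there to guarantee the same compatibility between the ordinary limit and its image under $\Lan_{i_{\Delta}}$. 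Your explanation of the role of flatness — ``controls the relative cotangent complexes in the colimit diagram and keeps $\colim_i p_i^{*}\mathbb{L}_{\mstack X_i}$ within that range'' — is not quite the right mechanism (flat maps do not in general bound Tor-amplitude of relative cotangent complexes); the point is the compatibility of the limit with the derived extension, together with the uniqueness caveat you correctly raise about testing only against connective $M$. With that adjustment, the argument is exactly the formal proof the paper is pointing at.
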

\begin{rem}
As in \Cref{cor:Tor_indep_basechange} if $\mstack Y$ and $\mstack Z$ are $\Tor$-independent over $\mstack W$, we obtain the base-change for the relative cotangent complexes for the non-derived fibered product.
\end{rem}

It also satisfies the \'etale hyperdescent:
\begin{prop}\label{etale_hyperdescent_for_cotangent}
Let $p_\bullet\colon \mstack U_\bullet \to \mstack X$ be an \'etale hypercover of Artin stacks $\mstack X$. Then the natural map
$$\wedge^n \mathbb L_{\mstack X} \xymatrix{\ar[r] &} \Tot p_{\bullet *}(\wedge^n \mathbb L_{\mstack U_\bullet})$$
is an equivalence for all $n\in \mathbb Z_{\ge 0}$.

\begin{proof}
Since all maps $p_i\colon \mstack U_i \to \mstack X$ are \'etale we have canonical equivalences $p_i^* \wedge^n \mathbb L_{\mstack X} \areq \wedge^n \mathbb L_{\mstack U_i}$. It follows that
$$\Tot p_{\bullet *}(\wedge^n \mathbb L_{\mstack U_\bullet}) \simeq \Tot p_{\bullet *} p_\bullet^* \wedge^n \mathbb L_{\mstack X} \simeq \wedge^n \mathbb L_{\mstack X},$$
where the last equivalence is provided by the flat descent for quasi-coherent sheaves.
\end{proof}
\end{prop}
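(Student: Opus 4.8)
The plan is to reduce the statement to fpqc descent for quasi-coherent sheaves, using the fact that \'etale morphisms are formally \'etale, so that the cotangent complex and its derived exterior powers are insensitive to \'etale base change. First I would recall that since $\mstack X$ is an Artin stack, $\mathbb L_{\mstack X}$ exists (and so do the $\mathbb L_{\mstack U_i}$, as each $\mstack U_i$ is again Artin), so all the objects in the statement make sense; moreover $\wedge^n\mathbb L_{\mstack X}$ is a well-defined quasi-coherent sheaf by \Cref{stacky_tensor_functos}, and each $p_i$ being \'etale is in particular flat, so that $\mstack X$ and $\mstack U_i$ are automatically $\Tor$-independent over $\mstack X$ and no derived-versus-classical fibre product issues arise.

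The key step is the identification $p_i^{*}\wedge^n\mathbb L_{\mstack X}\simeq \wedge^n\mathbb L_{\mstack U_i}$, compatibly in $i$. For this I would first argue that $\mathbb L_{p_i}\simeq 0$ for an \'etale morphism $p_i\colon\mstack U_i\to\mstack X$ of Artin stacks: by the defining (deformation-theoretic) property of the relative cotangent complex, together with flat descent along a smooth atlas, this reduces to the classical vanishing of the cotangent complex for \'etale morphisms of affine schemes. Given $\mathbb L_{p_i}\simeq 0$, the cofibre sequence $p_i^{*}\mathbb L_{\mstack X}\to\mathbb L_{\mstack U_i}\to\mathbb L_{p_i}$ of \Cref{cotangent:descent_and_kunneth}(1) shows $p_i^{*}\mathbb L_{\mstack X}\xrightarrow{\ \sim\ }\mathbb L_{\mstack U_i}$, and applying the sifted-colimit-preserving functor $\wedge^n$, which commutes with pullback by \Cref{stacky_tensor_functos}, gives $p_i^{*}\wedge^n\mathbb L_{\mstack X}\simeq\wedge^n\mathbb L_{\mstack U_i}$. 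By naturality of the base-change comparison these equivalences assemble into an equivalence of cosimplicial objects $p_\bullet^{*}\wedge^n\mathbb L_{\mstack X}\simeq\wedge^n\mathbb L_{\mstack U_\bullet}$ in $\QCoh(\mstack U_\bullet)$.

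To finish, I would substitute this identification into the right-hand side:
$$\Tot\, p_{\bullet *}\bigl(\wedge^n\mathbb L_{\mstack U_\bullet}\bigr)\simeq \Tot\, p_{\bullet *}\,p_\bullet^{*}\wedge^n\mathbb L_{\mstack X},$$
and then invoke fpqc descent for quasi-coherent sheaves: an \'etale hypercover is in particular an fpqc hypercover, so by the descent corollary following the fpqc hypersheafification statement (the natural map $\QCoh(\mstack X)\to\Tot\QCoh(\mstack U_\bullet)$ is an equivalence, and for $\mathcal F\in\QCoh(\mstack X)$ one has $\mathcal F\xrightarrow{\sim}\Tot p_{\bullet*}p_\bullet^{*}\mathcal F$), the right-hand side is canonically $\wedge^n\mathbb L_{\mstack X}$, and one checks the composite equivalence is the natural comparison map.

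\textbf{Main obstacle.} There is no deep obstacle here — the statement is essentially formal once one has \'etale-invariance of $\wedge^n\mathbb L$ and descent for $\QCoh$. The one point that requires a (routine) argument rather than a citation is the vanishing $\mathbb L_{p_i}\simeq 0$ for an \'etale morphism of Artin stacks and, relatedly, checking that the pointwise equivalences $p_i^{*}\wedge^n\mathbb L_{\mstack X}\simeq\wedge^n\mathbb L_{\mstack U_i}$ are sufficiently natural to glue into a map of cosimplicial diagrams; both are handled by smooth-descent reduction to the case of affine schemes, where everything is standard.
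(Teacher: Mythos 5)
Your proposal is correct and follows essentially the same route as the paper's own proof: establish the \'etale invariance $p_i^{*}\wedge^n\mathbb L_{\mstack X}\simeq\wedge^n\mathbb L_{\mstack U_i}$, substitute into the totalization, and conclude by fpqc (here \'etale) descent for quasi-coherent sheaves. You simply unwind the first step into more elementary pieces (vanishing of $\mathbb L_{p_i}$, the cofibre sequence, compatibility of $\wedge^n$ with pullback), which the paper treats as a canonical fact; the logical structure is the same.
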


If $\mstack Y=Y$ is a classical smooth scheme then $\mathbb L_{\mstack Y/S}$ is given by the sheaf $\Omega^1_{Y/S}\in \QCoh(Y)^\heartsuit$ of $S$-relative differential 1-forms. Note that in this case $\Omega^1_{Y/S}$ is locally free and so is a perfect complex. Similar statement also holds for $n$-Artin stacks:
\begin{prop}\label{cotangent_of_smooth_stack}
Let $\mstack Y$ be a smooth $n$-Artin stack over $S$. Then $\mathbb L_{\mstack Y}$ is perfect of $\Tor$-amplitude $[0,n]$. In particular $\mathcal H^i(\mathbb L_{\mstack Y}) \simeq 0$ for $i\not \in [0; n]$.

\begin{proof}
Let $\mstack Y$ be $n$-Artin. We will prove the statement by induction on $n$. If $n=0$, (i.e. $\mstack Y$ is a smooth scheme) since Zariski locally every smooth scheme admits an \'etale map into affine space we can assume $\mstack Y \simeq \mathbb A^n_S$. In this case it is not hard to see from the universal property that $\mathbb L_{\mathbb A^n_S/S} \simeq \mathcal O_{\mathbb A^n_S}^{\oplus n}$, hence is flat (which is equivalent to be of $\Tor$-amplitude $[0;0]$) and dualizable.

Assume now the statement is proved for $n^\prime < n$. Let $p\colon U\surj \mstack Y$ be a smooth atlas. Since the functor $p^*$ is conservative, $t$-exact and detects dualizable objects, it is enough to prove the statement for $p^*\mathbb L_{\mstack Y}$ instead of $\mathbb L_{\mstack Y}$. We have a co-fiber sequence
$$p^*\mathbb L_{\mstack Y} \xymatrix{\ar[r] &} \mathbb L_U \xymatrix{\ar[r] &} \mathbb L_{p}$$
Since dualizable objects are closed under finite limit and since a fiber of $[0;n-1]$ $\Tor$-amplitude objects has $\Tor$-amplitude at most $[0;n]$, it is enough to prove $\mathbb L_p$ is perfect and has $\Tor$-amplitude $[0;n-1]$. For this end consider the fibered square
$$\xymatrix{
U\times_{\mstack Y} U \ar[r]^-q\ar[d]^q & U \ar[d]^p \\
U \ar[r]^p & \mstack Y
}$$
Again since $q$ is smooth and surjective is enough to prove $q^* \mathbb L_p$ has desired properties. But by base-change for the cotangent complex $q^* \mathbb L_p\simeq \mathbb L_q$. We conclude by induction, since the map $q$ is smooth and $(n-1)$-representable.
\end{proof}
\end{prop}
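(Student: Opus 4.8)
The statement to prove is \Cref{cotangent_of_smooth_stack}: for a smooth $n$-Artin stack $\mstack Y$ over $S$, the cotangent complex $\mathbb L_{\mstack Y}$ is perfect of $\Tor$-amplitude $[0,n]$. The natural approach is induction on $n$, using a smooth atlas and the fiber sequence relating $\mathbb L_{\mstack Y}$, $\mathbb L_U$, and $\mathbb L_{U/\mstack Y}$, together with the fact that pullback along a smooth surjection is conservative, $t$-exact, and detects perfectness (the latter by \Cref{limit_of_dualizables_is_dualizable}, since $\QCoh(\mstack Y)^\perf$ consists of dualizable objects and an atlas presents $\mstack Y$ as a colimit).

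First I would handle the base case $n=0$, i.e. $\mstack Y$ a smooth scheme. Zariski-locally a smooth scheme admits an \'etale map to $\mathbb A^m_S$, and $\mathbb L_{\et}$-maps induce equivalences on cotangent complexes, so one reduces to $\mathbb A^m_S$, where the universal property gives $\mathbb L_{\mathbb A^m_S/S}\simeq \mathcal O_{\mathbb A^m_S}^{\oplus m}$ — free, hence perfect of $\Tor$-amplitude $[0,0]$. For the inductive step, assume the result for all $n' < n$ and let $p\colon U \surj \mstack Y$ be a smooth atlas by a disjoint union of affine schemes, which is $(n-1)$-representable by \Cref{Artin_basics}(3) (and in fact smooth and $(n-1)$-representable since $p$ is smooth). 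Since $p^*$ is conservative, $t$-exact, and detects dualizability, it suffices to control $p^*\mathbb L_{\mstack Y}$. The cofiber sequence $p^*\mathbb L_{\mstack Y}\to \mathbb L_U \to \mathbb L_{U/\mstack Y}$ (\Cref{cotangent:descent_and_kunneth}(1)), with $\mathbb L_U$ flat and perfect by the base case, reduces everything to showing $\mathbb L_{U/\mstack Y}$ is perfect of $\Tor$-amplitude $[0,n-1]$: indeed a fiber of a map from a $\Tor$-amplitude $[0,0]$ object to a $\Tor$-amplitude $[0,n-1]$ object has $\Tor$-amplitude $[0,n]$, and perfect objects are closed under finite limits.

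To analyze $\mathbb L_{U/\mstack Y}$, I would pass to the fiber square with $U\times_{\mstack Y} U$, both projections $q$ being smooth and $(n-1)$-representable; again $q^*$ is conservative, $t$-exact and detects perfectness, so it suffices to control $q^*\mathbb L_{U/\mstack Y}$, which by base change for the cotangent complex (\Cref{cotangent:descent_and_kunneth}(2)) equals $\mathbb L_{(U\times_{\mstack Y} U)/U}$. Since $q$ is smooth and $(n-1)$-representable, this last complex is perfect of $\Tor$-amplitude $[0,n-1]$ by the inductive hypothesis applied to the morphism $q$ (more precisely, to the $(n-1)$-geometric $U$-stack $U\times_{\mstack Y} U$, whose relative cotangent complex is the cotangent complex of a smooth $(n-1)$-Artin stack over the base $U$; the inductive statement should be read relatively, which causes no difficulty since all the cited tools — base change, conservativity of smooth pullback, the cofiber sequence — are stated relatively). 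This closes the induction. The conclusion $\mathcal H^i(\mathbb L_{\mstack Y})\simeq 0$ for $i\notin[0,n]$ is immediate since a complex of $\Tor$-amplitude $[0,n]$ is in particular concentrated in cohomological degrees $[-n,0]$... wait, with the cohomological conventions of the paper $\Tor$-amplitude $[0,n]$ means $M\otimes N$ lives in degrees $[0,n]$ for $N$ discrete, which for $M$ itself (taking $N$ the residue fields) forces $M$ to live in degrees $[0,n]$ after the appropriate sign normalization; I would state this precisely matching the paper's convention in \Cref{sect:notations}.

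The main obstacle, such as it is, is bookkeeping rather than conceptual: one must be careful that the inductive hypothesis is genuinely the \emph{relative} statement (smooth $(n-1)$-Artin morphisms have relative cotangent complex perfect of $\Tor$-amplitude $[0,n-1]$), and that $U\times_{\mstack Y} U \to U$ is indeed such a morphism — this uses that $p$ is an $(n-1)$-representable smooth atlas so that $U\times_{\mstack Y}U$ is $(n-1)$-geometric and smooth over $U$. A secondary point requiring a line of justification is why smooth pullback $p^*$ \emph{detects} perfectness/dualizability: this follows from \Cref{limit_of_dualizables_is_dualizable} together with \Cref{cor: Perf for colim}, since $\mstack Y \simeq \colim \mstack U_\bullet$ for the \v{C}ech nerve and an object is perfect iff its restriction to each $\mstack U_k$ (hence, by further descent, to $U$) is. Everything else is a direct assembly of results already available in the excerpt.
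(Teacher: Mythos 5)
Your proof is correct and follows essentially the same route as the paper's: induction on $n$, base case via Zariski-local reduction to affine space, the cofiber sequence $p^*\mathbb L_{\mstack Y}\to\mathbb L_U\to\mathbb L_p$, and the fiber square with $U\times_{\mstack Y}U$ plus base change to reduce to a smooth $(n-1)$-representable map. You correctly flag the one point the paper glosses over — that the inductive hypothesis must be applied to the $(n-1)$-Artin $U$-stack $U\times_{\mstack Y}U$ relative to the base $U$, which is legitimate since nothing in the setup pins the base to $S$ — and you spell out the "detects perfectness" step via \Cref{cor: Perf for colim}; both are useful clarifications but the mathematical content is unchanged.
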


\begin{ex}\label{ex:contangent_for_BG}
Let $G$ be a flat group scheme over $R$ and consider the classifying stack $BG$ from \Cref{ex:quotient_stacks}. It is not hard to see that the diagonal map $BG \to BG\times BG$ classifies the action of $G\times G$ on $G$ given by $(g_1, g_2) \cdot h = g_1 h g_2^{-1}$. Consider the following pullback diagram
$$\xymatrix{
G_{\Ad_G} \ar[r]^p\ar[d]^p & BG \ar[d]^\Delta \\
\ar@/^0.5pc/[u]^e BG \ar[r]^-\Delta & BG\times BG,
}$$
where $G_{\Ad_G}$ stands for the quotient of $G$ by the adjoint action and $e$ is a section of $p$ induced by the unit morphism $* \to G$. It follows by base change  that $\mathbb L_{G_{\Ad_G} / BG} \simeq p^* \mathbb L_{BG/BG\times BG} \simeq p^*\mathbb L_{\mathbb BG}[1]$. On the other hand, by definition $e^*\mathbb L_{G_{\Ad_G} / BG} \simeq \mathfrak g^\vee$ with the adjoint action on the right hand side. It follows that $\mathfrak g^\vee \simeq e^*\mathbb L_{G_{\Ad_G} / BG} \simeq e^*p^*\mathbb L_{ BG} \simeq \mathbb L_{ BG}[1]$, hence $\mathbb L_{ BG} \simeq \mathfrak g^\vee[-1]$.
\end{ex}
\begin{ex}\label{ex: cotangent complex of X/G}
Let $X$ be an $R$-scheme and let $G$ be a flat group $R$-scheme acting on $X$. Note that the structure map $X \to *$ induces a map $p\colon X/G \to BG$ and that there is a fibered square
$$\xymatrix{
X \ar[r]^q\ar[d] & [X/G] \ar[d]^p \\
{*} \ar[r] & BG.
}$$
It follow, there is a fiber sequence $p^*\mathbb L_{BG} \to \mathbb L_{X/G} \to \mathbb L_p$. Moreover, by base change for cotangent complex, $q^* \mathbb L_p \simeq \mathbb L_X$, hence $\mathbb L_p$ is just $\mathbb L_X$ with its natural $G$-equivariant structure. One can check that the natural map $\mathbb L_X \to p^*\mathbb L_{BG}[1] \simeq \mathfrak g^\vee$ is just the natural coaction of $\mathfrak g^\vee$ on $\mathbb L_X$. In particular for a smooth $X$ the cotangent complex $\mathbb L_{X/G}$ is equivalent to the two-term complex $\Omega_X^1 \to \mathfrak g^\vee$ of $G$-equivariant sheaves on $X$.
\end{ex}

We will finish this section by recalling the following flat descent result for the cotangent complex. The proof is essentially due to Bhatt (see \cite[Corollary 2.7, Remark 2.8]{Bhatt_derivedDeRham} or \cite[Section 3]{BMS2}):
\begin{prop}[Flat descent for the cotangent complex, {\cite[Proposition 1.1.5]{KubrakPrikhodko_HdR}}] \label{flat_descent_for_cotangent_compl}
Let $p\colon \mstack U \to \mstack X$ be a surjective quasi-compact quasi-separated flat morphism between Artin stacks and denote by $p_\bullet \colon \mstack U_\bullet \to \mstack X$ the corresponding \v Cech nerve. Then the natural map
$$\wedge^n \mathbb L_{\mstack X} \xymatrix{\ar[r] &} \Tot p_{\bullet *}(\wedge^n \mathbb L_{\mstack U_\bullet})$$
is an equivalence for all $n\in \mathbb Z_{\ge 0}$.
\end{prop}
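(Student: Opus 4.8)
The statement to prove is Proposition \ref{flat_descent_for_cotangent_compl}: for a surjective qcqs flat morphism $p\colon \mstack U \to \mstack X$ of Artin stacks with \v Cech nerve $p_\bullet\colon \mstack U_\bullet \to \mstack X$, the natural map $\wedge^n \mathbb L_{\mstack X} \to \Tot p_{\bullet *}(\wedge^n \mathbb L_{\mstack U_\bullet})$ is an equivalence. The plan is to reduce from the flat case to the \'etale case (where it is already known, \Cref{etale_hyperdescent_for_cotangent}) and then to the affine schematic case, where one can invoke the derived de Rham machinery of Bhatt. The key conceptual input is that derived de Rham cohomology, hence its Hodge-filtered pieces $\wedge^n \mathbb L$, satisfy faithfully flat descent for ordinary rings; this is exactly \cite[Corollary 2.7]{Bhatt_derivedDeRham}, and the statement here is the stacky upgrade.

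First I would reduce to the case where both $\mstack X$ and $\mstack U$ are affine schemes. Since the claim is local on $\mstack X$ and the cotangent complex and its exterior powers satisfy \'etale hyperdescent, one can replace $\mstack X$ by an affine scheme mapping to it via a smooth atlas, and then use the \v Cech nerve argument (as in \Cref{lem: enough to take smooth guys}): if $\mstack X$ is $k$-Artin, pick a smooth affine atlas $V \surj \mstack X$, form $V\times_{\mstack X}\mstack U$ which is $(k-1)$-Artin and flat qcqs over $V$; by induction on $k$ the statement holds for the base change, and one descends back along the \'etale (in fact smooth) hypercover $V_\bullet \to \mstack X$, using that $\Tot$ of $\Tot$s is a $\Tot$ over the diagonal and that $p^*$ is $t$-exact and conservative. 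This reduces us to $\mstack X = \Spec A$. Then, using that $\mstack U$ is qcqs over $\Spec A$ and Artin, one takes a smooth affine atlas $W\surj \mstack U$; the composite $W \to \Spec A$ is a flat, but not necessarily finitely presented, map of affine schemes, and one again uses the \v Cech-nerve-of-a-smooth-cover trick on the $\mstack U$-side to reduce the target totalization to a totalization over a simplicial affine scheme $W_\bullet$ with $W_0 = W$, each $W_i$ smooth affine over the previous $\mstack U_j$'s. The upshot is: it suffices to prove that for a faithfully flat map of commutative rings $A \to B$ with \v Cech nerve $B^{\otimes_A \bullet+1}$, the natural map $\wedge^n \mathbb L_{A/R} \to \Tot\big(\wedge^n \mathbb L_{B^{\otimes_A\bullet+1}/R}\big)$ is an equivalence of $A$-modules.

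For this last reduced statement I would invoke Bhatt's flat descent for derived de Rham cohomology. The derived de Rham cohomology functor $R\Gamma_{L\dR}(-/R)$ (\Cref{I_myself_was_a_bit_surprised} and surrounding discussion) carries the Hodge filtration whose graded pieces are $\wedge^n \mathbb L_{-/R}[-n]$; by \cite[Corollary 2.7, Remark 2.8]{Bhatt_derivedDeRham} (see also \cite[Section 3]{BMS2}) the Hodge-completed derived de Rham cohomology, and in fact each associated graded piece $\wedge^n\mathbb L_{-/R}$, is a faithfully flat hypersheaf on $\Aff_R$; equivalently, for $A \to B$ faithfully flat the comparison map above is an equivalence. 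This is precisely the content recorded in \cite[Proposition 1.1.5]{KubrakPrikhodko_HdR}, which the present paper cites; so the affine case may be quoted directly. Assembling the two descent reductions with this affine input finishes the proof.

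The main obstacle is bookkeeping rather than a genuine mathematical difficulty: one must carefully manage the double induction (on the Artin level of $\mstack X$ and then on that of $\mstack U$) and verify that the various totalizations can be interchanged and restricted to diagonals, and that base change for the relative cotangent complex (\Cref{cotangent:descent_and_kunneth}(2)) applies along the flat maps in play so that $\wedge^n \mathbb L$ of a \v Cech term is computed by pullback from $\wedge^n\mathbb L$ of the atlas. The one point requiring a little care is that the map $W \to \Spec A$ produced on the $\mstack U$-side need not be finitely presented, so one genuinely needs the non-Noetherian, non-finite-type form of flat descent for $\wedge^n\mathbb L$ — which is exactly why Bhatt's derived de Rham argument, rather than a naive \v Cech argument with K\"ahler differentials, is the right tool. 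Once the affine statement is granted in this generality, the rest is a formal descent assembly.
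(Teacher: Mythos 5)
Your overall route — reduce to the affine schematic case and invoke Bhatt's faithfully flat descent for $\wedge^n\mathbb L$ — matches the paper's approach, which is simply a citation to \cite[Proposition 1.1.5]{KubrakPrikhodko_HdR} together with the pointer to \cite[Corollary 2.7, Remark 2.8]{Bhatt_derivedDeRham} and \cite[Section 3]{BMS2}; you have correctly identified the key technical input. However, the descent step by which you propose to go from Artin stacks down to affines has a genuine gap. You want to invoke \Cref{etale_hyperdescent_for_cotangent} along the \v Cech nerve of a smooth affine atlas $V\to\mstack X$, but that nerve is a smooth, not an \'etale, hypercover, and the proof of \Cref{etale_hyperdescent_for_cotangent} uses essentially that $q^*\wedge^n\mathbb L_{\mstack X}\simeq\wedge^n\mathbb L_V$ for $q$ \'etale — which fails for merely smooth $q$, where the discrepancy is governed by $\mathbb L_q$.

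More concretely, after pulling back your comparison map $\alpha\colon\wedge^n\mathbb L_{\mstack X}\to\Tot p_{\bullet*}\wedge^n\mathbb L_{\mstack U_\bullet}$ along $V\to\mstack X$ (legitimate by conservativity) and commuting the pullback past $p_{\bullet*}$ using \Cref{QCoh_base_change}, the target becomes $\Tot p'_{\bullet*}\bigl(q'^*_\bullet\wedge^n\mathbb L_{\mstack U_\bullet}\bigr)$, not $\Tot p'_{\bullet*}\bigl(\wedge^n\mathbb L_{V\times_{\mstack X}\mstack U_\bullet}\bigr)$, since $q'^*_\bullet\wedge^n\mathbb L_{\mstack U_\bullet}\neq\wedge^n\mathbb L_{V\times_{\mstack X}\mstack U_\bullet}$ for the smooth projection $q'_\bullet$. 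So the pullback of $\alpha$ is not literally the comparison map for $V\times_{\mstack X}\mstack U\to V$, and your induction hypothesis at lower Artin level does not directly apply to it. This is precisely the place where real work is needed: the standard resolution is to filter $\wedge^n\mathbb L_{\mstack U_\bullet}$ using the cofiber sequence $p_\bullet^*\mathbb L_{\mstack X}\to\mathbb L_{\mstack U_\bullet}\to\mathbb L_{p_\bullet}$ of \Cref{cotangent:descent_and_kunneth}, with graded pieces $p_\bullet^*\wedge^j\mathbb L_{\mstack X}\otimes\wedge^{n-j}\mathbb L_{p_\bullet}$ (which do base-change well since $\mathbb L_{p_\bullet}$ is a relative cotangent complex), and then reduce to (i) fpqc descent for $\QCoh$ in the $j=n$ piece and (ii) the vanishing $\Tot p_{\bullet*}\wedge^m\mathbb L_{p_\bullet}\simeq 0$ for $m>0$ — and it is exactly this vanishing that Bhatt's derived de Rham argument delivers. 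So what you call bookkeeping is where the content lives; the smooth-atlas reduction as written cannot be run ahead of the filtration argument without implicitly assuming the result it is trying to prove.
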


\numberwithin{thm}{section}
\section{Fontaine's infinitesimal period ring $\Ainf$}\label{sec: Ainf}
Here we recall the definition of Fontaine's ring $\Ainf\coloneqq \Ainf (\mc O_{\mbb C_p})$ and some of its properties.

Let $S$ be a $p$-adically complete ring. We let the \emdef{tilt $S^\flat\coloneqq (S/p)^\perf\coloneqq \lim_{x\mapsto x^p} S/p$} be the inverse limit perfection of $S/p$. It is a perfect ring of characteristic $p$. It is not hard to see that the natural map
$$\lim_{x\mapsto x^p}S \ra \lim_{x\mapsto x^p}S/p\simeq S^\flat$$
is an isomorphism of multiplicative monoids (\cite{BMS1}, Lemma 3.2(i)).
This way, any element $x\in S^\flat$ has a unique expression as a sequence $(x^{(0)},x^{(1)},x^{(2)},\ldots)$ such that $(x^{(i)})^p =x^{(i-1)}$ with $x^{(i)}\in S$. Note that in this description we have $(x^{(0)},x^{(1)},x^{(2)},\ldots)^{1/p}\coloneqq\phi^{-1}((x^{(0)},x^{(1)},x^{(2)},\ldots))=(x^{(1)},x^{(2)},x^{(3)},\ldots)$.
\begin{defn}
	Fontaine's ring $\Ainf(S)$ is defined as
	$$\Ainf(S)\coloneqq W(S^\flat),$$
	where $W(-)$ is the functor of $p$-typical Witt vectors.
\end{defn} 

Note that the absolute Frobenius $\phi\colon S^\flat \xra{\sim} S^\flat$ on $S^\flat$ induces an automorphism $\phi\colon \Ainf(S)\xra{\sim}\Ainf(S)$. Any element $x\in\Ainf(S)$ of Witt vectors has a unique $p$-adic expansion $x\in \sum_{i\ge 0} [x_i]\cdot p^i$ where $[x_i]$ are the Teichm\"uller lifts of some elements $x_i\in S^\flat$. In terms of this expression we have $\phi(x)=\sum_{i\ge 0} [x_i^p]\cdot p^i$. One can also define a map 
$$
\theta\colon \Ainf(S) \xymatrix{\ar[r] &} S,\quad\text{ given by }\quad x=\sum_{i\ge 0} [x_i]\cdot p^i \xymatrix{\ar@{|->}[r] &} \sum_{i\ge 0}(x_i)^{(0)}\cdot p^i \in S,
$$
which turns out to be a homomorphism of rings. 

We now specialize to the case $S=\mc O_{\mbb C_p}$, where $\mathcal{O}_{\mbb C_p}\subset \mbb C_p$ is the ring of integers of the $p$-adic completion of $ \overline{\mbb Q}_p$. For convenience in the paper we put $\Ainf\coloneqq\Ainf(\mc O_{\mbb C_p})$. In this case the $p$-th power map $\mc O_{\mbb C_p}\xra{x\mapsto x^p} \mc O_{\mbb C_p}$ is surjective, and thus so is $\theta\colon \Ainf \ra \mc O_{\mbb C_p}$. Moreover, $\mc O_{\mbb C_p}$ is an example of a \emdef{perfectoid} ring (see \cite[Definition 3.5]{BMS1}); in particular, the ideal $\ker(\theta)$ is principal.

\smallskip We now also discuss some natural elements of $\Ainf$, including an explicit generator $\xi$ for $\ker(\theta)$:
\begin{notation} \label{rem: xi}
	Fix a compatible choice $(1,\zeta_p, \zeta_{p^2},\ldots)$, $\zeta_{p^{n-1}}=\zeta_{p^n}^p\in \mc O_{\mbb C_p}$ of $p^n$-th roots of unity. It defines an element $\epsilon\coloneqq (1,\zeta_p, \zeta_{p^2},\ldots)\in \mc O_{\mbb C_p}^\flat$ with its canonical $p$-th root given by $\epsilon^{1/p}\coloneqq (\zeta_p, \zeta_{p^2},\zeta_{p^3},\ldots)$. The element 
	$$
	\xi\coloneqq 1+[\epsilon^{1/p}]+[\epsilon^{2/p}]+\cdots +[\epsilon^{p-1/p}],
	$$
	then satisfies $\theta(\xi)=1+\zeta_p+\zeta_p^2+\cdots +\zeta_p^{p-1}=0$, so $\xi\in \ker(\theta)$. By \cite[Example 3.16]{BMS1} $\xi$ is also distinguished (see \Cref{defn: distinguished}) and so it generates $\ker(\theta)$ (\cite[Lemma 3.10]{BMS1}). 
	
	We will also denote $\widetilde \xi \coloneqq \phi(\xi)= 1 +[\epsilon]+\cdots +[\epsilon]^{p-1}$. Note that $\widetilde\xi\equiv 1+1+\cdots +1\equiv p \mod (\xi)$.
	Let also $\mu\coloneqq[\epsilon]-1$. We have
	$$
	\frac{\phi(\mu)}{\mu} = \frac{[\epsilon]^p-1}{[\epsilon]-1}=1+[\epsilon]+[\epsilon]^2+\cdots +[\epsilon]^{p-1}=\widetilde \xi.
	$$
\end{notation}

For any element $\pi\in \mf m_{\mc O_{\mbb C_p}}$ and any compatible choice $\pi,\pi^{\frac{1}{p}},\pi^{\frac{1}{p^2}},\ldots$ of $p^n$-th roots of $\pi$, we get an element $\pi^\flat\coloneqq (\pi,\pi^{\frac{1}{p}},\pi^{\frac{1}{p^2}},\ldots)\in \mc O_{\mbb C_p}^\flat$. If we localize $\mc O_{\mbb C_p}^\flat$ at $\pi^\flat$, for any choice of $\pi\in \mf m_{\mc O_{\mbb C_p}}$, we obtain a field ${\mbb C_p^\flat}\coloneqq\mc O_{\mbb C_p^\flat}[\frac{1}{\pi^\flat}]$, which is the fraction field of $\mc O_{\mbb C_p}^\flat$. We have a natural flat map $\Ainf \ra W(\mbb C_p^\flat)$. 

\begin{lem}
	The map $\Ainf \ra W(\mbb C_p^\flat)$ identifies $W(\mbb C_p^\flat)$ with the $p$-adic completion of $\Ainf[\frac{1}{\xi}]$.
\end{lem}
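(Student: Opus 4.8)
The statement to prove is that the natural map $\Ainf \to W(\mbb C_p^\flat)$ exhibits $W(\mbb C_p^\flat)$ as the derived (equivalently classical, since $\Ainf[\tfrac{1}{\xi}]$ is $p$-torsion free) $p$-adic completion of $\Ainf[\tfrac{1}{\xi}]$. The key observation is that $\mbb C_p^\flat = \mc O_{\mbb C_p^\flat}[\tfrac{1}{\pi^\flat}]$ for any choice of pseudouniformizer, and we may as well take $\pi^\flat = \epsilon^{1/p} - 1$ (or any element whose $\theta$-image generates $\mf m_{\mc O_{\mbb C_p}}$ up to units); more robustly, we use that $\overline\xi \in \mc O_{\mbb C_p}^\flat$, the image of $\xi$ modulo $p$, is a nonzero nonunit, so that $\mc O_{\mbb C_p^\flat}[\tfrac{1}{\overline\xi}] = \mbb C_p^\flat$ since $\mbb C_p^\flat$ is a field of characteristic $p$ (and $\mc O_{\mbb C_p^\flat}$ is a valuation ring with maximal ideal $\mf m_{\mbb C_p^\flat}$, every nonzero element of which has invertible powers lying below any given one — this is exactly the statement $\overline\xi \in \mf m_{\mbb C_p^\flat}$, $\overline\xi \neq 0$).

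The argument is essentially the same as the proof of \Cref{lem: p-completion of A[1/d]}, which handles a general perfect prism $(A,(d))$ with $R^\flat = A/p$: one shows $A[\tfrac{1}{d}]^\wedge_p \simeq W(R^\flat[\tfrac{1}{\overline d}])$. The only thing to check here is that in our situation $R^\flat[\tfrac{1}{\overline d}] = \mc O_{\mbb C_p}^\flat[\tfrac{1}{\overline\xi}]$ coincides with $\mbb C_p^\flat$. So first I would recall that $(\Ainf, (\xi))$ is a perfect prism (\Cref{ex: examples of prisms}(3)) with $\Ainf/p = \mc O_{\mbb C_p}^\flat$. Then I would invoke \Cref{lem: p-completion of A[1/d]} applied to $(A,(d)) = (\Ainf, (\xi))$ to get that the derived $p$-completion of $\Ainf[\tfrac{1}{\xi}]$ is classical and isomorphic to $W(\mc O_{\mbb C_p}^\flat[\tfrac{1}{\overline\xi}])$. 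Finally I would identify $\mc O_{\mbb C_p}^\flat[\tfrac{1}{\overline\xi}]$ with $\mbb C_p^\flat$: since $\overline\xi$ is a nonzero element of the maximal ideal of the valuation ring $\mc O_{\mbb C_p}^\flat$ (indeed $\theta(\xi)=0$ forces $\xi \in (p, \ker\theta)$, hence $\overline\xi \in \mf m_{\mbb C_p^\flat}$, while $\overline\xi \neq 0$ because $\xi$ is distinguished, so $\xi \notin p\Ainf$), inverting it already inverts the pseudouniformizer $\pi^\flat$ (any power of $\overline\xi$ divides some power of $\pi^\flat$ and vice versa in a rank-one valuation ring), giving the fraction field $\mbb C_p^\flat$.

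There is essentially no obstacle: the whole content has been packaged into \Cref{lem: p-completion of A[1/d]}, and the remaining verification — that $\overline\xi$ is a nonzero nonunit of $\mc O_{\mbb C_p}^\flat$ and hence that localizing at it produces the fraction field $\mbb C_p^\flat$ — is immediate from the facts that $\xi$ is distinguished (\Cref{rem: xi}), that $\theta(\xi) = 0$, and that $\mc O_{\mbb C_p}^\flat$ is a valuation ring of rank one. The only mild subtlety worth spelling out is that the map $\Ainf \to W(\mbb C_p^\flat)$ of the lemma's statement is indeed the one induced by $\mc O_{\mbb C_p}^\flat \hookrightarrow \mbb C_p^\flat$ via functoriality of Witt vectors, which is transparent from the construction in \Cref{lem: p-completion of A[1/d]} (the map sends $\xi$ to a unit because $\overline\xi$ becomes a unit in $\mbb C_p^\flat$ and $W(\mbb C_p^\flat)$ is $p$-complete). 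So I would simply write: the claim is the special case $(A,(d)) = (\Ainf, (\xi))$ of \Cref{lem: p-completion of A[1/d]}, together with the identification $\mc O_{\mbb C_p}^\flat[\tfrac{1}{\overline\xi}] = \mbb C_p^\flat$ just explained.
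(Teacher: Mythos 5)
Your proof is correct and takes the same approach as the paper: the paper's entire proof is the single sentence that this is a special case of \Cref{lem: p-completion of A[1/d]}, applied to the perfect prism $(\Ainf,(\xi))$. You correctly identify this reduction and additionally spell out the (implicit) identification $\mc O_{\mbb C_p}^\flat[\tfrac{1}{\ol\xi}] = \mbb C_p^\flat$, which follows since $\ol\xi$ is a nonzero non-unit in the rank-one valuation ring $\mc O_{\mbb C_p}^\flat$.
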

\begin{proof}
	This is a particular case of \Cref{lem: p-completion of A[1/d]}
\end{proof}

\begin{rem}
	Note that the construction $S\mapsto \Ainf(S)$ is functorial. In particular, for a finite field extension $\mbb Q_p\subset K$ there is a natural $G_K$-action on $\Ainf$. It is continuous with respect to $(\xi,p)$-adic topology since the ideal $(\xi,p)$ is $G_K$-invariant.
\end{rem}

\addcontentsline{toc}{section}{References}
%{\small \bibliography{references}}
\printbibliography

\bigskip

\noindent Dmitry~Kubrak, {\sc Max Planck Institute for Mathematics, Bonn, Germany}
\href{mailto:dmkubrak@gmail.com}{dmkubrak@gmail.com}

\smallskip

\noindent 
Artem~Prikhodko, {\sc Department of Mathematics, National Research University Higher School of Economics, Moscow; Center for Advanced Studies, Skoltech, Moscow,}
\href{mailto:artem.n.prihodko@gmail.com}{artem.n.prihodko@gmail.com}
\end{document}